\documentclass[11pt,reqno]{amsart}
\usepackage{amssymb,amsmath,amsfonts,amsthm,enumerate,stmaryrd,tensor,mathtools,dsfont,upgreek,bbm,mathrsfs,nameref,microtype,xcolor,bm,tikz}
\usetikzlibrary{arrows}
\usepackage[left=1 in, right=1 in,top=1 in, bottom=1 in]{geometry}

\usepackage{imakeidx}
\usepackage{hyperref,cleveref}%

\numberwithin{equation}{subsection}
\definecolor{popblue}{RGB}{55,115,255}
\definecolor{lightbl}{RGB}{155,205,255}
\definecolor{depthbl}{RGB}{145,215,255}
\definecolor{fancyre}{RGB}{225,55,115}
\definecolor{darkblu}{RGB}{15,75,185}
\definecolor{mellowy}{RGB}{225,225,35}
\renewcommand{\j}[1]{\tjump{#1}}
\renewcommand{\tilde}[1]{\widetilde{#1}}
\renewcommand{\Bar}{\overline}

\renewcommand{\S}{\mathbb{S}}
\newcommand{\R}{\mathbb{R}}
\newcommand{\N}{\mathbb{N}}

\newcommand{\C}{\mathbb{C}}
\newcommand{\X}{\mathbb{X}}
\newcommand{\Y}{\mathbb{Y}}
\newcommand{\W}{\mathbb{W}}
\newcommand{\E}{\mathbb{E}}
\newcommand{\F}{\mathbb{F}}

\newcommand{\m}{\mathrm}

\newcommand{\lv}{\lVert}
\newcommand{\rv}{\rVert}

\newcommand{\al}{\alpha}
\newcommand{\be}{\beta}
\newcommand{\es}{\varnothing}
\newcommand{\lra}{\;\Leftrightarrow\;}
\newcommand{\ep}{\varepsilon}
\newcommand{\f}{\frac}
\newcommand{\sig}{\sigma}
\newcommand{\gam}{\gamma}
\newcommand{\del}{\delta}

\newcommand{\bn}{\binom}

\newcommand{\pd}{\partial}

\newcommand{\grad}{\nabla}
\newcommand{\bpm}{\begin{pmatrix}}
\newcommand{\epm}{\end{pmatrix}}

\newcommand{\loc}{\m{loc}}
\renewcommand{\bar}{\overline}

\newcommand{\emb}{\hookrightarrow}
\newcommand{\res}{\restriction}
\renewcommand{\le}{\leqslant}
\renewcommand{\ge}{\geqslant}

\newcommand{\tjump}[1]{\llbracket#1\rrbracket}

\newcommand{\norm}[1]{\left\lv#1\right\rv}
\newcommand{\bnorm}[1]{\Big\lv#1\Big\rv}
\newcommand{\snorm}[1]{\big\lv#1\big\rv}
\newcommand{\tnorm}[1]{\lv#1\rv}

\newcommand{\p}[1]{\left(#1\right)}
\newcommand{\bp}[1]{\Big(#1\Big)}
\renewcommand{\sp}[1]{\big(#1\big)}
\newcommand{\tp}[1]{(#1)}
\newcommand{\tfloor}[1]{\lfloor #1\rfloor}
\newcommand{\sfloor}[1]{\big\lfloor#1\big\rfloor}
\newcommand{\bfloor}[1]{\Big\lfloor#1\Big\rfloor}

\newcommand{\abs}[1]{\left|#1\right|}
\newcommand{\babs}[1]{\Big|#1\Big|}

\newcommand{\tabs}[1]{|#1|}

\newcommand{\bsb}[1]{\Big[{#1}\Big]}
\newcommand{\ssb}[1]{\big[{#1}\big]}
\newcommand{\tsb}[1]{[{#1}]}
\newcommand{\cb}[1]{\left\{{#1}\right\}}
\newcommand{\scb}[1]{\big\{{#1}\big\}}
\newcommand{\bcb}[1]{\Big\{{#1}\Big\}}
\newcommand{\tcb}[1]{\{{#1}\}}
\newcommand{\br}[1]{\left\langle #1 \right\rangle}
\providecommand{\bbr}[1]{\Big\langle #1 \Big\rangle}

\providecommand{\tbr}[1]{\langle #1 \rangle}


\renewcommand{\bf}[1]{\mathbf{#1}}
\newcommand{\ii}{\m{i}}

\DeclareMathOperator{\supp}{supp}

\newtheorem{prop}{\color{popblue}{Proposition}}[section]
\newtheorem{thm}[prop]{\color{popblue}{Theorem}}
\newtheorem{defn}[prop]{\color{popblue}{Definition}}
\newtheorem{lem}[prop]{\color{popblue}{Lemma}}
\newtheorem{coro}[prop]{\color{popblue}{Corollary}}
\newtheorem{rmk}[prop]{\color{popblue}{Remark}}
\newtheorem{exa}[prop]{\color{popblue}{Example}}


\newenvironment{customthm}[1]
{\innercustomthm}
{\endinnercustomthm}

\newenvironment{customcoro}[1]
{\innercustomcoro}
{\endinnercustomcoro}

\author{Noah Stevenson}
\address{
	Department of Mathematics\\
	Princeton University\\
	Princeton, NJ 08544, USA
}
\email[N. Stevenson]{stevenson@princeton.edu}
\thanks{N. Stevenson was supported by an NSF Graduate Research Fellowship}
\author{Ian Tice}
\address{
	Department of Mathematical Sciences\\
	Carnegie Mellon University\\
	Pittsburgh, PA 15213, USA
}
\email[I. Tice]{iantice@andrew.cmu.edu}
\thanks{I. Tice was supported by an NSF Grant (DMS \#2204912). }

\title[Compressible traveling waves]{
    Well-posedness of the traveling wave problem for the free boundary compressible Navier-Stokes equations
 }

\subjclass[2020]{Primary 35Q30, 35R35, 35C07; Secondary 47J07, 76N06, 76N30}


\keywords{Free boundary compressible Navier-Stokes, traveling waves, Nash-Moser inverse function theorem}

\indexsetup{level=\section*,toclevel=section,headers = {}{}}
\makeindex[title={Notation Index\label{index}}]
\begin{document}

\begin{abstract}

We prove that traveling waves in viscous compressible liquids are a generic phenomenon.  The setting for our result is a horizontally infinite, finite depth layer of compressible, barotropic, viscous fluid, modeled by the free boundary compressible Navier-Stokes equations in dimension $n \ge 2$.  The bottom boundary of the fluid is flat and rigid, while the top is a moving free boundary.  A constant gravitational field acts normal to the flat bottom.  We allow external forces to act in the fluid's bulk and external stresses to act on its free surface.  These are posited to be in traveling wave form, i.e. time-independent when viewed in a coordinate system moving at a constant, nontrivial velocity parallel to the lower rigid boundary.  

In the absence of such external sources of stress and force, the fluid system reverts to equilibrium,  which corresponds to a flat, quiescent fluid layer with vertically stratified density.  In contrast, when such sources of stress or force are present, the system admits traveling wave solutions.  We establish a small data well-posedness theory for this problem by proving that for every nontrivial traveling wave speed there exists a nonempty open set of stress and forcing data that give rise to unique traveling wave solutions, and that these solutions depend continuously on the data and the wave speed.  When $n \ge 3$ we prove this with surface tension accounted for at the free boundary, while in the case $n=2$ we prove this with or without surface tension. To the best of our knowledge, this result constitutes  the first general construction of traveling wave solutions to any free boundary compressible fluid equations.

The traveling wave formulation of the equations is a quasilinear system of mixed type. The interaction of the hyperbolic and elliptic parts leads to derivative loss in the linearizations of the system. As such, we are compelled to construct solutions via an inverse function theorem of Nash-Moser type. Our well-posedness proof has a number of novelties and elements of broader interest, including: a new Nash-Moser variant that works in Banach scales but guarantees minimal regularity loss in the existence as well as continuity of the local inverse; a host of results about steady transport equations and their elliptic regularizations; new results about and uses of a scale of anisotropic Sobolev spaces suited for constructing traveling waves; and a robust, streamlined, and flexible approach for constructing solutions to our family of linearized free boundary problems.
\end{abstract}

\maketitle

\pagebreak
\makeatletter \def\l@subsection{\@tocline{2}{0pt}{1pc}{5pc}{}} \def\l@subsection{\@tocline{2}{0pt}{2pc}{6pc}{}} \makeatother
{\small\tableofcontents}

\pagebreak
	\section{Introduction}

The study of traveling wave solutions to the free boundary problems of fluid mechanics has been of fundamental interest in mathematics for nearly two centuries.  During this time, tremendous progress has been made in the analysis of such solutions for incompressible fluids.  Throughout most of this period, the primary focus was inviscid, irrotational fluids; only in the past two decades has progress been made on models that account for more robust phenomena such as vorticity and viscosity.  However, to the best of our knowledge, there are no rigorous results in the literature that account for the fundamental fluid mechanical effect of  compressibility.  It is this effect that we aim to study in the present paper.

All real fluids experience some degree of compressibility and viscosity, even if small, so it is physically important to verify that traveling waves remain a generic phenomenon when these effects are accounted for.  From a mathematical perspective, the development of the compressible viscous theory also opens the door to studying incompressible or inviscid limits, which may then shed light on the zoo of incompressible inviscid solutions that have been constructed in the literature.  We emphasize that in this work we only study compressible fluids for which the density does not vanish at the free boundary; these are often referred to as compressible liquids in the literature.

The rest of the introduction proceeds as follows.  In Section~\ref{dynamics_and_equilibria} we formulate the dynamical equations for a compressible viscous fluid with free boundary and identify the equilibrium solutions, which correspond to stratified layers of quiescent fluid.  Section~\ref{sec_tw_and_force_role} is concerned with the traveling wave ansatz and a discussion of the role played by external stresses and forces.  Previous work on traveling waves is discussed in  Section~\ref{sitting in an english garden wating for the sun}.  Further reformulations of the equations, made in the interest of identifying `good unknowns,' are recorded in Section~\ref{its just a state of mind}.  Our main results are stated in Section~\ref{sec_main_results} along with some discussion of their implications.  Section~\ref{sec_strategy_label_evil_duck} contains a high-level summary of the difficulties in the proof and our strategies for overcoming them.  Finally, Section~\ref{sec_notational_conventions} records the notational conventions we employ throughout the paper.  We emphasize that for  convenience we have included a~\hyperref[index]{\indexname} at the end of the paper, just before the references.

\subsection{Dynamics in Eulerian coordinates and stratified equilibria} \label{dynamics_and_equilibria}

We begin by formulating the free boundary compressible Navier-Stokes equations, which govern the dynamics of a layer of viscous, compressible, barotropic (isentropic) fluid.  First we must set some notation.  We let $n\in\N\setminus\cb{0,1}$ denote the spatial dimension; the cases $n\in\tcb{2,3}$ are the physically relevant ones, but our analysis works in general dimension.  The parameter $b\in\R^+$ designates the equilibrium depth of the fluid.  If $\eta:\R^{n-1}\to\R$ is a continuous function satisfying $\eta+b>0$, then we define the open set
\begin{equation}\label{Omega_eta_def}\index{\textbf{Fluid mechanical terms}!0@$\Omega[\eta]$}
\Omega[\eta]=\cb{\p{x,y}\in\R^{n-1}\times\R\;:\;0<y<b+\eta\p{x}}    
\end{equation}
as well as the interfacial sets 
\begin{equation}\label{Sigma_defs} 
\Sigma[\eta]=\cb{\p{x,y}\in\R^{n-1}\times\R\;:\;y=b+\eta\p{x}}   \index{\textbf{Fluid mechanical terms}!2@$\Sigma[\eta]$}
\text{ and } 
\Sigma_0=\R^{n-1}\times\tcb{0}.  \index{\textbf{Fluid mechanical terms}!4@$\Sigma_0$}
\end{equation}
Note that $\pd\Omega[\eta]=\Sigma[\eta]\sqcup\Sigma_0$. See Figure~\ref{fig:the sets} for a depiction of these sets. Throughout the paper we will extensively use the shorthand notation
\begin{equation}
\Omega=\Omega[0] = \R^{n-1} \times (0,b) \index{\textbf{Fluid mechanical terms}!1@$\Omega$}
\text{ and }
\Sigma=\Sigma[0] = \R^{n-1} \times \{b\} \index{\textbf{Fluid mechanical terms}!3@$\Sigma$}.
\end{equation}

\begin{figure}[!h]
    \centering
    \includegraphics{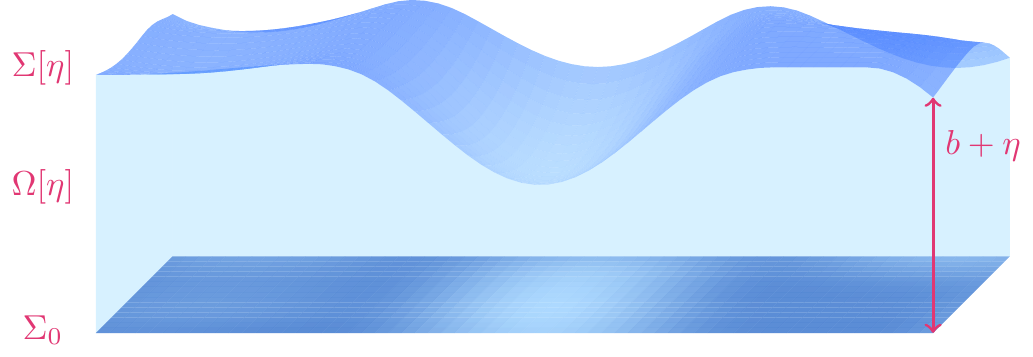}
    \caption{Depiction of the domain $\Omega[\eta]$ and its boundary, $\Sigma[\eta]$ and $\Sigma_0$, when $n=3$.}
    \label{fig:the sets}
\end{figure}
	
The fluid is assumed to occupy a semi-infinite layer of finite depth that changes in time; more precisely, at time $t$ the fluid occupies the set $\Omega[\zeta\p{t,\cdot}]\subset \R^n$ for an unknown continuous free surface function $\zeta\p{t,\cdot}:\R^{n-1}\to\R$ satisfying $\zeta\p{t,\cdot}+b>0$.  The upper boundary of this set, $\Sigma[\zeta\p{t,\cdot}]$, is called the free boundary, while the lower boundary $\Sigma_0$ is referred to as the fixed boundary.  The fluid is described by its velocity vector field $w\p{t,\cdot}:\Omega[\zeta\p{t,\cdot}]\to\R^n$ and its scalar density $\tau\p{t,\cdot}:\Omega[\zeta\p{t,\cdot}]\to\R^+$.  Associated to $w$ and $\tau$ are two crucial fluid mechanical quantities: the pressure and the viscous stress tensor.  The pressure within the fluid is given by $P(\tau)$, where $P\in C^\infty\p{\R^+;\R}$\index{\textbf{Fluid mechanical terms}!8@$P$} is a given pressure law that is strictly increasing and satisfies $P'>0$.  The assumption that the pressure depends only on the density is what makes the fluid barotropic; this can be viewed as a consequence of assuming the fluid flow is isentropic, i.e. entropy remains constant.  The viscous stress tensor within the fluid is the symmetric tensor 
\begin{equation}\index{\textbf{Fluid mechanical terms}!20@$\mathbb{S}^\tau$}
		\S^{\tau}w=\upmu\p{\tau}\bp{\grad w+\grad w^{\m{t}}-\f{2}{n}\grad\cdot wI}+\uplambda\p{\tau}(\grad\cdot w)I=\upmu(\tau)\mathbb{D}^0w+\uplambda(\tau)(\grad\cdot w)I,
\end{equation}
where the shear and bulk viscosity coefficients are $\upmu,\uplambda\in C^\infty\p{\R^+;[0,\infty)}$.  The non-negativity of  the viscosity coefficients is a requirement of the Clausius–Duhem inequality from continuum thermodynamics, but we will place more assumptions on these below.  Note that the specific forms of the functions $P$, $\upmu$, and $\uplambda$ can be thought of as characterizing the specific material comprising the fluid.

The dynamics of $\zeta$, $w$, and $\tau$ are then coupled to the various forces and stresses acting on the fluid through the free boundary compressible Navier-Stokes equations:
\begin{equation}\label{compressible navier-stokes in Eulerian coordinates}
		\begin{cases}
			\pd_t\tau+\grad\cdot\p{\tau w}=0&\text{in }\Omega[\zeta\p{t,\cdot}]\\
			\tau\p{\pd_tw+w\cdot\grad w}+\grad(P\p{\tau})-\grad\cdot\S^{\tau}w=-\mathfrak{g}\tau e_n+\tau G+F&\text{in }\Omega[\zeta\p{t,\cdot}]\\
			-\tp{P\p{\tau}-\S^{\tau}w}\nu_\zeta+P_{\m{ext}}\nu_\zeta-\varsigma\mathscr{H}\p{\zeta}\nu_\zeta=T\nu_\zeta&\text{on }\Sigma[\zeta\p{t,\cdot}]\\
			\pd_t\zeta+w\cdot\tp{\grad_{\|}\zeta,-1}=0&\text{on }\Sigma[\zeta\p{t,\cdot}]\\
			w=0&\text{on }\Sigma_0.
		\end{cases}
\end{equation}
Note that in the above we have written $\grad_{\|}=(\pd_1,\dots,\pd_{n-1})$\index{\textbf{Miscellaneous}!02@$\grad_{\parallel}$} to refer to the `tangential gradient'.  

We now enumerate these forces and stresses.  The term $-\mathfrak{g} \tau e_n$ is the gravitational force acting on the fluid, with gravitational strength $\mathfrak{g} >0$ \index{\textbf{Fluid mechanical terms}!11@$\mathfrak{g}$} and unit vector $e_n = (0,\dots,1)\in \R^n$ perpendicular to $\Sigma_0$.  The vector fields $F\p{t,\cdot},G(t,\cdot):\Omega[\zeta\p{t,\cdot}]\to\R^n$ are the applied bulk and specific bulk forces, respectively.  The parameter $P_{\m{ext}}\in\R$\index{\textbf{Fluid mechanical terms}!9@$P_{\m{ext}}$} is a constant external pressure,  $T\p{t,\cdot}:\Sigma[\zeta\p{t,\cdot}]\to\R^{n\times n}$ is the applied surface stress, and $\nu_\zeta$ is the outward unit normal to the surface $\Sigma[\zeta\p{t,\cdot}]$.  We note that in continuum mechanics it is usually the case that $T\p{t,\cdot}$ is symmetric, but this condition plays no role in our analysis, so have allowed for the most general case.  The mean curvature operator is 
\begin{equation}\label{mean_curvature_def} \index{\textbf{Fluid mechanical terms}!10@$\mathscr{H}$}
    \mathscr{H}\p{\zeta} = \grad_{\|} \cdot ( (1+ \abs{\grad_{\|} \zeta}^2)^{-1/2}    \grad_{\|}   \zeta),
\end{equation}
and the parameter $\varsigma \ge 0$ \index{\textbf{Fluid mechanical terms}!12@$\varsigma$} is called the coefficient of surface tension.  For technical reasons that will be discussed later, we make the following assumptions about the viscosity coefficients and the coefficient of surface tension:
\begin{equation}\label{parameter_assumptions}
\begin{cases}
    \upmu >0, \uplambda \ge 0 \text{ in } \R^+, \text{ and } \varsigma >0 & \text{if } n \ge 3 \\
    \upmu >0, \uplambda > 0  \text{ in } \R^+, \text{ and } \varsigma \ge 0 & \text{if } n = 2. 
\end{cases}
\end{equation}

The first equation in~\eqref{compressible navier-stokes in Eulerian coordinates} is the continuity equation, which asserts conservation of mass.  The next is the momentum equation, and it dictates a Newtonian balance of forces in the fluid bulk.  After this is the dynamic boundary condition, which enforces a balance of stresses acting on the free surface.  The penultimate equation is the kinematic boundary condition, which determines how the free surface evolves according to the fluid velocity.  The final equation in~\eqref{compressible navier-stokes in Eulerian coordinates} is simply the no-slip boundary condition for the velocity on the rigid bottom.  For a more thorough introduction to the compressible Navier-Stokes equations, including their derivation, we refer to the books of  Wehausen and Laitone~\cite{MR0119656},   Feireisl~\cite{MR2040667}, Lions~\cite{MR1422251},  Novotn\'{y} and Stra\v{s}kraba~\cite{MR2084891},  Gurtin, Fried, and Anand \cite{GFA_2010}, and Plotnikov and Soko\l owski~\cite{MR2963679}. 
	
The compressible Navier-Stokes system \eqref{compressible navier-stokes in Eulerian coordinates} admits a vertically stratified equilibrium solution, provided that the barotropic pressure law $P$, the external depth $b$, the external pressure $P_{\m{ext}}$, and the gravitational field strength $\mathfrak{g}$ satisfy some compatibility conditions.  Indeed, suppose that the fluid experiences no external forces or stresses, i.e. $F=G=0$ and $T =0$, and that the fluid is quiescent and occupies a flat slab of depth $b$, i.e. $w=0$ and $\zeta =0$.  Finally, suppose that $\partial_t \tau =0$; then \eqref{compressible navier-stokes in Eulerian coordinates} reduces to $\tau(t,x,y) = \varrho(y)$, where $\varrho : [0,b] \to \R^+$ is a smooth function solving the Cauchy problem
\begin{equation}\label{equilibrium cauchy problem}
	\begin{cases}
		(P\circ\varrho)'=-\mathfrak{g}\varrho&\text{in }(0,b)\\
		P\circ\varrho(b)=P_{\m{ext}}.
	\end{cases}
\end{equation}
In order to guarantee that a solution exists, we henceforth assume that the following pair of compatibility conditions are satisfied (these conditions are actually necessary and sufficient):
\begin{equation}\label{equilibrium_ccs}
    P_{\m{ext}}\in P(\R^+) \text{ and } (0,\infty]\ni\int_{P^{-1}(P_{\m{ext}})}^\infty t^{-1}P'(t)\;\m{d}t>\mathfrak{g}b.
\end{equation}
With \eqref{equilibrium_ccs} in hand, we can conveniently solve \eqref{equilibrium cauchy problem} by introducing the enthalpy $H:\R^+\to\R$, which is the smooth increasing function defined via
\begin{equation}\label{enthalpy_def}  \index{\textbf{Fluid mechanical terms}!60@$H$}
	H(s)=-\mathfrak{g}b+\int_{P^{-1}(P_{\m{ext}})}^st^{-1}P'(t)\;\m{d}t.  
\end{equation}
Note that since $P' >0$ we have that $H' >0$ as well.  We can also calculate the image $H(\R^+) = (H_{\m{min}},H_{\m{max}})\subseteq\R$\index{\textbf{Fluid mechanical terms}!61@$H_{\m{min}},H_{\m{max}}$}, where
\begin{equation}\label{domain of the inverse enthalpy}
		H_{\m{min}}=-\mathfrak{g}b-\int_0^{P^{-1}(P_{\m{ext}})}t^{-1}P'(t)\;\m{d}t < -\mathfrak{g}b  \text{ and } 
		H_{\m{max}}=-\mathfrak{g}b+\int_{P^{-1}(P_{\m{ext}})}^\infty t^{-1}P'(t)\;\m{d}t >0.
\end{equation}
Since we now know that $H:\R^+\to(H_{\m{min}},H_{\m{max}})$ is a smooth diffeomorphism and $[-\mathfrak{g}b,0] \subseteq H(\R^+)$, we may realize $\varrho$ as the smooth decreasing function defined by
\begin{equation}\label{varrho_def} \index{\textbf{Fluid mechanical terms}!62@$\varrho$}
    \varrho(y)=H^{-1}(-\mathfrak{g}y) \text{ for } y \in [0,b],
\end{equation}
which is the unique solution to \eqref{equilibrium cauchy problem} in light of the construction of $H$.  The equilibrium density $\varrho$ will play a crucial role in our subsequent analysis.  As a concrete example, if the pressure satisfies the well-known polytropic law $P(t) = K t^\alpha$ for $\alpha \ge 1$ and $K \in \R^+$, then 
\begin{equation}
    \varrho(y) = 
\begin{cases}
    P_{\m{ext}} K^{-1} \exp\tp{ \mathfrak{g} K^{-1} (b-y)} &\text{if } \alpha =1 \\
    \tp{(P_{\m{ext}} K^{-1} )^{(\alpha-1)/\alpha} + (\alpha-1)\alpha^{-1} K^{-1} \mathfrak{g}(b-y)}^{1/(\alpha-1)} &\text{if }\alpha >1.
\end{cases}    
\end{equation}

\subsection{Traveling waves and the role of the stress and forces}	\label{sec_tw_and_force_role}
	
The main thrust of this paper is the study of traveling wave solutions to the system~\eqref{compressible navier-stokes in Eulerian coordinates}. These are solutions that are time-independent when viewed in an inertial coordinate system obtained from the above Eulerian coordinates through a Galilean transformation. In order for time-independence to hold, the moving coordinate system must travel at a constant velocity parallel to $\Sigma_0$.  Without loss of generality (we can always apply a rigid rotation that fixes the vector $e_n$ to change coordinates), we may assume that the traveling coordinate system moves at constant velocity $\gam e_1$ for a wave speed $\gam\in\R^+$. 
	
In the new coordinates, the stationary free boundary is described by $\eta\p{x-\gam te_1}=\zeta\p{t,x}$ for a new unknown free surface function $\eta : \R^{n-1} \to (-b,\infty)$, which then determines the fluid domain $\Omega[\eta]$ and the free boundary $\Sigma[\eta]$.  We then posit that the other quantities are also in traveling wave form: $\gam v\p{x-t\gam e_1,y}=w\p{t,x,y}$, $\sig(x-\gam te_1,y)=\tau(t,x,y)$, 	$\mathcal{F}\p{x-t\gam e_1,y}=F\p{t,x,y}$, $\mathcal{G}\p{x-t\gam e_1,y}=G\p{t,x,y}$, and $\mathcal{T}\p{x-t\gam e_1,y}=T\p{t,x,y}$, where $v:\Omega[\eta]\to\R^n$, $\sig:\Omega[\eta]\to\R$, $\mathcal{F},\mathcal{G}:\Omega[\eta]\to\R^n$, and $\mathcal{T}:\Sigma[\eta]\to\R^{n\times n}$ define the stationary velocity field, density, external and specific forces, and external stresses, respectively.  Under these assumptions, \eqref{compressible navier-stokes in Eulerian coordinates} is equivalent to the following traveling compressible Navier-Stokes system for unknowns $\p{\sig,v,\eta}$ and data $\p{\mathcal{T},\mathcal{G},\mathcal{F}}$:
 	\begin{equation}\label{compressible navier-stokes traveling wave equations rescaled}
		\begin{cases}
			-\pd_1\sig+\grad\cdot\p{\sig v}=0&\text{in }\Omega[\eta]\\
			\gam^2\sig\p{v- e_1}\cdot\grad v+\grad(P\p{\sig})-\gam\grad\cdot\S^{\sig}v=-\mathfrak{g}\sig e_n+\sig \mathcal{G}+\mathcal{F}&\text{in }\Omega[\eta]\\
-\tp{P\p{\sig}-\gam\S^{\sig}v}\nu_\eta+P_{\m{ext}}\nu_\eta-\varsigma\mathscr{H}\p{\eta}\nu_\eta=\mathcal{T}\nu_\eta&\text{on }\Sigma[\eta]\\
			-\pd_1\eta+v\cdot\tp{\grad_{\|}\eta,-1}=0&\text{on }\Sigma[\eta]\\
			v=0&\text{on }\Sigma_0.
		\end{cases}
	\end{equation}
 Note that in changing unknowns, we have rescaled the velocity vector by $\gam$. This has the effect of nondimensionalizing the vector field $v$ and, more importantly, removing the $\gam$-dependence from the continuity equation and the kinematic boundary condition.

 With the traveling wave system \eqref{compressible navier-stokes traveling wave equations rescaled} formulated, we turn to a discussion of the role played by the surface stress, specific bulk force, and bulk force data triple, $(\mathcal{T},\mathcal{G},\mathcal{F})$.   We have chosen to study this general form of $\p{\mathcal{T},\mathcal{G},\mathcal{F}}$ in order to allow these to model a variety of physical effects.  As a specific example, the bulk force term $\mathcal{G}$ can be thought of as a localized perturbation of the gravitational field caused by a massive object translating above the fluid (a primitive model of the ocean-moon system).  Similarly, a simple example of the surface stress occurs when  $\mathcal{T} = -\varphi I_{n \times n}$ for a given scalar function $\varphi: \R^{n} \to \R$; in this configuration, $\varphi$ can be viewed as a spatially localized source of pressure translating above the fluid. See Figure~\ref{various wave speeds} for depictions of the free surface for this latter case of applied stress.

 If $(\mathcal{T},\mathcal{G},\mathcal{F})=0$, then it is a simple matter to verify that the stratified equilibrium solution,  $v=0$, $\eta =0$, and $\sigma = \varrho$ (defined by \eqref{varrho_def}), provides a solution to \eqref{compressible navier-stokes traveling wave equations rescaled} with any value of $\gamma$.  This suggests that we should seek solutions to \eqref{compressible navier-stokes traveling wave equations rescaled} as perturbations of this stratified equilibrium.  An elementary formal calculation (we will state and prove a rigorous version later after a further reformulation of the problem: see Appendix~\ref{lift my head, im still yawning} and, in particular Corollary~\ref{flat_diss_power_id}) reveals that if $\sigma - \varrho$, $v$, and $\eta$ are in a Sobolev-type framework, then  
  \begin{equation}\label{she said I know what its like to be dead}
     \int_{\Omega[\eta]}\f{\upmu(\sig)}{2}|\mathbb{D}^0v|^2+\uplambda(\sig)|\grad\cdot v|^2=\int_{\Omega[\eta]}(\sig\mathcal{G}+\mathcal{F})\cdot v+\int_{\Sigma[\eta]}\mathcal{T}\nu_\eta\cdot v.
 \end{equation}
 The physical interpretation of this identity is that if a traveling wave solution exists, then the power supplied by the forces and stress (the right side of \eqref{she said I know what its like to be dead}) must be in exact balance with the energy dissipation rate due to viscosity (the left side of~\eqref{she said I know what its like to be dead}). 
 
  The identity \eqref{she said I know what its like to be dead} reveals even more if we assume there are no applied forces or stress, i.e. $(\mathcal{T},\mathcal{G},\mathcal{F})=0$.  Without a source of external power, \eqref{she said I know what its like to be dead} requires that the left integral vanishes, and so the assumptions  \eqref{parameter_assumptions} together with the Korn inequality (see Propositions~\ref{prop on Korn's inequality} and~\ref{prop on deviatoric Korn's inequality} for Korn in $\Omega$, but similar results hold in $\Omega[\eta]$ if $\eta$ is sufficiently regular) imply that  $v=0$.  In turn, the momentum equation implies that $\grad(P(\sig))=-\mathfrak{g}\sig e_n$ and hence $\sig=H^{-1}(-\mathfrak{g}\m{id}_{\R^n}\cdot e_n+c)$ for some constant $c$, but we must have $c=0$ since $\sig-\varrho$ vanishes at infinity.  The normal part of the dynamic boundary condition then requires
 \begin{equation}
     P_{\m{ext}}-P\circ H^{-1}(-\mathfrak{g}(b+\eta))-\varsigma\grad_{\|}\cdot(\tp{1+|\grad_{\|}\eta|^2}^{-1/2}\grad_{\|}\eta)=0,
 \end{equation}
and by multiplying this equation by $\eta$  and integrating by parts in the mean curvature term, we deduce that
\begin{equation}\label{songs that linger on my}
    \int_{\R^{n-1}}(P_{\m{ext}}-P\circ H^{-1}(-\mathfrak{g}(b+\eta)))\eta\le0.
\end{equation}
Since $P_{\m{ext}}-P\circ H^{-1}(-\mathfrak{g}(b+\cdot))$ is a strictly increasing function vanishing at zero, the integrand in~\eqref{songs that linger on my} is nonnegative and must thus vanish pointwise.  Hence, $\eta =0$, and we have deduced that $(\sig,v,\eta)=(\varrho,0,0)$ when $(\mathcal{T},\mathcal{G},\mathcal{F})=0$.

The above formal computation suggests that the dissipative nature of viscosity prohibits the existence of nontrivial  traveling wave solutions (in Sobolev-type spaces) without applied stress or forcing.  This shows that the triple $(\mathcal{T},\mathcal{G},\mathcal{F})$ plays an essential role in the study of traveling wave solutions, as the stress and forcing data are necessary for solutions to exist.  We emphasize, however, that the above argument does not preclude the existence of nontrivial solutions with $(\mathcal{T},\mathcal{G},\mathcal{F})=0$ in non-Sobolev functional frameworks.

Now that the importance of the data triple in the traveling wave theory is evident, we can roughly summarize our goal for the paper: we aim to prove that for every traveling wave speed $\gamma \in \R^+$ the problem \eqref{compressible navier-stokes traveling wave equations rescaled} admits a small-data well-posedness theory.  That is, we aim to identify a nontrivial open set, $\mathcal{U}_\gamma$, of stress and forcing data in a Sobolev-type framework such that for every  $(\mathcal{T},\mathcal{G},\mathcal{F}) \in \mathcal{U}_\gamma$ there exists a locally unique solution triple $(\sigma, v, \eta)$ to \eqref{compressible navier-stokes traveling wave equations rescaled}, also in a Sobolev-type framework, that depends continuously on $(\mathcal{T},\mathcal{G},\mathcal{F})$.  However, for technical reasons that we will explain at the beginning of Section~\ref{its just a state of mind}, the variables $(\sigma,v,\eta)$ are not suitable for this task, and we must introduce a further reformulation of \eqref{compressible navier-stokes traveling wave equations rescaled} with a new set of `good unknowns' in order to achieve our goal.  Interestingly, this new formulation has the added benefit of allowing us to establish the continuity of solutions with respect to the wave speed $\gamma$ as well.

\subsection{Previous work}\label{sitting in an english garden wating for the sun}

The time dependent free boundary compressible fluid equations of~\eqref{compressible navier-stokes in Eulerian coordinates} and their variants have received much attention in the literature.  A full review is beyond the scope of the paper, so we will settle for a brief survey of some results closely related to ours.   

A significant portion of the literature on the dynamic problem concerns the inviscid analog of~\eqref{compressible navier-stokes in Eulerian coordinates}, i.e. the free boundary compressible Euler equations.  Lindblad~\cite{Lindblad_2003,MR2177323} proved local well-posedness of the liquid droplet problem.  Jang and Masmoudi \cite{Jang_Masmoudi_2009, Jang_Masmoudi_2015} and Coutand and Shkoller~\cite{Coutand_Shkoller_2011,MR2980528}  proved local well-posedness for the vacuum droplet problem.  Well-posedness of the liquid droplet problem with surface tension was studied by Coutand, Hole, and Shkoller~\cite{MR3139610} and by Disconzi and Kukavica \cite{Disconzi_Kukavica_2019}.  The incompressible limit for the liquid droplet problem was derived by Lindblad and Luo \cite{Lindblad_Luo_2018} without surface tension and by Disconzi and Luo  \cite{Disconzi_Luo_2020} with surface tension. Trakhinin~\cite{MR2560044} and  Luo and Zhang~\cite{MR4439376} proved local well-posedness for inviscid liquid layers of infinite depth.

There are also a number of dynamical studies of the free boundary compressible Navier-Stokes equations.  Denisova proved local well-posedness for the compressible bubble in a compressible fluid problem in Sobolev spaces \cite{Denisova_1997} and in weighted H\"older spaces \cite{Denisova_2003}.  Denisova  \cite{Denisova_2000} also proved similar results for the compressible bubble in an incompressible fluid problem.  The viscous liquid droplet problem was studied by Secchi and Valli~\cite{MR697305}, who proved local well-posedness with heat conduction, Solonnikov and Tani~\cite{MR1226506}, who proved local well-posedness with surface tension, and by Denisova and Solonnikov \cite{Denisova_Solonnikov_2018}, who developed a well-posedness theory with and without surface tension.  The well-posedness of the viscous liquid droplet problem has also been proved under various conditions using maximal regularity techniques: see the work of Enomoto, von Below, and Shibata \cite{EvBS_2014}, Shibata \cite{Shibata_2016}, and Burczak, Shibata, and Zaj\c{a}czkowski \cite{BSZ_2018}.   

The viscous literature also has several studies of layer geometries.  Jin~\cite{MR2164990} and Jin and Padula~\cite{MR1903004} proved local and global well-posedness for a layer of periodic barotropic fluid with surface tension.  Tanaka and Tani~\cite{MR2004291} gave local and global well-posedness results for layers of heat conducting fluids. Jang, Tice, and Wang~\cite{MR3537008,MR3488552} proved local and global well-posedness for multiple layers of barotropic fluid.  Huang and Luo~\cite{MR4266110} proved global existence for a layer of heat conducting fluid without surface tension.

It is also possible to use compressible fluids with free boundaries as a simple model of stellar structure, in which case the fluid is subject to the force of its own gravitational field.  When gravity is assumed to be Newtonian rather than relativistic, this problem is called the Euler-Poisson system for inviscid fluids and the Navier-Stokes-Poisson system for viscous ones.   Makino~\cite{MR882389} proved an early local existence result for Euler-Poisson under special assumptions on the data.  Makino \cite{Makino_1993} and Matu\v{s}u-Ne\v{c}asov\'{a}, Okada, and Makino  \cite{MOM_1995} studied the viscous problem with spherical symmetry outside a solid inner-core.  Jang \cite{Jang_2010} proved local existence for the Navier-Stokes-Poisson problem with vacuum boundary.  Luo, Xin, and Zeng~\cite{MR3218831} studied local well-posedness for radial solutions to the inviscid problem with vacuum boundary. Ginsberg, Lindblad, and Luo~\cite{MR4072680} studied the local well-posedness of a self gravitating compressible liquid.  Jang and  Had\v{z}i\'{c} \cite{Hadzic_Jang_2019} constructed global expanding solutions for Euler-Poisson.   The self-gravitating problem admits nontrivial radial steady states known as Lane-Emden solutions (see, for instance, the book of Chandrasekhar \cite{Chandrasekhar_1957}).  More recent work has rigorously constructed rotating solutions: for a variational approach  we refer to Auchmuty and Beals \cite{Auchmuty_Beals_1971} and Li \cite{Li_1991}, and for a perturbative approach we refer to Jang and Makino \cite{Jang_Makino_2017} and Strauss and Wu \cite{Strauss_Wu_2019}.

In stark contrast to the above discussion, the compressible traveling problem~\eqref{compressible navier-stokes traveling wave equations rescaled} has, to the best of our knowledge, not received any prior attention in the PDE literature either with or without viscosity.  Perhaps the closest work, though still rather distant, concerns the construction of stationary (but not traveling) solutions to some free boundary problems for viscous compressible fluids.  Pileckas and Zaj\c{a}czkowski~\cite{MR1046283} found stationary solutions to a bounded viscous compressible fluid droplet with surface tension under symmetry considerations. Jin and Padula~\cite{MR2076684} considered steady flows of viscous compressible fluids in a bounded rigid container with partially free boundary.

The incompressible analogs of~\eqref{compressible navier-stokes traveling wave equations rescaled} have, however, received attention in the literature.  The incompressible and inviscid analog of~\eqref{compressible navier-stokes traveling wave equations rescaled}, which is also known as the traveling water wave problem, has received enormous attention in the mathematics literature for more than a century.  We refer to the surveys of Toland~\cite{Toland_1996}, Groves~\cite{Groves_2004}, Strauss~\cite{Strauss_2010}, and Haziot, Hur, Strauss, Toland, Wahl\'en, Walsh, and Wheeler~\cite{MR4406719} and the references therein for a thorough review of this extensive literature.  On the other hand, progress on the traveling wave theory for the free boundary incompressible Navier-Stokes equations only began quite recently.  A small data well-posedness theory was first developed by Leoni and Tice~\cite{leoni2019traveling}.  This result was subsequently generalized by Stevenson and Tice~\cite{MR4337506} and Koganemaru and Tice~\cite{koganemaru2022traveling} to multi-layered and inclined geometries, respectively.  A similar well-posedness and stability theory for traveling wave solutions to the one-phase Muskat problem  was developed by Nguyen and Tice \cite{nguyen_tice_2022}.  Viscous traveling waves were also empirically identified recently in experiments with a tube of air, translating uniformly above a wave tank, blowing onto a single layer of viscous fluid. For details, we refer to the works of Akylas, Cho, Diorio, and Duncan \cite{CDAD_2011,DCDA_2011}, Masnadi and Duncan \cite{MD_2017}, and Park and Cho \cite{PC_2016,PC_2018}.

As we have already mentioned, there are only a few recent works in the literature that rigorously treat traveling waves with viscosity and none that treat compressibility.  Viscosity and compressibility are important physical effects to account for in studying traveling waves because all real fluids experience some degree of compressibility and viscosity, even if small.  Within the applied literature there are works that study the observable effects of  compressibility in simplified fluid models.  For example,  Longuet-Higgins~\cite{MR40887} and Kadri~\cite{MR3281921} studied how the compressibility of water leads to microseisms in the ocean, and  Long and Morton \cite{Long_Morton_1966} and Miesen, Kamp, and Sluijter \cite{MKS_1990,MKS_1990_2} used asymptotic expansions and numerics to study the role of compressibility in atmospheric solitary traveling waves.
 
Our proof of well-posedness for the traveling wave problem~\eqref{compressible navier-stokes traveling wave equations rescaled} uses a novel variation on the Nash-Moser inverse function theorem.  A literature review concerning our version is delayed until Sections \ref{sec_strategy_label_evil_duck} and \ref{section on NMH}, but we will conclude this subsection with a sampling of interesting results from the fluids PDE literature that have been proved with Nash-Moser.  
Beale~\cite{MR445136} used it to prove the existence of steady water waves for the irrotational incompressible free boundary Euler equations in two dimensions. Plotnikov and Toland~\cite{MR1854060} found time periodic standing water waves. Lindblad~\cite{MR2177323,MR2178961} used Nash-Moser to study the droplet problem for incompressible and compressible Euler.  Chen and Wang~\cite{MR2372810} studied the existence and stability of compressible current-vortex sheets in three-dimensional magnetohydrodynamics.  Trakhinin~\cite{MR2560044} considered the infinite depth surface wave problem for compressible Euler. Makino~\cite{MR3379135,MR3569407} studied spherically symmetric motions of a planet's compressible atmosphere and the vacuum boundary problem for gaseous stars.  Buffoni and Wahl\'{e}n~\cite{MR3892402} used a version of Nash-Moser to produce steady three dimensional rotation flows for incompressible Euler.  Chen, Secchi, and Wang~\cite{MR3925528} studied relativistic vortex sheets in three-dimensional Minkowski spacetime for compressible, relativistic Euler.  Chen, Hu, Wang, Wang, and Yuan \cite{CHWWY_2020} used Nash-Moser to study compressible vortex sheets in two-dimensional elastodynamics.  Trakhinin and Wang~\cite{MR4444136} studied the ideal compressible magnetohydrodynamic equations with surface tension via Nash-Moser.

\subsection{Enthalpy and flattened reformulations}\label{its just a state of mind}

The goal of this subsection is to further reformulate the traveling wave system \eqref{compressible navier-stokes traveling wave equations rescaled} so that it is more convenient to analyze.  The motivations for this are three-fold:  two common difficulties in free boundary problems and a third more subtle issue specific to the problem at hand.  The first issue is that we wish to establish a well-posedness theory in Sobolev-type spaces.  However, as we discussed at the end of Section \ref{sec_tw_and_force_role}, if $(\mathcal{T},\mathcal{G},\mathcal{F})=0$ then the solution reduces to the stratified equilibrium, but we cannot expect $\sigma = \varrho$ to belong to Sobolev-type spaces on sets of infinite measure.  This suggests that we should rewrite \eqref{compressible navier-stokes traveling wave equations rescaled} as a perturbation of the equilibrium solution $(\varrho,0,0)$.  The second issue is that, even if we rewrite in perturbed form, the resulting equations are still posed in an unknown domain $\Omega[\eta]$.  In order to conveniently employ standard PDE toolboxes, it is advantageous to recast the system in a fixed, known domain.  The traveling wave formulation precludes the common choice of Lagrangian coordinates, so we instead employ a flattening into the equilibrium domain $\Omega$ based only on the free surface function.  The third issue arises because the traveling wave structure ultimately forces the free surface function $\eta$ to belong to a scale of anisotropic Sobolev-type spaces (see Appendix \ref{appendix on anisotropic Sobolev spaces}).  After the perturbation and flattened reformulations, $\eta$ will end up appearing in various nonlinearities in a form that, due to the strange properties of the anisotropic spaces, is quite difficult or impossible to control in a Sobolev-type framework.  Roughly speaking, our way around this problem is to make a nonlinear change of unknown, shifting from the density to the perturbed enthalpy $h=H(\sigma)-H(\varrho)$.  The unknown $h$ turns out to serve as a sort of `good unknown' in that it recasts the worst nonlinearity, which comes from the term $\grad (P(\sigma) ) + \mathfrak{g}\sig e_n$ in \eqref{compressible navier-stokes traveling wave equations rescaled}, in the form $\sigma \grad(H(\sigma) - H(\varrho)) = H^{-1}(h+H(\varrho)) \grad h$, which shifts the nonlinearity outside of the gradient and permits simple Sobolev estimates.  We will also employ a nonlinear change of the velocity in order to similarly linearize the kinematic boundary condition.  The cost of these changes is that the continuity equation becomes more cumbersome, but fortunately we can handle its new form.

We now turn to the execution of these reformulations of \eqref{compressible navier-stokes traveling wave equations rescaled}.  We start by switching to a perturbed enthalpy reformulation by defining the new unknown $h=H(\sig)-H(\varrho)$.  Note that $\sigma$ can be recovered from $h$ via $\sigma = H^{-1}(h+H(\varrho))$, provided $h+H(\varrho)$ takes values in $(H_{\m{min}},H_{\m{max}})$ from \eqref{domain of the inverse enthalpy}, which will always hold for the solutions we construct.  Then the perturbative enthalpy reformulation of the traveling free boundary compressible Navier-Stokes equations is the following system for $(h,v,\eta)$ with data $(\mathcal{T},\mathcal{G},\mathcal{F})$:
	\begin{equation}\label{unflattened PDE enthalpy formulation}
		\begin{cases}
			-\pd_1(H^{-1}(h+H(\varrho)))+\grad\cdot(H^{-1}(h+H(\varrho))v)=0&\text{in }\Omega[\eta]\\
			\gam^2H^{-1}(h+H(\varrho))(v-e_1)\cdot\grad v+H^{-1}(h+H(\varrho))\grad h-\gam\grad\cdot\S^{H^{-1}(h+H(\varrho))}v&\\\quad=H^{-1}(h+H(\varrho))\mathcal{G}+\mathcal{F}&\text{in }\Omega[\eta]\\
			-((P-P_{\m{ext}})\circ H^{-1}(h+H(\varrho))-\gam\S^{H^{-1}(h+H(\varrho))}v)\nu_\eta-\varsigma\mathscr{H}(\eta)\nu_\eta=\mathcal{T}\nu_\eta&\text{on }\Sigma[\eta]\\
			-\pd_1\eta+v\cdot(\grad_{\|}\eta,-1)=0&\text{on }\Sigma[\eta]\\
			v=0&\text{on }\Sigma_0.
		\end{cases}
	\end{equation}
	
Next we turn our attention to reformulating \eqref{unflattened PDE enthalpy formulation} in the fixed domain $\Omega= \R^{n-1} \times (0,b)$.  We first define a Poisson-like extension operator that takes functions defined on $\Sigma$ and extends them to functions defined on $\Omega$.  The auxiliary mapping $\mathcal{E}_0:H^{s-1/2}(\Sigma)\to H^{s}(\Omega)\cap{_0}H^1(\Omega)$\index{\textbf{Linear maps}!05@$\mathcal{E}_0$} (see~\eqref{zero on the left}), for $s\in\N^+$, is defined via $\varphi\mapsto\mathcal{E}_0\varphi$, where $\mathcal{E}_0\varphi$ is the unique solution to the PDE
	\begin{equation}\label{Nevada}
		\begin{cases}
			-\Delta\mathcal{E}_0\varphi=0&\text{in }\Omega,\\
			\mathcal{E}_0\varphi=\varphi&\text{on }\Sigma,\\
			\mathcal{E}_0\varphi=0&\text{on }\Sigma_0.
		\end{cases}
	\end{equation}
	Our Poisson-like extension operator $\mathcal{E}$\index{\textbf{Linear maps}!06@$\mathcal{E}$} is then defined for appropriate functions (see Lemma~\ref{lem on mapping properties of the Poisson extension operator variants}) $\eta:\Sigma\to\R$ through the assignment 
	\begin{equation}\label{North Dakota}
		\mathcal{E}\eta(x,y)=y\cdot b^{-1}\Uppi^1_{\m{L}}\eta(x)+\mathcal{E}_0\Uppi^1_{\m{H}}\eta(x,y)
		\text{ for }
		(x,y)\in\R^{n-1}\times(0,b),
	\end{equation}
	where the Fourier projectors are given by  $\Uppi^1_{\m{L}}\eta=\mathscr{F}^{-1}[\mathds{1}_{B(0,1)}\mathscr{F}[\eta]]$ and  $\Uppi^1_{\m{H}}\eta=\eta-\Uppi^1_{\m{L}}\eta$.
 
 Our flattening map is then built from $\mathcal{E}$ as follows.  For $\eta: \Sigma \to \R$ satisfying $\eta>-b$ and belonging to an appropriate function space (see~\eqref{in the introduction we have an ansiobro} with $s>1+d/2$ and $d=n-1$) we define $\mathfrak{F}_\eta : \Bar{\Omega} \to \R^{n}$ via 
 \begin{equation}\label{flattening_map_def} \index{\textbf{Nonlinear maps}!110@$\mathfrak{F}_\eta$}
     \mathfrak{F}_\eta(x,y)=(x,y+\mathcal{E}\eta(x,y)) \text{ for }(x,y)\in \Omega =\R^{n-1}\times(0,b).
 \end{equation}
 We associate to $\mathfrak{F}_\eta$ two crucial quantities: the Jacobian $J_\eta:\Omega\to\R^+$ 
 and (when $J_\eta$ is nowhere vanishing) the geometry matrix $\mathcal{A}_\eta:\Omega\to\R^{n\times n}$, which are defined via
 \begin{equation}\label{geometry_and_jacobian_def}
 J_\eta=\det(\grad\mathfrak{F}_\eta)=1+\pd_n\mathcal{E}\eta = \pd_n(\mathfrak{F}_\eta\cdot e_n) \index{\textbf{Nonlinear maps}!130@$J_\eta$}
 \text{ and }
 \mathcal{A}_\eta=(\grad\mathfrak{F}_\eta))^{-\m{t}}   \index{\textbf{Nonlinear maps}!120@$\mathcal{A}_\eta$}.
 \end{equation}
 Then, provided that
 \begin{equation}
     J_\eta>0\text{ and }J_\eta,1/J_\eta\in L^\infty(\Omega),
 \end{equation}
 $\mathfrak{F}_\eta(\Omega)=\Omega[\eta]$ and $\mathfrak{F}_\eta$ is a bi-Lipschitz homeomorphism from $\Bar{\Omega}$ to $\Bar{\Omega[\eta]}$ such that its restriction to $\Omega$ defines a smooth diffeomorphism to $\Omega[\eta]$.  Moreover,  $\mathfrak{F}_\eta(\Sigma) = \Sigma[\eta]$ and $\mathfrak{F}_\eta$ is the identity on  $\Sigma_0$.

 For our penultimate change of equations, we set $\m{h}=h\circ\mathfrak{F}_\eta$ and $u=J_\eta\mathcal{A}^{\m{t}}_\eta v\circ\mathfrak{F}_\eta$ to be our new unknowns defined in the fixed domain $\Omega$. Equations~\eqref{unflattened PDE enthalpy formulation} then transform to the following equivalent system for unknowns $(\m{h},u,\eta)$ with data $(\mathcal{T},\mathcal{G},\mathcal{F})$:
	\begin{equation}\label{flattened PDE enthalpy formulation I}
		\begin{cases}
			
			\grad\cdot\tp{H^{-1}(\m{h}+H\circ\varrho(\mathfrak{F}_\eta\cdot e_n))(u-J_\eta\mathcal{A}^{\m{t}}_\eta e_1)}=0&\text{in }\Omega\\
			H^{-1}(\m{h}+H\circ\varrho(\mathfrak{F}_\eta\cdot e_n))(\gam^2(\mathcal{A}^{-\m{t}}_\eta u/J_\eta- e_1)\cdot\mathcal{A}_\eta\grad (\mathcal{A}_\eta^{-\m{t}}u/J_\eta)+\mathcal{A}_\eta\grad\m{h})&\\\quad-\gam(\mathcal{A}_\eta\grad)\cdot\mathbb{S}_{\mathcal{A}_\eta}^{H^{-1}(\m{h}+H\circ\varrho(\mathfrak{F}_\eta\cdot e_n))}(\mathcal{A}_\eta^{-\m{t}}u/J_\eta)=H^{-1}(\m{h}+H\circ\varrho(\mathfrak{F}_\eta\cdot e_n))\mathcal{G}\circ\mathfrak{F}_\eta&\\\qquad+\mathcal{F}\circ\mathfrak{F}_\eta&\text{in }\Omega\\
			-((P-P_{\m{ext}})\circ H^{-1}(\m{h}+H\circ\varrho(\mathfrak{F}_\eta\cdot e_n))-\gam\mathbb{S}_{\mathcal{A}_\eta}^{H^{-1}(\m{h}+H\circ\varrho(\mathfrak{F}_\eta\cdot e_n))}(\mathcal{A}_\eta^{-\m{t}}u/J_\eta))\mathcal{N}_\eta&\\\qquad-\varsigma\mathscr{H}(\eta)\mathcal{N}_\eta=\mathcal{T}\circ\mathfrak{F}_\eta\mathcal{N}_\eta&\text{on }\Sigma\\
			u\cdot e_n+\pd_1\eta=0&\text{on }\Sigma\\
			u=0&\text{on }\Sigma_0,
		\end{cases}
	\end{equation}
	where for $\uptau=H^{-1}(\m{h}+H\circ\varrho(\mathfrak{F}_\eta\cdot e_n))$,  $\mathcal{M}=\mathcal{A}_\eta$, and $w=\mathcal{A}_\eta^{-\m{t}}u/J_\eta$ we have used the notation
	\begin{equation}\label{flat_stress_def}
		\S^{\uptau}_{\mathcal{M}}w=\upmu(\uptau)\bp{\grad w\mathcal{M}^{\m{t}}+\mathcal{M}\grad w^{\m{t}}-\f{2}{n}((\mathcal{M}\grad)\cdot w)I}+\uplambda(\uptau)((\mathcal{M}\grad)\cdot w)I\index{\textbf{Fluid mechanical terms}!21@$\mathbb{S}^\tau_{\mathcal{A}}$},
	\end{equation}
 and we also denote $\mathcal{N}_\eta=(-\grad_{\|}\eta,1)$\index{\textbf{Nonlinear maps}!140@$\mathcal{N}_\eta$}.

 Note that in obtaining~\eqref{flattened PDE enthalpy formulation I} we did not only flatten the velocity vector, we also multiplied by the matrix $J_\eta\mathcal{A}_{\eta}^{\m{t}}$. This has the following important consequences. First, we are able to maintain the continuity equation in `perfect divergence' form. Second, the kinematic boundary condition transforms to a linear equation. These properties are indispensable in our analysis.

We have one more change of unknowns left to make before we reach the desired formulation of the problem.  The issue is that the formulation \eqref{flattened PDE enthalpy formulation I} will not quite let us ensure that  $\m{h}$  belongs to a standard Sobolev space.  Rather than develop more nonstandard Sobolev theory, we circumvent this issue with a final change of unknowns and equations by defining $q=\m{h}-\mathfrak{g}\eta$, and multiplying the momentum equation by $\mathcal{A}_\eta^{-1}$, which leads to considerable simplifications in our analysis.  We are then left with the following final equivalent form of our equations for the unknowns $(q,u,\eta)$ with data $(\mathcal{T},\mathcal{G},\mathcal{F})$:
	\begin{equation}\label{The nonlinear equations in the right form}
		\begin{cases}
			\grad\cdot(\sig_{q,\eta}(u-M_\eta e_1))=0&\text{in }\Omega,\\
			\gam^2\sig_{q,\eta}M_{\eta}^{-\m{t}}(((u-M_\eta e_1)\cdot\grad)(M_\eta^{-1}u))+\sig_{q,\eta}\grad(q+\mathfrak{g}\eta)&\\\quad-\gam M_{\eta}^{-\m{t}}(\grad\cdot(\S^{\sig_{q,\eta}}_{\mathcal{A}_\eta}(M_{\eta}^{-1}u)M_{\eta}^{\m{t}}))=J_\eta\sig_{q,\eta}M_{\eta}^{-\m{t}}\mathcal{G}\circ\mathfrak{F}_\eta+J_\eta M_{\eta}^{-\m{t}}\mathcal{F}\circ\mathfrak{F}_\eta&\text{in }\Omega,\\
			-\tp{(P-P_{\m{ext}})\circ\sig_{q,\eta}-\gam\S^{\sig_{q,\eta}}_{\mathcal{A}_\eta}(M_\eta^{-1}u)}M_\eta^{\m{t}}e_n-\varsigma\mathscr{H}(\eta)M_\eta^{\m{t}}e_n=\mathcal{T}\circ\mathfrak{F}_\eta M_\eta^{\m{t}}e_n&\text{on }\Sigma,\\
			u\cdot e_n+\pd_1\eta=0&\text{on }\Sigma,\\
			u=0&\text{on }\Sigma_0,
		\end{cases}
	\end{equation}
	where we have set
	\begin{equation}\label{Mississippi}  \index{\textbf{Nonlinear maps}!150@$M_\eta$}
		M_{\eta}=J_\eta\mathcal{A}_{\eta}^{\m{t}}=\bpm(1+\pd_n\mathcal{E}\eta) I_{(n-1)\times(n-1)}&0_{(n-1)\times 1}\\-\mathcal{E}(\grad_{\|}\eta)&1\epm
		: \Omega \to \R^{n \times n}
	\end{equation}
	and 
	\begin{equation}\label{sigma_q_eta_def}	\index{\textbf{Nonlinear maps}!160@$\sig_{q,\eta}$}
	\sig_{q,\eta}=H^{-1}(q+\mathfrak{g}\eta+H\circ\varrho(\mathfrak{F}_\eta\cdot e_n))=H^{-1}(-\mathfrak{g}\m{id}_{\R^n}\cdot e_n+q+\mathfrak{g}(I-\mathcal{E})\eta)
	: \Omega \to \R.    
	\end{equation}

 As we discussed in Section~\ref{dynamics_and_equilibria}, the original system~\eqref{compressible navier-stokes in Eulerian coordinates} admits a stratified equilibrium solution.  This solution is still encoded in the new formulation~\eqref{The nonlinear equations in the right form} in the sense that for any $\gam\in\R^+$, $(q,u,\eta)=(0,0,0)$ is a solution to~\eqref{The nonlinear equations in the right form} when there are no additional stress or forces present, i.e. when $(\mathcal{T},\mathcal{G},\mathcal{F})=(0,0,0)$.  Furthermore, we show in Corollary \ref{trivial_solns_unique} that these trivial solutions are unique among triples $(q,u,\eta)$ satisfying 
 \begin{equation}
     \sqrt{\tnorm{q}_{H^{4+\tfloor{n/2}}}^2+\tnorm{u}_{H^{5+\tfloor{n/2}}}^2+\tnorm{\eta}^2_{\mathcal{H}^{11/2+\tfloor{n/2}}}} < \rho,
 \end{equation}
 where $\rho$ is a constant depending only on the equilibrium depth $b$, gravity $\mathfrak{g}$, the pressure law $P$, the viscosity coefficients $\upmu$ and $\uplambda$, and the dimension $n$.  We emphasize, though, that this result is recorded in this form for simplicity but could be improved. This reflects the fact that we only expect traveling wave solutions to exist for viscous fluids if they are generated by stress and force.  Solutions to \eqref{The nonlinear equations in the right form} also obey a balance of power and dissipation analogous to \eqref{she said I know what its like to be dead}; this is recorded in Corollary \ref{flat_diss_power_id}.

 \subsection{Statement of main result}\label{sec_main_results}

We now state our main results. To do so, we first need to introduce a bit of notation to describe the functional framework.  In our work, as in the previous work of Leoni and Tice~\cite{leoni2019traveling}, Stevenson and Tice~\cite{MR4337506}, and Koganemaru and Tice~\cite{koganemaru2022traveling} on traveling wave solutions to the incompressible analog of~\eqref{compressible navier-stokes traveling wave equations rescaled}, and Nguyen and Tice~\cite{nguyen_tice_2022} on traveling wave solutions to the one-phase Muskat problem, the traveling wave structure forces the free surface functions to belong to a scale of nonstandard anisotropic Sobolev spaces, the properties of which end up playing a crucial role in the analysis. Therefore, in order to properly state our main theorem, we first introduce these spaces.

For $\R\ni s\ge 0$ and $d\in\N^+$ we define the anisotropic Sobolev space
	\begin{equation}\label{in the introduction we have an ansiobro}
		\mathcal{H}^s(\R^d)=\tcb{f\in\mathscr{S}^\ast(\R^d;\R)\;:\;\mathscr{F}[f]\in L^1_{\m{loc}}(\R^d;\C),\;\tnorm{f}_{\mathcal{H}^s} <\infty},
	\end{equation}
	equipped with the norm
	\begin{equation}
 \tnorm{f}_{\mathcal{H}^s}
 =\bp{\int_{\R^d}\sp{|\xi|^{-2}(\xi_1^2+|\xi|^4)\mathds{1}_{B(0,1)}(\xi) + \tbr{\xi}^{2s}\mathds{1}_{\R^d\setminus B(0,1)}(\xi)} |\mathscr{F}[f](\xi)|^2\;\m{d}\xi}^{1/2}.
	\end{equation}
 We refer to Appendix~\ref{appendix on anisotropic Sobolev spaces} for more information on these function spaces, but for the purposes of stating our main theorem, we note here that  $\mathcal{H}^s(\R^d)$ is a Hilbert space and $H^s(\R^d) \emb \mathcal{H}^s(\R^d) \emb H^s(\R^d) + C^\infty_0(\R^d)$, with equality in the first embedding if and only if $d=1$.
 
 For the following statement of the main theorem, we set $r=10+2\tfloor{n/2}$ and for $s\in\N$ we define the sets
\begin{equation}
    U_s=H^{1+s+r}(\R^n;\R^{n\times n})\times H^{s+r}(\R^n;\R^n)\times H^{s+r}(\R^n;\R^n)\times\R^+,
\end{equation}
\begin{equation}\label{its me, hi, im the problem its me}
    V_s=H^{1+s+r}(\Omega)\times H^{2+s+r}(\Omega;\R^n)\times\mathcal{H}^{5/2+s+r}(\R^{n-1}),
\end{equation}
\begin{equation}\label{the label master strikes again! - RETURN OF HTAT!}
    V^0=\tcb{(q,u,\eta)\in V_0\;:\;\m{Tr}_{\Sigma_0}(u)=0,\;\m{Tr}_{\Sigma}(u\cdot e_n)+\pd_1\eta=0}.
\end{equation}
We can now state our main theorem, which establishes the well-posedness of the traveling wave formulation of free boundary compressible Navier-Stokes equations~\eqref{The nonlinear equations in the right form}.

\begin{customthm}{1}[Proved in Theorem~\ref{main thm 2 on traveling wave solutions to the free boundary compressible Navier-Stokes equations}]\label{thm on main}
Assume that the parameters $\upmu$, $\uplambda$, and $\varsigma$ satisfy \eqref{parameter_assumptions} and that $P$ satisfies $P'>0$ and~\eqref{equilibrium_ccs}.   There exist a collection $\{\mathscr{V}(\gam)\}_{\gam \in \R^+}$ of open subsets of $V^0$ and a  nonincreasing sequence $\tcb{\mathscr{U}_s}_{s=0}^\infty$ of open subsets of $U_0$ such that the following hold.
    \begin{enumerate}
        \item Nondegeneracy: We have that $\tcb{0}\times\R^+\subseteq\bigcap_{s=0}^\infty\mathscr{U}_s$ and $0\in\bigcap_{\gam\in\R^+}\mathscr{V}(\gam)$.
        \item Existence and uniqueness: For all tuples $(\mathcal{T},\mathcal{G},\mathcal{F},\gam)\in\mathscr{U}_0$ of applied stress, specific bulk force, bulk force, and wave speed, there exists a unique solution $(q,u,\eta)\in\mathscr{V}(\gam)$ such that the traveling wave reformulation for the free boundary compressible Navier-Stokes equations, system~\eqref{The nonlinear equations in the right form}, is classically satisfied with data $(\mathcal{T},\mathcal{G},\mathcal{F})$ and wave speed $\gam$.
        \item Regularity, given low norm smallness: If $s\in\N^+$ and $(\mathcal{T},\mathcal{G},\mathcal{F},\gam)\in\mathscr{U}_s\cap U_s$, then the corresponding solution satisfies $(q,u,\eta)\in\mathscr{V}(\gam)\cap V_s$.
        \item Continuous dependence: For any $s\in\N$, the solution map 
        \begin{equation}\label{thm on main_soln map}
            \mathscr{U}_s\cap U_s\ni(\mathcal{T},\mathcal{G},\mathcal{F},\gam)\mapsto(q,u,\eta)\in V_s \cap V^0
        \end{equation}
        is continuous with respect to the $U_s$ and $V_s\cap V^0$ topologies.
        \item No vacuum formation: There exists positive constants $c,C\in\R^+$ such that for all $(q,u,\eta)\in\bigcup_{\gam\in\R^+}\mathscr{V}(\gam)$ we have that $c\le \sig_{q,\eta}\le C$, where $\sig_{q,\eta}$ is defined in~\eqref{sigma_q_eta_def}.
        \item Flattening map diffeomorphism: For any $s\in\N$ and $(q,u,\eta)\in V_s\cap\bigcup_{\gam\in\R^+}\mathscr{V}(\gam)$, we have that the flattening map $\mathfrak{F}_\eta$ from~\eqref{flattening_map_def} is a smooth diffeomorphism from $\Omega$ to $\Omega[\eta]$ that extends to a         $C^{12+\tfloor{n/2}+s}$ diffeomorphism from $\Bar{\Omega}$ to $\Bar{\Omega[\eta]}$. 
    \end{enumerate}

\end{customthm}

Before enumerating some corollaries, we pause for a few comments and remarks.  First, we emphasize that a high-level summary of our theorem is that traveling waves for the free boundary compressible Navier-Stokes system are generic: they exist for all nontrivial wave speeds $\gamma \in \R^+$, and for a fixed regularity index $s \in \N$ the set of stress-force-speed data, $\mathscr{U}_s\cap U_s$, is open.  In particular, for each fixed wave speed $\gamma \in \R^+$ and $s \in \N$, the set of stress and force data for which we can solve~\eqref{The nonlinear equations in the right form} is an open set containing the origin, so the existence of small-data solutions is a generic phenomenon.  The utility of working in $\mathscr{U}_s\cap U_s$ is that it allows us to prove the joint continuity of our solutions with respect to both the stress-forcing data $(\mathcal{T}, \mathcal{G},\mathcal{F})$ and the wave speed $\gamma$.  In fact, we can say a bit more: we show in Theorem~\ref{main thm 1 on traveling wave solutions to the free boundary compressible Navier-Stokes equations} and Remark \ref{abstract_higher_reg_remark} that the solution map~\eqref{thm on main_soln map} enjoys a certain form of continuous differentiability, and even higher regularity. Stating this precisely entails carefully dealing with more issues related to derivative loss, so we have skipped this technical point in the statement of Theorem~\ref{thm on main} for the sake of brevity.

\begin{figure}[!h]
    \centering
    \includegraphics{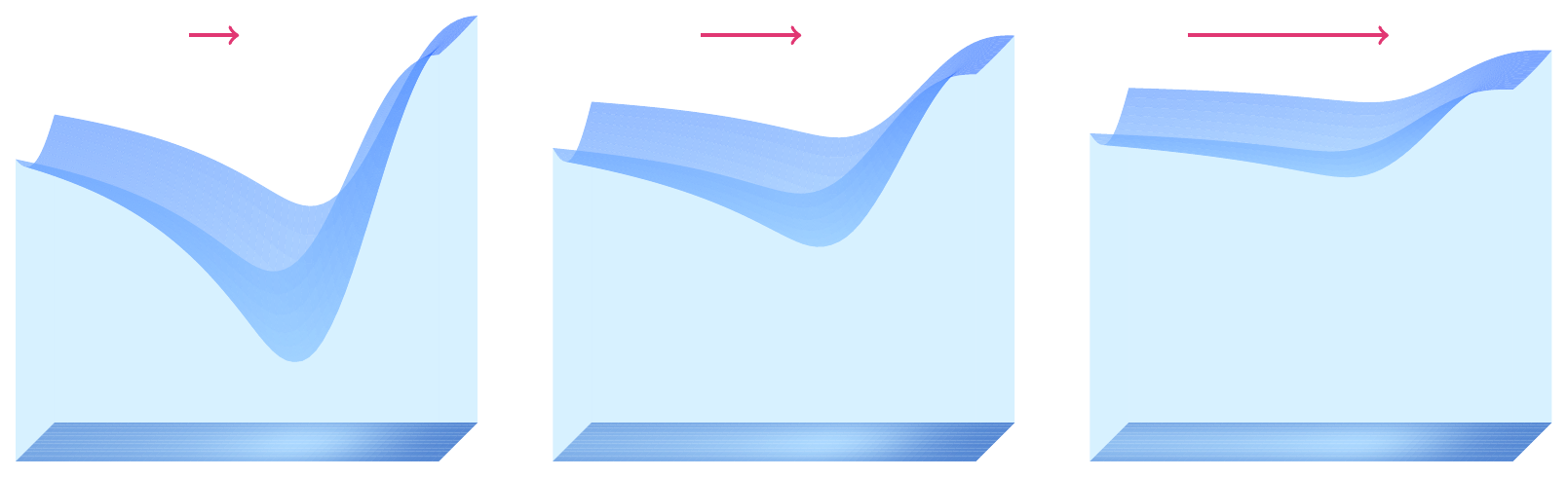}
    \caption{A depiction of three traveling wave free surfaces generated by the same applied stress tensor $\mathcal{T}=-\varphi I_{3\times 3}$, where $\varphi\ge0$ is compactly supported, with increasing wave speed $\gam$ moving from left to right, indicated by the red arrows.}
    \label{various wave speeds}
\end{figure}

Second, it is worth highlighting that our theorem does slightly different things when $n=2$ as compared to when $n \ge 3$.  In the case $n \ge 3$, the hypothesis~\eqref{parameter_assumptions} requires that the coefficient of surface tension is positive, $\varsigma >0$, and our analysis crucially uses this and the ellipticity of the mean curvature operator to gain regularity for $\eta$.  When $n=2$ we only require $\varsigma \ge 0$, as in this case there is another mechanism built into the equations for regularity gain, namely the equation $\partial_1 \eta = - u \cdot e_n$ on $\Sigma$, which is elliptic when $n=2$ since then $\partial_1$ is elliptic as a differential operator on $\Sigma \simeq \R$.  When $n \ge 3$ we have to contend with the free surface function $\eta$ belonging to the anisotropic spaces $\mathcal{H}^{5/2+s}(\R^{n-1})$, which are strictly larger than $H^{5/2+s}(\R^{n-1})$, but when $n =2$ we have that $\mathcal{H}^{5/2+s}(\R)= H^{5/2+s}(\R)$, and so our solutions live in standard Sobolev spaces.  Interestingly, when $n=2$ there is one condition that must be stronger than when $n \ge 3$; indeed, \eqref{parameter_assumptions} requires positive bulk viscosity, $\uplambda >0$, when $n=2$ and only $\uplambda \ge 0$ when $n \ge 3$.  This is ultimately related to technical issues with the deviatoric Korn inequality; we refer to Remark~\ref{rmk that even a watched pot boils twice per day} for further details.

In spite of the above dimensional differences in the precise functional setting, the space $V^0\cap V_s$ from~\eqref{its me, hi, im the problem its me} and~\eqref{the label master strikes again! - RETURN OF HTAT!} always satisfies the embedding (thanks to Proposition \ref{proposition on frequency splitting} and standard Sobolev embeddings)
\begin{equation}
        V^0\cap V_s\emb C^{s+k}_0(\Omega)\times C^{s+k+1}_0(\Omega;\R^n)\times C^{s+k+2}_0(\R^{n-1})
\end{equation}
for $k=10+\tfloor{n/2}$.  Since our theorem guarantees the inclusion $(q,u,\eta) \in V_0\cap V_s$, we see that our solutions always decay to zero at infinity, which means that our solutions are what are known as solitary waves in the parlance of the traveling wave literature.  At the level of generality of our well-posedness theory, there is not much more qualitative information that can be deduced about our solutions.  However, our result opens the door to more detailed qualitative studies given specific forms of the stress-forcing data tuple $(\mathcal{T},\mathcal{G},\mathcal{F})$.

We also point out that our techniques only allow us to construct traveling wave solutions ($\gamma\in \R^+)$ and not stationary $(\gamma=0)$ solutions. The strict sign condition on $\gamma$ plays a key role in our analysis by, in part, allowing us to use a nondegenerate norm on the collection of free surface functions. Our selected functional framework simply does not work with $\gamma =0$.  We face a similar issue with the gravitational constant in our analysis; indeed, we can only construct solutions in the case that $\mathfrak{g}>0$.

We now turn our attention to two corollaries of Theorem~\ref{thm on main}.  The first formalizes the above discussion about the open set of stress-force data for a given wave speed $\gamma \in \R^+.$

\begin{customcoro}{2}[Proved in Corollary~\ref{coro on rain}]
    With $r=10+2\tfloor{n/2}$ as before, for each $\gam\in\R^+$ there exists a nonempty open set
    \begin{equation}
    (0,0,0)\in\mathscr{W}(\gam)\subset H^{1+r}(\R^n;\R^{n\times n})\times H^r(\R^n;\R^n)\times H^r(\R^n;\R^n)
    \end{equation}
with the property that for all stress-force data tuples $(\mathcal{T},\mathcal{G},\mathcal{F})\in\mathscr{W}(\gam)$ there exists a unique $(q,u,\eta)\in\mathscr{V}(\gam)$ (where the latter open set is from the statement of Theorem~\ref{thm on main}) such that system~\eqref{The nonlinear equations in the right form} is satisfied with solution $(q,u,\eta)$, wave speed $\gam$, and data $(\mathcal{T},\mathcal{G},\mathcal{F})$.
\end{customcoro}

Although the natural formulation for the traveling wave problem from the perspective of well-posedness is in a flattened domain as in~\eqref{The nonlinear equations in the right form}, we can also switch our solutions back to the Eulerian formulation~\eqref{compressible navier-stokes traveling wave equations rescaled}.  We record this in our second corollary.

\begin{customcoro}{3}[Proved in Corollary~\ref{blowing in the wind}]
    Each solution to the flattened perturbative enthalpy formulation for the traveling wave problem for free boundary compressible Navier-Stokes, i.e system~\eqref{The nonlinear equations in the right form}, produced by Theorem~\ref{thm on main} gives rise to a classical solution to the traveling Eulerian formulation of the problem given by system~\eqref{compressible navier-stokes traveling wave equations rescaled}.
\end{customcoro}

 \subsection{Summary of strategy and layout of paper}\label{sec_strategy_label_evil_duck}

In this subsection we aim to summarize the principal difficulties in proving Theorem \ref{thm on main} and our strategies for overcoming them.  This will also serve to outline the structure of the paper.

\textbf{High level summary of difficulties.}  The boundary value problem~\eqref{The nonlinear equations in the right form} is posed in an unbounded domain with infinite measure and  non-compact boundary, the equations are quasilinear, and there is no variational structure; as such, compactness, Fredholm, and variational techniques are unavailable.  This, along with our expectation of a robust linear theory, suggests that the construction of solutions should proceed through perturbative techniques such as the implicit function theorem, or more fundamentally, an iteration scheme based on some sort of linearization.  Indeed, an implicit function theorem strategy, based on the linearization of the equations around vanishing stress-force data and trivial solution triple for a fixed arbitrary wave speed $\gamma \in \R^+$, proved successful in recent work \cite{koganemaru2022traveling,leoni2019traveling,MR4337506} on the incompressible version of \eqref{The nonlinear equations in the right form}, so it is enlightening to begin our discussion by stating the corresponding linearization of \eqref{The nonlinear equations in the right form}:
	\begin{equation}\label{linearization_at_zero}
		\begin{cases}
			\grad\cdot(\varrho u)+\mathfrak{g}^{-1}\varrho'\pd_1(q+\mathfrak{g}\eta)=g&\text{in }\Omega,\\
			-\gam^2\varrho\pd_1 u+\varrho\grad(q+\mathfrak{g}\eta)-\gam\grad\cdot\S^{\varrho}u=f&\text{in }\Omega,\\
			-(\varrho q-\gam\S^{\varrho}u)e_n-\varsigma\Delta_{\|}\eta e_n=k&\text{on }\Sigma,\\
			u\cdot e_n+\pd_1\eta=0&\text{on }\Sigma,\\
			u=0&\text{on }\Sigma_0,
		\end{cases}
	\end{equation}
where the linearized unknowns are still labeled $(q,u,\eta)$ but the linearized data is now the triple $(g,f,k)$.

The reader familiar with the elliptic structure of the incompressible Stokes problem will recognize a fundamental difficulty appearing already in the first two equations of \eqref{linearization_at_zero}: even if we ignore $\eta$ or view it as given, these two equations do not constitute an elliptic system for $(q,u)$ in the sense of Agmon, Douglis, and Nirenberg \cite{MR162050}, due to the appearance of $\partial_1 q$ in the first equation.  Without this elliptic structure to serve as a base for the analysis, it is not obvious that \eqref{linearization_at_zero} will give rise to an isomorphism between Banach spaces, a necessary ingredient for the perturbation strategy.  Remarkably, in spite of this ellipticity failure, we are able to show in Theorem \ref{thm on existence for the principal part} that the forward linear map $(q,u,\eta) \mapsto (g,f,k)$ defined by \eqref{linearization_at_zero} actually does induce an isomorphism between the Sobolev-type Hilbert spaces $\overset{0,0,0}{\X^s}$ and $\mathbb{Y}^s$ for $s \in \N$, as defined by \eqref{Oregon} and \eqref{Y^s_def}, respectively.  Unpacking the details of the space $\overset{0,0,0}{\X^s}$ reveals the fundamental difficulties lurking in \eqref{The nonlinear equations in the right form} and motivates our overall strategy.  

A cursory glance at the spaces $\overset{0,0,0}{\X^s}$ and $\mathbb{Y}^s$ shows that the regularity count essentially matches that of the incompressible problem if we formally identify $q$ with the pressure in the incompressible problem: in the domain space, if $q$ gets $1+s$ derivatives, then $u$ gets $2+s$, and $\eta$ gets $5/2+s$, while in the codomain $g$ gets $1+s$ derivatives, $f$ gets $s$, and $k$ gets $1/2 +s$.  However, a closer inspection reveals two crucial complications with these spaces and this counting scheme.  The first, which was already present in the analysis of the incompressible problem, is that the structure of the operators hitting $\eta$ in \eqref{linearization_at_zero} only allows for the recovery of estimates of
\begin{equation}\label{aisumimasen}
    \Delta_{\|}\eta \in H^{1/2+s}(\R^{n-1}), \grad_{\|}\eta \in H^s(\R^{n-1};\R^{n-1}), \text{ and } \partial_1 \eta \in \dot{H}^{-1}(\R^{n-1}),
\end{equation}
where $\dot{H}^{-1}(\R^{n-1})$ is defined by \eqref{dotHminus1_def}, which is not enough to guarantee the inclusion $\eta \in H^{5/2+s}(\R^{n-1})$ for general $n$.  Instead, as we prove in Proposition \ref{proposition on spatial characterization of anisobros}, these inclusions essentially characterize the anisotropic inclusion $\eta \in \mathcal{H}^{5/2+s}(\R^{n-1})$.  The takeaway is that the anisotropic spaces are inextricably linked to the traveling wave problem through the structure of the differential operators (and also through the positivity $\mathfrak{g}>0$ and $\gam>0$, which give us the latter two estimates of~\eqref{aisumimasen}). The second complication is more severe and new to the compressible problem:  knowing that $q \in H^{1+s}(\Omega)$, $u \in H^{2+s}(\Omega;\R^n)$, and $\eta \in \mathcal{H}^{5/2+s}(\R^{n-1})$ alone is not enough to guarantee that $g \in H^{1+s}(\Omega)$.  With this count, the continuity equation in \eqref{linearization_at_zero} only yields $g \in H^s(\Omega)$.  To achieve the higher regularity inclusion $g \in H^{1+s}(\Omega)$ we have to build the extra condition that $\partial_1 q \in H^{1+s}(\Omega)$ into the domain space $\overset{0,0,0}{\X^s}$, and conversely, with $g \in H^{1+s}(\Omega)$ we are able to recover that $\partial_1 q \in H^{1+s}(\Omega)$ through the linearized continuity equation.  This is what the $0,0,0$ adornment actually indicates for the space $\overset{0,0,0}{\X^s}$: the $q$ elements in this space enjoy some `bonus partial regularity' whose precise form is dictated by the structure of the linearized operator around the trivial triple $(0,0,0)$.  The bonus partial regularity can also be viewed as another manifestation of anisotropy in the problem.

The fact that \eqref{linearization_at_zero} induces an isomorphism $\overset{0,0,0}{\X^s} \ni (q,u,\eta) \mapsto (g,f,k) \in \mathbb{Y}^s$ is certainly encouraging, but in reality it exposes a much deeper complication with the nonlinear problem \eqref{The nonlinear equations in the right form}.  Indeed, if we attempt to formulate \eqref{The nonlinear equations in the right form} as a nonlinear mapping problem on a space in which $(q,u,\eta) \in \overset{0,0,0}{\X^s}$, then we immediately see that the bonus partial regularity is lost by the nonlinearity.  In more concrete terms: any attempt to solve \eqref{The nonlinear equations in the right form} through an iteration scheme based on the isomorphism from \eqref{linearization_at_zero} will suffer from derivative loss, rendering the scheme useless.
 
This isomorphism issue is actually more generic.  We also prove in Theorem \ref{thm on existence for the principal part} that for any appropriately small triple $(q_0,u_0,\eta_0)$, the linearization of \eqref{The nonlinear equations in the right form} around $(q_0,u_0,\eta_0)$ for $(\mathcal{T},\mathcal{G},\mathcal{F}) = 0$ and $\gamma \in \R^+$  induces an isomorphism $\overset{q_0,u_0,\eta_0}{\X^s} \ni (q,u,\eta) \mapsto (g,f,k) \in \mathbb{Y}^s$, where now the `adapted space' $\overset{q_0,u_0,\eta_0}{\X^s}$ encodes the bonus partial regularity  $v_{q_0,u_0,\eta_0} \cdot \grad q \in H^{1+s}(\Omega)$  for a vector field $v_{q_0,u_0,\eta_0}$ that is determined by the linearization location $(q_0,u_0,\eta_0)$ (the field is collinear with $e_1$ at $(0,0,0)$).  Once more, this can be viewed as a sort of anisotropy, but now it is clear that the favored direction depends on the background triple $(q_0,u_0,\eta_0)$.  These general adapted spaces face the same problem described above: the bonus regularity is lost by the nonlinearity, leading to derivative loss in any iteration scheme.  The failure of the nonlinearity to preserve the adapted spaces can ultimately be traced to the fact that the bonus regularity is not perturbative: we cannot use control of  $v_{q_0,u_0,\eta_0} \cdot \grad q$ to say anything useful about $v_{q_1,u_1,\eta_1} \cdot \grad q$ for general distinct triples  $(q_i,u_i,\eta_i)$ with $i  \in \{0,1\}$.  In other words, in general the adapted spaces $\overset{q_0,u_0,\eta_0}{\X^s}$ and $\overset{q_1,u_1,\eta_1}{\X^s}$ are inequivalent.

While these issues preclude the use of elementary perturbation techniques, they also reveal the potential utility of more sophisticated Nash-Moser techniques.  Indeed, for any appropriate triple $(q_0,u_0,\eta_0)$, we have the natural inclusion $\X^{s+1} \hookrightarrow \overset{q_0,u_0,\eta_0}{\X^s}$, where the former space is defined by \eqref{domain banach scales} and encodes no location-specific bonus regularity. This suggests that we may pose the nonlinear mapping from \eqref{The nonlinear equations in the right form} on a scale of spaces, indexed by $s$, in which the codomain involves $\mathbb{Y}^s$ but, to compensate for the derivative loss, the domain scale is shifted and requires $(q,u,\eta) \in \X^{s+1}$. The above isomorphism results then suggest that the maps $\mathbb{Y}^s \ni (g,f,k) \mapsto (q,u,\eta) \in \overset{q_0,u_0,\eta_0}{\X^s} \hookrightarrow \X^s$ will allow us to construct the right and left inverse to the derivative of the nonlinear map, provided we expand our view to scales of Banach spaces and accept the reality of derivative loss.  This is precisely the purview of the Nash-Moser technique \cite{nash_1956,Moser_1966}, which we have thus chosen as the engine to prove Theorem \ref{thm on main}.  

\textbf{Nash-Moser framework.}  Our goal in studying \eqref{The nonlinear equations in the right form} is not just to show the existence and uniqueness of solutions, but to establish a proper well-posedness theory that shows the solutions depend continuously on the data in the optimal topology. To the best of our knowledge, the only Nash-Moser inverse function theorems in the literature with this capability are the formulations of Sergeraert~\cite{Sergeraert_1972} and Hamilton~\cite{MR656198}, which actually yield smoothness of the inverse map.  The Sergeraert and Hamilton Nash-Moser theorems work in the context of smooth tame maps between Fr\'echet spaces. Roughly speaking, one can think of tameness as a family of structured estimates associated to the derivatives of the maps; these bounds play an essential role in the use of a modification of Newton's method to overcome the derivative loss in proving surjectivity. The Fr\'echet spaces that serve as the domain and codomain  of the nonlinear operator in these theorems can be thought of as the intersection of all of the spaces in a Banach scale (like $H^\infty(\R^n) = \bigcap_{k \in \N} H^k(\R^n)$ vis-\`a-vis the Banach scale $\{H^k(\R^n)\}_{k \in \N}$). The hypotheses of the Sergeraert and Hamilton Nash-Moser variants require, among other things, a family of right inverses to the derivative mapping into the domain Fr\'echet space.  Unfortunately, for reasons that are ultimately attributable to the failure of hypoellipticity for the hyperbolic structure appearing in the traveling wave formulation of the continuity equation, in our context we can only verify that in a given open neighborhood of the trivial solution, the derivatives' inverses only map into \emph{finite} regularity spaces in the Banach scale, and so we cannot satisfy the basic hypotheses of Sergeraert's or Hamilton's formulation.

There are Nash-Moser theorems in the literature that allow for this finite invertibility range.  The oldest we are aware of is found in the work of Schwartz \cite{Schwartz_1960,Schwartz_1969}, but this is formulated only for a very specific Banach scale, produces solutions in a suboptimal space, and has no mechanism for regularity promotion.  The version due to H\"ormander~\cite{MR602181,MR802486,MR1039355} works for general Banach scales and produces solutions in the optimal or nearly-optimal space in an associated `weak Banach scale,' which in some cases coincides with the original (e.g. H\"older scales) but in general is slightly larger (e.g. if the original is the Sobolev scale $\{H^k(\R^n)\}_{k \in \N}$, then the weak scale is the larger Besov scale $\{B^k_{2,\infty}(\R^n)\}_{k \in \N}$).  This result also has no regularity promotion mechanism.  The recent Nash-Moser theorem of Baldi and Haus \cite{MR3711883} produces solutions in the optimal space and also has a regularity promotion mechanism.  Famously, the Nash-Moser technique overcomes the problem of derivative loss by employing a family of smoothing operators that satisfy a host of precise quantitative estimates. Baldi and Haus achieve their significant improvement by placing more strenuous conditions on these smoothing operators than in the other Nash-Moser formulations, which allows them to port techniques from Littlewood-Paley theory and the paradifferential calculus into the abstract setting.

In the context of the function spaces we employ, it is relatively easy to construct smoothing operators that satisfy the hypotheses of, say H\"ormander's Nash-Moser theorem~\cite{MR802486}.  However, in our context it is rather delicate to show that these operators satisfy the stronger hypotheses of Baldi and Haus~\cite{MR3711883}. Rather than focus effort on this construction, which would mostly apply to our specific problem, we have chosen to use a more general, abstract approach, which is a Banach scale generalization of the notion of a tame Fr\'echet space as defined in Hamilton~\cite{MR656198}, and which we believe may be of use in other problems. Roughly speaking, the idea is to view a given scale of Banach spaces, in which it is hard or impossible to construct the smoothing operators, as being a retract of another Banach scale in which the smoothing operators are known to exist. In the setting of Banach spaces, we can think of the retract property in terms of the former spaces being direct summands, or complemented subspaces, of the latter.  We emphasize that this idea does not pull the smoothing operators back to the initial scale, but rather pushes the nonlinear map forward to the larger scale, and so in some sense our method shifts the focus from constructing smoothing operators to identifying the direct summand structure, which is more amenable to PDE techniques.

In light of the above discussion, we have opted to craft another version of the abstract Nash-Moser inverse function theorem, synthesizing the desired elements of the  Sergeraert,  Hamilton, and Baldi and Haus formulations, that is capable of working within our finite range of invertibility context, produces solutions in the optimal space, provides a regularity promotion mechanism, and provides some degree of regularity for the inverse map, in particular a continuity assertion in an optimally strong norm.  Our new version employs the direct summand method to sidestep the smoothing operator construction.  This new Nash-Moser formulation, the precise statement of which is given in Theorems~\ref{thm on nmh} and~\ref{thm on further conclusions of the inverse function theorem}, is essential for our proof of Theorem~\ref{thm on main}, but we believe it is likely to be of broader interest and applicability due to the flexibility of its hypotheses and improved conclusions.  We refer to  Section~\ref{section on NMH} for further exposition.

With our Nash-Moser variant in hand, our strategy for proving Theorem \ref{thm on main} is simple to state: encode the conclusions of the theorem as properties of a nonlinear map associated the system~\eqref{The nonlinear equations in the right form} that can be granted by our Nash-Moser inverse function theorem, and then verify the hypotheses of Nash-Moser. These hypotheses, which are stated precisely in Definition~\ref{defn of the mapping hypotheses} and Theorem~\ref{thm on nmh}, are divided into two categories: nonlinear and linear.  For the former, we need to verify that the nonlinear operator satisfies certain differentiability conditions and that the derivatives obey certain tame estimates.  At the linear level we need to study the linearization of~\eqref{The nonlinear equations in the right form} around a generic triple $(q_0,u_0,\eta_0)$ in an open neighborhood of zero, and in particular we need to construct the family of left and right inverses to these derivatives and verify they also obey a set of tame estimates.   

The diagram in Figure~\ref{fig:flow_diagram} represents the logical flow of dependencies for our strategy as it is implemented in this paper. The gray boxes correspond to the abstract nonlinear analysis in Section~\ref{section on NMH}, which culminates with the inverse function theorem. The green boxes correspond to the main features of our nonlinear analysis, which are found in Section~\ref{icelandic arctic char}. The blue boxes show our linear analysis strategy, which is then executed in Sections~\ref{Section: Analysis of Regularized Steady Transport Equations}, \ref{chilean sea bass}, and~\ref{alaskan black cod}. Finally, at the bottom, we have the red box representing our conclusion and final proof of Theorem~\ref{thm on main}, appearing in Section~\ref{she could steal, but she could not rob}.

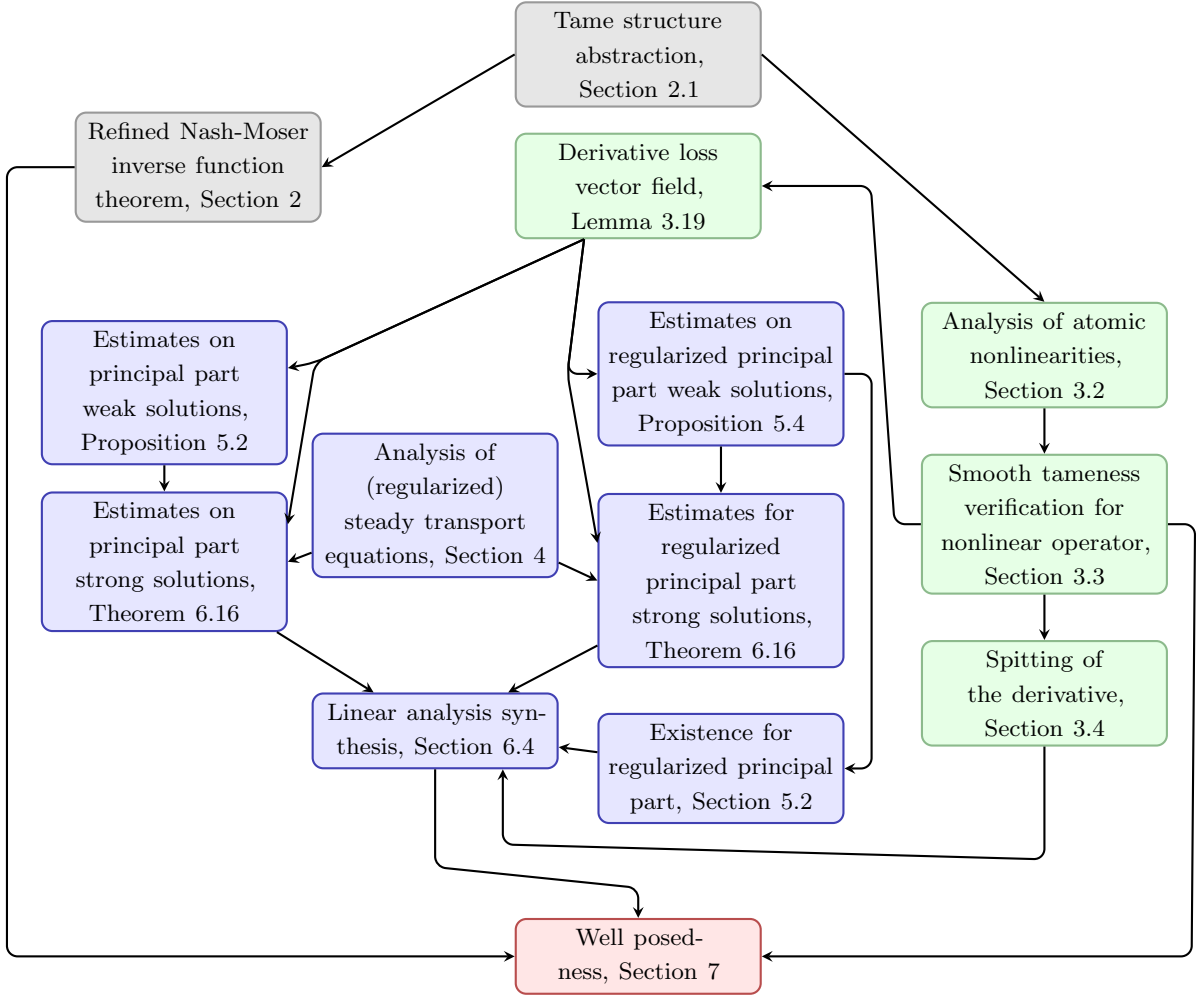
\begin{figure}[!h]
    \centering

\usetikzlibrary {calc,positioning,shapes.misc}
\begin{tikzpicture}[scale = 1,every node/.style={scale=1},abstract/.style={rectangle,rounded corners,thick,draw=black!40,fill=black!10,text width=3cm,align=center}, nonlinear/.style={rectangle,rounded corners,thick,draw=green!40!black!45,fill=green!10,text width=3cm,align=center},linear/.style={rectangle,rounded corners,thick,draw=blue!60!black!75,fill=blue!10,text width=3cm,align=center},combo/.style={rectangle,rounded corners,thick,draw=red!60!black!70,fill=red!10,text width=3cm,align=center}]

\node[anchor = center] at (0,-1) [combo] (conclusion) {\footnotesize Well posedness, Section~\ref{she could steal, but she could not rob}};

\node[anchor = center] at (1.1,1.5) [linear] (ERPP) {\footnotesize Existence for regularized principal part, Section~\ref{please dont spoil my day, im miles away and}};

\node[anchor = center] at (-2.7,2) [linear] (analysis) {\footnotesize Linear analysis synthesis, Section~\ref{dotting his socks in the night}};

\node[anchor = center] at (-5.85,9.5) [abstract] (nmh) {\footnotesize Refined Nash-Moser inverse function theorem, Section~\ref{section on NMH}};

\node[anchor = center] at (5.4,4.75) [nonlinear] (tamesmooth) {\footnotesize Smooth tameness verification for nonlinear operator, Section~\ref{section on smooth tameness of the nonlinear map}};

\node[anchor = center] at (0,11) [abstract] (tame) {\footnotesize Tame structure abstraction, Section~\ref{Michigan}};

\node[anchor = center] at (0,9.25) [nonlinear] (vector) {\footnotesize Derivative loss vector field, Lemma~\ref{properties of the principal parts vector field}};

\node[anchor = center] at (5.4,7) [nonlinear] (atom) {\footnotesize Analysis of atomic nonlinearities, Section~\ref{only place he's ever been}};

\node[anchor = center] at (5.4,2.5) [nonlinear] (split) {\footnotesize Spitting of the derivative, Section~\ref{subsection on derivative splitting and notation for linear analysis}};

\node[anchor = center] at (-2.7,5.0) [linear] (steady) {\footnotesize Analysis of (regularized) steady transport equations, Section~\ref{Section: Analysis of Regularized Steady Transport Equations}};

\node[anchor = center] at (-6.3,4.25) [linear] (SPP) {\footnotesize Estimates on principal part strong solutions, Theorem~\ref{thm on a priori estimates for the principal part and the regularization}};

\node[anchor = center] at (1.1,4) [linear] (SRPP) {\footnotesize Estimates for regularized principal part strong solutions, Theorem~\ref{thm on a priori estimates for the principal part and the regularization}};

\node[anchor = center] at (1.1,6.75) [linear] (WRPP) {\footnotesize Estimates on regularized principal part weak solutions, Proposition~\ref{prop on a priori estimates for regularized weak solutions}};

\node[anchor = center] at (-6.3,6.5) [linear] (WPP) {\footnotesize Estimates on principal part weak solutions, Proposition~\ref{prop on a priori estimates for weak solutions}};

\draw[-stealth,thick,rounded corners] (WPP) -- (SPP);
\draw[-stealth,thick,rounded corners] (WRPP) -- (SRPP);
\draw[-stealth,thick,rounded corners] (steady) -- (SRPP.west);
\draw[-stealth,thick,rounded corners] (steady) -- (SPP.east);
\draw[-stealth,thick,rounded corners] (SPP) -- (analysis);
\draw[-stealth,thick,rounded corners] (SRPP) -- (analysis);
\draw[-stealth,thick,rounded corners] (ERPP) -- (analysis);
\draw[-stealth,thick,rounded corners] (WRPP.east) -- ($(WRPP.east)+(0.36,0)$) -- ($(ERPP.east)+(0.36,0)$) -- (ERPP.east);
\draw[-stealth,thick,rounded corners] ($(vector.south)+(-0.72,0)$) -- ($(WRPP.west)+(-0.4,0)$) -- (WRPP);
\draw[-stealth,thick,rounded corners] ($(vector.south)+(-0.72,0)$) -- ($(WRPP.west)+(-0.4,0)$) -- ($(SRPP.west)+(0,0.5)$);
\draw[-stealth,thick,rounded corners] ($(vector.south)+(-0.72,0)$) -- ($(WPP.east)+(0.36,0.4)$) -- (WPP);
\draw[-stealth,thick,rounded corners] ($(vector.south)+(-0.72,0)$) -- ($(WPP.east)+(0.36,0.4)$) -- ($(SPP.east)+(0,0.5)$);
\draw[-stealth,thick,rounded corners] (tame.east) -- (atom.north);
\draw[-stealth,thick,rounded corners] (tame.west) -- (nmh.east);
\draw[-stealth,thick,rounded corners] (atom) -- (tamesmooth);
\draw[-stealth,thick,rounded corners] (tamesmooth) -- (split);
\draw[-stealth,thick,rounded corners] (split.south) -- ($(split.south)+(0,-1.5)$) -- ($(analysis.south)+(.9,-1)$) -- ($(analysis.south)+(.9,-0)$);
\draw[-stealth,thick,rounded corners] (tamesmooth.west) -- ($(tamesmooth.west)+(-0.35,0)$) -- ($(vector.east)+(1.45,0)$) -- (vector.east); 
\draw[-stealth,thick,rounded corners] (tamesmooth.east) -- ($(tamesmooth.east)+(0.36,0)$)  -- ($(conclusion.east)+(5.715,0)$) -- (conclusion.east);
\draw[-stealth,thick,rounded corners] (analysis.south) -- ($(analysis.south)+(0,-1.3)$) -- ($(conclusion.north)+(0,0.4)$) -- (conclusion.north);
\draw[-stealth,thick,rounded corners] (nmh.west) -- ($(nmh.west)+(-.9,0)$) -- ($(conclusion.west)+(-6.75,0)$) -- (conclusion.west);
\end{tikzpicture}

    \caption{Logical flow of the paper}
    \label{fig:flow_diagram}
\end{figure}

\textbf{Nonlinear analysis.}  Our abstract nonlinear analysis begins in Section \ref{section on NMH} with the study of the tame structure of differentiable maps between scales of Banach spaces, which form the basic framework of our Nash-Moser theorem.  In contrast to the  Sergeraert~\cite{Sergeraert_1972} and Hamilton~\cite{MR656198} frameworks, we study tameness in a finite regularity context and on possibly finite Banach scales rather than Fr\'echet spaces.  We develop a calculus of such maps, showing closure under various operations such as sums, products, and compositions.  These results turn out to be essential in our subsequent verification of the `nonlinear hypotheses' of our Nash-Moser inverse function theorem. Once this is done, we then formulate and prove our version of the inverse function theorem.

In Section~\ref{section on scales of Banach spaces} we precisely define the nonlinear operator associated to the system~\eqref{The nonlinear equations in the right form}, namely $\Bar{\Psi}$ defined by~\eqref{the nonlinear map equation}, and formulate the Banach scales that serve as its domain and codomain.  We verify, in Lemma~\ref{lem on tameness of domain and codomain}, that the domain Banach scale is tame and that the codomain Banach scale is a tame direct summand of the domain. Then we endeavor to check that $\Bar{\Psi}$ is tamely twice continuously differentiable and has order one derivative loss.  Within a standard Sobolev framework this could be accomplished, more or less, with off-the-shelf tools.  Unfortunately, in our context the free surface functions $\eta$ belong to the anisotropic spaces $\mathcal{H}^{5/2+s}(\R^{n-1})$, and their ubiquity in the nonlinearities of~\eqref{The nonlinear equations in the right form} then requires a much more delicate and customized analysis.   To make this task less arduous, we first identify within $\Bar{\Psi}$ a number of simpler `atomic' nonlinearities that can be analyzed separately.  

For the most part, these atoms are handled via elementary high-low estimates from Appendix~\ref{appendix on tools for tame estimates}, combined with some results from the abstraction of tame structure.  We emphasize that our choice of the enthalpy formulation makes the atoms that comprise the momentum equation all of this form.  By contrast, the nonlinearity arising from the continuity equation in~\eqref{The nonlinear equations in the right form}, namely
\begin{equation}
 (q,u,\eta) \mapsto  \Xi(q,u,\eta) =  \grad\cdot(\sig_{q,\eta}(u- M_{\eta}e_1)),
\end{equation}
is significantly more delicate and requires more sophisticated ideas.  The principal difficulty here is that our functional setting requires that 
\begin{equation}
     \int_0^b \Xi(q,u,\eta)(\cdot,y)\;\m{d}y    \in \dot{H}^{-1}(\R^{n-1}),
\end{equation}
where $\dot{H}^{-1}(\R^{n-1})$ is defined by \eqref{dotHminus1_def}.  At first glance, this inclusion appears to follow readily from the identity 
\begin{equation}
    \int_0^b \Xi(q,u,\eta)(\cdot,y)\;\m{d}y = (\grad_{\|},0) \cdot \int_0^b (\sig_{q,\eta}(u- M_{\eta}e_1)+\varrho e_1)(\cdot,y)\;\m{d}y,
\end{equation}
but a closer inspection reveals that, because of the anisotropic spaces, the integral argument of the divergence on the right does not belong to $L^2(\R^{n-1})$ in general.  To get around this problem we employ the `Taylor expansion trick' of Lemma~\ref{lem on taylor expansion trick} in conjunction with the subtle vector field  decomposition  from Lemma~\ref{lem on fundamental decomposition of continuity equation-like vector fields}.  These rely crucially on various nontrivial algebraic properties of the anisotropic spaces.

We then synthesize the analysis of the atomic nonlinearities with our analysis of tame structure to complete the verification of most of the `nonlinear hypotheses' of the inverse function theorem. This is done in Theorem~\ref{thm on smooth tameness of the nonlinear operator}. 

\textbf{Principal part identification.}   We next pass through some applications of our nonlinear results to set up the linear analysis in a simpler form.  We identify the manifestation of the derivative loss at the linear level as the vector field $v_{q_0,u_0,\eta_0}$ in the linearized continuity equation, as described above. The relevant properties of this derivative loss vector field are enumerated in Lemma~\ref{properties of the principal parts vector field}.  This understanding allows us to perform a `derivative splitting' for the nonlinear operator $\Bar{\Psi}$ of the form $D\Bar{\Psi} = D\Bar{\Psi}_{\m{prin}} + D\Bar{\Psi}_{\m{rem}}$, where the linear operator $D\Bar{\Psi}_{\m{prin}}$ is the `principal part' and $D\Bar{\Psi}_{\m{rem}}$ is the perturbative remainder.  Associated to the principal part operator $D\Bar{\Psi}_{\m{prin}}$ is the following principal part PDE system for linearized unknowns $(q,u,\eta)$ with given data $(g,f,k)$ and wave speed $\gamma \in \R^+$:
\begin{equation}\label{principal_part_intro}
		\begin{cases}
			\grad\cdot(\varrho u)+\grad\cdot(v_{q_0,u_0,\eta_0} (q+\mathfrak{g}\eta))=g&\text{in }\Omega,\\
			-\gam^2\varrho\pd_1 u+\varrho\grad(q+\mathfrak{g}\eta)-\gam\grad\cdot\S^{\varrho}u=f&\text{in }\Omega,\\
			-(\varrho q-\gam\S^{\varrho}u)e_n-\varsigma\Delta_{\|}\eta e_n=k&\text{on }\Sigma,\\
			u\cdot e_n+\pd_1\eta=0&\text{on }\Sigma,\\
			u=0&\text{on }\Sigma_0.
		\end{cases}
\end{equation}
This splitting provides two key benefits.  First, the problem \eqref{principal_part_intro} is as close as possible to the linearization around the trivial background, namely \eqref{linearization_at_zero}, while retaining the entirety of the derivative loss information. Second, the remainder piece of the linearization has no derivative loss and is effectively small so that it can be handled perturbatively.   We refer to Propositions~\ref{prop on properties of the A+P decomposition} and~\ref{prop on the Q+R decomposition of DPhi} for the precise details of this derivative splitting. As a result of this careful splitting, we effectively reduce most of our linear analysis to the study of this family of principal part linear equations, \eqref{principal_part_intro}.

\textbf{Linear analysis.}  Now we discuss the strategy for the verification of the `linear hypotheses' for our Nash-Moser inverse function theorem.  The majority of the work is devoted to studying the principal part system \eqref{principal_part_intro} with $v_{q_0,u_0,\eta_0}$ obtained from a general background triple $(q_0,u_0,\eta_0)$ in an open neighborhood of zero, and in particular showing that it induces the aforementioned isomorphism  $\overset{q_0,u_0,\eta_0}{\X^s} \ni (q,u,\eta) \mapsto (g,f,k) \in \mathbb{Y}^s$ and obeys related tame estimates.  The fundamental difficulty here, as in \eqref{linearization_at_zero}, is the lack of ellipticity in the base Stokes system for $q$ and $u$; indeed, one should view \eqref{principal_part_intro} as an unhappy marriage of elliptic (Lam\'e and mean-curvature type) and hyperbolic (steady transport type) operators whose individual regularity theories are incompatible and appear not to combine without substantial difficulty. To deal with the difficulties inherent in the system \eqref{principal_part_intro}, it is convenient to initially decouple the problems of estimates and existence and only combine them at the last moment.

The key to this strategy is the introduction of a regularizing term in \eqref{principal_part_intro} that makes the elliptic parts interface better with the hyperbolic steady transport part.  Unfortunately, the natural technique of applying a smoothing operator to the steady transport term in the continuity equation of \eqref{principal_part_intro} does not work well in $\Omega$: operators that preserve the good elliptic energy structure seemingly lack good commutators and so fail to give high regularity estimates, while operators that have good commutators do not seem to respect the energy structure.  We are thus led to employ an elliptic regularization by replacing the system  \eqref{principal_part_intro} with
	\begin{equation}\label{regularization_intro}
		\begin{cases}
			\grad\cdot(\varrho u)+\tau\grad\cdot(v_{q_0,u_0,\eta_0}(q+\mathfrak{g}\eta))+N^{-1}L_m(q+\mathfrak{g}\eta)=g&\text{in }\Omega,\\
			-\gam^2\varrho\pd_1u+\varrho\grad(q+\mathfrak{g}\eta)-\gam\grad\cdot\S^\varrho u=f&\text{in }\Omega,\\
			-(\varrho q-\gam\S^\varrho u)e_n-\varsigma\Delta_{\|}\eta e_n=k&\text{on }\Sigma,\\
			u\cdot e_n+\pd_1\eta=N^{-1}(-\Delta_{\|})^{m-1/4}\eta&\text{on }\Sigma,\\
			u=0&\text{on }\Sigma_0,\\
			\pd_n^mq=\cdots=\pd_n^{2m-1}q=0&\text{on }\pd\Omega,
		\end{cases}
	\end{equation}
where the regularization parameter is $N^{-1}$ for $N \in \N^+$,  $L_m$ is the $2m^{\m{th}}$-order linear elliptic differential operator
\begin{equation}
    L_m=(-1)^m\sum_{j=1}^n\pd_j^{2m},
\end{equation}
$(-\Delta_{\|})^{m-1/4}$ is a standard fractional power of the Laplace operator on $\Sigma \simeq \R^{n-1}$, $m \in \N^+$ is a tunable regularity parameter, and $\tau \in [0,1]$ is an operator homotopy parameter ($\tau =1$ corresponds to a regularization of \eqref{principal_part_intro}).  Note that the Neumann boundary conditions for $q$ recorded in the final equation of \eqref{regularization_intro} are new relative to \eqref{principal_part_intro}; their presence is dictated by our introduction of $L_m$, but the specific choice of the Neumann conditions plays a crucial role later.

The benefits of the elliptic regularization \eqref{regularization_intro} are manifold.  First, the domain space for \eqref{regularization_intro} is $\X^s_{m,N}$, as defined by \eqref{the regularized spaces}.  Crucially, this space is independent of the background triple $(q_0,u_0,\eta_0)$ but continuously embeds into the background-dependent space $\overset{q_0,u_0,\eta_0}{\X^s}$.  This makes it an ideal setting for using arguments based on the method of continuity to extend the solvability theory from $\tau =0$ to $\tau=1$, which is the form of the problem we actually care about.  Second, the regularization operators preserve the energy structure of the original problem \eqref{principal_part_intro} while giving a relatively simple regularity gain mechanism.  Third, and perhaps most important, they are compatible with the derivation of $N-$independent estimates at high-regularity, which we will employ to solve~\eqref{principal_part_intro} in $\overset{q_0,u_0,\eta_0}{\X^s}$ via weak compactness arguments.    Note that in doing so we will always need $m \ge 1+s$ so that the artificial Neumann conditions for $q$ in \eqref{regularization_intro} do not pass to the limit.  Of course, regularization does not come without its downsides, and a good amount of work is needed to deal with technical complications it introduces.  We now turn to a somewhat more detailed account of how we use \eqref{regularization_intro} to complete the linear analysis.

We study a priori estimates for the principal part and its regularization in tandem; for the former the goal is to develop the desired tame estimates, but for the latter the goal is high regularity $N-$independent estimates with only a weaker form of tameness with respect to the background tuple $(q_0,u_0,\eta_0)$. The starting point for the principal part is estimates for weak solutions, which are proved in the usual manner modulo some minor technical complications due to some nonlinear expressions involving members of anisotropic Sobolev spaces.  See Proposition~\ref{prop on a priori estimates for weak solutions} for more details.  With the weak estimates in hand, we then turn our attention to high regularity estimates for strong solutions.  The idea here is to exploit the fact that we can apply tangential derivatives to the system \eqref{principal_part_intro} without changing the basic structure of the equations; this allows us to employ the weak solution estimates to get bounds on these tangential derivatives as in Theorem~\ref{thm on tangential derivative analysis}.  After we achieve this control of the tangential derivatives, it remains to recover control of the normal, or vertical derivatives.   Here we implement a version of the classic technique of Matsumura and Nishida~\cite{MR713680}, originally developed for fixed domains with Dirichlet boundary conditions, to reveal a subtle dissipative structure for the normal derivatives of the density.  This, together with some supplementary analysis of steady transport equations (see Proposition~\ref{proposition on a priori estimates for steady transport}), provides an estimate on the high norms of the solution in terms of the data and the tangentially differentiated solution alone.  This is recorded in the first conclusion of Theorem~\ref{third synthesis of normal system results}.  Synthesizing, we then derive the first conclusion of Theorem~\ref{thm on a priori estimates for the principal part and the regularization}, which is the desired a priori estimates for the principal part equations.
	
The strategy for the a priori estimates of the regularized problem \eqref{regularization_intro} is similar at a bird's eye view, although the specifics of the argument are rather distinct.  We again start by proving a priori estimates for weak solutions and then study the equations satisfied by the tangentially differentiated solution: see Proposition~\ref{prop on a priori estimates for regularized weak solutions} and the second conclusion of Theorem~\ref{thm on tangential derivative analysis}, respectively.  The argument used to derive estimates for the normal system, the second conclusion of Theorem~\ref{third synthesis of normal system results}, now substantially deviates from that used for the principal part case because of the need to pass through analysis of the regularized steady transport equations.  It is here that the regularization $L_m$ serves as a liability rather than an asset.   Indeed, deriving estimates with good dependence on $N$ is technically delicate and requires a very careful use of the bilinear form associated to $L_m$ and the precise Neumann boundary conditions imposed on $q$.  The majority of Section~\ref{Section: Analysis of Regularized Steady Transport Equations} is devoted to this regularized transport analysis.  In the end, we obtain the uniform in $N$ estimates as stated in the second conclusion of Theorem~\ref{thm on a priori estimates for the principal part and the regularization}.

The existence theory for the regularized problem \eqref{regularization_intro} for all $\tau \in [0,1]$  is developed by first establishing the existence of weak solutions in Theorem \ref{thm on existence of regularized weak solutions}.  This is proved through the method of continuity, based on the a priori estimates from Proposition  \ref{prop on a priori estimates for regularized weak solutions} and an existence result for the problem with $\tau =0$, which itself requires a further two-parameter regularization and compactness argument to establish.  The weak solutions are then promoted to strong solutions in Corollary \ref{coro on regularity of the regularization} via elliptic regularity arguments.  We emphasize that while this result shows that \eqref{regularization_intro} induces an isomorphism, it does not establish $N-$uniform estimates.   The existence theory for the principal part problem \eqref{principal_part_intro} is then established by way of the regularized existence theory, the $N-$uniform a priori estimates, and another weak limiting argument. This is our Theorem~\ref{thm on existence for the principal part}.

For the conclusion of the linear analysis, we combine the work that allowed us to identify the principal part equations, namely the splitting of the derivative, with the previously discussed principal part analysis on estimates and existence. This is the synthesis of linear analysis result of Theorem~\ref{thm on analysis of the linearization 1}.

We conclude with a couple remarks on our linear strategy.  First, we emphasize that our existence strategy for~\eqref{principal_part_intro} intentionally bypasses direct regularity promotion of weak solutions in favor of high regularity inherited by weak limits of solutions to the regularized equations~\eqref{regularization_intro}.  This is advantageous since the mixed elliptic-hyperbolic nature of~\eqref{principal_part_intro} creates substantial technical difficulties in attempting to implement the standard techniques for regularity promotion (e.g. finite differences or mollification and commutators). The second feature we wish to highlight is that our methods of handling the free surface unknown in the linear existence theory are significantly different and simpler than in the previous work on the incompressible problem \cite{leoni2019traveling, MR4337506, koganemaru2022traveling}. In these works the free surface unknown is constructed in terms of the data alone via a pseudodifferential equation, the symbol for which is inverted after a careful asymptotic analysis.  In our work, we entirely circumvent the use of these delicate pseudodifferential techniques in our existence theory, replacing them with regularizations, a priori estimates, weak limits, and a new spatial characterization of the anisotropic Sobolev spaces $\mathcal{H}^s(\R^d)$.

\textbf{Conclusion and appendices.}  Once we have completed the nonlinear and linear analysis, the hypotheses of our Nash-Moser theorem are verified.  We then invoke the result with a few minor additional arguments in Theorems~\ref{main thm 1 on traveling wave solutions to the free boundary compressible Navier-Stokes equations} and~\ref{main thm 2 on traveling wave solutions to the free boundary compressible Navier-Stokes equations} to complete the proof of well-posedness.

The remainder of the paper consists of four appendices.  These mostly consist of various analytical and PDE tools that are customized or optimized for our particular needs in the paper.  However, some of the results there appear to be new and may be of independent interest for use in other problems.  Appendix  \ref{appendix_std_sob_tools} records tools related to standard Sobolev spaces.  In contrast, Appendix \ref{appendix_nonstandard_fn_spaces} is concerned with properties of the nonstandard spaces we employ in this paper, namely the anisotropic spaces and the adapted spaces.  Appendix \ref{appendix_pde_tools} focuses on PDE tools, with an emphasis on the specified divergence problem in various contexts.  Appendix \ref{appendix_nlin_analysis} contains a selection of nonlinear analysis tools that are essential in our abstract tame analysis.

 \subsection{Conventions of notation} \label{sec_notational_conventions}
 
 We recall that we have included a \hyperref[index]{\indexname} at the end of the paper, which catalogs the numerous operators, function spaces, and other symbols used throughout.

 We write $\N$ for the set of nonnegative integers, $\N^+=\N\setminus\tcb{0}$, and $\R^+=(0,\infty)$. Whenever $\al\lesssim\be$ appears in a result, it means there is a constant $C \in \R^+$ depending only on the parameters mentioned in the formulation of the result such that $\al\le C\be$. To emphasize this dependence, we will sometimes write $\alpha \lesssim_{a,\dotsc,b} \beta$ to indicate the parameters $a,\dotsc,b$.  We will also write $\al\asymp\be$ to mean $\al\lesssim \be$ and $\be\lesssim \al$.  We will use the bracket notation
 \begin{equation}\label{j_bracket_def}      \index{\textbf{Miscellaneous}!00@$\tbr{\cdot}$}
 \tbr{x_1,\dots,x_p}=\tp{1+x_1^2+\cdots+x_p^2}^{1/2}    
 \end{equation}
  for $p\in\N^+$ and $x_1,\dots,x_p\in\R$.  Given $x \in \R^d$, we will often abbreviate $\tbr{x}= \tbr{x_1,\dotsc,x_d}$.    If $U$ and $V$ are some open sets we write $U\Subset V$ to mean that the closure $\Bar{U}$ is compact and $\Bar{U}\subset V$.

We denote the gradient and its tangential counterpart by $\grad=(\pd_1,\dots,\pd_n)$ and $\grad_{\|}=(\pd_1,\dots,\pd_{n-1})$, respectively. The divergence and tangential divergence operators are written $\grad\cdot f=\sum_{j=1}^n\pd_j(f\cdot e_j)$ and $(\grad_{\|},0)\cdot f=\sum_{k=1}^{n-1}\pd_k(f\cdot e_k)$ for appropriate $\R^n$-valued functions $f$. We will also use subscript `$\|$' to indicate that a differential operator depends only on $\pd_1,\dots,\pd_{n-1}$, e.g. $\Delta_{\|}=\sum_{\ell=1}^{n-1}\pd_\ell^2$.

If $\tnorm{\cdot}$ is a norm on a product of normed spaces, $X_1\times\cdots\times X_q$, we will typically write $\tnorm{x_1,\dots, x_q}$ in place of $\tnorm{(x_1,\dots,x_q)}$, where $x_i \in X_i$ for $i \in \{1,\dotsc,q\}$.   Given $\Lambda \subseteq \R$, we say a decreasing collection of Banach spaces $\tcb{X_s}_{s\in\Lambda}$ with norms $\tcb{\tnorm{\cdot}_{X_s}}_{s\in\Lambda}$ is log-convex if for all $s_0,s_1,s \in \Lambda$ such that  $s_0 < s_1$ and $s = (1-\sigma)s_0 + \sigma s_1$ for some $\sigma \in [0,1]$ we have that
	\begin{equation}
		\tnorm{x}_{X_s}\lesssim_{s_0,s_1,\sigma} \tnorm{x}_{s_0}^{1-\sig}\tnorm{x}_{s_1}^{\sig} \text{ for all } x\in X_{s_1}.
	\end{equation}

 Suppose that $k\in\N$, $U\subseteq\R^d$ is open, and $V$ is a finite dimensional real vector space.  For $p \in [1,\infty]$, we write $W^{k,p}(U;V)$ for the usual $L^p$-based Sobolev space of order $k$ with functions valued in $V$,  and we write $H^k(U;V) = W^{k,2}(U;V)$.  We write  $C^{k}_b(U;V)=C^{k}(U;V)\cap W^{k,\infty}(U;V)$, \index{\textbf{Function spaces}!900@$C^k_b$}
 endowed with the obvious norm.  The space $C^{k}_0(U;V)$ is the closure of $C^\infty_c(\R^d;V)$\index{\textbf{Function spaces}!910@$C^k_0$} in $C^{k}_b(U;V)$.  We also write $C^\infty_b(U;V)=\bigcap_{k\in\N} C^k_b(U;V)$\index{\textbf{Function spaces}!920@$C^\infty_b$} and $C^\infty_0(U;V)=\bigcap_{k\in\N}C_0^k(U;V)$\index{\textbf{Function spaces}!930@$C^\infty_0$}. Similarly, we write $H^\infty(U;V)=\bigcap_{k\in\N}H^k(U;V)$\index{\textbf{Function spaces}!890@$H^\infty$} and $W^{\infty,\infty}(U;V)=\bigcap_{k\in\N}W^{k,\infty}(U;V)$\index{\textbf{Function spaces}!895@$W^{\infty,\infty}$}.  Note that $W^{\infty,\infty}(U;V) = C^\infty_b(U;V)$.

 For $S\in\tcb{\Sigma_0,\Sigma}$, we write $\m{Tr}_{S}$ for the trace operator that maps appropriate functions defined on $\Omega$ to functions on $S$. The following closed subspace of $H^1(\Omega;\R^n)$ is frequently used:
 \begin{equation}\label{zero on the left}\index{\textbf{Function spaces}!164@${_0}H^{1}$}
     {_0}H^1(\Omega;\R^n) = \tcb{u\in H^1(\Omega;\R^n)\;:\;\m{Tr}_{\Sigma_0}(u)=0}.
 \end{equation}
 
 The Fourier transform, which we normalize to be unitary on $L^2$, is denoted by $\mathscr{F}$.   We will utilize the homogeneous Sobolev space of order $-1$, which is defined as
 \begin{equation}\label{dotHminus1_def}   \index{\textbf{Function spaces}!165@$\dot{H}^{-1}$}
     \dot{H}^{-1}(\R^d) = \tcb{f\in\mathscr{S}^\ast(\R^d;\R)\;:\;\mathscr{F}[f]\in L^1_{\m{loc}}(\R^d\setminus\tcb{0};\C) \text{ and }\tsb{f}_{\dot{H}^{-1}}<\infty}
 \end{equation}
 for the seminorm $\tsb{f}_{\dot{H}^{-1}}=\tnorm{\mathds{1}_{\R^d\setminus\tcb{0}}|\cdot|^{-1}\mathscr{F}[f]}_{L^2}$.  Finally, we emphasize that we frequently identify $\Sigma$ and $\R^{n-1}$ in the canonical way when working with function spaces defined on $\Sigma$.

	
\section{A variation on the Nash-Moser inverse function theorem}\label{section on NMH}

As we discussed in Section \ref{sec_strategy_label_evil_duck}, our strategy for proving the well-posedness of the system \eqref{The nonlinear equations in the right form} is to employ a new version of the Nash-Moser inverse function theorem.  The goal of this section is to prove this theorem and develop its abstract framework, which will be employed generally in all of our subsequent analysis.  Before we begin, we provide a brief overview of the Nash-Moser strategy and some variants of the theorem that have been developed in the literature.

The abstract setting of the classical inverse function theorem is $C^1$ maps $\Psi : U \to F$, where $E$ and $F$ are Banach spaces and $U \subseteq E$ is an open set, for which $D\Psi(u) \in \mathcal{L}(E;F)$ is invertible for some $u \in U$.  The typical proof leverages the invertibility of $D\Psi(u)$ to prove the local bijectivity of $\Psi$ by way of the contraction mapping principle.  In particular, surjectivity is established via a Picard iteration scheme that crucially employs the map $D\Psi(u)^{-1}$.

The Nash-Moser approach aims to handle the case when $D\Psi(u)$ fails to be invertible but remains `nearly invertible' in the sense that it admits a right inverse $L(u)$, defined on $F$ but only mapping into some larger vector space $E_0\supset E$.  This phenomenon is typically regarded as `derivative loss,' as in practice $E_0$ often consists of functions, and the subspace $E$ consists of functions of higher regularity than those in $E_0$. To fully take advantage of this, the Nash-Moser strategy generalizes the setting of the standard inverse function theorem to maps between one-parameter scales of Banach spaces, say $\{E^s\}_{s \in S}$ and $\{F^s\}_{s \in S}$ for some $S \subseteq \R$, where roughly speaking one should think of the parameter $s$ as measuring the regularity of the elements of the space. The derivative loss is then required to be uniform along the scale in the sense that $D\Psi$ maps from $E^{s+\mu}$ to $F^s$, for some $\mu >0$ measuring the extent of the derivative loss, but its right inverse  $L$ (which is now required to exist in the entirety of an appropriate open set) only maps from $F^{s}$ to $E^s$.  See Figure \ref{fig:NM commutative diagram} for a diagram.  The profound idea of Nash~\cite{nash_1956}, which was expanded upon by Moser~\cite{Moser_1966}, was to establish local surjectivity not via Picard iteration, which is not available due to the derivative loss, but instead with an iteration scheme, based on Newton's method, that employs a family of smoothing operators that increase the regularity along the Banach scales.  The extreme speed of convergence of Newton's method is needed to make the derivative loss and smoothing operators cooperate, and in order to properly implement this idea various precise quantitative estimates are required for the map and the smoothing operators.  

The above approach is extremely flexible and customizable, and has thus become more of a strategy than a specific theorem.  Indeed, many variants of the Nash-Moser theorem have appeared in the literature.  Most of these are really local surjectivity theorems rather than full inverse function theorems, which must establish injectivity as well as continuity or higher regularity of the induced local inverse map.  We are aware of a few exceptions in the literature.  Sergeraert~\cite{Sergeraert_1972} and Hamilton~\cite{MR656198} prove smoothness of the inverse map by working in the more restrictive context of smooth tame maps between Fr\'echet spaces.  Berti, Bolle, and Procesi~\cite{MR2580515} study an implicit function theorem with parameters, and they prove continuous differentiability of the local solution map only with respect to a finite dimensional space of parameters.  Ekeland~\cite{MR2765512} and Ekeland and S\'er\'e~\cite{MR4275475} prove an inverse function theorem and deduce some suboptimal Lipschitz estimates of the inverse map.

Unfortunately, the well-posedness problem for ~\eqref{The nonlinear equations in the right form} is not amenable to any of the above results, so we have endeavored to develop a new Nash-Moser inverse function theorem that works well for our problem, produces solutions in optimal spaces, provides a regularity promotion mechanism, and provides an optimal continuity and even some higher regularity results for the inverse map. Our approach is to synthesize ideas from Hamilton~\cite{MR656198} with the iteration scheme of Baldi and Haus~\cite{MR3711883}, which is an improvement on the results of H\"ormander~\cite{MR602181,MR802486,MR1039355} that employs ideas from Littlewood-Paley theory.  

When compared to the standard inverse function theorem, the Nash-Moser variants have hypotheses that are much more involved, but are similar at a high level. To guarantee a local inverse for a nonlinear operator, one needs to check that: 1) the nonlinear operator is defined on tame scales of Banach spaces; 2) the operator is tamely $C^2$, with a fixed derivative loss; and, 3) the derivative of the operator admits a tame family of inverses in an open neighborhood of a point.  As all of these hypotheses involve the adjective `tame,' we devote Section~\ref{Michigan} to defining tame structures and developing a calculus of tame maps. Once this is done, we use Section~\ref{did you remeber to  start your sourdough starter? no im a fish} to state the precise hypotheses and conclusions of our version of the Nash-Moser inverse function theorem. Afterward, in Sections~\ref{billy shears}, \ref{what would you do if i sang}, and~\ref{section on refinements} the theorem is then proved in several parts.  For the reader looking to take the inverse function theorem as a `black box' and more readily proceed to the analysis of the PDE~\eqref{The nonlinear equations in the right form}, we suggest restricting to the abstract tame structure and the statement of our Nash-Moser theorem, but initially skipping over Sections~\ref{billy shears}, \ref{what would you do if i sang}, and~\ref{section on refinements}.

\subsection{Tame structure abstraction}\label{Michigan}
 
 In this subsection we are first concerned with tame mappings between scales of Banach spaces. Our presentation here primarily inspired by Hamilton~\cite{MR656198} and Baldi and Haus~\cite{MR3711883}.  Our initial concern is scales of Banach spaces, starting with some notation for indexing them.
	
	\begin{defn}[Subsets of $\N$]\label{defn on subsets of N}
		Given $N\in\N\cup\tcb{\infty}$ we define $\j{N}\subseteq\N$ to be the set $\j{N}=\tcb{n\in\N\;:\;n\le N}$\index{\textbf{Miscellaneous}!10@$\j{N}$}.
	\end{defn}
	
    Now we define scales of Banach spaces.

	\begin{defn}[Banach scales]\label{definition of Banach scales}
		Let $N\in\N^+ \cup\tcb{\infty}$ and let $\bf{E} = \{E^s\}_{s\in \j{N}}$ be a collection of Banach spaces over a common field, either $\R$ or $\C$.
		\begin{enumerate}
			\item We say that $\bf{E}$ is a Banach scale if for each $s \in \j{N-1}$ we have the non-expansive inclusion $E^{s+1} \hookrightarrow E^s$, i.e. $\tnorm{\cdot}_{E^s}\le\tnorm{\cdot}_{E^{s+1}}$.
			\item If $\bf{E}$ is a Banach scale, then we define the scale's terminal space to be $E^N = \bigcap_{s \in \j{N}} E^s$ and endow it with the Fr\'{e}chet topology induced by the collection of norms $\{\norm{\cdot}_{E^{s}}\}_{s\in \j{N}}$. Note that if $N<\infty$, then $E^N$ has the standard Banach topology from its norm.
			\item If $\bf{E}$ is a Banach scale, then we write $B_{E^r}(u,\delta) \subseteq E^r$ for the $E^r$-open ball of radius $\delta>0$, centered at $u \in E^r$.
			\item If $\bf{E}$ is a Banach scale and $E^N$ is dense in $E^s$ for each $s \in \j{N}$, then we say that $\bf{E}$ is terminally dense.
		\end{enumerate}
	\end{defn}
	
	We note that finite scales of Banach spaces correspond to the case $N \in \N$ in this definition.   Next we record a quick remark regarding the product of Banach scales.

 	\begin{rmk}[Products of Banach scales]\label{scale_prod_rmk}
	Suppose that $\bf{E}_i = \{E^s_i\}_{s\in \j{N}}$ is a Banach scale for $i \in \{1,\dotsc,n\}$, each over the same field. Then $\bf{F} = \prod_{i=1}^n \bf{E}_i = \{ \prod_{i=1}^n E^s_i \}_{s\in \j{N}}$ is a Banach scale when each $\prod_{i=1}^n E^s_i$ is endowed with the norm
		\begin{equation}
			\norm{u_1,\dotsc,u_n}_{\prod_{i=1}^n E^s_i} = 
				\bp{\sum_{i=1}^n \norm{u_i}_{E^s_i}^2}^{1/2}.
		\end{equation}
	\end{rmk}

	  We now consider tame mappings between Banach scales in our next definition.
	
	\begin{defn}[Tame maps]\label{defn of smooth tame maps}
		Let $\bf{E}=\tcb{E^s}_{s\in\j{N}}$ and $\bf{F}=\tcb{F^s}_{s\in\j{N}}$ be Banach scales over the same field, $\mu,r\in\j{N}$ with $\mu\le r$, $U\subseteq E^r$ be an open set, and $P:U\to F^0$.
		\begin{enumerate}
			\item We say that $P$ satisfies tame estimates of order $\mu$ and base $r$ (with respect to the Banach scales $\bf{E}$ and $\bf{F}$) if for all $\j{N}\ni s\ge r$, there exists a constant $C_s\in\R^+$ such that for all $f\in U\cap E^s$ we have the inclusion $P(f)\in F^{s-\mu}$ as well as the estimate
			\begin{equation}
				\tnorm{P(f)}_{F^{s-\mu}}\le C_s\tbr{\tnorm{f}_{E^{s}}},
			\end{equation}
			where $\tbr{\cdot}$ is defined by \eqref{j_bracket_def}.
			
			\item We say that $P$ is $\mu$-tame with base $r$ if for each $f\in U$, there exists an $E^r$-open set $V\subseteq U$ such that $f \in V$ and the restricted map $P\res V:V \to F^0$ satisfies tame estimates of order $\mu$ and base $r$.
			
			\item We say that $P$ is strongly $\mu$-tame with base $r$ if on every $E^r$-open and bounded subset $V\subseteq U$, the restricted map $P\res V$ satisfies tame estimates of order $\mu$ and base $r$.
			
			\item We say that $P$ is (strongly) $\mu$-tamely $C^0$ with base $r$ if $P$ is (strongly) $\mu$-tame with base $r$ and if for every $\j{N}\ni s\ge r$ we have that $P:U\cap E^s\to F^{s-\mu}$
			is continuous as a map from $E^s$ to $F^{s-\mu}$. The collections of such maps will be denoted by $T^0_{\mu,r}(U,\bf{E};\bf{F})$ and ${_{\m{s}}}T^0_{\mu,r}(U,\bf{E};\bf{F})$, respectively.
			
			\item For $k\in\N^+$ we say that $P$ is (strongly) $\mu$-tamely $C^k$ with base $r$ if for all $\j{N}\ni s\ge r$ the map $P:U\cap E^{s}\to F^{s-\mu}$ is  $C^k$ and for all $j\in\tcb{0,1,\dots,k}$ the $j^{\m{th}}$ derivative map, thought of as mapping $D^jP:U\times\prod_{\ell=1}^jE^r \to F^0$, is (strongly) $\mu$-tame with base $r$ with respect to the Banach scales $\bf{E}^{1+j}$ and $\bf{F}$. In other words, $D^jP\in T^0_{\mu,r}(U\times\prod_{\ell=1}^jE^r,\bf{E}^{1+j};\bf{F})$ or, in the strong case, $D^jP\in {_{\m{s}}}T^0_{\mu,r}(U\times\prod_{\ell=1}^jE^r,\bf{E}^{1+j};\bf{F})$. The collections of such maps will be denoted by $T^k_{\mu,r}(U,\bf{E};\bf{F})$\index{\textbf{Function spaces}!65@$T^k_{\mu,r}(U,\bf{E};\bf{F})$} and ${_{\m{s}}}T^k_{\mu,r}(U,\bf{E};\bf{F})$\index{\textbf{Function spaces}!66@${_{\m{s}}}T^k_{\mu,r}(U,\bf{E};\bf{F})$}, respectively.
			
			\item We denote the collections of (strongly) $\mu$-tamely $C^\infty$ with base $r$ maps  by $T^\infty_{\mu,r}(U,\bf{E};\bf{F})=\bigcap_{k\in\N} T^k_{\mu,r}(U,\bf{E};\bf{F})$ and ${_{\m{s}}}T^\infty_{\mu,r}(U,\bf{E};\bf{F})=\bigcap_{k\in\N} {_{\m{s}}}T^k_{\mu,r}(U,\bf{E};\bf{F})$.
			
			\item    When $U= E^r$ we will often use the abbreviated notation $T^k_{\mu,r}(\bf{E},\bf{F})$ and ${_{\m{s}}}T^k_{\mu,r}(\bf{E};\bf{F})$ in place of $T^k_{\mu,r}(E^r,\bf{E};\bf{F})$ and ${_{\m{s}}}T^k_{\mu,r}(U,\bf{E};\bf{F})$, respectively.
		\end{enumerate}
	\end{defn}

	The following result is a useful characterization of tameness when there is multilinear structure present in the map.
	
	\begin{lem}[Multilinearity and tame maps]\label{lem on multilinear tame maps} Let $k\in\N^+$ and  $r,\mu\in\j{N}$ with $\mu\le r$.  Let $\bf{F}=\tcb{F^s}_{s\in\j{N}}$, $\bf{G}=\tcb{G^s}_{s\in\j{N}}$, and $\bf{E}_j=\tcb{E^s_j}_{s\in\j{N}}$, for $j\in\tcb{1,\dots,k}$, be Banach scales over the same field.  Let $U\subseteq G^r$ be an open set.  Then the following are equivalent for all maps $P:U\times\prod_{j=1}^kE^r_j\to F^0$ such that $P(g,\cdot)$ is $k$-multilinear for all $g\in U$.
		\begin{enumerate}
			\item $P\in T^0_{\mu,r}(U\times\prod_{j=1}^kE^r_j;\bf{G}\times\prod_{j=1}^k\bf{E}_j;\bf{F})$.
			\item The restriction of $P$ to $(U\cap G^s)\times\prod_{j=1}^kE^s_j$ is continuous as a map into $F^{s-\mu}$ for all $\j{N}\ni s\ge r$, and for all $g\in U$ there exists a $G^r$-open subset $V\subseteq U$ such that $g \in V$ and whenever $\j{N}\ni s\ge r$, $f\in V\cap G^s$, and $h_i\in E_i^s$ for $i\in\tcb{1,\dots,k}$ we have the estimate
			\begin{equation}\label{which he ate and donated to the national trust}
				\tnorm{P(f,h_1,\dots,h_k)}_{F^{s-\mu}}\lesssim\tbr{\tnorm{f}_{G^s}}\prod_{i=1}^k\tnorm{h_i}_{E_i^r}+\sum_{j=1}^k\tnorm{h_j}_{E_j^s}\prod_{i\neq j}\tnorm{h_i}_{E_i^r}.
			\end{equation}
		\end{enumerate}
		A similar equivalence holds for maps $P\in{_{\m{s}}}T^0_{\mu,r}(U\times\prod_{j=1}^k E^r_j;\bf{G}\times\prod_{j=1}^k\bf{E}_j;\bf{F})$ if we change the space in the first item to the space of strongly tame maps and we change the second item's  quantification of $V$ to `for all bounded $G^r$-open sets $V \subseteq U$'.
	\end{lem}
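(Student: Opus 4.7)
The plan is to prove the two implications separately, exploiting the multilinearity in the $h$-variables via a scaling trick for the nontrivial direction.

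The implication (2) $\Rightarrow$ (1) is the easy direction. Fix an arbitrary point $(g_0, h_1^0, \dots, h_k^0) \in U \times \prod_{j=1}^k E_j^r$ and let $V \subseteq U$ be the $G^r$-open neighborhood of $g_0$ supplied by (2). I would take the $G^r \times \prod_j E_j^r$-open neighborhood $V \times \prod_j B_{E_j^r}(h_j^0, 1)$, on which every $\tnorm{h_j}_{E_j^r}$ is bounded by $1+\tnorm{h_j^0}_{E_j^r}$. Plugging these bounds into~\eqref{which he ate and donated to the national trust} and combining with the elementary estimate $\tbr{\tnorm{f}_{G^s}} + \sum_j \tnorm{h_j}_{E_j^s} \lesssim \tbr{\tnorm{f}_{G^s}, \tnorm{h_1}_{E_1^s}, \dots, \tnorm{h_k}_{E_k^s}}$ yields tame estimates in the sense of Definition~\ref{defn of smooth tame maps}. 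Combined with the continuity from (2), this shows $P \in T^0_{\mu,r}$.

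The implication (1) $\Rightarrow$ (2) is where the multilinearity really enters. Continuity is inherited directly from the tamely $C^0$ hypothesis. For the estimate, fix $g \in U$ and apply the tameness of $P$ at the specific point $(g, 0, \dots, 0) \in U \times \prod_j E_j^r$, producing a $G^r \times \prod_j E_j^r$-open neighborhood $W$ on which a tame estimate of order $\mu$ and base $r$ with respect to the product scale $\bf{G} \times \prod_j \bf{E}_j$ holds. Since $W$ is open, it contains a product of the form $V \times \prod_j B_{E_j^r}(0, \delta_j)$ with $V \subseteq U$ a $G^r$-open neighborhood of $g$ and $\delta_j > 0$. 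For arbitrary $f \in V \cap G^s$ and nonzero $h_j \in E_j^s$, set $\lambda_j = \delta_j/(2 \tnorm{h_j}_{E_j^r})$ so that $(\lambda_1 h_1, \dots, \lambda_k h_k) \in \prod_j B_{E_j^r}(0, \delta_j)$, apply the tame estimate to $(f, \lambda_1 h_1, \dots, \lambda_k h_k)$, and use the multilinearity identity $P(f, \lambda_1 h_1, \dots, \lambda_k h_k) = \lambda_1 \cdots \lambda_k P(f, h_1, \dots, h_k)$ together with $\tbr{x_0, \dots, x_p} \le 1 + \sum_i \abs{x_i}$. Pulling the factors $(\lambda_1 \cdots \lambda_k)^{-1}$ through, substituting back, and rearranging yields precisely~\eqref{which he ate and donated to the national trust}, with a new constant absorbing the $\delta_j^{-1}$ factors. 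The cases in which some $h_j$ vanishes are handled trivially by multilinearity, since then $P(f, h_1, \dots, h_k) = 0$.

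Finally, for the strongly tame analogue, I would run essentially the same argument but with the neighborhood $V$ replaced by an arbitrary bounded $G^r$-open $V \subseteq U$. The rescaling trick still applies: on the bounded set $V \times \prod_j B_{E_j^r}(0, \delta_j)$ one has strongly tame estimates by hypothesis, and the scaling argument lifts these to a bound of the form~\eqref{which he ate and donated to the national trust} on all of $V \times \prod_j E_j^r$, with constants depending only on the $G^r$-diameter of $V$ and on $s$. Conversely, a bound of the form~\eqref{which he ate and donated to the national trust} on such $V$ combined with bounded control of $\tnorm{h_j}_{E_j^r}$ on any bounded subset of the product scale directly yields the strong tame estimate.

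The argument is fundamentally computational rather than conceptual, and the main subtlety lies in correctly bookkeeping the product neighborhood structure so that the rescaling factors $\lambda_j$ land the rescaled arguments inside the region where the tame bound applies. No abstract obstacle arises, and no tools beyond Definition~\ref{defn of smooth tame maps} and elementary properties of the $\tbr{\cdot}$ bracket are required.
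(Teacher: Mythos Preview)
The proposal is correct and follows essentially the same approach as the paper: both directions are handled identically, with the nontrivial $(1)\Rightarrow(2)$ implication proved by applying the tame estimate at the point $(g,0,\dots,0)$, extracting a product neighborhood, and rescaling each $h_j$ via multilinearity to land inside that neighborhood. Your version is slightly more detailed (allowing distinct radii $\delta_j$ and explicitly treating the vanishing-$h_j$ case), but the core scaling trick is the same.
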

	\begin{proof}
		The second item implies the first by noting that if $\tcb{h_i}_{i=1}^k$ lies within a bounded subset of $\prod_{i=1}^kE^r_i$, then we immediately obtain the required tame estimates on $P$ from inequality~\eqref{which he ate and donated to the national trust}.
		
		Now we look to the converse, fixing $g\in U$.  The first item provides an open set $\tilde{V}\subseteq U\times\prod_{j=1}^k E^r_j$ such that $(g,0,\dots,0)\in\tilde{V}$ and if $(f,h_1,\dots,h_k)\in\tilde{V}\cap (G^s\times E^s_1\times\cdots\times E^s_k)$ for $\j{N}\ni s\ge r$, then
		\begin{equation}\label{i hear the clock a tickin on the mantel shelf}
			\tnorm{P(f,h_1,\dots,h_k)}_{F^{s-\mu}}\lesssim\tbr{\tnorm{f}_{G^s},\tnorm{h_1,\dots,h_k}_{E^s_1\times\cdots\times E^s_{k}}}.
		\end{equation}
		Now, since $\tilde{V}$ is open, there exists $\del\in\R^+$ such that $V_{\del}=\Bar{B_{G^r}(0,\del)}\times\prod_{j=1}^k\Bar{B_{E^r_j}(0,\del)}\subseteq\tilde{V}$. Hence, if $f\in\Bar{B_{G^r}(0,\del)}$ and $h_1,\dots,h_k\in\prod_{j=1}^k(E^r_k\setminus\tcb{0})$ are such that $(g,h_1,\dots,h_k)\in G^s\times E^s_1\times\cdots\times E^s_k$, then we have that
		\begin{equation}
			(f,\del h_1/\tnorm{h_1}_{E^r_1},\dots,\del h_k/\tnorm{h_k}_{E^r_k})\in V_{\del}
		\end{equation}
		and so we can invoke estimate~\eqref{i hear the clock a tickin on the mantel shelf} and multiply through by $\del^{-k}\tnorm{h_1}_{E^r_1}\cdot\dots\cdot\tnorm{h_k}_{E^r_k}$ to acquire the desired bound \eqref{which he ate and donated to the national trust}.  A similar argument applies in the case of strongly tame maps.
	\end{proof}
	
	\begin{rmk}\label{remark on purely multlinear maps}
		If $P\in T^0_{\mu,r}(\prod_{j=1}^k\bf{E}_j;\bf{F})$ is a $k$-multilinear mapping, then a simple modification of the proof of Lemma~\ref{lem on multilinear tame maps} shows that $P$ is actually strongly tame.  Moreover, by multilinearity, we have that $P$ is automatically a smooth function whose derivatives are also multilinear maps. This combines with the previous fact to show that actually $P\in {_{\m{s}}}T^\infty_{\mu,r}(\prod_{j=1}^k\bf{E}_j;\bf{F})$.
	\end{rmk}
	
	As a consequence of the previous lemma, we have structured estimates of the derivatives of tame maps.
	
	\begin{coro}[Derivative estimates on tame maps]\label{lem on derivative estimates on tame maps}
		Let $\bf{E}=\tcb{E^s}_{s\in\j{N}}$ and $\bf{F}=\tcb{F^s}_{s\in\j{N}}$ be Banach scales over a common field. Let $r,\mu\in\j{N}$ with $\mu\le r$, $U\subseteq E^r$ be an open set, and $P\in T^k_{\mu,r}(U,\bf{E};\bf{F})$.  Then for each $f_0\in U$, there exists an open set $f_0\in V\subseteq U$ with the property that for every $\j{N}\ni s\ge r$ there exists a constant $C_{s,V} \in \R^+$, depending on $s$ and $V$, such that for all $f\in V\cap E^s$ and all $g_1,\dots,g_k\in E^s$ we have the estimate
		\begin{equation}
			\tnorm{D^kP(f)[g_1,\dots,g_k]}_{F^{s-\mu}}\le C_{s,V} \sum_{\ell=1}^k\tnorm{g_\ell}_{E^s}\prod_{j\neq\ell}\tnorm{g_j}_{E^r}
			+ C_{s,V} \tbr{\tnorm{f}_{E^s}}\prod_{j=1}^k\tnorm{g_j}_{E^r}.
		\end{equation}
		Moreover, if $P\in{_{\m{s}}}T^k_{\mu,r}(U,\bf{E};\bf{F})$, then a similar assertion holds for every bounded and open subset $V \subseteq U$.
	\end{coro}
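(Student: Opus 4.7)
The plan is to reduce the claim directly to the previously established Lemma on Multilinearity and tame maps. The key observation is that while $P$ itself is a general nonlinear map, its $k^{\m{th}}$ derivative $D^k P$ has built-in multilinear structure in its last $k$ slots, which is precisely the hypothesis the multilinearity lemma exploits to upgrade a `joint' tame estimate into a structured high-low estimate.

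Concretely, I would introduce the auxiliary map
\begin{equation}
    Q : U \times \prod_{j=1}^k E^r \to F^0,
    \quad
    Q(f, g_1, \dots, g_k) = D^k P(f)[g_1, \dots, g_k].
\end{equation}
By the definition of $P \in T^k_{\mu,r}(U, \bf{E}; \bf{F})$, the $k^{\m{th}}$ derivative $D^k P$, when viewed precisely in this way, lies in $T^0_{\mu,r}(U \times \prod_{j=1}^k E^r, \bf{E}^{1+k}; \bf{F})$. Moreover, for each fixed $f \in U$ the map $Q(f, \cdot) : \prod_{j=1}^k E^r \to F^0$ is $k$-multilinear by the standard properties of Fr\'echet derivatives.

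Thus $Q$ satisfies both of the standing hypotheses of Lemma~\ref{lem on multilinear tame maps} with $\bf{G} = \bf{E}$ and $\bf{E}_j = \bf{E}$ for $j \in \{1,\dots,k\}$. The implication (1) $\Rightarrow$ (2) of that lemma therefore applies: given $f_0 \in U$, we obtain an $E^r$-open neighborhood $V \subseteq U$ of $f_0$ such that for every $s \in \j{N}$ with $s \ge r$, every $f \in V \cap E^s$, and every choice of $g_1, \dots, g_k \in E^s$,
\begin{equation}
    \tnorm{Q(f, g_1, \dots, g_k)}_{F^{s-\mu}}
    \lesssim_{s,V}
    \tbr{\tnorm{f}_{E^s}} \prod_{i=1}^k \tnorm{g_i}_{E^r}
    + \sum_{\ell=1}^k \tnorm{g_\ell}_{E^s} \prod_{j \neq \ell} \tnorm{g_j}_{E^r}.
\end{equation}
Unwinding the definition of $Q$ yields the claimed estimate on $D^k P(f)[g_1, \dots, g_k]$ with some $C_{s, V} \in \R^+$. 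The strong case follows identically, this time invoking the strong-tameness version of Lemma~\ref{lem on multilinear tame maps}: on any bounded $E^r$-open subset $V \subseteq U$, the restriction $Q\res(V \times \prod_{j=1}^k E^r)$ is strongly tame, yielding the structured bound with a constant depending on $V$.

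I do not anticipate any serious obstacle; the entire content of the corollary is already packaged inside Lemma~\ref{lem on multilinear tame maps}, and the only thing to check is the purely formal matching of hypotheses---namely, that $D^k P$ is both tamely bounded and multilinear in the derivative directions---both of which are immediate from Definition~\ref{defn of smooth tame maps} and elementary calculus in Banach spaces.
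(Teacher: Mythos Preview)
Your proposal is correct and is precisely the paper's approach: the paper's proof consists of the single sentence ``This is a direct application of Lemma~\ref{lem on multilinear tame maps},'' and you have simply unpacked that sentence. The only content is the observation that $D^kP$, viewed as a map on $U\times\prod_{j=1}^k E^r$, lies in $T^0_{\mu,r}$ by definition of $T^k_{\mu,r}$ and is $k$-multilinear in its last $k$ arguments, which is exactly what you wrote.
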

	\begin{proof}
		This is a direct application of Lemma~\ref{lem on multilinear tame maps}.
	\end{proof}
	
	Our next result studies the interaction of tame maps via composition.
	
	\begin{lem}[Composition of tame maps]\label{lem on composition of tame maps}
		Let  $\bf{E}=\tcb{E^s}_{s\in\j{N}}$, $\bf{F}=\tcb{F^s}_{s\in\j{N}}$, and $\bf{G}=\tcb{G^s}_{s\in\j{N}}$ be a triple of Banach scales over a common field ($\R$ or $\C$), and let $k\in\N$ and $\mu,\mu', r,r'\in\j{N}$ be such that $\mu\le r$, $\mu'\le r'$, $r'+\mu\in\j{N}$.  Suppose that $U\subseteq E^r$, $U'\subseteq F^{r'}$ are open sets,  $P\in T^k_{\mu,r}(U,\bf{E};\bf{F})$, and $Q\in T^k_{\mu',r'}(U',\bf{F};\bf{G})$. Set $V=U\cap E^{\max\tcb{r,r'+\mu}}$.   If $P(V)\subseteq U'$, then $Q\circ P\in T^k_{\mu+\mu',\max\tcb{r,\mu+r'}}(V,\bf{E};\bf{G})$. A similar assertion holds for the composition of strongly tame maps.
	\end{lem}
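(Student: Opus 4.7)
The plan is to proceed by induction on $k \in \N$, using the chain rule in the inductive step to reduce to a self-application of the lemma at order $k-1$. For the base case $k = 0$, fix $f_0 \in V$. By the $\mu$-tameness of $P$ at base $r$, select an $E^r$-open $V_1 \ni f_0$ contained in $U$ on which the tame estimates hold, and similarly choose an $F^{r'}$-open $V_2 \ni P(f_0)$ in $U'$ for $Q$. Since $r' + \mu \in \tjump{N}$, the tame estimates ensure that $P$ restricts to a continuous map $E^{\max\{r, r' + \mu\}} \to F^{\max\{r, r' + \mu\} - \mu} \emb F^{r'}$, so the set $V_3 := V_1 \cap V \cap P^{-1}(V_2)$ is $E^{\max\{r, r' + \mu\}}$-open and contains $f_0$. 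For $s \ge \max\{r, \mu + r'\}$ and $f \in V_3 \cap E^s$, we have $s - \mu \ge r'$, so chaining the tame bounds yields $\|(Q \circ P)(f)\|_{G^{s - \mu - \mu'}} \lesssim_s \tbr{\|P(f)\|_{F^{s - \mu}}} \lesssim_s \tbr{\|f\|_{E^s}}$, while continuity $E^s \to G^{s - \mu - \mu'}$ follows by composing the continuities of the two factors.

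\textbf{Inductive step.} Assume the lemma at level $k - 1$ for all admissible parameters. Using Remark~\ref{scale_prod_rmk}, form the product Banach scales $\bf{E} \times \bf{E}$ and $\bf{F} \times \bf{F}$, and define the auxiliary maps
\begin{equation*}
\tilde{P}(f,g) := (P(f),\; DP(f)[g])
\qquad \text{and} \qquad
\tilde{Q}(h_1,h_2) := DQ(h_1)[h_2],
\end{equation*}
viewed as $\tilde{P} : U \times E^r \to F^0 \times F^0$ and $\tilde{Q} : U' \times F^{r'} \to G^0$. Using Definition~\ref{defn of smooth tame maps}(5) applied to $DP$ and $DQ$, together with Lemma~\ref{lem on multilinear tame maps} to package the linearly-appearing second slots, we obtain $\tilde{P} \in T^{k-1}_{\mu, r}(U \times E^r, \bf{E} \times \bf{E}; \bf{F} \times \bf{F})$ and $\tilde{Q} \in T^{k-1}_{\mu', r'}(U' \times F^{r'}, \bf{F} \times \bf{F}; \bf{G})$. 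The chain rule gives $D(Q \circ P)(f)[g] = \tilde{Q}(\tilde{P}(f, g))$, and one checks (again invoking $r' + \mu \in \tjump{N}$) that $\tilde{P}$ maps the subset $V \times E^{\max\{r, \mu + r'\}} = (U \times E^r) \cap (E^{\max\{r, \mu + r'\}})^2$ into $U' \times F^{r'}$. Applying the inductive hypothesis to $\tilde{Q} \circ \tilde{P}$ places $D(Q \circ P)$ in $T^{k-1}_{\mu + \mu', \max\{r, \mu + r'\}}(V \times E^{\max\{r, \mu + r'\}}, \bf{E} \times \bf{E}; \bf{G})$. Combined with the base case and the identification of $D^{j}(Q \circ P)$ with $D^{j-1}[D(Q \circ P)]$ for $j \in \{1,\dots,k\}$, this promotes $Q \circ P$ to $T^k_{\mu + \mu', \max\{r, \mu + r'\}}(V, \bf{E}; \bf{G})$, as required. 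The strongly tame version proceeds identically, replacing the local neighborhoods $V_1, V_2$ with arbitrary bounded open subsets of $U, U'$ and noting that each operation in the argument preserves boundedness in the relevant topologies.

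\textbf{Principal difficulty.} The core subtlety is the tame bookkeeping in the inductive step: one must verify that $\tilde{P}$ satisfies the claimed product-scale tameness, namely a mixed estimate of the form $\|\tilde{P}(f,g)\|_{F^{s-\mu} \times F^{s-\mu}} \lesssim \tbr{\|f\|_{E^s}} \|g\|_{E^r} + \|g\|_{E^s}$ on bounded $E^r$-sets, which is supplied by Lemma~\ref{lem on multilinear tame maps} together with Corollary~\ref{lem on derivative estimates on tame maps} applied to the linear-in-$g$ second slot $(f,g) \mapsto DP(f)[g]$. Equally important is the well-definedness $\tilde{P}(V \times E^{\max\{r, \mu + r'\}}) \subseteq U' \times F^{r'}$, which hinges crucially on the hypothesis $r' + \mu \in \tjump{N}$: without this, the derivative $DP$, which loses $\mu$ orders of regularity, would lack the range to land in $F^{r'}$ and the composition with $DQ$ could not be formed. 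Once these two structural points are established, the induction runs cleanly, as no further quantitative estimates are required beyond direct chaining of the already-available tame bounds.
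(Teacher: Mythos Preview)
Your base case coincides with the paper's. In the inductive step you take a genuinely different route: the paper expands $D^k(Q\circ P)$ directly via the Fa\`a di Bruno formula and bounds each summand using Corollary~\ref{lem on derivative estimates on tame maps}, whereas you repackage the chain rule as a composition $\tilde{Q}\circ\tilde{P}$ of auxiliary maps on product scales and self-apply the lemma at level $k-1$. Your approach is more conceptual and sidesteps the combinatorics; the paper's is more hands-on and makes the structure of the tame constants explicit. Both are valid.

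That said, one step warrants more than you give it. The references you cite for $\tilde{P}\in T^{k-1}_{\mu,r}$ address only the $T^0$ level (Lemma~\ref{lem on multilinear tame maps} is a $T^0$ characterization, and Remark~\ref{remark on purely multlinear maps} is for \emph{purely} multilinear maps); you still need $D^{j'}\tilde{P}$ to be $\mu$-tame for each $j'\le k-1$. Using the linearity of the second component in $g$, one computes
\[
D^{j'}\bigl[(f,g)\mapsto DP(f)[g]\bigr]\bigl[(f_\ell,g_\ell)_{\ell=1}^{j'}\bigr]
= D^{j'+1}P(f)[g,f_1,\dotsc,f_{j'}]
+ \sum_{i=1}^{j'} D^{j'}P(f)\bigl[g_i,(f_\ell)_{\ell\ne i}\bigr],
\]
a finite sum of restrictions of $D^mP$ with $m\le j'+1\le k$, each $\mu$-tame since $P\in T^k$. (The analogous formula handles $\tilde{Q}$.) Similarly, the final ``identification'' should be made precise: one has $D^j(Q\circ P)(f)[f_1,\dotsc,f_j]=D^{j-1}(\tilde{Q}\circ\tilde{P})(f,f_j)[(f_1,0),\dotsc,(f_{j-1},0)]$, and the tame bound is preserved under this restriction since it is an upper bound in terms of all input norms. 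With these two computations written out, your argument is complete.
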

	\begin{proof}
		We proceed by induction on $k \in \N$.  Consider first the case that $k=0$. Fix $f_0\in V$. Since $P(f_0)\in U'$, we can appeal to the tameness of $Q$ to obtain an open set $P(f_0)\in W'\subseteq U'$ such that $Q$ satisfies a tame estimate of order $\mu'$ and base $r'$ in $W'$. The map $P$ is continuous and hence $P^{-1}(W')\subseteq V$ is open and contains $f_0$. In light of the tameness of $P$, there exists an open set $f_0\in W\subseteq V$ in which $P$ satisfies a tame estimate of order $\mu$ and base $r$. Now, the open set $W\cap P^{-1}(W')\subseteq V$ contains $f_0$ and is such that whenever $\j{N}\ni s\ge\max\tcb{r,r'+\mu}$ and $f \in E^s \cap W \cap P^{-1}(W')$ we may estimate
		\begin{equation}
			\tnorm{Q\circ P(f)}_{G^{s-(\mu+\mu')}}\lesssim\tbr{\tnorm{P(f)}_{F^{s-\mu}}}\lesssim\tbr{\tnorm{f}_{E^{s}}}.
		\end{equation}
		Hence, $Q\circ P$ is indeed $(\mu+\mu')$-tamely $C^0$ with base $\max\tcb{r,r'+\mu}$.
		
		Now suppose that for $1 \le k \in \N$ the result has been proved at the level $k-1$, and $P$ and $Q$ satisfy the hypotheses at level $k$.  Applying the induction hypothesis handles the tameness of all derivatives up to order $k-1$, so it suffices to show that $D^k(Q\circ P)$ is $(\mu+\mu')$-tame with base $\max\tcb{r,r'+\mu}$. Fix $f_0\in V$. For $\ell\in\tcb{0,1,\dots,k}$ we have that $D^\ell Q$ is $\mu$-tame with base $r$ and hence there exists an open set $P(f_0)\in W'_\ell\subseteq U'$ such that in $W'_\ell$ we have that $D^\ell Q$ satisfies an $\ell$-multilinear tame estimate of order $\mu'$ and base $r'$ (see Lemma~\ref{lem on derivative estimates on tame maps}). The map $P$ is continuous and hence $\bigcap_{\ell=1}^kP^{-1}(W'_\ell)$ is an open subset of $V$ containing $f_0$. By appealing to the tameness of $P$ and Lemma~\ref{lem on derivative estimates on tame maps} again, for $\ell\in\tcb{0,1,\dots,k}$ there exists an open set $f_0\in W_\ell\subseteq V$ such that in $W_\ell$ we have that $D^\ell P$ satisfies an $\ell$-multilinear tame estimate of order $\mu$ and base $r$.
		
		For each $R>0$, the set $O_{f_0,R}=\bigcap_{\ell=1}^k(W_\ell\cap P^{-1}(W'_\ell))\times B_{(E^{\max\tcb{r,r'+\mu}})^k}(0,R)$ is an open $V\times\prod_{p=1}^kE^{\max\tcb{r,r'+\mu}}$ containing $(f_0,0,\dots,0)$. For $\j{N}\ni s\ge\max\tcb{r,r'+\mu}$ and $(f,g_1,\dots,g_k)\in O_{f_0,R}\cap E^s$,  we use the Fa\`a di Bruno theorem (see, for instance, Section 2.4A in Abraham, Marsden, and Ratiu~\cite{AbMaRa_1988}) to compute
		\begin{multline}\label{waterloo}
			D^k(Q\circ P)(f)[g_1,\dots,g_k]\\=\sum_{\sig\in S_k}\sum_{\ell=1}^k\sum_{j_1+\cdots +j_\ell=k}c_{\sig,\ell,k,j_1,\dots,j_\ell}D^\ell Q\circ P(f)\tcb{D^{j_1}P(f),\dots,D^{j_\ell}P(f)}[g_{\sig(1)},\dots,g_{\sig(k)}],
		\end{multline}
		where $S_k$ denotes the permutation group on $\tcb{1,\dots,k}$, and $c_\ast$ denotes some combinatorial constant.  We will estimate the norm in $G^{s-(\mu+\mu')}$ of each term in the sum above. Fix $\ell\in\tcb{1,\dots,k}$ and $j_1,\dots,j_\ell\in\tcb{1,\dots,k}$ such that $j_1+\cdots+j_\ell=k$. Thanks to Lemma~\ref{lem on derivative estimates on tame maps} and the fact that $\max_{1 \le i \le k}\tnorm{g_i}_{E^r}\le R$, we are free to estimate
		\begin{equation}
			\begin{cases}
				\tnorm{D^{j_1}P(f)[g_1,\dots,g_{j_1}]}_{F^{s-\mu}}\\
				\qquad\vdots\\
				\tnorm{D^{j_\ell}P(f)[g_{j_{\ell-k+1}},\dots,g_k]}_{F^{s-\mu}}
			\end{cases}
			\lesssim_R 
			\tbr{\tnorm{f,g_1,\dots,g_k}_{E^s\times\cdots\times E^{s}}}.
		\end{equation}
		By the same argument,
		\begin{equation}
			\begin{cases}
				\tnorm{D^{j_1}P(f)[g_1,\dots,g_{j_1}]}_{F^{r'}}\\
				\qquad\vdots\\
				\tnorm{D^{j_\ell}P(f)[g_{j_{\ell-k+1}},\dots,g_k]}_{F^{r'}}
			\end{cases}
			\lesssim_R\tbr{\tnorm{f,g_1,\dots,g_k}_{E^{\max\tcb{r'+\mu}}\times\cdots\times E^{\tcb{r'+\mu}}}}\lesssim_R1.
		\end{equation}
		Thus, applying the estimate from Lemma~\ref{lem on derivative estimates on tame maps} once more yields
		\begin{multline}\label{sunset}
			\tnorm{D^\ell Q\circ P(f)\tcb{D^{j_1}P(f)[g_1,\dots,g_{j_1}],\dots,D^{j_\ell}P(f)[g_{j_\ell-k+1},\dots,g_{k}]}}_{G^{s-(\mu+\mu')}}\\\lesssim_R\tbr{\tnorm{f,g_1,\dots,g_k}_{E^s\times\cdots\times E^{s}}}.
		\end{multline}
		Then \eqref{waterloo} and \eqref{sunset} combine to give us the estimate
		\begin{equation}
			\tnorm{D^k(Q\circ P)(f)[g_1,\dots g_k]}_{G^{s-(\mu+\mu')}}\lesssim_R\tbr{\tnorm{f,g_1,\dots,g_k}_{E^s\times\cdots\times E^s}},
		\end{equation}
		which is the desired tame bound at level $k$.  The result thus holds for all $k \in \N$ by induction.
	\end{proof}
	
	A convenient application of the previous result is that tame products of tame maps result in a tame map. More precisely, we have the following. 

	\begin{coro}[Tame products of tame $C^k$ maps]\label{lem on product of smooth tame maps} Let  $\ell,k\in\N$ with $\ell\ge1$.  For $j\in\tcb{1,\dots,\ell}$ let  $\bf{E}_j=\tcb{E^s_j}_{s\in\j{N}}$ and $\bf{F}_j=\tcb{E^s_j}_{s\in\j{N}}$ be Banach scales over a common field, and let $\bf{G}=\tcb{G^s}_{s\in\j{N}}$ be a Banach scale over the same field.  Let $r_1,\dots,r_\ell\in\j{N}$, $\mu_j\in\j{r_j}$ for $j\in\tcb{1,\dots,\ell}$, $r'\in\j{N}$, and $\mu'\in\j{r'}$.  Finally, suppose that $U_j\subseteq E^{r_j}_j$ is open, $P_j\in T^k_{\mu_j,r_j}(U_j,\bf{E}_j;\bf{F}_j)$, and $B\in T^0_{\mu',r'}(\prod_{j=1}^\ell\bf{F}_j;\bf{G})$ is $k$-multilinear. If we set
		\begin{equation}
			\mu=\mu'+\max\tcb{\mu_1,\dots,\mu_\ell}
			\text{ and }
			r=\max\tcb{r_1,\dots,r_\ell,\mu_1+r',\dots,\mu_\ell+r'}
		\end{equation}
		and assume that $r\in\j{N}$, then the product map satisfies the inclusion
		\begin{equation}
			B(P_1,\dots,P_\ell)\in T^k_{\mu,r}\bp{\prod_{j=1}^\ell U_j\cap E^{r}_j,\prod_{j=1}^\ell\bf{E}_j;\bf{G}}.
		\end{equation}
		A similar assertion holds for products of strongly tame maps.
	\end{coro}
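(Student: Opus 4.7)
The plan is to express $B(P_1,\dots,P_\ell) = B \circ \tilde{P}$, where $\tilde{P}: \prod_{j=1}^\ell U_j \to \prod_{j=1}^\ell F^0_j$ is the Cartesian product map defined by $\tilde{P}(f_1,\dots,f_\ell)=(P_1(f_1),\dots,P_\ell(f_\ell))$, and then to apply Lemma~\ref{lem on composition of tame maps} together with Remark~\ref{remark on purely multlinear maps}. Set $\bar{r} = \max\{r_1,\dots,r_\ell\}$ and $\bar{\mu} = \max\{\mu_1,\dots,\mu_\ell\}$. Note that $r = \max\{\bar{r},\bar{\mu}+r'\}$, and $\mu=\mu'+\bar{\mu}$.

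First, I would verify that $\tilde{P} \in T^k_{\bar{\mu},\bar{r}}(\prod_{j=1}^\ell U_j \cap E^{\bar r}_j, \prod_j \bf{E}_j; \prod_j \bf{F}_j)$. This is essentially bookkeeping rather than a true obstacle. The map $\tilde{P}$ is ``diagonal'' in the sense that the $j^{\m{th}}$ component $P_j$ only depends on $f_j$, so the chain rule produces $D^p\tilde{P}(f)[g^{(1)},\dots,g^{(p)}] = (D^pP_1(f_1)[g_1^{(1)},\dots,g_1^{(p)}],\dots,D^pP_\ell(f_\ell)[g_\ell^{(1)},\dots,g_\ell^{(p)}])$ for each $p\in\{0,\dots,k\}$. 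For $\j{N}\ni s \ge \bar r$, the Banach scale embeddings $F^{s-\mu_j}_j \hookrightarrow F^{s-\bar\mu}_j$ allow one to bound $\|\tilde P(f)\|_{\prod_j F^{s-\bar\mu}_j} \lesssim \sum_j \|P_j(f_j)\|_{F^{s-\mu_j}_j}$ and similarly for derivatives, and then the tameness of each $P_j$ (used via Corollary~\ref{lem on derivative estimates on tame maps} at derivative orders $p\ge 1$) on an open neighborhood of each $f_j$ produces the required estimates on a product neighborhood. Continuity of $\tilde{P}:\prod_j(U_j\cap E^s_j)\to\prod_j F^{s-\bar\mu}_j$ and of its derivatives follows from the componentwise continuity of each $D^p P_j$.

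Second, because $B$ is $k$-multilinear and belongs to $T^0_{\mu',r'}(\prod_j \bf{F}_j;\bf{G})$, Remark~\ref{remark on purely multlinear maps} upgrades $B$ to an element of ${_{\m{s}}}T^\infty_{\mu',r'}(\prod_j\bf{F}_j;\bf{G})$; in particular, $B \in T^k_{\mu',r'}$. Note that $U'=\prod_j F^{r'}_j$ is the entire ambient product Banach space, so the containment requirement from Lemma~\ref{lem on composition of tame maps} is automatic: for $f\in \prod_j(U_j\cap E^{\max\{\bar r, r'+\bar\mu\}}_j)$, we have $s-\mu_j \geq r'$ for $s = \max\{\bar r, r'+\bar\mu\}$, which forces $P_j(f_j)\in F^{r'}_j$ for all $j$, i.e., $\tilde{P}(f)\in U'$. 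Then Lemma~\ref{lem on composition of tame maps} applied to $P=\tilde P$ and $Q=B$ immediately yields
\begin{equation}
B(P_1,\dots,P_\ell) = B\circ\tilde{P} \in T^k_{\bar{\mu}+\mu',\max\{\bar r,\bar\mu + r'\}}\bp{\prod_{j=1}^\ell U_j \cap E^{r}_j, \prod_{j=1}^\ell \bf{E}_j; \bf{G}},
\end{equation}
which matches the claim since $\mu=\mu'+\bar{\mu}$ and $r=\max\{\bar r,\bar\mu+r'\}=\max\{r_1,\dots,r_\ell,\mu_1+r',\dots,\mu_\ell+r'\}$. The strongly tame version follows by the same argument, using that bounded $\prod_j E^{\bar r}_j$-open subsets of $\prod_j U_j$ have bounded $E^{r_j}_j$-open projections onto each factor, so the strong tameness of each $P_j$ transfers to $\tilde P$, and strong tameness of $B$ from the remark transfers to the composition via the strong version of Lemma~\ref{lem on composition of tame maps}.

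The main obstacle, such as it is, is the verification of tameness of $\tilde{P}$ at each derivative order: one must carefully track how the product-scale norm $\|\cdot\|_{\prod_j E^s_j}$ interacts with the componentwise tame bounds, and in particular ensure that the neighborhoods produced by the tameness hypothesis on each $P_j$ combine into a valid product neighborhood of a given point. This is routine but genuinely the only nontrivial bookkeeping in the argument; once it is done, everything else is an immediate application of previously established results.
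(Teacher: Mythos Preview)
Your proposal is correct and follows exactly the paper's approach: upgrade $B$ via Remark~\ref{remark on purely multlinear maps}, observe that the Cartesian product map $(P_1,\dots,P_\ell)$ lies in $T^k_{\bar\mu,\bar r}$, and apply Lemma~\ref{lem on composition of tame maps}. The paper's proof is even terser than yours, simply declaring the inclusion $(P_1,\dots,P_\ell)\in T^k_{\max\{\mu_j\},\max\{r_j\}}$ to be ``clear'' and omitting the containment check you spell out.
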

	\begin{proof} We first note that by Remark~\ref{remark on purely multlinear maps}, $B$ in fact belongs to ${_{\m{s}}}T^\infty_{\mu',r'}(\prod_{j=1}^\ell\bf{F}_j;\bf{G})$. The inclusion
		\begin{equation}
			(P_1,\dots,P_\ell)\in T^k_{\max\tcb{\mu_1,\dots,\mu_\ell},\max\tcb{r_1,\dots,r_\ell}}\bp{\prod_{j=1}^\ell U_j\cap E^{\max\tcb{r_1,\dots,r_\ell}}_{j},\prod_{j=1}^\ell\bf{E}_j;\prod_{j=1}^\ell\bf{F}_j}
		\end{equation}
		is clear, so the conclusion then follows from Lemma~\ref{lem on composition of tame maps}.
	\end{proof}
	
	We can also combine families of tame maps by integrating over parameters. Although more general results hold, we will need only the following simple realization of this fact.
	
	\begin{lem}[Integrals of one-parameter families of tame $C^k$ maps] \label{lemma on addition of smooth tame maps}
		Let $k\in\N\cup\tcb{\infty}$, $\bf{E}=\tcb{E^s}_{s\in\j{N}}$ and $\bf{F}=\tcb{F^s}_{s\in\j{N}}$ be a pair of Banach scales over the same field, and let $\mu,r\in\j{N}$ satisfy $\mu\le r$.  Suppose that $U\subseteq E^r$ is an open set, and for all $t\in[0,1]$ let $P_t\in T^k_{\mu,r}(U,\bf{E};\bf{F})$.  Suppose additionally that the defining inequalities for these tame estimates (as in the first item of Definition~\ref{defn of smooth tame maps} for the map and its derivatives) are satisfied uniformly in $t\in[0,1]$ and that for every $j\in\tcb{0,\dots,k}$, $\j{N}\ni s\ge r$, $f\in U\cap E^s$, and $f_1,\dots,f_j\in E^s$ the map
		\begin{equation}\label{integrability hypotheses}
			[0,1]\ni t\mapsto D^jP_t(f)[f_1,\dots,f_j]\in F^{s-\mu}
		\end{equation}
		is continuous. Then the integral map $\int_0^1P_t\;\m{d}t: U \to F^0$  given by $f\mapsto\int_0^1P_t(f)\;\m{d}t$ is well-defined and satisfies $\int_0^1 P_{t}\;\m{d}t\in T^k_{\mu,r}(U,\bf{E};\bf{F})$. A similar assertion holds for integrals of strongly tame maps.
	\end{lem}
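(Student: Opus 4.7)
The plan is to build up the conclusion in three stages: well-definedness of the integral map, differentiability with an interchange of derivative and integral, and finally the tame estimates and continuity properties for the derivatives. Throughout, the key input is that the uniform-in-$t$ tame bounds combine with the continuity hypothesis \eqref{integrability hypotheses} to make dominated convergence arguments go through.

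First I would dispatch well-definedness. Given $s\in\j{N}$ with $s\ge r$ and $f\in U\cap E^s$, the hypothesis that $t\mapsto P_t(f)\in F^{s-\mu}$ is continuous makes $\int_0^1 P_t(f)\;\m{d}t$ a perfectly well-defined Bochner integral in $F^{s-\mu}$. Writing $Q(f)=\int_0^1 P_t(f)\;\m{d}t$, the uniform tame bound yields $\tnorm{Q(f)}_{F^{s-\mu}}\le \int_0^1 \tnorm{P_t(f)}_{F^{s-\mu}}\;\m{d}t \lesssim \tbr{\tnorm{f}_{E^s}}$, giving the tame estimate of order $\mu$ and base $r$ for $Q$ on the relevant open set (which can be taken to be a common neighborhood from the uniformity hypothesis).

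Next I would prove by induction on $j\in\tcb{0,\dots,k}$ that $Q:U\cap E^s\to F^{s-\mu}$ is $C^j$ with $D^jQ(f)[g_1,\dots,g_j]=\int_0^1 D^jP_t(f)[g_1,\dots,g_j]\;\m{d}t$. For the inductive step, fix $f\in U\cap E^s$ and $g\in E^s$ with $\tnorm{g}_{E^s}$ small enough that the segment from $f$ to $f+g$ stays in the tame neighborhood. Apply the fundamental theorem of calculus to $D^{j-1}P_t$ along this segment to write
\begin{equation}
D^{j-1}P_t(f+g)[\cdot]-D^{j-1}P_t(f)[\cdot]-D^jP_t(f)[g,\cdot]=\int_0^1\p{D^jP_t(f+\th g)-D^jP_t(f)}[g,\cdot]\;\m{d}\th.
\end{equation}
Integrating in $t$ and using the uniform multilinear tame bounds from Corollary~\ref{lem on derivative estimates on tame maps} (with constants independent of $t$) together with continuity of $\th\mapsto D^jP_t(f+\th g)$ lets dominated convergence produce the desired $o(\tnorm{g}_{E^s})$ bound in $F^{s-\mu}$. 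The resulting derivative $D^jQ(f)=\int_0^1 D^jP_t(f)\;\m{d}t$ exists as a Bochner integral in the appropriate space of multilinear maps thanks once again to the continuity hypothesis \eqref{integrability hypotheses} and the uniform tame control.

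Finally I would collect the tame estimates and $C^0$ properties. The same uniform tame bounds, integrated against $t$, show that $D^jQ$ inherits tameness of order $\mu$ and base $r$ in the sense of Lemma~\ref{lem on multilinear tame maps}. For continuity of $D^jQ:U\cap E^s\times(E^s)^j\to F^{s-\mu}$, if $(f_n,g_1^n,\dots,g_j^n)\to(f,g_1,\dots,g_j)$ in $E^s\times(E^s)^j$, then the pointwise-in-$t$ continuity of $D^jP_t$ from $E^s\times(E^s)^j$ to $F^{s-\mu}$ (part of $P_t\in T^k_{\mu,r}(U,\bf{E};\bf{F})$) combined with uniform tame domination gives the $F^{s-\mu}$-convergence $D^jQ(f_n)[g_1^n,\dots,g_j^n]\to D^jQ(f)[g_1,\dots,g_j]$ by dominated convergence. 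The strong tameness variant is proved identically, since on bounded $E^r$-open $V\subseteq U$ the hypothesis supplies constants uniform both in $t\in[0,1]$ and in points of $V$. The main obstacle is the interchange of derivative and integral in the inductive step; all other points reduce to uniform estimates and pointwise continuity. Once that exchange is justified by the dominated convergence argument above, the conclusion follows routinely.
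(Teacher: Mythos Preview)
Your proposal is correct and follows essentially the same approach as the paper, which merely sketches the argument by pointing to well-definedness via the continuity hypothesis, $C^k$-ness via ``standard dominated convergence arguments,'' and tame estimates via uniformity and the norm-of-integral bound. Your version is simply a more detailed fleshing out of that sketch, with the same inductive interchange of derivative and integral justified by the uniform tame domination.
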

	\begin{proof}
		The proof essentially amounts to checking definitions, so we will only sketch the argument.  Hypothesis \eqref{integrability hypotheses} ensures that the map $\int_0^1P_t\;\m{d}t$ is well-defined, while the assumed uniform tame estimates ensure that the integrands are uniformly bounded with respect to $t\in[0,1]$.  The map is $C^k$ as a map from $U\cap E^s$ to  $F^{s-\mu}$ thanks to \eqref{integrability hypotheses} and standard dominated convergence arguments.  The fact that $\int_0^1 P_t\;\m{d}t$ obeys the defining inequalities to be $\mu$-tamely $C^k$ with base $r$ follows from uniformity in $t$ and the fact that the norm of the integral is at most the integral of the norm.
	\end{proof}
	
	The latter half of this subsection is concerned with various specialized classes of Banach scales. The nicest classes of these are introduced in the subsequent definition, which closely follows Baldi and Haus \cite{MR3711883}.

	\begin{defn}[Smoothable and LP-smoothable Banach scales]\label{defn of smoothable and LP-smoothable Banach scales}
		Let $\bf{E} = \{E^s\}_{s\in \j{N}}$ be a Banach scale.
		\begin{enumerate}
			\item We say that $\bf{E}$ is smoothable if for each $j \in \N^+$ there exists a linear map $S_j : E^0 \to E^N$\index{\textbf{Linear maps}!100@$S_j$} satisfying the following smoothing conditions for every $u \in E^0$:
			\begin{equation}\label{smoothing_1}
				\norm{S_j u}_{E^s} \lesssim \norm{u}_{E^s} \text{ for all }s \in \j{N},
			\end{equation}
			\begin{equation}\label{smoothing_2}
				\norm{S_j u}_{E^t} \lesssim 2^{j(t-s)} \norm{S_j u}_{E^s}  \text{ for all } s,t \in \j{N} \text{ with } s < t,  
			\end{equation}
			\begin{equation}\label{smoothing_3}
				\norm{(I-S_j)u}_{E^s} \lesssim 2^{-j(t-s)} \norm{(I-S_j) u}_{E^t} \text{ for all } s,t \in \j{N} \text{ with } s < t,
			\end{equation}
			\begin{equation}\label{smoothing_4}
				\norm{(S_{j+1}-S_j)u}_{E^t} \lesssim 2^{j(t-s)}    \norm{(S_{j+1}-S_j)u}_{E^s} \text{ for all }s,t \in \j{N},
			\end{equation}
			where the implicit constants are independent of $j$ and are increasing in $s$ and $t$.

			\item We say that $\bf{E}$ is LP-smoothable if $\bf{E}$ is smoothable and the smoothing operators satisfy the following Littlewood-Paley condition:   for $s \in \j{N}$ and $u \in E^s$ we have that
			\begin{equation}\label{smoothing_LP_1}
				\lim_{j\to \infty} \tnorm{(I-S_j)u}_{E^s} =0, 
			\end{equation}
			and there exists a constant $A >0$, possibly depending on $s$, such that 
			\begin{equation}\label{smoothing_LP_2}
				A^{-1} \norm{u}_{E^s} \le \bp{ \sum_{j=0}^\infty \norm{\Updelta_j u}_{E^s}^2 }^{1/2} \le A \norm{u}_{E^s},
			\end{equation}
			where the operators $\{\Updelta_j\}_{j=0}^\infty$ are defined via $\Updelta_j=S_{j+1}-S_j$\index{\textbf{Linear maps}!101@$\Updelta_j$} with the convention that $S_0=0$.
		\end{enumerate}
	\end{defn}
	
	We now give some examples of LP-smoothable Banach scales.  The first is trivial, but instructive.  
	
	\begin{exa}[A fixed Banach space]\label{example of a fixed Banach space}
		Suppose that $X$ is a Banach space and $N \in \N \cup \{\infty\}$. Then the scale $\tcb{X}_{s \in \j{N}}$, generated by $X$ alone, is LP-smoothable in the sense of Definition~\ref{defn of smoothable and LP-smoothable Banach scales}, provided we take $S_j=I$ for all $j\in\N^+$.
	\end{exa}
	
	We also have the less trivial example of Sobolev spaces on all of Euclidean space.
	
	\begin{exa}[Sobolev spaces on $\R^d$]\label{example of Euclidean Soblev spaces}
		Let $V$ be a finite dimensional real vector space and $N \in \N \cup \{\infty\}$.  The real Banach scale of Sobolev spaces $\tcb{H^s(\R^d;V)}_{s\in \j{N}}$ is LP-smoothable for the smoothing operators $\tcb{S_j}_{j=0}^\infty$ given by $S_j=\mathds{1}_{B(0,2^j)}(\grad/2\pi\ii)$.
	\end{exa}
	
	Unfortunately, the (LP-)smoothable Banach scales are insufficiently general for our purposes. As such, we introduce a broader class that captures Banach scales which are essentially closed and complemented subspaces of LP-smoothable Banach scales.  This is analogous to Hamilton's notion of a tame Fr\'echet space, which is given in Definition 1.3.2 of~\cite{MR656198}.

	\begin{defn}[Tame direct summands and tame Banach scales]\label{defn of tame direct summands and tame Banach scales}
		Let $\bf{E} = \{E^s\}_{s\in \j{N}}$ be a Banach scale.
		\begin{enumerate}
			\item Suppose that  $\bf{F} = \{F^s\}_{s\in \j{N}}$ is a Banach scale over the same field as $\bf{E}$. We say that $\bf{E}$ is a tame direct summand of $\bf{F}$ if there exist bounded linear maps $\lambda : E^0 \to F^0$ and $\rho : F^0 \to E^0$ such that the following hold.
			\begin{enumerate}
				\item $\rho \lambda u = u$ for all $u \in E^0$.
				\item For $s \in \j{N}$ we have that $\lambda(E^s) \subseteq F^s$ and $\rho(F^s) \subseteq E^s$, and the induced maps $\lambda : E^s \to F^s$ and $\rho : F^s \to E^s$ are bounded and linear.
			\end{enumerate}
			In this case, we say $\lambda$ is the lifting map and $\rho$ is the restriction map.
			\item We say that $\bf{E}$ is tame if there exists an LP-smoothable Banach scale $\bf{F}$ such that $\bf{E}$ is a tame direct summand of $\bf{F}$.
		\end{enumerate}
	\end{defn}
	
	\begin{rmk}\label{Massachusetts}
		We note that if  $\bf{E}$ is a tame direct summand of $\bf{F}$, then basic functional analysis shows that $\lambda(E^s) \subseteq F^s$ is a closed and complemented subspace of $F^s$, and $\pi = \lambda \circ \rho : F^s \to F^s$ is bounded and linear projection onto $\lambda(E^s)$.  Consequently, for each $s \in \j{N}$ there exists a closed subspace $G^s \subseteq F^s$ such that $F^s = \lambda(E^s) \oplus G^s$.  This is the motivation for calling $\bf{E}$ a direct summand of $\bf{F}$. Moreover, if $\bf{E}$ is a direct summand of $\bf{F}$, then for each $s \in \N$ we have the equivalence $\norm{u}_{E^s} \asymp \norm{\lambda u}_{F^s}$ for $u\in E^s$.  
	\end{rmk}
	
	We have the following example of a tame Banach scale.
	
	\begin{exa}[Sobolev spaces on domains]\label{example on Sobolev spaces on domains}
		Let $U\subset\R^d$ be an open set that is a Stein extension domain in the sense of Definition~\ref{defn Stein-extension operator}, and let $\mathfrak{E}_U$ denote the associated extension operator.  Let $V$ be a finite dimensional real vector space and $N \in \N \cup \{\infty\}$.  The real Banach scale of Sobolev spaces $\tcb{H^s(U;V)}_{s\in \j{N}}$ is tame in the sense of Definition~\ref{defn of tame direct summands and tame Banach scales}. Indeed, we realize this scale as a tame direct summand of $\tcb{H^s(\R^d;V)}_{s \in \j{N} }$, which is LP-smoothable by Example~\ref{example of Euclidean Soblev spaces}, with the lifting operator   $\lambda=\mathfrak{E}_U$ and restriction operator given by standard restriction, $\rho=\mathfrak{R}_U$.
	\end{exa}
	
	We now record a result on products of (LP-)smoothable and tame Banach scales, the proof of which is straightforward and thus omitted.
	
	\begin{lem}[Products of Banach scales]\label{lemma on products of types of Banach scales}
		The product of a finite family of (smoothable, LP-smoothable, tame) Banach scales over a common field is again a (smoothable, LP-smoothable, tame) Banach scale.
	\end{lem}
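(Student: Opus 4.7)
The plan is to reduce everything to coordinate-wise verifications, using the fact that the product norm $\|(u_1,\dots,u_n)\|_{\prod_{i=1}^n E_i^s}=\bigl(\sum_{i=1}^n\|u_i\|_{E_i^s}^2\bigr)^{1/2}$ already makes each component projection and each component inclusion bounded with constants independent of the scale index $s$. The Banach scale property for $\bf{E}=\prod_{i=1}^n\bf{E}_i$ is immediate from Remark~\ref{scale_prod_rmk}, since the non-expansive inclusion $\prod_i E_i^{s+1}\hookrightarrow\prod_i E_i^s$ follows directly from $\|\cdot\|_{E_i^s}\le\|\cdot\|_{E_i^{s+1}}$ for each $i$.

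For the smoothable case, given smoothing operators $\{S_j^{(i)}\}_{j\in\N^+}$ on each factor $\bf{E}_i$, I would simply define $S_j:\prod_i E_i^0\to\prod_i E_i^N$ by $S_j(u_1,\dots,u_n)=(S_j^{(1)}u_1,\dots,S_j^{(n)}u_n)$. Then each of the four smoothing conditions \eqref{smoothing_1}--\eqref{smoothing_4} is verified by squaring, summing over $i\in\{1,\dots,n\}$, and taking the square root: the four inequalities for the product reduce to the $\ell^2$ sum of the same inequalities for each component, with implicit constant the maximum of the $n$ component constants (which remains increasing in $s,t$ and independent of $j$).

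For the LP-smoothable case, the same componentwise $S_j$ works. Condition \eqref{smoothing_LP_1} follows since $\|(I-S_j)u\|_{\prod E_i^s}^2=\sum_i\|(I-S_j^{(i)})u_i\|_{E_i^s}^2$ and each summand vanishes as $j\to\infty$ by hypothesis (with uniform domination by $\|u\|_{\prod E_i^s}^2$, finite, for interchanging limits if one prefers). Condition \eqref{smoothing_LP_2} follows from Fubini-type interchange of the $\ell^2$ sums: swap the order of summation over $i\in\{1,\dots,n\}$ and $j\in\N$ to write $\sum_{j=0}^\infty\|\Updelta_j u\|_{\prod E_i^s}^2=\sum_{i=1}^n\sum_{j=0}^\infty\|\Updelta_j^{(i)}u_i\|_{E_i^s}^2$ and apply the two-sided bound on each factor, obtaining an equivalent bound with constant $A=\max_i A_i$.

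Finally, for the tame case, suppose each $\bf{E}_i$ is a tame direct summand of an LP-smoothable scale $\bf{F}_i$ with lifting $\lambda_i:E_i^0\to F_i^0$ and restriction $\rho_i:F_i^0\to E_i^0$. By the LP-smoothable case just handled, the product $\bf{F}=\prod_{i=1}^n\bf{F}_i$ is LP-smoothable. Define $\lambda:\prod_i E_i^0\to\prod_i F_i^0$ by $\lambda(u_1,\dots,u_n)=(\lambda_1 u_1,\dots,\lambda_n u_n)$ and $\rho:\prod_i F_i^0\to\prod_i E_i^0$ by $\rho(w_1,\dots,w_n)=(\rho_1 w_1,\dots,\rho_n w_n)$. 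Boundedness of $\lambda$ and $\rho$ between the $s$-level product spaces follows from componentwise boundedness and the product norm structure, while the retraction identity $\rho\lambda=\m{id}$ is immediate from $\rho_i\lambda_i=\m{id}$ on each factor. Hence $\bf{E}$ is a tame direct summand of the LP-smoothable $\bf{F}$, so $\bf{E}$ is tame. There is no genuine obstacle here: the only thing to keep track of is that implicit constants stay uniform in $j$ across the $n$ factors, which is automatic since $n$ is finite.
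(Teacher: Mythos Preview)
Your proposal is correct and is precisely the straightforward componentwise verification the paper has in mind (the paper omits the proof entirely, calling it straightforward). Defining all operators diagonally and checking each of the smoothing and LP conditions by squaring, summing over the finitely many factors, and taking square roots is the natural route, and your handling of the tame case via the product of the ambient LP-smoothable scales is exactly right.
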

	
	Consider now the following useful inequalities about smoothing in tame Banach scales.
	
	\begin{lem}[Smoothing in tame Banach scales]\label{lem on useful smoothing inequalities in tame Banach scales}
		Let $\bf{E}=\tcb{E^s}_{s\in\j{N}}$ be a tame Banach scale in the sense of Definition~\ref{defn of tame direct summands and tame Banach scales}.  There exist a sequence of smoothing operators $\tcb{T_j}_{j=0}^\infty\subset\mathcal{L}(E^0;E^N)$ and, for $s\in\j{N}$, sequences of seminorms $\tcb{\bf{m}_j^s}_{j=0}^\infty,\tcb{\bf{n}_j^s}_{j=0}^\infty$ on $E^s$ such that the following hold.
		\begin{enumerate}
			\item For all $g \in E^s$ we have the estimates
			\begin{equation}
				\tnorm{(T_{j+1}-T_j)g}_{E^s}\lesssim\bf{m}^s_j(g)
				\text{ and }
				\tnorm{(I-T_j)g}_{E^s}\lesssim\bf{n}^s_j(g),
			\end{equation}
			where the implicit constants depend only on $s$.
			
			\item For $s,t\in\j{N}$ we have that
			\begin{equation}
				\begin{cases}
					\bf{m}_j^s\asymp 2^{j(s-t)}\bf{m}_j^t&\text{for all }s,t,\\
					\bf{n}_j^s\lesssim 2^{j(s-t)}\bf{n}_j^t&\text{for all }s\le t,
				\end{cases}
			\end{equation}
			with the implicit constants depending only on $s$ and $t$.
			\item We have the equivalence
			\begin{equation}
				\tnorm{\cdot}_{E^s}^2\asymp\sum_{j=0}^\infty(\bf{m}_j^s)^2
			\end{equation}
			with implicit constants depending only on $s$.
			\item For every $g \in E^s$ we have that $\bf{n}_j^s(g)\lesssim\tnorm{g}_{E^s}$ (with implicit constants depending only on $s$), and $\bf{n}_j^s(g) \to 0$ as $j \to \infty$.
			\item $\bf{E}$ is terminally dense in the sense of the fourth item of Definition~\ref{definition of Banach scales}.
		\end{enumerate}
	\end{lem}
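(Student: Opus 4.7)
The plan is to construct $T_j$, $\bf{m}_j^s$, and $\bf{n}_j^s$ directly from the LP-smoothable scale that $\bf{E}$ is a tame direct summand of, and then verify each item is essentially inherited from the corresponding property in Definition~\ref{defn of smoothable and LP-smoothable Banach scales} via the lift/restrict pair.

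By hypothesis, there is an LP-smoothable Banach scale $\bf{F}=\tcb{F^s}_{s\in\j{N}}$ with lifting $\lambda:E^s\to F^s$ and restriction $\rho:F^s\to E^s$ satisfying $\rho\lambda=I_{E^s}$ and both bounded on every level of the scale. Let $\tcb{S_j}_{j=0}^\infty$ denote the smoothing operators on $\bf{F}$ from Definition~\ref{defn of smoothable and LP-smoothable Banach scales}, with $\Updelta_j=S_{j+1}-S_j$ and $S_0=0$. I would then define
\begin{equation}
T_j=\rho\,S_j\,\lambda:E^0\to E^N,\quad \bf{m}_j^s(g)=\tnorm{\Updelta_j\lambda g}_{F^s},\quad \bf{n}_j^s(g)=\tnorm{(I-S_j)\lambda g}_{F^s}.
\end{equation}
Since $\rho$ and $\lambda$ are bounded on every level, $T_j$ maps into $E^N$ and both $\bf{m}_j^s$ and $\bf{n}_j^s$ are well-defined seminorms on $E^s$. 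The identity $\rho\lambda=I$ gives $(I-T_j)g=\rho(I-S_j)\lambda g$ and $(T_{j+1}-T_j)g=\rho\Updelta_j\lambda g$, so item (1) follows immediately from boundedness of $\rho:F^s\to E^s$.

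For item (2), apply the bound~\eqref{smoothing_2} (with $S_j$ replaced by $\Updelta_j$, which is permitted by \eqref{smoothing_4}) to $\lambda g$ to obtain $\bf{m}_j^t(g)\lesssim 2^{j(t-s)}\bf{m}_j^s(g)$ for all $s,t$; swapping the roles of $s$ and $t$ yields the reverse inequality, producing the claimed equivalence. Similarly, \eqref{smoothing_3} applied to $\lambda g$ gives $\bf{n}_j^s(g)\lesssim 2^{j(s-t)}\bf{n}_j^t(g)$ for $s\le t$. For item (3), the remark in Definition~\ref{defn of tame direct summands and tame Banach scales} (expanded in Remark~\ref{Massachusetts}) yields the norm equivalence $\tnorm{g}_{E^s}\asymp\tnorm{\lambda g}_{F^s}$; combining this with the Littlewood--Paley square function equivalence \eqref{smoothing_LP_2} applied to $\lambda g\in F^s$ produces
\begin{equation}
\tnorm{g}_{E^s}^2\asymp\tnorm{\lambda g}_{F^s}^2\asymp\sum_{j=0}^\infty\tnorm{\Updelta_j\lambda g}_{F^s}^2=\sum_{j=0}^\infty(\bf{m}_j^s(g))^2.
\end{equation}
Item (4) is then immediate: the uniform bound $\bf{n}_j^s(g)\le\tnorm{\lambda g}_{F^s}+\tnorm{S_j\lambda g}_{F^s}\lesssim\tnorm{\lambda g}_{F^s}\asymp\tnorm{g}_{E^s}$ follows from \eqref{smoothing_1}, and the decay $\bf{n}_j^s(g)\to 0$ is exactly \eqref{smoothing_LP_1} applied to $\lambda g$.

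Finally, for the terminal density in item (5), note that $T_j g=\rho S_j\lambda g\in E^N$ since $S_j$ maps $F^0$ into $F^N$ and $\rho$ restricts level-by-level, and item (1) combined with item (4) gives $\tnorm{g-T_j g}_{E^s}\lesssim\bf{n}_j^s(g)\to 0$, so $E^N$ is dense in $E^s$ for every $s\in\j{N}$. The only step requiring any care is item (3), where one must use the two-sided equivalence $\tnorm{g}_{E^s}\asymp\tnorm{\lambda g}_{F^s}$ to transfer the Littlewood--Paley square-function bound from $\bf{F}$ to $\bf{E}$; this is where the full strength of the direct-summand hypothesis (both $\lambda$ and $\rho$ bounded at every level, with $\rho\lambda=I$) rather than mere continuous inclusion is essential. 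All other items are routine pushforwards of the LP-smoothable structure on $\bf{F}$ through the intertwining identity $T_j=\rho S_j\lambda$.
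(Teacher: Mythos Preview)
Your proposal is correct and follows essentially the same approach as the paper: both define $T_j=\rho S_j\lambda$, $\bf{m}_j^s(g)=\tnorm{\Updelta_j\lambda g}_{F^s}$, and $\bf{n}_j^s(g)=\tnorm{(I-S_j)\lambda g}_{F^s}$, then read off each item from the corresponding property in Definition~\ref{defn of smoothable and LP-smoothable Banach scales} together with Remark~\ref{Massachusetts}. You have supplied more detail than the paper's terse proof, but the construction and verification are identical.
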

	\begin{proof}
		Let $\bf{F}$ be an LP-smoothable Banach scale witnessing the definition of tameness for $\bf{E}$ with lifting $\lambda\in\mathcal{L}(E^0;F^0)$, restriction $\rho\in\mathcal{L}(F^0;E^0)$, and smoothing operators $\tcb{S_j}_{j=0}^\infty\subseteq\mathcal{L}(F^0;F^N)$. We define $T_j=\rho\circ S_j\circ\lambda$ and for $s\in\j{N}$ and $g\in E^s$ set 
		\begin{equation}
			\bf{m}_j^s(g)=\tnorm{(S_{j+1}-S_j)\circ \lambda g}_{F^s} 
			\text{ and }
			\bf{n}^s_j(g)=\tnorm{(I-S_j)\circ\lambda g}_{F^s}.
		\end{equation}
		Then the first item follows from the boundedness of $\rho$. The second, third, and fourth items are immediate consequences of Definition~\ref{defn of smoothable and LP-smoothable Banach scales} and Remark~\ref{Massachusetts}. The fifth assertions follows from the first and the fourth.
	\end{proof}
	
	Next we give an interpolation result.

	\begin{lem}[Log-convexity in tame Banach scales]\label{lem on log-convexity in tame Banach scales}
		Let $\bf{E}=\tcb{E^s}_{s\in\N}$ be a tame Banach scale in the sense of Definition~\ref{defn of tame direct summands and tame Banach scales} over either $\R$ or $\C$. For $r,s,t \in \j{N}$ with $r < s < t$ we have that 
		\begin{equation}\label{smoothing_lemma_02}
			\norm{u}_{E^s} \lesssim \norm{u}_{E^r}^{\frac{t-s}{t-r}} \norm{u}_{E^t}^{\frac{s-r}{t-r}}
		\end{equation}
		for all $u \in E^t$, where the implicit constant is increasing $r,t$.
	\end{lem}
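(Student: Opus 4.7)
The plan is to reduce to the standard smoothing-based interpolation argument by passing through the LP-smoothable witness scale. Let $\bf{F}=\tcb{F^s}_{s\in\j{N}}$ be an LP-smoothable Banach scale with smoothing operators $\tcb{S_j}_{j=0}^\infty$, lifting $\lambda$, and restriction $\rho$ realizing $\bf{E}$ as a tame direct summand of $\bf{F}$ in the sense of Definition~\ref{defn of tame direct summands and tame Banach scales}. By Remark~\ref{Massachusetts}, for each $\sigma \in \j{N}$ we have the norm equivalence $\tnorm{v}_{E^\sigma} \asymp \tnorm{\lambda v}_{F^\sigma}$ with constants depending on $\sigma$. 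Consequently, the task reduces to proving the corresponding log-convex bound on $\lambda u \in F^t$ and transferring through this equivalence at the end.

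For $u \in E^t$ with $\tnorm{u}_{E^r} > 0$ (the case $u = 0$ being trivial given the non-expansive inclusions), and for arbitrary $j \in \N$, I would decompose $\lambda u = S_j \lambda u + (I - S_j)\lambda u$ inside $F^s$. Applying estimate~\eqref{smoothing_2} with indices $r < s$ followed by~\eqref{smoothing_1} gives $\tnorm{S_j \lambda u}_{F^s} \lesssim 2^{j(s-r)} \tnorm{\lambda u}_{F^r}$, and applying~\eqref{smoothing_3} with indices $s < t$ together with the triangle inequality and~\eqref{smoothing_1} gives $\tnorm{(I-S_j) \lambda u}_{F^s} \lesssim 2^{-j(t-s)} \tnorm{\lambda u}_{F^t}$. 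Summing these two bounds and transferring back through the norm equivalence yields
\begin{equation}
\tnorm{u}_{E^s} \lesssim 2^{j(s-r)} \tnorm{u}_{E^r} + 2^{-j(t-s)} \tnorm{u}_{E^t} \text{ for every } j \in \N.
\end{equation}

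The final step is to optimize over $j$. The continuous minimizer of the right-hand side solves $2^{j(t-r)} = \tnorm{u}_{E^t}/\tnorm{u}_{E^r}$, which is $\ge 1$ since the non-expansive inclusion $E^t \emb E^r$ forces $\tnorm{u}_{E^r} \le \tnorm{u}_{E^t}$; hence taking $j$ to be the integer part of $(t-r)^{-1}\log_2(\tnorm{u}_{E^t}/\tnorm{u}_{E^r})$ is a legal choice in $\N$. For this $j$ the two terms equilibrate up to a bounded multiplicative factor, producing the target bound $\tnorm{u}_{E^s} \lesssim \tnorm{u}_{E^r}^{(t-s)/(t-r)}\tnorm{u}_{E^t}^{(s-r)/(t-r)}$. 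The only point requiring care is tracking the monotonicity of the implicit constant in $r$ and $t$, which follows because~\eqref{smoothing_1}--\eqref{smoothing_3} and the norm equivalence have constants that are themselves increasing in the indices involved (with $s$ bounded by $t$). No genuine obstacles arise: this is the classical dyadic interpolation argument, and the tame direct summand structure exists precisely to make it available in this broader setting.
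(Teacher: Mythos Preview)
Your proposal is correct and follows essentially the same approach as the paper: pass to the LP-smoothable witness scale via the lifting map $\lambda$ and the norm equivalence of Remark~\ref{Massachusetts}, decompose $\lambda u = S_j \lambda u + (I-S_j)\lambda u$, apply the smoothing estimates~\eqref{smoothing_1}--\eqref{smoothing_3} to obtain the two-term dyadic bound, and then choose $j$ as the integer part of $(t-r)^{-1}\log_2(\tnorm{u}_{E^t}/\tnorm{u}_{E^r})$. The paper's proof is slightly terser in its citation of the smoothing properties, but the argument is the same.
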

	\begin{proof}
		The bound is trivial if $u=0$, so assume $u \neq 0$.  Let $\bf{F}$ be an LP-smoothable Banach scale witnessing the definition of tameness for $\bf{E}$, with lifting, restriction, and smoothing operators $\lambda$, $\rho$, and $\tcb{S_j}_{j=0}^\infty$, respectively. By Remark~\ref{Massachusetts},  we have that $\tnorm{u}_{E^p}\asymp\tnorm{\lambda u}_{F^p}$ for $p\in\tcb{s,r,t}$. Now we invoke the properties of the smoothing operators from Definition~\ref{defn of smoothable and LP-smoothable Banach scales} to see that for any $j\in\N$
		\begin{equation}
			\tnorm{\lambda u}_{F^s}\le\tnorm{(I-S_j)\lambda u}_{F^s}+\tnorm{S_j\lambda u}_{F^s}\lesssim 2^{j(s-t)}\tnorm{\lambda u}_{F^t}+2^{j(s-r)}\tnorm{\lambda u}_{F^r}
		\end{equation}
		and hence $\tnorm{ u}_{E^s}\lesssim 2^{j(s-t)}\tnorm{ u}_{E^t}+2^{j(s-r)}\tnorm{u}_{E^r}$. Now, since $1\le \tnorm{u}_{E^t}/\tnorm{u}_{E^r}$, we can choose  $j = \lfloor  \log(\tnorm{u}_{E^t}/\tnorm{u}_{E^r}) /  ((t-r) \log 2)  \rfloor \in \N$ to obtain the desired inequality.
	\end{proof}
	
	\subsection{Mapping hypotheses and statement of the inverse function theorem}\label{did you remeber to  start your sourdough starter? no im a fish}
	
	We now introduce  a lengthy definition that records a number of conditions that must be placed on the nonlinear map in our version of the Nash-Moser inverse function theorem.  

	\begin{defn}[Mapping hypotheses]\label{defn of the mapping hypotheses}
		We say that a triple $(\bf{E},\bf{F},\Psi)$ satisfies the RI mapping hypotheses with parameters $(\mu,r,R) \in \j{N}^3$ if $\bf{E} = \{E^s\}_{s\in \j{N}}$ and $\bf{F} = \{F^s\}_{s\in \j{N}}$ are Banach scales over a common field, $1 \le \mu \le r < R < \infty$, $R+\mu \in \j{N}$, and there exists $0 < \delta_r \in \R$ such that  $\Psi : B_{E^r}(0,\delta_r) \to F^{r-\mu}$ is a map satisfying the following.
		\begin{enumerate}
			\item $\Psi(0)=0$.
			
			\item $\mu$-tamely $C^2$: For every $r-\mu \le s \in \j{N-\mu}$ we have that $\Psi : B_{E^r}(0,\delta_r) \cap E^{s+\mu} \to F^s$ is $C^2$, and for every $u_0 \in B_{E^r}(0,\delta_r) \cap E^{s+\mu}$ we have the tame estimate 
			\begin{equation}\label{C2 tame estimates}
				\norm{D^2 \Psi(u_0)[v,w] }_{F^s} \le C_1(s)\tp{\norm{v}_{E^{s+\mu}} \norm{w}_{E^r} 
					+ \norm{v}_{E^r} \norm{w}_{E^{s+\mu}}  
					+  \br{\norm{u_0}_{E^{s+\mu}} } \norm{v}_{E^r} \norm{w}_{E^r} }.
			\end{equation}
			Here the constant $C_1(s)$ is increasing in $s$. In other words, we have the inclusion $\Psi \in{_{\m{s}}}T^2_{\mu,r}(B_{E^r}(0,\del_r),\bf{E};\bf{F})$ according to the notation from Definition~\ref{defn of smooth tame maps}.
			
			\item Derivative inversion: There exists $\delta_R \in \R$ satisfying $0 < \delta_R \le \delta_r$ such that for every $u_0 \in B_{E^r}(0,\delta_R) \cap E^{N}$ there exists a bounded linear operator $L(u_0) : F^r \to E^r$ satisfying the following three conditions.
			\begin{enumerate}
				\item For every $s \in \N \cap [r,R]$  we have that the restriction of $L(u_0)$ to $F^s$ defines a bounded linear operator with values in $E^s$, i.e. $L(u_0) \in \mathcal{L}(F^s; E^s)$.
				\item $D\Psi(u_0) L(u_0) f = f$ for every $f \in F^r$.

				\item We have the tame estimate 
				\begin{equation}\label{tame estimates on the right inverse}
					\norm{L(u_0) f}_{E^s} \le C_2(s) \tp{\norm{f}_{F^s} + \br{\norm{u_0}_{E^{s+\mu}}} \norm{f}_{F^r}}
				\end{equation}
				for every $f \in F^s$ and $r \le s \le R$, where again the constant $C_2(s)$ is increasing in $s$.
			\end{enumerate}
		\end{enumerate}
		Here the use of the prefix RI- is meant to indicate that the maps $L(u_0)$ are only required to be right inverses.  We say that the triple $(\bf{E},\bf{F},\Psi)$ satisfies the LRI mapping hypotheses if condition $(b)$ in the third item is augmented by the left-inverse condition  $(b'):$  
		\begin{equation}
			L(u_0) D\Psi(u_0) v = v \text{ for every }v \in E^{r+\mu}.
		\end{equation}
  See Figure~\ref{fig:NM commutative diagram} for a diagrammatic depiction of how  $L(u_0)$ and $D\Psi(u_0)$ interact with $\bf{E}$ and $\bf{F}$.
	\end{defn}
	
 \begin{figure}[h]
    \centering
    \usetikzlibrary {calc,positioning,shapes.misc}
\begin{tikzpicture}[space/.style={align=center,inner sep = 0.2 cm,anchor = center}, map/.style={fill=white,align=center,inner sep = 0.2 cm,anchor = center}]

    \node at (0,0) [space] (Fs1){
    $F^{s+\mu}$
    };
    \node at (0,3) [space] (Es1){
    $E^{s+\mu}$
    };
    \draw[-stealth] (Fs1) -- (Es1);

    \node at (0,1.5) [map] (Fs1Es1){
    $L(u_0)$
    };

    \node at (4,0) [space] (Fs){
    $F^s$
    };
    \node at (4,3) [space] (Es){
    $E^s$
    };
    \draw[-stealth] (Fs) -- (Es);
    \node at (4,1.5) [map] (FsEs){
    $L(u_0)$
    };

    \draw[-stealth] (Es1) -- (Fs);
    \node at (2,1.5) [map] (Es1Fs){
    $D\Psi(u_0)$
    };

    \draw[right hook-stealth] (Es1) -- (Es);
    \draw[right hook-stealth] (Fs1) -- (Fs);

    \node at (8,0) [space] (Fs-1){
    $F^{s-\mu}$
    };
    \node at (8,3) [space] (Es-1){
    $E^{s-\mu}$
    };
    \draw[-stealth] (Fs-1) -- (Es-1);
    \node at (8,1.5) [map] (Fs-1Es-1){
    $L(u_0)$
    };

    \draw[-stealth] (Es) -- (Fs-1);
    \node at (6,1.5) [map] (EsFs-1){
    $D\Psi(u_0)$
    };

    \draw[right hook-stealth] (Fs) -- (Fs-1);
    \draw[right hook-stealth] (Es) -- (Es-1);

    \node at (10,0) [space] (F0){
    $F^0$
    };
    \node at (10,3) [space] (E0){
    $E^0$
    };
    \draw[right hook-stealth] (Es-1) -- (E0);
    \draw[right hook-stealth] (Fs-1) -- (F0);

    \node at (-2,0) [space] (FN){
    $F^N$
    };
    \node at (-2,3) [space] (EN){
    $E^N$
    };
    \draw[right hook-stealth] (FN) -- (Fs1);
    \draw[right hook-stealth] (EN) -- (Es1);

\end{tikzpicture}
    \caption{Commutative diagram arising from the LRI mapping hypotheses for $u_0\in E^N\cap B_{E^r}(0,\del_R)$ and $r+\mu\le s\le R-\mu$. The `$\hookrightarrow$' are the inclusion maps.}
    \label{fig:NM commutative diagram}
\end{figure}

	To conclude this subsection, we state our version of the Nash-Moser inverse function theorem, which is divided into two parts.  
	
	\begin{thm}[Inverse function theorem]\label{thm on nmh}
		Let $\bf{E} = \{E^s\}_{s\in \j{N}}$ and $\bf{F} = \{F^s\}_{s\in \j{N}}$ be Banach scales over the same field, and assume that $\bf{E}$ and $\bf{F}$ satisfy one of the following two conditions:
		\begin{enumerate}[I:]
			\item $\bf{E}$ and $\bf{F}$ are LP-smoothable (see Definition~\ref{defn of smoothable and LP-smoothable Banach scales}), or 
			\item $\bf{E}$ is tame and $\bf{F}$ is a tame direct summand of $\bf{E}$ (see Definition~\ref{defn of tame direct summands and tame Banach scales}).
		\end{enumerate} 
		Assume the triple $(\bf{E},\bf{F},\Psi)$ satisfies the LRI mapping hypotheses of Definition \ref{defn of the mapping hypotheses} with parameters $(\mu,r,R)$ satisfying $2(r+\mu) + 1 < (r+R)/2$, and set $\be=2(r+\mu)+1\in\j{N}$.   Then there exist $\ep,\kappa_1,\kappa_2 >0$ such that the following hold.
		\begin{enumerate}
			\item Existence of local inverse: For every $g \in B_{F^\beta}(0,\ep)$ there exists a unique $u \in B_{E^\beta}(0,\kappa_1 \ep)$ such that $\Psi(u) =g$. 

			\item Estimates of local inverse:  The induced bijection $\Psi^{-1} :  B_{F^\beta}(0,\ep) \to \Psi^{-1}(B_{F^\beta}(0,\ep)) \cap  B_{E^\beta}(0,\kappa_1 \ep)$ obeys the estimate
			\begin{equation}\label{nmh_01}
				\tnorm{\Psi^{-1}(g)}_{E^\beta} \le \kappa_1 \tnorm{g}_{F^\beta} \text{ for all }g \in B_{F^\beta}(0,\ep).
			\end{equation}
			Moreover, if $\N\ni\nu\le R+r-2\be$, then we have that
			\begin{equation}\label{derivatives_gain}
				\Psi^{-1}:B_{F^\be}(0,\ep)\cap F^{\be+\nu}\to E^{\be+\nu}
			\end{equation}
			with the estimate
			\begin{equation}\label{when the rain comes they run and hide}
				\tnorm{\Psi^{-1}(g)}_{E^{\be+\nu}}\lesssim\tnorm{g}_{F^{\be+\nu}},
			\end{equation}
			where the implied constant is independent of $g$.
			
			\item Basic continuous dependence: The map $\Psi^{-1}$ obeys the Lipschitz bound
			\begin{equation}\label{hey cuz}
				\tnorm{\Psi^{-1}(g_0)-\Psi^{-1}(g_1)}_{E^{\be-\mu}}\le\kappa_2\tnorm{g_0-g_1}_{F^{\be-\mu}}
			\end{equation}
			for all $g_0,g_1\in B_{F^\be}(0,\ep)$.
		\end{enumerate}
	\end{thm}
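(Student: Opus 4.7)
The plan is to prove the theorem via a modified Newton--Nash--Moser iteration scheme built from smoothing operators, in the spirit of Baldi--Haus~\cite{MR3711883}, while using the left-inverse condition $(b')$ to extract uniqueness and continuous dependence. I first address case I where both scales are LP-smoothable; case II then reduces to case I by observing that tameness of $\bf{E}$ means $\bf{E}$ is a tame direct summand of some LP-smoothable $\bf{G}$, and $\bf{F}$ being a tame direct summand of $\bf{E}$ makes it a tame direct summand of $\bf{G}$ as well. In that case one lifts $\Psi$ along the retraction structure, runs the iteration in $\bf{G}$ using the smoothing operators there, and restricts back; Lemma~\ref{lem on useful smoothing inequalities in tame Banach scales} guarantees that all the requisite smoothing, log-convexity, and terminal density properties transfer.

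For the iteration itself, I set $u_0 = 0$ and define $u_{n+1} = u_n + h_n$ where $h_n = S_{N_n} L(S_{N_n} u_n)(g - \Psi(u_n))$, choosing the smoothing frequencies $N_n = N_0^{(3/2)^n}$ with $N_0$ large depending on $\ep$. The key is to run two coupled inductive estimates: a low-norm bound
\begin{equation}
\tnorm{h_n}_{E^\be} \lesssim N_n^{-\al_1} \qquad \text{and} \qquad \tnorm{u_n}_{E^R} \lesssim N_n^{\al_2},
\end{equation}
for exponents $\al_1, \al_2 > 0$ tied to the gap between $\be = 2(r+\mu)+1$ and $R$. The evolution of the residual is analyzed by decomposing
\begin{equation}
g - \Psi(u_{n+1}) = (I - D\Psi(u_n) S_{N_n} L(S_{N_n} u_n))(g - \Psi(u_n)) - Q_n,
\end{equation}
where $Q_n$ collects the quadratic Taylor error from the $\mu$-tamely $C^2$ bound~\eqref{C2 tame estimates} and the mismatch between $D\Psi(u_n)$ and $D\Psi(S_{N_n} u_n)$. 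The first factor is handled by writing $I - D\Psi(u_n) S_{N_n} L(S_{N_n} u_n) = -D\Psi(u_n)(I - S_{N_n}) L(S_{N_n} u_n) + (I - D\Psi(u_n)L(S_{N_n} u_n))$; smoothing estimates~\eqref{smoothing_2}--\eqref{smoothing_3} and the tame bound~\eqref{tame estimates on the right inverse} convert the loss of $\mu$ derivatives into polynomial factors of $N_n$, which are absorbed by the super-exponential growth of $N_n$ once the hypothesis $\be < (r+R)/2$ is invoked. Log-convexity (Lemma~\ref{lem on log-convexity in tame Banach scales}) interpolates intermediate norms. Summing $\sum_n \tnorm{h_n}_{E^\be}$ yields convergence in $E^\be$ to some $u$ with $\tnorm{u}_{E^\be} \le \kappa_1 \ep$, and the fact that $\tnorm{g - \Psi(u_n)}_{F^\be} \to 0$ combined with continuity of $\Psi$ on $E^{r+\mu}$ produces $\Psi(u) = g$, establishing existence and the estimate~\eqref{nmh_01}.

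For uniqueness and the Lipschitz bound~\eqref{hey cuz}, suppose $\Psi(u_0) = g_0$ and $\Psi(u_1) = g_1$ with both in $B_{E^\be}(0, \kappa_1 \ep)$. Using the left-inverse condition $(b')$ applied at some base point (e.g. $u_0$ after appropriate truncation to $E^N$ via smoothing and then a limiting argument) and expanding $\Psi(u_1) - \Psi(u_0) = D\Psi(u_0)(u_1 - u_0) + \int_0^1 (1-t) D^2\Psi(u_0 + t(u_1-u_0))[u_1-u_0, u_1-u_0]\;\m{d}t$, I apply $L(u_0)$ and use the tame bound~\eqref{C2 tame estimates} on the quadratic error: with the derivative loss of $\mu$, this yields
\begin{equation}
\tnorm{u_0 - u_1}_{E^{\be-\mu}} \le \kappa_2 \tnorm{g_0 - g_1}_{F^{\be - \mu}} + C \tnorm{u_0-u_1}_{E^\be} \tnorm{u_0-u_1}_{E^{\be-\mu}},
\end{equation}
and the second term is absorbed by smallness. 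Uniqueness in $B_{E^\be}(0, \kappa_1 \ep)$ follows by taking $g_0 = g_1$.

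For the regularity promotion~\eqref{derivatives_gain}--\eqref{when the rain comes they run and hide}, I revisit the same iteration under the stronger assumption $g \in F^{\be+\nu}$ with $\nu \le R + r - 2\be$, and establish a corresponding inductive bound $\tnorm{u_n}_{E^{\be+\nu}} \lesssim \tnorm{g}_{F^{\be+\nu}}$: the tame estimates on $L$ and $D^2\Psi$, together with the already-proven low-norm convergence and the range constraint on $\nu$, prevent the bound from blowing up as $n \to \infty$. The main obstacle throughout is the careful bookkeeping of exponents so that the quadratic gain from Newton beats the combined derivative loss coming from $L$ and the tame $C^2$ estimate, under the precise parameter constraint $2(r+\mu) + 1 < (r+R)/2$, and so that the regularity promotion exponent $\nu$ is limited exactly by $R + r - 2\be$; the entire proof hinges on these bookkeeping calculations being compatible with the LP-smoothing inequalities in Definition~\ref{defn of smoothable and LP-smoothable Banach scales}.
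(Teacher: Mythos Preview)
Your high-level architecture—surjectivity via a Nash--Moser iteration and injectivity/Lipschitz via the left-inverse plus Taylor expansion—matches the paper's division into Theorem~\ref{iteration_thm} (surjectivity, needing only the RI hypotheses) and Theorem~\ref{local_inj} (injectivity, needing only the left-inverse and terminal density). Your treatment of item~(3) is essentially the paper's argument.

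However, two substantive deviations deserve scrutiny. First, your iteration scheme is the classical super-exponential one ($N_n = N_0^{(3/2)^n}$), whereas the paper follows Baldi--Haus: dyadic smoothing $S_j$ and increments $h_j = L(S_j u_j)(\Updelta_j g + y_j)$ where $\Updelta_j g = (S_{j+1}-S_j)g$ and $y_j$ is a telescoping correction. The point of the LP-smoothability hypothesis~\eqref{smoothing_LP_2} is precisely to enable the Littlewood--Paley summation in Step~1 of the proof of Theorem~\ref{iteration_thm}, which is what delivers convergence in the \emph{sharp} space $E^\beta$ with $\beta = 2(r+\mu)+1$. The classical super-exponential scheme typically lands only in a weak Banach scale or loses derivatives (cf.\ the discussion of H\"ormander's version in Section~\ref{section on NMH}); you have asserted but not shown that your exponent bookkeeping yields $\sum_n \tnorm{h_n}_{E^\beta} < \infty$ at exactly this $\beta$, and this is the hard part the Baldi--Haus mechanism was designed to solve.

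Second, your reduction of case~II is not what the paper does and is likely unworkable as stated. Simply lifting $\Psi$ to the LP-smoothable scale $\bf{G}$ via $\lambda_{\bf{E}}\lambda_{\bf{F}}\Psi\rho_{\bf{E}}$ does not produce a map whose derivative has a right inverse on all of $G^r$—only on the range of $\lambda_{\bf{E}}$. The paper instead builds an auxiliary map $\Phi$ on the \emph{triple} scale $\bf{H} = \bf{G}^3$ via
\[
\Phi(u,v,w) = \lambda_{\bf{E}}\bigl(\lambda_{\bf{F}}\Psi(\rho_{\bf{E}} u) + (I-\lambda_{\bf{F}}\rho_{\bf{F}})\rho_{\bf{E}} v\bigr) + (I-\lambda_{\bf{E}}\rho_{\bf{E}})w,
\]
so that the complementary projections are absorbed into the extra slots. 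This $\Phi$ satisfies only the RI hypotheses (the paper explicitly notes the LRI condition fails for $\Phi$), so Theorem~\ref{iteration_thm} yields surjectivity; one then unravels $\Phi(u',v',w') = \lambda_{\bf{E}}\lambda_{\bf{F}} g$ by applying $\rho_{\bf{F}}\rho_{\bf{E}}$ to recover $\Psi(\rho_{\bf{E}} u') = g$. Injectivity and~\eqref{hey cuz} then come from applying Theorem~\ref{local_inj} directly to $\Psi$ on $(\bf{E},\bf{F})$, using that tame scales are terminally dense (Lemma~\ref{lem on useful smoothing inequalities in tame Banach scales}).
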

	\begin{rmk}
		The inequality $3r +4\mu + 2< R$ is equivalent to $\be=2(r+\mu) +1 < (r+R)/2$.  Furthermore, when $\j{N}$ is finite, the mapping hypotheses require that $R+\mu\le N$, and so we obtain the necessary relation $3r + 5 \mu + 2< N$.
	\end{rmk}

	We present the proof of Theorem~\ref{thm on nmh} in Section \ref{what would you do if i sang} after first establishing two other theorems that prove separate components of the theorem under different hypotheses.  We now turn to the statement of the second part of our inverse function theorem, beginning with some notation.  Due to the derivative loss in the nonlinear operators under consideration, the higher regularity of the local inverse map is most conveniently phrased in terms of some variation on Gateaux derivatives rather than the usual Fr\'echet notion of differentiability. This is analogous to what is done in Section I.3 and elsewhere in Hamilton~\cite{MR656198}.

	\begin{defn}[Continuous Gateaux differentiability]\label{definition of gateaux}
		Let $X$ and $Y$ be Banach spaces over a common field, $U\subseteq X$ an open set, and $f:U\to Y$. 
		\begin{enumerate}
			\item We say that $f$ is continuously Gateaux differentiable on $U$ if there exists a continuous map $Df:U\times X\to Y$ such that for all $x\in U$ we have that $Df(x)\in\mathcal{L}(X;Y)$ and for all $z\in X$
			\begin{equation}
				\lim_{t\to0}\;t^{-1}\tp{f(x+tz)-f(x)}=Df(x)z.
			\end{equation}
			\item For $\N\ni\ell\ge2$, we say that $f$ is $\ell$-times continuously Gateaux differentiable if $f$ is continuously Gateaux differentiable and $Df:U\times X\to Y$ is $(\ell-1)$-times continuously Gateaux differentiable.
		\end{enumerate}
	\end{defn}
	
	We can now state the second part of our Nash-Moser inverse function theorem, the proof of which is in Section \ref{section on refinements}.

	\begin{thm}[Further conclusions of the inverse function theorem]\label{thm on further conclusions of the inverse function theorem}
		Assume the hypotheses of Theorem~\ref{thm on nmh} and additionally that the Banach scale $\bf{E}$ consists of reflexive spaces. The following additional conclusions hold for the local inverse map $\Psi^{-1}:B_{F^\be}(0,\ep)\to E^\be$.
		\begin{enumerate}
			\item Continuity: For $s\in[\be,R+r-\be-\mu)\cap\N$ the map
			\begin{equation}
				\Psi^{-1}:B_{F^\be}(0,\ep)\cap F^s\to E^s
			\end{equation}
			is continuous.
			
			\item Continuous differentiability:  For $s\in[\be,R+r-\be-\mu)\cap\N$,  the map
			\begin{equation}
				\Psi^{-1}:B_{F^\be}(0,\ep)\cap F^s\to E^{s-\mu}
			\end{equation}
			is differentiable in the Fr\'echet sense with $D\Psi^{-1}=L\circ\Psi^{-1}$. Moreover, when viewing $D\Psi^{-1}$ as map
			\begin{equation}
				D\Psi^{-1}:(B_{F^\be}(0,\ep)\cap F^s)\times F^{s-\mu}\to E^{s-\mu}
			\end{equation}
			we have that it is continuous.
			
			\item Higher regularity: Let $\N\ni\ell\ge 2$ and assume that the map $\Psi$ is $\ell$-times continuously Gateaux differentiable. For $s\in[\be+(\ell(\ell+1)/2-1)\mu,R+r-\be-\mu)\cap\N$ we have that the map
			\begin{equation}\label{Georgia}
				\Psi^{-1}:B_{F^\be}(0,\ep)\cap F^s\to E^{s-\f{\ell(\ell+1)}{2}\mu}
			\end{equation}
			is also $\ell$-times continuously Gateaux differentiable.
		\end{enumerate}
	\end{thm}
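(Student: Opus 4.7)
I would prove the three claims in sequence, using the estimates already granted by Theorem \ref{thm on nmh}, the log-convexity of tame scales (Lemma \ref{lem on log-convexity in tame Banach scales}), the smoothing operators on $\bf{F}$ (Lemma \ref{lem on useful smoothing inequalities in tame Banach scales}), and the reflexivity hypothesis on $\bf{E}$. The main technical content lies in the continuity assertion of part (1); once this is secured, parts (2) and (3) follow via Taylor expansion paired with the left-inverse property of $L$ and an induction on the order of Gateaux differentiation.

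\textbf{Continuity.} Given $g_k\to g$ in $F^s$ with all $g_k,g\in B_{F^\be}(0,\ep)$, set $u_k=\Psi^{-1}(g_k)$ and $u=\Psi^{-1}(g)$. Because $s<R+r-\be-\mu\le R+r-\be$, the estimate \eqref{derivatives_gain} provides a uniform bound for $\tcb{u_k}$ in $E^s$, while \eqref{hey cuz} together with the embedding $F^s\emb F^{\be-\mu}$ yields $u_k\to u$ strongly in $E^{\be-\mu}$. Interpolating via Lemma \ref{lem on log-convexity in tame Banach scales} then produces strong convergence $u_k\to u$ in every intermediate $E^{s'}$ with $\be-\mu\le s'<s$, and reflexivity of $E^s$ combined with uniqueness of limits upgrades this to weak convergence $u_k\rightharpoonup u$ in $E^s$. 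Upgrading weak to strong convergence in $E^s$ is the delicate step and, I expect, the principal obstacle. My plan for this upgrade is to exploit the $\mu$-slack in $s<R+r-\be-\mu$: smooth the data via $g_k^{(j)}=T_jg_k$ and $g^{(j)}=T_jg$, both of which lie in $F^{s+\mu}$; run the interpolation argument on the smoothed solutions, which by \eqref{derivatives_gain} are bounded in $E^{s+\mu}$ for fixed $j$, to deduce $\Psi^{-1}(g_k^{(j)})\to\Psi^{-1}(g^{(j)})$ in $E^s$ as $k\to\infty$; and then close a three-$\ep$ argument in $j$ by quantifying how $\Psi^{-1}(T_jg)\to\Psi^{-1}(g)$ in $E^s$ using the tame estimates on $L$ combined with the Littlewood-Paley-type decay $T_jg\to g$.

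\textbf{Differentiability.} Fix $g\in B_{F^\be}(0,\ep)\cap F^s$, $u=\Psi^{-1}(g)$, and consider $g'$ close to $g$ in $F^s$ with $u'=\Psi^{-1}(g')$ and $v=u'-u$. The $C^2$ tameness of $\Psi$ gives the Taylor identity
\begin{equation}
    g'-g=\Psi(u')-\Psi(u)=D\Psi(u)v+\int_0^1(1-t)D^2\Psi(u+tv)[v,v]\;\m{d}t.
\end{equation}
Applying $L(u)$ and invoking the left-inverse property $(b')$ of Definition \ref{defn of the mapping hypotheses} rearranges this to $v=L(u)(g'-g)-L(u)R(u,u')$, where $R$ is the quadratic remainder. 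Identifying $D\Psi^{-1}(g)=L(\Psi^{-1}(g))$ in the Fr\'echet sense with target $E^{s-\mu}$ reduces to proving $\tnorm{L(u)R(u,u')}_{E^{s-\mu}}=o(\tnorm{g'-g}_{F^s})$. Combining the $C^2$ tame estimate \eqref{C2 tame estimates} at output regularity $s-\mu$ with \eqref{tame estimates on the right inverse}, using the Lipschitz bound \eqref{hey cuz} to obtain $\tnorm{v}_{E^r}=O(\tnorm{g'-g}_{F^s})$, and crucially invoking part (1) to obtain $\tnorm{v}_{E^s}\to 0$ as $g'\to g$ in $F^s$, yields the required little-$o$. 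Joint continuity of $D\Psi^{-1}$ on $(B_{F^\be}(0,\ep)\cap F^s)\times F^{s-\mu}$ follows from part (1) together with \eqref{tame estimates on the right inverse}.

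\textbf{Higher Gateaux regularity.} Induct on $\ell$, with the base case $\ell=1$ handled by part (2). For the induction, formally differentiate the identity $D\Psi(u)L(u)=I$ in $u$ to obtain the expression $DL(u)[w]=-L(u)D^2\Psi(u)[w,L(u)\,\cdot\,]$, yielding $D^2\Psi^{-1}(g)[h,k]=-L(u)D^2\Psi(u)[L(u)h,L(u)k]$ and, recursively, closed-form formulas for $D^\ell\Psi^{-1}$ in terms of $L(u)$ and the higher derivatives $D^j\Psi(u)$ for $2\le j\le\ell+1$. At each differentiation step, estimating the resulting expression in the appropriate $E^{s-k\mu}$-norm and securing continuity of the newly formed Gateaux derivative costs $\mu$ additional units of regularity, accumulating to the triangular total $\mu\sum_{k=1}^\ell k=\ell(\ell+1)\mu/2$ of \eqref{Georgia}. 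The Gateaux differentiability hypothesis on $\Psi$ supplies the higher derivatives of $\Psi$ that must enter these formulas, and the tame estimates on $D^j\Psi$ and $L$ control the growth of each factor along the Banach scale.
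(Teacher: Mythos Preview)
Your overall architecture matches the paper's: a smoothing-based three-$\ep$ argument for continuity, Taylor expansion plus the left inverse for differentiability, and induction via the formula $D\Psi^{-1}=L\circ\Psi^{-1}$ for higher regularity. However, there is one substantive gap and one under-specified hard step.

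\textbf{The extension of $L$.} In Definition~\ref{defn of the mapping hypotheses} the operators $L(u_0)$ are only furnished for $u_0\in B_{E^r}(0,\del_R)\cap E^N$. When you write ``applying $L(u)$'' with $u=\Psi^{-1}(g)$, or when you invoke $L\circ\Psi^{-1}(T_jg)$ in the quantitative step of part~(1), you are evaluating $L$ at points that are only known to lie in $E^{R+r-\be}$, not $E^N$. The paper handles this by first proving Theorem~\ref{thm on extension and regularity of the right and left inverse}, which extends $L$ to backgrounds in $E^{r+\mu}$ via a smoothing-and-weak-limit argument; \emph{this is precisely where reflexivity of $\bf{E}$ enters}. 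That same theorem also establishes the joint continuity of $(u_0,h)\mapsto \Bar{L}(u_0)h$ from $E^{s+\mu}\times F^s$ to $E^s$, which is what is actually needed to conclude continuity of $D\Psi^{-1}$ in part~(2); the bare tame estimate \eqref{tame estimates on the right inverse} is not enough. Your proposal uses reflexivity only for a weak-compactness detour in part~(1), which, as you yourself note, does not by itself yield strong $E^s$ convergence.

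\textbf{The quantitative convergence in part~(1).} You correctly isolate the crux: proving $\Psi^{-1}(T_jg)\to\Psi^{-1}(g)$ in $E^s$ with a rate that is stable under small perturbations of $g$ (so that the first leg of the three-$\ep$ is uniform in $k$). Simple interpolation fails here because $\tnorm{T_jg}_{F^{s+\mu}}\sim 2^{j\mu}$ blows up. The paper's argument telescopes $\Psi^{-1}(T_{j+1}g)-\Psi^{-1}(T_jg)$ via Taylor and the extended $\Bar{L}$, estimates each piece in $E^{s+\sig}$ for $\sig\in\{-1,0,1\}$, and then runs a genuine Littlewood-Paley $\ell^2$ argument (mirroring Step~1 of Theorem~\ref{iteration_thm}) to sum the telescoping series in $E^s$. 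This produces the quantitative bound $\tnorm{\Psi^{-1}(T_{n+1}g)-\Psi^{-1}(g)}_{E^s}\lesssim\tbr{\tnorm{g}_{F^s}}^3\bigl(\sum_{j\ge n}\bf{m}_j^s(g)^2\bigr)^{1/2}$, whose stability under $g\mapsto g+h$ follows from subadditivity of the seminorms $\bf{m}_j^s$. Your outline gestures at this but does not supply it.

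\textbf{Minor.} In part~(3), the formulas for $D^\ell\Psi^{-1}$ involve $D^j\Psi$ only for $2\le j\le\ell$, not $\ell+1$; otherwise the hypothesis that $\Psi$ is $\ell$-times Gateaux differentiable would be insufficient.
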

	
	\subsection{Local surjectivity and injectivity}\label{billy shears}
	
	We begin this section by proving an existence result modeled on the main theorem of Baldi and Haus \cite{MR3711883}, but adapted to our specific setting.  We emphasize that the following theorem only requires the existence of right inverses in the mapping hypotheses.
	
	\begin{thm}[Local surjectivity]\label{iteration_thm}
		Let $\bf{E} = \{E^s\}_{s\in \j{N}}$ and $\bf{F} = \{F^s\}_{s\in \j{N}}$ be Banach scales over the same field, and suppose that $(\bf{E},\bf{F},\Psi)$ satisfies the RI mapping hypotheses of Definition \ref{defn of the mapping hypotheses} with parameters $(\mu,r,R)$ such that $2(r+\mu) +1 < (r+R)/2$.  Set $\be=2(r+\mu)+1\in\j{N}$ and let $A_\beta >0$ be such that $\norm{\Updelta_j f}_{F^\beta} \le A_\beta \norm{f}_{F^\beta}$ for all $j\in\N$ and $f\in F^\be$. Let $C_i = C_i(R)$ denote the constants from the mapping hypotheses for $i \in \{1,2\}$.
		
		Then there exist $K_1,K_2,K_3,K_4 \in \R^+$ such that for every $g \in F^\beta$ satisfying
		\begin{equation}\label{it_0_gbnd}
			\norm{g}_{F^\beta} < \frac{\delta_R}{2[(1+A_\beta) K_1 + K_2 + K_3 + K_4 + (1+A_\beta)^2 K_1(K_1 + K_3) ]}
		\end{equation}
		there exists a sequence  $\{(u_j,v_j,h_j,y_j,f_j,e_j)\}_{j=0}^\infty \subseteq (B_{E^r}(0,\delta_R) \cap E^R) \times E^N \times E^R \times F^N \times F^N \times F^{R-\mu}$ satisfying the following.
		\begin{enumerate}
			\item When $j=0$ we have the identities
			\begin{align}\label{it_0_seed}
				u_0 &= 0, &
				v_0 &= S_0u_0 = 0, \nonumber\\
				y_0 &= 0, &
				f_0 &= \Updelta_0g = S_1g, \\
				h_0 &= L(u_0)f_0 = L(0)S_1g, &
				e_0 &= \Psi(u_0+h_0)-\Psi(u_0)-f_0\nonumber,
			\end{align}
			while for $j \ge 1$ we have the recursive relations
			\begin{align}\label{it_0_iteration}
				u_j &= u_{j-1} + h_{j-1}, &
				v_j &= S_j u_j, \nonumber\\
				y_j &= -S_j \sum_{n=0}^{j-1} e_n - \sum_{n=0}^{j-1} y_n, &
				f_j &=\Updelta_j g + y_j, \\
				h_j &= L(v_j) f_j, &
				e_j &= \Psi(u_j +h_j) - \Psi(u_j) - f_j\nonumber.
			\end{align}
			
			\item For $j \in \N$ we have the estimates 
			\begin{equation}\label{it_0_h}
				\tnorm{h_j}_{E^s} \le K_1 \tp{ \norm{g}_{F^\beta} 2^{-j} + \norm{\Updelta_j g}_{F^\beta} } 2^{j(s-\beta)} \text{ for all } s \in [r,R] \cap \N,
			\end{equation}
			\begin{equation}\label{it_0_v}
				\tnorm{v_j}_{E^s} \le K_2 \tnorm{g}_{F^\beta} 2^{j(s-\beta)} \text{ for all } \beta \le s \in \j{N},
			\end{equation}
			\begin{equation}\label{it_0_u-v}
				\tnorm{u_j - v_j}_{E^s} \le K_3 \tnorm{g}_{F^\beta} 2^{j(s-\beta)} \text{ for all } s \in [0,R] \cap \N,
			\end{equation}
			\begin{equation}\label{it_0_u}
				\tnorm{u_j}_{E^\beta} \le K_4 \tnorm{g}_{F^\beta}.
			\end{equation}
			and 
			\begin{equation}\label{it_0_e}
				\tnorm{e_j}_{F^s} \lesssim C_1 \tnorm{g}_{F^\beta} 2^{j(s+\mu+r - 2\beta)} \text{ for all } s \in [r-\mu,R-\mu] \cap \N.
			\end{equation}
			
			\item There exists $u \in E^\beta$ such that $u_j \to u$ in $E^\beta$ as $j \to \infty$, $\norm{u}_{E^\beta} \le K_4 \norm{g}_{F^\beta}$, and $\Psi(u) =g$. 
			
			\item If we know additionally that $\nu\in\N$ is such that $\nu\le R+r-2\be$ and $g\in F^{\be+\nu}$, then actually $u_j  \to u$ in $E^{\be+\nu}$ as $j \to \infty$ and we have the estimate $\tnorm{u}_{E^{\be+\nu}}\lesssim\tnorm{g}_{F^{\be+\nu}}$. Here the implied constant depends on $K_1,\dots,K_4$,  $A_\be, A_{\be+\nu}$,  $\mu$, $r$, $\delta_R$, and $R$.
		\end{enumerate}
	\end{thm}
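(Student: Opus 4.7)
The plan is to carry out the Nash--Moser iteration prescribed by \eqref{it_0_seed}--\eqref{it_0_iteration} and verify the estimates \eqref{it_0_h}--\eqref{it_0_e} by strong induction on $j$. The base case $j=0$ is a direct application of the right-inverse tame bound \eqref{tame estimates on the right inverse} to $h_0 = L(0)S_1 g$ combined with the Littlewood--Paley smoothing axioms \eqref{smoothing_1}--\eqref{smoothing_4}. For the inductive step, the constants $K_1,\dots,K_4$ must be produced in a carefully ordered sequence (first $K_2,K_3$ from the smoothing properties of $S_j$ applied to $u_j = \sum_{n=0}^{j-1}h_n$, then $K_1$ from the right-inverse estimate at $v_{j+1}$ applied to $f_{j+1}$, then $K_4$ via the Littlewood--Paley identity \eqref{smoothing_LP_2}), and the smallness threshold in \eqref{it_0_gbnd} must be chosen so that $u_j$ remains in $B_{E^r}(0,\del_R)$ at every step, ensuring that $L(v_j)$ is defined and that \eqref{C2 tame estimates} is available around both $u_j$ and $v_j$.

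The crucial ingredient is the estimate \eqref{it_0_e} on the nonlinear error. Since $h_j = L(v_j)f_j$ satisfies $D\Psi(v_j)h_j = f_j$ by the right-inverse property, we split
\begin{equation}
e_j = \bigl[\Psi(u_j+h_j) - \Psi(u_j) - D\Psi(u_j)h_j\bigr] + \bigl[D\Psi(u_j) - D\Psi(v_j)\bigr]h_j,
\end{equation}
the first bracket being a Taylor remainder of order two in $h_j$ and the second, rewritten as $\int_0^1 D^2\Psi(v_j + t(u_j-v_j))[u_j-v_j,h_j]\,\m{d}t$, being bilinear in $(u_j-v_j,h_j)$. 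Applying the tame $C^2$ estimate \eqref{C2 tame estimates}, interpolating intermediate norms via Lemma~\ref{lem on log-convexity in tame Banach scales}, and feeding in the inductive hypotheses \eqref{it_0_h}, \eqref{it_0_v}, \eqref{it_0_u-v} yields \eqref{it_0_e} with exponent $2^{j(s+\mu+r-2\be)}$. The choice $\be = 2(r+\mu)+1$ forces this exponent to be strictly negative at $s=r-\mu$, which is the quadratic-convergence mechanism that drives the iteration. The corrector $y_{j+1}$ defined in \eqref{it_0_iteration} is designed precisely so that $\tnorm{f_{j+1}}_{F^s}$ is controlled by $\tnorm{\Updelta_{j+1}g}_{F^s}$ plus a tail of $S_{j+1}e_n$ and $\Updelta_j \sum_{n<j}e_n$ contributions that \eqref{it_0_e} renders summable.

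Convergence of $u_j = \sum_{n=0}^{j-1} h_n$ in $E^\be$ follows from \eqref{it_0_h} at $s=\be$ combined with the square-summability of $\tnorm{\Updelta_j g}_{F^\be}$ furnished by \eqref{smoothing_LP_2}. The identification $\Psi(u)=g$ is extracted from the telescoping identity
\begin{equation}
\Psi(u_j) = \sum_{n=0}^{j-1}(f_n + e_n) = S_j g \;-\; S_{j-1}\sum_{n=0}^{j-2}e_n \;+\; \sum_{n=0}^{j-1} e_n,
\end{equation}
which follows from $\sum_{n=0}^{j-1}\Updelta_n g = S_j g$ together with the explicit telescoping $\sum_{n=0}^{j}y_n = -S_j \sum_{n=0}^{j-1}e_n$ that the recursion for $y_n$ in \eqref{it_0_iteration} was engineered to satisfy. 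Passing $j\to\infty$, the right-hand side tends to $g$ in $F^{r-\mu}$ by \eqref{smoothing_LP_1} and the summability of $\{e_n\}$ in $F^{r-\mu}$ coming from \eqref{it_0_e}, while the left-hand side tends to $\Psi(u)$ by continuity of $\Psi: B_{E^r}(0,\del_R) \to F^{r-\mu}$ guaranteed by the $C^2$ tameness hypothesis.

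The higher-regularity assertion in the fourth item is obtained by re-running the same induction at the shifted level $s = \be+\nu$, exploiting the improved decay $\tnorm{\Updelta_j g}_{F^\be}\lesssim 2^{-j\nu}\tnorm{g}_{F^{\be+\nu}}$ and the fact that the constraint $\be + \nu \le R$ holds automatically under $\nu \le R+r-2\be$ since $r < \be$. The main obstacle throughout is the tight bookkeeping of the four constants $K_1,\dots,K_4$ together with the smallness parameter in \eqref{it_0_gbnd}: each of the five inductive bounds feeds into the others with prefactors depending on the tame constants $C_1(R),C_2(R)$, the Littlewood--Paley constant $A_\be$, and the implicit constants in the smoothing and log-convexity estimates, and closing the induction demands that the composite constants remain bounded along the iteration, which in turn requires the strict inequality $\be < (r+R)/2$.
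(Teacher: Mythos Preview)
Your approach is essentially the same as the paper's, and the overall architecture (inductive construction, Taylor splitting of $e_j$, telescoping identity for $y_j$, convergence via the telescoped expression for $\Psi(u_j)$, and a second induction for higher regularity) is correct.

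There is, however, one genuinely underspecified step that is the technical heart of the argument. You write that convergence of $u_j=\sum_{n<j}h_n$ in $E^\be$ ``follows from \eqref{it_0_h} at $s=\be$ combined with the square-summability of $\tnorm{\Updelta_j g}_{F^\be}$,'' and that $K_4$ comes ``via the Littlewood--Paley identity \eqref{smoothing_LP_2}.'' But \eqref{it_0_h} at $s=\be$ only gives $\tnorm{h_j}_{E^\be}\lesssim \xi_j$ with $\{\xi_j\}\in\ell^2$, not $\ell^1$; a triangle inequality does not close. What is needed is the almost-orthogonality estimate: apply \eqref{smoothing_4} together with \eqref{it_0_h} at $s=\be\pm 1$ to get $\tnorm{\Updelta_k h_n}_{E^\be}\lesssim 2^{-|k-n|}\xi_n$, then feed this into \eqref{smoothing_LP_2} and use Young's convolution inequality in $\ell^2(\Z)$ to obtain $\tnorm{\sum_{n=\ell}^k h_n}_{E^\be}^2\lesssim K_1^2\sum_{n=\ell}^k\xi_n^2$. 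This single estimate simultaneously gives the $K_4$ bound \eqref{it_0_u}, the Cauchy property in $E^\be$, and (rerun at level $\be+\nu$) the higher-regularity convergence in item four. The paper isolates this as its Step~1 precisely because the rest of the induction hinges on it.

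One minor point: your stated order ``first $K_2,K_3$, then $K_1$, then $K_4$'' is slightly off, since the bounds on $v_j$ and $u_j-v_j$ are derived from the $h_n$-bounds (hence from $K_1$). In practice all four constants are determined together as fixed multiples of $C_2(1+C_1)$, and the smallness condition \eqref{it_0_gbnd} then guarantees simultaneously that $u_j,v_j,u_j+h_j\in B_{E^r}(0,\del_R)$ and that the products like $K_1(1+A_\be)\tnorm{g}_{F^\be}$ arising in the Taylor estimate are bounded by $1$.
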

	\begin{proof}
		
		We divide the proof into several steps. The first three steps establish a trio of crucial claims that will be used in the fourth step to inductively construct the sequence. The convergence result is then proved in the fifth step. The higher regularity assertions of the fourth item are then proved in the sixth step. Throughout the proof we will utilize the sequence $\{\xi_n\}_{n=0}^\infty \subseteq [0,\infty)$ defined by  
		\begin{equation}\label{it_xi}
			\xi_n = \norm{g}_{F^\beta} 2^{-n} + \norm{\Updelta_n g}_{F^\beta}.
		\end{equation}

		\textbf{Step 1}: An estimate in the style of Littlewood-Paley theory. Let $\ell,k \in \N$ satisfy $0 \le \ell \le k$.  We claim that if  $\{h_j\}_{j=0}^k \subseteq E^R$ are given and satisfy \eqref{it_0_h} for $0 \le j \le k$, then 
		\begin{equation}\label{it_4}
			\bnorm{\sum_{n=\ell}^k h_n}_{E^\beta} \lesssim K_1 \bp{\sum_{n=\ell}^k \xi_n^2}^{1/2},
		\end{equation}
		where $\xi_n$ is defined by \eqref{it_xi}

		To prove the claim, we begin by noting that for any $j \in \N$, $0 \le n \le k$, and $s=\be+\m{sgn}(j-n)\in[r,R]\cap\N$ we may bound, via~\eqref{smoothing_4} and~\eqref{it_0_h},
		\begin{equation}\label{it_1}
			\tnorm{\Updelta_j h_n}_{E^\beta} \lesssim 2^{j(\beta-s)} \tnorm{h_n}_{E^s} \lesssim K_1 \xi_n 2^{j(\beta-s)}2^{n(s-\beta)} = K_1 \xi_n 2^{(j-n)(\beta-s)}=K_1\xi_n2^{-|j-n|}.
		\end{equation}
		We estimate the sum on the left hand side of~\eqref{it_4} via the `Littlewood-Paley' characterization~\eqref{smoothing_LP_2}, the  bound~\eqref{it_1}, and  Young's convolution inequality (setting $\xi_n =0$ for $n \in \mathbb{Z}$ such that $0 < n$):
		\begin{multline}\label{yes very good at spelling}
			\bnorm{\sum_{n=\ell}^k h_n}_{E^\be}^2 \lesssim
			\sum_{j=0}^\infty \bnorm{\Updelta_j \sum_{n=\ell}^k h_n}_{E^\be}^2 
			\le \sum_{j=0}^\infty \bp{\sum_{n=\ell}^k \tnorm{\Updelta_j h_n}_{E^\be}}^2 \lesssim K_1^2 \sum_{j=0}^\infty \bp{\sum_{n=\ell}^k\xi_n2^{-|j-n|}}^2 \\
			\le K_1^2 \sum_{j \in \mathbb{Z}} \bp{\sum_{n \in \mathbb{Z}} \xi_n \mathds{1}_{[\ell,k]}(n)  2^{-|j-n|}}^2 
			\le K_1^2 \bp{\sum_{n \in \mathbb{Z}}  \xi_n^2 \mathds{1}_{[\ell,k]}(n) }
			\bp{\sum_{n \in \mathbb{Z}} 2^{-\abs{n}}  }^2
			\lesssim K_1^2\sum_{n=\ell}^k\xi_n^2.
		\end{multline}
		This is~\eqref{it_4}.

		\textbf{Step 2}: An estimate from Taylor's theorem.   Let $k \in \N$.  We claim that if $g \in F^\beta$ satisfies~\eqref{it_0_gbnd} and $\{(u_j,v_j,h_j)\}_{j=0}^k \subseteq (B_{E^r}(0,\delta_R) \cap E^R) \times E^N\times E^R$ are given and satisfy~\eqref{it_0_h}--\eqref{it_0_u} for $0 \le j \le k$,  as well as the conditions $u_0 = v_0 = 0$ and $u_j = u_{j-1} + h_{j-1}$ if $1 \le j \le k$, then 
		\begin{equation}\label{it_5_inc}
			\{u_j\}_{j=0}^k,     \{u_j+h_j\}_{j=0}^k,     \{v_j\}_{j=0}^k \subseteq B_{E^r}(0,\delta_R) \subseteq B_{E^r}(0,\delta_r)
		\end{equation}
		and
		\begin{equation}\label{it_5_bnd}
			\tnorm{ \Psi(u_j +h_j) - \Psi(u_j) - D\Psi(v_j) h_j }_{F^s} \lesssim C_{1} \tnorm{g}_{F^\beta} 2^{j(s+\mu+r - 2\beta)}  
		\end{equation}
		for all $s \in [r-\mu,R-\mu] \cap \N$.
		
		To prove this claim we begin by employing \eqref{it_0_gbnd}.  Indeed, \eqref{it_0_gbnd} and \eqref{it_0_u} imply that $\norm{u_j}_{E^r} \le  \norm{u_j}_{E^\beta} \le K_4 \norm{g}_{F^\beta} < \frac{\delta_R}{2} \le \frac{\delta_r}{2}$, \eqref{it_0_v} and \eqref{it_0_gbnd} imply that $\norm{v_j}_{E^r} \le  \norm{v_j}_{E^\beta} \le K_2 \norm{g}_{F^\beta} < \frac{\delta_R}{2} \le \frac{\delta_r}{2}$ for $0 \le j \le k$,  and \eqref{it_0_h} and~\eqref{it_0_gbnd} imply that $\norm{h_j}_{E^r} \le \norm{h_j}_{E^\beta} \le K_1 \left( \norm{g}_{F^\beta} + A_\beta \norm{g}_{F^\beta}\right) = K_1(1+A_\beta) \norm{g}_{F^\beta} <  \frac{\delta_R}{2} \le \frac{\delta_r}{2}$ for the same range of $j$.  Together, these three bounds imply the inclusions~\eqref{it_5_inc}.
		
		Now that~\eqref{it_5_inc} is established, we know that $\Psi(u_j+h_j)$, $\Psi(u_j)$, and $D\Psi(v_j)$ are well-defined.  We may then employ the fundamental theorem of calculus, Taylor's theorem, and the convexity of $B_{E^r}(0,\delta_r)$ to write 
		\begin{multline}
			\Psi(u_j+h_j) - \Psi(u_j)  - D\Psi(v_j) h_j = \tp{D\Psi(u_j) - D \Psi(v_j)}h_j + \int_0^1 (1-t) D^2 \Psi(u_j + th_j)[h_j,h_j] dt \\
			= \int_0^1 D^2 \Psi((1-t) v_j + t u_j)[h_j,u_j - v_j] dt + \int_0^1 (1-t) D^2 \Psi(u_j + th_j)[h_j,h_j] dt 
			= \bf{I}_j + \bf{II}_j.
		\end{multline}
		By using the tame $C^2$ estimate from the mapping hypotheses of Definition~\ref{defn of the mapping hypotheses}, we readily deduce that for $r-\mu \le s \in \j{N-\mu}$, 
		\begin{multline}
			\tnorm{\bf{I}_j}_{F^s} \lesssim C_1\tbr{\norm{u_j - v_j}_{E^r}}  \norm{u_j - v_j}_{E^{s+\mu}} \norm{h_j}_{E^r} \\
			+ C_1 \tbr{ \norm{v_j}_{E^{s+\mu}}} \norm{u_j - v_j}_{E^r} \norm{h_j}_{E^r} 
			+ C_1 \norm{u_j - v_j}_{E^r} \norm{h_j}_{E^{s+\mu}} 
		\end{multline}
		and 
		\begin{equation}
			\norm{\bf{II}_j}_{F^s} \lesssim C_1 \tbr{ \norm{h_j}_{E^r}} \norm{h_j}_{E^{s+\mu}} \norm{h_j}_{E^r} + C_1 \tbr{ \norm{u_j}_{E^{s+\mu}}}\norm{h_j}_{E^r}^2.
		\end{equation}
		Synthesizing these bounds, we find that 
		\begin{multline}\label{it_6}
			\norm{    \Psi(u_j+h_j) - \Psi(u_j)  - D\Psi(v_j) h_j}_{F^s} \lesssim
			C_1 \tbr{ \norm{u_j - v_j}_{E^r} }  \norm{u_j - v_j}_{E^{s+\mu}} \norm{h_j}_{E^r} \\ 
			+ C_1 \norm{u_j - v_j}_{E^r} \norm{h_j}_{E^{s+\mu}} 
			+ C_1 \tbr{  \norm{h_j}_{E^r}}\norm{h_j}_{E^{s+\mu}} \norm{h_j}_{E^r}     \\
			+C_1 \tbr{ \norm{u_j,v_j}_{E^{s+\mu}} }\norm{h_j}_{E^r} (\norm{h_j}_{E^r} + \norm{u_j-v_j}_{E^r})   
		\end{multline}
		for $r-\mu \le s \in \j{N-\mu}$.
		
		Next we turn our attention to estimates for $u_j$ and $v_j$.  Note that if $1 \le j \le k$, then the identity $u=\sum_{n=0}^{j-1}h_n$ and~\eqref{it_0_h} imply that 
		\begin{equation}
			\norm{u_j}_{E^R} \le \sum_{n=0}^{j-1} \norm{h_n}_{E^R} \le K_1(1+A_\beta)\norm{g}_{F^\beta} \sum_{n=0}^{j-1} 2^{n(R-\beta)} \lesssim K_1(1+A_\beta) \norm{g}_{F^\beta}  2^{j(R-\beta)}.
		\end{equation}
		In turn, when we couple this with the smoothing estimates from Definition~\ref{defn of smoothable and LP-smoothable Banach scales}, the interpolation result from Lemma~\ref{lem on log-convexity in tame Banach scales}, and~\eqref{it_0_u}, this implies that 
		\begin{equation}
			\norm{v_j}_{E^s} \lesssim \norm{u_j}_{E^s} \lesssim \norm{u_j}_{E^\beta}^{\frac{R-s}{R-\beta}} \norm{u_j}_{E^R}^{\frac{s-\beta}{R-\beta}}  \lesssim  K_4^{\frac{R-s}{R-\beta}} \tp{K_1(1+A_\beta)}^{\frac{s-\beta}{R-\beta}} \norm{g}_{F^\beta} 2^{j(s-\beta)}
		\end{equation}
		for all $s\in\N\cap[\be,R]$.
		However, \eqref{it_0_gbnd} implies that $K_4^{\frac{R-s}{R-\beta}} \tp{K_1(1+A_\beta)}^{\frac{s-\beta}{R-\beta}} \norm{g}_{F^\beta} \le 1$, so we may further bound $\norm{v_j}_{E^s} \lesssim \norm{u_j}_{E^s} \lesssim 2^{j(s-\beta)}$  for all $s \in \N \cap [\beta,R]$.
		
		On the other hand, \eqref{it_0_u} and \eqref{it_0_gbnd} give us that $\norm{v_j}_{E^s} \lesssim \norm{u_j}_{E^s} \lesssim \norm{u_j}_{E^\beta} \lesssim K_4 \norm{g}_{F^\beta} \lesssim 1$ for all $s \in \N \cap [r,\beta]$. Upon combining these, we find that 
		\begin{equation}\label{it_7}
			\norm{u_j}_{E^s} + \norm{v_j}_{E^s} \lesssim  2^{j \max\{0,s-\beta\}}  \text{ for all }s \in \N \cap [r,R] \text{ and } 0 \le j \le k,
		\end{equation}
		where we note that these bounds are trivial for $j=0$ since $u_0 = v_0 =0$.
		
		Now we combine \eqref{it_6} and \eqref{it_7} with \eqref{it_0_h}--\eqref{it_0_u} to bound, for $0 \le j \le k$ and $s \in \N \cap [r-\mu,R-\mu]$, 
		\begin{multline}
			\norm{    \Psi(u_j+h_j) - \Psi(u_j)  - D\Psi(v_j) h_j}_{F^s} \\
			\lesssim C_1 \tbr{  K_3 \norm{g}_{F^\beta} 2^{j(r-\beta)}} K_3 \norm{g}_{F^\beta} 2^{j(s+\mu-\beta)} \cdot K_1(1+A_\beta) \norm{g}_{F^\beta} 2^{j(r-\beta)} \\
			+ C_1 K_3 \norm{g}_{F^\beta} 2^{j(r-\beta)} \cdot K_1(1+A_\beta) \norm{g}_{F^\beta} 2^{j(s+\mu-\beta)} \\
			+ C_1 \tbr{  K_1(1+A_\beta)\norm{g}_{F^\beta} 2^{j(r-\beta)}} K_1 (1+A_\beta) 2^{j(s+\mu-\beta)} \cdot K_1 (1+A_\beta) 2^{j(r-\beta)} \\
			+ C_1 \tbr{ 2^{j\max\{0,s+\mu-\beta\}} } K_1(1+A_\beta) \norm{g}_{F^\beta} 2^{j(r-\beta)} \cdot ( K_1(1+A_\beta) \norm{g}_{F^\beta} 2^{j(r-\beta)}  + K_3 \norm{g}_{F^\beta} 2^{j(r-\beta)} ).
		\end{multline}
		For the last term we note that 
		\begin{equation}
			\begin{cases}
				r -\mu  \le s \le \beta -\mu \Rightarrow 2r -2\beta \le s+\mu +r - 2 \beta \\
				\beta-\mu \le s \le R -\mu \Rightarrow s+\mu-\beta + 2r-2\beta \le s+\mu+r-2\beta.
			\end{cases}
		\end{equation}
		Thus, upon regrouping and using \eqref{it_0_gbnd}, we then see that
		\begin{multline}
			\norm{    \Psi(u_j+h_j) - \Psi(u_j)  - D\Psi(v_j) h_j}_{F^s} 
			\lesssim C_1(1+A_\beta)^2 K_1(K_1 + K_3) \norm{g}_{F^\beta}^2 2^{j(s+\mu+r-2\beta)} \\
			\lesssim  C_1   \norm{g}_{F^\beta}  2^{j(s+\mu+r-2\beta)} 
		\end{multline}
		for all $s \in \N \cap [r-\mu,R-\mu]$ and $0\le j \le k$.  This is \eqref{it_5_bnd}, and so the proof of the claim is complete.

		\textbf{Step 3}: A recursive identity.  Suppose that $\{(y_j,e_j)\}_{j=0}^k \subseteq F^N \times F^{R-\mu}$ is given for $2 \le k \in \N$ and satisfies $y_0 =0$ and the recursive condition \begin{equation}\label{recursive_condition_must_label_using_underscores_instead_of_spaces_because_it_is_cooler_sorry_}
			y_j = -S_j \sum_{n=0}^{j-1}e_n - \sum_{n=0}^{j-1}y_n \text{ for }1 \le j \le k.     
		\end{equation}
		We claim that 
		\begin{equation}\label{_u_n_d_e_r_s_c_o_r_e_}
			y_j = -S_j e_{j-1} - \Updelta_{j-1} \sum_{n=0}^{j-2} e_n \text{ for } 2\le j \le k.
		\end{equation}
		To see this, note that from~\eqref{recursive_condition_must_label_using_underscores_instead_of_spaces_because_it_is_cooler_sorry_} we deduce that for any $1\le j\le k$ the identity $\sum_{n=0}^j y_n=-S_j\sum_{n=0}^{j-1}e_n$. Hence, for $2\le j\le k$ we have $y_j=-S_j\sum_{n=0}^{j-1}e_n-S_{j-1}\sum_{n=0}^{j-2}e_n$, but by the definitions of the $\Updelta_{n}$ and $e_n$, this is the same as $y_j=-S_je_{j-1}-\Updelta_{j-1}\sum_{n=0}^{j-2}e_n$,
		and therefore~\eqref{_u_n_d_e_r_s_c_o_r_e_} holds.
		
		\textbf{Step 4}: Inductive construction of the sequence.  We now aim to inductively construct the desired sequence under some assumptions on the constants $K_1$, $K_2$, $K_3$, $K_4$, which we will work out as we proceed.  
		
		We begin by seeding the sequence, i.e. constructing its elements with $j=0$.  Define $u_0 =0 \in B_{E^r}(0,\delta_R) \cap E^R$, $v_0 = S_0 u_0 = 0 \in E^N$, $y_0 =0 \in F^N$, $f_0 = \Updelta_0 g = S_1 g \in F^N$,  $h_0 = L(u_0) f_0 = L(0) S_1 g \in E^R$, and $e_0 = \Psi(h_0) - \Psi(0) - f_0 \in F^{R-\mu}$.  By construction, the bounds \eqref{it_0_v}, \eqref{it_0_u-v}, and \eqref{it_0_u} hold trivially when $j=0$ for all possible choices of $K_2,K_3, K_4 > 0$.  On the other hand, we can apply the tame estimate for $L(0)$ from the mapping hypotheses to see that for $s \in \N \cap [r,R]$, \begin{multline}
			\norm{h_0}_{E^s} \le C_2 \norm{S_1 g}_{F^s} + C_2(1 + \norm{0}_{E^{s+\mu}} ) \norm{S_1 g}_{F^r} \lesssim C_2 \norm{\Updelta_0 g}_{F^\beta} +  C_2 \norm{g}_{F^\beta} \\
			\le  K_1 \left( \norm{g}_{F^\beta} 2^{-0} + \norm{\Updelta_j g}_{F^\beta} \right) 2^{0(s-\beta)},
		\end{multline}
		provided the constant $K_1$ satisfies the bound
		\begin{equation}\label{it_K_1}
			C_2  \lesssim K_1,
		\end{equation}
		which we henceforth assume holds. We then have that \eqref{it_0_h} is satisfied for $j=0$.
		
		We now proceed to the inductive step.  Let $k \in \N$ and suppose that 
		\begin{equation}
			\{(u_j,v_j,h_j,y_j,f_j,e_j)\}_{j=0}^k \subseteq (B_{E^r}(0,\delta_R) \cap E^R) \times E^N \times E^R \times F^N \times F^N \times F^{R-\mu}    
		\end{equation}
		are given and satisfy \eqref{it_0_seed},  \eqref{it_0_h}--\eqref{it_0_u} for $0 \le j \le k$,   and \eqref{it_0_iteration} if $1 \le j \le k$.  We will construct $(u_{k+1},v_{k+1},h_{k+1},y_{k+1},f_{k+1},e_{k+1}) \in  (B_{E^r}(0,\delta_R) \cap E^R) \times E^N \times E^R \times F^N\times F^N\times F^{R-\mu}$ and show that \eqref{it_0_h}--\eqref{it_0_u} continue to hold for $j=k+1$.
		
		First, we define $u_{k+1} = u_k + h_k \in E^R$.  The claim established in Step 2 guarantees that $u_{k+1} \in B_r(0,\delta_R) \subseteq B_r(0,\delta_r)$.  Moreover, we may use a telescoping argument to see that $u_{k+1} = u_{k+1} - u_0 = \sum_{n=0}^k h_n$. Thus, the claim established in Step 1 shows that 
		\begin{equation}\label{it_8}
			\norm{u_{k+1}}_{E^\beta} \lesssim K_1\bp{ \sum_{n=0}^\infty \xi_n^2}^{1/2} \lesssim K_1 \norm{g}_{F^\beta},
		\end{equation}
		where in the last inequality we have used the Littlewood-Paley bound~\eqref{smoothing_LP_2}.  Thus, $u_{k+1} \in B_{E^r}(0,\delta_R) \cap E^R$ satisfies~\eqref{it_0_u} with $j=k+1$, provided that $K_1$ and $K_4$ satisfy 
		\begin{equation}\label{it_K_2}
			K_1 \lesssim K_4,
		\end{equation}
		which we henceforth assume.

		Second, we define $v_{k+1} = S_{k+1} u_{k+1} \in E^N$.  We then use~\eqref{it_0_h} and the smoothing bounds~\eqref{smoothing_1}--\eqref{smoothing_4} to estimate 
		\begin{multline}
			\norm{v_{k+1} - u_{k+1}}_{E^R} =  \norm{(I-S_{k+1}) u_{k+1}}_{E^R} \lesssim \norm{u_{k+1}}_{E^R}  \le \sum_{n=0}^k \norm{h_n}_{E^R} \le K_1 \sum_{n=0}^k \xi_n 2^{n(R-\beta)} \\
			\le K_1 \bp{\sum_{n=0}^\infty \xi_n^2 }^{1/2} \bp{\sum_{n=0}^k 2^{2n(R-\beta)}}^{1/2} 
			\lesssim K_1 \norm{g}_{F^\beta} 2^{(k+1)(R-\beta)}.
		\end{multline}
		On the other hand, \eqref{it_8} and the smoothing bounds~\eqref{smoothing_1}--\eqref{smoothing_4} show that
		\begin{multline}
			\norm{v_{k+1} - u_{k+1}}_{E^0} = \norm{(I-S_{k+1})u_{k+1}}_{E^0} \lesssim 2^{-(k+1) \beta} \norm{(I-S_{k+1})u_{k+1}}_{E^\beta} \\
			\lesssim 2^{-(k+1)\beta} \norm{u_{k+1}}_{E^\beta} \lesssim K_1 \norm{g}_{F^\beta} 2^{-(k+1) \beta}.
		\end{multline}
		Upon combining these two estimates and interpolating with the help of Lemma~\ref{lem on log-convexity in tame Banach scales}, we find that for $s \in \N \cap [0,R]$,
		\begin{equation}
			\norm{v_{k+1} - u_{k+1}}_{E^s} \lesssim     \norm{v_{k+1} - u_{k+1}}_{E^0}^{1-s/R}    \norm{v_{k+1} - u_{k+1}}_{E^R}^{s/R} \lesssim K_1 \norm{g}_{F^\beta} 2^{(k+1)(s-\beta)},
		\end{equation}
		and so $v_{k+1}\in E^N$ satisfies \eqref{it_0_u-v} with $j=k+1$ provided that $K_1$ and $K_3$ satisfy 
		\begin{equation}\label{it_K_3}
			K_1 \lesssim K_3,
		\end{equation}
		which we henceforth assume.

		Continuing with $v_{k+1}$, we observe that for $\beta \le s \in \j{N}$ we can use the smoothing bounds of Definition~\ref{defn of smoothable and LP-smoothable Banach scales} to estimate 
		\begin{multline}
			\norm{v_{k+1}}_{E^s} = \norm{S_{k+1} u_{k+1}}_{E^s} \lesssim 2^{(k+1)(s-\beta)} \norm{S_{k+1} u_{k+1}}_{E^\beta} \lesssim 2^{(k+1)(s-\beta)} \norm{u_{k+1}}_{E^\beta} \\
			\lesssim K_1 \norm{g}_{F^\beta} 2^{(k+1)(s-\beta)},
		\end{multline}
		where we have again used \eqref{it_8}.  Thus, $v_{k+1}$ obeys the bound \eqref{it_0_v} for $j=k+1$ provided that $K_1$ and $K_2$ satisfy 
		\begin{equation}\label{it_K_4}
			K_1 \lesssim K_2,
		\end{equation}
		which we henceforth assume, and in this case \eqref{it_0_gbnd} in turn implies that
		\begin{equation}\label{it_12}
			\norm{v_{k+1}}_{E^r} \le   \norm{v_{k+1}}_{E^\beta} \le K_2 \norm{g}_{F^\beta} < \delta_R.
		\end{equation}
		
		Third, we introduce some useful estimates for the terms $\{e_j\}_{j=0}^k$.  For $0 \le j \le k$ we know that $h_j = L(v_j) f_j$, and so $f_j = D\Psi(v_j) h_j$ by the RI mapping hypotheses.  We may thus invoke the claim established in Step 2 in order to see that for $0 \le j \le k$ \eqref{it_5_bnd} implies that
		\begin{multline}\label{it_9}
			\norm{e_j}_{F^s} = \norm{\Psi(u_j + h_j) - \Psi(u_j) - f_j}_{F^s} = \norm{\Psi(u_j + h_j) - \Psi(u_j) - D\Psi(v_j) h_j}_{F^s} \\
			\lesssim C_{1} \norm{g}_{F^\beta} 2^{j(s+\mu+r - 2\beta)}
		\end{multline} 
		for all $s \in \N \cap [r-\mu,R-\mu]$.
		
		Fourth, we turn our attention to defining $y_{k+1}$.  If $k=0$, then we simply set $y_1 = - S_1 e_0 \in F^N$, while if $k \ge 1$ then we set $y_{k+1} = -S_{k+1} \sum_{n=0}^k e_n - \sum_{n=0}^k y_n \in F^N$. We may then invoke the claim of Step 3 to see that the formula 
		\begin{equation}\label{it_10}
			y_j = -S_{j} e_{j-1} - \Updelta_{j-1} \sum_{n=0}^{j-2} e_n 
		\end{equation}
		holds whenever $1 \le j \le k+1$, provided that we understand sums over empty ranges to mean zero.  Next we use \eqref{it_9} to estimate $y_{k+1}$.   Initially we use the smoothing operator properties together with \eqref{it_9} to estimate 
		\begin{equation}\label{it_10_1}
			\norm{S_{k+1} e_k}_{F^s} \lesssim 
			\begin{cases}
				\norm{e_k}_{F^s} &\text{if } r-\mu  \le s \le R-\mu \\
				2^{(k+1)(s-R+\mu)} \norm{e_k}_{F^{R-\mu}} &\text{if } R-\mu \le s \in \j{N} 
			\end{cases}
			\lesssim 
			C_{1} \norm{g}_{F^\beta} 2^{k(s+\mu+r - 2\beta)}
		\end{equation}
		for every $r-\mu \le s \in \j{N} $.  Similarly, if $k \ge 1$ then we can use the fact that $R+r-2\beta >0$ to bound 
		\begin{multline}\label{it_10_2}
			\bnorm{\sum_{j=0}^{k-1} \Updelta_k e_j   }_{F^s} \le \sum_{j=0}^{k-1} \norm{\Updelta_k e_j}_{F^s}  \lesssim 2^{k(s-R+\mu)} \sum_{j=0}^{k-1}  \norm{\Updelta_k e_j}_{F^{R-\mu}} \lesssim  2^{k(s-R+\mu)} \sum_{j=0}^{k-1}  \norm{ e_j}_{F^{R-\mu}} \\
			\lesssim C_{1} \norm{g}_{F^\beta} 2^{k(s-R+\mu)}  \sum_{j=0}^{k-1} 2^{j(R+r-2\beta)} 
			\lesssim C_{1} \norm{g}_{F^\beta} 2^{k(s+\mu+r -2\beta )} 
		\end{multline}
		for every $r-\mu\le s \in \j{N}$.  Synthesizing \eqref{it_10}, \eqref{it_10_1}, and \eqref{it_10_2}, we deduce that 
		\begin{equation}\label{it_11}
			\norm{y_{k+1}}_{F^s} \lesssim C_{1} \norm{g}_{F^\beta} 2^{k(s+\mu+r -2\beta )}
		\end{equation}
		for every $r-\mu \le s \in \j{N}$.

		As the penultimate update we define $f_{k+1} = \Updelta_{k+1}g + y_{k+1} \in F^N$. We know that $v_{k+1}$ satisfies \eqref{it_12}, so the operator $L(v_{k+1})$ exists, and we may make the final update by setting $h_{k+1} = L(v_{k+1}) f_{k+1} \in E^R$. The tame estimates for $L(v_{k+1})$ then provide for the bound 
		\begin{equation}\label{it_13}
			\norm{h_{k+1}}_{E^s} \le C_2 \tp{\norm{\Updelta_{k+1} g}_{F^s} + \norm{y_{k+1}}_{F^s}  } + C_2\tbr{\norm{v_{k+1}}_{E^{s+\mu}}}\tp{\norm{\Updelta_{k+1} g}_{F^r} + \norm{y_{k+1}}_{F^r}}
		\end{equation}
		for $s \in \N \cap [r,R]$.  From \eqref{it_0_v}, which we established above holds for $j=k+1$, and \eqref{it_0_gbnd} we may estimate $\norm{v_{k+1}}_{E^{s+\mu}} \lesssim K_4 \norm{g}_{F^\beta} 2^{(k+1) (s+\mu-\beta)} \lesssim 2^{(k+1) (s+\mu-\beta)}$. By plugging this and \eqref{it_11} into \eqref{it_13}, we then find that 
		\begin{multline}
			\norm{h_{k+1}}_{E^s} \lesssim C_2\left( \norm{\Updelta_{k+1} g}_{F^\beta} 2^{(k+1)(s-\beta)} + C_{1} \norm{g}_{F^\beta} 2^{k(s+\mu+r-2\beta)}   \right) \\
			+ C_2 \tbr{2^{(k+1) (s+\mu-\beta)}}  \left(\norm{\Updelta_{k+1} g}_{F^\beta} 2^{(k+1)(r-\beta)} + C_{1} \norm{g}_{F^\beta} 2^{k(2r+\mu-2\beta)}   \right) \\
			\lesssim  C_{2} (1+C_{1}) 2^{(k+1)(s-\beta)}    ( \norm{\Updelta_{k+1} g}_{F^\beta} (2^{(k+1)(r-s)} + 2^{(k+1)(\mu +r-\beta)} )  \\
			+ \norm{g}_{F^\beta} ( 2^{(k+1)(\mu+r-\beta)} + 2^{(k+1)(\mu+2r-s-\beta)}  + 2^{(k+1)(2r+2\mu -\beta)} ). 
		\end{multline}
		To consolidate all of the exponents we recall that that $r-s \le 0$ and $1 = \beta - 2r -2\mu >0$, which show that $\mu + r-\beta \le 2\mu + 2r - \beta = -1 \le 0$ and $\mu + 2r -s -\beta \le \mu + r -\beta \le -1$.	Hence, the previous estimate implies that $\norm{h_{k+1}}_{E^s} \lesssim C_{2} (1+C_{1}) 2^{(k+1)(s-\beta)}\tp{\norm{\Updelta_{k+1} g}_{F^\beta} + \norm{g}_{F^\beta} 2^{- (k+1)} }$, and we deduce that $h_{k+1}$ obeys \eqref{it_0_h} for $j=k+1$, provided that $K_1$ satisfies 
		\begin{equation}\label{it_K_5}
			C_{2} (1+C_{1}) \lesssim K_1,
		\end{equation}
		which we assume holds.
		
		We have now established conditions on $K_1$, $K_2$, $K_3$, and $K_4$ that are sufficient for the construction of $(u_{k+1},v_{k+1},h_{k+1},y_{k+1},f_{k+1},e_{k+1}) \in  (B_{E^r}(0,\delta_R) \cap E^R) \times E^N \times E^R \times F^N \times F^N \times F^{R-\mu}$, namely \eqref{it_K_1}, \eqref{it_K_2}, \eqref{it_K_3}, \eqref{it_K_4}, and \eqref{it_K_5}.  It is a simple matter to choose parameters satisfying these conditions, and so the inductive step is complete provided these parameters are chosen.  We thus have the desired sequence.  Note that \eqref{it_0_e} follows as above from the claim established in Step 2.

		\textbf{Step 5}: Convergence of the sequence.  With the sequence in hand from Step 4, we use the claim from Step 1 with $0 \le \ell \le  k$ and the fact that $u_{k+1}-u_\ell=\sum_{n=\ell}^k h_\ell$ to see that 
		\begin{equation}
			\tnorm{u_{k+1} - u_\ell}_{E^\beta} \le \sum_{n=\ell}^k \norm{h_n}_{E^\beta} \lesssim K_1 \bp{\sum_{n=\ell}^\infty \norm{g}_{F^\beta}^2 2^{-2 n} + \norm{\Updelta_n g}_{F^\beta}^2 }^{1/2} \to 0
		\end{equation}
		as $\ell\to\infty$. Thus, $\{u_j\}_{j=0}^\infty$ is a Cauchy sequence in $E^\beta$, and hence convergent to some $u \in E^\beta$.  Sending $j \to \infty$ in \eqref{it_0_u} shows that $\norm{u}_{E^\beta} \le K_4 \norm{g}_{F^\beta}$.  
		
		It remains only to show that $\Psi(u) =g$.  To this end, we again telescope and use that $\Psi(0)=\Psi(u_0) =0$ and \eqref{it_0_iteration} to write, for $k \ge 2$, 
		\begin{multline}
			\Psi(u_{k+1}) = \sum_{j=0}^k \tp{\Psi(u_{k+1}) - \Psi(u_k)} =  \sum_{j=0}^k \tp{\Psi(u_{k} +h_k) - \Psi(u_k)} = \sum_{j=0}^k \tp{e_j + f_j} \\
			= \sum_{j=0}^k \Updelta_j g + \sum_{j=0}^k e_j + \sum_{j=0}^k y_j  = S_{k+1}g + \sum_{j=0}^k e_j - S_k \sum_{j=0}^{k-1}e_j = S_{k+1}g + e_k + \tp{I-S_k}\sum_{j=0}^{k-1}e_j.
		\end{multline}
		Now, we know from the properties of the smoothing operators, which are given in Definition~\ref{defn of smoothable and LP-smoothable Banach scales}, that $\norm{(I-S_{k+1})g}_{F^\beta} \to 0$ as $k\to\infty$,
		while \eqref{it_0_e} implies that
		\begin{equation}
			\norm{e_k}_{F^{\beta-\mu}} \lesssim C_{1} \norm{g}_{F^\beta} 2^{k(r-\beta)} \to 0 \text{ as } k \to \infty
		\end{equation}
		and (observing that $\beta  -\mu -1 \ge 2r+\mu \ge 1$)
		\begin{multline}
			\bnorm{(I-S_k) \sum_{j=0}^{k-1}e_j}_{F^{\beta-\mu-1}}  \le \sum_{j=0}^{k-1} \norm{(I-S_k) e_j}_{F^{\beta-\mu -1}} 
			\lesssim 2^{-k} \sum_{j=0}^{k-1} \tnorm{(I-S_k) e_j}_{F^{\beta-\mu}} \\
			\lesssim C_{1} \tnorm{g}_{F^\beta}  2^{-k} \sum_{j=0}^{k-1}2^{j(r-\beta)} 
			\lesssim C_{1} \tnorm{g}_{F^\beta}  2^{-k}  \to 0 \text{ as } k \to \infty. 
		\end{multline}
		Upon combining these, we find that $\norm{\Psi(u_{k+1}) - g}_{F^{\beta-\mu-1}} \to 0$ as $k\to\infty$ but since $u_k \to u$ in $E^\beta$ as $k \to \infty$, the continuity of $\Psi$ guarantees that $\Psi(u_{k+1}) \to \Psi(u)$ in $F^{\beta-\mu}$.  Thus, $\Psi(u) = g$.
		
		\textbf{Step 6}: Higher regularity. Let $\N\ni\upnu\le R+r-2\be$ and now suppose additionally that $g\in F^{\be+\upnu}$. We wish to prove that the solution $u\in E^\be$ constructed in Step 5 actually belongs to $E^{\be+\upnu}$ and satisfies $\tnorm{u}_{E^{\be+\upnu}}\lesssim\tnorm{g}_{F^{\be+\upnu}}$. We will establish this via finite induction.  The proposition to be proved inductively is the following statement, depending on $\nu\in\tcb{0,1,\dots,\upnu}$:  if $j \in \N$ and $s\in\N\cap[r,R]$, then
		\begin{equation}\label{the_inductive_hypothesiesta}
			\tnorm{h_j}_{E^s}\lesssim 2^{j(s-(\be+\nu))}\tp{\tnorm{\Updelta_jg}_{F^{\be+\nu}}+2^{-j}\tnorm{g}_{F^{\be+\nu}}}
		\end{equation}
		for an implicit constant depending on $\nu$, as well as $K_1,\dots,K_4$,  $A_\be, A_{\be+\nu}$,  $\mu$, $r$, $\delta_R$, and $R$.  The case $\nu=0$ was established in the fourth step, in particular in the verification of~\eqref{it_0_h}. Assume now that for some $\nu\in\tcb{0,1,\dots,\upnu-1}$ we have that the induction hypothesis~\eqref{the_inductive_hypothesiesta} holds at level $\nu$. We wish to prove it at level $\nu+1$.
		
		We begin by mimicking Step 1 and deriving improved bounds on the sequence $\tcb{u_j}_{j=0}^\infty$. For $n\in\N$ we set $\xi^\nu_n=\tnorm{\Updelta_ng}_{F^{\be+\nu}}+2^{-n}\tnorm{g}_{F^{\be+\nu}}$. By arguing as in the first step, we can show that for any $0\le j\le k$ we have the estimate
		\begin{equation}\label{knock knock whose shoe showers with the dock for which the boat docks indeed if you want me too}
			\bnorm{\sum_{n=\ell}^kh_n}_{E^{\be+\nu}}\lesssim\bp{\sum_{n=\ell}^k\tp{\xi^\nu_n}^2}^{1/2}.
		\end{equation}
		Since $g\in F^{\be+\upnu}$, we deduce from~\eqref{smoothing_LP_2} that $\tcb{\xi^\nu_n}_{n=0}^\infty\in\ell^2(\N)$; hence, the sequence of partial sums $\tcb{\sum_{n=0}^jh_n}_{j=0}^\infty\subset E^{\be+\nu}$ is Cauchy. We have already established convergence of the scheme, giving that $u=\sum_{n=0}^\infty h_j$ in $E^\be$. Therefore, $u\in E^{\be+\nu}$ and, from~\eqref{knock knock whose shoe showers with the dock for which the boat docks indeed if you want me too}, we deduce that for all $j\in\N$
		\begin{equation}\label{improved bounds on u_j}
			\tnorm{u_j}_{E^{\be+\nu}}\lesssim\bp{\sum_{n=0}^{j-1}(\xi^\nu_n)^2}^{1/2}\lesssim\tnorm{g}_{F^{\be+\nu}}.
		\end{equation}
		
		Now we derive improved bounds on the sequence $\tcb{v_j}_{j\in\N}$. From properties~\eqref{smoothing_1} and~\eqref{smoothing_2} of the smoothing operators and the improved bounds of~\eqref{improved bounds on u_j}, we deduce for $\be+\nu\le s\in\j{N}$ that
		\begin{equation}\label{improved bounds on v_j}
			\tnorm{v_{j}}_{E^s}=\tnorm{S_ju_j}_{E^s}\lesssim 2^{j(s-(\be+\nu))}\tnorm{u_j}_{E^{\be+\nu}}\lesssim 2^{j(s-(\be+\nu))}\tnorm{g}_{F^{\be+\nu}}.
		\end{equation}
		
		Next, we bound the sequence $\tcb{u_j-v_j}_{j\in\N}$. Thanks to Lemma~\ref{lem on log-convexity in tame Banach scales}, for $s\in\N\cap[0,R]$ we have the bound 
		\begin{equation}\label{tight lipped}
			\tnorm{u_j-v_j}_{E^s}\lesssim\tnorm{u_j-v_j}_{E^0}^{1-s/R}\tnorm{u_j-v_j}_{E^R}^{s/R}.
		\end{equation}
		The $E^0$-norm term we bound using properties~\eqref{smoothing_1} and~\eqref{smoothing_3} of the smoothing operators and estimate~\eqref{improved bounds on u_j}:
		\begin{equation}\label{condescending}
			\tnorm{u_j-v_j}_{E^0}=\tnorm{(I-S_j)u_j}_{E^0}\lesssim2^{-j(\be+\nu)}\tnorm{u_j}_{E^{\be+\nu}}\lesssim 2^{-j(\be+\nu)}\tnorm{g}_{E^{\be+\nu}}.
		\end{equation}
		On the other hand, the $E^R$ term is handled via~\eqref{smoothing_1}, the telescoping identity  $u_j=\sum_{n=0}^{j-1}h_n$, induction hypothesis~\eqref{the_inductive_hypothesiesta}, and the bound $\xi^\nu_n\lesssim\tnorm{g}_{F^{\be+\nu}}$:
		\begin{equation}\label{mama's little chauvinist}
			\tnorm{u_j-v_j}_{E^R} \le \sum_{n=0}^{j-1}\tnorm{(I-S_j)h_n}_{E^R}  \lesssim \sum_{n=0}^{j-1}\tnorm{h_n}_{E^R} \lesssim \tnorm{g}_{F^{\be+\nu}}\sum_{n=0}^{j-1}2^{n(R-(\be+\nu))}
			\lesssim 2^{j(R-(\be+\nu))}\tnorm{g}_{F^{\be+\nu}}.
		\end{equation}
		We synthesize~\eqref{tight lipped}, \eqref{condescending}, and~\eqref{mama's little chauvinist} to get
		\begin{equation}\label{improved bound on u_j-v_j}
			\tnorm{u_j-v_j}_{E^s}\lesssim 2^{j(s-(\be+\nu))}\tnorm{g}_{F^{\be+\nu}}.
		\end{equation}
		
		Our next endeavor is to derive improved bounds on the sequence $\tcb{e_j}_{j\in\N}$. We turn to estimate~\eqref{it_6} for $s\in[r-\mu,R-\mu]\cap\N$. To handle the right hand side, we invoke the following bounds:
		\begin{equation}
			\begin{split}
				\max\tcb{\tnorm{u_j-v_j}_{E^r},\tnorm{h_j}_{E^r}}&\lesssim2^{j(r-\be)}\min\tcb{1,2^{-j\nu}\tnorm{g}_{F^{\be+\nu}}}, \\ 
				\max\tcb{\tnorm{u_j-v_j}_{E^{s+\mu}},\tnorm{h_j}_{E^{s+\mu}}}&\lesssim2^{j(s+\mu-\be)}\min\tcb{1,2^{-j\nu}\tnorm{g}_{F^{\be+\nu}}}, \\
				\tnorm{u_j,v_j}_{E^{s+\mu}}&\lesssim 2^{j\max\tcb{0,s+\mu-\be}},
			\end{split}
		\end{equation}
		which are consequences of~\eqref{it_0_h}, \eqref{it_0_u-v}, \eqref{it_0_gbnd}, \eqref{the_inductive_hypothesiesta}, \eqref{improved bound on u_j-v_j}, and finally~\eqref{it_7}. In this way we acquire the estimate
		\begin{equation}\label{improved bounds on e_j}
			\tnorm{e_j}_{F^s}\lesssim2^{j(s+\mu+r-2\be-\nu)}\tnorm{g}_{F^{\be+\nu}}.
		\end{equation}
		
		Now we estimate the sequence $\tcb{y_j}_{j\in\N}$. For this we recall identity~\eqref{_u_n_d_e_r_s_c_o_r_e_}. By arguing as in~\eqref{it_10_1}, but using the bounds established in~\eqref{improved bounds on e_j} for $\tcb{e_j}_{j\in\N}$, we learn that for $j\in\N$ and $r-\mu\le s\in\j{N}$
		\begin{equation}\label{of this}
			\tnorm{S_{j+1}e_{j}}_{F^s}\lesssim 2^{j(s+\mu+r-2\be-\nu)}\tnorm{g}_{F^{\be+\nu}}.
		\end{equation}
		Similarly, by arguing as in~\eqref{it_10_2}, but instead using~\eqref{improved bounds on e_j} with $s=R-\mu$ and the fact that $R+r-2\be-\nu>0$ (since $\nu<\upnu$), we gain the bound
		\begin{equation}\label{the one the only}
			\bnorm{\sum_{n=0}^{j-1}\Updelta_je_{n}}_{F^s}\lesssim 2^{j(s+\mu+r-2\be-\nu)}\tnorm{g}_{F^{\be+\nu}}.
		\end{equation}
		We combine~\eqref{of this} and~\eqref{the one the only} to see that for $j\in\N$ and $r-\mu\le s\in\j{N}$,
		\begin{equation}\label{improved bounds on y_j}
			\tnorm{y_j}_{F^s}\lesssim 2^{(j-1)(s+\mu+r-2\be-\nu)}\tnorm{g}_{F^{\be+\nu}}.
		\end{equation}
		
		At last, we are ready to obtain an improved estimate on the sequence $\tcb{h_j}_{j\in\N}$ and close the induction. By using the identity $h_j=L(v_j)(\Updelta_jg+y_j)$ with the right inverse estimates of equation~\eqref{tame estimates on the right inverse}, we find that for $j\in\N$,
		\begin{equation}\label{wah-wah you made me such a big star}
			\tnorm{h_j}_{E^s}\lesssim\tnorm{\Updelta_jg}_{F^s}+\tnorm{y_j}_{F^s}+\tbr{\tnorm{v_j}_{E^{s+\mu}}}\tp{\tnorm{\Updelta_jg}_{F^r}+\tnorm{y_j}_{E^r}} \text{ for } s\in[r,R]\cap\N.
		\end{equation}
		We estimate $\tnorm{y_j}_{F^s}$ and $\tnorm{y_j}_{F^r}$ according to the improved estimates of~\eqref{improved bounds on y_j}, the $\Updelta_jg$ norms are handled via smoothing estimate~\eqref{smoothing_4}, i.e. $\tnorm{\Updelta_jg}_{F^k}=2^{j(k-(\be+\nu+1))}\tnorm{\Updelta_jg}_{F^{\be+\nu+1}}$ for $k\in\tcb{r,s}$, and the $v_j$-term is estimated according to~\eqref{smoothing_1} and \eqref{it_0_u} in the case $s\le\be$ and via \eqref{it_0_v}. Hence estimate~\eqref{wah-wah you made me such a big star} yields
		\begin{multline}
			\tnorm{h_j}_{E^s}\lesssim 2^{j(s-(\be+\nu+1))}\tnorm{\Updelta_jg}_{F^{\be+\nu+1}}+2^{j(s+\mu+r-2\be-\nu)}\tnorm{g}_{F^{\be+\nu}}\\+\tbr{2^{j(s-\be)}}\tp{2^{j(r-(\be+\nu+1))}\tnorm{\Updelta_jg}_{F^{\be+\nu+1}}+2^{j(\mu+2r-2\be-\nu)}\tnorm{g}_{F^{\be+\nu}}}.
		\end{multline}
		Upon regrouping, we acquire the bound
		\begin{multline}\label{her mind can blow those clouds away}
			\tnorm{h_j}_{E^s}\lesssim 2^{j(s-(\be+\nu+1))}\tp{1+\tbr{2^{j(s-\be)}}2^{j(r-s)}}\tnorm{\Updelta_jg}_{F^{\be+\nu+1}}\\
			+2^{j(s-(\be+\nu+1))}2^{-j}\tp{2^{j(2+\mu+r-\be)}+\tbr{2^{j(s-\be)}}2^{j(2+\mu+2r-\be-s))}}\tnorm{g}_{F^{\be+\nu}}.
		\end{multline}
		Since $r\le\min\tcb{s,\be}$, we have that 
		\begin{equation}\label{its not always}
			1+\tbr{2^{j(s-\be)}}2^{j(r-s)}\lesssim1.
		\end{equation}
		The inequalities $1\le\mu+r$ and $2(\mu+r)<\be$ imply that
		\begin{equation}\label{gonna be this}
			2^{j(2+\mu+r-\be)}\lesssim 1.
		\end{equation}
		The inequality $\mu\ge 1$ implies that $\min\tcb{2\be,s+\be}\ge\be\ge1+2(\mu+r)\ge 2+\mu+2r$ and hence
		\begin{equation}\label{grey}
			\tbr{2^{j(s-\be)}}2^{j(2+\mu+2r-\be-s)}\lesssim1.
		\end{equation}
		We combine inequalities~\eqref{her mind can blow those clouds away}, \eqref{its not always}, \eqref{gonna be this}, and~\eqref{grey} to see that
		\begin{equation}
			\tnorm{h_j}_{E^s}\lesssim 2^{j(s-(\be+\nu+1))}\tp{\tnorm{\Updelta_jg}_{F^{\be+\nu+1}}+2^{-j}\tnorm{g}_{F^{\be+\nu}}}.
		\end{equation}
		This means that the inductive proposition~\eqref{the_inductive_hypothesiesta} has been verified for $\nu+1$, and hence the induction is complete.  We therefore know that~\eqref{the_inductive_hypothesiesta} holds for $\nu=\upnu$. We then argue as in~\eqref{knock knock whose shoe showers with the dock for which the boat docks indeed if you want me too} and~\eqref{improved bounds on u_j} with the sequence $\tcb{\xi^\upnu_n}_{n\in\N}$ to deduce that $u\in E^{\be+\upnu}$ with the estimate $\tnorm{u}_{E^{\be+\upnu}}\lesssim\tnorm{g}_{F^{\be+\upnu}}$. 
	\end{proof}
	
	We now complement the previous local surjectivity result with the following local injectivity result, which has no analog in Baldi and Haus \cite{MR3711883} but is analogous to a result of Hamilton~\cite{MR656198}.  We emphasize that in the following we only need left inverses and weaker forms of the tame estimates than in the LRI mapping hypotheses.
	
	\begin{thm}[Local injectivity]\label{local_inj}
		Let $\bf{E} = \{E^s\}_{s\in \j{N}}$ and $\bf{F} = \{F^s\}_{s\in \j{N}}$ be Banach scales over the same field, and suppose that $\bf{E}$ is terminally dense in the sense of Definition~\ref{definition of Banach scales}.   Let $\sigma,\mu \in \N$ be such that $\sigma+\mu \in \j{N}$.  Assume that there exists $\gamma_\sigma \in \R^+$ and a $C^2$ map $\Psi : B_{E^{\sigma+\mu}}(0,\gamma_\sigma) \to F^{\sig}$ such that the following hold.
		\begin{enumerate}
			\item For every $u_0 \in B_{E^{\sigma+\mu}}(0,\gamma_\sigma)$ we have the bound 
			\begin{equation}
				\norm{D^2 \Psi(u_0)[v,w] }_{F^\sigma} \lesssim \br{\norm{u_0}_{E^{\sigma+\mu}} } (\norm{v}_{E^{\sigma+\mu}} \norm{w}_{E^\sigma} + \norm{v}_{E^\sigma} \norm{w}_{E^{\sigma+\mu}}  ).
			\end{equation}
			
			\item  For every $u_0 \in B_{E^{\sigma+\mu}}(0,\gamma_\sigma) \cap E^N$ there exists a bounded linear operator $L(u_0) : F^\sigma \to E^\sigma$ satisfying the following two conditions.
			\begin{enumerate}
				\item $L(u_0) D\Psi(u_0) v = v$ for every $v \in E^{\sigma+\mu}$.
				\item We have the estimate $\norm{L(u_0) f}_{E^\sigma} \lesssim   \br{\norm{u_0}_{E^{\sigma+\mu}} } \norm{f}_{F^\sigma}$ for every $f \in F^\sigma$.
			\end{enumerate}
		\end{enumerate}
		Then there exists $0 < \delta_{\operatorname*{inj},\sig} \le \gamma_\sigma/4$ such that if $u_0-u_1 \in B_{E^{\sigma+\mu}}(0,2\delta_{\operatorname*{inj},\sig})$, with $u_i\in B_{E^{\sig+\mu}}(0,\gam_\sig)$ for $i\in\tcb{0,1}$, then 
		\begin{equation}\label{local_inj_0}
			\norm{u_1-u_0}_{E^\sigma} \lesssim \norm{\Psi(u_1)-\Psi(u_0)}_{F^\sigma}. 
		\end{equation}
		In particular, the restriction $\Psi : B_{E^{\sigma+\mu}}(0,\delta_{\operatorname*{inj},\sig}) \to F^\sigma$ is injective. 
	\end{thm}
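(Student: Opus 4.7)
The strategy is the standard one for deriving injectivity from a left inverse to the derivative, but it must be augmented with an approximation argument since $L(u_0)$ is only defined when $u_0 \in E^N$, whereas the hypothesis only places $u_0$ in $B_{E^{\sigma+\mu}}(0,\gamma_\sigma)$. The plan is to establish the estimate \eqref{local_inj_0} for background points $\tilde{u}_0 \in B_{E^{\sigma+\mu}}(0,\gamma_\sigma) \cap E^N$, then pass to the limit using the assumed terminal density of $\bf{E}$, and finally choose $\delta_{\operatorname*{inj},\sig}$ small enough to absorb a quadratic error term.

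First, for any $\tilde{u}_0 \in B_{E^{\sigma+\mu}}(0,\gamma_\sigma) \cap E^N$ and $u_1 \in B_{E^{\sigma+\mu}}(0,\gamma_\sigma)$, the segment $t \mapsto \tilde{u}_0 + t(u_1 - \tilde{u}_0)$ stays in $B_{E^{\sigma+\mu}}(0,\gamma_\sigma)$ by convexity, so the fundamental theorem of calculus and Taylor's theorem, applied to $\Psi$ along this segment, yield
\begin{equation}
\Psi(u_1) - \Psi(\tilde{u}_0) = D\Psi(\tilde{u}_0)(u_1 - \tilde{u}_0) + \int_0^1 (1-t)\, D^2\Psi(\tilde{u}_0 + t(u_1 - \tilde{u}_0))[u_1-\tilde{u}_0,u_1-\tilde{u}_0]\,\m{d}t.
\end{equation}
Applying $L(\tilde{u}_0)$ and using the left-inverse identity $L(\tilde{u}_0) D\Psi(\tilde{u}_0) v = v$ for $v = u_1 - \tilde{u}_0 \in E^{\sigma+\mu}$ gives
\begin{equation}
u_1 - \tilde{u}_0 = L(\tilde{u}_0)\tp{\Psi(u_1)-\Psi(\tilde{u}_0)} - L(\tilde{u}_0) \int_0^1 (1-t)\, D^2\Psi(\tilde{u}_0 + t(u_1-\tilde{u}_0))[u_1-\tilde{u}_0,u_1-\tilde{u}_0]\,\m{d}t.
\end{equation}
The tame bound on $L(\tilde{u}_0)$ and the hypothesized bound on $D^2\Psi$, combined with $\tnorm{\tilde{u}_0}_{E^{\sigma+\mu}}, \tnorm{u_1}_{E^{\sigma+\mu}} < \gamma_\sigma$, then produce the estimate
\begin{equation}
\tnorm{u_1 - \tilde{u}_0}_{E^\sigma} \le C_0 \tnorm{\Psi(u_1) - \Psi(\tilde{u}_0)}_{F^\sigma} + C_1 \tnorm{u_1 - \tilde{u}_0}_{E^{\sigma+\mu}} \tnorm{u_1 - \tilde{u}_0}_{E^\sigma},
\end{equation}
where $C_0, C_1$ depend only on $\gamma_\sigma$ and the implicit constants of the hypotheses.

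Next, for given $u_0 \in B_{E^{\sigma+\mu}}(0,\gamma_\sigma)$, invoke terminal density to choose a sequence $\tilde{u}_0^{(k)} \in B_{E^{\sigma+\mu}}(0,\gamma_\sigma) \cap E^N$ with $\tilde{u}_0^{(k)} \to u_0$ in $E^{\sigma+\mu}$, hence also in $E^\sigma$. Since $\Psi$ is continuous from $B_{E^{\sigma+\mu}}(0,\gamma_\sigma)$ to $F^\sigma$, applying the displayed estimate with $\tilde{u}_0 = \tilde{u}_0^{(k)}$ and sending $k \to \infty$ gives
\begin{equation}
\tnorm{u_1 - u_0}_{E^\sigma} \le C_0 \tnorm{\Psi(u_1) - \Psi(u_0)}_{F^\sigma} + C_1 \tnorm{u_1 - u_0}_{E^{\sigma+\mu}} \tnorm{u_1 - u_0}_{E^\sigma}
\end{equation}
for any $u_0, u_1 \in B_{E^{\sigma+\mu}}(0,\gamma_\sigma)$. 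Choose $\delta_{\operatorname*{inj},\sig} = \min\{\gamma_\sigma/4, 1/(8C_1)\}$. When $u_1 - u_0 \in B_{E^{\sigma+\mu}}(0, 2\delta_{\operatorname*{inj},\sig})$, the coefficient $C_1 \tnorm{u_1-u_0}_{E^{\sigma+\mu}} \le 1/2$, so the quadratic term can be absorbed into the left-hand side (noting $\tnorm{u_1-u_0}_{E^\sigma}$ is finite by the embedding $E^{\sigma+\mu}\hookrightarrow E^\sigma$), yielding \eqref{local_inj_0}. Injectivity of $\Psi$ restricted to $B_{E^{\sigma+\mu}}(0,\delta_{\operatorname*{inj},\sig})$ follows immediately from the triangle inequality.

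The only mildly delicate point is the passage to the limit: it requires simultaneous continuity of $\Psi$ into $F^\sigma$ and convergence of $\tilde{u}_0^{(k)}$ in both $E^\sigma$ and $E^{\sigma+\mu}$, both of which are available. Everything else is bookkeeping on the tame bounds.
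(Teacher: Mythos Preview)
Your proof is correct and follows essentially the same approach as the paper: Taylor expand, apply the left inverse $L(\tilde{u}_0)$ at an $E^N$ base point, use the tame bounds on $L$ and $D^2\Psi$ to obtain the key inequality, and then combine terminal density with absorption. The only cosmetic differences are that the paper approximates both $u_0$ and $u_1$ by $E^N$ sequences and performs the absorption before passing to the limit, whereas you approximate only $u_0$ and absorb afterward; both orderings work equally well.
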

	\begin{proof}

		Suppose initially that $u_0-u_1 \in B_{E^{\sigma+\mu}}(0,2\delta) \cap E^N$ and $u_i\in B_{E^{\sig+\mu}}(0,\gam_\sig)$ for some $0 < \delta \le \gam_\sig/4$.  We may use Taylor's theorem  to write
		\begin{equation}
			\Psi(u_1) - \Psi(u_0) = D \Psi(u_0)(u_1 - u_0) + \int_0^1(1-t) D^2 \Psi((1-t)u_0 + t u_1) [u_1-u_0,u_1-u_0]\;\m{d}t,
		\end{equation}
		with the understanding that this equality holds in the space $F^\sigma$.   We then apply the bounded linear map $L(u_0) : F^\sigma \to E^\sigma$ and rearrange to see that 
		\begin{equation}\label{local_inj_1}
			u_1 - u_0 = L(u_0)\bp{\Psi(u_1) - \Psi(u_0) -  \int_0^1(1-t) D^2 \Psi((1-t)u_0 + t u_1) [u_1-u_0,u_1-u_0]\;\m{d}t}.
		\end{equation}
		
		Next we couple the identity \eqref{local_inj_1} to the estimate for $L(u_0)$ and the trivial bound $\br{\norm{u_0}_{E^{\sigma+\mu}}}  \lesssim 1$ to see that
		\begin{equation}\label{local_inj_2}
			\norm{u_1 - u_0}_{E^\sigma} \lesssim \norm{\Psi(u_1) - \Psi(u_0)}_{F^\sigma} +   \int_0^1(1-t) \norm{D^2 \Psi((1-t)u_0 + t u_1) [u_1-u_0,u_1-u_0]}_{F^\sigma}\;\m{d}t. 
		\end{equation}
		For the latter term we use the $C^2$ estimate to bound
		\begin{multline}
			\norm{D^2 \Psi((1-t)u_0 + t u_1) [u_1-u_0,u_1-u_0]}_{F^\sigma} 
			\lesssim 
			\\
			\br{  \norm{(1-t)u_0 + t u_1}_{E^{\sigma+\mu}} } \norm{u_1-u_0}_{E^{\sigma+\mu}} \norm{u_1-u_0}_{E^\sigma}   
			\lesssim  \norm{u_1-u_0}_{E^{\sigma+\mu}} \norm{u_1-u_0}_{E^\sigma}
		\end{multline}
		for every $t \in [0,1]$. We then plug this bound into \eqref{local_inj_2} to see that 
		\begin{multline}
			\norm{u_1 - u_0}_{E^\sigma} \lesssim \norm{\Psi(u_1) - \Psi(u_0)}_{F^\sigma} +  \norm{u_1-u_0}_{E^{\sigma+\mu}} \norm{u_1-u_0}_{E^\sigma} \\
			\lesssim \norm{\Psi(u_1) - \Psi(u_0)}_{F^\sigma} +  \delta \norm{u_1-u_0}_{E^\sigma}.       
		\end{multline}
		From this we readily deduce the existence of $0 <\delta_{\operatorname*{inj}}\le \gamma_\sig/4$ such that if $\delta \le \delta_{\operatorname*{inj}}$ then  we can absorb the right-most term onto the left to conclude that 
		\begin{equation}\label{local_inj_3}
			\norm{u_1 - u_0}_{E^\sigma} \lesssim \norm{\Psi(u_1) - \Psi(u_0)}_{F^\sigma} \text{ for all }u_0-u_1 \in B_{\sigma+\mu}(0,2\delta_{\operatorname*{inj}}) \cap E^N.
		\end{equation}
		
		Estimate \eqref{local_inj_3} is not quite the desired result since it requires $u_0,u_1 \in E^N$, but we can use the fact that $\bf{E}$ is terminally dense to promote this result.  Indeed, given $u_0,u_1 \in B_{E^{\sigma+\mu}}(0,\gam_\sig)$ such that $u_0-u_1\in B_{E^{\sig+\mu}}(0,2\delta_{\operatorname*{inj}})$ we can pick 
		$\{u^{n}_i\}_{n\in \N} \subseteq E^N$ such that $u^{n}_i \to u_i$ in $E^{\sigma+\mu}$ for $i \in \{0,1\}$. Since $B_{E^{\sigma+\mu}}(0,2\delta_{\operatorname*{inj}})$ and $B_{E^{\sig+\mu}}(0,\gam_\sig)$ are open, we may assume without loss of generality, that $\{u^{n}_0-u^n_1\}_{n\in \N} \subseteq B_{E^{\sigma+\mu}}(0,2\delta_{\operatorname*{inj}})$ and for $i\in\tcb{0,1}$, $\tcb{u_i^n}_{n\in\N}\subset B_{E^{\sig+\mu}}(0,\gam_\sig)\cap E^N$.  We then apply \eqref{local_inj_3} to the sequence to see that $\norm{u_1^{n} - u_0^{n}}_{E^\sigma} \lesssim \norm{\Psi(u_1^{n}) - \Psi(u_0^{n})}_{F^\sigma}$ for all $n\in\N$. By sending $n \to \infty$ and using the continuity of $\Psi : B_{E^{\sigma+\mu}}(0,\gam_\sig) \to F^\sigma$,  we deduce that $\norm{u_1 - u_0}_{E^\sigma} \lesssim \norm{\Psi(u_1) - \Psi(u_0)}_{F^\sigma}$ for all $u_0-u_1 \in B_{E^{\sigma+\mu}}(0,2\delta_{\operatorname*{inj}})$, with $\;u_0,u_1\in B_{E^{\sig+\mu}}(0,\gam_\sig)$ which is \eqref{local_inj_0}.
	\end{proof}
	
	\subsection{Proof of the inverse function theorem}\label{what would you do if i sang}
	
	We are now ready for the proofs of the inverse function theorem. The proofs are given under conditions $I$ and $II$ separately.
	
	\begin{proof}[Proof of Theorem~\ref{thm on nmh}, assuming $I$]
		Assume that condition $I$ is satisfied, i.e. $\bf{E}$ and $\bf{F}$ are LP-smoothable.  
		
		First note that Lemma~\ref{lem on useful smoothing inequalities in tame Banach scales} implies that $\bf{E}$ is terminally dense.  This and the LRI mapping hypotheses from Definition~\ref{defn of the mapping hypotheses} imply the hypotheses of Theorem~\ref{local_inj} with $\sigma= \beta -\mu$ and  $\gamma_\sigma = \delta_{R} \le \delta_r.$  Let $\delta_{\operatorname*{inj},\sig}>0$ be the constant from Theorem \ref{local_inj}.  Then the theorem tells us that $\Psi : B_{E^{\beta}}(0,\delta_{\operatorname*{inj},\sig}) \to F^{\beta-\mu}$ is injective and obeys the estimate \eqref{local_inj_0}. 
		
		On the other hand, by hypothesis, we know that the inequality $2(r+\mu) < \beta < (r+R)/2$ is satisfied.  Let $K_1$, $K_2$, $K_3$, and $K_4$ be the constants from Theorem \ref{iteration_thm} and let $\ep_{\operatorname*{surj}}>0$ denote the constant appearing on the right side of \eqref{it_0_gbnd}.  Then Theorem \ref{iteration_thm} guarantees that for every $g \in B_{F^\beta}(0,\ep_{\operatorname*{surj}})$ there exists $u \in E^\beta$ satisfying $\Psi(u) =g$ as well as the bound 
		\begin{equation}\label{nmh_b_1}
			\norm{u}_{E^\beta} \le K_4 \norm{g}_{F^\beta}.
		\end{equation}
		
		Set $\ep = \min\{\ep_{\operatorname*{surj}},  \delta_{\operatorname*{inj},\sig} / K_4\}$, $\kappa_1 = K_4$, and $\kappa_2 >0$ to be the constant on the right side of \eqref{local_inj_0}.  Since $\kappa_1 \ep \le  \delta_{\operatorname*{inj},\sig}$, the above analysis shows that for every $g \in B_{F^\beta}(0,\ep)$ there exists a unique  $u \in B_{E^\beta}(0,\kappa_1 \ep)$ such that $\Psi(u) = g$.  Moreover, the induced map $\Psi^{-1} : B_{F^\beta}(0,\ep) \to \Psi^{-1}(B_{F^\beta}(0,\ep)) \cap B_{E^\beta}(0,\kappa_1 \ep)$ satisfies~\eqref{nmh_01} and~\eqref{hey cuz} in light of~\eqref{nmh_b_1} and~\eqref{local_inj_0}.
		
		Now suppose that $\N\ni\nu\le R+r-2\be$. From the fourth item of Theorem~\ref{iteration_thm}, we deduce~\eqref{derivatives_gain} and~\eqref{when the rain comes they run and hide}, i.e. that the inverse $\Psi^{-1}$ sends $B_{F^\be}(0,\ep)\cap F^{\be+\nu}$ to $E^{\be+\nu}$ with the tame estimate $\tnorm{\Psi^{-1}(g)}_{E^{\be+\nu}}\lesssim\tnorm{g}_{F^{\be+\nu}}$.
	\end{proof}

	A similar, but slightly more involved argument is needed to prove the inverse function theorem under assumption $II$.

	\begin{proof}[Proof of Theorem~\ref{thm on nmh}, assuming $II$]
		Assume that condition $II$ is satisfied, i.e. $\bf{E}$ is tame  and $\bf{F}$ is a direct summand of $\bf{E}$.  
		
		Since $\bf{E}$ is tame, there exists an LP-smoothable Banach scale $\bf{G} = \{G^s\}_{s\in\j{N}}$ such that $\bf{E}$ is a tame direct summand of $\bf{G}$ with lifting and restriction operators $\lambda_{\bf{E}}: E^0 \to G^0$ and $\rho_{\bf{E}} : G^0 \to E^0$.  Similarly, since $\bf{F}$ is a tame direct summand of $\bf{E}$ we can pick associated lifting and restriction operators $\lambda_{\bf{F}} : F^0 \to E^0$ and $\rho_{\bf{F}} : E^0 \to F^0$.  
		
		Before proceeding, we need to introduce three bits of notation.  First, we define  $\bf{H} = \bf{G}^3$, endowed with the $2-$norm for the sake of definiteness, which makes $\bf{H}$ into an LP-smoothable Banach scale thanks to Lemma~\ref{lemma on products of types of Banach scales}.  Next, for $s \in \j{N}$ write $Q_s =\norm{\rho_{\bf{E}}}_{\mathcal{L}(G^s;E^s)}$.  Then by construction we have that 
		\begin{equation}\label{nmhs_1}
			\rho_{\bf{E}}(B_{G^s}(0,\delta/Q_s)) \subseteq B_{E^s}(0,\delta) \text{ for all }s \in \j{N}, 0< \delta.
		\end{equation}
		Third, we define the map $\Phi : B_{H^r}(0,\delta_r/Q_r) \to G^{r-\mu}$ via 
		\begin{equation}
			\Phi(u,v,w) = \lambda_{\bf{E}}(  \lambda_{\bf{F}} \Psi(\rho_{\bf{E}} u)  +  (I-\lambda_{\bf{F}} \rho_{\bf{F}}) \rho_{\bf{E}}v ) + (I-\lambda_{\bf{E}} \rho_{\bf{E}}) w,
		\end{equation}
		which is well-defined in light of \eqref{nmhs_1} and the fact that $\Psi : B_{E^r}(0,\delta_r) \to F^{r-\mu}$.
		
		We now claim that $(\bf{H},\bf{G},\Phi)$ satisfy the RI mapping hypotheses with parameters $(\mu,r,R)$.  Clearly, $\Phi(0) =0$.  Now let $r-\mu \le s \in \j{N-\mu}$.  By the mapping properties of $\Psi$ and the lifting and restriction operators, we readily deduce that $\Phi : B_{H^r}(0,\delta_r/Q_r) \cap H^{s+1} \to G^s$ is $C^2$ and satisfies 
		\begin{equation}\label{nmhs_deriv_1}
			D \Phi(u_0,v_0,w_0)(a,b,c) =  \lambda_{\bf{E}}(  \lambda_{\bf{F}} D\Psi(\rho_{\bf{E}} u_0) \rho_{\bf{E}} a  +  (I-\lambda_{\bf{F}} \rho_{\bf{F}}) \rho_{\bf{E}} b ) + (I-\lambda_{\bf{E}} \rho_{\bf{E}}) c
		\end{equation}
		and
		\begin{equation}\label{nmhs_deriv_2}
			D^2 \Phi(u_0,v_0,w_0)[(a_1,b_1,c_1), (a_2,b_2,c_2) ] = \lambda_{\bf{E}} \lambda_{\bf{F}} D^2 \Psi(\rho_{\bf{E}} u_0)[\rho_{\bf{E}} a_1, \rho_{\bf{E}} a_2  ].
		\end{equation}
		In turn, the tame $C^2$ estimate for $D^2 \Psi$ and \eqref{nmhs_deriv_2} imply that
		\begin{multline}
			\norm{D^2 \Phi(u_0,v_0,w_0)[(a_1,b_1,c_1), (a_2,b_2,c_2) ]  }_{H^s}  \\ \le C'_1(s)[\norm{(a_1,b_1,c_1)}_{H^{s+\mu}} \norm{(a_2,b_2,c_2)}_{H^r} + \norm{(a_1,b_1,c_1)}_{H^r} \norm{(a_2,b_2,c_2)}_{H^{s+\mu}}  ] \\
			+ C'_1(s) \br{ \norm{(u_0,v_0,w_0)}_{H^{s+\mu}} } \norm{(a_1,b_1,c_1)}_{H^r} \norm{(a_2,b_2,c_2)}_{H^r}
		\end{multline}
		for  
		\begin{equation}\label{nmhs_map_2}
			C'_1(s) = C_1(s) \norm{\lambda_{\bf{E}} \lambda_{\bf{F}} }_{\mathcal{L}(F^s;G^s)} (1+ Q_{s+\mu}) (Q_r + Q_{r}^2).
		\end{equation}
		This proves the first and second items of the RI mapping hypotheses.
		
		To complete the proof of the claim, it remains to show that the third item of the RI mapping hypotheses is satisfied.  Let  $0 < \delta_R \le \delta_r$ be given by the RI mapping hypotheses for $(\bf{E},\bf{F},\Psi)$.  For $(u_0,v_0,w_0) \in B_{H^r}(0,\delta_R / Q_R) \cap H^N$ we then define the bounded linear operator $\Lambda(u_0,v_0,w_0) : G^s \to H^s$, for $r \le s \le R$,  via 
		\begin{equation}
			\Lambda(u_0,v_0,w_0) \xi = (\lambda_{\bf{E}} L(\rho_{\bf{E}} u_0 ) \rho_{\bf{F}} \rho_{\bf{E}} \xi, \lambda_{\bf{E}} \rho_{\bf{E}} \xi , \xi    ),
		\end{equation}
		which is well-defined since $\rho_{\bf{E}} u_0 \in B_{E^r}(0,\delta_R) \cap E^N$ whenever $(u_0,v_0,w_0) \in  B_{H^r}(0,\delta_R / Q_R) \cap H^N$.  We now use \eqref{nmhs_deriv_1} to verify that $\Lambda(u_0,v_0,w_0)$ is a right inverse of $D\Phi(u_0,v_0,w_0)$, using the identities $\rho_{\bf{F}} \lambda_{\bf{F}} = 1$ and $\rho_{\bf{E}} \lambda_{\bf{E}}=1$:   
		\begin{multline}
			D\Phi(u_0,v_0,w_0) L(u_0,v_0,w_0) \xi 
			= 
			\lambda_{\bf{E}}(  \lambda_{\bf{F}} D\Psi(\rho_{\bf{E}} u_0) \rho_{\bf{E}} \lambda_{\bf{E}} L(\rho_{\bf{E}} u_0 ) \rho_{\bf{F}} \rho_{\bf{E}} \xi  +  (I-\lambda_{\bf{F}} \rho_{\bf{F}}) \rho_{\bf{E}} \lambda_{\bf{E}} \rho_{\bf{E}} \xi ) \\ + (I-\lambda_{\bf{E}} \rho_{\bf{E}}) \xi  
			= 
			\lambda_{\bf{E}}\tp{  \lambda_{\bf{F}}  \rho_{\bf{F}} \rho_{\bf{E}} \xi  +  (I-\lambda_{\bf{F}} \rho_{\bf{F}})  \rho_{\bf{E}} \xi } + (I-\lambda_{\bf{E}} \rho_{\bf{E}}) \xi   =  \lambda_{\bf{E}} \rho_{\bf{E}}\xi +  (I-\lambda_{\bf{E}} \rho_{\bf{E}}) \xi = \xi
		\end{multline}
		for all $\xi \in G^r$.  Moreover, for $\xi \in G^s$, the tame estimate for $L(u_0)$ allows us to bound
		\begin{multline}
			\norm{\Lambda(u_0,v_0,w_0) \xi}_{H^s} = \norm{\lambda_{\bf{E}} L(\rho_{\bf{E}} u_0 ) \rho_{\bf{F}} \rho_{\bf{E}} \xi  }_{G^s} + \norm{\lambda_{\bf{E}} \rho_{\bf{E}} \xi }_{G^s} + \norm{\xi}_{G^s} \\
			\le   C_2(s) \norm{\lambda_{\bf{E}}}_{\mathcal{L}(E^s;G^s)} 
			\tp{ 
			\norm{\rho_{\bf{F}} \rho_{\bf{E}} \xi}_{F^s} 
			+  \br{ \norm{\rho_{\bf{E}} u_0}_{E^{s+\mu}} } \norm{\rho_{\bf{F}} \rho_{\bf{E}} \xi}_{F^r}
			}
			\\+\tbr{\norm{\lambda_{\bf{E}} \rho_{\bf{E}} }_{\mathcal{L}(G^s) }}  \norm{\xi}_{G^s}     
			\le  C_2'(s) \norm{\xi}_{G^s} +  \br{ \norm{(u_0,v_0,w_0)}_{H^{s+\mu}} } \norm{\xi}_{F^r},
		\end{multline}
		where the constant $C_2'(s)$ depends on $C_2(s)$  as well as on the quantities $\norm{\lambda_{\bf{E}} \rho_{\bf{E}} }_{\mathcal{L}(G^s) }$, $\norm{\lambda_{\bf{E}}  }_{\mathcal{L}(E^s;G^s) }$, $\norm{\rho_{\bf{F}} \rho_{\bf{E}} }_{\mathcal{L}(G^s;F^s)}$, and $\norm{\rho_{\bf{E}}}_{\mathcal{L}(G^{s+\mu};E^{s+\mu})}$.  Thus, the third item of the RI hypotheses is satisfied by the triple $(\bf{H},\bf{G},\Phi)$ with parameters $(\mu,r,R)$, and the claim is proved.  We emphasize, though, that we are not asserting that the LRI hypotheses are satisfied, as the left inverse condition fails in general for $D\Phi$ and $\Lambda$.
		
	With the claim in hand, we now consider $\beta = 2(r+\mu) + 1 \in \j{N}$ and  invoke Theorem \ref{iteration_thm} for the triple $(\bf{H},\bf{G},\Phi)$.  Let $K_1$, $K_2$, $K_3$, and $K_4$ be the constants from Theorem \ref{iteration_thm} and let $\ep'_{\operatorname*{surj}}>0$ denote the constant appearing on the right side of \eqref{it_0_gbnd}.  Then Theorem \ref{iteration_thm} guarantees that for every $\xi \in B_{G^\beta}(0,\ep'_{\operatorname*{surj}})$ there exists $(u',v',w') \in H^\beta = (G^\beta)^3$ satisfying $\Phi(u',v',w') =\xi$ as well as the bound 
		\begin{equation}\label{nmh_s_1}
			\tnorm{u'}_{G^\beta} + \tnorm{v'}_{G^\beta} + \tnorm{w'}_{G^\beta} = \tnorm{(u',v',w')}_{H^\beta} \le K_4 \tnorm{\xi}_{G^\beta}.
		\end{equation}
		Set $\ep_{\operatorname*{surj}} = \ep'_{\operatorname*{surj}} / \tnorm{\lambda_{\bf{E}} \lambda_{\bf{F}} }_{\mathcal{L}(F^\beta;G^\beta)}$ and note that 
		\begin{equation}
			\lambda_{\bf{E}} \lambda_{\bf{F}}(B_{F^\beta}(0,\ep_{\operatorname*{surj}} ) ) \subseteq B_{G^\beta}(0,\ep'_{\operatorname*{surj}})
		\end{equation}
		by construction.  Consequently, for any $g \in B_{F^\beta}(0,\ep_{\operatorname*{surj}} )$ there exists $(u',v',w') \in H^\beta$ such that $\Phi(u',v',w') = \lambda_{\bf{E}} \lambda_{\bf{F}} g$, which unravels to 
		\begin{equation}\label{twisted}
			\lambda_{\bf{E}}(  \lambda_{\bf{F}} \Psi(\rho_{\bf{E}} u')  +  (I-\lambda_{\bf{F}} \rho_{\bf{F}}) \rho_{\bf{E}}v' ) + (I-\lambda_{\bf{E}} \rho_{\bf{E}}) w' = \lambda_{\bf{E}} \lambda_{\bf{F}} g.
		\end{equation}
		Applying $\rho_{\bf{E}}$ and using the identity $\rho_{\bf{E}} \lambda_{\bf{E}}=1$, this implies the identity
		\begin{equation}
			\lambda_{\bf{F}} \Psi(\rho_{\bf{E}} u')  +  (I-\lambda_{\bf{F}} \rho_{\bf{F}}) \rho_{\bf{E}}v' = \lambda_{\bf{F}} g,
		\end{equation}
		to which we apply  $\rho_{\bf{F}}$ and using the identity $\rho_{\bf{F}} \lambda_{\bf{F}}=1$ to see that 
		\begin{equation}\label{shaken}
			\Psi(\rho_{\bf{E}}u') = g.
		\end{equation}
		Thus, if we set $u = \rho_{\bf{E}}u' \in E^{\beta}$, then $\Psi(u) =g$, and \eqref{nmh_s_1} implies that
		\begin{equation}\label{nmh_s_2}
			\norm{u}_{E^\beta} \le \kappa_1 \norm{g}_{F^\beta} 
		\end{equation}
		for $\kappa_1 = K_4 \norm{\rho_{\bf{E}}}_{\mathcal{L}(G^\beta;E^\beta) }  \norm{\lambda_{\bf{E}} \lambda_{\bf{F}}  }_{\mathcal{L}(F^\beta;G^\beta)}$, which in particular means that $u \in B_{E^\beta}(0,\kappa_1 \ep_{\operatorname*{surj}})$.
		
		On the other hand, Lemma~\ref{lem on useful smoothing inequalities in tame Banach scales}  implies that $\bf{E}$ is terminally dense.  This and the LRI mapping hypotheses imply that the hypotheses of Theorem \ref{local_inj} are satisfied by the triple $(\bf{E}, \bf{F},\Psi)$ with $\sigma =\beta -\mu$ and  $\gamma_\sigma = \delta_{R} \le \delta_r.$   Let $\delta_{\operatorname*{inj},\sig}>0$ be the constant from Theorem \ref{local_inj}.  Then the theorem tells us that $\Psi : B_{E^{\beta}}(0,\delta_{\operatorname*{inj},\sig}) \to F^{\beta-\mu}$ is injective and obeys the estimate \eqref{local_inj_0}. 
		
		Set $\ep = \min\tcb{\ep_{\operatorname*{surj}},  \delta_{\operatorname*{inj},\sig} / \kappa_1}$ and $\kappa_2 >0$ to be the constant on the right side of \eqref{local_inj_0}.  Since $\kappa_1 \ep \le  \delta_{\operatorname*{inj},\sig}$, the above analysis shows that for every $g \in B_{F^\beta}(0,\ep)$ there exists a unique  $u \in B_{E^\beta}(0,\kappa_1 \ep)$ such that $\Psi(u) = g$.  Moreover, the induced map $\Psi^{-1} : B_{F^\beta}(0,\ep) \to \Psi^{-1}(B_{F^\beta}(0,\ep)) \cap B_{E^\beta}(0,\kappa_1 \ep)$ satisfies \eqref{nmh_01} and \eqref{hey cuz} in light of \eqref{nmh_s_2} and \eqref{local_inj_0}.
		
		Finally, if we assume that $\N\ni\nu\le R+r-2\be$ and $g\in B_{F^\be}(0,\ep)\cap F^{\be+\nu}$, then we are assured by the fourth item of Theorem~\ref{iteration_thm} that there exists $(u',v',w')\in H^{\be+\nu}$ such that $\Phi(u',v',w')=\lambda_{\bf{E}}\lambda_{\bf{F}}g$. By unraveling as in~\eqref{twisted}--\eqref{shaken}, we find that for $u=\rho_{\bf{E}}u'\in E^{\be+\nu}$ we have that $\Psi(u)=g$ and $\tnorm{u}_{E^{\be+\nu}}\lesssim\tnorm{g}_{F^{\be+\nu}}$.
	\end{proof}
	
	\subsection{Refinements}\label{section on refinements}  In this subsection we aim to strengthen the conclusions of Theorem~\ref{thm on nmh}.  In particular, we will study the continuity and higher order smoothness of the inverse map $\Psi^{-1}$ provided by the theorem. First, we analyze the right and left linear inverse map $L$ by making an extension to backgrounds outside of the terminal space $E^N$, and then proving various continuity and differentiability assertions. Second, we return to the map $\Psi^{-1}$ and show a more refined continuity estimate than the basic assertion of~\eqref{hey cuz}.  Then we prove differentiability of $\Psi^{-1}$ and relate the derivative to the operator $L$.  Once this is done, we conclude by reading off higher regularity assertions. 
	
	We now enumerate further properties of the family of inverses $L$.
	
	\begin{thm}[Extension and regularity of the right and left inverse]\label{thm on extension and regularity of the right and left inverse}
		Under the LRI mapping hypotheses set forth in Definition~\ref{defn of the mapping hypotheses} and the additional assumptions that $r+\mu\le R$ and that the Banach scale $\tcb{E^s}_{s\in\j{N}}$ consists of reflexive spaces, we have that the following properties of $L$ hold.
		\begin{enumerate}
			\item Existence of extension: There exists a family of bounded linear maps $\Bar{L}:B_{E^r}(0,\del_R)\cap E^{r+\mu}\to\mathcal{L}(F^r;E^r)$ such that the following extension properties are satisfied.
			\begin{enumerate}
				\item $\Bar{L}=L$ on the subset $B_{E^r}(0,\del_R)\cap E^N$.
				\item If $g\in F^r$ and $u_0\in B_{E^r}(0,\del_R)\cap E^{r+\mu}$, then we have that $D\Psi(u_0)\Bar{L}(u_0)g=g$. Additionally, if $u\in E^{r+\mu}$ then $\Bar{L}(u_0)D\Psi(u_0)u=u$.
				\item For $s\in[r,R]\cap\N$ we have that $\Bar{L}:B_{E^r}(0,\del_R)\cap E^{s+\mu}\to\mathcal{L}(F^s;E^s)$ with the tame estimate $\tnorm{\Bar{L}(u_0)g}_{E^s}\lesssim\tnorm{g}_{F^s}+\tbr{\tnorm{u_0}_{E^{s+\mu}}}\tnorm{g}_{F^r}$.   \end{enumerate}
			
			\item Continuity: For $s\in[r,R-\mu]\cap\N$, if we view $\Bar{L}$ as mapping $\Bar{L}:(B_{E^r}(0,\del_R)\cap E^{s+\mu})\times F^s\to E^s$, then this map is continuous.
			
			\item Higher regularity: Assume that, for some $\N\ni\ell\ge 2$, the map $\Psi$ is $\ell$-times continuously Gateaux differentiable in the sense of Definition~\ref{definition of gateaux}.  If for $s\in[r,R-\ell\mu]\cap\N$ we view $\Bar{L}$ as a mapping
			\begin{equation}
				\Bar{L}:(B_{E^r}(0,\del_R)\cap E^{s+\ell\mu})\times F^{s+(\ell-1)\mu}\to E^s,
			\end{equation}
			then $\Bar{L}$ is $(\ell-1)$-times continuously Gateaux-differentiable.
		\end{enumerate}
	\end{thm}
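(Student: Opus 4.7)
The proof has three parts, unified by the strategy of approximating the background $u_0 \in B_{E^r}(0,\del_R) \cap E^{r+\mu}$ by smoothed versions $u_0^j := T_j u_0 \in E^N$, where $\tcb{T_j}$ are the smoothing operators on the tame Banach scale $\bf{E}$ provided by Lemma~\ref{lem on useful smoothing inequalities in tame Banach scales}.  The idea is to define $\Bar{L}(u_0)$ as a limit of $L(u_0^j)$, which is available at the smoothed backgrounds by the LRI mapping hypotheses, and transfer the relevant properties by a combination of reflexivity and tame $C^2$ estimates.

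For Part 1, fix $g \in F^r$.  The tame estimate (c) on $L(u_0^j)$ at $s = r$, together with the bound $\tnorm{u_0^j}_{E^{r+\mu}} \lesssim \tnorm{u_0}_{E^{r+\mu}}$, shows that $\tcb{L(u_0^j) g}$ is $E^r$-bounded.  By the reflexivity hypothesis on $\bf{E}$, extract a weakly convergent subsequence $L(u_0^{j_k}) g \rightharpoonup v \in E^r$.  I verify $D\Psi(u_0) v = g$ in $F^r$ via the decomposition
\begin{equation*}
g = D\Psi(u_0^{j_k}) L(u_0^{j_k}) g = D\Psi(u_0)\sb{L(u_0^{j_k}) g} + \bp{D\Psi(u_0^{j_k}) - D\Psi(u_0)} L(u_0^{j_k}) g.
\end{equation*}
The second term converges strongly to $0$ in $F^{r-\mu}$ via Taylor's theorem and the tame $C^2$ estimate at level $s = r-\mu$, where $E^{r+\mu}$ norms do not appear.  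The first term converges weakly to $D\Psi(u_0) v$ in $F^{r-\mu}$ by the continuity of $D\Psi(u_0):E^r \to F^{r-\mu}$ granted by Corollary~\ref{lem on derivative estimates on tame maps}.  Hence $D\Psi(u_0)v = g$ in $F^{r-\mu}$, and thus in $F^r$.  Setting $\Bar{L}(u_0)g := v$, both linearity in $g$ and independence from the subsequence reduce to the injectivity of $D\Psi(u_0)$ on $E^r$, which is the main obstacle discussed below.  Property (b) is then built in, and the left-inverse property follows: for $u \in E^{r+\mu}$, both $u$ and $\Bar{L}(u_0) D\Psi(u_0) u$ lie in $E^r$ and map to the same $D\Psi(u_0) u \in F^r$, so by injectivity they coincide.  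The tame estimate (c) passes to $\Bar{L}$ by weak lower semicontinuity of the $E^s$-norms along a diagonal subsequence extracted at each $s \in [r,R] \cap \N$.

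For Part 2, the right and left inverse relations combine to yield the identity $\Bar{L}(u_0) g - \Bar{L}(u_0') g = -\Bar{L}(u_0)(D\Psi(u_0) - D\Psi(u_0')) \Bar{L}(u_0') g$, initially verified on a dense subset of $F^s$ where $\Bar{L}(u_0')g \in E^{s+\mu}$, then extended by continuity via the uniform tame estimate (c).  Combined with the tame $C^2$ bound on $D\Psi(u_0) - D\Psi(u_0')$ at level $s$, this yields the claimed joint continuity $(B_{E^r}(0,\del_R) \cap E^{s+\mu}) \times F^s \to E^s$.  For Part 3, formal differentiation of $D\Psi(u_0)\Bar{L}(u_0)g = g$ in the direction $h$ yields the candidate Gateaux derivative
\begin{equation*}
D_{u_0} \Bar{L}(u_0)[h] g = -\Bar{L}(u_0) D^2 \Psi(u_0)\sb{h, \Bar{L}(u_0) g}.
\end{equation*}
This derivative loses $\mu$ in background regularity, as $D^2\Psi$ has base $r$ and order $\mu$.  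An induction on $\ell$, using the Gateaux hypothesis on $\Psi$ to iterate the formula and propagating the Part 2 continuity to the higher multilinear forms, yields the stated $(\ell-1)$-times continuous Gateaux differentiability with domain regularity $s + \ell\mu$ and codomain regularity $s$.

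The main obstacle is the uniqueness step underlying Part 1: the left inverse identity $L(u_0^j)D\Psi(u_0^j)u = u$ is available only for $u \in E^{r+\mu}$, so one cannot directly conclude $\ker(D\Psi(u_0) \res E^r) = \tcb{0}$.  The plan is a double-smoothing argument: given $w \in E^r$ with $D\Psi(u_0)w = 0$, smooth $w$ to $T_k w \in E^N$ and use left invertibility to write $T_k w = L(u_0^j) D\Psi(u_0^j) T_k w$.  Decomposing $D\Psi(u_0^j)T_k w = D\Psi(u_0)(T_k w - w) + (D\Psi(u_0^j) - D\Psi(u_0))T_k w$ (using $D\Psi(u_0)w = 0$), one balances the limits $j \to \infty$ (first, at fixed $k$) and $k \to \infty$ against the blow-up $\tnorm{T_k w}_{E^{r+\mu}} \lesssim 2^{k\mu}\tnorm{w}_{E^r}$ and the Lemma~\ref{lem on useful smoothing inequalities in tame Banach scales} residual estimates.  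Successfully closing this argument demands a careful interplay between the tame $C^2$ estimates at various levels $s$ and the precise smoothing rates, and it is the most delicate part of the proof.
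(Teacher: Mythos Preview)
Your identification of the obstacle is correct, but your proposed double-smoothing resolution does not close, and the paper's proof takes a different route that sidesteps injectivity of $D\Psi(u_0)$ on $E^r$ entirely.

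The gap in your double-smoothing argument is this: from $T_k w = L(u_0^j) D\Psi(u_0^j) T_k w$ and $D\Psi(u_0)w = 0$ you decompose $D\Psi(u_0^j) T_k w = (D\Psi(u_0^j) - D\Psi(u_0)) T_k w + D\Psi(u_0)(T_k - I)w$. The first piece can indeed be made small in $F^r$ by balancing $j$ against $k$, since the tame $C^2$ estimate at level $r$ produces a term $\tnorm{u_0^j - u_0}_{E^r}\tnorm{T_k w}_{E^{r+\mu}} \lesssim 2^{(k-j)\mu}$. But the second piece is the problem: to place $D\Psi(u_0)(T_k - I)w$ in $F^r$ you need $\tnorm{(T_k - I)w}_{E^{r+\mu}}$, which does not decay for $w \in E^r$; and if you instead work in $F^{r-\mu}$ (where this term does go to zero), you cannot apply $L(u_0^j)$, since the derivative-inversion hypothesis only provides $L(u_0^j) \in \mathcal{L}(F^s;E^s)$ for $s \ge r$. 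There is no evident way to close this mismatch.

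The paper avoids the issue by smoothing $g$ as well: it defines $\Bar{L}(u_0)g = \lim_{j\to\infty} L(T_j u_0) T_j g$ and shows this sequence is \emph{strongly} Cauchy in $E^r$. The key input is a Lipschitz-type bound $\tnorm{L(u_0)g - L(w_0)g}_{E^s} \lesssim \tbr{\cdot}^3 \tnorm{u_0 - w_0}_{E^{s+\mu}}\tnorm{g}_{F^r} + \tbr{\cdot}\tnorm{u_0 - w_0}_{E^r}(\tnorm{g}_{F^{s+\mu}} + \tbr{\tnorm{w_0}_{E^{s+2\mu}}}\tnorm{g}_{F^r})$ for $u_0,w_0 \in E^N$, obtained from the identity $L(u_0)g - L(w_0)g = -L(u_0)(D\Psi(u_0)-D\Psi(w_0))L(w_0)g$ and the tame $C^2$ estimate. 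Applied at $s=r$ with $u_0 = T_{j+k}u_0$, $w_0 = T_j u_0$, $g$ replaced by $T_j g$, the dangerous terms $\tnorm{T_j g}_{F^{r+\mu}} \lesssim 2^{j\mu}\tnorm{g}_{F^r}$ and $\tbr{\tnorm{T_j u_0}_{E^{r+2\mu}}} \lesssim 2^{j\mu}$ are exactly compensated by $\tnorm{T_{j+k}u_0 - T_j u_0}_{E^r} \lesssim 2^{-j\mu}(\bf{n}^{r+\mu}_j(u_0) + \bf{n}^{r+\mu}_{j+k}(u_0))$, and the residual seminorms $\bf{n}^{r+\mu}_\cdot(u_0) \to 0$ supply the Cauchy property. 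Strong convergence then gives uniqueness and linearity for free, with no appeal to injectivity of $D\Psi(u_0)$ on $E^r$; reflexivity is used only afterward to transfer the tame estimate to higher $s$. Your arguments for Parts 2 and 3 are essentially correct and match the paper's, once Part 1 is repaired along these lines.
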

	\begin{proof}

		We divide the proof into several steps.
		
		\textbf{Step 1}: Constructing $\Bar{L}$.  First, we provide a continuity estimate on the right and left inverse map $L$ (a priori only defined for $E^N$-backgrounds). Suppose that $u_0,w_0\in B_{E^r}(0,\del_R)\cap E^N$, $s\in[r,R-\mu]\cap\N$, and $g\in F^{s+\mu}$. We will estimate the difference
		\begin{multline}\label{believe me when I tell you}
			L(u_0)g-L(w_0)g=-L(u_0)(D\Psi(u_0)-D\Psi(w_0))L(w_0)g\\
			=-\int_0^1L(u_0)D^2\Psi((1-t)w_0+tu_0)(u_0-w_0,L(w_0)g)\;\m{d}t
		\end{multline}
		in $E^{s}$. By applying the tame estimates of~\eqref{C2 tame estimates} and~\eqref{tame estimates on the right inverse} repeatedly and the embedding inequalities of the first item of Definition~\ref{definition of Banach scales}, we get the somewhat crude estimate
		\begin{multline}\label{somewhat crude}
			\tnorm{L(u_0)g-L(w_0)g}_{E^s}\lesssim\tbr{\tnorm{u_0,w_0}_{E^{s+\mu}}}^3\tnorm{u_0-w_0}_{E^{s+\mu}}\tnorm{g}_{F^r}\\+\tbr{\tnorm{u_0,w_0}_{E^{s+\mu}}}\tnorm{u_0-w_0}_{E^r}\tp{\tnorm{g}_{F^{s+\mu}}+\tbr{\tnorm{w_0}_{E^{s+2\mu}}}\tnorm{g}_{F^r}}.
		\end{multline}
		While forgoing tameness, this estimate is strong enough to allow us to define our extension. 
		
		Indeed, let $u_0\in B_{E^r}(0,\del_R)\cap E^{r+\mu}$ and fix some $g\in F^r$. We claim that the sequence $\tcb{L(T_ju_0)T_jg}_{j=\ell}^\infty\subset E^r$ is Cauchy,  where the operators $\tcb{T_j}_{j=0}^\infty$ are from Lemma~\ref{lem on useful smoothing inequalities in tame Banach scales} and $\ell\in\N$ is the first index for which $T_ju_0\in B_{E^r}(0,\del_R)$ for all $\N\ni j\ge\ell$.  We verify this by first  estimating
		\begin{multline}\label{by sending k off into the subset}
			\tnorm{L(T_{j+k}u_0)T_{j+k}g-L(T_ju_0)T_jg}_{E^r}\le\tnorm{L(T_{j+k}u_0)(T_{j+k}g-T_jg)}_{F^r}\\+\tnorm{(L(T_{j+k}u_0)-L(T_ju_0))T_jg}_{E^r}=\bf{I}_{j,k}+\bf{II}_{j,k}.
		\end{multline}
		For $\bf{I}_{j,k}$ we simply apply estimate~\eqref{tame estimates on the right inverse} and Lemma~\ref{lem on useful smoothing inequalities in tame Banach scales}:
		\begin{equation}\label{in search of}
			\bf{I}_{j,k}\lesssim\tbr{\tnorm{T_{j+k}u_0}_{E^{r+\mu}}}\tnorm{T_{j+k}g-T_jg}_{F^r}\lesssim\tbr{\tnorm{u_0}_{E^{r+\mu}}}\tp{\bf{n}^r_j(g)+\bf{n}^r_{j+k}(g)}.
		\end{equation}
		Hence $\lim_{j,k\to\infty}\bf{I}_{j,k}=0$. On the other hand, for $\bf{II}_{j,k}$, we first employ estimate~\eqref{somewhat crude}:
		\begin{multline}\label{in search of those}
			\bf{II}_{j,k}\lesssim\tbr{\tnorm{T_{j+k}u_0,T_ju_0}_{E^{r+\mu}}}^3\tnorm{T_{j+k}u_0-T_j u_0}_{E^{r+\mu}}\tnorm{T_jg}_{F^r}\\
			+\tbr{\tnorm{T_{j+k}u_0,T_ju_0}_{E^{r+\mu}}}\tnorm{T_{j+k}u_0-T_ju_0}_{E^r}\tp{\tnorm{T_jg}_{F^{r+\mu}}+\tbr{\tnorm{T_{j}u_0}_{E^{r+2\mu}}}\tnorm{T_jg}_{F^r}}.
		\end{multline}
		Thanks again to Lemma~\ref{lem on useful smoothing inequalities in tame Banach scales}, we are free to make the following bounds:
		\begin{align}\label{mahi mahi}
			\tnorm{T_{j+k}u_0}_{E^{r+\mu}},\tnorm{T_ju_0}_{E^{r+\mu}}&\lesssim\tnorm{u_0}_{F^{r+\mu}}, &
			\tnorm{T_jg}_{F^r}&\lesssim\tnorm{g}_{F^r},\nonumber\\
			\tnorm{T_{j+k}u_0-T_ju_0}_{E^r}&\lesssim 2^{-j\mu}\tp{\bf{n}^{r+\mu}_{j}(u_0)+\bf{n}^{r+\mu}_{j+k}(u_0)}, & \tnorm{T_jg}_{F^{r+\mu}}&\lesssim 2^{j\mu}\tnorm{g}_{F^r},\nonumber\\
			\tnorm{T_{j+k}u_0-T_ju_0}_{E^{r+\mu}}&\lesssim\bf{n}^{r+\mu}_{j}(u_0)+\bf{n}^{r+\mu}_{j+k}(u_0)
			& \tnorm{T_ju_0}_{E^{r+2\mu}}&\lesssim 2^{j\mu}\tnorm{u_0}_{E^{r+\mu}}.
		\end{align}
		Upon combining~\eqref{in search of those} and~\eqref{mahi mahi}, we acquire the estimate
		\begin{equation}
			\bf{II}_{j,k}\lesssim\tbr{\tnorm{u_0}_{E^{r+\mu}}}^3\tnorm{g}_{F^r}\tp{\bf{n}^{r+\mu}_{j}(u_0)+\bf{n}^{r+\mu}_{j+k}(u_0)},
		\end{equation}
		and hence $\lim_{j,k\to\infty}\bf{II}_{j,k}=0$. We deduce that the sequence $\tcb{L(T_ju_0)T_jg}_{j=\ell}^\infty\subset E^r$ is Cauchy. Hence there exists 
		\begin{equation}
			\Bar{L}(u_0)g=\lim_{j\to\infty}L(T_ju_0)T_jg,
		\end{equation}
		and this defines $\Bar{L}$ as a family of linear maps.
		
		\textbf{Step 2}: Properties of $\Bar{L}$.  We now examine the restriction of $\Bar{L}$ to higher regularity (larger $s$) spaces in the scale. Suppose that $u_0\in B_{E^r}(0,\del_R)\cap E^{s+\mu}$ and that $g\in F^{s}$ for some $s\in[r,R]\cap \N$. For $\N\ni j$ sufficiently large we can apply estimate~\eqref{tame estimates on the right inverse} and again Lemma~\ref{lem on useful smoothing inequalities in tame Banach scales}  to see that
		\begin{equation}\label{before I am a frail old man}
			\tnorm{L(T_ju_0)T_jg}_{E^s}\lesssim\tnorm{T_jg}_{F^s}+
			\tbr{\tnorm{T_ju_0}_{E^{s+\mu}}}\tnorm{T_jg}_{F^r}\lesssim\tnorm{g}_{F^s}+\tbr{\tnorm{u_0}_{E^{s+\mu}}}\tnorm{g}_{F^r}.
		\end{equation}
		Hence, the sequence $\tcb{L(T_ju_0)T_jg}_{j=\ell}^\infty\subset E^s$ is bounded by the right hand expression above. The space $E^s$ is reflexive, and this sequence already converges in $E^r$, so the limit $\Bar{L}(u_0)g$ belongs to $E^s$ and has norm bounded above by the right side of~\eqref{before I am a frail old man} thanks to the weak sequential lower semicontinuity of the norm.

		We next prove that $\Bar{L}$ is a family of right and left inverses for $D\Psi$. First suppose that $g\in F^r$ and $u_0\in B_{E^r}(0,\del_{R})\cap E^{r+\mu}$. For $j \in \N$ sufficiently large, we have that 
		\begin{equation}\label{ziglet_the_piglet}
    		D\Psi(T_ju_0)L(T_ju_0)T_jg=T_jg. 		    
		\end{equation}
		Since $\Psi:B_{E^r}(0,\del_r)\cap E^{r}\to F^{r-\mu}$ is $C^2$, $\tcb{L(T_ju_0)T_jg}_{j=0}^\infty\subset E^r$ converges to $\Bar{L}(u_0)g$, and we have the convergences $T_ju_0\to u_0$ in $E^{r+\mu}\emb E^r$ and $T_jg\to g$ in $F^r$ as $j\to\infty$, we may send $j\to\infty$ in \eqref{ziglet_the_piglet} to see that $D\Psi(u_0)\Bar{L}(u_0)g=g$.
		
		On the other hand, if we assume that $u\in E^{r+\mu}$ and $u_0\in B_{E^r}(0,\del_R)\cap E^{r+\mu}$, then we  have 
		\begin{equation}\label{all i got lookin for was somebody who was lookin like you}
			L(T_ju_0)T_jD\Psi(u_0)u=u+L(T_ju_0)(T_jD\Psi(u_0)-D\Psi(T_ju_0))u.
		\end{equation}
		The left hand side converges in $E^r$ to $\Bar{L}(u_0)D\Psi(u_0)u$ as $j\to\infty$. For the right hand side we may estimate
		\begin{multline}
			\tnorm{L(T_ju_0)(T_jD\Psi(u_0)-D\Psi(T_ju_0))u}_{E^r}\lesssim\tbr{\tnorm{u_0}_{E^{r+\mu}}}\tnorm{(T_jD\Psi(u_0)-D\Psi(T_ju_0))u}_{F^r}\\
			\lesssim\tbr{\tnorm{u_0}_{E^{r+\mu}}}\tp{\tnorm{(I-T_j)D\Psi(u_0)u}_{F^r}+\tnorm{(D\Psi(u_0)-D\Psi(T_ju_0))u}_{F^r}},
		\end{multline}
		and the right hand side of this evidently converges to zero as $j\to\infty$ thanks again to the properties of $I-T_j$ from Lemma~\ref{lem on useful smoothing inequalities in tame Banach scales}, the continuity properties of $D\Psi$, and the fact that $u,u_0\in E^{r+\mu}$.  Hence by sending $j\to\infty$ in \eqref{all i got lookin for was somebody who was lookin like you} we learn that $\Bar{L}(u_0)D\Psi(u_0)u=u$.
		
		Now we are ready to prove that $\Bar{L}$ is actually an extension of $L$. Suppose that $u_0\in B_{E^r}(0,\del_R)\cap E^{N}$ and that $g\in F^r$. Then by the right and left inverse properties, for every $j\in\N$ we have the identity
		\begin{equation}
			\Bar{L}(u_0)T_jg=\Bar{L}(u_0)D\Psi(u_0)L(u_0)T_jg=L(u_0)T_jg.
		\end{equation}
		Upon sending $j\to\infty$ and using that $\Bar{L}(u_0)$ and $L(u_0)$ are bounded, we deduce that $\Bar{L}(u_0)g=L(u_0)g$.  This completes the proof of all of the assertions of the first item.

		\textbf{Step 3}: Continuity.  We now study continuity. We have established that for $s\in[r,R-\mu]\cap\N$, $u_0,w_0\in B_{E^r}(0,\del_R)\cap E^{s+2\mu}$, and $g\in F^{s+\mu}$ the sequence $\tcb{L(T_ju_0)T_jg-L(T_jw_0)T_jg}_{j=0}^\infty\subset E^{s+\mu}$ converges weakly up to a subsequence in $E^{s+\mu}$ and strongly in $E^r$ to the limit $\Bar{L}(u_0)g-\Bar{L}(w_0)g$. Therefore, upon invoking the log-convexity of Lemma~\ref{lem on log-convexity in tame Banach scales}, we obtain strong convergence in $E^s$ up to a subsequence.  By passing along this subsequence in~\eqref{somewhat crude} and then taking the supremum over $\tnorm{g}_{F^{s+\mu}}\le 1$, we obtain the Lipschitz estimate
		\begin{equation}\label{yes to the}
			\tnorm{\Bar{L}(u_0)-\Bar{L}(w_0)}_{\mathcal{L}(F^{s+\mu},E^s)}\lesssim\tbr{\tnorm{u_0,w_0}_{E^{s+2\mu}}}^3\tnorm{u_0-w_0}_{E^{s+\mu}}.
		\end{equation}	
		
		We use \eqref{yes to the} as an intermediate step in deriving the stated continuity in the second item of the Theorem statement. Let $u_0,w_0\in B_{E^r}(0,\del_R)\cap E^{s+\mu}$ and $g,h\in F^s$. For $j\in\N$ sufficiently large, we consider the difference
		\begin{multline}
			\Bar{L}(u_0)g-\Bar{L}(w_0)h=(\Bar{L}(u_0)g-\Bar{L}(T_ju_0)T_jg)\\+(\Bar{L}(T_ju_0)T_jg-\Bar{L}(T_jw_0)T_jh)+(\Bar{L}(T_jw_0)T_jh-\Bar{L}(w_0)h)=\bf{I}_j+\bf{II}_j+\bf{III}_j
		\end{multline}
		and estimate the three terms individually. For $\bf{I}_j$, we note that \eqref{by sending k off into the subset}, \eqref{in search of}, \eqref{in search of those}, and~\eqref{mahi mahi} hold with $r$ replaced by $s$ (by the same proof), and hence we may send  $k\to\infty$ to acquire the bound
		\begin{equation}\label{insert song lyric here that sounds weird out of contex so modify it to sound meaningless}
			\tnorm{\bf{I}_j}_{E^s}\lesssim\tbr{\tnorm{u_0}_{E^{s+\mu}}}^3\sp{\tnorm{g}_{F^s}\bf{n}_j^{s+\mu}(u_0)+\bf{n}^s_j(g)}.
		\end{equation}
		For $\bf{II}_j$ we simply apply the local Lipschitz estimate~\eqref{yes to the} and use properties of the operators $T_j$:
		\begin{equation}
			\tnorm{\bf{II}_j}_{E^s}\lesssim\tbr{2^{j\mu}\tnorm{g}_{F^s}}\tbr{2^{j\mu}\tnorm{u_0,w_0}_{E^{s+\mu}}}^3\tnorm{u_0-w_0,g-h}_{E^{s+\mu}\times F^s}.
		\end{equation}
		For $\bf{III}_j$, we begin by bounding as in~\eqref{insert song lyric here that sounds weird out of contex so modify it to sound meaningless}:
		\begin{equation}
			\tnorm{\bf{III}_j}_{E^s}\lesssim\tbr{\tnorm{w_0}_{E^{s+\mu}}}^3\sp{\tnorm{h}_{F^s}\bf{n}_j^{s+\mu}(w_0)+\bf{n}^s_j(h)}.
		\end{equation}
		Now, by Lemma~\ref{lem on useful smoothing inequalities in tame Banach scales}, we may estimate
		\begin{equation}
			\bf{n}_j^{s+\mu}(w_0)\lesssim\bf{n}_j^{s+\mu}(u_0)+\tnorm{u_0-w_0}_{E^{s+\mu}}
			\text{ and }
			\bf{n}_j^s(h)\lesssim\bf{n}_j^{s}(g)+\norm{g-h}_{F^s},
		\end{equation}
		and hence deduce that
		\begin{multline}
			\tnorm{\Bar{L}(u_0)g-\Bar{L}(w_0)h}_{E^s}\lesssim2^{3\mu j}\tbr{\tnorm{u_0,w_0}_{E^{s+\mu}},\tnorm{g,h}_{F^s}}^3\tnorm{u_0-w_0,g-h}_{E^{s+\mu}\times F^s}\\
			+\tbr{\tnorm{u_0,w_0}_{E^{s+\mu}},\tnorm{g,h}_{F^s}}^3\tp{\bf{n}_j^{s+\mu}(u_0)+\bf{n}^s_j(g)}.
		\end{multline}
		By taking $j$ large relative to $u_0$ and $g$ and then taking $w_0$ and $h$ sufficiently close $u_0$ and $g$ to we see that $\Bar{L}$ is continuous as a map from $E^{s+\mu}\times F^s$ to $E^s$, but not necessarily uniformly so.  This completes the proof of the second item.

		\textbf{Step 4}: Higher regularity.  Finally, we prove the third assertion. By arguing as in the derivation of \eqref{before I am a frail old man}, we learn that for $s\in[r,R-2\mu]\cap\N$, $w_0,h_0,w_0+h_0\in B_{E^r}(0,\del_R)\cap E^{s+2\mu}$,  $g\in F^{s+\mu}$, and $\tau\in(0,1)$ we have the decomposition
		\begin{equation}
			\tau^{-1}\tp{\Bar{L}(w_0+\tau h_0)g-\Bar{L}(w_0)g}+\Bar{L}(w_0)D^2\Psi(w_0)(h_0,\Bar{L}(w_0)g)=\bf{I}_\tau+\bf{II}_\tau
		\end{equation}
		where 
		\begin{equation}
			\bf{I}_\tau=-(\Bar{L}(w_0+\tau h_0)-\Bar{L}(w_0))\int_0^1D^2\Psi(w_0+\tau h_0)(h_0,\Bar{L}(w_0)g)\;\m{d}t
		\end{equation}
		and
		\begin{equation}
			\bf{II}_\tau=-\Bar{L}(w_0)\int_0^1(D^2\Psi(w_0+t\tau h_0)-D^2\Psi(w_0))(h_0,\bar{L}(w_0)g)\;\m{d}t.
		\end{equation}
  
		Since $\Psi$ is $\mu$-tamely $C^2$,  $\Bar{L}$ satisfies the continuity assertions of the second item, and  $\Bar{L}$ obeys the same tame estimates as $L$, it holds that $\tnorm{\bf{I}_\tau,\bf{II}_\tau}_{E^s} \to 0$ as $\tau\to0$. This proves that the map
		\begin{equation}
			\Bar{L}:(B_{E^r}(0,\del_R)\cap  E^{s+2\mu})\times F^{s+\mu}\to E^s
		\end{equation}
		is Gateaux differentiable with derivative given by
		\begin{equation}\label{florida}
			D\Bar{L}(u_0,g_0)[w,h]=-\Bar{L}(u_0)D^2\Psi(u_0)(w,L(u_0)g_0)+\Bar{L}(u_0)h
		\end{equation}
		for $u_0\in B_{E^r}(0,\del_R)\cap E^{s+2\mu}$, $w\in E^{s+2\mu}$, and $g_0,h\in F^{s+\mu}$. Now suppose that $\N\ni\ell\ge 3$ and that $\Psi$ is $\ell$-times continuously Gateaux differentiable. By a simple induction argument using the formula~\eqref{florida}, we find the remaining conclusions of the third item.
	\end{proof}

	Now that we have a refined understanding of the mapping properties of the family of right and left inverses $\Bar{L}$, we return to studying the local inverse map $\Psi^{-1}$, which we recall is granted by the conclusions of Theorem~\ref{thm on nmh}.  Indeed, we now prove Theorem \ref{thm on further conclusions of the inverse function theorem}.

	\begin{proof}[Proof of Theorem \ref{thm on further conclusions of the inverse function theorem}]
		Throughout the proof we will use the operator $\Bar{L}$ from Theorem~\ref{thm on extension and regularity of the right and left inverse}, which is an extension of the operator $L$ from the LRI mapping hypotheses. By a very mild abuse of notation, we write $L$ in place of $\Bar{L}$ in what follows.

		Under either hypothesis $I$ or $II$, the Banach scales $\bf{E}$ and $\bf{F}$ are tame, and so there exist smoothing operators $\tcb{T_j}_{j=0}^\infty$ as in Lemma~\ref{lem on useful smoothing inequalities in tame Banach scales}. We aim to establish that if $s\in[\be,R+r-\be-\mu)\cap\N$ and $g\in B_{F^\be}(0,\ep)\cap F^s$, then  $\Psi^{-1}(T_jg)\to\Psi^{-1}(g)$ in the space $E^{s}$ as $j\to\infty$. By Taylor expanding $\Psi$ at $T_jg$ to second order as in the proof of Lemma~\ref{local_inj} and using the left invertibility of $D\Psi(T_jg)$ by $L\circ\Psi^{-1}(T_jg)$, we arrive at the identity
		\begin{multline}
			\Psi^{-1}(T_{j+1}g)-\Psi^{-1}(T_jg)=L\circ\Psi^{-1}(T_jg)(T_{j+1}-T_j)g\\-L\circ\Psi^{-1}(T_jg)\int_0^1(1-t)D^2\Psi((1-t)\Psi^{-1}(T_jg)+t\Psi^{-1}(T_{j+1}g))(\Psi^{-1}(T_{j+1}g)-\Psi^{-1}(T_jg))^{\otimes 2}\;\m{d}t\\=\bf{I}_j+\bf{II}_j.
		\end{multline}
		We will estimate the right hand side of the above in the spaces $E^{s+\sig}$ for $\sig\in\tcb{-1,0,1}$. From the assumed tame structure, we have the estimate
		\begin{equation}
			\tnorm{L\circ\Psi^{-1}(T_jg)h}_{E^{s+\sig}}\lesssim\tbr{\tnorm{\Psi^{-1}(T_jg)}_{E^{s+\mu+\sig}}}\tnorm{h}_{F^r}+\tnorm{h}_{F^{s+\sig}}
			\lesssim\tbr{2^{j(\mu+\sig)}\tnorm{g}_{F^s}}\tnorm{h}_{F^r}+\tnorm{h}_{F^{s+\sig}}.
		\end{equation}
		For $h=(T_{j+1}-T_j)g$ we estimate
		\begin{equation}
			\begin{cases}
				\tnorm{(T_{j+1}-T_j)g}_{F^r}\lesssim 2^{j(r-s)}\bf{m}_j^{s}(g),\\
				\tnorm{(T_{j+1}-T_j)g}_{F^{s+\sig}}\lesssim 2^{j\sig}\bf{m}_j^s(g),
			\end{cases}
		\end{equation}
		and hence (since $\mu+r\le s$) $\tnorm{\bf{I}_j}_{E^{s+\sig}}\lesssim2^{j\sig}\tbr{\tnorm{g}_{F^s}}\bf{m}_j^s(g)$, where we recall that the seminorms $\bf{m}_j^s$ are from Lemma~\ref{lem on useful smoothing inequalities in tame Banach scales}. On the other hand, for some $t\in[0,1]$, we take 
		\begin{equation}
			h_t=D^2\Psi((1-t)\Psi^{-1}(T_jg)+t\Psi^{-1}(T_{j+1}g))(\Psi^{-1}(T_{j+1}g)-\Psi^{-1}(T_jg))^{\otimes 2}
		\end{equation}
		and estimate for $\ell\in\tcb{r,s+\sig}$
		\begin{multline}
			\tnorm{h_t}_{E^\ell}\lesssim\tbr{\tnorm{T_jg,T_{j+1}g}_{F^{\ell+\mu}}}\tnorm{\Psi^{-1}(T_{j+1}g)-\Psi^{-1}(T_jg)}_{E^{r}}^2\\+\tnorm{\Psi^{-1}(T_{j+1}g)-\Psi^{-1}(T_jg)}_{E^{r}}\tnorm{\Psi^{-1}(T_{j+1}g)-\Psi^{-1}(T_jg)}_{E^{\ell+\mu}}.
		\end{multline}
		If $j$ is sufficiently large, say $j\ge J(g)$, then we have that $T_jg,T_{j+1}g\in B_{F^\be}(0,\ep)$, and hence we can apply the estimate from the third conclusion of Theorem~\ref{thm on nmh}, to bound
		\begin{equation}\label{referenced}
			\begin{cases}
				\tnorm{\Psi^{-1}(T_{j+1}g)-\Psi^{-1}(T_jg)}_{E^r}\\
				\tnorm{\Psi^{-1}(T_{j+1}g)-\Psi^{-1}(T_jg)}_{E^{r+\mu}}
			\end{cases}\lesssim\tnorm{(T_{j+1}-T_j)g}_{E^{\be-\mu}}\lesssim 2^{j(\be-\mu-s)}\bf{m}_j^s(g).
		\end{equation}
		On the other hand, we trivially bound
		\begin{equation}
			\tnorm{\Psi^{-1}(T_{j+1}g)-\Psi^{-1}(T_jg)}_{E^{s+\sig+\mu}}\lesssim\tnorm{T_{j+1}g,T_jg}_{F^{s+\sig+\mu}}\lesssim 2^{j(\mu+\sig)}\tnorm{g}_{F^s}.
		\end{equation}
		Therefore, we get that
		\begin{equation}
			\tnorm{h_t}_{E^{s+\sig}}\lesssim2^{2j(\be-\mu-s)+j(\mu+\sig)}\tbr{\tnorm{g}_{F^s}}\bf{m}_j^s(g)^2+2^{j(\be+\sig-s)}\tnorm{g}_{F^s}\bf{m}_j^s(g)
			\lesssim2^{j\sig}\tbr{\tnorm{g}_{F^s}}^2\bf{m}_j^s(g)
		\end{equation}
		and $\tbr{2^{j(\mu+\sig)}\tnorm{g}_{F^s}}\tnorm{h_t}_{E^r}\lesssim2^{j\sig}\tbr{\tnorm{g}_{F^s}}^3\bf{m}_j^s(g)$. Upon synthesizing these bounds we arrive at the estimate $\tnorm{\bf{II}_j}_{E^{s+\sig}}\lesssim 2^{j\sig}\tbr{\tnorm{g}_{F^s}}^3\bf{m}_j^s(g)$,	and hence (for $\sig\in\tcb{-1,0,1}$ and $\N\ni j\ge J(g)$) we have
		\begin{equation}\label{the good estimate}
			\tnorm{\Psi^{-1}(T_{j+1}g)-\Psi^{-1}(T_jg)}_{E^{s+\sig}}\lesssim2^{j\sig}\tbr{\tnorm{g}_{F^s}}^3\bf{m}_j^s(g).
		\end{equation}
		
		We now use~\eqref{the good estimate} to prove that the sequence $\tcb{\sum_{j=J(g)}^n\tp{\Psi^{-1}(T_{j+1}g)-\Psi^{-1}(T_jg)}}_{n=J(g)}^\infty$ is Cauchy in $E^s$. Again, the Banach scales $\bf{E}$ and $\bf{F}$ are tame in the sense of Definition~\ref{defn of tame direct summands and tame Banach scales}, and so there exist  LP-smoothable Banach scales $\Bar{\bf{E}}$ and $\Bar{\bf{F}}$ with lifting and restriction pairs $\lambda_{\bf{E}},\rho_{\bf{E}}$ and $\lambda_{\bf{F}},\rho_{\bf{F}}$ witnessing the definition of tameness for $\bf{E}$ and $\bf{F}$, respectively. Let $\xi_j(g)=\lambda_{\bf{E}}(\Psi^{-1}(T_{j+1}g)-\Psi^{-1}(T_jg))\in\Bar{E}^{R+r-\be}$ and $\eta_j(g)=\tbr{\tnorm{g}_{F^s}}^3(S_{j+1}-S_j)\circ\lambda_{\bf{F}}g\in\Bar{F}^{R+r-\be}$. By the continuity of $\lambda_{\bf{E}}$, \eqref{the good estimate}, and properties of the LP-smoothing operators from Definition~\ref{defn of smoothable and LP-smoothable Banach scales}, we deduce that for $j,k\in\N$ with $j\ge J(g)$ we have the bound
		\begin{equation}
			\tnorm{\Updelta_k\xi_j(g)}_{\Bar{E}^{s}}\lesssim 2^{-|j-k|}\tnorm{\eta_j(g)}_{\Bar{F}^s}.
		\end{equation}
		Hence, we obtain the following estimates for $\ell,n\in\N$ with $\ell\ge J(g)$:
		\begin{equation}\label{the above}
			\bnorm{\sum_{j=\ell}^{\ell+n}\xi_j(g)}_{\Bar{E}^s}^2\lesssim\sum_{k=0}^\infty\bp{\sum_{j=\ell}^{\ell+n}\tnorm{\Updelta_k\xi_j(g)}_{\Bar{E}^s}}^2
			\lesssim\sum_{k=0}^\infty\bp{\sum_{j=\ell}^{\ell+n}2^{-|j-k|}\tnorm{\eta_j(g)}_{\Bar{F}^s}}^2
			\lesssim\sum_{j=\ell}^{\ell+n}\tnorm{\eta_j(g)}_{\Bar{F}^s}^2,
		\end{equation}
		where in the last inequality above we have employed Young's convolution inequality, as was done in~\eqref{yes very good at spelling}. Since $g\in F^s$, we have the inclusion $\tcb{\tnorm{\eta_j(g)}_{\Bar{F}^s}}_{j=0}^\infty\in\ell^2(\N)$ and hence from~\eqref{the above}, we deduce that $\tcb{\sum_{j=J(g)}^n\xi_j(g)}_{n=J(g)}^\infty\subset\Bar{E}^s$ is Cauchy. The map $\rho_{\bf{E}}$ is continuous and linear and thus by taking the image of this sequence we learn that
            \begin{equation}
			\bcb{\sum_{j=J(g)}^n(\Psi^{-1}(T_{j+1}g)-\Psi^{-1}(T_jg))}_{n=J(g)}^\infty=\tcb{\Psi^{-1}(T_{n+1}g)-\Psi^{-1}(T_{J(g)}g)}_{n=0}^\infty\subset E^s
		\end{equation}
		is Cauchy, as desired. On the other hand, the third conclusion of Theorem~\ref{thm on nmh} yields:
		\begin{equation}
			\tnorm{\Psi^{-1}(T_{n+1}g)-\Psi^{-1}(g)}_{E^{\be-\mu}}\lesssim\tnorm{(I-T_{n+1})g}_{F^{\be-\mu}}\to0\quad\text{as}\quad n\to\infty.
		\end{equation}
		Therefore, the limit in $E^s$ of the above sequence, which exists by the satisfaction of the Cauchy condition, necessarily is $\Psi^{-1}(g)$. By tracing back through the estimates, we find the following quantitative rate of convergence for $n\ge J(g)$:
		\begin{equation}\label{quantiative rate of convergence}
			\tnorm{\Psi^{-1}(T_{n+1}g)-\Psi^{-1}(g)}_{E^s}\lesssim\tbr{\tnorm{g}_{F^s}}^3\bp{\sum_{j=n}^\infty\bf{m}_j^s(g)^2}^{1/2}.
		\end{equation}

		We now have all the tools we need to establish that the map $\Psi^{-1}:B_{F^\be}(0,\ep)\cap F^s\to E^s$ is continuous for every $s\in[\be,R+r-\be-\mu)\cap\N$. Let $g,h,g+h\in B_{F^\be}(0,\ep)\cap F^s$, and assume that $\tnorm{h}_{F^s}$ is sufficiently small so that $J(g+h)\le J(g)+1$, where once more we let $J(g+h)$ denote the first index for which $j\ge J(g+h)$ implies that $T_j(g+h)\in B_{F^\be}(0,\ep)$. For any $\N\ni n\ge J(g)+1$, we may then estimate
		\begin{multline}
			\tnorm{\Psi^{-1}(g)-\Psi^{-1}(g+h)}_{E^s}\le\tnorm{\Psi^{-1}(T_{n+1}g)-\Psi^{-1}(g)}_{E^s}\\+\tnorm{\Psi^{-1}(T_{n+1}g)-\Psi^{-1}(T_{n+1}(g+h))}_{E^s}+\tnorm{\Psi^{-1}(T_{n+1}(g+h))-\Psi^{-1}(g+h)}_{E^s}\\
			=\bf{I}_n+\bf{II}_n+\bf{III}_n.
		\end{multline}
		For $\bf{I}_n$ and $\bf{III}_n$ we employ the quantitative rate of convergence~\eqref{quantiative rate of convergence} along with the inequality (which is true by Lemma~\ref{lem on useful smoothing inequalities in tame Banach scales})
		\begin{equation}
			\bp{\sum_{j=n}^\infty\bf{m}_j^s(g+h)^2}^{1/2}\lesssim\tnorm{h}_{F^s}+\bp{\sum_{j=n}^\infty\bf{m}_j^s(g)^2}^{1/2}
		\end{equation}
		to see that
		\begin{equation}
            \bf{I}_n+\bf{III}_n \lesssim \tbr{\tnorm{g,h}_{F^s}}^3 \bp{\tnorm{h}_{F^s} + \bp{\sum_{j=n}^\infty\bf{m}_j^s(g)^2}^{1/2}}.
		\end{equation}
		It remains to handle $\bf{II}_n$. As before, we have
		\begin{multline}\label{para exemplar}
			\Psi^{-1}(T_{n+1}(g+h))-\Psi^{-1}(T_{n+1}g)=L\circ\Psi^{-1}(T_{n+1}g)\Big(T_{n+1}h-\\\int_0^1(1-t)D^2\Psi((1-t)\Psi^{-1}(T_{n+1}g)+t\Psi^{-1}(T_{n+1}(g+h)))(\Psi^{-1}(T_{n+1}(g+h))-\Psi^{-1}(T_{n+1}g))^{\otimes 2}\;\m{d}t\Big)
		\end{multline}
		as a consequence of Taylor's theorem. Hence we have the estimate
		\begin{multline}
			\bf{II}_n\lesssim\tbr{2^{\mu n}\tnorm{g}_{F^s}}\big(\tnorm{h}_{F^s}+\\\tbr{2^{\mu n}\tnorm{g,h}_{F^s}}\tnorm{\Psi^{-1}(T_{n+1}(g+h))-\Psi^{-1}(T_{n+1}g)}_{E^{s+\mu}}\tnorm{\Psi^{-1}(T_{n+1}(g+h))-\Psi^{-1}(T_{n+1}g)}_{E^{r}}\big).
		\end{multline}
		We then crudely estimate
		\begin{equation}
			\tnorm{\Psi^{-1}(T_{n+1}(g+h))-\Psi^{-1}(T_{n+1}g)}_{E^{s+\mu}}\lesssim2^{n\mu}\tnorm{g,h}_{F^{s}}
		\end{equation}
		and also apply the third conclusion of Theorem~\ref{thm on nmh} to see that
		\begin{equation}
			\tnorm{\Psi^{-1}(T_{n+1}(g+h))-\Psi^{-1}(T_{n+1}g)}_{E^{r}}\le\tnorm{T_{n+1}h}_{F^{\be-\mu}}\lesssim\tnorm{h}_{F^s}.
		\end{equation}
		Synthesizing these bounds shows that $\bf{II}_n\lesssim\tbr{2^{\mu n}\tnorm{g,h}_{F^s}}^3\tnorm{h}_{F^s}$, and hence
		\begin{equation}
			\tnorm{\Psi^{-1}(g)-\Psi^{-1}(g+h)}_{E^s}\lesssim\tbr{\tnorm{g,h}_{F^s}}^3\bp{\tnorm{h}_{F^s}+\bp{\sum_{j=n}^\infty\bf{m}_j^s(g)^2}^{1/2}}+\tbr{2^{\mu n}\tnorm{g,h}_{F^s}}^3\tnorm{h}_{F^s}.
		\end{equation}
		By taking $n$ large relative to $g$ and then taking $h$ small, we see that the above estimate proves that $\Psi^{-1}$ is continuous at $g$. This completes the proof of the first item.

		We continue by studying the differentiability of the inverse map $\Psi^{-1}$. Suppose that $s\in[\be,R+r-\be-\mu)\cap\N$, $g,h,g+h\in B_{F^\be}(0,\ep)\cap F^s$. By expanding $\Psi$ to second order via Taylor's theorem with integral remainder and utilizing the left invertibility of $L$ (see e.g.~\eqref{para exemplar}), we derive the equality
		\begin{multline}
			\Psi^{-1}(g+h)-\Psi^{-1}(g)-L\circ\Psi^{-1}(g)h=\\
			- L\circ\Psi^{-1}(g)\int_0^1(1-t)D^2\Psi((1-t)\Psi^{-1}(g)+t\Psi^{-1}(g+h))(\Psi^{-1}(g+h)-\Psi^{-1}(g))^{\otimes 2}\;\m{d}t.
		\end{multline}
		By taking the norm of both sides in the space $E^{s-\mu}$, employing tame estimates, and utilizing the third item of Theorem~\ref{thm on nmh}, we find that 
		\begin{multline}\label{let it down let it down}
			\tnorm{\Psi^{-1}(g+h)-\Psi^{-1}(g)-L\circ\Psi^{-1}(g)h}_{E^{s-\mu}}\\\lesssim\tbr{\tnorm{g,h}_{F^s}}^2\tnorm{\Psi^{-1}(g+h)-\Psi^{-1}(g)}_{E^s}\tnorm{\Psi^{-1}(g+h)-\Psi^{-1}(g)}_{E^r}\\
			\lesssim\tbr{\tnorm{g,h}_{F^s}}^2\tnorm{\Psi^{-1}(g+h)-\Psi^{-1}(g)}_{E^s}\tnorm{h}_{F^{\be-\mu}}.
		\end{multline}
		Since $F^s\emb F^{\be-\mu}$ and we have already established that $\Psi^{-1}$ is continuous with respect to the  $F^s$ and $E^s$ topologies, estimate~\eqref{let it down let it down} shows that $\Psi^{-1}$ is differentiable at $g$ when viewed as a map from $F^s$  to $E^{s-\mu}$, and we have the derivative formula $D\Psi^{-1}(g)h=L\circ\Psi^{-1}(g)h$.  Moreover, since $\Psi^{-1}$ is continuous  relative to the $F^s$ and $E^s$ topologies, and $L$ is continuous relative to the  $E^s\times F^{s-\mu}$ and $E^{s-\mu}$ topologies, we find, by composition of continuity, that $D\Psi^{-1}$ is continuous with respect to the $F^s\times F^{s-\mu}$ and $E^{s-\mu}$ topologies.  This proves the second item.
		
		The third item now follows by pairing the final item of Theorem~\ref{thm on extension and regularity of the right and left inverse} and the derivative formula $D\Psi^{-1}=L\circ\Psi^{-1}$ with a simple induction argument.
	\end{proof}
	
	
	\section{Nonlinear analysis of traveling free boundary compressible Navier-Stokes}\label{icelandic arctic char}

    With our tools from nonlinear analysis now established in abstract form, we return to our main goal: the analysis of the traveling wave free boundary compressible Navier-Stokes equations, \eqref{The nonlinear equations in the right form}. In this section we verify most of the `nonlinear' hypotheses for our Nash-Moser inverse function theorem and make preparations for the linear analysis that follows in subsequent sections.
	
	We select a nonlinear mapping with tame Banach scales for the domain and codomain. In Section~\ref{section on scales of Banach spaces} we verify condition II of Theorem~\ref{thm on nmh}. To check that our rather complicated operator is $1$-tamely  $C^2$, we analyze each of the atomic nonlinearities individually in Section~\ref{only place he's ever been} and then synthesize in Section~\ref{section on smooth tameness of the nonlinear map} via the calculus of tame maps from Section~\ref{Michigan}. Once this is done, in Section~\ref{subsection on derivative splitting and notation for linear analysis} we then use our newly developed understanding of the nonlinear map to decompose its derivative into a principal part and a remainder term.   We then conclude in Section~\ref{subsection on notation and overview for linear analysis} with some preliminary results for our linear analysis.
	
	\subsection{Banach scales for the traveling wave problem}\label{section on scales of Banach spaces}
	
	We begin by defining some function spaces that will comprise our Banach scales.  First, we introduce spaces that will play a role in the domain of our nonlinear map.  For $s\in\tcb{-1,0}\cup\R^+$  we define
	\begin{equation}\label{Illinois}  \index{\textbf{Function spaces}!00@$\X_s$}
		\X_s=H^{1+s}(\Omega)\times H^{2+s}(\Omega;\R^n)\times\mathcal{H}^{5/2+s}(\Sigma),
	\end{equation}
	where $\mathcal{H}^{5/2+s}(\Sigma)$ denotes the anisotropic Sobolev space defined in \eqref{how has this not yet been labeled}, with $\Sigma$ identified with $\R^{n-1}$, and we endow $\X_s$ with the Hilbert norm
	\begin{equation}
		\tnorm{q,u,\eta}_{\X_s}=\sqrt{\tnorm{q}_{H^{1+s}}^2+\tnorm{u}_{H^{2+s}}^2+\tnorm{\eta}^2_{\mathcal{H}^{5/2+s}}}.
	\end{equation}
     We single out an important closed subspace of $\X_s$:
	\begin{equation}\label{domain banach scales}\index{\textbf{Function spaces}!10@$\X^s$}
		\X^s=\tcb{(q,u,\eta)\in\X_s\;:\;\m{Tr}_{\Sigma_0}(u)=0,\;\m{Tr}_{\Sigma}(u\cdot e_n)+\pd_1\eta=0}.
	\end{equation}
	
	Second, we define some spaces that will play a role in the codomain of our nonlinear map.  For $s\in\tcb{-1,0}\cup\R^+$ we define the space 
	\begin{equation}\label{Y_s_def} \index{\textbf{Function spaces}!20@$\Y_s$}
		\Y_s=\begin{cases}
			L^2(\Omega)\times({_0}H^1(\Omega;\R^n))^\ast&\text{if }s=-1,\\
			H^{1+s}(\Omega)\times H^s(\Omega;\R^n)\times H^{1/2+s}(\Sigma;\R^n)&\text{if }s \ge 0
		\end{cases}
	\end{equation}
	and endow it with the norm
	\begin{equation}
		\begin{cases}
			\tnorm{g,F}_{\Y_{s}}=\sqrt{\tnorm{g}^2_{L^2}+\tnorm{F}_{({_0}H^1)^\ast}^2}&\text{if }s=-1,\\
			\tnorm{g,f,k}_{\Y_s}=\sqrt{\tnorm{g}^2_{H^{1+s}}+\tnorm{f}^2_{H^s}+\tnorm{k}^2_{H^{1/2+s}}}&\text{if }s\ge 0.
		\end{cases}
	\end{equation}
	We also single out an important subspace of $\Y_s$:
	\begin{equation}\label{Y^s_def}\index{\textbf{Function spaces}!30@$\Y^s$}
		\Y^s=\begin{cases}
			\bcb{(g,F)\in\Y_s\;:\;\int_0^b g(\cdot,y)\;\m{d}y\in\dot{H}^{-1}(\Sigma)}&\text{if }s=-1,\\
			\bcb{(g,f,k)\in\Y_s\;:\;\int_0^b g(\cdot,y)\;\m{d}y\in\dot{H}^{-1}(\Sigma)}&\text{if }s \ge 0,
		\end{cases}
	\end{equation}
	which we endow with the norm
	\begin{equation}
		\begin{cases}
			\tnorm{g,F}_{\Y^{s}}=\sqrt{\tnorm{g,F}_{\Y_{-1}}^2+\tsb{\int_0^bg(\cdot,y)\;\m{d}y}^2_{\dot{H}^{-1}}}&\text{if }s=-1,\\
			\tnorm{g,f,k}_{\Y^s}=\sqrt{\tnorm{g,f,k}^2_{\Y_s}+\tsb{\int_0^bg(\cdot,y)\;\m{d}y}^2_{\dot{H}^{-1}}}&\text{if }s\ge 0.
		\end{cases}
	\end{equation}
	The spaces $\Y_s$ and $\Y^s$ are clearly complete and are Hilbert spaces in the case $s \ge 0$.

 \begin{rmk}\label{remarkable remark is named what else other than shark}
     In the notation of~\eqref{Rhode Island}, we have that $\Y^s=\hat{H}^{1+s}(\Omega)\times H^s(\Omega;\R^n)\times H^{1/2+s}(\Sigma;\R)$ for $\R\ni s\ge 0$, while $\Y^{-1}=\hat{H}^0(\Omega)\times({_0}H^{1}(\Omega;\R^n))^\ast$.
 \end{rmk}
 
 Third, we introduce spaces that will contain the stress and forcing data tuple $(T,G,F)$.  For $\R \ni s \ge 0$ we define the space
	\begin{equation}\index{\textbf{Function spaces}!40@$\W_s$}
		\W_s=H^{1+s}(\R^n;\R^{n\times n})\times H^{s}(\R^n;\R^n)\times H^s(\R^n;\R^n)
	\end{equation}
	and endow it with the norm
	\begin{equation}
		\tnorm{\mathcal{T},\mathcal{G},\mathcal{F}}_{\W_s}=\sqrt{\tnorm{\mathcal{T}}_{H^{1+s}}^2+\tnorm{\mathcal{G}}_{H^s}^2+\tnorm{\mathcal{F}}^2_{H^s}}.
	\end{equation}

     Finally, for $s\in\N$ we set
	\begin{equation}\label{North Carolina}
		\E^s=\X^s\times\W_{1+s}\times\R\index{\textbf{Function spaces}!50@$\E^s$}\quad\text{and}\quad\F^s=\Y^s\times\W_{1+s}\times\R\index{\textbf{Function spaces}!60@$\F^s$}
	\end{equation}
	and endow these with the norm from Remark~\ref{scale_prod_rmk}. 
	
	From these spaces we now define the following Banach scales:\index{\textbf{Function spaces}!200@$\pmb{\X}$}\index{\textbf{Function spaces}!201@$\pmb{\Y}$}\index{\textbf{Function spaces}!202@$\pmb{\W}$}\index{\textbf{Function spaces}!202@$\pmb{\E}$}\index{\textbf{Function spaces}!203@$\pmb{\F}$}
	\begin{align}\label{main_thm_banach_scales_def}
		\pmb{\X}&=\tcb{\X^s}_{s\in\N}, & \pmb{\Y}&=\tcb{\Y^s}_{s\in\N}, & \pmb{\W}&=\tcb{\W_{1+s}}_{s\in\N}, \nonumber\\
		\pmb{\E}&=\tcb{\E^s}_{s\in\N}, & \pmb{\F}&=\tcb{\F^s}_{s\in\N}.
	\end{align}
	It is a simple matter to check that these all satisfy the conditions required by Definition \ref{definition of Banach scales} to be Banach scales.
	
	In the next result we check the second condition in the statement of Theorem~\ref{thm on nmh} for these scales.
	
	\begin{lem}[Tameness of domain and codomain]\label{lem on tameness of domain and codomain}
		The Banach scale $\pmb{\E}$ is tame, and the Banach scale $\pmb{\F}$ is a tame direct summand of $\pmb{\E}$ in the sense of Definition~\ref{defn of tame direct summands and tame Banach scales}.
	\end{lem}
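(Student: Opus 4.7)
The plan is to build both scales out of simpler tame (or LP-smoothable) atoms via Lemma~\ref{lemma on products of types of Banach scales} and then to exhibit $\pmb{\F}$ inside $\pmb{\E}$ by an explicit lifting–restriction pair. For the tameness of $\pmb{\E}$, I reduce via Lemma~\ref{lemma on products of types of Banach scales} to showing each factor of $\E^s = \X^s \times \W_{1+s} \times \R$ is tame. The factor $\{\R\}_{s\in\N}$ is LP-smoothable by Example~\ref{example of a fixed Banach space}, and $\{\W_{1+s}\}_{s\in\N}$ is LP-smoothable as a finite product of Sobolev scales on $\R^n$ by Example~\ref{example of Euclidean Soblev spaces}. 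For $\{\X^s\}_{s\in\N}$ I would first show the ambient scale $\{\X_s\}_{s\in\N}$ is tame: its Sobolev factors on the slab $\Omega = \R^{n-1}\times(0,b)$ are tame by Example~\ref{example on Sobolev spaces on domains} (the Stein extension being produced by reflection across $\Sigma_0$ and $\Sigma$), while the anisotropic factor $\{\mathcal{H}^{5/2+s}(\Sigma)\}_{s\in\N}$ is LP-smoothable with the Fourier projectors $S_j = \mathds{1}_{B(0,2^j)}(\grad/2\pi\ii)$, which commute with the nonnegative Fourier weights defining the $\mathcal{H}^s$-norm and verify the axioms of Definition~\ref{defn of smoothable and LP-smoothable Banach scales} by direct computation. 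Descending to $\X^s$ amounts to exhibiting it as a tame direct summand of $\X_s$, for which I build a Poisson-type projection: given $(q,u,\eta)\in\X_s$, harmonically extend the trace residuals $\m{Tr}_{\Sigma_0}u$ and $\m{Tr}_\Sigma(u\cdot e_n)+\pd_1\eta$ into $\Omega$ via the operator $\mathcal{E}_0$ from~\eqref{Nevada} to produce a corrector $U$ such that $(q,u-U,\eta)\in\X^s$, with the mapping properties of $\mathcal{E}_0$ supplying tame bounds at every level.

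The tameness of $\pmb{\F}$ follows by a parallel argument: each factor of $\Y_s$ is a Sobolev scale on $\Omega$ or $\Sigma$, hence tame by Example~\ref{example on Sobolev spaces on domains}, so $\{\Y_s\}_{s\in\N}$ is tame by products. To cut out the integrability constraint $\int_0^b g(\cdot,y)\,\m{d}y \in \dot{H}^{-1}(\Sigma)$, I would use the identification of Remark~\ref{remarkable remark is named what else other than shark} to rewrite $\Y^s = \hat{H}^{1+s}(\Omega)\times H^s(\Omega;\R^n)\times H^{1/2+s}(\Sigma)$ and then appeal to the Fourier-multiplier structure of the $\hat{H}$-spaces developed in the appendix (paired with the $\dot{H}^{-1}$ seminorm) to exhibit $\{\Y^s\}_{s\in\N}$ as a tame direct summand of a slightly enlarged LP-smoothable ambient scale, again using standard frequency projectors.

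Since the $\W_{1+s}\times\R$ factors of $\E^s$ and $\F^s$ coincide, establishing $\pmb{\F}$ as a tame direct summand of $\pmb{\E}$ reduces to constructing a tame lifting–restriction pair between $\{\Y^s\}_{s\in\N}$ and $\{\X^s\}_{s\in\N}$. I would define $\lambda(g,f,k)=(q,u,\eta)\in\X^s$ by setting $q:=g$, letting $u\in H^{2+s}(\Omega;\R^n)$ solve a simple second-order elliptic problem with source $f$ and Dirichlet data on $\pd\Omega$ chosen to enforce the trace conditions of $\X^s$, and building $\eta\in\mathcal{H}^{5/2+s}(\Sigma)$ as a Fourier-multiplier blend of $\int_0^b g(\cdot,y)\,\m{d}y$ (encoding the low-frequency $\dot{H}^{-1}$ content forced by the anisotropic weight) and $k\cdot e_n$ (encoding the high-frequency content via an elliptic lift $H^{1/2+s}\to H^{5/2+s}$). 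The restriction $\rho$ inverts this coordinate by coordinate — read off $g:=q$, recover $f$ by applying the elliptic operator to $u$, and extract $k$ from the boundary data of $u$ and $\eta$ — so that $\rho\lambda=\mathrm{id}_{\Y^s}$ by design, and tame bounds at every level follow from standard elliptic estimates combined with Fourier-multiplier bounds on $\Sigma$.

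The main obstacle is the low-frequency matching in this last stage: the anisotropic space $\mathcal{H}^{5/2+s}$ is calibrated so that its elements satisfy $\pd_1\eta \in \dot{H}^{-1}$, which is the very low-frequency condition carving $\Y^s$ out of $\Y_s$. Designing a lift that respects this matching consistently across the entire Banach scale will likely require the spatial characterization of $\mathcal{H}^s$ from Proposition~\ref{proposition on spatial characterization of anisobros} and careful bookkeeping of Fourier multipliers near $\xi=0$, but once this dictionary is set up the remaining verifications are routine applications of the calculus of tame maps.
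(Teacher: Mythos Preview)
Your argument for the tameness of $\pmb{\E}$ matches the paper's: reduce via products, treat $\R$ and $\pmb{\W}$ as LP-smoothable, and realize $\pmb{\X}$ as a tame direct summand of an LP-smoothable full-space scale using Stein extensions on the Sobolev factors, Fourier projectors on the anisotropic factor (Lemma~\ref{lem on lp smoothability of anisotropic Sobolev spaces}), and a harmonic corrector to enforce the trace constraints. The paper goes directly to $\Bar{\X}_s = H^{1+s}(\R^n)\times H^{2+s}(\R^n;\R^n)\times\mathcal{H}^{5/2+s}(\Sigma)$ rather than passing through $\X_s$, but this is cosmetic.

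The direct-summand construction, however, has a genuine gap. Your restriction $\rho:\X^s\to\Y^s$ sets $g:=q$, but $\rho$ must be defined and bounded on \emph{all} of $\X^s$, and for generic $q\in H^{1+s}(\Omega)$ there is no reason why $\int_0^b q(\cdot,y)\,\m{d}y$ lies in $\dot{H}^{-1}(\Sigma)$; hence the image need not belong to $\Y^s$ at all. The low-frequency constraint carving $\Y^s$ out of $\Y_s$ is not a boundary condition that a harmonic corrector can absorb, and your coordinate-by-coordinate inversion does not address it---you correctly flag the low-frequency matching as the obstacle on the lifting side, but it bites the restriction just as hard. The paper sidesteps this by proving the stronger fact that $\pmb{\Y}$ and $\pmb{\X}$ are \emph{isomorphic} as Banach scales: the map $\Gamma:\X^s\to\Y^s$ taking $(q,u,\eta)$ to the data $(g,f,k)$ of the incompressible traveling Stokes problem (with $g=\grad\cdot u$, $f=-\pd_1 u+\grad(q+\eta)-\Delta u$, and $k=-(q-\mathbb{D}u)e_n-\Delta_{\|}\eta\,e_n$ on $\Sigma$) is a Banach isomorphism at every level $s$ by Theorem~6.6 of~\cite{leoni2019traveling}. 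With $\lambda=\Gamma^{-1}$ and $\rho=\Gamma$ the summand structure is immediate, and the $\dot{H}^{-1}$ condition on $g=\grad\cdot u$ follows automatically from the trace conditions built into $\X^s$ together with the anisotropic control $\pd_1\eta\in\dot{H}^{-1}$---precisely the mechanism your sketch was reaching for but did not pin down. Note also that your separate verification that $\pmb{\F}$ is tame is then unnecessary.
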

	\begin{proof}
		We begin by proving that $\pmb{\E}$ is a tame Banach scale. The scale $\pmb{\W}$ is tame in light of  Lemma \ref{lemma on products of types of Banach scales} and Example~\ref{example of Euclidean Soblev spaces}, as it is the product of scales of $L^2$-based Sobolev spaces on all of $\R^n$.  Since $\pmb{\W}$ is tame, this lemma also shows that it suffices to prove that $\R$ and $\pmb{\X} =\tcb{\X^s}_{s\in\N}$ are tame. $\R$ is handled by Example~\ref{example of a fixed Banach space}, so it remains only to handle  $\pmb{\X}$. 
		
		For $s \in \N$ write $\Bar{\X}_s=H^{1+s}(\R^n)\times H^{2+s}(\R^n;\R^{n})\times\mathcal{H}^{5/2+s}(\Sigma)$.  Thanks to Lemma~\ref{lem on lp smoothability of anisotropic Sobolev spaces}, Example~\ref{example of Euclidean Soblev spaces}, and Lemma~\ref{lemma on products of types of Banach scales}  the Banach scale $\tcb{\Bar{\X}_s}_{s\in\N}$ is LP smoothable.   We now show that $\tcb{\X^s}_{s\in\N}$ is tame by showing it is a tame direct summand of $\tcb{\Bar{\X}_s}_{s\in\N}$.   To this end, we define an auxiliary mapping $\mathcal{E}_1:H^{1/2+s}(\Sigma_0;\R^n)\times H^{1/2+s}(\Sigma)\to H^{1+s}(\Omega;\R^n)$ via $\mathcal{E}_1(\phi,\varphi)=w$, where $w$ is the unique solution to the PDE 
		\begin{equation}
			\begin{cases}
				-\Delta w=0&\text{in }\Omega,\\
				w=\phi&\text{on }\Sigma_0,\\
				(I-e_n\otimes e_n)w=0&\text{on }\Sigma,\\
				w\cdot e_n=\varphi&\text{on }\Sigma.
			\end{cases}
		\end{equation}
		Also, let $\mathfrak{E}_\Omega$  denote a Stein extension operator, in the sense of Definition~\ref{defn Stein-extension operator}, and write $\mathfrak{R}_\Omega$ for the linear operator corresponding to restriction from $\R^n$ to $\Omega$.   We then define the lifting operators $\lambda_{\X}:\X^s\to\Bar{\X}_s$ and the restriction operators $\rho_{\X}:\Bar{\X}_s\to\X^s$ via 
		\begin{equation}
			\lambda_{\X}(q,u,\eta)=(\mathfrak{E}_\Omega q,\mathfrak{E}_\Omega u,\eta)
			\text{ and }
			\rho_{\X}(q,u,\eta)=(\mathfrak{R}_\Omega q,\mathfrak{R}_\Omega u-\mathcal{E}_1(\m{Tr}_{\Sigma_0}u,\m{Tr}_{\Sigma}(u\cdot e_n)+\pd_1\eta),\eta).
		\end{equation}
		In light of Proposition \ref{proposition on spatial characterization of anisobros}, it is straightforward to check that $\lambda_\X$ and $\rho_\X$ indeed define bounded linear maps between their stated domains and codomains.  Additionally, we have that $\rho_{\X}\lambda_{\X}=\m{id}_{\X^0}$,  which completes the proof that $\tcb{\X^s}_{s\in\N}$ is a tame direct summand of $\tcb{\Bar{\X}_s}_{s\in\N}$, and hence is tame.  In turn, this completes the proof that $\pmb{\E}$ is tame.
		
		We continue by showing that $\pmb{\F}$ is a tame direct summand of $\pmb{\E}$. For this, it is clearly sufficient to prove that $\pmb{\Y}$ is a tame direct summand of $\pmb{\X}$.  In fact, we have the stronger result that these spaces are isomorphic.  To see this, consider the map $\Gamma:\X^s\to\Y^s$  defined by $\Gamma(q,u,\eta)=(g,f,k)$, where
		\begin{equation}
			\begin{cases}
				\grad\cdot u=g&\text{in }\Omega,\\
				-\pd_1u+\grad(q+\eta)-\Delta u=f&\text{in }\Omega,\\
				-(q-\mathbb{D}u)e_n-\Delta_{\|}\eta e_n=k&\text{on }\Sigma,\\
				u\cdot e_n+\pd_1\eta=0&\text{on }\Sigma,\\
				u=0&\text{on }\Sigma_0.
			\end{cases}
		\end{equation}
		This is well-defined by virtue of Proposition \ref{proposition on spatial characterization of anisobros}.  The proof of Theorem 6.6 in Leoni and Tice~\cite{leoni2019traveling} establishes that this map is a Banach isomorphism, although the theorem itself states the isomorphism between slightly different spaces, which is needed in~\cite{leoni2019traveling} due to a change of unknown made by taking a particular linear combination of $q$ and $\eta$.
	\end{proof}
	
	\subsection{Analysis of atomic nonlinearities}\label{only place he's ever been}
	
	In this subsection we put to use the abstract analysis of smooth tame structures from Section~\ref{Michigan} and the tools for verifying smoothness and tame estimates from Appendices~\ref{subsection on analysis of superposition nonlinearities} and~\ref{appendix on tools for tame estimates} in the study of the nonlinear expressions appearing in the equations under consideration, namely system~\eqref{The nonlinear equations in the right form}.	The majority of the results in this subsection are a recasting of the tame calculus estimates from Appendix~\ref{appendix on tools for tame estimates} in the language of Section~\ref{Michigan}. As such, some of the proofs are not much more than mere referrals; it is the statements that are important as we move forward. Once these are established, we conclude this subsection by considering two nonlinearities that play a distinguished role in our analysis of~\eqref{The nonlinear equations in the right form}. These aid in understanding the mapping properties of the continuity equation, which is rather subtle.

    The Banach scales we work with in this subsection are built from combinations of the following atomic scales: 
	\index{\textbf{Function spaces}!300@$\bf{H}$}\index{\textbf{Function spaces}!301@$\bm{\mathcal{H}}$}\index{\textbf{Function spaces}!302@$\bf{W}$}\index{\textbf{Function spaces}!303@$\bm{\mathcal{H}}_{(\kappa)}$}\index{\textbf{Function spaces}!304@$\bf{W}\bm{\mathcal{H}}_{(\kappa)}$}
	\begin{align}
		\bf{H}(U;V) &=\tcb{H^s(U;V)}_{s\in\N},& \bm{\mathcal{H}}(\R^d)&=\tcb{\mathcal{H}^s(\R^{d})}_{s\in\N},\nonumber\\
		\bf{W}(U;V)&=\tcb{W^{s,\infty}(U;V)}_{s\in\N},  & \bf{W}\bm{\mathcal{H}}_{(\kappa)}&=\tcb{W^{s,\infty}((0,b);\mathcal{H}^{s}_{(\kappa)}(\R^d))}_{s\in\N},\nonumber\\ 
        \bm{\mathcal{H}}_{(\kappa)}(\R^d)&=\tcb{\mathcal{H}_{(\kappa)}^s(\R^d)}_{s\in\N},
	\end{align}
	where $\kappa\in\R^+$, $d\in\N^+$, $U\subseteq\R^d$ is a Stein extension domain (see Definition~\ref{defn Stein-extension operator}), $V$ is a finite dimensional vector space over $\R$, and the spaces $\mathcal{H}^s$, $\mathcal{H}_{(\kappa)}^s$ are defined in Appendix~\ref{appendix on anisotropic Sobolev spaces}. We will often suppress the domain and codomain in this notation when they are clear from context. It is a simple matter to check that these are indeed Banach scales.  
	
	We begin by looking at products. Recall the spaces of (strongly)-tame maps introduced in Definition~\ref{defn of smooth tame maps}.
	
	\begin{lem}[Smooth tameness of products, 1]\label{lemma on tameness of products 1}
		Let $V_1,V_2$ and $W$ be real finite dimensional vector spaces, $B:V_1\times V_2 \to W$ be a bilinear map, and $U\subseteq\R^d$ be a Stein-extension domain (see Definition~\ref{defn Stein-extension operator}). The following inclusions hold when viewing $B$ as a product for functions taking values in $V_1$ and $V_2$.
		\begin{enumerate}
			\item $B\in {_{\m{s}}}T^\infty_{0,1+\tfloor{d/2}}(\bf{H}(U;V_1)\times\bf{H}(U;V_2);\bf{H}(U;W))$.
			\item $B\in {_{\m{s}}}T^\infty_{0,0}(\bf{H}(U;V_1)\times\bf{W}(U;V_2);\bf{H}(U;W))$.
		\end{enumerate}
	\end{lem}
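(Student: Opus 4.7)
The plan is to exploit bilinearity to reduce both items to a single base-level verification. Since $B:V_1\times V_2\to W$ is bilinear between finite-dimensional vector spaces, its pointwise extension to function spaces is automatically smooth in the Fr\'echet sense with $DB(f,g)[h,k]=B(h,g)+B(f,k)$, with $D^2B$ equal to $B$ up to symmetrization, and with $D^jB=0$ for $j\ge3$. Consequently, by Remark~\ref{remark on purely multlinear maps}, it will suffice to verify that $B$ lies in $T^0_{0,r}$ with $r=1+\tfloor{d/2}$ for item (1) and with $r=0$ for item (2); the full strong tameness and $C^\infty$ structure, together with the tame estimates on all derivatives, then come for free.

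To verify the base-level inclusion I will invoke the criterion of Lemma~\ref{lem on multilinear tame maps} specialized to bilinear maps. For item (1) this reduces matters to the Sobolev product estimate
\begin{equation*}
\tnorm{B(f,g)}_{H^s(U;W)}\lesssim_s\tnorm{f}_{H^s}\tnorm{g}_{H^r}+\tnorm{f}_{H^r}\tnorm{g}_{H^s}\quad\text{for }s\ge r=1+\tfloor{d/2},
\end{equation*}
together with continuity of the induced product map $H^s(U;V_1)\times H^s(U;V_2)\to H^s(U;W)$. Since $r>d/2$, the embedding $H^r(U)\emb L^\infty(U)$ holds, so this is the classical Moser-type bilinear product inequality, which is recorded in Appendix~\ref{appendix on tools for tame estimates} on Stein-extension domains via the standard procedure of extending by a Stein extension operator, applying the Euclidean version, and restricting back to $U$. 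Item (2) is handled in exactly the same way using the companion high-low product estimate
\begin{equation*}
\tnorm{B(f,g)}_{H^s(U;W)}\lesssim_s\tnorm{f}_{H^s}\tnorm{g}_{L^\infty}+\tnorm{f}_{L^2}\tnorm{g}_{W^{s,\infty}},
\end{equation*}
which imposes no condition on the dimension since $W^{s,\infty}$ already embeds in $L^\infty$. The operator norm of $B$ will absorb the finite-dimensional algebraic structure, so only the scalar versions of these inequalities are needed.

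Continuity of $B$ at each scale level will then be immediate from bilinearity and the base estimate via the splitting $B(f_n,g_n)-B(f,g)=B(f_n-f,g_n)+B(f,g_n-g)$. The upgrade from plain tameness to strong tameness is also automatic here, because the bilinear product estimates above hold globally with no background-dependent constant, so the tame constant on any bounded open subset of the base space is controlled by the supremum over that set of the $H^r$ (respectively $L^\infty$) norm. I do not anticipate any serious obstacle; the one point that demands care is matching the precise algebraic form of the product estimates from Appendix~\ref{appendix on tools for tame estimates} to the bilinear template of Lemma~\ref{lem on multilinear tame maps}, which is what dictates the specific bases $r=1+\tfloor{d/2}$ and $r=0$ appearing in the statement.
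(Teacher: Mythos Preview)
Your proposal is correct and takes essentially the same approach as the paper: both reduce, via the bilinearity and the tensor-product/component-wise reduction to scalars, to the high-low product estimates of Corollary~\ref{corollary on tame estimates on simple multipliers} (items 2 and 4), and then use Remark~\ref{remark on purely multlinear maps} to upgrade from $T^0$ to ${_{\m{s}}}T^\infty$. The paper is simply terser, invoking the corollary directly after the tensor factorization rather than passing explicitly through Lemma~\ref{lem on multilinear tame maps}.
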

	\begin{proof}
		By using the universality of tensor products to factor $B=L\circ\otimes$ for a linear map $L:V_1\otimes V_2\to W$ and then working component-wise, we see that this is a direct application of the high-low product estimates of Corollary~\ref{corollary on tame estimates on simple multipliers}.
	\end{proof}
	
	We now handle more complicated products also involving sums.
	
	\begin{lem}[Smooth tameness of products, 2]\label{lem on smooth tameness of products 2}
		Let $m\in\N^+$, $V$, $W$, and $V_1,\dots,V_m$ be real finite dimensional vector spaces, $B:V_1\times\cdots\times V_m\times V\to W$ be $(m+1)$-linear, and $U\subseteq\R^d$ be a Stein extension domain (see Definition~\ref{defn Stein-extension operator}). The $(m+1)$-linear map $P_B$ defined via $P_B\tp{(g_j,\psi_j)_{j=1}^m,\varphi}=B(g_1+\psi_1,\dots,g_m+\psi_m,\varphi)$ satisfies the inclusion
		\begin{equation}
			P_B\in{_{\m{s}}}T^\infty_{0,1+\tfloor{d/2}}\bp{\prod_{j=1}^m\tp{\bf{W}(U;V_j)\times\bf{H}(U;V_j)}\times\bf{H}(U;V);\bf{H}(U;W)}.
		\end{equation}
	\end{lem}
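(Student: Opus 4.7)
The plan is to first observe that $P_B$ is manifestly $(m+1)$-linear in its arguments $((g_j,\psi_j))_{j=1}^m$ and $\varphi$: the assignment $(g_j,\psi_j)\mapsto g_j+\psi_j$ is linear in each pair, and $B$ is $(m+1)$-linear, so linearity in each pair $(g_j,\psi_j)$ is preserved under the composition. By Remark~\ref{remark on purely multlinear maps}, it then suffices to show that $P_B$ is merely $0$-tame with base $r=1+\tfloor{d/2}$ in the sense of Definition~\ref{defn of smooth tame maps}, since multilinearity of $P_B$ automatically upgrades $0$-tameness to strong $0$-tameness together with $C^\infty$ regularity. By the characterization in Lemma~\ref{lem on multilinear tame maps}, this reduces to verifying continuity of the restriction of $P_B$ to each level of the scale and a high-low multilinear product estimate of the standard form.

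To establish the tame bound I would expand $P_B$ using the distributive law and the multilinearity of $B$: namely
\begin{equation}
    P_B((g_j,\psi_j)_{j=1}^m,\varphi)=\sum_{S\subseteq\{1,\dots,m\}}B(h_1^S,\dots,h_m^S,\varphi),
\end{equation}
where $h_j^S=\psi_j$ when $j\in S$ and $h_j^S=g_j$ otherwise. Each of these $2^m$ summands is an $(m+1)$-linear form in the original pairs and in $\varphi$, involving exactly $|S|+1$ factors drawn from the Sobolev scale $\bf{H}$ (namely the $\psi_j$ for $j\in S$ together with $\varphi$) and $m-|S|$ factors drawn from the $L^\infty$-scale $\bf{W}$ (the $g_j$ for $j\notin S$). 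I would then bound each summand by iterated application of Lemma~\ref{lemma on tameness of products 1}: item (1) treats pairings of two $H^s$ factors at base $1+\tfloor{d/2}$, while item (2) treats pairings of an $H^s$ factor with a $W^{s,\infty}$ factor at base $0$. Chaining these via Corollary~\ref{lem on product of smooth tame maps}, with all intermediate derivative losses equal to zero, yields for each $S$ a strongly tame $(m+1)$-linear product estimate at base $1+\tfloor{d/2}$ (the worst case, arising when $|S|\ge 1$). Summing the finitely many bounds over $S$ and regrouping in terms of the product norms on $W^{s,\infty}\times H^s$ produces precisely the multilinear tame estimate of the shape appearing in the second item of Lemma~\ref{lem on multilinear tame maps}; the associated continuity on each scale level follows by applying the same bound to differences.

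The main conceptual subtlety, and the reason a more direct argument fails, is that one cannot simply express $P_B$ as the composition of the sum map $(g,\psi)\mapsto g+\psi$ with $B$ acting on $(H^s)^{m+1}$, since $W^{s,\infty}(U)\not\hookrightarrow H^s(U)$ on our domains of infinite measure (e.g.\ $\Omega=\R^{n-1}\times(0,b)$). The distributive expansion circumvents this by isolating the pure $W^{s,\infty}$ contributions, in which only $L^\infty$-type norms on the $g_j$ are ever invoked, from the mixed and pure-Sobolev contributions. Once the expansion is fixed, the remaining bookkeeping of which factor in each summand carries the $s$-norm versus the $r$-norm is routine, but organizing it cleanly so that the final bound admits the uniform base $r=1+\tfloor{d/2}$ (rather than separate bases for each summand) is the main step where care is required.
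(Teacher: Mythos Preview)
Your approach is correct but organized differently from the paper. The paper proceeds by induction on $m$: the base case $m=1$ is handled by writing $B(g_1+\psi_1,\varphi)=B(g_1,\varphi)+B(\psi_1,\varphi)$ and invoking both items of Lemma~\ref{lemma on tameness of products 1}; the inductive step factors $B$ through tensor products as $B(v_1,\dots,v_{m+1},v)=B_1(v_1,B_2(v_2,\dots,v_{m+1},v))$ with $B_1$ bilinear and $B_2$ $(m+1)$-linear, so that $P_B=P_{B_1}(\cdot,P_{B_2}(\cdot))$ is a composition of tame maps handled by Lemma~\ref{lem on composition of tame maps}. Your route instead invokes multilinearity of $P_B$ up front, reduces via Remark~\ref{remark on purely multlinear maps} to a $T^0$ check, and then expands distributively into $2^m$ summands before iterating the bilinear estimates. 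Both approaches ultimately rest on the same iteration of Lemma~\ref{lemma on tameness of products 1}; the paper's induction avoids the $2^m$ expansion by keeping the pairs $(g_j,\psi_j)$ intact throughout, while your expansion makes the separation of $W^{s,\infty}$ and $H^s$ norms more explicit. One small imprecision: the tool you want for ``chaining'' bilinear tame maps into a multilinear one is Lemma~\ref{lem on composition of tame maps} (composition), exactly as in the paper's inductive step, rather than Corollary~\ref{lem on product of smooth tame maps}, which presupposes the outer multilinear map is already known to be tame.
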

	\begin{proof}
		We proceed via induction on $m\in\N^+$. The base case, $m=1$, is a simple application of Lemma~\ref{lemma on tameness of products 1}. Now suppose that $m\in\N^+$ and the result holds for $m$.  We wish to prove it for $m+1$, so let $B:V_1\times\cdots V_m\times V_{m+1}\times V\to W$ be an $(m+2)$-linear map on some finite dimensional real vector spaces. We claim that there exists a bilinear map $B_1:V_1\times\tp{V_2\otimes\cdots\otimes V_{m+1}\otimes V}\to W$ and an $(m+1)$-linear map $B_2:V_2\times\cdots\times V_{m+1}\times V\to V_2\otimes\cdots \otimes V_{m+1}\otimes V$ such that
		\begin{equation}\label{factoring of B}
			B(v_1,\dots,v_{m+1},v)=B_1(v_1,B_2(v_2,\dots,v_{m+1},v)).
		\end{equation}
		Indeed, by the universality property of tensor products there exists a linear map $L:V_1\otimes\cdots\otimes V_{m+1}\otimes V\to W$ defined via
		\begin{equation}
			 L(v_1\otimes\cdots\otimes v_{m+1}\otimes v)=B(v_1,\dots,v_{m+1},v),
		\end{equation}
		so we set $B_1(v_1,\tau)=L(v_1\otimes \tau)$ and $B_2(v_2,\dots,v_{m+1},v)=v_2\otimes\cdots\otimes v_{m+1}\otimes v$. We then apply the induction hypothesis to the operator $P_{B_2}$ to see that
		\begin{equation}
			P_{B_2}\in{_{\m{s}}}T^\infty_{0,1+\tfloor{d/2}}\bp{\prod_{j=2}^{m+1}\tp{\bf{W}(U;V_j)\times\bf{H}(U;V_j)}\times\bf{H}(U;V);\bf{H}(U;V_2\otimes\cdots\otimes V_{m+1}\otimes V)}
		\end{equation}
		Similarly, we apply the base case to the operator $P_{B_1}$ and acquire the inclusion
		\begin{equation}
			P_{B_1}\in{_{\m{s}}}T^{\infty}_{0,1+\tfloor{d/2}}(\bf{W}(U;V_1)\times\bf{H}(U;V_1)\times\bf{H}(U;V_2\otimes\cdots\otimes V_{m+1}\otimes V);\bf{H}(U;W)).
		\end{equation}
		Equation~\eqref{factoring of B} implies that $P_B\tp{(g_j,\psi_j)_{j=1}^{m+1},\varphi}=P_{B_1}\tp{(g_1,\varphi_1),P_{B_2}\tp{(g_j,\psi_j)_{j=2}^{m+1},\varphi}}$. Hence, the inclusion $P_B\in{_{\m{s}}}T^\infty_{0,1+\tfloor{d/2}}$ is a consequence of the composition of smooth tame maps, Lemma~\ref{lem on composition of tame maps}.
	\end{proof}

	We also consider products with members of the anisotropic Sobolev spaces, which are defined in Appendix~\ref{appendix on anisotropic Sobolev spaces}.

	\begin{lem}[Smooth tameness of products, 3]\label{lem onsmooth tameness of products 3}
		The following hold.
		\begin{enumerate}
			\item  Let $r_d \in \N^+$ be as defined in the second item of Proposition~\ref{proposition on algebra properties}.  We have that the map $\prod_{j=1}^{r_d}\mathcal{H}^0_{(1)}(\R^d)\ni(\eta_1,\dots,\eta_{r_d})\mapsto \prod_{j=1}^{r_d}\eta_j\in H^0(\R^d)$ belongs to ${_{\m{s}}}T^\infty_{0,0}(\prod_{j=1}^{r_d}\bm{\mathcal{H}}_{(1)};\bf{H})$.
			
			\item For any $\chi\in W^{\infty,\infty}(0,b)$ and $\upnu\in\N^+$ we have that the map $\prod_{j=1}^{\upnu}\mathcal{H}^0_{(1)}(\R^d)\ni(\eta_1,\dots,\eta_\upnu)\mapsto\chi\prod_{j=1}^\upnu\eta_j\in W^{0,\infty}((0,b);\mathcal{H}^0_{(\upnu)}(\R^d))$ belongs to ${_{\m{s}}}T^\infty_{0,0}(\prod_{j=1}^\nu\bm{\mathcal{H}}_{(1)};\bf{W}\bm{\mathcal{H}}_{(\nu)})$.
		\end{enumerate}
	\end{lem}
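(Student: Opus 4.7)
The plan is to exploit the multilinearity of both maps and invoke the multilinear characterization of tameness together with the algebra properties of the anisotropic Sobolev spaces. Both maps in question are purely multilinear (the first is $r_d$-multilinear and the second is $\upnu$-multilinear), so by Remark~\ref{remark on purely multlinear maps} it suffices to establish that each is an element of $T^0_{0,0}$ of the appropriate product scale; the smooth and strong tameness at all derivative orders then come for free. By Lemma~\ref{lem on multilinear tame maps}, verifying membership in $T^0_{0,0}$ reduces to two steps: continuity from each $s$-level of the product scale into the $s$-level of the target scale, and the multilinear high-low tame estimate~\eqref{which he ate and donated to the national trust}.

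For item (1), the target is $\mathbf{H}(\R^d)$ and the domain is the product of $r_d$ copies of $\bm{\mathcal{H}}_{(1)}(\R^d)$, where $r_d$ is chosen precisely so that Proposition~\ref{proposition on algebra properties} supplies the $r_d$-fold multiplicative estimate
\[
\Bnorm{\prod_{j=1}^{r_d}\eta_j}_{H^s}\lesssim\sum_{j=1}^{r_d}\tnorm{\eta_j}_{\mathcal{H}^s_{(1)}}\prod_{i\neq j}\tnorm{\eta_i}_{\mathcal{H}^0_{(1)}}
\]
for every $s\in\N$. This is exactly the form of estimate~\eqref{which he ate and donated to the national trust} with $r=0$ and no extra background term (since the map is purely multilinear). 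Continuity at each level $s$ then follows from multilinearity and the same estimate applied to differences. Applying Lemma~\ref{lem on multilinear tame maps} yields the claim.

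For item (2), the $\eta_j$ depend only on $x\in\R^d$ while $\chi$ depends only on $y\in(0,b)$, so the output factors as $\chi(y)\prod_j\eta_j(x)$. The $y$-variable part is handled by treating $\chi$ as a fixed element of $W^{\infty,\infty}(0,b)$: since we measure the target in $W^{s,\infty}((0,b);\mathcal{H}^s_{(\upnu)}(\R^d))$, the norm splits and the $W^{s,\infty}$ norm of $\chi$ contributes a constant depending on $\chi$ and $s$. The remaining task is to bound $\prod_j\eta_j$ in $\mathcal{H}^s_{(\upnu)}(\R^d)$, for which we invoke the item of Proposition~\ref{proposition on algebra properties} giving the anisotropic multiplication
\[
\Bnorm{\prod_{j=1}^{\upnu}\eta_j}_{\mathcal{H}^s_{(\upnu)}}\lesssim\sum_{j=1}^{\upnu}\tnorm{\eta_j}_{\mathcal{H}^s_{(1)}}\prod_{i\neq j}\tnorm{\eta_i}_{\mathcal{H}^0_{(1)}},
\]
which has precisely the $(1)\to(\upnu)$ aggregation of the low-frequency weight built into the definition of $\bm{\mathcal{H}}_{(\kappa)}$. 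Combining with the $\chi$-dependence and invoking Lemma~\ref{lem on multilinear tame maps} a second time yields the tame membership.

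The main obstacle is bookkeeping rather than analysis: one must ensure that the number of factors matches the anisotropy index $\upnu$ (resp.\ $r_d$) in the target space exactly, since the anisotropic spaces $\mathcal{H}^s_{(\kappa)}$ scale differently in low and high frequencies and a mismatch between input and output weights would force additional regularity loss that would break strong tameness with base $r=0$. Once the correct matching is identified, everything reduces to invoking the precise form of Proposition~\ref{proposition on algebra properties} and quoting Lemma~\ref{lem on multilinear tame maps} together with Remark~\ref{remark on purely multlinear maps}.
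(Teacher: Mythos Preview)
Your approach is correct and essentially the same as the paper's: both reduce everything to the multilinearity of the maps (via Remark~\ref{remark on purely multlinear maps}) together with the algebra properties of Proposition~\ref{proposition on algebra properties}. The paper's presentation is slightly more streamlined in that it directly observes the ``improved mapping property'' that the product lands in $H^s$ (resp.\ $W^{s,\infty}((0,b);\mathcal{H}^s_{(\upnu)})$) for \emph{every} $s$, which is immediate from band-limitedness of the output; this makes the constancy of the domain scale $\bm{\mathcal{H}}_{(1)}$ (recall $\mathcal{H}^s_{(1)}=\mathcal{H}^0_{(1)}$ for all $s$) do all the work without writing down the high-low estimate of Lemma~\ref{lem on multilinear tame maps}, which is vacuous here anyway. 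Your displayed $H^s$ estimate is correct but is not literally stated in Proposition~\ref{proposition on algebra properties} (which only gives $L^2$); the upgrade to $H^s$ is precisely the band-limitedness of the product, and making that step explicit would tighten your argument.
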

	\begin{proof}
		The fact that the maps in the statement are well-defined and smooth follows from multilinearity and Proposition~\ref{proposition on algebra properties}. 
		Note that for $s \in \N$, the product map of the first item actually smoothly maps  $\prod_{j=1}^{r_d}\mathcal{H}^0_{(1)}(\R^d)\to H^s(\R^d)$
		and the map of the second item smoothly maps  $\prod_{j=1}^{\upnu}\mathcal{H}^0_{(1)}(\R^d)\to W^{s,\infty}((0,b);\mathcal{H}^s_{(\upnu)}(\R^d))$.  Then the strong tame estimates on all of the derivatives follow from Remark~\ref{remark on purely multlinear maps} and these improved mapping properties. 
	\end{proof}
	
	The next key nonlinear structure we handle is superposition as a Sobolev multiplier.

	\begin{lem}[Smooth tameness of superposition multipliers]\label{lem on smooth tameness of superposition multipliers}
		Let $V_1$, $V_2$, $W$, and $Z$ be real finite dimensional vector spaces, and $B:V_1\times V_2\to W$ be a bilinear map.  Let $U\subseteq\R^n$ and $O\subseteq Z$ be Stein extension domains (see Definition~\ref{defn Stein-extension operator}), and  $f\in C^\infty_b(O;V_1)$.  Finally, suppose that $Y\subseteq W^{2+\tfloor{n/2},\infty}(U;Z)\times H^{2+\tfloor{n/2}}(U;Z)$ is an open set with the property that if $(g,\psi)\in Y$, then $(g+\psi)(U)\subseteq O$. Then the nonlinear operator $P$ defined via $P(g,\psi,\varphi)=B(f(g+\psi),\varphi)$ belongs to
		\begin{equation}
			{_{\m{s}}}T^\infty_{0,2+\tfloor{n/2}}(Y\times H^{2+\tfloor{n/2}}(U;V_2),\bf{W}(U;Z)\times\bf{H}(U;Z)\times\bf{H}(U;V_2);\bf{H}(U;W)).
		\end{equation}
	\end{lem}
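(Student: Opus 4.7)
The plan is to reduce this to a Nemytskii/Moser-type superposition estimate, presumably recorded in Appendix~\ref{subsection on analysis of superposition nonlinearities}, together with the bilinear product lemmas (Lemma~\ref{lemma on tameness of products 1}) and the composition, sum, and integration rules in the calculus of tame maps from Section~\ref{Michigan}. Since the dependence of $P$ on $\varphi$ is linear through the bilinear map $B$, it is enough to produce a smoothly tame decomposition of the superposition $(g,\psi)\mapsto f(g+\psi)$ and then contract with $\varphi\in H^{2+\tfloor{n/2}}(U;V_2)$.

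The main decomposition I would use is the exact Taylor identity
\begin{equation}
f(g+\psi)=f(g)+\bp{\int_0^1 Df(g+t\psi)\,\m{d}t}\cdot\psi,
\end{equation}
which is valid on $Y$ because, after possibly shrinking the open neighborhood, the line segment from $g$ to $g+\psi$ stays in $O$. Correspondingly, $P$ splits as $B(f(g),\varphi)+B(R(g,\psi)\psi,\varphi)$, where $R(g,\psi)=\int_0^1 Df(g+t\psi)\,\m{d}t$ takes values in $\mathcal{L}(Z;V_1)$.

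For the first summand, $g\mapsto f(g)$ with $f\in C^\infty_b(O;V_1)$ is a classical Nemytskii operator which, by the appendix results, belongs to ${_{\m{s}}}T^\infty_{0,0}$ from $\bf{W}(U;Z)$ to $\bf{W}(U;V_1)$ on the open set $\tcb{g\;:\;g(U)\Subset O}$. Composing this with the $\bf{H}\times\bf{W}$ product in item~(2) of Lemma~\ref{lemma on tameness of products 1} via Lemma~\ref{lem on composition of tame maps} handles $B(f(g),\varphi)$ at base $0$, which is sharper than needed. For the second summand, the analogous appendix result applied to $Df\in C^\infty_b(O;\mathcal{L}(Z;V_1))$ and to the affine family $g\mapsto g+t\psi$ produces, uniformly in $t\in[0,1]$, smooth tameness of $(g,\psi)\mapsto Df(g+t\psi)$ from $\bf{W}\times\bf{H}$ into a scale that decomposes as a $\bf{W}$-valued piece (depending on $g$ alone) plus an $\bf{H}$-valued correction (depending on $\psi$). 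Integrating in $t$ via Lemma~\ref{lemma on addition of smooth tame maps}, contracting with $\psi\in\bf{H}$ via item~(1) of Lemma~\ref{lemma on tameness of products 1} at base $1+\tfloor{n/2}$, and finally multiplying by $\varphi\in\bf{H}$ again through the same item yields strong smooth tameness of $(g,\psi,\varphi)\mapsto B(R(g,\psi)\psi,\varphi)$ into $\bf{H}(U;W)$ at base $1+\tfloor{n/2}\le 2+\tfloor{n/2}$.

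The only nonroutine step, and the one I expect to be the main obstacle, is the Nemytskii assertion itself: via a Faà di Bruno expansion one must check that $f$ composed with $g+t\psi$ admits the claimed $\bf{W}+\bf{H}$ decomposition with tame estimates that are uniform for $(g,\psi)$ in a bounded open subset of $Y$ and uniform in $t\in[0,1]$. The base regularity $2+\tfloor{n/2}$ is precisely what makes this work: it guarantees the Sobolev embedding $H^{2+\tfloor{n/2}}(U)\emb C^1_b(U)$, so $g+\psi\in C^1_b(U)$ uniformly on bounded subsets of $Y$, which legitimizes the pointwise application of the chain rule to $f,Df,D^2f,\ldots$ on $O$ and allows the Moser-type estimates of the appendix to close. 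Once the appendix result is quoted, everything else is a mechanical assembly via the tame calculus already developed.
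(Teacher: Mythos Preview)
Your approach is in the right spirit and would eventually close, but it is a detour compared to the paper's argument, and it deposits the real content in a place that is slightly less convenient than where the paper puts it.

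The paper does not attempt a $\bf{W}+\bf{H}$ decomposition of $f(g+\psi)$. Instead it invokes Theorem~\ref{comp_C2_fixed_variant} to place the map $(g,\psi)\mapsto f(g+\psi)$ directly into the Sobolev multiplier space $\mathcal{M}(H^s(U))$ with $C^\infty$ dependence, and then pairs with $\varphi$ through $B$. The tame bound for the undifferentiated map is then read off from the third item of Corollary~\ref{coro on tame estimates on superposition multipliers}. For higher derivatives the paper writes out $D^kP$ explicitly: by the chain rule and linearity in $\varphi$,
\[
D^kP(g,\psi,\varphi)[(g_j,\psi_j,\varphi_j)]_{j=1}^k=B(D^kf(g+\psi)(g_j+\psi_j)_{j=1}^k,\varphi)+\sum_{i=1}^kB\sp{D^{k-1}f(g+\psi)(g_j+\psi_j)_{j\neq i},\varphi_i},
\]
factors each term through a fixed linear map on tensor products, and then reduces to the base case via Lemma~\ref{lem on smooth tameness of products 2} and Lemma~\ref{lem on composition of tame maps}. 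This cleanly separates the chain-rule dependence on $(g,\psi)$ from the Leibniz contribution in $\varphi$.

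Your Taylor expansion $f(g+\psi)=f(g)+R(g,\psi)\psi$ does isolate a $\bf{W}$-piece, but the remainder $R(g,\psi)=\int_0^1Df(g+t\psi)\,\m{d}t$ is a superposition of exactly the same kind as $f(g+\psi)$; showing $R(g,\psi)\psi\in\bf{H}$ tamely is precisely the superposition-multiplier estimate you were trying to bypass, so the expansion does not buy anything. There is also a small technical wrinkle: the hypothesis on $Y$ only guarantees $(g+\psi)(U)\subseteq O$, not $(g+t\psi)(U)\subseteq O$ for all $t\in[0,1]$, so your integral representation needs either a convexity assumption on $O$ or a localization argument that you would have to check is compatible with the \emph{strong} tameness requirement on every bounded open subset of $Y$. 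The paper's multiplier-space route sidesteps both of these issues.
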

	\begin{proof}
		By working component-wise and using the universality of tensor products, we see that as a consequence of Theorem~\ref{comp_C2_fixed_variant} and the smoothness of the Sobolev multiplier times Sobolev to Sobolev pairing, for any $s\ge 2+\tfloor{n/2}$ the map
		\begin{equation}\label{can you peel me some garlic}
			P:(Y\times H^{2+\tfloor{n/2}}(U,V_2))\cap\tp{W^{s,\infty}(U;Z)\times H^s(U;Z)\times H^s(U;V_2)}\to H^s(U;W)
		\end{equation}
		is well-defined and smooth. Thus, we need only check that the derivatives of $P$ obey the required tame estimates. Consider first the case of zero derivatives, which we refer to as the base case. Employing the third item of Corollary~\ref{coro on tame estimates on superposition multipliers}, working component-wise, and using the universality of tensor products, we readily deduce that the map in~\eqref{can you peel me some garlic} belongs to the space ${_{\m{s}}}T^0_{0,2+\tfloor{n/2}}$.

		Now for $k\in\N^+$ we consider the case of $k$-derivatives of $P$. A short computation reveals the derivative formula
		\begin{equation}
			D^kP(g,\psi,\varphi)(g_j,\psi_j,\varphi_j)_{j=1}^k=B(D^kf(g+\psi)(g_j+\psi_j)_{j=1}^k,\varphi)+\sum_{i=1}^kB\tp{D^{k-1}f(g+\psi)(g_j+\psi_j)_{j\neq i},\varphi_i}.
		\end{equation}
		Again by the universality of tensor products, there are linear maps
  \begin{equation}
      L^k:\mathcal{L}^k(Z;V_1)\otimes\bp{\bigotimes_{j=1}^k Z}\otimes V_2 \to W\text{ and }L_{k-1}:\mathcal{L}^{k-1}(Z;V_1)\otimes\bp{\bigotimes_{j=1}^{k-1} Z}\otimes V_2 \to W
  \end{equation}
(depending only on $k$ and $B$) such that
		\begin{equation}\label{operator k}
			B(D^kf(g+\psi)(g_j+\psi_j)_{j=1}^k,\varphi)=L^k\bp{D^kf(g+\psi)\otimes\bigotimes_{j=1}^k(g_j+\psi_j)\otimes\varphi}
		\end{equation}
		and
		\begin{equation}\label{operator k-1}
			B\tp{D^{k-1}f(g+\psi)(g_j+\psi_j)_{j\neq i},\varphi_i}=L_{k-1}\bp{D^{k-1}f(g+\psi)\otimes\bigotimes_{j\neq i}(g_j+\psi_j)\otimes\varphi_i}.
		\end{equation}
		We then find that the operators in~\eqref{operator k} and~\eqref{operator k-1} belong to ${_{\m{s}}}T^0_{0,2+{\tfloor{n/2}}}$ by applying the base case, the tameness of composition from Lemma~\ref{lem on composition of tame maps}, and the second version of tameness of products, Lemma~\ref{lem on smooth tameness of products 2}.  This shows that, indeed, $D^kP$ belongs to ${_{\m{s}}}T^0_{0,2+{\tfloor{n/2}}}$ for every $k \in \N$.
	\end{proof}
	
	The next nonlinear structure we examine is superposition.

	\begin{lem}[Smooth tameness of superposition]\label{lem on smooth tameness of superposition}
		Let $\N\ni k\ge 2+\tfloor{n/2}$. The following hold.
		\begin{enumerate}
			\item Let $V$, $W$ be real finite dimensional vector spaces, $0\in O\subseteq V$ be a Stein extension domain (see Definition~\ref{defn Stein-extension operator}) that is star shaped with respect to the origin, $U\subseteq\R^n$ be a Stein extension domain, and $f\in C^\infty(O;W)$ be such that $f(0)=0$ and $Df\in C^\infty_b(O;\mathcal{L}(V;W))$.  Let $Y\subseteq H^{2+\tfloor{n/2}}(U;V)$ be an open set with the property that if $\varphi\in Y$, then $\varphi(U)\subseteq O$. Then the operator $\varphi\mapsto f(\varphi)$
			belongs to ${_{\m{s}}}T^\infty_{0,2+\tfloor{n/2}}(Y,\bf{H}(U;V);\bf{H}(U;W))$.
			\item Suppose that $\Phi\in C^\infty(\R^n;\R^n)$ is a bi-Lipschitz homeomorphism and a $C^1$ diffeomorphism such that $D\Phi\in C^\infty_b(\R^n;\R^{n\times n})$. There exists an open set $0\in U_{\Phi}\subseteq W^{2+\tfloor{n/2},\infty}(\R^n;\R^n)\times H^{2+\tfloor{n/2}}(\R^n;\R^n)$, depending only on $\Phi$ and the dimension, such that for any $m\in\N$, the operator $P$ defined via $P(f,g,h)=f(\Phi+g+h)$
			satisfies
			\begin{equation}
				P\in{_{\m{s}}}T^m_{0,2+\tfloor{n/2}}(H^{m+2+\tfloor{n/2}}(\R^n;\R^d)\times U_\Phi,\bf{I}_m;\bf{H}(\R^n;\R^d)),
			\end{equation}
			where $\bf{I}_m=\tcb{H^{m+s}(\R^n;\R^d)\times W^{s,\infty}(\R^n,\R^n)\times H^s(\R^n;\R^n)}_{s\in\N}$.
		\end{enumerate}
	\end{lem}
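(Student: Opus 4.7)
The plan is to reduce both items to the immediately preceding Lemma on smooth tameness of superposition multipliers, combined with the tame calculus developed earlier in the section: namely Lemmas on smooth tameness of products 1 and 2, Lemma on composition of tame maps, and Lemma on integrals of one-parameter families of tame $C^k$ maps.

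For item 1, the first step is to exploit the normalization $f(0)=0$ via the fundamental theorem of calculus, obtaining the representation
\begin{equation}
    f(\varphi)(x)=\int_0^1 Df(t\varphi(x))\varphi(x)\;\m{d}t,
\end{equation}
valid for every $\varphi\in Y$ and $t\in[0,1]$, since the star-shape of $O$ with respect to the origin ensures $t\varphi(U)\subseteq O$. One would then apply the superposition multiplier lemma to the evaluation bilinear pairing $B:\mathcal{L}(V;W)\times V\to W$, $B(A,v)=Av$, taking $Df\in C^\infty_b(O;\mathcal{L}(V;W))$ in place of the fixed smooth map there and using the bounded linear substitution $\varphi\mapsto(0,t\varphi,\varphi)$ composed via Lemma on composition of tame maps. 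This would yield that, for each $t\in[0,1]$, the integrand $\varphi\mapsto Df(t\varphi)\varphi$ is smoothly strongly tame of order zero with base $2+\tfloor{n/2}$ on $Y$, with tame constants independent of $t\in[0,1]$. The smoothness of $Df$ yields continuity of the integrand in $t$, so the Lemma on integrals of one-parameter families of tame $C^k$ maps concludes the argument.

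For item 2, the first step is to choose $U_\Phi\subseteq W^{2+\tfloor{n/2},\infty}(\R^n;\R^n)\times H^{2+\tfloor{n/2}}(\R^n;\R^n)$ small enough that, via the embedding $H^{2+\tfloor{n/2}}\emb W^{1,\infty}$ and the assumption that $\Phi$ is bi-Lipschitz with $D\Phi\in C^\infty_b$, the perturbation $\Psi_{g,h}:=\Phi+g+h$ remains a bi-Lipschitz $C^1$ diffeomorphism with $|\det D\Psi_{g,h}|$ bounded above and below uniformly on $U_\Phi$. Since $P$ is linear in $f$ and nonlinear in $(g,h)$, iterated application of the chain rule would express its $k^{\m{th}}$ Fr\'echet derivative at $(f,g,h)$ in directions $(f_i,g_i,h_i)_{i=1}^k$ as a finite sum of terms of the schematic form
\begin{equation}
    \sp{D^jf_{i_0}\circ\Psi_{g,h}}\cdot Q_j\p{g_{i_1}+h_{i_1},\dots,g_{i_j}+h_{i_j}}
\end{equation}
with $0\le j\le k\le m$, $i_0\in\tcb{0,1,\dots,k}$ (identifying $f_0=f$), and $Q_j$ a fixed multilinear tensor. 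Since $f,f_i\in H^{m+s}$ and $j\le m$, each $D^jf_{i_0}$ still lies in $H^s$; the precomposition with $\Psi_{g,h}$ would then be controlled by the joint tame composition estimate
\begin{equation}
    \tnorm{v\circ\Psi_{g,h}}_{H^s}\lesssim\tnorm{v}_{H^s}+\tbr{\tnorm{g}_{W^{s,\infty}}+\tnorm{h}_{H^s}}\tnorm{v}_{H^{2+\tfloor{n/2}}},
\end{equation}
derived from the Fa\`a di Bruno expansion of $D^s(v\circ\Psi_{g,h})$ followed by iterated application of the product Lemmas 1 and 2. Each atomic piece would then be assembled into a tame product via the same lemmas, and smoothness and continuity would follow from the corresponding assertions built into the superposition multiplier lemma.

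The main obstacle will be establishing the joint tame composition estimate above in a form compatible with our tame calculus, together with analogous estimates on its higher derivatives in the $(g,h)$-slots. This requires careful bookkeeping of the chain rule expansion $D^s(v\circ\Psi_{g,h})=\sum_{\pi}(D^{|\pi|}v)\circ\Psi_{g,h}\cdot\prod_{B\in\pi}D^{|B|}\Psi_{g,h}$ summed over partitions $\pi$ of $\tcb{1,\dots,s}$, along with uniform Sobolev multiplier bounds on the entries of $(D\Psi_{g,h})^{-1}$, which are secured by Cramer's rule and the uniform bound on $|\det D\Psi_{g,h}|$ ensured by the smallness of $U_\Phi$. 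The continuity assertions for the derivatives of $P$ as maps between the stated Sobolev spaces are routine but tedious consequences of the continuity provided by the superposition multiplier lemma.
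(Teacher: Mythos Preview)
Your approach to item 1 is essentially identical to the paper's: the fundamental theorem of calculus representation, followed by the superposition multiplier lemma at each fixed $t$, composition of tame maps, and then the lemma on integrals of one-parameter families.

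For item 2 you are on the right track but taking a more laborious route than the paper, and you are underselling one genuine difficulty. The paper does not re-derive the composition estimate and the differentiability of $P$ inline; instead it invokes two appendix results. First, Theorem~\ref{comp_C2} supplies the open set $U_\Phi$, proves that $P$ is $C^m$ between the stated spaces, and gives the explicit derivative formula~\eqref{comp_C2_02}. Second, the tame estimate on the $0^{\m{th}}$ derivative (your ``joint tame composition estimate'') is exactly the second item of Corollary~\ref{coro on tame estimates on superposition}, combined with the uniform bound $\tnorm{\det D(\Phi+g+h)^{-1}}_{L^\infty}\le 2^n\tnorm{D\Phi^{-1}}_{L^\infty}^n$ from Lemma~\ref{comp_bilip_lem}. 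With these in hand, the higher derivatives are handled exactly as you suggest, via the product Lemmas~\ref{lem on composition of tame maps} and~\ref{lem on smooth tameness of products 2} applied to the explicit formula.

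The point you are glossing over is the \emph{differentiability} of $P$ itself. Writing down what the $k^{\m{th}}$ Fr\'echet derivative ``would be'' via the chain rule does not prove that $P$ is $C^k$; one must actually control the remainder. This is the nontrivial content of Theorem~\ref{comp_C2} (which follows Inci--Kappeler--Topalov): one first proves well-definedness and continuity by a finite induction on the regularity index, then establishes differentiability by a fundamental theorem of calculus remainder argument, and only then iterates. Similarly, your proposed derivation of the tame composition estimate via Fa\`a di Bruno is essentially the proof of Corollary~\ref{coro on tame estimates on superposition}, but note that the terms $(D^j v)\circ\Psi_{g,h}$ appearing in that expansion still contain the composition, so the argument must start from the $L^2$ base case via change of variables (using the determinant bound) and then induct upward --- the product lemmas alone do not close it. Your sketch has the right ingredients, but the cleanest path is to cite these two appendix results rather than rebuild them.
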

	\begin{proof}
		We begin by proving the first item. Note that by using the fundamental theorem of calculus, the map from the first item has the equivalent formula
		\begin{equation}
			\varphi\mapsto f(\varphi)=\int_0^1Df(t\varphi)(\varphi)\;\m{d}t.
		\end{equation}
		For each $t\in[0,1]$, the map $\varphi\mapsto Df(t\varphi)(\varphi)$ is seen to be ${_{\m{s}}}T^\infty_{0,2+\tfloor{n/2}}$ as a consequence of our previous result on the smooth tameness of superposition multipliers, Lemma~\ref{lem on smooth tameness of superposition multipliers}, and the composition of smooth tame maps, Lemma~\ref{lem on composition of tame maps}. In fact, it is a simple matter to verify that we actually have uniformity of the defining inequalities with respect to $t$ as well as the satisfaction of the remaining hypotheses of Lemma~\ref{lemma on addition of smooth tame maps}. Hence, the first item follows by applying the lemma.
		
		We next consider the second item. Theorem~\ref{comp_C2} establishes the existence of an open set $U_{\Phi}\subseteq W^{2+\tfloor{n/2},\infty}(\R^n;\R^n)\times H^{2+\tfloor{n/2}}(\R^n;\R^n)$ such that for every $m\in\N$ and $s\ge 2+\tfloor{n/2}$ the map
		\begin{equation}
			P:H^{m+s}(\R^n;\R^d)\times\tp{U_\Phi\cap\tp{W^{s,\infty}(\R^n;\R^n)\times H^s(\R^n;\R^n)}}\to H^s(\R^n;\R^d)
		\end{equation}
		is well-defined and $C^m$.  Thus we need only verify that the derivatives of $P$, which are enumerated in~\eqref{comp_C2_02}, are ${_{\m{s}}}T^0_{0,2+\tfloor{n/2}}$. The case of the $0^{\m{th}}$ derivative ($m=0$) follows immediately from the second item of Corollary~\ref{coro on tame estimates on superposition} and the bound $\tnorm{\det D(g+h)^{-1}}_{L^\infty}\le 2^n\tnorm{D\Phi^{-1}}_{L^\infty}^n$  for all $(g,h)\in U_\Phi$, which holds as a consequence of Lemma~\ref{comp_bilip_lem} and Theorem~\ref{comp_C2}.  When $m>0$, the formula \eqref{comp_C2_02} shows that the $m^{\m{th}}$ derivative is built from simple products and the $0^{\m{th}}$ derivative structure.  Consequently, the result follows in this case by supplementing this observation with Lemmas~\ref{lem on composition of tame maps} and~\ref{lem on smooth tameness of products 2}.
	\end{proof}
	
	The next two results, which happen to be the most subtle part of the nonlinear analysis, deal with superposition-like nonlinearities whose argument contains a member of the anisotropic Sobolev space. Recall that Appendix~\ref{appendix on anisotropic Sobolev spaces} gives the notation and an enumeration of basic properties for these specialized spaces. In the following statement $r_d\in \N^+$, for $d=n-1$, refers to the number defined in the second item of Proposition~\ref{proposition on algebra properties}.

	\begin{lem}[Taylor expansion trick]\label{lem on taylor expansion trick}
		Let $I\subseteq\R$ be an open interval containing $[-\mathfrak{g}b,0]$ and $\phi\in C^\infty(I)$. There exists a $\rho_I\in\R^+$,
		\begin{equation}
			\mathcal{N}_\phi^{(1)}\in {_{\m{s}}}T^\infty_{0,2+\tfloor{n/2}}(B_{H^{2+\tfloor{n/2}}}(0,\rho_I)\times B_{\mathcal{H}^{2+\tfloor{n/2}}}(0,\rho_I),\bf{H}(\Omega)\times\bm{\mathcal{H}}(\Sigma);\bf{H}(\Omega))\index{\textbf{Nonlinear maps}!80@$\mathcal{N}_\phi^{(1)}$},    
		\end{equation}
		and
		\begin{equation}
			\mathcal{N}_{\phi}^{(2)}\in {_{\m{s}}}T^\infty_{0,0}(\bm{\mathcal{H}}_{(1)}(\Sigma);\bf{W}\bm{\mathcal{H}}_{(r_{n-1}-1)})\index{\textbf{Nonlinear maps}!85@$\mathcal{N}_\phi^{(2)}$}
		\end{equation}
		such that $\mathcal{N}_\phi^{(1)}(0,0)=0$, $\mathcal{N}^{(2)}_\phi(0)=0$, and
		\begin{equation}\label{taylor expansion trick}
			\mathcal{N}_\phi(q,\eta)-\mathcal{N}_\phi(0,0)=\mathcal{N}^{(1)}_\phi(q,\eta)+\mathcal{N}^{(2)}_\phi(\Uppi^{1}_{\m{L}}\eta),
		\end{equation}
		where the projectors $\Uppi$ are defined in~\eqref{notation for the Fourier projection operators} and we write
		\begin{equation}\label{definition of N}
			\mathcal{N}_\phi(q,\eta)=\phi(-\mathfrak{g}\m{id}_{\R^n}\cdot e_n+q+\mathfrak{g}(I-\mathcal{E})\eta) 
			\text{ and }
			\mathcal{N}_\phi(0,0)=\phi(-\mathfrak{g}\m{id}_{\R^n}\cdot e_n)\index{\textbf{Nonlinear maps}!75@$\mathcal{N}_\phi$}.
		\end{equation}
	\end{lem}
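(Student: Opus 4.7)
Here is a plan for proving the lemma.

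The strategy begins with the observation that the obstruction to applying standard superposition estimates to $\mathcal{N}_\phi$ is that $\Uppi^1_{\m{L}}\eta$ does not live in any standard Lebesgue space, even when $\eta$ is smooth in the anisotropic sense. Using the definition \eqref{North Dakota} of $\mathcal{E}$, one has the orthogonal-type decomposition
\begin{equation*}
\mathfrak{g}(I-\mathcal{E})\eta(x,y) = \mathfrak{g}(1-y/b)\,\Uppi^1_{\m{L}}\eta(x) + \mathfrak{g}\,\psi_\eta(x,y),\quad \psi_\eta:=\Uppi^1_{\m{H}}\eta-\mathcal{E}_0\Uppi^1_{\m{H}}\eta,
\end{equation*}
where the mapping properties of $\mathcal{E}_0$ (Lemma \ref{lem on mapping properties of the Poisson extension operator variants}) place $\psi_\eta \in H^{1+s}(\Omega)$ when $\eta \in \mathcal{H}^{5/2+s}(\Sigma)$. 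This motivates the definitions
\begin{equation*}
\mathcal{N}^{(2)}_\phi(\Uppi^1_{\m{L}}\eta)(x,y) := \phi\bigl(-\mathfrak{g}y + \mathfrak{g}(1-y/b)\Uppi^1_{\m{L}}\eta(x)\bigr) - \phi(-\mathfrak{g}y),\quad \mathcal{N}^{(1)}_\phi(q,\eta) := \mathcal{N}_\phi(q,\eta)-\mathcal{N}_\phi(0,0)-\mathcal{N}^{(2)}_\phi(\Uppi^1_{\m{L}}\eta),
\end{equation*}
so that \eqref{taylor expansion trick} holds by design and both summands vanish at zero. The constant $\rho_I$ is chosen so that the arguments of $\phi$ remain in $I$ throughout the analysis; in particular, Bernstein-type estimates on functions with compact Fourier support ensure $\Uppi^1_{\m{L}}\eta$ can be controlled pointwise by $\tnorm{\eta}_{\mathcal{H}^{2+\tfloor{n/2}}}$ up to a factor depending only on $n$.

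For $\mathcal{N}^{(1)}_\phi$, I would apply Taylor's theorem in the scalar variable to write
\begin{equation*}
\mathcal{N}^{(1)}_\phi(q,\eta) = (q+\mathfrak{g}\psi_\eta)\int_0^1 \phi'\bigl(a_\eta + t(q+\mathfrak{g}\psi_\eta)\bigr)\,dt,\quad a_\eta(x,y):=-\mathfrak{g}y + \mathfrak{g}(1-y/b)\Uppi^1_{\m{L}}\eta(x).
\end{equation*}
The key point is that $a_\eta$, for $\eta$ in the small-ball of $\mathcal{H}^{2+\tfloor{n/2}}$, lies in the $W^{s,\infty}(\Omega)$ portion of a product scale (by Bernstein), while $q+\mathfrak{g}\psi_\eta \in H^{1+s}(\Omega)$ depends tamely on $(q,\eta)$ through Lemma \ref{lem on mapping properties of the Poisson extension operator variants}. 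Thus for each fixed $t$ the integrand is precisely the setting of Lemma \ref{lem on smooth tameness of superposition multipliers}, producing a smoothly tame multiplier with the required tame product estimates via Lemma \ref{lemma on tameness of products 1}. Continuity in $t$ and uniform tameness in $t\in[0,1]$ are easily verified, so Lemma \ref{lemma on addition of smooth tame maps} allows the integration over $t$ without losing the tame $C^\infty$ structure, yielding the desired inclusion of $\mathcal{N}^{(1)}_\phi$ into the strongly tame space.

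The principal difficulty lies with $\mathcal{N}^{(2)}_\phi$, since its argument lies only in $\mathcal{H}^0_{(1)}$ and its range must lie in $\bf{W}\bm{\mathcal{H}}_{(r_{n-1}-1)}$; the usual superposition machinery cannot even land in $L^2(\Sigma)$. The idea is to Taylor expand $\phi$ in its scalar argument about $-\mathfrak{g}y$ up to order $r_{n-1}$ with integral remainder, obtaining
\begin{equation*}
\mathcal{N}^{(2)}_\phi(\Uppi^1_{\m{L}}\eta) = \sum_{k=1}^{r_{n-1}-1}\frac{\phi^{(k)}(-\mathfrak{g}y)}{k!}\bigl[\mathfrak{g}(1-y/b)\bigr]^k\,[\Uppi^1_{\m{L}}\eta]^k + R_{r_{n-1}}(\Uppi^1_{\m{L}}\eta).
\end{equation*}
Each term with $k\le r_{n-1}-1$ is handled by the second item of Lemma \ref{lem onsmooth tameness of products 3}, which places the $k$-fold product $[\Uppi^1_{\m{L}}\eta]^k$ into $\bf{W}\bm{\mathcal{H}}_{(k)}$, embedded into $\bf{W}\bm{\mathcal{H}}_{(r_{n-1}-1)}$ at low frequencies (with the standard Sobolev portion of the anisotropic norm controlled by the $y$-independent smooth multipliers $\phi^{(k)}(-\mathfrak{g}y)(1-y/b)^k$). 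For the remainder term, the first item of Lemma \ref{lem onsmooth tameness of products 3} gains $[\Uppi^1_{\m{L}}\eta]^{r_{n-1}} \in \bf{H}(\R^{n-1})$, which in turn embeds into $\bm{\mathcal{H}}_{(r_{n-1}-1)}$; this product is then multiplied by a smoothly tame superposition $\int_0^1 (1-t)^{r_{n-1}-1}\phi^{(r_{n-1})}(-\mathfrak{g}y+t\mathfrak{g}(1-y/b)\Uppi^1_{\m{L}}\eta)\,dt$ whose $L^\infty$ tameness follows from Lemma \ref{lem on smooth tameness of superposition multipliers} applied to the smooth, bounded perturbation. Assembling all terms via the product lemma and the addition lemma for tame integrals, together with the multilinearity shortcut of Remark \ref{remark on purely multlinear maps} for the polynomial terms, establishes strong tameness of $\mathcal{N}^{(2)}_\phi$. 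The subtlest part will be tracking the anisotropic index $\kappa$ through these product and embedding steps, and verifying the assertion that low-order $\mathcal{H}^0_{(k)}$ embeds into $\mathcal{H}^0_{(r_{n-1}-1)}$ with multiplier-compatible estimates; these are precisely the properties the appendix's algebra results are designed to deliver.
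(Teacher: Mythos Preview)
Your overall strategy—splitting off the low-frequency contribution $g=\mathfrak{g}(1-y/b)\Uppi^1_{\m{L}}\eta$ and Taylor-expanding $\phi$ around $g_0=-\mathfrak{g}y$—matches the paper's exactly, and your treatment of $\mathcal{N}^{(1)}_\phi$ via the fundamental theorem of calculus and Lemma~\ref{lem on smooth tameness of superposition multipliers} is essentially the paper's argument for the piece $\phi(g_0+g+\varphi)-\phi(g_0+g)$.

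However, there is a genuine error in where you place the Taylor remainder. You assign the remainder term $R_{r_{n-1}}$ to $\mathcal{N}^{(2)}_\phi$ and assert that $[\Uppi^1_{\m{L}}\eta]^{r_{n-1}}\in\bf{H}(\R^{n-1})$ ``embeds into $\bm{\mathcal{H}}_{(r_{n-1}-1)}$.'' This embedding is false: the spaces $\mathcal{H}^0_{(\kappa)}$ consist only of functions with Fourier support in $\overline{B(0,\kappa)}$, so an arbitrary $H^s$ function cannot live there. Even the product $[\Uppi^1_{\m{L}}\eta]^{r_{n-1}}$ itself has Fourier support in $\overline{B(0,r_{n-1})}$, not $\overline{B(0,r_{n-1}-1)}$. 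Worse, once you multiply by the integral factor $\int_0^1(1-t)^{r_{n-1}-1}\phi^{(r_{n-1})}(g_0+tg)\,dt$, which is a \emph{nonlinear} function of $\Uppi^1_{\m{L}}\eta$ in the tangential variables, band-limitedness is destroyed entirely. So your $\mathcal{N}^{(2)}_\phi$ as defined cannot land in $\bf{W}\bm{\mathcal{H}}_{(r_{n-1}-1)}$.

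The paper's remedy is to move the remainder into $\mathcal{N}^{(1)}_\phi$, whose codomain is the ordinary Sobolev scale $\bf{H}(\Omega)$. The key observation making this work is exactly the first item of Lemma~\ref{lem onsmooth tameness of products 3}: the $r_{n-1}$-fold product $g^{r_{n-1}}$ lies in $\bf{H}$, so multiplying it by the bounded superposition factor (via Lemma~\ref{lem on smooth tameness of superposition multipliers}) keeps the result in $\bf{H}(\Omega)$. Consequently $\mathcal{N}^{(2)}_\phi$ is defined in the paper to be \emph{only} the finite Taylor polynomial $\sum_{j=1}^{r_{n-1}-1}\frac{\phi^{(j)}(g_0)}{j!}g^j$; each term has Fourier support in $\overline{B(0,j)}\subseteq\overline{B(0,r_{n-1}-1)}$ (the prefactor $\phi^{(j)}(g_0)$ depends only on $y$), and the second item of Lemma~\ref{lem onsmooth tameness of products 3} gives the tame structure. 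Your argument is repaired by this single reallocation.
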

	\begin{proof}
		We begin by choosing $\rho_I$. Set $R_I=\m{dist}(\R\setminus I,[-\mathfrak{g}b,0])/2\in(0,\infty]$. Thanks to the supercritical Sobolev embeddings, properties of anisotropic Sobolev from Proposition~\ref{proposition on frequency splitting}, and the boundedness of the extension operator $\mathcal{E}$ from Lemma~\ref{lem on mapping properties of the Poisson extension operator variants}, we see that the map
		\begin{equation}
			H^{2+\tfloor{n/2}}(\Omega)\times\mathcal{H}^{2+\tfloor{n/2}}(\Sigma) \ni (q,\eta)\mapsto q+\mathfrak{g}(I-\mathcal{E})\eta  \in L^\infty(\Omega)  
		\end{equation}
		is well-defined and continuous. Hence, the preimage of $B_{L^\infty}(0,R_I)$ under this map is an open set $U_I$ containing the origin. We take $\rho_I=\m{dist}(0,\pd U_I)/\sqrt{2}$ so that
		\begin{equation}
			B_{H^{2+\tfloor{n/2}}}(0,\rho_I)\times B_{\mathcal{H}^{2+\tfloor{n/2}}}(0,\rho_I)\subseteq U_I.
		\end{equation}

		Now let $(q,\eta)\in B_{H^{2+\tfloor{n/2}}}(0,\rho_I)\times B_{\mathcal{H}^{2+\tfloor{n/2}}}(0,\rho_I)$. Note that $\mathcal{N}_{\phi}(q,\eta)$, as defined in~\eqref{definition of N}, is well-defined as a function from $\Omega$ to $\R$.  We write
		\begin{equation}
			\mathcal{N}_\phi(q,\eta)=\phi(g_0+g+\varphi),
		\end{equation}
		where $g_0=-\mathfrak{g}\m{id}_{\R^n}\cdot e_n$, $g=\mathfrak{g}(I-\m{id}_{\R^n}\cdot e_n/b)\Uppi^1_{\m{L}}\eta$, and $\varphi=q+\mathfrak{g}(I-\mathcal{E})\Uppi^1_{\m{H}}\eta$. Hence,
		\begin{equation}
			\mathcal{N}_\phi(q,\eta)-\mathcal{N}_\phi(0,0)=(\phi(g_0+g+\varphi)-\phi(g_0+g))+(\phi(g_0+g)-\phi(g_0))=\bf{I}+\bf{II}.
		\end{equation}
		For $\bf{I}$ we use the fundamental theorem of calculus to express
		\begin{equation}
			\bf{I}=\varphi\int_0^1\phi'(g_0+g+\tau\varphi)\;\m{d}\tau,
		\end{equation}
		whereas for $\bf{II}$ we use Taylor's formula with integral remainder to write
		\begin{equation}
			\bf{II}=\sum_{j=1}^{r_d-1}\f{\phi^{(j)}(g_0)}{j!}g^j+g^{r_d}\int_0^1\f{(1-\tau)^{r_d-1}}{(r_d-1)!}\phi^{(r_d)}(g_0+\tau g)\;\m{d}\tau,
		\end{equation}
		where $r_d = r_{n-1} \ge 1$ is given by \eqref{the definition of rd} and sums over empty intervals are understood as zero.      This leads us to define
		\begin{equation}
			\mathcal{N}^{(1)}_{\phi}(q,\eta)=\varphi\int_0^1\phi'(g_0+g+\tau\varphi)\;\m{d}\tau+g^{r_d}\int_0^1\f{(1-\tau)^{r_d-1}}{(r_d-1)!}\phi^{(r_d)}(g_0+\tau g)\m{d}\tau
		\end{equation}
		and
		\begin{equation}\label{absolute definition of N2 no questions about it}
			\mathcal{N}^{(2)}_\phi(\Uppi^1_{\m{L}}\eta)=\sum_{j=1}^{r_d-1}\f{\phi^{(j)}(g_0)}{j!}g^j.
		\end{equation}
		The computation leading up to these definitions shows that decomposition~\eqref{taylor expansion trick} holds and that $\mathcal{N}^{(1)}_\phi$ and $\mathcal{N}_\phi^{(2)}$ vanish at the origin.
		
		Now we analyze the tame smoothness of $\mathcal{N}^{(1)}_\phi$ and $\mathcal{N}^{(2)}_\phi$. The former is ${_{\m{s}}}T^\infty_{0,2+\tfloor{n/2}}$ thanks to the results on addition of smooth tame maps, Lemma~\ref{lemma on addition of smooth tame maps}, smooth tameness of superposition multipliers, Lemma~\ref{lem on smooth tameness of superposition multipliers}, composition of smooth tame maps, Lemma~\ref{lem on composition of tame maps}, and the third version of smooth tameness of products, Lemma~\ref{lem onsmooth tameness of products 3}. On the other hand, the map $\mathcal{N}_\phi^{(2)}$ is ${_{\m{s}}}T^\infty_{0,0}$ as a consequence of Lemma~\ref{lem onsmooth tameness of products 3}.
	\end{proof}
	
	We now use the previous result to understand the mapping properties of the vector field argument of the divergence in the continuity equation of~\eqref{The nonlinear equations in the right form}.
	
	\begin{lem}[A vector field decomposition]\label{lem on fundamental decomposition of continuity equation-like vector fields}
		Let $I\subseteq\R$ be an open interval containing $[-\mathfrak{g}b,0]$, $\phi\in C^\infty(I)$, and $\rho_I$ be as in Lemma~\ref{lem on taylor expansion trick}. Let the open set $O_I$ be given by $B_{H^{2+\tfloor{n/2}}}(0,\rho_I)\times H^{2+\tfloor{n/2}}(\Omega;\R^n)\times B_{\mathcal{H}^{3+\tfloor{n/2}}}(0,\rho_I)$, and consider the Banach scale  
		\begin{equation}
		\bf{J}  =\tcb{H^s(\Omega)\times H^s(\Omega;\R^n)\times\mathcal{H}^{1+s}(\Sigma)}_{s\in\N}.    
		\end{equation}
		 There exist
		\begin{equation}
			\mathcal{M}_{\phi}^{(1)}\in{_{\m{s}}}T^{\infty}_{0,2+\tfloor{n/2}}(O_I,\bf{J};\bf{H}(\Omega;\R^n))\index{\textbf{Nonlinear maps}!95@$\mathcal{M}^{(1)}_\phi$}
		\end{equation}
		and
		\begin{equation}\label{mapping properties of the second piece in teh decomposition}
			e_1\cdot\mathcal{M}^{(2)}_{\phi}\in{_{\m{s}}}T^\infty_{0,0}(\bm{\mathcal{H}}_{(1)}(\Sigma);\bf{W}\bm{\mathcal{H}}_{(r_{n-1})})\index{\textbf{Nonlinear maps}!100@$e_1\cdot\mathcal{M}_\phi^{(2)}$}
		\end{equation}
		such that  $\mathcal{M}^{(1)}_{\phi}(0,0,0)=0$, $e_1\cdot\mathcal{M}^{(2)}_\phi(0)=0$,
		\begin{equation}\label{respect the trace or get the claw}
			\m{Tr}_{\pd\Omega}(\mathcal{M}^{(1)}_{\phi}(q,u,\eta)\cdot e_n)=\m{Tr}_{\pd\Omega}(\mathcal{N}_\phi(q,\eta))\sp{(\m{Tr}_{\Sigma}(u\cdot e_n)+\pd_1\eta)\mathds{1}_{\Sigma}+\m{Tr}_{\Sigma_0}(u\cdot e_n)\mathds{1}_{\Sigma_0}},
		\end{equation}
		and
		\begin{equation}\label{fundamental decomposition of vector fields}
			\mathcal{M}_\phi(q,u,\eta)-\mathcal{M}_\phi(0,0,0)=\mathcal{M}^{(1)}_\phi(q,u,\eta)+e_1\cdot\mathcal{M}_\phi^{(2)}(\Uppi^1_{\m{L}}\eta)e_1,
		\end{equation}
		where we write
		\begin{equation}  \index{\textbf{Nonlinear maps}!90@$\mathcal{M}_\phi$}
			\mathcal{M}_\phi(q,u,\eta)=\mathcal{N}_\phi(q,\eta)(u- M_\eta e_1)
			\text{ and }
			\mathcal{M}_\phi(0,0,0)=-\mathcal{N}_{\phi}(0,0)e_1
		\end{equation}
		for $\mathcal{N}_\phi(q,\eta)$, $M_{\eta}$, and $\Uppi$  given by~\eqref{definition of N}, \eqref{Mississippi}, and~\eqref{notation for the Fourier projection operators}, respectively.
	\end{lem}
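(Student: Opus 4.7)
The plan is to construct the decomposition by explicit algebraic manipulation, isolating the single low-frequency term that refuses to live in a standard Sobolev space on $\Omega$. Starting from the definition, we write
\begin{equation*}
\mathcal{M}_\phi(q,u,\eta) - \mathcal{M}_\phi(0,0,0) = \mathcal{N}_\phi(q,\eta)\,u - \bigl[\mathcal{N}_\phi(q,\eta) - \mathcal{N}_\phi(0,0)\bigr] e_1 - \mathcal{N}_\phi(q,\eta)(M_\eta - I) e_1,
\end{equation*}
and apply Lemma~\ref{lem on taylor expansion trick} to the middle bracket, producing $-\mathcal{N}^{(1)}_\phi(q,\eta) e_1 - \mathcal{N}^{(2)}_\phi(\Uppi^1_{\m{L}}\eta) e_1$. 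For the last term, \eqref{Mississippi} gives $(M_\eta - I) e_1 = \partial_n \mathcal{E}\eta \cdot e_1 - \mathcal{E}(\partial_1 \eta) e_n$, and the frequency splitting in~\eqref{North Dakota} yields $\partial_n \mathcal{E}\eta = b^{-1}\Uppi^1_{\m{L}}\eta + \partial_n \mathcal{E}_0\Uppi^1_{\m{H}}\eta$; the analogous decomposition for $\mathcal{E}(\partial_1\eta)$ has no troublesome low-frequency piece because the anisotropic weight $|\xi|^{-2}\xi_1^2$ keeps $\Uppi^1_{\m{L}}(\partial_1 \eta)$ in $L^2(\Sigma)$.

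The only genuinely offending summand is $\mathcal{N}_\phi(0,0)\,b^{-1} \Uppi^1_{\m{L}}\eta \cdot e_1 = \phi(-\mathfrak{g}\m{id}_{\R^n} \cdot e_n)\,b^{-1} \Uppi^1_{\m{L}}\eta \cdot e_1$, since $\Uppi^1_{\m{L}}\eta$ need not lie in $L^2(\Sigma)$ for $\eta \in \mathcal{H}^{3+\tfloor{n/2}}(\Sigma)$. Accordingly we set
\begin{equation*}
e_1 \cdot \mathcal{M}^{(2)}_\phi(\Uppi^1_{\m{L}}\eta) := -\mathcal{N}^{(2)}_\phi(\Uppi^1_{\m{L}}\eta) - \phi(-\mathfrak{g}\m{id}_{\R^n} \cdot e_n)\,b^{-1} \Uppi^1_{\m{L}}\eta - \mathcal{N}^{(2)}_\phi(\Uppi^1_{\m{L}}\eta)\,b^{-1} \Uppi^1_{\m{L}}\eta,
\end{equation*}
and define $\mathcal{M}^{(1)}_\phi$ to be the residual, so that~\eqref{fundamental decomposition of vector fields} holds by construction along with $\mathcal{M}^{(1)}_\phi(0,0,0) = 0$ and $e_1 \cdot \mathcal{M}^{(2)}_\phi(0) = 0$. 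The tame smoothness of $e_1 \cdot \mathcal{M}^{(2)}_\phi$ into $\bf{W}\bm{\mathcal{H}}_{(r_{n-1})}$ follows summand by summand from Lemma~\ref{lem on taylor expansion trick} (combined with monotonicity of the anisotropic index) and the second item of Lemma~\ref{lem onsmooth tameness of products 3} applied with $\upnu \in \{1,\dots,r_{n-1}\}$, after unfolding $\mathcal{N}^{(2)}_\phi$ via~\eqref{absolute definition of N2 no questions about it} as a polynomial of degree $r_{n-1}-1$ in $\Uppi^1_{\m{L}}\eta$.

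For $\mathcal{M}^{(1)}_\phi$, the surviving summands after cancellation are products of tame atoms drawn from $u$, $\mathcal{N}_\phi(q,\eta)$, $\mathcal{N}^{(1)}_\phi(q,\eta)$, $\partial_n \mathcal{E}_0 \Uppi^1_{\m{H}}\eta$, $\mathcal{E}(\partial_1\eta)$, and the top-degree monomial $\mathcal{N}_\phi(0,0)\,g^{r_{n-1}}$ extracted from inside $\mathcal{N}^{(1)}_\phi$; each such atom has standard Sobolev regularity on $\Omega$ by Lemmas~\ref{lem on taylor expansion trick} and~\ref{lem on smooth tameness of superposition multipliers}, by the mapping properties of $\mathcal{E}_0$ from Lemma~\ref{lem on mapping properties of the Poisson extension operator variants}, and by the first item of Lemma~\ref{lem onsmooth tameness of products 3}. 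Assembling these via the product and composition results of Lemmas~\ref{lemma on tameness of products 1}, \ref{lem on smooth tameness of products 2}, and~\ref{lem on composition of tame maps} yields the claimed membership $\mathcal{M}^{(1)}_\phi \in {_{\m{s}}}T^\infty_{0,2+\tfloor{n/2}}$. The trace identity~\eqref{respect the trace or get the claw} is then immediate, since every summand in $\mathcal{M}^{(1)}_\phi$ other than $\mathcal{N}_\phi(q,\eta)(u\cdot e_n) + \mathcal{N}_\phi(q,\eta)\,\mathcal{E}(\partial_1\eta) e_n$ is a scalar multiple of $e_1$, while $\mathcal{E}(\partial_1\eta)|_{\Sigma_0} = 0$ and $\mathcal{E}(\partial_1\eta)|_\Sigma = \partial_1 \eta$ by~\eqref{North Dakota}.

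The principal obstacle is the careful algebraic bookkeeping required to route every problematic low-frequency contribution either into $\mathcal{M}^{(2)}$ or into the maximal-degree $(\Uppi^1_{\m{L}}\eta)^{r_{n-1}}$ monomial of $\mathcal{N}^{(1)}_\phi$, so that the residual $\mathcal{M}^{(1)}_\phi$ lands precisely in $\bf{H}(\Omega;\R^n)$ rather than in a strictly larger anisotropic scale. This alignment between the polynomial degree threshold inherited from the Taylor expansion in Lemma~\ref{lem on taylor expansion trick} and the number of $\Uppi^1_{\m{L}}$-factors demanded by the first item of Lemma~\ref{lem onsmooth tameness of products 3} to trigger the $L^2$ improvement is what makes the decomposition close.
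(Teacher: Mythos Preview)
Your proof is correct and follows essentially the same route as the paper: your formula for $e_1\cdot\mathcal{M}^{(2)}_\phi$ simplifies exactly to the paper's $-\bigl(\mathcal{N}_\phi(0,0)\Uppi^1_{\m{L}}\eta/b+\mathcal{N}^{(2)}_\phi(\Uppi^1_{\m{L}}\eta)(1+\Uppi^1_{\m{L}}\eta/b)\bigr)$, so the residual $\mathcal{M}^{(1)}_\phi$ is forced to coincide with the paper's explicit expression $\mathcal{N}_\phi(q,\eta)(u-\partial_n\mathcal{E}\Uppi^1_{\m{H}}\eta\,e_1+\partial_1\mathcal{E}\eta\,e_n)-\mathcal{N}^{(1)}_\phi(q,\eta)(1+\Uppi^1_{\m{L}}\eta/b)e_1$. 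One minor simplification: there is no need to extract any monomial from inside $\mathcal{N}^{(1)}_\phi$, since Lemma~\ref{lem on taylor expansion trick} already places $\mathcal{N}^{(1)}_\phi$ as a whole in ${_{\m{s}}}T^\infty_{0,2+\tfloor{n/2}}$ with values in $\bf{H}(\Omega)$, so it may be treated as a single tame atom multiplied by the $W^{s,\infty}$ factor $(1+\Uppi^1_{\m{L}}\eta/b)$.
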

	\begin{proof}
		The precise form of the decomposition \eqref{fundamental decomposition of vector fields}, which uses Lemma~\ref{lem on taylor expansion trick} as well as the identities $M_\eta e_1 = (1+\partial_n \mathcal{E} \eta)e_1 - \partial_1 \mathcal{E}\eta e_n$ and $\pd_n\mathcal{E}\Uppi^1_{\m{L}}\eta=\Uppi^1_{\m{L}}\eta/b$, is given by
		\begin{equation}\label{M^1 I II splitting}
			\mathcal{M}^{(1)}_{\phi}(q,u,\eta)=\mathcal{N}_\phi(q,\eta)(u-\pd_n\mathcal{E}\Uppi^1_{\m{H}}\eta e_1+\pd_1\mathcal{E}\eta e_n)-\mathcal{N}^{(1)}_{\phi}(q,\eta)(1+\Uppi^1_{\m{L}}\eta/b)e_1
		\end{equation}
		and
		\begin{equation}\label{the definition of the M^2 piece}
			e_1\cdot\mathcal{M}_\phi^{(2)}(\Uppi^1_{\m{L}}\eta)=-\tp{\mathcal{N}_\phi(0,0)\Uppi^1_{\m{L}}\eta/b+\mathcal{N}^{(2)}_\phi(\Uppi^1_{\m{L}}\eta)(1+\Uppi^1_{\m{L}}\eta/b)}.
		\end{equation}
		That $\mathcal{M}_\phi^{(1)}$ is ${_{\m{s}}}T^\infty_{0,2+\tfloor{n/2}}$ is a consequence of the composition of tame maps, Lemma~\ref{lem on composition of tame maps}, the second product result, Lemma~\ref{lem on smooth tameness of products 2}, the mapping properties of $\mathcal{N}_\phi$, Lemma~\ref{lem on taylor expansion trick}, and the properties of anisotropic Sobolev spaces enumerated in~\eqref{the norm on the anisotropic Sobolev spaces} and Proposition~\ref{proposition on frequency splitting}. That $e_1\cdot \mathcal{M}_\phi^{(2)}$ is ${_{\m{s}}}T^\infty_{0,0}$ is a consequence of Lemmas~\ref{lem onsmooth tameness of products 3} and~\ref{lem on taylor expansion trick}.
	\end{proof}
	
	\subsection{Smooth tameness of the nonlinear operator}\label{section on smooth tameness of the nonlinear map}

	We begin by defining a nonlinear operator associated with the PDE~\eqref{The nonlinear equations in the right form}. We will employ the Banach scales defined in Section~\ref{section on scales of Banach spaces}.  For some $r\ge 2+\lfloor n/2\rfloor$, $0<\rho\le\rho_{\m{WD}}$, with $\rho_{\m{WD}}$ determined by Theorem~\ref{thm on smooth tameness of the nonlinear operator}, and $s\ge r$, we set
	\begin{equation}\label{the nonlinear map equation}
		\Bar{\Psi}:(B_{\X^{r}}(0,\rho)\cap\X^{1+s})\times\W_s\times\R^+\to\Y^s\times\W_s\times\R
	\end{equation}
	via 
	\begin{equation}\label{the definition of the nonlinear operator} \index{\textbf{Nonlinear maps}!00@$\Bar{\Psi}$}
		\Bar{\Psi}(q,u,\eta,\mathcal{T},\mathcal{G},\mathcal{F},\gam)=(\Psi(q,u,\eta,\gam)+\Phi(q,u,\eta,\mathcal{T},\mathcal{G},\mathcal{F}),\mathcal{T},\mathcal{G},\mathcal{F},\gam), 
	\end{equation}
	where
	\begin{equation}
		\Psi:(B_{\X^r}(0,\rho)\cap\X^{1+s})\times\R^+\to\Y^s \index{\textbf{Nonlinear maps}!10@$\Psi$}
		\text{ and }
		\Phi:(B_{\X^{r}}(0,\rho)\cap\X^s)\times\W_s\to\Y^s\index{\textbf{Nonlinear maps}!20@$\Phi$}
	\end{equation}
	are the principal and auxiliary parts of $\Bar{\Psi}$, which are defined via
	\begin{equation}\label{Nebraska}
		\Psi(q,u,\eta,\gam)=(\Psi_1(q,u,\eta),\Psi_2(q,u,\eta,\gam),\Psi_3(q,u,\eta,\gam))
	\end{equation}
	and
	\begin{equation}\label{hey i think this may be the last thing I label}
		\Phi(q,u,\eta,\mathcal{T},\mathcal{G},\mathcal{F})=(0,\Phi_2(q,\eta,\mathcal{G},\mathcal{F}),\Phi_3(\eta,\mathcal{T})),
	\end{equation}
	where
	\begin{equation}\label{definition of Psi1}
		\Psi_1(q,u,\eta)=\grad\cdot(\sig_{q,\eta}(u- M_{\eta}e_1)),
	\end{equation}
	\begin{equation}\label{definition of the nonlinear map Psi2}
		\Psi_2(q,u,\eta,\gam)=\gam^2\sig_{q,\eta}M_{\eta}^{-\m{t}}(\tp{u-\gam M_{\eta}e_1}\cdot\grad(M_{\eta}^{-1}u))+\sig_{q,\eta}\grad(q+\mathfrak{g}\eta)-\gam M_{\eta}^{-\m{t}}\grad\cdot(\S^{\sig_{q,\eta}}_{\mathcal{A}_\eta}(M_{\eta}^{-1}u)M_{\eta}^{\m{t}}),
	\end{equation}
	\begin{equation}\label{definition of the nonlinear map Psi3}
		\Psi_3(q,u,\eta,\gam)=-((P-P_{\m{ext}})\circ\sig_{q,\eta}-\gam\S^{\sig_{q,\eta}}_{\mathcal{A}_\eta}(M_{\eta}^{-1}u))M_{\eta}^{\m{t}}e_n-\varsigma\mathscr{H}(\eta)M_{\eta}^{\m{t}}e_n,
	\end{equation}
	\begin{equation}\label{definition of the nonlinear map Phi2}
		\Phi_2(q,\eta,\mathcal{G},\mathcal{F})=-J_\eta M_{\eta}^{-\m{t}}\tp{\sig_{q,\eta}\mathcal{G}\circ\mathfrak{F}_\eta+\mathcal{F}\circ\mathfrak{F}_\eta},
	\end{equation}
	and
	\begin{equation}\label{definition of the nonlinear map Phi3}
		\Phi_3(\eta,\mathcal{T})=-\mathcal{T}\circ\mathfrak{F}_\eta M_{\eta}^{\m{t}}e_n.
	\end{equation}
    Here we recall that $\mathfrak{F}_\eta$ is defined by \eqref{flattening_map_def}, $\mathcal{A}_\eta$ and $J_\eta$ by \eqref{geometry_and_jacobian_def},  $\S^{\uptau}_{\mathcal{A}}$ by \eqref{flat_stress_def},  $M_{\eta}$ by \eqref{Mississippi}, and $\sig_{q,\eta}$  by \eqref{sigma_q_eta_def}. We also recall the assumptions on $\varsigma$, $\upmu$, and $\uplambda$ stated in~\eqref{parameter_assumptions}.
    
	Armed with the results from the previous two subsections, we now endeavor to study the smooth tameness of the map $\Bar{\Psi}$ from \eqref{the nonlinear map equation}. Our strategy is to handle the $\Psi$ and $\Phi$ pieces separately and component-wise.  Our first result studies the continuity equation piece, $\Psi_1$, which we recall is defined in~\eqref{definition of Psi1}. This $\Psi_1$ term is the source of the derivative loss and also requires the most careful analysis among the smooth tameness verification results.
 
	\begin{prop}[Smooth tameness of the continuity equation]\label{prop on smooth tameness of the continuity equation}
		There exists a $\rho_{\m{cont}}\in(0,\infty)$, depending only on the domain of the inverse enthalpy (see~\eqref{domain of the inverse enthalpy}) such that the following hold
  \begin{enumerate}
    \item There exists constants $c,C\in\R^+$ such that for all $(q,u,\eta)\in B_{\X^{1+\tfloor{n/2}}}(0,\rho_{\m{cont}})$ we have the estimate $c\le\sig_{q,\eta}\le C$ where $\sig_{q,\eta}$ is defined in~\eqref{sigma_q_eta_def}.
    \item $\Psi_1\in{_{\m{s}}}T^\infty_{1,1+\tfloor{n/2}}(B_{\X^{1+\tfloor{n/2}}}(0,\rho_{\m{cont}}),\pmb{\X};\tcb{\hat{H}^{1+s}(\Omega)}_{s\in\N})$, where we recall that the spaces $\hat{H}^s(\Omega)$ are defined in~\eqref{Rhode Island} (see also Remark~\ref{remarkable remark is named what else other than shark}).
  \end{enumerate}

	\end{prop}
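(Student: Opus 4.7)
For the first conclusion, I would use Sobolev embedding: since $1+\tfloor{n/2}>n/2$, the embedding $H^{2+\tfloor{n/2}}(\Omega)\emb L^\infty(\Omega)$ combined with the continuity of $(I-\mathcal{E}):\mathcal{H}^{7/2+\tfloor{n/2}}(\Sigma)\to L^\infty(\Omega)$ (via Proposition~\ref{proposition on frequency splitting} and Lemma~\ref{lem on mapping properties of the Poisson extension operator variants}) gives $\tnorm{q+\mathfrak{g}(I-\mathcal{E})\eta}_{L^\infty(\Omega)}\lesssim\tnorm{(q,u,\eta)}_{\X^{1+\tfloor{n/2}}}$. Since $[-\mathfrak{g}b,0]$ is compactly contained in the open interval $(H_{\m{min}},H_{\m{max}})$ defined in~\eqref{domain of the inverse enthalpy}, I would pick $\rho_{\m{cont}}>0$ small enough that whenever $(q,u,\eta)\in B_{\X^{1+\tfloor{n/2}}}(0,\rho_{\m{cont}})$ the argument of $H^{-1}$ in~\eqref{sigma_q_eta_def} remains in a fixed compact subset of $(H_{\m{min}},H_{\m{max}})$; since $H^{-1}$ is continuous and strictly positive there, $\sig_{q,\eta}$ is bounded above and below by positive constants depending only on $b$, $\mathfrak{g}$, and $P$.

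For the second conclusion, I would start from the identity $\Psi_1(q,u,\eta)=\grad\cdot\mathcal{M}_{H^{-1}}(q,u,\eta)$, noting that $\mathcal{M}_{H^{-1}}(0,0,0)=-\varrho e_1$ has $\grad\cdot(-\varrho e_1)=0$. Applying Lemma~\ref{lem on fundamental decomposition of continuity equation-like vector fields} with $\phi=H^{-1}$ (after possibly shrinking $\rho_{\m{cont}}$ to fit inside its open domain) yields the splitting
\begin{equation}\label{continuity_split_plan}
\Psi_1(q,u,\eta)=\grad\cdot\mathcal{M}^{(1)}_{H^{-1}}(q,u,\eta)+\pd_1\sp{e_1\cdot\mathcal{M}^{(2)}_{H^{-1}}(\Uppi^1_{\m{L}}\eta)}.
\end{equation}
The two summands enjoy qualitatively different control. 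The first piece is handled by the tameness $\mathcal{M}^{(1)}_{H^{-1}}\in{_{\m{s}}}T^\infty_{0,2+\tfloor{n/2}}$ which, combined with the embedding $\X^s\emb H^{1+s}(\Omega)\times H^{2+s}(\Omega;\R^n)\times\mathcal{H}^{5/2+s}(\Sigma)$ and the fact that $\grad\cdot$ is a first-order differential operator, yields tame estimates of order $\mu=1$ and base $1+\tfloor{n/2}$ into $\bf{H}(\Omega)$. For the second, $\Uppi^1_{\m{L}}\eta$ has Fourier support in $B(0,1)$, so by~\eqref{mapping properties of the second piece in teh decomposition} and Proposition~\ref{proposition on frequency splitting}, the expression $\pd_1\sp{e_1\cdot\mathcal{M}^{(2)}_{H^{-1}}(\Uppi^1_{\m{L}}\eta)}$ is smooth and tame into $H^s(\Omega)$ for every $s\in\N$.

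The delicate step, which I expect to be the main obstacle, is producing the bonus $\dot{H}^{-1}(\Sigma)$ inclusion of $\int_0^b\Psi_1(q,u,\eta)(\cdot,y)\;\m{d}y$ with tame control, since this is exactly what the codomain scale $\tcb{\hat{H}^{1+s}(\Omega)}_{s\in\N}$ demands. The decisive ingredient is the trace identity~\eqref{respect the trace or get the claw}: because $(q,u,\eta)\in\X^s$ satisfies $\m{Tr}_{\Sigma_0}(u)=0$ and $\m{Tr}_\Sigma(u\cdot e_n)+\pd_1\eta=0$, the right-hand side of~\eqref{respect the trace or get the claw} vanishes on $\pd\Omega$, so $\mathcal{M}^{(1)}_{H^{-1}}(q,u,\eta)\cdot e_n$ has zero trace on $\Sigma\cup\Sigma_0$. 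Fubini and integration by parts in $y$ then produce
\begin{equation}\label{integral_split_plan}
\int_0^b\Psi_1(\cdot,y)\;\m{d}y=(\grad_{\|},0)\cdot\int_0^b\mathcal{M}^{(1)}_{H^{-1}}(q,u,\eta)(\cdot,y)\;\m{d}y+\pd_1\int_0^be_1\cdot\mathcal{M}^{(2)}_{H^{-1}}(\Uppi^1_{\m{L}}\eta)(\cdot,y)\;\m{d}y.
\end{equation}
The first summand is a tangential divergence of an $L^2(\Sigma;\R^{n-1})$ vector field (by Minkowski's integral inequality and the $\bf{H}(\Omega;\R^n)$-tameness of $\mathcal{M}^{(1)}_{H^{-1}}$), so it is in $\dot{H}^{-1}(\Sigma)$. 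For the second, the integrand belongs to $W^{0,\infty}((0,b);\mathcal{H}^0_{(r_{n-1})}(\R^{n-1}))$, so its $y$-integral lies in $\mathcal{H}^0_{(r_{n-1})}(\R^{n-1})$; here I would exploit the fact that the low-frequency weight $|\xi|^{-2}\xi_1^2\mathds{1}_{B(0,1)}$ built into the $\mathcal{H}^s$-norm is engineered precisely so that $\pd_1$ maps $\mathcal{H}^0_{(\kappa)}(\R^{n-1})$ continuously into $\dot{H}^{-1}(\R^{n-1})$. Once both the $H^s$ and $\dot{H}^{-1}$ bounds are assembled and supplemented with closure under the smooth tame calculus (Lemmas~\ref{lem on composition of tame maps}, \ref{lem on smooth tameness of products 2}, and~\ref{lemma on addition of smooth tame maps}), the inclusion $\Psi_1\in{_{\m{s}}}T^\infty_{1,1+\tfloor{n/2}}$ will follow at all orders of derivative and for every $s\ge 1+\tfloor{n/2}$.
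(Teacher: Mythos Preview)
Your proposal is correct and follows essentially the same approach as the paper: you invoke Lemma~\ref{lem on fundamental decomposition of continuity equation-like vector fields} with $\phi=H^{-1}$ to obtain the splitting~\eqref{continuity_split_plan}, use the trace identity~\eqref{respect the trace or get the claw} together with the boundary conditions built into $\X^s$ to reduce the $\dot{H}^{-1}$ estimate on the first piece to a tangential divergence, and handle the second piece via the band-limited structure of $\Uppi^1_{\m{L}}\eta$ and the embedding $\pd_1\mathcal{H}^0_{(r_{n-1})}(\Sigma)\emb\dot{H}^{-1}(\Sigma)$. The paper sets $\rho_{\m{cont}}=\rho_{(H_{\m{min}},H_{\m{max}})}$ directly from Lemma~\ref{lem on taylor expansion trick} rather than by a separate Sobolev-embedding argument, but this is only a cosmetic difference since the construction of $\rho_I$ in that lemma is precisely the $L^\infty$-smallness argument you describe.
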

	\begin{proof}
    We begin by proving the second item. First, we use Lemma~\ref{lem on fundamental decomposition of continuity equation-like vector fields} to set $\rho_{\m{cont}}=\rho_{(H_{\m{min}},H_{\m{max}})}$, where $H_{\m{min}}$ and $H_{\m{max}}$ are defined by~\eqref{domain of the inverse enthalpy}, to see that $\grad\cdot\mathcal{M}_{H^{-1}}(0,0,0)=0$, and to equate
		\begin{equation}
			\Psi_1(q,u,\eta)=\grad\cdot(\mathcal{M}_{H^{-1}}^{(1)}(q,u,\eta))+\pd_1\tp{e_1\cdot\mathcal{M}^{(2)}_{H^{-1}}(\Uppi^1_{\m{L}}\eta)}=\Psi^{\bf{I}}_{1}(q,u,\eta)+\Psi_1^{\bf{II}}(\Uppi^1_{\m{L}}\eta).
		\end{equation}
		In the above $H^{-1}$ refers to the inverse enthalpy function (see~\ref{enthalpy_def}). As a direct consequence of Lemma~\ref{lem on fundamental decomposition of continuity equation-like vector fields}, we obtain the inclusion
		\begin{equation}\label{dishwashers are so convenient}
			\Psi_1^{\bf{I}}\in{_{\m{s}}}T^\infty_{1,1+\tfloor{n/2}}(B_{\X^{1+\tfloor{n/2}}}(0,\rho_{\m{cont}}),\pmb{\X};\tcb{H^{1+s}(\Omega)}_{s\in\N}).
		\end{equation}
		On the other hand, the boundary conditions built into the definition of the space $\X^{1+\tfloor{n/2}}$ in \eqref{domain banach scales}, together with condition~\eqref{respect the trace or get the claw}, imply that
		\begin{equation}
			\int_0^b\Psi^{\bf{I}}_1(q,u,\eta)(\cdot,y)\;\m{d}y=(\grad_{\|},0)\cdot\int_0^b\mathcal{M}^{(1)}_{H^{-1}}(q,u,\eta)(\cdot,y)\;\m{d}y,
		\end{equation}
		and hence we deduce that $\Psi_1^{\bf{I}}$ also maps smoothly into the space $\hat{H}^0(\Omega)$. This fact combined with~\eqref{dishwashers are so convenient} proves that $\Psi^{\bf{I}}_1$ is ${_{\m{s}}}T^\infty_{1,1+\tfloor{n/2}}$ with respect to the Banach scales stated in the hypotheses.
		
    We next handle the $\Psi_1^{\bf{II}}$ piece.  By the norm in the anisotropic Sobolev spaces given in~\eqref{the norm on the anisotropic Sobolev spaces}, we readily see that the embedding $W^{0,\infty}((0,b);\pd_1\mathcal{H}^0_{(r_{n-1})}(\Sigma))\emb H^0(\Omega)$ holds and hence we deduce from conclusion~\eqref{mapping properties of the second piece in teh decomposition} of Lemma~\ref{lem on fundamental decomposition of continuity equation-like vector fields} that $\Psi_1^{\bf{II}}\in{_{\m{s}}}T^\infty_{0,0}(\bm{\mathcal{H}}_{(1)}(\Sigma);\bf{H}(\Omega))$.
  
  By inspection of the anisotropic norm from~\eqref{the norm on the anisotropic Sobolev spaces} again, we also deduce the embedding
  \begin{equation}
      W^{0,\infty}((0,b);\pd_1\mathcal{H}^0_{(r_{n-1})}(\Sigma))\emb W^{0,\infty}((0,b);\dot{H}^{-1}(\Sigma)),
  \end{equation}
  and hence the map
		\begin{equation}
			\mathcal{H}^0_{(1)}(\Sigma) \ni \zeta 
			    \mapsto 
			\int_0^b\Psi_1^{\bf{II}}(\zeta)(\cdot,y)\;\m{d}y
			\in \dot{H}^{-1}(\Sigma)
		\end{equation}
		is well-defined and smooth. These facts merge to show that $\Psi_1^{\bf{II}}$ is ${_{\m{s}}}T^{\infty}_{0,0}$ with respect to the Banach scales $\bm{\mathcal{H}}_{(1)}(\Sigma)$ and $\tcb{\hat{H}^s(\Omega)}_{s\in\N}$. We synthesize our results for $\Psi_1^{\bf{I}}$ and $\Psi_1^{\bf{II}}$ to complete the proof of the second item.

  We now deduce the bounds stated in the first item. By unpacking the meaning of $\rho_{\m{cont}}=\rho_{I}$ from Lemma~\ref{lem on taylor expansion trick}, for $I=(H_{\m{min}},H_{\m{max}})$,  we find that for $(q,u,\eta)\in B_{\X^{1+\tfloor{n/2}}}(0,\rho_{\m{cont}})$ we have the inclusion
  \begin{equation}
      (-\mathfrak{g}\m{id}_{\R^n}\cdot e_n + q+\mathfrak{g}(I-\mathcal{E})\eta)(\Omega)\subseteq(h_\star,h^\star)\Subset (H_{\m{min}},H_{\m{max}}),
  \end{equation}
  for some universal $h_\star,h^\star\in(H_{\m{min}},H_{\m{max}})$.
  Hence, by applying the inverse enthalpy, we find that $H^{-1}(h_\star)\le\sig_{q,\eta}\le H^{-1}(h^\star)$, which is the first item  with $c=H^{-1}(h_\star),C=H^{-1}(h^\star)\in\R^+$.
	\end{proof}

	Next we examine the $\Psi_2$ piece of the momentum equation, which we recall is defined in~\eqref{definition of the nonlinear map Psi2}.
	
\begin{prop}[Smooth tameness of the momentum equation, 1]\label{prop on smooth tameness of the momentum equation 1}
		There exits a $\rho_{\m{mome}}\in(0,\rho_{\m{cont}}]$ such that the following  hold.
  \begin{enumerate}
        \item For $(q,u,\eta)\in B_{\X^{1+\tfloor{n/2}}}(0,\rho_{\m{mome}})$, we have the bounds $1/2\le J_\eta\le 3/2$, where $J_\eta$ is defined in~\eqref{geometry_and_jacobian_def}.
      \item $\Psi_2\in{_{\m{s}}}T^\infty_{0,1+\tfloor{n/2}}(B_{\X^{1+\tfloor{n/2}}}(0,\rho_{\m{mome}})\times\R^+,\pmb{\X}\times\R;\bf{H}(\Omega;\R^n))$.
  \end{enumerate}
	\end{prop}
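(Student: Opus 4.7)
The plan is to prove the two items in order, with item 1 furnishing the uniform lower bound on $\det M_\eta=J_\eta$ that item 2 will need in order to invert $M_\eta$ and treat its inverse as a smoothly tame nonlinearity.

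For item 1, I would start from the identity $J_\eta-1=\pd_n\mathcal{E}\eta$ and use the Poisson-extension mapping properties from Lemma~\ref{lem on mapping properties of the Poisson extension operator variants} together with the supercritical Sobolev embedding $H^{2+\tfloor{n/2}}(\Omega)\hookrightarrow L^\infty(\Omega)$ to obtain $\tnorm{\pd_n\mathcal{E}\eta}_{L^\infty}\lesssim\tnorm{\eta}_{\mathcal{H}^{5/2+1+\tfloor{n/2}}}$.  Choosing $\rho_{\m{mome}}\in(0,\rho_{\m{cont}}]$ small enough forces $|\pd_n\mathcal{E}\eta|\le 1/2$ pointwise, whence $1/2\le J_\eta\le 3/2$.

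For item 2, I would decompose $\Psi_2$ into a finite collection of tame-smooth atoms and reassemble them by the calculus of Section~\ref{Michigan}.  The atoms are:  (a) the density $\sig_{q,\eta}=\mathcal{N}_{H^{-1}}(q,\eta)$ and the viscosities $\upmu\circ\sig_{q,\eta}$, $\uplambda\circ\sig_{q,\eta}$, all handled by Lemma~\ref{lem on taylor expansion trick} applied to $\phi\in\tcb{H^{-1},\upmu\circ H^{-1},\uplambda\circ H^{-1}}$;  (b) the matrices $M_\eta, M_\eta^{\m{t}}$, which are affine in the first derivatives of $\mathcal{E}\eta$ and hence tame-smooth via the mapping properties of $\mathcal{E}$ together with Lemmas~\ref{lemma on tameness of products 1} and~\ref{lem on smooth tameness of products 2};  (c) the inverses $M_\eta^{-1}, M_\eta^{-\m{t}}$, which by Cramer's rule equal $J_\eta^{-1}$ times polynomials in the entries of $M_\eta$, with $J_\eta^{-1}$ obtained by composing the scalar map $t\mapsto 1/(1+t)$ on $(-1/2,\infty)$ with $\pd_n\mathcal{E}\eta$ via Lemma~\ref{lem on smooth tameness of superposition} (this is the step that actually uses item 1);  (d) the geometry matrix $\mathcal{A}_\eta=M_\eta^{\m{t}}/J_\eta$, treated analogously;  and  (e) the wave speed $\gam\in\R^+$, which sits in a trivial one-point Banach scale by Example~\ref{example of a fixed Banach space} and enters polynomially.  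Each of the three summands in \eqref{definition of the nonlinear map Psi2} is then a product of these atoms with $\grad q$, $\grad u$, or $\grad^2 u$, and the regularity count matches exactly: $q\in H^{1+s}$ and $u\in H^{2+s}$ produce $\grad q,\grad^2 u\in H^s$, while $\mathcal{E}\eta\in H^{3+s}$ supplies the extra derivative needed whenever a factor containing $M_\eta$ is itself differentiated.  Assembly by Lemmas~\ref{lem on composition of tame maps} and~\ref{lem on product of smooth tame maps} then yields $\Psi_2\in{_{\m{s}}}T^\infty_{0,1+\tfloor{n/2}}$ with no derivative loss.

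The main obstacle, and the reason the two items must be proved in the stated order with $\rho_{\m{mome}}$ strictly smaller than $\rho_{\m{cont}}$, is the treatment of $M_\eta^{-1}$: one cannot invoke Lemma~\ref{lem on smooth tameness of superposition} for the reciprocal $1/J_\eta$ without first confining $\pd_n\mathcal{E}\eta$ to a relatively compact subinterval of $(-1,\infty)$, and this is precisely the content of item 1.  Apart from this point, the analysis is markedly simpler than in Proposition~\ref{prop on smooth tameness of the continuity equation}: the anisotropic structure in $\eta$ enters only through $\mathcal{E}\eta$ and through the superposition $\mathcal{N}_\phi$, so the output of $\Psi_2$ lies in an ordinary Sobolev space and no auxiliary integral condition of $\dot H^{-1}$ type needs to be verified.
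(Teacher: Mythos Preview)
Your overall strategy matches the paper's: choose $\rho_{\m{mome}}$ to force $|\pd_n\mathcal{E}\eta|\le 1/2$ in $L^\infty$, then decompose $\Psi_2$ into atoms and reassemble via the tame calculus.  The paper organizes the second item as $\Psi_2=\gam^2\Psi_2^{\bf{I}}+\Psi_2^{\bf{II}}+\gam\Psi_2^{\bf{III}}$ and treats each piece separately, but the ingredients are essentially the ones you list.

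There is, however, a genuine gap in step (c).  You propose to obtain $J_\eta^{-1}$ by composing $t\mapsto 1/(1+t)$ with $\pd_n\mathcal{E}\eta$ via Lemma~\ref{lem on smooth tameness of superposition}.  That lemma requires the argument of the outer function to lie in a standard Sobolev space $H^{2+\tfloor{n/2}}(\Omega)$, but $\pd_n\mathcal{E}\eta$ does not: by~\eqref{North Dakota} one has $\pd_n\mathcal{E}\eta = b^{-1}\Uppi^1_{\m{L}}\eta + \pd_n\mathcal{E}_0\Uppi^1_{\m{H}}\eta$, and for $n\ge 3$ the low-frequency piece $\Uppi^1_{\m{L}}\eta$ lies in $W^{\infty,\infty}(\Omega)$ but generically not in $L^2(\Omega)$ (cf.\ Remark~\ref{remark_about_Lp_inclusion_for_anisos}).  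Consequently $J_\eta^{-1}$ is not an element of any $H^s(\Omega)$, and Lemma~\ref{lem on smooth tameness of superposition} does not apply.  The same obstruction hits $M_\eta^{-1}$ in your Cramer-rule decomposition.  The correct tool is Lemma~\ref{lem on smooth tameness of superposition multipliers}, which is designed exactly for the $(g,\psi)\in W^{k,\infty}\times H^k$ splitting and delivers the \emph{multiplier} $(g,\psi,\varphi)\mapsto f(g+\psi)\varphi$ as a smoothly tame map; this is what the paper invokes for all of $M_\eta^{\pm1}$, $M_\eta^{\pm\m{t}}$, and $J_\eta^{-1}$.  Once you replace Lemma~\ref{lem on smooth tameness of superposition} by Lemma~\ref{lem on smooth tameness of superposition multipliers} in step (c), your argument goes through.
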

	\begin{proof}
		We begin by selecting $\rho_{\m{mome}}$. The map
		\begin{equation}
			\mathcal{H}^{3/2+\tfloor{n/2}}(\Sigma)\ni\eta\mapsto J_\eta-1=\pd_n\mathcal{E}\eta\in L^\infty(\Omega)
		\end{equation}
		is bounded thanks to the supercritical Sobolev embedding and the continuity properties of $\mathcal{E}$ from Proposition~\ref{lem on mapping properties of the Poisson extension operator variants}. Denote the preimage of the set $B_{L^\infty}(0,1/2)$ under this map by $O\subseteq\mathcal{H}^{3/2+\tfloor{n/2}}(\Sigma)$. $O$ is open and we set $\tilde{\rho}_{\m{mome}}=\m{dist}(0,\pd O)\in\R^+$ and $\rho_{\m{mome}}=\min\tcb{\tilde{\rho}_{\m{mome}},\rho_{\m{cont}}}$, where the latter is defined in Proposition~\ref{prop on smooth tameness of the continuity equation}.
		
		With this choice of $\rho_{\m{mome}}$ we have that $1/2\le J_\eta\le3/2$, and hence the inverse matrix $M_\eta^{-1}$ exists whenever $\norm{\eta}_{\mathcal{H}^{3/2+\tfloor{n/2}}} < \rho_{\m{mome}}$. Consequently, all of the expressions appearing in $\Psi_2$ are pointwise well-defined. We next decompose
		\begin{equation}\label{Montana}
			\Psi_2(q,u,\eta,\gam)=\gam^2\Psi_2^{\bf{I}}(q,u,\eta)+\Psi_2^{\bf{II}}(q,\eta)+\gam\Psi_2^{\bf{III}}(q,u,\eta),
		\end{equation}
		where
		\begin{equation}\label{Missouri}
			\begin{cases}
				\Psi_2^{\bf{I}}(q,u,\eta)=\sig_{q,\eta}M_{\eta}^{-\m{t}}[(u-M_{\eta}e_1)\cdot\grad(M_{\eta}^{-1}u)],\\
				\Psi_2^{\bf{II}}(q,\eta)=\sig_{q,\eta}\grad(q+\mathfrak{g}\eta),\\
				\Psi_2^{\bf{III}}(q,u,\eta)=-M_{\eta}^{-\m{t}}\grad\cdot(\S^{\sig_{q,\eta}}_{\mathcal{A}_\eta}(M_{\eta}^{-1}u)M_{\eta}^{\m{t}}).
			\end{cases}
		\end{equation}
		The $\gamma$ dependence in~\eqref{Montana} is simple, and it is sufficient to study the pieces of~\eqref{Missouri} individually. For the first piece, we use Lemma~\ref{lem on taylor expansion trick} to write
		\begin{equation}\label{decomposition of the density}
			\sig_{q,\eta}=\varrho+\mathcal{N}_{H^{-1}}^{(1)}(q,\eta)+\mathcal{N}^{(2)}_{H^{-1}}(\Uppi^1_{\m{L}}\eta),
		\end{equation}
		where $H^{-1}$ is the inverse of the enthalpy  (see~\eqref{enthalpy_def}).  The embeddings of the second item of Proposition~\ref{proposition on frequency splitting} allow us to  view the latter map as $\mathcal{N}^{(2)}_{H^{-1}}\in{_{\m{s}}}T^{\infty}_{0,0}(\bm{\mathcal{H}}_{(1)}(\Sigma);\bf{W}(\Omega))$.  Thanks to the smooth tameness of superposition multipliers proved in Proposition~\ref{lem on smooth tameness of superposition multipliers}, the map $(u,\eta)\mapsto \grad(M_\eta^{-1}u)$ is ${_{\m{s}}}T_{0,1+\tfloor{n/2}}^\infty$ with respect to the Banach scales $\tcb{H^{2+s}(\Omega;\R^n)\times\mathcal{H}^{5/2+s}(\Sigma)}_{s\in\N}$ and $\tcb{H^s(\Omega;\R^n)}_{s\in\N}$. By applying the product of tame maps result from Lemma~\ref{lem on smooth tameness of products 2} and the composition of smooth tame maps result from  Lemma~\ref{lem on composition of tame maps}, we then see that the map $(u,\eta)\mapsto(u-M_{\eta}e_1)\cdot\grad(M_{\eta}^{-1}u)$ is also ${_{\m{s}}}T_{0,1+\tfloor{n/2}}^\infty$ with respect to the aforementioned Banach scales. By invoking Lemmas~\ref{lem on smooth tameness of superposition multipliers} and~\ref{lem on composition of tame maps} again, we get that the same conclusion is true for the map $(u,\eta)\mapsto M^{-\m{t}}_\eta[(u-M_{\eta}e_1)\cdot\grad(M_{\eta}^{-1}u)]$. Finally, by using Lemmas~\ref{lem on taylor expansion trick}, \ref{lem on smooth tameness of products 2}, and~\ref{lem on composition of tame maps}, we deduce that $(q,u,\eta)\mapsto(\varrho+\mathcal{N}_{H^{-1}}^{(1)}(q,\eta)+\mathcal{N}^{(2)}_{H^{-1}}(\Uppi^1_{\m{L}}\eta))M^{-\m{t}}_\eta[(u- M_{\eta}e_1)\cdot\grad(M_{\eta}^{-1}u)]$, i.e. $\Psi_2^{\bf{I}}$, is ${_{\m{s}}}T_{0,1+\tfloor{n/2}}^\infty$ with respect to the Banach scales $\tcb{\X_s}_{s\in\N}$ and $\tcb{H^s(\Omega;\R^n)}_{s\in\N}$.
		
		Now we examine the $\Psi_2^{\bf{II}}$ piece.  We again employ the density decomposition in~\eqref{decomposition of the density}.  Then we see that $\Psi_2^{\bf{II}}$ is ${_{\m{s}}}T^\infty_{0,1+\tfloor{n/2}}$ for the Banach scales $\tcb{H^{1+s}(\Omega)\times\mathcal{H}^{5/2+s}(\Sigma)}_{s\in\N}$ and $\tcb{H^s(\Omega;\R^n)}_{s\in\N}$ as a consequence of Lemmas~\ref{lem on taylor expansion trick} and~\ref{lem on smooth tameness of products 2}.
		
		Finally, we examine the $\Psi_2^{\bf{III}}$ piece,  decomposing further:
		\begin{equation}
			\Psi_2^{\bf{III}}(q,u,\eta)=\Psi_2^{\bf{III_1}}(q,u,\eta)+\Psi_2^{\bf{III_2}}(q,u,\eta)+\Psi_2^{\bf{III_3}}(q,u,\eta)+\Psi_2^{\bf{III_4}}(q,u,\eta),
		\end{equation}
		where
		\begin{equation}
			\begin{cases}
				\Psi_2^{\bf{III_1}}(q,u,\eta)=-M_{\eta}^{-\m{t}}\grad\cdot\sp{\upmu(\sig_{q,\eta})J_\eta^{-1}\grad(M_\eta^{-1}u)M_\eta M_{\eta}^{\m{t}}},\\
				\Psi_2^{\bf{III_2}}(q,u,\eta)=-M_{\eta}^{-\m{t}}\grad\cdot(\upmu(\sig_{q,\eta})J_{\eta}^{-1}M_\eta^{\m{t}}\grad(M_{\eta}^{-1}u)^{\m{t}}M_{\eta}^{\m{t}}),\\
				\Psi_2^{\bf{III_3}}(q,u,\eta)=2M_{\eta}^{-\m{t}}\grad\cdot\tp{\upmu(\sig_{q,\eta})J_{\eta}^{-1}(\grad\cdot u)M_{\eta}^{\m{t}}},\\
				\Psi_2^{\bf{III}_4}(q,u,\eta)=-M_{\eta}^{-\m{t}}\grad\cdot(\uplambda(\sig_{q,\eta})J_\eta^{-1}(\grad\cdot u)M_{\eta}^{\m{t}}).
			\end{cases}
		\end{equation}
		We handle these four terms in more or less the same way. The $\upmu$ and $\uplambda$ viscosity coefficients are decomposed as
		\begin{equation}
			\begin{cases}
				\upmu(\sig_{q,\eta})=\upmu(\varrho)+\mathcal{N}^{(1)}_{\upmu\circ H^{-1}}(q,\eta)+\mathcal{N}^{(2)}_{\upmu\circ H^{-1}}(\Uppi^1_{\m{L}}\eta),\\
				\uplambda(\sig_{q,\eta})=\uplambda(\varrho)+\mathcal{N}^{(1)}_{\uplambda\circ H^{-1}}(q,\eta)+\mathcal{N}^{(2)}_{\uplambda\circ H^{-1}}(\Uppi^1_{\m{L}}\eta),
			\end{cases}
		\end{equation}
		via Lemma~\ref{lem on taylor expansion trick}, while the $M_\eta$, $M_{\eta}^{-1}$, $M_{\eta}^{\m{t}}$, $M_{\eta}^{-\m{t}}$, and $J_\eta^{-1}$ terms are viewed as superposition multipliers and thus are handled via Lemma~\ref{lem on smooth tameness of superposition multipliers}. Hence, the fact that each $\Psi_2^{\bf{III}_j}$, $j\in\tcb{1,\dots,4}$, is ${_{\m{s}}}T^\infty_{0,1+\tfloor{n/2}}$ with respect to the Banach scales $\tcb{\X_s}_{s\in\N}$ and $\tcb{H^s(\Omega;\R^n)}_{s\in\N}$ is a consequence of the aforementioned lemmas, the second result on products, Lemma~\ref{lem on smooth tameness of products 2}, and the result on composition of smooth tame maps, Lemma~\ref{lem on composition of tame maps}. 
	\end{proof}
	
	Now we examine the $\Psi_3$ piece of the dynamic boundary condition, which we recall is defined in~\eqref{definition of the nonlinear map Psi3}.

	\begin{prop}[Smooth tameness of the dynamic boundary condition, 1]\label{prop on smooth tameness of the dynamic boundary condition 1}
		With $\rho_{\m{dyna}}=\rho_{\m{mome}}\in\R^+$ as in Proposition~\ref{prop on smooth tameness of the momentum equation 1}, we have
		\begin{equation}
			\Psi_3\in{_{\m{s}}}T^\infty_{0,1+\tfloor{n/2}}(B_{\X^{1+\tfloor{n/2}}}(0,\rho_{\m{dyna}})\times\R^+,\pmb{\X}\times\R;\tcb{H^{1/2+s}(\Sigma;\R^n)}_{s\in\N}).
		\end{equation}
	\end{prop}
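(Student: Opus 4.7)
The plan is to mirror the strategy of Propositions~\ref{prop on smooth tameness of the continuity equation} and~\ref{prop on smooth tameness of the momentum equation 1}: decompose $\Psi_3$ into atomic pieces and verify tame smoothness of each via the lemmas of Section~\ref{only place he's ever been}. Setting $\rho_{\m{dyna}}=\rho_{\m{mome}}$ secures the bounds $1/2\le J_\eta\le 3/2$ and $c\le\sig_{q,\eta}\le C$, so the matrices $M_\eta,M_\eta^{-1},M_\eta^{\m{t}},M_\eta^{-\m{t}}$ and the compositions $\upmu(\sig_{q,\eta}),\uplambda(\sig_{q,\eta}),(P-P_{\m{ext}})(\sig_{q,\eta})$ are all pointwise well-defined and smooth. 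I would write
\begin{equation}
\Psi_3(q,u,\eta,\gam)=\Psi_3^{\bf{I}}(q,\eta)+\gam\Psi_3^{\bf{II}}(q,u,\eta)+\Psi_3^{\bf{III}}(\eta),
\end{equation}
where the summands are the pressure, viscous, and surface-tension pieces
$\Psi_3^{\bf{I}}=-(P-P_{\m{ext}})(\sig_{q,\eta})M_\eta^{\m{t}}e_n$, $\Psi_3^{\bf{II}}=\S^{\sig_{q,\eta}}_{\mathcal{A}_\eta}(M_\eta^{-1}u)M_\eta^{\m{t}}e_n$, and $\Psi_3^{\bf{III}}=-\varsigma\mathscr{H}(\eta)M_\eta^{\m{t}}e_n$. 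For each piece I would first establish tame smoothness into the scale $\tcb{H^{1+s}(\Omega;\R^n)}_{s\in\N}$ and then compose with the bounded linear trace $\m{Tr}_\Sigma:H^{1+s}(\Omega)\to H^{1/2+s}(\Sigma)$ via Lemma~\ref{lem on composition of tame maps}, landing in $\tcb{H^{1/2+s}(\Sigma;\R^n)}_{s\in\N}$ with no additional derivative loss. The trivial $\gam$-dependence poses no difficulty.

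For $\Psi_3^{\bf{I}}$, I would recognize $(P-P_{\m{ext}})(\sig_{q,\eta})=\mathcal{N}_{(P-P_{\m{ext}})\circ H^{-1}}(q,\eta)$ and apply the Taylor expansion trick (Lemma~\ref{lem on taylor expansion trick}) to decompose this as a constant plus tame contributions into $\bf{H}(\Omega)$ and $\bf{W}\bm{\mathcal{H}}_{(r_{n-1}-1)}$; the matrix factor $M_\eta^{\m{t}}e_n$ is a superposition multiplier handled by Lemma~\ref{lem on smooth tameness of superposition multipliers}, and the assembly is via Lemma~\ref{lem on smooth tameness of products 2}. For $\Psi_3^{\bf{II}}$, the analysis is a direct analog of the treatment of $\Psi_2^{\bf{III}}$ in Proposition~\ref{prop on smooth tameness of the momentum equation 1}: the Taylor trick controls $\upmu(\sig_{q,\eta})$ and $\uplambda(\sig_{q,\eta})$, while $J_\eta^{-1}$ and the $M_\eta$-matrices are treated as superposition multipliers, and everything is combined through Lemmas~\ref{lem on smooth tameness of products 2} and~\ref{lem on composition of tame maps}.

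The main new step, and the anticipated principal obstacle, is the surface-tension term $\Psi_3^{\bf{III}}$, since $\eta$ is only anisotropically regular, $\eta\in\mathcal{H}^{5/2+s}(\Sigma)$. The idea is to write $\mathscr{H}(\eta)=\grad_{\|}\cdot F(\grad_{\|}\eta)$ for the smooth map $F(v)=v(1+|v|^2)^{-1/2}$, which satisfies $F(0)=0$ and $DF\in C^\infty_b$, and then frequency-split $\eta=\Uppi^1_{\m{L}}\eta+\Uppi^1_{\m{H}}\eta$ using Proposition~\ref{proposition on frequency splitting}. The high-frequency piece satisfies $\Uppi^1_{\m{H}}\eta\in H^{5/2+s}(\Sigma)$, so its contribution to $\mathscr{H}$ is handled by the superposition Lemma~\ref{lem on smooth tameness of superposition} together with the product lemmas; the low-frequency piece $\Uppi^1_{\m{L}}\eta$ is smooth and bounded with all derivatives bounded, and the mixed terms can be controlled by a Taylor-expansion argument in the spirit of Lemma~\ref{lem on taylor expansion trick}, writing $F(\grad_{\|}\eta)=F(\grad_{\|}\Uppi^1_{\m{L}}\eta)+\grad_{\|}\Uppi^1_{\m{H}}\eta\cdot\int_0^1 DF(\grad_{\|}\Uppi^1_{\m{L}}\eta+t\grad_{\|}\Uppi^1_{\m{H}}\eta)\,dt$ and invoking Lemma~\ref{lemma on addition of smooth tame maps} for the integral. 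Multiplying by $M_\eta^{\m{t}}e_n|_\Sigma=\mathcal{N}_\eta=(-\grad_{\|}\eta,1)$ is then a tame product on $\Sigma$ via Lemma~\ref{lem on smooth tameness of products 2}, yielding an element of $H^{1/2+s}(\Sigma;\R^n)$. The delicate point throughout is verifying, via the frequency decomposition, that the full result lives in $H^{1/2+s}(\Sigma)$ with no derivative loss despite the weakness of $\mathcal{H}^s$ at low frequencies.
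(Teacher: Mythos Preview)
Your decomposition and general strategy are correct and would work, but you miss two simplifications that make the paper's argument much shorter. For $\Psi_3^{\bf{I}}$, the paper observes that on $\Sigma$ one has $(I-\mathcal{E})\eta=0$ and hence $\sig_{q,\eta}|_\Sigma=H^{-1}(-\mathfrak{g}b+q)$ depends only on $q$, not on $\eta$; thus $(P-P_{\m{ext}})(\sig_{q,\eta})|_\Sigma$ is a pure superposition $f(\m{Tr}_\Sigma q)$ with $f(0)=0$, handled directly by the first item of Lemma~\ref{lem on smooth tameness of superposition} without invoking the Taylor expansion trick. For $\Psi_3^{\bf{III}}$, the anticipated obstacle evaporates once one recalls from the norm characterization~\eqref{the norm on the anisotropic Sobolev spaces} that $\eta\in\mathcal{H}^{5/2+s}(\Sigma)$ implies $\grad_{\|}\eta\in H^{3/2+s}(\Sigma;\R^{n-1})$, a \emph{standard} Sobolev space; writing $\mathscr{H}(\eta)=\grad_{\|}\cdot\tilde{\mathscr{H}}(\grad_{\|}\eta)$ with $\tilde{\mathscr{H}}(v)=\tbr{v}^{-1}v$ then places this squarely in the scope of Lemma~\ref{lem on smooth tameness of superposition}, and no frequency splitting is needed. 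Your route via $\Uppi^1_{\m{L}}/\Uppi^1_{\m{H}}$ and a Taylor expansion of $F$ is valid and would close, but it is heavier than necessary; the paper's argument exploits that on the boundary the anisotropic difficulties simply disappear.
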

	\begin{proof}
		As before, we begin the proof by decomposing
		\begin{equation}
			\Psi_3(q,u,\eta,\gam)=\Psi_3^{\bf{I}}(q,\eta)+\gam\Psi_3^{\bf{II}}(q,u,\eta)+\Psi_3^{\bf{III}}(\eta)
		\end{equation}
		where
		\begin{equation}
			\begin{cases}
				\Psi_3^{\bf{I}}(q,\eta)=-(P\circ\sig_{q,\eta}-P_{\m{ext}})M_{\eta}^{\m{t}}e_n,\\
				\Psi_3^{\bf{II}}(q,u,\eta)=\S^{\sig_{q,\eta}}_{\mathcal{A}_{\eta}}(M_{\eta}^{-1}u)M_{\eta}^{\m{t}}e_n,\\
				\Psi_3^{\bf{III}}(\eta)=-\varsigma\mathscr{H}(\eta)M_{\eta}^{\m{t}}e_n.
			\end{cases}
		\end{equation}
		For the first piece we recall that $H^{-1}$ is the inverse of the enthalpy, as in~\eqref{enthalpy_def}, and use the fact that $\sig_{q,\eta}=H^{-1}(-\mathfrak{g}b+q)$ and $P_{\m{ext}}=P\circ\varrho(b)=P\circ H^{-1}(-\mathfrak{g}b)$ on $\Sigma$ to rewrite
		\begin{equation}
			\Psi_3^{\bf{I}}(q,\eta)=(P\circ H^{-1}(\cdot-\mathfrak{g}b)-P\circ H^{-1}(-\mathfrak{g}b))(q)M_{\eta}^{\m{t}}e_n.
		\end{equation}
		From this, the operator $\Psi_3^{\bf{I}}$ is readily seen to belong to  ${_{\m{s}}}T^\infty_{0,1+\tfloor{n/2}}$ with respect to the Banach scales $\tcb{H^{1+s}(\Omega)\times\mathcal{H}^{5/2+s}(\Sigma)}_{s\in\N}$ and $\tcb{H^{1/2+s}(\Sigma;\R^n)}_{s\in\N}$, as a consequence of the first item of Lemma \ref{lem on smooth tameness of superposition} regarding the tame smoothness of superposition, the result on tame smoothness of superposition multipliers, Lemma~\ref{lem on smooth tameness of superposition multipliers}, and the result on the composition of smooth tame maps, Lemma~\ref{lem on composition of tame maps}.
		
		The fact that $\Psi_3^{\bf{II}}$ is ${_{\m{s}}}T^\infty_{0,1+\tfloor{n/2}}$ for the scales $\tcb{\X_s}_{s\in\N}$ and $\tcb{H^{1/2+s}(\Sigma;\R^n)}_{s\in\N}$ follows from an argument similar to the analysis of the $\Psi_2^{\bf{III}}$-term from the proof of Lemma~\ref{prop on smooth tameness of the momentum equation 1}.
		
		Finally, we handle $\Psi_3^{\bf{III}}$. The mean curvature operator has the expression $\mathscr{H}(\eta)=\grad_{\|}\cdot(\tilde{\mathscr{H}}(\grad_{\|}\eta))$, where the  map $\tilde{\mathscr{H}}\in C^\infty(\R^{n-1};\R^{n-1})$ is given by $\tilde{\mathscr{H}}(v) = \tbr{v}^{-1} v$. 		From this we deduce that $\Psi_3^{\bf{III}}$ is ${_{\m{s}}}T^\infty_{0,1+\tfloor{n/2}}$ by combining the conclusions of Lemmas~\ref{lem on smooth tameness of superposition} (the first item), \ref{lem on smooth tameness of superposition multipliers}, and~\ref{lem on composition of tame maps}.
	\end{proof}
	
	We now synthesize our previous results to deduce the smooth tameness of the principal part  nonlinear operator from~\eqref{Nebraska}.

	\begin{thm}[Smooth tameness of the principal part nonlinear operator]\label{thm on smooth tameness of the principal part nonlinear operator}
		There exists a $\rho_{\m{prin}}\in\R^+$ such that
		\begin{equation}
			\Psi\in{_{\m{s}}}T^\infty_{1,1+\tfloor{n/2}}(B_{\X^{1+\tfloor{n/2}}}(0,\rho_{\m{prin}})\times\R^+,\pmb{\X}\times\R;\pmb{\Y}).
		\end{equation}
	\end{thm}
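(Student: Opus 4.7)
\medskip

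\noindent\textbf{Proof proposal.} The plan is to synthesize Propositions~\ref{prop on smooth tameness of the continuity equation}, \ref{prop on smooth tameness of the momentum equation 1}, and~\ref{prop on smooth tameness of the dynamic boundary condition 1} directly, recasting the three component results as a single statement for the vector-valued map $\Psi=(\Psi_1,\Psi_2,\Psi_3)$. There is no new analytic content to be produced; the work lies entirely in matching the codomain scale $\pmb{\Y}$ to the three component codomain scales and in reconciling the two different orders of derivative loss $\mu\in\{0,1\}$ that appear.

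First, set $\rho_{\m{prin}}=\min\{\rho_{\m{cont}},\rho_{\m{mome}},\rho_{\m{dyna}}\}$, which ensures that each $\Psi_i$ is strongly tamely $C^\infty$ on $B_{\X^{1+\tfloor{n/2}}}(0,\rho_{\m{prin}})\times\R^+$. Next, identify the product structure of the codomain: by the definition of $\Y^s$ in~\eqref{Y^s_def} together with Remark~\ref{remarkable remark is named what else other than shark}, for $s\in\N$ we have
\begin{equation}
    \Y^s=\hat{H}^{1+s}(\Omega)\times H^s(\Omega;\R^n)\times H^{1/2+s}(\Sigma;\R^n),
\end{equation}
equipped with the $\ell^2$-style Hilbert norm. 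In light of Remark~\ref{scale_prod_rmk}, this realizes $\pmb{\Y}$ as the product of the three Banach scales $\{\hat{H}^{1+s}(\Omega)\}_{s\in\N}$, $\bf{H}(\Omega;\R^n)$, and $\{H^{1/2+s}(\Sigma;\R^n)\}_{s\in\N}$ - which are precisely the codomain scales appearing in Propositions~\ref{prop on smooth tameness of the continuity equation}, \ref{prop on smooth tameness of the momentum equation 1}, and~\ref{prop on smooth tameness of the dynamic boundary condition 1}, respectively.

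The next step is to upgrade $\Psi_2$ and $\Psi_3$, which are a priori $0$-tamely $C^\infty$, to be $1$-tamely $C^\infty$. This is immediate from the non-expansive embeddings $F^s\emb F^{s-1}$ inherent in any Banach scale (Definition~\ref{definition of Banach scales}): the tame estimates established in Propositions~\ref{prop on smooth tameness of the momentum equation 1} and~\ref{prop on smooth tameness of the dynamic boundary condition 1} for $\Psi_2$, $\Psi_3$, and all their derivatives satisfy
\begin{equation}
    \tnorm{D^kP(f)[g_1,\dotsc,g_k]}_{F^{s-1}}\le\tnorm{D^kP(f)[g_1,\dotsc,g_k]}_{F^s},
\end{equation}
so each of the defining tame estimates at order $0$ trivially implies the corresponding one at order $1$ with the same base $1+\tfloor{n/2}$; strong tameness is preserved. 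Thus all three components are strongly $1$-tamely $C^\infty$ with base $1+\tfloor{n/2}$.

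Finally, combine the components. For any bounded $\X^{1+\tfloor{n/2}}\times\R$-open subset $V\subseteq B_{\X^{1+\tfloor{n/2}}}(0,\rho_{\m{prin}})\times\R^+$, adding the squared componentwise tame estimates and using $\sqrt{a^2+b^2+c^2}\le a+b+c$ yields the single tame estimate on $V$ for $\Psi$ and each of its derivatives with respect to the product norm on $\pmb{\Y}$. Componentwise continuity of $\Psi_i:U\cap E^s\to F^{s-1}$ and componentwise $C^\infty$-regularity both lift to the product; the $\R^+$-factor for $\gamma$ is handled by the trivial scale $\{\R\}_{s\in\N}$ of Example~\ref{example of a fixed Banach space}, noting that the polynomial $\gamma$-dependence in $\Psi_2$ and $\Psi_3$ (degrees $\le 2$) is manifestly smooth and contributes only bounded factors to the tame estimates. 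This establishes $\Psi\in{_{\m{s}}}T^\infty_{1,1+\tfloor{n/2}}(B_{\X^{1+\tfloor{n/2}}}(0,\rho_{\m{prin}})\times\R^+,\pmb{\X}\times\R;\pmb{\Y})$. There is no serious obstacle in this proof - the content is essentially bookkeeping on top of the three hard analytical propositions - though one must take care that $\Psi_1$'s order-one loss, which originates from the $\dot{H}^{-1}$ integrability condition encoded in $\hat{H}^{1+s}$, is preserved under the embedding into $\pmb{\Y}$.
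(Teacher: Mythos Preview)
Your proposal is correct and follows exactly the same approach as the paper's proof, which simply sets $\rho_{\m{prin}}=\min\{\rho_{\m{cont}},\rho_{\m{mome}},\rho_{\m{dyna}}\}$ and invokes the three component propositions. You have merely spelled out the bookkeeping (matching the product codomain $\pmb{\Y}$, upgrading the order-$0$ tameness of $\Psi_2,\Psi_3$ to order $1$ via the non-expansive inclusions, and handling the trivial $\gamma$-dependence of $\Psi_1$) that the paper leaves implicit.
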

	\begin{proof}
		We set $\rho_{\m{prin}}=\min\tcb{\rho_{\m{cont}},\rho_{\m{mome}},\rho_{\m{dyna}}}$ and apply Propositions~\ref{prop on smooth tameness of the continuity equation}, \ref{prop on smooth tameness of the momentum equation 1}, and~\ref{prop on smooth tameness of the dynamic boundary condition 1}.
	\end{proof}

	The remainder of this subsection is devoted to the study of the auxiliary piece $\Phi$ of the nonlinear operator $\Bar{\Psi}$. The next result handles the $\Phi_2$ piece, which we recall is defined in~\eqref{definition of the nonlinear map Phi2}.
	
	\begin{prop}[Smooth tameness of the momentum equation, 2]\label{prop on smooth tameness of the momentum equation 2}
		There exists a $\rho_{\m{bulk}}\in\R^+$ such that for every $m\in\N$,
		\begin{equation}
			\Phi_2\in{_{\m{s}}}T^m_{0,2+\tfloor{n/2}}(B_{\X^{2+\tfloor{n/2}}}(0,\rho_{\m{bulk}})\times\W_{m+2+\tfloor{n/2}},\tcb{\X^s\times\W_{m+s}}_{s\in\N};\bf{H}(\Omega;\R^n)).
		\end{equation}
	\end{prop}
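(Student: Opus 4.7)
The strategy is to decompose $\Phi_2$ as a sum of products of tame atomic nonlinearities and then to assemble the pieces using the composition Lemma~\ref{lem on composition of tame maps} and the product Lemma~\ref{lem on smooth tameness of products 2}. Writing
\begin{equation}
    \Phi_2(q,\eta,\mathcal{G},\mathcal{F})=-J_\eta M_\eta^{-\m{t}}\sig_{q,\eta}\,\mathcal{G}\circ\mathfrak{F}_\eta-J_\eta M_\eta^{-\m{t}}\,\mathcal{F}\circ\mathfrak{F}_\eta,
\end{equation}
we identify the factor-type nonlinearities to analyze: the smooth superposition functions of $\grad\mathcal{E}\eta$, namely $J_\eta=1+\pd_n\mathcal{E}\eta$ and the entries of $M_\eta^{-\m{t}}$; the enthalpy-type factor $\sig_{q,\eta}$; and the compositions $\mathcal{G}\circ\mathfrak{F}_\eta$ and $\mathcal{F}\circ\mathfrak{F}_\eta$ with the flattening map.

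The first two classes are immediate from tools already in hand. The superposition functions of $\grad\mathcal{E}\eta$ are tame in $\eta$ by combining the first item of Lemma~\ref{lem on smooth tameness of superposition} with the mapping properties of $\mathcal{E}$ from Lemma~\ref{lem on mapping properties of the Poisson extension operator variants}. The enthalpy-type factor $\sig_{q,\eta}$ is the subject of the Taylor expansion trick, Lemma~\ref{lem on taylor expansion trick}, which gives the decomposition $\sig_{q,\eta}=\varrho+\mathcal{N}^{(1)}_{H^{-1}}(q,\eta)+\mathcal{N}^{(2)}_{H^{-1}}(\Uppi^1_{\m{L}}\eta)$ whose summands are tame Sobolev- and $W^{\infty,\infty}$-valued multipliers, respectively. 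The delicate piece is the composition $\mathcal{G}\mapsto\mathcal{G}\circ\mathfrak{F}_\eta$. To handle it, we invoke the second item of Lemma~\ref{lem on smooth tameness of superposition} with $\Phi=\m{id}_{\R^n}$. That lemma requires the perturbation of the identity to be defined on all of $\R^n$, whereas $\mathcal{E}\eta\,e_n$ is a priori defined only on $\Omega$, so we extend $\mathcal{E}\eta$ to $\R^n$ by splitting it via~\eqref{North Dakota}: the low-frequency piece $yb^{-1}\Uppi^1_{\m{L}}\eta(x)$ extends verbatim to $\R^n$ as a smooth function whose $W^{k,\infty}$ norms are controlled by $\tnorm{\Uppi^1_{\m{L}}\eta}_{L^2}$ (via Paley--Wiener), while the high-frequency piece $\mathcal{E}_0\Uppi^1_{\m{H}}\eta$ is extended from $\Omega$ to $\R^n$ by a Stein extension operator (Example~\ref{example on Sobolev spaces on domains}). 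This produces a linear, bounded, tame decomposition of the extension of $\mathcal{E}\eta\,e_n$ into $g_\eta+h_\eta\in W^{2+\tfloor{n/2},\infty}(\R^n;\R^n)\times H^{2+\tfloor{n/2}}(\R^n;\R^n)$ which, after shrinking $\rho_{\m{bulk}}$ appropriately, lies in the open set $U_{\m{id}}$ provided by Lemma~\ref{lem on smooth tameness of superposition}. Composing, applying the lemma, and then restricting from $\R^n$ back to $\Omega$ (a bounded linear operation on the $H^s$ scales) produces the desired $C^m$ tame structure on $(\eta,\mathcal{G})\mapsto\mathcal{G}\circ\mathfrak{F}_\eta$, with the $m$ extra derivatives on $\mathcal{G}$ accounting for the $C^m$-regularity in the flattening arguments. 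The composition $\mathcal{F}\circ\mathfrak{F}_\eta$ is handled identically.

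With these tame building blocks in hand, the full $\Phi_2$ is assembled by repeated application of Lemma~\ref{lem on smooth tameness of products 2} (for products of $L^\infty$- and $H^s$-valued tame maps) and Lemma~\ref{lem on composition of tame maps}. The constant $\rho_{\m{bulk}}$ is taken as the minimum of: $\rho_{\m{cont}}$ (so the enthalpy factor is controlled), a bound keeping $J_\eta\ge 1/2$ pointwise so that $M_\eta^{-\m{t}}$ exists, and a bound guaranteeing that the extended decomposition $(g_\eta,h_\eta)$ lies in $U_{\m{id}}$. I expect the principal obstacle to lie in the third step: verifying that the ``extend--compose--restrict'' construction for $\mathcal{G}\circ\mathfrak{F}_\eta$ delivers the tame estimates at base regularity $2+\tfloor{n/2}$ without loss, and, in particular, that the frequency-split extension $(g_\eta,h_\eta)$ can be constructed so that each piece is tame of the right order as a function of $\eta\in\mathcal{H}^{5/2+s}(\Sigma)$ and has small $W^{2+\tfloor{n/2},\infty}\times H^{2+\tfloor{n/2}}$ norm whenever $\eta$ is small in $\X^{2+\tfloor{n/2}}$.
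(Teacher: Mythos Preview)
Your proposal is correct and follows essentially the same route as the paper's proof. The paper likewise reduces to the composition atom $\Lambda(\mathcal{I},\eta)=\mathcal{I}\circ\mathfrak{F}_\eta$, rewrites it as $\mathfrak{R}_\Omega\mathcal{I}(\m{id}_{\R^n}+\mathfrak{E}_\Omega\mathcal{E}(\Uppi^1_{\m{L}}\eta+\Uppi^1_{\m{H}}\eta))$, and invokes the second item of Lemma~\ref{lem on smooth tameness of superposition} (i.e.\ Theorem~\ref{comp_C2}) with $\Phi=\m{id}_{\R^n}$; the only cosmetic difference is that the paper Stein-extends both the low- and high-frequency pieces of $\mathcal{E}\eta$ via $\mathfrak{E}_\Omega$, whereas you extend the low-frequency piece $yb^{-1}\Uppi^1_{\m{L}}\eta$ verbatim, which works equally well. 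One small correction: for the multiplier factors $J_\eta$ and $M_\eta^{-\m{t}}$ you should cite Lemma~\ref{lem on smooth tameness of superposition multipliers} rather than the first item of Lemma~\ref{lem on smooth tameness of superposition}, since $J_\eta$ does not vanish at $\eta=0$ and the matrix inverse is most naturally viewed as a superposition multiplier.
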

	\begin{proof}
		Thanks to the smooth tameness of superposition multipliers, Lemma~\ref{lem on smooth tameness of superposition multipliers}, the decomposition of $\sig_{q,\eta}=\mathcal{N}^{(1)}_{H^{-1}}(q,\eta)+\mathcal{N}^{(2)}_{H^{-1}}(\Uppi^1_{\m{L}}\eta)+\varrho$ from Lemma~\ref{lem on taylor expansion trick}, and the result on composition of smooth tame maps, Lemma~\ref{lem on composition of tame maps}, we see that it is sufficient to prove that the nonlinear operator
		\begin{equation}\label{eqref mappity dappity}
			\Lambda:H^{m+2+\tfloor{n/2}}(\R^n;\R^n)\times B_{\mathcal{H}^{9/2+\tfloor{n/2}}(\Sigma)}(0,\rho)\to H^0(\Omega;\R^n) 
			  \text{ given by }
			\Lambda(\mathcal{I},\eta)=\mathcal{I}\circ\mathfrak{F}_\eta
		\end{equation}
		is, for some $\rho>0$, well-defined and ${_{\m{s}}}T^m_{0,2+\tfloor{n/2}}$ for the scales $\tcb{H^{m+s}(\R^n;\R^n)\times\mathcal{H}^{5/2+s}(\Sigma)}_{s\in\N}$ and $\tcb{H^s(\Omega;\R^n)}$. For this we let $\mathfrak{E}_\Omega$ and $\mathfrak{R}_\Omega$ denote the Stein extension (see Definition~\ref{defn Stein-extension operator}) and the restriction operators for $\Omega$, respectively, and note that we have the equivalent formula
		\begin{equation}
			\Lambda(\mathcal{I},\eta)=\mathfrak{R}_\Omega\mathcal{I}(\m{id}_{\R^n}+\mathfrak{E}_\Omega\mathcal{E}(\Uppi^1_{\m{L}}\eta+\Uppi^1_{\m{H}}\eta)).
		\end{equation}
		Hence, according to Lemma~\ref{comp_C2} and properties of the maps $\mathcal{E}$ and $\mathfrak{E}_\Omega$ (see Lemma~\ref{lem on mapping properties of the Poisson extension operator variants} and Example~\ref{example on Sobolev spaces on domains}), there exists a $\rho_{\m{comp}}\in\R^+$ (depending only on the dimension and $\Omega$) such that whenever $\rho\le\rho_{\m{comp}}$ the map~\eqref{eqref mappity dappity} is  ${_{\m{s}}}T^m_{0,2+\tfloor{n/2}}$ with respect to the aforementioned Banach scales.
	\end{proof}
	
	The penultimate result of this subsection considers the $\Phi_3$ piece of $\Bar{\Psi}$, which is defined in~\eqref{definition of the nonlinear map Phi3}.
	
	\begin{prop}[Smooth tameness of the dynamic boundary condition, 2]\label{smooth tameness of dynamic boundary condition 2}
		For $\rho_{\m{surf}}=\rho_{\m{bulk}}\in\R^+$, where the latter is from Proposition~\ref{prop on smooth tameness of the momentum equation 2}, we have that for every $m\in\N$,
		\begin{equation}
			\Phi_3\in{_{\m{s}}}T^m_{0,2+\tfloor{n/2}}(B_{\X^{2+\tfloor{n/2}}}(0,\rho_{\m{surf}})\times\W_{m+2+\tfloor{n/2}},\tcb{\X^s\times\W_{m+s}}_{s\in\N};\tcb{H^{1/2+s}(\Sigma;\R^n)}_{s\in\N}).
		\end{equation}
	\end{prop}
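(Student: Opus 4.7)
The plan is to mirror the strategy of Proposition~\ref{prop on smooth tameness of the momentum equation 2} very closely, factoring $\Phi_3(\eta,\mathcal{T}) = -(\mathcal{T}\circ\mathfrak{F}_\eta) M_\eta^{\m{t}} e_n$ into a composition with the flattening map times a multiplier built from $\eta$, treated as bulk objects on $\Omega$, and then applying the trace operator $\m{Tr}_\Sigma$ at the end. I will take $\rho_{\m{surf}} = \rho_{\m{bulk}}$ so that the diffeomorphism hypotheses for $\mathfrak{F}_\eta$ inherited from the preceding proposition are already in force.

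For the composition factor, the map $(\eta,\mathcal{T})\mapsto \mathcal{T}\circ\mathfrak{F}_\eta$ is exactly the map $\Lambda$ analyzed in the proof of Proposition~\ref{prop on smooth tameness of the momentum equation 2}, except that now $\mathcal{T}\in H^{m+1+s}(\R^n;\R^{n\times n})$ carries one extra Sobolev derivative over the vector-field data used there; rerunning the Theorem~\ref{comp_C2} argument with the extension/restriction pair $(\mathfrak{E}_\Omega, \mathfrak{R}_\Omega)$ and the mapping properties of $\mathcal{E}$ (Lemma~\ref{lem on mapping properties of the Poisson extension operator variants}) thus yields smooth tameness with target $\tcb{H^{1+s}(\Omega;\R^{n\times n})}_{s\in\N}$, at base $2+\tfloor{n/2}$ and order $C^m$. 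For the multiplier factor, recall from~\eqref{Mississippi} that $M_\eta^{\m{t}} e_n = (-\mathcal{E}(\grad_{\|}\eta), 1)^{\m{t}}$; since $\eta \mapsto \mathcal{E}(\grad_{\|}\eta)$ is affine linear and, by the frequency splitting of Proposition~\ref{proposition on frequency splitting} together with Lemma~\ref{lem on mapping properties of the Poisson extension operator variants}, maps $\mathcal{H}^{5/2+s}(\Sigma)$ boundedly into $H^{2+s}(\Omega;\R^{n-1})$, Remark~\ref{remark on purely multlinear maps} delivers the inclusion $\eta \mapsto M_\eta^{\m{t}} e_n$ in ${_{\m{s}}}T^\infty_{0,0}$ with target $\tcb{H^{2+s}(\Omega;\R^n)}_{s\in\N}$.

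With those two tame ingredients in hand, I will combine them into the product via the $H \times H$ product estimate Lemma~\ref{lemma on tameness of products 1} (first item) at base $1+\tfloor{n/2}$, which is dominated by the working base $2+\tfloor{n/2}$. This yields $(\mathcal{T}\circ\mathfrak{F}_\eta) M_\eta^{\m{t}} e_n$ in ${_{\m{s}}}T^m_{0,2+\tfloor{n/2}}$ with target $\tcb{H^{1+s}(\Omega;\R^n)}_{s\in\N}$. Finally, since the trace operator $\m{Tr}_\Sigma : H^{1+s}(\Omega;\R^n)\to H^{1/2+s}(\Sigma;\R^n)$ is a bounded linear map between the two scales, it is itself in ${_{\m{s}}}T^\infty_{0,0}$ by Remark~\ref{remark on purely multlinear maps}, and postcomposing via the composition result Lemma~\ref{lem on composition of tame maps} closes the argument.

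The main obstacle, as in Proposition~\ref{prop on smooth tameness of the momentum equation 2}, is the bookkeeping needed to confirm that the target sits in the correct, derivative-loss-free scale $\tcb{H^{1/2+s}(\Sigma;\R^n)}_{s\in\N}$: the single extra derivative present in $\mathcal{T}\in H^{m+1+s}(\R^n)$ (versus the $H^{m+s}$ regularity of the bulk data in Proposition~\ref{prop on smooth tameness of the momentum equation 2}) must be tracked carefully through the composition to land in $H^{1+s}(\Omega;\R^{n\times n})$ rather than $H^s(\Omega;\R^{n\times n})$, and the anisotropic regularity of $\eta$ must be converted via $\mathcal{E}$ to standard bulk Sobolev regularity at least as good as the $\mathcal{T}$-factor so that the product estimate does not cost a derivative. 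Once these are verified, everything else is a direct application of the calculus of tame maps from Section~\ref{Michigan} and the atomic results of Section~\ref{only place he's ever been}.
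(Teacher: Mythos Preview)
Your overall strategy is essentially the same as the paper's and is correct in spirit, but there is a concrete error in your treatment of the multiplier factor. You claim that $\eta \mapsto M_\eta^{\m{t}} e_n$ lands in the scale $\tcb{H^{2+s}(\Omega;\R^n)}_{s\in\N}$, but this is false: from~\eqref{Mississippi} one has $M_\eta^{\m{t}} e_n = (-\mathcal{E}(\grad_{\|}\eta),1)$, and the constant function $1$ is not in $L^2(\Omega)$ since $\Omega$ has infinite measure. Consequently, you cannot invoke the $H\times H$ product estimate of Lemma~\ref{lemma on tameness of products 1} as written, and Remark~\ref{remark on purely multlinear maps} does not apply to this affine map with nonzero constant part.

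The fix is easy and does not change your architecture. Either split $M_\eta^{\m{t}} e_n = e_n + (-\mathcal{E}(\grad_{\|}\eta),0)$ and handle the two resulting terms of the product separately (the first is just $(\mathcal{T}\circ\mathfrak{F}_\eta)e_n$, already covered by your composition analysis; the second uses your $H\times H$ product with the genuinely $H^{2+s}$ factor $-\mathcal{E}(\grad_{\|}\eta)$), or treat $M_\eta^{\m{t}}$ as a superposition multiplier via Lemma~\ref{lem on smooth tameness of superposition multipliers} or Lemma~\ref{lem on smooth tameness of products 2}, exactly as the paper does in Proposition~\ref{prop on smooth tameness of the momentum equation 2}. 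After this correction, the rest of your argument (composition via Theorem~\ref{comp_C2}, then trace, then Lemma~\ref{lem on composition of tame maps}) goes through and matches the paper's proof.
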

	\begin{proof}
		We argue is in the proof of Proposition~\ref{prop on smooth tameness of the momentum equation 2} and reduce to studying the map
		\begin{equation}
			\tilde{\Lambda}:H^{m+2+\tfloor{n/2}}(\R^n;\R^n)\times B_{\mathcal{H}^{9/2+\tfloor{n/2}}(\Sigma)}(0,\rho)\to H^0(\Sigma;\R^n) 
		\end{equation}
		given by $\tilde{\Lambda}(\mathcal{T},\eta)=\m{Tr}_{\Sigma}(\mathcal{T}\circ\mathfrak{F}_\eta)$. We then use the equivalent formula
		\begin{equation}
			\tilde{\Lambda}(\mathcal{T},\eta)=\m{Tr}_{\Sigma}\mathfrak{R}_\Omega\mathcal{T}(\m{id}_{\R^n}+\mathfrak{E}_\Omega\mathcal{E}(\Uppi^1_{\m{L}}\eta+\Uppi^1_{\m{H}}\eta)),
		\end{equation}
		and conclude as before.
	\end{proof}

 \begin{rmk}
     The parameter $m\in\N$ appearing in Propositions~\ref{prop on smooth tameness of the momentum equation 2} and~\ref{smooth tameness of dynamic boundary condition 2} (and subsequently in Theorem~\ref{thm on smooth tameness of the nonlinear operator}) gives the data terms $(\mathcal{T},\mathcal{G},\mathcal{F})$ $m$-extra derivatives to ensure that the composition type nonlinearities of $\Phi_2$ and $\Phi_3$ are $C^m$ into the correct codomain space. One sees that this extra regularity is necessary upon inspection of the derivative formulae~\eqref{comp_C2_01} and~\eqref{comp_C2_02}.
 \end{rmk}
	
	At last, we are ready to deduce the smooth tameness of the nonlinear operator $\Bar{\Psi}$, which we recall is defined in~\eqref{the nonlinear map equation}. In the following statement $\m{WD}$ is an acronym for `well-defined'.
	
	\begin{thm}[Smooth tameness of the nonlinear operator]\label{thm on smooth tameness of the nonlinear operator}
		There exists a $\rho_{\m{WD}}\in\R^+$\index{\textbf{Miscellaneous}!30@$\rho_{\m{WD}}$} such that the following hold.
  \begin{enumerate}
    \item For $\N\ni s\ge 2+\tfloor{n/2}$ and $(q,u,\eta)\in \X^s\cap B_{\X^{2+\tfloor{n/2}}}(0,\rho_{\m{WD}})$ we have that the flattening map $\mathfrak{F}_\eta$ defined in~\eqref{flattening_map_def} is a smooth diffeomorphism from $\Omega$ to $\Omega[\eta]$ that extends to a $C^{s+2-\tfloor{n/2}}$ diffeomorphism from $\Bar{\Omega}$ to  $\Bar{\Omega[\eta]}$.
    
    \item For every $m\in\N$, $\Bar{\Psi}\in{_{\m{s}}}T^m_{1,2+\tfloor{n/2}}(B_{\X^{2+\tfloor{n/2}}}(0,\rho_{\m{WD}})\times\W_{m+1+\tfloor{n/2}}\times\R^+,\bf{O}_m;\bf{P}_m)$, where we have denoted $\bf{O}_m=\tcb{\X^s\times\W_{m-1+s}\times\R}_{s\in\N}$ and $\bf{P}_m=\tcb{\Y^s\times\W_{m+s-1}\times\R}_{s\in\N}$.
  \end{enumerate}
	\end{thm}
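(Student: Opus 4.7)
The plan is to synthesize the tame smoothness results established earlier in this subsection, combined with a direct verification of the diffeomorphism claim. First I would set $\rho_{\m{WD}}=\min\tcb{\rho_{\m{prin}},\rho_{\m{mome}},\rho_{\m{bulk}},\rho_{\m{surf}}}$ so that Proposition~\ref{prop on smooth tameness of the momentum equation 1}, Theorem~\ref{thm on smooth tameness of the principal part nonlinear operator}, Proposition~\ref{prop on smooth tameness of the momentum equation 2}, and Proposition~\ref{smooth tameness of dynamic boundary condition 2} all apply simultaneously on $B_{\X^{2+\tfloor{n/2}}}(0,\rho_{\m{WD}})$; in particular, this gives the uniform bound $1/2\le J_\eta\le 3/2$ on the ball.

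For the diffeomorphism assertion, fix $(q,u,\eta)\in\X^s\cap B_{\X^{2+\tfloor{n/2}}}(0,\rho_{\m{WD}})$ with $s\ge 2+\tfloor{n/2}$. From the explicit form $\mathfrak{F}_\eta(x,y)=(x,y+\mathcal{E}\eta(x,y))$ and the identity $\pd_n(\mathfrak{F}_\eta\cdot e_n)=J_\eta\ge 1/2$, the vertical component is strictly increasing on each fiber $\tcb{x}\times[0,b]$ and maps it surjectively onto $\tcb{x}\times[0,b+\eta(x)]$. Combined with preservation of horizontal coordinates, this yields a global bijection $\mathfrak{F}_\eta:\Omega\to\Omega[\eta]$ whose differential is everywhere invertible. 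Smoothness on $\Omega$ follows from harmonicity of $\mathcal{E}\eta$ in $\Omega$, while $C^{s+2-\tfloor{n/2}}$ regularity up to $\Bar{\Omega}$ is supplied by the mapping properties of $\mathcal{E}$ in Lemma~\ref{lem on mapping properties of the Poisson extension operator variants} together with standard Sobolev embeddings; the inverse is then of matching regularity by the classical inverse function theorem.

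For the tame smoothness assertion I would decompose $\Bar{\Psi}$ as the sum of a principal contribution $(q,u,\eta,\mathcal{T},\mathcal{G},\mathcal{F},\gam)\mapsto(\Psi(q,u,\eta,\gam),\mathcal{T},\mathcal{G},\mathcal{F},\gam)$ and an auxiliary contribution $(q,u,\eta,\mathcal{T},\mathcal{G},\mathcal{F},\gam)\mapsto(\Phi(q,u,\eta,\mathcal{T},\mathcal{G},\mathcal{F}),0,0,0,0)$. For the principal contribution I would invoke Theorem~\ref{thm on smooth tameness of the principal part nonlinear operator}, which gives $\Psi\in{_{\m{s}}}T^\infty_{1,1+\tfloor{n/2}}$, upgrade the base to $2+\tfloor{n/2}$ by monotonicity, and note that $\Psi$ is independent of $(\mathcal{T},\mathcal{G},\mathcal{F})$, while the identity map on $\W_{m-1+s}\times\R$ is strongly tame of order $1$, every $C^k$, and every base, via the non-expansive inclusion $\W_{m-1+s}\hookrightarrow\W_{m+s-2}$ dictated by $\bf{P}_m$. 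For the auxiliary contribution I would apply Propositions~\ref{prop on smooth tameness of the momentum equation 2} and~\ref{smooth tameness of dynamic boundary condition 2} with the differentiability parameter shifted so that the data scale $\tcb{\W_{m-1+s}}_{s\in\N}$ matches, and then absorb the $\mu=1$ derivative loss into the embeddings $H^{s}\hookrightarrow H^{s-1}$ and $H^{1/2+s}\hookrightarrow H^{-1/2+s}$ that compare the output slot of each proposition against the corresponding factor of $\Y^{s-1}$. The sum, composition, and product calculus of Section~\ref{Michigan}, specifically Lemmas~\ref{lemma on addition of smooth tame maps}, \ref{lem on composition of tame maps}, and Corollary~\ref{lem on product of smooth tame maps}, then combines these contributions into the desired inclusion $\Bar{\Psi}\in{_{\m{s}}}T^m_{1,2+\tfloor{n/2}}$ with respect to $\bf{O}_m$ and $\bf{P}_m$.

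The hard part is the careful bookkeeping of the differentiability index, the derivative-loss parameter, and the shifts in the $\W$-slots of $\bf{O}_m$ and $\bf{P}_m$, reconciling the $C^\infty$ but order-$1$ behavior of the principal part with the $C^m$ but order-$0$ behavior of the composition pieces in $\Phi$ within a single Banach scale framework. Once this index matching is handled precisely, all the nonlinear analysis has already been done in the preceding propositions and theorem, and the remaining work is mechanical synthesis through the tame calculus.
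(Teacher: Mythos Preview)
Your proposal is correct and follows essentially the same approach as the paper: set $\rho_{\m{WD}}$ as the minimum of the smallness parameters from the preceding propositions, invoke Theorem~\ref{thm on smooth tameness of the principal part nonlinear operator} and Propositions~\ref{prop on smooth tameness of the momentum equation 2} and~\ref{smooth tameness of dynamic boundary condition 2} for the tame smoothness, and use the Jacobian bound from Proposition~\ref{prop on smooth tameness of the momentum equation 1} together with the inverse function theorem and Sobolev embeddings for the diffeomorphism claim. One minor inaccuracy: $\mathcal{E}\eta$ is not globally harmonic (only the high-frequency piece $\mathcal{E}_0\Uppi^1_{\m{H}}\eta$ is; the low-frequency piece is the linear extension $y\,b^{-1}\Uppi^1_{\m{L}}\eta$), but both pieces are smooth in $\Omega$ so your interior smoothness conclusion is unaffected.
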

	\begin{proof}
		We set $\rho_{\m{WD}}=\min\tcb{\rho_{\m{prin}},\rho_{\m{bulk}},\rho_{\m{surf}}}$ and apply Theorem~\ref{thm on smooth tameness of the principal part nonlinear operator} and Propositions~\ref{prop on smooth tameness of the momentum equation 2} and~\ref{smooth tameness of dynamic boundary condition 2}. This immediately gives the second item. For the first item, we note that Proposition~\ref{prop on smooth tameness of the momentum equation 1} guarantees that $J_\eta>0$ and $J_\eta,1/J_\eta\in L^\infty(\Omega)$, which means that $\mathfrak{F}_\eta$ is a continuous bijection from $\Bar{\Omega}$ to $\Bar{\Omega[\eta]}$.  On the other hand, the identity $\pd_n(\mathfrak{F}_\eta\cdot e_n)=J_\eta=\det(\grad\mathfrak{F}_\eta)$ and the inverse function theorem  guarantee that $\mathfrak{F}_\eta:\Omega\to\Omega[\eta]$ is a smooth diffeomorphism. The regularity of $\mathfrak{F}_\eta:\Bar{\Omega}\to\Bar{\Omega[\eta]}$ and its inverse now follow from Sobolev embeddings and Proposition~\ref{proposition on frequency splitting}.
	\end{proof}
	
	\begin{rmk}\label{I feel hung up and I dont know why, its alright}
		In the case $m=2$, the second item of Theorem~\ref{thm on smooth tameness of the nonlinear operator} is stating that $\Bar{\Psi}\in{_{\m{s}}}T^2_{1,2+\tfloor{n/2}}$ for the Banach scales $\pmb{\E}$ and $\pmb{\F}$, which are introduced in~\eqref{North Carolina}.
	\end{rmk}

	\subsection{Derivative splitting}\label{subsection on derivative splitting and notation for linear analysis}

	We now turn our attention to the study of the derivative of the map $\Bar{\Psi}$ from \eqref{the nonlinear map equation}.  When written in full, it is rather complicated, so our focus now is to identify a principal part and handle the remainder terms. Let
	\begin{equation}
		(q_0,u_0,\eta_0)\in B_{\X^{2+\tfloor{n/2}}}(0,\rho_{\m{WD}}),\quad(\mathcal{T}_0,\mathcal{G}_0,\mathcal{F}_0)\in\W_{3+\tfloor{n/2}},
	\end{equation}
	\begin{equation}
		(q,u,\eta)\in\X^{2+\tfloor{n/2}},\quad(\mathcal{T},\mathcal{G},\mathcal{F})\in\W_{2+\tfloor{n/2}},\quad \gam_0\in\R^+,\;\gam\in\R,
	\end{equation}
	and write the $6-$tuple
	\begin{equation}\label{theta_0_def}
	\theta_0=(q_0,u_0,\eta_0,\mathcal{T}_0,\mathcal{G}_0,\mathcal{F}_0).    
	\end{equation}
	The derivative of the map $\Bar{\Psi}$ has the formula
	\begin{equation}\label{formula for the derivative for like he had no place to go psi}
		D\Bar{\Psi}(\theta_0,\gam_0)(q,u,\eta,\mathcal{T},\mathcal{G},\mathcal{F},\gam)=\sp{D\Psi(q_0,u_0,\eta_0,\gam_0)(q,u,\eta,\gam)+D\Phi(\theta_0)(q,u,\eta,\mathcal{T},\mathcal{G},\mathcal{F}),\mathcal{T},\mathcal{G},\mathcal{F},\gam}.
	\end{equation}
	We will decompose the above into a principal part, $A$, and remainder terms, $P$, $Q$, and $R$ (these symbols will come equipped with various adornments, but for brevity we will often refer to them without these in the main text). The derivative $D\Psi$ is split as follows:
	\begin{equation}\label{A+P_splitting_of_DPsi}
		D\Psi(w_0,\gam_0)(q,u,\eta,\gam)=\overset{w_0,\gam_0}{A}(q,u,\eta)+\overset{w_0,\gam_0}{P}(q,u,\eta,\gam), 
	\end{equation}
	where for brevity we define the triple
	\begin{equation}\label{w_0_def}
	    w_0=(q_0,u_0,\eta_0).	    
	\end{equation}
     The $A$ piece is meant to be as close as possible to $D\Psi(0,0,0,\gam_0)$, but to retain the entirety of the structure responsible for derivative loss in the continuity equation.  To that end, we set
	\begin{equation}\label{definition of the principal part operator}\index{\textbf{Linear maps}!30@$\overset{w_0,\gam_0}{A}$}
		\overset{w_0,\gam_0}{A}(q,u,\eta)=\sp{\overset{w_0}{A^1}(q,u,\eta),\overset{\gam_0}{A^2}(q,u,\eta),\overset{\gam_0}{A^3}(q,u,\eta)},
	\end{equation}
	where
	\begin{equation}\label{connecticut}
		\overset{w_0}{A^1}(q,u,\eta)=\grad\cdot(\varrho u)+\grad\cdot(v_{w_0}(q+\mathfrak{g}\eta)),
	\end{equation}
	\begin{equation}\label{deleware}
		\overset{\gam_0}{A^2}(q,u,\eta)=D\Psi_2(0,0,0,\gam_0)(q,u,\eta,0)=-\gam_0^2\varrho\pd_1u+\varrho\grad(q+\mathfrak{g}\eta)-\gam_0\grad\cdot\S^\varrho u,
	\end{equation}
	and
	\begin{equation}
		\overset{\gam_0}{A^3}(q,u,\eta)=D\Psi_3(0,0,0,\gam_0)(q,u,\eta,0)=-(\varrho q-\gam_0\S^\varrho u)e_n-\varsigma\Delta_{\|}\eta e_n.
	\end{equation}
	Here $v_{w_0} : \Omega \to \R^n$ is the vector field 
	\begin{equation}\label{definition of the vector field vnaught}
		v_{w_0}=\mathcal{M}_{(H^{-1})'}(q_0,u_0,\eta_0)=(H^{-1})'(-\mathfrak{g}\m{id}_{\R^n}\cdot e_n+q_0+\mathfrak{g}(I-\mathcal{E})\eta_0)(u_0-M_{\eta_0} e_1),
	\end{equation}
	where the $\mathcal{M}$ notation is from Lemma~\ref{lem on fundamental decomposition of continuity equation-like vector fields} and $(H^{-1})'$ refers to the derivative of the inverse enthalpy (see equation~\eqref{enthalpy_def}). Note that since $\varrho(y)=H^{-1}(-\mathfrak{g}y)$ for $y\in[0,b]$ we have that
	\begin{equation}
	v_{0}=-(H^{-1})'(-\mathfrak{g}\m{id}_{\R^n}\cdot e_n)e_1=\mathfrak{g}^{-1}\varrho'e_1.	    
	\end{equation}

 The $P$ piece is, of course, the remainder of the derivative of $\Psi$ and has the formula
	\begin{equation}\label{the first bit}\index{\textbf{Linear maps}!31@$\overset{w_0,\gam_0}{P}$}
		\overset{w_0,\gam_0}{P}(q,u,\eta,\gam)=\sp{\overset{w_0}{P^1}(q,u,\eta),\overset{w_0,\gam_0}{P^2}(q,u,\eta,\gam),\overset{w_0,\gam_0}{P^3}(q,u,\eta,\gam)}
	\end{equation}
	where
	\begin{equation}\label{salt and vinegar 1}
		\overset{w_0}{P^1}(q,u,\eta)=\grad\cdot((\sig_{q_0,\eta_0}-\varrho)(u-\dot{M}[\eta]e_1))-\mathfrak{g}\grad\cdot((v_{w_0}-\varrho' e_1/\mathfrak{g})\mathcal{E}\eta)-\grad\cdot(\varrho\dot{M}[\eta]e_1)-\grad\cdot(\varrho'\mathcal{E}\eta e_1),
	\end{equation}
	$\dot{M}[\eta] : \Omega \to \R^{n \times n}$ is the matrix field given by
	\begin{equation}\label{Mdot_def}\index{\textbf{Linear maps}!07@$\dot{M}[\eta]$}
		\dot{M}[\eta]=\bpm \pd_n\mathcal{E}\eta I_{(n-1)\times(n-1)}&0_{(n-1)\times 1}\\-\mathcal{E}(\grad_{\|}\eta)&0\epm,
	\end{equation}
	and
	\begin{equation}
		\overset{w_0,\gam_0}{P^j}(q,u,\eta,\gam)=\tp{D\Psi_j(w_0,\gam_0)-D\Psi_j(0,0,0,\gam_0)}(q,u,\eta,0)+D\Psi_j(w_0,\gam_0)(0,0,0,\gam)
	\end{equation}
	for $j\in\tcb{2,3}$.  

	Before we define a similar decomposition of the $D\Phi$ piece of $D\Bar{\Psi}$, we will prove some basic properties about the $A+P$ decomposition. First, we have the following lemma which, aside from the final item, is a reprise of Lemma~\ref{lem on fundamental decomposition of continuity equation-like vector fields}.
	
	\begin{lem}[Properties of the derivative loss vector field]\label{properties of the principal parts vector field}
		Let $0<\rho\le\rho_{\m{WD}}$, where the latter is defined in Theorem~\ref{thm on smooth tameness of the nonlinear operator}, and $w_0=(q_0,u_0,\eta_0)\in B_{\X^{1+\tfloor{n/2}}}(0,\rho)\cap\X^\infty$.  Define the vector field $v_{w_0}: \Omega \to \R^n $ as in \eqref{definition of the vector field vnaught}.  There exists a decomposition
		\begin{equation}\label{fundamental decomposition of the vector field v}\index{\textbf{Nonlinear maps}!160@$v_{w_0}$}
			v_{w_0}=\mathfrak{g}^{-1}\varrho'e_1+v^{(1)}_{q_0,u_0,\eta_0}+v^{(2)}_{\eta_0}
		\end{equation}
		such that the following hold.
		\begin{enumerate}
			\item The vector field $v^{(1)}_{q_0,u_0,\eta_0}$ has vanishing normal trace, $\m{Tr}_{\pd\Omega}(v^{(1)}_{q_0,u_0,\eta_0}\cdot e_n)=0$, satisfies the inclusion $v^{(1)}_{q_0,u_0,\eta_0}\in H^\infty(\Omega;\R^n)$, and obeys the estimates
			\begin{equation}
				\tnorm{v_{q_0,u_0,\eta_0}^{(1)}}_{H^{1+s}}\lesssim\begin{cases}
					\rho&\text{if }s=1+\tfloor{n/2},\\
					\tnorm{q_0,u_0,\eta_0}_{\X_{s}}&\text{if }s>1+\tfloor{n/2}.
				\end{cases}
			\end{equation}
			\item The vector field $v^{(2)}_{\eta_0}$ is parallel to $e_1$, satisfies the inclusion
			\begin{equation}
				v^{(2)}_{\eta_0}\cdot e_1\in W^{\infty,\infty}((0,b);\mathcal{H}^{0}_{(r_{n-1})}(\Sigma))\emb W^{\infty,\infty}(\Omega),
			\end{equation}
			and obeys the estimates
			\begin{equation}
				\tnorm{v^{(2)}_{\eta_0}\cdot e_1}_{W^{s,\infty}((0,b);\mathcal{H}^0_{(r_{n-1})}(\Sigma))}\lesssim\rho \text{ for } s\ge 1+\tfloor{n/2},
			\end{equation}
			where $r_{n-1}\in\N$ is from the second item of Proposition~\ref{proposition on algebra properties} with $d=n-1$.
			\item We have the inclusion $\grad\cdot v_{w_0}\in H^{\infty}(\Omega;\R^n)$ along with the estimates
			\begin{equation}
				\tnorm{\grad\cdot v_{w_0}}_{H^{s}}\lesssim\begin{cases}
					\rho&\text{if }s=1+\tfloor{n/2},\\
					\tnorm{q_0,u_0,\eta_0}_{\X_{s}}&\text{if }s>1+\tfloor{n/2}.
				\end{cases}
			\end{equation}
			\item If $s\in\N$, $q\in H^{1+s}(\Omega)$, and $\eta\in\mathcal{H}^{1+s}(\Sigma)$, then we have that $\grad\cdot(v_{w_0}(q+\mathfrak{g}\eta))\in\hat{H}^{1+s}(\Omega)$, where the latter space is defined in~\eqref{Rhode Island} (see also Remark~\ref{remarkable remark is named what else other than shark}), as well as the estimates
			\begin{equation}\label{well-definedness estimates for the funny bit in the principal part of the continuity equation}
				\tnorm{\grad\cdot(v_{w_0}(q+\mathfrak{g}\eta))}_{\hat{H}^{s}}\lesssim\tnorm{q,\eta}_{H^{1+s}\times\mathcal{H}^{1+s}}+\begin{cases}
					0&\text{if }s\le\tfloor{n/2},\\
					\tbr{\tnorm{q_0,u_0,\eta_0}_{\X_{s}}}\tnorm{q,\eta}_{H^{1+\tfloor{n/2}}\times\mathcal{H}^{1+\tfloor{n/2}}}&\text{if }\tfloor{n/2}<s.
				\end{cases}
			\end{equation}
		\end{enumerate}
		In the above, the implicit constants depend on the dimension, the physical parameters, $s$, and $\rho_{\m{WD}}$.
	\end{lem}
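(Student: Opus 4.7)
The starting point is to recognize that $v_{w_0}=\mathcal{M}_{\phi}(q_0,u_0,\eta_0)$ for $\phi=(H^{-1})'$ and to invoke decomposition~\eqref{fundamental decomposition of vector fields} from Lemma~\ref{lem on fundamental decomposition of continuity equation-like vector fields}. Using the identity $\varrho(y)=H^{-1}(-\mathfrak{g}y)$, a short calculation shows that $\mathcal{M}_\phi(0,0,0)=-(H^{-1})'(-\mathfrak{g}\m{id}_{\R^n}\cdot e_n)e_1=\mathfrak{g}^{-1}\varrho'e_1$, so that the choices
\begin{equation*}
v^{(1)}_{q_0,u_0,\eta_0}:=\mathcal{M}^{(1)}_{\phi}(q_0,u_0,\eta_0)\quad\text{and}\quad v^{(2)}_{\eta_0}:=e_1\cdot\mathcal{M}^{(2)}_{\phi}(\Uppi^1_{\m{L}}\eta_0)\,e_1
\end{equation*}
realize the decomposition~\eqref{fundamental decomposition of the vector field v} directly.

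For the first item, the vanishing of the normal trace of $v^{(1)}$ on $\pd\Omega$ is immediate from~\eqref{respect the trace or get the claw} applied with $\phi=(H^{-1})'$, together with the boundary conditions $\m{Tr}_{\Sigma_0}(u_0)=0$ and $\m{Tr}_{\Sigma}(u_0\cdot e_n)+\pd_1\eta_0=0$ encoded in $\X^\infty$. The smooth tameness of $\mathcal{M}^{(1)}_\phi$ then yields the inclusion $v^{(1)}\in H^\infty(\Omega;\R^n)$. For the estimates, I exploit that $\mathcal{M}^{(1)}_\phi(0,0,0)=0$ by writing $\mathcal{M}^{(1)}_\phi(w_0)=\int_0^1 D\mathcal{M}^{(1)}_\phi(tw_0)[w_0]\,\m{d}t$ and applying the tame $C^1$ bounds from Corollary~\ref{lem on derivative estimates on tame maps}. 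When $s=1+\tfloor{n/2}$ the target regularity $1+s$ coincides with the base $2+\tfloor{n/2}$ of the scale $\bf{J}$ from Lemma~\ref{lem on fundamental decomposition of continuity equation-like vector fields}, and since $\tnorm{w_0}_{\bf{J}^{2+\tfloor{n/2}}}\lesssim\tnorm{w_0}_{\X^{1+\tfloor{n/2}}}<\rho$, the base-level estimate produces the bound by $\rho$; for $s>1+\tfloor{n/2}$ the same tame estimate delivers $\lesssim\tnorm{w_0}_{\X_s}$, with the implicit constant allowed to depend on $\rho$. An analogous argument handles the second item: $e_1\cdot\mathcal{M}^{(2)}_\phi$ has base $0$ and vanishes at the origin, and Proposition~\ref{proposition on frequency splitting} guarantees $\tnorm{\Uppi^1_{\m{L}}\eta_0}_{\mathcal{H}^s_{(1)}}\lesssim\tnorm{\eta_0}_{\mathcal{H}^{5/2}}\lesssim\rho$ uniformly in $s$, because the low-frequency part lies in $\mathcal{H}^s_{(1)}$ for every $s$.

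The third item follows because $\nabla\cdot(\mathfrak{g}^{-1}\varrho'e_1)=0$ (as $\varrho$ depends only on $y$), reducing the task to bounding $\nabla\cdot v^{(1)}$ (via item 1) and $\pd_1(v^{(2)}\cdot e_1)$ (via item 2 together with the standard embedding $\pd_1:\mathcal{H}^s_{(r_{n-1})}(\Sigma)\to H^s(\Sigma)$ built into the definition of the anisotropic norm). The fourth item is the most delicate. Splitting
\begin{equation*}
v_{w_0}(q+\mathfrak{g}\eta)=\mathfrak{g}^{-1}\varrho'(q+\mathfrak{g}\eta)e_1+v^{(1)}(q+\mathfrak{g}\eta)+(v^{(2)}\cdot e_1)(q+\mathfrak{g}\eta)e_1
\end{equation*}
and invoking the high-low product estimates of Appendix~\ref{appendix on tools for tame estimates} together with items 1 and 2 yields the $H^{1+s}$-regularity of $\grad\cdot(v_{w_0}(q+\mathfrak{g}\eta))$ with the bound stated in~\eqref{well-definedness estimates for the funny bit in the principal part of the continuity equation}. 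The crucial $\dot H^{-1}$-inclusion of the $y$-integral is then established piece by piece. For the $v^{(1)}$-term, I decompose $v^{(1)}=v^{(1)}_\|+(v^{(1)}\cdot e_n)e_n$ and use the fundamental theorem of calculus in $y$; since $v^{(1)}\cdot e_n$ vanishes on $\pd\Omega$, the $\pd_n$-contribution integrates to zero, leaving
\begin{equation*}
\int_0^b\grad\cdot(v^{(1)}(q+\mathfrak{g}\eta))(\cdot,y)\,\m{d}y=(\grad_{\|},0)\cdot\int_0^bv^{(1)}_\|(q+\mathfrak{g}\eta)(\cdot,y)\,\m{d}y,
\end{equation*}
which, being a tangential divergence of an $L^2(\Sigma)$ function, manifestly lies in $\dot H^{-1}(\Sigma)$. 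The remaining two pieces are both parallel to $e_1$, so their divergence is $\pd_1$ of a scalar function, and its $y$-integral is $\pd_1$ of an $L^2(\Sigma)$ function, again in $\dot H^{-1}(\Sigma)$.

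The principal obstacle is propagating norms through the anisotropic spaces cleanly enough to produce both the $H^{1+s}$ and the $\dot H^{-1}$ parts of item 4 simultaneously. The structural fact that dissolves this obstacle is that the remainder $v^{(2)}$ in the decomposition is parallel to $e_1$, so its divergence has the form $\pd_1$ of a scalar whose $y$-integral sits in $L^2(\Sigma)$; this alignment between the direction of the anisotropy and the bonus regularity is precisely what conclusion~\eqref{mapping properties of the second piece in teh decomposition} of Lemma~\ref{lem on fundamental decomposition of continuity equation-like vector fields} is engineered to provide.
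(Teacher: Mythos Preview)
Your treatment of items 1--3 and the $H^s$ part of item 4 tracks the paper's argument closely (invoke Lemma~\ref{lem on fundamental decomposition of continuity equation-like vector fields}, use the fundamental theorem of calculus together with the tame $C^1$ bounds to extract the quantitative estimates, and exploit $\nabla\cdot(\mathfrak{g}^{-1}\varrho'e_1)=0$ for item 3). This part is fine.

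There is, however, a concrete error in your $\dot H^{-1}$ argument for item~4. You assert that for the two $e_1$-parallel pieces ``its $y$-integral is $\pd_1$ of an $L^2(\Sigma)$ function.'' This fails whenever $n\ge 3$: the $\eta$-contribution to $\int_0^b \mathfrak{g}^{-1}\varrho'(q+\mathfrak{g}\eta)\,\m{d}y$ is $(\varrho(b)-\varrho(0))\eta$, and for the $v^{(2)}$-piece it is $\mathfrak{g}\,\eta\int_0^b(v^{(2)}_{\eta_0}\cdot e_1)\,\m{d}y$; in either case $\eta\in\mathcal{H}^{1+s}(\Sigma)$ need not lie in $L^2(\Sigma)$. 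The correct mechanism is different for each piece: for the $\varrho'$-term one uses directly that $\pd_1\eta\in\dot H^{-1}(\Sigma)$ from the characterization~\eqref{the norm on the anisotropic Sobolev spaces}; for the $v^{(2)}$-term one uses the algebra property (Proposition~\ref{proposition on algebra properties}, third item) to place $\eta\int_0^b(v^{(2)}_{\eta_0}\cdot e_1)\,\m{d}y$ in $\mathcal{H}^0(\Sigma)$, and then the inclusion $\pd_1\mathcal{H}^0(\Sigma)\subset\dot H^{-1}(\Sigma)$. The paper handles this by first splitting $\eta=\Uppi^1_{\m{L}}\eta+\Uppi^1_{\m{H}}\eta$ and grouping the high-frequency part with $q$ (so that the whole sum is genuinely $L^2$), reserving the anisotropic algebra properties for the band-limited low-frequency remainder; your organization by the decomposition of $v_{w_0}$ can be made to work, but not with the $L^2$ claim as stated.
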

	\begin{proof}
		We apply Lemma~\ref{lem on fundamental decomposition of continuity equation-like vector fields} to obtain identity~\eqref{fundamental decomposition of the vector field v} with $\varrho' e_1/\mathfrak{g}=\mathcal{M}_{(H^{-1})'}(0,0,0)$, $v^{(1)}_{q_0,u_0,\eta_0}=\mathcal{M}^{(1)}_{(H^{-1})'}(q_0,u_0,\eta_0)$, and $v^{(2)}_{\eta_0}=e_1\cdot\mathcal{M}^{(2)}_{(H^{-1})'}(\Uppi^1_{\m{L}}\eta_0) e_1$. The qualitative smoothness assertions in the first, second, and third items now follow immediately the lemma. We will prove the quantitative bounds via the fundamental theorem of calculus. Since $\mathcal{M}^{(1)}_{(H^{-1})'}(0,0,0)=0$, we have that
		\begin{equation}
			v^{(1)}_{q_0,u_0,\eta_0}=\int_0^1D\mathcal{M}^{(1)}_{(H^{-1})'}(tq_0,tu_0,t\eta_0)(q_0,u_0,\eta_0)\;\m{d}t,
		\end{equation}
		and hence, by using the strong smooth tameness assertions in Lemma~\ref{lem on fundamental decomposition of continuity equation-like vector fields} and the derivative estimates on smooth tame maps from Lemma~\ref{lem on derivative estimates on tame maps}, we find that for $s\ge 1+\tfloor{n/2}$,
		\begin{equation}
			\tnorm{v^{(1)}_{q_0,u_0,\eta_0}}_{H^{1+s}}\le\int_0^1\tnorm{D\mathcal{M}^{(1)}_{(H^{-1})'}(tq_0,tu_0,t\eta_0)[q_0,u_0,\eta_0]}_{H^{1+s}}\;\m{d}t\lesssim\tnorm{q_0,u_0,\eta_0}_{\X_{s}}.
		\end{equation}
		The same technique proves the quantitative bounds asserted in the second item. 
		
		Next, we justify the divergence estimates of the third item. By using \eqref{fundamental decomposition of the vector field v}, we compute that
		\begin{equation}
			\grad\cdot v_{w_0}=\grad\cdot v^{(1)}_{q_0,u_0,\eta_0}+\pd_1(v^{(2)}_{\eta_0}\cdot e_1).
		\end{equation}
		From this identity, the quantitative estimates of the first and second items, and the properties of band limited anisotropic Sobolev spaces from  Proposition~\ref{proposition on frequency splitting}, we deduce that for $s\ge 1+\tfloor{n/2}$,
		\begin{equation}
			\tnorm{\grad\cdot v^{(1)}_{q_0,u_0,\eta_0}}_{H^s}\lesssim\tnorm{v^{(1)}_{q_0,u_0,\eta_0}}_{H^{1+s}}\lesssim\tnorm{q_0,u_0,\eta_0}_{\X_{s}}
		\end{equation}
		and
		\begin{equation}
			\tnorm{\pd_1(v^{(2)}_{\eta_0}\cdot e_1)}_{H^{1+s}}\lesssim\tnorm{v^{(2)}_{\eta_0}\cdot e_1}_{W^{1+s,\infty}((0,b);\mathcal{H}^0_{(r_{n-1})}(\Sigma))}\lesssim\rho.
		\end{equation}
		These complete the proof of the third item.
		
		Finally, to prove the fourth item we first note that \eqref{fundamental decomposition of the vector field v} yields the formula
		\begin{multline}
			\grad\cdot(v_{w_0}(q+\mathfrak{g}\eta))=\grad\cdot(v_{w_0}(q+\mathfrak{g}\Uppi^1_{\m{H}}\eta))+\mathfrak{g}\grad\cdot(v_{q_0,u_0,\eta_0}^{(1)}\Uppi^1_{\m{L}}\eta)\\
			+\mathfrak{g}\pd_1(e_1\cdot v^{(2)}_{\eta_0}\Uppi^1_{\m{L}}\eta)+\varrho'\pd_1\Uppi^1_{\m{L}}\eta=\bf{I}+\bf{II}+\bf{III}+\bf{IV}.
		\end{multline}
		For $\bf{I}$ we use Proposition~\ref{proposition on frequency splitting}, the first and second items above, and Corollary~\ref{corollary on tame estimates on simple multipliers} to estimate
		\begin{multline}\label{zappy piglet 1}
			\tnorm{\bf{I}}_{H^s}\lesssim\tnorm{v_{w_0}(q+\mathfrak{g}\Uppi^1_{\m{H}}\eta)}_{H^{1+s}}\le\tnorm{v^{(1)}_{q_0,u_0,\eta_0}(q+\mathfrak{g}\Uppi^1_{\m{H}}\eta)}_{H^{1+s}}+\tnorm{(v^{(2)}_{\eta_0}+\varrho'e_1/\mathfrak{g})(q+\mathfrak{g}\eta_{\m{H}})}_{H^{1+s}}\\\lesssim\tnorm{q,\eta}_{H^{1+s}\times\mathcal{H}^{1+s}}+\begin{cases}
				0&\text{if }s\le\tfloor{n/2},\\
				\tbr{\tnorm{q_0,u_0,\eta_0}_{\X_s}}\tnorm{q,\eta}_{H^{1+\tfloor{n/2}}\times\mathcal{H}^{1+\tfloor{n/2}}}&\text{if }\tfloor{n/2}<s.
			\end{cases}
		\end{multline}
		On the other hand, since $\m{Tr}_{\pd\Omega}(v_{q_0,u_0,\eta_0}\cdot e_n)=0$, we use estimate~\eqref{driving down the road yesterday} from Proposition~\ref{prop on refined divergence compatibility estimate} to bound
		\begin{equation}\label{zappy piglet 2}
			\bsb{\int_0^b\bf{I}(\cdot,y)\;\m{d}y}_{\dot{H}^{-1}}\lesssim\tnorm{v_{w_0}(q+\mathfrak{g}\Uppi^1_{\m{H}}\eta)}_{L^2}\lesssim\tnorm{q,\eta}_{L^2\times\mathcal{H}^0}.
		\end{equation}
		From~\eqref{zappy piglet 1} and~\eqref{zappy piglet 2} we deduce that
		\begin{equation}
			\tnorm{\bf{I}}_{\hat{H}^{s}}\lesssim\tnorm{q,\eta}_{H^{1+s}\times\mathcal{H}^{1+s}}+\begin{cases}
				0&\text{if }s\le\tfloor{n/2},\\
				\tbr{\tnorm{q_0,u_0,\eta_0}_{\X_s}}\tnorm{q,\eta}_{H^{1+\tfloor{n/2}}\times\mathcal{H}^{1+\tfloor{n/2}}}&\text{if }\tfloor{n/2}<s.
			\end{cases}
		\end{equation}
		For $\bf{II}$, we use the first item along with Proposition~\ref{proposition on frequency splitting} and Corollary~\ref{corollary on tame estimates on simple multipliers} again to estimate
		\begin{equation}\label{zappy piglet 3}
			\tnorm{\bf{II}}_{H^s}\lesssim\tnorm{v^{(1)}_{q_0,u_0,\eta_0}}_{L^2}\tnorm{\Uppi^1_{\m{L}}\eta}_{W^{1+s,\infty}}+\tnorm{v^{(1)}_{q_0,u_0,\eta_0}}_{H^{1+s}}\tnorm{\Uppi^1_{\m{L}}\eta}_{L^\infty}
			\lesssim\tnorm{q_0,u_0,\eta_0}_{\X_{s}}\tnorm{\Uppi^1_{\m{L}}\eta}_{\mathcal{H}^0}.
		\end{equation}
		On the other hand, since $\m{Tr}_{\pd\Omega}(v_{q_0,u_0,\eta_0}^{(1)}\cdot e_n)=0$, we have
		\begin{equation}\label{zappy piglet 4}
			\bsb{\int_0^b\bf{II}(\cdot,y)\;\m{d}y}_{\dot{H^{-1}}}\lesssim\tnorm{v^{(1)}_{q_0,u_0,\eta_0}\Uppi^1_{\m{L}}\eta}_{L^2}\lesssim\tnorm{\Uppi^1_{\m{L}}\eta}_{\mathcal{H}^0}.
		\end{equation}
		From~\eqref{zappy piglet 3} and~\eqref{zappy piglet 4} we deduce that $\tnorm{\bf{II}}_{\hat{H}^s}$ is controlled by the right hand side of~\eqref{well-definedness estimates for the funny bit in the principal part of the continuity equation}. We now estimate $\bf{III}$, using the second item and the algebraic properties of the anisotropic Sobolev spaces from Proposition~\ref{proposition on algebra properties} to get
		\begin{equation}\label{zappy piglet 5}
			\tnorm{\bf{III}}_{H^s}\lesssim\tnorm{e_1\cdot v^{(2)}_{\eta_{0}}\Uppi^1_{\m{L}}\eta}_{W^{s,\infty}((0,b);\mathcal{H}^0_{(r_{n-1}+1)}(\Sigma))}\lesssim\tnorm{e_1\cdot v^{(2)}_{\eta_0}}_{W^{s,\infty}\mathcal{H}^0_{(r_{n-1})}}\tnorm{\Uppi^1_{\m{L}}\eta}_{\mathcal{H}^0}\lesssim\tnorm{\Uppi^1_{\m{L}}\eta}_{\mathcal{H}^0}
		\end{equation}
		and
		\begin{equation}\label{zappy piglet 6}
			\bsb{\int_0^b\bf{III}(\cdot,y)\;\m{d}y}_{\dot{H}^{-1}}\le\bnorm{\Uppi^1_{\m{L}}\eta\int_0^be_1\cdot v^{(2)}_{\eta_0}(\cdot,y)\;\m{d}y}_{\mathcal{H}^0}\lesssim\int_0^b\tnorm{e_1\cdot v^{(2)}_{\eta_0}(\cdot,y)}_{\mathcal{H}^0}\;\m{d}y\cdot\tnorm{\Uppi^1_{\m{L}}\eta}_{\mathcal{H}^0}\lesssim\tnorm{\Uppi^1_{\m{L}}\eta}_{\mathcal{H}^0}.
		\end{equation}
		The bounds \eqref{zappy piglet 5} and~\eqref{zappy piglet 6} imply that $\tnorm{\bf{III}}_{\hat{H}^s}$ is controlled by the right hand side of~\eqref{well-definedness estimates for the funny bit in the principal part of the continuity equation}. The norm $\tnorm{\bf{IV}}_{\hat{H}^s}$ is trivially controlled by the same quantity thanks to~\eqref{the norm on the anisotropic Sobolev spaces}. The proof of the fourth item is then complete upon synthesizing these bounds on $\bf{I}$, $\bf{II}$, $\bf{III}$, and $\bf{IV}$.
	\end{proof}

	We now introduce a new scale of adapted spaces pertinent to the analysis of the $A$ piece of $D\Psi$. As we will see, these are the natural domains on which $A$ is an isomorphism of Banach spaces. To define the spaces, we simply build an extra condition into the spaces $\tcb{\X^s}_{s\in\N}$, which we recall were defined in \eqref{domain banach scales}, to create a new family in which the effect of the derivative loss is mitigated. What is notable is that these domains depend on the background point $w_0=(q_0,u_0,\eta_0)$ in a non-trivial way. For $s\in\tcb{-1,0}\cup\R^+$ and $(q_0,u_0,\eta_0)$ as in the hypotheses of Lemma~\ref{properties of the principal parts vector field} we define the space
	\begin{equation}\label{Oregon}\index{\textbf{Function spaces}!12@$\overset{q_0,u_0,\eta_0}{\X^s}$}
		\overset{q_0,u_0,\eta_0}{\X^s}=\tcb{(q,u,\eta)\in\X^s\;:\;\grad\cdot\tp{v_{w_0} q}\in H^{1+s}(\Omega)}.
	\end{equation}
	and equip it with the norm
	\begin{equation}\label{adapted_space_norm_def}
	\tnorm{q,u,\eta}_{\overset{q_0,u_0,\eta_0}{\X^s}}=\sqrt{\tnorm{q,u,\eta}_{\X_s}^2+\tnorm{\grad\cdot(v_{w_0}q)}_{H^{1+s}}^2}.	    
	\end{equation}

 \begin{rmk}
     A simple modification of the  proof of the first item of Proposition~\ref{lem on density of bounded support functions in adapted Sobolev spaces} reveals that the spaces $\overset{q_0,u_0,\eta_0}{\X^s}$ are Hilbert.
 \end{rmk}

	The following result captures that the $A+P$ decomposition of $D\Psi$ from \eqref{A+P_splitting_of_DPsi} is such that $A$ contains all of the derivative loss and $P$ is a small correction term without derivative loss. Note that the conclusions in what follows are stronger than what can be deduced from a direct application of the smooth-tameness verification result, namely Theorem~\ref{thm on smooth tameness of the principal part nonlinear operator}.
	
	\begin{prop}[Properties of the $A+P$ decomposition of $D\Psi$]\label{prop on properties of the A+P decomposition}
		Let $\rho\in\R^+$ and $w_0=(q_0,u_0,\eta_0)$ be as in the hypotheses of Lemma~\ref{properties of the principal parts vector field} and let $I\Subset\R^+$ be an interval with $\gam_0\in I$. The following hold.
		\begin{enumerate}
			\item For every $s\in\N$, the map $\overset{w_0,\gam_0}{A}:\overset{q_0,u_0,\eta_0}{\X^s}\to\Y^s$ is well-defined and continuous.
			\item For every $\N\ni s\ge 1+\tfloor{n/2}$, the map $\overset{w_0,\gam_0}{P}:\X^s\times\R\to\Y^s$ is well-defined, continuous, and obeys the estimates
			\begin{equation}
				\snorm{\overset{w_0,\gam_0}{P}(q,u,\eta,\gam)}_{\Y^s}\lesssim\rho\tnorm{q,u,\eta}_{\X^s}+\tnorm{q_0,u_0,\eta_0}_{\X_{1+s}}\tnorm{q,u,\eta,\gam}_{\X_{\tfloor{n/2}}\times\R}.
			\end{equation}
		\end{enumerate}
		The implicit constants depend on the dimension, the physical parameters, $s$, $\rho_{\m{WD}}$, and $I$.
	\end{prop}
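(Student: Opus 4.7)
The plan is to establish the well-definedness and continuity of $\overset{w_0,\gam_0}{A}$ and $\overset{w_0,\gam_0}{P}$ component-by-component, using the prior structural results — especially Lemma~\ref{properties of the principal parts vector field} and the tame calculus built in Section~\ref{only place he's ever been} — rather than re-deriving tame estimates from scratch. The payoff of the decomposition is that $A$ absorbs the derivative-loss piece entirely (so that the adapted space $\overset{q_0,u_0,\eta_0}{\X^s}$ is precisely designed to turn $A^1$ into a bounded map) while $P$ assembles into terms whose small factors are either vanishing-at-equilibrium quantities ($\sig_{q_0,\eta_0}-\varrho$, $v_{w_0}-\varrho'e_1/\mathfrak{g}$) or linear-in-$\eta$ matrix fields ($\dot{M}[\eta]$), so no adapted control is needed and bilinear high-low estimates suffice.

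For the first item, I would split $A^1 = \grad\cdot(\varrho u) + \grad\cdot(v_{w_0}q) + \mathfrak{g}\grad\cdot(v_{w_0}\eta)$. The first summand is immediate since $\varrho \in C^\infty_b([0,b])$ and $u\in H^{2+s}$; the second is in $H^{1+s}(\Omega)$ by the very definition~\eqref{adapted_space_norm_def} of the adapted norm; the third is in $\hat{H}^{1+s}(\Omega)$ by item (4) of Lemma~\ref{properties of the principal parts vector field}. The $\dot{H}^{-1}(\Sigma)$ membership of $\int_0^b A^1(\cdot,y)\,\m{d}y$ follows by integrating the divergence in $y$ and using the boundary conditions $\m{Tr}_{\Sigma_0}(u)=0$ and $\m{Tr}_\Sigma(u\cdot e_n)+\pd_1\eta=0$ built into $\X^s$, together with Proposition~\ref{prop on refined divergence compatibility estimate}. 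For $A^2$ and $A^3$, both are first-order linear differential operators in $(q,u,\eta)$ with smooth $\varrho$-coefficients (and a constant-coefficient surface tension term $-\varsigma\Delta_\|\eta\,e_n$); the relevant inclusions $A^2\in H^s(\Omega;\R^n)$ and $A^3\in H^{1/2+s}(\Sigma;\R^n)$ follow from standard trace and multiplication facts, where the $\Delta_\|\eta$ term uses the anisotropic inclusion $\Delta_\|\eta\in H^{1/2+s}$ from~\eqref{aisumimasen}. Continuity is automatic by linearity and boundedness.

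For the second item, I would handle $P^1$ term-by-term using the explicit formula~\eqref{salt and vinegar 1}. The crucial observation is that each of $\sig_{q_0,\eta_0}-\varrho = \mathcal{N}_{H^{-1}}(q_0,\eta_0)-\mathcal{N}_{H^{-1}}(0,0)$, $v_{w_0}-\varrho'e_1/\mathfrak{g} = v^{(1)}_{q_0,u_0,\eta_0}+v^{(2)}_{\eta_0}$, and $\dot{M}[\eta]$, $\mathcal{E}\eta$ enjoy tame bilinear estimates thanks to Lemma~\ref{lem on taylor expansion trick}, Lemma~\ref{properties of the principal parts vector field} (items 1--2 giving $O(\rho)$ control at the base level $1+\tfloor{n/2}$ and $\X_{1+s}$ control at high regularity), and the mapping properties of $\mathcal{E}$. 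The refined divergence estimate of Proposition~\ref{prop on refined divergence compatibility estimate}, applied to $(\sig_{q_0,\eta_0}-\varrho)(u-\dot{M}[\eta]e_1)$ and $(v_{w_0}-\varrho'e_1/\mathfrak{g})\mathcal{E}\eta$ (both of which have vanishing normal traces on $\pd\Omega$ modulo the last two terms of $P^1$ which need separate handling via the kinematic boundary condition), yields the $\dot{H}^{-1}$ control needed for the $\hat{H}^{1+s}$ inclusion. Terms $P^2$ and $P^3$ are obtained from the known tame $C^1$ structure of $\Psi_2$ and $\Psi_3$ (Propositions~\ref{prop on smooth tameness of the momentum equation 1} and~\ref{prop on smooth tameness of the dynamic boundary condition 1}): writing $(D\Psi_j(w_0,\gam_0)-D\Psi_j(0,0,0,\gam_0))(q,u,\eta,0)$ via $\int_0^1 D^2\Psi_j(tw_0,\gam_0)[w_0,(q,u,\eta,0)]\,\m{d}t$ and applying Corollary~\ref{lem on derivative estimates on tame maps} produces the bilinear bound with a $\rho$ factor (at base regularity) or $\tnorm{w_0}_{\X_{1+s}}\tnorm{q,u,\eta,\gam}_{\X_{\tfloor{n/2}}\times\R}$ factor (at high regularity), matching the claimed estimate; the pure $\gam$-term $D\Psi_j(w_0,\gam_0)(0,0,0,\gam)$ is $\gam$ times a smooth function of $w_0$ that vanishes at $w_0=0$, handled analogously.

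The main technical obstacle will be the verification of the $\dot{H}^{-1}(\Sigma)$ part of the $\hat{H}^{1+s}$ inclusion for $P^1$, where the boundary terms generated by integrating the various divergences in $y$ must be reconciled with the kinematic constraint $u\cdot e_n+\pd_1\eta=0$ on $\Sigma$ and with the algebraic peculiarities of the anisotropic spaces $\mathcal{H}^{s}(\Sigma)$; in particular, the last two terms $-\grad\cdot(\varrho\dot{M}[\eta]e_1)-\grad\cdot(\varrho'\mathcal{E}\eta e_1)$ in $P^1$ produce low-frequency contributions from $\Uppi^1_{\m{L}}\eta$ whose $\dot{H}^{-1}$ control relies on the Taylor-expansion-type structure of Lemma~\ref{lem on taylor expansion trick} and the identity $\pd_n\mathcal{E}\Uppi^1_{\m{L}}\eta=\Uppi^1_{\m{L}}\eta/b$ exactly as in the proof of Lemma~\ref{properties of the principal parts vector field} item (4). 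Once this delicate divergence bookkeeping is carried out, the tame bilinear bounds from Section~\ref{only place he's ever been} assemble into the claimed estimate essentially by linearity of the splitting.
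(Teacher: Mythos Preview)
Your outline for the first item and for $P^2,P^3$ matches the paper. The gap is in your treatment of $P^1$: you propose to estimate the last two terms $-\grad\cdot(\varrho\dot{M}[\eta]e_1)-\grad\cdot(\varrho'\mathcal{E}\eta e_1)$ as genuine residual contributions, controlled via low-frequency and Taylor-expansion techniques. This cannot succeed, because each of these two terms is a fixed linear operator in $\eta$ with $\varrho$-coefficients and \emph{no dependence on $w_0$}; its $\hat{H}^{1+s}$ norm is therefore comparable to $\tnorm{\eta}_{\mathcal{H}^{5/2+s}}$ with no small prefactor. Any such contribution is incompatible with the claimed bound $\rho\tnorm{q,u,\eta}_{\X^s}+\tnorm{w_0}_{\X_{1+s}}\tnorm{q,u,\eta,\gam}_{\X_{\tfloor{n/2}}\times\R}$, which forces $P\to 0$ as $\rho\to 0$ and $w_0\to 0$.

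The point you are missing is that these two terms cancel exactly. From~\eqref{Mdot_def} one has $\dot{M}[\eta]e_1=\pd_n\mathcal{E}\eta\,e_1-\pd_1\mathcal{E}\eta\,e_n$, hence $\grad\cdot(\dot{M}[\eta]e_1)=0$ and $-\grad\cdot(\varrho\dot{M}[\eta]e_1)=\varrho'\pd_1\mathcal{E}\eta$, while $-\grad\cdot(\varrho'\mathcal{E}\eta e_1)=-\varrho'\pd_1\mathcal{E}\eta$. After this cancellation $P^1$ reduces to $\grad\cdot((\sig_{q_0,\eta_0}-\varrho)(u-\dot{M}[\eta]e_1))-\mathfrak{g}\grad\cdot((v_{w_0}-\varrho'e_1/\mathfrak{g})\mathcal{E}\eta)$, and now every factor genuinely carries $O(\rho)$ smallness at base regularity (via Lemma~\ref{lem on taylor expansion trick} and Lemma~\ref{properties of the principal parts vector field}). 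At that point your proposed machinery---the high/low splitting into $\grad\cdot(\cdot)+\pd_1(\cdot)$ pieces, Proposition~\ref{prop on refined divergence compatibility estimate} for the $\dot{H}^{-1}$ part, and bilinear tame product estimates---does apply and gives the result, exactly as in the paper.
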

	\begin{proof}
		Upon inspecting the components of $\overset{w_0,\gam_0}{A}$ in~\eqref{definition of the principal part operator}, we see that the second and third are trivially well-defined and continuous, so to prove the first item we heed to Remark~\ref{remarkable remark is named what else other than shark} and reduce to showing that  the map
		\begin{equation}\label{vanilla chai tea}
			\overset{q_0,u_0,\eta_0}{\X^s}\ni(q,u,\eta)\mapsto\overset{w_0}{A^1}(q,u,\eta)\in\hat{H}^{1+s}(\Omega)
		\end{equation}
		is well-defined and continuous. As a consequence of the fourth item of Lemma~\ref{properties of the principal parts vector field}, the identity
		\begin{equation}
			\int_0^b\grad\cdot(\varrho u)(\cdot,y)\;\m{d}y=(\grad_{\|},0)\cdot\int_0^b(\varrho u)(\cdot,y)\;\m{d}y-\varrho(b)\pd_1\eta
		\end{equation}
		(which is true by virtue of the boundary condition build into the $\X^s$ spaces, as defined in \eqref{domain banach scales}), and the norm equivalence \eqref{the norm on the anisotropic Sobolev spaces} from Proposition~\ref{proposition on spatial characterization of anisobros}, we see that the operator in~\eqref{vanilla chai tea} continuously maps into $\hat{H}^0(\Omega)$.  On the other hand, by employing the first, second, and third items of Lemma~\ref{properties of the principal parts vector field}, the equality
		\begin{equation}\label{lechee tastes to me as i taste to the lechee}
			\grad\cdot(v_{w_0}(q+\mathfrak{g}\eta))=\mathfrak{g}\grad\cdot(v_{w_0}\eta)+\grad\cdot(v_{w_0}q),
		\end{equation}
		and the definition of the norm in \eqref{adapted_space_norm_def}, we readily deduce that $\overset{w_0}{A^1}$ maps boundedly into $H^{1+s}(\Omega)$. This completes the proof of the first item.
		
		We now turn to the proof of the second item. Recall from~\eqref{the first bit} that $\overset{w_0,\gam_0}{P}$ has components $\overset{w_0,\gam_0}{P^i}$ for $i \in \{1,2,3\}$.  We divide the remainder of the proof into three steps, each of which handles one of these components.

		\textbf{Step 1}: Estimates on $\overset{w_0}{P^1}$. We first aim to exploit some hidden cancellation by observing that the sum of the final two terms in~\eqref{salt and vinegar 1} vanishes. Indeed, since $\dot{M}[\eta]e_1=\pd_n\mathcal{E}\eta e_1-\pd_1\mathcal{E}\eta e_n$ we have that $\grad\cdot(\dot{M}[\eta]e_1)=0$, and therefore
		\begin{equation}
			-\grad\cdot(\varrho'  \mathcal{E}\eta e_1) - \grad\cdot(\varrho\dot{M}[\eta]e_1)=-\varrho'\pd_1\mathcal{E}\eta-\varrho'(\pd_n\mathcal{E}\eta e_1-\pd_1\mathcal{E}\eta e_n)\cdot e_n=0.
		\end{equation}
		Thus, we have the more accessible formula
		\begin{equation}\label{the more accessible formula}
			\overset{w_0}{P^1}(u,\eta)=\grad\cdot((\sig_{q_0,\eta_0}-\varrho)(u-\dot{M}[\eta]e_1))-\mathfrak{g}\grad\cdot((v_{w_0}-\varrho'e_1/\mathfrak{g})\mathcal{E}\eta)=\bf{I}+\bf{II},
		\end{equation}
		and we will deal with $\bf{I}$ and $\bf{II}$ separately.
		
		The term $\bf{I}$ is handled with the Taylor expansion trick of Lemma~\ref{lem on taylor expansion trick}, i.e. we decompose $\sig_{q_0,\eta_0}-\varrho=\sig^{(1)}_{q_0,\eta_0}+\sig^{(2)}_{\eta_0}$, where
		\begin{equation}
			\sig^{(1)}_{q_0,\eta_0}=\mathcal{N}^{(1)}_{H^{-1}}(q_0,\eta_0)
			 \text{ and }  
			\sig^{(2)}_{\eta_0}=\mathcal{N}^{(2)}_{H^{-1}}(\Uppi^1_{\m{L}}\eta).
		\end{equation} 
		Employing the above decomposition and recalling \eqref{Mdot_def}, we may further rewrite
		\begin{equation}\label{overnight oats}
			\bf{I}=\grad\cdot((\sig_{q_0,\eta_0}-\varrho)(u+\pd_1\mathcal{E}\eta e_n)-\sig^{(1)}_{q_0,\eta_0}\pd_n\mathcal{E}\eta e_1-\sig^{(2)}_{\eta_0}\pd_n\mathcal{E}_0\Uppi^1_{\m{H}}\eta e_1)-\pd_1(\sig^{(2)}_{\eta_0}\Uppi^1_{\m{L}}\eta)/b=\grad\cdot\bf{I}_1+\pd_1\bf{I}_2.
		\end{equation}
		To handle $\nabla \cdot \bf{I}_1$ we will first derive a Sobolev estimate for $\bf{I}_1$.  Indeed, thanks to multiple applications of Corollary~\ref{corollary on tame estimates on simple multipliers} and Lemma~\ref{lem on mapping properties of the Poisson extension operator variants} , we are free to bound, for any $s\in\N$,
		\begin{multline}\label{oh i repeat}
			\tnorm{\bf{I}_1}_{H^s}\lesssim\tnorm{\sig_{q_0,\eta_0}^{(1)},\sig_{\eta_0}^{(2)}}_{H^{1+\lfloor n/2\rfloor}\times W^{1+\lfloor n/2\rfloor,\infty}}\tnorm{u,\eta}_{H^s\times\mathcal{H}^{1/2+s}}\\+\begin{cases}
				0&\text{if }s\le 1+\lfloor n/2\rfloor,\\
				\tnorm{\sig^{(1)}_{q_0,\eta_0},\sig^{(2)}_{\eta_0}}_{H^s\times W^{s,\infty}}\tnorm{u,\eta}_{H^{1+\lfloor n/2\rfloor}\times\mathcal{H}^{3/2+\lfloor n/2\rfloor}}&\text{if }1+\lfloor n/2\rfloor<s.
			\end{cases}
		\end{multline}
		Then we use Lemma~\ref{lem on taylor expansion trick} combined with the fundamental theorem of calculus as in the first part of the proof of Lemma~\ref{properties of the principal parts vector field} to acquire the bounds
		\begin{equation}\label{dont ya think thats kinda neat}
			\tnorm{\sig^{(1)}_{q_0,\eta_0},\sig^{(2)}_{\eta_0}}_{H^s\times W^{s,\infty}}\lesssim\begin{cases}
				\tnorm{q_0,u_0,\eta_0}_{\X_{1+\lfloor n/2\rfloor}}&\text{if }s\le 2+\lfloor n/2\rfloor,\\
				\tnorm{q_0,u_0,\eta_0}_{\X_{s-1}}&\text{if }2+\lfloor n/2\rfloor<s.
			\end{cases}
		\end{equation}
		Upon synthesizing the bounds~\eqref{oh i repeat} and~\eqref{dont ya think thats kinda neat}, we find that
		\begin{equation}\label{give thanks}
			\tnorm{\bf{I}_1}_{H^s}\lesssim\rho\tnorm{q,u,\eta}_{\X_{s-2}}+\begin{cases}
				0&\text{if }s\le2+\lfloor n/2\rfloor,\\
				\tnorm{q_0,u_0,\eta_0}_{\X_{s-1}}\tnorm{q,u,\eta}_{\X_{\lfloor n/2\rfloor-1}}&\text{if }2+\lfloor n/2\rfloor<s,
			\end{cases}
		\end{equation}
		which is the aforementioned Sobolev estimate.  Then, in light of \eqref{give thanks}, the fact that $\m{Tr}_{\pd\Omega}(\bf{I}_1\cdot e_n)=0$, and divergence - normal trace compatibility estimates from Proposition~\ref{prop on divergence-normal trace}, we arrive at the bound
		\begin{equation}\label{a well respected man}
			\tnorm{\grad\cdot\bf{I}_1}_{\hat{H}^{1+s}}\le\tnorm{\bf{I}_1}_{H^{2+s}}\lesssim\rho\tnorm{q,u,\eta}_{\X_{s}}+\begin{cases}
				0&\text{if }s\le\lfloor n/2\rfloor,\\
				\tnorm{q_0,u_0,\eta_0}_{\X_{1+s}}\tnorm{q,u,\eta}_{\X_{\lfloor n/2\rfloor-1}}&\text{if }\lfloor n/2\rfloor<s.
			\end{cases}
		\end{equation}		
		Having dispatched $\bf{I}_1$, we turn our attention to the $\bf{I}_2$ term in~\eqref{overnight oats}.  The product $\sig^{(2)}_{\eta_0}\Uppi^1_{\m{L}}\eta$ is handled via Lemma~\ref{lem on taylor expansion trick} and the algebra properties of band limited members of $\mathcal{H}^0(\Sigma)$ enumerated in Proposition~\ref{proposition on algebra properties}, applied to $\sig^{(2)}_{\eta_0}(\cdot,y) \Uppi^1_{\m{L}}\eta$ for $y \in [0,b]$:
		\begin{multline}\label{she said in a dark brown voice}
			\tnorm{\pd_1\bf{I}_2}_{\hat{H}^{1+s}} \lesssim \bsb{ \partial_1 \bp{\Uppi^1_{\m{L}}\eta\int_0^b\sig^{(2)}_{\eta_0}(\cdot,y)\;\m{d}y} }_{\dot{H}^{-1}} +\max_{0\le j \le 1+\upnu}\sup_{y\in[0,b]} \tnorm{\pd_1\pd_n^j\bf{I}_2(\cdot,y)}_{H^{1+s}(\R^{n-1})}\\
			\lesssim\bnorm{\int_0^b\sig^{(2)}_{\eta_0}(\cdot,y)\;\m{d}y}_{\mathcal{H}^0}\tnorm{\Uppi^1_{\m{L}}\eta}_{\mathcal{H}^0}
			+\max_{0\le j\le 1+s} \sup_{y\in[0,b]} \tsb{\pd_1\pd_n^j(\sig^{(2)}_{\eta_0}(\cdot,y) \Uppi^1_{\m{L}}\eta)}_{\dot{H}^{-1}}\\
			\lesssim \max_{0\le j \le 1+s} \sup_{y\in[0,b]} \tnorm{\pd_n^j\sig^{(2)}_{\eta_0}(\cdot,y)}_{\mathcal{H}^0}\tnorm{\Uppi^1_{\m{L}}\eta}_{\mathcal{H}^0}\lesssim\rho\tnorm{\Uppi^1_{\m{L}}\eta}_{\mathcal{H}^0},
		\end{multline}
		where in deducing the final inequality  we again use the tame mapping properties of $\sig^{(2)}_{\eta_0}$ from Lemma~\ref{lem on taylor expansion trick}, combined with the fundamental theorem of calculus as before. This completes the handling of $\bf{I}_2$, and hence of $\bf{I}$.
		
		Next, we consider $\bf{II}$ from equation~\eqref{the more accessible formula}. Thanks to the decomposition of $v_{w_0}$ from Lemma~\ref{properties of the principal parts vector field}, $\bf{II}$ has the further decomposition
		\begin{equation}\label{he ignores the girl}
			\bf{II}=-\mathfrak{g}\grad\cdot((v^{(1)}_{q_0,u_0,\eta_0}+v^{(2)}_{\eta_0})\mathcal{E}_0\Uppi^1_{\m{H}}\eta+v^{(1)}_{q_0,u_0,\eta_0}\mathcal{E}\Uppi^1_{\m{L}}\eta)-\mathfrak{g}\pd_1(\m{id}_{\R^n}\cdot e_n v^{(2)}_{\eta_0}\cdot e_1\Uppi^1_{\m{L}}\eta)/b=\grad\cdot\bf{II}_1+\pd_1\bf{II}_2.
		\end{equation}
		As in the proof of~\eqref{give thanks}, we use Corollary~\ref{corollary on tame estimates on simple multipliers} and the mapping properties of $v^{(1)}_{q_0,u_0,\eta_0}$ and $v^{(2)}_{\eta_0}$ from Lemma~\ref{properties of the principal parts vector field} to see that for any $s\in\N$,
		\begin{multline}\label{just remembered}
			\tnorm{\bf{II}_1}_{H^s}\lesssim\tnorm{v^{(1)}_{q_0,u_0,\eta_0},v^{(2)}_{\eta_0}}_{H^{1+\lfloor n/2\rfloor}\times W^{1+\lfloor n/2\rfloor,\infty}}\tnorm{\eta}_{\mathcal{H}^{s-1/2}}\\+\begin{cases}
				0&\text{if }s\le 1+\lfloor n/2\rfloor,\\
				\tnorm{v^{(1)}_{q_0,u_0,\eta_0},v^{(2)}_{\eta_0}}_{H^s\times W^{s,\infty}}\tnorm{\eta}_{\mathcal{H}^{1/2+\lfloor n/2\rfloor}}&\text{if }1+\lfloor n/2\rfloor<s,
			\end{cases}\\
			\lesssim\rho\tnorm{q,u,\eta}_{\X_{s-3}}+\begin{cases}
				0&\text{if }s\le 2+\lfloor n/2\rfloor,\\
				\tnorm{q_0,u_0,\eta_0}_{\X_{s-1}}\tnorm{q,u,\eta}_{\X_{\lfloor n/2\rfloor-2}}&\text{if }2+\lfloor n/2\rfloor<s.
			\end{cases}
		\end{multline}
		In turn, \eqref{just remembered}, the fact that $\m{Tr}_{\pd\Omega}(\bf{II}_1\cdot e_n)=0$, and Proposition~\ref{prop on divergence-normal trace} provide the bound
		\begin{equation}
			\tnorm{\grad\cdot\bf{II}_1}_{H^{1+s}}\lesssim\tnorm{\bf{II}_1}_{H^{2+s}}\lesssim\rho\tnorm{q,u,\eta}_{\X_{s-1}}+\begin{cases}
				0&\text{if }\upnu\le\lfloor n/2\rfloor,\\
				\tnorm{q_0,u_0,\eta_0}_{\X_{1+\upnu}}\tnorm{q,u,\eta}_{\X_{\lfloor n/2\rfloor-1}}&\text{if }\lfloor n/2\rfloor<\upnu.
			\end{cases}
		\end{equation}
		For $\bf{II}_2$, we argue as in \eqref{she said in a dark brown voice}, estimating the product $(\m{id}_{\R^n}\cdot e_n)(v^{(2)}_{\eta_0}\cdot e_1) \eta_{\m{L}}$ by invoking the second item from Lemma~\ref{properties of the principal parts vector field} as well as the algebra properties of band-limited members of $\mathcal{H}^0(\R^{n-1})$ from Proposition~\ref{proposition on algebra properties}; this results in the bound
		\begin{equation}\label{where do i go walk}
			\tnorm{\pd_1\bf{II}_2}_{\hat{H}^{1+s}}\lesssim\rho\tnorm{\Uppi^1_{\m{L}}\eta}_{\mathcal{H}^0}.
		\end{equation}
		
		Finally, we synthesize \eqref{the more accessible formula}, \eqref{overnight oats}, \eqref{a well respected man}, \eqref{she said in a dark brown voice}, \eqref{he ignores the girl}, \eqref{just remembered}, and \eqref{where do i go walk} to get the $\overset{w_0}{P^1}$ estimate
		\begin{equation}
			\snorm{\overset{q_0,u_0,\eta_0}{P^1}(u,\eta)}_{\hat{H}^{1+s}}\lesssim\rho\tnorm{q,u,\eta}_{\X_s}+\begin{cases}
				0&\text{if }s\le\lfloor n/2\rfloor,\\
				\tnorm{q_0,u_0,\eta_0}_{\X_{1+s}}\tnorm{q,u,\eta}_{\X_{\lfloor n/2\rfloor-1}}&\text{if }\lfloor n/2\rfloor<s.
			\end{cases}
		\end{equation}

		\textbf{Step 2}: Estimates on $\overset{w_0,\gam_0}{P^2}$. This is simpler than the previous step, as we can use the tame calculus conclusions of Proposition~\ref{prop on smooth tameness of the momentum equation 1}. Recall that
		\begin{equation}
			\overset{w_0,\gam_0}{P^2}(q,u,\eta,\gam)=(D\Psi_2(w_0,\gam_0)-D\Psi_2(0,0,0,\gam_0))(q,u,\eta,0)+D\Psi_2(w_0,\gam_0)(0,0,0,\gam)=\bf{J}_1+\bf{J}_2,
		\end{equation}
		where $\Psi_2$ is the nonlinear map defined in~\eqref{definition of the nonlinear map Psi2}. We may use that $\Psi_2$ is  $C^2$ paired with the fundamental theorem of calculus to write
		\begin{equation}
			\bf{J}_1=\int_0^1D^2\Psi_2(tw_0,\gam_0)((q_0,u_0,\eta_0,0),(q,u,\eta,0))\;\m{d}t.
		\end{equation}
		Hence, by the strong tame estimates on the second derivative from  Proposition~\ref{prop on smooth tameness of the momentum equation 1}, the log-convexity of the norm of the $\X_s$-spaces (see Lemma~\ref{lem on log-convexity of the norms}), and Young's inequality we get that for $\upnu\ge 1+\tfloor{n/2}$:
		\begin{equation}\label{Ohio}
			\tnorm{\bf{J}_1}_{H^s}\lesssim\rho\tnorm{q,u,\eta}_{\X_s}+\tnorm{q_0,u_0,\eta_0}_{\X_s}\tnorm{q,u,\eta}_{\X_{1+\tfloor{n/2}}}.
		\end{equation}
		On the other hand, by inspection we see that $D\Psi_2(0,\gam_0)(0,0,0,\gam)=0$. Thus by a similar fundamental theorem of calculus argument, we have that
		\begin{equation}
			\bf{J}_2=\int_0^1D^2\Psi_2(tw_0,\gam_0)((w_0,0),(0,0,0,\gam))\;\m{d}t.
		\end{equation}
		This lets us use the strong tame estimates on the second derivative again to bound
		\begin{equation}\label{Oklahoma}
			\tnorm{\bf{J}_2}_{H^s}\lesssim|\gam|\tnorm{q_0,u_0,\eta_0}_{\X_{s}}.
		\end{equation}
		By combining equations~\eqref{Ohio} and~\eqref{Oklahoma}, we obtain the stated bounds for $\overset{w_0,\gam_0}{P^2}$.
		
		\textbf{Step 3}: Estimates on $\overset{w_0,\gam_0}{P^3}$. We perform the same analysis, utilizing the fundamental theorem of calculus and the tame $C^2$-estimates as in the second step, but this time we employ Proposition~\ref{prop on smooth tameness of the dynamic boundary condition 1} in place of Proposition~\ref{prop on smooth tameness of the momentum equation 1}.
	\end{proof}

	The remainder of this subsection is devoted to the decomposition of the $D\Phi$ piece of $D\Bar{\Psi}$, which is derived from equations~\eqref{definition of the nonlinear map Phi2} and~\eqref{definition of the nonlinear map Phi3}, into subcomponents $Q$ and $R$. The key observation here is that $\Phi:B_{\X_{2+\tfloor{n/2}}}(0,\rho_{\m{WD}})\times\W_{3+\tfloor{n/2}}\to\Y^0$ is linear in the second factor, and hence for $\theta_0$ as in \eqref{theta_0_def} we can write, 
	\begin{equation}
		D\Phi(\theta_0)(q,u,\eta,\mathcal{T},\mathcal{G},\mathcal{F})=\overset{\theta_0}{Q}(q,u,\eta) + \overset{q_0,u_0,\eta_0}{R}(\mathcal{T},\mathcal{G},\mathcal{F})
	\end{equation}
	where
	\begin{equation}\label{definition of the operator Q}\index{\textbf{Linear maps}!40@$\overset{\theta_0}{Q}$}
		\overset{\theta_0}{Q}(q,u,\eta)= D\Phi(\theta_0)(q,u,\eta,0,0,0)
	\end{equation}
	and
	\begin{equation}\label{definition of the operator R}\index{\textbf{Linear maps}!41@$\overset{q_0,u_0,\eta_0}{R}$}
		\overset{q_0,u_0,\eta_0}{R}(\mathcal{T},\mathcal{G},\mathcal{F})=D\Phi(q_0,u_0,\eta_0,0,0,0)(0,0,0,\mathcal{T},\mathcal{G},\mathcal{F}).
	\end{equation}
 
	The following result reveals the point of the $Q+R$ decomposition of $D\Phi$: $Q$ is small when the background is small, and $R$ is independent of $(q,u,\eta)$ and thus enjoys a useful decoupling. In contrast with the $A+P$ decomposition (Proposition~\ref{prop on properties of the A+P decomposition}), the next result is less delicate and only relies on the smooth-tameness results from Propositions~\ref{prop on smooth tameness of the momentum equation 2} and~\ref{smooth tameness of dynamic boundary condition 2}.

	\begin{prop}[Properties of the $Q+R$ decomposition of $D\Phi$]\label{prop on the Q+R decomposition of DPhi}
		Let $0<\rho\le\rho_{\m{WD}}$, where $\rho_{\m{WD}}$ is defined in Theorem~\ref{thm on smooth tameness of the nonlinear operator}, and
		\begin{equation}
			\theta_0=(q_0,u_0,\eta_0,\mathcal{T}_0,\mathcal{G}_0,\mathcal{F}_0)\in \tp{B_{\X_{2+\tfloor{n/2}}}(0,\rho)\times B_{\W_{3+\tfloor{n/2}}}(0,\rho)}\cap(\X_\infty\times\W_{\infty}).
		\end{equation}
		The following hold for $\N\ni s\ge 2+\tfloor{n/2}$.
		\begin{enumerate}
			\item  The map $\overset{\theta_0}{Q}:\X_{s}\to\Y^s$ is well-defined, continuous, and obeys the estimate 
			\begin{equation}
				\snorm{\overset{\theta_0}{Q}(q,u,\eta)}_{\Y^s}\lesssim\rho\tnorm{q,u,\eta}_{\X_s}+\tnorm{\theta_0}_{\X_s\times\W_{1+s}}\tnorm{q,u,\eta}_{\X_{2+\tfloor{n/2}}}.
			\end{equation}
			\item The map $\overset{q_0,u_0,\eta_0}{R}:\W_{1+s}\to\Y^{s}$ is well-defined, continuous, and obeys the estimate
			\begin{equation}
				\snorm{\overset{q_0,u_0,\eta_0}{R}(\mathcal{T},\mathcal{G},\mathcal{F})}_{\Y^s}\lesssim\tnorm{\mathcal{T},\mathcal{G},\mathcal{F}}_{\W_{1+s}}+\tbr{\tnorm{q_0,u_0,\eta_0}_{\X_s}}\tnorm{\mathcal{T},\mathcal{G},\mathcal{F}}_{\W_{3+\tfloor{n/2}}}.
			\end{equation}
		\end{enumerate}
		In the above the implicit constants depend on the dimension, the physical parameters, $s$, and $\rho_{\m{WD}}$.
	\end{prop}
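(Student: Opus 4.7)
The plan rests on two structural features of $\Phi$. First, from~\eqref{hey i think this may be the last thing I label}, \eqref{definition of the nonlinear map Phi2}, and~\eqref{definition of the nonlinear map Phi3}, the map $\Phi$ is linear in the data triple $(\mathcal{T},\mathcal{G},\mathcal{F})$ and vanishes when that data triple is zero; moreover, $\Phi_2$ depends on $(q,u,\eta)$ only through $(q,\eta)$ and $\Phi_3$ only through $\eta$, so the boundary trace conditions distinguishing $\X^s$ from $\X_s$ play no role, and all tame estimates extend verbatim from $\X^s$ to $\X_s$. Second, Propositions~\ref{prop on smooth tameness of the momentum equation 2} and~\ref{smooth tameness of dynamic boundary condition 2} with $m=1$ give the inclusion $\Phi\in{_{\m{s}}}T^1_{0,2+\tfloor{n/2}}(B_{\X^{2+\tfloor{n/2}}}(0,\rho_{\m{WD}})\times\W_{3+\tfloor{n/2}},\pmb{\X}\times\pmb{\W};\pmb{\Y})$, so both $\Phi$ and $D\Phi$ are tame. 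I will feed these structural observations into the multilinear characterization of tameness from Lemma~\ref{lem on multilinear tame maps}.

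For the second item of the proposition, the linearity of $\Phi$ in the data yields the identity $\overset{q_0,u_0,\eta_0}{R}(\mathcal{T},\mathcal{G},\mathcal{F})=\Phi(q_0,u_0,\eta_0,\mathcal{T},\mathcal{G},\mathcal{F})$. I would apply Lemma~\ref{lem on multilinear tame maps} to $\Phi$ with $k=1$, taking $(q,u,\eta)$ as the nonlinear base argument and $(\mathcal{T},\mathcal{G},\mathcal{F})$ as the single linear argument; with base and high scale indices $\W_{3+\tfloor{n/2}}$ and $\W_{1+s}$, this produces precisely the claimed estimate on $R$. Continuity and well-definedness are automatic from the strong tame smoothness of $\Phi$.

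For the first item, the data linearity of $\Phi$ forces $D_{(q,u,\eta)}\Phi$ to be linear in the data as well. Consequently, for fixed $(q_0,u_0,\eta_0)$, the map $((q,u,\eta),(\mathcal{T}_0,\mathcal{G}_0,\mathcal{F}_0))\mapsto D_{(q,u,\eta)}\Phi(q_0,u_0,\eta_0,\mathcal{T}_0,\mathcal{G}_0,\mathcal{F}_0)(q,u,\eta,0,0,0)=\overset{\theta_0}{Q}(q,u,\eta)$ is bilinear, and it is tame as a restriction of the tame map $D\Phi$. Invoking Lemma~\ref{lem on multilinear tame maps} with $k=2$ produces three terms, each the product of a base-level norm of one argument with a high-level norm of another. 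Applying the hypothesis $\tnorm{\mathcal{T}_0,\mathcal{G}_0,\mathcal{F}_0}_{\W_{3+\tfloor{n/2}}}\le\rho$ converts the two terms carrying the base-level data norm into contributions bounded by $\rho\tnorm{q,u,\eta}_{\X_s}$ and $\rho\tnorm{q_0,u_0,\eta_0}_{\X_s}\tnorm{q,u,\eta}_{\X_{2+\tfloor{n/2}}}$, while the remaining term yields $\tnorm{\mathcal{T}_0,\mathcal{G}_0,\mathcal{F}_0}_{\W_{1+s}}\tnorm{q,u,\eta}_{\X_{2+\tfloor{n/2}}}$. Collecting these against $\tnorm{\theta_0}_{\X_s\times\W_{1+s}}$ delivers the claimed bound.

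The main obstacle I anticipate is extracting the $\rho$ smallness factor in the $Q$ estimate, which is indispensable for treating $Q$ perturbatively in the forthcoming linear analysis. A direct application of the derivative estimates in Lemma~\ref{lem on derivative estimates on tame maps} to $D\Phi$ would produce a term of the form $\tnorm{q,u,\eta}_{\X_s}$ with no $\rho$ prefactor, which is too weak. The key to overcoming this is the observation that $\Phi$ is linear in the data, which transfers to make $Q$ bilinear in $((q,u,\eta),(\mathcal{T}_0,\mathcal{G}_0,\mathcal{F}_0))$; the bilinear estimate from Lemma~\ref{lem on multilinear tame maps} then automatically pairs every $\tnorm{q,u,\eta}_{\X_s}$ occurrence with a $\tnorm{\mathcal{T}_0,\mathcal{G}_0,\mathcal{F}_0}_{\W_{3+\tfloor{n/2}}}\le\rho$ factor, recovering the required smallness.
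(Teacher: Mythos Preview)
Your proposal is correct and follows essentially the same line as the paper's proof. Both arguments hinge on the data-linearity of $\Phi$: the paper first applies Lemma~\ref{lem on derivative estimates on tame maps} to $D\Phi$ to obtain a basic tame estimate and then, exactly as you do, exploits the linearity of $(\mathcal{T}_0,\mathcal{G}_0,\mathcal{F}_0)\mapsto\overset{\theta_0}{Q}$ by ``arguing as in the proof of Lemma~\ref{lem on multilinear tame maps}'' to extract the $\rho$ factor; you simply invoke Lemma~\ref{lem on multilinear tame maps} with $k=2$ directly, which is the same content. Your identification $\overset{q_0,u_0,\eta_0}{R}(\mathcal{T},\mathcal{G},\mathcal{F})=\Phi(q_0,u_0,\eta_0,\mathcal{T},\mathcal{G},\mathcal{F})$ is a clean shortcut for the second item, whereas the paper works from the definition of $R$ as a derivative and applies Lemma~\ref{lem on derivative estimates on tame maps}; both routes yield the same bound.
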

	\begin{proof}

		That $\overset{\theta_0}{Q}$ is well-defined, continuous, and, when $\rho=\rho_{\m{WD}}$ and $s\ge 2+\tfloor{n/2}$, obeys a tame estimate of the form
		\begin{equation}\label{basic tame estimate}
			\snorm{\overset{\theta_0}{Q}(q,u,\eta)}_{\Y^s}\lesssim\tnorm{q,u,\eta}_{\X_s}+\tbr{\tnorm{\theta_0}_{\X_s\times\W_{1+s}}}\tnorm{q,u,\eta}_{\X_{2+\tfloor{n/2}}}
		\end{equation}
		follows from the formula~\eqref{definition of the operator Q}, the tame smoothness assertions of Propositions~\ref{prop on smooth tameness of the momentum equation 2} and~\ref{smooth tameness of dynamic boundary condition 2}, and the tame estimates on derivatives from Lemma~\ref{lem on derivative estimates on tame maps}. We can improve the above estimate by leveraging the fact that for fixed $(q_0,u_0,\eta_0)$ the map $(\mathcal{T}_0,\mathcal{G}_0,\mathcal{F}_0)\mapsto\overset{\theta_0}{Q}$ is linear; indeed, by arguing as in the proof of Lemma~\ref{lem on multilinear tame maps}, we deduce from estimate~\eqref{basic tame estimate} that actually
		\begin{equation}
			\snorm{\overset{\theta_0}{Q}(q,u,\eta)}_{\Y^s}\lesssim\tnorm{\mathcal{T}_0,\mathcal{G}_0,\mathcal{F}_0}_{\W_{3+\tfloor{n/2}}}\tnorm{q,u,\eta}_{\X_s}+\tnorm{\theta_0}_{\X_s\times\W_{1+s}}\tnorm{q,u,\eta}_{\X_{2+\tfloor{n/2}}}.
		\end{equation}
		The first item now follows. The assertions and estimate of the second item are a direct consequence of formula~\eqref{definition of the operator R}, the tame smoothness assertions of Propositions~\ref{prop on smooth tameness of the momentum equation 2} and~\ref{smooth tameness of dynamic boundary condition 2},and the tame estimates on derivatives from Lemma~\ref{lem on derivative estimates on tame maps}. 
	\end{proof}
	
	\subsection{Prelude to linear analysis}\label{subsection on notation and overview for linear analysis}
	
	Recall that $\Bar{\Psi}$ is the nonlinear operator associated with the PDE~\eqref{The nonlinear equations in the right form} and is defined in \eqref{the definition of the nonlinear operator}. Our goal now is to prove that this map satisfies the hypotheses of the inverse function theorem, which are enumerated in Definition~\ref{defn of the mapping hypotheses}, Theorem~\ref{thm on nmh}, and Theorem~\ref{thm on further conclusions of the inverse function theorem}. The previous analysis of this section, in particular Lemma~\ref{lem on tameness of domain and codomain} and Theorem~\ref{thm on smooth tameness of the nonlinear operator}, shows that the `nonlinear hypotheses,' other than a trivial issue with the domain and the first item of Definition~\ref{defn of the mapping hypotheses}, of this inverse function theorem are satisfied.
	
	It only remains, then, to show that the `linear hypotheses' given in the third item of Definition~\ref{defn of the mapping hypotheses} are satisfied. In other words, we aim to prove the following assertion about the derivative, $D\Bar{\Psi}$.  Given $\N\ni s\ge2+\tfloor{n/2}$ and an interval $I\Subset\R^+$, there exists an \emph{existence and estimates} parameter $\rho_{\m{EE}}(s)\in(0,\rho_{\m{WD}}]$ (also depending on $I$), where $\rho_{\m{WD}}\in\R^+$ is from Theorem~\ref{thm on smooth tameness of the nonlinear operator}, with the property that whenever (recall that $\theta_0$ is defined in \eqref{theta_0_def})
	\begin{equation}
		(\theta_0,\gam_0)\in \tp{B_{\X^{2+\tfloor{n/2}}}(0,\rho_{\m{EE}}(s))\times B_{\W_{3+\tfloor{n/2}}}(0,\rho_{\m{EE}}(s))\times I}\cap\tp{\X^{1+s}\times\W_{2+s}\times\R^+}
	\end{equation}
	and
	\begin{equation}
		(g,f,k,\mathcal{T},\mathcal{G},\mathcal{F},\gam)\in\Y^s\times\W_{1+s}\times\R,
	\end{equation}
	there exists a unique $(q,u,\eta)\in\X^s$ solving $D\Bar{\Psi}(\theta_0,\gam_0)(q,u,\eta,\mathcal{T},\mathcal{G},\mathcal{F},\gam)=(g,f,k,\mathcal{T},\mathcal{G},\mathcal{F},\gam)$; moreover, if we set $\Xi=(g,f,k,\mathcal{T},\mathcal{G},\mathcal{F},\gam)$ and $\Theta=(q,u,\eta,\mathcal{T},\mathcal{G},\mathcal{F},\gam)$ then the solution tuple obeys the tame estimate
	\begin{equation}
		\tnorm{\Theta}_{\X^s\times\W_{1+s}\times\R}\lesssim\tnorm{\Xi}_{\Y^s\times\W_{1+s}\times\R}+\tbr{\tnorm{\theta_0}_{\X^{1+s}\times\W_{2+s}}}\tnorm{\Xi}_{\X^{2+\tfloor{n/2}}\times\W_{3+\tfloor{n/2}}\times\R}
	\end{equation}
	for an implicit constant depending only on the dimension, the various physical parameters, $s$, $I$, and $\rho_{\m{EE}}(s)$.

	Thanks to the analysis of the previous subsection, namely Propositions~\ref{prop on properties of the A+P decomposition} and~\ref{prop on the Q+R decomposition of DPhi}, we have that the behavior of $D\Bar{\Psi}$ is governed by the principal part operator $A$ defined in~\eqref{definition of the principal part operator}. Thus our above goal is essentially achieved, modulo minor supplementary analysis, as soon as we prove the following assertion. For any $s\in\N$ and $\gam_0\in I\Subset\R^+$an interval, there exists an \emph{existence and estimates principal part} parameter $\rho_{\m{EEP}}(s)\in(0,\rho_{\m{WD}}]$ (also depending on $I$) with the property that whenever
	\begin{equation}\label{such that whenever wine is red}
		w_0=(q_0,u_0,\eta_0)\in B_{\X^{2+\tfloor{n/2}}}(0,\rho_{\m{EEP}}(s))\cap\X^\infty 
		\text{ and }
		(g,f,k)\in\Y^s,
	\end{equation}
	there exists a unique $(q,u,\eta)\in\X^s$ solving $\overset{w_0,\gam_0}{A}(q,u,\eta)=(g,f,k)$; moreover, the solution obeys the tame estimates
	\begin{equation}\label{tames estimate goal for the principal part}
		\tnorm{q,u,\eta}_{\X^s}\lesssim\tnorm{g,f,k}_{\Y^s}+\begin{cases}
			0&\text{if }s\le\tfloor{n/2},\\
			\tbr{\tnorm{q_0,u_0,\eta_0}_{\X_{1+s}}}\tnorm{q,u,\eta}_{\X_{\tfloor{n/2}}}&\text{if }\tfloor{n/2}<s.
		\end{cases}
	\end{equation}
	Again we allow for implied constants to depend on the dimension, the physical parameters, $s$, $I$, and $\rho_{\m{EEP}}(s)$. We will see that a necessary condition for the estimate \eqref{tames estimate goal for the principal part} to hold is that the operator $\overset{w_0,\gam_0}{A}:\overset{q_0,u_0,\eta_0}{\X^s}\to\Y^s$ is a Banach isomorphism, where we recall the adapted spaces are defined in~\eqref{Oregon}. This expresses one of the core difficulties of the linear analysis: we either have that the family of operators $\tcb{\overset{w_0,\gam_0}{A}}\subset\mathcal{L}(\X^{1+s};\Y^s)$ are defined on the common Banach space $\X^{1+s}$ but \emph{not} isomorphisms, or else each individual operator $\overset{w_0,\gam_0}{A}$ is defined on a larger adapted Banach space $\overset{q_0,u_0,\eta_0}{\X^s}$ making it an isomorphism onto $\Y^s$, but the spaces $\tcb{\overset{q_0,u_0,\eta_0}{\X^{s}}}$ are \emph{inequivalent}. Consequently, we cannot port invertibility from one operator to the next via the method of continuity, even if we were to establish the estimates of~\eqref{tames estimate goal for the principal part} a priori.

	We overcome this difficulty via an elliptic regularization procedure which we now describe. For $m\in\N^+$ we define the $2m^{\m{th}}$-order linear elliptic differential operator
	\begin{equation}\label{the operator Lm}\index{\textbf{Linear maps}!12@$L_m$}
		L_m=(-1)^m\sum_{j=1}^n\pd_j^{2m}.
	\end{equation}
	We will consider a sequence of operators obtained by adding a vanishing contribution of this operator to the continuity equation component of $A$ and also adding a vanishing contribution of $(-\Delta_{\|})^{m-1/4}$ to the kinematic boundary condition. These new operators have the following domains: for $m,N\in\N^+$ and $s\in\tcb{-1}\cup\N$ we define the space\index{\textbf{Function spaces}!14@$\X^s_{m,N}$}
	\begin{multline}\label{the regularized spaces}
		\X^s_{m,N}=\{(q,u,\eta)\in\X_s\;:\;\m{Tr}_{\Sigma_0}(u)=0,\;\m{Tr}_{\Sigma}(u\cdot e_n)+\pd_1\eta=N^{-1}(-\Delta_{\|})^{m-1/4}\eta\\
		(q,\eta)\in H^{1+s+2m}(\Omega)\times\mathcal{H}^{1+s+2m}(\Sigma),\;\m{Tr}_{\pd\Omega}(\pd_n^mq)=\cdots=\m{Tr}_{\pd\Omega}(\pd_n^{2m-1}q)=0\}
	\end{multline}
	and endow it with the norms
	\begin{equation}
		\tnorm{q,u,\eta}_{\X^s_{m,N}}=\sqrt{\tnorm{q,u,\eta}_{\X_s}^2+N^{-2}\tnorm{q,\eta}^2_{H^{1+s+2m}\times\mathcal{H}^{1+s+2m}}},
	\end{equation}
	and
	\begin{equation}\label{the adapted norm on the regularized spaces}
		\tnorm{q,u,\eta}_{\overset{q_0,u_0,\eta_0}{\X^s_{m,N}}}=\sqrt{\tnorm{q,u,\eta}_{\X^s_{m,N}}^2+\tnorm{\grad\cdot(v_{q_0,u_0,\eta_0}q)}_{H^{1+s}}^2}.
	\end{equation}
 We note that, in light of the fourth item of Lemma~\ref{properties of the principal parts vector field},  $\tnorm{\cdot}_{\overset{q_0,u_0,\eta_0}{\X^s_{m,N}}}$ is a norm equivalent to $\tnorm{\cdot}_{\X^s_{m,N}}$,  with equivalence constants depending on $s$, $m$, $N$, and $\rho_{\m{WD}}$.

	For $s \in \N$ we then define the regularized principal part operator 
	\begin{equation}\label{the map of the regularized principal part operator}\index{\textbf{Linear maps}!31@$\overset{w_0,\gam_0}{A_{m,N}}$}
		\overset{w_0,\gam_0}{A_{m,N}}:\X^s_{m,N}\to\Y^s
	\end{equation}
	via
	\begin{equation}\label{the application of the map of the reg princ part oper and also will be known as ziggy mcpiggson pigface ponylicker frogs made of wicker but not a football kicker boy i sure do have a problem i need to stop labeling like this}
		\overset{w_0,\gam_0}{A_{m,N}}(q,u,\eta)=\tp{N^{-1}L_m(q+\mathfrak{g}\eta)+\overset{w_0,\gam_0}{A^1}(q,u,\eta),\overset{\gam_0}{A^2}(q,u,\eta),\overset{\gam_0}{A^3}(q,u,\eta)},
	\end{equation}
    where the $A^i$ terms are the same as in \eqref{definition of the principal part operator}.
    
	The upshot is that the operators $\tcb{\overset{w_0,\gam_0}{A_{m,N}}}$ are all contained in the space $\mathcal{L}(\X^s_{m,N},\Y^s)$ and, as we will see, obey a family of nice a priori estimates. Thus the method of continuity is available for the regularized operators' existence theory.
	
	Once we have existence for the regularized operators, we would like to show that we can pass to the limit as $N\to\infty$ and obtain existence for $A$. This is achieved by carefully proving $N$-independent a priori estimates for $A_{m,N}$ by inductively working up from the base case of a priori estimates for weak solutions. This inductive estimate procedure will also be done in tandem with the operator $A$ to obtain the sought-after a priori estimates of~\eqref{tames estimate goal for the principal part}.
	
	As a result, we will need to set notation for weak formulations of the operators $A$ and $A_{m,N}$. First we recall the definition of the space ${_0}H^1$ from~\eqref{zero on the left}. The operator associated with the weak formulation of the momentum equation is 
	\begin{equation}\label{definition of the I functional}\index{\textbf{Linear maps}!70@$\overset{\gam_0}{\mathscr{I}}$}
		\overset{\gam_0}{\mathscr{I}}:\X_{-1}\to({_0}H^1(\Omega;\R^n))^\ast
	\end{equation}
	defined by
	\begin{multline}
		\tbr{\overset{\gam_0}{\mathscr{I}}(q,u,\eta),w}_{({_0}H^1)^\ast,{_0}H^1}=\int_{\Omega}-\gam_0^2\varrho\pd_1u\cdot w-q\grad\cdot(\varrho w)+\mathfrak{g}\varrho\grad\eta\cdot w+\gam_0\S^\varrho u:\grad w\\
		-\varsigma\tbr{\Delta_{\|}\eta,\m{Tr}_{\Sigma}(w\cdot e_n)}_{H^{-1/2},H^{1/2}}
	\end{multline}
	for $(q,u,\eta)\in\X_{-1}$ and $w\in{_0}H^1(\Omega;\R^n)$.  We next define a family of operators associated with the weak formulation of the full problem by setting
	\begin{equation}\label{weak formulation operator}\index{\textbf{Linear maps}!71@$\overset{w_0\gam_0}{\mathscr{J}}$}
		\overset{w_0,\gam_0}{\mathscr{J}}:\overset{q_0,u_0,\eta_0}{\X^{-1}}\to\Y^{-1}
	\end{equation}
	via
	\begin{equation}
		\overset{w_0,\gam_0}{\mathscr{J}}(q,u,\eta)=\sp{\grad\cdot(\varrho u)+\grad\cdot(v_{w_0}(q+\mathfrak{g}\eta)),\overset{\gam_0}{\mathscr{I}}(q,u,\eta)},
	\end{equation}
	where we recall that the vector field $v_{w_0}$ is defined in~\eqref{definition of the vector field vnaught}. Furthermore, given $\tau\in[0,1]$ and $m,N\in\N^+$ with $m\ge 2$ we define the map
	\begin{equation}\label{regularized weak formulation operator}\index{\textbf{Linear maps}!72@$\overset{w_0\gam_0}{\mathscr{J}^\tau_{m,N}}$}
		\overset{w_0,\gam_0}{\mathscr{J}^\tau_{m,N}}:\X_{m,N}^{-1}\to\Y^{-1}
	\end{equation}
	via
	\begin{equation}
		\overset{w_0,\gam_0}{\mathscr{J}^\tau_{m,N}}(q,u,\eta)=\tp{\grad\cdot(\varrho u)+\tau\grad\cdot(v_{w_0}(q+\mathfrak{g}\eta))+N^{-1}L_m(q+\mathfrak{g}\eta),\overset{\gam_0}{\mathscr{I}}(q,u,\eta)}.
	\end{equation}

	We now give a basic result on the well-definedness and continuity of these operators.
	
	\begin{lem}[Well-definedness check for linear analysis]\label{lem on well-definedness check for linear analysis}
		Let $\rho\in\R^+$ and $w_0=(q_0,u_0,\eta_0)$ be as in the hypotheses of Lemma~\ref{properties of the principal parts vector field}. Let $I\Subset\R^+$ be an interval with $\gam_0\in I$. The following hold.
		\begin{enumerate}
			\item The weak formulation operator of~\eqref{weak formulation operator} is well-defined and bounded.
			\item For $\N\ni m\ge 2$, $N\in\N^+$, and $\tau\in[0,1]$, the regularized weak formulation operator of~\eqref{regularized weak formulation operator} is well-defined and bounded.
			\item For $\N\ni m\ge 2$ and $N\in\N^+$ the regularization of the principal part operator defined by \eqref{the map of the regularized principal part operator} is well-defined and bounded.
		\end{enumerate}
	\end{lem}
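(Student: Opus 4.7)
The three items share the same structural skeleton: each operator is a sum of bilinear or linear terms, and each component's membership in the target space has been prepared by the machinery already developed in Sections~\ref{section on scales of Banach spaces}--\ref{subsection on derivative splitting and notation for linear analysis}. Consequently, the proof reduces to assembling estimates that are essentially routine once the correct spaces are lined up. The plan is to verify the second coordinate of all three operators once and for all, then address the first coordinate in each case separately, since this is where the different spaces interact nontrivially.

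The second coordinate, $\overset{\gam_0}{\mathscr{I}}(q,u,\eta)$, is a sum of five bilinear forms tested against $w \in {_0}H^1(\Omega;\R^n)$. Each term is controlled by Cauchy--Schwarz and the Poincar\'e inequality (which applies thanks to $\m{Tr}_{\Sigma_0}w = 0$), using the spatial characterization of $\mathcal{H}^{3/2+s}(\Sigma)$ from Proposition~\ref{proposition on spatial characterization of anisobros} for the gravity term, standard trace theory for the viscous and pressure contributions, and the $(H^{-1/2},H^{1/2})$--duality on $\Sigma$ for the surface tension term. This establishes boundedness of $\overset{\gam_0}{\mathscr{I}}:\X_{-1} \to ({_0}H^1(\Omega;\R^n))^\ast$, which covers the second coordinate in all three items simultaneously.

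For the first coordinate in item~(1), split it into $\grad\cdot(\varrho u)$, which belongs to $L^2(\Omega)$ from $u\in H^1(\Omega;\R^n)$; and $\grad\cdot(v_{w_0}(q+\mathfrak{g}\eta))$, which by the fourth item of Lemma~\ref{properties of the principal parts vector field} combined with the adapted norm~\eqref{adapted_space_norm_def} lies in $\hat{H}^0(\Omega)$. To finish membership in $\hat{H}^0(\Omega)$, the boundary conditions $\m{Tr}_{\Sigma_0}u=0$ and $\m{Tr}_\Sigma(u\cdot e_n) + \pd_1\eta = 0$ built into $\X^{-1}$ yield
\[
\int_0^b \grad\cdot(\varrho u)(\cdot,y)\,\m{d}y = (\grad_{\|},0)\cdot\int_0^b \varrho u(\cdot,y)\,\m{d}y - \varrho(b)\pd_1\eta,
\]
which lies in $\dot{H}^{-1}(\Sigma)$ via the Fourier-side definition~\eqref{dotHminus1_def} and Proposition~\ref{proposition on spatial characterization of anisobros}. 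For items~(2) and~(3) the same strategy applies with three modifications: the modified kinematic condition in~\eqref{the regularized spaces} adds the extra term $-\varrho(b)N^{-1}(-\Delta_{\|})^{m-1/4}\eta$ to the identity above, which remains in $\dot{H}^{-1}(\Sigma)$ thanks to $\eta \in \mathcal{H}^{1+s+2m}(\Sigma)$; the elliptic regularization $N^{-1}L_m(q+\mathfrak{g}\eta)$ lies in $H^{1+s}(\Omega)$ directly by the definition~\eqref{the operator Lm} of $L_m$ and the $2m$ extra derivatives encoded in $\X^s_{m,N}$; and when integrated in $y$ its $\pd_n^{2m}$ contribution vanishes via the Neumann conditions $\pd_n^m q = \cdots = \pd_n^{2m-1}q = 0$ on $\pd\Omega$ in~\eqref{the regularized spaces}, while the tangential $\pd_j^{2m}$ contributions produce distributions in $\dot{H}^{-1}(\Sigma)$ by a direct Fourier computation using the regularity of $q$ and $\eta$. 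The nonlinear-in-$(q,u,\eta)$ term $\tau\grad\cdot(v_{w_0}(q+\mathfrak{g}\eta))$ in items~(2)--(3) is controlled in $\hat{H}^{1+s}(\Omega)$ by the fourth item of Lemma~\ref{properties of the principal parts vector field} without needing the adapted norm, since the additional derivatives on $(q,\eta)$ sitting inside $\X^s_{m,N}$ suffice.

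The main obstacle is the simultaneous bookkeeping for the $\dot{H}^{-1}(\Sigma)$ condition in items~(2) and~(3): one must match the elliptic operator $L_m$, the artificial Neumann conditions on $q$, and the modified kinematic condition on $\eta$ against each other so that the boundary contributions cancel and the interior contributions can be Fourier-analyzed. Once this alignment is in place, boundedness of each of the three operators follows by tracking constants through the estimates above.
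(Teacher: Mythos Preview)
Your approach matches the paper's and your handling of items~(2) and~(3) is correct, but there is one imprecision in item~(1) that you should tighten. You invoke the fourth item of Lemma~\ref{properties of the principal parts vector field} for the full term $\grad\cdot(v_{w_0}(q+\mathfrak{g}\eta))$, but that item requires $q\in H^{1+s}(\Omega)$ for some $s\in\N$, whereas in $\overset{q_0,u_0,\eta_0}{\X^{-1}}$ one only has $q\in L^2(\Omega)$. The adapted norm~\eqref{adapted_space_norm_def} does give $\grad\cdot(v_{w_0}q)\in L^2(\Omega)$, but neither ingredient you cite explains why $\int_0^b\grad\cdot(v_{w_0}q)(\cdot,y)\,\m{d}y$ lands in $\dot{H}^{-1}(\Sigma)$, which is the remaining half of the $\hat{H}^0(\Omega)$ membership.

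The paper closes this by splitting off the $\eta$ contribution (to which item~(iv) does apply, since $\eta\in\mathcal{H}^{3/2}(\Sigma)\subset\mathcal{H}^1(\Sigma)$) and then invoking Proposition~\ref{prop on refined divergence compatibility estimate} for the $q$ piece alone: since $v_{w_0}\in W^{1,\infty}(\Omega;\R^n)$ with $\m{Tr}_{\pd\Omega}(v_{w_0}\cdot e_n)=0$ and $q\in H^0_{v_{w_0}}(\Omega)$ by the adapted norm, that proposition yields
\[
\bsb{\int_0^b\grad\cdot(v_{w_0}q)(\cdot,y)\,\m{d}y}_{\dot{H}^{-1}}\lesssim\tnorm{v_{w_0}q}_{L^2}\lesssim\tnorm{q}_{L^2}.
\]
With this single correction your argument is complete.
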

	\begin{proof}
		For the first item we see that it suffices to check that the first component of $\overset{\gam_0,w_0}{\mathscr{J}}$ is well-defined and maps boundedly into $\hat{H}^0(\Omega)$. For this we can argue in a manner similar to the first part of the proof of Proposition~\ref{prop on properties of the A+P decomposition}, where we studied~\eqref{vanilla chai tea}. The only difference is in handling the $\grad\cdot(v_{w_0}q)$ term. That this term maps boundedly into $L^2(\Omega)$ is immediate from the definition of the norm on $\overset{q_0,u_0,\eta_0}{\X^{-1}}$. We then use Proposition~\ref{prop on refined divergence compatibility estimate}, paired with the fact that $v_{w_0}\in L^\infty(\Omega;\R^n)$, to obtain that $q\mapsto\int_0^{b}\grad\cdot(v_{w_0}q)(\cdot,y)\;\m{d}y$ maps boundedly into $\dot{H}^{-1}(\Omega)$.
			
		The second and third items follow as soon as we check that 
		\begin{equation}\label{jo jo was a man}
			\X^{-1}_{m,N}\ni(q,u,\eta)\mapsto\grad\cdot(\varrho u)+\tau\grad\cdot(v_{w_0}(q+\mathfrak{g}\eta))+N^{-1}L_m(q+\mathfrak{g}\eta)\in\hat{H}^0(\Omega)
		\end{equation}
		is a well-defined and bounded linear map.  We handle the $\tau\grad\cdot(v_{w_0}(q+\mathfrak{g}\eta))$ term in the same way that we handled~\eqref{lechee tastes to me as i taste to the lechee} from Proposition~\ref{prop on properties of the A+P decomposition}. For the $N^{-1}L_m(q+\mathfrak{g}\eta)$ term we see that it obviously maps into $L^2(\Omega)$. To acquire the divergence compatibility condition, we use the Neumann boundary conditions built into the domain $\X^{-1}_{m,N}$ to compute
		\begin{equation}
			\int_0^b\f{1}{N}L_m(q+\mathfrak{g}\eta)(\cdot,y)\;\m{d}y
			= \f{1}{N}\sum_{j=1}^{n-1}\pd_j^{2m}\bp{\mathfrak{g}b\eta+\int_0^bq(\cdot,y)\;\m{d}y},
		\end{equation}
		but since $m\ge 1$, this provides the bound
		\begin{equation}
			\bsb{\int_0^b\f{1}{N}L_m(q+\mathfrak{g}\eta)(\cdot,y)\;\m{d}y}_{\dot{H}^{-1}}\lesssim\f{1}{N}\tp{\tnorm{\grad_{\|}\eta}_{H^{2m-2}}+\tnorm{q}_{H^{2m-1}}}\lesssim\tnorm{q,u,\eta}_{\X^{-1}_{m,N}}.
		\end{equation}
		Finally we handle the $\grad\cdot(\varrho u)$ term of~\eqref{jo jo was a man}. Evidently, this maps into $L^2(\Omega)$, leaving us to check the divergence compatibility condition. By integrating, we learn that
		\begin{equation}
			\int_0^b\grad\cdot(\varrho u)(\cdot,y)\;\m{d}y=(\grad_{\|},0)\cdot\int_0^b(\varrho u)(\cdot,y)\;\m{d}y-\varrho(b)\pd_1\eta+\f{\varrho(b)}{N}(-\Delta_{\|})^{m-1/4}\eta,
		\end{equation}
		and since $m\ge 2$ this yields the estimate
		\begin{equation}
			\bsb{\int_0^b\grad\cdot(\varrho u)(\cdot,y)\;\m{d}y}_{\dot{H}^{-1}}\lesssim\tnorm{u}_{L^2}+\tnorm{\eta}_{\mathcal{H}^0}+\f{1}{N}\tnorm{\grad_{\|}\eta}_{H^{2m-5/2}}\lesssim\tnorm{q,u,\eta}_{\X^{-1}_{m,N}}.
		\end{equation}
	\end{proof}
	
	We now consider the relationship between the weak and strong operators, $\mathscr{J}$, $\mathscr{J}^\tau_{m,N}$ and $A$, $A_{m,N}$,  by introducing a functional that maps the strong form of the data to the weak formulation of the data.  For $s\in\N$ we define the strong-to-weak data map
	\begin{equation}\label{definition of the K functional dude lies here rest in peace and serrano peppers}\index{\textbf{Linear maps}!80@$\mathscr{K}$}
		\mathscr{K}:H^{s}(\Omega;\R^n)\times H^{1/2+s}(\Sigma;\R^n)\to({_0}H^1(\Omega;\R^n))^\ast
	\end{equation}
	given by
	\begin{equation}
		\tbr{\mathscr{K}(f,k),w}_{({_0}H^1)^\ast,{_0}H^1}=\int_\Omega f\cdot w+\int_{\Sigma}k\cdot w,
	\end{equation}
	where $w\in{_0}H^1(\Omega;\R^n)$, $(f,k)\in H^{s}(\Omega;\R^n)\times H^{1/2+s}(\Sigma;\R^n)$, and $s\in\N$.
	
	\begin{lem}[Strong and weak solutions]\label{lem on strong solutions are weak solutions}
		Under the hypotheses of Lemma~\ref{lem on well-definedness check for linear analysis}, the following hold for $(g,f,k)\in\Y^0$.
		\begin{enumerate}
			\item If $(q,u,\eta)\in\overset{q_0,u_0,\eta_0}{\X^0}$, then $\overset{w_0,\gam_0}{A}(q,u,\eta)=(g,f,k)$ if and only if $\overset{w_0,\gam_0}{\mathscr{J}}(q,u,\eta)=(g,\mathscr{K}(f,k))$.
			\item If $m,N\in\N^+$, $m\ge 2$, and $(q,u,\eta)\in\X^0_{m,N}$, then $\overset{w_0,\gam_0}{A_{m,N}}(q,u,\eta)=(g,f,k)$ if and only if $\overset{w_0,\gam_0}{\mathscr{J}^1_{m,N}}(q,u,\eta)=(g,\mathscr{K}(f,k))$.
		\end{enumerate}
	\end{lem}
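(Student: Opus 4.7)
The proof amounts to establishing the standard weak-strong equivalence via integration by parts, with the added observation that the regularization terms and the various Neumann/kinematic boundary conditions appear identically in the strong and weak versions because they are either built directly into the domain $\X^0_{m,N}$ or appear with the same formula in both the first component of $A_{m,N}$ and of $\mathscr{J}^1_{m,N}$. I would present the two items simultaneously.

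First, I would observe that the first component (the continuity equation) is literally the same expression on both sides: $\overset{w_0}{A^1}(q,u,\eta) = \grad\cdot(\varrho u) + \grad\cdot(v_{w_0}(q+\mathfrak{g}\eta))$ in the strong case, with the extra summand $N^{-1}L_m(q+\mathfrak{g}\eta)$ in the regularized case, and both of these expressions appear with equal formulas in the first slot of $\mathscr{J}$ and $\mathscr{J}^1_{m,N}$. Hence the equalities of first components transfer verbatim between strong and weak.

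Next, the key lemma is the following integration-by-parts identity, valid for $(q,u,\eta)\in\overset{q_0,u_0,\eta_0}{\X^0}$ or $(q,u,\eta)\in\X^0_{m,N}$ (both of which enforce $u|_{\Sigma_0}=0$) and any test field $w\in{_0}H^1(\Omega;\R^n)$:
\begin{equation}
\bbr{\overset{\gam_0}{\mathscr{I}}(q,u,\eta)-\mathscr{K}(f,k),w}_{({_0}H^1)^\ast,{_0}H^1}
=\int_\Omega \tp{\overset{\gam_0}{A^2}(q,u,\eta)-f}\cdot w+\int_\Sigma\tp{\overset{\gam_0}{A^3}(q,u,\eta)-k}\cdot w.
\end{equation}
To verify this I would integrate $\int_\Omega \varrho\grad q\cdot w$ by parts against $\grad\cdot(\varrho w)$, picking up the boundary term $\int_\Sigma \varrho q\,w\cdot e_n$ (with no contribution on $\Sigma_0$ since $w$ vanishes there); then integrate $-\gam_0\int_\Omega (\grad\cdot\S^\varrho u)\cdot w$ by parts against $\grad w$, picking up $-\gam_0\int_\Sigma \S^\varrho u\,e_n\cdot w$; interpret $\grad(\mathfrak{g}\eta)$ as $(\mathfrak{g}\grad_\|\eta,0)$ via the constant-in-$y$ extension of $\eta$, so that the $\mathfrak{g}\varrho\grad\eta\cdot w$ term in $\mathscr{I}$ matches exactly the $\varrho\grad(\mathfrak{g}\eta)\cdot w$ term in $A^2$; and combine with the dynamic boundary expression $\overset{\gam_0}{A^3}(q,u,\eta)=-(\varrho q-\gam_0\S^\varrho u)e_n-\varsigma\Delta_{\|}\eta\,e_n$, so that the boundary contributions from integration by parts cancel against the Neumann stress terms in $A^3$ and leave precisely the surface-tension pairing $-\varsigma\langle\Delta_\|\eta,\mathrm{Tr}_\Sigma(w\cdot e_n)\rangle_{H^{-1/2},H^{1/2}}$.

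Granting this identity, the forward direction of each item is immediate. For the converse, I would test the weak equation against $w\in C^\infty_c(\Omega;\R^n)\subseteq{_0}H^1(\Omega;\R^n)$, which kills the surface integrals on $\Sigma$ and reduces the identity to $\int_\Omega(\overset{\gam_0}{A^2}(q,u,\eta)-f)\cdot w=0$. Since $\overset{\gam_0}{A^2}(q,u,\eta)\in L^2(\Omega;\R^n)$ (using the $H^1\times H^2\times\mathcal{H}^{5/2}$ regularity in $\overset{q_0,u_0,\eta_0}{\X^0}$ or $\X^0_{m,N}$, together with the smoothness of $\varrho$ and $\S^\varrho$), density of $C^\infty_c$ in $L^2$ yields $\overset{\gam_0}{A^2}(q,u,\eta)=f$ in $L^2$. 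Substituting this back, the identity collapses to $\int_\Sigma(\overset{\gam_0}{A^3}(q,u,\eta)-k)\cdot w=0$ for every $w\in{_0}H^1$. Since $\mathrm{Tr}_\Sigma:{_0}H^1(\Omega;\R^n)\to H^{1/2}(\Sigma;\R^n)$ is surjective and $H^{1/2}(\Sigma;\R^n)$ is dense in $H^{-1/2}(\Sigma;\R^n)^\ast$'s test space for pairing with $\overset{\gam_0}{A^3}(q,u,\eta)-k\in H^{-1/2}(\Sigma;\R^n)$, we conclude $\overset{\gam_0}{A^3}(q,u,\eta)=k$, completing the equivalence. The regularized case requires no additional argument because the extra $N^{-1}L_m(q+\mathfrak{g}\eta)$ term sits in the first component of both operators and the Neumann traces $\mathrm{Tr}_{\pd\Omega}(\pd_n^j q)=0$ for $j\in\{m,\dots,2m-1\}$, like the modified kinematic condition, are encoded in the domain $\X^0_{m,N}$ and play no role in the integration-by-parts calculation above. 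There is no substantive obstacle here; the only mild subtlety is the correct reading of $\grad(\mathfrak{g}\eta)$ on $\Omega$ and the bookkeeping that ensures the surface-stress boundary terms match exactly between the two formulations.
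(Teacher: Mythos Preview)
Your proposal is correct and is precisely the detailed expansion of what the paper records as its entire proof: ``These follow directly from integration by parts.'' The one minor slip is that at this regularity $\overset{\gam_0}{A^3}(q,u,\eta)-k$ actually lies in $H^{1/2}(\Sigma;\R^n)$ rather than $H^{-1/2}$, so the final density argument is even simpler than you state.
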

	\begin{proof}
		These follow directly from integration by parts.
	\end{proof}
	
	This section is concluded with the following simple lemma on log-convexity, which we recall is defined in Section \ref{sec_notational_conventions}. 
	 
	\begin{lem}[Log-convexity of the norms]\label{lem on log-convexity of the norms}
		The following Banach scales are log-convex for the stated norms:
		\begin{align}
			&\sp{\X^s,\tnorm{\cdot}_{\X^s}}_{s\in\N}, & &\sp{\overset{q_0,u_0,\eta_0}{\X^s},\tnorm{\cdot}_{\overset{q_0,u_0,\eta_0}{\X^s}}}_{s\in\N}, & &\sp{\X^s_{m,N},\tnorm{\cdot}_{\X^s_{m,N}}}_{s\in\N}, & \nonumber\\&\sp{\X^s_{m,N},\tnorm{\cdot}_{\overset{q_0,u_0,\eta_0}{\X^s_{m,N}}}}_{s\in\N}, & &\sp{\Y^s,\tnorm{\cdot}_{\Y^s}}_{s\in\N},
		\end{align}
		where we take $m,N\in\N$ and $(q_0,u_0,\eta_0)$ be as in the hypotheses of Lemma~\ref{properties of the principal parts vector field}.
	\end{lem}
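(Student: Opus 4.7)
The strategy is to reduce everything to log-convexity of the three atomic scales $\{H^s(\Omega)\}_{s\in\N}$, $\{H^s(\R^d)\}_{s\in\N}$, and $\{\mathcal{H}^s(\R^d)\}_{s\in\N}$, and then observe that log-convexity propagates through three preservation principles: (i) finite $\ell^2$-products of log-convex scales are log-convex with the same exponents, (ii) adding an $s$-independent seminorm preserves log-convexity, and (iii) the restriction of a log-convex scale to a family of closed subspaces cut out by $s$-independent conditions inherits log-convexity. Each of the five norms in the statement is built from the atomic scales via these three operations.

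For the atomic scales, log-convexity of $\{H^s(\R^d)\}_{s\in\N}$ is classical: by Plancherel and Hölder's inequality with exponents $1/(1-\sigma)$ and $1/\sigma$ applied to the integrand $\tbr{\xi}^{2s}|\mathscr{F}[f](\xi)|^2 = \tp{\tbr{\xi}^{2s_0}|\mathscr{F}[f](\xi)|^2}^{1-\sigma}\tp{\tbr{\xi}^{2s_1}|\mathscr{F}[f](\xi)|^2}^{\sigma}$, one obtains $\tnorm{f}_{H^s}^2 \le \tnorm{f}_{H^{s_0}}^{2(1-\sigma)}\tnorm{f}_{H^{s_1}}^{2\sigma}$. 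For $\{H^s(\Omega)\}_{s\in\N}$, one composes this with the Stein extension operator $\mathfrak{E}_\Omega$ of Definition~\ref{defn Stein-extension operator}, which is bounded on every $H^s$. For $\{\mathcal{H}^s(\R^d)\}_{s\in\N}$, one splits the Fourier integral defining $\tnorm{f}_{\mathcal{H}^s}^2$ into its contributions from $B(0,1)$ and $\R^d\setminus B(0,1)$; the first is $s$-independent, and on the second the same Hölder argument as for $H^s$ applies. The two parts are then reassembled using the elementary inequality $a^{1-\sigma}b^{\sigma} + c^{1-\sigma}d^{\sigma} \le (a+c)^{1-\sigma}(b+d)^{\sigma}$, which is itself an instance of Hölder, to conclude $\tnorm{f}_{\mathcal{H}^s}^2 \le \tnorm{f}_{\mathcal{H}^{s_0}}^{2(1-\sigma)}\tnorm{f}_{\mathcal{H}^{s_1}}^{2\sigma}$.

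With the atomic log-convexity in hand, the assembly is routine. The codomain scale $\Y^s = \hat{H}^{1+s}(\Omega)\times H^s(\Omega;\R^n)\times H^{1/2+s}(\Sigma;\R^n)$ is an $\ell^2$-product of log-convex scales, where the factor $\hat{H}^{1+s}(\Omega)$, defined in~\eqref{Rhode Island} and Remark~\ref{remarkable remark is named what else other than shark}, is the sum of $\tnorm{\cdot}_{H^{1+s}}^2$ and the $s$-independent seminorm $\tsb{\int_0^b (\cdot)(\cdot,y)\,\m{d}y}_{\dot{H}^{-1}}^2$, hence log-convex by (ii). The domain scale $\tcb{\X_s}_{s\in\N}$ is a product of $H^{1+s}(\Omega)$, $H^{2+s}(\Omega;\R^n)$, and $\mathcal{H}^{5/2+s}(\Sigma)$, and $\X^s$ is the closed subspace obtained by imposing $s$-independent trace conditions from~\eqref{domain banach scales}, so log-convexity of $\pmb{\X}$ follows from (i) and (iii). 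The adapted norm $\tnorm{\cdot}_{\overset{q_0,u_0,\eta_0}{\X^s}}$ adds the extra term $\tnorm{\grad\cdot(v_{w_0} q)}_{H^{1+s}}^2$; since $q\mapsto \grad\cdot(v_{w_0}q)$ is a fixed linear operation independent of $s$, this term is log-convex as a pullback of $\tnorm{\cdot}_{H^{1+s}}^2$, and adding it to $\tnorm{\cdot}_{\X_s}^2$ again gives a log-convex norm by the $\ell^2$-combination argument.

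Finally, for the regularized scale $\tcb{\X^s_{m,N}}_{s\in\N}$ the norm is $\sqrt{\tnorm{\cdot}_{\X_s}^2 + N^{-2}\tnorm{(q,\eta)}_{H^{1+s+2m}\times\mathcal{H}^{1+s+2m}}^2}$, which is an $\ell^2$-sum of five log-convex scales (the first three from $\X_s$, the last two shifted by $2m$ and scaled by $N^{-1}$, which is harmless since shifts in the index and rescaling preserve log-convexity). The additional Neumann and kinematic conditions in~\eqref{the regularized spaces} are $s$-independent, so (iii) again applies; the adapted norm $\tnorm{\cdot}_{\overset{q_0,u_0,\eta_0}{\X^s_{m,N}}}$ is handled identically to the unregularized case. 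The only mildly delicate step in the whole proof is the reassembly for $\mathcal{H}^s$ via the Hölder-type inequality above, but as noted this is elementary; everything else is a direct bookkeeping of the three preservation principles.
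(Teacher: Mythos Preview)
Your proof is correct and follows essentially the same approach as the paper: reduce to log-convexity of the atomic scales $H^s$ and $\mathcal{H}^s$, then assemble via products. The paper's two-sentence proof cites Lemma~\ref{lem on log-convexity in ansiotropic Sobolev spaces} for $\mathcal{H}^s$ (which uses the multiplier $\chi$ of~\eqref{Minnesota} to reduce to $H^s$, rather than your direct low/high frequency split) and Gagliardo--Nirenberg for $H^s(\Omega)$, and simply invokes the ``evident preservation of log-convexity under products''; your explicit enumeration of the three preservation principles (products, $s$-independent seminorm addition, subspace restriction) fills in details the paper leaves implicit.
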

	\begin{proof} 
		Lemma~\ref{lem on log-convexity in ansiotropic Sobolev spaces} shows that the anisotropic Sobolev spaces $\tcb{\mathcal{H}^s(\R^d)}_{s\in\N}$ are log-convex.  The result then follows from the log-convexity of standard Sobolev spaces (see Theorem \ref{thm gagliardo nirenberg} and Corollary \ref{gagliardo nirenberg interpolation in domains}) and the evident preservation of log-convexity under products.
	\end{proof}
	
	
	\section{Analysis of steady transport equations and their regularizations}\label{Section: Analysis of Regularized Steady Transport Equations}

    In the previous section we introduced the regularized principal part operator $\overset{w_0,\gam_0}{A_{m,N}}$ (defined in equation~\eqref{the map of the regularized principal part operator}), which perturbs the operator $\overset{w_0,\gam_0}{A}$ (defined in equation~\eqref{definition of the principal part operator}) by adding a multiple of the elliptic operator $L_m$ (defined in~\eqref{the operator Lm}) to the part of $\overset{w_0,\gam_0}{A}$ corresponding to the linearized continuity equation.  In this section we focus our attention on solutions to such regularized steady transport equations, with the aim of deriving precise estimates that are independent of the regularization parameter.  These will play an essential role in our subsequent linear analysis.   Along the way, we also develop some estimates of solutions to the standard steady transport equation.
    
To be precise, we now study equations of the type
 \begin{equation}\label{sunday morning i just gotta give it up}
     \al f+\grad\cdot(v f)+\ep L_m f=g \text{ in } \Omega,
 \end{equation}
 where $\alpha, g: \Omega \to \R$ are given functions, $v: \Omega \to \R^n$ is a given vector field, $\ep\ge0$ is small, $L_m$ is the linear elliptic operator given by \eqref{the operator Lm}, and $f: \Omega \to \R$ is the unknown. The key results of this section are Proposition~\ref{proposition on a priori estimates for steady transport} and Theorem~\ref{theorem on estimates for regularized steady transport}.
	
In what follows, a key player is a vector field $X\in W^{\infty,\infty}(\Omega;\R^n)$ such that $\m{Tr}_{\pd\Omega}(X\cdot e_n)=0$. Assume $X_0\in H^\infty(\Omega;\R^n)$, $X_1\in W^{\infty,\infty}(\Omega;\R^n)$, $\rho_{\m{max}}\in\R^+$ is arbitrary but fixed, $0<\rho\le\rho_{\m{max}}$, $r\in\N$,
	\begin{equation}\label{assumptions on the vector field X}
		X=X_0+X_1,\text{ and }(DX_0,DX_1)\in B_{H^r}(0,\rho)\times B_{W^{r,\infty}}(0,\rho).
	\end{equation}
	We will frequently reference this  equation when we need to quantify a vector field of this type.
	
	\subsection{Preliminary tame estimates}\label{slip into the shade and sip their lemonaide}
	
    Given $m\in\N^+$, we define a bilinear form associated to $L_m$ via
	\begin{equation}\label{copy that pasta is it linguini or roasted tolueney}
	\index{\textbf{Nonlinear maps}!1@$B_m$}
		B_m:H^m(\Omega)\times H^m(\Omega)\to\R \text{ via } B_m(\varphi_0,\varphi_1)=\sum_{j=1}^n\int_{\Omega}\pd_j^m\varphi_0\pd_j^m\varphi_1.
	\end{equation}
	We have recorded a number of basic properties of $B_m$ in Appendix \ref{appendix_elliptic_tools}.  Our first result here considers an estimate for $B_m$ in which we are able to a save a derivative thanks to integration by parts.
	
	\begin{lem}[A bilinear estimate]\label{lem on a bilinear estimate}
		Let $m\in\N^+$. Suppose that $\varphi\in H^m(\Omega)$, $X\in W^{\infty,\infty}(\Omega;\R^n)$ satisfies $\m{Tr}_{\pd\Omega}(X\cdot e_n)=0$ as well as \eqref{assumptions on the vector field X} with $r=1+\tfloor{n/2}$ and $0<\rho\le\rho_{\m{max}}$, and that $\grad\cdot(X\varphi)\in H^m(\Omega)$. Then we have the estimates
		\begin{equation}\label{Bm L2 low}
			|B_m(\varphi,\grad\cdot(X\varphi))|\lesssim\rho\tnorm{\varphi}_{H^m}^2+\rho^{-1-\tfloor{n/2}}\tbr{\tnorm{DX_0,DX_1}_{H^m\times W^{m,\infty}}}^{2+\lfloor n/2\rfloor}\tnorm{\varphi}_{H^m}\tnorm{\varphi}_{L^2}
		\end{equation}
		and
		\begin{equation}\label{Bm tame}
			|B_m(\varphi,\grad\cdot(X\varphi))|\lesssim\rho\tnorm{\varphi}^2_{H^m}+\tnorm{\varphi}_{H^m}\begin{cases}
				0&\text{if }m\le 1+\lfloor n/2\rfloor,\\
				\tnorm{DX_0,DX_1}_{H^m\times W^{m,\infty}}\tnorm{\varphi}_{H^{1+\lfloor n/2\rfloor}}&\text{if }1+\lfloor n/2\rfloor<m.
			\end{cases}
		\end{equation}
		Here the implicit constants depend on $m$, $\rho_{\m{max}}$, and the dimension.
	\end{lem}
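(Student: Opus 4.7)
The plan is to peel off the highest-derivative cancellation via integration by parts and then bound the remaining commutator/product pieces with tame Moser-type estimates followed by interpolation.

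\textbf{Step 1 (algebraic decomposition).} I would first expand $\nabla\cdot(X\varphi)=X\cdot\nabla\varphi+(\nabla\cdot X)\varphi$ and then apply the Leibniz rule to commute $\partial_j^m$ past $X\cdot\nabla$:
\begin{equation*}
\partial_j^m\bigl(X\cdot\nabla\varphi\bigr)=X\cdot\nabla\partial_j^m\varphi+\sum_{k=1}^m\binom{m}{k}(\partial_j^kX)\cdot\nabla\partial_j^{m-k}\varphi.
\end{equation*}
Correspondingly, $B_m(\varphi,\nabla\cdot(X\varphi))$ splits as a principal integration-by-parts piece plus a commutator and a divergence term.

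\textbf{Step 2 (cancellation in the principal piece).} The principal term $\sum_j\int_\Omega\partial_j^m\varphi\,X\cdot\nabla\partial_j^m\varphi=\tfrac12\sum_j\int_\Omega X\cdot\nabla(\partial_j^m\varphi)^2$ is exactly where a derivative would be lost; the no-penetration boundary condition $X\cdot e_n=0$ on $\partial\Omega$ lets me integrate by parts cleanly to produce $-\tfrac12\sum_j\int_\Omega(\nabla\cdot X)(\partial_j^m\varphi)^2$. Since $\|\nabla\cdot X\|_{L^\infty}\lesssim\|DX_0\|_{H^{1+\lfloor n/2\rfloor}}+\|DX_1\|_{W^{0,\infty}}\lesssim\rho$ by Sobolev embedding applied to the decomposition \eqref{assumptions on the vector field X}, this piece is $\lesssim\rho\|\varphi\|_{H^m}^2$, matching the first term on the right of both estimates.

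\textbf{Step 3 (tame estimate for the remainders).} For the commutator $[\partial_j^m,X]\cdot\nabla\varphi$ and the divergence term $\partial_j^m((\nabla\cdot X)\varphi)$, I would invoke the high–low Moser/tame product estimates from Corollary~\ref{corollary on tame estimates on simple multipliers} (with $L^\infty$ replaced by the supercritical $H^{1+\lfloor n/2\rfloor}$ embedding for $X_0$ and $W^{m,\infty}$ control for $X_1$). This yields
\begin{equation*}
\bigl|B_m(\varphi,\nabla\cdot(X\varphi))\bigr|\lesssim\rho\|\varphi\|_{H^m}^2+\|\varphi\|_{H^m}\bigl(\rho\|\varphi\|_{H^m}+M\|\varphi\|_{H^{1+\lfloor n/2\rfloor}}\bigr),
\end{equation*}
where $M:=\|DX_0,DX_1\|_{H^m\times W^{m,\infty}}$, with the $M$-term only appearing when $m>1+\lfloor n/2\rfloor$ (in the opposite regime $H^{1+\lfloor n/2\rfloor}$ is an algebra and absorbs everything into $\rho\|\varphi\|_{H^m}^2$). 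This already proves \eqref{Bm tame}.

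\textbf{Step 4 (interpolation for \eqref{Bm L2 low}).} Starting from \eqref{Bm tame}, I would interpolate the middle norm via Lemma~\ref{lem on log-convexity in tame Banach scales}: $\|\varphi\|_{H^{1+\lfloor n/2\rfloor}}\le\|\varphi\|_{L^2}^{1-r/m}\|\varphi\|_{H^m}^{r/m}$ with $r=1+\lfloor n/2\rfloor$, so the cross term is $\le M\|\varphi\|_{H^m}^{1+r/m}\|\varphi\|_{L^2}^{1-r/m}$. Young's inequality with dual exponents $(p,q)=(m/r,\,m/(m-r))$, and the weight $\epsilon=(\rho/M)^{r/m}$ tuned so that the high-norm contribution is absorbed into $\rho\|\varphi\|_{H^m}^2$, produces the residual $M^{m/(m-r)}\rho^{-r/(m-r)}\|\varphi\|_{H^m}\|\varphi\|_{L^2}$. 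Using $\rho\le\rho_{\m{max}}$ and $m\ge r+1$ (the only case where $M$ appears), the exponents satisfy $m/(m-r)\le r+1=2+\lfloor n/2\rfloor$ and $r/(m-r)\le r=1+\lfloor n/2\rfloor$, so the residual is controlled by $\rho^{-1-\lfloor n/2\rfloor}\langle M\rangle^{2+\lfloor n/2\rfloor}\|\varphi\|_{H^m}\|\varphi\|_{L^2}$ up to constants depending on $m$, $\rho_{\m{max}}$, and $n$. The case $m\le 1+\lfloor n/2\rfloor$ is trivial since then the commutator already collapses into $\rho\|\varphi\|_{H^m}^2$.

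The bookkeeping in Step 4—specifically verifying that a single choice of Young exponents gives the stated \emph{$m$-independent} powers $(-r,\,r+1)$ of $\rho$ and $\langle M\rangle$ uniformly for all $m\ge r+1$—will be the main technical point; the rest is a routine commutator calculation plus the boundary cancellation in Step 2.
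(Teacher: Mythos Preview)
Your approach is essentially the paper's: split off the principal advection term, use the boundary condition to integrate by parts and gain the factor $\tfrac12\nabla\cdot X$, bound the commutator via tame product estimates, then interpolate with Young's inequality to pass from \eqref{Bm tame} to \eqref{Bm L2 low}. The paper organizes the decomposition slightly differently (it writes $\partial_j^m\nabla\cdot(X\varphi)=\nabla\cdot(X\partial_j^m\varphi)+\nabla\cdot([\partial_j^m,X]\varphi)$ and then bounds $\|[\partial_j^m,X]\varphi\|_{H^1}$ via Corollary~\ref{coro on tames estimates on commutators}), but this is equivalent to your commutator-plus-divergence split.

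One point where you are looser than the paper: your Step~2 writes $\int_\Omega\partial_j^m\varphi\,X\cdot\nabla\partial_j^m\varphi$, but $\partial_j^m\varphi$ is only in $L^2$, so this pairing is not obviously defined. The paper handles this by first observing that the hypothesis $\nabla\cdot(X\varphi)\in H^m$ forces $\nabla\cdot(X\partial_j^m\varphi)\in L^2$ (via Leibniz), i.e.\ $\partial_j^m\varphi\in H^0_X(\Omega)$, and then invoking the density-based identity of Proposition~\ref{prop on divergence trick}. You should cite that proposition rather than a bare integration by parts. Your Step~4 bookkeeping is exactly what the paper does, and the exponent monotonicity $m/(m-r)\le r+1$ and $r/(m-r)\le r$ for $m\ge r+1$ (with $r=1+\lfloor n/2\rfloor$), together with $\rho\le\rho_{\mathrm{max}}$ and $\langle M\rangle\ge 1$, does deliver the $m$-independent powers as stated.
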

	\begin{proof}
		Since $\varphi, \nabla\cdot(X \varphi) \in H^m(\Omega)$ and $X \in W^{\infty,\infty}(\Omega;\R^n)$, we can apply the Leibniz rule to see that 
		\begin{equation}
			\nabla\cdot (X \partial_j^m \varphi) = \partial_j^m \nabla \cdot(X \varphi) - \sum_{k=1}^m \binom{m}{k} \nabla \cdot(\partial_j^{k} X \partial_j^{m-k} \varphi  )  \in L^2(\Omega),
		\end{equation}
		and so $\partial_j^m \varphi \in H^0_X(\Omega)$, the space defined by \eqref{adapted X space def} in Appendix \ref{appendix on adapted Sobolev spaces}.      By introducing commutators and employing Proposition \ref{prop on divergence trick}, we obtain the identity
		\begin{equation}
			B_m(\grad\cdot(X\varphi),\varphi)=\sum_{j=1}^n\int_{\Omega}\bp{\f{\grad\cdot X}{2}\pd_j^m\varphi+\grad\cdot([\pd_j^m,X]\varphi)}\pd_j^m\varphi.
		\end{equation}
		Hence, we have the estimate
		\begin{equation}\label{return to this equation}
			|B_m(\grad\cdot(X\varphi),\varphi)|\lesssim\bp{\tnorm{DX}_{L^\infty}\tnorm{\varphi}_{H^m}+\sum_{j=1}^n\tnorm{[\pd_j^m,X]\varphi}_{H^1}}\tnorm{\varphi}_{H^m}.
		\end{equation}
		According to Corollary~\ref{coro on tames estimates on commutators}, we have the estimate
		\begin{equation}\label{X0}
			\tnorm{[\pd_j^m,X_0]\varphi}_{H^1}\lesssim\tnorm{\pd_jX_0}_{H^{1+\lfloor n/2\rfloor}}\tnorm{\varphi}_{H^m}+\begin{cases}
				0&\text{if }m\le1+\lfloor n/2\rfloor,\\
				\tnorm{\pd_jX_0}_{H^m}\tnorm{\varphi}_{H^{1+\lfloor n/2\rfloor}}&\text{if }1+\lfloor n/2\rfloor<m,  
				
			\end{cases}
		\end{equation}
		and
		\begin{equation}\label{X1}
			\tnorm{[\pd_j^m,X_1]\varphi}_{H^1}\lesssim\tnorm{\pd_jX_1}_{L^\infty}\tnorm{\varphi}_{H^m}+\tnorm{\pd_jX_1}_{W^{m,\infty}}\tnorm{\varphi}_{L^2}.
		\end{equation}
		Estimates \eqref{X0} and~\eqref{X1} combine to show that
		\begin{multline}\label{oh so}
			\sum_{j=1}^n\tnorm{[\pd_j^m,X]\varphi}_{H^1}\lesssim\tnorm{DX_0,DX_1}_{H^{1+\lfloor n/2\rfloor}\times W^{1+\lfloor n/2\rfloor,\infty}}\tnorm{\varphi}_{H^m}\\
			+\begin{cases}
				0&\text{if }m\le1+\lfloor n/2\rfloor,\\
				\tnorm{DX_0,DX_1}_{H^m\times W^{m,\infty}}\tnorm{\varphi}_{H^{1+\lfloor n/2\rfloor}}&\text{if }1+\lfloor n/2\rfloor<m.
			\end{cases}
		\end{multline}
		This establishes~\eqref{Bm tame}. 
		
		We now continue to prove~\eqref{Bm L2 low}. In the latter case of~\eqref{oh so}, we use interpolation and Young's inequality to bound
		\begin{multline}\label{shines so}
			\tnorm{DX_0,DX_1}_{H^m\times W^{m,\infty}}\tnorm{\varphi}_{H^{1+\lfloor n/2\rfloor}}\lesssim\rho^{-\f{1+\lfloor n/2\rfloor}{m-1-\lfloor n/2\rfloor}}\tnorm{DX_0,DX_1}_{H^m\times W^{m,\infty}}^{\f{m}{m-1-\lfloor n/2\rfloor}}\tnorm{\varphi}_{L^2}+\rho\tnorm{\varphi}_{H^m}\\
			\lesssim_{\rho_{\m{max}}}\rho^{-1-\lfloor n/2\rfloor}\tbr{\tnorm{DX_0,DX_1}_{H^m\times W^{m,\infty}}}^{2+\lfloor n/2\rfloor}\tnorm{\varphi}_{L^2}+\rho\tnorm{\varphi}_{H^m}.
		\end{multline}
		Together, \eqref{oh so} and~\eqref{shines so} provide the bound
		\begin{equation}\label{chez_nicos}
			\sum_{j=1}^n\tnorm{[\pd_j^m,X]\varphi}_{H^1}\lesssim\rho\tnorm{\varphi}_{H^m}+\rho^{-1-\lfloor n/2\rfloor}\tbr{\tnorm{DX_0,DX_1}_{H^m\times W^{m,\infty}}}^{2+\lfloor n/2\rfloor}\tnorm{\varphi}_{L^2}.
		\end{equation}
		Now we return to~\eqref{return to this equation} and plug in \eqref{chez_nicos} to derive the estimate
		\begin{equation}
			|B_m(\grad\cdot(X\varphi),\varphi)|\lesssim\rho\tnorm{\varphi}_{H^m}^2+\rho^{-1-\lfloor n/2\rfloor}\tbr{\tnorm{DX_0,DX_1}_{H^m\times W^{m,\infty}}}^{2+\lfloor n/2\rfloor}\tnorm{\varphi}_{H^m}\tnorm{\varphi}_{L^2},
		\end{equation}
		which is \eqref{Bm L2 low}.
	\end{proof}
	
	For the next two results we use the notation $\grad_{X}\varphi=X\cdot\grad\varphi$. The following rather technical lemma considers traces of the normal derivatives of $\grad_X\varphi$ when $X$ has vanishing normal trace and $\varphi$ has some vanishing normal derivative traces. The point is that the higher norms on normal derivative traces depend only on $X$ and  a lower norm of $\varphi$. 
	
	\begin{lem}[A trace estimate]\label{lemma on a trace estimate}
		Suppose that $X\in W^{\infty,\infty}(\Omega;\R^n)$
		satisfies $\m{Tr}_{\pd\Omega}(X\cdot e_n)=0$ and decomposes as $X = X_0 + X_1$ with $DX_0\in H^\infty(\Omega;\R^{n\times n})$  and $DX_1\in W^{\infty,\infty}(\Omega;\R^{n\times n})$.  Assume additionally that $\varphi\in H^{2m+1}(\Omega)$ and satisfies the Neumann conditions
		\begin{equation}
			\pd_n^m\varphi=\cdots=\pd_n^{2m-1}\varphi=0\quad\text{on }\pd\Omega.
		\end{equation}
		The we have the following normal derivative estimates for $\ell\in\tcb{0,1,\dots,m-1}$:
		\begin{multline}\label{version 1}
			\tnorm{\m{Tr}_{\pd\Omega}(\pd_n^{m+\ell}\grad_X\varphi)}_{H^{-1/2}}\lesssim\tnorm{DX_0,DX_1}_{H^{2+\lfloor n/2\rfloor+\ell}\times W^{2+\lfloor n/2\rfloor+\ell,\infty}}\tnorm{\varphi}_{H^m}\\+
			\begin{cases}
				0&\text{if }m\le1+\lfloor n/2\rfloor,\\
				\tnorm{DX_0,DX_1}_{H^{1+m+\ell}\times W^{1+m+\ell,\infty}}\tnorm{\varphi}_{H^{1+\lfloor n/2\rfloor}}&\text{if }1+\lfloor n/2\rfloor<m,
			\end{cases}
		\end{multline}
		and
		\begin{multline}\label{version 2}
			\tnorm{\m{Tr}_{\pd\Omega}(\pd_n^{m+\ell}\grad_X\varphi)}_{H^{-1/2}}\lesssim\tbr{\tnorm{DX_0,DX_1}_{H^{2+\lfloor n/2\rfloor+\ell}\times W^{2+\lfloor n/2\rfloor+\ell,\infty}}}\tnorm{\varphi}_{H^m}\\+\tbr{\tnorm{DX_0,DX_1}_{H^{1+m+\ell}\times W^{1+m+\ell,\infty}}}^{2+\lfloor n/2\rfloor}\tnorm{\varphi}_{L^2}.
		\end{multline}
	Here the implicit constants depend on the domain, the dimension, $m$ and $\ell$.
	\end{lem}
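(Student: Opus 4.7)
My plan is to expand $\pd_n^{m+\ell}(X\cdot\grad\varphi)$ via the Leibniz rule, take the trace on $\pd\Omega$, and then exploit two boundary vanishing conditions to kill most of the resulting terms. Concretely, the general summand is $\binom{m+\ell}{k}\pd_n^k X \cdot \pd_n^{m+\ell-k}\grad \varphi$; splitting $X \cdot \grad = \sum_{i=1}^{n-1}X^i\pd_i + X^n\pd_n$, the hypothesis $X^n|_{\pd\Omega} = 0$ annihilates the normal-component term at $k=0$, while the Neumann conditions $\pd_n^j\varphi|_{\pd\Omega} = 0$ for $j \in \{m,\dots,2m-1\}$ kill (i) all tangential terms with $0 \le k \le \ell$ (since $\pd_i$ preserves boundary vanishing of $\pd_n^{m+\ell-k}\varphi$) and (ii) all normal terms with $1 \le k \le \ell+1$. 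The index checks rely only on $\ell \le m - 1$. After these cancellations, the trace reduces to an explicit finite sum
\begin{equation*}
\m{Tr}_{\pd\Omega}(\pd_n^{m+\ell}\grad_X\varphi) = \sum_{j=0}^{m-1} c_{j,\ell}^{\parallel} \sum_{i=1}^{n-1} (\pd_n^{m+\ell-j}X^i)|_{\pd\Omega}\, \pd_i(\pd_n^j\varphi|_{\pd\Omega}) + \sum_{j=1}^{m-1} c_{j,\ell}^{\perp}\,(\pd_n^{m+\ell-j+1}X^n)|_{\pd\Omega}\, \pd_n^j\varphi|_{\pd\Omega},
\end{equation*}
in which each $X$-factor carries between $\ell+1$ and $m+\ell+1$ normal derivatives, and each $\varphi$-factor is a boundary trace of some $\pd_n^j\varphi$ with $j \in \{0,\dots,m-1\}$.

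I will then bound each summand in $H^{-1/2}(\pd\Omega)$ by duality, the trace theorem, and Moser-type product estimates, treating the low-$X$ and high-$X$ branches separately. For the low-$X$ branch (yielding the first term in \eqref{version 1}), combine the standard trace bound $\tnorm{\pd_n^j\varphi|_{\pd\Omega}}_{H^{1/2}} \lesssim \tnorm{\varphi}_{H^m}$ for $j \le m-1$ with the Sobolev multiplier inequality $\tnorm{fg}_{H^{-1/2}(\R^{n-1})} \lesssim \tnorm{f}_{H^{1+\tfloor{n/2}}(\R^{n-1})}\tnorm{g}_{H^{-1/2}(\R^{n-1})}$ (a duality consequence of $H^{1+\tfloor{n/2}} \cdot H^{1/2} \hookrightarrow H^{1/2}$); the $X$-factor, decomposed as $X=X_0+X_1$, is bounded in $H^{1+\tfloor{n/2}}(\pd\Omega)$ via the trace theorem on $\Omega$ (for the $X_0$-piece, losing one half derivative and matching the index $2+\tfloor{n/2}+\ell$ on $DX$ via the $j=m-1$ summand) or directly in $W^{1+\tfloor{n/2},\infty}$ (for $X_1$). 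For the high-$X$ branch (yielding the second term in \eqref{version 1}), apply a tangential integration by parts on $\pd\Omega$ to move $\pd_i$ off the $\varphi$-factor; the resulting term $(\pd_i\pd_n^{m+\ell-j}X^i)|_{\pd\Omega} \cdot (\pd_n^j\varphi|_{\pd\Omega}) \cdot \psi$ can then be bounded in $L^2$ via $\tnorm{(\pd_i\pd_n^{m+\ell-j}X^i)|_{\pd\Omega}}_{L^2} \lesssim \tnorm{X}_{H^{m+\ell+2}}$ and $\tnorm{\varphi|_{\pd\Omega}}_{L^\infty} \lesssim \tnorm{\varphi}_{H^{1+\tfloor{n/2}}}$, delivering the required $DX$-index $1+m+\ell$. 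A similar argument with a fundamental-theorem-of-calculus manipulation handles the normal-component piece. Invoking the high-low tame bounds of Corollary~\ref{corollary on tame estimates on simple multipliers} throughout produces \eqref{version 1}.

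Version 2 will follow from Version 1 by a routine interpolation argument. When $m > 1+\tfloor{n/2}$, the log-convex bound $\tnorm{\varphi}_{H^{1+\tfloor{n/2}}} \lesssim \tnorm{\varphi}_{L^2}^{1-\theta}\tnorm{\varphi}_{H^m}^\theta$ with $\theta = (1+\tfloor{n/2})/m$, combined with Young's inequality, permits the $\tnorm{\varphi}_{H^m}$ factor in the second term of~\eqref{version 1} to be absorbed into the first term at the cost of raising the high-$X$ norm to a power of order $m/(m-1-\tfloor{n/2})$; since the worst case (attained at $m = 2+\tfloor{n/2}$) is exponent $2+\tfloor{n/2}$, this is dominated by $\tbr{\cdot}^{2+\tfloor{n/2}}$ for all larger $m$ as well, and the bracket in~\eqref{version 2} absorbs the resulting constants; the complementary case $m \le 1+\tfloor{n/2}$ is already contained in~\eqref{version 1} with the second summand absent. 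The main technical obstacle will be the careful bookkeeping of binomial coefficients and derivative indices in the Leibniz expansion, and verifying that the integration-by-parts estimates on $\pd\Omega \cong \R^{n-1}$ yield precisely the derivative counts stated on the right-hand sides of~\eqref{version 1} and~\eqref{version 2}.
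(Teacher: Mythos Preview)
Your Leibniz expansion and boundary-cancellation bookkeeping are correct and match the paper exactly (the paper organizes the surviving terms as $\bf{I}=\sum_{k=0}^{m-2}\cdots$ and $\bf{II}$ = the $k=m-1$ tangential piece, which is equivalent to your $(j,\parallel)$/$(j,\perp)$ sums). The derivation of \eqref{version 2} from \eqref{version 1} by interpolation and Young is also the same as the paper's.

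The gap is in your product estimates. You describe the ``low-$X$ branch'' and ``high-$X$ branch'' as two separate arguments, each yielding one of the two terms on the right of \eqref{version 1}. That is not how tame bilinear estimates work: both terms arise simultaneously from a single high--low product bound, and your separate-branch approach does not give the correct $X$-index for most summands. Concretely, in your low-$X$ branch you bound $(\pd_n^{m+\ell-j}X_0^i)|_{\pd\Omega}$ in $H^{1+\tfloor{n/2}}(\pd\Omega)$ via the trace, which costs $\tnorm{DX_0}_{H^{m+\ell-j+1/2+\tfloor{n/2}}}$. For $j<m-1$ this exceeds the target index $2+\tfloor{n/2}+\ell$; already at $m=2$, $\ell=0$, $j=0$ you are half a derivative over budget, and for larger $m$ the overshoot grows linearly. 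The $X_1$-piece has the same defect for $j<m-2$. Since the second term of \eqref{version 1} is absent when $m\le 1+\tfloor{n/2}$, your high-$X$ branch cannot rescue these cases, and your argument does not close. (Your high-$X$ integration by parts also leaves the term $(\pd_n^{m+\ell-j}X^i)(\pd_n^j\varphi)\,\pd_i\psi$, with the test function $\psi\in H^{1/2}$, which you do not address.)

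The paper avoids all of this by not working on the boundary. For the $k\le m-2$ terms it uses the crude embedding $H^{1/2}(\pd\Omega)\hookrightarrow H^{-1/2}(\pd\Omega)$ together with the trace $H^1(\Omega)\to H^{1/2}(\pd\Omega)$ to reduce to estimating $\tnorm{\pd_n^{m+\ell-k}X\cdot\nabla\pd_n^k\varphi}_{H^1(\Omega)}$ in the \emph{volume}; this is then a direct application of the tame product bound in Corollary~\ref{corollary on tame estimates on simple multipliers}, which automatically balances the $X$- and $\varphi$-indices and outputs both terms of \eqref{version 1} at once. Only the borderline $k=m-1$ term $\bf{II}$ needs the tangential integration-by-parts trick (your ``perfect divergence''), and even there the paper immediately returns to $H^1(\Omega)$ via the trace. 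Replacing your boundary multiplier/IBP scheme with this volume-based argument would repair your proof.
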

	\begin{proof}
		We begin by computing the argument of the trace in the stated estimate. Fix $\ell\in\tcb{0,1,\dots,m-1}$. According to the Leibniz rule, we have
		\begin{multline}
			\pd_n^{m+\ell}\grad_X\varphi=\sum_{k=0}^{m+\ell}\bn{m+\ell}{k}\pd_n^{m+\ell-k}X\cdot\grad\pd_n^k\varphi\\
			=\sum_{k=0}^{m+\ell}\bn{m+\ell}{k}\pd_n^{m+\ell-k}X\cdot(\grad_{\|},0)\pd_n^k\varphi+\sum_{k=0}^{m+\ell}\bn{m+\ell}{k}\pd_n^{m+k-\ell}(X\cdot e_n)\pd_n^{k+1}\varphi.
		\end{multline}
		Applying $\m{Tr}_{\pd\Omega}$ and using that $\pd_n^m\varphi=\cdots=\pd_n^{2m-1}\varphi=0$ and $X\cdot e_n=0$ on $\pd\Omega$, we compute
		\begin{multline}
			\m{Tr}_{\pd\Omega}(\pd_n^{m+\ell}\grad_X\varphi)=\sum_{k=0}^{m-1}\bn{m+\ell}{k}\m{Tr}_{\pd\Omega}(\pd_n^{m+\ell-k}X\cdot(\grad_{\|},0)\pd_n^k\varphi)\\+\sum_{k=0}^{m-2}\bn{m+\ell}{k}\m{Tr}_{\pd\Omega}(\pd_n^{m+\ell-k}(X\cdot e_n)\pd_n^{k+1}\varphi)+\begin{cases}
				0&\text{if }\ell<m-1,\\
				\m{Tr}_{\pd\Omega}(X\cdot e_n\pd_n^{2m}\varphi)&\text{if }\ell=m-1.
			\end{cases}
			\\=\sum_{k=0}^{m-2}\bn{m+\ell}{k}\m{Tr}_{\pd\Omega}(\pd_n^{m+\ell-k}X\cdot\grad\pd_n^k\varphi)+\bn{m+\ell}{m-1}\m{Tr}_{\pd\Omega}(\pd_n^{\ell+1}X\cdot(\grad_{\|},0) \pd_n^{m-1}\varphi)=\bf{I}+\bf{II}.
		\end{multline}
		We then take $H^{-1/2}$-norms and handle $\bf{I}$ and $\bf{II}$ separately.

		We bound $\bf{I}$ via the embedding $H^{1/2}\emb H^{-1/2}$ and the $H^1\to H^{1/2}$ continuity of the trace map:
		\begin{multline}
			\tnorm{\bf{I}}_{H^{-1/2}}\lesssim\sum_{k=0}^{m-2}\tnorm{\pd_n^{m+\ell-k}X\cdot\grad\pd_n^k\varphi}_{H^1}
			\lesssim\sum_{k=0}^{m-2}\tnorm{D^{m+\ell-k}X\otimes D^{k+1}\varphi}_{L^2}\\+\sum_{k=0}^{m-1}\tnorm{D^{m+1+\ell-k}X\otimes D^{k+1}\varphi}_{L^2}=\bf{III}+\bf{IV}.
		\end{multline}
		For $\bf{III}$ and $\bf{IV}$ we employ the splitting $X = X_0 +X_1$ and Corollary~\ref{corollary on tame estimates on simple multipliers}: 
		\begin{multline}
			\bf{III}\le\sum_{k=0}^{m-2}\sp{\tnorm{D^{m-1-k}(D^{1+\ell}X_0)\otimes D^{k+1}\varphi}_{L^2}+\tnorm{D^{m-1-k}(D^{1+\ell}X_1)\otimes D^{k+1}\varphi}_{L^2}}\\
			\lesssim\tnorm{D^{1+\ell}X_0,D^{1+\ell}X_1}_{H^{1+\lfloor n/2\rfloor},W^{1+\lfloor n/2\rfloor,\infty}}\tnorm{\varphi}_{H^m}\\+\begin{cases}
				0&\text{if }m\le1+\lfloor n/2\rfloor,\\
				\tnorm{D^{1+\ell}X_0,D^{1+\ell}X_1}_{H^m\times W^{m,\infty}}\tnorm{\varphi}_{H^{1+\lfloor n/2\rfloor}}&\text{if }1+\lfloor n/2\rfloor<m,
			\end{cases}
		\end{multline}
		and similarly,
		\begin{multline}
			\bf{IV}\le\sum_{k=0}^{m-1}\sp{\tnorm{D^{m-1-k}(D^{2+\ell}X_0)\otimes D^{k+1}\varphi}_{L^2}+\tnorm{D^{m-1-k}(D^{2+\ell}X_1)\otimes D^{k+1}\varphi}_{L^2}}\\
			\lesssim\tnorm{D^{2+\ell}X_0,D^{2+\ell}X_1}_{H^{1+\lfloor n/2\rfloor},W^{1+\lfloor n/2\rfloor,\infty}}\tnorm{\varphi}_{H^m}\\+\begin{cases}
				0&\text{if }m\le1+\lfloor n/2\rfloor,\\
				\tnorm{D^{2+\ell}X_0,D^{2+\ell}X_1}_{H^m\times W^{m,\infty}}\tnorm{\varphi}_{H^{1+\lfloor n/2\rfloor}}&\text{if }1+\lfloor n/2\rfloor<m.
			\end{cases}
		\end{multline}
		Hence, the estimate on $\bf{I}$ we obtain is
		\begin{multline}
			\tnorm{\bf{I}}_{H^{-1/2}}\lesssim\tnorm{DX_0,DX_1}_{H^{2+\lfloor n/2\rfloor+\ell}\times W^{2+\lfloor n/2\rfloor+\ell,\infty}}\tnorm{\varphi}_{H^m}\\
			+\begin{cases}
				0&\text{if }m\le1+\lfloor n/2\rfloor,\\
				\tnorm{DX_0,DX_1}_{H^{1+m+\ell}\times W^{1+m+\ell,\infty}}\tnorm{\varphi}_{H^{1+\lfloor n/2\rfloor}}&\text{if }1+\lfloor n/2\rfloor<m.
			\end{cases}
		\end{multline}
		
		For $\bf{II}$ we make a perfect divergence to exploit the negative norm in the $H^{-1/2}$ space, i.e.
		\begin{equation}
			\pd_n^{\ell+1}X\cdot(\grad_{\|},0) \pd_n^{m-1}\varphi = (\grad_{\|},0)\cdot\tp{\pd_n^{\ell+1}X\pd_n^{m-1}\varphi}-(\grad_{\|},0)\cdot(\pd_n^{\ell+1}X)\pd_n^{m-1}\varphi.
		\end{equation}
		Then again by the continuity of the trace map and Corollary~\ref{corollary on tame estimates on simple multipliers}, we find that
		\begin{multline}
			\tnorm{\bf{II}}_{H^{-1/2}}\lesssim\tnorm{\pd_n^{\ell+1}X\pd_n^{m-1}\varphi}_{H^1}+\tnorm{(\grad_{\|},0)\cdot(\pd_n^{\ell+1}X)\pd_n^{m-1}\varphi}_{H^1}
			\lesssim\tnorm{D^{\ell+1}X\otimes D^{m-1}\varphi}_{H^1}\\+\tnorm{D^{\ell+2}X\otimes D^{m-1}\varphi}_{H^1}\lesssim\tnorm{DX_0,DX_1}_{H^{2+\lfloor n/2\rfloor+\ell}\times W^{2+\lfloor n/2\rfloor+\ell,\infty}}\tnorm{\varphi}_{H^m}\\+
			\begin{cases}
				0&\text{if }m\le1+\lfloor n/2\rfloor,\\
				\tnorm{DX_0,DX_1}_{H^{1+m+\ell}\times W^{1+m+\ell,\infty}}\tnorm{\varphi}_{H^{1+\lfloor n/2\rfloor}}&\text{if }1+\lfloor n/2\rfloor<m.
			\end{cases}
		\end{multline}
		Combining the estimates for $\bf{I}$ and $\bf{II}$ completes the proof of estimate~\eqref{version 1}.
		
		For the remaining estimate, we first take $1+\lfloor n/2\rfloor<m$ and use interpolation and Young's inequality to bound
		\begin{multline}
			\tnorm{DX_0,DX_1}_{H^{1+m+\ell}\times W^{1+m+\ell,\infty}}\tnorm{\varphi}_{H^{1+\lfloor n/2\rfloor}}\lesssim\tnorm{\varphi}_{H^m}+\tnorm{DX_0,DX_1}_{H^{1+m+\ell}\times W^{1+m+\ell,\infty}}^{\f{m}{m-(1+\lfloor n/2\rfloor)}}\tnorm{\varphi}_{L^2}\\\lesssim\tnorm{\varphi}_{H^m}+\tbr{\tnorm{DX_0,DX_1}_{H^{1+m+\ell}\times W^{1+m+\ell,\infty}}}^{2+\lfloor n/2\rfloor}\tnorm{\varphi}_{L^2}.
		\end{multline}
		Hence, for any $m$ we see that~\eqref{version 2} follows.
	\end{proof}
	
	As an application of the previous result, we develop the following estimate for the map $B_m$. As was the case for Lemma~\ref{lem on a bilinear estimate}, the point of this estimate is that the right hand side depends on fewer derivatives that one might expect from a na\"ive inspection of the left hand side.
	
	\begin{lem}[Another bilinear estimate]\label{lem on another bilinear estimate}
		Suppose that $m\in\N^+$ and $X\in W^{\infty,\infty}(\Omega;\R^n)$ satisfies $\m{Tr}_{\pd\Omega}(X\cdot e_n)=0$ and decomposes as $X = X_0 + X_1$, where $DX_0\in H^\infty(\Omega;\R^{n\times n})$  and $DX_1\in W^{\infty,\infty}(\Omega;\R^n)$.
		Assume that $\varphi\in H^{3m}(\Omega)$ satisfies the Neumann conditions $\pd_n^{m}\varphi=\cdots=\pd_n^{2m-1}\varphi=0$ on $\pd\Omega$. Then 
		\begin{multline}\label{movie title the estimate action packed film}
			|B_m(\grad_X\varphi,L_m\varphi)|\lesssim\tbr{\tnorm{DX_0,DX_1}_{H^{2+\lfloor n/2\rfloor}\times W^{2+\lfloor n/2\rfloor,\infty}}}\tnorm{\varphi}_{H^{3m}}\tnorm{\varphi}_{H^{m}}\\
			+\tbr{\tnorm{DX_0,DX_1}_{H^{2m}\times W^{2m,\infty}}}^{2+\lfloor n/2\rfloor}\tnorm{\varphi}_{H^{3m}}\tnorm{\varphi}_{L^2}\\
			+\tbr{\tnorm{DX_0,DX_1}_{H^{1+\lfloor n/2\rfloor+m}\times W^{1+\lfloor n/2\rfloor+m,\infty}}}^{6}\tnorm{\varphi}_{H^m}\tnorm{\varphi}_{L^2}.
		\end{multline}
		Here the implicit constant depends on the domain, the dimension, and $m$.
	\end{lem}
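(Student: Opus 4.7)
\textbf{Plan of proof for Lemma~\ref{lem on another bilinear estimate}.}

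The starting point is to exploit the symmetry $B_m(\nabla_X\varphi,L_m\varphi)=B_m(L_m\varphi,\nabla_X\varphi)$ and then integrate by parts in each summand $\int_\Omega \partial_j^m L_m\varphi\cdot\partial_j^m\nabla_X\varphi$, moving the $m$ factors of $\partial_j$ from the $L_m\varphi$ entry onto the $\nabla_X\varphi$ entry. Since $\Omega=\R^{n-1}\times(0,b)$ has no tangential boundary, only the $j=n$ direction contributes boundary terms, and we use the Neumann conditions on $\varphi$ (which prevent boundary terms involving $\partial_n^m\varphi,\ldots,\partial_n^{2m-1}\varphi$ from appearing on the $L_m\varphi$-side). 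This yields the identity
\begin{equation}\label{planid}
B_m(\nabla_X\varphi,L_m\varphi)=\int_\Omega L_m\varphi\cdot L_m\nabla_X\varphi+\sum_{k=0}^{m-1}c_k\int_{\partial\Omega}(\partial_n^k L_m\varphi)(\partial_n^{2m-1-k}\nabla_X\varphi)(\nu\cdot e_n)\,dS
\end{equation}
for universal signs $c_k\in\{\pm1\}$. The two pieces of \eqref{planid} are estimated separately.

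For the bulk integral I would split $L_m\nabla_X\varphi=\nabla_X L_m\varphi+[L_m,\nabla_X]\varphi$. The first piece is handled by noting that $\int L_m\varphi\cdot\nabla_X L_m\varphi=-\tfrac12\int(\nabla\cdot X)(L_m\varphi)^2$ thanks to $\m{Tr}_{\pd\Omega}(X\cdot e_n)=0$, which is controlled by $\|\nabla\cdot X\|_{L^\infty}\|L_m\varphi\|_{L^2}^2$; bounding $\|\nabla\cdot X\|_{L^\infty}\lesssim\|DX_0,DX_1\|_{H^{2+\tfloor{n/2}}\times W^{2+\tfloor{n/2},\infty}}$ via Sobolev embedding and interpolating $\|L_m\varphi\|_{L^2}^2\lesssim\|\varphi\|_{H^{2m}}^2\lesssim\|\varphi\|_{H^m}\|\varphi\|_{H^{3m}}$ via Lemma~\ref{lem on log-convexity of the norms} produces a contribution of type $(A)$. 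For the commutator term I would expand $[L_m,\nabla_X]\varphi=(-1)^m\sum_{j}\sum_{k=0}^{2m-1}\binom{2m}{k}\partial_j^{2m-k}X\cdot\nabla\partial_j^k\varphi$ and apply the high--low tame product estimates from Corollary~\ref{corollary on tame estimates on simple multipliers} to the $X_0$ and $X_1$ parts, yielding $\|[L_m,\nabla_X]\varphi\|_{L^2}\lesssim\|DX_0,DX_1\|_{H^{1+\tfloor{n/2}}\times W^{1+\tfloor{n/2},\infty}}\|\varphi\|_{H^{2m}}+\|DX_0,DX_1\|_{H^{2m}\times W^{2m,\infty}}\|\varphi\|_{H^{2+\tfloor{n/2}}}$, which after multiplication by $\|L_m\varphi\|_{L^2}\lesssim\|\varphi\|_{H^{2m}}$ and interpolation again breaks down into contributions controlled by $(A)$ and $(B)$.

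For each boundary term in \eqref{planid} I would use the $H^{1/2}$--$H^{-1/2}$ duality, pairing $\|\partial_n^k L_m\varphi\|_{H^{1/2}(\partial\Omega)}\lesssim\|\partial_n^k L_m\varphi\|_{H^1(\Omega)}\lesssim\|\varphi\|_{H^{k+1+2m}}$ (via the ordinary trace theorem) with $\|\partial_n^{2m-1-k}\nabla_X\varphi\|_{H^{-1/2}(\partial\Omega)}$, which is exactly the quantity controlled by Lemma~\ref{lemma on a trace estimate} applied with $\ell=m-1-k\in\{0,\dotsc,m-1\}$. Using the first form of that lemma gives the bound
\begin{equation}
\|\varphi\|_{H^{3m-\ell}}\tp{\tbr{\|DX_0,DX_1\|_{H^{2+\tfloor{n/2}+\ell}\times W^{2+\tfloor{n/2}+\ell,\infty}}}\|\varphi\|_{H^m}+\tbr{\|DX_0,DX_1\|_{H^{1+m+\ell}\times W^{1+m+\ell,\infty}}}^{2+\tfloor{n/2}}\|\varphi\|_{L^2}}
\end{equation}
for the boundary contribution at index $k$.

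The main obstacle, and the place requiring genuine care, is the final assembly: one must collapse the spread of intermediate indices $\ell=0,\dotsc,m-1$ back into the three canonical combinations $(A)$, $(B)$, $(C)$ appearing in \eqref{movie title the estimate action packed film}. This requires applying the log-convexity of Lemma~\ref{lem on log-convexity of the norms} to both the $X$-norms (interpolating $\|DX\|_{H^{2+\tfloor{n/2}+\ell}}$ between $\|DX\|_{H^{2+\tfloor{n/2}}}$ and $\|DX\|_{H^{1+m+\tfloor{n/2}}}$) and the $\varphi$-norms (interpolating $\|\varphi\|_{H^{3m-\ell}}$ between $\|\varphi\|_{H^m}$ and $\|\varphi\|_{H^{3m}}$), then using Young's inequality with carefully chosen exponents to absorb cross-terms. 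The appearance of the exponent $6$ on the $X$-norm in the $(C)$-term is the combinatorial price of these repeated interpolations: the $L^2$-estimate in Lemma~\ref{lemma on a trace estimate} already carries exponent $2+\tfloor{n/2}$ on $\|DX\|$, and subsequent redistribution of the $\|\varphi\|_{H^m}$ and $\|\varphi\|_{L^2}$ factors via Young's inequality introduces additional multiplicative powers of $\langle\|DX\|\rangle$. The computation is tedious but mechanical once the identity \eqref{planid} and Lemma~\ref{lemma on a trace estimate} are in hand.
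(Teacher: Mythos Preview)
Your proposal is correct and follows essentially the same route as the paper's proof: both begin with the integration-by-parts identity from Lemma~\ref{lemma on integration by parts} to split into a bulk integral $\int_\Omega L_m(\nabla_X\varphi)L_m\varphi$ and boundary terms, handle the boundary terms via Lemma~\ref{lemma on a trace estimate} paired with trace theory, and assemble via repeated log-convexity and Young's inequality. A couple of cosmetic differences: the paper treats the bulk term by first rewriting $\nabla_X\varphi=\nabla\cdot(X\varphi)-(\nabla\cdot X)\varphi$ and then expanding $\nabla\cdot L_m(X\varphi)=\nabla\cdot(XL_m\varphi)+\nabla\cdot([L_m,X]\varphi)$, rather than your direct commutator $[L_m,\nabla_X]$; these differ only by lower-order $(\nabla\cdot X)$ terms and lead to the same estimate. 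The paper also invokes version~\eqref{version 2} of the trace lemma (already interpolated to $\|\varphi\|_{L^2}$) rather than version~\eqref{version 1}, which slightly streamlines the subsequent Young-inequality bookkeeping; your displayed boundary bound actually matches~\eqref{version 2}, so your text should say ``second form.'' Finally, your account of the exponent $6$ is not quite the mechanism in the paper: it arises specifically from the term $\tbr{\|DX\|_{H^{1+\lfloor n/2\rfloor+m}}}\|\varphi\|_{H^m}\|L_m\varphi\|_{H^1}$ after interpolating $\|\varphi\|_{H^{2m+1}}$ between $\|\varphi\|_{L^2}$ and $\|\varphi\|_{H^{3m}}$, yielding exponent $3m/(m-1)\le 6$ for $m\ge 2$.
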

	\begin{proof}
		We begin by integrating by parts. Thanks to Lemma~\ref{lemma on integration by parts}, we have the identity
		\begin{equation}
			B_m(\grad_X\varphi,L_m\varphi)=\int_{\Omega}L_m(\grad_X\varphi)L_m\varphi+\sum_{\ell=0}^{m-1}\bp{\int_{\Sigma}-\int_{\Sigma_0}}(-1)^{\ell}\pd_n^{m+\ell}(\grad_{X}\varphi)\pd_n^{m-1-\ell}L_m\varphi.
		\end{equation}
		Hence, by trace theory, we have the estimate
		\begin{equation}
			|B_m(\grad_X\varphi,L_m\varphi)|\lesssim\babs{\int_{\Omega}L_m(\grad_X\varphi)L_m\varphi}
			+ \sum_{\ell=0}^{m-1} \tnorm{\m{Tr}_{\pd\Omega} (\pd_n^{m+\ell}(\grad_{X} \varphi)) }_{H^{-1/2}} \tnorm{L_m\varphi}_{H^{m-\ell}}=\bf{I}+\bf{II}.
		\end{equation}
		We will handle $\bf{I}$ and $\bf{II}$ separately.

		For $\bf{I}$ we first use $\nabla_X \varphi = \nabla\cdot(\varphi X) - \varphi \nabla \cdot X$ to expand 
		\begin{multline}
			\int_{\Omega}L_m(\grad_X\varphi)L_m\varphi  
			= \int_{\Omega} \nabla \cdot L_m (\varphi X) L_m \varphi -  L_m( \varphi (\nabla \cdot X)) L_m \varphi \\
			= \int_{\Omega} \nabla \cdot (X  L_m \varphi) L_m \varphi + \nabla \cdot ([L_m,X]\varphi) L_m \varphi -  L_m( \varphi (\nabla \cdot X)) L_m \varphi, 
		\end{multline}
		and then employ Proposition~\ref{prop on divergence trick} on the first term on the right. This yields the bound
		\begin{multline}\label{it is time}
			\bf{I}  \le\babs{\int_{\Omega}\f{\grad\cdot X}{2}(L_m\varphi)^2}+\babs{\int_{\Omega}\grad\cdot([L_m,X]\varphi)L_m\varphi}
			+ \babs{\int_{\Omega} L_m( \varphi (\nabla \cdot X)) L_m \varphi} \\
			\lesssim\bp{\tnorm{DX}_{L^\infty}\tnorm{\varphi}_{H^{2m}}+\sum_{j=1}^n\tnorm{[\pd_j^{2m},X]\varphi}_{H^1}   +\tnorm{(\grad\cdot X)\varphi}_{H^{2m}}  }\tnorm{\varphi}_{H^{2m}}.
		\end{multline}
		
		We next use Corollary~\ref{corollary on tame estimates on simple multipliers}, followed by interpolation and Young's inequality, to estimate
		\begin{multline}\label{for me to}
			\tnorm{(\grad\cdot X)\varphi}_{H^{2m}}\lesssim\tnorm{DX_0,DX_1}_{H^{1+\lfloor n/2\rfloor}\times W^{1+\lfloor n/2\rfloor,\infty}} \tnorm{\varphi}_{H^{2m}}\\+\begin{cases}
				0&\text{if }2m\le 1+\lfloor n/2\rfloor,\\
				\tnorm{DX_0,DX_1}_{H^{2m}\times W^{2m,\infty}}\tnorm{\varphi}_{H^{1+\lfloor n/2\rfloor}}&\text{if }1+\lfloor n/2\rfloor<2m,
			\end{cases}\\
			\lesssim\tbr{\tnorm{DX_0,DX_1}_{H^{1+\lfloor n/2\rfloor}\times W^{1+\lfloor n/2\rfloor,\infty}}}\tnorm{\varphi}_{H^{2m}}+\tbr{\tnorm{DX_0,DX_1}_{H^{2m}\times W^{2m,\infty}}}^{2+\lfloor n/2\rfloor}\tnorm{\varphi}_{L^2}.
		\end{multline}
		On the other hand, for $j\in\tcb{1,\dots,n}$, we use Corollary~\ref{coro on tames estimates on commutators} and another interpolation and Young inequality argument to estimate
		\begin{multline}\label{create a bunch of stupid labels}
			\tnorm{[\pd_j^{2m},X]\varphi}_{H^1}\lesssim\tnorm{DX_0,DX_1}_{H^{1+\lfloor n/2\rfloor}\times W^{1+\lfloor n/2\rfloor,\infty}}\tnorm{\varphi}_{H^{2m}}\\
			+\begin{cases}
				0&\text{if }2m\le 1+\lfloor n/2,\\
				\tnorm{DX_0,DX_1}_{H^{2m}\times W^{2m,\infty}}\tnorm{\varphi}_{H^{1+\lfloor n/2\rfloor}}&\text{if }1+\lfloor n/2\rfloor<2m,
			\end{cases}\\
			\lesssim\tbr{\tnorm{DX_0,DX_1}_{H^{1+\lfloor n/2\rfloor}\times W^{1+\lfloor n/2\rfloor, \infty}}} \tnorm{\varphi}_{H^{2m}}+\tbr{\tnorm{DX_0,DX_1}_{H^{2m}\times W^{2m,\infty}}}^{2+\lfloor n/2\rfloor}\tnorm{\varphi}_{L^2}.
		\end{multline}
		Upon synthesizing~\eqref{it is time},\eqref{for me to}, and~\eqref{create a bunch of stupid labels}, we learn that
		\begin{equation}\label{eye bound}
			\bf{I}\lesssim\tbr{\tnorm{DX_0,DX_1}_{H^{1+\lfloor n/2\rfloor}\times W^{1+\lfloor n/2\rfloor, \infty}}} \tnorm{\varphi}_{H^{2m}}^2+\tbr{\tnorm{DX_0,DX_1}_{H^{2m}\times W^{2m,\infty}}}^{2+\lfloor n/2\rfloor}\tnorm{\varphi}_{H^{2m}}\tnorm{\varphi}_{L^2}.
		\end{equation}
		
		Now we turn our attention to $\bf{II}$. First we input the estimate~\eqref{version 2} from Lemma~\ref{lemma on a trace estimate}:
		\begin{multline}
			\bf{II}\le\tnorm{\varphi}_{H^m}\sum_{\ell=0}^{m-1}\tbr{\tnorm{DX_0,DX_1}_{H^{2+\lfloor n/2\rfloor+\ell}\times W^{2+\lfloor n/2\rfloor+\ell,\infty}}}\tnorm{L_m\varphi}_{H^{m-\ell}}\\
			+\tnorm{\varphi}_{L^2}\sum_{\ell=0}^{m-1}\tbr{\tnorm{DX_0,DX_1}_{H^{1+m+\ell}\times W^{1+m+\ell,\infty}}}^{2+\lfloor n/2\rfloor}\tnorm{L_m\varphi}_{H^{m-\ell}}.
		\end{multline}
		Next we utilize the interpolation inequalities
		\begin{equation}
			\tnorm{L_m\varphi}_{H^{m-\ell}}\lesssim\tnorm{L_m\varphi}_{H^1}^{\f{\ell}{m-1}}\tnorm{L_m\varphi}_{H^m}^{\f{m-1-\ell}{m-1}},
		\end{equation}
		\begin{multline}
			\tnorm{DX_0,DX_1}_{H^{2+\lfloor n/2\rfloor+\ell}\times W^{2+\lfloor n/2\rfloor+\ell, \infty}} \\\lesssim\tnorm{DX_0,DX_1}_{H^{2+\lfloor n/2\rfloor}\times W^{2+\lfloor n/2\rfloor,\infty}}^{\f{m-1-\ell}{m-1}} \tnorm{DX_0,DX_1}_{H^{1+\lfloor n/2\rfloor+m}\times W^{1+\lfloor n/2\rfloor+m,\infty}}^{\f{\ell}{m-1}},
		\end{multline}
		and
		\begin{equation}
			\tnorm{DX_0,DX_1}_{H^{1+m+\ell}\times W^{1+m+\ell,\infty}}\lesssim\tnorm{DX_0,DX_1}_{H^{1+m}\times W^{1+m,\infty}}^{\f{m-1-\ell}{m-1}}\tnorm{DX_0,DX_1}_{H^{2m}\times W^{2m,\infty}}^{\f{\ell}{m-1}},
		\end{equation}
		together with Young's inequality to bound
		\begin{multline}\label{eye eye bound}
			\bf{II}\le\tnorm{\varphi}_{H^m}\big(\tbr{\tnorm{DX_0,DX_1}_{H^{2+\lfloor n/2\rfloor}\times W^{2+\lfloor n/2\rfloor,\infty}}}\tnorm{L_m\varphi}_{H^{m}}\\+\tbr{\tnorm{DX_0,DX_1}_{H^{1+\lfloor n/2\rfloor+m}\times W^{1+\lfloor n/2\rfloor+m,\infty}}}\tnorm{L_m\varphi}_{H^{1}}\big)\\
			+\tnorm{\varphi}_{L^2}\big(\tbr{\tnorm{DX_0,DX_1}_{H^{1+m}\times W^{1+m,\infty}}}^{2+\lfloor n/2\rfloor}\tnorm{L_m\varphi}_{H^{m}}\\+\tbr{\tnorm{DX_0,DX_1}_{H^{2m}\times W^{2m,\infty}}}^{2+\lfloor n/2\rfloor}\tnorm{L_m\varphi}_{H^{1}}\big).
		\end{multline}
		
		Upon combining \eqref{eye bound} and~\eqref{eye eye bound}, we see that
		\begin{multline}\label{this guy is an equation}
			\bf{I}+\bf{II}\lesssim\tbr{\tnorm{DX_0,DX_1}_{H^{2+\lfloor n/2\rfloor}\times W^{2+\lfloor n/2\rfloor,\infty}}}\tp{\tnorm{\varphi}_{H^{2m}}^2+\tnorm{\varphi}_{H^m}\tnorm{\varphi}_{H^{3m}}}\\+\tbr{\tnorm{DX_0,DX_1}_{H^{2m}\times W^{2m,\infty}}}^{2+\lfloor n/2\rfloor}\tnorm{\varphi}_{H^{3m}}\tnorm{\varphi}_{L^2}\\
			+\tbr{\tnorm{DX_0,DX_1}_{H^{1+\lfloor n/2\rfloor+m}\times W^{1+\lfloor n/2\rfloor+m, \infty}}} \tnorm{\varphi}_{H^m}\tnorm{\varphi}_{H^{2m+1}}.
		\end{multline}
		We then interpolate again: $\tnorm{\varphi}_{H^{2m}}^2\lesssim\tnorm{\varphi}_{H^m}\tnorm{\varphi}_{H^{3m}}$, and if $m\ge2$ then we use Young's inequality to bound
		\begin{multline}
			\tbr{\tnorm{DX_0,DX_1}_{H^{1+\lfloor n/2\rfloor+m}\times W^{1+\lfloor n/2\rfloor+m}}}\tnorm{\varphi}_{H^{2m+1}}\\\lesssim\tnorm{\varphi}_{H^{3m}}+\tbr{\tnorm{DX_0,DX_1}_{H^{1+\lfloor n/2\rfloor+m}\times W^{1+\lfloor n/2\rfloor+m}}}^{\f{3m}{m-1}}\tnorm{\varphi}_{L^2}.
		\end{multline}
		By combining these  with~\eqref{this guy is an equation}, we prove \eqref{movie title the estimate action packed film}.
	\end{proof}
	
	\subsection{Some results on steady transport equations and elliptic regularizations}
	
	With the lemmas from Section~\ref{slip into the shade and sip their lemonaide} in hand, we may now begin to derive the required precise estimates for steady transport equations and their regularizations. The former is much simpler, and our first result gives all of the necessary a priori estimates.
	
	\begin{prop}[A priori estimates for steady transport]\label{proposition on a priori estimates for steady transport}
		Let $m\in\N$. Suppose that $\varphi\in H^m(\Omega)$, $X\in W^{\infty,\infty}(\Omega;\R^n)$ satisfies~\eqref{assumptions on the vector field X} with $r=1+\tfloor{n/2}$ and $0<\rho\le\rho_{\m{max}}$, and that $\grad\cdot(X\varphi)\in H^m(\Omega)$. Suppose additionally that $\Lambda\in W^{\infty,\infty}(\Omega)$ is such that $\Lambda>0$, $1/\Lambda\in L^\infty(\Omega)$, and 
		\begin{equation}\label{zigmund_das_schwein_7} 
			\Lambda\varphi+\grad\cdot(X\varphi)=\psi  \text{ in }\Omega.
		\end{equation}
		There exists a $\rho^{(m)}\in\R^+$, depending only on $m$, $\Lambda$, and the dimension, such that if $0<\rho\le\rho^{(m)}$, then 
		\begin{equation}\label{a priori estimate is in da houz}
			\tnorm{\varphi}_{H^m}\lesssim\tnorm{\psi}_{H^m}+\begin{cases}
				0&\text{if }m\le 1+\lfloor n/2\rfloor,\\
				\tnorm{DX_0,DX_1}_{H^m\times W^{m,\infty}}\tnorm{\varphi}_{H^{1+\lfloor n/2\rfloor}}&\text{if }1+\lfloor n/2\rfloor<m.
			\end{cases}
		\end{equation}
		Here the implicit constant depends only on $m$, $\rho^{(m)}$, $\Lambda$, and the dimension.
	\end{prop}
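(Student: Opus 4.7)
The plan is to prove the bound by induction on $m$, combining an $L^2$-energy estimate for the base case with a bilinear-form test for the inductive step, exploiting the positivity of $\Lambda$, the vanishing normal trace $X\cdot e_n|_{\partial\Omega}=0$, and the estimate of Lemma~\ref{lem on a bilinear estimate}.

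For the base case $m=0$, the equation yields $\nabla\cdot(X\varphi)=\psi-\Lambda\varphi\in L^2(\Omega)$, so $\varphi$ belongs to the adapted space $H^0_X(\Omega)$, and Proposition~\ref{prop on divergence trick} produces the energy identity $\int_\Omega(\Lambda+\tfrac{1}{2}\nabla\cdot X)\varphi^2=\int_\Omega\psi\varphi$. Since $\Lambda\ge(\tnorm{1/\Lambda}_{L^\infty})^{-1}>0$ while $\tnorm{\nabla\cdot X}_{L^\infty}\lesssim\rho$, choosing $\rho^{(0)}\in\R^+$ small enough renders the left side coercive and gives $\tnorm{\varphi}_{L^2}\lesssim\tnorm{\psi}_{L^2}$.

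For the inductive step ($m\ge 1$), pair both sides of~\eqref{zigmund_das_schwein_7} through the bilinear form $B_m(\varphi,\cdot)$ from~\eqref{copy that pasta is it linguini or roasted tolueney} to obtain
\begin{equation*}
B_m(\varphi,\Lambda\varphi)+B_m(\varphi,\nabla\cdot(X\varphi))=B_m(\varphi,\psi).
\end{equation*}
Expanding the first term via Leibniz isolates the coercive piece $\sum_j\int_\Omega\Lambda(\partial_j^m\varphi)^2$ from a commutator bounded by $\tnorm{\Lambda}_{W^{m,\infty}}\tnorm{\varphi}_{H^m}\tnorm{\varphi}_{H^{m-1}}$; the second term is controlled directly by the tame estimate~\eqref{Bm tame} of Lemma~\ref{lem on a bilinear estimate}; and the right side is at most $\tnorm{\varphi}_{H^m}\tnorm{\psi}_{H^m}$. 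Upgrading the coercive quantity to the full seminorm $\tnorm{\varphi}_{\dot H^m(\Omega)}^2$ is then a standard tangential Fourier argument combined with one-dimensional $L^2$ interpolation in the normal variable, exploiting $|\xi|^{2m}\lesssim_n\sum_{j<n}|\xi_j|^{2m}$ together with $\tnorm{\partial_y^kf}_{L^2(0,b)}^2\lesssim_k\tnorm{f}_{L^2(0,b)}^2+\tnorm{\partial_y^mf}_{L^2(0,b)}^2$ for $k<m$.

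Assembling the ingredients produces the bound
\begin{equation*}
\tnorm{\varphi}_{H^m}^2\lesssim\tnorm{\psi}_{H^m}^2+\rho\tnorm{\varphi}_{H^m}^2+\tnorm{\varphi}_{H^{m-1}}^2+\tnorm{\varphi}_{H^m}\cdot T_m,
\end{equation*}
where $T_m$ denotes the tame lower-order term produced by~\eqref{Bm tame}. Choosing $\rho^{(m)}\in\R^+$ sufficiently small absorbs the $\rho\tnorm{\varphi}_{H^m}^2$ contribution to the left; the $\tnorm{\varphi}_{H^{m-1}}$ factor is removed by standard interpolation $\tnorm{\varphi}_{H^{m-1}}\le\varepsilon\tnorm{\varphi}_{H^m}+C_\varepsilon\tnorm{\varphi}_{L^2}$ combined with the base case; and the piecewise shape of $T_m$ inherited from~\eqref{Bm tame} matches exactly the right side of the target bound~\eqref{a priori estimate is in da houz}. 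I expect the main technical obstacle to be the bookkeeping required to trace $\rho$-dependence through the Leibniz remainder and the Lemma~\ref{lem on a bilinear estimate} tame contribution simultaneously, so that all of the $\rho\tnorm{\varphi}_{H^m}^2$-type terms can indeed be absorbed uniformly across the induction and the residual lower-order terms assemble into precisely the piecewise form demanded by~\eqref{a priori estimate is in da houz}.
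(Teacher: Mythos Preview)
Your proposal is correct and follows essentially the same approach as the paper: an $L^2$ energy identity via Proposition~\ref{prop on divergence trick} for the base case, then testing the equation against $B_m(\varphi,\cdot)$ and invoking the tame bilinear estimate~\eqref{Bm tame} for $m\ge 1$. The only cosmetic difference is that the paper packages your Leibniz expansion of $B_m(\varphi,\Lambda\varphi)$ together with your ``tangential Fourier plus one-dimensional interpolation'' coercivity argument into a single G\aa rding inequality (Lemma~\ref{lemma on garding inequality for Bm}) and cites it, which immediately produces $\tnorm{\varphi}_{H^m}^2\lesssim B_m(\Lambda\varphi,\varphi)+\tnorm{\varphi}_{L^2}^2$ and so bypasses the intermediate $\tnorm{\varphi}_{H^{m-1}}$ term entirely; also note that despite your phrasing as ``induction on $m$,'' neither argument actually inducts through intermediate values---both go directly from $m=0$ to arbitrary $m\ge 1$.
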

	\begin{proof}
		We begin by multiplying \eqref{zigmund_das_schwein_7} by $\varphi$ and applying Proposition~\ref{prop on divergence trick} to obtain the identity
		\begin{equation}
			\int_{\Omega}\bp{\Lambda+\f{\grad\cdot X}{2}}\varphi^2=\int_\Omega \varphi\psi.
		\end{equation}
		Selecting $\rho^{(0)}$ sufficiently small, depending on $\tnorm{1/\Lambda}_{L^\infty}$, and using Cauchy-Schwarz on the right hand side, we obtain the a priori estimate
		\begin{equation}\label{bcaprioriestimate}
			\tnorm{\varphi}_{L^2}\lesssim\tnorm{\psi}_{L^2}.
		\end{equation}
		Now suppose that $m\ge 1$ and $0<\rho\le\rho^{(0)}$.  Plugging  \eqref{zigmund_das_schwein_7} into the bilinear form $B_m$ from~\eqref{copy that pasta is it linguini or roasted tolueney}, we have the identity $B_m(\Lambda\varphi+\grad\cdot(X\varphi),\varphi)=B_m(\psi,\varphi)$, and hence
		\begin{equation}\label{the identity is cat}
			B_m(\Lambda\varphi,\varphi)\le\tnorm{\psi}_{H^m}\tnorm{\varphi}_{H^m}+|B_m(\grad\cdot(X\varphi),\varphi)|.
		\end{equation}
		By applying G\aa rding's inequality, Lemma~\ref{lemma on garding inequality for Bm}, we thus obtain the bound
		\begin{equation}
			\tnorm{\varphi}_{H^m}^2\lesssim\tnorm{\psi}_{H^m}\tnorm{\varphi}_{H^m}+\tnorm{\varphi}_{L^2}^2+|B_m(\grad\cdot(X\varphi),\varphi)|.
		\end{equation}
		Now we invoke Cauchy's inequality for the $\tnorm{\psi}_{H^m}\tnorm{\varphi}_{H^m}$ term, estimate \eqref{bcaprioriestimate} for the $\tnorm{\varphi}^2_{L^2}$ term, and estimate \eqref{Bm tame} from Lemma~\ref{lem on a bilinear estimate} for the final term above; these then yield the bound
		\begin{multline}\label{begotten}
			\tnorm{\varphi}_{H^m}^2\lesssim\tnorm{\psi}^2_{H^m}+\rho\tnorm{\varphi}^2_{H^m}\\+\tnorm{\varphi}_{H^m}\begin{cases}
				0&\text{if }m\le 1+\lfloor n/2,\\
				\tnorm{DX_0,DX_1}_{H^{m}\times W^{m,\infty}}\tnorm{\varphi}_{H^{1+\lfloor n/2\rfloor}}&\text{if }1+\lfloor n/2\rfloor<m.
			\end{cases}
		\end{multline}
		We define $\rho^{(m)}\in(0,\rho^{(0)}]$ to be sufficiently small so that when taking $\rho\le\rho^{(m)}$ we can absorb the right hand side's $\tnorm{\varphi}_{H^m}^2$-contribution with the left. Once this is done, estimate~\eqref{a priori estimate is in da houz} follows from one last application of Cauchy's inequality to the final term in~\eqref{begotten}.
	\end{proof}
	
	The remainder of this subsection develops corresponding a priori estimates for regularized steady transport equations, with the right uniformities with respect to the approximation parameters. This is more complicated than Proposition~\ref{proposition on a priori estimates for steady transport} since the theory now has to account for a rather unhappy marriage of elliptic and hyperbolic structures. Our first result in this direction handles the case of low regularity, namely $L^2$,  data.
	
	\begin{prop}[A priori estimate for regularized steady transport with data in $L^2$]\label{prop on L2 steady transport a prioris}
		Let $m\in\N^+$ and $L_m$ be the operator given in~\eqref{the operator Lm}.  Suppose that $\psi\in L^2(\Omega)$, $\varphi\in H^{2m}(\Omega)$, $X\in W^{\infty,\infty}(\Omega;\R^n)$ satisfies~\eqref{assumptions on the vector field X} with $r=1+\tfloor{n/2}$ and $0<\rho\le\rho_{\m{max}}$, $\Lambda\in W^{\infty,\infty}(\Omega)$ satisfies $\Lambda>0$ and $1/\Lambda\in L^\infty(\Omega)$, and $N\in\N^+$. Assume additionally that the equations
		\begin{equation}\label{regularized steady transport}
			\begin{cases}
				N^{-1}\Lambda L_m\varphi+\grad\cdot(X\varphi)=\psi&\text{in }\Omega,\\
				\pd_n^m\varphi=\cdots=\pd_n^{2m-1}\varphi=0&\text{on }\pd\Omega
			\end{cases}
		\end{equation}
		are satisfied.   If
		\begin{equation}\label{N gtrsim estimate}
			N\gtrsim\tbr{\tnorm{DX_0,DX_1}_{H^m\times W^{m,\infty}}}^{4+2\lfloor n/2\rfloor},
		\end{equation}
		then we have the a priori estimate
		\begin{equation}
			\tnorm{\varphi}_{H^{2m}}\lesssim N\tnorm{\varphi,\psi}_{L^2\times L^2},
		\end{equation}
		where the implied constants depend on $m$, the dimension, $\Lambda$, and $\rho_{\m{max}}$.
	\end{prop}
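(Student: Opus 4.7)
The plan is a two-step energy argument: first test \eqref{regularized steady transport} against $\varphi$ to obtain a preliminary $H^m$ bound scaling like $\sqrt{N}$, then test against $L_m \varphi$ and apply the bilinear estimate \eqref{Bm L2 low} of Lemma~\ref{lem on a bilinear estimate} to obtain a bound on $\|L_m\varphi\|_{L^2}$ scaling like $N$, and finally upgrade to $H^{2m}$ via standard elliptic regularity for $L_m$ with the higher-order Neumann boundary conditions. The observation that makes everything go through is that the Neumann conditions $\partial_n^m \varphi = \cdots = \partial_n^{2m-1}\varphi = 0$ yield the clean identity $B_m(g,\varphi) = \int_\Omega g\, L_m\varphi$ for every $g \in H^m(\Omega)$, with \emph{no} residual boundary terms: the standard integration-by-parts expansion of $B_m(g,\varphi)$ pairs $\partial_n^{m+k}\varphi$ (which vanishes on $\partial\Omega$ for $k=0,\dots,m-1$) against $\partial_n^{m-1-k}g$, so all traces collapse.

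For the first step, test \eqref{regularized steady transport} against $\varphi$. The elliptic contribution becomes $N^{-1}\int L_m\varphi \cdot (\Lambda\varphi) = N^{-1}B_m(\varphi,\Lambda\varphi)$, bounded below by $cN^{-1}\|\varphi\|_{H^m}^2 - CN^{-1}\|\varphi\|_{L^2}^2$ via G\aa rding's inequality (Lemma~\ref{lemma on garding inequality for Bm}) and a Leibniz expansion of $\Lambda\varphi$. The transport term $\int\nabla\cdot(X\varphi)\varphi$ reduces to $\tfrac{1}{2}\int(\nabla\cdot X)\varphi^2$ by the divergence trick (Proposition~\ref{prop on divergence trick}) applied to the field $X\varphi^2/2$, using $X\cdot e_n = 0$ on $\partial\Omega$; this is bounded by $\|DX\|_{L^\infty}\|\varphi\|_{L^2}^2 \lesssim \rho_{\mathrm{max}}\|\varphi\|_{L^2}^2$ via the Sobolev embedding valid for $r = 1+\lfloor n/2\rfloor$. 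Cauchy-Schwarz and Young on $\int\psi\varphi$ then yield the preliminary estimate
\begin{equation*}
    \|\varphi\|_{H^m}^2 \lesssim N\|\varphi,\psi\|_{L^2\times L^2}^2.
\end{equation*}

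For the second step, test \eqref{regularized steady transport} against $L_m\varphi$. The elliptic term delivers $cN^{-1}\|L_m\varphi\|_{L^2}^2$, and the source term is dispatched by Young's inequality. The coupling term, by the clean identity above with $g = \nabla\cdot(X\varphi) \in H^m(\Omega)$, equals $B_m(\varphi, \nabla\cdot(X\varphi))$, which is precisely the object controlled by \eqref{Bm L2 low}:
\begin{equation*}
    |B_m(\varphi, \nabla\cdot(X\varphi))| \lesssim \rho\|\varphi\|_{H^m}^2 + \rho^{-1-\lfloor n/2\rfloor}\langle \|DX_0, DX_1\|_{H^m\times W^{m,\infty}}\rangle^{2+\lfloor n/2\rfloor}\|\varphi\|_{H^m}\|\varphi\|_{L^2}.
\end{equation*}
Inserting the first-step bound (so $\|\varphi\|_{H^m} \lesssim \sqrt{N}\|\varphi,\psi\|_{L^2}$), treating $\rho \le \rho_{\mathrm{max}}$ as a fixed constant absorbable into the implicit constants, and absorbing the favorable $\|L_m\varphi\|_{L^2}^2$-term I obtain
\begin{equation*}
    \|L_m\varphi\|_{L^2}^2 \lesssim N^2\|\varphi,\psi\|_{L^2}^2 + \langle\|DX_0,DX_1\|_{H^m\times W^{m,\infty}}\rangle^{2+\lfloor n/2\rfloor}N^{3/2}\|\varphi,\psi\|_{L^2}^2.
\end{equation*}

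The hypothesis $N \gtrsim \langle\|DX_0,DX_1\|_{H^m\times W^{m,\infty}}\rangle^{4+2\lfloor n/2\rfloor}$ is designed precisely so that $\langle\|DX\|\rangle^{2+\lfloor n/2\rfloor}N^{1/2} \lesssim N$, collapsing the right-hand side to $N^2\|\varphi,\psi\|_{L^2}^2$; thus the stated exponent $4+2\lfloor n/2 \rfloor$ is simply the square of the $2+\lfloor n/2\rfloor$ exponent in \eqref{Bm L2 low}, arising from the Young splitting that separates $\|\varphi\|_{H^m}$ from $\|\varphi\|_{L^2}$. Standard elliptic regularity for the constant-coefficient operator $L_m$ with the complementing higher-order Neumann conditions finally gives $\|\varphi\|_{H^{2m}} \lesssim \|L_m\varphi\|_{L^2} + \|\varphi\|_{L^2} \lesssim N\|\varphi,\psi\|_{L^2\times L^2}$. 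The main technical point is the justification of the clean IBP identity (which hinges on the Neumann conditions built into $\X^s_{m,N}$); once that is in hand, the remainder is a direct application of Lemmas~\ref{lemma on garding inequality for Bm} and~\ref{lem on a bilinear estimate} together with elementary absorption.
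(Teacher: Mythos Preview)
Your proof is correct. Both your argument and the paper's test the equation against $L_m\varphi$, convert the transport term to $B_m(\nabla\cdot(X\varphi),\varphi)$ via the clean integration-by-parts identity enabled by the Neumann conditions on $\varphi$, and then invoke the bilinear estimate~\eqref{Bm L2 low} together with the elliptic regularity of Lemma~\ref{lem on a priori estimates for Lm}. The only genuine difference is in how the $\tnorm{\varphi}_{H^m}$ terms arising from~\eqref{Bm L2 low} are controlled.

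The paper does \emph{not} perform your preliminary Step~1. Instead, it handles $\tnorm{\varphi}_{H^m}$ by interpolating between $L^2$ and $H^{2m}$, i.e.\ $\tnorm{\varphi}_{H^m}^2 \lesssim \tnorm{\varphi}_{L^2}\tnorm{\varphi}_{H^{2m}}$ and $\tnorm{\varphi}_{H^m} \lesssim \tnorm{\varphi}_{L^2}^{1/2}\tnorm{\varphi}_{H^{2m}}^{1/2}$, which produces fractional powers of $\tnorm{\varphi}_{H^{2m}}$ on the right that are then absorbed via Young's inequality at the very end. Your two-step route (first test against $\varphi$ and use G\aa rding to obtain $\tnorm{\varphi}_{H^m}^2 \lesssim N\tnorm{\varphi,\psi}_{L^2}^2$, then substitute) sidesteps both the interpolation and the final absorption of fractional powers, at the cost of one extra testing. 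Both arguments apply~\eqref{Bm L2 low} with $\rho=\rho_{\m{max}}$ (so that $\rho$ and $\rho^{-1-\tfloor{n/2}}$ become harmless fixed constants), and both identify the same mechanism behind the exponent $4+2\tfloor{n/2}$: it is twice the exponent $2+\tfloor{n/2}$ from~\eqref{Bm L2 low}, needed so that $\tbr{\tnorm{DX_0,DX_1}}^{2+\tfloor{n/2}} \lesssim N^{1/2}$.

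Two minor remarks: your reference to ``Leibniz expansion of $\Lambda\varphi$'' is unnecessary since Lemma~\ref{lemma on garding inequality for Bm} already handles general $\Lambda$; and the exponent $4+2\tfloor{n/2}$ is \emph{double}, not the square of, $2+\tfloor{n/2}$.
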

	\begin{proof}
		We multiply the first equation in \eqref{regularized steady transport} by $N^{-1}L_m\varphi$ and integrate over $\Omega$.  By applying Lemma~\ref{lemma on integration by parts} on the $\grad\cdot(X\varphi)$-term, we acquire the identity
		\begin{equation}
			\f{1}{N^2}\int_{\Omega}\Lambda(L_m\varphi)^2+\f{1}{N}B_m(\grad\cdot(X\varphi),\varphi)=\int_{\Omega}\psi\f{1}{N}L_m\varphi.
		\end{equation}
		Now we use the hypotheses on $\Lambda$ paired with Cauchy's inequality and absorption:
		\begin{equation}\label{at long last we return to this equation}
			\f{1}{N^2}\tnorm{L_m\varphi}_{L^2}^2\lesssim\tnorm{\psi}^2_{L^2}+\f{1}{N}|B_m(\grad\cdot(X\varphi),\varphi)|.
		\end{equation}
		We now handle the $B_m(\grad\cdot(X\varphi),\varphi)$ term by using estimate~\eqref{Bm L2 low} from Lemma~\ref{lem on a bilinear estimate} (with $\rho=\rho_{\m{max}}$) and then interpolating:
		\begin{multline}
			\f{1}{N}|B_m(\grad\cdot(X\varphi),\varphi)|\lesssim\f{1}{N}\tnorm{\varphi}_{H^m}^2+\f{1}{N}\tbr{\tnorm{DX_0,DX_1}_{H^m\times W^{m,\infty}}}^{2+\lfloor n/2\rfloor}\tnorm{\varphi}_{H^m}\tnorm{\varphi}_{L^2}\\
			\lesssim\f{1}{N}\tnorm{\varphi}_{H^{2m}}\tnorm{\varphi}_{L^2}+\f{1}{N^{1/2}}\tbr{\tnorm{DX_0,DX_1}_{H^m\times W^{m,\infty}}}^{2+\lfloor n/2\rfloor}\f{\tnorm{\varphi}_{H^{2m}}^{1/2}}{N^{1/2}}\tnorm{\varphi}_{L^2}^{3/2}.
		\end{multline}
		Therefore, if~\eqref{N gtrsim estimate} is satisfied, then
		\begin{equation}
			\f{1}{N}|B_m(\grad\cdot(X\varphi),\varphi)|\lesssim\f{1}{N}\tnorm{\varphi}_{H^{2m}}\tnorm{\varphi}_{L^2}+\f{\tnorm{\varphi}_{H^{2m}}^{1/2}}{N^{1/2}}\tnorm{\varphi}_{L^2}^{3/2}.
		\end{equation}
		Upon returning to \eqref{at long last we return to this equation}, we see that
		\begin{equation}
			\tnorm{L_m\varphi}_{L^2}\lesssim N\tnorm{\psi}_{L^2}+N^{1/2}\tnorm{\varphi}_{H^{2m}}^{1/2}\tnorm{\varphi}_{L^2}^{1/2}+N^{3/4}\tnorm{\varphi}_{H^{2m}}^{1/4}\tnorm{\varphi}_{L^2}^{3/4}.
		\end{equation}
		As the boundary conditions in~\eqref{regularized steady transport} are satisfied, we may then invoke the a priori estimate for $L_m$ from Lemma~\ref{lem on a priori estimates for Lm} to acquire the bound
		\begin{equation}
			\tnorm{\varphi}_{H^{2m}}\lesssim\tnorm{\varphi,L_m\varphi}_{L^2\times L^2}\lesssim N\tnorm{\varphi,\psi}_{L^2\times L^2}+N^{1/2}\tnorm{\varphi}_{H^{2m}}^{1/2}\tnorm{\varphi}_{L^2}^{1/2}+N^{3/4}\tnorm{\varphi}_{H^{2m}}^{1/4}\tnorm{\varphi}_{L^2}^{3/4}.
		\end{equation}
		The proof is complete upon applying Young's inequality in order to absorb the $\tnorm{\varphi}_{H^{2m}}$ terms from the right onto the left.
	\end{proof}
	
	Our next two results consider what happens when the data for a regularized steady transport equation is of higher regularity than $L^2$. The first of these analyzes the bonus regularity of the solution gained from the vanishing elliptic term.

	\begin{prop}[A priori estimate for regularized steady transport with high regularity data, 1]\label{proposition on Hm a prioris for regularized steady transport}
		Let $m\in\N^+$. Suppose that $\psi\in H^m(\Omega)$, $\varphi\in H^{3m}(\Omega)$, $X\in W^{\infty,\infty}(\Omega;\R^n)$ satisfies~\eqref{assumptions on the vector field X} with $r=2+\tfloor{n/2}$ and $0<\rho\le\rho_{\m{max}}$, $\Lambda\in W^{\infty,\infty}(\Omega)$ satisfies $\Lambda>0$ and $1/\Lambda\in L^\infty(\Omega)$, and $N\in\N^+$. Assume additionally that~\eqref{regularized steady transport} is satisfied. If
		\begin{equation}\label{this is gonna be referenced}
			N\gtrsim\tbr{\tnorm{DX_0,DX_1}_{H^{1+\lfloor n/2\rfloor +m}\times W^{1+\lfloor n/2\rfloor+m,\infty}}}^{4+2\lfloor n/2\rfloor},
		\end{equation}
		then we have the a priori estimate
		\begin{equation}
			\tnorm{\varphi}_{H^{3m}}\lesssim N\tnorm{\psi,\varphi}_{H^m\times H^m}+N\tbr{\tnorm{DX_0,DX_1}_{H^{2m}\times W^{2m,\infty}}}^{2+\lfloor n/2\rfloor}\tnorm{\varphi}_{L^2}.
		\end{equation}
		Here the implied constants depend only on $m$, the dimension, $\Lambda$, and $\rho_{\m{max}}$.
	\end{prop}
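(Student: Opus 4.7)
\emph{Proof proposal.} The plan is to upgrade the $L^2$-based argument of Proposition~\ref{prop on L2 steady transport a prioris} by testing against $L_m\varphi$ in the bilinear form $B_m$ of~\eqref{copy that pasta is it linguini or roasted tolueney} rather than in $L^2$. Concretely, I would apply $B_m(\cdot,L_m\varphi)$ to both sides of~\eqref{regularized steady transport}, yielding
\begin{equation}
N^{-1}B_m(\Lambda L_m\varphi,L_m\varphi)+B_m(\nabla\cdot(X\varphi),L_m\varphi)=B_m(\psi,L_m\varphi).
\end{equation}
The first term is coercive: by writing $\partial_j^m(\Lambda L_m\varphi)=\Lambda\partial_j^m L_m\varphi+[\partial_j^m,\Lambda]L_m\varphi$ and using $\Lambda\ge c>0$ together with the Leibniz-type commutator bounds from Corollary~\ref{coro on tames estimates on commutators}, one obtains a G\aa rding inequality of the form $B_m(\Lambda L_m\varphi,L_m\varphi)\gtrsim\tnorm{L_m\varphi}_{H^m}^2-C\tnorm{L_m\varphi}_{L^2}^2$, with constants depending only on $\Lambda$, $m$, and the dimension.

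For the transport term I would decompose $\nabla\cdot(X\varphi)=\nabla_X\varphi+(\nabla\cdot X)\varphi$. The first piece is exactly the object handled by Lemma~\ref{lem on another bilinear estimate} (whose hypotheses match the Neumann conditions $\partial_n^m\varphi=\cdots=\partial_n^{2m-1}\varphi=0$ imposed in~\eqref{regularized steady transport}), producing three contributions controlled by
\begin{equation}
\tbr{\tnorm{DX_0,DX_1}_{H^{2+\tfloor{n/2}}\times W^{2+\tfloor{n/2},\infty}}}\tnorm{\varphi}_{H^{3m}}\tnorm{\varphi}_{H^m}+\tbr{\tnorm{DX_0,DX_1}_{H^{2m}\times W^{2m,\infty}}}^{2+\tfloor{n/2}}\tnorm{\varphi}_{H^{3m}}\tnorm{\varphi}_{L^2}
\end{equation}
plus a term purely of the form $\tbr{\tnorm{DX_0,DX_1}_{H^{1+\tfloor{n/2}+m}\times W^{1+\tfloor{n/2}+m,\infty}}}^6\tnorm{\varphi}_{H^m}\tnorm{\varphi}_{L^2}$. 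The second piece $(\nabla\cdot X)\varphi$ is estimated directly by $\tnorm{(\nabla\cdot X)\varphi}_{H^m}\tnorm{L_m\varphi}_{H^m}$, applying the tame product bound of Corollary~\ref{corollary on tame estimates on simple multipliers} with the split $X=X_0+X_1$. The right-hand side is bounded trivially by $\tnorm{\psi}_{H^m}\tnorm{L_m\varphi}_{H^m}$.

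Combining these and using Young's inequality with $N$-weights to absorb the $\tnorm{\varphi}_{H^{3m}}\asymp\tnorm{L_m\varphi}_{H^m}$ (modulo lower order) contributions back into the coercive term $N^{-1}\tnorm{L_m\varphi}_{H^m}^2$, I expect to arrive at
\begin{equation}
N^{-1}\tnorm{L_m\varphi}_{H^m}^2\lesssim N^{-1}\tnorm{L_m\varphi}_{L^2}^2+N\tnorm{\psi,\varphi}_{H^m\times H^m}^2+N\tbr{\tnorm{DX_0,DX_1}_{H^{2m}\times W^{2m,\infty}}}^{4+2\tfloor{n/2}}\tnorm{\varphi}_{L^2}^2,
\end{equation}
provided $N$ dominates the prescribed lower bound, which is precisely what~\eqref{this is gonna be referenced} is arranged to guarantee when absorbing the $\tbr{\tnorm{DX}_{H^{1+\tfloor{n/2}+m}}}^6\tnorm{\varphi}_{H^m}\tnorm{\varphi}_{L^2}$ term and the dimensional-critical part of the commutator bounds. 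The intermediate $\tnorm{L_m\varphi}_{L^2}$ is then controlled via Proposition~\ref{prop on L2 steady transport a prioris} (whose $N$-assumption is weaker than~\eqref{this is gonna be referenced}), giving $\tnorm{L_m\varphi}_{L^2}\lesssim\tnorm{\varphi}_{H^{2m}}\lesssim N\tnorm{\varphi,\psi}_{L^2\times L^2}$, so that $N^{-1}\tnorm{L_m\varphi}_{L^2}^2\lesssim N\tnorm{\psi,\varphi}_{H^m\times H^m}^2$ is absorbable. Taking square roots and applying the elliptic $H^{3m}$-regularity for $L_m$ with the hypothesized Neumann boundary conditions (the higher-order analog of Lemma~\ref{lem on a priori estimates for Lm}, applied at the $H^m$ level: $\tnorm{\varphi}_{H^{3m}}\lesssim\tnorm{L_m\varphi}_{H^m}+\tnorm{\varphi}_{L^2}$) produces the claimed estimate.

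The main obstacle is not conceptual but bookkeeping: one must carefully arrange the Young's inequality splittings so that every occurrence of $\tnorm{\varphi}_{H^{3m}}$ arising from Lemma~\ref{lem on another bilinear estimate} can be absorbed into $N^{-1}\tnorm{L_m\varphi}_{H^m}^2$, and so that the resulting $DX$-prefactors on $\tnorm{\varphi}_{L^2}$ do not exceed the power $2+\tfloor{n/2}$ stated in the conclusion. It is precisely to balance these scalings that the hypothesis $N\gtrsim\tbr{\tnorm{DX_0,DX_1}_{H^{1+\tfloor{n/2}+m}\times W^{1+\tfloor{n/2}+m,\infty}}}^{4+2\tfloor{n/2}}$ is calibrated, matching the structure already established for the $L^2$-data case.
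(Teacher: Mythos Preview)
Your proposal is correct and follows essentially the same approach as the paper: test the equation with $L_m\varphi$ in $B_m$, invoke G\aa rding's inequality (Lemma~\ref{lemma on garding inequality for Bm}) for the coercive term, split $\nabla\cdot(X\varphi)=\nabla_X\varphi+(\nabla\cdot X)\varphi$ and handle the first piece via Lemma~\ref{lem on another bilinear estimate} and the second via Corollary~\ref{corollary on tame estimates on simple multipliers} (plus an interpolation/Young step), feed in Proposition~\ref{prop on L2 steady transport a prioris} for the lower-order $\tnorm{L_m\varphi}_{L^2}$, and close with the elliptic bound of Lemma~\ref{lem on a priori estimates for Lm} at the $H^m$ level. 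The only cosmetic difference is that the paper pairs with $N^{-1}L_m\varphi$ rather than $L_m\varphi$, which merely rescales the intermediate inequalities by a global factor of $N$.
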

	\begin{proof}
		We rearrange the first equation in~\eqref{regularized steady transport} as
		\begin{equation}
			N^{-1}\Lambda L_m\varphi=\psi-(\grad\cdot X)\varphi-\grad_X\varphi
		\end{equation}
		and then take the $B_m$-product of the above equation with $N^{-1}L_m \varphi$. After applying G\aa rding's inequality, Lemma~\ref{lemma on garding inequality for Bm}, followed by the $L^2$-a priori estimate of Proposition~\ref{prop on L2 steady transport a prioris}, we find that
		\begin{equation}
			N^{-2}\tnorm{L_m\varphi}^2_{H^m}\lesssim\tnorm{\varphi,\psi}_{L^2\times L^2}^2+N^{-1}\tnorm{\psi,(\grad\cdot X)\varphi}_{H^m\times H^m}\tnorm{L_m\varphi}_{H^m}+N^{-1}|B_m(\grad_X\varphi,L_m\varphi)|.
		\end{equation}
		Hence, by Cauchy's inequality we get the bound
		\begin{equation}
			N^{-2}\tnorm{L_m\varphi}^2_{H^m}\lesssim\tnorm{\varphi,\psi}_{L^2\times L^2}^2+\tnorm{\psi,(\grad\cdot X)\varphi}_{H^m\times H^m}^2+N^{-1}|B_m(\grad_X\varphi,L_m\varphi)|.
		\end{equation}
		Now we apply the a priori estimates for $L_m$ of Lemma~\ref{lem on a priori estimates for Lm} to learn that
		\begin{equation}\label{until daddy takes the t-bird away now}
			N^{-2}\tnorm{\varphi}_{H^{3m}}^2\lesssim\tnorm{\varphi,\psi}_{L^2\times L^2}^2+\tnorm{\psi,(\grad\cdot X)\varphi}_{H^m\times H^m}^2+N^{-1}|B_m(\grad_X\varphi,L_m\varphi)|.
		\end{equation}
		By Corollary~\ref{corollary on tame estimates on simple multipliers} and a familiar argument using  interpolation and Young's inequality, we deduce that
		\begin{multline}\label{noone can ever bite their own dust - say freddy mercurium}
			\tnorm{(\grad\cdot X)\varphi}_{H^m}\lesssim\tnorm{\varphi}_{H^m}+\begin{cases}
				0&\text{if }m\le 1+\lfloor n/2\rfloor,\\
				\tnorm{DX_0,DX_1}_{H^m\times W^{m,\infty}}\tnorm{\varphi}_{H^{1+\lfloor n/2\rfloor}}&\text{if }1+\lfloor n/2\rfloor<m,
			\end{cases}\\
			\lesssim\tnorm{\varphi}_{H^m}+\tbr{\tnorm{DX_0,DX_1}_{H^m\times W^{m,\infty}}}^{2+\lfloor n/2\rfloor}\tnorm{\varphi}_{L^2}.
		\end{multline}
		On the other hand, Lemma~\ref{lem on another bilinear estimate} shows that
		\begin{multline}\label{another one bites the dust}
			|B_m(\grad_X\varphi,L_m\varphi)|\lesssim\tnorm{\varphi}_{H^{3m}}\tnorm{\varphi}_{H^{m}}+\tbr{\tnorm{DX_0,DX_1}_{H^{2m}\times W^{2m,\infty}}}^{2+\lfloor n/2\rfloor}\tnorm{\varphi}_{H^{3m}}\tnorm{\varphi}_{L^2}\\
			+\tbr{\tnorm{DX_0,DX_1}_{H^{1+\lfloor n/2\rfloor+m}\times W^{1+\lfloor n/2\rfloor+m}}}^{6}\tnorm{\varphi}_{H^m}\tnorm{\varphi}_{L^2}.
		\end{multline}
		Inserting~\eqref{noone can ever bite their own dust - say freddy mercurium} and~\eqref{another one bites the dust} into~\eqref{until daddy takes the t-bird away now} and using Cauchy's inequality then shows that
		\begin{multline}
			N^{-2}\tnorm{\varphi}^2_{H^{3m}}\lesssim\tnorm{\varphi,\psi}_{H^m\times H^m}^2+\tbr{\tnorm{DX_0,DX_1}_{H^{2m}\times W^{2m,\infty}}}^{4+2\lfloor n/2\rfloor}\tnorm{\varphi}_{L^2}^2\\
			+N^{-1}\tbr{\tnorm{DX_0,DX_1}_{H^{1+\lfloor n/2\rfloor+m}\times W^{1+\lfloor n/2\rfloor+m,\infty}}}^{6}\tnorm{\varphi}_{H^m}^2,
		\end{multline}
		and upon combining this with hypothesis~\eqref{this is gonna be referenced} we deduce the bound
		\begin{equation}
			N^{-2}\tnorm{\varphi}^2_{H^{3m}}\lesssim\tnorm{\varphi,\psi}_{H^m\times H^m}^2+\tbr{\tnorm{DX_0,DX_1}_{H^{2m}\times W^{2m,\infty}}}^{4+2\lfloor n/2\rfloor}\tnorm{\varphi}_{L^2}^2.
		\end{equation}
		This is the stated estimate.
	\end{proof}

	Next, we aim to estimate the norm of the solution to a regularized steady transport equation in the same space as the regular data, but independently of the approximation parameter.

	\begin{prop}[A priori estimate for regularized steady transport with high regularity data, 2]\label{prop on hm aprioris for regularized steady transport 2}
		Let $m,N\in\N^+$. Suppose that $\psi\in H^m(\Omega)$, $\varphi\in H^{3m}(\Omega)$, $X\in W^{\infty,\infty}(\Omega;\R^n)$ satisfies~\eqref{assumptions on the vector field X} with $r=1+\tfloor{n/2}$ and $0<\rho\le\rho_{\m{max}}$.  Further suppose that, for $i\in\tcb{0,1}$, $\Lambda_i\in W^{\infty,\infty}(\Omega)$ satisfies $\Lambda_i>0$ and $1/\Lambda_i\in L^\infty(\Omega)$, and that the equations
		\begin{equation}\label{assume that the equations}
			\begin{cases}
				\Lambda_0\varphi+N^{-1}\Lambda_1L_m\varphi+\grad\cdot(X\varphi)=\psi&\text{in }\Omega,\\
				\pd_n^m\varphi=\cdots=\pd_n^{2m-1}\varphi=0&\text{on }\pd\Omega,
			\end{cases}
		\end{equation}
		are satisfied. There exists a $\tilde{\rho}^{(m)}\in\R^+$, depending only on the dimension, $m$, $\Lambda_0$, and $\Lambda_1$ such that if $0<\rho\le\tilde{\rho}^{(m)}$, then we have the a priori estimate 
		\begin{equation}
			\tnorm{\varphi}_{H^m}\lesssim\tnorm{\psi}_{H^m}+\tbr{\tnorm{DX_0,DX_1}_{H^{m}\times W^{m,\infty}}}^{2+\lfloor n/2\rfloor}\tnorm{\varphi}_{L^2},
		\end{equation}
		where the implicit constant depends only on $m$, $\tilde{\rho}^{(m)}$, $\Lambda_0$, $\Lambda_1$, and the dimension.
	\end{prop}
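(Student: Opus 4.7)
The proof adapts the strategy used for the non-regularized case in Proposition~\ref{proposition on a priori estimates for steady transport}, but with an essential new ingredient: the elliptic regularization term $N^{-1}\Lambda_1 L_m\varphi$ must be handled in a way that is uniform in $N \in \N^+$. The crucial observation is that the Neumann boundary conditions $\pd_n^m\varphi = \cdots = \pd_n^{2m-1}\varphi = 0$ on $\pd\Omega$ built into the system~\eqref{assume that the equations} are precisely what makes this term cooperate: after integration by parts for the bilinear form $B_m$, it yields a nonnegative contribution which can simply be discarded from the lower bound of the resulting energy identity. Unlike the strategy of the earlier propositions in this subsection, no preliminary $L^2$ estimate is required — the $H^m$ bound can be obtained in one shot.

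The plan is to test the first equation of~\eqref{assume that the equations} against $\varphi$ in the bilinear form $B_m$ from~\eqref{copy that pasta is it linguini or roasted tolueney}, producing
\begin{equation*}
B_m(\Lambda_0\varphi,\varphi) + N^{-1}B_m(\Lambda_1 L_m\varphi,\varphi) + B_m(\grad\cdot(X\varphi),\varphi) = B_m(\psi,\varphi).
\end{equation*}
G\aa rding's inequality for $B_m$ with positive weight controls the first term from below as $B_m(\Lambda_0\varphi,\varphi) \ge c\tnorm{\varphi}_{H^m}^2 - C\tnorm{\varphi}_{L^2}^2$, with $c,C$ depending on $\Lambda_0$, $m$, and the dimension. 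For the regularization term, integration by parts $m$ times in each variable (as in the identity used repeatedly in this section) shows that the only boundary contributions involve the traces $\pd_n^{m+\ell}\varphi|_{\pd\Omega}$ for $\ell=0,\dots,m-1$, all of which vanish by hypothesis. One therefore obtains the clean identity
\begin{equation*}
B_m(\Lambda_1 L_m\varphi,\varphi) = \int_\Omega \Lambda_1\, (L_m\varphi)^2 \ge 0,
\end{equation*}
so this term is discarded. For the transport term I apply estimate~\eqref{Bm L2 low} from Lemma~\ref{lem on a bilinear estimate}, invoking it with the smallness parameter equal to $\tilde\rho^{(m)}$ (to be chosen); this is legitimate because $DX_0, DX_1$ lie in the ball of radius $\rho \le \tilde\rho^{(m)}$, so the lemma's hypothesis is satisfied. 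The result is
\begin{equation*}
\snorm{B_m(\grad\cdot(X\varphi),\varphi)} \lesssim \tilde\rho^{(m)}\tnorm{\varphi}_{H^m}^2 + C_{\tilde\rho^{(m)}}\tbr{\tnorm{DX_0,DX_1}_{H^m \times W^{m,\infty}}}^{2+\lfloor n/2\rfloor}\tnorm{\varphi}_{H^m}\tnorm{\varphi}_{L^2}.
\end{equation*}

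To finish, combine these inputs with the Cauchy--Schwarz bound $\snorm{B_m(\psi,\varphi)} \le \tnorm{\psi}_{H^m}\tnorm{\varphi}_{H^m}$, and apply Young's inequality to both $\tnorm{\psi}_{H^m}\tnorm{\varphi}_{H^m}$ and the mixed product $\tnorm{\varphi}_{H^m}\tnorm{\varphi}_{L^2}$ on the right. Taking $\tilde\rho^{(m)}$ and the Young parameter sufficiently small (depending only on $m$, $\Lambda_0$, $\Lambda_1$, and the dimension) so that all coefficients of $\tnorm{\varphi}_{H^m}^2$ on the right are no larger than $c/2$, one absorbs them into the left side to obtain
\begin{equation*}
\tnorm{\varphi}_{H^m}^2 \lesssim \tnorm{\psi}_{H^m}^2 + \tbr{\tnorm{DX_0,DX_1}_{H^m \times W^{m,\infty}}}^{4+2\lfloor n/2\rfloor}\tnorm{\varphi}_{L^2}^2,
\end{equation*}
from which the stated estimate follows by taking square roots. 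The principal subtlety is the treatment of the regularization term: the choice of Neumann boundary conditions is no accident, and any alternative would generate uncontrolled boundary terms in the $B_m$-integration by parts that would be impossible to bound uniformly in $N$.
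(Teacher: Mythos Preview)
Your proof is correct and takes essentially the same approach as the paper. The paper tests the equation with $L_m\varphi$ in $L^2$ and integrates by parts via Lemma~\ref{lemma on integration by parts} to arrive at the identity $B_m(\Lambda_0\varphi,\varphi)+N^{-1}\tnorm{\sqrt{\Lambda_1}\,L_m\varphi}_{L^2}^2 = B_m(\psi,\varphi)-B_m(\grad\cdot(X\varphi),\varphi)$, then drops the nonnegative term and refers to the argument of Proposition~\ref{proposition on a priori estimates for steady transport}; your route of applying $B_m(\cdot,\varphi)$ to the equation and then integrating by parts only in the regularization term yields the identical identity, and your direct citation of~\eqref{Bm L2 low} simply packages together the use of~\eqref{Bm tame} plus the interpolation step needed to reach the stated conclusion with the $\tbr{\cdot}^{2+\lfloor n/2\rfloor}$ power.
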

	\begin{proof}
		We test the first equation in~\eqref{assume that the equations} with $L_m\varphi$ in the $L^2(\Omega)$ inner product and integrate by parts via Lemma \ref{lemma on integration by parts} to obtain the identity
		\begin{equation}
			B_m(\Lambda_0\varphi,\varphi)+N^{-1}\tnorm{ \sqrt{\Lambda_1}  L_m\varphi}_{L^2}^2 = B_m(\psi,\varphi)+B_m(\grad\cdot(X\varphi),\varphi).
		\end{equation}
		Hence, $B_m(\Lambda_0\varphi,\varphi) \le \tnorm{\psi}_{H^m}\tnorm{\varphi}_{H^m}+|B_m(\grad\cdot(X\varphi),\varphi)|$. This is exactly inequality~\eqref{the identity is cat} from Proposition~\ref{proposition on a priori estimates for steady transport}. We can therefore argue in exactly the same way here to reach the desired conclusion.
	\end{proof}

	The final result of this section is the culmination of our steady transport analysis.  Essentially, we interpolate between the low regularity estimate of Proposition~\ref{prop on L2 steady transport a prioris} and the high regularity estimates of Propositions~\ref{proposition on Hm a prioris for regularized steady transport} and~\ref{prop on hm aprioris for regularized steady transport 2}. 
	In doing so, we aim to preserve a key structure of the previous estimates: all norms involving the vector field are multiplied by the lowest regularity norms of the data or solution.  This requires some fine interpolation results, which are proved in Appendix~\ref{subsection on on interpolation of Sobolev spaces}.
	
	\begin{thm}[Estimates for regularized steady transport]\label{theorem on estimates for regularized steady transport}
		Let $m,N\in\N^+$ and $j\in\tcb{1,\dots,m}$. Suppose that $\psi\in H^j(\Omega)$, $\varphi\in H^{j+2m}(\Omega)$, and $X\in W^{\infty,\infty}(\Omega;\R^n)$ satisfies~\eqref{assumptions on the vector field X} with $r=2+\tfloor{n/2}$ and $0<\rho\le\rho_{\m{max}}$.  Further suppose $\Lambda_i\in W^{\infty,\infty}(\Omega)$ satisfies $\Lambda_i>0$ and $1/\Lambda_i\in L^\infty(\Omega)$ for $i\in\tcb{0,1}$, and that the equations
		\begin{equation}
			\begin{cases}
				\Lambda_0\varphi+N^{-1}L_m\varphi+\grad\cdot(\Lambda_1X\varphi)=\psi&\text{in }\Omega,\\
				\pd_n^m\varphi=\cdots=\pd_n^{2m-1}\varphi=0&\text{on }\pd\Omega
			\end{cases}
		\end{equation}
		are satisfied. There exists a $\Bar{\rho}^{(m)}\in\R^+$, depending only on $m$, $\Lambda_0$, and $\Lambda_1$, such that if
		\begin{equation}
			\max\tcb{\rho,\tnorm{X_0\cdot\grad\Lambda_1}_{H^{1+\lfloor n/2\rfloor}},\tnorm{X_1\cdot\grad\Lambda_1}_{W^{1+\lfloor n/2\rfloor,\infty}}}\le\Bar{\rho}^{(m)}
		\end{equation}
		and
		\begin{equation}\label{the large N condition}
			N\gtrsim\tbr{\tnorm{X_0,X_1}_{H^{2+\lfloor n/2\rfloor+m}\times W^{2+\lfloor n/2\rfloor+m,\infty}}}^{4+2\lfloor n/2\rfloor},
		\end{equation}
		then we have the a priori estimate
		\begin{equation}
			\tnorm{\varphi,\grad\cdot(\Lambda_1X\varphi),N^{-1}\varphi}_{H^{j}\times H^j\times H^{j+2m}}\lesssim\tnorm{\psi}_{H^j}+\tbr{\tnorm{X_0,X_1}_{H^{1+2m}\times W^{1+2m,\infty}}}^{2+\lfloor n/2\rfloor}\tnorm{\psi}_{L^2}.
		\end{equation}
		Here the implied constants depend only on $\Lambda_0$, $\Lambda_1$, $m$, the dimension, and $\Bar{\rho}^{(m)}$.
	\end{thm}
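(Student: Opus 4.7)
The plan is to cast the equation into the forms handled by the previous results, establish the endpoint estimates at regularity $L^2$, $H^m$, $H^{2m}$, and $H^{3m}$, and then interpolate the intermediate levels $H^j$ and $H^{j+2m}$ while preserving tameness. Two reformulations will be used. Expanding the divergence gives
\begin{equation}
\Lambda_0\varphi+N^{-1}L_m\varphi+\Lambda_1\grad\cdot(X\varphi)+(X\cdot\grad\Lambda_1)\varphi=\psi;
\end{equation}
dividing by $\Lambda_1$ we obtain the equivalent equation
\begin{equation}
\tilde\Lambda_0\varphi+N^{-1}\tilde\Lambda_1L_m\varphi+\grad\cdot(X\varphi)=\tilde\psi,
\end{equation}
where $\tilde\Lambda_0=(\Lambda_0+X\cdot\grad\Lambda_1)/\Lambda_1$, $\tilde\Lambda_1=1/\Lambda_1$, $\tilde\psi=\psi/\Lambda_1$. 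The assumed smallness of $X\cdot\grad\Lambda_1$ (controlled by $\Bar\rho^{(m)}$) ensures that $\tilde\Lambda_0$ is positive with bounded inverse. Alternatively, moving $\Lambda_0\varphi$ to the right yields
\begin{equation}
N^{-1}L_m\varphi+\grad\cdot(\Lambda_1X\varphi)=\psi-\Lambda_0\varphi,
\end{equation}
a vector field $Y=\Lambda_1X$ decomposition $Y=\Lambda_1X_0+\Lambda_1X_1\in H^\infty+W^{\infty,\infty}$ with $\tnorm{DY_0,DY_1}_{H^s\times W^{s,\infty}}\lesssim\tnorm{X_0,X_1}_{H^{1+s}\times W^{1+s,\infty}}$.

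First, I would take the $L^2$ inner product of the original equation with $\varphi$, integrate by parts using the vanishing normal trace of $\Lambda_1X$ and the Neumann conditions on $\varphi$ (together with $B_m(\varphi,\varphi)\ge 0$ from \eqref{copy that pasta is it linguini or roasted tolueney}), and use the smallness of $\grad\cdot X$ and $X\cdot\grad\Lambda_1$ to absorb the first-order terms and conclude $\tnorm{\varphi}_{L^2}\lesssim\tnorm{\psi}_{L^2}$. Next, applying Proposition \ref{prop on L2 steady transport a prioris} to the second reformulation (with $\Lambda=1$ and vector field $\Lambda_1 X$, treating $\psi-\Lambda_0\varphi\in L^2$ as data) — the $N$-hypothesis \eqref{the large N condition} guarantees the required lower bound on $N$ after converting $D(\Lambda_1X)$ norms to $X$ norms — yields $\tnorm{\varphi}_{H^{2m}}\lesssim N\tnorm{\psi}_{L^2}$, i.e.\ the desired $N^{-1}\varphi$ bound at $j=0$. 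Then Proposition \ref{prop on hm aprioris for regularized steady transport 2} applied to the first reformulation gives
\begin{equation}
\tnorm{\varphi}_{H^m}\lesssim\tnorm{\psi}_{H^m}+\tbr{\tnorm{DX_0,DX_1}_{H^m\times W^{m,\infty}}}^{2+\tfloor{n/2}}\tnorm{\psi}_{L^2},
\end{equation}
and Proposition \ref{proposition on Hm a prioris for regularized steady transport} applied to the second reformulation, combined with the $L^2$ and $H^m$ estimates just obtained, produces
\begin{equation}
\tnorm{N^{-1}\varphi}_{H^{3m}}\lesssim\tnorm{\psi}_{H^m}+\tbr{\tnorm{X_0,X_1}_{H^{1+2m}\times W^{1+2m,\infty}}}^{2+\tfloor{n/2}}\tnorm{\psi}_{L^2}.
\end{equation}

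For intermediate $1\le j\le m$ I would invoke the log-convexity of Sobolev norms (Corollary \ref{gagliardo nirenberg interpolation in domains} and the fine interpolation tools in Appendix \ref{subsection on on interpolation of Sobolev spaces}), writing $\tnorm{\varphi}_{H^j}\lesssim\tnorm{\varphi}_{L^2}^{1-j/m}\tnorm{\varphi}_{H^m}^{j/m}$ and $\tnorm{N^{-1}\varphi}_{H^{j+2m}}\lesssim\tnorm{N^{-1}\varphi}_{H^{2m}}^{1-j/m}\tnorm{N^{-1}\varphi}_{H^{3m}}^{j/m}$, then using concavity $(a+b)^{j/m}\le a^{j/m}+b^{j/m}$ combined with log-convexity $\tnorm{\psi}_{L^2}^{1-j/m}\tnorm{\psi}_{H^m}^{j/m}\lesssim\tnorm{\psi}_{H^j}$ and the observation $\tbr{\cdot}^{(j/m)(2+\tfloor{n/2})}\lesssim\tbr{\cdot}^{2+\tfloor{n/2}}$ since $\tbr{\cdot}\ge 1$. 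This yields the desired estimates on $\tnorm{\varphi}_{H^j}$ and $\tnorm{N^{-1}\varphi}_{H^{j+2m}}$. Finally, the bound on $\grad\cdot(\Lambda_1X\varphi)$ in $H^j$ follows immediately from the original equation rearranged as $\grad\cdot(\Lambda_1X\varphi)=\psi-\Lambda_0\varphi-N^{-1}L_m\varphi$ and the previously established estimates, since $\tnorm{N^{-1}L_m\varphi}_{H^j}\le\tnorm{N^{-1}\varphi}_{H^{j+2m}}$ and $\Lambda_0\in W^{\infty,\infty}$.

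I anticipate the principal technical obstacle will be the interpolation step: one must arrange the bounds so that the vector field norm appearing with $\tnorm{\psi}_{L^2}$ on the right sits at the single regularity $H^{1+2m}\times W^{1+2m,\infty}$ rather than being split between different levels, and so that the $H^j$ norm of $\psi$ — not a stronger norm — appears as the leading term. This requires carefully exploiting the fact that the bonus factor $K^{j/m}$ from interpolation of $(a+b)^{j/m}$ dominates by $K$ thanks to the $\tbr{\cdot}$ notation, and tracking whether the resulting constants depend only on $m$, $\Lambda_0$, $\Lambda_1$, the dimension, and $\Bar\rho^{(m)}$. A secondary subtlety is ensuring that smallness of $\Bar\rho^{(m)}$ is chosen consistently across the applications of Propositions \ref{prop on L2 steady transport a prioris}, \ref{proposition on Hm a prioris for regularized steady transport}, and \ref{prop on hm aprioris for regularized steady transport 2} (in particular to make $\tilde\Lambda_0$ bounded below in the divided form and to control the perturbation in the $L^2$ testing step), but this is a matter of bookkeeping rather than analytic difficulty.
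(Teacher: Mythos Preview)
Your endpoint estimates at $L^2$, $H^m$, $H^{2m}$, and $H^{3m}$ are correct and essentially match what the paper does. The genuine gap is in your interpolation step. You write the inequality
\[
\tnorm{\psi}_{L^2}^{1-j/m}\tnorm{\psi}_{H^m}^{j/m}\lesssim\tnorm{\psi}_{H^j},
\]
but log-convexity goes the other way: one has $\tnorm{\psi}_{H^j}\lesssim\tnorm{\psi}_{L^2}^{1-j/m}\tnorm{\psi}_{H^m}^{j/m}$, and the reverse fails in general (take $\hat\psi$ supported on two well-separated frequency shells). Tracing your argument, after interpolating $\tnorm{\varphi}_{H^j}\lesssim\tnorm{\varphi}_{L^2}^{1-j/m}\tnorm{\varphi}_{H^m}^{j/m}$ and inserting the endpoint bounds, you are left with $\tnorm{\psi}_{L^2}^{1-j/m}\tnorm{\psi}_{H^m}^{j/m}$ on the right, which cannot be bounded by $\tnorm{\psi}_{H^j}$. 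Function-level interpolation of the solution simply cannot recover the correct right-hand side.

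The paper circumvents this by interpolating the \emph{operator} rather than the function. This requires an additional ingredient you omit: one first shows (via coercivity of the associated bilinear form and standard elliptic theory) that $L\varphi=\Lambda_0\varphi+N^{-1}L_m\varphi+\grad\cdot(\Lambda_1X\varphi)$ defines a Banach isomorphism $Z(m,\ell)\to H^\ell(\Omega)$ for every $\ell\in\N$, so that the solution map $L^{-1}$ exists as a bounded linear operator. One then transfers $L^{-1}$ to $\R^n$ via Stein extension and restriction, $T=\mathfrak{E}_\Omega L^{-1}\mathfrak{R}_\Omega$, and applies Proposition~\ref{proposition on refined interpolation of Sobolev spaces}. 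That result is precisely designed to interpolate operator bounds of the form $\tnorm{T\psi}_{H^{s_0}}\le C_0\tnorm{\psi}_{H^{r_0}}$ and $\tnorm{T\psi}_{H^{s_1}}\le C_1\tnorm{\psi}_{H^{r_1}}+A\tnorm{\psi}_{H^{r_0}}$ while preserving the tame structure: the conclusion places the $A$-term against $\tnorm{\psi}_{H^{j-m}}\le\tnorm{\psi}_{L^2}$ rather than against an interpolated product. This is the missing idea.
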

	\begin{proof}
		We let $\Bar{\rho}\in\R^+$ be such that
		\begin{equation}\label{definition of rho}
			\max\tcb{\rho,\tnorm{X_0\cdot\grad\Lambda_1}_{H^{1+\lfloor n/2\rfloor}},\tnorm{X_1\cdot\grad\Lambda_1}_{W^{1+\lfloor n/2\rfloor,\infty}}}\le\Bar{\rho}.
		\end{equation}
		Throughout the proof we will take $\Bar{\rho}$ to be ever smaller to meet various criteria.  Ultimately, we then define $\Bar{\rho}^{(m)}$ to be the value for $\Bar{\rho}$ we have at the end of the proof.
		
		For $\ell \in \N$, we will also make use of the Banach space
		\begin{equation}
			Z(m,\ell) = \tcb{\phi\in H^{\ell+2m}(\Omega)\;:\;\m{Tr}_{\pd\Omega}(\pd_n^m\phi)=\cdots=\m{Tr}_{\pd\Omega}(\pd_n^{2m-1}\phi)=0}
		\end{equation}
		and the bounded linear map $L: Z(m,\ell) \to H^\ell(\Omega)$   defined via $L\varphi=\Lambda_0\varphi+N^{-1}L_m\varphi+\grad\cdot(\Lambda_1X\varphi)$. We divide the remainder of the proof into several steps. 
		
		\textbf{Step 1}: Establishing invertibility.   We claim that $L$ is a Banach isomorphism for every $\ell \in \N$.  This follows from standard elliptic theory arguments once we establish the fact that the bilinear form $\mathfrak{B}$ defined by
		\begin{equation}
			H^m(\Omega)\times H^{m}(\Omega)\ni(\varphi_0,\varphi_0)\overset{\mathfrak{B}}{\mapsto}\int_{\Omega}\Lambda_0\varphi_0\varphi_1+\grad\cdot(\Lambda_1X\varphi_0)\varphi_1+N^{-1}B_m(\varphi_0,\varphi_1)\in\R
		\end{equation}
		is coercive when $\Bar{\rho}$ is sufficiently small.  To prove coercivity, we first use Proposition \ref{prop on divergence trick} to compute
		\begin{equation}
		   \mathfrak{B}(\varphi,\varphi)=\int_{\Omega}\sp{\Lambda_0+2^{-1}\grad\cdot(\Lambda_1X)}\varphi^2+N^{-1}B_m(\varphi,\varphi).  
		\end{equation}
		 In light of \eqref{assumptions on the vector field X}, \eqref{definition of rho}, and the Sobolev embeddings, we may bound
		\begin{equation}
			\norm{\grad\cdot(\Lambda_1 X ) }_{L^\infty(\Omega)} \le  \norm{\Lambda_1\grad\cdot X }_{L^\infty(\Omega) }+\norm{\grad\Lambda_1\cdot X}_{L^\infty(\Omega) } \lesssim \Bar{\rho},
		\end{equation}
		where the implicit constant depends on $\Lambda_0$ and $\Lambda_1$.  Therefore, if we take $0<\bar{\rho}\le\Bar{\rho}^{(0)}$ sufficiently small, we get that
		\begin{equation}\label{coercivity equation station}
			\mathfrak{B}(\varphi,\varphi)\gtrsim\tnorm{\varphi}^2_{L^2}+N^{-1}B_m(\varphi,\varphi),
		\end{equation}
		and hence  Lemma~\ref{lemma on garding inequality for Bm} shows that $\mathfrak{B}$ is coercive.
		
		\textbf{Step 2}: Low-norm estimates on the inverse. Assume that $\psi\in L^2(\Omega)$, $\varphi \in Z(m,0)$, and that $L\varphi=\psi$. According to \eqref{coercivity equation station}, we have the estimate $\tnorm{\varphi}_{L^2}^2\lesssim\mathfrak{B}(\varphi,\varphi)=\int_{\Omega}\psi\varphi$, and hence
		\begin{equation}\label{aloha piano}
			\tnorm{L^{-1}\psi}_{L^2} = \tnorm{\varphi}_{L^2}  \lesssim\tnorm{\psi}_{L^2}.
		\end{equation}
		Now we may invoke Proposition~\ref{prop on L2 steady transport a prioris} (the hypotheses of which are satisfied thanks to~\eqref{the large N condition}) followed by~\eqref{aloha piano} to see that $\tnorm{L^{-1}\psi}_{H^{2m}}=\tnorm{\varphi}_{H^{2m}}\lesssim N\tnorm{\psi-\Lambda_0\varphi,\varphi}_{L^2\times L^2}\lesssim N\tnorm{\psi}_{L^2}$. Thus, we have shown that $L^{-1}$ maps $L^2(\Omega)$ into $Z(m,0)\subset H^{2m}(\Omega)$ with the operator bounds
		\begin{equation}\label{base case L2 data boundz}
			\tnorm{L^{-1}\psi,N^{-1}L^{-1}\psi}_{L^2\times H^{2m}}\lesssim\tnorm{\psi}_{L^2}.
		\end{equation}

		\textbf{Step 3}: High-norm estimates on the inverse. Now assume that $\psi\in H^m(\Omega)$ and  $\varphi\in Z(m,m) \subset H^{3m}(\Omega)$ satisfy $L\varphi=\psi$. Note that this equation is equivalent to
		\begin{equation}
			\widetilde{\Lambda}_0\varphi+\grad\cdot(X\varphi)+N^{-1}\widetilde{\Lambda}_1L_m\varphi=\widetilde{\psi},
		\end{equation}
		where $\widetilde{\Lambda}_0=\Lambda_0/\Lambda_1$, $\widetilde{\Lambda}_1=1/\Lambda_1$, and $\widetilde{\psi}=(\psi-\varphi X\cdot\grad\Lambda_1)/\Lambda_1$. We take $\Bar{\rho}\le\widetilde{\rho}^{(m)}$, where the latter is given by Proposition~\ref{prop on hm aprioris for regularized steady transport 2}, and then apply the proposition to gain the bound
		\begin{equation}\label{mozie one}
			\tnorm{L^{-1}\psi}_{H^{m}}\lesssim\tnorm{\widetilde{\psi}}_{H^m}+\tbr{\tnorm{DX_0,DX_1}_{H^m\times W^{m,\infty}}}^{2+\lfloor n/2\rfloor}\tnorm{\widetilde{\psi}}_{L^2}.
		\end{equation}
		Thanks to Corollary~\ref{corollary on tame estimates on simple multipliers}, interpolation, and Young's inequality, we have that
		\begin{equation}\label{mozie two}
			\tnorm{\widetilde{\psi}}_{H^m}\lesssim\tnorm{\psi}_{H^m}+\Bar{\rho}\tnorm{\varphi}_{H^m}+\tbr{\tnorm{X_0\cdot\grad\Lambda_1,X_1\cdot\grad\Lambda_1}_{H^m\times W^{m,\infty}}}^{2+\lfloor n/2\rfloor}\tnorm{\varphi}_{L^2}.
		\end{equation}
		On the other hand, \eqref{base case L2 data boundz} provides the bound
		\begin{equation}\label{mozie three}
			\tnorm{\widetilde{\psi}}_{L^2}\lesssim\tnorm{\psi}_{L^2}.
		\end{equation}
		We combine~\eqref{aloha piano}, \eqref{mozie one}, \eqref{mozie two}, and~\eqref{mozie three} and then take $\Bar{\rho}\le\Bar{\rho}^{(m)}\le\Bar{\rho}^{(0)}$ to be sufficiently small so that the right hand side's $\tnorm{\varphi}_{H^m}$-contribution can be absorbed by the left; this results in the bound
		\begin{equation}\label{mozie none}
			\tnorm{L^{-1}\psi}_{H^{m}}\lesssim\tnorm{\psi}_{H^m}+\tbr{\tnorm{X_0,X_1}_{H^{1+m}\times W^{1+m,\infty}}}^{2+\lfloor n/2\rfloor}\tnorm{\psi}_{L^2}.
		\end{equation}
		
		We next apply Proposition~\ref{proposition on Hm a prioris for regularized steady transport} followed by estimates~\eqref{mozie none} and~\eqref{base case L2 data boundz}:
		\begin{multline}
			N^{-1}\tnorm{\varphi}_{H^{3m}}\lesssim\tnorm{\psi-\Lambda_0 \varphi,\varphi}_{H^m}+\tbr{\tnorm{D(\Lambda_1X_0),D(\Lambda_1X_1)}_{H^{2m}\times W^{2m,\infty}}}^{2+\lfloor n/2\rfloor}\tnorm{\varphi}_{L^2}\\
			\lesssim\tnorm{\psi}_{H^m}+\tbr{\tnorm{X_0,X_1}_{H^{1+2m}\times W^{1+2m,\infty}}}^{2+\lfloor n/2\rfloor}\tnorm{\psi}_{L^2}.
		\end{multline}
		The culmination of this analysis is that we have shown that $L^{-1}$ maps $H^m(\Omega)$ into $Z(m,m) \subset H^{3m}(\Omega)$ with the operator bounds
		\begin{equation}
			\tnorm{L^{-1}\psi,N^{-1}L^{-1}\psi}_{H^m\times H^{3m}}\lesssim\tnorm{\psi}_{H^m}+\tbr{\tnorm{X_0,X_1}_{H^{1+2m}\times W^{1+2m,\infty}}}^{2+\lfloor n/2\rfloor}\tnorm{\psi}_{L^2}.
		\end{equation}

		\textbf{Step 4}: Interpolation and conclusion. Let $\mathfrak{E}_\Omega$ denote a Stein extension operator for $\Omega$ (see Definition~\ref{defn Stein-extension operator}) and let $\mathfrak{R}_\Omega$ denote the operator given by restriction to $\Omega$ of functions defined on $\R^n$ (see Example~\ref{example on Sobolev spaces on domains}). We define a map $T\in\mathcal{L}(L^2(\R^n);H^{2m}(\R^n))\cap\mathcal{L}(H^m(\R^n);H^{3m}(\R^n))$ via the formula $T=\mathfrak{E}_\Omega L^{-1}\mathfrak{R}_\Omega$. Thanks to the previous steps, we know that for $A=\tbr{\tnorm{X_0,X_1}_{H^{1+2m}\times W^{1+2m,\infty}}}^{2+\lfloor n/2\rfloor}$, $T$ satisfies the operator bounds
		\begin{equation}
			\begin{cases}
				\tnorm{T \psi }_{L^2}\lesssim\tnorm{\psi}_{L^2}&\text{for all }\psi\in L^2(\R^n),\\
				\tnorm{T \psi}_{H^{m}}\lesssim\tnorm{\psi}_{H^m}+A\tnorm{\psi}_{L^2}&\text{for all }\psi\in H^m(\R^n),
			\end{cases}
		\end{equation}
		and
		\begin{equation}
			\begin{cases}
				\tnorm{T\psi}_{H^{2m}}\lesssim N\tnorm{\psi}_{L^2}&\text{for all }\psi\in L^2(\R^n),\\
				\tnorm{T\psi}_{H^{3m}}\lesssim N\tnorm{\psi}_{H^m}+NA\tnorm{\psi}_{L^2}&\text{for all }\psi\in H^m(\R^n).
			\end{cases}
		\end{equation}
		We are therefore in a position to apply Proposition~\ref{proposition on refined interpolation of Sobolev spaces} to deduce that $T\in\mathcal{L}(H^j(\R^n);H^{j+2m}(\R^n))$ for $j\in\tcb{0,1,\dots,m}$, with the estimates
		\begin{equation}\label{estimate for the extendy boy}
			\tnorm{T\psi,N^{-1}T\psi}_{H^{j}\times H^{j+2m}}\lesssim\tnorm{\psi}_{H^j}+A\tnorm{\psi}_{H^{j-m}} \text{ for all }\psi\in H^j(\R^n).
		\end{equation}
		By utilizing that $L^{-1}=\mathfrak{R}_\Omega T\mathfrak{E}_\Omega$, we can port~\eqref{estimate for the extendy boy} to an estimate on $L^{-1}$, namely:
		\begin{equation}\label{the establishment}
			\tnorm{L^{-1}\psi,N^{-1}L^{-1}\psi}_{H^j\times H^{j+2m}}\lesssim\tnorm{\psi}_{H^j}+\tbr{\tnorm{X_0,X_1}_{H^{1+2m}\times W^{1+2m,\infty}}}^{2+\lfloor n/2\rfloor}\tnorm{\psi}_{L^2}
		\end{equation}
		for all $\psi\in H^j(\Omega)$.
		
		It remains to obtain an estimate on $\grad\cdot(\Lambda_1X L^{-1}\psi)$. For this we rearrange the equation as $\grad\cdot(\Lambda_1X\varphi)=\psi-\Lambda_0\varphi-N^{-1}L_m\varphi$, take the norm in $H^j(\Omega)$ of both sides, and apply the established estimates of~\eqref{the establishment}. This yields the bound
		\begin{equation}\label{the middle class}
  \tnorm{\grad\cdot(\Lambda_1X\varphi)}_{H^j}\lesssim\tnorm{\psi}_{H^j}+\tbr{\tnorm{X_0,X_1}_{H^{1+2m}\times W^{1+2m,\infty}}}^{2+\lfloor n/2\rfloor}\tnorm{\psi}_{L^2}.
		\end{equation}
		The proof is complete upon combining \eqref{the establishment} and~\eqref{the middle class}.
	\end{proof}

	
	\section{Analysis of weak solutions to the principal part linear equations}\label{chilean sea bass}
	
	In this section we study weak solutions to the PDE
	\begin{equation}\label{principal part of the linearization}
		\begin{cases}
			\grad\cdot(\varrho u)+\grad\cdot(v_{w_0}(q+\mathfrak{g}\eta))=g&\text{in }\Omega,\\
			-\gam_0^2\varrho\pd_1 u+\varrho\grad(q+\mathfrak{g}\eta)-\gam_0\grad\cdot\S^{\varrho}u=f&\text{in }\Omega,\\
			-(\varrho q-\gam_0\S^{\varrho}u)e_n-\varsigma\Delta_{\|}\eta e_n=k&\text{on }\Sigma,\\
			u\cdot e_n+\pd_1\eta=0&\text{on }\Sigma,\\
			u=0&\text{on }\Sigma_0.
		\end{cases}
	\end{equation}
	Here the given data are $g:\Omega\to\R$, $f:\Omega\to\R^n$, and $k:\Sigma\to\R^n$, as well as  $\gam_0\in\R^+$ and a vector field $v_{w_0}:\Omega\to\R^n$ defined via a fixed triple $w_0=(q_0, u_0,\eta_0)$ as in~\eqref{definition of the vector field vnaught} (see also Lemma~\ref{properties of the principal parts vector field}).  The unknowns are  $q:\Omega\to\R$, $u:\Omega\to\R^n$, and $\eta:\Sigma\to\R$. In other words, we are interested in the weak formulation principal part linear operator $\overset{w_0,\gam_0}{\mathscr{J}}$, which we recall is defined in~\eqref{weak formulation operator}.
	
	The above system is not elliptic in the sense of Agmon, Douglis, and Nirenberg~\cite{MR162050}. Because of this and various other linear effects of the derivative loss, we are led to consider the following regularized version of \eqref{principal part of the linearization} with parameters $\tau\in[0,1]$, $m,N\in\N^+$, and $m\ge 2$:
	\begin{equation}\label{regularization of the prinpal part of the linearization}
		\begin{cases}
			\grad\cdot(\varrho u)+\tau\grad\cdot(v_{w_0}(q+\mathfrak{g}\eta))+N^{-1}L_m(q+\mathfrak{g}\eta)=g&\text{in }\Omega,\\
			-\gam_0^2\varrho\pd_1u+\varrho\grad(q+\mathfrak{g}\eta)-\gam_0\grad\cdot\S^\varrho u=f&\text{in }\Omega,\\
			-(\varrho q-\gam_0\S^\varrho u)e_n-\varsigma\Delta_{\|}\eta e_n=k&\text{on }\Sigma,\\
			u\cdot e_n+\pd_1\eta=N^{-1}(-\Delta_{\|})^{m-1/4}\eta&\text{on }\Sigma,\\
			u=0&\text{on }\Sigma_0,\\
			\pd_n^mq=\cdots=\pd_n^{2m-1}q=0&\text{on }\pd\Omega,
		\end{cases}
	\end{equation}
	where the linear elliptic operator $L_m$ is defined in~\eqref{the operator Lm}. In other words, we are also considering in this section the weak formulation regularized principal part linear operators $\overset{w_0,\gam_0}{\mathscr{J}_{m,N}^\tau}$, which are defined in~\eqref{regularized weak formulation operator}.
	
	The strategy is as follows. We begin in Section~\ref{gonna carry that weight a long time} by proving a priori estimates for weak solutions to the systems~\eqref{principal part of the linearization} and~\eqref{regularization of the prinpal part of the linearization} that are appropriately uniform with respect to the background solution, $N$, and $\tau$.  In Section~\ref{please dont spoil my day, im miles away and} we develop the existence theory for~\eqref{regularization of the prinpal part of the linearization} from our a priori estimates and the method of continuity, which is the reason for including the homotopy parameter $\tau$.  As it turns out, the problem with $\tau=0$ can be solved by taking a two parameter limit of solutions to similar equations, which we solve with the help of the Lax-Milgram lemma.
	
	\subsection{Estimates}\label{gonna carry that weight a long time}
	
	In this subsection we prove a priori estimates for weak solutions to~\eqref{principal part of the linearization} and~\eqref{regularization of the prinpal part of the linearization}. We first require the following technical lemma.
	
	\begin{lem}\label{a technical lemma that will become rel}
		Let $0<\rho\le\rho_{\m{WD}}$, where the latter is defined in Theorem~\ref{thm on smooth tameness of the nonlinear operator}, and let $w_0=(q_0,u_0,\eta_0)$ be as in Lemma~\ref{properties of the principal parts vector field}. Suppose that $\eta\in\mathcal{H}^{0}(\Sigma)$ has Fourier support in the punctured ball $\Bar{B_{\R^{n-1}}(0,1)}\setminus\tcb{0}$. Then we have the estimate
		\begin{equation}\label{chez_nicos_2}
			\babs{\int_{\Omega}(\grad\cdot v_{w_0})\eta^2}\lesssim\rho\tnorm{\eta}^2_{\mathcal{H}^0},
		\end{equation}
		where the implied constant depends only on the dimension, the various physical parameters, and $\rho_{\m{WD}}$.
	\end{lem}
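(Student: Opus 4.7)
The approach is to reduce to a 1D integral on $\R^{n-1}$ via Fubini, decompose $v_{w_0}$ with Lemma~\ref{properties of the principal parts vector field}, and execute two Cauchy–Schwarz arguments in Fourier. The principal obstacle is that $\eta\in\mathcal{H}^0$ need not lie in $L^2$, because the anisotropic weight $w(\xi)=\xi_1^2/|\xi|^2+|\xi|^2$ degenerates at $\xi=0$; so the na\"ive bound $|\int\Lambda\eta^2|\le\tnorm{\Lambda}_{L^\infty}\tnorm{\eta}_{L^2}^2$ is unavailable, and every estimate must route through $\tnorm{\eta}_{L^\infty}$ together with the homogeneous quantity $\tnorm{\grad_\|\eta}_{L^2}$.

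Since $\eta^2$ depends only on $x\in\R^{n-1}$, Fubini yields $\int_\Omega(\grad\cdot v_{w_0})\eta^2=\int_{\R^{n-1}}\Lambda\,\eta^2\,\m{d}x$ with $\Lambda(x)=\int_0^b(\grad\cdot v_{w_0})(x,y)\,\m{d}y$. The vanishing of $\m{Tr}_{\pd\Omega}(v_{w_0}\cdot e_n)$ guaranteed by Lemma~\ref{properties of the principal parts vector field}, combined with the divergence theorem in the $y$-variable, gives $\Lambda=\grad_\|\cdot V$ for $V=\int_0^b(v_{w_0})_\|\,\m{d}y$. The decomposition $v_{w_0}=\mathfrak{g}^{-1}\varrho'e_1+v^{(1)}_{q_0,u_0,\eta_0}+v^{(2)}_{\eta_0}\cdot e_1\,e_1$ furnished by the same lemma then yields $V=V_0+V^{(1)}+V^{(2)}_1 e_1$, where $V_0$ is constant (killed by $\grad_\|\cdot$), $V^{(1)}\in H^\infty(\R^{n-1})$ satisfies $\tnorm{V^{(1)}}_{L^2}\lesssim\rho$ (Minkowski and the first item of the lemma), and $V^{(2)}_1\in\mathcal{H}^0_{(r_{n-1})}(\R^{n-1})$ satisfies $\tnorm{V^{(2)}_1}_{\mathcal{H}^0}\lesssim\rho$ (by the second item).

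To bypass the lack of an $L^2$ bound on $\eta$, I establish the auxiliary estimate $\tnorm{\eta}_{L^\infty}\lesssim\tnorm{\eta}_{\mathcal{H}^0}$. Weighted Cauchy–Schwarz gives
\begin{equation*}
\tnorm{\mathscr{F}[\eta]}_{L^1}=\int_{B(0,1)} w^{-1/2}\cdot(w^{1/2}|\mathscr{F}[\eta]|)\,\m{d}\xi\le\Big(\int_{B(0,1)}w^{-1}\,\m{d}\xi\Big)^{1/2}\tnorm{\eta}_{\mathcal{H}^0},
\end{equation*}
and a routine polar-coordinate calculation verifies $\int_{B(0,1)} w^{-1}<\infty$ in every dimension $n\ge 2$; Hausdorff–Young concludes. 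Combined with the trivial inclusion $\tnorm{\grad_\|\eta}_{L^2}^2=\int|\xi|^2|\mathscr{F}[\eta]|^2\le\tnorm{\eta}_{\mathcal{H}^0}^2$, this yields $\tnorm{\grad_\|(\eta^2)}_{L^2}=2\tnorm{\eta\grad_\|\eta}_{L^2}\le 2\tnorm{\eta}_{L^\infty}\tnorm{\grad_\|\eta}_{L^2}\lesssim\tnorm{\eta}_{\mathcal{H}^0}^2$.

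Finally, passing to Fourier via Plancherel (justified by approximating $V^{(2)}_1$ through its truncations $\mathscr{F}^{-1}(\mathscr{F}[V^{(2)}_1]\mathds{1}_{|\xi|>1/k})\in L^2$, which converge in $\mathcal{H}^0$ by dominated convergence), and noting that $V_0$ is killed by the $|\xi|$ factor, I get
\begin{equation*}
\babs{\int\Lambda\eta^2\,\m{d}x}\lesssim\int|\xi|\,|\mathscr{F}[V^{(1)}]|\,|\mathscr{F}[\eta^2]|\,\m{d}\xi+\int|\xi_1|\,|\mathscr{F}[V^{(2)}_1]|\,|\mathscr{F}[\eta^2]|\,\m{d}\xi.
\end{equation*}
Placing $|\xi|$ on the $\mathscr{F}[\eta^2]$ side in the first summand and invoking Cauchy–Schwarz gives a bound of $\tnorm{V^{(1)}}_{L^2}\tnorm{\grad_\|(\eta^2)}_{L^2}\lesssim\rho\tnorm{\eta}_{\mathcal{H}^0}^2$. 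For the second summand, the weighted Cauchy–Schwarz with weight $\xi_1^2/|\xi|^2\le w(\xi)$ produces
\begin{equation*}
\Big(\int\tfrac{\xi_1^2}{|\xi|^2}|\mathscr{F}[V^{(2)}_1]|^2\,\m{d}\xi\Big)^{1/2}\Big(\int|\xi|^2|\mathscr{F}[\eta^2]|^2\,\m{d}\xi\Big)^{1/2}\le\tnorm{V^{(2)}_1}_{\mathcal{H}^0}\tnorm{\grad_\|(\eta^2)}_{L^2}\lesssim\rho\tnorm{\eta}_{\mathcal{H}^0}^2.
\end{equation*}
Summing the two contributions delivers~\eqref{chez_nicos_2}.
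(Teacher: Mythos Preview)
Your proof is correct and follows a route closely related to, but technically distinct from, the paper's. Both arguments reduce to $\R^{n-1}$ via Fubini and the vanishing normal trace, decompose $v_{w_0}$ via Lemma~\ref{properties of the principal parts vector field}, and treat the $v^{(1)}$ and $v^{(2)}$ pieces separately. The paper, however, first applies the identity $\int_\Omega(\nabla\cdot X)\eta^2 = 2\int_\Omega \eta\,\nabla\cdot(X\eta)$ from Proposition~\ref{prop on divergence trick} to place one factor of $\eta$ inside the divergence, then pairs the resulting object $(\nabla_\|,0)\cdot\int_0^b(v_{w_0}-\varrho'e_1/\mathfrak{g})\eta$ against $\eta$ via $\dot H^{-1}$--$\dot H^{1}$ duality; the $v^{(2)}$ contribution is then handled through the algebra property $\tnorm{V^{(2)}_1\eta}_{\mathcal{H}^0}\lesssim\tnorm{V^{(2)}_1}_{\mathcal{H}^0}\tnorm{\eta}_{\mathcal{H}^0}$ of Proposition~\ref{proposition on algebra properties}. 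Your approach instead keeps $\eta^2$ intact, establishes $\tnorm{\nabla_\|(\eta^2)}_{L^2}\lesssim\tnorm{\eta}_{\mathcal{H}^0}^2$ directly, and resolves the $v^{(2)}$ piece by the weighted Cauchy--Schwarz splitting $|\xi_1|=(|\xi_1|/|\xi|)\cdot|\xi|$ together with the elementary inequality $\xi_1^2/|\xi|^2\le w(\xi)$. This avoids both the divergence trick and the algebra property, trading them for a bare-hands Fourier calculation; the paper's version is more modular in that it reuses machinery developed elsewhere. Two minor remarks: your rederivation of $\tnorm{\eta}_{L^\infty}\lesssim\tnorm{\eta}_{\mathcal{H}^0}$ is correct but redundant, since the puncture at the origin forces $\eta=\Uppi^1_{\m{L}}\eta$ and Proposition~\ref{proposition on frequency splitting} applies; and since the puncture in fact gives $\eta\in H^\infty\subset L^2\cap L^\infty$, we have $\eta^2\in L^2$ and $\pd_1 V^{(2)}_1\in L^2$, so Plancherel applies directly and your approximation argument is unnecessary (though harmless).
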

	\begin{proof}
		First note that the support hypotheses on $\eta$ imply that $\eta \in H^\infty(\Sigma)$ (see, for instance, Proposition~\ref{proposition on frequency splitting}).  Second, we compute $\grad\cdot v_{w_0}=\grad\cdot(v_{w_0}-\varrho'/\mathfrak{g} e_1)$, and use this, integration by parts, Fubini-Tonelli, the fundamental theorem of calculus, and the fact that $\m{Tr}_{\pd\Omega}(v_{w_0}\cdot e_n)=0$ to rewrite
		\begin{equation}
			\f12\int_{\Omega}\grad\cdot(v_{w_0}-\varrho'/\mathfrak{g}e_1)\eta^2=\int_{\Omega}\grad\cdot((v_{w_0}-\varrho'/\mathfrak{g}e_1)\eta)\eta=\int_{\R^{n-1}}\bp{(\grad_{\|},0)\cdot\int_0^b(v_{w_0}-\varrho'/\mathfrak{g}e_1)\eta}\eta.
		\end{equation}
		In turn, this readily implies that 
		\begin{equation}
			\babs{\f12\int_{\Omega}(\grad\cdot v_{w_0})\eta^2}\le\bsb{(\grad_{\|},0)\cdot\int_0^b(v_{w_0}-\varrho'/\mathfrak{g}e_1)\eta}_{\dot{H}^{-1}}\tsb{\eta}_{\dot{H}^1}.
		\end{equation}
		By Proposition~\ref{proposition on spatial characterization of anisobros}, specifically~\eqref{the norm on the anisotropic Sobolev spaces}, $\tsb{\eta}_{\dot{H}^1}\lesssim\tnorm{\eta}_{\mathcal{H}^0}$, so it remains to estimate the $\dot{H}^{-1}$ term on the right.  For this we use the decomposition~\eqref{fundamental decomposition of the vector field v} of Lemma~\ref{properties of the principal parts vector field}, which allows us to rewrite
		\begin{equation}
			(\grad_{\|},0)\cdot\int_0^b(v_{w_0}-\varrho'/\mathfrak{g}e_1)\eta=(\grad_{\|},0)\cdot\int_0^bv^{(1)}_{q_0,u_0,\eta_0}\eta+\pd_1\bp{\eta\int_0^bv^{(2)}_{\eta_0}(\cdot,y)\cdot e_1\;\m{d}y}
			=\bf{I}_1+\bf{I}_2.
		\end{equation}
		We bound $\bf{I}_1$ by using the fact that the integrand belongs to $L^2(\Omega)$ (and tacitly using Proposition~\ref{proposition on frequency splitting}, Remark~\ref{remark_about_Lp_inclusion_for_anisos}, and the first item of Lemma~\ref{properties of the principal parts vector field}):
		\begin{equation}
			\tsb{\bf{I}_1}_{\dot{H}^{-1}}\lesssim\tnorm{v_{q_0,u_0,\eta_0}^{(1)}\eta}_{L^2}\lesssim\tnorm{v^{(1)}_{q_0,u_0,\eta_0}}_{L^2}\tnorm{\eta}_{L^\infty}\lesssim\rho\tnorm{\eta}_{\mathcal{H}^0}.
		\end{equation}
		We bound $\bf{I}_2$ using the algebra properties of the specialized Sobolev spaces (see Proposition~\ref{proposition on algebra properties}) and the second item of Lemma~\ref{properties of the principal parts vector field}:
		\begin{equation}
			\tsb{\bf{I}_2}_{\dot{H}^{-1}}\le\bnorm{\eta\bp{\int_0^bv_{\eta_0}^{(2)}(\cdot,y)\;\m{d}y}}_{\mathcal{H}^0}\lesssim\bnorm{\int_0^bv_{\eta_0}^{(2)}(\cdot,y)\;\m{d}y}_{\mathcal{H}^0}\tnorm{\eta}_{\mathcal{H}^0}\lesssim\rho\tnorm{\eta}_{\mathcal{H}^0}.
		\end{equation}
		Combining these bounds yields \eqref{chez_nicos_2}.
	\end{proof}
	
	With the lemma in hand, we are ready to study estimates of weak solutions to the principal part equations~\eqref{principal part of the linearization}. Recall that the spaces $\overset{q_0,u_0,\eta_0}{\X^{-1}}$ and $\Y^{-1}$ are defined in equations~\eqref{domain banach scales} and~\eqref{Y^s_def}, while the weak formulation operator $\overset{w_0,\gam_0}{\mathscr{J}}$ is defined in~\eqref{weak formulation operator}. Also recall the surface tension $\varsigma$ and viscosity $\upmu,\uplambda$ hypotheses set forth in~\eqref{parameter_assumptions}.
	
	\begin{prop}[A priori estimates for weak solutions]\label{prop on a priori estimates for weak solutions} Let $0<\rho\le\rho_{\m{WD}}$, where the latter is defined in Theorem~\ref{thm on smooth tameness of the nonlinear operator}, let $w_0=(q_0,u_0,\eta_0)$ be as in Lemma~\ref{properties of the principal parts vector field}, and let $\gam_0\in I$, where $I\Subset\R^+$ is some interval. Suppose that $(q,u,\eta)\in\overset{q_0,u_0,\eta_0}{\X^{-1}}$ and $(g,F)\in\Y^{-1}$ satisfy the equation
		\begin{equation}\label{definition of weak solutions}
			\overset{w_0,\gam_0}{\mathscr{J}}(q,u,\eta)=(g,F).
		\end{equation}
		There exists a $\rho_{\m{weak}}\in\R^+$ such that if $0<\rho\le\rho_{\m{weak}}$, then we have the a priori estimate
		\begin{equation}\label{a priori estimate for weak solutions equation}
			\tnorm{q,u,\eta}_{\overset{q_0,u_0,\eta_0}{\X^{-1}}}\lesssim\tnorm{g,F}_{\Y^{-1}}.
		\end{equation}
		The implicit constants and $\rho_{\m{weak}}$ depend on the physical parameters, the dimension, $\rho_{\m{WD}}$, and $I$.
	\end{prop}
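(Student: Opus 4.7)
The plan is to derive the estimate through a weak energy identity obtained by testing the momentum equation against $u$ itself, and then to recover the remaining pieces of the $\overset{q_0,u_0,\eta_0}{\X^{-1}}$-norm via auxiliary dual-test arguments and direct use of the continuity equation.

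First I would unfold $\langle \overset{\gam_0}{\mathscr{I}}(q,u,\eta), u\rangle = \langle F, u\rangle$. The $\pd_1$-convection term vanishes by antisymmetry since $\varrho$ depends only on the vertical variable, and the surface-tension pairing $-\varsigma\langle\Delta_\|\eta, u\cdot e_n\rangle_\Sigma$ vanishes after using the kinematic condition $u\cdot e_n = -\pd_1\eta$. For the pressure terms, I substitute the weak continuity equation for $\grad\cdot(\varrho u)$ and integrate $\mathfrak{g}\int_\Omega \varrho(\grad_\|\eta,0)\cdot u$ by parts against the constant-in-$y$ extension $\tilde\eta(x,y) = \eta(x)$, whose $\Sigma$-boundary contribution $\mathfrak{g}\varrho(b)\int_\Sigma \eta\pd_1\eta$ is zero; this produces another $\grad\cdot(\varrho u)$-term which I rewrite using continuity. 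Setting $p = q + \mathfrak{g}\tilde\eta$ and applying Proposition~\ref{prop on divergence trick} to convert $\int_\Omega p\,\grad\cdot(v_{w_0}p)$ into $\tfrac{1}{2}\int_\Omega(\grad\cdot v_{w_0})p^2$ (using $\m{Tr}_{\pd\Omega}(v_{w_0}\cdot e_n)=0$), I arrive at the key identity
\begin{equation}
\gamma_0\int_\Omega\mathbb{S}^\varrho u:\grad u + \tfrac{1}{2}\int_\Omega(\grad\cdot v_{w_0})p^2 = \langle F,u\rangle + \int_\Omega p\,g.
\end{equation}

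Next I would turn this identity into quantitative bounds. Under \eqref{parameter_assumptions}, the deviatoric Korn inequality makes the viscous term coercive on $\gamma_0\tnorm{u}_{H^1}^2$. For the quadratic remainder $\int_\Omega(\grad\cdot v_{w_0})p^2$ I split $\tilde\eta$ via the Fourier projectors $\Uppi^1_{\m{L}}$ and $\Uppi^1_{\m{H}}$: the $q$ and $\Uppi^1_{\m{H}}\tilde\eta$ pieces inherit $\rho$-smallness from the $L^\infty$-bound $\tnorm{\grad\cdot v_{w_0}}_{L^\infty}\lesssim\rho$ afforded by item three of Lemma~\ref{properties of the principal parts vector field}, while the $\Uppi^1_{\m{L}}\tilde\eta$ piece is handled precisely by Lemma~\ref{a technical lemma that will become rel}. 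On the right side, $|\int_\Omega pg|$ splits analogously: the $q$ and high-frequency pieces are controlled by $\tnorm{g}_{L^2}(\tnorm{q}_{L^2} + \tnorm{\Uppi^1_{\m{H}}\eta}_{L^2(\Sigma)})$, while the low-frequency piece is controlled by the built-in $\dot{H}^{-1}$-pairing $|\int_\Sigma\Uppi^1_{\m{L}}\eta\cdot\int_0^bg(\cdot,y)\,\m{d}y|\lesssim\tnorm{g,F}_{\Y^{-1}}\tnorm{\eta}_{\mathcal{H}^{3/2}}$, since the anisotropic weight pointwise dominates $|\xi|^2$ and hence $[\eta]_{\dot{H}^1}\lesssim\tnorm{\eta}_{\mathcal{H}^{3/2}}$. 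Taking $\rho_{\m{weak}}$ small and applying Cauchy's inequality absorbs the quadratic terms, yielding control of $\tnorm{u}_{H^1}^2 + \tnorm{q}_{L^2}^2$ by $\tnorm{g,F}_{\Y^{-1}}^2$ plus the remaining $\eta$-norms.

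To close, I still need the full $\mathcal{H}^{3/2}(\Sigma)$-norm of $\eta$ and the bonus $\grad\cdot(v_{w_0}q)\in L^2$ regularity. The kinematic condition immediately yields $\tnorm{\pd_1\eta}_{H^{1/2}(\Sigma)}\lesssim\tnorm{u}_{H^1}$, supplying the $\pd_1\eta$-contributions to the anisotropic norm characterized by Proposition~\ref{proposition on spatial characterization of anisobros}. For the $\grad_\|\eta$-contributions when $n\ge 3$ and $\varsigma>0$, I would test the weak momentum equation against a vector field $w\in {_0}H^1(\Omega;\R^n)$ whose normal trace on $\Sigma$ lifts an arbitrary $\varphi\in H^{1/2}(\Sigma)$ along $e_n$ (via the Laplace-based operator $\mathcal{E}_1$ introduced in the proof of Lemma~\ref{lem on tameness of domain and codomain}), extracting a $-\varsigma\langle\Delta_\|\eta,\varphi\rangle$ pairing and yielding by duality $\tnorm{\Delta_\|\eta}_{H^{-1/2}}\lesssim\tnorm{g,F}_{\Y^{-1}} + \tnorm{q,u}_{L^2\times H^1}$; when $n=2$ and $\varsigma=0$, the ellipticity of $\pd_1$ on $\Sigma\simeq\R$ together with the kinematic condition already delivers the needed control. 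Finally, rearranging the continuity equation gives $\grad\cdot(v_{w_0}q) = g - \grad\cdot(\varrho u) - \mathfrak{g}\grad\cdot(v_{w_0}\tilde\eta)$, and each term on the right is bounded in $L^2$ by what we have already established, with the last term controlled via the decomposition of $v_{w_0}$ in Lemma~\ref{properties of the principal parts vector field} and the inclusion $\grad_\|\eta\in L^2(\Sigma)$.

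The hard part will be navigating the anisotropic low-frequency behavior of $\eta$, which precludes ordinary Sobolev bounds and instead forces the precise pairings of Lemma~\ref{a technical lemma that will become rel} and of the $\dot{H}^{-1}$-structure encoded in $\Y^{-1}$. A related subtlety is recovering $\Delta_\|\eta$-control purely within the weak framework, where the data $F$ cannot be split into independent bulk and boundary pieces $(f,k)$; this obstructs any direct pointwise inversion of the dynamic boundary condition and instead demands carefully crafted test-function liftings.
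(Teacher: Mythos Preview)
Your proposal has two genuine gaps, plus a rigor issue in the displayed identity.

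\textbf{Gap 1: no mechanism for $\tnorm{q}_{L^2}$.} You claim the energy identity ``yields control of $\tnorm{u}_{H^1}^2 + \tnorm{q}_{L^2}^2$,'' but it does not. The only $q$-dependence in your identity appears through the quadratic remainder $\tfrac{1}{2}\int_\Omega(\grad\cdot v_{w_0})p^2 = O(\rho)\tnorm{q}_{L^2}^2 + \dotsb$ and through $\int_\Omega q\,g$; neither is coercive in $q$, so after absorption you only control $\tnorm{u}_{H^1}$. The paper recovers $\tnorm{q}_{L^2}$ in a separate step by testing the momentum functional against $w = \varrho^{-1}\mathcal{B}q$ (Corollary~\ref{Bogovskii 1}), for which $\grad\cdot(\varrho w)=q$ so that the pressure pairing becomes $-\int_\Omega q^2$. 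Your $\Delta_\|\eta$-dual step already presupposes $\tnorm{q}_{L^2}$ on the right side, so without this Bogovskii step the chain does not close.

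\textbf{Gap 2: no mechanism for $\tsb{\pd_1\eta}_{\dot{H}^{-1}}$.} The kinematic condition gives $\pd_1\eta = -\m{Tr}_\Sigma(u\cdot e_n)\in H^{1/2}(\Sigma)$, but this does \emph{not} control $\tsb{\pd_1\eta}_{\dot{H}^{-1}}$: an $H^{1/2}$ function need not lie in $\dot{H}^{-1}$ (the low-frequency weight $|\xi|^{-1}$ is unbounded). Yet $\tsb{\pd_1\eta}_{\dot{H}^{-1}}$ is precisely the anisotropic low-mode piece of $\tnorm{\eta}_{\mathcal{H}^{3/2}}$ in Proposition~\ref{proposition on spatial characterization of anisobros}. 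The paper obtains it by integrating the continuity equation over $(0,b)$, producing $\varrho(0)\pd_1\eta = (\grad_\|,0)\cdot(\text{an }L^2\text{ field}) + \text{small}$, whence $\tsb{\pd_1\eta}_{\dot{H}^{-1}}\lesssim\tnorm{u}_{L^2}+\tnorm{q}_{L^2}+\rho\tnorm{\eta}_{\mathcal{H}^0}$.

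\textbf{Rigor issue.} Your displayed identity invokes Proposition~\ref{prop on divergence trick} for $p = q+\mathfrak{g}\tilde\eta$, but that proposition requires $p\in L^2(\Omega)$, which fails since $\Uppi^1_{\m{L}}\eta\notin L^2(\Sigma)$ in general. The paper avoids this by performing the frequency split \emph{before} testing---writing $\overset{\gam_0}{\mathscr{I}}(q,u,\eta_{\kappa,\m{H}}) = F - \overset{\gam_0}{\mathscr{I}}(0,0,\eta_{\kappa,\m{L}})$, applying the divergence trick only to $q+\mathfrak{g}\eta_{\kappa,\m{H}}\in L^2$, and sending $\kappa\to 0$ at the end. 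This also explains why the paper first reduces to $g=0$ via $\mathcal{B}_0$: it removes the $\int_\Omega pg$ term and its attendant integrability subtleties.
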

	\begin{proof}
		We divide the proof into several steps.
		
		\textbf{Step 1}: Reduction to $g=0$. We claim first that it suffices to prove the result under the specialized assumption that $g=0$.  Indeed, suppose this special case has been proved and let $(q,u,\eta)$, $w_0=(q_0,u_0,\eta_0)$, $\gam_0$, and $(g,F)$ be related as in~\eqref{definition of weak solutions}. With the help of the operator $\mathcal{B}_0$ from Proposition~\ref{Bogovskii main}, we define $w\in{_0}H^1(\Omega;\R^n)$ via $w=u-\varrho^{-1} \mathcal{B}_0g$.   Since $\mathcal{B}_0g\in H^1_0(\Omega;\R^n)$, we have that $\m{Tr}_{\Sigma}(w\cdot e_n)=\m{Tr}_{\Sigma}(u\cdot e_n)=-\pd_1\eta$, and hence $(q,w,\eta) \in \overset{q_0,u_0,\eta_0}{\X^{-1}}$.  Recalling that $\overset{\gam_0}{\mathscr{I}}$ is defined in~\eqref{definition of the I functional},  we calculate that
		\begin{equation}
			\overset{\gam_0}{\mathscr{I}}(q,w,\eta)=F-\overset{\gam_0}{\mathscr{I}}(0,\mathcal{B}_0g/\varrho,0) \text{ and }
			\grad\cdot(\varrho w)+\grad\cdot(v_{w_0}(q+\mathfrak{g}\eta))=0.
		\end{equation}
		 Thus, we can apply the special case to obtain the estimate
		\begin{equation}
			\tnorm{q,w,\eta}_{\overset{q_0,u_0,\eta_0}{\X^{-1}}}\lesssim\tnorm{F-\overset{\gam_0}{\mathscr{I}}(0,\mathcal{B}_0g/\varrho,0)}_{({_0}H^1)^\ast}\lesssim\tnorm{g,F}_{\Y^{-1}}.
		\end{equation}
		To switch from $w$ to $u$ in this bound we use the estimate provided by Proposition~\ref{Bogovskii main}, namely $\tnorm{u}_{H^1} \lesssim \tnorm{w}_{H^1} + \tnorm{g}_{\hat{H}^0}$. By chaining this together with the previous estimate we prove the result in general, which completes the proof of the claim. In the remaining steps we will prove the result in the special case that $g=0$.
		
		\textbf{Step 2}: A priori bound on $u$.  We claim that the bound
		\begin{equation}\label{the bound of step 1}
			\tnorm{u}_{H^1}^2\lesssim\tnorm{F}^2_{({_0}H^1)^\ast}+\rho\tp{\tnorm{q}_{L^2}^2+\tnorm{\eta}_{\mathcal{H}^{3/2}}^2}
		\end{equation}
		holds.  We make the following notational simplification for the Fourier space decompositions of $\eta$ from~\eqref{notation for the Fourier projection operators}:
		\begin{equation}\label{fourier space decomposition of eta}
			\eta_{\kappa,\m{L}}=\Uppi^\kappa_{\m{L}}\eta 
			\text{ and }
			\eta_{\kappa,\m{H}}=\Uppi^\kappa_\m{H}\eta.
		\end{equation}
		According to Proposition~\ref{proposition on frequency splitting}, we have that $\eta_{\kappa,\m{H}}\in L^2(\Sigma)$ and $\eta_{\kappa,\m{L}}\in\mathcal{H}^\infty(\Sigma)$. We test the equation
		\begin{equation}\label{weak formulation with eta splitting}
			\overset{\gam_0}{\mathscr{I}}(q,u,\eta_{\kappa,\m{H}})=F-\overset{\gam_0}{\mathscr{I}}(0,0,\eta_{\kappa,\m{L}})
		\end{equation}
		with $u$; the left hand side of the resulting identity reads
		\begin{multline}\label{testing with u}
			\tbr{\overset{\gam_0}{\mathscr{I}}(q,u,\eta_{\kappa,\m{H}}),u}=\int_{\Omega}\gam_0\bp{\f{\upmu(\varrho)}{2}|\mathbb{D}^0u|^2+\uplambda(\varrho)(\grad\cdot u)^2}-(q+\mathfrak{g}\eta_{\kappa,\m{H}})\grad\cdot(\varrho u)\\+\tbr{(\mathfrak{g}\varrho-\varsigma\Delta_{\|})\eta_{\kappa,\m{H}},\m{Tr}_{\Sigma}(u\cdot e_n)}_{H^{-1/2},H^{1/2}},
		\end{multline}
		and we next aim to estimate the latter two terms on the right side this expression.  To this end, we recall that
		\begin{equation}\label{this is being labeled to ensure students recollection}
			\m{Tr}_{\Sigma}(u\cdot e_n)=-\pd_1\eta 
			\text{ and }
			-\grad\cdot(\varrho u)=\grad\cdot(v_{w_0}(q+\mathfrak{g}\eta)).
		\end{equation}
		These allow us to compute
		\begin{equation}\label{can o vanish}
			\tbr{(\mathfrak{g}\varrho-\varsigma\Delta_{\|})\eta_{\kappa,\m{H}},\m{Tr}_{\Sigma}(u\cdot e_n)}_{H^{-1/2},H^{1/2}}=-\tbr{(\mathfrak{g}\varrho-\varsigma\Delta_{\|})\eta_{\kappa,\m{H}},\pd_1\eta_{\kappa,\m{H}}}_{H^{-1/2},H^{1/2}}=0
		\end{equation}
		and  (employing the integration by parts trick of Proposition~\ref{prop on divergence trick})
		\begin{equation}\label{first instance of the I_1 dood}
			\int_{\Omega}-(q+\mathfrak{g}\eta_{\kappa,\m{H}})\grad\cdot(\varrho u)=\int_{\Omega}\f{\grad\cdot v_{w_0}}{2}(q+\mathfrak{g}\eta_{\kappa,\m{H}})^2+\grad\cdot(v_{w_0}\eta_{\kappa,\m{L}})(q+\mathfrak{g}\eta_{\kappa,\m{H}})=\bf{I}_1+\bf{I}_2.
		\end{equation}
		
		We handle $\bf{I}_1$ first by expanding
		\begin{multline}
			\bf{I}_1=\int_{\Omega}\f{\grad\cdot v_{w_0}}{2}(q+\mathfrak{g}\eta_{1,\m{H}})^2+\mathfrak{g}\int_{\Omega}\grad\cdot v_{w_0}(q+\mathfrak{g}\eta_{1,\m{H}})(\eta_{\kappa,\m{H}}-\eta_{1,\m{H}})\\+\int_{\Omega}\f{\mathfrak{g}^2\grad\cdot v_{w_0}}{2}(\eta_{\kappa,\m{H}}-\eta_{1,\m{H}})^2
			=\bf{I}_{1,1}+\bf{I}_{1,2}+\bf{I}_{1,3}.
		\end{multline}
		According to the third item of Lemma~\ref{properties of the principal parts vector field} and Proposition~\ref{proposition on frequency splitting}, we may estimate
		\begin{equation}
			|\bf{I}_{1,1}|\le\tnorm{\grad\cdot v_{w_0}}_{L^\infty}\tnorm{q+\mathfrak{g}\eta_{1,\m{H}}}_{L^2}^2\lesssim\rho\tp{\tnorm{q}_{L^2}^2+\tnorm{\eta}^2_{\mathcal{H}^0}}
		\end{equation}
		and
		\begin{equation}
			|\bf{I}_{1,2}|\lesssim\tnorm{\grad\cdot v_{w_0}}_{L^2}\tnorm{q+\mathfrak{g}\eta_{1,\m{H}}}_{L^2}\tnorm{\eta_{\kappa,\m{H}}-\eta_{1,\m{H}}}_{L^\infty}\lesssim\rho\tp{\tnorm{q}_{L^2}^2+\tnorm{\eta}_{\mathcal{H}^0}^2}.
		\end{equation}
		For $\bf{I}_{1,3}$, we instead use Lemma~\ref{a technical lemma that will become rel}:
		\begin{equation}
			|\bf{I}_{1,3}|\lesssim\rho\tnorm{\eta}_{\mathcal{H}^0}^2.
		\end{equation}
		Upon piecing together the previous three estimates, we deduce that
		\begin{equation}
			|\bf{I}_1|\lesssim\rho\tp{\tnorm{q}_{L^2}^2+\tnorm{\eta}^2_{\mathcal{H}^0}}.
		\end{equation}
		
		Now we turn our attention to the term $\bf{I}_2$. Again, we first decompose
		\begin{equation}\label{cite1}
			\bf{I}_2=\int_{\Omega}\grad\cdot(v_{w_0}\eta_{\kappa,\m{L}})(q+\mathfrak{g}\eta_{1,\m{H}})+\mathfrak{g}\int_{\Omega}\grad\cdot(v_{w_0}\eta_{\kappa,\m{L}})(\eta_{\kappa,\m{H}}-\eta_{1,\m{H}})=\bf{I}_{2,1}+\bf{I}_{2,2}.
		\end{equation}
		$\bf{I}_{2,1}$ is handled via fourth item of Lemma~\ref{properties of the principal parts vector field}:
		\begin{equation}\label{cite2}
			|\bf{I}_{2,1}|\le\tnorm{\grad\cdot(v_{w_0}\eta_{\kappa,\m{L}})}_{L^2}\tnorm{q+\mathfrak{g}\eta_{1,\m{H}}}_{L^2}\lesssim\tnorm{\eta_{\kappa,\m{L}}}_{\mathcal{H}^0}\tp{\tnorm{q}_{L^2}+\tnorm{\eta}_{\mathcal{H}^0}}.
		\end{equation}
		For $\bf{I}_{2,2}$ we instead integrate by parts and use Fubini-Tonelli:
		\begin{equation}\label{cite3}
			|\bf{I}_{2,2}|=\mathfrak{g}\babs{\int_{\R^{n-1}}\bp{(\grad_{\|},0)\cdot\int_0^b v_{w_0}\eta_{\kappa,\m{L}}}(\eta_{\kappa,\m{H}}-\eta_{1,\m{H}})}\lesssim\bsb{(\grad_{\|},0)\cdot\int_0^bv_{w_0}\eta_{\kappa,\m{L}}}_{\dot{H}^{-1}}\tsb{\eta_{\kappa,\m{H}}-\eta_{1,\m{H}}}_{\dot{H^1}}.
		\end{equation}
		By decomposing $v_{w_0}=\varrho'\mathfrak{g}^{-1}e_1+v^{(1)}_{q_0,u_0,\eta_0}+v^{(2)}_{\eta_0}$ (see~\eqref{fundamental decomposition of the vector field v}) and arguing as in the proof of Lemma~\ref{a technical lemma that will become rel}, we acquire the bound
		\begin{equation}\label{cite4}
			\bsb{(\grad_{\|},0)\cdot\int_0^bv_{w_0}\eta_{\kappa,\m{L}}}_{\dot{H}^{-1}}\lesssim\tnorm{\eta_{\kappa,\m{L}}}_{\mathcal{H}^0}.
		\end{equation}
		Hence,
		\begin{equation}\label{cite5}
			|\bf{I}_2|\lesssim\tnorm{\eta_{\kappa,\m{L}}}_{\mathcal{H}^0}\tp{\tnorm{q}_{L^2}+\tnorm{\eta}_{\mathcal{H}^0}},
		\end{equation}
		and upon combining this with the $\bf{I}_1$ estimate we deduce that
		\begin{equation}\label{ziggy_piggy}
			\babs{\int_{\Omega}-(q+\mathfrak{g}\eta_{\kappa,\m{H}})\grad\cdot(\varrho u)}\lesssim\rho\tp{\tnorm{q}^2_{L^2}+\tnorm{\eta}^2_{\mathcal{H}^0}}+\tnorm{\eta_{\kappa,\m{L}}}_{\mathcal{H}^0}(\tnorm{q}_{L^2}+\tnorm{\eta}_{\mathcal{H}^0}).
		\end{equation}

		With \eqref{ziggy_piggy} and \eqref{can o vanish} in hand, we return to~\eqref{testing with u} to  obtain the inequality
		\begin{multline}
			\int_{\Omega}\gam_0\bp{\f{\upmu(\varrho)}{2}|\mathbb{D}^0u|^2+\uplambda(\varrho)(\grad\cdot u)^2}\lesssim\tnorm{F-\mathscr{I}(0,0,\eta_{\kappa,\m{L}})}_{({_0}H^1)^\ast}\tnorm{u}_{H^1}+\rho\tp{\tnorm{q}^2_{L^2}+\tnorm{\eta}^2_{\mathcal{H}^0}}\\+\tnorm{\eta_{\kappa,\m{L}}}_{\mathcal{H}^0}(\tnorm{q}_{L^2}+\tnorm{\eta}_{\mathcal{H}^0}).
		\end{multline}
		This holds for all $\kappa\in(0,1)$ and the implicit constant is independent of $\kappa$. Thus we may send $\kappa\to0$ and use the fact that, as a consequence of the dominated convergence theorem and the definition of the norm on the anisotropic Sobolev spaces~\eqref{there's one for you nineteen for me}, $\tnorm{\eta_{\kappa,\m{L}}}_{\mathcal{H}^0}\to0$ to arrive at the bound
		\begin{equation}\label{chez_nicos_3}
			\gam_0\int_{\Omega}\f{\upmu(\varrho)}{2}|\mathbb{D}^0u|^2+\uplambda(\varrho)(\grad\cdot u)^2\lesssim\tnorm{F}_{({_0}H^1)^\ast}\tnorm{u}_{H^1}+\rho(\tnorm{q}^2_{L^2}+\tnorm{\eta}^2_{\mathcal{H}^0}).
		\end{equation}
		Recall that the assumptions on $\upmu,\uplambda\in C^\infty(\R^+)$ are that $\upmu>0$ and $\uplambda>0$ if $n=2$, while $\upmu>0$ and $\uplambda\ge 0$ if $n\ge 3$.  Thus, the bound \eqref{the bound of step 1} follows from \eqref{chez_nicos_3}, the inclusion $\gam_0\in I\Subset\R^+$, and either Proposition~\ref{prop on deviatoric Korn's inequality} in the case $n\ge 3$ or else Proposition~\ref{prop on Korn's inequality} in the case $n=2$. 
		
		\textbf{Step 3}: A priori bound on $\grad_{\|}\eta$. Next we claim that we have the a priori bound
		\begin{equation}\label{the step 3 estimate}
			\tnorm{\grad_{\|}\eta}_{H^{1/2}}\lesssim\tnorm{u}_{H^1}+\tnorm{F}_{({_0}H^1)^\ast}.
		\end{equation}
		We first consider the case that surface tension is positive: $\varsigma>0$.
		To prove this we will utilize the operator $\mathcal{B}_2$ from Corollary~\ref{Bogovskii 2}. Recall from the previous step that for $\kappa\in(0,1)$ we have the decomposition $\eta=\eta_{\kappa,\m{L}}+\eta_{\kappa,\m{H}}$ defined in~\eqref{fourier space decomposition of eta}. We test identity~\eqref{weak formulation with eta splitting} with $w_\kappa\in{_0}H^1(\Omega;\R^n)$, defined via 
		\begin{equation}
			w_\kappa=-\varrho^{-1}\mathcal{B}_2(\varrho(b)\tbr{\grad_{\|}}^{-1}\Delta_{\|}\eta_{\kappa,\m{H}}),
		\end{equation}
		noting that $\grad\cdot(\varrho w_\kappa)=0$, $\m{Tr}_{\Sigma}(w_\kappa\cdot e_n)=-\tbr{\grad_{\|}}^{-1}\Delta_{\|}\eta_{\kappa,\m{H}}$, and
		\begin{equation}\label{estimate on the H1 norm of dubya kappa}
			\tnorm{w_\kappa}_{H^1}\lesssim\tnorm{\tbr{\grad_{\|}}^{-1}\Delta_{\|}\eta_{\kappa,\m{H}}}_{\dot{H}^{-1}\cap H^{1/2}}\asymp\tnorm{\tbr{\grad_{\|}}^{1/2}|\grad_{\|}|\eta_{\kappa,\m{H}}}_{L^2}\lesssim\tnorm{\grad_{\|}\eta_{\kappa,\m{H}}}_{H^{1/2}}. 
		\end{equation}
		The result is the identity
		\begin{multline}
			\tbr{\overset{\gam_0}{\mathscr{I}}(q,u,\eta_{\kappa,\m{H}}),w_{\kappa}}=\int_{\Omega}\gam_0\tp{\gam_0\varrho u\otimes e_1+\S^{\varrho}u}:\grad w_\kappa+\mathfrak{g}\varrho\grad\eta_{\kappa,\m{L}}\cdot w_\kappa\\
			+\tbr{(\mathfrak{g}\varrho-\varsigma\Delta_{\|})\eta_{\kappa,\m{H}},-\tbr{\grad_{\|}}^{-1}\Delta_{\|}\eta_{\kappa,\m{H}}}_{H^{-1/2},H^{1/2}}.
		\end{multline}
		As
		\begin{equation}
			\tnorm{\grad_{\|}\eta_{\kappa,\m{H}}}_{H^{1/2}}^2\lesssim\tbr{(\mathfrak{g}\varrho-\varsigma\Delta_{\|})\eta_{\kappa,\m{H}},-\tbr{\grad_{\|}}^{-1}\Delta_{\|}\eta_{\kappa,\m{H}}}_{H^{-1/2},H^{1/2}},
		\end{equation}
		we obtain the estimate
		\begin{equation}
			\tnorm{\grad_{\|}\eta_{\kappa,\m{H}}}^2_{H^{1/2}}\lesssim\tp{\tnorm{u}_{H^1}+\tnorm{F}_{({_0}H^1)^\ast}+\tnorm{\eta_{\kappa,\m{L}}}_{\mathcal{H}^{3/2}}}\tnorm{w_\kappa}_{H^1}.
		\end{equation}
		Then \eqref{the step 3 estimate} follows from this and \eqref{estimate on the H1 norm of dubya kappa} upon sending $\kappa\to0$, which is valid since the implicit constants are independent of $\kappa$.  This proves the claim in the case of positive surface tension.
		
		Now we consider the case of vanishing surface tension, $\varsigma=0$, in dimension $n=2$. For this we simply look to the boundary condition satisfied by $u$, namely $\pd_1\eta=-\m{Tr}_{\Sigma}(u\cdot e_n)$. Since $n=2$ and $\eta$ is defined on $\Sigma\simeq\R$, we have $\pd_1\eta=\grad_{\|}\eta$. Therefore we have $\tnorm{\grad_{\|}\eta}_{H^{1/2}}=\tnorm{\m{Tr}_{\Sigma}(u\cdot e_n)}_{H^{1/2}}\lesssim\tnorm{u}_{H^1}$, so~\eqref{the step 3 estimate} holds.

		\textbf{Step 4}: A priori bound on $q$.  We claim that we have the a priori bound
		\begin{equation}\label{a priori bound on q}
			\tnorm{q}_{L^2}\lesssim\tnorm{u}_{H^1}+\tnorm{\grad_{\|}\eta}_{H^{1/2}}+\tnorm{F}_{({_0}H^1)^\ast}.
		\end{equation}
		To see this, we first let $w=\varrho^{-1}\mathcal{B}q \in{_0}H^1(\Omega;\R^n)$,  where $\mathcal{B}$ is constructed in Corollary~\ref{Bogovskii 1}. By construction, we have that $\grad\cdot(\varrho w)=q$ and 
		\begin{equation}\label{ziggy_piggy_2}
			\tnorm{w}_{H^1}\lesssim\tnorm{q}_{L^2}.
		\end{equation}
		Then we we test the identity $\overset{\gam_0}{\mathscr{I}}(q,u,\eta)=F$ with $w$ to see that
		\begin{equation}
			\int_{\Omega}\gam_0(\gam_0\varrho u\otimes e_1+\S^{\varrho}u):\grad w+\mathfrak{g}\varrho\grad\eta\cdot w-q^2-\varsigma\tbr{\Delta_{\|}\eta,\m{Tr}_{\Sigma}(w\cdot e_n)}_{H^{-1/2},H^{1/2}}
			=\tbr{F,w}_{({_0}H^1)^\ast,{_0}H^1}.
		\end{equation}
		Estimate~\eqref{a priori bound on q} readily follows from this, \eqref{ziggy_piggy_2}, and the estimates established in the previous steps.  This proves the claim.

		\textbf{Step 5}: A priori bound on $\pd_1\eta$.  Next we claim that 
		\begin{equation}\label{a priori bound on eta}
			\tsb{\pd_1\eta}_{\dot{H}^{-1}}\lesssim\tnorm{u}_{L^2}+\tnorm{q}_{L^2}+\rho\tnorm{\eta}_{\mathcal{H}^0}.
		\end{equation}
		First, we note that by using the decomposition of $v_{w_0}$ from Lemma~\ref{properties of the principal parts vector field}, the continuity equation is equivalently written as
		\begin{equation}
			0=\grad\cdot(\varrho u+v_{w_0}q)+\mathfrak{g}\grad\cdot(v^{(1)}_{q_0,u_0,\eta_0}\eta)+\mathfrak{g}\pd_1(v^{(2)}_{\eta_0}\cdot e_1\eta)+\varrho'\pd_1\eta.
		\end{equation}
		We integrate this in the $n^{\m{th}}$ coordinate over $(0,b)$;  after recalling the identities $\m{Tr}_\Sigma(u\cdot e_n)+\pd_1\eta=0$ and \eqref{song for a favorite flour} from Proposition~\ref{prop on refined divergence compatibility estimate}, this results in the equality
		\begin{equation}
			\varrho(0)\pd_1\eta=(\grad_{\|},0)\cdot\int_{0}^b\tp{\varrho u+v_{w_0}q+\mathfrak{g}v_{q_0,u_0,\eta_0}^{(1)}\eta}+\mathfrak{g}\pd_1\bp{\eta\bp{\int_0^bv^{(2)}_{\eta_0}(\cdot,y)\cdot e_1\;\m{d}y}}.
		\end{equation}
		Hence, we may use the estimates from Lemma~\ref{properties of the principal parts vector field}, the fact that $\int_0^bv^{(2)}_{\eta_0}(\cdot,y)\;\m{d}y$ has $r_{n-1}$ as a band limit, the algebra properties of the anisotropic Sobolev spaces in Proposition~\ref{proposition on algebra properties}, and estimate~\eqref{driving down the road yesterday} from Proposition~\ref{prop on refined divergence compatibility estimate} to bound
		\begin{equation}
			\varrho(0)\tsb{\pd_1\eta}_{\dot{H}^{-1}}\lesssim\tnorm{\varrho +v_{w_0}q+\mathfrak{g}v^{(1)}_{q_0,u_0,\eta_0}\eta}_{L^2}+\mathfrak{g}\bnorm{\bp{\int_0^bv_{\eta_0}^{(2)}(\cdot,y)\;\m{d}y}\eta}_{\mathcal{H}^0}
			\lesssim\tnorm{u}_{L^2}+\tnorm{q}_{L^2}+\rho\tnorm{\eta}_{\mathcal{H}^0}.
		\end{equation}
		This proves the claim since $\varrho(0)>0$.
		
		\textbf{Step 6}: Conclusion.  We now synthesize the claims of the previous steps to conclude.  First, we take the bound from~\eqref{the bound of step 1} and plug it into the right hand side of~\eqref{the step 3 estimate}; this yields the inequality
		\begin{equation}\label{better estimate on gradient eta}
			\tnorm{\grad_{\|}\eta}_{H^{1/2}}\lesssim\tnorm{F}_{({_0}H^1)^\ast}+\sqrt{\rho}\tp{\tnorm{q}_{L^2}+\tnorm{\eta}_{\mathcal{H}^{3/2}}}.
		\end{equation}
		Second, we take~\eqref{better estimate on gradient eta} and~\eqref{the bound of step 1} and insert them into the right hand side of~\eqref{a priori bound on q} to get
		\begin{equation}\label{better a priori bound on q}
			\tnorm{q}_{L^2}\lesssim\tnorm{F}_{({_0}H^1)^\ast}+\sqrt{\rho}\tp{\tnorm{q}_{L^2}+\tnorm{\eta}_{\mathcal{H}^{3/2}}}.
		\end{equation}
		Now, while heeding to~\eqref{the norm on the anisotropic Sobolev spaces}, we sum the estimates~\eqref{the bound of step 1}, ~\eqref{a priori bound on eta}, \eqref{better estimate on gradient eta}, and \eqref{better a priori bound on q}, and then use ~\eqref{the bound of step 1} and~\eqref{better a priori bound on q} on the right hand side; the resulting estimate is
		\begin{equation}
			\tnorm{q,u,\eta}_{\X_{-1}}\lesssim\tnorm{F}_{({_0}H^1)^\ast}+\sqrt{\rho}\tnorm{q,u,\eta}_{\X_{-1}}.
		\end{equation}
		We choose $\rho_{\m{weak}}\in\R^+$ sufficiently small so that when taking $\rho\le \rho_{\m{weak}}$ we can absorb the $\X_{-1}$ contribution onto the left side and obtain the clean a priori bound
		\begin{equation}
			\tnorm{q,u,\eta}_{\X_{-1}}\lesssim\tnorm{F}_{({_0}H^1)^\ast}.
		\end{equation}
		It remains to only estimate the $L^2(\Omega)$-norm of $\grad\cdot(v_{w_0}q)$ in terms of the data.  This is now a simple matter since we can isolate it in the continuity equation via $-\grad\cdot(v_{w_0}q)=\grad\cdot(\varrho u)+\mathfrak{g}\grad\cdot(v_{w_0}\eta)$ and then note that the right hand side is controlled by $\tnorm{q,u,\eta}_{\X_{-1}}$ (see in particular the fourth item of Lemma~\ref{properties of the principal parts vector field}).
	\end{proof}
	
	Before our next a priori estimates result, we need a lemma from the theory of regularized steady transport equations.
	
	\begin{lem}[Regularized steady transport lemma]\label{regularized steady transport equations lemma 1}
		Let $0<\rho\le\rho_{\m{WD}}$, where the latter is defined in Theorem~\ref{thm on smooth tameness of the nonlinear operator}, $w_0=(q_0,u_0,\eta_0)$ be as in Lemma~\ref{properties of the principal parts vector field}, $m,N\in\N^+$, $\tau\in[0,1]$, $\varphi\in H^{2m}(\Omega)$, and $\psi\in L^2(\Omega)$.  Suppose that  
		\begin{equation}
			\begin{cases}
				N^{-1}L_m\varphi+\tau\grad\cdot(v_{w_0}\varphi)=\psi&\text{in }\Omega,\\
				\pd_n^m\varphi=\cdots=\pd_n^{2m-1}\varphi=0&\text{on }\pd\Omega,
			\end{cases}
		\end{equation}
		where $L_m$ is defined in~\eqref{the operator Lm}.  If $N\gtrsim\tbr{\tau\tnorm{q_0,u_0,\eta_0}_{\X_m}}^{4+2\lfloor n/2\rfloor}$, then we have the a priori estimate 
		\begin{equation}\label{chez_nicos_4}
		\tnorm{\varphi}_{H^{2m}}\lesssim N\tnorm{\varphi,\psi}_{L^2\times L^2},    
		\end{equation}
		 where the implicit constant depends only on the physical parameters, the dimension, $m$, and $\rho_{\m{WD}}$.
	\end{lem}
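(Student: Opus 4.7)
The plan is to recognize this equation as a special case of the regularized steady transport equation studied in Proposition~\ref{prop on L2 steady transport a prioris}, taken with the constant coefficient $\Lambda \equiv 1$ (which trivially satisfies positivity and $L^\infty$-boundedness there) and vector field $X = \tau v_{w_0}$, and then to apply that proposition directly. The task thus reduces to verifying three structural conditions on $X$: the decomposition~\eqref{assumptions on the vector field X} at the base regularity level $r = 1+\tfloor{n/2}$, the vanishing normal trace $\m{Tr}_{\pd\Omega}(X\cdot e_n) = 0$ (tacitly required by that proposition via its use of Lemma~\ref{lem on a bilinear estimate}), and the largeness hypothesis~\eqref{N gtrsim estimate} on $N$.

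All three conditions are furnished by Lemma~\ref{properties of the principal parts vector field}. Using the decomposition~\eqref{fundamental decomposition of the vector field v}, I would set
\begin{equation}
X_0 = \tau v^{(1)}_{q_0,u_0,\eta_0}
\quad \text{and} \quad
X_1 = \tau\sp{\mathfrak{g}^{-1}\varrho' e_1 + v^{(2)}_{\eta_0}}.
\end{equation}
Then $X_0 \in H^\infty(\Omega;\R^n) \subseteq W^{\infty,\infty}(\Omega;\R^n)$ by Sobolev embedding, and $X_1 \in W^{\infty,\infty}(\Omega;\R^n)$ via the embedding $W^{\infty,\infty}((0,b);\mathcal{H}^0_{(r_{n-1})}(\Sigma)) \emb W^{\infty,\infty}(\Omega)$ from item~(2) of the lemma. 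The vanishing normal trace is immediate: item~(1) gives $\m{Tr}_{\pd\Omega}(v^{(1)}\cdot e_n) = 0$, and both $\varrho' e_1$ and $v^{(2)}_{\eta_0}$ are parallel to $e_1$. The quantitative bounds at level $s = 1+\tfloor{n/2}$ yield $\tnorm{DX_0}_{H^{1+\tfloor{n/2}}} \lesssim \tau\rho_{\m{WD}}$ and $\tnorm{DX_1}_{W^{1+\tfloor{n/2},\infty}} \lesssim \tau\tbr{\rho_{\m{WD}}}$, so the decomposition condition~\eqref{assumptions on the vector field X} holds for any fixed $\rho_{\m{max}}$ exceeding these background-independent bounds.

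To match the largeness condition on $N$, I would apply the tame estimates of items~(1) and~(2) at the general level $s = m$, which after combining the two regimes of item~(1) (and using monotonicity $\tnorm{\cdot}_{\X_k} \le \tnorm{\cdot}_{\X_\ell}$ for $k\le\ell$) gives
\begin{equation}
\tnorm{DX_0, DX_1}_{H^m \times W^{m,\infty}} \lesssim \tau \tbr{\tnorm{q_0,u_0,\eta_0}_{\X_m}}.
\end{equation}
Raising to the $(4+2\tfloor{n/2})^{\m{th}}$ power, the present lemma's hypothesis $N \gtrsim \tbr{\tau\tnorm{q_0,u_0,\eta_0}_{\X_m}}^{4+2\tfloor{n/2}}$ implies the proposition's hypothesis~\eqref{N gtrsim estimate}, and Proposition~\ref{prop on L2 steady transport a prioris} then delivers~\eqref{chez_nicos_4} verbatim. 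I do not foresee any substantive obstacle: the lemma is essentially a corollary, and the work is entirely in the careful bookkeeping of the derivative-loss vector field $v_{w_0}$ so that it fits the abstract steady transport framework established earlier in this section.
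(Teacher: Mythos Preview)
Your proposal is correct and matches the paper's proof essentially verbatim: both invoke Proposition~\ref{prop on L2 steady transport a prioris} with $\Lambda=1$ and the decomposition $X_0=\tau v^{(1)}_{q_0,u_0,\eta_0}$, $X_1=\tau(\mathfrak{g}^{-1}\varrho'e_1+v^{(2)}_{\eta_0})$, and both feed in the bounds $\tnorm{DX_0,DX_1}_{H^s\times W^{s,\infty}}\lesssim\tau\tbr{\tnorm{q_0,u_0,\eta_0}_{\X_s}}$ from Lemma~\ref{properties of the principal parts vector field} at $s\in\{1+\tfloor{n/2},m\}$.
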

	\begin{proof}
		Most of the work in verifying this result has already been executed in Section~\ref{Section: Analysis of Regularized Steady Transport Equations}. We invoke Proposition~\ref{prop on L2 steady transport a prioris} with $\Lambda=1$ and the decomposed vector field $X = X_0 + X_1$, where $X=\tau v_{w_0}$,  $X_0=\tau v^{(1)}_{q_0,u_0,\eta_0}$,   and  $X_1=\tau\tp{v^{(2)}_{\eta_0}+\mathfrak{g}^{-1}\varrho'e_1}$,
		with $v^{(1)}_{q_0,u_0,\eta_0}$ and $v^{(2)}_{\eta_0}$ as in Lemma~\ref{properties of the principal parts vector field}. The estimate \eqref{chez_nicos_4} then follows from properties of the vector field $v_{q_0,u_0,\eta_0}$ stated in Lemma~\ref{properties of the principal parts vector field}, namely:  for $s\in\tcb{1+\lfloor n/2\rfloor,m}$ we have $\tnorm{DX_0,DX_1}_{H^s\times W^{s,\infty}}\le\tnorm{X_0,X_1}_{H^{1+s}\times W^{1+s,\infty}}\lesssim\tau\tbr{\tnorm{q_0,u_0,\eta_0}_{\X_s}}$.
	\end{proof}
	
	Our next result studies estimates on weak solutions to the regularized equations~\eqref{regularization of the prinpal part of the linearization}. Recall that the spaces $\X^{-1}_{m,N}$, the norms $\tnorm{\cdot}_{\overset{q_0,u_0,\eta_0}{\X^{-1}_{m,N}}}$, and the mappings $\overset{w_0,\gam_0}{\mathscr{J}^\tau_{m,N}}$ are defined in~\eqref{the regularized spaces}, \eqref{the adapted norm on the regularized spaces}, and~\eqref{regularized weak formulation operator}, respectively.
	
	\begin{prop}[A priori estimates for regularized weak solutions]\label{prop on a priori estimates for regularized weak solutions}
		Let $0<\rho\le\rho_{\m{WD}}$, where the latter is from Theorem~\ref{thm on smooth tameness of the nonlinear operator}, $w_0=(q_0,u_0,\eta_0)$ be as in Lemma~\ref{properties of the principal parts vector field}, and $\gam_0\in I$, where $I\Subset\R^+$ is some interval. Suppose that $m,N\in\N^+$, $\tau\in[0,1]$, $(q,u,\eta)\in\X^{-1}_{m,N}$, and that $(g,F)\in\Y^{-1}$ satisfy the equation $\overset{w_0,\gam_0}{\mathscr{J}^\tau_{m,N}}(q,u,\eta)=(g,F)$. There exists a $\rho_{\m{weak,reg}}\in\R^+$ such that if
		\begin{equation}\label{the labeled conditions to be satisfaction}
			0<\rho\le\rho_{\m{weak,reg}}\text{ and }N\gtrsim\tbr{\tnorm{q_0,u_0,\eta_0}_{\X_m}}^{4+2\lfloor n/2\rfloor},
		\end{equation}
		then we have the a priori estimate
		\begin{equation}\label{prop on a priori estimates for regularized weak solutions EST}
			\tnorm{q,u,\eta}_{\overset{q_0,u_0,\eta_0}{\X^{-1}_{m,N}}}\lesssim\tnorm{g,F}_{\Y^{-1}}.
		\end{equation}
		The implicit constants and $\rho_{\m{weak,reg}}$ depend on the dimension, physical parameters, $m$, $\rho_{\m{WD}}$, and $I$.
	\end{prop}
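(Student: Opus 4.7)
The plan is to adapt the proof of Proposition~\ref{prop on a priori estimates for weak solutions} to the regularized setting, preserving its six-step structure while extracting bounds that are independent of $N$ and also capturing the $N^{-1}$-scaled high-regularity control built into the norm of $\overset{q_0,u_0,\eta_0}{\X^{-1}_{m,N}}$.  First I would reduce to $g=0$ by setting $w = u - \varrho^{-1}\mathcal{B}_0 g$ using Proposition~\ref{Bogovskii main}; since $\mathcal{B}_0 g \in H^1_0$, this preserves the modified kinematic boundary condition $\m{Tr}_\Sigma(u\cdot e_n) = -\pd_1\eta + N^{-1}(-\Delta_\|)^{m-1/4}\eta$ built into $\X^{-1}_{m,N}$, and the modification is absorbed into an $\overset{\gam_0}{\mathscr{I}}(0,\mathcal{B}_0 g/\varrho,0)$ correction to $F$ with bound $\lesssim \tnorm{g}_{\hat{H}^0}$.

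The core of the argument is the energy bound obtained from testing $\overset{\gam_0}{\mathscr{I}}(q,u,\eta)=F$ with $u$.  After using the regularized continuity equation to rewrite $-\grad\cdot(\varrho u) = \tau\grad\cdot(v_{w_0}(q+\mathfrak{g}\eta)) + N^{-1}L_m(q+\mathfrak{g}\eta)$ and integrating by parts via Proposition~\ref{prop on divergence trick} (for the transport term) and Lemma~\ref{lemma on integration by parts} (for the elliptic term, using the Neumann conditions on $q$), the pressure-density pairing becomes $\tau\int_\Omega \tfrac{\grad\cdot v_{w_0}}{2}(q+\mathfrak{g}\eta)^2 + N^{-1}B_m(q+\mathfrak{g}\eta, q+\mathfrak{g}\eta)$, up to boundary corrections that arise only from the $\mathfrak{g}\eta$-piece.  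Crucially, the modified kinematic BC causes the surface-tension coupling $-\varsigma\tbr{\Delta_\|\eta,\m{Tr}_\Sigma(u\cdot e_n)}$ to split (after the high/low frequency splitting $\eta=\eta_{\kappa,\m{L}}+\eta_{\kappa,\m{H}}$ and $\kappa\to0$ limit used in the reference proof) into the usual vanishing antisymmetric piece plus a manifestly positive $\varsigma N^{-1}\tnorm{(-\Delta_\|)^{(4m+3)/8}\eta}_{L^2}^2$ term, together with an analogous positive gravity contribution; these give extra $N^{-1}$-scaled dissipation for $\eta$.  Coupling with the Korn inequalities (Propositions~\ref{prop on deviatoric Korn's inequality} and~\ref{prop on Korn's inequality}) and the vector-field estimates of Lemmas~\ref{properties of the principal parts vector field} and~\ref{a technical lemma that will become rel}, the $H^1$-bound on $u$ from Step~2 of the reference proof transfers uniformly in $N$.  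Steps 3--5 of that proof carry over with cosmetic changes: $\grad_\|\eta$ in $H^{1/2}$ via the $\mathcal{B}_2$-test function from Corollary~\ref{Bogovskii 2} (or direct inversion of $\pd_1$ when $n=2$, $\varsigma=0$), $q$ in $L^2$ via the $\mathcal{B}$-test function from Corollary~\ref{Bogovskii 1}, and $\pd_1\eta$ in $\dot{H}^{-1}$ via vertical integration of the continuity equation, where the extra $N^{-1}L_m$-term contributes, thanks to the Neumann BCs on $q$, only $N^{-1}$-scaled horizontal-derivative contributions of $\eta$ that are absorbed using smallness of $\rho$ via Proposition~\ref{proposition on spatial characterization of anisobros}.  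This yields $\tnorm{q,u,\eta}_{\X_{-1}} \lesssim \tnorm{g,F}_{\Y^{-1}}$, and isolating $\grad\cdot(v_{w_0}q)$ from the continuity equation, using the fourth item of Lemma~\ref{properties of the principal parts vector field}, produces its $L^2$-bound.

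For the remaining ingredient, the $N^{-1}$-scaled control of $(q,\eta)$ in $H^{2m}\times\mathcal{H}^{2m}$, I would view the continuity equation as a regularized steady transport equation
\begin{equation*}
N^{-1}L_m(q+\mathfrak{g}\eta) + \tau\grad\cdot(v_{w_0}(q+\mathfrak{g}\eta)) = g - \grad\cdot(\varrho u)
\end{equation*}
and invoke Lemma~\ref{regularized steady transport equations lemma 1}, whose largeness hypothesis on $N$ is precisely~\eqref{the labeled conditions to be satisfaction}; this delivers $\tnorm{q+\mathfrak{g}\eta}_{H^{2m}}\lesssim N\tnorm{q+\mathfrak{g}\eta,\,g-\grad\cdot(\varrho u)}_{L^2\times L^2}$, the right-hand side of which is controlled by the preceding energy bounds.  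The main obstacle lies in the mismatch between the Neumann conditions in $\X^{-1}_{m,N}$ (imposed on $q$ alone) and the $L_m$-action on the composite $q+\mathfrak{g}\eta$: the $\eta$-piece generates genuine boundary terms in the Step~2 integration by parts and creates a compatibility issue for the direct application of Lemma~\ref{regularized steady transport equations lemma 1}.  Overcoming this will require carefully tracking the boundary contributions via Lemma~\ref{lemma on integration by parts}, exploiting the extra dissipation for $\eta$ from the modified kinematic BC, and using the log-convexity from Lemma~\ref{lem on log-convexity of the norms} together with the smallness of $\rho_{\m{weak,reg}}$ to absorb all $\eta$-dependent small terms back to the left-hand side.
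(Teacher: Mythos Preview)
Your overall architecture is right, but the obstacle you flag is illusory and the real subtleties lie elsewhere. Since $\eta$ is a function on $\Sigma$ extended trivially in the normal direction, $\pd_n^j\eta = 0$ for all $j\ge1$; hence $q+\mathfrak{g}\eta$ automatically inherits the Neumann conditions from $q$, and the integration by parts in Step~2 produces no boundary corrections from $\eta$. The genuine issue with applying Lemma~\ref{regularized steady transport equations lemma 1} to $\varphi = q+\mathfrak{g}\eta$ is not the Neumann conditions but rather that $\eta\in\mathcal{H}^{2m}(\R^{n-1})$ need not lie in $H^{2m}(\R^{n-1})$ when $n\ge3$, so $q+\mathfrak{g}\eta\notin H^{2m}(\Omega)$. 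The paper fixes this by first splitting $\eta=\eta_{1,\m{L}}+\eta_{1,\m{H}}$, applying the lemma to $q+\mathfrak{g}\eta_{1,\m{H}}\in H^{2m}(\Omega)$, and moving $\eta_{1,\m{L}}$ to the data side (where it is harmless since $\Uppi^1_{\m{L}}$ maps into $H^\infty$).

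Two further corrections are needed. First, you must obtain $N^{-1}\tnorm{\eta_{1,\m{H}}}_{H^{2m}}$ separately from the transport argument: the Step~2 dissipation only controls $\eta$ at order roughly $m+3/4$, not $2m$. The paper instead tests the kinematic boundary condition $\m{Tr}_\Sigma(u\cdot e_n)+\pd_1\eta=N^{-1}(-\Delta_\|)^{m-1/4}\eta$ directly against $N^{-1}(-\Delta_\|)^{m+1/4}\eta_{1,\m{H}}$ to get $\tnorm{\eta_{1,\m{H}}}_{H^{2m}}\lesssim N\tnorm{u}_{H^1}$. Second, your ordering and your description of the $\pd_1\eta$ step are both off: after vertical integration and testing against $|\grad_\||^{-2}\pd_1\eta_\kappa$, the $\eta$-contributions from both $N^{-1}L_{m,\|}\eta$ and $N^{-1}(-\Delta_\|)^{m-1/4}\eta$ vanish by antisymmetry, while the $q$-contribution $N^{-1}L_{m,\|}\int_0^b q$ survives and costs $N^{-1}\tnorm{q}_{H^{2m-1}}$. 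This is the opposite of what you wrote, and it forces the high-norm bounds on $q$ and $\eta$ to precede the $\dot{H}^{-1}$ estimate for $\pd_1\eta$, as in the paper's Steps~5--6.
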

	\begin{proof}
		We proceed in much the same way as in the proof of Proposition~\ref{prop on a priori estimates for weak solutions}, breaking to steps that mirror the structure of the argument used there.
		
		\textbf{Step 1}: Reduction to the case $g=0$.  We claim that it suffices to prove the result in the special case that $g=0$.  Indeed, the exact same argument used in the first step of Proposition~\ref{prop on a priori estimates for weak solutions} proves the claim here. In the remaining steps we will prove the result in the special case that $g=0$.
		
		\textbf{Step 2}: A priori bound on $u$.  We claim that we have the a priori bound
		\begin{equation}\label{this is a surely an a priori bound on u}
			\tnorm{u}_{H^1}^2\lesssim\tnorm{F}_{({_0}H^1)^\ast}^2+\rho\tp{\tnorm{q}_{L^2}^2+\tnorm{\eta}^2_{\mathcal{H}^{3/2}}}.
		\end{equation}
		To prove the claim we again use the Fourier space decomposition of~\eqref{fourier space decomposition of eta}, which again yields \eqref{weak formulation with eta splitting}.   We then test~\eqref{weak formulation with eta splitting} with $u$ to arrive at~\eqref{testing with u} as in the proof of Proposition~\ref{prop on a priori estimates for weak solutions}.   Note, though, that in the present context we have the identities
		\begin{equation}\label{regularized trace condition}
			\m{Tr}_{\Sigma}(u\cdot e_n)=-\pd_1\eta+N^{-1}(-\Delta_{\|})^{m-1/4}\eta
		\end{equation}
		and
		\begin{equation}\label{regularized continuity equation}
			-\grad\cdot(\varrho u)=\tau\grad\cdot(v_{w_0}(q+\mathfrak{g}\eta))+N^{-1}L_m(q+\mathfrak{g}\eta),
		\end{equation}
		which are somewhat different from~\eqref{this is being labeled to ensure students recollection}. We insert \eqref{regularized trace condition} and~\eqref{regularized continuity equation} into~\eqref{testing with u} and integrate by parts  to see that 
		\begin{multline}\label{ziggy_piggy_3}
			\int_{\Omega}\gamma_0\bp{\f{\upmu(\varrho)}{2}|\mathbb{D}^0u|^2+\uplambda(\varrho)(\grad\cdot u)^2}+\f{\tau\grad\cdot v_{w_0}}{2}(q+\mathfrak{g}\eta_{\kappa,\m{H}})^2+\f{1}{N}\sum_{j=1}^n(\pd_j^m(q+\mathfrak{g}\eta_{\kappa,\m{H}}))^2\\
			+\mathfrak{g}\int_{\Omega}(q+\mathfrak{g}\eta_{\kappa,\m{H}})\bp{\tau\grad\cdot(v_{w_0}\eta_{\kappa,\m{L}})
				+\f{1}{N} L_{m,\|}\eta_{\kappa,\m{L}}} + \f{1}{2}\tnorm{(\mathfrak{g}\varrho-\varsigma\Delta_{\|})^{1/2}(\Delta_{\|})^{m/2-1/8}\eta_{\kappa,\m{H}}}^2_{L^2}\\
			=\tbr{F-\mathscr{I}(0,0,\eta_{\kappa,\m{L}}),u}_{({_0}H^1)^\ast,{_0}H^1}.
		\end{multline}
		
		By combining \eqref{ziggy_piggy_3}, the Korn inequalities from Propositions~\ref{prop on deviatoric Korn's inequality} and \ref{prop on Korn's inequality} as well as the end of the second step of Proposition~\ref{prop on a priori estimates for weak solutions}, and the fact that $\tau \in [0,1]$, we obtain the estimate
		\begin{multline}\label{this 0}
			\tnorm{u}_{H^1}^2 \lesssim 
			\babs{\int_{\Omega}\f{\grad\cdot v_{w_0}}{2}(q+\mathfrak{g}\eta_{\kappa,\m{H}})^2}
			+\mathfrak{g}\babs{\int_{\Omega}(q+\mathfrak{g}\eta_{\kappa,\m{H}})\bp{\tau\grad\cdot(v_{w_0}\eta_{\kappa,\m{L}})+\f{1}{N} L_{m,\|} \eta_{\kappa,\m{L}}}}\\
			+\tp{\tnorm{F}_{({_0}H^1)^\ast}+\tnorm{\eta_{\kappa,\m{L}}}_{\mathcal{H}^{3/2}}}\tnorm{u}_{H^1}
			=\bf{I}_1+\bf{I}_2+\bf{I}_3.
		\end{multline}
		The term $\bf{I}_1$ is identical to the term $\bf{I}_1$ that appeared in \eqref{first instance of the I_1 dood}, so we may use the same argument used in the second step of the proof of Proposition~\ref{prop on a priori estimates for weak solutions} to arrive at the estimate
		\begin{equation}\label{this3}
			\bf{I}_1\lesssim\rho\tp{\tnorm{q}_{L^2}^2+\tnorm{\eta}^2_{\mathcal{H}^0}}.
		\end{equation}
		For the term $\bf{I}_2$ we bound
		\begin{equation}
			\bf{I}_2\le  \mathfrak{g}\babs{\int_{\Omega}(q+\mathfrak{g}\eta_{\kappa,\m{H}})\tau\grad\cdot(v_{w_0}\eta_{\kappa,\m{L}})}+\f{\mathfrak{g}}{N}\babs{\int_{\Omega}(q+\mathfrak{g}\eta_{\kappa,\m{H}})L_{m,\|} \eta_{\kappa,\m{L}}}=\bf{I}_{2,1}+\bf{I}_{2,2}.
		\end{equation}
		Arguing as in \eqref{cite1}--\eqref{cite5}, we find that
		\begin{equation}\label{this1}
			\bf{I}_{2,1}\lesssim\tnorm{\eta_{\kappa,\m{L}}}_{\mathcal{H}^0}\tp{\tnorm{q}_{L^2}+\tnorm{\eta}_{\mathcal{H}^0}}.
		\end{equation}
		For the remaining piece we exploit the fact that $\eta_{\kappa,\m{L}}$ and $\eta_{\kappa,\m{H}}$ have disjoint Fourier supports, together with~\eqref{the norm on the anisotropic Sobolev spaces} to see that 
		\begin{equation}\label{this2}  
			\bf{I}_{2,2}=\f{g}{N}\babs{\int_{\Omega}q L_{m,\|} \eta_{\kappa,\m{L}}}
			\lesssim \f{1}{N}\tnorm{q}_{L^2} \tnorm{ L_{m,\|} \eta_{\kappa,\m{L}}}_{L^2}.
			\lesssim \f{1}{N}\tnorm{q}_{L^2}\tnorm{\eta_{\kappa,\m{L}}}_{\mathcal{H}^0}.
		\end{equation}
		Finally, we plug \eqref{this3}, \eqref{this1}, and \eqref{this2} into \eqref{this 0}  and then use Cauchy's inequality on $\bf{I}_3$ to deduce that
		\begin{equation}
			\tnorm{u}^2_{H^1}\lesssim\tnorm{F}^2_{({_0}H^1)^\ast}+\rho\tp{\tnorm{q}_{L^2}^2+\tnorm{\eta}_{\mathcal{H}^0}^2}+\tnorm{\eta_{\kappa,\m{L}}}_{\mathcal{H}^0}\tp{\tnorm{q}_{L^2}+\tnorm{\eta}_{\mathcal{H}^0}}.
		\end{equation}
		The implicit constant is independent of $\kappa$, so we may send $\kappa\to 0$ to obtain~\eqref{this is a surely an a priori bound on u} from this.  The claim is proved.

		\textbf{Step 3}: A priori bound on $\grad_{\|}\eta$.  We claim that we have the a priori bound
		\begin{equation}\label{a priori grad eta woo}
			\tnorm{\grad_{\|}\eta}_{H^{1/2}}\lesssim\tnorm{u}_{H^1}+\tnorm{F}_{({_0}H^1)^\ast}.
		\end{equation}
		Indeed, the claim is proved in exactly the same way as the claim from third step of the proof of Proposition~\ref{prop on a priori estimates for weak solutions}.

		\textbf{Step 4}: A priori bound on $q$. We claim that we have the a priori bound
		\begin{equation}\label{a priori grad q woo}
			\tnorm{q}_{L^2}\lesssim\tnorm{u}_{H^1}+\tnorm{\grad_{\|}\eta}_{H^{1/2}}+\tnorm{F}_{({_0}H^1)^\ast}.
		\end{equation}
		Again, this is proved in exactly the same way as the fourth step in the proof of Proposition~\ref{prop on a priori estimates for weak solutions}.
		
		\textbf{Step 5}: High norm a priori bounds on $\eta$ and $q$.  We next claim that we have the bound
		\begin{equation}\label{the high norm estimates}
			\tnorm{\eta_{1,\m{H}}}_{H^{2m}}+\tnorm{q}_{H^{2m}}\lesssim N\tp{\tnorm{u}_{H^1}+\tnorm{q}_{L^2}+\tnorm{\grad_{\|}\eta}_{H^{1/2}}+\rho\tnorm{\eta}_{\mathcal{H}^0}}.
		\end{equation}
		To see this, we begin by rewriting the equation
		\begin{equation}
			\grad\cdot(\varrho u)+\tau\grad\cdot(v_{w_0}(q+\mathfrak{g}\eta))+N^{-1}L_m(q+\mathfrak{g}\eta)=0
		\end{equation}
		with the help of the splitting~\eqref{fourier space decomposition of eta} with $\kappa=1$:
		\begin{equation}
			N^{-1}L_m(q+\mathfrak{g}\eta_{1,\m{H}})+\tau\grad\cdot(v_{w_0}(q+\mathfrak{g}\eta_{1,\m{H}})=-\grad\cdot(\varrho u)-\mathfrak{g}\tau\grad\cdot(v_{w_0}\eta_{1,\m{L}})-\mathfrak{g}N^{-1}L_{m,\|} \eta_{1,\m{L}}.
		\end{equation}
		As $\pd_n^m(q+\mathfrak{g}\eta_{1,\m{H}})=\cdots=\pd_n^{2m-1}(q+\mathfrak{g}\eta_{1,\m{H}})=0$, we are in a position to apply Lemma~\ref{regularized steady transport equations lemma 1}, provided that  $N\gtrsim\tbr{\tnorm{q_0,u_0,\eta_0}_{\X_m}}^{4+2\lfloor n/2\rfloor}$, which we are free to assume; this yields the bound
		\begin{equation}\label{the sum}
			\tnorm{q+\mathfrak{g}\eta_{1,\m{H}}}_{H^{2m}}\lesssim N\tp{\tnorm{q}_{L^2}+\tnorm{u}_{H^1}+\rho\tnorm{\eta}_{\mathcal{H}^0}+\tnorm{\grad_{\|}\eta}_{H^{1/2}}}.
		\end{equation}
		
		Next we obtain a bound on $\eta_{1,\m{H}}$ in $H^{2m}(\Sigma)$ through the normal trace boundary condition.  Indeed, we test equation~\eqref{regularized trace condition} with $N^{-1}(-\Delta_{\|})^{m+1/4}\eta_{1,\m{H}}$ to see that
		\begin{equation}
			N^{-2}\tnorm{(-\Delta_{\|})^{m}\eta_{1,\m{H}}}^2_{L^2}\le N^{-1}\tnorm{\m{Tr}_\Sigma(u\cdot e_n)}_{H^{1/2}}\tnorm{(-\Delta_{\|})^{m+1/4}\eta_{1,\m{H}}}_{H^{-1/2}},
		\end{equation}
		and hence
		\begin{equation}\label{eta alone}
			\tnorm{\eta_{1,\m{H}}}_{H^{2m}}\lesssim N\tnorm{u}_{H^1}.
		\end{equation}
		We then obtain~\eqref{the high norm estimates} by combining~\eqref{the sum} and~\eqref{eta alone}.  The claim is proved.

		\textbf{Step 6}: A priori bounds on $\pd_1\eta$.  We claim that we have the a priori bound
		\begin{equation}\label{aprioriboundonpd1eta}
			\tsb{\pd_1\eta}_{\dot{H}^{-1}}\lesssim\tnorm{u}_{L^2}+\tnorm{q}_{L^2}+N^{-1}\tnorm{q}_{H^{2m}}+\rho\tnorm{\eta}_{\mathcal{H}^0}.
		\end{equation}
		As in the fifth step of the proof of Proposition~\ref{prop on a priori estimates for weak solutions}, we integrate equation~\eqref{regularized continuity equation} in the $n^{\m{th}}$ coordinate from $0$ to $b$ and isolate the non-small $\pd_1\eta$ contributions:
		\begin{multline}\label{ziggy_piggy_5}
			((1-\tau)\varrho(b)+\tau\varrho(0))\pd_1\eta=(\grad_{\|},0)\cdot\int_0^b(\varrho u+\tau v_{w_0}q+\tau\mathfrak{g}v^{(1)}_{q_0,u_0,\eta_0}\eta)+\tau\mathfrak{g}\pd_1\bp{\eta\int_0^b v^{(2)}_{\eta_0}(\cdot,y)\cdot e_1\;\m{d}y}\\+\f{1}{N}L_{m,\|}\int_0^b(q+\mathfrak{g}\eta)+\f{\varrho(b)}{N}(-\Delta_{\|})^{m-1/4}\eta=\bf{I}_1+\bf{I}_2+\bf{I}_3+\bf{I}_4.
		\end{multline}
		Fix $\kappa\in(0,1)$ and set $\eta_{\kappa}=(\Uppi^{1/\kappa}_{\m{L}}-\Uppi^\kappa_{\m{L}})\eta$. We take the $L^2$-inner product of \eqref{ziggy_piggy_5} with $|\grad_{\|}|^{-2}\pd_1\eta_\kappa$.  The terms involving $\bf{I}_1$ and $\bf{I}_2$ are estimated via Cauchy-Schwarz
		\begin{equation}
			\tbr{\bf{I}_1 + \bf{I}_2,|\grad_{\|}|^{-2}\pd_1\eta_\kappa}_{L^2,L^2}
			\lesssim 
			\tsb{\bf{I}_1 +\bf{I}_2 }_{\dot{H}^{-1} }\tsb{\pd_1\eta_\kappa}_{\dot{H}^{-1}}, 
		\end{equation}
		but then the argument used in the fifth step of the proof of Proposition~\ref{prop on a priori estimates for weak solutions} shows that
		\begin{equation}
			\tsb{\bf{I}_1}_{\dot{H}^{-1}}+\tsb{\bf{I}_2}_{\dot{H}^{-1}}\lesssim
			\tnorm{u}_{L^2}+\tnorm{q}_{L^2}+\rho\tnorm{\eta}_{\mathcal{H}^0},
		\end{equation}
		so 
		\begin{equation}
			\tbr{\bf{I}_1 + \bf{I}_2,|\grad_{\|}|^{-2}\pd_1\eta_\kappa}_{L^2,L^2} \lesssim \left(\tnorm{u}_{L^2}+\tnorm{q}_{L^2}+\rho\tnorm{\eta}_{\mathcal{H}^0} \right) \tsb{\pd_1\eta_\kappa}_{\dot{H}^{-1}}.
		\end{equation}
		On the other hand, for the term involving $\bf{I}_3$ we have
		\begin{equation}
			\tbr{\bf{I}_3,|\grad_{\|}|^{-2}\pd_1\eta_\kappa}_{L^2,L^2}=\f{1}{N}\bbr{|\grad_{\|}|^{-1}L_{m,\|}\int_0^bq,|\grad_{\|}|\pd_1\eta_\kappa}_{L^2,L^2}\lesssim\f{1}{N}\tnorm{q}_{H^{2m-1}}\tsb{\eta_{\kappa}}_{\dot{H^{-1}}}.
		\end{equation}
		while for the $\bf{I}_4$ term we compute $\tbr{\bf{I}_4,|\grad_{\|}|^{-2}\pd_1\eta_{\kappa}}_{L^2,L^2}=0$. All together, these combine to show that
		\begin{equation}
			\tsb{\pd_1\eta_\kappa}_{\dot{H}^{-1}}\lesssim\tnorm{u}_{L^2}+\tnorm{q}_{L^2}+N^{-1}\tnorm{q}_{H^{2m}}+\rho\tnorm{\eta}_{\mathcal{H}^0},
		\end{equation}
		and since the implicit constant is independent of $\kappa$, we can send  $\kappa\to0$ to arrive at~\eqref{aprioriboundonpd1eta}.  The claim is proved.
		
		\textbf{Step 7}: Conclusion. Finally, we derive the desired estimate \eqref{prop on a priori estimates for regularized weak solutions EST} by combining the bounds from the previous steps and taking $\rho$ to be sufficiently small to absorb, exactly as in the proof of the sixth step of Proposition~\ref{prop on a priori estimates for weak solutions}.
	\end{proof}
	
	\subsection{Existence of solutions to the regularization}\label{please dont spoil my day, im miles away and}
	
	In this subsection we prove the existence of weak solutions to the regularized problem~\eqref{regularization of the prinpal part of the linearization} and then quickly deduce qualitative regularity. First we have our main existence result for weak solutions. 
	
	\begin{thm}[Existence of regularized weak solutions]\label{thm on existence of regularized weak solutions}
		Let $0<\rho\le\rho_{\m{WD}}$, where the latter is from Theorem~\ref{thm on smooth tameness of the nonlinear operator}, $w_0=(q_0,u_0,\eta_0)$ be as in Lemma~\ref{properties of the principal parts vector field}, $m,N\in\N^+$ with $m\ge 2$, $\tau\in[0,1]$, $(g,F)\in\Y^{-1}$, and $\gam_0\in I$ for some interval $I\Subset\R^+$. If~\eqref{the labeled conditions to be satisfaction} is satisfied, then there exists a unique $(q,u,\eta)\in\X^{-1}_{m,N}$ satisfying 
		\begin{equation}\label{proposition on existence of regularized weak solutions EQN}
			\overset{w_0,\gam_0}{\mathscr{J}^\tau_{m,N}}(q,u,\eta)=(g,F).
		\end{equation}
	\end{thm}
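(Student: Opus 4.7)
My plan is to establish uniqueness directly from the a priori estimate and then obtain existence via the method of continuity in the homotopy parameter $\tau \in [0,1]$, with the base case at $\tau=0$ handled by a separate two-parameter regularization and compactness argument.

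\emph{Uniqueness.} Suppose $(q_1,u_1,\eta_1),(q_2,u_2,\eta_2)\in\X^{-1}_{m,N}$ both satisfy~\eqref{proposition on existence of regularized weak solutions EQN} for the same data $(g,F)$. The difference solves $\overset{w_0,\gam_0}{\mathscr{J}^\tau_{m,N}}(q_1-q_2,u_1-u_2,\eta_1-\eta_2)=(0,0)$, and therefore Proposition~\ref{prop on a priori estimates for regularized weak solutions}, whose hypotheses are met by assumption~\eqref{the labeled conditions to be satisfaction}, forces $\tnorm{q_1-q_2,u_1-u_2,\eta_1-\eta_2}_{\overset{q_0,u_0,\eta_0}{\X^{-1}_{m,N}}}=0$, yielding uniqueness.

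\emph{Method of continuity.} By Lemma~\ref{lem on well-definedness check for linear analysis}, the family $\{T_\tau\}_{\tau\in[0,1]}\subset\mathcal{L}(\X^{-1}_{m,N};\Y^{-1})$ defined by $T_\tau=\overset{w_0,\gam_0}{\mathscr{J}^\tau_{m,N}}$ is well defined, and since the $\tau$-dependence of $T_\tau$ is affine through the term $\tau\grad\cdot(v_{w_0}(\cdot))$, the map $\tau\mapsto T_\tau$ is continuous in operator norm. Proposition~\ref{prop on a priori estimates for regularized weak solutions} provides the $\tau$-uniform bound $\tnorm{(q,u,\eta)}_{\overset{q_0,u_0,\eta_0}{\X^{-1}_{m,N}}}\lesssim\tnorm{T_\tau(q,u,\eta)}_{\Y^{-1}}$. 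By the standard method of continuity, the set $S=\{\tau\in[0,1]:T_\tau\text{ is a Banach isomorphism}\}$ is open and closed in $[0,1]$; proving $0\in S$ thus yields $S=[0,1]$ and, in particular, the desired surjectivity at any $\tau$.

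\emph{Base case $\tau=0$.} After the substitution $p=q+\mathfrak{g}\eta$, the $\tau=0$ system decouples from the background triple $w_0$ and takes a Stokes-like form in which $L_m$ acts on $p$ alone and $\eta$ enters the momentum balance only through boundary traces (via a calculation using integration by parts and $w=0$ on $\Sigma_0$). To solve this system I would introduce a further two-parameter regularization by adding $\delta_1 p$ to the continuity equation and $\delta_2 u$ to the momentum equation, for $\delta_1,\delta_2>0$. This produces a symmetric, strongly coercive bilinear form on the Hilbert space of triples $(p,u,\eta)\in H^m(\Omega)\times{_0}H^1(\Omega;\R^n)\times\mathcal{H}^{3/2}(\Sigma)$ satisfying the modified kinematic boundary condition, where coercivity in $u$ comes from Korn (Propositions~\ref{prop on Korn's inequality} and~\ref{prop on deviatoric Korn's inequality}), coercivity in $p$ from $\delta_1$ together with G\aa rding's inequality for $B_m$ (Lemma~\ref{lemma on garding inequality for Bm}), and coercivity for $\grad_{\|}\eta$ from the surface tension or, when $n=2$ and $\varsigma=0$, from the kinematic boundary condition itself. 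Lax-Milgram then provides a unique weak solution for each $(\delta_1,\delta_2)>0$, and a close rerun of the energy estimates of Proposition~\ref{prop on a priori estimates for regularized weak solutions} (now much simpler since $\tau=0$) yields bounds uniform in $(\delta_1,\delta_2)$. Extracting a weak limit as $\delta_2\to 0$ and then $\delta_1\to 0$, and invoking the Neumann elliptic regularity for $L_m$ from Lemma~\ref{lem on a priori estimates for Lm} to justify the strong form of the equations and the boundary conditions on $p$, produces a solution $(p,u,\eta)$ of $T_0(q,u,\eta)=(g,F)$ in $\X^{-1}_{m,N}$.

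\emph{Main obstacle.} The principal subtlety lies in setting up the base case. The operator $\overset{w_0,\gam_0}{\mathscr{J}^0_{m,N}}$ is not elliptic in the ADN sense, so one must carefully choose both the regularization and the functional framework: the test space must incorporate the perturbed kinematic condition $w\cdot e_n+\pd_1\tilde{\eta}=N^{-1}(-\Delta_{\|})^{m-1/4}\tilde{\eta}$ and the Neumann conditions $\pd_n^m p=\dots=\pd_n^{2m-1}p=0$, and the pairing must be tuned so that the anti-symmetric coupling between the pressure-like variable $p$ and the divergence $\grad\cdot(\varrho u)$ is preserved while the $L_m$ term and the surface-tension term contribute positive definite pieces. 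Once these ingredients are in place, the $(\delta_1,\delta_2)\to 0$ compactness step is routine, and the method of continuity closes the argument.
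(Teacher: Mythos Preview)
Your high-level architecture---uniqueness from the a priori estimate, method of continuity in $\tau$, and a Lax--Milgram argument with a two-parameter regularization at $\tau=0$---mirrors the paper's. The divergence is in how the base case is set up, and here your proposal has a genuine gap.

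The regularization you choose, adding $\delta_1 p$ and $\delta_2 u$ and working with a bilinear form on triples, does not yield coercivity. First, the Hilbert space is misdeclared: enforcing the kinematic constraint $u\cdot e_n+\partial_1\eta=N^{-1}(-\Delta_\|)^{m-1/4}\eta$ with $u\in{_0}H^1$ forces $\eta\in\mathcal{H}^{2m}$, not $\mathcal{H}^{3/2}$. Second, and more fundamentally, nothing in your form controls the low-frequency anisotropic part of $\eta$: surface tension paired with the constraint contributes only $\varsigma N^{-1}\||\nabla_\||^{m+3/4}\eta\|_{L^2}^2$, which degenerates at frequency zero, and the $p$--$u$ cross-terms produce boundary integrals like $\int_\Sigma\eta\,\partial_1\eta$ that are not even defined for generic $\eta\in\mathcal{H}^s$ since $\eta\notin L^2$ when $n\ge 3$. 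The paper sidesteps all of this by a different regularization: it perturbs the \emph{kinematic} boundary condition by $(NM)^{-1}\eta$ and the continuity equation by $(NK)^{-1}p$, which makes both maps $u\mapsto\eta=\upeta_M(u)$ and $u\mapsto p=\mathrm{p}_K(u)$ boundedly invertible (Lemmas~\ref{lemma on simple symbol inversion} and~\ref{elliptic theory lemma for existence}), with $\upeta_M(u)$ landing in the \emph{standard} Sobolev space $H^{2m}$. The problem then collapses to a bilinear form $B_{M,K}$ on $u\in{_0}H^1$ alone, where every cross-term is manifestly nonnegative and coercivity is simply Korn; the anisotropic space for $\eta$ is recovered only in the $M\to\infty$ limit via $M$-uniform bounds obtained by testing against carefully chosen functions. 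Your $\delta_2 u$ is redundant (Korn already controls $u$) and your $\delta_1 p$ is the analogue of the paper's $K^{-1}$, but you are missing the kinematic regularization and the reduction to $u$ that tame $\eta$.
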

	\begin{proof}
		In functional analytic terms, we aim to prove that for every $\tau \in [0,1]$ the operator $ \overset{w_0,\gam_0}{\mathscr{J}_{m,N}^\tau} \in\mathcal{L}(\X_{m,N}^{-1};\Y^{-1})$, which is well-defined thanks to Lemma~\ref{lem on well-definedness check for linear analysis},  is an isomorphism.  We divide the proof of this into several steps.

		\textbf{Step 1}: Reduction to proving existence with  $g=0$ and $\tau=0$.  We will achieve the reduction to $\tau=0$ through the method of continuity (see, for instance, Theorem 5.2 in Gilbarg and Trudinger~\cite{MR1814364}).  Indeed, the convex homotopy of operators $[0,1]\ni\tau\mapsto \overset{w_0,\gam_0}{\mathscr{J}_{m,N}^\tau}
		\in\mathcal{L}(\X_{m,N}^{-1};\Y^{-1})$ satisfies $\tau$-uniform a priori estimates thanks to Proposition~\ref{prop on a priori estimates for regularized weak solutions}.  Thus, the method of continuity guarantees that $\overset{w_0,\gam_0}{\mathscr{J}_{m,N}^\tau}$ is an isomorphism for every $\tau \in [0,1]$ if and only if $\overset{w_0,\gam_0}{\mathscr{J}_{m,N}^0}$ is an isomorphism, so we reduce to proving that $\overset{w_0,\gam_0}{\mathscr{J}_{m,N}^0}$ is an isomorphism.
		
		Next, we note that the a priori estimate of Proposition \ref{prop on a priori estimates for regularized weak solutions} guarantees that $\overset{w_0,\gam_0}{\mathscr{J}_{m,N}^0}$ is injective, so we further reduce, by way of the bounded inverse theorem, to proving that this map is surjective.  In turn, we reduce to proving the existence of solutions to  \eqref{proposition on existence of regularized weak solutions EQN} with $g =0$ and $\tau=0$ with the help of the operator $\mathcal{B}_0$, exactly as in the first step in the proof of Proposition~\ref{prop on a priori estimates for weak solutions}.
		
		We have now shown that it suffices to establish the existence of solutions to \eqref{proposition on existence of regularized weak solutions EQN} with $g=0$ and $\tau =0$.  Thus, in the remainder of the proof we set $g=0$ and $\tau=0$.

		\textbf{Step 2}: Existence of a two parameter family of approximate solutions.  We introduce the approximation parameters $M,K\in\N^+$ and fix data $F\in({_0}H^1(\Omega;\R^n))^\ast$.  We then claim that there exists a collection
		\begin{equation}\label{the two paramter approximation sequence}
			\tcb{(q_{M,K},u_{M,K},\eta_{M,K})}_{M,K \in \N^+} \subset H^{2m}(\Omega)\times{_0}H^1(\Omega;\R^n)\times H^{2m}(\Sigma)
		\end{equation}
		satisfying
		\begin{equation}\label{weak formulation satisfied}
			\overset{\gam_0}{\mathscr{I}}(q_{M,K},u_{M,K},\eta_{M,K})=F,
		\end{equation}
		\begin{equation}\label{equations for q+geta}
			\begin{cases}
				N^{-1}\sp{K^{-1}+L_m}(q_{M,K}+\mathfrak{g}\eta_{M,K})=-\grad\cdot(\varrho u_{M,K})&\text{in }\Omega,\\
				\pd_n^m(q_{M,K}+\mathfrak{g}\eta_{M,K})=\cdots=\pd_n^{2m-1}(q_{M,K}+\mathfrak{g}\eta_{M,K})=0&\text{on }\pd\Omega,
			\end{cases}
		\end{equation}
		and
		\begin{equation}\label{equation for eta}
			\m{Tr}_{\Sigma}(u_{M,K}\cdot e_n)+\pd_1\eta_{M,K}=N^{-1}\tp{M^{-1}+(-\Delta_{\|})^{m-1/4}}\eta_{M,K}.
		\end{equation}
		The high-level idea for producing these approximate solutions is as follows.   First, we note that equation~\eqref{equation for eta} determines $\eta_{M,K}$ as a function of $u_{M,K}$. Then equation~\eqref{equations for q+geta} determines $q_{M,K}$ as a function of $u_{M,K}$ and $\eta_{M,K}$. These allow us to rewrite~\eqref{weak formulation satisfied} as an equation relating $F$ and $u_{M,K}$ alone, and it turns out that this can be solved by utilizing the Lax-Milgram lemma.
		
		To prove the claim we begin by recalling that the space ${_0}H^1(\Omega;\R^n)$ is defined by~\eqref{zero on the left} and defining the bounded linear maps $\m{p}_K: {_0}H^1(\Omega;\R^n)\to H^{2m}(\Omega)$ and
		$\upeta_M:{_0}H^1(\Omega;\R^n)\to H^{2m}(\Sigma)$ via
		\begin{equation}
		\m{p}_K(u)=-N(K^{-1}+L_m)^{-1}(\grad\cdot(\varrho u)) \text{ and }    \upeta_M(u)=N\tp{M^{-1}+(-\Delta_{\|})^{m-1/4}- N\pd_1}^{-1}\m{Tr}_\Sigma(u\cdot e_n).
		\end{equation}
		The map $\upeta_M$ is well-defined and bounded in light of the symbol inversion result in Lemma~\ref{lemma on simple symbol inversion}, while  $p_{\m{K}}$ is well-defined and bounded by virtue of Lemma~\ref{elliptic theory lemma for existence}.  These maps allow us to define the bilinear form $B_{M,K}:{_0}H^1(\Omega;\R^n)\times{_0}H^1(\Omega;\R^n)\to\R$ via 
		\begin{equation}
    		B_{M,K}(u,w)=\tbr{\overset{\gam_0}{\mathscr{I}}(\m{p}_K(u)-\mathfrak{g}\upeta_M(u),u,\upeta_M(u)),w}_{({_0}H^1)^\ast,{_0}H^1}.    
		\end{equation}
		 Thanks to integration by parts and the definitions of $\m{p}_K$ and $\upeta_M$, we have the equivalent formulation
		\begin{multline}
			B_{M,K}(u,w)=\int_{\Omega}\gam_0\tp{\gam_0\varrho u\otimes e_1+\S^{\varrho}u}:\grad w-\m{p}_{K}(u)\grad\cdot(\varrho w)
			\\+\tbr{(\mathfrak{g}\varrho-\varsigma\Delta_{\|})\upeta_{M}(u),\m{Tr}_{\Sigma}(w\cdot e_n)}_{H^{-1/2},H^{1/2}}\\
			=\int_{\Omega}\gam_0(\gam_0\varrho u\otimes e_1+\S^{\varrho}u):\grad w+\f{1}{N}\int_{\Omega}\f{1}{K}\m{p}_K(u)\m{p}_K(w)+\sum_{j=1}^n\pd_j^m\m{p}_K(u)\pd_j^m\m{p}_K(w)\\
			+\tbr{(\mathfrak{g}\varrho-\varsigma\Delta_{\|})\upeta_{M}(u),-\pd_1\upeta_M(w)+N^{-1}(M^{-1}+(-\Delta_{\|})^{m-1/4})\upeta_{M}(w)}_{H^{-1/2},H^{1/2}}.
		\end{multline}
		
		The boundedness of $B_{M,K}$ is straightforward to check, but it is also coercive since by anti-symmetry we have 
		\begin{equation}
		 \int_{\Omega}\gam_0^2\varrho u\otimes e_1:\grad u=0,   
		\end{equation}
		by orthogonality  we have 
		\begin{equation}
		\int_{\Omega}\S^{\varrho} u:\grad u=\int_{\Omega}\f{\upmu(\varrho)}{2}|\mathbb{D}^0u|^2+\uplambda(\varrho)(\grad\cdot u)^2,    
		\end{equation}
		by squaring we have
		\begin{equation}
		\f{1}{N}\int_{\Omega}\f{1}{K}\m{p}_K(u)^2+\sum_{j=1}^n(\pd_j^m\m{p}_K(u))^2\ge0,    
		\end{equation}
		and by anti-symmetry and symmetry considerations again we have
		\begin{multline}
			\tbr{(\mathfrak{g}\varrho-\varsigma\Delta_{\|})\upeta_{M}(u),-\gam\pd_1\upeta_M(w)+N^{-1}(M^{-1}+(-\Delta_{\|})^{m-1/4})\upeta_{M}(w)}_{H^{-1/2},H^{1/2}}\\=\tbr{(\mathfrak{g}\varrho-\varsigma\Delta_{\|})\upeta_M(u),N^{-1}(M^{-1}+(-\Delta_{\|})^{m-1/4})\upeta_M(u)}\ge0,
		\end{multline}
		which together with the Korn inequalities of Propositions~\ref{prop on deviatoric Korn's inequality} and~\ref{prop on Korn's inequality} imply the coercivity estimate
		\begin{equation}\label{the coercive inequality}
			B_{M,K}(u,u)\ge\gam_0\int_{\Omega}\f{\upmu(\varrho)}{2}|\mathbb{D}^0u|^2+\uplambda(\varrho)(\grad\cdot u)^2  \gtrsim \norm{u}_{H^1}^2.
		\end{equation}
		Recall that our conditions on $\upmu,\uplambda\in C^\infty(\R^+)$ are that $\upmu,\uplambda>0$ if $n=2$, and $\upmu>0$, $\uplambda\ge0$ if $n\ge 3$.
		The hypotheses of the Lax-Milgram lemma (see, for instance Theorem 6 in Chapter 6 of Lax~\cite{MR1892228}) are satisfied,  so we are therefore granted $u_{M,K}\in{_0}H^1(\Omega;\R^n)$ with the property that
		\begin{equation}\label{definition of uMK}
			B_{M,K}(u_{M,K},w)=\tbr{F,w}_{({_0}H^1)^\ast,{_0}H^1}  \text{ for all } w\in{_0}H^1(\Omega;\R^n).
		\end{equation}
		We then set $\eta_{M,K}=\upeta_K(u_{M,K}) \in H^{2m}(\Sigma)$ and $q_{M,K} = \m{p}_K(u_{M,K})-\mathfrak{g}\upeta_M(u_{M,K}) \in  H^{2m}(\Omega)$. By construction, the collection $\tcb{(q_{M,K},u_{M,K},\eta_{M,K})}_{M,K \in \N^+}$ satisfies the claimed inclusions and satisfies \eqref{weak formulation satisfied}, \eqref{equations for q+geta}, and \eqref{equation for eta}.
		
		\textbf{Step 3}: Estimates on the two parameter family of approximate solutions.  We claim that the approximate solutions \eqref{the two paramter approximation sequence} obey the $K$-independent bounds
		\begin{equation}\label{i never remember how to spell tomorrow good thing today it is}
			\tnorm{q_{M,K},u_{M,K},\eta_{M,K}}_{H^{2m}\times H^1\times H^{2m}}\lesssim_{m,M,N}\tnorm{F}_{({_0}H^1)^\ast}.
		\end{equation}
		To prove the claim, we first take $w=u_{M,K}$ in~\eqref{definition of uMK} and then use the coercive inequality~\eqref{the coercive inequality}  to get the control
		\begin{equation}\label{e niniu ai}
			\sup_{M,K \in \N^+ } \tnorm{u_{M,K}}_{H^1}\lesssim\tnorm{F}_{({_0}H^1)^\ast},
		\end{equation}
		where the implied constant only depends on the physical parameters.  Next, we employ the continuity of the map $\upeta_M$ to see that
		\begin{equation}\label{today I am labeling an equation with these keystrokes}
			\tnorm{\eta_{M,K}}_{H^{2m}}\lesssim_MN\tnorm{u_{M,N}}_{H^1}\lesssim_{M}N\tnorm{F}_{({_0}H^1)^\ast}.
		\end{equation}
		
		Obtaining a $K$-independent bound on the $q_{M,K}$ is slightly more involved.  We test the weak formulation \eqref{definition of uMK} with $w=\varrho^{-1}\mathcal{B}(q_{M,K}+\mathfrak{g}\eta_{M,K}) \in  {_0}H^1(\Omega;\R^n)$, where $\mathcal{B}$ is the right inverse to the divergence constructed in Corollary~\ref{Bogovskii 1}. Since $\grad\cdot(\varrho w) = q_{M,K} + \mathfrak{g}\eta_{M,K} = \m{p}_K(u_{M,K})$, the identity $B_{M,K}(u_{M,K},w)=\tbr{F,w}$ and the bounds \eqref{e niniu ai} and \eqref{today I am labeling an equation with these keystrokes} imply that
		\begin{equation}
			\tnorm{q_{M,K}+\mathfrak{g}\eta_{M,K}}_{L^2}\lesssim\tnorm{u_{M,K}}_{H^1}+\tnorm{\eta_{M,K}}_{H^{3/2}}+\tnorm{F}_{({_0}H^1)^\ast}\lesssim_{N,M}\tnorm{F}_{({_0}H^1)^\ast}.
		\end{equation}
		To get a higher-regularity estimate, we rewrite the first equation in~\eqref{equations for q+geta} as $(1+L_m)(q_{M,K}+\mathfrak{g}\eta_{M,K})=\f{K-1}{K}(q_{M,K}+\mathfrak{g}\eta_{M,K})-N\grad\cdot(\varrho u_{M,K})$ and then apply Lemma~\ref{elliptic theory lemma for existence} with $\kappa =1$ (and the already established $K$-independent bounds) to arrive at the estimate
		\begin{equation}\label{creativity is key when labeling these very important equation}
			\tnorm{q_{M,K}+\mathfrak{g}\eta_{M,K}}_{H^{2m}} \lesssim_m (1-K^{-1})\tnorm{q_{M,K}+\mathfrak{g}\eta_{M,K}}_{L^2}+N\tnorm{u_{M,K}}_{H^1}\lesssim_{m,M,N}\tnorm{F}_{({_0}H^1)^\ast}.
		\end{equation}
		Then \eqref{today I am labeling an equation with these keystrokes} and \eqref{creativity is key when labeling these very important equation} imply that $\tnorm{q_{M,K}}_{H^{2m}}  \lesssim_{m,M,N} \tnorm{F}_{({_0}H^1)^\ast},$ and we then combine this with~\eqref{e niniu ai} and~\eqref{today I am labeling an equation with these keystrokes} to complete the verification of~\eqref{i never remember how to spell tomorrow good thing today it is}, and hence the proof of the claim.

		\textbf{Step 4}: Existence of a one parameter family of approximate solutions.  With the fixed data $F\in({_0}H^1(\Omega;\R^n))^\ast$ as in the previous steps, we claim that there exists a sequence
		\begin{equation}\label{i was at the airport and i met an onion}
			\tcb{(q_M,u_M,\eta_M)}_{M \in \N^+ } \subset H^{2m}(\Omega)\times{_0}H^1(\Omega;\R^n)\times H^{2m}(\Sigma)
		\end{equation}
		such that 
		\begin{equation}\label{better}
			\overset{\gam_0}{\mathscr{I}}(q_M,u_M,\eta_M)=F,
		\end{equation}
		\begin{equation}\label{call}
			\begin{cases}
				N^{-1}L_m(q_M+\mathfrak{g}\eta_M)=-\grad\cdot(\varrho u_M)&\text{in }\Omega,\\
				\pd_n^m(q_M+\mathfrak{g}\eta_M)=\cdots=\pd_n^{2m-1}(q_M+\mathfrak{g}\eta_M)=0&\text{on }\pd\Omega,
			\end{cases}
		\end{equation}
		and
		\begin{equation}\label{saul}
			\m{Tr}_{\Sigma}(u_M\cdot e_n)+\pd_1\eta_M=N^{-1}\tp{M^{-1}+(-\Delta_{\|})^{m-1/4}}\eta_{M}.
		\end{equation}
		The existence of this sequence follows by taking a weak subsequential limit in the $K$ parameter in our previously constructed collection.  More precisely, for each fixed $M\in\N^+$ we have established in the previous step that the $K$-independent bounds~\eqref{i never remember how to spell tomorrow good thing today it is} hold. Thus, by weak compactness, there exist $(q_M,u_M,\eta_M)\in H^{2m}(\Omega)\times{_0}H^1(\Omega;\R^n)\times H^{2m}(\Sigma)$ that are a weak subsequential limit of the sequence $\tcb{(q_{M,K},u_{M,K},\eta_{M,K})}_{K \in \N^+}$. Routine weak convergence arguments applied to the  identities~\eqref{weak formulation satisfied}, \eqref{equations for q+geta}, and~\eqref{equation for eta} then show that \eqref{better}, \eqref{call}, and~\eqref{saul} hold.  This completes the construction and the proof of the claim.
		
		\textbf{Step 5}: Estimates on the one parameter family of approximate solutions.  We claim that the one parameter family of approximate solutions from \eqref{i was at the airport and i met an onion} obeys the $M$-independent bounds
		\begin{equation}\label{medium rare sought after bounds are delicious}
			\tnorm{q_M,u_M,\eta_M}_{H^{2m}\times H^1\times\mathcal{H}^{2m}}\lesssim_{m,N}\tnorm{F}_{({_0}H^1)^\ast}.
		\end{equation}
		To see this, we first employ the weak sequential lower semicontinuity of the norm and \eqref{e niniu ai} to obtain the estimate
		\begin{equation}\label{apple of my ogre}
			\sup_{M \in \N^+}\tnorm{u_M}_{H^1}\lesssim\tnorm{F}_{({_0}H^1)^\ast}.
		\end{equation}
		
		Next, as in the third step of the proof of Proposition~\ref{prop on a priori estimates for weak solutions}, in the case that $\varsigma>0$, we test~\eqref{better} against the function $w_M=-\varrho^{-1}\mathcal{B}_2(\varrho(b)\tbr{\grad_{\|}}^{-1}\Delta_{\|}\eta_M)\in{_0}H^1(\Omega;\R^n),$ where $\mathcal{B}_2$ is defined in Corollary~\ref{Bogovskii 2}.  This yields the identity
		\begin{equation}
			\tbr{F,w_M}_{({_0}H^1)^\ast,{_0}H^1}=\int_{\Omega}\gam_0(\gam_0\varrho u_M\otimes e_1+\S^{\varrho}u_M):\grad w_M+\tbr{(\mathfrak{g}\varrho-\varsigma\Delta_{\|})\eta_M,-\tbr{\grad_{\|}}^{-1}\Delta_{\|}\eta_M}_{H^{-1/2},H^{1/2}},
		\end{equation}
		which in turn yields the estimate
		\begin{equation}
			\tnorm{\grad_{\|}\eta_M}_{H^{1/2}}^2\lesssim\tp{\tnorm{F}_{({_0}H^1)^\ast}+\tnorm{u_M}_{H^1}}\tnorm{w_M}_{H^1}\lesssim\tnorm{F}_{({_0}H^1)^\ast}\tnorm{w_M}_{H^1}.
		\end{equation}
		The continuity of $\mathcal{B}_2$ shows that $\tnorm{w_M}_{H^1}\lesssim\tnorm{\grad_{\|}\eta_M}_{H^{1/2}}$, and hence
		\begin{equation}\label{Pennsylvania}
			\tnorm{\grad_{\|}\eta_M}_{H^{1/2}}\lesssim\tnorm{F}_{({_0}H^1)^\ast}.
		\end{equation}
		On the other hand, if $\varsigma=0$ and $n=2$, estimate~\eqref{Pennsylvania} follows by testing~\eqref{saul} with $\pd_1\tbr{\grad_{\|}}\eta_M=\grad_{\|}\tbr{\grad_{\|}}\eta_M\in H^{-1/2}(\Sigma)$ and using orthogonality, Cauchy-Schwarz, boundedness of traces, and~\eqref{apple of my ogre}.
		
		Now, as in the fourth step of Proposition~\ref{prop on a priori estimates for weak solutions}, we employ $w_M=\varrho^{-1}\mathcal{B}q \in{_0}H^1(\Omega;\R^n)$, where $\mathcal{B}$ is constructed in Corollary~\ref{Bogovskii 1}, as a test function in \eqref{better} in order to see that
		\begin{multline}
			\tbr{F,w_M}_{({_0}H^1)^\ast,{_0}H^1}=\int_{\Omega}\gam_0(\gam_0\varrho u_M\otimes e_1+\S^{\varrho}u_M):\grad w_M+\mathfrak{g}\varrho\grad\eta_M\cdot w_M-\int_{\Omega}q_M^2\\-\varsigma\tbr{\Delta_{\|}\eta_M,\m{Tr}_{\Sigma}(w_M\cdot e_n)}.
		\end{multline}
		From this we readily deduce the bound $\tnorm{q_M}_{L^2}^2\lesssim\tp{\tnorm{F}_{({_0}H^1)^\ast}+\tnorm{u_M}_{H^1}+\tnorm{\grad_{\|}\eta_M}_{H^{1/2}}}\tnorm{w_M}_{H^1}$, but this combines with our already established bounds and the continuity estimate $\tnorm{w_M}_{H^1}\lesssim\tnorm{q_M}_{L^2}$ to show the low regularity bound $\tnorm{q_M}_{L^2}\lesssim\tnorm{F}_{({_0}H^1)^\ast}.$
		
		We now have low regularity estimates on $q_M$ and $\eta_M$.  To promote these to high regularity bounds, we proceed as in the fifth step of the proof of Proposition~\ref{prop on a priori estimates for regularized weak solutions}.  The first identity in \eqref{call} is equivalent to $N^{-1}L_m(q_M+\mathfrak{g}\eta_M^{\m{H}}) = -\grad\cdot(\varrho u_M) - \mathfrak{g}N^{-1} L_{m,\|} \eta_{M}^{\m{L}}$, where we have decomposed $\eta_{M}^{\m{L}}=\Uppi^1_{\m{L}}\eta_M$ and $\eta_M^{\m{H}}=\Uppi^1_{\m{H}}\eta_M$. Since $\pd_n^m(q_M+\mathfrak{g}\eta_M^{\m{H}})=\cdots=\pd_n^{2m-1}(q_M+\mathfrak{g}\eta_M^{\m{H}})=0$, we can apply Lemma~\ref{regularized steady transport equations lemma 1} with $\tau=0$ and exploit the Fourier supports of $\eta_M^{\m{L}}$ and $\eta_M^{\m{H}}$ to obtain the estimate
		\begin{multline}\label{if i could make}
			\tnorm{q_M+\mathfrak{g}\eta_M^{\m{H}}}_{H^{2m}}
			\lesssim N\tp{\tnorm{u_M}_{H^1}   + \tnorm{ L_{m,\|} \eta_M^{\m{L}} }_{L^2} + \tnorm{q_M+\mathfrak{g}\eta_M^{\m{H}}}_{L^2}} \\
			\lesssim N\tp{\tnorm{u_M}_{H^1}+\tnorm{\grad_{\|}\eta_M}_{H^{1/2}}+\tnorm{q_M}_{L^2}}.
		\end{multline}
		Now we test the identity \eqref{saul} against $N^{-1}(-\Delta_{\|})^{m+1/4}\eta_{M}^{\m{H}} \in H^{-1/2}(\Sigma)$ and employ the Fourier support of $\eta_M^{\m{H}}$ as well as the bound \eqref{medium rare sought after bounds are delicious} to arrive at the estimate
		\begin{equation}\label{and strange as what}
			\tnorm{\eta_{M}^{\m{H}}}_{H^{2m}}\lesssim N\tnorm{u_M}_{H^1}\lesssim N\tnorm{F}_{({_0}H^1)^\ast}.
		\end{equation}
		Together, \eqref{if i could make} and~\eqref{and strange as what} imply that
		\begin{equation}\label{is whenever im with}
			\tnorm{q_M,\eta_M^{\m{H}}}_{H^{2m}\times H^{2m}}\lesssim N\tnorm{F}_{({_0}H^1)^\ast}.
		\end{equation}
		
		In light of Proposition~\ref{proposition on frequency splitting} and equation~\eqref{the norm on the anisotropic Sobolev spaces}, it remains only to obtain $M$-uniform  bounds on $\tsb{\pd_1\eta_M}_{\dot{H}^{-1}}$.  First, we integrate~\eqref{call} over $(0,b)$ in the $n^{\m{th}}$-coordinate and recall~\eqref{saul} to acquire the equality
		\begin{equation}\label{goodbye goodbye goodbye}
			\varrho(b)\pd_1\eta_M=\f{1}{N}L_{m,\|}\int_0^b(q_M+\mathfrak{g}\eta_M)+\f{\varrho(b)}{N}\bp{\f{1}{M}+(-\Delta_{\|})^{m-1/4}}\eta_M+(\grad_{\|},0)\cdot\int_0^b\varrho u_M.
		\end{equation}
		For $\kappa\in(0,1)$ we write $\eta_M^\kappa=(\Uppi^{1/\kappa}_{\m{L}}-\Uppi^\kappa_{\m{L}})\eta_M\in H^\infty(\Sigma)$ and then take the $L^2$ inner product of \eqref{goodbye goodbye goodbye} with $|\grad_{\|}|^{-2}\pd_1\eta_M^\kappa \in H^\infty(\Sigma)$.  The right hand side $\eta_M$ terms all vanish due to the $\partial_1$ operator, and we arrive at the equality
		\begin{equation}
			\varrho(b)\tsb{\pd_1\eta_M^\kappa}_{\dot{H}^{-1}}^2=\bbr{\f{|\grad_{\|}|^{-1}}{N}L_{m,\|}\int_0^bq_M+|\grad_{\|}|^{-1}(\grad_{\|},0)\cdot\int_0^b\varrho u_M,|\grad_{\|}|^{-1}\pd_1\eta_M^\kappa}_{L^2,L^2},
		\end{equation}
		from which we readily deduce that
		\begin{equation}\label{hay is for horses}
			\tsb{\pd_1\eta_M}_{\dot{H}^{-1}}=\lim_{\kappa\to0}\tsb{\pd_1\eta_M^\kappa}_{\dot{H}^{-1}}\lesssim_mN^{-1}\tnorm{q_M}_{H^{2m-1}}+\tnorm{u_M}_{L^2}\lesssim_m\tnorm{F}_{({_0}H^1)^\ast}.
		\end{equation}
		Then  \eqref{medium rare sought after bounds are delicious} follows by combining~\eqref{apple of my ogre}, \eqref{is whenever im with}, and~\eqref{hay is for horses} and recalling the equivalent norm  on $\mathcal{H}^{2m}(\Sigma)$ given in \eqref{the norm on the anisotropic Sobolev spaces}.  The claim is proved.
		
		\textbf{Step 6}: Conclusion.  The $M$-uniform bounds  \eqref{medium rare sought after bounds are delicious} guarantee the existence of a weak subsequential limit for the sequence~\eqref{i was at the airport and i met an onion}, say $(q,u,\eta)\in H^{2m}(\Omega)\times{_0}H^1(\Omega;\R^n)\times\mathcal{H}^{2m}(\Sigma)$. Routine weak convergence arguments applied to the identities~\eqref{better}, \eqref{call}, and~\eqref{saul} then reveal that $(q,u,\eta)$ satisfy $\overset{\gam_0}{\mathscr{I}}(q,u,\eta)=F$,
		\begin{equation}
			\begin{cases}
				N^{-1}L_m(q+\mathfrak{g}\eta)=-\grad\cdot(\varrho u)&\text{in }\Omega,\\
				\pd_n^m(q+\mathfrak{g}\eta)=\cdots=\pd_n^{2m-1}(q+\mathfrak{g}\eta)=0&\text{on }\pd\Omega,
			\end{cases}
		\end{equation}
		and $\m{Tr}_{\Sigma}(u\cdot e_n)+\pd_1\eta=N^{-1}(-\Delta_{\|})^{m-1/4}\eta$. Therefore $(q,u,\eta)\in\X^{-1}_{m,N}$ satisfy $\overset{w_0,\gam_0}{\mathscr{J}^0_{m,N}}(q,u,\eta)=(0,F)$, and so the proof is complete in light of the first step.
	\end{proof}

	As a consequence of the existence of weak solutions to the regularization, we now show that the operators associated to the strong formulation of the regularization, namely $\overset{w_0,\gam_0}{A_{m,N}}$ defined in~\eqref{the map of the regularized principal part operator}, are automatically isomorphisms. The catch is that at this point we cannot guarantee the inverses come with estimates independent of $N$, so will will have to work harder in subsequent sections to verify this. 
	
	\begin{coro}[Isomorphisms induced by the regularization]\label{coro on regularity of the regularization}
		Let $0<\rho\le\rho_{\m{WD}}$, where the latter is from Theorem~\ref{thm on smooth tameness of the nonlinear operator}, $w_0=(q_0,u_0,\eta_0)$ be as in Lemma~\ref{properties of the principal parts vector field}, $m,N\in\N^+$ with $m\ge 2$, and $\gam_0\in I$ for an interval $I\Subset\R^+$. Suppose that \eqref{the labeled conditions to be satisfaction} holds.  Then for $\upnu\in\N^+$, the map
		\begin{equation}\label{girls go to jupiter to get more frogs}
			\overset{w_0,\gam_0}{A_{m,N}}:\X_{m,N}^\upnu\to\Y^\upnu
		\end{equation}
		defined by~\eqref{the application of the map of the reg princ part oper and also will be known as ziggy mcpiggson pigface ponylicker frogs made of wicker but not a football kicker boy i sure do have a problem i need to stop labeling like this}  is a Banach isomorphism.
	\end{coro}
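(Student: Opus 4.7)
The plan is to combine the weak existence theory of Theorem~\ref{thm on existence of regularized weak solutions} with an elliptic regularity bootstrap that exploits the strong regularizing effect of the $N^{-1}L_m$ and $N^{-1}(-\Delta_{\|})^{m-1/4}$ terms. Injectivity of $\overset{w_0,\gam_0}{A_{m,N}}$ on $\X^{\upnu}_{m,N}$ is immediate: any element of the kernel is, by Lemma~\ref{lem on strong solutions are weak solutions}, a weak solution with vanishing data, and hence is zero by Proposition~\ref{prop on a priori estimates for regularized weak solutions}. For surjectivity, given $(g,f,k)\in\Y^{\upnu}$, I would feed $(g,\mathscr{K}(f,k))\in\Y^{-1}$ into Theorem~\ref{thm on existence of regularized weak solutions} to produce a unique weak solution $(q,u,\eta)\in\X^{-1}_{m,N}$ of $\overset{w_0,\gam_0}{\mathscr{J}^1_{m,N}}(q,u,\eta)=(g,\mathscr{K}(f,k))$. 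By construction this triple already satisfies $q\in H^{2m}(\Omega)$, $\eta\in\mathcal{H}^{2m}(\Sigma)$, $u\in{_0}H^1(\Omega;\R^n)$, the Neumann conditions $\pd_n^m q=\cdots=\pd_n^{2m-1}q=0$ on $\pd\Omega$, and the kinematic condition with the $N^{-1}(-\Delta_{\|})^{m-1/4}\eta$ term; then Lemma~\ref{lem on strong solutions are weak solutions} will upgrade it to a strong solution of $\overset{w_0,\gam_0}{A_{m,N}}(q,u,\eta)=(g,f,k)$ once enough regularity is obtained.

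The core of the proof is a regularity bootstrap with three ingredients. First, viewing $(q,\eta)$ as frozen, the momentum equation becomes a Lam\'e-type elliptic system for $u$ with Dirichlet data on $\Sigma_0$ and the stress boundary condition on $\Sigma$ (determined by $k$, $q$, and $\Delta_{\|}\eta$); standard elliptic boundary regularity gains two derivatives for $u$ from the regularity of the data and of $(q,\eta)$. Second, after isolating $N^{-1}L_m(q+\mathfrak{g}\eta)=g-\grad\cdot(\varrho u)-\grad\cdot(v_{w_0}(q+\mathfrak{g}\eta))$, the continuity equation together with the Neumann conditions is a $2m^{\m{th}}$-order elliptic boundary value problem for $q+\mathfrak{g}\eta$, and Lemma~\ref{elliptic theory lemma for existence} yields a gain of $2m$ derivatives relative to the right-hand side. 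Third, rearranging the kinematic boundary condition as $(-\Delta_{\|})^{m-1/4}\eta-N\pd_1\eta=N\,\m{Tr}_\Sigma(u\cdot e_n)$ gives a pseudodifferential equation on $\Sigma\simeq\R^{n-1}$ whose principal symbol $(-\Delta_{\|})^{m-1/4}$ is elliptic, and inversion (as in Lemma~\ref{lemma on simple symbol inversion} already invoked in the proof of Theorem~\ref{thm on existence of regularized weak solutions}) gains $2m-1/2$ derivatives on $\eta$ from the trace of $u$, after which $q$ is recovered by subtraction.

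Iterating these three gains finitely many times promotes $(q,u,\eta)$ into $\X^{\upnu}_{m,N}$, and the bounded inverse theorem yields continuity of the inverse, establishing the Banach isomorphism claim in~\eqref{girls go to jupiter to get more frogs}. The main technical obstacle is bookkeeping: one must verify that the three gain mechanisms are mutually consistent at each level, that the lower-order coupling terms $\grad\cdot(\varrho u)$ and $\grad\cdot(v_{w_0}(q+\mathfrak{g}\eta))$ in the continuity equation do not consume the elliptic gain (here the assumption $m\ge 2$ supplies a safety margin, since the regularization is several derivatives stronger than the coupling), and that the anisotropic Sobolev regularity of $\eta$ meshes with the standard Sobolev regularity of $u$ through the trace map. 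The vector field $v_{w_0}$ is controlled in all necessary norms via Lemma~\ref{properties of the principal parts vector field}, and the algebra and trace properties of $\mathcal{H}^s(\Sigma)$ recorded in Propositions~\ref{proposition on algebra properties} and~\ref{proposition on frequency splitting} provide the compatibility needed to close each round of the iteration.
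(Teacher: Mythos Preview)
Your proposal is correct and follows essentially the same approach as the paper: injectivity via the weak a priori estimates, surjectivity via Theorem~\ref{thm on existence of regularized weak solutions} followed by a finite elliptic bootstrap cycling through the Lam\'e system for $u$, the $L_m$ equation for $q$, and the $(-\Delta_{\|})^{m-1/4}$ equation for $\eta$. The only minor discrepancies are in the ordering and citations: the paper promotes $u$ first, then $\eta$ (via a high-low frequency split of the kinematic equation rather than Lemma~\ref{lemma on simple symbol inversion}, which does not directly apply without the $M^{-1}$ term), and finally $q$ alone using Lemma~\ref{lem on a priori estimates for Lm} (the a priori estimate) rather than Lemma~\ref{elliptic theory lemma for existence} (the existence result for $\kappa+L_m$).
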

	\begin{proof}
		That the map~\eqref{girls go to jupiter to get more frogs} is well-defined is a consequence of the third item of Lemma~\ref{lem on well-definedness check for linear analysis}. We begin by proving injectivity. Suppose that $(q,u,\eta)\in\m{ker}\overset{w_0,\gam_0}{A_{m,N}}\subseteq\X_{m,N}^{\upnu}$. Lemma~\ref{lem on strong solutions are weak solutions} shows that strong solutions are weak solutions, and hence $\overset{w_0,\gam_0}{\mathscr{J}^1_{m,N}}(q,u,\eta)=(0,0)$.  We then deduce that $(q,u,\eta)=0$ by invoking the a priori estimates of Proposition~\ref{prop on a priori estimates for regularized weak solutions}.
		
		We now turn to the proof of surjectivity. Suppose that
		\begin{equation}\label{the data is included so it doesnt feel left out}
			(g,f,k)\in\Y^\upnu,
		\end{equation}
		and, by utilizing Theorem~\ref{thm on existence of regularized weak solutions} and the map from \eqref{definition of the K functional dude lies here rest in peace and serrano peppers}, define $(q,u,\eta)$ via
		\begin{equation}\label{ocean man take me by the ocean oops}
			(q,u,\eta)=\sp{\overset{w_0,\gam_0}{\mathscr{J}^1_{m,N}}}^{-1}(g,\mathscr{K}(f,k))  \in\X^{-1}_{m,N}.
		\end{equation}
		We claim that we have the higher-regularity inclusion $(q,u,\eta)\in\X^{\upnu}_{m,N}$. Once this is shown, another application of Lemma~\ref{lem on strong solutions are weak solutions}  reveals that $\overset{w_0,\gam_0}{A_{m,N}}(q,u,\eta)=(g,f,k)$, which establishes surjectivity.
		
		To prove the claim we will employ a finite induction argument to promote the regularity of the triple $(q,u,\eta)$ one step at a time.  To this end, it is useful to unpack the definition of weak solution in a more helpful way.  Equation~\eqref{ocean man take me by the ocean oops} is equivalent to: $(q,u,\eta)\in\X_{-1}$,
        \begin{equation}\label{qgeta systa}
			\begin{cases}
				N^{-1}L_m q = g - \grad\cdot(v_{w_0}(q+\mathfrak{g}\eta))  - \nabla \cdot(\varrho u) - 
				\mathfrak{g}N^{-1}L_{m,\|}\eta &\text{in }\Omega,\\
				\pd_n^m q =\cdots=\pd_n^{2m-1} q=0&\text{on }\pd\Omega,
			\end{cases}
		\end{equation}
		\begin{equation}\label{eta systa}
			N^{-1}(-\Delta_{\|})^{m-1/4}\Uppi^1_{\m{H}}\eta = - N^{-1}(-\Delta_{\|})^{m-1/4}\Uppi^1_{\m{L}}\eta +  \pd_1\eta + \m{Tr}_{\Sigma}(u\cdot e_n),\quad\m{Tr}_{\Sigma_0}(u)=0,
		\end{equation}
		and for all $w\in{_0}H^1(\Omega;\R^n)$ it holds that
		\begin{equation}\label{ziggy_piggy_7}
			\int_{\Omega}\gam_0\tp{\gam_0\varrho u\otimes e_1+\S^{\varrho}u}:\grad w =\tbr{\widetilde{F},w}_{({_0}H^1)^\ast,{_0}H^1},
		\end{equation}
		where $\tilde{F}=\mathscr{K}(\tilde{f},\tilde{k})$ for $\widetilde{f}=f-\varrho\grad(q+\mathfrak{g}\eta)$ and $\widetilde{k}=k+(\varrho q+\varsigma\Delta_{\|}\eta)e_n$.
		
		The identity \eqref{ziggy_piggy_7} means, in other words, that $u$ is a weak solution to the elliptic boundary value problem
		\begin{equation}\label{move all of the nonvelocity to the other side}
			\begin{cases}
				-\gam_0^2\varrho\pd_1u-\gam_0\grad\cdot\S^{\varrho}u=\tilde{f}&\text{in }\Omega,\\
				\gam_0\S^{\varrho}ue_n=\tilde{k}&\text{on }\Sigma,\\
				u=0&\text{on }\Sigma_0.
			\end{cases}
		\end{equation}
		From the inclusions \eqref{the data is included so it doesnt feel left out} and $(q,\eta)\in H^{2m}(\Omega)\times\mathcal{H}^{2m}(\Sigma)$ and the norm~\eqref{the norm on the anisotropic Sobolev spaces} we deduce that
		\begin{equation}
			\widetilde{f}\in H^{\min\tcb{\upnu,2m-1}}(\Omega;\R^n) 
			\text{ and }
			\widetilde{k}\in H^{\min\tcb{1/2+\upnu,2m-2}}(\Sigma;\R^n),
		\end{equation}
		and so the standard elliptic regularity gain for the problem \eqref{move all of the nonvelocity to the other side} (see, for instance, Agmon, Douglis, and Nirenberg~\cite{MR162050}) guarantees the inclusion
		\begin{equation}\label{base case promotion of u}
			u\in H^{\min\tcb{2+\upnu,2m-1/2}}(\Omega;\R^n)\emb H^2(\Omega;\R^n),
		\end{equation}
		where the embedding holds since $m\ge 2$.  With the improved $u$ regularity from \eqref{base case promotion of u} in hand, we return to~\eqref{eta systa} to see that $\Uppi^1_{\m{H}}\eta \in H^{1+2m}(\Sigma)$, and hence, by Proposition~\ref{proposition on frequency splitting}, $\eta  \in\mathcal{H}^{1+2m}(\Sigma)$. In turn, we use use the improved $u$ and $\eta$ regularity together with the fourth item of Lemma~\ref{properties of the principal parts vector field} in~\eqref{qgeta systa}, appealing to Lemma~\ref{lem on a priori estimates for Lm} to deduce the improvement $q\in H^{1+2m}(\Omega)$.
		
		Proceeding by finite induction, we assume now that for some $0\le\nu\le\upnu-1$ we have the inclusion
		\begin{equation}\label{this is an equation station boyz}
			(q,u,\eta)\in H^{\nu+1+2m}(\Omega)\times H^{\nu+2}(\Omega;\R^n)\times\mathcal{H}^{\nu+1+2m}(\Sigma).
		\end{equation}
		This implies that $\widetilde{f}\in H^{\min\tcb{\upnu,\nu+2m}}(\Omega;\R^n)$ and $\widetilde{k}\in H^{\min\tcb{1/2+\upnu,\nu-1+2m}}(\Sigma;\R^n)$, and so elliptic regularity for \eqref{move all of the nonvelocity to the other side} implies that $u\in H^{\min\tcb{2+\upnu,\nu+1/2+2m}}(\Omega;\R^n)\emb H^{\nu+3}(\Omega;\R^n)$. We then argue exactly as above to use the improved $u$ regularity to promote to $\eta\in\mathcal{H}^{\nu+2+2m}(\Sigma)$ and $q\in H^{\nu+2+2m}(\Omega)$. Thus, \eqref{this is an equation station boyz} holds with $\nu$ replaced by $\nu+1$.  By finite induction, \eqref{this is an equation station boyz} then also holds for $\nu = \upnu$, and so $(q,u,\eta)\in\X^\upnu_{m,N}$, which completes the proof of the claim.
	\end{proof}
	
	
	\section{Analysis of strong solutions to the linearization}\label{alaskan black cod}

	Previously, we have established estimates for weak solutions to~\eqref{principal part of the linearization} and \eqref{regularization of the prinpal part of the linearization}, and for the latter we proved existence and qualitative regularity in Corollary~\ref{coro on regularity of the regularization}. The majority of this section is devoted to building a series of tools to aid in the estimation of the higher regularity norms of these weak solutions when given sufficiently regular data. In other words, we require a specific quantitative understanding of the regularity for systems~\eqref{principal part of the linearization} and~\eqref{regularization of the prinpal part of the linearization}; for the former we will develop tame estimates of the solution norms with respect to the background $w_0=(q_0,u_0,\eta_0)$ and $\gam_0$, while for the latter we will prove high regularity bounds that are independent of the approximation parameter.
	
	We proceed as follows. Section~\ref{running everywhere at such a speed} analyzes the equations satisfied by the tangential derivatives of the solutions to systems~\eqref{principal part of the linearization} and~\eqref{regularization of the prinpal part of the linearization}. Section~\ref{section on analysis of normal systems} reduces the regularity promotion of solutions to estimates on tangential derivatives via analysis of the so-called normal system. Section~\ref{till they find, there's no need} combines results from the tangential derivative and normal system analysis to derive the sought-after precise a priori estimates for the principal part equations. We then conclude in Section~\ref{dotting his socks in the night} by deducing the existence of strong solutions to~\eqref{principal part of the linearization} and then to the full linearization.
	
	\subsection{Analysis of tangential derivatives}\label{running everywhere at such a speed}
	
	In order to study the tangential derivatives of solutions to ~\eqref{principal part of the linearization} and~\eqref{regularization of the prinpal part of the linearization}, we must first understand the commutators of the operators $\overset{w_0,\gam_0}{A}$, $\overset{w_0,\gam_0}{A_{m,N}}$, $\overset{w_0,\gam_0}{\mathscr{J}}$, and $\overset{w_0,\gam_0}{\mathscr{J}^1_{m,N}}$ with the tangential derivatives $\pd_j$ for $j\in\tcb{1,\dots,n-1}$.  These operators are nearly tangentially translation invariant, and so nearly commute with the tangential derivatives; the failure of each to commute is precisely due to the appearance of $\grad\cdot(v_{w_0}(q+\mathfrak{g}\eta))$ in their continuity equations. The following definition and subsequent lemma capture this almost tangential translation invariance. We recall that the $\hat{H}^s(\Omega)$ spaces are defined in~\eqref{Rhode Island}.
	
	\begin{defn}[The principal part commutator]\label{commutator_def}
		Let $0<\rho\le\rho_{\m{WD}}$, where the latter is defined in Theorem~\ref{thm on smooth tameness of the nonlinear operator}, and let $w_0=(q_0,u_0,\eta_0)$ be as in Lemma~\ref{properties of the principal parts vector field}. For $j\in\tcb{1,\dots,n-1}$ we define the map $\overset{w_0}{\mathscr{C}^j}:H^{1+s}(\Omega)\times\mathcal{H}^{1+s}(\Sigma)\to\hat{H}^{s}(\Omega)$ via $\overset{w_0}{\mathscr{C}^j}(q,\eta)=\grad\cdot(\pd_jv_{w_0}(q+\mathfrak{g}\eta))$.
	\end{defn}
	
	We now check that the $\mathscr{C}$ maps are well-defined and then explore their mapping properties.
	
	\begin{lem}[Mapping properties of $\mathscr{C}$]\label{lem on mapping properties of the C}
		Under the hypotheses of Definition~\ref{commutator_def}, we have the following estimates for $s\in\N$:
		\begin{equation}\label{sought me and you}
			\snorm{\overset{w_0}{\mathscr{C}^j}(q,\eta)}_{\hat{H}^{s}}\lesssim\rho\tnorm{q,\eta}_{H^{1+s}\times\mathcal{H}^{1+s}}+\begin{cases}
				0&\text{if }s\le\tfloor{n/2},\\
				\tnorm{q_0,u_0,\eta_0}_{\X_{1+s}}\tnorm{q,\eta}_{H^{1+\tfloor{n/2}}\times\mathcal{H}^{1+\tfloor{n/2}}}&\text{if }\tfloor{n/2}<s.
			\end{cases}
		\end{equation}
		Here the implicit constant depends only on $s$, the physical parameters, the dimension, and $\rho_{\m{WD}}$.
	\end{lem}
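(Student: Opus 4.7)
The strategy parallels the proof of the fourth item of Lemma~\ref{properties of the principal parts vector field}, with a useful simplification: because $j \in \{1,\dots,n-1\}$ is a tangential index and $\varrho$ depends only on the vertical coordinate, $\pd_j\varrho' = 0$, and so the decomposition~\eqref{fundamental decomposition of the vector field v} yields
\begin{equation}
\pd_j v_{w_0} = \pd_j v^{(1)}_{q_0,u_0,\eta_0} + \pd_j v^{(2)}_{\eta_0}.
\end{equation}
This is the source of the factor of $\rho$ on the right-hand side of~\eqref{sought me and you}: the tangential derivative annihilates the non-perturbative piece $\mathfrak{g}^{-1}\varrho' e_1$ that is present in $v_{w_0}$ itself, so what remains is controlled in size by the background smallness. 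Two additional structural properties are inherited from Lemma~\ref{properties of the principal parts vector field}: tangentiality of $\pd_j$ commutes with $\m{Tr}_{\pd\Omega}$, so the identity $\m{Tr}_{\pd\Omega}(v^{(1)}_{q_0,u_0,\eta_0}\cdot e_n)=0$ passes to $\m{Tr}_{\pd\Omega}(\pd_j v^{(1)}_{q_0,u_0,\eta_0}\cdot e_n)=0$, and since $v^{(2)}_{\eta_0}$ is parallel to $e_1$, so is $\pd_j v^{(2)}_{\eta_0}$. Together these justify the splitting
\begin{equation}
\overset{w_0}{\mathscr{C}^j}(q,\eta) = \grad\cdot\bigl(\pd_j v^{(1)}_{q_0,u_0,\eta_0}(q+\mathfrak{g}\eta)\bigr) + \pd_1\bigl((\pd_j v^{(2)}_{\eta_0}\cdot e_1)(q+\mathfrak{g}\eta)\bigr) =: \bf{I}+\bf{II}.
\end{equation}

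Next I would estimate $\bf{I}$ in $\hat{H}^s(\Omega)$. For the $H^s(\Omega)$ component, the high--low product estimate of Corollary~\ref{corollary on tame estimates on simple multipliers}, paired with the first item of Lemma~\ref{properties of the principal parts vector field} (which controls $\pd_j v^{(1)}_{q_0,u_0,\eta_0}$ in $H^{\tfloor{n/2}}$ by $\rho$ and in $H^s$ by $\tnorm{q_0,u_0,\eta_0}_{\X_{1+s}}$ for $s>\tfloor{n/2}$), bounds $\tnorm{\pd_j v^{(1)}_{q_0,u_0,\eta_0}(q+\mathfrak{g}\eta)}_{H^{1+s}}$ by the right-hand side of~\eqref{sought me and you}, from which the divergence bound is immediate. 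For the $\dot{H}^{-1}$ compatibility component, the vanishing normal trace of $\pd_j v^{(1)}_{q_0,u_0,\eta_0}$ permits the use of estimate~\eqref{driving down the road yesterday} of Proposition~\ref{prop on refined divergence compatibility estimate} to control $\bsb{\int_0^b\bf{I}(\cdot,y)\,\m{d}y}_{\dot{H}^{-1}}$ by $\tnorm{\pd_j v^{(1)}_{q_0,u_0,\eta_0}(q+\mathfrak{g}\eta)}_{L^2}$, which is again $\rho$ times low Sobolev norms of $(q,\eta)$.

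For $\bf{II}$, the second item of Lemma~\ref{properties of the principal parts vector field} gives $\pd_j v^{(2)}_{\eta_0}\cdot e_1 \in W^{\infty,\infty}((0,b);\mathcal{H}^0_{(r_{n-1})}(\Sigma))$ with norm $\lesssim\rho$. The Sobolev $H^s$ component of the $\hat{H}^s$ control of $\bf{II}$ then reduces, via the anisotropic norm representation~\eqref{the norm on the anisotropic Sobolev spaces} and the algebra property of Proposition~\ref{proposition on algebra properties} for products between a band-limited $\mathcal{H}^0_{(r_{n-1})}$ factor and $\eta\in\mathcal{H}^{1+s}$, to a routine estimate; the pairing against $q\in H^{1+s}$ goes through Corollary~\ref{corollary on tame estimates on simple multipliers} in the same manner. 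For the $\dot{H}^{-1}$ compatibility piece, Fubini pushes $\pd_1$ outside the integral in $y$, and the resulting expressions are treated exactly as in the $\bf{III}$-term analysis of~\eqref{zappy piglet 5}--\eqref{zappy piglet 6} in the proof of Proposition~\ref{prop on properties of the A+P decomposition}: for the $\eta$ contribution one uses the algebra of band-limited anisotropic spaces, and for the $q$ contribution one uses that the product lies in $L^2(\Omega)$ and that $\pd_1$ sends $L^2(\Sigma)$ into $\dot{H}^{-1}(\Sigma)$.

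I do not expect a serious obstacle here. The whole argument is a bookkeeping adaptation of the fourth item of Lemma~\ref{properties of the principal parts vector field}, and it is actually simpler: because $\pd_j v_{w_0}$ carries no non-decaying constant-in-$x'$ term analogous to $\mathfrak{g}^{-1}\varrho' e_1$, the delicate $\Uppi^1_{\m{L}}/\Uppi^1_{\m{H}}$ frequency splitting of $\eta$ used there is not required; one can work with the full $\eta$ throughout. The only point requiring a bit of care is ensuring that, in the high-regularity regime $s>\tfloor{n/2}$, the expensive background norm $\tnorm{q_0,u_0,\eta_0}_{\X_{1+s}}$ always multiplies only the low-regularity $H^{1+\tfloor{n/2}}\times\mathcal{H}^{1+\tfloor{n/2}}$ norm of $(q,\eta)$, which is precisely what the standard high--low tame product estimates deliver.
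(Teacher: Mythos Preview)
Your approach is correct and will produce the stated estimate, but the paper's proof takes a shorter route that you did not spot. The key simplification in the paper is the observation that $\pd_j v^{(2)}_{\eta_0}$ actually lies in $H^s(\Omega)$, not merely in $W^{s,\infty}((0,b);\mathcal{H}^0_{(r_{n-1})})$: for each fixed $y$ and each $p\le s$, the function $\pd_n^p v^{(2)}_{\eta_0}(\cdot,y)\cdot e_1$ is band-limited in $\mathcal{H}^0_{(r_{n-1})}(\Sigma)$, and a tangential derivative $\pd_j$ of a band-limited $\mathcal{H}^0$ function lands in $L^2(\Sigma)$ (hence in every $H^r(\Sigma)$ by the band-limit). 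This yields $\tnorm{\pd_j v^{(2)}_{\eta_0}}_{H^s}\lesssim\tnorm{\Uppi^1_{\m{L}}\eta_0}_{\mathcal{H}^0}\lesssim\rho$, and combining with the bound on $\pd_j v^{(1)}$ gives $\tnorm{\pd_j v_{w_0}}_{H^s}\lesssim\tnorm{q_0,u_0,\eta_0}_{\X_s}$ for $s\ge 1+\tfloor{n/2}$. With $\pd_j v_{w_0}$ now sitting in a single standard Sobolev space, the paper dispatches the entire $H^s$ bound with one application of Corollary~\ref{corollary on tame estimates on simple multipliers}, and the $\dot{H}^{-1}$ piece follows from $\pd_j v_{w_0}\in L^\infty\cap L^2$ and Proposition~\ref{prop on refined divergence compatibility estimate}. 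Your $\bf{I}/\bf{II}$ split and the anisotropic algebra machinery for $\bf{II}$ are thus avoidable.

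One small correction: your claim that the $\Uppi^1_{\m{L}}/\Uppi^1_{\m{H}}$ splitting of $\eta$ is not required is overstated. Even in your $\bf{I}$ term, the product $\pd_j v^{(1)}_{q_0,u_0,\eta_0}(q+\mathfrak{g}\eta)$ cannot be placed in $H^{1+s}(\Omega)$ by Corollary~\ref{corollary on tame estimates on simple multipliers} without first writing $\eta=\Uppi^1_{\m{H}}\eta+\Uppi^1_{\m{L}}\eta$ and pairing the high piece via the $H^k\times H^k$ estimate and the low piece via the $H^k\times W^{k,\infty}$ estimate; the paper's proof does exactly this. What is true is that the four-term decomposition of the proof of item~4 in Lemma~\ref{properties of the principal parts vector field} collapses, since $\pd_j(\mathfrak{g}^{-1}\varrho' e_1)=0$ eliminates the $\bf{IV}$ term and the $H^s$ inclusion of $\pd_j v^{(2)}$ obviates the separate treatment of the $\bf{III}$ term.
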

	\begin{proof}
		We first use Lemma~\ref{properties of the principal parts vector field} (in particular the splitting~\eqref{fundamental decomposition of the vector field v}) to compute 
		\begin{equation}\label{art}
			\pd_jv_{w_0}=\pd_jv_{q_0,u_0,\eta_0}^{(1)}+\pd_jv^{(2)}_{\eta_0}.
		\end{equation}
		Then, given $\N\ni s\ge 1+\lfloor n/2\rfloor$, we use the first item of Lemma~\ref{properties of the principal parts vector field} to estimate
		\begin{equation}\label{deco}
			\tnorm{\pd_jv^{(1)}_{q_0,u_0,\eta_0}}_{H^s}\le\tnorm{v_{q_0,u_0,\eta_0}^{(1)}}_{H^{1+s}}\lesssim\tnorm{q_0,u_0,\eta_0}_{\X_s}.
		\end{equation}
		For the other piece, we may use the second item of Lemma~\ref{properties of the principal parts vector field} to deduce that $v^{(2)}_{\eta_0}=(v^{(2)}_{\eta_0}\cdot e_1)e_1$ and that $v^{(2)}_{\eta_0}$ has $r_{n-1}$ as a band-limit. This, in addition to the second item of the aforementioned lemma, \eqref{the norm on the anisotropic Sobolev spaces}, and Proposition~\ref{proposition on frequency splitting}, allow us to estimate
		\begin{equation}\label{empire state building}
			\tnorm{\pd_jv_{\eta_0}^{(2)}}_{H^s} \lesssim \sup_{0\le p\le s}\sup_{y\in[0,b]}\tnorm{\pd_j\pd_n^pv_{\eta_0}^{(2)}(\cdot,y)}_{H^{s-p}(\R^{n-1})}\le\sup_{0\le p\le s}\sup_{y\in[0,b]}\tnorm{\pd_n^pv_{\eta_0}(\cdot,y)\cdot e_1}_{\mathcal{H}^0}\lesssim\tnorm{\Uppi^1_{\m{L}}\eta_0}_{\mathcal{H}^0}.
		\end{equation}
		Together, \eqref{art}, \eqref{deco}, and~\eqref{empire state building} imply the bound $\tnorm{\pd_jv_{w_0}}_{H^s}\lesssim\tnorm{q_0,u_0,\eta_0}_{\X_s}$ for $s\ge 1+\lfloor n/2\rfloor$. With this established, proving the stated estimates is a simple application of Corollary~\ref{corollary on tame estimates on simple multipliers}.  Indeed, it shows that for any $s\in\N$ and $(q,\eta)\in H^{1+s}(\Omega)\times\mathcal{H}^{1+s}(\Sigma)$, we have
		\begin{multline}\label{high norm estimate}
			\snorm{\overset{w_0}{\mathscr{C}^j}(q,\eta)}_{H^s}\lesssim\tnorm{\pd_jv_{w_0}(q+\mathfrak{g}\eta)}_{H^{1+s}}\lesssim\rho\tnorm{q,\Uppi^1_{\m{H}}\eta,\Uppi^1_{\m{L}}\eta}_{H^{1+s}\times H^{1+s}\times W^{1+s,\infty}}\\
			+\begin{cases}
				0&\text{if }s<\lfloor n/2\rfloor,\\
				\tnorm{\pd_jv_{q_0,u_0,\eta_0}}_{H^{1+s}}\tnorm{q,\Uppi^1_{\m{H}}\eta,\Uppi^1_{\m{L}}\eta}_{H^{1+\lfloor n/2\rfloor}\times H^{1+\lfloor n/2\rfloor}\times W^{1+\lfloor n/2\rfloor,\infty}}&\text{if }\lfloor n/2\rfloor\le s,
			\end{cases}
			\\\lesssim\rho\tnorm{q,\eta}_{H^{1+s}\times\mathcal{H}^{1+s}}+\begin{cases}
				0&\text{if }s<\lfloor n/2\rfloor,\\
				\tnorm{q_0,u_0,\eta_0}_{\X_{1+s}}\tnorm{q,\eta}_{H^{1+\lfloor n/2\rfloor}\times\mathcal{H}^{1+\lfloor n/2\rfloor}}&\text{if }\lfloor n/2\rfloor\le s.
			\end{cases}
		\end{multline}
		Additionally, since $\m{Tr}_{\pd\Omega}(v_{w_0}\cdot e_n)=0$ and  $\pd_jv_{w_0}\in(L^\infty\cap L^2)(\Omega)$, we have the estimate
		\begin{equation}\label{funny extra bit}
			\bsb{\int_0^b\overset{w_0}{\mathscr{C}^j}(q,\eta)(\cdot,y)\;\m{d}y}_{\dot{H}^{-1}}\lesssim\tnorm{\pd_jv_{w_0}(q+\mathfrak{g}\eta)}_{L^2}\lesssim\rho\tnorm{q,\Uppi^1_{\m{H}}\eta,\Uppi^1_{\m{L}}\eta}_{L^2\times L^2\times L^\infty}\lesssim\rho\tnorm{q,\eta}_{L^2\times\mathcal{H}^0}.
		\end{equation}
		Then~\eqref{sought me and you} follows by combining~\eqref{high norm estimate} and~\eqref{funny extra bit}.
	\end{proof}
	
	We now apply the mapping properties of $\mathscr{C}$ to obtain low regularity estimates for first order tangential derivatives.

	\begin{prop}[Low norm estimates on tangential derivatives]\label{prop on commutators 1}
		Under the hypotheses of Definition~\ref{commutator_def}, the following hold for $\gam_0\in I$ with $I\Subset\R^+$ an interval, $(g,f,k)\in\Y^0$, and $j \in \{1,\dotsc,n-1\}$.
		\begin{enumerate}
			\item If $(q,u,\eta)\in\overset{q_0,u_0,\eta_0}{\X^{0}}$ satisfies $\overset{w_0,\gam_0}{A}(q,u,\eta)=(g,f,k)$, then $(\pd_jq,\pd_ju,\pd_j\eta) \in \overset{q_0,u_0,\eta_0}{\X^{-1}}$ and obeys the estimate 
			\begin{equation}\label{arizona}
				\snorm{\overset{w_0,\gam_0}{\mathscr{J}}(\pd_jq,\pd_ju,\pd_j\eta)}_{\Y^{-1}}\lesssim\rho\tnorm{q,u,\eta}_{\X_{0}}+\tnorm{g,f,k}_{\Y^0}.
			\end{equation}
			
			\item If $m,N\in\N^+$ with $m\ge 2$ and $(q,u,\eta)\in\X^0_{m,N}$ satisfy $\overset{w_0,\gam_0}{A_{m,N}}(q,u,\eta)=(g,f,k)$, then  $(\pd_jq,\pd_ju,\pd_j\eta) \in \X^{-1}_{m,N}$ and obeys the estimate
			\begin{equation}\label{arkansas}
				\snorm{\overset{w_0,\gam_0}{\mathscr{J}^1_{m,N}}(\pd_jq,\pd_ju,\pd_j\eta)}_{\Y^{-1}}\lesssim\rho\tnorm{q,u,\eta}_{\X_0}+\tnorm{g,f,k}_{\Y^0}.
			\end{equation}
		\end{enumerate}
		Here the implicit constants depend on the dimension, the physical parameters, $\rho_{\m{WD}}$, and $I$.
	\end{prop}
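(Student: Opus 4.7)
The strategy is to exploit the near tangential translation invariance of both $\overset{w_0,\gamma_0}{A}$ and $\overset{w_0,\gamma_0}{A_{m,N}}$. For $j\in\{1,\dots,n-1\}$, the equilibrium density $\varrho$ depends only on $y$ and commutes with $\partial_j$; the stress operator satisfies $\partial_j(\grad\cdot\S^\varrho u)=\grad\cdot\S^\varrho\partial_j u$ for the same reason; the elliptic regularizer $L_m=(-1)^m\sum_{i=1}^n\partial_i^{2m}$ is a sum of pure-coordinate powers and commutes with $\partial_j$; the kinematic regularizer $(-\Delta_\|)^{m-1/4}$ is a tangential Fourier multiplier and commutes with $\partial_j$; and the Neumann traces $\partial_n^\ell q=0$ on $\partial\Omega$ and Dirichlet trace $u|_{\Sigma_0}=0$ are preserved under $\partial_j$. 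Commutation fails only in the term $\grad\cdot(v_{w_0}(q+\mathfrak{g}\eta))$ of the continuity equation, and the failure is precisely $\overset{w_0}{\mathscr{C}^j}(q,\eta)$ from Definition~\ref{commutator_def}.

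First I would verify that $(\partial_j q,\partial_j u,\partial_j\eta)$ lies in the advertised weak-solution space. The basic inclusions $\partial_j q\in L^2(\Omega)$, $\partial_j u\in H^1(\Omega;\R^n)$, and $\partial_j\eta\in\mathcal{H}^{3/2}(\Sigma)$ (respectively $\mathcal{H}^{2m}(\Sigma)$ in the regularized case) follow from the anisotropic norm identity~\eqref{the norm on the anisotropic Sobolev spaces} together with Proposition~\ref{proposition on frequency splitting}; the boundary conditions built into $\overset{q_0,u_0,\eta_0}{\X^{-1}}$ and $\X^{-1}_{m,N}$ are preserved since $\partial_j$ commutes with all the tangential operations appearing in them; and the adapted compatibility $\grad\cdot(v_{w_0}\partial_j q)\in L^2(\Omega)$ required by~\eqref{Oregon} emerges by rearranging the tangentially differentiated continuity equation. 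Next, applying $\partial_j$ to the strong equation and invoking the commutator identity $\partial_j\grad\cdot(v_{w_0}\phi)=\grad\cdot(v_{w_0}\partial_j\phi)+\grad\cdot(\partial_j v_{w_0}\phi)$ together with the commutations enumerated above yields
\begin{equation*}
\overset{w_0,\gamma_0}{A}(\partial_j q,\partial_j u,\partial_j\eta)=\bigl(\partial_j g-\overset{w_0}{\mathscr{C}^j}(q,\eta),\,\partial_j f,\,\partial_j k\bigr),
\end{equation*}
and an entirely analogous identity for $\overset{w_0,\gamma_0}{A_{m,N}}$ with the regularized structure intact. Lemma~\ref{lem on strong solutions are weak solutions} then repackages these as the corresponding weak-form identities with data $\bigl(\partial_j g-\overset{w_0}{\mathscr{C}^j}(q,\eta),\mathscr{K}(\partial_j f,\partial_j k)\bigr)$.

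Finally, the $\Y^{-1}$ estimate splits componentwise. The $\hat H^0$ piece is bounded by $\|\partial_j g\|_{L^2}\le\|g\|_{H^1}$, the identity $\int_0^b\partial_j g\,\m{d}y=\partial_j\int_0^b g\,\m{d}y$ paired with $[\partial_j\phi]_{\dot H^{-1}}\le\|\phi\|_{L^2}$ for tangential $j$ (since $|\xi_j|\le|\xi|$), and the $s=0$ case of Lemma~\ref{lem on mapping properties of the C} to control $\overset{w_0}{\mathscr{C}^j}(q,\eta)$ in $\hat H^0$ by $\rho\|q,u,\eta\|_{\X_0}$. The $({_0}H^1)^\ast$ piece is handled by tangentially integrating by parts against a test field $w\in{_0}H^1(\Omega;\R^n)$: the bounds $|\int_\Omega f\cdot\partial_j w|\le\|f\|_{L^2}\|w\|_{H^1}$ and $|\int_\Sigma k\cdot\partial_j w|\le\|k\|_{H^{1/2}}\|w\|_{H^1}$ (the latter via $H^{1/2}/H^{-1/2}$ trace duality on $\Sigma$) give $\|\mathscr{K}(\partial_j f,\partial_j k)\|_{({_0}H^1)^\ast}\lesssim\|f\|_{L^2}+\|k\|_{H^{1/2}}$. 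The main technical nuisance I anticipate is bookkeeping the divergence-compatibility condition baked into $\Y^{-1}$ and rigorously checking adapted-space membership in the anisotropic framework, but no new estimates beyond Lemma~\ref{lem on mapping properties of the C} and the anisotropic tools already present in the paper should be needed.
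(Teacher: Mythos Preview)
Your identification of the commutator structure, the membership verifications, and the final $\Y^{-1}$ estimates are all correct and match the paper. There is, however, a genuine gap in the middle of your argument.

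You write the strong-form identity $\overset{w_0,\gamma_0}{A}(\partial_j q,\partial_j u,\partial_j\eta)=(\partial_j g-\overset{w_0}{\mathscr{C}^j}(q,\eta),\partial_j f,\partial_j k)$ and then invoke Lemma~\ref{lem on strong solutions are weak solutions} to pass to the weak form. Neither step is legitimate. The operator $\overset{w_0,\gamma_0}{A}$ is only defined as a map $\overset{q_0,u_0,\eta_0}{\X^0}\to\Y^0$, and the differentiated triple lies merely in $\overset{q_0,u_0,\eta_0}{\X^{-1}}$: in particular $\partial_j u\in H^1$, so $\grad\cdot\S^\varrho\partial_j u\notin L^2$ and the second component of $A$ does not make sense pointwise. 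Likewise $\partial_j f\in H^{-1}$ and $\partial_j k\in H^{-1/2}$, so the right-hand side is not in $\Y^0$ and $\mathscr{K}(\partial_j f,\partial_j k)$ is undefined (the map $\mathscr{K}$ requires $L^2\times H^{1/2}$ input). Lemma~\ref{lem on strong solutions are weak solutions} therefore cannot be applied to the differentiated equation.

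The paper circumvents this by working directly at the weak level. One applies Lemma~\ref{lem on strong solutions are weak solutions} to the \emph{original} equation to obtain $\overset{\gamma_0}{\mathscr{I}}(q,u,\eta)=\mathscr{K}(f,k)$, then tests against $-\partial_j w$ for $w\in H^2\cap{_0}H^1$ and integrates by parts tangentially to get $\langle\overset{\gamma_0}{\mathscr{I}}(\partial_j q,\partial_j u,\partial_j\eta),w\rangle=\langle\mathscr{K}(f,k),-\partial_j w\rangle$. The right-hand side defines a bounded functional $\mathscr{K}_j(f,k)\in({_0}H^1)^\ast$, and density of $H^2\cap{_0}H^1$ in ${_0}H^1$ extends the identity to all test fields. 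Your final-paragraph computation is precisely the boundedness of this $\mathscr{K}_j$; once you reorganize the logic in this order the proof goes through.
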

	\begin{proof}
		We will prove only the first item;  the proof of the second follows from a nearly identical argument.  We begin with three observations.
		
		First, recall the bounded linear map  $\mathscr{K} : L^2(\Omega;\R^n) \times H^{1/2}(\Sigma;\R^n)  \to ({_0}H^1(\Omega;\R^n))^\ast$ defined by \eqref{definition of the K functional dude lies here rest in peace and serrano peppers}.  For each $j \in \{1,\dotsc,n-1\}$ we construct a related bounded linear map $\mathscr{K}_j : L^2(\Omega;\R^n) \times H^{1/2}(\Sigma;\R^n)  \to ({_0}H^1(\Omega;\R^n))^\ast$ as follows.  For $(f,k) \in L^2(\Omega;\R^n) \times H^{1/2}(\Sigma;\R^n)$ we initially define the linear map $\mathscr{K}_j(f,k) : {_0}H^1(\Omega;\R^n) \cap H^2(\Omega;\R^n) \to \R$ via $\tbr{\mathscr{K}_j(f,k) ,w }= \tbr{\mathscr{K}(f,k) , -\pd_j w }_{({_0}H^1)^\ast, {_0}H^1}  =  -\int_{\Omega} f\cdot \pd_j w - \int_{\Sigma}k\cdot \pd_j w$. For such $f$, $k$, and $w$, we may bound 
		\begin{equation}
			\tabs{\tbr{\mathscr{K}_j(f,k),w}} \le \tnorm{f}_{L^2}\tnorm{\pd_j w}_{L^2} + \tnorm{k}_{H^{1/2}} \tnorm{\pd_j\m{Tr}_{\Sigma} w}_{H^{-1/2}}
			\lesssim \tnorm{f,k}_{L^2\times H^{1/2}}\tnorm{ w}_{H^1},
		\end{equation}
		and from this and the density of ${_0}H^1(\Omega;\R^n) \cap H^2(\Omega;\R^n)$ in ${_0}H^1(\Omega;\R^n)$ we deduce that $\mathscr{K}_j(f,k)$ uniquely extends to an element $\mathscr{K}_j(f,k) \in ({_0}H^1(\Omega;\R^n))^\ast$ such that 
		\begin{equation}\label{zigmund_das_schwein_8}
			\tbr{\mathscr{K}_j(f,k) ,w }_{({_0}H^1)^\ast, {_0}H^1} = \tbr{\mathscr{K}(f,k) , -\pd_j w }_{({_0}H^1)^\ast, {_0}H^1}
		\end{equation}
		for all $w \in H^2(\Omega; \R^n) \cap {_0}H^1(\Omega;\R^n)$ and $\tnorm{\mathscr{K}_j(f,k) }_{({_0}H^1)^\ast} \lesssim  \tnorm{f,k}_{L^2\times H^{1/2}}$.  In particular, the latter estimate shows that the induced map $\mathscr{K}_j$ is bounded and linear with the domain and codomain stated above.
		
		Second, we recall the map $\overset{\gam_0}{\mathscr{I}} : \X_{-1} \to ({_0}H^1(\Omega;\R^n))^\ast$ defined by \eqref{definition of the I functional}.  Suppose that $(q,u,\eta) \in \X_0$.  For $j \in \{1,\dotsc,n-1\}$ we have that $(\pd_j q, \pd_j u, \pd_j \eta) \in \X_{-1}$,  which means that $\overset{\gam_0}{\mathscr{I}}(\pd_j q, \pd_j u, \pd_j \eta)$ defines an element of $({_0}H^1(\Omega;\R^n))^\ast$.  We may compute the action of this functional on any  $w \in H^2(\Omega;\R^n) \cap {_0}H^1(\Omega;\R^n)$ by integrating by parts:  
		\begin{multline}\label{zigmund_das_schwein_9}
			\tbr{\overset{\gam_0}{\mathscr{I}}(q,u,\eta), -\pd_j w}_{({_0}H^1)^\ast,{_0}H^1} 
			=    -\int_{\Omega}-\gam_0^2\varrho\pd_1u\cdot \pd_jw - q\grad\cdot(\varrho\pd_jw) + \mathfrak{g}\varrho\grad\eta\cdot\pd_j w + \gam_0\S^{\varrho}u : \grad \pd_jw \\
			+ \varsigma\tbr{\Delta_{\|}\eta,\m{Tr}_{\Sigma}(\pd_jw\cdot e_n)}_{H^{-1/2},H^{1/2}}  
			= \int_{\Omega}-\gam_0^2\varrho\pd_1\pd_ju\cdot w-\pd_jq\grad\cdot(\varrho w)+\mathfrak{g}\varrho\grad\pd_j\eta\cdot w\\+\int_{\Omega}\gam_0\S^{\varrho}\pd_ju:\grad w
			-\varsigma\tbr{\Delta_{\|}\pd_j\eta,\m{Tr}_{\Sigma}(w\cdot e_n)}_{H^{-1/2},H^{1/2}}=\tbr{\mathscr{I}(\pd_jq,\pd_ju,\pd_j\eta),w}_{({_0}H^1)^\ast,{_0}H^1}.
		\end{multline}

		For the third observation, suppose that $q\in H^1(\Omega)$ satisfies $\grad\cdot(v_{w_0}q)\in H^{1}(\Omega)$.   Testing against members of $C^\infty_c(\Omega)$ and appealing to Lemma~\ref{lem on mapping properties of the C}, we see that for each $j \in \{1,\dotsc,n-1\}$ we have the distributional identity 
		\begin{equation}\label{zigmund_das_schwein_10}
			\grad\cdot(v_{w_0}\pd_jq) = \pd_j\grad\cdot(v_{w_0}q)-\overset{w_0}{\mathscr{C}^j}(q,0) \in L^2(\Omega).
		\end{equation}
		In particular, this identity establishes that  $\pd_j q\in H^0_{v_{w_0}}(\Omega)$, where this space is defined in  Appendix~\ref{appendix on adapted Sobolev spaces}.
		
		Having established these three observations, we are now ready to prove the first item.  Suppose that  $(q,u,\eta) \in \overset{q_0,u_0,\eta_0}{\X^{0}}$ and $(g,f,k) \in \Y^0$ satisfy the strong from equation $\overset{w_0,\gam_0}{A}(q,u,\eta)=(g,f,k)$, which (due to the assumed level of regularity) is equivalent to the weak form equation $\overset{w_0,\gam_0}{\mathscr{J}}(q,u,\eta) = (g,\mathscr{K}(f,k))$ (see Lemma~\ref{lem on strong solutions are weak solutions}).  In turn, the weak formulation unpacks into the pair of equations
		\begin{equation}\label{zigmund_das_schwein_11}
			\grad\cdot(\varrho  u) + \grad\cdot(v_{w_0}  (q+\mathfrak{g}\eta))= g \text{ in } L^2(\Omega)  \text{ and } \mathscr{I}(q,u,\eta) = \mathscr{K}(f,k) \text{ in } ({_0}H^1(\Omega;\R^n))^\ast.
		\end{equation}
		Let $j \in \{1,\dotsc,n-1\}$.  For the first equation in \eqref{zigmund_das_schwein_11} we apply $\pd_j$ and use \eqref{zigmund_das_schwein_10} from the third observation to see that
		\begin{equation}\label{alabama}
			\grad\cdot(\varrho\pd_ju)+\grad\cdot(v_{w_0}\pd_j(q+\mathfrak{g}\eta))=\pd_jg-\overset{w_0}{\mathscr{C}^j}(q,\eta).
		\end{equation}
		For the second equation in \eqref{zigmund_das_schwein_11} we let $w \in H^2(\Omega;\R^n) \cap {_0}H^1(\Omega;\R^n)$, test against $-\pd_j w$, and use \eqref{zigmund_das_schwein_8} and \eqref{zigmund_das_schwein_9} from the first and second observations to see that 
		\begin{multline}
			\tbr{\overset{\gam_0}{\mathscr{I}}(\pd_jq,\pd_ju,\pd_j\eta),w}_{({_0}H^1)^\ast,{_0}H^1} 
			= \tbr{\overset{\gam_0}{\mathscr{I}}(q,u,\eta), -\pd_j w}_{({_0}H^1)^\ast,{_0}H^1} \\
			= \tbr{\mathscr{K}(f,k), -\pd_j w}_{({_0}H^1)^\ast,{_0}H^1} 
			= \tbr{\mathscr{K}_j(f,k) ,w }_{({_0}H^1)^\ast, {_0}H^1}.
		\end{multline}
		Since $H^2(\Omega;\R^n) \cap {_0}H^1(\Omega;\R^n)$ is dense in ${_0}H^1(\Omega;\R^n)$, we deduce that 
		\begin{equation}\label{alaska}
			\overset{\gam_0}{\mathscr{I}}(\pd_j q,\pd_j u,\pd_j \eta)= \mathscr{K}_{j}(f,k).
		\end{equation}
		By combining \eqref{alabama} and \eqref{alaska}, we deduce that
  \begin{equation}\label{did i hear you say that you dislike my labels}
      \overset{w_0,\gam_0}{\mathscr{J}}(\pd_jq,\pd_ju,\pd_j\eta)=\tp{\pd_jg-\overset{w_0}{\mathscr{C}^j}(q,\eta), \mathscr{K}_j(f,k)}.
  \end{equation}
 Therefore, \eqref{arizona} follows by taking the norm in $\Y^{-1}$ of \eqref{did i hear you say that you dislike my labels} and applying both Lemma~\ref{lem on mapping properties of the C} and the boundedness of $\mathscr{K}_j$ established in the first observation.
	\end{proof}
	
	The next result is a higher regularity version of the previous one.
	
	\begin{prop}[High norm estimates on tangential derivatives]\label{prop on commutators 2}
		Under the hypotheses of Definition~\ref{commutator_def}, the following hold for $s\in\N^+$, $j \in \{1,\dotsc,n-1\}$, $\gam_0\in I\Subset\R^+$ for $I$ an interval, and $(g,f,k)\in\Y^s$.
		\begin{enumerate}
			\item If $(q,u,\eta)\in\overset{q_0,u_0,\eta_0}{\X^s}$ satisfies $\overset{w_0,\gam_0}{A}(q,u,\eta)=(g,f,k)$, then  $(\pd_jq,\pd_ju,\pd_j\eta)  \in \overset{q_0,u_0,\eta_0}{\X^{s-1}}$ and obeys the estimate
			\begin{multline}\label{california}
				\snorm{\overset{w_0,\gam_0}{A}(\pd_jq,\pd_ju,\pd_j\eta)}_{\Y^{s-1}}\lesssim\rho\tnorm{q,u,\eta}_{\X_s}\\+\tnorm{g,f,k}_{\Y^s}+\begin{cases}
					0&\text{if }s\le\tfloor{n/2},\\
					\tnorm{q_0,u_0,\eta_0}_{\X_{1+s}}\tnorm{q,u,\eta}_{\X_{\tfloor{n/2}}}&\text{if }\tfloor{n/2}< s.
				\end{cases}
			\end{multline}
			\item If $m,N\in\N^+$ with $m\ge 2$ and $(q,u,\eta)\in{\X^s_{m,N}}$ satisfy $\overset{w_0,\gam_0}{A_{m,N}}(q,u,\eta)=(g,f,k)$, then $(\pd_jq,\pd_ju,\pd_j\eta) \in \X^{s-1}_{m,N}$ and obeys the estimate
			\begin{multline}
				\snorm{\overset{w_0,\gam_0}{A_{m,N}}(\pd_jq,\pd_ju,\pd_j\eta)}_{\Y^{s-1}}\lesssim\rho\tnorm{q,u,\eta}_{\X_s}\\+\tnorm{g,f,k}_{\Y^s}+\begin{cases}
					0&\text{if }s\le\tfloor{n/2},\\
					\tnorm{q_0,u_0,\eta_0}_{\X_{1+s}}\tnorm{q,u,\eta}_{\X_{\tfloor{n/2}}}&\text{if }\tfloor{n/2}< s.
				\end{cases}
			\end{multline}
		\end{enumerate}
		Here the implicit constants depend on $s$, the dimension, the physical parameters, $\rho_{\m{WD}}$, and $I$.
	\end{prop}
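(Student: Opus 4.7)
The proof will proceed in close parallel to Proposition~\ref{prop on commutators 1}, promoting the low regularity analysis there to the higher regularity setting here; the main new input is a shifted application of Lemma~\ref{lem on mapping properties of the C}. For the first item, I would begin by applying the tangential derivative $\pd_j$ (with $j \in \{1,\dots,n-1\}$) to the strong-form system $\overset{w_0,\gam_0}{A}(q,u,\eta) = (g,f,k)$. Since $\varrho$, $\mu(\varrho)$, and $\lambda(\varrho)$ depend only on the vertical coordinate $x_n$, the operator $\pd_j$ commutes with $-\gam_0^2\varrho\pd_1$, $\varrho\grad$, $\grad\cdot\S^\varrho$, $\varrho q e_n$, and $\Delta_{\|}\eta e_n$, and it also commutes with the boundary condition $u\cdot e_n + \pd_1\eta=0$. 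The only nontrivial commutator arises from the derivative loss term $\grad\cdot(v_{w_0}(q+\mathfrak{g}\eta))$ in $\overset{w_0}{A^1}$: using the Leibniz rule we obtain the identity
\begin{equation*}
    \overset{w_0,\gam_0}{A}(\pd_j q, \pd_j u, \pd_j\eta) = (\pd_j g - \overset{w_0}{\mathscr{C}^j}(q,\eta), \pd_j f, \pd_j k),
\end{equation*}
which is the higher-regularity analog of the identity \eqref{did i hear you say that you dislike my labels} appearing in the proof of Proposition~\ref{prop on commutators 1}.

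Next, I would verify the membership $(\pd_j q, \pd_j u, \pd_j \eta) \in \overset{q_0,u_0,\eta_0}{\X^{s-1}}$. The inclusions $\pd_j q \in H^s(\Omega)$ and $\pd_j u \in H^{1+s}(\Omega;\R^n)$ are immediate from $(q,u,\eta) \in \overset{q_0,u_0,\eta_0}{\X^s} \subseteq \X^s$, while the inclusion $\pd_j \eta \in \mathcal{H}^{3/2+s}(\Sigma)$ follows from $\eta \in \mathcal{H}^{5/2+s}(\Sigma)$ by way of the spatial characterization in Proposition~\ref{proposition on spatial characterization of anisobros}; the boundary compatibility conditions built into $\X^{s-1}$ transfer from those of $\X^s$ since $\pd_j$ commutes with $\m{Tr}_{\pd\Omega}$, $e_n$, and $\pd_1$. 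For the adapted-space condition, the distributional identity $\grad\cdot(v_{w_0}\pd_j q) = \pd_j\grad\cdot(v_{w_0}q) - \overset{w_0}{\mathscr{C}^j}(q,0)$ (which is the higher-regularity version of \eqref{zigmund_das_schwein_10}) combined with Lemma~\ref{lem on mapping properties of the C} applied at level $s-1$ yields $\grad\cdot(v_{w_0}\pd_j q) \in H^s(\Omega)$, completing the verification.

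The estimate \eqref{california} is then obtained by taking $\Y^{s-1}$ norms of the commuted system and applying Lemma~\ref{lem on mapping properties of the C} with the index $s$ shifted to $s-1$: the $(\pd_j f, \pd_j k)$ contribution is trivially controlled by $\tnorm{g,f,k}_{\Y^s}$ via the non-expansive inclusion $\pd_j : H^{r+1} \to H^r$, and the first component $\pd_j g - \overset{w_0}{\mathscr{C}^j}(q,\eta)$ is controlled in $\hat{H}^s$ by a combination of $\tnorm{g}_{H^{1+s}} \le \tnorm{g,f,k}_{\Y^s}$ and the bound
\begin{equation*}
    \snorm{\overset{w_0}{\mathscr{C}^j}(q,\eta)}_{\hat{H}^{s-1}} \lesssim \rho\tnorm{q,\eta}_{H^s \times \mathcal{H}^s} + \begin{cases} 0 & \text{if } s \le 1 + \tfloor{n/2}, \\ \tnorm{w_0}_{\X_s} \tnorm{q,\eta}_{H^{1+\tfloor{n/2}} \times \mathcal{H}^{1+\tfloor{n/2}}} & \text{if } 1 + \tfloor{n/2} < s. \end{cases}
\end{equation*}
The statement's case distinction at $s \le \tfloor{n/2}$ versus $\tfloor{n/2} < s$ is a coarsening of the lemma's natural split at $s-1 \le \tfloor{n/2}$, and since $\tnorm{w_0}_{\X_s} \le \tnorm{w_0}_{\X_{1+s}}$ and $\tnorm{q,\eta}_{H^{1+\tfloor{n/2}}\times\mathcal{H}^{1+\tfloor{n/2}}} \lesssim \tnorm{q,u,\eta}_{\X_{\tfloor{n/2}}}$, the bound \eqref{california} follows.

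The second item, for the regularized operator $\overset{w_0,\gam_0}{A_{m,N}}$, proceeds by exactly the same strategy with two additional verifications: $\pd_j$ commutes with the constant-coefficient operators $L_m$ and $(-\Delta_{\|})^{m-1/4}$, and the Neumann conditions $\pd_n^m q = \cdots = \pd_n^{2m-1}q = 0$ are preserved under $\pd_j$ because tangential and normal derivatives commute. No new commutator terms appear, so the same estimate on $\overset{w_0}{\mathscr{C}^j}$ controls the entire right-hand side. The main obstacle throughout is not the commutation itself but rather the careful bookkeeping to match the tame structure in \eqref{california}—specifically, tracking that the $\rho$-small piece, the data piece, and the nonperturbative background piece all land in the correct spaces with the correct norms, which is precisely what the splitting built into Lemma~\ref{lem on mapping properties of the C} was designed to provide.
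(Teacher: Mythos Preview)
Your approach is essentially identical to the paper's: derive the commuted identity
\[
\overset{w_0,\gam_0}{A}(\pd_j q,\pd_j u,\pd_j\eta)=\bp{\pd_j g-\overset{w_0}{\mathscr{C}^j}(q,\eta),\pd_j f,\pd_j k},
\]
verify membership in $\overset{q_0,u_0,\eta_0}{\X^{s-1}}$ via the identity $\grad\cdot(v_{w_0}\pd_j q)=\pd_j\grad\cdot(v_{w_0}q)-\overset{w_0}{\mathscr{C}^j}(q,0)$, and then take $\Y^{s-1}$ norms and invoke Lemma~\ref{lem on mapping properties of the C}.

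There is, however, one indexing slip worth flagging. The first component of $\Y^{s-1}$ lives in $\hat{H}^{1+(s-1)}=\hat{H}^s$, not $\hat{H}^{s-1}$. So you need to apply Lemma~\ref{lem on mapping properties of the C} at level $s$ (not $s-1$), which directly gives
\[
\snorm{\overset{w_0}{\mathscr{C}^j}(q,\eta)}_{\hat{H}^{s}}\lesssim\rho\tnorm{q,\eta}_{H^{1+s}\times\mathcal{H}^{1+s}}+\begin{cases}0&\text{if }s\le\tfloor{n/2},\\ \tnorm{q_0,u_0,\eta_0}_{\X_{1+s}}\tnorm{q,\eta}_{H^{1+\tfloor{n/2}}\times\mathcal{H}^{1+\tfloor{n/2}}}&\text{if }\tfloor{n/2}<s.\end{cases}
\]
This matches the statement's case split exactly (no coarsening is needed), and the embeddings $\tnorm{q,\eta}_{H^{1+s}\times\mathcal{H}^{1+s}}\lesssim\tnorm{q,u,\eta}_{\X_s}$ and $\tnorm{q,\eta}_{H^{1+\tfloor{n/2}}\times\mathcal{H}^{1+\tfloor{n/2}}}\lesssim\tnorm{q,u,\eta}_{\X_{\tfloor{n/2}}}$ finish the job. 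With this correction your proof is correct and matches the paper.
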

	\begin{proof}
		We will only prove the first item, as the second follows from a nearly identical argument.  Employing the identity $\grad\cdot(v_{w_0}\pd_jq) = \pd_j\grad\cdot(v_{w_0}q)-\overset{w_0}{\mathscr{C}^j}(q,0)$ and Lemma~\ref{lem on mapping properties of the C}, it is evident that the differentiated triple $(\pd_jq,\pd_ju,\pd_j\eta)$ belongs to the space $\overset{q_0,u_0,\eta_0}{\X^{s-1}}$.   Applying $\pd_j$ to the equations in $\overset{w_0,\gam_0}{A}(q,u,\eta)=(g,f,k)$ and rearranging provides the identity $
			\overset{w_0,\gam_0}{A}(\pd_jq,\pd_ju,\pd_j\eta)=\sp{\pd_jg-\overset{w_0}{\mathscr{C}^j}(q,\eta),\pd_jf,\pd_jk}$. 	Then \eqref{california} follows by taking the $\Y^{s-1}$ norm of both sides of this identity and applying the estimates from Lemma~\ref{lem on mapping properties of the C}.
	\end{proof}	
	
	To conclude this subsection, we iterate Proposition~\ref{prop on commutators 2} and combine with Proposition~\ref{prop on commutators 1} to obtain low norm estimates on high-order tangential derivatives.
	
	\begin{thm}[Synthesis of tangential derivative analysis]\label{thm on tangential derivative analysis}
		Let $0<\rho\le\rho_{\m{WD}}$, where the latter is as in Theorem~\ref{thm on smooth tameness of the nonlinear operator}, $w_0=(q_0,u_0,\eta_0)$ be as in Lemma~\ref{properties of the principal parts vector field}, $s\in\N$, $\gam_0\in I\Subset\R^+$ for an interval $I$, $(g,f,k)\in\Y^s$, and $\al\in\N^n\setminus\tcb{0}$ be a multiindex such that $\al\cdot e_n=0$ and $|\al|\le 1+s$. The following hold.
		\begin{enumerate}
			\item If $(q,u,\eta)\in\overset{q_0,u_0,\eta_0}{\X^s}$ satisfies $\overset{w_0,\gam_0}{A}(q,u,\eta)=(g,f,k)$, then  $(\pd^\al q,\pd^\al u,\pd^\al q)  \in \overset{w_0,\gam_0}{\X^{-1}}$ and we have the estimate
			\begin{multline}\label{higher order tangential derivatives}
				\snorm{\overset{w_0,\gam_0}{\mathscr{J}}(\pd^\al q,\pd^\al u,\pd^\al\eta)}_{\Y^{-1}}\lesssim\rho\tnorm{q,u,\eta}_{\X_s}\\+\tnorm{g,f,k}_{\Y^s}+\begin{cases}
					0&\text{if }s\le\tfloor{n/2},\\
					\tnorm{q_0,u_0,\eta_0}_{\X_{1+s}}\tnorm{q,u,\eta}_{\X_{\tfloor{n/2}}}&\text{if }\tfloor{n/2}< s.
				\end{cases}
			\end{multline}
			\item If $m,N\in\N^+$ with $m\ge2$ and $(q,u,\eta)\in\X^s_{m,N}$ satisfy $\overset{w_0,\gam_0}{A_{m,N}}(q,u,\eta)=(g,f,k)$, then   $(\pd^\al q,\pd^\al u,\pd^\al\eta)  \in \X^{-1}_{m,N}$ and we have the estimate
			\begin{multline}
				\snorm{\overset{w_0,\gam_0}{\mathscr{J}^1_{m,N}}(\pd^\al q,\pd^\al u,\pd^\al\eta)}_{\Y^{-1}}\lesssim\rho\tnorm{q,u,\eta}_{\X_s}\\+\tnorm{g,f,k}_{\Y^s}+\begin{cases}
					0&\text{if }s\le\tfloor{n/2},\\
					\tnorm{q_0,u_0,\eta_0}_{\X_{1+s}}\tnorm{q,u,\eta}_{\X_{\tfloor{n/2}}}&\text{if }\tfloor{n/2}< s.
				\end{cases}
			\end{multline}
		\end{enumerate}
		The implicit constants depend on $s$, the dimension, the physical parameters, $\rho_{\m{WD}}$, and $I$.
	\end{thm}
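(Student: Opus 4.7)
The strategy is a finite induction on $k = |\al|$, using Proposition~\ref{prop on commutators 2} to peel off tangential derivatives one at a time in the high-regularity regime, and then Proposition~\ref{prop on commutators 1} to make the final passage to the $\Y^{-1}$-level estimate. I present the argument for the first item; the second follows by an identical argument using the second halves of both propositions. Throughout, I abbreviate background-dependent constants to emphasize the structural form of the bounds.

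For the base case $k = 1$ we have $\al = e_j$ for some $j \in \{1,\dots,n-1\}$. The embeddings $\overset{q_0,u_0,\eta_0}{\X^s} \hookrightarrow \overset{q_0,u_0,\eta_0}{\X^0}$ and $\Y^s \hookrightarrow \Y^0$ place us in the hypotheses of Proposition~\ref{prop on commutators 1}, which directly yields the inclusion $(\pd_j q, \pd_j u, \pd_j \eta) \in \overset{q_0,u_0,\eta_0}{\X^{-1}}$ and the bound~\eqref{arizona}. The right side of~\eqref{arizona} is then dominated by the right side of~\eqref{higher order tangential derivatives} with $|\al| = 1$.

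For the inductive step, suppose $1 < k \le 1+s$ and write $\al = \be + e_j$ with $|\be| = k-1$, $\be \cdot e_n = 0$, and $j \in \{1,\dots,n-1\}$. Iterating the first item of Proposition~\ref{prop on commutators 2} a total of $k-1$ times---each application peels off one tangential derivative and drops the regularity index by one---produces an identity of the form
\begin{equation}
\overset{w_0,\gam_0}{A}(\pd^\be q, \pd^\be u, \pd^\be \eta) = (g_\be, f_\be, k_\be)
\end{equation}
with $(\pd^\be q, \pd^\be u, \pd^\be \eta) \in \overset{q_0,u_0,\eta_0}{\X^{s-k+1}}$ and, via a telescoping combination of the estimates~\eqref{california} at each step, the control
\begin{equation}
\tnorm{g_\be, f_\be, k_\be}_{\Y^{s-k+1}} \lesssim \rho \tnorm{q,u,\eta}_{\X_s} + \tnorm{g,f,k}_{\Y^s} + \begin{cases} 0 & \text{if } s \le \tfloor{n/2}, \\ \tnorm{q_0,u_0,\eta_0}_{\X_{1+s}}\tnorm{q,u,\eta}_{\X_{\tfloor{n/2}}} & \text{if } \tfloor{n/2} < s. \end{cases}
\end{equation}
Since $s-k+1 \ge 0$, we have the embeddings $\overset{q_0,u_0,\eta_0}{\X^{s-k+1}} \hookrightarrow \overset{q_0,u_0,\eta_0}{\X^0}$ and $\Y^{s-k+1} \hookrightarrow \Y^0$, and may now apply Proposition~\ref{prop on commutators 1} to the triple $(\pd^\be q, \pd^\be u, \pd^\be \eta)$ with data $(g_\be, f_\be, k_\be)$. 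This yields the desired inclusion $(\pd^\al q, \pd^\al u, \pd^\al \eta) = (\pd_j \pd^\be q, \pd_j \pd^\be u, \pd_j \pd^\be \eta) \in \overset{q_0,u_0,\eta_0}{\X^{-1}}$ together with a bound on $\overset{w_0,\gam_0}{\mathscr{J}}(\pd^\al q, \pd^\al u, \pd^\al \eta)$ in $\Y^{-1}$ by $\rho \tnorm{\pd^\be(q,u,\eta)}_{\X_0} + \tnorm{g_\be,f_\be,k_\be}_{\Y^0}$. Chaining these with the previous display and the trivial bound $\tnorm{\pd^\be(q,u,\eta)}_{\X_0} \lesssim \tnorm{q,u,\eta}_{\X_{k-1}} \le \tnorm{q,u,\eta}_{\X_s}$ gives~\eqref{higher order tangential derivatives}.

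The one point requiring care is to verify that the telescoping accumulation of the estimates~\eqref{california} preserves the precise tame structure appearing on the right side of~\eqref{higher order tangential derivatives}: namely, that the small factor $\rho$ continues to multiply the solution norm at the \emph{top} regularity level $s$, and that the background factor $\tnorm{q_0,u_0,\eta_0}_{\X_{1+s}}$ multiplies only the \emph{low} regularity norm $\tnorm{q,u,\eta}_{\X_{\tfloor{n/2}}}$. This preservation holds because \eqref{california} itself already has this form at every individual step, intermediate-regularity norms of $(q,u,\eta)$ collapse into $\tnorm{q,u,\eta}_{\X_s}$ via the scale's monotonicity, and the background norm is nondecreasing in its regularity index, so the largest contribution dominates in the telescope. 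The second item follows from the same induction with the second halves of Propositions~\ref{prop on commutators 1} and~\ref{prop on commutators 2}, the only formal change being the replacement of $\overset{w_0,\gam_0}{A}$ and $\overset{w_0,\gam_0}{\mathscr{J}}$ by $\overset{w_0,\gam_0}{A_{m,N}}$ and $\overset{w_0,\gam_0}{\mathscr{J}^1_{m,N}}$, and of the spaces $\overset{q_0,u_0,\eta_0}{\X^{\cdot}}$ by $\X^{\cdot}_{m,N}$.
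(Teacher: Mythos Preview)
Your overall induction strategy matches the paper's, and the way you split off the final derivative with Proposition~\ref{prop on commutators 1} after iterating Proposition~\ref{prop on commutators 2} is correct. However, there is a genuine gap in the paragraph where you argue that the telescoping of~\eqref{california} preserves the tame structure.

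The issue is this: when you apply~\eqref{california} at an intermediate level, say to $(\pd^\be q,\pd^\be u,\pd^\be\eta)$ at regularity $s-|\be|$, the background-dependent term that appears is
\[
\tnorm{q_0,u_0,\eta_0}_{\X_{1+s-|\be|}}\tnorm{\pd^\be q,\pd^\be u,\pd^\be\eta}_{\X_{\tfloor{n/2}}}
\lesssim
\tnorm{q_0,u_0,\eta_0}_{\X_{1+s-|\be|}}\tnorm{q,u,\eta}_{\X_{|\be|+\tfloor{n/2}}}.
\]
This is an \emph{intermediate} background norm multiplied by an \emph{intermediate} solution norm. Your claimed mechanism, ``the background norm is nondecreasing in its regularity index, so the largest contribution dominates,'' would replace $\tnorm{q_0,u_0,\eta_0}_{\X_{1+s-|\be|}}$ by $\tnorm{q_0,u_0,\eta_0}_{\X_{1+s}}$, but that still leaves $\tnorm{q,u,\eta}_{\X_{|\be|+\tfloor{n/2}}}$, which you cannot collapse to $\tnorm{q,u,\eta}_{\X_{\tfloor{n/2}}}$ by monotonicity (it goes the wrong way), nor to $\tnorm{q,u,\eta}_{\X_s}$ without destroying the tame form.

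The paper handles exactly this cross term via the log-convexity of the $\X_s$-norms (Lemma~\ref{lem on log-convexity of the norms}) followed by Young's inequality: writing $|\be|+\tfloor{n/2}$ as a convex combination of $\tfloor{n/2}$ and $s$, and $1+s-|\be|$ as the matching convex combination of $1+s$ and $1+\tfloor{n/2}$, one interpolates both factors simultaneously and then splits the resulting product, yielding
\[
\tnorm{q_0,u_0,\eta_0}_{\X_{1+s-|\be|}}\tnorm{q,u,\eta}_{\X_{|\be|+\tfloor{n/2}}}
\lesssim
\rho\,\tnorm{q,u,\eta}_{\X_s}+\tnorm{q_0,u_0,\eta_0}_{\X_{1+s}}\tnorm{q,u,\eta}_{\X_{\tfloor{n/2}}},
\]
using $\tnorm{q_0,u_0,\eta_0}_{\X_{1+\tfloor{n/2}}}\le\rho$. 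Once you insert this interpolation step, your argument closes and matches the paper's.
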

	\begin{proof}
		Again we only prove the first item, as the second follows from a nearly identical argument. First we claim that if $(q,u,\eta)\in\overset{q_0,u_0,\eta_0}{\X^s}$ and $\al\in\N^n$ satisfies $1 \le |\al| \le s$ and $\al\cdot e_n=0$, then  $(\pd^\al q,\pd^\al u,\pd^\al\eta)\in\overset{q_0,u_0,\eta_0}{\X^{s-|\al|}}$ and
		\begin{multline}\label{induction function whats your conjunction}
			\snorm{\overset{w_0,\gam_0}{A}(\pd^\al q,\pd^\al u,\pd^\al\eta)}_{\Y^{s-|\al|}}\lesssim\rho\tnorm{q,u,\eta}_{\X_s}\\+\snorm{\overset{w_0,\gam_0}{A}(q,u,\eta)}_{\Y^s}+\begin{cases}
				0&\text{if }s\le\tfloor{n/2},\\
				\tnorm{q_0,u_0,\eta_0}_{\X_{1+s}}\tnorm{q,u,\eta}_{\X_{\tfloor{n/2}}}&\text{if }\tfloor{n/2}<s.
			\end{cases}
		\end{multline}
		We establish this via strong induction on $|\al|$.

		The base case of the claim, $|\al|=1$, was already established in Proposition~\ref{prop on commutators 2}. Suppose now that for a fixed $1\le \upnu \le s-1$ the claim holds for all $\al\in\N^n$ such that $\al\cdot e_n=0$ and $1\le |\al|\le\upnu$. Let $\al\in\N^n$ with $\al\cdot e_n=0$ be such that $|\al|=\upnu+1$. Let $\be,\gam\in\N^n\setminus\tcb{0}$ be such that $\al=\be+\gam$ and $|\gamma|=1$. By repeated applications of the induction hypothesis, it follows that $(\pd^\be q,\pd^\be u,\pd^\be \eta)\in\overset{q_0,u_0,\eta_0}{\X^{s-|\be|}}$ and, in turn, that $(\pd^\al q,\pd^\al u,\pd^\al \eta)\in\overset{q_0,u_0,\eta_0}{\X^{s-|\al|}}$. Moreover, the induction hypothesis also provides the estimates
		\begin{multline}\label{i like to eat fish}
			\snorm{\overset{w_0,\gam_0}{A}(\pd^\al q,\pd^\al u,\pd^\al\eta)}_{\Y^{s-|\al|}}\lesssim\rho\tnorm{\pd^\be q,\pd^\be u,\pd^\be\eta}_{\X_{s-|\be|}}+\snorm{\overset{w_0,\gam_0}{A}(\pd^\be q,\pd^\be u,\pd^\be\eta)}_{\Y^{s-|\be|}}\\+\begin{cases}
				0&\text{if }s-|\be|\le\tfloor{n/2},\\
				\tnorm{q_0,u_0,\eta_0}_{\X_{1+s-|\be|}}\tnorm{\pd^\be q,\pd^\be u,\pd^\be\eta}_{\X_{\tfloor{n/2}}}&\text{if }\tfloor{n/2}<s-|\be|,
			\end{cases}
			\\\lesssim\rho\sp{\tnorm{q,u,\eta}_{\X_s}+\tnorm{\pd^\be q,\pd^\be u,\pd^\be\eta}_{\X_{s-|\be|}}}+\snorm{\overset{w_0,\gam_0}{A}(q,u,\eta)}_{\Y^s}\\+\begin{cases}
				0&\text{if }s\le\tfloor{n/2},\\
				\tnorm{q_0,u_0,\eta_0}_{\X_{1+s}}\tnorm{q,u,\eta}_{\X_{\tfloor{n/2}}}&\text{if }\tfloor{n/2}<s,
			\end{cases}
			\\+\begin{cases}
				0&\text{if }s-|\be|\le\tfloor{n/2},\\
				\tnorm{q_0,u_0,\eta_0}_{\X_{1+s-|\be|}}\tnorm{\pd^\be q,\pd^\be u,\pd^\be\eta}_{\X_{\tfloor{n/2}}}&\text{if }\tfloor{n/2}<s-|\be|.
			\end{cases}
		\end{multline}
		To morph this estimate into the correct form, we note the following two facts. First, the $\partial^\be$-continuity estimates: $\tnorm{\pd^\be q,\pd^\be u,\pd^\be\eta}_{\X_{s-|\be|}}\lesssim\tnorm{q,u,\eta}_{\X_s}$ and $\tnorm{\pd^\be q,\pd^\be u,\pd^\be\eta}_{\X_{\tfloor{n/2}}}\lesssim\tnorm{q,u,\eta}_{\X_{|\be|+\tfloor{n/2}}}$. Second, by the log-convexity of the norm in the $\X$-spaces (see Lemma~\ref{lem on log-convexity of the norms}) and Young's inequality, we have that if $\tfloor{n/2}<s-|\be|$, then
		\begin{multline}\label{fish likes to eat me}
			\tnorm{q_0,u_0,\eta_0}_{\X_{1+s-|\be|}}\tnorm{\pd^\be q,\pd^\be u,\pd^\be\eta}_{\X_{\tfloor{n/2}}}\lesssim\tnorm{q_0,u_0,\eta_0}_{\X_{1+s-|\be|}}\tnorm{q,u,\eta}_{\X_{|\be|+\tfloor{n/2}}}\\
			\lesssim\sp{\tnorm{q_0,u_0,\eta_0}_{\X_{1+\tfloor{n/2}}}\tnorm{q,u,\eta}_{\X_{s}}}^{\f{|\be|}{s-\tfloor{n/2}}}\sp{\tnorm{q_0,u_0,\eta_0}_{\X_{1+s}}\tnorm{q,u,\eta}_{\X_{\tfloor{n/2}}}}^{1-\f{|\be|}{s-\tfloor{n/2}}}
			\\\lesssim\rho\tnorm{q,u,\eta}_{\X_s}+\tnorm{q_0,u_0,\eta_0}_{\X_{1+s}}\tnorm{q,u,\eta}_{\X_{\tfloor{n/2}}}.
		\end{multline}
		Upon combining these facts with~\eqref{i like to eat fish}, we verify that~\eqref{induction function whats your conjunction} holds, which completes the proof of the inductive step and hence the claim.
		
		Now, if $\al\in\N^n$ is such that $\al\cdot e_n$ and $1\le |\al|\le s$, then the sought-after estimate~\eqref{higher order tangential derivatives} is true thanks to the estimate \eqref{induction function whats your conjunction} established in the above claim and the trivial bounds
		\begin{equation}
			\snorm{\overset{w_0,\gam_0}{\mathscr{J}}(q,u,\eta)}_{\Y^{-1}}\lesssim\snorm{\overset{w_0,\gam_0}{A}(q,u,\eta)}_{\Y^{0}}\le\snorm{\overset{w_0,\gam_0}{A}(q,u,\eta)}_{\Y^{s-|\al|}}.
		\end{equation}
		It then only remains to prove \eqref{higher order tangential derivatives} in the case that $|\al|=s+1$.  In this case we may write $\pd^\al=\pd_j\pd^\be$ for some $j\in\tcb{1,\dots,n-1}$  and $|\be|=s$. Applying Proposition~\ref{prop on commutators 1}, followed by~\eqref{induction function whats your conjunction}, we arrive at the estimate
		\begin{multline}
			\snorm{\overset{w_0,\gam_0}{\mathscr{J}}(\pd^\al q,\pd^\al u,\pd^\al\eta)}_{\Y^{-1}}\lesssim\rho\tnorm{\pd^\be q,\pd^\be u,\pd^\be\eta}_{\X_0}+\tnorm{\overset{w_0,\gam_0}{A}(\pd^\be q,\pd^\be u,\pd^\be\eta)}_{\Y^0}\\
			\lesssim\rho\tnorm{q,u,\eta}_{\X_s}+\tnorm{\overset{w_0,\gam_0}{A}(q,u,\eta)}_{\Y^s}
			+\begin{cases}
				0&\text{if }s\le\tfloor{n/2},\\
				\tnorm{q_0,u_0,\eta_0}_{\X_{1+s}}\tnorm{q,u,\eta}_{\X_{\tfloor{n/2}}}&\text{if }\tfloor{n/2}<s,
			\end{cases}
		\end{multline}
		which completes the proof in the case $|\alpha|=s+1$.
	\end{proof}
	
	\subsection{Analysis of normal systems}\label{section on analysis of normal systems}
	
	A useful technique in the study of the dynamic compressible Navier-Stokes system, originally developed by Matsumura and Nishida~\cite{MR713680}, is to take linear combinations of a normal, or vertical, derivative of the continuity equation with certain components of the momentum equation in order to reveal a subtle dissipative structure for the normal derivative of the density. Our goal now is to implement a version of this technique for our traveling wave problem. The result will essentially be a bound on various high norms of a solution in terms of the data and norms of tangential derivatives alone.

	We begin with a computation that motivates the definition of the normal system. Suppose that $\overset{w_0,\gam_0}{A^1}(q,u,\eta)=g$ and $\overset{\gam_0}{A^2}(q,u,\eta)=f$, where the operators $A^i$ are as defined in \eqref{connecticut} and~\eqref{deleware}, and consider the linear combination
	\begin{equation}\label{special linear combination}
		\pd_n\overset{w_0}{A^1}(q,u,\eta)+\f{\varrho}{\gam_0(2(1-1/n)\upmu(\varrho)+\uplambda(\varrho))}\overset{\gam_0}{A^2}(q,u,\eta)\cdot e_n=\pd_ng+\f{\varrho}{\gam_0(2(1-1/n)\upmu(\varrho)+\uplambda(\varrho))}f\cdot e_n.
	\end{equation}
	On the left hand side the $\pd_n^2(u\cdot e_n)$-terms cancel each other out. We may then rearrange the above equation to create a (differentiated) steady transport equation in $q$, i.e.~\eqref{special linear combination} is equivalent to
	\begin{equation}\label{normal system part 1}
		\pd_n\bp{\f{\varrho^2}{\gam_0(2(1-1/n)\upmu(\varrho)+\uplambda(\varrho))}q+\grad\cdot(v_{w_0}q)}=\overset{w_0,\gam_0}{\mathscr{N}^0}(q,u,\eta,g,f),
	\end{equation}
	where
	\begin{multline}\label{N0_map_defs}
		\overset{w_0,\gam_0}{\mathscr{N}^0}(q,u,\eta,g,f)=\bp{\f{\varrho^2}{\gam_0(2(1-1/n)\upmu(\varrho)+\uplambda(\varrho))}}'q+\pd_ng\\-\pd_n(\varrho'u\cdot e_n)-\varrho'\pd_nu\cdot e_n-(\grad_{\|},0)\cdot\pd_n(\varrho u)-\mathfrak{g}\pd_n\grad\cdot(v_{w_0}\eta)\\
		+\f{\varrho}{\gam_0(2(1-1/n)\upmu(\varrho)+\uplambda(\varrho))}\big(f\cdot e_n+\gam_0^2\pd_1u\cdot e_n+\gam_0\upmu(\varrho)\Delta_{\|}u\cdot e_n\\+\gam_0(\upmu(\varrho)(1-2/n)+\uplambda(\varrho))(\grad_{\|},0)\cdot\pd_nu+\gam_0\varrho'(\upmu'(\varrho)\mathbb{D}^0ue_n\cdot e_n+\gam_0\uplambda'(\varrho)\grad\cdot u)\big).
	\end{multline}
	The key point  is that~\eqref{N0_map_defs} depends only on tangential derivatives of $q,u,\pd_nu,\eta$ along with $\pd_ng$ and $f$.
	
	If we perform the same manipulations under the regularized hypotheses $N^{-1}L_m(q+\mathfrak{g}\eta)+\overset{w_0}{A^1}(q,u,\eta)=g$ and $\overset{\gam_0}{A^2}(q,u,\eta)=f$, where we recall that $L_m$ is defined in~\eqref{the operator Lm}, then we obtain the equation
	\begin{equation}\label{normal system part 1, regularized}
		\pd_n\bp{\f{\varrho^2}{\gam_0(2(1-1/n)\upmu(\varrho)+\uplambda(\varrho))}q+\grad\cdot(v_{w_0}q)+\f{1}{N}L_mq}=\overset{w_0,\gam_0}{\mathscr{N}^0}(q,u,\eta,g,f).
	\end{equation}
	
	The identities \eqref{normal system part 1} and~\eqref{normal system part 1, regularized} are only half of the normal system in that they only allow us to gain control of the normal derivative of $q$ in terms of lower order and tangential derivatives.  Next, we see how to obtain similar control of the normal derivatives $u$.  For this we only need to examine the equation $\overset{\gam_0}{A^2}(q,u,\eta)=f$. If $j\in\tcb{1,\dots,n-1}$, then we take the $j^{\m{th}}$-component of this equation and isolate the $\pd_n^2(u\cdot e_j)$ term: $\pd_n^2(u\cdot e_j)=\overset{\gam_0}{\mathscr{N}^j}(q,u,\eta,f)$, where
	\begin{multline}\label{Nj_map_defs}
		\overset{\gam_0}{\mathscr{N}^j}(q,u,\eta,f)=\f{1}{\gam_0\upmu(\varrho)}\big(-\gam_0^2\varrho\pd_1u\cdot e_j+\varrho\pd_j(q+\mathfrak{g}\eta)-\gam_0\upmu(\varrho)\Delta_{\|}u\cdot e_j \\ -\gam_0(\upmu(\varrho)(1-2/n)+\uplambda(\varrho))\pd_j\grad\cdot u
		-\gam_0\mathbb{D}^0ue_n\cdot e_j(\upmu(\varrho))'-f\cdot e_j\big).
	\end{multline}
	On the other hand, if we take the $n^{\m{th}}$-component of $\overset{\gam_0}{A^2}(q,u,\eta)=f$ and isolate the $\pd_n^2( u\cdot e_n)$ contribution, then we get the equation $\pd_n^2(u\cdot e_n)=\overset{\gam_0}{\mathscr{N}^n}(q,u,\eta,f)$, where
	\begin{multline}\label{Nn_map_defs}
		\overset{\gam_0}{\mathscr{N}^n}(q,u,\eta,f)=\f{1}{\gam_0(2(1-1/n)\upmu(\varrho)+\uplambda(\varrho))}\big(-\gam_0^2\varrho\pd_1u\cdot e_n+\varrho\pd_nq-\gam_0\upmu(\varrho)\Delta_{\|}u\cdot e_n\\-\gam_0(\upmu(\varrho)(1-2/n)+\uplambda(\varrho))(\grad_{\|},0)\cdot\pd_nu-\gam_0\varrho'(\upmu'(\varrho)\mathbb{D}^0ue_n\cdot e_n+\uplambda'(\varrho)\grad\cdot u)-f\cdot e_n\big).
	\end{multline}
	
	We record the output of these calculations in the following lemma.
	
	\begin{lem}[Existence of the normal systems]\label{lemma on the existence of the normal systems}
		Let $0<\rho\le\rho_{\m{WD}}$, where the latter is defined in Theorem~\ref{thm on smooth tameness of the nonlinear operator}, $w_0=(q_0,u_0,\eta_0)$ be as in Lemma~\ref{properties of the principal parts vector field}, $\gam_0\in I$ for $I\Subset\R+$ an interval, and $(g,f,k)\in\Y^0$. The following hold upon setting $\Lambda_{\gam_0}(\varrho)=\gam_0^{-1}\varrho^2(2(1-1/n)\upmu(\varrho)+\uplambda(\varrho))^{-1}$.
		\begin{enumerate}
			\item If $(q,u,\eta)\in\overset{q_0,u_0,\eta_0}{\X^{0}}$ satisfies $\overset{w_0,\gam_0}{A}(q,u,\eta)=(g,f,k)$, then
			\begin{equation}\label{normal system zeroth component identity}
				\pd_n\tp{\Lambda_{\gam_0}(\varrho)q+\grad\cdot(v_{w_0}q)}=\overset{w_0,\gam_0}{\mathscr{N}^0}(q,u,\eta,f)
			\end{equation}
			and
			\begin{equation}\label{country roads take me home}
				\pd_n^2u=\tp{\overset{\gam_0}{\mathscr{N}^1},\dots,\overset{\gam_0}{\mathscr{N}^n}}(q,u,\eta,f).
			\end{equation}
			\item If $m,N\in\N^+$, with $m\ge 2$, and $(q,u,\eta)\in\X^0_{m,N}$ satisfy the equations $\overset{w_0,\gam_0}{A_{m,N}}(q,u,\eta)=(g,f,k)$, then
			\begin{equation}
				\pd_n\tp{\Lambda_{\gam_0}(\varrho)q+\grad\cdot(v_{w_0}q)+N^{-1}L_mq}=\overset{w_0,\gam_0}{\mathscr{N}^0}(q,u,\eta,f),
			\end{equation}
			and~\eqref{country roads take me home} holds.
		\end{enumerate}
	\end{lem}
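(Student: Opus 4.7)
The plan is to verify the lemma by carefully performing the algebraic manipulations that were outlined in the text immediately preceding the statement, being attentive to the regularity at which each identity holds. The content of the lemma is essentially a recording of the Matsumura--Nishida-style linear combinations that isolate normal derivatives in terms of data and tangential derivatives; no new analytic ideas are required, only a careful bookkeeping of terms.

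For the first item, I would start from the two scalar equations $\overset{w_0}{A^1}(q,u,\eta)=g$ and $\overset{\gam_0}{A^2}(q,u,\eta)=f$ (the third component $\overset{\gam_0}{A^3}=k$ plays no role here). First I would form the linear combination displayed in~\eqref{special linear combination}, namely $\pd_n\overset{w_0}{A^1} + \Lambda_{\gam_0}(\varrho) \, e_n\cdot \overset{\gam_0}{A^2}$, and observe the key cancellation: the coefficient $\Lambda_{\gam_0}(\varrho) = \varrho^2/[\gam_0(2(1-1/n)\upmu(\varrho)+\uplambda(\varrho))]$ is designed precisely so that the $\pd_n^2(u\cdot e_n)$ contribution from $\pd_n\grad\cdot(\varrho u)$ in the first equation cancels the $\pd_n^2(u\cdot e_n)$ contribution from the $(2(1-1/n)\upmu(\varrho)+\uplambda(\varrho))\pd_n(\grad\cdot u)\cdot e_n$ part of the Lam\'e operator $\grad\cdot \S^\varrho u$ acting on the $n$th component. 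After this cancellation, I would collect all remaining terms, move those involving at most tangential derivatives or only $\pd_n$ applied to $u$ and $\eta$ into $\overset{w_0,\gam_0}{\mathscr{N}^0}$ as defined in~\eqref{N0_map_defs}, and leave on the left precisely $\pd_n(\Lambda_{\gam_0}(\varrho)q + \grad\cdot(v_{w_0}q))$; this yields~\eqref{normal system zeroth component identity}. The identity~\eqref{country roads take me home} is then immediate: for each $j\in\{1,\dots,n-1\}$, take the $j$th scalar component of $\overset{\gam_0}{A^2}(q,u,\eta)=f$, isolate the $\upmu(\varrho)\pd_n^2(u\cdot e_j)$ term (which is the only $\pd_n^2$ contribution, since mixed $\pd_n\pd_j$ terms in $\grad\cdot \S^\varrho u$ only involve $\pd_n u$), and divide by $\gam_0\upmu(\varrho)$ to obtain the formula~\eqref{Nj_map_defs}; similarly, taking the $n$th component yields~\eqref{Nn_map_defs} after dividing by $\gam_0(2(1-1/n)\upmu(\varrho)+\uplambda(\varrho))$.

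The second item is handled by the same algebraic manipulation, simply carrying along the extra term $N^{-1}L_m(q+\mathfrak{g}\eta)$ in the continuity equation. Since $L_m q$ and $L_m \eta$ involve at most $2m$ derivatives in each coordinate direction, and the combination enters the first identity additively via $\pd_n(N^{-1}L_m q)$, the result is exactly as stated. The $\mathfrak{g}\pd_n L_m \eta$ piece is absorbed into $\overset{w_0,\gam_0}{\mathscr{N}^0}$ through the $\mathfrak{g}\pd_n\grad\cdot(v_{w_0}\eta)$-like machinery, though in the regularized setting one simply notes that the $L_m\eta$ term depends only on $\eta$ (hence is already controlled by tangential data up to the Neumann structure on $q$), so for the regularized identity one needs only to move $N^{-1}\pd_n L_m q$ to the left and treat the $\eta$-piece as part of the right-hand data.

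The only genuine obstacle is purely technical: ensuring that every identity is justified at the regularity level afforded by the hypotheses. For the first item, $u\in H^2(\Omega;\R^n)$ and $q\in H^1(\Omega)$ with $\grad\cdot(v_{w_0}q)\in H^1(\Omega)$, so $\pd_n\grad\cdot(v_{w_0}q)\in L^2(\Omega)$ and $\pd_n^2 u \in L^2(\Omega;\R^n)$, meaning both identities hold as equalities in $L^2(\Omega)$; I would confirm this by pairing with test functions in $C^\infty_c(\Omega)$ and using the already-established $L^2$ regularity of each term appearing in $\overset{w_0,\gam_0}{\mathscr{N}^0}$ and the $\overset{\gam_0}{\mathscr{N}^j}$. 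For the regularized item, since $(q,u,\eta)\in\X^0_{m,N}$ with $m\ge 2$ guarantees $q,\eta\in H^{1+2m}$, the additional $\pd_n L_m q$ term is smooth enough for the identity to hold in $L^2(\Omega)$ as well. No absorption, no smallness, and no Korn-type arguments are needed; the lemma is a purely algebraic statement supplemented by a routine regularity check.
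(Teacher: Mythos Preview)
Your proposal is correct and matches the paper's approach exactly: the paper offers no separate proof, simply stating that the lemma records the output of the computations displayed in the text immediately preceding it. One minor clarification: in the regularized item, your handling of the $\mathfrak{g}\pd_n L_m\eta$ contribution is slightly muddled; since $\eta$ is a function on $\Sigma$ extended constantly in the normal direction, $\pd_n\eta=0$ and hence $\pd_n L_m\eta=0$ identically, so no absorption into $\mathscr{N}^0$ is required and only $N^{-1}\pd_n L_m q$ survives on the left.
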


	We next study the mapping properties of the $\mathscr{N}$ mappings, starting with $\mathscr{N}^0$.
	
	\begin{lem}[Boundedness of $\mathscr{N}^0$]\label{prop on boundedness of N^0}
		Let $0<\rho\le\rho_{\m{WD}}$, where the latter is defined in Theorem~\ref{thm on smooth tameness of the nonlinear operator}, $w_0=(q_0,u_0,\eta_0)$ be as in Lemma~\ref{properties of the principal parts vector field}, $\gam_0\in I$ for $I\Subset\R^+$ an interval, and $s\in\N$. The linear map $\overset{w_0,\gam_0}{\mathscr{N}^0}:\X_s\times H^{1+s}(\Omega)\times H^s(\Omega;\R^n)\to H^s(\Omega)$ from \eqref{N0_map_defs} is well-defined, bounded, and obeys the following estimate.
		\begin{multline}
			\snorm{\overset{w_0,\gam_0}{\mathscr{N}^0}(q,u,\eta,g,f)}_{H^s}\lesssim\sum_{\sig=0}^1\sum_{j=1}^{n-1}\tnorm{\pd_j^\sig q,\pd_j^\sig u,\pd_j^\sig\eta}_{\X_{s-1}}\\+\tnorm{g,f}_{H^{1+s}\times H^s}+\begin{cases}
				0&\text{if }s<\tfloor{n/2},\\
				\tbr{\tnorm{q_0,u_0,\eta_0}_{\X_{1+s}}}\tnorm{\eta}_{\mathcal{H}^{1+\tfloor{n/2}}}&\text{if }\tfloor{n/2}\le s.
			\end{cases}
		\end{multline}
		Here the implicit constant depends on the dimension, the physical parameters, $\rho_{\m{WD}}$, and $I$.
	\end{lem}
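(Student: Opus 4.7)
The plan is to bound the output of $\overset{w_0,\gam_0}{\mathscr{N}^0}$ in $H^s(\Omega)$ by estimating each of the summands appearing in its defining expression \eqref{N0_map_defs} individually, and then synthesizing. The summands naturally partition into three classes: (1) smooth-coefficient multipliers of $q$, $u$, or a single derivative of $u$; (2) the data terms $\partial_n g$ and $\Lambda_{\gam_0}(\varrho) f\cdot e_n$; and (3) the delicate term $-\mathfrak{g}\pd_n\grad\cdot(v_{w_0}\eta)$. Set $\Lambda_{\gam_0}(\varrho)=\varrho^2/(\gam_0(2(1-1/n)\upmu(\varrho)+\uplambda(\varrho)))$ and note that $\Lambda_{\gam_0}(\varrho)$, $\Lambda_{\gam_0}(\varrho)'$, $\varrho$, $\varrho'$, $\varrho''$, $\upmu(\varrho)$, $\uplambda(\varrho)$, $\upmu'(\varrho)\varrho'$, and $\uplambda'(\varrho)\varrho'$ all belong to $W^{\infty,\infty}(\Omega)$ by the analysis of Section \ref{dynamics_and_equilibria}, with norms depending only on $\gam_0\in I\Subset \R^+$ and the physical parameters.

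For class (1), each summand takes the form $c(y) \cdot \partial^\alpha u$ or $c(y) \cdot \partial^\alpha q$ with $c \in W^{\infty,\infty}$, where $|\alpha|\le 2$ and, crucially, $\alpha\cdot e_n\le 1$ (never $\partial_n^2$). For instance, $\partial_n(\varrho' u\cdot e_n) = \varrho'' u\cdot e_n + \varrho' \partial_n u\cdot e_n$ involves $u$ and $\partial_n u$; the term $(\grad_{\|},0)\cdot\partial_n(\varrho u) = \varrho' (\grad_{\|},0)\cdot u + \varrho (\grad_{\|},0)\cdot \partial_n u$ involves a tangential derivative of $u$ and a mixed second derivative; and $\Delta_{\|} u\cdot e_n$, $(\grad_{\|},0)\cdot \partial_n u$, $\mathbb{D}^0 u e_n\cdot e_n$, $\grad\cdot u$ similarly. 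By Corollary \ref{corollary on tame estimates on simple multipliers}, each such term is controlled in $H^s(\Omega)$ by $\tnorm{u}_{H^{s+1}}$ plus, when a second derivative is present, $\tnorm{\partial_j u}_{H^{s+1}}$ for some $j\in\{1,\dots,n-1\}$. The corresponding $q$-terms are controlled by $\tnorm{q}_{H^s}$. Since $\tnorm{u}_{H^{s+1}}= \tnorm{u}_{\X_{s-1}\text{-comp}}$ (the $u$-component of $\X_{s-1}$) and $\tnorm{q}_{H^s}=\tnorm{q}_{\X_{s-1}\text{-comp}}$, these contributions all fit into the first sum in the stated estimate. Class (2) is immediate: $\tnorm{\partial_n g}_{H^s}\le \tnorm{g}_{H^{1+s}}$ and $\tnorm{\Lambda_{\gam_0}(\varrho)f\cdot e_n}_{H^s}\lesssim \tnorm{f}_{H^s}$.

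The main obstacle is class (3), the term $-\mathfrak{g}\pd_n\grad\cdot(v_{w_0}\eta)$. To estimate this in $H^s(\Omega)$, I will bound $\grad\cdot(v_{w_0}\eta)$ in $H^{1+s}(\Omega)$ and then take one normal derivative. Applying the fourth item of Lemma \ref{properties of the principal parts vector field} at regularity level $s+1$, with $q=0$ there, yields
\begin{equation}
\tnorm{\grad\cdot(v_{w_0}\eta)}_{H^{1+s}} \lesssim \tnorm{\eta}_{\mathcal{H}^{2+s}} + \begin{cases} 0 & \text{if } s < \tfloor{n/2},\\ \tnorm{q_0,u_0,\eta_0}_{\X_{1+s}}\tnorm{\eta}_{\mathcal{H}^{1+\tfloor{n/2}}} & \text{if } \tfloor{n/2} \le s.\end{cases}
\end{equation}
The background-dependent remainder above matches the last piece of the stated estimate exactly. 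For the first piece, I use the characterization of the anisotropic norm from Proposition \ref{proposition on spatial characterization of anisobros}, together with the elementary inequality $\tnorm{\eta}_{\mathcal{H}^{2+s}} \lesssim \tnorm{\eta}_{\mathcal{H}^{3/2+s}} + \sum_{j=1}^{n-1}\tnorm{\partial_j\eta}_{\mathcal{H}^{3/2+s}}$ (which follows since one tangential derivative in Fourier space produces the factor $\xi_j$ that boosts the high-frequency weight from $\tbr{\xi}^{3/2+s}$ to $\tbr{\xi}^{2+s}$, while respecting the low-frequency weight). Since $\tnorm{\eta}_{\mathcal{H}^{3/2+s}}$ and $\tnorm{\partial_j\eta}_{\mathcal{H}^{3/2+s}}$ are the $\eta$-components of $\tnorm{\eta}_{\X_{s-1}}$ and $\tnorm{\partial_j\eta}_{\X_{s-1}}$ respectively, this absorbs into the $\sigma=0,1$ sum.

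Adding the estimates from classes (1), (2), and (3) produces the claimed bound, and boundedness and well-definedness of $\overset{w_0,\gam_0}{\mathscr{N}^0}$ on the stated product space follow. The only subtlety worth double-checking is the anisotropic norm inequality used in class (3), since it relies on the precise low-frequency weight $|\xi|^{-2}(\xi_1^2+|\xi|^4)$ in the definition of $\mathcal{H}^s$; the desired bound holds because this weight already encodes the $\partial_1$-boost through the $\xi_1^2$ term and the $|\xi|^2$-boost through the $|\xi|^4$ term, so multiplying by an additional $\tbr{\xi}$ on the high-frequency side is the only effect of $\partial_j$ relevant here.
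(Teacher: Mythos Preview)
Your proposal is correct and follows essentially the same route as the paper. The paper organizes the argument by writing $\overset{w_0,\gam_0}{\mathscr{N}^0}=\overset{0,\gam_0}{\mathscr{N}^0}-\mathfrak{g}\pd_n\grad\cdot((v_{w_0}-\varrho' e_1/\mathfrak{g})\eta)$ and handling the $w_0=0$ piece by inspection, whereas you group the terms directly into your classes (1)--(3); but both arguments use the fourth item of Lemma~\ref{properties of the principal parts vector field} for the critical $\eta$-term and then absorb $\tnorm{\eta}_{\mathcal{H}^{2+s}}$ into $\sum_{\sigma,j}\tnorm{\partial_j^\sigma\eta}_{\mathcal{H}^{3/2+s}}$ exactly as you do (the paper records the slightly stronger bound $\tnorm{\eta}_{\mathcal{H}^{5/2+s}}\lesssim\sum_{\sigma,j}\tnorm{\partial_j^\sigma\eta}_{\mathcal{H}^{3/2+s}}$, but either suffices). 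Your justification of this last inequality is a bit muddled---on low frequencies the anisotropic weight is the same for every $\mathcal{H}^r$, so nothing needs checking there, and on high frequencies it is just $\tbr{\xi}^{4+2s}\le\tbr{\xi}^{3+2s}\tbr{\xi}^2$---but the inequality itself is correct.
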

	\begin{proof}
		We decompose
		\begin{equation}
			\overset{w_0,\gam_0}{\mathscr{N}^0}(q,u,\eta,g,f)=\overset{0,\gam_0}{\mathscr{N}^0}(q,u,\eta,g,f)-\mathfrak{g}\pd_n\grad\cdot((v_{w_0}-\varrho'e_1/\mathfrak{g})\eta).
		\end{equation}
		By inspection, it is clear that for any $s\ge0$ we have the estimate
		\begin{equation}\label{zigmund_das_schwein_2}
			\snorm{\overset{0,\gam_0}{\mathscr{N}^0}(q,u,\eta,g,f)}_{H^s}\lesssim\sum_{\sig=0}^1\sum_{j=1}^{n-1}\tnorm{\pd_j^\sig q,\pd_j^\sig u,\pd_j^\sig \eta}_{\X_{s-1}}+\tnorm{g,f}_{H^{1+s}\times H^s}.
		\end{equation}
		For the remaining piece we appeal to the fourth item of Lemma~\ref{properties of the principal parts vector field} to estimate
		\begin{multline}
			\tnorm{\pd_n\grad\cdot((v_{w_0}-\varrho'e_1/\mathfrak{g})\eta)}_{H^s}\le\tnorm{\grad\cdot(v_{w_0}\eta)}_{H^{1+s}}+\tnorm{\pd_1\eta}_{H^s}\\
			\lesssim\tnorm{\eta}_{\mathcal{H}^{2+s}}+\begin{cases}
				0&\text{if }s<\tfloor{n/2},\\
				\tbr{\tnorm{q_0,u_0,\eta_0}_{\X_{1+s}}}\tnorm{\eta}_{\mathcal{H}^{1+\tfloor{n/2}}}&\text{if }\tfloor{n/2}\le s.
			\end{cases}
		\end{multline}
		We then conclude by noting that
		\begin{equation}\label{SQG is also an equation did yua know?}
			\tnorm{\eta}_{\mathcal{H}^{5/2+s}}\lesssim\sum_{\sig=0}^1\sum_{j=1}^{n-1}\tnorm{\pd_j^\sig\eta}_{\mathcal{H}^{3/2+s}}.
		\end{equation}
	\end{proof}
	
	Next, we study the $\mathscr{N}^j$ maps for $1 \le j \le n$, as defined by \eqref{Nj_map_defs} and \eqref{Nn_map_defs}.
	
	\begin{lem}[Boundedness of $\mathscr{N}^j$, for $1\le j\le n$]\label{boundedness of the remaining N^j maps}
		Suppose that  $\gam_0\in I$ for $I \Subset\R^+$ an interval, $s\in\N$, and $i\in\tcb{1,\dots,n}$.  Then the linear map $\overset{\gam_0}{\mathscr{N}^i}:\X_s\times H^s(\Omega;\R^n)\to H^s(\Omega)$ from \eqref{Nj_map_defs} when $j < n$ or \eqref{Nn_map_defs} when $j=n$ is well-defined, bounded, and satisfies the following estimates.  If $1\le i\le n-1$, then
		\begin{equation}
			\tnorm{\overset{\gam_0}{\mathscr{N}^i}(q,u,\eta,f)}_{H^s}\lesssim\sum_{\sig=0}^1\sum_{j=1}^{n-1}\tnorm{\pd_j^\sig q,\pd_j^\sig u,\pd_j^\sig\eta}_{\X_{s-1}}+\tnorm{f}_{H^s},
		\end{equation}
		and if $j=n$, then
		\begin{equation}
			\tnorm{\overset{\gam_0}{\mathscr{N}^n}(q,u,\eta,f)}_{H^s}\lesssim\sum_{\sig=0}^{1}\sum_{j=1}^{n-1}\tnorm{\pd_j^\sig q,\pd_j^\sig u,\pd_j^\sig\eta}_{\X_{s-1}}+\tnorm{\pd_nq}_{H^s}+\tnorm{f}_{H^s}.
		\end{equation}
	\end{lem}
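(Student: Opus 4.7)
The plan is to treat each component $\mathscr{N}^i$ by direct inspection of its defining formula, \eqref{Nj_map_defs} or \eqref{Nn_map_defs}. Because $\mathscr{N}^i$ is linear in $(q,u,\eta,f)$, the well-definedness assertion is an automatic consequence of the norm estimate, so the entire task reduces to proving the stated bound. The main analytic input will be Corollary~\ref{corollary on tame estimates on simple multipliers}, combined with the observation that every coefficient appearing in $\mathscr{N}^i$ (namely $\varrho$, $\varrho'$, $\upmu(\varrho)$, $\uplambda(\varrho)$, their derivatives, and the reciprocals $1/\gam_0\upmu(\varrho)$ and $1/(\gam_0(2(1-1/n)\upmu(\varrho)+\uplambda(\varrho)))$) belongs to $W^{s,\infty}((0,b))$ uniformly in $\gam_0 \in I$, since $\varrho \in C^\infty([0,b];\R^+)$ by \eqref{varrho_def} and $\upmu,\uplambda \in C^\infty(\R^+;\R^+)$ under \eqref{parameter_assumptions}; their $W^{s,\infty}$ norms depend only on $s$, $I$, and the physical parameters.

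For $1 \le i \le n-1$, I would enumerate the summands in \eqref{Nj_map_defs}. The contributions $-\gam_0^2\varrho \pd_1 u\cdot e_i$, $\varrho \pd_i(q+\mathfrak{g}\eta)$, and $-\gam_0\upmu(\varrho)\Delta_{\|} u \cdot e_i = -\gam_0\upmu(\varrho)\sum_{k=1}^{n-1}\pd_k^2 u\cdot e_i$ consist only of tangential derivatives applied to $q$, $u$, or $\eta$, so each is bounded in $H^s(\Omega)$ by the $(\sigma{=}1,k{=}1)$, $(\sigma{=}1,k{=}i)$, or $(\sigma{=}1,k)$ pieces of $\sum_{\sigma,k}\tnorm{\pd_k^\sigma q,\pd_k^\sigma u,\pd_k^\sigma\eta}_{\X_{s-1}}$, after multiplying by the appropriate $W^{s,\infty}$ coefficient. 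The term $\pd_i\grad\cdot u$ splits as $\sum_{k=1}^{n-1}\pd_k(\pd_i u\cdot e_k) + \pd_n(\pd_i u\cdot e_n)$; each piece is one derivative applied to the tangentially differentiated velocity $\pd_i u$ and is thus controlled by $\tnorm{\pd_i u}_{H^{1+s}} \le \tnorm{\pd_i u}_{\X_{s-1}}$, again a $\sigma{=}1$ contribution. Finally, the deviatoric piece $\mathbb{D}^0 u \, e_n \cdot e_i = \pd_n u \cdot e_i + \pd_i u\cdot e_n$ involves a genuine normal derivative of $u$; however, this is absorbed into $\tnorm{u}_{H^{2+(s-1)}} = \tnorm{u}_{H^{1+s}} \le \tnorm{u}_{\X_{s-1}}$, which appears as the $\sigma{=}0$ contribution in the sum. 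The forcing term $-f\cdot e_i/\gam_0\upmu(\varrho)$ is trivially bounded by $\tnorm{f}_{H^s}$.

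For $i=n$, the same analysis applies to every summand in \eqref{Nn_map_defs} except one. The terms $\gam_0^2\varrho\pd_1 u\cdot e_n$, $\gam_0\upmu(\varrho)\Delta_{\|} u\cdot e_n$, $\gam_0(\upmu(\varrho)(1-2/n)+\uplambda(\varrho))(\grad_{\|},0)\cdot\pd_n u$, and the $\mathbb{D}^0 u \, e_n\cdot e_n$ and $\grad\cdot u$ contributions either contain only tangential derivatives of $u$, or mixed derivatives $\pd_n\pd_k u$ with $k<n$ (controlled by $\tnorm{\pd_k u}_{H^{1+s}} \le \tnorm{\pd_k u}_{\X_{s-1}}$), or a single normal derivative $\pd_n u$ (controlled by $\tnorm{u}_{H^{1+s}} \le \tnorm{u}_{\X_{s-1}}$). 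The one genuinely new feature is $\varrho \pd_n q / (\gam_0(2(1-1/n)\upmu(\varrho)+\uplambda(\varrho)))$: this is a bona fide normal derivative of $q$ which cannot be recovered from any tangential-derivative norm, nor from $\tnorm{q}_{\X_{s-1}} = \tnorm{q}_{H^s}$. This is precisely what forces the additional $\tnorm{\pd_n q}_{H^s}$ term on the right-hand side of the estimate for $\mathscr{N}^n$.

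I do not anticipate any substantive obstacle: the argument is a systematic unpacking of the two algebraic formulas, parallel to (but easier than) the treatment of $\mathscr{N}^0$ in Lemma~\ref{prop on boundedness of N^0}, where the subtler derivative cancellations and the anisotropic estimate $\tnorm{\grad\cdot(v_{w_0}\eta)}_{H^{1+s}}$ from Lemma~\ref{properties of the principal parts vector field} were required. The only minor points to keep track of are (i) that the $\X_{s-1}$-norm of $u$ carries $s+1$ derivatives, which is exactly what is needed to absorb the single normal derivatives of $u$ that appear, and (ii) that $\eta$-contributions like $\pd_i\eta$ are dominated by $\tnorm{\pd_i\eta}_{\mathcal{H}^{3/2+s}} \le \tnorm{\pd_i\eta}_{\X_{s-1}}$, using the norm structure \eqref{the norm on the anisotropic Sobolev spaces}. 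Collecting all the bounds and using the uniform $W^{s,\infty}$ control of the coefficient functions then closes both estimates.
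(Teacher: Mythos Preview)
Your proposal is correct and is precisely what the paper has in mind: its own proof reads in full ``These estimates are clear by inspection.'' Your termwise unpacking of \eqref{Nj_map_defs} and \eqref{Nn_map_defs} is exactly that inspection written out.

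One small imprecision worth flagging: in your point (ii) you write that $\pd_i\eta$ is dominated by $\tnorm{\pd_i\eta}_{\mathcal{H}^{3/2+s}}$, but $\mathcal{H}^{3/2+s}$ does not embed into $H^s$ in general (this is the whole point of the anisotropic scale). The clean way to control $\tnorm{\pd_i\eta}_{H^s(\Omega)}$ is via the $\sigma=0$ term: $\tnorm{\eta}_{\mathcal{H}^{3/2+s}}$ gives $\grad_{\|}\eta\in H^{1/2+s}(\R^{n-1})$ by \eqref{the norm on the anisotropic Sobolev spaces}, hence $\pd_i\eta\in H^s(\R^{n-1})$, and then multiplication by the smooth $y$-dependent coefficient lands in $H^s(\Omega)$. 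Since your right-hand side already carries both $\sigma=0$ and $\sigma=1$, the argument closes; only the attribution of which term does the work needs adjusting.
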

	\begin{proof}
		These estimates are clear by inspection.
	\end{proof}
	
	The next result is an application of some of the analysis from Section~\ref{Section: Analysis of Regularized Steady Transport Equations} to the specific steady transport structure appearing here in the normal system.
	
	\begin{lem}[Steady transport estimate]\label{lemma on steady transport estimate}
		Let $0<\rho\le\rho_{\m{WD}}$, where the latter is defined in Theorem~\ref{thm on smooth tameness of the nonlinear operator}, $w_0=(q_0,u_0,\eta_0)$ be as in Lemma~\ref{properties of the principal parts vector field}, $\gam_0\in I$ for $I\Subset\R^+$ an interval, and $\upnu\in\N$. Suppose that $\varphi,\psi\in H^\upnu(\Omega)$ satisfy $\grad\cdot(v_{w_0}\varphi)\in H^\upnu(\Omega)$ and
		\begin{equation}\label{steady freddy}
			\Lambda_{\gam_0}(\varrho)\varphi+\grad\cdot(v_{w_0}\varphi)=\psi\quad\text{in }\Omega,
		\end{equation}
		where $\Lambda_{\gam_0}(\varrho)$ is defined as in Lemma~\ref{lemma on the existence of the normal systems}. There exists a $\rho_{\m{ST},\upnu}\in\R^+$, depending only on the physical parameters, $\upnu$, and the dimension, and $I$, such that if $\rho\le\rho_{\m{ST},\upnu}$ then we have the estimate
		\begin{equation}\label{everybody had a hard year}
			\tnorm{\varphi,\grad\cdot(v_{w_0}\varphi)}_{H^\upnu\times H^{\upnu}}\lesssim\tnorm{\psi}_{H^{\upnu}}+\begin{cases}
				0&\text{if }\upnu\le 1+\tfloor{n/2},\\
				\tnorm{q_0,u_0,\eta_0}_{\X_{\upnu}}\tnorm{\varphi}_{H^{1+\tfloor{n/2}}}&\text{if }1+\tfloor{n/2}<\upnu.
			\end{cases}
		\end{equation}
		The implicit constant depends on $\upnu$, the dimension, the physical parameters, $\rho_{\m{ST},\upnu}$, and $I$.
	\end{lem}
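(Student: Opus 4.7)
The plan is to apply Proposition~\ref{proposition on a priori estimates for steady transport} to \eqref{steady freddy}, with some care taken to handle a non-small piece of $v_{w_0}$ that arises from the equilibrium density.

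First, using the decomposition \eqref{fundamental decomposition of the vector field v} from Lemma~\ref{properties of the principal parts vector field}, write $v_{w_0}=\mathfrak{g}^{-1}\varrho'e_1+X_0+X_1$ with $X_0=v^{(1)}_{q_0,u_0,\eta_0}\in H^\infty(\Omega;\R^n)$ and $X_1=v^{(2)}_{\eta_0}\in W^{\infty,\infty}(\Omega;\R^n)$.  That lemma gives, for $\rho\le\rho_{\m{WD}}$ sufficiently small, $(DX_0,DX_1)\in B_{H^{1+\tfloor{n/2}}}(0,\rho)\times B_{W^{1+\tfloor{n/2},\infty}}(0,\rho)$, and moreover the tame bound $\tnorm{DX_0,DX_1}_{H^{\upnu}\times W^{\upnu,\infty}}\lesssim\tnorm{q_0,u_0,\eta_0}_{\X_\upnu}$ for $\upnu>1+\tfloor{n/2}$.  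The multiplier $\Lambda_{\gam_0}(\varrho)$ belongs to $W^{\infty,\infty}(\Omega)$ and is bounded below away from zero since $\varrho,\upmu>0$ and $\uplambda\ge 0$.

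The residual piece $\mathfrak{g}^{-1}\varrho'(y)e_1$ is $O(1)$ and thus violates the smallness hypothesis of Proposition~\ref{proposition on a priori estimates for steady transport}.  However, it is both divergence-free and $x_1$-translation-invariant, so $\grad\cdot(\mathfrak{g}^{-1}\varrho'e_1\varphi)=\mathfrak{g}^{-1}\varrho'\pd_1\varphi$, and \eqref{steady freddy} is equivalent to
\begin{equation}
\Lambda_{\gam_0}(\varrho)\varphi+\mathfrak{g}^{-1}\varrho'\pd_1\varphi+\grad\cdot((X_0+X_1)\varphi)=\psi.
\end{equation}
I would then mimic the proof of Proposition~\ref{proposition on a priori estimates for steady transport} in three ingredients: the $L^2$-energy test, the $B_m$-energy test, and an inductive bootstrap.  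Testing with $\varphi$, integration by parts in $x_1$ annihilates $\int_\Omega\mathfrak{g}^{-1}\varrho'\pd_1\varphi\cdot\varphi$ (no boundary terms since $\Omega=\R^{n-1}\times(0,b)$), so the base case proceeds as in Proposition~\ref{proposition on a priori estimates for steady transport}.  For higher $m$, the term $B_m(\varphi,\grad\cdot((X_0+X_1)\varphi))$ is controlled by Lemma~\ref{lem on a bilinear estimate} with the right tame structure, while for $B_m(\varphi,\mathfrak{g}^{-1}\varrho'\pd_1\varphi)$ the tangential ($j<n$) contributions vanish via the same $x_1$-antisymmetry, and the normal ($j=n$) contribution reduces to commutators involving $y$-derivatives of $\varrho'$, which are bounded by $C\tnorm{\varphi}_{H^m}\tnorm{\varphi}_{H^{m-1}}$ with an $O(1)$ constant.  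Young's inequality splits this as $\ep\tnorm{\varphi}_{H^m}^2+C_\ep\tnorm{\varphi}_{H^{m-1}}^2$; the first is absorbed and the second is removed by an induction on $\upnu$, yielding the asserted bound on $\tnorm{\varphi}_{H^\upnu}$.  The estimate on $\grad\cdot(v_{w_0}\varphi)$ is then immediate: rearrange the equation as $\grad\cdot(v_{w_0}\varphi)=\psi-\Lambda_{\gam_0}(\varrho)\varphi$, take the $H^\upnu$-norm, and use the boundedness of multiplication by $\Lambda_{\gam_0}(\varrho)\in W^{\infty,\infty}(\Omega)$.

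\textbf{Main obstacle.} The key difficulty is the $O(1)$ equilibrium term $\mathfrak{g}^{-1}\varrho'(y)e_1$ in $v_{w_0}$, which prevents a direct application of Proposition~\ref{proposition on a priori estimates for steady transport}.  The resolution exploits the divergence-freeness and $x_1$-invariance of this term to secure an exact antisymmetry cancellation at leading order in the $B_m$ pairings, reducing its contribution to commutator remainders that are absorbable by Young's inequality and removed by an induction on the regularity level.
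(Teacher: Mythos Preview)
Your approach has a genuine gap in the handling of the equilibrium piece $\mathfrak{g}^{-1}\varrho'(y)e_1$.  The claim that the $j=n$ contribution to $B_m(\varphi,\mathfrak{g}^{-1}\varrho'\pd_1\varphi)$ is bounded by $C\tnorm{\varphi}_{H^m}\tnorm{\varphi}_{H^{m-1}}$ is not correct.  After extracting the antisymmetric top-order term, the Leibniz remainder contains, for $k=1$,
\[
m\int_{\Omega}\varrho''(y)\,\pd_n^m\varphi\;\pd_1\pd_n^{m-1}\varphi,
\]
and the factor $\pd_1\pd_n^{m-1}\varphi$ is a genuine $m^{\mathrm{th}}$-order derivative: no integration by parts (in $x_1$ or $x_n$) reduces this below $C\tnorm{\varrho''}_{L^\infty}\tnorm{\varphi}_{H^m}^2$ with an $O(1)$ constant depending on $m$ and the physical parameters.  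This term therefore competes directly with the coercivity constant from G\aa rding's inequality and cannot be absorbed, so the induction does not close.

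The paper sidesteps this issue by a change of unknown rather than by antisymmetry.  Setting $\widetilde{\varphi}=\varrho'\varphi/\mathfrak{g}$ and $X=\tfrac{\mathfrak{g}}{\varrho'}v_{w_0}$, one has $X\widetilde{\varphi}=v_{w_0}\varphi$, and the equation becomes $\Uplambda\widetilde{\varphi}+\grad\cdot(X\widetilde{\varphi})=\psi$ with $\Uplambda=\mathfrak{g}\Lambda_{\gam_0}(\varrho)/\varrho'$.  Under this rescaling the equilibrium piece of $X$ is the \emph{constant} field $e_1$, whose derivative vanishes identically; the smallness hypothesis of Proposition~\ref{proposition on a priori estimates for steady transport} concerns only $DX_0,DX_1$, so the $e_1$ part is harmless and the proposition applies directly with $X_0=\tfrac{\mathfrak{g}}{\varrho'}v^{(1)}_{q_0,u_0,\eta_0}$, $X_1=e_1+\tfrac{\mathfrak{g}}{\varrho'}v^{(2)}_{\eta_0}$.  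The point is that the obstruction you identified is removed not by cancellation but by absorbing the $y$-dependence of $\varrho'$ into the unknown, which is exactly what makes the commutators with $\pd_n^m$ disappear.
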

	\begin{proof}
		Most of the work was already carried out in Section~\ref{Section: Analysis of Regularized Steady Transport Equations} in the sense that we endeavor to apply Proposition~\ref{proposition on a priori estimates for steady transport} with the decomposed vector field $X = \f{\mathfrak{g}}{\varrho'}v_{w_0} = X_0 + X_1$, where $X_0 = \f{\mathfrak{g}}{\varrho'}v^{(1)}_{q_0,u_0,\eta_0}$ and $X_1 = e_1+\f{\mathfrak{g}}{\varrho'}v^{(2)}_{\eta_0}$,
		which is split according to the decomposition of $v_{w_0}$ from Lemma~\ref{properties of the principal parts vector field}.  Write $\Uplambda=\mathfrak{g}\Lambda_{\gam_0}(\varrho)/\gam\varrho'$. By hypothesis,  $\widetilde{\varphi}=\varrho'\varphi/\mathfrak{g}\in H^\upnu(\Omega)$ satisfies
		\begin{equation}
			\Uplambda\widetilde{\varphi}+\grad\cdot(X\widetilde{\varphi})=\psi \text{ in }\Omega 
			\text{ and } 
			\grad\cdot(X\widetilde{\varphi})=\grad\cdot(v_{w_0}\varphi)\in H^\upnu(\Omega).
		\end{equation}
		Thus, we may employ Proposition~\ref{proposition on a priori estimates for steady transport} to see that there exists $\rho^{(\upnu)}\in\R^+$ (depending only $\upnu$, the physical parameters, and $I$) such that if $\tnorm{DX_0,DX_1}_{H^{1+\lfloor n/2\rfloor}\times W^{1+\lfloor n/2\rfloor,\infty}}\le \rho^{(\upnu)}$, then we have the estimate
		\begin{equation}
			\tnorm{\widetilde{\varphi}}_{H^\upnu}\lesssim\tnorm{\psi}_{H^\upnu}+\begin{cases}
				0&\text{if }\upnu\le 1+\lfloor n/2\rfloor,\\
				\tnorm{DX_0,DX_1}_{H^\upnu\times W^{\upnu,\infty}}\tnorm{\widetilde{\varphi}}_{H^{1+\lfloor n/2\rfloor}}&\text{if }1+\lfloor n/2\rfloor<\upnu.
			\end{cases}
		\end{equation}
		Next we note that, with implicit constants depending only on the physical parameters and $s\in\tcb{\upnu,1+\lfloor n/2\rfloor}$, we have the estimates
		\begin{equation}
			\tnorm{\varphi}_{H^s}\asymp\tnorm{\tilde{\varphi} }_{H^s}
			\text{ and }
			\tnorm{DX_0,DX_1}_{H^s\times W^{s,\infty}}\lesssim\tnorm{v^{(1)}_{q_0,u_0,\eta_0},v^{(2)}_{\eta_0}}_{H^{1+s}\times W^{1+s,\infty}}.
		\end{equation}
		By invoking the first and second items of Lemma~\ref{properties of the principal parts vector field}, we also see that
		\begin{equation}
			\tnorm{v^{(1)}_{q_0,u_0,\eta_0},v^{(2)}_{\eta_0}}_{H^{1+s}\times W^{1+s,\infty}}\lesssim\tnorm{q_0,u_0,\eta_0}_{\X_s}.
		\end{equation}
		The claimed bound on $\varphi$ in the $H^\upnu(\Omega)$ norm now follows by combining these bounds and taking $\rho_{\m{ST},\upnu}$ small enough. 
		
		It remains to establish a bound on $\grad\cdot(v_{w_0}\varphi)$ in the same space. The equation~\eqref{steady freddy} is equivalent to $\grad\cdot(v_{w_0}\varphi)=\psi-\Uplambda(\rho)\varphi$, so by taking the norm in $H^\upnu(\Omega)$ and utilizing the established bounds on $\tnorm{\varphi}_{H^\upnu}$, we complete the proof of~\eqref{everybody had a hard year}.
	\end{proof}

	We are now ready to identify a recursive estimate for the norm of a solution to the principal part equations. The previous normal system identification and boundedness results merge in this next proposition and allow us to control the solution's norm in terms of the data and a lower norm of the tangentially differentiated solution.

	\begin{prop}[Synthesis of normal system results, 1]\label{first normal system synthesis}
		Let $0<\rho\le\rho_{\m{WD}}$, where the latter is from Theorem~\ref{thm on smooth tameness of the nonlinear operator}, $w_0=(q_0,u_0,\eta_0)$ be as in Lemma~\ref{properties of the principal parts vector field}, $\gam_0\in I$ for $I\Subset\R^+$ an interval, and $\upnu\in\N$.  Suppose that $(q,u,\eta)\in\overset{q_0,u_0,\eta_0}{\X^\upnu}$ and $(g,f,k)\in\Y^\upnu$ satisfy $\overset{w_0,\gam_0}{A}(q,u,\eta)=(g,f,k)$. There exists a $\rho_{\m{normal},\upnu}\in\R^+$
		, depending only on the physical parameters, the dimension, $\upnu$, and $I$, such that that if $\rho\le\rho_{\m{normal},\upnu}$ then we have the estimate 
		\begin{multline}\label{normal system estimate no 1}
			\tnorm{q,u,\eta}_{\overset{q_0,u_0,\eta_0}{\X^\upnu}}\lesssim\sum_{\sig=0}^1\sum_{j=1}^{n-1}\tnorm{\pd_j^\sig q,\pd_j^\sig u,\pd_j^\sig\eta}_{\overset{q_0,u_0,\eta_0}{\X^{\upnu-1}}}\\+\tnorm{g,f,k}_{\Y^{\upnu}}+\begin{cases}
				0&\text{if }\upnu\le\tfloor{n/2},\\
				\tbr{\tnorm{q_0,u_0,\eta_0}_{\X_{1+\upnu}}}\tnorm{q,u,\eta}_{\X_{\tfloor{n/2}}}&\text{if }\tfloor{n/2}<\upnu.
			\end{cases}
		\end{multline}
		The implicit constant depends on $\upnu$, $\rho_{\m{normal},\upnu}$, the dimension, the physical parameters, and $I$.
	\end{prop}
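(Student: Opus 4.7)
The plan is to convert control of the missing normal derivatives of $q$ and $u$ into control of the tangential derivatives and data via iterated application of the normal system identities from Lemma~\ref{lemma on the existence of the normal systems}. To begin, the $\grad\cdot(v_{w_0}q)$ contribution to the adapted-space norm on the left of~\eqref{normal system estimate no 1} is handled directly by the continuity equation: since $\grad\cdot(v_{w_0}q)=g-\grad\cdot(\varrho u)-\mathfrak{g}\grad\cdot(v_{w_0}\eta)$, its $H^{1+\upnu}$ norm is bounded by $\tnorm{g,f,k}_{\Y^\upnu}$ together with $\tnorm{u,\eta}_{H^{2+\upnu}\times\mathcal{H}^{5/2+\upnu}}$ (to be obtained below) and the fourth item of Lemma~\ref{properties of the principal parts vector field}.

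Next, for the normal derivatives of $u$, I will use the identity $\pd_n^2u=(\overset{\gam_0}{\mathscr{N}^1},\dots,\overset{\gam_0}{\mathscr{N}^n})(q,u,\eta,f)$ from~\eqref{country roads take me home}: after applying $\pd_n^\ell$ and invoking Lemma~\ref{boundedness of the remaining N^j maps}, each $\pd_n^{\ell+2}u$ is controlled in $H^{\upnu-\ell}$ by tangential derivatives, data, and at most $\pd_n^{1+\ell}q$. For the normal derivatives of $q$, the key step is to differentiate the identity $\pd_n[\Lambda_{\gam_0}(\varrho)q+\grad\cdot(v_{w_0}q)]=\overset{w_0,\gam_0}{\mathscr{N}^0}$ in the $\pd_n^\ell$ direction and then extract $\pd_n^{\ell+1}q$ by commuting the derivatives through the divergence. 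This rearranges into a steady transport equation of the form
\begin{equation*}
\Lambda_{\gam_0}(\varrho)\pd_n^{\ell+1}q+\grad\cdot(v_{w_0}\pd_n^{\ell+1}q)=\mathcal{R}_\ell,
\end{equation*}
where $\mathcal{R}_\ell$ consists of $\pd_n^\ell\overset{w_0,\gam_0}{\mathscr{N}^0}$, products of $\pd_n^j v_{w_0}$ with $\pd_n^{\ell+1-j}q$ for $j\ge 1$, and terms arising from differentiating the multiplier $\Lambda_{\gam_0}(\varrho)$. For $\rho\le\rho_{\m{ST},\upnu}$, Lemma~\ref{lemma on steady transport estimate} then applies and yields control of both $\pd_n^{\ell+1}q$ and $\grad\cdot(v_{w_0}\pd_n^{\ell+1}q)$ in the appropriate Sobolev norm, in terms of $\mathcal{R}_\ell$.

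Chaining these two steps in a finite induction on $\ell\in\{0,1,\dots,\upnu\}$ closes the recursion, since at each level $\mathcal{R}_\ell$ involves only previously-controlled quantities (lower-order normal derivatives of $q$ from the prior step) and the tangentially-differentiated triple that appears on the right of~\eqref{normal system estimate no 1}. The $\mathcal{H}^{5/2+\upnu}$ norm of $\eta$ is then handled through Proposition~\ref{proposition on spatial characterization of anisobros}, reducing it to bounds on $\tsb{\pd_1\eta}_{\dot H^{-1}}$ (obtained from the kinematic boundary condition $\pd_1\eta=-\m{Tr}_\Sigma(u\cdot e_n)$), $\tnorm{\Delta_\|\eta}_{H^{1/2+\upnu}}$ (obtained by solving the $e_n$-component of the dynamic boundary condition for $\varsigma\Delta_\|\eta=-\varrho q+\gam_0\S^\varrho u\cdot e_n-k\cdot e_n$), and $\tnorm{\grad_\|\eta}_{H^\upnu}$ (already included in the tangential-derivative sum). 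The tame bifurcation between the low-norm $\X_{\tfloor{n/2}}$ and high-norm $\X_{1+\upnu}$ factors of the background is preserved at each step using log-convexity of the $\X$-scale (Lemma~\ref{lem on log-convexity of the norms}) and Young's inequality, exactly as in the proof of Theorem~\ref{thm on tangential derivative analysis}.

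The main obstacle will be the commutator bookkeeping in the $q$-recursion: the commutators $[\pd_n^{\ell+1},\grad\cdot(v_{w_0}\,\cdot\,)]$ produce products of mixed normal-tangential derivatives of $v_{w_0}$ with those of $q$, and closing the induction requires ensuring that every such product is either absorbable by smallness of $\rho$ at low regularity or controllable by the high-regularity background norm $\tnorm{q_0,u_0,\eta_0}_{\X_{1+\upnu}}$ paired with a low $\X_{\tfloor{n/2}}$ factor of the solution. The tight low-high splittings built into Lemmas~\ref{properties of the principal parts vector field}, \ref{prop on boundedness of N^0}, and~\ref{lemma on steady transport estimate} are designed precisely to permit this absorption at each recursion step.
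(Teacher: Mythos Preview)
Your approach is workable in principle but takes an unnecessarily circuitous route. The paper avoids your finite induction on $\ell\in\{0,\dots,\upnu\}$ entirely by applying the steady transport estimate (Lemma~\ref{lemma on steady transport estimate}) \emph{once}, directly to $q$ at level $H^{1+\upnu}$, with data $\psi=\Lambda_{\gam_0}(\varrho)q+\grad\cdot(v_{w_0}q)$. This yields $\tnorm{q,\grad\cdot(v_{w_0}q)}_{H^{1+\upnu}\times H^{1+\upnu}}\lesssim\tnorm{\psi}_{H^{1+\upnu}}$ plus tame terms in one shot, simultaneously handling the adapted-norm contribution you treated separately via the continuity equation. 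The remaining task is to bound $\tnorm{\psi}_{H^{1+\upnu}}$, and the splitting $\tnorm{\psi}_{H^{1+\upnu}}\lesssim\tnorm{\psi}_{L^2}+\sum_{j<n}\tnorm{\pd_j\psi}_{H^\upnu}+\tnorm{\pd_n\psi}_{H^\upnu}$ requires only \emph{one} normal derivative of $\psi$, which is exactly $\overset{w_0,\gam_0}{\mathscr{N}^0}$ by the normal system identity. The tangential pieces $\pd_j\psi$ produce the commutators $\overset{w_0}{\mathscr{C}^j}(q,0)$, which give the absorbable $\rho\tnorm{q}_{H^{1+\upnu}}$ term. The normal-direction commutator bookkeeping you anticipate never arises, because no iterated $\pd_n^\ell$ is ever applied to the transport structure; even within your own framework, the step $\ell=0$ alone already recovers $\tnorm{\pd_nq}_{H^\upnu}$ and hence $\tnorm{q}_{H^{1+\upnu}}$, so the higher steps are redundant.

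For $\eta$, the paper uses the elementary observation $\tnorm{\eta}_{\mathcal{H}^{5/2+\upnu}}\lesssim\sum_{\sig=0}^1\sum_{j=1}^{n-1}\tnorm{\pd_j^\sig\eta}_{\mathcal{H}^{3/2+\upnu}}$, which follows directly from the definition of the anisotropic norm since $\eta$ lives on $\Sigma\simeq\R^{n-1}$. Your detour through the kinematic and dynamic boundary conditions is unnecessary here, and the dynamic-condition step would require separate handling when $\varsigma=0$ and $n=2$.
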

	\begin{proof}
		Suppose first that $\rho\le\rho_{\m{ST},1+\upnu}$, where this parameter is from Lemma~\ref{lemma on steady transport estimate}. We begin by proving the stated bounds on $q$. Denote $\psi=\Lambda_{\gam_0}(\varrho)q+\grad\cdot(v_{w_0}q)$, where $\Lambda_{\gam_0}(\varrho)$ is from Lemma~\ref{lemma on the existence of the normal systems}. According to the steady transport estimate, Lemma~\ref{lemma on steady transport estimate}, we have that
		\begin{equation}\label{combine -1}
			\tnorm{q,\grad\cdot(v_{w_0}q)}_{H^{1+\upnu}\times H^{1+\upnu}}\lesssim\begin{cases}
				\tnorm{\psi}_{H^{1+\upnu}}&\text{if }\upnu\le \lfloor n/2\rfloor,\\
				\tnorm{\psi}_{H^{1+\upnu}}+\tbr{\tnorm{q_0,u_0,\eta_0}_{\X^{1+\upnu}}}\tnorm{q}_{H^{1+\lfloor n/2\rfloor}}&\text{if }\lfloor n/2\rfloor<\upnu.
			\end{cases}
		\end{equation}
		To go from this to the desired bounds on $q$ we will carefully estimate $\norm{\psi}_{H^{1+\nu}}$ by splitting into three pieces:
		\begin{equation}\label{here, there, and everywhere}
			\tnorm{\psi}_{H^{1+\upnu}}\lesssim \tnorm{\psi}_{L^2}+\tnorm{\grad\psi}_{H^{\upnu}}
			\lesssim \tnorm{\psi}_{L^2} + \sum_{j=1}^{n-1}\tnorm{\pd_j\psi}_{H^\upnu}
			+\tnorm{\pd_n\psi}_{H^{\upnu}}.
		\end{equation}
		
		For the $L^2$-norm in \eqref{here, there, and everywhere} we bound via the definition of the norm on $\overset{q_0,u_0,\eta_0}{\X^{-1}}$:
		\begin{equation}\label{combine 1}
			\tnorm{\psi}_{L^2}\lesssim\tnorm{q}_{L^2}+\tnorm{\grad\cdot(v_{w_0}q)}_{L^2}\lesssim\tnorm{q,u,\eta}_{\overset{q_0,u_0,\eta_0}{\X^{-1}}}.
		\end{equation}
		Next, we consider the tangential ($1\le j\le n-1$) derivative terms in  \eqref{here, there, and everywhere} by employing the commutator operator from Definition \ref{commutator_def} to write
		\begin{equation}\label{exhibit uno}
			\pd_j\psi=\Lambda_{\gam_0}(\varrho)\pd_jq+\grad\cdot(v_{w_0}\pd_jq)+\overset{w_0}{\mathscr{C}^j}(q,0).
		\end{equation}
		The first and second terms are readily dealt with using the definition of the space $\overset{q_0,u_0,\eta_0}{\X^{\upnu-1}}$: 
		\begin{equation}\label{zigmund_das_schwein_3}
			\tnorm{\Lambda_{\gam_0}(\varrho)\pd_jq+\grad\cdot(v_{w_0}\pd_jq)}_{H^{\nu}} \lesssim \tnorm{\pd_jq,\pd_ju,\pd_j\eta}_{\overset{q_0,u_0,\eta_0}{\X^{\upnu-1}}}.
		\end{equation}
		We next use Lemma~\ref{lem on mapping properties of the C} on the $\overset{w_0}{\mathscr{C}^{j}}(q,0)$-term in~\eqref{exhibit uno} and then combine with estimate~\eqref{zigmund_das_schwein_3} to see that
		\begin{equation}\label{combine 2}
			\tnorm{\pd_j\psi}_{H^{\upnu}}\lesssim \tnorm{\pd_jq,\pd_ju,\pd_j\eta}_{\overset{q_0,u_0,\eta_0}{\X^{\upnu-1}}}
			+ \rho\tnorm{q}_{H^{1+\upnu}}+\begin{cases}
				0&\text{if }\upnu\le \lfloor n/2\rfloor,\\
				\tbr{\tnorm{q_0,u_0,\eta_0}_{\X_{1+\upnu}}}\tnorm{q}_{H^{1+\lfloor n/2\rfloor}}&\text{if } \lfloor n/2\rfloor<\upnu,
			\end{cases}
		\end{equation}
		for all $1 \le j \le n-1$.
		
		In order to handle the normal ($j=n$) derivative in \eqref{here, there, and everywhere}, we use the existence of the normal system, Lemma~\ref{lemma on the existence of the normal systems}, which shows that $\pd_n\psi=\overset{w_0,\gam_0}{\mathscr{N}^0}(q,u,\eta,g,f)$. Then we use the boundedness of the linear map $\mathscr{N}^0$, Proposition~\ref{prop on boundedness of N^0}, to bound
		\begin{multline}\label{combine 3}
			\tnorm{\pd_n\psi}_{H^\upnu}\lesssim\sum_{\sig=0}^1\sum_{j=1}^n\tnorm{\pd_j^\sig q,\pd_j^\sig u,\pd_j^\sig\eta}_{\X_{\upnu-1}}\\
			+\tnorm{g,f}_{H^{1+\upnu}\times H^\upnu}+\begin{cases}
				0&\text{if }\upnu\le\tfloor{n/2},\\
				\tbr{\tnorm{q_0,u_0,\eta_0}_{\X_{1+\upnu}}}\tnorm{\eta}_{\mathcal{H}^{1+\tfloor{n/2}}}&\text{if }\tfloor{n/2}<\upnu.
			\end{cases}
		\end{multline}
		We have now handled all of the terms on the right side of \eqref{here, there, and everywhere};  by combining \eqref{combine -1}, \eqref{here, there, and everywhere}, \eqref{combine 1}, \eqref{zigmund_das_schwein_3}, \eqref{combine 2}, and \eqref{combine 3}, we deduce the  estimate
		\begin{multline} \label{who what when where why}
			\tnorm{q,\grad\cdot(v_{w_0}q)}_{H^{1+\upnu}\times H^{1+\upnu}}\lesssim\rho\tnorm{q}_{H^{1+\upnu}}\\+\sum_{\sig=0}^1\sum_{j=1}^{n-1}\tnorm{\pd_jq,\pd_ju,\pd_j\eta}_{\overset{q_0,u_0,\eta_0}{\X^\upnu-1}}
			+\tnorm{g,f,k}_{\Y^\upnu}+\begin{cases}
				0&\text{if }\upnu\le\tfloor{n/2},\\
				\tbr{\tnorm{q_0,u_0,\eta_0}_{\X_{1+\upnu}}}\tnorm{q,u,\eta}_{\X_{\tfloor{n/2}}}&\text{if }\tfloor{n/2}<\upnu.
			\end{cases}
		\end{multline}
		
		Next, we turn our attention to the estimate of $u$ in~\eqref{normal system estimate no 1}. Note that 
		\begin{equation}\label{throught the hole and out the door}
			\tnorm{u}_{H^{2+\upnu}}\lesssim\tnorm{u}_{L^2}+\sum_{j=1}^{n-1}\tnorm{\pd_ju}_{H^{1+\upnu}}+\tnorm{\pd_n u}_{H^{1+\upnu}}.
		\end{equation}
		The $L^2$-norm and the sum of tangential ($1\le j\le n-1$) derivatives are trivially controlled by the right hand side of~\eqref{normal system estimate no 1}.
		For the normal $(j=n)$ derivative, we split as before:
		\begin{equation}\label{clear blue island sky}
			\tnorm{\pd_nu}_{H^{1+\upnu}}\le\tnorm{\pd_nu}_{L^2}+\sum_{j=1}^{n-1}\tnorm{\pd_j\pd_nu}_{H^\upnu}+\tnorm{\pd_n^2u}_{H^{\upnu}}.
		\end{equation}
		The first two terms in~\eqref{clear blue island sky} are again trivially bounded by the right hand side of~\eqref{normal system estimate no 1}. According to Lemma~\ref{lemma on the existence of the normal systems}, $\pd_n^2 u$ satisfies~\eqref{country roads take me home}, so we may employ the boundedness of the linear maps $\mathscr{N}^1,\dotsc,\mathscr{N}^n$, proved in Lemma~\ref{boundedness of the remaining N^j maps}, to estimate
		\begin{equation}\label{what a wonderful world}
			\tnorm{\pd_n^2u}_{H^{\upnu}}\lesssim\sum_{\sig=0}^1\sum_{j=1}^{n-1}\tnorm{\pd_j^\sig q,\pd_j^\sig u,\pd_j^\sig\eta}_{\X_{\upnu-1}}+\tnorm{\pd_nq}_{H^\upnu}+\tnorm{f}_{H^\upnu}.
		\end{equation}
		For the $\tnorm{\pd_nq}_{H^\upnu}$ term on the right we insert the already established bounds for $\tnorm{q}_{H^{1+\upnu}}$ from \eqref{who what when where why}. We then synthesize these bounds to deduce the sought-after estimate of $u$ in~\eqref{normal system estimate no 1}. In remains only to handle $\eta$.  However, the estimate for $\eta$ in \eqref{normal system estimate no 1} follows directly from \eqref{SQG is also an equation did yua know?}.
	\end{proof}
	
	\begin{rmk}\label{nonincreasing remark 1}
		We are free to choose the function $\N\ni\upnu\mapsto\rho_{\m{normal},\upnu}\in\R^+$ to be nonincreasing without altering the statement or conclusion of Proposition~\ref{first normal system synthesis}.
	\end{rmk}
	
	Next, we consider the normal system corresponding to the regularized principal part operator $\overset{w_0,\gam_0}{A_{m,N}}$. We require the following preliminary result, which is analogous to Lemma~\ref{lemma on steady transport estimate} from the unregularized case.

	\begin{lem}[Regularized steady transport estimate]\label{lemma on regularized steady transport estimate}
		Let $0<\rho\le\rho_{\m{WD}}$, where the latter is defined in Theorem~\ref{thm on smooth tameness of the nonlinear operator}, $w_0=(q_0,u_0,\eta_0)\in B_{\X^{2+\tfloor{n/2}}}(0,\rho)\cap\X^\infty$, $\gam_0\in I$ for $I \Subset\R^+$ an interval, $m,N\in\N^+$, and $\upnu\in\tcb{1,\dots,m}$.  Suppose that $\varphi\in H^{\upnu+2m}(\Omega)$  and $\psi\in H^{\upnu}(\Omega)$ satisfy
		\begin{equation}
			\begin{cases}\Lambda_{\gam_0}(\varrho)\varphi+\grad\cdot(v_{w_0}\varphi)+N^{-1}L_m\varphi=\psi,&\text{in }\Omega,\\
				\pd_n^m\varphi=\cdots=\pd_n^{2m-1}\varphi=0&\text{in }\pd\Omega,
			\end{cases}
		\end{equation}
		where $\Lambda_{\gam_0}(\varrho)$ and $L_m$ are defined in Lemma~\ref{lemma on the existence of the normal systems} and \eqref{the operator Lm}, respectively.  There exists a $\rho_{\m{RST},m}\in\R^+$, depending only on the physical parameters, the dimension, $m$, and $I$, such that if $\rho\le\rho_{\m{RST},m}$ and $N\gtrsim\tbr{\tnorm{q_0,u_0,\eta_0}_{\X_{1+\lfloor n/2\rfloor+m}}}^{4+2\lfloor n/2\rfloor}$, then we have the estimate
		\begin{equation}
			\tnorm{\varphi,\grad\cdot(v_{w_0}\varphi),N^{-1}\varphi}_{H^\upnu\times H^{\upnu}\times H^{\upnu+2m}}\lesssim\tnorm{\psi}_{H^{\upnu}}+\tbr{\tnorm{q_0,u_0,\eta_0}_{\X_{2m}}}^{2+\lfloor n/2\rfloor}\tnorm{\psi}_{L^2},
		\end{equation}
		where the implied constants depend on $m$, the dimension, the physical parameters, $\rho_{\m{RST},m}$, and $I$.
	\end{lem}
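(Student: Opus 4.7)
The plan is to reduce this lemma to a direct application of Theorem~\ref{theorem on estimates for regularized steady transport}, which has already done the heavy lifting for general regularized steady transport equations. The correspondence is with the substitution $\Lambda_0 = \Lambda_{\gam_0}(\varrho)$, $\Lambda_1 = 1$, and $X = v_{w_0}$, and in doing so we set $j = \upnu$ in the notation of that theorem. The primary benefits of this choice of $\Lambda_1$ are that the two auxiliary smallness conditions $\tnorm{X_0\cdot\grad\Lambda_1}_{H^{1+\lfloor n/2\rfloor}}$ and $\tnorm{X_1\cdot\grad\Lambda_1}_{W^{1+\lfloor n/2\rfloor,\infty}}$ in the hypotheses of Theorem~\ref{theorem on estimates for regularized steady transport} become trivially zero, and $\grad\cdot(\Lambda_1 X \varphi) = \grad\cdot(v_{w_0}\varphi)$ matches the target equation exactly.

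First, I will invoke the decomposition of the derivative loss vector field from Lemma~\ref{properties of the principal parts vector field}: $v_{w_0}=\mathfrak{g}^{-1}\varrho'e_1+v^{(1)}_{q_0,u_0,\eta_0}+v^{(2)}_{\eta_0}$, and set $X_0 = v^{(1)}_{q_0,u_0,\eta_0}$ and $X_1 = v^{(2)}_{\eta_0}+\mathfrak{g}^{-1}\varrho'e_1$. Next I need to verify the vanishing normal trace hypothesis $\m{Tr}_{\pd\Omega}(X\cdot e_n)=0$, which follows because $v_{w_0} = \mathcal{N}_{(H^{-1})'}(q_0,\eta_0)(u_0 - M_{\eta_0}e_1)$ and the factor $(u_0 - M_{\eta_0}e_1)\cdot e_n$ vanishes on $\pd\Omega$: on $\Sigma_0$ both $u_0\cdot e_n$ and $\mathcal{E}(\pd_1\eta_0)$ vanish, while on $\Sigma$ the identities $u_0\cdot e_n + \pd_1\eta_0 = 0$ and $\mathcal{E}(\pd_1\eta_0)|_\Sigma = \pd_1\eta_0$ yield cancellation. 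I will then verify the smallness of $(DX_0,DX_1)$ in $H^{2+\lfloor n/2\rfloor}\times W^{2+\lfloor n/2\rfloor,\infty}$ by directly invoking the quantitative bounds in the first and second items of Lemma~\ref{properties of the principal parts vector field}, which yield $\tnorm{DX_0}_{H^{2+\lfloor n/2\rfloor}}\lesssim\rho$ and $\tnorm{DX_1}_{W^{2+\lfloor n/2\rfloor,\infty}}\lesssim\rho$ (noting that $\varrho' \in W^{\infty,\infty}(\Omega)$ depends only on the physical parameters, so its derivative contributes a universal constant which gets absorbed into the implicit constant), so choosing $\rho_{\m{RST},m}\le\Bar{\rho}^{(m)}$ (with $\Bar{\rho}^{(m)}$ from Theorem~\ref{theorem on estimates for regularized steady transport}) and small enough to make these bounds below $\Bar{\rho}^{(m)}$ completes the smallness verification.

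For the $N$-largeness hypothesis of Theorem~\ref{theorem on estimates for regularized steady transport}, I need $N\gtrsim\tbr{\tnorm{X_0,X_1}_{H^{2+\lfloor n/2\rfloor+m}\times W^{2+\lfloor n/2\rfloor+m,\infty}}}^{4+2\lfloor n/2\rfloor}$. Again by Lemma~\ref{properties of the principal parts vector field}, we have $\tnorm{X_0}_{H^{2+\lfloor n/2\rfloor+m}}\lesssim\tnorm{q_0,u_0,\eta_0}_{\X_{1+\lfloor n/2\rfloor+m}}$ (taking $s = 1+\lfloor n/2\rfloor+m > 1+\lfloor n/2\rfloor$), while $\tnorm{X_1}_{W^{2+\lfloor n/2\rfloor+m,\infty}}\lesssim 1 + \tnorm{v^{(2)}_{\eta_0}\cdot e_1}_{W^{2+\lfloor n/2\rfloor+m,\infty}((0,b);\mathcal{H}^0_{(r_{n-1})})}\lesssim 1 + \rho$, which is bounded by a universal constant. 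Consequently the $N$-condition reduces exactly to the stated hypothesis $N\gtrsim\tbr{\tnorm{q_0,u_0,\eta_0}_{\X_{1+\lfloor n/2\rfloor+m}}}^{4+2\lfloor n/2\rfloor}$.

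With all hypotheses verified, Theorem~\ref{theorem on estimates for regularized steady transport} immediately delivers
\begin{equation}
\tnorm{\varphi,\grad\cdot(v_{w_0}\varphi),N^{-1}\varphi}_{H^{\upnu}\times H^{\upnu}\times H^{\upnu+2m}}\lesssim\tnorm{\psi}_{H^{\upnu}}+\tbr{\tnorm{X_0,X_1}_{H^{1+2m}\times W^{1+2m,\infty}}}^{2+\lfloor n/2\rfloor}\tnorm{\psi}_{L^2}.
\end{equation}
The final bookkeeping step is to bound $\tnorm{X_0,X_1}_{H^{1+2m}\times W^{1+2m,\infty}}$ by $\tnorm{q_0,u_0,\eta_0}_{\X_{2m}}$ plus a universal constant, once more via the first and second items of Lemma~\ref{properties of the principal parts vector field} with $s = 2m$; absorbing the universal contribution into the bracket yields the claimed estimate. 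There is no substantial obstacle here — the whole argument is essentially a compatibility check and bookkeeping exercise; the only minor subtlety is confirming that the universal $\mathfrak{g}^{-1}\varrho'e_1$ contribution to $X_1$ is benign in every norm that appears, which it is because $\varrho$ and its derivatives are smooth functions of $y\in[0,b]$ depending only on fixed physical parameters.
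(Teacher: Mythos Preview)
Your approach has a genuine gap in the verification of the smallness hypothesis for Theorem~\ref{theorem on estimates for regularized steady transport}. With your choice $\Lambda_1=1$ and $X_1 = v^{(2)}_{\eta_0}+\mathfrak{g}^{-1}\varrho'e_1$, the matrix $DX_1$ contains the fixed term $\mathfrak{g}^{-1}\varrho''\,e_1\otimes e_n$ (since $\varrho'$ depends only on the vertical variable). This is a nonzero function depending only on the physical parameters, and it cannot in general be made smaller than the threshold $\Bar{\rho}^{(m)}$; hypothesis~\eqref{assumptions on the vector field X} of Theorem~\ref{theorem on estimates for regularized steady transport} requires $(DX_0,DX_1)\in B_{H^{2+\lfloor n/2\rfloor}}(0,\rho)\times B_{W^{2+\lfloor n/2\rfloor,\infty}}(0,\rho)$ with $\rho\le\Bar{\rho}^{(m)}$, and this smallness is essential to the absorption arguments underlying Propositions~\ref{proposition on a priori estimates for steady transport} and~\ref{prop on hm aprioris for regularized steady transport 2}. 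Concretely, the commutator $[\pd_n^m,X_1]\varphi$ in the bilinear estimate of Lemma~\ref{lem on a bilinear estimate} contributes a term of order $\tnorm{\varrho''}_{L^\infty}\tnorm{\varphi}_{H^m}^2$, which cannot be absorbed by the left side. Your remark that this contribution ``gets absorbed into the implicit constant'' conflates boundedness with smallness.

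The paper avoids this obstruction by instead choosing $\Lambda_1=\mathfrak{g}^{-1}\varrho'$ and $X=\mathfrak{g}(\varrho')^{-1}v_{w_0}$, so that $\Lambda_1 X = v_{w_0}$ as required while the background part of $X_1$ becomes the \emph{constant} field $e_1$, whose derivative vanishes identically. With this choice both $DX_0$ and $DX_1$ are genuinely controlled by $\rho$ via Lemma~\ref{properties of the principal parts vector field}. The price is that the auxiliary terms $X_i\cdot\grad\Lambda_1$ are now nontrivial, but $\grad\Lambda_1$ is parallel to $e_n$ while $X_1$ is parallel to $e_1$ (so $X_1\cdot\grad\Lambda_1=0$), and $X_0\cdot\grad\Lambda_1$ inherits the smallness of $v^{(1)}_{q_0,u_0,\eta_0}$.
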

	\begin{proof}
		Once more, the goal is to invoke the work from Section~\ref{Section: Analysis of Regularized Steady Transport Equations} by applying Theorem~\ref{theorem on estimates for regularized steady transport}. To this end, we set $\Lambda_0=\Lambda_{\gam_0}(\varrho)$ and $\Lambda_1=\mathfrak{g}^{-1}\varrho'$,
		and we use Lemma~\ref{properties of the principal parts vector field} to define the decomposed vector field $X=X_0+X_1$, where $X_0=\f{\mathfrak{g}}{\varrho'}v^{(1)}_{q_0,u_0,\eta_0}$ and $X_1=e_1+\f{\mathfrak{g}}{\varrho'}v^{(2)}_{\eta_0}$. Thanks to the first and second items of the lemma, we have that 
		\begin{equation}
			\tnorm{X_0,X_1}_{H^{1+s}\times W^{1+s,\infty}}\lesssim\tbr{\tnorm{q_0,u_0,\eta_0}_{\X_{s}}}
			\text{ for }
			\N\ni s\ge 1+\lfloor n/2\rfloor.
		\end{equation}
		By the additional fact that $\grad\Lambda_1=\gam\mathfrak{g}^{-1}\varrho''e_n$, we see that
		\begin{multline}
			\max\tcb{\tnorm{DX_0,DX_1}_{H^{2+\lfloor n/2\rfloor}\times W^{2+\lfloor n/2\rfloor,\infty}},\tnorm{X_0\cdot\grad\Lambda_1}_{H^{1+\lfloor n/2\rfloor}},\tnorm{X_1\cdot\grad\Lambda_1}_{W^{1+\lfloor n/2\rfloor,\infty}}}\\\lesssim\tnorm{q_0,u_0,\eta_0}_{\X_{2+\lfloor n/2\rfloor}}\le\rho.
		\end{multline}
		Thus, we may take $0<\rho_{\m{RST},m}\lesssim\Bar{\rho}^{(m)}$, where the latter is defined in Theorem~\ref{theorem on estimates for regularized steady transport}, in order to reach the asserted conclusion.
	\end{proof}
	
	The next result is the analog of Proposition~\ref{first normal system synthesis} for the $m,N$-regularized principal part operator. There are a few key differences, the most glaring of which is the finite range, depending on $m$, in which the estimate holds. Another more minor distinction is that while the following estimates are no longer tame (due to the appearance of the $\tbr{\cdot}^{2+\tfloor{n/2}}$ term), they still place all of the high norms of the background onto the low norms of the solution.

	\begin{prop}[Synthesis of normal system results, 2]\label{second normal system synthesis}
		Let $0<\rho\le\rho_{\m{WD}}$, where the latter is defined in Theorem~\ref{thm on smooth tameness of the nonlinear operator}, $w_0=(q_0,u_0,\eta_0)\in B_{\X^{2+\tfloor{n/2}}}(0,\rho)\cap\X^\infty$, $\gam_0\in I$ for $I\Subset\R^+$ an interval, $m,N\in\N^+$ with $m\ge2$, and $\upnu\in\tcb{0,\dots,m-1}$.  Suppose that $(q,u,\eta)\in\X^\upnu_{m,N}$ and $(g,f,k)\in\Y^\upnu$ satisfy $\overset{w_0,\gam_0}{A_{m,N}}(q,u,\eta)=(g,f,k)$. There exists a $\rho_{\m{reg,normal},\upnu,m}\in\R^+$, depending only on the physical parameters, the dimension, $\upnu$, $m$, and $I$ such that if $\rho\le\rho_{\m{reg,normal},\upnu,m}$ and
		\begin{equation}\label{big boy}
			N\gtrsim\tbr{\tnorm{q_0,u_0,\eta_0}_{\X_{1+\lfloor n/2\rfloor+m}}}^{4+2\lfloor n/2\rfloor},
		\end{equation}
		then we have the estimate
		\begin{multline}\label{hi'ilawe}
			\tnorm{q,u,\eta}_{\overset{q_0,u_0,\eta_0}{\X^{\upnu}_{m,N}}}\lesssim\sum_{\sig=0}^1\sum_{j=1}^{n-1}\tnorm{\pd_j^\sig q,\pd_j^\sig u,\pd_j^\sig \eta}_{\overset{q_0,u_0,\eta_0}{\X^{\upnu-1}_{m,N}}}+\tnorm{g,f,k}_{\Y^\upnu}\\+\tbr{\tnorm{q_0,u_0,\eta_0}_{\X_{\max\tcb{2m,1+\lfloor n/2\rfloor+m}}}}^{2+\lfloor n/2\rfloor}\tnorm{q,u,\eta}_{\overset{q_0,u_0,\eta_0}{\X^{-1}_{m,N}}}.
		\end{multline}
		The implicit constants depend on the dimension, the physical parameters, $m$, $\rho_{\m{reg,normal},\upnu,m}$, and $I$.
	\end{prop}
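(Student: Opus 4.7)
The plan is to adapt the strategy of Proposition~\ref{first normal system synthesis} to the regularized setting, with the key modification that we work with the augmented transport combination
\begin{equation}
\psi=\Lambda_{\gam_0}(\varrho)q+\grad\cdot(v_{w_0}q)+N^{-1}L_mq,
\end{equation}
rather than just the first two terms, and invoke the regularized steady transport estimate of Lemma~\ref{lemma on regularized steady transport estimate} in place of Lemma~\ref{lemma on steady transport estimate}. This way we obtain control of $q$ in $H^\upnu$, of $\grad\cdot(v_{w_0}q)$ in $H^\upnu$, and, crucially, of $N^{-1}q$ in $H^{\upnu+2m}$ in terms of $\|\psi\|_{H^\upnu}$ and the penalty term $\tbr{\tnorm{q_0,u_0,\eta_0}_{\X_{2m}}}^{2+\lfloor n/2\rfloor}\|\psi\|_{L^2}$. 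The Neumann conditions built into $\X^\upnu_{m,N}$ guarantee that $q$ is in the kernel of the relevant boundary operators, so the hypotheses of Lemma~\ref{lemma on regularized steady transport estimate} are met provided we take $\rho_{\m{reg,normal},\upnu,m}\le\rho_{\m{RST},m}$ and impose the stated lower bound on $N$ from~\eqref{big boy}. Note that $\|\psi\|_{L^2}\lesssim\tnorm{q,u,\eta}_{\overset{q_0,u_0,\eta_0}{\X^{-1}_{m,N}}}$ since the norm on $\overset{q_0,u_0,\eta_0}{\X^{-1}_{m,N}}$ controls the three summands of $\psi$ at the $L^2$ level.

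Next, I would estimate $\|\psi\|_{H^\upnu}$ by splitting $\|\psi\|_{H^\upnu}\lesssim\|\psi\|_{L^2}+\sum_{j=1}^{n-1}\|\pd_j\psi\|_{H^{\upnu-1}}+\|\pd_n\psi\|_{H^{\upnu-1}}$ (or apply a slight variant if $\upnu=0$, using Lemma~\ref{properties of the principal parts vector field} directly). The $L^2$ term is already handled above. For each tangential direction $1\le j\le n-1$, the commutator identity gives
\begin{equation}
\pd_j\psi=\Lambda_{\gam_0}(\varrho)\pd_jq+\grad\cdot(v_{w_0}\pd_jq)+\overset{w_0}{\mathscr{C}^j}(q,0)+N^{-1}L_m\pd_jq,
\end{equation}
where the first, second, and fourth terms are directly absorbed by $\tnorm{\pd_jq,\pd_ju,\pd_j\eta}_{\overset{q_0,u_0,\eta_0}{\X^{\upnu-1}_{m,N}}}$ using the definition~\eqref{the adapted norm on the regularized spaces}, and $\overset{w_0}{\mathscr{C}^j}(q,0)$ is controlled via Lemma~\ref{lem on mapping properties of the C} and absorbed after taking $\rho$ small, combined with log-convexity interpolation as in the proof of Proposition~\ref{first normal system synthesis}. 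For the normal term, the second item of Lemma~\ref{lemma on the existence of the normal systems} yields $\pd_n\psi=\overset{w_0,\gam_0}{\mathscr{N}^0}(q,u,\eta,g,f)$, and the boundedness of $\mathscr{N}^0$ from Lemma~\ref{prop on boundedness of N^0} bounds $\|\pd_n\psi\|_{H^{\upnu-1}}$ by $\|g,f\|_{H^\upnu\times H^{\upnu-1}}+\sum_{\sigma,j}\|\pd_j^\sigma q,\pd_j^\sigma u,\pd_j^\sigma\eta\|_{\X_{\upnu-1}}$ plus the penalty from $\eta$.

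The $u$ estimate then follows by the same splitting strategy used in Proposition~\ref{first normal system synthesis}, where normal derivatives are traded via~\eqref{country roads take me home} and Lemma~\ref{boundedness of the remaining N^j maps} for tangential ones and $\pd_nq$, the latter being already controlled. For $\eta$, I would use the regularized kinematic boundary condition
\begin{equation}
N^{-1}(-\Delta_{\|})^{m-1/4}\eta=\m{Tr}_\Sigma(u\cdot e_n)+\pd_1\eta
\end{equation}
to bound $N^{-1}\|(-\Delta_\|)^{m-1/4}\eta\|_{H^{1/2+\upnu}}$ via the already obtained $u$ bound, while $\tnorm{\eta}_{\mathcal{H}^{5/2+\upnu}}$ is handled through~\eqref{SQG is also an equation did yua know?} as in the unregularized case. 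Combining the $N^{-1}q$ output of the regularized transport estimate with this $N^{-1}\eta$ bound gives the complete $\overset{q_0,u_0,\eta_0}{\X^\upnu_{m,N}}$ norm.

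The main obstacle is the delicate bookkeeping required to ensure $N$-independence: the right inverse structure of $L_m$ in Lemma~\ref{lemma on regularized steady transport estimate} produces control on $N^{-1}q$ in $H^{\upnu+2m}$ only when $\upnu\le m$ and under the large-$N$ hypothesis~\eqref{big boy}, which is why the proposition restricts to $\upnu\in\{0,\dots,m-1\}$ (the stricter inequality gives margin to propagate tangential derivatives through the estimate without running up against the Neumann regularity ceiling). Carefully matching the penalty factor $\tbr{\tnorm{q_0,u_0,\eta_0}_{\X_{\max\{2m,1+\lfloor n/2\rfloor+m\}}}}^{2+\lfloor n/2\rfloor}$ to the worst contribution from the combined transport and commutator bounds, and verifying that every use of log-convexity plus Young's inequality can absorb cross terms into the principal quantity $\tnorm{q,u,\eta}_{\overset{q_0,u_0,\eta_0}{\X^{-1}_{m,N}}}$, is where most of the technical care will be required.
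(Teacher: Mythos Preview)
Your proposal is correct and follows essentially the same approach as the paper: define the augmented $\psi=\Lambda_{\gam_0}(\varrho)q+\grad\cdot(v_{w_0}q)+N^{-1}L_mq$, invoke the regularized steady transport estimate (Lemma~\ref{lemma on regularized steady transport estimate}), split $\psi$ into $L^2$/tangential/normal pieces handled respectively by the $\X^{-1}_{m,N}$ norm, the commutator $\mathscr{C}^j$, and $\mathscr{N}^0$, then reprise the $u$ argument from Proposition~\ref{first normal system synthesis}. One small correction: you should be estimating $\|\psi\|_{H^{1+\upnu}}$ rather than $\|\psi\|_{H^{\upnu}}$ (and correspondingly $\|\pd_j\psi\|_{H^\upnu}$, $\|\pd_n\psi\|_{H^\upnu}$), since the $\X^\upnu_{m,N}$ norm places $q$ in $H^{1+\upnu}$ and Lemma~\ref{lemma on regularized steady transport estimate} must be applied with index $1+\upnu\in\{1,\dots,m\}$; this is precisely why the proposition restricts $\upnu\in\{0,\dots,m-1\}$. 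For the $N^{-1}\eta$ high-regularity piece, the paper simply notes it is ``trivial'' via~\eqref{SQG is also an equation did yua know?} applied at the shifted index (rather than the kinematic boundary condition route you suggest, which would also work).
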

	\begin{proof}
		The proof largely mirrors that of Proposition \ref{first normal system synthesis}, but since we have to appeal to estimates for solutions to the regularized steady transport equation, the estimates we get are somewhat different in a few key places.  As such, we work through most of the argument in detail, appealing to the proof of Proposition \ref{first normal system synthesis} when possible.
		
		Suppose first that $\rho\le\rho_{\m{RST},m}$.  We begin with the bounds on $q$ by setting $\psi=\Lambda_{\gam_0}(\varrho)q+\grad\cdot(v_{w_0}q)+N^{-1}L_mq$. According to the regularized steady transport estimate, Lemma~\ref{lemma on regularized steady transport estimate}, we have the bound
		\begin{equation}\label{how are you stranger}
			\tnorm{q,\grad\cdot(v_{w_0}q),N^{-1}q}_{H^{1+\upnu}\times H^{1+\upnu}\times H^{1+\upnu+2m}}\lesssim\tnorm{\psi}_{H^{1+\upnu}}+\tbr{\tnorm{q_0,u_0,\eta_0}_{\X_{2m}}}^{2+\lfloor n/2\rfloor}\tnorm{\psi}_{L^2},
		\end{equation}
		and to parlay this into our desired estimate we split
		\begin{equation}\label{pastoral}
			\tnorm{\psi}_{H^{1+\upnu}}\le\tnorm{\psi}_{L^2}+\sum_{j=1}^{n-1}\tnorm{\pd_j\psi}_{H^\upnu}+\tnorm{\pd_n\psi}_{H^\upnu}.
		\end{equation}
		For the $L^2$-norm we bound as in~\eqref{combine 1}:
		\begin{equation}\label{are you real dah dah dah}
			\tnorm{\psi}_{L^2}\lesssim\tnorm{q,u,\eta}_{\overset{q_0,u_0,\eta_0}{\X_{m,N}^{-1}}}.
		\end{equation}
		We study the tangential $(1\le j\le n-1)$ derivatives in \eqref{pastoral} via the identity
		\begin{equation}
			\pd_j\psi=\Lambda_{\gam_0}(\varrho)\pd_jq+\grad\cdot(v_{w_0}\pd_jq)+N^{-1}L_m\pd_jq+\overset{w_0}{\mathscr{C}^j}(q,0),
		\end{equation}
		where $\overset{w_0}{\mathscr{C}^{j}}$ is as in Definition \ref{commutator_def}, which shows that  
		\begin{equation}\label{all we are saying}
			\tnorm{\pd_j\psi}_{H^{\upnu}}\lesssim\tnorm{\pd_jq,\pd_ju,\pd_j\eta}_{\overset{q_0,u_0,\eta_0}{\X^{\upnu-1}_{m,N}}}+\snorm{\overset{w_0}{\mathscr{C}^{j}}(q,0)}_{H^{\upnu}}.
		\end{equation}
		By the boundedness of $\mathscr{C}^j$ established in Lemma~\ref{lem on mapping properties of the C}, we obtain the estimate
		\begin{equation}
			\tnorm{\overset{w_0}{\mathscr{C}^j}(q,0)}_{H^{\upnu}}\lesssim\rho\tnorm{q}_{H^{1+\upnu}}+\begin{cases}
				0&\text{if }\upnu\le\lfloor n/2\rfloor,\\
				\tnorm{q_0,u_0,\eta_0}_{\X_{1+\upnu}}\tnorm{q}_{H^{1+\lfloor n/2\rfloor}}&\text{if }\lfloor n/2\rfloor<\upnu.
			\end{cases}
		\end{equation}
		In the case that $\lfloor n/2\rfloor<\upnu$, we use interpolation (see Lemma~\ref{lem on log-convexity of the norms}) and Young's inequality to further estimate $\tnorm{q_0,u_0,\eta_0}_{\X_{1+\upnu}}\tnorm{q}_{H^{1+\lfloor n/2\rfloor}}\lesssim\rho\tnorm{q}_{H^{1+\upnu}}+\tnorm{q_0,u_0,\eta_0}_{\X_{2+\lfloor n/2\rfloor+\upnu}}\tnorm{q}_{L^2}$. Hence, for any $\upnu$ we have the bound
		\begin{equation}\label{peace a chance}
			\snorm{\overset{w_0}{\mathscr{C}^j}(q,0)}_{H^\upnu}\lesssim\rho\tnorm{q}_{H^{1+\upnu}}+\tnorm{q_0,u_0,\eta_0}_{\X_{2+\lfloor n/2\rfloor+\upnu}}\tnorm{q}_{L^2}
		\end{equation}
		Upon combining~\eqref{all we are saying} and \eqref{peace a chance} and then summing over $1\le j\le n-1$, we acquire the tangential bound
		\begin{equation}\label{mona lisa}
			\sum_{j=1}^{n-1}\tnorm{\pd_j\psi}_{H^\upnu}\lesssim\rho\tnorm{q}_{H^{1+\upnu}}+\sum_{j=1}^{n-1}\tnorm{\pd_jq,\pd_ju,\pd_j\eta}_{\overset{q_0,u_0,\eta_0}{\X^{\upnu-1}_{m,N}}}+\tnorm{q_0,u_0,\eta_0}_{\X_{2+\lfloor n/2\rfloor+\upnu}}\tnorm{q,u,\eta}_{\X_{-1}}.
		\end{equation}
		
		It remains to estimate the $\tnorm{\pd_n\psi}_{H^{\upnu}}$-term in \eqref{pastoral}. For this, we recall the regularized normal system, Lemma~\ref{lemma on the existence of the normal systems}, which says that $\pd_n\psi=\overset{w_0,\gam_0}{\mathscr{N}^0}(q,u,\eta,g,f)$. Therefore, we may apply the continuity properties of $\mathscr{N}^0$, Lemma~\ref{prop on boundedness of N^0}, to see that
		\begin{multline}
			\tnorm{\pd_n\psi}_{H^\upnu}\lesssim\sum_{\sig=0}^1\sum_{j=1}^{n-1}\tnorm{\pd_j^\sig q,\pd_j^\sig u,\pd_j^\sig\eta}_{\X_{\upnu-1}}+\tnorm{g,f}_{H^{1+\upnu}\times H^\upnu}\\+\begin{cases}
				0&\text{if }\upnu\le\lfloor n/2\rfloor,\\
				\tbr{\tnorm{q_0,u_0,\eta_0}_{\X_{1+\upnu}}}\tnorm{\eta}_{\mathcal{H}^{1+\lfloor n/2\rfloor}}&\text{if }\lfloor n/2\rfloor<\upnu.
			\end{cases}
		\end{multline}
		Again, in the latter case of $\lfloor n/2\rfloor<\upnu$ we use interpolation and Young's inequality to bound
		\begin{equation}
			\tbr{\tnorm{q_0,u_0,\eta_0}_{\X_{1+\upnu}}}\tnorm{\eta}_{\mathcal{H}^{1+\lfloor n/2\rfloor}}\lesssim\tbr{\tnorm{q_0,u_0,\eta_0}_{\X_{\lfloor n/2\rfloor+1/2}}}\tnorm{\eta}_{\mathcal{H}^{3/2+\upnu}}+\tbr{\tnorm{q_0,u_0,\eta_0}_{\X_{\lfloor n/2\rfloor+1/2+\upnu}}}\tnorm{\eta}_{\mathcal{H}^{3/2}},
		\end{equation}
		and hence in all cases
		\begin{equation}\label{twinkle in your eye}
			\tnorm{\pd_n\psi}_{H^\upnu}\lesssim\sum_{\sig=0}^1\sum_{j=1}^{n-1}\tnorm{\pd_j^\sig q,\pd_j^\sig u,\pd_j^\sig\eta}_{\X_{\upnu-1}}+\tnorm{g,f}_{H^{1+\upnu}\times H^\upnu}+\tbr{\tnorm{q_0,u_0,\eta_0}_{\X_{\lfloor n/2\rfloor+1/2+\upnu}}}\tnorm{q,u,\eta}_{\X_{-1}}.
		\end{equation}
		
		Upon piecing together~\eqref{pastoral}, \eqref{are you real dah dah dah}, \eqref{mona lisa}, and~\eqref{twinkle in your eye}, we arrive at the bound
		\begin{multline}\label{the sandpiper returns}
			\tnorm{\psi}_{H^{1+\upnu}}\lesssim\rho\tnorm{q}_{H^{1+\upnu}}+\sum_{\sig=0}^1\sum_{j=1}^{n-1}\tnorm{\pd_j^\sig q,\pd_j^\sig u,\pd_j^\sig \eta}_{\overset{q_0,u_0,\eta_0}{\X^{\upnu-1}_{m,N}}}\\+\tnorm{g,f,k}_{\Y^\upnu}+\tbr{\tnorm{q_0,u_0,\eta_0}_{\X_{2+\lfloor n/2\rfloor+\upnu}}}\tnorm{q,u,\eta}_{\overset{q_0,u_0,\eta_0}{\X^{-1}_{m,N}}}.
		\end{multline}
		Now we insert~\eqref{are you real dah dah dah} and~\eqref{the sandpiper returns} into \eqref{how are you stranger} and choose $\rho_{\m{reg,normal},\upnu,m}\in\R^+$ such that $\rho_{\m{reg,normal},\upnu,m}\le\rho_{\m{RST},m}$ and sufficiently small to allow absorption of the $\tnorm{q}_{H^{1+\upnu}}$ term on the right by the left hand side. This yields
		\begin{multline}\label{we have so far prooved that}
			\tnorm{q,\grad\cdot({v_{w_0}}q),N^{-1}q}_{H^{1+\upnu}\times H^{1+\upnu}\times H^{1+\upnu+2m}}\lesssim\sum_{\sig=0}^1\sum_{j=1}^{n-1}\tnorm{\pd_j^\sig q,\pd_j^\sig u,\pd_j^\sig\eta}_{\overset{q_0,u_0,\eta_0}{\X^{\upnu-1}_{m,N}}}\\+\tnorm{g,f,k}_{\Y^\upnu}+\tbr{\tnorm{q_0,u_0,\eta_0}_{\X_{\max\tcb{2m,1+\lfloor n/2\rfloor+m}}}}^{2+\lfloor n/2\rfloor}\tnorm{q,u,\eta}_{\overset{q_0,u_0,\eta_0}{\X^{-1}_{m,N}}},
		\end{multline}
		and this implies the asserted bound on $q$ in~\eqref{hi'ilawe}.  
		
		Finally, the proof of the estimates for $u$ and $\eta$ is mostly a reprise of the latter part of the proof of Proposition~\ref{first normal system synthesis}.  We get equations~\eqref{throught the hole and out the door}, \eqref{clear blue island sky}, and~\eqref{what a wonderful world} exactly as before.  Then we insert our new bound for $q$ from~\eqref{we have so far prooved that} into the $\tnorm{\pd_nq}_{H^\upnu}$-term.  This yields the $u$ bound of~\eqref{hi'ilawe}. The $\eta$ bound is again trivial.
	\end{proof}
	
	\begin{rmk}\label{nonincreasing remark 2}
		For fixed $m$, we are free to choose the function $\tcb{0,\dots,m-1}\ni\upnu\mapsto\rho_{\m{reg,normal,\upnu,m}}\in\R^+$ to be nonincreasing without altering the statement or conclusion of Proposition~\ref{second normal system synthesis}.
	\end{rmk}
	
	The final result of this subsection iterates the conclusions of Propositions~\ref{first normal system synthesis} and~\ref{second normal system synthesis} to alter the form of the estimate's right hand side to one in which higher order tangential derivatives appear in a low norm.
	
	\begin{thm}[Synthesis of normal system results, 3]\label{third synthesis of normal system results}
		Let $0<\rho\le\rho_{\m{WD}}$, where the latter is defined in Theorem~\ref{thm on smooth tameness of the nonlinear operator}, $r\in\tcb{1+\tfloor{n/2},2+\tfloor{n/2}}$, $w_0=(q_0,u_0,\eta_0)\in B_{\X^r}(0,\rho)\cap\X^\infty$, $\gam_0\in I$ for $I\Subset\R^+$ an interval, $\upnu\in\N$, and $(g,f,k)\in\Y^\upnu$. The following hold.
		\begin{enumerate}
			\item If $r=1+\tfloor{n/2}$, $\rho\le\rho_{\m{normal},\upnu}$, and $(q,u,\eta)\in\overset{q_0,u_0,\eta_0}{\X^\upnu}$ satisfy $\overset{w_0,\gam_0}{A}(q,u,\eta)=(g,f,k)$, then we have the estimate
			\begin{multline}\label{noone around you to carry the blame for you}
				\tnorm{q,u,\eta}_{\overset{q_0,u_0,\eta_0}{\X^{\upnu}}}\lesssim\rho\tnorm{q,u,\eta}_{\X_\upnu}+\sum_{\substack{|\al|\le1+\upnu\\\al\cdot e_n=0}}\tnorm{\pd^\al q,\pd^\al u,\pd^\al\eta}_{\overset{q_0,u_0,\eta_0}{\X^{-1}}}\\+\tnorm{g,f,k}_{\Y^\upnu}+\begin{cases}
					0&\text{if }\upnu\le\tfloor{n/2},\\
					\tbr{\tnorm{q_0,u_0,\eta_0}_{\X_{1+\upnu}}}\tnorm{q,u,\eta}_{\X_{\tfloor{n/2}}}&\text{if }\tfloor{n/2}<\upnu.
				\end{cases}
			\end{multline}
			The implied constant depends on $\upnu$, the dimension, the physical parameters, $\rho_{\m{normal},\upnu}$, and $I$.
			
			\item If $r=2+\tfloor{n/2}$, $\N\ni m\ge\max\tcb{2,1+\upnu}$, $N\in\N$ satisfies~\eqref{big boy}, $\rho\le\rho_{\m{reg,normal},\upnu,m}$, and $(q,u,\eta)\in\X^\upnu_{m,N}$ satisfy $\overset{w_0,\gam_0}{A_{m,N}}(q,u,\eta)=(g,f,k)$, then there exists $\chi_{\upnu,m}\in\R^+$, depending only on $\upnu$, $m$, and the dimension, and $C_{1/\rho}\in\R^+$, depending only on $1/\rho$, $\upnu$, $m$, such that we have the estimate
			\begin{multline}\label{how high can you leap}
				\tnorm{q,u,\eta}_{\overset{q_0,u_0,\eta_0}{\X_{m,N}^\upnu}}\lesssim\rho\tnorm{q,u,\eta}_{\overset{q_0,u_0,\eta_0}{\X^\upnu_{m,N}}}+\sum_{\substack{|\al|\le1+\upnu\\\al\cdot e_n}}\tnorm{\pd^\al q,\pd^\al u,\pd^\al\eta}_{\overset{q_0,u_0,\eta_0}{\X_{m,N}^{-1}}}\\
				+\tnorm{g,f,k}_{\Y^{\upnu}}+C_{1/\rho}\tbr{\tnorm{q_0,u_0,\eta_0}_{\X_{\max\tcb{2m,1+\tfloor{n/2}+m}}}}^{\chi_{\upnu,m}}\tnorm{q,u,\eta}_{\overset{q_0,u_0,\eta_0}{\X^{-1}_{m,N}}}.
			\end{multline}
			Here the implied constant depends on $m$, the dimension, the various physical parameters, $\rho_{\m{reg,normal},\upnu,m}$, and $I$. 
		\end{enumerate}
	\end{thm}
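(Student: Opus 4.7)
The plan is to establish both items via finite induction on $\upnu \in \N$, iterating the recursive bound from Proposition~\ref{first normal system synthesis} (respectively Proposition~\ref{second normal system synthesis}) together with the strong-form commutator estimate from Proposition~\ref{prop on commutators 2}. The underlying idea is that each invocation of the normal-system synthesis trades one regularity level in the solution for one order of tangential derivative; starting at level $\upnu$ and iterating $\upnu+1$ times, we reach the $\overset{q_0,u_0,\eta_0}{\X^{-1}}$ level while building up tangential derivatives up to order $1+\upnu$. Remarks~\ref{nonincreasing remark 1} and~\ref{nonincreasing remark 2} allow us to choose the smallness thresholds to be monotone, so a single hypothesis $\rho \le \rho_{\m{normal},\upnu}$ (resp.\ $\rho \le \rho_{\m{reg,normal},\upnu,m}$) underwrites all intermediate applications.

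For item~1, the base case $\upnu=0$ is a direct reading of Proposition~\ref{first normal system synthesis}. For the inductive step, assume the claim at level $\upnu-1$. Apply Proposition~\ref{first normal system synthesis} at level $\upnu$ to control $\tnorm{q,u,\eta}_{\overset{q_0,u_0,\eta_0}{\X^\upnu}}$ by a sum of $\overset{q_0,u_0,\eta_0}{\X^{\upnu-1}}$ norms of the $\pd_j^\sigma$-differentiated triples for $\sigma\in\{0,1\}$ and $1\le j\le n-1$, plus the data and tame remainder. Then, for each such differentiated triple, observe by Proposition~\ref{prop on commutators 2} that it again solves a principal-part system whose data satisfies a tame $\Y^{\upnu-1}$ estimate in terms of $(g,f,k)$, $\rho\tnorm{q,u,\eta}_{\X_\upnu}$, and the tame remainder $\tnorm{q_0,u_0,\eta_0}_{\X_{1+\upnu}}\tnorm{q,u,\eta}_{\X_{\tfloor{n/2}}}$. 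Applying the induction hypothesis to each differentiated triple at level $\upnu-1$ accumulates tangential derivatives of order up to $1+\upnu$ at the $\overset{q_0,u_0,\eta_0}{\X^{-1}}$ level.

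For item~2 the strategy is identical, but with Proposition~\ref{second normal system synthesis} playing the role of Proposition~\ref{first normal system synthesis} and the second item of Proposition~\ref{prop on commutators 2} replacing the first. The hypothesis $m\ge\max\tcb{2,1+\upnu}$ ensures every intermediate level $0,1,\ldots,\upnu$ lies in the admissible range $\tcb{0,\ldots,m-1}$ of Proposition~\ref{second normal system synthesis}, and the largeness condition~\eqref{big boy} depends only on the background norm and so persists unchanged across iterations, guaranteeing the $N$-uniformity of the final bound. The key structural difference is that Proposition~\ref{second normal system synthesis} contributes a factor $\tbr{\tnorm{q_0,u_0,\eta_0}_{\X_{\max\{2m,1+\tfloor{n/2}+m\}}}}^{2+\tfloor{n/2}}$ rather than a clean tame remainder; iterating $\upnu+1$ times turns this factor into a power $\chi_{\upnu,m}$, while absorption of small $\rho$-contributions from the commutator steps into the leading term $\rho\tnorm{q,u,\eta}_{\overset{q_0,u_0,\eta_0}{\X^\upnu_{m,N}}}$ via Young's inequality produces the $\rho$-dependent constant $C_{1/\rho}$.

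The main obstacle is the careful bookkeeping of the tame remainder terms produced at each stage. Every invocation of Proposition~\ref{first normal system synthesis} or~\ref{prop on commutators 2} introduces either a term $\rho\tnorm{\cdot}_{\X_s}$ or a mixed term $\tbr{\tnorm{q_0,u_0,\eta_0}_{\X_{1+s}}}\tnorm{\cdot}_{\X_{\tfloor{n/2}}}$ at an intermediate level $s\in\{0,\ldots,\upnu\}$, and one must show that, after $\upnu+1$ iterations, these collapse into the stated form involving only $\tnorm{q_0,u_0,\eta_0}_{\X_{1+\upnu}}$ multiplied by $\tnorm{q,u,\eta}_{\X_{\tfloor{n/2}}}$, with all surplus regularity of the solution factor absorbed into $\rho\tnorm{q,u,\eta}_{\X_\upnu}$. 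This is accomplished by log-convex interpolation in the $\X_s$-scale (Lemma~\ref{lem on log-convexity of the norms}) followed by Young's inequality, precisely mirroring the manipulation carried out in~\eqref{fish likes to eat me} inside the proof of Theorem~\ref{thm on tangential derivative analysis}. For item~2 this combinatorial accounting becomes more delicate since one must simultaneously track the growing exponent $\chi_{\upnu,m}$ and the accumulating negative powers of $\rho$ inside $C_{1/\rho}$; finiteness of both follows from the finiteness of the iteration depth $\upnu+1$.
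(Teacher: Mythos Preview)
Your proposal is correct and follows essentially the same approach as the paper. The only cosmetic difference is that the paper organizes the iteration as an induction on an auxiliary parameter $s\in\{1,\dotsc,\upnu+1\}$ (tracking how many tangential derivatives have been traded in) while you induct directly on $\upnu$; since each step in either formulation applies Proposition~\ref{first normal system synthesis} (resp.\ Proposition~\ref{second normal system synthesis}) once and then invokes the induction hypothesis on the differentiated triple at one lower level, the two inductions traverse the same descent and produce identical intermediate estimates. Your identification of Proposition~\ref{prop on commutators 2}, the monotonicity Remarks~\ref{nonincreasing remark 1}--\ref{nonincreasing remark 2}, and the log-convexity/Young manipulation of~\eqref{fish likes to eat me} as the ingredients that close the bookkeeping matches the paper exactly.
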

	\begin{proof}
		We begin by proving the first item via an induction argument.  The proposition to be proved inductively is as follows: if $s\in\N^+$ satisfies $s \le \upnu +1$ for some $\upnu \in \N$, $\rho\le\rho_{\m{normal},\upnu}$, and $(q,u,\eta)\in\overset{q_0,u_0,\eta_0}{\X^\upnu}$, then  we have the estimate
		\begin{multline}\label{the inductive hypothesis here and now for all to take no we're not gonna take it kitty bag anymore noone none fun thumb bath tub}
			\tnorm{q,u,\eta}_{\overset{q_0,u_0,\eta_0}{\X^\upnu}}\lesssim\rho\tnorm{q,u,\eta}_{\X_\upnu}+\sum_{\substack{|\al|\le s\\\al\cdot e_n=0}}\tnorm{\pd^\al q,\pd^\al u,\pd^\al\eta}_{\overset{q_0,u_0,\eta_0}{\X^{\upnu-s}}}\\+\snorm{\overset{w_0,\gam_0}{A}(q,u,\eta)}_{\Y^\upnu}+\begin{cases}
				0&\text{if }\upnu\le\tfloor{n/2},\\
				\tbr{\tnorm{q_0,u_0,\eta_0}_{\X_{1+\upnu}}}\tnorm{q,u,\eta}_{\X_{\tfloor{n/2}}}&\text{if }\tfloor{n/2}<\upnu.
			\end{cases}
		\end{multline}
		The base case, $s=1$, was established in Proposition~\ref{first normal system synthesis}. Suppose now that $\N\ni s\ge 2$ is such that the proposition holds at the level $s-1$. We will now verify it at level $s$. Suppose that $s \le \upnu +1$ for some $\upnu \in \N$, $\rho\le\rho_{\m{normal},\upnu}$, and $(q,u,\eta)\in\overset{q_0,u_0,\eta_0}{\X^\upnu}$. We first invoke Proposition~\ref{first normal system synthesis} to obtain the estimate
		\begin{multline}\label{normal system estimate no 1, wah wah}
			\tnorm{q,u,\eta}_{\overset{q_0,u_0,\eta_0}{\X^\upnu}}\lesssim\sum_{\sig=0}^1\sum_{j=1}^{n-1}\tnorm{\pd_j^\sig q,\pd_j^\sig u,\pd_j^\sig\eta}_{\overset{q_0,u_0,\eta_0}{\X^{\upnu-1}}}\\+\snorm{\overset{w_0,\gam_0}{A}(q,u,\eta)}_{\Y^{\upnu}}+\begin{cases}
				0&\text{if }\upnu\le\tfloor{n/2},\\
				\tbr{\tnorm{q_0,u_0,\eta_0}_{\X_{1+\upnu}}}\tnorm{q,u,\eta}_{\X_{\tfloor{n/2}}}&\text{if }\tfloor{n/2}<\upnu.
			\end{cases}
		\end{multline}
		Fix $\sig\in\tcb{0,1}$ and $j\in\tcb{1,\dots,n-1}$. To handle the $\tnorm{\pd^\sig_jq,\pd_j^\sig u,\pd^\sig_j\eta}_{\overset{q_0,u_0,\eta_0}{\X^{\upnu-1}}}$ term appearing in~\eqref{normal system estimate no 1, wah wah}, we invoke the induction hypothesis at level $s-1$ (while heeding to Remark~\ref{nonincreasing remark 1}) in order to learn that
		\begin{multline}\label{no use sitting in another chair}
			\tnorm{\pd^\sig_jq,\pd_j^\sig u,\pd_j^\sig\eta}_{\overset{q_0,u_0,\eta_0}{\X^{\upnu-1}}}\lesssim\rho\tnorm{\pd_j^\sig q,\pd_j^\sig u,\pd^\sig_j\eta}_{\X_{\upnu-1}}+\sum_{\substack{|\al|\le s-1\\\al\cdot e_n=0}}\tnorm{\pd^\al\pd_j^\sig q,\pd^\al\pd^\sig_ju,\pd^\al\pd_j^\sig\eta}_{\overset{q_0,u_0,\eta_0}{\X^{\upnu-s}}}\\+\snorm{\overset{w_0,\gam_0}{A}(\pd_j^\sig q,\pd_j^\sig u,\pd_j^\sig\eta)}_{\Y^{\upnu-1}}+\begin{cases}
				0&\text{if }\upnu\le 1+\tfloor{n/2},\\
				\tbr{\tnorm{q_0,u_0,\eta_0}_{\X_{\upnu}}}\tnorm{\pd_j^\sig q,\pd_j^\sig u,\pd_j^\sig \eta}_{\X_{\tfloor{n/2}}}&\text{if }1+\tfloor{n/2}<\upnu.
			\end{cases}
		\end{multline}
		We estimate the first term on the right hand side trivially via $\tnorm{\pd_j^\sig q,\pd_j^\sig u,\pd_j^\sig\eta}_{\X_{\upnu-1}}\lesssim\tnorm{q,u,\eta}_{\X_\upnu}$. For the term involving the operator $\overset{w_0,\gam_0}{A}$, we invoke the first item of Proposition~\ref{prop on commutators 2} (noting that $\upnu>0$) to bound
		\begin{multline}\label{looking like i dont care}
			\snorm{\overset{w_0,\gam_0}{A}(\pd_j^\sig q,\pd_j^\sig u,\pd_j^\sig\eta)}_{\Y^{\upnu-1}}\lesssim\rho\tnorm{q,u,\eta}_{\X_\upnu}\\+\snorm{\overset{w_0,\gam_0}{A}(q,u,\eta)}_{\Y^\upnu}+\begin{cases}
				0&\text{if }\upnu\le\tfloor{n/2},\\
				\tnorm{q_0,u_0,\eta_0}_{\X_{1+\upnu}}\tnorm{q,u,\eta}_{\X_{\tfloor{n/2}}}&\text{if }\tfloor{n/2}<s.
			\end{cases}
		\end{multline}
		Upon combining inequalities~\eqref{normal system estimate no 1, wah wah}, \eqref{no use sitting in another chair}, and~\eqref{looking like i dont care} with the fact that
		\begin{multline}
			\begin{cases}
				0&\text{if }\upnu\le 1+\tfloor{n/2},\\
				\tbr{\tnorm{q_0,u_0,\eta_0}_{\X_{\upnu}}}\tnorm{\pd_j^\sig q,\pd_j^\sig u,\pd_j^\sig \eta}_{\X_{\tfloor{n/2}}}&\text{if }1+\tfloor{n/2}<\upnu,
			\end{cases} \\
			\lesssim\rho\tnorm{q,u,\eta}_{\X_{\upnu}}
			+\begin{cases}
				0&\text{if }\upnu\le\tfloor{n/2},\\
				\tnorm{q_0,u_0,\eta_0}_{\X_{1+\upnu}}\tnorm{q,u,\eta}_{\X_{\tfloor{n/2}}}&\text{if }\tfloor{n/2}<s,
			\end{cases}
		\end{multline}
		we establish ~\eqref{the inductive hypothesis here and now for all to take no we're not gonna take it kitty bag anymore noone none fun thumb bath tub} for the level $s$, and thus prove the proposition at the $s$ level.  The proposition then holds for all $s\in \N^+$ by induction.  By taking $s=1+\upnu$ in~\eqref{the inductive hypothesis here and now for all to take no we're not gonna take it kitty bag anymore noone none fun thumb bath tub} we obtain \eqref{noone around you to carry the blame for you}, which completes the proof of the first item.
		
		Next, we turn our attention to the second item. The strategy here is the same as in the first item, but due to the dissimilarities between~\eqref{noone around you to carry the blame for you} and~\eqref{how high can you leap}, we cannot apply precisely the same argument. Our new proposition to be proved inductively is as follows: if $s\in\N^+$, then for all $\N\ni\upnu\ge s-1$ and $\N\ni m\ge\max\tcb{2,1+\upnu}$, there exists $\chi\in\R^+$, depending on $s$, $m$, and $\upnu$, and $C_{1/\rho}$, depending on $s$, $1/\rho$, $\upnu$, and $m$, such that for all $N\in\N$ satisfying~\eqref{big boy}, $\rho\le\rho_{\m{reg,normal},\upnu,m}$, and $(q,u,\eta)\in\X^{\upnu}_{m,N}$ we have the estimate
		\begin{multline}\label{the second inductive hypothesis}
			\tnorm{q,u,\eta}_{\overset{q_0,u_0,\eta_0}{\X^\upnu_{m,N}}}\lesssim\rho\tnorm{q,u,\eta}_{\overset{q_0,u_0,\eta_0}{\X^\upnu_{m,N}}}+\sum_{\substack{|\al|\le s\\\al\cdot e_n=0}}\tnorm{\pd^\al q,\pd^\al u,\pd^\al\eta}_{\overset{q_0,u_0,\eta_0}{\X^{\upnu-s}_{m,N}}}\\+\snorm{\overset{w_0,\gam_0}{A_{m,N}}(q,u,\eta)}_{\Y^\upnu}+C_{1/\rho}\tbr{\tnorm{q_0,u_0,\eta_0}_{\X_{\max\tcb{2m,1+\tfloor{n/2}+m}}}}^{\chi}\tnorm{q,u,\eta}_{\overset{q_0,u_0,\eta_0}{\X^{-1}_{m,N}}}.
		\end{multline}
		As before, the base case, $s=1$, was established already, this time in Proposition~\ref{second normal system synthesis}. 
		
		Suppose now that $\N\ni s\ge 2$ is such that the induction proposition holds at level $s-1$. We will now verify it at level $s$. Suppose that $\N\ni\upnu\ge s-1$, $\N\ni m\ge\max\tcb{2,1+\upnu}$, $N\in\N$ satisfies~\eqref{big boy}, $\rho\le\rho_{\m{reg,normal},\upnu,m}$, and $(q,u,\eta)\in\X^{\upnu}_{m,N}$. As before, we first invoke Proposition~\ref{second normal system synthesis} and obtain the estimate
		\begin{multline}\label{hi'ilawe number 2}
			\tnorm{q,u,\eta}_{\overset{q_0,u_0,\eta_0}{\X^{\upnu}_{m,N}}}\lesssim\sum_{\sig=0}^1\sum_{j=1}^{n-1}\tnorm{\pd_j^\sig q,\pd_j^\sig u,\pd_j^\sig \eta}_{\overset{q_0,u_0,\eta_0}{\X^{\upnu-1}_{m,N}}}+\snorm{\overset{w_0,\gam_0}{A_{m,N}}(q,u,\eta)}_{\Y^\upnu}\\+\tbr{\tnorm{q_0,u_0,\eta_0}_{\X_{\max\tcb{2m,1+\lfloor n/2\rfloor+m}}}}^{2+\lfloor n/2\rfloor}\tnorm{q,u,\eta}_{\overset{q_0,u_0,\eta_0}{\X^{-1}_{m,N}}}.
		\end{multline}
		For $\sig\in\tcb{0,1}$ and $j\in\tcb{1,\dots,n-1}$, we invoke the $(s-1)$-induction hypotheses while again heeding Remark~\ref{nonincreasing remark 2} to bound
		\begin{multline}\label{harpsichord}
			\tnorm{\pd_j^\sig q,\pd_j^\sig u,\pd_j^\sig\eta}_{\overset{q_0,u_0,\eta_0}{\X^{\upnu-1}_{m,N}}}\lesssim\rho\tnorm{\pd_j^\sig q,\pd_j^\sig u,\pd_j^\sig\eta}_{\overset{q_0,u_0,\eta_0}{\X^{\upnu-1}_{m,N}}}+\sum_{\substack{|\al|\le s-1\\\al\cdot e_n=0}}\tnorm{\pd^\al\pd_j^\sig q,\pd^\al\pd_j^\sig  u,\pd^\al\pd_j^\sig\eta}_{\overset{q_0,u_0,\eta_0}{\X^{\upnu-s}_{m,N}}}\\
			+\snorm{\overset{w_0,\gam_0}{A_{m,N}}(\pd_j^\sig q,\pd_j^\sig u,\pd_j^\sig\eta)}_{\Y^{\upnu-1}}+C_{1/\rho}\tbr{\tnorm{q_0,u_0,\eta_0}_{\X_{\max\tcb{2m,1+\tfloor{n/2}+m}}}}^{\chi}\tnorm{\pd_j^\sig q,\pd_j^\sig u,\pd_j^\sig\eta}_{\overset{q_0,u_0,\eta_0}{\X^{-1}_{m,N}}}.
		\end{multline}
		Now we make three estimates. First, we again have the trivial estimate
		\begin{equation}\label{first of three}
			\tnorm{\pd_j^\sig q,\pd_j^\sig u,\pd_j^\sig\eta}_{\overset{q_0,u_0,\eta_0}{\X^{\upnu-1}_{m,N}}}\lesssim\tnorm{q,u,\eta}_{\overset{q_0,u_0,\eta_0}{\X^{\upnu}_{m,N}}}.
		\end{equation}
		Second, by invoking the second item of Proposition~\ref{prop on commutators 2}, the log-convexity of the $\X$-norms from Lemma~\ref{lem on log-convexity of the norms}, and Young's inequality, we get the bounds
		\begin{multline}\label{second of three}
			\snorm{\overset{w_0,\gam_0}{A_{m,N}}(\pd_j^\sig q,\pd_j^\sig u,\pd_j^\sig\eta)}_{\Y^{\upnu-1}}\lesssim\rho\tnorm{q,u,\eta}_{\X_{\upnu}}\\+\snorm{\overset{w_0,\gam_0}{A_{m,N}}(q,u,\eta)}_{\Y^\upnu}+\begin{cases}
				0&\text{if }\upnu\le\tfloor{n/2},\\
				\tnorm{q_0,u_0,\eta_0}_{\X_{1+\upnu}}\tnorm{q,u,\eta}_{\X_{\tfloor{n/2}}}&\text{if }\tfloor{n/2}<\upnu,
			\end{cases}\\
			\lesssim\rho\tnorm{q,u,\eta}_{\X_\upnu}+\snorm{\overset{w_0,\gam_0}{A}(q,u,\eta)}_{\Y^{\upnu}}+\rho^{-1-\tfloor{n/2}}\tbr{\tnorm{q_0,u_0,\eta_0}_{\X_{1+\upnu}}}^{2+\tfloor{n/2}}\tnorm{q,u,\eta}_{\X_{-1}}.
		\end{multline}
		Third, by the log-convexity of the $\X$-norms again and Young's inequality, we get the bound
		\begin{multline}\label{third of three}
			C_{1/\rho}\tbr{\tnorm{q_0,u_0,\eta_0}_{\X_{\max\tcb{2m,1+\tfloor{n/2}+m}}}}^{\chi}\tnorm{\pd_j^\sig q,\pd_j^\sig u,\pd_j^\sig\eta}_{\overset{q_0,u_0,\eta_0}{\X^{-1}_{m,N}}}\lesssim\rho\tnorm{q,u,\eta}_{\overset{q_0,u_0,\eta_0}{\X^\upnu_{m,N}}}\\+\rho^{-1/\upnu}C_{1/\rho}^{(1+\upnu)/\upnu}\tbr{\tnorm{q_0,u_0,\eta_0}_{\X_{\max\tcb{2m,1+m+\tfloor{n/2}}}}}^{\chi(1+\upnu)/\upnu}\tnorm{q,u,\eta}_{\overset{q_0,u_0,\eta_0}{\X^{-1}_{m,N}}}.
		\end{multline}
		Now we combine estimates~\eqref{hi'ilawe number 2} and \eqref{harpsichord} with the trio \eqref{first of three}, \eqref{second of three}, and~\eqref{third of three}; this shows that the induction proposition holds at the level $s$, and hence for all $s \in \N^+$ by induction.   Estimate~\eqref{how high can you leap} from the second item now follows by taking $s=1+\upnu$ in~\eqref{the second inductive hypothesis}.
	\end{proof}
	
	\subsection{Estimates and existence for the principal part}\label{till they find, there's no need}
	
	In this subsection we first prove a priori estimates for systems~\eqref{principal part of the linearization} and~\eqref{regularization of the prinpal part of the linearization}. After this, we derive the existence theory for the former. 
	
	\begin{thm}[A priori estimates for the principal part and the regularization]\label{thm on a priori estimates for the principal part and the regularization}
		Let $0<\rho\le\rho_{\m{WD}}$, where the latter is defined in Theorem~\ref{thm on smooth tameness of the nonlinear operator}, $r\in\tcb{1+\tfloor{n/2},2+\tfloor{n/2}}$, $w_0=(q_0,u_0,\eta_0)\in B_{\X^r}(0,\rho)\cap\X^\infty$, $\gam_0\in I$ for $I\Subset\R^+$ an interval, $\upnu\in\N$, and $(g,f,k)\in\Y^{\upnu}$. The following hold.
		\begin{enumerate}
			\item Let $r=1+\tfloor{n/2}$. There exists a $\rho_{\m{est},\upnu}\in\R^+$, depending only on $\upnu$, the dimension, the various physical parameters, and $I$, such that if $\rho\le\rho_{\m{est},\upnu}$, then for $(q,u,\eta)\in\overset{q_0,u_0,\eta_0}{\X^\upnu}$ satisfying $\overset{w_0,\gam_0}{A}(q,u,\eta)=(g,f,k)$ we have the a priori estimate
			\begin{equation}\label{clarity}
				\tnorm{q,u,\eta}_{\overset{q_0,u_0,\eta_0}{\X^\upnu}}\lesssim\tnorm{g,f,k}_{\Y^\upnu}+\begin{cases}
					0&\text{if }\upnu\le\tfloor{n/2},\\
					\tbr{\tnorm{q_0,u_0,\eta_0}_{\X_{1+\upnu}}}\tnorm{q,u,\eta}_{\X_{\tfloor{n/2}}}&\text{if }\tfloor{n/2}<\upnu.
				\end{cases}
			\end{equation}
			The implicit constants depend on  $\upnu$, the dimension, the physical parameters, $\rho_{\m{est},\upnu}$, and $I$.
			\item Let $r=2+\tfloor{n/2}$ and $\N\ni m\ge\max\tcb{2,1+\upnu}$. There exists a $\rho_{\m{reg},\upnu,m}\in\R^+$, depending only on $\upnu$, $m$, the various physical parameters, and $I$ such that if $\rho\le\rho_{\m{reg},\upnu,m}$, then for $(q,u,\eta)\in\X^\upnu_{m,N}$ satisfying $\overset{w_0,\gam_0}{A_{m,N}}(q,u,\eta)=(g,f,k)$ we have the a priori estimate
			\begin{equation}\label{i couldnt even see the floor}
				\tnorm{q,u,\eta}_{\overset{q_0,u_0,\eta_0}{\X^\upnu_{m,N}}}\lesssim\tnorm{g,f,k}_{\Y^\upnu}+\tbr{\tnorm{q_0,u_0,\eta_0}_{\X_{\max\tcb{2m,1+\tfloor{n/2}+m}}}}^{\chi_{\upnu,m}}\tnorm{q,u,\eta}_{\overset{q_0,u_0,\eta_0}{\X^{-1}_{m,N}}},
			\end{equation}
			where $\chi_{\upnu,m}\in\R^+$ is from the second item of Theorem~\ref{third synthesis of normal system results}. Here the implicit constant depends on $\upnu$, $m$, the dimension, the physical parameters, $\rho_{\m{reg},\upnu,m}$, and $I$.
		\end{enumerate}
	\end{thm}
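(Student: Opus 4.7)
My plan is to synthesize the tangential derivative analysis of Theorem~\ref{thm on tangential derivative analysis}, the reduction to tangential derivatives provided by Theorem~\ref{third synthesis of normal system results}, and the weak-solution a priori estimates of Propositions~\ref{prop on a priori estimates for weak solutions} and~\ref{prop on a priori estimates for regularized weak solutions}. These three ingredients fit together like a triangle: the normal system analysis bounds the full $\overset{q_0,u_0,\eta_0}{\X^\nu}$ norm by the $\overset{q_0,u_0,\eta_0}{\X^{-1}}$ norms of tangential derivatives (plus data and tame remainders), the tangential derivative analysis bounds $\overset{w_0,\gam_0}{\mathscr{J}}$ (respectively $\overset{w_0,\gam_0}{\mathscr{J}^1_{m,N}}$) applied to tangentially differentiated solutions in $\Y^{-1}$, and the weak a priori estimates invert these maps, converting $\Y^{-1}$ control of the image into $\overset{q_0,u_0,\eta_0}{\X^{-1}}$ (respectively $\overset{q_0,u_0,\eta_0}{\X^{-1}_{m,N}}$) control of the preimage. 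Chaining these yields the desired a priori estimate after absorbing a small $\rho$-multiple of the full norm into the left-hand side.

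For the first item, I set $\rho_{\m{est},\upnu}=\min\{\rho_{\m{weak}},\rho_{\m{normal},\upnu}\}$ where $\rho_{\m{weak}}$ comes from Proposition~\ref{prop on a priori estimates for weak solutions} and $\rho_{\m{normal},\upnu}$ from Proposition~\ref{first normal system synthesis}. Applying the first item of Theorem~\ref{third synthesis of normal system results} gives
\begin{equation}
\tnorm{q,u,\eta}_{\overset{q_0,u_0,\eta_0}{\X^\nu}}\lesssim\rho\tnorm{q,u,\eta}_{\X_\nu}+\sum_{\substack{|\al|\le 1+\nu\\ \al\cdot e_n=0}}\tnorm{\pd^\al q,\pd^\al u,\pd^\al\eta}_{\overset{q_0,u_0,\eta_0}{\X^{-1}}}+\tnorm{g,f,k}_{\Y^\nu}+\mathcal{R},
\end{equation}
where $\mathcal{R}$ denotes the tame background remainder appearing on the right side of the theorem statement. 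For each tangentially differentiated triple, Lemma~\ref{lem on strong solutions are weak solutions} ensures we may invoke Proposition~\ref{prop on a priori estimates for weak solutions} to bound $\|\pd^\al(q,u,\eta)\|_{\overset{q_0,u_0,\eta_0}{\X^{-1}}}\lesssim\|\overset{w_0,\gam_0}{\mathscr{J}}(\pd^\al q,\pd^\al u,\pd^\al \eta)\|_{\Y^{-1}}$, and then the first item of Theorem~\ref{thm on tangential derivative analysis} converts this into $\rho\tnorm{q,u,\eta}_{\X_\nu}+\tnorm{g,f,k}_{\Y^\nu}+\mathcal{R}$. Since $\tnorm{\cdot}_{\X_\nu}\le\tnorm{\cdot}_{\overset{q_0,u_0,\eta_0}{\X^\nu}}$, we absorb the $\rho$-multiples into the left-hand side by shrinking $\rho_{\m{est},\upnu}$ further if necessary, which yields~\eqref{clarity}.

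For the second item, the architecture is identical but with the regularized counterparts in play. Set $\rho_{\m{reg},\upnu,m}=\min\{\rho_{\m{weak,reg}},\rho_{\m{reg,normal},\upnu,m}\}$ where these come from Proposition~\ref{prop on a priori estimates for regularized weak solutions} and the second item of Theorem~\ref{third synthesis of normal system results}, and assume $N$ satisfies the lower bound~\eqref{big boy} as well as the condition of~\eqref{the labeled conditions to be satisfaction}. Apply the second item of Theorem~\ref{third synthesis of normal system results} to control $\tnorm{q,u,\eta}_{\overset{q_0,u_0,\eta_0}{\X^\nu_{m,N}}}$ by tangential derivatives in $\overset{q_0,u_0,\eta_0}{\X^{-1}_{m,N}}$ plus the data and the $\tbr{\cdot}^{\chi_{\upnu,m}}$-factor times $\tnorm{q,u,\eta}_{\overset{q_0,u_0,\eta_0}{\X^{-1}_{m,N}}}$. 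For each tangentially differentiated triple, the second item of Theorem~\ref{thm on tangential derivative analysis} together with Lemma~\ref{lem on strong solutions are weak solutions} and Proposition~\ref{prop on a priori estimates for regularized weak solutions} yield $\|\pd^\al(q,u,\eta)\|_{\overset{q_0,u_0,\eta_0}{\X^{-1}_{m,N}}}\lesssim\rho\tnorm{q,u,\eta}_{\X_\nu}+\tnorm{g,f,k}_{\Y^\nu}+\mathcal{R}_{m,N}$. Chaining and absorbing the small $\rho$-terms then produces~\eqref{i couldnt even see the floor}.

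\textbf{Main obstacle.} The subtlest step is verifying that the $N$-independent absorption in the second item can be carried out cleanly: one must track that the $\tbr{\tnorm{q_0,u_0,\eta_0}_{\X_{\max\{2m,1+\tfloor{n/2}+m\}}}}^{\chi}$ factor multiplies only the $\overset{q_0,u_0,\eta_0}{\X^{-1}_{m,N}}$ norm and never the top-order $\overset{q_0,u_0,\eta_0}{\X^\nu_{m,N}}$ norm, so that absorption still works uniformly in $N$. Careful bookkeeping of the constant $\chi_{\upnu,m}$ through the compositions of Theorems~\ref{third synthesis of normal system results} and~\ref{thm on tangential derivative analysis} with Proposition~\ref{prop on a priori estimates for regularized weak solutions} is the main point of attention; everything else reduces to direct application of the three cited results plus standard absorption.
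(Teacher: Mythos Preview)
Your approach is essentially the same as the paper's: chain Theorem~\ref{third synthesis of normal system results}, Theorem~\ref{thm on tangential derivative analysis}, and the weak a priori estimates (Propositions~\ref{prop on a priori estimates for weak solutions} and~\ref{prop on a priori estimates for regularized weak solutions}), then absorb the $\rho$-multiple of the top norm. The first item goes through exactly as you describe.

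For the second item there is one step you gloss over. The tangential derivative analysis (second item of Theorem~\ref{thm on tangential derivative analysis}) produces a tame remainder of the form $\tnorm{q_0,u_0,\eta_0}_{\X_{1+\upnu}}\tnorm{q,u,\eta}_{\X_{\tfloor{n/2}}}$ when $\tfloor{n/2}<\upnu$, \emph{not} a term already involving $\tnorm{q,u,\eta}_{\overset{q_0,u_0,\eta_0}{\X^{-1}_{m,N}}}$. To reach the stated form of~\eqref{i couldnt even see the floor} you must convert this via log-convexity of the $\X$-norms (Lemma~\ref{lem on log-convexity of the norms}) together with Young's inequality:
\[
\tnorm{q_0,u_0,\eta_0}_{\X_{1+\upnu}}\tnorm{q,u,\eta}_{\X_{\tfloor{n/2}}}\lesssim\rho\tnorm{q,u,\eta}_{\overset{q_0,u_0,\eta_0}{\X^\upnu_{m,N}}}+\rho^{-1-\tfloor{n/2}}\tbr{\tnorm{q_0,u_0,\eta_0}_{\X_{1+\upnu}}}^{2+\tfloor{n/2}}\tnorm{q,u,\eta}_{\overset{q_0,u_0,\eta_0}{\X^{-1}_{m,N}}}.
\]
The first piece is then absorbed along with the other $\rho$-terms, and the second piece is dominated by the $\tbr{\cdot}^{\chi_{\upnu,m}}$ term already present (after possibly enlarging the exponent, which is harmless since the statement only asserts existence of some $\chi_{\upnu,m}$). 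This is precisely the ``careful bookkeeping'' you flag, but it is an actual interpolation argument rather than just constant-tracking, and without it the remainder would sit in the wrong norm.
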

	\begin{proof}
		We begin by proving the first item. Assume that $\rho\le\min\tcb{\rho_{\m{weak}},\rho_{\m{normal},\upnu}}$, where these smallness parameters are from Propositions~\ref{prop on a priori estimates for weak solutions} and~\ref{first normal system synthesis}, respectively. By combining the a priori estimates for weak solutions from Proposition~\ref{prop on a priori estimates for weak solutions} with the tangential derivative estimates from the first item of Theorem~\ref{thm on tangential derivative analysis}, we learn that
		\begin{multline}\label{the above estimate}
			\sum_{\substack{|\al|\le 1+\upnu\\\al\cdot e_n=0}}\tnorm{\pd^\al q,\pd^\al u,\pd^\al\eta}_{\overset{q_0,u_0,\eta_0}{\X^{-1}}}\lesssim\rho\tnorm{q,u,\eta}_{\X_\upnu}\\+\tnorm{g,f,k}_{\Y^\upnu}+\begin{cases}
				0&\text{if }\upnu\le\tfloor{n/2},\\
				\tnorm{q_0,u_0,\eta_0}_{\X_{1+\upnu}}\tnorm{q,u,\eta}_{\X_{\tfloor{n/2}}}&\text{if }\tfloor{n/2}<\upnu.
			\end{cases}
		\end{multline}
		We insert the bound~\eqref{the above estimate} into conclusion~\eqref{noone around you to carry the blame for you} of the first item of Theorem~\ref{third synthesis of normal system results} to acquire the estimate
		\begin{equation}
			\tnorm{q,u,\eta}_{\overset{q_0,u_0,\eta_0}{\X^\upnu}}\lesssim\rho\tnorm{q,u,\eta}_{\X_\upnu}+\tnorm{g,f,k}_{\Y^\upnu}+\begin{cases}
				0&\text{if }\upnu\le\tfloor{n/2},\\
				\tnorm{q_0,u_0,\eta_0}_{\X_{1+\upnu}}\tnorm{q,u,\eta}_{\X_{\tfloor{n/2}}}&\text{if }\tfloor{n/2}<\upnu.
			\end{cases}
		\end{equation}
		Hence, we may define $\rho_{\m{est},\upnu}\in\R^+$ to be sufficiently small so that when $0<\rho\le\rho_{\m{est},\upnu}$, we may absorb the right hand side's $\tnorm{q,u,\eta}_{\X_{\upnu}}$-contribution by the left and obtain~\eqref{clarity}. This completes the proof of the first item.
		
		Next we consider the second item. The argument is basically the same, but with an extra step. Assume $0<\rho\le\min\tcb{\rho_{\m{weak,reg}},\rho_{\m{reg,normal},\upnu,m}}$, where these smallness parameters are from Propositions~\ref{prop on a priori estimates for regularized weak solutions} and~\ref{second normal system synthesis}, respectively. As in the proof of the first item, we combine the conclusions of Proposition~\ref{prop on a priori estimates for regularized weak solutions}, the second item of Theorem~\ref{thm on tangential derivative analysis}, and the second item of Theorem~\ref{third synthesis of normal system results}; however, the resulting estimate is slightly different:
		\begin{multline}\label{crackerbox palace}
			\tnorm{q,u,\eta}_{\overset{q_0,u_0,\eta_0}{\X^\upnu_{m,N}}}\lesssim\rho\tnorm{q,u,\eta}_{\overset{q_0,u_0,\eta_0}{\X^{\upnu}_{m,N}}}+\tnorm{g,f,k}_{\Y^\upnu}\\+C_{1/\rho}\tbr{\tnorm{q_0,u_0,\eta_0}_{\X_{\max\tcb{2m,1+\tfloor{n/2}+m}}}}^{\chi_{\upnu,m}}\tnorm{q,u,\eta}_{\overset{q_0,u_0,\eta_0}{\X^{-1}_{m,N}}}\\+\begin{cases}
				0&\text{if }\upnu\le\tfloor{n/2},\\
				\tnorm{q_0,u_0,\eta_0}_{\X_{1+\upnu}}\tnorm{q,u,\eta}_{\X_{\tfloor{n/2}}}&\text{if }\tfloor{n/2}<\upnu.
			\end{cases}
		\end{multline}
		To reach the estimate~\eqref{i couldnt even see the floor}, we note that the log-convexity of the $\X$-norms from Lemma~\ref{lem on log-convexity of the norms}, paired with Young's inequality grants the bound
		\begin{multline}\label{thirty three and a third}
			\begin{cases}
				0&\text{if }\upnu\le\tfloor{n/2},\\
				\tnorm{q_0,u_0,\eta_0}_{\X_{1+\upnu}}\tnorm{q,u,\eta}_{\X_{\tfloor{n/2}}}&\text{if }\tfloor{n/2}<\upnu,
			\end{cases}\lesssim\rho\tnorm{q,u,\eta}_{\overset{q_0,u_0,\eta_0}{\X^\upnu_{m,N}}}\\+\rho^{-1-\tfloor{n/2}}\tbr{\tnorm{q_0,u_0,\eta_0}_{\X_{1+\upnu}}}^{2+\tfloor{n/2}}\tnorm{q,u,\eta}_{\overset{q_0,u_0,\eta_0}{\X^{-1}_{m,N}}},
		\end{multline}
		and thus we may combine estimates~\eqref{crackerbox palace} and~\eqref{thirty three and a third}, and then take $0<\rho\le\rho_{\m{reg},\upnu,m}$ sufficiently small to obtain~\eqref{i couldnt even see the floor}.  
	\end{proof}
	
	We now are in a position to give an existence result for the principal part of the linearization, system~\eqref{principal part of the linearization}.

	\begin{thm}[Existence for the principal part]\label{thm on existence for the principal part}
		Let $0<\rho\le\rho_{\m{WD}}$, where the latter is defined in Theorem~\ref{thm on smooth tameness of the nonlinear operator}, and let $w_0=(q_0,u_0,\eta_0)\in B_{\X^{2+\tfloor{n/2}}}(0,\rho)\cap\X^\infty$, $\gam_0\in I$ for $I\Subset\R^+$ an interval. For each $\upnu\in\N$, there exists a $\rho_{\m{exi},\upnu}\in\R^+$, depending on $\upnu$, the physical parameters, and $I$, such that if $\rho\le\rho_{\m{exi},\upnu}$ then the map
		\begin{equation}\label{that the map}
			\overset{w_0,\gam_0}{A}:\overset{q_0,u_0,\eta_0}{\X^\upnu}\to\Y^{\upnu},
		\end{equation}
		the action of which is given via~\eqref{definition of the principal part operator}, is a Banach isomorphism.
	\end{thm}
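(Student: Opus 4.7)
The plan is to combine the a priori estimates already in hand with an approximation scheme built from the regularized operators $\overset{w_0,\gam_0}{A_{m,N}}$. Continuity of $\overset{w_0,\gam_0}{A}$ as a map into $\Y^\upnu$ is already granted by the first item of Proposition~\ref{prop on properties of the A+P decomposition}. Injectivity is immediate: the a priori estimate from the first item of Theorem~\ref{thm on a priori estimates for the principal part and the regularization} applied at level $\upnu = 0 \le \tfloor{n/2}$ has no background-dependent right-hand-side contribution, so zero data forces a zero solution in $\overset{q_0,u_0,\eta_0}{\X^0}$, and hence in every higher $\overset{q_0,u_0,\eta_0}{\X^\upnu}$ by the scale embedding. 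Continuity of the inverse is subsequently delivered by the same a priori estimate applied at level $\upnu$, where the background-dependent term on the right is controlled by the clean low-regularity bound.

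For surjectivity at a given $\upnu$, fix $(g,f,k) \in \Y^\upnu$, set $m = \max\{2, 1+\upnu\}$, and shrink $\rho_{\m{exi},\upnu}$ so that the smallness hypotheses of Proposition~\ref{prop on a priori estimates for regularized weak solutions} and of the second item of Theorem~\ref{thm on a priori estimates for the principal part and the regularization} both hold. Because $w_0 \in \X^\infty$, there is a threshold $N_0$ beyond which the large-$N$ assumptions of these results are satisfied. For each $N \ge N_0$, Corollary~\ref{coro on regularity of the regularization} produces a unique $(q_N,u_N,\eta_N) \in \X^\upnu_{m,N}$ solving $\overset{w_0,\gam_0}{A_{m,N}}(q_N,u_N,\eta_N) = (g,f,k)$. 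Via Lemma~\ref{lem on strong solutions are weak solutions}, this also solves the corresponding regularized weak formulation, so Proposition~\ref{prop on a priori estimates for regularized weak solutions} controls $\tnorm{q_N,u_N,\eta_N}_{\overset{q_0,u_0,\eta_0}{\X^{-1}_{m,N}}}$ uniformly in $N$. Feeding this into the second item of Theorem~\ref{thm on a priori estimates for the principal part and the regularization} yields the $N$-uniform bound $\tnorm{q_N,u_N,\eta_N}_{\overset{q_0,u_0,\eta_0}{\X^\upnu_{m,N}}} \lesssim \tnorm{g,f,k}_{\Y^\upnu}$, with a constant depending on $\tnorm{w_0}_{\X_{\max\{2m,1+\tfloor{n/2}+m\}}}$ but not on $N$.

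Unpacking this adapted norm provides simultaneous $N$-uniform control of $(q_N,u_N,\eta_N)$ in $\overset{q_0,u_0,\eta_0}{\X^\upnu}$ together with control of $N^{-1}(q_N,\eta_N)$ in $H^{1+\upnu+2m}(\Omega)\times\mathcal{H}^{1+\upnu+2m}(\Sigma)$. I extract a weak subsequential limit $(q,u,\eta)$ in the reflexive space $\overset{q_0,u_0,\eta_0}{\X^\upnu}$ and pass to the limit in the equation: the standard terms converge weakly, the regularization contributions $N^{-1}L_m(q_N + \mathfrak{g}\eta_N)$ and $N^{-1}(-\Delta_{\|})^{m-1/4}\eta_N$ vanish in $L^2(\Omega)$ and $H^{1/2}(\Sigma)$ respectively (thanks to the $N^{-1}$ prefactor against the uniform high-regularity bounds), and the artificial Neumann conditions built into $\X^\upnu_{m,N}$ simply drop out. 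The main obstacle is confirming that the weak limit lies in the adapted space $\overset{q_0,u_0,\eta_0}{\X^\upnu}$ rather than merely in $\X^\upnu$; this is precisely what the $N$-uniform bonus-regularity estimate on $\tnorm{\grad\cdot(v_{w_0} q_N)}_{H^{1+\upnu}}$ embedded in the adapted norm furnishes, by weak lower semicontinuity. Bijectivity, together with the a priori estimate, then identifies $\overset{w_0,\gam_0}{A}$ as a Banach isomorphism.
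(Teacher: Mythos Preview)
Your proposal is correct and follows essentially the same approach as the paper: well-definedness via Proposition~\ref{prop on properties of the A+P decomposition}, injectivity from the a priori estimate in the first item of Theorem~\ref{thm on a priori estimates for the principal part and the regularization}, and surjectivity by solving the regularized problems $\overset{w_0,\gam_0}{A_{m,N}}(q_N,u_N,\eta_N)=(g,f,k)$ via Corollary~\ref{coro on regularity of the regularization}, establishing $N$-uniform bounds in the adapted norm through Proposition~\ref{prop on a priori estimates for regularized weak solutions} and the second item of Theorem~\ref{thm on a priori estimates for the principal part and the regularization}, and passing to a weak limit. Your explicit discussion of why the regularizing terms $N^{-1}L_m(q_N+\mathfrak{g}\eta_N)$ and $N^{-1}(-\Delta_{\|})^{m-1/4}\eta_N$ vanish and why the artificial Neumann conditions disappear in the limit is a nice elaboration of what the paper calls ``routine weak convergence arguments.''
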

	\begin{proof}
		Take $\rho_{\m{exi},\upnu}=\min\tcb{\rho_{\m{est},\upnu},\rho_{\m{reg},\upnu,\max\tcb{2,1+\upnu}}}$, where these smallness parameters are from the first and second items of Theorem~\ref{thm on a priori estimates for the principal part and the regularization}. That the map~\eqref{that the map} is well-defined is a consequence of the first item of Proposition~\ref{prop on properties of the A+P decomposition}. This map is injective as a consequence of a priori estimate~\eqref{clarity} from the first item of Theorem~\ref{thm on a priori estimates for the principal part and the regularization}. It remains only to verify surjectivity. 
		
		The proof of the second item of Theorem~\ref{thm on a priori estimates for the principal part and the regularization} shows that $\rho_{\m{exi},\upnu}\le\rho_{\m{reg,weak}}$, and hence we are in a position to apply Corollary~\ref{coro on regularity of the regularization}, which tells us that the maps $\overset{w_0,\gam_0}{A_{\max\tcb{2,1+\upnu},N}}:\X^\upnu_{\max\tcb{2,1+\upnu},N}\to\Y^\upnu$ are Banach isomorphisms for $N\in\N$ sufficiently large, say $N\ge\Bar{N}$. Thus, given $(g,f,k)\in\Y^\upnu$ we can define the sequence $\tcb{(q_N,u_N,\eta_N)}_{N=\Bar{N}}^\infty\subset\X_\upnu$ via $(q_N,u_N,\eta_N)=\sp{\overset{w_0,\gam_0}{A_{\max\tcb{2,1+\upnu},N}}}^{-1}(g,f,k)$. In this we are tacitly using that $\X^\upnu_{m,N}\emb\X_\upnu$ for any $m,N\in\N^+$; in fact, this embedding is non-expansive.  Therefore, by applying the a priori estimate~\eqref{i couldnt even see the floor} from the second item of Theorem~\ref{thm on a priori estimates for the principal part and the regularization}, followed by the regularized weak solution a priori estimate from Proposition~\ref{prop on a priori estimates for regularized weak solutions} (and by also invoking Lemma~\ref{lem on strong solutions are weak solutions}), we obtain the uniform bounds
		\begin{equation}
			\sup_{N\ge\Bar{N}}\sp{\tnorm{q_N,u_N,\eta_N}_{\X_\upnu}+\tnorm{\grad\cdot(v_{w_0}q_N)}_{H^{1+\upnu}}}\lesssim\tbr{\tnorm{q_0,u_0,\eta_0}_{\X_{\max\tcb{2m,1+\tfloor{n/2}+m}}}}^{\chi_{\upnu,m}}\tnorm{g,f,k}_{\Y^\upnu}.
		\end{equation}
		Therefore, we may extract a weak limit  $(q,u,\eta)\in\X_{\upnu}$ with $\grad\cdot(v_{w_0}q)\in H^{1+\upnu}(\Omega)$ such that along some unlabeled subsequence we have that
		\begin{equation}
			(q_N,u_N,\eta_N)\rightharpoonup(q,u,\eta)\text{ in }\X_\upnu
			\text{ and }
			\grad\cdot\tp{v_{w_0}q_N}\rightharpoonup\grad\cdot(v_{w_0}q)\text{ in }H^{1+\upnu}(\Omega).
		\end{equation}
		By routine weak convergence arguments, we readily deduce that $(q,u,\eta)\in\overset{q_0,u_0,\eta_0}{\X^\upnu}$ and $\overset{w_0,\gam_0}{A}(q,u,\eta)=(g,f,k)$, which completes the proof of surjectivity.
	\end{proof}
	
	\subsection{Synthesis of linear analysis}\label{dotting his socks in the night}
	
	In this final subsection of linear analysis, we turn to the study of the full derivative of the nonlinear map $\Bar{\Psi}$ from~\eqref{the nonlinear map equation}, which is associated to the PDE~\eqref{The nonlinear equations in the right form}. In other words, we consider the question of existence and tame estimates for the system
	\begin{equation}
		D\Bar{\Psi}(\theta_0,\gam_0)(q,u,\eta,\mathcal{T},\mathcal{G},\mathcal{F},\gam)=(g,f,k,\mathcal{T},\mathcal{G},\mathcal{F},\gam).
	\end{equation}
	As usual, the unknowns are $q:\Omega\to\R$, $u:\Omega\to\R^n$, and $\eta:\Sigma\to\R$, while the given data are $g:\Omega\to\R$, $f:\Omega\to\R^n$, $k:\Sigma\to\R^n$, $\mathcal{G},\mathcal{F}:\R^n\to\R^n$, $\mathcal{T}:\R^n\to\R^{n\times n}$, and $\gam\in\R$, as well as the background tuple $(\theta_0,\gam_0)$. We now state our main linear analysis result.

	\begin{thm}[Analysis of the linearization]\label{thm on analysis of the linearization 1}
		Let $0<\rho\le\rho_{\m{WD}}$ (recall that the latter is defined in Theorem~\ref{thm on smooth tameness of the nonlinear operator}), $I\Subset\R^+$ be an interval, and
		\begin{equation}\label{the hypotheses on the background solution is a pumpkin}
			(\theta_0,\gam_0)=(q_0,u_0,\eta_0,\mathcal{T}_0,\mathcal{G}_0,\mathcal{F}_0,\gam_0)\in\tp{ B_{\X^{3+\tfloor{n/2}}}(0,\rho)\times B_{\W_{4+\tfloor{n/2}}}(0,\rho)\times I}\cap(\X^\infty\times\W_\infty\times\R^+).
		\end{equation}
		For every $\N\ni\upnu\ge 3+\tfloor{n/2}$ there exists a $\rho_\upnu\in\R^+$, depending on $\upnu$, the physical parameters, and $I$, such that if $0<\rho\le\rho_\upnu$, then the following hold.
		\begin{enumerate}
			\item The map
			\begin{equation}\label{parental supervision}
				D\Bar{\Psi}(\theta_0,\gam_0):\overset{q_0,u_0,\eta_0}{\X^\upnu}\times\W_{1+\upnu}\times\R\to\Y^\upnu\times\W_{1+\upnu}\times\R
			\end{equation}
			is well-defined and a Banach isomorphism.
			\item Assume that $\Xi=(g,f,k,\mathcal{T},\mathcal{G},\mathcal{F},\gam)\in\Y^\upnu\times\W_{1+\upnu}\times\R$ and $\Theta=(D\Bar{\Psi}(\theta_0,\gam_0))^{-1}\Xi\in\overset{q_0,u_0,\eta_0}{\X^\upnu}\times\W_{1+\upnu}\times\R$. We have the tame estimate
			\begin{equation}
				\tnorm{\Theta}_{\overset{q_0,u_0,\eta_0}{\X^\upnu}\times\W_{1+\upnu}\times\R}\lesssim\tnorm{\Xi}_{\Y^\upnu\times\W_{1+\upnu}\times\R}+\tbr{\tnorm{\theta_0}_{\X^{1+\upnu}\times\W_{1+\upnu}}}\tnorm{\Xi}_{\Y^{2+\tfloor{n/2}}\times\W_{3+\tfloor{n/2}}\times\R},
			\end{equation}
			where the implicit constant depends on $\upnu$, the dimension, the physical parameters, $\rho_{\upnu}$, and $I$.
		\end{enumerate}
	\end{thm}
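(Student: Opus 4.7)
The plan is to reduce the isomorphism question for $D\bar{\Psi}(\theta_0,\gam_0)$ to a perturbation of the principal part operator $A$, which is already known to be a Banach isomorphism by Theorem~\ref{thm on existence for the principal part}. First, I will observe that, by the formula~\eqref{formula for the derivative for like he had no place to go psi} together with the $A+P$ splitting~\eqref{A+P_splitting_of_DPsi} and the $Q+R$ splitting of $D\Phi$ from~\eqref{definition of the operator Q}--\eqref{definition of the operator R}, the trivial identity action on the $(\mathcal{T},\mathcal{G},\mathcal{F},\gam)$ components reduces the problem to solving
\begin{equation}
    \overset{w_0,\gam_0}{A}(q,u,\eta) + \overset{w_0,\gam_0}{P}(q,u,\eta,\gam) + \overset{\theta_0}{Q}(q,u,\eta) = (g,f,k) - \overset{q_0,u_0,\eta_0}{R}(\mathcal{T},\mathcal{G},\mathcal{F})
\end{equation}
for $(q,u,\eta)\in\overset{q_0,u_0,\eta_0}{\X^\nu}$ given data $(g,f,k,\mathcal{T},\mathcal{G},\mathcal{F},\gam)$. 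Since $\overset{w_0,\gam_0}{P}$ is linear in its full argument, I will split the $\gam$-piece off and absorb it into the right-hand side, leaving a linear operator $\overset{w_0,\gam_0}{A} + \overset{w_0,\gam_0}{P_{\m{lin}}} + \overset{\theta_0}{Q}$ acting on $(q,u,\eta)$. The well-definedness and continuity of the map~\eqref{parental supervision} follows from Propositions~\ref{prop on properties of the A+P decomposition} and~\ref{prop on the Q+R decomposition of DPhi} together with the continuity of $\overset{q_0,u_0,\eta_0}{R}$.

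Second, I will establish the isomorphism at the baseline level $\nu_0 = 3+\tfloor{n/2}$. Composing with $\sp{\overset{w_0,\gam_0}{A}}^{-1}$ (granted by Theorem~\ref{thm on existence for the principal part} upon taking $\rho\le\rho_{\m{exi},\nu_0}$), the equation becomes a fixed point problem
\begin{equation}
    (q,u,\eta) = \sp{\overset{w_0,\gam_0}{A}}^{-1}\bsb{(g,f,k) - \overset{q_0,u_0,\eta_0}{R}(\mathcal{T},\mathcal{G},\mathcal{F}) - \gam\,\overset{w_0,\gam_0}{P}(0,0,0,1) - \overset{w_0,\gam_0}{P_{\m{lin}}}(q,u,\eta) - \overset{\theta_0}{Q}(q,u,\eta)}.
\end{equation}
The continuous embedding $\overset{q_0,u_0,\eta_0}{\X^{\nu_0}}\emb \X^{\nu_0}$, combined with the fact that the estimates in Propositions~\ref{prop on properties of the A+P decomposition} and~\ref{prop on the Q+R decomposition of DPhi} contain a factor of $\rho$ multiplying the top norm (with the remaining terms involving only low norms of $(q,u,\eta)$), yields that the linear map $(q,u,\eta)\mapsto \sp{\overset{w_0,\gam_0}{A}}^{-1}\tp{\overset{w_0,\gam_0}{P_{\m{lin}}} + \overset{\theta_0}{Q}}(q,u,\eta)$ has operator norm on $\overset{q_0,u_0,\eta_0}{\X^{\nu_0}}$ bounded by a universal constant times $\rho$. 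Taking $\rho$ small, Neumann series inversion then produces a unique baseline solution and gives the isomorphism property at level $\nu_0$.

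Third, for $\nu>\nu_0$, I will bootstrap. Given data in $\Y^\nu\times\W_{1+\nu}\times\R$, the baseline solution $(q,u,\eta)\in\overset{q_0,u_0,\eta_0}{\X^{\nu_0}}$ automatically satisfies the equation $\overset{w_0,\gam_0}{A}(q,u,\eta) = h$ where $h\in\Y^\nu$ by the mapping properties of $P$, $Q$, $R$. Invoking the higher-regularity isomorphism statement of Theorem~\ref{thm on existence for the principal part} at level $\nu$ promotes the inclusion to $(q,u,\eta)\in\overset{q_0,u_0,\eta_0}{\X^\nu}$. To establish the tame estimate, I will apply the a priori bound~\eqref{clarity} from Theorem~\ref{thm on a priori estimates for the principal part and the regularization} to $\overset{w_0,\gam_0}{A}(q,u,\eta) = (g,f,k) - \overset{q_0,u_0,\eta_0}{R}(\mathcal{T},\mathcal{G},\mathcal{F}) - \overset{w_0,\gam_0}{P}(q,u,\eta,\gam) - \overset{\theta_0}{Q}(q,u,\eta)$ and estimate the right-hand side term by term using Propositions~\ref{prop on properties of the A+P decomposition} and~\ref{prop on the Q+R decomposition of DPhi}. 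Log-convexity of the scales (Lemma~\ref{lem on log-convexity of the norms}) together with Young's inequality will convert cross terms of the form $\tnorm{\theta_0}_{\X_{1+\nu}}\tnorm{q,u,\eta}_{\X_{\tfloor{n/2}}}$ into a small multiple of the top norm plus a product of the form $\tbr{\tnorm{\theta_0}_{\X^{1+\nu}\times\W_{1+\nu}}}$ times the low norm $\tnorm{q,u,\eta}_{\X_{2+\tfloor{n/2}}}$; the latter can be bounded through the baseline estimate in terms of the low-regularity data norm $\tnorm{\Xi}_{\Y^{2+\tfloor{n/2}}\times\W_{3+\tfloor{n/2}}\times\R}$, closing the tame bound after taking $\rho_\nu$ small enough to absorb the $\rho$-multiplied top-norm contributions.

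The main obstacle is the interaction between the background-adapted space $\overset{q_0,u_0,\eta_0}{\X^\nu}$ and the non-adapted bounds on $P$ and $Q$: these perturbative operators only see the ambient $\X_\nu$ norm, not the bonus-regularity component $\grad\cdot(v_{w_0}q)\in H^{1+\nu}$. Recovering control of this bonus regularity will require reading it off from the continuity equation contained in $\overset{w_0,\gam_0}{A}(q,u,\eta)=h$ and confirming that the resulting right-hand side lies in $H^{1+\nu}(\Omega)$, which uses the fourth item of Lemma~\ref{properties of the principal parts vector field} applied to the already-controlled tame $\X_\nu$ bounds. Threading this bonus-regularity loop, together with the careful interpolation needed to keep the background's top norm $\tnorm{q_0,u_0,\eta_0}_{\X_{1+\nu}}$ multiplied only by low norms of the solution, is the most delicate bookkeeping step in the proof.
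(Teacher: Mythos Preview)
Your decomposition and the identification of the main obstacle are correct, and the plan for the tame estimate in your third step mirrors the paper's approach closely. However, the existence argument has two genuine gaps.

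First, the Neumann series claim at $\nu_0 = 3+\tfloor{n/2}$ does not close. The estimate from Proposition~\ref{prop on properties of the A+P decomposition} for $P$ at level $\nu_0$ contains the term $\tnorm{q_0,u_0,\eta_0}_{\X_{1+\nu_0}}\tnorm{q,u,\eta}_{\X_{\tfloor{n/2}}}$, and $\X_{1+\nu_0} = \X_{4+\tfloor{n/2}}$; the hypothesis~\eqref{the hypotheses on the background solution is a pumpkin} only bounds the background in $\X^{3+\tfloor{n/2}}$ by $\rho$, so this coefficient is finite (since $w_0\in\X^\infty$) but not small. The composed operator $A^{-1}(P_{\m{lin}}+Q)$ therefore does not have operator norm $\lesssim \rho$ on $\overset{q_0,u_0,\eta_0}{\X^{\nu_0}}$, and $\rho_{\nu_0}$ is not permitted to depend on $\tnorm{w_0}_{\X_{4+\tfloor{n/2}}}$. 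The contraction \emph{does} close at level $2+\tfloor{n/2}$, where every background norm that appears is $\X_{3+\tfloor{n/2}}$ or lower and hence $<\rho$; this is precisely the low level the paper isolates in~\eqref{equation number 1}--\eqref{now insert}.

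Second, your bootstrap is circular. You assert that for the baseline solution, $h = (g,f,k) - R - P - Q \in \Y^\nu$; but $P(q,u,\eta,\gam)$ and $Q(q,u,\eta)$ lie only in $\Y^{\nu_0}$ when $(q,u,\eta)\in\X^{\nu_0}$, since Propositions~\ref{prop on properties of the A+P decomposition} and~\ref{prop on the Q+R decomposition of DPhi} map $\X^s\to\Y^s$ with no regularity gain. Thus $h\in\Y^\nu$ already presupposes $(q,u,\eta)\in\X^\nu$. The paper sidesteps both issues by the method of continuity: for each fixed $\nu\ge2+\tfloor{n/2}$ one introduces the homotopy $L^\tau = A + \tau(P+Q) + R$ acting on the fixed space $\overset{q_0,u_0,\eta_0}{\X^\nu}\times\W_{1+\nu}\times\R$, derives $\tau$-uniform a priori estimates by combining~\eqref{clarity} at level $\nu$ with the closed low-level bound~\eqref{now insert} at $2+\tfloor{n/2}$, and transfers invertibility from $L^0$ (which reduces to $A$ and $R$, handled by Theorem~\ref{thm on existence for the principal part}) to $L^1 = D\bar\Psi$. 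This yields the isomorphism at every level directly, with no regularity bootstrap required.
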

	\begin{proof}
		Suppose initially that $\N\ni\upnu\ge 2+\tfloor{n/2}$. The idea of the proof is to prove a priori estimates, uniform with respect to a parameter, and utilize the method of continuity. To this end, for $\tau\in[0,1]$ we define the convex homotopy of operators
		\begin{equation}
			L^\tau(\theta_0,\gam_0):\overset{q_0,u_0,\eta_0}{\X^\upnu}\times\W_{1+\upnu}\times\R\to\Y^\upnu\times\W_{1+\upnu}\times\R
		\end{equation}
		via 
		\begin{equation}\label{given via the formulae}
			L^\tau(\theta_0,\gam_0)(q,u,\eta,\mathcal{T},\mathcal{G},\mathcal{F},\gam)=\bpm \overset{w_0,\gam_0}{A}(q,u,\eta)+\tau\sp{\overset{w_0,\gam_0}{P}+\overset{\theta_0}{Q}}(q,u,\eta,\gam)+\overset{q_0,u_0,\eta_0}{R}(\mathcal{T},\mathcal{G},\mathcal{F})\\\mathcal{T},\mathcal{G},\mathcal{F},\gam\epm.
		\end{equation}
		Note that $L^1(\theta_0,\gam_0)=D\Bar{\Psi}(\theta_0,\gam_0)$ from~\eqref{formula for the derivative for like he had no place to go psi} and that the mapping properties of $A$, $P$, $Q$, and $R$ established in Propositions~\ref{prop on properties of the A+P decomposition} and~\ref{prop on the Q+R decomposition of DPhi} ensure well-definedness and continuity of the maps $L^\tau(\theta_0,\gam_0)$. 
		
		We now aim to prove $\tau$-uniform a priori estimates for $\tcb{L^\tau(\theta_0,\gam_0)}_{\tau\in[0,1]}$. Assume that
		\begin{equation}
			(q,u,\eta,\mathcal{T},\mathcal{G},\mathcal{F},\gam)\in\overset{q_0,u_0,\eta_0}{\X^\upnu}\times\W_{1+\upnu}\times\R
			\text{ and } 
			(g,f,k)\in\Y^\upnu
		\end{equation} 
		are related via
		\begin{equation}
			L^\tau(\theta_0,\gam_0)(q,u,\eta,\mathcal{T},\mathcal{G},\mathcal{F},\gam)=(g,f,k,\mathcal{T},\mathcal{G},\mathcal{F},\gam).
		\end{equation}
		The first three components of the above equation are equivalent to
		\begin{equation}\label{utilization station}
			\overset{w_0,\gam_0}{A}(q,u,\eta)=(g,f,k)-\tau\sp{\overset{w_0,\gam_0}{P}+\overset{\theta_0}{Q}}(q,u,\eta,\gam)-\overset{q_0,u_0,\eta_0}{R}(\mathcal{T},\mathcal{G},\mathcal{F})=(\tilde{g},\tilde{f},\tilde{k}).
		\end{equation}
		Assume that $0<\rho\le\rho_{\m{est},\upnu}$; then we may invoke the principal part estimates from the first item of Theorem~\ref{thm on a priori estimates for the principal part and the regularization} to see that
		\begin{multline}\label{you give me your wah wah}
			\tnorm{q,u,\eta}_{\overset{q_0,u_0,\eta_0}{\X^\upnu}}\lesssim\tnorm{g,f,k}_{\Y^\upnu}+\snorm{\overset{w_0,\gam_0}{P}(q,u,\eta,\gam)}_{\Y^\upnu}+\snorm{\overset{\theta_0}{Q}(q,u,\eta)}_{\Y^\upnu}\\+\snorm{\overset{q_0,u_0,\eta_0}{R}(\mathcal{T},\mathcal{G},\mathcal{F})}_{\Y^\upnu}+\tbr{\tnorm{q_0,u_0,\eta_0}_{\X_{1+\upnu}}}\tnorm{q,u,\eta}_{\X_{\tfloor{n/2}}}.
		\end{multline}
		According to the second item of Proposition~\ref{prop on properties of the A+P decomposition} and the first and second items of Proposition~\ref{prop on the Q+R decomposition of DPhi}, we have the bounds
		\begin{equation}\label{bounds for P}
			\snorm{\overset{w_0,\gam_0}{P}(q,u,\eta,\gam)}_{\Y^\upnu}\lesssim\rho\tnorm{q,u,\eta}_{\X_{\upnu}}+\tnorm{q_0,u_0,\eta_0}_{\X_{1+\upnu}}\tnorm{q,u,\eta,\gam}_{\X_{\tfloor{n/2}}\times\R},
		\end{equation}
		\begin{equation}\label{bounds for Q}
			\snorm{\overset{\theta_0}{Q}(q,u,\eta)}_{\Y^\upnu}\lesssim\rho\tnorm{q,u,\eta}_{\X_\upnu}+\tnorm{\theta_0}_{\X_{\upnu}\times\W_{1+\upnu}}\tnorm{q,u,\eta}_{\X_{2+\tfloor{n/2}}},
		\end{equation}
		and
		\begin{equation}\label{bounds for R}
			\snorm{\overset{q_0,u_0,\eta_0}{R}(\mathcal{T},\mathcal{G},\mathcal{F})}_{\Y^\upnu}\lesssim\tnorm{\mathcal{T},\mathcal{G},\mathcal{F}}_{\W_{1+\upnu}}+
			\tbr{\tnorm{q_0,u_0,\eta_0}_{\X_{\upnu}}}\tnorm{\mathcal{T},\mathcal{G},\mathcal{F}}_{\W_{3+\tfloor{n/2}}}.
		\end{equation}
		Inserting~\eqref{bounds for P}, \eqref{bounds for Q}, and~\eqref{bounds for R} into the right hand side of~\eqref{you give me your wah wah}, we obtain the  estimate
		\begin{multline}\label{fishcake by the sea all my friends}
			\tnorm{q,u,\eta}_{\overset{q_0,u_0,\eta_0}{\X^\upnu}}\lesssim\rho\tnorm{q,u,\eta}_{\X_\upnu}+\tnorm{g,f,k,\mathcal{T},\mathcal{G},\mathcal{F}}_{\Y^\upnu\times\W_{1+\upnu}}\\+\tbr{\tnorm{\theta_0}_{\X_{1+\upnu}\times\W_{1+\upnu}}}\tnorm{q,u,\eta,\mathcal{T},\mathcal{G},\mathcal{F},\gam}_{\X_{2+\tfloor{n/2}}\times\W_{3+\tfloor{n/2}}\times\R}.
		\end{multline}
		Now we take  $0<\rho\le\min\tcb{\rho_{\m{est},\upnu},\rho_{\m{exi},\upnu}}$, where the latter smallness parameters are from Theorems~\ref{thm on a priori estimates for the principal part and the regularization} and~\ref{thm on existence for the principal part}, to be sufficiently small so that we can absorb as usual in~\eqref{fishcake by the sea all my friends} to see that
		\begin{equation}\label{just dance it'll be ok}
			\tnorm{q,u,\eta}_{\overset{q_0,u_0,\eta_0}{\X^\upnu}}\lesssim\tnorm{g,f,k,\mathcal{T},\mathcal{G},\mathcal{F}}_{\Y^\upnu\times\W_{1+\upnu}}+\tbr{\tnorm{\theta_0}_{\X_{1+\upnu}\times\W_{1+\upnu}}}\tnorm{q,u,\eta,\mathcal{T},\mathcal{G},\mathcal{F},\gam}_{\X_{2+\tfloor{n/2}}\times\W_{3+\tfloor{n/2}}\times\R}.
		\end{equation}
		
		We need to handle the norm of $(q,u,\eta)$ in $\X_{2+\tfloor{n/2}}$ on the right hand side of \eqref{just dance it'll be ok}.  According to the first item of Theorem~\ref{thm on a priori estimates for the principal part and the regularization} and~\eqref{the hypotheses on the background solution is a pumpkin},  we know that
		\begin{equation}\label{equation number 1}
			\tnorm{q,u,\eta}_{\X_{2+\tfloor{n/2}}}\le\tnorm{q,u,\eta}_{\overset{q_0,u_0,\eta_0}{\X^{2+\tfloor{n/2}}}}\lesssim\tnorm{\tilde{g},\tilde{f},\tilde{k}}_{\Y^{2+\tfloor{n/2}}},
		\end{equation}
		where the latter is defined in~\eqref{utilization station}. By arguing as in \eqref{bounds for P}, \eqref{bounds for Q}, and~\eqref{bounds for R} again but using~\eqref{the hypotheses on the background solution is a pumpkin}, we learn that
		\begin{equation}\label{equation number 10}
			\tnorm{\tilde{g},\tilde{f},\tilde{k}}_{\Y^{2+\tfloor{n/2}}}\lesssim\rho\tnorm{q,u,\eta,\gam}_{\X_{2+\tfloor{n/2}}\times\R}+\tnorm{g,f,k}_{\Y^{2+\tfloor{n/2}}}+\tnorm{\mathcal{T},\mathcal{G},\mathcal{F}}_{\W_{3+\tfloor{n/2}}}.
		\end{equation}
		By combining~\eqref{equation number 1} and~\eqref{equation number 10} and taking $\rho$ smaller, if necessary, we gain the estimate
		\begin{equation}\label{now insert}
			\tnorm{q,u,\eta}_{\X_{2+\tfloor{n/2}}}\lesssim\tnorm{g,f,k,\mathcal{T},\mathcal{G},\mathcal{F},\gam}_{\Y^{2+\tfloor{n/2}}\times\W_{3+\tfloor{n/2}}\times\R}.
		\end{equation}
		Finally, we insert~\eqref{now insert} into~\eqref{just dance it'll be ok} to acquire the estimate
		\begin{multline}\label{you'd better hurry}
			\tnorm{q,u,\eta,\mathcal{T},\mathcal{G},\mathcal{F},\gam}_{\overset{q_0,u_0,\eta_0}{\X^\upnu}\times\W_{1+\upnu}\times\R}\lesssim\tnorm{L^\tau(\theta_0,\gam_0)(q,u,\eta,\mathcal{T},\mathcal{G},\mathcal{F},\gam)}_{\Y^\upnu\times\W_{1+\upnu}\times\R}\\+\tbr{\tnorm{\theta_0}_{\X_{1+\upnu}\times\W_{1+\upnu}}}\tnorm{L^\tau(\theta_0,\gam_0)(q,u,\eta,\mathcal{T},\mathcal{G},\mathcal{F},\gam)}_{\Y^{2+\tfloor{n/2}}\times\W_{3+\tfloor{n/2}}\times\R}.
		\end{multline}
		
		The bound \eqref{you'd better hurry} establishes the desired $\tau$-uniform  a prior bounds for $\tcb{L^\tau(\theta_0,\gam_0)}_{\tau\in[0,1]}$, as long as $\rho\le\rho_\upnu$ for some $\rho_{\upnu}\in\R^+$. Therefore, by the method of continuity (see, for instance, Theorem 5.2 in Gilbarg and Trudinger~\cite{MR1814364}), the invertibility of $L^1(\theta_0,\gam_0)=D\Bar{\Psi}(\theta_0,\gam_0)$ is established as soon as we know that $L^0(\theta_0,\gam_0)$ is invertible.  The latter holds thanks to Theorem~\ref{thm on existence for the principal part} (and the fact that $\rho_{\upnu}\le\rho_{\m{exi},\upnu}$) since
		\begin{equation}
			(L^0(\theta_0,\gam_0))^{-1}(g,f,k,\mathcal{T},\mathcal{G},\mathcal{F},\gam)=\sp{\sp{\overset{w_0,\gam_0}{A}}^{-1}\sp{(g,f,k)-\overset{q_0,u_0,\eta_0}{R}(\mathcal{T},\mathcal{G},\mathcal{F})},\mathcal{T},\mathcal{G},\mathcal{F},\gam}.
		\end{equation}
		This completes the proof of the first item. The second item follows from the first and estimate~\eqref{you'd better hurry} at $\tau=1$.
	\end{proof}
	

 \section{Conclusion}\label{she could steal, but she could not rob}

In this section we prove our main result, Theorem~\ref{thm on main}.  We begin with Theorem~\ref{main thm 1 on traveling wave solutions to the free boundary compressible Navier-Stokes equations}, which is an abstract construction in the Nash-Moser inverse function theorem framework from Section~\ref{section on NMH}. We then employ the abstract construction for the PDE-style result in Theorem~\ref{main thm 2 on traveling wave solutions to the free boundary compressible Navier-Stokes equations}.

 \subsection{Abstract construction}

Recall the scales of Banach spaces from~\eqref{main_thm_banach_scales_def}, the nonlinear operator $\Bar{\Psi}$ from~\eqref{the nonlinear map equation}, and the parameter $\rho_{\m{WD}} \in \R^+$ from Theorem \ref{thm on smooth tameness of the nonlinear operator}.  To state the next theorem we introduce the following notation. First, we set $\be=9+2\tfloor{n/2}$. Second, for $\N\ni s\ge\be$ and $\rho\in\R^+$ we define the open sets
\begin{equation}\label{before stegosaurus take the t-bird away now}
    E_s(\rho)=B_{\E^\be}(0,\rho)\cap\E^s
    \text{ and }
    F_s(\rho)=B_{\F^\be}(0,\rho)\cap\F^s,
\end{equation}
where $\E^s$ and $\F^s$ are defined by \eqref{North Carolina}.  Finally, it is convenient to  introduce a translation of $\Bar{\Psi}$; namely, for $\Gamma\in\R^+$ we set
\begin{equation}\label{fester uncle pester}
    \Bar{\Psi}_{\Gamma}(q,u,\eta,\mathcal{T},\mathcal{G},\mathcal{F},\gam)=\Bar{\Psi}(q,u,\eta,\mathcal{T},\mathcal{G},\mathcal{F},\Gamma+\gam)-(0,0,0,0,0,0,\Gamma),
\end{equation}
defined for $(q,u,\eta)\in B_{\X^\be}(0,\rho_{\m{WD}})$, $(\mathcal{T},\mathcal{G},\mathcal{F})\in\W_{1+\be}$, and $\R^+\ni\gam>-\Gamma$.  The utility of this definition is that $\Bar{\Psi}_{\Gamma}(0)=0$.

\begin{thm}[Traveling waves for free boundary compressible Navier-Stokes, 1]\label{main thm 1 on traveling wave solutions to the free boundary compressible Navier-Stokes equations}
    Assume that the parameters $\upmu$, $\uplambda$, and $\varsigma$ satisfy \eqref{parameter_assumptions} and that $P$ satisfies $P'>0$ and~\eqref{equilibrium_ccs}.   Let $\Gamma\in\R^+$, and let $\beta,$ $E_s(\rho)$, $F_s(\rho)$, and $\bar{\Psi}_{\Gamma}$ be as defined above.  There exists a nonincreasing sequence $\tcb{\ep_\nu(\Gamma)}_{\nu=0}^\infty\subset(0,\min\tcb{\rho_{\m{WD}},\Gamma/2})$ and $\kappa(\Gamma)\in\R^+$ such that the following hold.
    \begin{enumerate}
        \item Existence and uniqueness: Given $\bf{f}\in F_\be(\ep_0(\Gamma))$, there exists a unique $\bf{e}\in E_\be(\kappa(\Gamma)\ep_0(\Gamma))$ such that $\Bar{\Psi}_\Gamma(\bf{e})=\bf{f}$.  This induces the local inverse map
        \begin{equation}
            \Bar{\Psi}_{\Gamma}^{-1} : F_\be(\ep_0(\Gamma)) \to \Bar{\Psi}_{\Gamma}^{-1}(F_\be(\ep_0(\Gamma))) \subseteq E_\be(\kappa(\Gamma)\ep_0(\Gamma)).
        \end{equation}
        
        \item Higher regularity, given low norm smallness: If $\nu\in\N$ and $\bf{f}\in F_{\be+\nu}(\ep_\nu(\Gamma))$, then $\Bar{\Psi}_{\Gamma}^{-1}(\bf{f})\in\E^{\be+\nu}$, and we have the tame estimate
        \begin{equation}
            \tnorm{\Bar{\Psi}_\Gamma^{-1}(\bf{f})}_{\E^{\be+\nu}}\lesssim\tnorm{\bf{f}}_{\F^{\be+\nu}},
        \end{equation}
        for an implicit constant depending only on the dimension, the physical parameters, $\nu$, $\rho_{\m{WD}}$, and $\Gamma$.
        
        \item Continuous dependence: For every $\nu\in\N$, the restricted map
        \begin{equation}
            \Bar{\Psi}^{-1}_\Gamma:F_{\be+\nu}(\ep_\nu(\Gamma))\to\E^{\be+\nu}
        \end{equation}
        is continuous with respect to the norms on $\F^{\be+\nu}$ and $\E^{\be+\nu}$.

        \item Continuous differentiability: For every $\nu\in\N$, the restricted map
        \begin{equation}
            \Bar{\Psi}_\Gamma^{-1}:F_{\be+\nu}(\ep_{\nu+2}(\Gamma))\to\E^{\be+\nu-1}
        \end{equation}
        is differentiable.  Moreover, if we view $D\Bar{\Psi}_\Gamma^{-1}$ as a map
        \begin{equation}\label{Kansas}
            D\Bar{\Psi}_\Gamma^{-1}:F_{\be+\nu}(\ep_{\nu+2}(\Gamma))\times\F^{\be+\nu-1}\to\E^{\be+\nu-1},
        \end{equation}
        then $D\Bar{\Psi}_\Gamma^{-1}$ is continuous.
    \end{enumerate}
\end{thm}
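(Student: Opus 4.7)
The plan is to apply the Nash-Moser inverse function theorem (Theorems~\ref{thm on nmh} and~\ref{thm on further conclusions of the inverse function theorem}) to the translated map $\Bar{\Psi}_\Gamma$ from~\eqref{fester uncle pester}, viewed as a map between the Banach scales $\pmb{\E}$ and $\pmb{\F}$ from~\eqref{main_thm_banach_scales_def} and using hypothesis II: by Lemma~\ref{lem on tameness of domain and codomain}, $\pmb{\E}$ is tame and $\pmb{\F}$ is a tame direct summand of $\pmb{\E}$. The parameter choices for the mapping hypotheses of Definition~\ref{defn of the mapping hypotheses} are $\mu=1$, $r=3+\tfloor{n/2}$, and $R\in\N$ with $R>3r+4\mu+2$ to be chosen depending on the desired level of higher regularity; with these choices $\beta=2(r+\mu)+1=9+2\tfloor{n/2}$, matching the value fixed above~\eqref{before stegosaurus take the t-bird away now}.

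The first step is to verify the LRI mapping hypotheses for $(\pmb{\E},\pmb{\F},\Bar{\Psi}_\Gamma)$ with parameters $(\mu,r,R)$ on a ball $B_{\E^r}(0,\delta_R)$ with $\delta_R\in(0,\min\tcb{\rho_{\m{WD}},\Gamma/2})$. The vanishing condition $\Bar{\Psi}_\Gamma(0)=0$ is immediate from~\eqref{fester uncle pester}. The tame $C^2$ bound~\eqref{C2 tame estimates} is a direct consequence of Theorem~\ref{thm on smooth tameness of the nonlinear operator} with $m=2$ and Remark~\ref{I feel hung up and I dont know why, its alright}, noting that the constraint $\delta_R\le\Gamma/2$ forces $\gamma_0=\Gamma+\gamma\in[\Gamma/2,3\Gamma/2]\Subset\R^+$, which places the wave speed in a compactly contained interval where the tame estimates apply uniformly. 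The derivative inversion condition~\eqref{tame estimates on the right inverse} is the content of Theorem~\ref{thm on analysis of the linearization 1}: for every $u_0\in B_{\E^r}(0,\delta_R)\cap\E^\infty$ it produces a two-sided inverse $L(u_0)$ of $D\Bar{\Psi}_\Gamma(u_0)$ with values in the adapted space $\overset{q_0,u_0,\eta_0}{\X^s}\times\W_{1+s}\times\R$, which embeds continuously into $\E^s$; moreover, the tame bound from the theorem's second item is strictly stronger than~\eqref{tame estimates on the right inverse}, using the weaker norms $\tnorm{\theta_0}_{\X^{1+s}\times\W_{1+s}}$ and $\tnorm{\Xi}_{\F^{2+\tfloor{n/2}}}$ in place of $\tnorm{u_0}_{E^{s+\mu}}$ and $\tnorm{f}_{F^r}$. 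This is arranged by possibly further shrinking $\delta_R$ to the parameter $\rho_\upnu$ from Theorem~\ref{thm on analysis of the linearization 1}.

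With the hypotheses verified, item (1) follows from the existence portion of Theorem~\ref{thm on nmh}, with $\kappa(\Gamma)=\kappa_1$ and $\ep_0(\Gamma)=\ep$ from its conclusion. Item (2) follows from the regularity-gain conclusion~\eqref{when the rain comes they run and hide} of Theorem~\ref{thm on nmh}, but this requires $\nu\le R+r-2\beta$, so one reapplies the theorem with $R_\nu\in\N$ chosen large enough depending on $\nu$ to produce a radius $\ep_\nu'(\Gamma)>0$. The nonincreasing sequence in the statement is then defined by $\ep_\nu(\Gamma):=\min_{0\le k\le\nu+2}\ep_k'(\Gamma)$, where the two-step shift accommodates item (4). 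Items (3) and (4) follow from the first two conclusions of Theorem~\ref{thm on further conclusions of the inverse function theorem}, whose reflexivity hypothesis is automatic since each $\E^s$ is a Hilbert space. Specifically, for $R$ large enough so that $\beta+\nu<R+r-\beta-\mu$, the theorem yields continuity of $\Bar{\Psi}_\Gamma^{-1}:F^{\beta+\nu}\to\E^{\beta+\nu}$, Fr\'echet differentiability of $\Bar{\Psi}_\Gamma^{-1}:F^{\beta+\nu}\to\E^{\beta+\nu-1}$, and continuity of the derivative map~\eqref{Kansas}.

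The main difficulty is combinatorial rather than analytical: the substantive estimates have all been carried out by Theorems~\ref{thm on smooth tameness of the nonlinear operator} and~\ref{thm on analysis of the linearization 1} together with the abstract Nash-Moser machinery, but each regularity level $\nu$ demands a different $R$, hence a different existence radius, and these must be organized into the single nonincreasing sequence $\tcb{\ep_\nu(\Gamma)}_{\nu=0}^\infty$ in a way that simultaneously certifies all four items. A secondary subtlety is that the linear analysis naturally lives on the adapted spaces $\overset{q_0,u_0,\eta_0}{\X^s}$ rather than on $\X^s$, so the continuous embedding into the Nash-Moser scale must be consistently invoked when verifying the mapping hypotheses and translating the adapted-space estimate into the form required by~\eqref{tame estimates on the right inverse}.
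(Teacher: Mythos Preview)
Your proposal is correct and follows essentially the same approach as the paper: verify the LRI mapping hypotheses for $(\pmb{\E},\pmb{\F},\Bar{\Psi}_\Gamma)$ with $\mu=1$, $r=3+\tfloor{n/2}$ using Lemma~\ref{lem on tameness of domain and codomain}, Theorem~\ref{thm on smooth tameness of the nonlinear operator}, and Theorem~\ref{thm on analysis of the linearization 1}, then invoke Theorems~\ref{thm on nmh} and~\ref{thm on further conclusions of the inverse function theorem}, increasing $R$ as needed for each regularity level $\nu$. The paper fixes an initial $R=17+3\tfloor{n/2}$ and uses $I=[\Gamma/2,2\Gamma]$, but your choices are equivalent; your explicit remark about the adapted-space embedding and the two-step shift in defining $\ep_\nu(\Gamma)$ are correct refinements that the paper handles more implicitly.
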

\begin{proof}
Our aim is to show that the hypotheses of Theorem~\ref{thm on nmh} are satisfied by $\bar{\Psi}_\Gamma$ with the Banach scales $\pmb{\E}$ and $\pmb{\F}$ defined in \eqref{main_thm_banach_scales_def}. Thanks to Lemma~\ref{lem on tameness of domain and codomain}, we have that condition $II$ from the hypotheses of Theorem~\ref{thm on nmh} is satisfied by $\pmb{\E}$ and $\pmb{\F}$. In verifying the rest of the hypotheses we consider the triple $(\pmb{\E},\pmb{\F},\Bar{\Psi}_\Gamma)$ with parameters  $(\mu,r,R)=(1,3+\tfloor{n/2},17+3\tfloor{n/2})$, which obey the requisite inequality $2(r+\mu)+1=\be<(r+R)/2$ with $\beta=9+2\tfloor{n/2}$.  

We next set $\del_r=\min\tcb{\rho_{\m{WD}},\Gamma/2}$.  Then Theorem~\ref{thm on smooth tameness of the nonlinear operator} and Remark~\ref{I feel hung up and I dont know why, its alright} prove that the second item ($C^2$ and $\mu$-tameness) of Definition~\ref{defn of the mapping hypotheses} is satisfied.  Invoking Theorem~\ref{thm on analysis of the linearization 1} with $\upnu=R$ and interval $I=[\Gamma/2,2\Gamma]$ then shows that the third item (derivative inversion) of Definition~\ref{defn of the mapping hypotheses} holds with $\del_R=\min\tcb{\rho_{\upnu},\del_r}$ (where $\rho_{\upnu}$ is given by Theorem~\ref{thm on analysis of the linearization 1}) and that the remainder of the LRI mapping hypotheses from the definition are also satisfied.

Hence, the hypotheses of Theorem~\ref{thm on nmh} are satisfied.  We thus obtain an $\ep = \ep_0(\Gamma)$ such that the first item holds, and we have the estimate
\begin{equation}\label{Indiana}
    \tnorm{\Bar{\Psi}_\Gamma^{-1}(\bf{f})}_{\E^{\be}}\lesssim\tnorm{\bf{f}}_{\F^{\be}} 
\end{equation}
for all $\bf{f}\in F_\be(\ep_0(\Gamma))$.  In fact, by the second item of Theorem~\ref{thm on nmh}, we have that for all $s\in[\be,R+r-\be]\cap\N=[9+\tfloor{n/2},11+3\tfloor{n/2}]\cap\N$ it holds that if $\bf{f}\in F_\be(\ep_0(\Gamma))\cap\F^s$, then $\Bar{\Psi}_\Gamma^{-1}(\bf{f})\in\E^s$ and estimate~\eqref{Indiana} holds with $\be$ replaced by $s$. Therefore, we may set $\ep_\nu(\Gamma)=\ep_0$ for $\nu\in\tcb{0,1,\dots,2+2\tfloor{n/2}=R+r-2\be}$.

Now, given $\N\ni\nu>2+2\tfloor{n/2}$, we define $\ep_\nu(\Gamma)\in(0,\ep_0]$ to be the smallness parameter $\ep$ provided by Theorem~\ref{thm on nmh} with parameter triple $(\mu,r,\tilde{R})$, where $\tilde{R}=R+\nu-(2+2\tfloor{n/2})$. Note that the LRI mapping hypotheses are satisfied in this case if we take $\del_{\tilde{R}}=\min\tcb{\rho_{R+\nu-(2+2\tfloor{n/2})},\del_r}$, where the former parameter is from Theorem~\ref{thm on analysis of the linearization 1} (with $\upnu=R+\nu-(2+2\tfloor{n/2})$ and $I=[\Gamma/2,2\Gamma]$). The second item   now follows from this definition of $\tcb{\ep_\nu(\Gamma)}_{\nu=0}^\infty$, the second item of Theorem~\ref{thm on nmh}, and uniqueness.  Finally, the third and fourth items now follow directly from Theorem~\ref{thm on further conclusions of the inverse function theorem}.
\end{proof}

\begin{rmk}\label{abstract_higher_reg_remark}
    As a consequence of the third conclusion of Theorem~\ref{thm on further conclusions of the inverse function theorem}, the local inverse map $\Bar{\Psi}_\Gamma^{-1}$ produced in Theorem~\ref{main thm 1 on traveling wave solutions to the free boundary compressible Navier-Stokes equations} satisfies certain higher-order differentiability assertions beyond the basic continuous differentiability result in the third item of the previous theorem. However, the precise statements become cumbersome to enumerate due to a compounding of the derivative loss in the formulas for higher-order derivatives of the inverse map.  As such, we have chosen not to state these precisely.
\end{rmk}

 \subsection{PDE construction}  We utilize Theorem \ref{main thm 1 on traveling wave solutions to the free boundary compressible Navier-Stokes equations} to prove our main result about the free boundary compressible Navier-Stokes equations, system \eqref{The nonlinear equations in the right form}.

\begin{thm}[Traveling wave for free boundary compressible Navier-Stokes, 2]\label{main thm 2 on traveling wave solutions to the free boundary compressible Navier-Stokes equations}
    Assume that the parameters $\upmu$, $\uplambda$, and $\varsigma$ satisfy \eqref{parameter_assumptions} and that $P$ satisfies $P'>0$ and~\eqref{equilibrium_ccs}. 
    Let $\be=9+2\tfloor{n/2}$.  There exist a family $\tcb{\mathcal{V}(\gamma)}_{\gamma\in\R^+}$ of open sets of $\X^\be$ and a nonincreasing sequence $\tcb{\mathcal{U}_s}_{s=\be}^\infty$ of open sets of $\W_{1+\be}\times\R^+$ such that the following hold.
    \begin{enumerate}
        \item Nondegeneracy: We have that $\tcb{0}\times\R^+ \subseteq \bigcap_{s=\be}^\infty \mathcal{U}_s$ and $0\in\bigcap_{\gamma\in\R^+}\mathcal{V}(\gamma)$.

        \item Existence and uniqueness:  For all $(\mathcal{T},\mathcal{G},\mathcal{F},\gam)\in\mathcal{U}_\be$ there exists a unique $(q,u,\eta)\in\mathcal{V}(\gam)$ such that the traveling wave formulation for the free boundary compressible Navier-Stokes equations, system~\eqref{The nonlinear equations in the right form}, is classically satisfied with wave speed $\gam$, data $(\mathcal{T},\mathcal{G},\mathcal{F})$, and solution $(q,u,\eta)$.
        
        \item Higher regularity, given low norm smallness: If $\N\ni s\ge\be$ and $(\mathcal{T},\mathcal{G},\mathcal{F},\gam)\in\mathcal{U}_{s}\cap(\W_{1+s}\times\R)$, then the corresponding solution satisfies $(q,u,\eta)\in\mathcal{V}(\gam)\cap\X^{1+s}$.
        
        \item Continuous dependence: For any $\N\ni s\ge\be$, the solution  map
        \begin{equation}
        \mathcal{U}_s\cap(\W_{1+s}\times\R)\ni(\mathcal{T},\mathcal{G},\mathcal{F},\gam)\mapsto(q,u,\eta)\in\X^{1+s}
        \end{equation}
        is continuous with respect to the $\W_{1+s}\times\R$ and $\X^{1+s}$ norms.
        \item No vacuum formation: There exists positive constants $c,C\in\R^+$ such that for all $(q,u,\eta)\in\bigcup_{\gam\in\R^+}\mathcal{V}(\gam)$ we have that $c\le \sig_{q,\eta}\le C$, where $\sig_{q,\eta}$ is defined in~\eqref{sigma_q_eta_def}.

        \item Flattening map diffeomorphism: For any $\N\ni s\ge\be$ and $(q,u,\eta)\in \X^s\cap\bigcup_{\gam\in\R^+}\mathcal{V}(\gam)$ we have that the flattening map $\mathfrak{F}_\eta$ from~\eqref{flattening_map_def} is a smooth diffeomorphism from $\Omega$ to $\Omega[\eta]$ that extends to a         $C^{s+2-\tfloor{n/2}}$ diffeomorphism from $\Bar{\Omega}$ to $\Bar{\Omega[\eta]}$.

    \end{enumerate}
\end{thm}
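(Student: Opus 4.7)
The plan is to derive Theorem~\ref{main thm 2 on traveling wave solutions to the free boundary compressible Navier-Stokes equations} from the abstract construction Theorem~\ref{main thm 1 on traveling wave solutions to the free boundary compressible Navier-Stokes equations} by applying the latter at every base wave speed $\Gamma \in \R^+$ and patching the resulting local inverse maps together. The key observation driving the reduction is that since $\bar{\Psi}$ passes the coordinates $(\mathcal{T},\mathcal{G},\mathcal{F},\gamma)$ through identically, for each $\Gamma \in \R^+$ and each data tuple $(\mathcal{T},\mathcal{G},\mathcal{F},\gamma)$ the translated data $\bf{f}_\Gamma := (0,0,0,\mathcal{T},\mathcal{G},\mathcal{F},\gamma-\Gamma) \in \F^\beta$ has $\F^\beta$-norm equal to $(\|\mathcal{T},\mathcal{G},\mathcal{F}\|_{\W_{1+\beta}}^2 + |\gamma-\Gamma|^2)^{1/2}$, and whenever $\bf{f}_\Gamma \in F_\beta(\epsilon_0(\Gamma))$ the image $\bar{\Psi}_\Gamma^{-1}(\bf{f}_\Gamma) = (q,u,\eta,\mathcal{T},\mathcal{G},\mathcal{F},\gamma-\Gamma)$ delivers a triple $(q,u,\eta)$ classically satisfying \eqref{The nonlinear equations in the right form} at the target wave speed $\gamma$ with data $(\mathcal{T},\mathcal{G},\mathcal{F})$.

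I would then define
\begin{equation*}
\mathcal{U}_s = \bigcup_{\Gamma \in \R^+} \scb{(\mathcal{T},\mathcal{G},\mathcal{F},\gamma) \in \W_{1+\beta} \times \R^+ : \tnorm{\bf{f}_\Gamma}_{\F^\beta} < \epsilon_{s-\beta}(\Gamma)}
\end{equation*}
for $s \ge \beta$, and $\mathcal{V}(\gamma) = B_{\X^\beta}(0,r(\gamma))$ for a suitable $r(\gamma) \in (0,\rho_{\m{WD}}]$. The nondegeneracy property is immediate by taking $\Gamma=\gamma$, since then $\bf{f}_\gamma = 0$; existence for $(\mathcal{T},\mathcal{G},\mathcal{F},\gamma) \in \mathcal{U}_\beta$ follows from the first conclusion of Theorem~\ref{main thm 1 on traveling wave solutions to the free boundary compressible Navier-Stokes equations}; the higher regularity and continuous dependence follow from items (2)--(4) of the same theorem applied with $\nu = s-\beta$ (and, for the $\X^{1+s}$ claim, an additional bootstrap using the Stokes-type elliptic structure of the linearized momentum equation to pass from the abstractly guaranteed $\X^s$-regularity to $\X^{1+s}$, iteratively upgrading $u$ via elliptic regularity, $q$ via the rearranged momentum equation, and $\eta$ via the kinematic and dynamic boundary conditions).

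The non-vacuum statement is immediate from the first conclusion of Proposition~\ref{prop on smooth tameness of the continuity equation}, since every $(q,u,\eta) \in \mathcal{V}(\gamma) \subseteq B_{\X^{1+\tfloor{n/2}}}(0,\rho_{\m{cont}})$ yields $c \le \sigma_{q,\eta} \le C$ once $r(\gamma)$ is chosen below $\rho_{\m{cont}}$. The diffeomorphism assertion is then a direct consequence of the first conclusion of Theorem~\ref{thm on smooth tameness of the nonlinear operator}, since $\eta \in B_{\mathcal{H}^{5/2+\tfloor{n/2}}}(0,\rho_{\m{WD}})$ implies $J_\eta \in (1/2,3/2)$ and $\mathfrak{F}_\eta$ is a smooth bi-Lipschitz diffeomorphism with the claimed boundary regularity inherited from Sobolev embedding via the $\X^s$ regularity of $(q,u,\eta)$.

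The main obstacle I anticipate is securing the global uniqueness asserted in item~(2): Theorem~\ref{main thm 1 on traveling wave solutions to the free boundary compressible Navier-Stokes equations} only guarantees uniqueness within a ball in the full $\E^\beta$-space around a fixed $\Gamma$, whereas the claimed uniqueness must hold inside $\mathcal{V}(\gamma) \subset \X^\beta$ regardless of which $\Gamma$ in the cover is used. Resolving this will require carefully selecting $r(\gamma)$ so that every candidate $(q,u,\eta) \in \mathcal{V}(\gamma)$, when paired with any admissible $(\mathcal{T},\mathcal{G},\mathcal{F},\gamma-\Gamma)$ for which the triple solves the PDE, automatically lies in the $E^\beta$-uniqueness ball of \emph{every} valid $\Gamma$, forcing all $\bar{\Psi}_\Gamma^{-1}$-reconstructions to coincide. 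The uniform invertibility of the linearization at nearby backgrounds provided by Theorem~\ref{thm on analysis of the linearization 1} is the essential tool for making this patching quantitative.
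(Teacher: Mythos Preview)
Your overall architecture matches the paper's: invoke Theorem~\ref{main thm 1 on traveling wave solutions to the free boundary compressible Navier-Stokes equations} at each $\Gamma\in\R^+$, take $\mathcal{U}_s$ as a union of balls over $\Gamma$, and set $\mathcal{V}(\gamma)$ to be a small $\X^\beta$-ball. Items~(5) and~(6) are exactly as you say, via Proposition~\ref{prop on smooth tameness of the continuity equation} and Theorem~\ref{thm on smooth tameness of the nonlinear operator}.

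Your uniqueness concern is over-engineered. The paper sidesteps the patching problem entirely by always using $\Gamma=\gamma$: for existence one sets $(q,u,\eta)$ to be the first three components of $\bar{\Psi}_\gamma^{-1}(0,0,0,\mathcal{T},\mathcal{G},\mathcal{F},0)$, and for uniqueness one observes that any two candidate solutions in $\mathcal{V}(\gamma)=B_{\X^\beta}(0,\kappa(\gamma)\epsilon_0(\gamma))$ yield tuples in $E_\beta(\kappa(\gamma)\epsilon_0(\gamma))$ with the same image under $\bar{\Psi}_\gamma$, so Theorem~\ref{main thm 1 on traveling wave solutions to the free boundary compressible Navier-Stokes equations} gives equality directly. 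No overlap analysis between different $\Gamma$'s is needed.

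The substantive divergence is your mechanism for the extra derivative in items~(3) and~(4), where you propose an elliptic bootstrap on the PDE to pass from $\X^s$ to $\X^{1+s}$. This is risky: the continuity equation is not elliptic, so the Stokes-type argument does not close on its own, and you would essentially be redoing the normal-system analysis of Section~\ref{section on analysis of normal systems} at the nonlinear level. The paper instead exploits an algebraic trick: since $\Phi$ is linear in $(\mathcal{T},\mathcal{G},\mathcal{F})$ and has \emph{no} derivative loss (Propositions~\ref{prop on smooth tameness of the momentum equation 2} and~\ref{smooth tameness of dynamic boundary condition 2} with $m=0$ give $\Phi:\X^s\times\W_{1+s}\to\Y^{1+s}$), one rewrites the equation as
\[
\bar{\Psi}_\gamma(q,u,\eta,0,0,0,0)=(-\Phi(q,u,\eta,\mathcal{T},\mathcal{G},\mathcal{F}),0,0,0,0),
\]
checks that the right-hand side lies in $F_{\beta+(s-\beta)+1}(\epsilon_{s-\beta+1}(\Gamma))$ (this is where the constant $C$ in the definition of $\mathcal{U}_s$ enters, to control $\tnorm{\Phi}_{\Y^\beta}$), and then re-applies Theorem~\ref{main thm 1 on traveling wave solutions to the free boundary compressible Navier-Stokes equations} to extract $(q,u,\eta)\in\X^{1+s}$. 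The same maneuver, combined with the continuity of $\Phi$ and of $\bar{\Psi}^{-1}$, gives the continuity into $\X^{1+s}$ for item~(4). This is both shorter and structurally cleaner than a direct bootstrap.
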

\begin{proof}
For each $\Gamma \in \R^+$, we many invoke Theorem~\ref{main thm 1 on traveling wave solutions to the free boundary compressible Navier-Stokes equations} and acquire a nonincreasing sequence $\tcb{\ep_\nu(\Gamma)}_{\nu=0}^\infty\subset(0,\min\tcb{\rho_{\m{WD}},\Gamma/2})$ and a $\kappa(\Gamma)\in\R^+$ such that the various conclusions of the theorem hold. We then define the open sets 
\begin{equation}\label{pester uncle fester}
    \mathcal{U}_s=\bigcup_{\Gamma\in\R^+}B_{\W_{1+\be}\times\R}((0,\Gamma),\ep_{s-\be+1}(\Gamma)/C) \text{ for } \N\ni s\ge\be
\end{equation}
for some constant $C\ge 1$ (independent of $s$) to be determined, and 
\begin{equation}\label{pester uncle fester 2}
        \mathcal{V}(\Gamma)=B_{\X^\be}(0,\kappa(\Gamma)\ep_0(\Gamma)) \text{ for } \Gamma \in \R^+.    
\end{equation}
 The first item is now clear by inspection.
    
    For the second item, we apply the first conclusion of Theorem~\ref{main thm 1 on traveling wave solutions to the free boundary compressible Navier-Stokes equations}. Indeed, for $(\mathcal{T},\mathcal{G},\mathcal{F},\gam)\in\mathcal{U}_\be$ we have existence by setting $(q,u,\eta)$ to be the first three components of the tuple   $\Bar{\Psi}_\gam^{-1}(0,0,0,\mathcal{T},\mathcal{G},\mathcal{F},0)$. Uniqueness in $\mathcal{V}(\gamma)$ follows from the fact that if $\Bar{\Psi}(q,u,\eta,\mathcal{T},\mathcal{G},\mathcal{F},\gam)=\Bar{\Psi}(\tilde{q},\tilde{u},\tilde{\eta},\mathcal{T},\mathcal{G},\mathcal{F},\gam)$, then $\Bar{\Psi}_\gam(q,u,\eta,\mathcal{T},\mathcal{G},\mathcal{F},0)=\Bar{\Psi}_\gam(\tilde{q},\tilde{u},\tilde{\eta},\mathcal{T},\mathcal{G},\mathcal{F},0)$ and hence $(q,u,\eta)=(\tilde{q},\tilde{u},\tilde{\eta})$ by the uniqueness assertions of  Theorem~\ref{main thm 1 on traveling wave solutions to the free boundary compressible Navier-Stokes equations}.

    We now prove the third item. An immediate consequence of the second conclusion of Theorem~\ref{main thm 1 on traveling wave solutions to the free boundary compressible Navier-Stokes equations} is that if $\N\ni s\ge\be$ and $(\mathcal{T},\mathcal{G},\mathcal{F},\gam)\in\mathcal{U}_s\cap(\W_{1+s}\times\R)$, then the corresponding solution satisfies $(q,u,\eta)\in\mathcal{V}(\gam)\cap\X^s$.  In fact,  the solution is one degree more regular.  To see this, we recall the map $\Phi$ from~\eqref{hey i think this may be the last thing I label} and note that the equation $\Bar{\Psi}(q,u,\eta,\mathcal{T},\mathcal{G},\mathcal{F},\gam)=(0,0,0,\mathcal{T},\mathcal{G},\mathcal{F},\gam)$ is equivalent to $\Bar{\Psi}(q,u,\eta,0,0,0,\gam)=(-\Phi(q,u,\eta,\mathcal{T},\mathcal{G},\mathcal{F}),0,0,0,\gam)$, which in turn is equivalent to
    \begin{equation}\label{on second thought I might need to reference this equation too}
        \Bar{\Psi}_\gam(q,u,\eta,0,0,0,0)=(-\Phi(q,u,\eta,\mathcal{T},\mathcal{G},\mathcal{F}),0,0,0,0).
    \end{equation}
    Note that $\Phi(q,u,\eta,\mathcal{T},\mathcal{G},\mathcal{F})$ is linear in $(\mathcal{T},\mathcal{G},\mathcal{F})$; consequently, Propositions~\ref{prop on smooth tameness of the momentum equation 2} and~\ref{smooth tameness of dynamic boundary condition 2} (with $m=0$) show that
    \begin{equation}\label{round round get around I get around}
        \tnorm{\Phi(q,u,\eta,\mathcal{T},\mathcal{G},\mathcal{F})}_{\Y^\be}\le c\tnorm{\mathcal{T},\mathcal{G},\mathcal{F}}_{\W_\be}
        \text{ and }
        \tnorm{\Phi(q,u,\eta,\mathcal{T},\mathcal{G},\mathcal{F})}_{\Y^{1+s}}\lesssim_{\tnorm{q,u,\eta}_{\X^s}}\tnorm{\mathcal{T},\mathcal{G},\mathcal{F}}_{\W_{1+s}}
    \end{equation}
    for a constant $c$ depending only on the dimension, the physical parameters, and $\rho_{\m{WD}}$. Assume the constant $C\ge 1$ from \eqref{pester uncle fester} satisfies $C\gtrsim c$.  We then use the inclusion $(\mathcal{T},\mathcal{G},\mathcal{F},\gam)\in\mathcal{U}_s\cap(\W_{1+s}\times\R)$ and \eqref{round round get around I get around} to see that
    \begin{equation}\label{everybody gonna surfin, surfin US and A}
        (-\Phi(q,u,\eta,\mathcal{T},\mathcal{G},\mathcal{F}),0,0,0,\gam)\in\F^{1+s}\cap\bigcup_{\Gamma\in\R^+}B_{\F^{\be}\times\R}((0,\Gamma),\ep_{s-\be+1}(\Gamma)).
    \end{equation}
    Hence, we can apply the first and second conclusions of Theorem~\ref{main thm 1 on traveling wave solutions to the free boundary compressible Navier-Stokes equations} to deduce from~\eqref{on second thought I might need to reference this equation too} and~\eqref{everybody gonna surfin, surfin US and A} that the third item  holds.

    It remains to justify the fourth item.  Identity~\eqref{fester uncle pester}  and the third conclusion of Theorem~\ref{main thm 1 on traveling wave solutions to the free boundary compressible Navier-Stokes equations} show that for $\N\ni s\ge\be$, the solution map 
        \begin{equation}\label{this continuity}
        \mathcal{U}_{s-1}\cap(\W_{1+s}\times\R)\ni(\mathcal{T},\mathcal{G},\mathcal{F},\gam)\mapsto(q,u,\eta)\in\X^{s}
        \end{equation}
        is continuous. In fact, we can do one derivative better for the target space topology by arguing as in the proof of the third item above.  Indeed, by Propositions~\ref{prop on smooth tameness of the momentum equation 2} and~\ref{smooth tameness of dynamic boundary condition 2} again, along with~\eqref{this continuity}, we find that the map
        \begin{equation}\label{pauls broken a glass broken a glass}
            \mathcal{U}_s\cap(\W_{1+s}\times\R)\ni(\mathcal{T},\mathcal{G},\mathcal{F},\gam)\mapsto(-\Phi(q,u,\eta,\mathcal{T},\mathcal{G},\mathcal{F}),0,0,0,\gam)\in\F^{1+s}
        \end{equation}
        is continuous.  In light of the identity
        \begin{equation}\label{thats an interesting}
            \Bar{\Psi}(q,u,\eta,0,0,0,\gam)=(-\Phi(q,u,\eta,\mathcal{T},\mathcal{G},\mathcal{F}),0,0,0,\gam),
        \end{equation}
        the third conclusion of Theorem~\ref{main thm 1 on traveling wave solutions to the free boundary compressible Navier-Stokes equations} (which implies that $\Bar{\Psi}^{-1}$ is continuous), the inclusion \eqref{everybody gonna surfin, surfin US and A} (which implies that the right side of~\eqref{thats an interesting} is in the domain of $\Bar{\Psi}^{-1}$), the continuity of the map ~\eqref{pauls broken a glass broken a glass}, and the continuity of compositions, we then complete the verification of the fourth item.

        The fifth and sixth items are immediate consequences of the first conclusions of Proposition~\ref{prop on smooth tameness of the continuity equation} and Theorem~\ref{thm on smooth tameness of the nonlinear operator}. This completes the proof.
\end{proof}

We conclude this section by recording some simple consequences of Theorem~\ref{main thm 2 on traveling wave solutions to the free boundary compressible Navier-Stokes equations}.

\begin{coro}[Open set of data for fixed wave speed]\label{coro on rain}
For each $\gam\in\R^+$ there exists a nonempty open set $(0,0,0)\in\mathcal{W}(\gam)\subset\W_{1+\be}$ with the property that for all stress-force data tuples $(\mathcal{T},\mathcal{G},\mathcal{F})\in\mathcal{W}(\gam)$ there exists a unique $(q,u,\eta)\in\mathcal{V}(\gam)$ such that system~\eqref{The nonlinear equations in the right form} is satisfied with solution $(q,u,\eta)$, wave speed $\gam$, and data $(\mathcal{T},\mathcal{G},\mathcal{F})$.
\end{coro}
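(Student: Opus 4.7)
The plan is to obtain $\mathcal{W}(\gamma)$ as a $\gamma$-slice of the open set $\mathcal{U}_\be$ produced by Theorem~\ref{main thm 2 on traveling wave solutions to the free boundary compressible Navier-Stokes equations}, and to read off all three claimed properties (nonemptiness, openness, and existence/uniqueness) from the corresponding assertions of that theorem. Concretely, for a fixed $\gam\in\R^+$ I would define
\begin{equation}
\mathcal{W}(\gam)=\tcb{(\mathcal{T},\mathcal{G},\mathcal{F})\in\W_{1+\be}\;:\;(\mathcal{T},\mathcal{G},\mathcal{F},\gam)\in\mathcal{U}_\be},
\end{equation}
i.e.\ the preimage of $\mathcal{U}_\be\subseteq\W_{1+\be}\times\R^+$ under the continuous affine inclusion $(\mathcal{T},\mathcal{G},\mathcal{F})\mapsto(\mathcal{T},\mathcal{G},\mathcal{F},\gam)$.

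First, openness of $\mathcal{W}(\gam)$ in $\W_{1+\be}$ is immediate from openness of $\mathcal{U}_\be$ in $\W_{1+\be}\times\R^+$ together with the continuity of the inclusion map. Second, the nondegeneracy assertion in the first item of Theorem~\ref{main thm 2 on traveling wave solutions to the free boundary compressible Navier-Stokes equations}, namely $\tcb{0}\times\R^+\subseteq\mathcal{U}_\be$, gives at once that $(0,0,0)\in\mathcal{W}(\gam)$, so in particular $\mathcal{W}(\gam)$ is nonempty. Third, if $(\mathcal{T},\mathcal{G},\mathcal{F})\in\mathcal{W}(\gam)$ then $(\mathcal{T},\mathcal{G},\mathcal{F},\gam)\in\mathcal{U}_\be$, so the existence and uniqueness clause (second item) of that theorem supplies a unique $(q,u,\eta)\in\mathcal{V}(\gam)$ such that~\eqref{The nonlinear equations in the right form} is classically satisfied with wave speed $\gam$ and data $(\mathcal{T},\mathcal{G},\mathcal{F})$.

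There is no essential obstacle here: the corollary is a direct transcription of the joint-in-data-and-wave-speed statement into the fixed-wave-speed framework, and all the analytic work has already been absorbed into Theorem~\ref{main thm 2 on traveling wave solutions to the free boundary compressible Navier-Stokes equations}. The only mildly nontrivial point worth flagging is that one must verify $\mathcal{W}(\gam)$ is genuinely nonempty, which is where the first item of that theorem (the inclusion $\tcb{0}\times\R^+\subseteq\bigcap_{s\ge\be}\mathcal{U}_s$) is used in an essential way; without this nondegeneracy, the $\gam$-slice could in principle be empty.
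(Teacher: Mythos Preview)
Your proposal is correct and takes exactly the same approach as the paper: define $\mathcal{W}(\gam)$ as the $\gam$-slice of $\mathcal{U}_\be$ and read off all three properties from the corresponding items of Theorem~\ref{main thm 2 on traveling wave solutions to the free boundary compressible Navier-Stokes equations}. The paper's proof is in fact just the one-line definition of $\mathcal{W}(\gam)$, leaving the verifications you spelled out as implicit.
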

\begin{proof}
    Given $\gamma\in\R^+$ we take $\mathcal{W}(\gam)=\tcb{(\mathcal{T},\mathcal{G},\mathcal{F})\;:\;(\mathcal{T},\mathcal{G},\mathcal{F},\gam)\in\mathcal{U}_\be}$.
\end{proof}

\begin{coro}[On the Eulerian formulation]\label{blowing in the wind}
    Each solution to the flattened, perturbative enthalpy formulation of the traveling wave problem for free boundary compressible Navier-Stokes, i.e system~\eqref{The nonlinear equations in the right form}, produced by Theorem~\ref{main thm 2 on traveling wave solutions to the free boundary compressible Navier-Stokes equations}, gives rise to a classical solution to the traveling Eulerian formulation of the problem given by system~\eqref{compressible navier-stokes traveling wave equations rescaled}.
\end{coro}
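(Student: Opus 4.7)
\textbf{Proof proposal for Corollary \ref{blowing in the wind}.} The plan is to invert, at the level of a single fixed solution, the chain of reformulations carried out in Section~\ref{its just a state of mind}, which took \eqref{compressible navier-stokes traveling wave equations rescaled} to \eqref{The nonlinear equations in the right form}. Since Theorem~\ref{main thm 2 on traveling wave solutions to the free boundary compressible Navier-Stokes equations} guarantees the solution $(q,u,\eta) \in \mathcal{V}(\gamma) \cap \X^\beta$ belongs to the high-regularity space $\X^\beta$ with $\beta = 9+2\lfloor n/2\rfloor$, Sobolev embedding (together with the embedding $V^0 \cap V_s \hookrightarrow C^{s+10+\lfloor n/2\rfloor} \times C^{s+11+\lfloor n/2\rfloor} \times C^{s+12+\lfloor n/2\rfloor}_0$ noted in the discussion after Theorem~\ref{thm on main}) guarantees that $q$, $u$, and $\eta$ are classically differentiable at the orders required below.

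First I would define, for a solution $(q,u,\eta)$ of~\eqref{The nonlinear equations in the right form}, the triple $(\sigma,v,\eta)$ in $\Omega[\eta]$ via $\sigma = \sigma_{q,\eta} \circ \mathfrak{F}_\eta^{-1}$ and $v = \bigl(\mathcal{A}_\eta^{-\m{t}} u / J_\eta\bigr) \circ \mathfrak{F}_\eta^{-1}$, where $\sigma_{q,\eta}$ is given by~\eqref{sigma_q_eta_def}. The sixth item of Theorem~\ref{main thm 2 on traveling wave solutions to the free boundary compressible Navier-Stokes equations} supplies that $\mathfrak{F}_\eta:\bar\Omega\to\bar{\Omega[\eta]}$ is a $C^{s+2-\lfloor n/2\rfloor}$ diffeomorphism, which is smooth on the interior, so the composition and its inverse are well-defined and classical. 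The no-vacuum assertion (fifth item) gives $\sigma\geq c>0$, so $\sigma\in(0,\infty)$ throughout $\Omega[\eta]$ and belongs to the image $H(\R^+)$ of the enthalpy function, which is where the inversion $H^{-1}$ is valid.

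Next I would check, equation by equation, that the PDE~\eqref{The nonlinear equations in the right form} for $(q,u,\eta)$ transforms into the PDE~\eqref{compressible navier-stokes traveling wave equations rescaled} for $(\sigma,v,\eta)$. The continuity equation, the momentum equation, and the dynamic and kinematic boundary conditions of~\eqref{unflattened PDE enthalpy formulation} are recovered by multiplying the second equation of~\eqref{The nonlinear equations in the right form} on the left by $\mathcal{A}_\eta^{\m{t}}$ (undoing the left-multiplication by $\mathcal{A}_\eta^{-1}$ that produced it from~\eqref{flattened PDE enthalpy formulation I}), dividing the first equation by $J_\eta$, and reversing the matrix contraction $M_\eta^{\m{t}}e_n = J_\eta\mathcal{A}_\eta e_n$ in the dynamic condition; each of these algebraic operations is reversible pointwise because $J_\eta>0$ and $M_\eta$ is invertible (this uses $J_\eta\in[1/2,3/2]$ from Proposition~\ref{prop on smooth tameness of the momentum equation 1}). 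Pulling back through $\mathfrak{F}_\eta^{-1}$ and using the standard identities $(\mathcal{A}_\eta\grad)f\circ\mathfrak{F}_\eta^{-1} = \grad (f\circ\mathfrak{F}_\eta^{-1})$ and $(\mathcal{A}_\eta\grad)\cdot(X\circ\mathfrak{F}_\eta) = (\grad\cdot X)\circ\mathfrak{F}_\eta$ then yields~\eqref{unflattened PDE enthalpy formulation}. Finally, inverting the perturbed enthalpy change of unknown $h = H(\sigma)-H(\varrho)$ that defined~\eqref{unflattened PDE enthalpy formulation} from~\eqref{compressible navier-stokes traveling wave equations rescaled} recovers the original Eulerian system; this is purely an algebraic substitution using $H'>0$, the identity $\sigma\grad(H(\sigma)) = \grad(P(\sigma))$, and the fact that $-\grad(P\circ\varrho) = \mathfrak{g}\varrho e_n$ from~\eqref{equilibrium cauchy problem}.

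The main (modest) obstacle is bookkeeping: one must verify that at the classical level all of the compositions, multiplications, and inversions involved are valid and that every boundary trace passes correctly from $\Sigma$ to $\Sigma[\eta]$ under $\mathfrak{F}_\eta$. This is where the high regularity of Theorem~\ref{main thm 2 on traveling wave solutions to the free boundary compressible Navier-Stokes equations} is essential, since $\beta$ far exceeds the minimal Sobolev exponents that would yield classical continuity for the highest-order terms appearing in~\eqref{compressible navier-stokes traveling wave equations rescaled}, namely $\grad^2 v$, $\grad\sigma$, and $\mathscr{H}(\eta)$. Once this is in place the corollary follows directly.
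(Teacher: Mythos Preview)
Your proposal is correct and follows essentially the same approach as the paper's proof: use the sixth item of Theorem~\ref{main thm 2 on traveling wave solutions to the free boundary compressible Navier-Stokes equations} for the diffeomorphism property of $\mathfrak{F}_\eta$, invoke supercritical Sobolev embeddings for classical regularity, and then undo the chain of reformulations from Section~\ref{its just a state of mind}. The paper's proof is simply a two-sentence sketch of this, whereas you have spelled out the individual inversions (enthalpy change, flattening, matrix multiplication by $\mathcal{A}_\eta^{-1}$) in more detail; the content is the same.
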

\begin{proof}
    The sixth item of Theorem~\ref{main thm 2 on traveling wave solutions to the free boundary compressible Navier-Stokes equations} verifies that the flattening map $\mathfrak{F}_\eta:\Omega\to\Omega[\eta]$ from~\eqref{flattening_map_def} is a smooth diffeomorphism that is sufficiently smooth up to the boundary as to preserve the notion of classical solutions upon undoing the flattening. This same theorem also gives us classical solutions to~\eqref{The nonlinear equations in the right form} as an easy consequence of various supercritical Sobolev embeddings. By combining these facts and undoing the nonlinear changes of unknowns that took us from~\eqref{compressible navier-stokes traveling wave equations rescaled} to~\eqref{The nonlinear equations in the right form}, we obtain the stated result.
\end{proof}
	
	
\appendix
	
\section{Standard Sobolev space tools} \label{appendix_std_sob_tools}

\subsection{Extension operators}\label{appendix on extension operators}

Recall the Poisson extension operator $\mathcal{E}_0$ and the variant $\mathcal{E}$ introduced in~\eqref{Nevada} and~\eqref{North Dakota}, respectively. The following lemma records some simple mapping properties.
	
	\begin{lem}[Mapping properties of the Poisson extension operator variants]\label{lem on mapping properties of the Poisson extension operator variants}
		The following hold.
		\begin{enumerate}
			\item $\mathcal{E}_0:H^{s+1/2}(\Sigma)\to{_0}H^1(\Omega)\cap H^{1+s}(\Omega)$  is a bounded linear map for each for $s\in\N$.
			\item $\mathcal{E}\circ\Uppi^1_{\m{H}}=\mathcal{E}_0$, and $\mathcal{E}\circ\Uppi^1_{\m{L}}:\mathcal{H}^0(\Sigma)\to W^{\infty,\infty}((0,b);\mathcal{H}^0_{(1)}(\R^{n-1}))$ is a continuous linear map, where $\mathcal{H}^0_{(1)}$ is defined by \eqref{band limited chilli snow flake}.
		\end{enumerate}
	\end{lem}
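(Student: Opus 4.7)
For the first item, the plan is to proceed in two stages: first construct $\mathcal{E}_0\varphi$ as a weak solution to the mixed Dirichlet problem in \eqref{Nevada} and verify the inclusion into ${_0}H^1(\Omega)$, then upgrade to $H^{1+s}(\Omega)$ regularity. For the weak solvability step, given $\varphi \in H^{s+1/2}(\Sigma)$, I would pick an auxiliary bounded right inverse for the trace map $H^1(\Omega) \to H^{1/2}(\Sigma)$ supported near $\Sigma$, call the image $\Phi$, and then seek $\mathcal{E}_0\varphi$ in the form $\Phi + \psi$ with $\psi \in {_0}H^1(\Omega)$ solving $-\Delta \psi = \Delta \Phi$ with zero trace on both components of $\partial\Omega$. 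Existence and uniqueness of $\psi$ follow from Lax--Milgram applied to the coercive bilinear form $(\psi_0,\psi_1) \mapsto \int_\Omega \nabla\psi_0 \cdot \nabla\psi_1$ on the space $\{\psi \in H^1(\Omega) : \mathrm{Tr}_{\Sigma_0}(\psi) = \mathrm{Tr}_\Sigma(\psi) = 0\}$, once Poincar\'e's inequality is invoked (it holds on $\Omega$ relative to $\Sigma_0$ since $\Omega$ has bounded vertical extent). This gives the case $s=0$ with the desired bound.

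For the higher-regularity step, the cleanest route is Fourier analysis in the tangential variable: solving \eqref{Nevada} via Fourier transform in $x$ yields the explicit representation
\begin{equation}
\mathcal{E}_0 \varphi(x,y) = \mathscr{F}^{-1}_x\bp{\f{\sinh(|\xi| y)}{\sinh(|\xi| b)}\mathscr{F}_x[\varphi](\xi)},
\end{equation}
and the corresponding multiplier $\sinh(|\xi| y)/\sinh(|\xi| b)$ obeys, together with its $y$-derivatives up to any finite order, the pointwise estimate that gives the bound $\norm{\mathcal{E}_0\varphi}_{H^{1+s}(\Omega)} \lesssim \norm{\varphi}_{H^{s+1/2}(\Sigma)}$ by Plancherel in $x$ and an explicit integration in $y$. (Alternatively, standard elliptic boundary regularity theory for the Laplacian on the smooth slab $\Omega$ with mixed Dirichlet data yields the same bound directly.)

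For the second item, the identity $\mathcal{E} \circ \Uppi^1_{\m{H}} = \mathcal{E}_0$ follows instantly from the definition \eqref{North Dakota} together with the orthogonality relations $\Uppi^1_{\m{L}} \Uppi^1_{\m{H}} = 0$ and $\Uppi^1_{\m{H}} \Uppi^1_{\m{H}} = \Uppi^1_{\m{H}}$, which hold because $\Uppi^1_{\m{L}}$ is the Fourier multiplier by $\mathds{1}_{B(0,1)}$. For the mapping estimate on $\mathcal{E} \circ \Uppi^1_{\m{L}}$, the same orthogonality shows that $\mathcal{E}(\Uppi^1_{\m{L}}\eta)(x,y) = (y/b)\Uppi^1_{\m{L}}\eta(x)$, so the problem reduces to two observations: first, $\Uppi^1_{\m{L}} : \mathcal{H}^0(\Sigma) \to \mathcal{H}^0_{(1)}(\R^{n-1})$ is bounded by definition of the band-limited anisotropic space and of the $\mathcal{H}^0$-norm in \eqref{the norm on the anisotropic Sobolev spaces}; second, the function $y \mapsto y/b$ belongs to $W^{\infty,\infty}(0,b)$ (in fact, is polynomial in $y$, so all of its derivatives of order $\ge 2$ vanish). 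Combining these shows $(y/b)\Uppi^1_{\m{L}}\eta \in W^{\infty,\infty}((0,b);\mathcal{H}^0_{(1)}(\R^{n-1}))$ with the continuity estimate controlled by $\norm{\eta}_{\mathcal{H}^0}$.

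The only nontrivial step is the higher regularity of $\mathcal{E}_0$; everything else is either algebraic bookkeeping with the Fourier projectors or routine Lax--Milgram. The main technical subtlety, which I expect to handle via the explicit Fourier representation, is verifying that the Poisson-like symbol produces the correct $s$-dependent gain across the full range $s \in \N$.
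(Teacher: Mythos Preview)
Your proposal is correct and follows essentially the same approach as the paper's proof, which is extremely terse: it simply says the first item ``is clear by standard elliptic theory for the Dirichlet problem'' and the second item ``is immediate.'' Your Lax--Milgram plus Fourier-representation argument for item~1 and your orthogonality-of-projectors argument for item~2 are exactly the standard details one would fill in behind those two sentences.
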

	\begin{proof}
		The first item is clear by standard elliptic theory for the Dirichlet problem. The second item is immediate.
	\end{proof}

 Throughout the paper is is also frequently useful to consider extension operators mapping functions defined on domains to functions defined on the entire Euclidean space that are regularity preserving for the \emph{entire} Sobolev scale. For these we use the celebrated extension operators of Stein; the following definition sets our notation for these.

 \begin{defn}[Stein-Extension operators and domains]\label{defn Stein-extension operator}
    The notion of a Stein extension operator is given in Section 3.1 in Chapter VI of Stein~\cite{MR0290095}. A Stein extension domain is an open set $U\subset\R^d$ for which there exists a Stein extension operator, which we will denote by $\mathfrak{E}_U$. \index{\textbf{Linear maps}!1@$\mathfrak{E}_U$}  We will also employ this notation more generally for open subsets of finite dimensional real vector spaces of dimension $d$ via the standard identification with $\R^d$.
 \end{defn}

\subsection{Korn's inequalities}\label{appendix on Korn}
	
	Our first result here, which is classical, involves the (not normalized) symmetric gradient, which we recall is defined for differentiable vector fields $f$ by $\mathbb{D}f=\grad f+\grad f^{\m{t}}$\index{\textbf{Fluid mechanical terms}!22@$\mathbb{D}$}.
	
	\begin{prop}[Korn's inequality for the symmetric gradient, $n\ge 2$]\label{prop on Korn's inequality}
		Let $n\ge 2$ be the ambient dimension. Then there exists a constant $c\in\R^+$, depending only $\Omega$ and $n$, such that for all $f\in{_0}H^1(\Omega;\R^n)$ we have the inequality $\tnorm{f}_{H^1}\le c\norm{\mathbb{D}f}_{L^2}$.
	\end{prop}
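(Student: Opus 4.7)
The statement is the classical Korn inequality on the infinite slab $\Omega = \R^{n-1} \times (0, b)$ for vector fields with a Dirichlet condition on the flat bottom $\Sigma_0$. My plan is a two-step reduction: first, control the $L^2$ norm of $f$ by the $L^2$ norm of $\nabla f$ via Poincaré (exploiting the vanishing trace on $\Sigma_0$); second, establish the ``second Korn inequality'' $\tnorm{\nabla f}_{L^2(\Omega)} \lesssim \tnorm{\mathbb{D} f}_{L^2(\Omega)}$, which is the main content.

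For the Poincaré step, by density it suffices to work with $f \in C^\infty(\Bar\Omega; \R^n)$ vanishing on $\Sigma_0$ and with Schwartz decay in the tangential directions. The identity $f(x, y) = \int_0^y \pd_n f(x, s)\,\m{d}s$, together with Cauchy-Schwarz and Fubini, immediately yields $\tnorm{f}_{L^2(\Omega)} \le b\tnorm{\pd_n f}_{L^2(\Omega)} \le b\tnorm{\nabla f}_{L^2(\Omega)}$. So the task reduces to the second Korn inequality on the slab.

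For this second Korn inequality on the unbounded slab, the cleanest route is via the tangential Fourier transform, which reduces the problem to a family of one-dimensional estimates in the vertical variable $y \in (0, b)$ parameterized by $\xi \in \R^{n-1}$, with the Dirichlet boundary condition $\hat f(\xi, 0) = 0$. For each fixed $\xi$, one writes out the components of $\widehat{\mathbb{D} f}(\xi, \cdot)$ in terms of $\hat f(\xi, \cdot)$ and $\pd_y \hat f(\xi, \cdot)$, integrates by parts in $y$ (with the $y=0$ boundary terms killed by the Dirichlet condition), and absorbs the cross terms between the tangential-tangential, tangential-normal, and normal-normal components of $\mathbb{D}f$ to recover the full gradient. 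The hard part is obtaining a bound uniform in $\xi$, particularly reconciling the low-frequency regime, where the positive tangential quadratic form $|\xi|^2|\hat f|^2$ offers no help and one must exploit only the normal derivative together with the Dirichlet trace, with the high-frequency regime, where the tangential terms dominate; Plancherel then reassembles the frequency-by-frequency bounds into the full $L^2$ estimate. Since the inequality is classical, one can alternatively cite a version from the compressible Navier-Stokes literature on slab geometries (cf.\ the references discussed in Section~\ref{sitting in an english garden wating for the sun}) rather than reproducing the proof in full.
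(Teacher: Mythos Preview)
The paper does not actually prove this; it cites Lemma~2.7 of Beale~\cite{MR611750}, which obtains Korn on the slab by tiling with cubes and invoking Fichera's cube Korn inequality on each. Your primary route --- Poincar\'e in the vertical direction followed by a tangential Fourier reduction to a $\xi$-parameterized family of ODE estimates on $(0,b)$ --- is a genuinely different approach, exploiting the slab's translation invariance rather than a localization argument. The strategy is sound and appears elsewhere in the literature, but your sketch stops short of the crux: the uniform-in-$\xi$ bound you correctly flag as ``the hard part'' is not actually carried out, and in particular the $y=b$ boundary terms (where no Dirichlet condition is available) require nontrivial handling in the high-frequency regime when one attempts to absorb the cross terms between $\widehat{u_1'}$ and $i\xi\widehat{u_n}$. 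Your fallback of simply citing the slab-geometry literature is precisely what the paper does, so either route is acceptable here; but if you intend the Fourier argument to stand as a self-contained proof, the ODE-level estimate still needs to be filled in.
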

	\begin{proof}  
		The proof can be found in Lemma 2.7 in Beale~\cite{MR611750}, and is based on Theorem 12.II of Fichera~\cite{Fichera1973}, which gives a Korn inequality in cubes of general dimension.  We note that the result in~\cite{MR611750} is only stated for $n=3$, but the same argument works in general for $n \ge 2$.
	\end{proof}
	
	We are also interested in inequalities involving the trace-free part of the symmetric gradient, also known as the deviatoric gradient. We denote this for differentiable vector fields $f$ by $\mathbb{D}^0f=\grad f+\grad f^{\m{t}}-\f{2}{n}(\grad\cdot f)I$\index{\textbf{Fluid mechanical terms}!23@$\mathbb{D}^0$}. In dimensions $n\ge 3$, the analog of Proposition~\ref{prop on Korn's inequality} for $\mathbb{D}^0$ holds.

	\begin{prop}[Deviatoric Korn's inequality, $n\ge 3$]\label{prop on deviatoric Korn's inequality}
		Let $n\ge 3$ be the ambient dimension. There exists a constant $c\in\R^+$, depending only on $\Omega$ and $n$, such that for all $f\in{_0}H^1(\Omega;\R^n)$ we have the inequality  $\tnorm{f}_{H^1}\le c\tnorm{\mathbb{D}^0f}_{L^2}$.
	\end{prop}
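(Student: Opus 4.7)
The plan is to deduce the deviatoric Korn inequality from the ordinary Korn inequality of Proposition~\ref{prop on Korn's inequality} by extracting a divergence bound, with the hypothesis $n\ge 3$ entering sharply in the divergence step. The starting algebraic input is the pointwise Frobenius-orthogonal decomposition: since $\mathbb{D}^0 f$ is trace-free and $\tfrac{2}{n}(\grad\cdot f)I$ is a scalar multiple of the identity, we have $|\mathbb{D}f|^2 = |\mathbb{D}^0 f|^2 + \tfrac{4}{n}(\grad\cdot f)^2$ pointwise. Integrating over $\Omega$ and applying Proposition~\ref{prop on Korn's inequality} yields
\[
\tnorm{f}_{H^1}^2 \le c^2 \tnorm{\mathbb{D}^0 f}_{L^2}^2 + \tfrac{4c^2}{n}\tnorm{\grad\cdot f}_{L^2}^2,
\]
reducing the task to proving the divergence estimate $\tnorm{\grad\cdot f}_{L^2}\lesssim\tnorm{\mathbb{D}^0 f}_{L^2}$ for $f\in{_0}H^1(\Omega;\R^n)$.

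For the divergence bound I would argue by duality through a Bogovskii-type right inverse to the divergence. Given $f$, set $\psi=\grad\cdot f\in L^2(\Omega)$ and invoke Proposition~\ref{Bogovskii main} to obtain $w\in H^1_0(\Omega;\R^n)$ with $\grad\cdot w=\psi$ and $\tnorm{w}_{H^1}\lesssim\tnorm{\psi}_{L^2}$; the full Dirichlet vanishing of $w$ is the key point, as it allows integration by parts without boundary contributions. Direct expansion of $\int_\Omega\mathbb{D}f:\mathbb{D}w$ gives $2\int\grad f:\grad w+2\int\partial_i f_j\,\partial_j w_i$, and two integrations by parts on the second term (boundary-free because of $w$) convert it to $2\int(\grad\cdot f)(\grad\cdot w)$. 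Combining with the Frobenius splitting used again on the left yields the identity
\[
\int_\Omega\mathbb{D}^0 f:\mathbb{D}^0 w = 2\int_\Omega\grad f:\grad w+\bp{2-\tfrac{4}{n}}\int_\Omega(\grad\cdot f)(\grad\cdot w).
\]
Substituting $\grad\cdot w=\psi=\grad\cdot f$ and applying Cauchy--Schwarz together with $\tnorm{w}_{H^1}\lesssim\tnorm{\psi}_{L^2}$ produces $(2-\tfrac{4}{n})\tnorm{\psi}_{L^2}^2\lesssim(\tnorm{\mathbb{D}^0 f}_{L^2}+\tnorm{\grad f}_{L^2})\tnorm{\psi}_{L^2}$. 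The coefficient $2-\tfrac{4}{n}=\tfrac{2(n-2)}{n}$ is strictly positive exactly when $n\ge 3$, yielding $\tnorm{\grad\cdot f}_{L^2}\lesssim_n\tnorm{\mathbb{D}^0 f}_{L^2}+\tnorm{\grad f}_{L^2}$.

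The hard part will be the final elimination of the $\tnorm{\grad f}_{L^2}$ contribution from the right-hand side, since the naive quadratic absorption back into $\tnorm{f}_{H^1}^2$ through the reduced inequality above is obstructed by the uncontrolled product of the Korn and Bogovskii constants. The cleanest resolution in the layer geometry at hand is to work cube by cube in the spirit of the Fichera-based proof of Proposition~\ref{prop on Korn's inequality}: establish the deviatoric analogue of Fichera's inequality on a fixed reference cube in dimension $n\ge 3$---where the conformal Killing algebra is finite dimensional and trivialized by even a single-sided Dirichlet condition, making the quotient space finite-dimensional so a Peetre--Tartar compactness argument closes---and then glue the cube estimates into a global bound on $\Omega=\R^{n-1}\times(0,b)$ via the same partition-of-unity and covering scheme Beale employed for the ordinary inequality. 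The dimensional threshold $n\ge 3$ is sharp: in the plane the conformal Killing fields form an infinite-dimensional space, the cube estimate itself fails, and no gluing argument can recover.
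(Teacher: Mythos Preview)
Your eventual approach---establish the deviatoric Korn inequality on a reference cube using the finite-dimensionality of conformal Killing fields in $n\ge 3$ and a compactness argument to drop the lower-order term under one-sided vanishing, then glue to $\Omega=\R^{n-1}\times(0,b)$ via Beale's covering scheme---is exactly the paper's route; the paper simply cites Dain for the cube step rather than rederiving it via Peetre--Tartar. One side remark on the abandoned Bogovskii attempt: beyond the absorption obstruction you correctly flag, the operator $\mathcal{B}_0$ from Proposition~\ref{Bogovskii main} requires its input to lie in $\hat{H}^0(\Omega)$ rather than merely $L^2(\Omega)$, and $\grad\cdot f$ for general $f\in{_0}H^1(\Omega;\R^n)$ need not satisfy this, since $\m{Tr}_\Sigma(f\cdot e_n)$ is not automatically in $\dot{H}^{-1}(\Sigma)$.
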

	\begin{proof}
		See, for instance, Theorem 1.1 in Dain~\cite{MR2214623} for a proof of the bound
  \begin{equation}
      \tnorm{f}_{H^1(Q;\R^n)}\lesssim\tnorm{\mathbb{D}^0 f}_{L^2(Q;\R^{n\times n})}+\tnorm{f}_{L^2(Q;\R^n)}
      \text{ for all } f \in H^1(Q;\R^n)
  \end{equation}
  whenever $Q$ is a cube.  By a standard compactness argument, we can then show that  $\tnorm{f}_{H^1(Q;\R^n)} \lesssim \tnorm{\mathbb{D}^0f}_{L^2(Q;\R^{n\times n})}$  for all $f\in H^1(Q;\R^n)$ vanishing on one side of the cube, where the implicit constant depends on $Q$. We then utilize this inequality and the symmetries of $\Omega$, as was done in Proposition~\ref{prop on Korn's inequality}, to obtain the desired result.
\end{proof}

\begin{rmk}[Failure of deviatoric Korn in dimension $n=2$]\label{rmk that even a watched pot boils twice per day}
For dimension $n=2$, however, the analog of Proposition~\ref{prop on deviatoric Korn's inequality} is false. See, for instance, Section 6.6 of Bauer, Neff, Pauly, and Starke~\cite{refId0}, where it is proved that there exists a Lipschitz domain $\es\neq U\subset\R^2$ and a proper segment $\es\neq S\subset\pd U$ with the property that $\tnorm{\cdot}_{H^1}\not\lesssim\tnorm{\mathbb{D}^0(\cdot)}_{L^2}$ on the subspace of $H^1(U;\R^2)$ consisting of vector fields vanishing on $S$.
\end{rmk}

 \subsection{Refined interpolation of Sobolev spaces}\label{subsection on on interpolation of Sobolev spaces}
 
 In this subsection we derive refined interpolation inequalities for linear mappings between Sobolev spaces; in particular, this discussion considers improvements for the $K$-method of real interpolation. For more information on the basic $K$-method of interpolation, we refer to Chapter 3 in Bergh and L\"ofstr\"om~\cite{MR0482275}. First, we recall the definition of the $K$-functional. Given a pair of Banach spaces $X_0$ and $X_1$ that embed into a Hausdorff topological vector space $Z$, i.e. $X_0,X_1\emb Z$, we define the map $K(\cdot,\cdot;X_0,X_1):\R^+\times(X_0+X_1)\to\R$, via  
	\begin{equation}
		K(t,x;X_0,X_1)=\inf\tcb{\tnorm{x_0}_{X_0}+t\tnorm{x_1}_{X_1}\;:\;x=x_0+x_1,\;(x_0,x_1)\in X_0\times X_1}
	\end{equation}
	for $(t,x)\in\R^+\times(X_0+X_1)$. Our first lemma realizes the $K$-functional on Sobolev spaces.
	
	\begin{lem}[Sobolev space $K$-functional]\label{lemma on sobolev space K functional}
		Let $s_0,s_1,s\in\R$, be such that $s>0$ and $s_0+s=s_1$.  For $t\in\R^+$ define the bounded linear map $P_t^s:H^{s_0}(\R^n)\to H^{s_0+2s}(\R^n)$ via $P_t^s=(1+t^2\tbr{\grad}^{2s})^{-1}$. For all $f\in H^{s_0}(\R^n)$ we have that
		\begin{equation}
			2^{-1}K(t,f;H^{s_0};H^{s_1})^2 \le \tnorm{(I-P^s_t)f}_{H^{s_0}}^2 + t^2 \tnorm{P^s_tf}_{H^{s_1}}^2  
			\le 
			K(t,f;H^{s_0};H^{s_1})^2    .
		\end{equation}
	\end{lem}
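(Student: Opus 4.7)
The plan is to prove the two inequalities separately, with the left bound being essentially trivial once the right one motivates the decomposition, and the right bound following from a pointwise minimization in Fourier space.

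For the left inequality, I would simply use the specific admissible decomposition $f = (I - P^s_t)f + P^s_t f$, where the first summand lies in $H^{s_0}(\R^n)$ and the second in $H^{s_1}(\R^n)$ (the latter by construction of $P^s_t$). By the defining infimum of the $K$-functional, this yields $K(t,f;H^{s_0},H^{s_1}) \le \tnorm{(I-P^s_t)f}_{H^{s_0}} + t\tnorm{P^s_t f}_{H^{s_1}}$, and the elementary bound $(a+b)^2 \le 2(a^2+b^2)$ produces the factor $2^{-1}$ on the left.

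For the right inequality — which I expect to be the crux of the argument — I would pass to Fourier variables using Plancherel's theorem. For an arbitrary decomposition $f = f_0 + f_1$ with $f_0 \in H^{s_0}(\R^n)$ and $f_1 \in H^{s_1}(\R^n)$, I would write
\begin{equation}
\tnorm{f_0}_{H^{s_0}}^2 + t^2 \tnorm{f_1}_{H^{s_1}}^2 = \int_{\R^n} \tp{\tbr{\xi}^{2s_0}|\mathscr{F}[f](\xi) - \mathscr{F}[f_1](\xi)|^2 + t^2 \tbr{\xi}^{2s_1}|\mathscr{F}[f_1](\xi)|^2} \m{d}\xi,
\end{equation}
and minimize the integrand pointwise in $\mathscr{F}[f_1](\xi) \in \C$ via standard calculus. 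Using $s_1 = s_0 + s$, the minimum is attained at $\mathscr{F}[f_1](\xi) = (1+t^2\tbr{\xi}^{2s})^{-1}\mathscr{F}[f](\xi)$, which is precisely the Fourier representation of $P^s_t f$. Consequently,
\begin{equation}
\tnorm{(I-P^s_t)f}_{H^{s_0}}^2 + t^2 \tnorm{P^s_t f}_{H^{s_1}}^2 \le \tnorm{f_0}_{H^{s_0}}^2 + t^2 \tnorm{f_1}_{H^{s_1}}^2
\end{equation}
for every admissible decomposition of $f$.

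To conclude, I would combine this pointwise optimality with the elementary inequality $a^2 + b^2 \le (a+b)^2$ for $a,b \ge 0$ (applied with $a = \tnorm{f_0}_{H^{s_0}}$ and $b = t\tnorm{f_1}_{H^{s_1}}$), which yields $\tnorm{(I-P^s_t)f}_{H^{s_0}}^2 + t^2 \tnorm{P^s_t f}_{H^{s_1}}^2 \le (\tnorm{f_0}_{H^{s_0}} + t\tnorm{f_1}_{H^{s_1}})^2$ for every decomposition. Taking the infimum over admissible decompositions on the right produces $K(t,f;H^{s_0},H^{s_1})^2$, completing the argument. The main subtlety is ensuring the Fourier minimization is rigorous — specifically that the optimizer $P^s_t f$ genuinely realizes admissible summands in $H^{s_0} \cap H^{s_1}$ respectively, which follows from the multiplier bounds $|(1+t^2\tbr{\xi}^{2s})^{-1}| \le 1$ and $t^2\tbr{\xi}^{2s}(1+t^2\tbr{\xi}^{2s})^{-1}\tbr{\xi}^{-2s} \le 1$.
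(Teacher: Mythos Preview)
Your proposal is correct and follows essentially the same approach as the paper: both recognize that $P^s_t f$ minimizes the squared functional $\hat{K}(t,f) = \inf_{g \in H^{s_1}}(\tnorm{f-g}_{H^{s_0}}^2 + t^2\tnorm{g}_{H^{s_1}}^2)$ and then use the elementary sandwich $2^{-1}K^2 \le \hat{K} \le K^2$. The only difference is that the paper identifies the minimizer via the direct method and the Euler--Lagrange equation, whereas you do it by pointwise minimization on the Fourier side; your route is slightly more concrete but otherwise equivalent.
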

	\begin{proof}
		Define $\hat{K}(t,f;H^{s_0},H^{s_1}) = \inf_{g\in H^{s_1}}\sp{\tnorm{f-g}^2_{H^{s_0}}+t^2\tnorm{g}^2_{H^{s_1}}}$ and note that elementary estimates show that $
		2^{-1} K(t,f;H^{s_0},H^{s_1})^2 \le  \hat{K}(t,f;H^{s_0},H^{s_1}) \le     K(t,f;H^{s_0},H^{s_1})^2$. The benefit of switching to $\hat{K}$ is that we may readily employ the direct method in the calculus of variations to see that the infimum in the definition of $\hat{K}(t,f;H^{s_0},H^{s_1})$ is actually a minimum, achieved by $f_\star \in H^{s_1}$ satisfying (by virtue of the Euler-Lagrange equations)
		\begin{equation}
			-\tbr{\tbr{\grad}^{s_0}(f-f_{\star}),\tbr{\grad}^{s_0}g}_{L^2}+t^2\tbr{\tbr{\grad}^{s_0+s}f_\star,\tbr{\grad}^{s_0+s}g}_{L^2}=0 \text{ for every }g\in H^{s_1}(\R^n). 
		\end{equation}
		This is equivalent to saying that $f_\star$ is a weak solution to the elliptic pseudo-differential equation $(I+t^2\tbr{\grad}^{2s})\tbr{\grad}^{s_0}f_\ast=\tbr{\grad}^{s_0}f$, or alternatively, $f_\star=(I+t^2\tbr{\grad}^{2s})^{-1}f=P_t^sf$.	Thus, $\hat{K}(t,f;H^{s_0},H^{s_1}) = \tnorm{(I-P^s_t)f}_{H^{s_0}}^2+ t^2 \tnorm{P^s_tf}_{H^{s_1}}^2$. 
	\end{proof}
	
	Our next lemma expresses the norm in Sobolev spaces in terms of the $K$-functional. The point of the following computation is to divide the $K$-functional norm by a suitable constant to remove the degeneracies near the end-points.
	
	\begin{lem}[A norm computation]\label{lemma on a norm computation}
		Let $s_0,s_1,s\in\R^+$ be such that $s>0$ and $s_0+s=s_1$, and let $\sig\in(0,1)$. Define $\mathfrak{c}(\sig)=\int_0^\infty\f{\tau^{1-2\sig}}{1+\tau^2}\;\m{d}\tau\in\R^+$. We have the equivalence of norms
		\begin{equation}
			\tnorm{f}_{H^{(1-\sig)s_0+\sig s_1}}\le\bp{\f{1}{\mathfrak{c}(\sigma)}\int_0^\infty t^{-2\sig-1}K(t,f;H^{s_0},H^{s_1})^2\;\m{d}t}^{1/2}\le\sqrt{2}\tnorm{f}_{H^{(1-\sig)s_0+\sig s_1}}.
		\end{equation}
	\end{lem}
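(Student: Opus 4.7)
The plan is to reduce the computation of the integral to an explicit Fourier multiplier calculation by invoking the characterization of the $K$-functional from Lemma~\ref{lemma on sobolev space K functional}. Writing $P_t^s = (1+t^2\tbr{\grad}^{2s})^{-1}$ as a Fourier multiplier with symbol $(1+t^2\tbr{\xi}^{2s})^{-1}$, the key identity to exploit is
\begin{equation}
    \tnorm{(I-P_t^s)f}_{H^{s_0}}^2 + t^2\tnorm{P_t^s f}_{H^{s_1}}^2 = \int_{\R^n}\tbr{\xi}^{2s_0}\f{t^2\tbr{\xi}^{2s}}{1+t^2\tbr{\xi}^{2s}}|\mathscr{F}[f](\xi)|^2\;\m{d}\xi,
\end{equation}
which follows from a direct Plancherel computation using $s_1=s_0+s$ and the algebraic cancellation $(I-P_t^s)^2 + t^2\tbr{\grad}^{2s}(P_t^s)^2 = t^2\tbr{\grad}^{2s}/(1+t^2\tbr{\grad}^{2s})$.

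Next, I would multiply this identity by $t^{-2\sig-1}$, integrate from $0$ to $\infty$, and apply Tonelli to swap the order of integration. For the inner $t$-integral, a change of variables $\tau = t\tbr{\xi}^s$ converts it exactly to $\mathfrak{c}(\sig)\tbr{\xi}^{2\sig s}$, yielding
\begin{equation}
    \int_0^\infty t^{-2\sig-1}\tp{\tnorm{(I-P_t^s)f}_{H^{s_0}}^2 + t^2\tnorm{P_t^s f}_{H^{s_1}}^2}\;\m{d}t = \mathfrak{c}(\sig)\tnorm{f}_{H^{(1-\sig)s_0+\sig s_1}}^2,
\end{equation}
where we use the identity $s_0 + \sig s = (1-\sig)s_0 + \sig s_1$.

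Finally, to pass from this clean identity to the stated double inequality, I would sandwich the integrand against $K(t,f;H^{s_0},H^{s_1})^2$ using the equivalence proved in Lemma~\ref{lemma on sobolev space K functional}. The upper bound $\tnorm{(I-P_t^s)f}_{H^{s_0}}^2 + t^2\tnorm{P_t^s f}_{H^{s_1}}^2 \le K(t,f;H^{s_0},H^{s_1})^2$ gives the left inequality after integration and division by $\mathfrak{c}(\sig)$, while the lower bound $2^{-1}K(t,f;H^{s_0},H^{s_1})^2 \le \tnorm{(I-P_t^s)f}_{H^{s_0}}^2 + t^2\tnorm{P_t^s f}_{H^{s_1}}^2$ produces the right inequality with the factor of $\sqrt{2}$. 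There is no serious obstacle here once Lemma~\ref{lemma on sobolev space K functional} is in hand; the main work is the Fourier-side bookkeeping and the change of variables that isolates $\mathfrak{c}(\sig)$, and the convergence of $\mathfrak{c}(\sig)$ at both endpoints is guaranteed by $\sig \in (0,1)$.
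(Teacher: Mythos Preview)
Your proposal is correct and follows essentially the same approach as the paper's own proof: the Plancherel computation yielding the clean identity, the Tonelli swap with the change of variables $\tau = t\tbr{\xi}^s$ to extract $\mathfrak{c}(\sig)$, and then the sandwiching via Lemma~\ref{lemma on sobolev space K functional} to obtain the two inequalities with the factor $\sqrt{2}$.
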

	\begin{proof}
		Given $t\in\R^+$, we may equate
		\begin{multline}
			\tnorm{(I-P^s_t)f}^2_{H^{s_0}}+t^2\tnorm{P^s_tf}^2_{H^{s_1}}=\int_{\R^n}\tbr{2\pi\xi}^{2s_0}\bp{\f{t^2\tbr{2\pi\xi}^{2s}}{1+t^2\tbr{2\pi\xi}^{2s}}}^2|\mathscr{F}[f](\xi)|^2\;\m{d}\xi\\+\int_{\R^n}\tbr{2\pi\xi}^{2(s_0+s)}\bp{\f{t}{1+t^2\tbr{2\pi\xi}^{2s}}}^2|\mathscr{F}[f](\xi)|^2\;\m{d}\xi=\int_{\R^n}\tbr{2\pi\xi}^{2s_0}\f{t^2\tbr{2\pi\xi}^{2s}}{1+t^2\tbr{2\pi\xi}^{2s}}|\mathscr{F}[f](\xi)|^2\;\m{d}\xi.
		\end{multline}
		Hence, by  Tonelli's theorem and a change of variables, we have that
		\begin{multline}
			\int_0^\infty t^{-2\sig-1}\tp{\tnorm{(I-P^s_t)f}^2_{H^{s_0}}+t^2\tnorm{P_t^sf}_{H^{s_1}}^2}\;\m{d}t\\=\int_{\R^n}\tbr{2\pi\xi}^{2s_0}\int_{\R^+}t^{-2\sig-1}\f{t^2\tbr{2\pi\xi}^{2s}}{1+t^2\tbr{2\pi\xi}^{2s}}\;\m{d}t\;|\mathscr{F}[f](\xi)|^2\;\m{d}\xi\\=\mathfrak{c}(\sig)\int_{\R^n}\tbr{2\pi\xi}^{2(s_0+\sig s)}|\mathscr{F}[f](\xi)|^2\;\m{d}\xi=\mathfrak{c}(\sig)\tnorm{f}_{H^{(1-\sig)s_0+\sig s_1}}^2.
		\end{multline}
		The result then follows by combining this with Lemma~\ref{lemma on sobolev space K functional}.
	\end{proof}
	
	We now come to the main result of this subsection of the appendix.
	
	\begin{prop}[Refined interpolation of Sobolev spaces]\label{proposition on refined interpolation of Sobolev spaces}
		Let $s_0,s_1,s,r_0,r_1,r\in\R$ with $s,r>0$, $s_0+s=s_1$, and $r_0+r=r_1$.  Assume that 
		\begin{equation}
			T\in\mathcal{L}(H^{r_0}(\R^n);H^{s_0}(\R^n))\cap\mathcal{L}(H^{r_1}(\R^n);H^{s_1}(\R^n))
		\end{equation}
		is such that for some constants $C_0,C_1,A\in\R^+$ we have the bounds
		\begin{equation}
			\tnorm{Tf}_{s_0}\le C_0\tnorm{f}_{r_0}
			\text{ and }
			\tnorm{Tf}_{s_1}\le C_1\tnorm{f}_{r_1}+A\tnorm{f}_{r_0},
		\end{equation}
		for all appropriate $f$. Set $r_{-1}=r_0-r$.  Then for all $\sig\in[0,1]$ we have the inclusion
		\begin{equation}
			T\in\mathcal{L}(H^{(1-\sig)r_0+\sig r_1}(\R^n);H^{(1-\sig)s_0+\sig s_1}(\R^n))
		\end{equation}
		and the bound
		\begin{equation}
			\f{\tnorm{Tf}_{H^{(1-\sig)s_0+\sig s_1}}}{4C_0^{1-\sig}C_1^\sig}\le\tnorm{f}_{H^{(1-\sig)r_0+\sig r_1}}+\sig^{1/2}\f{A}{C_1}\tnorm{f}_{H^{(1-\sig)r_{-1}+\sig r_0}}
		\end{equation}
		for all appropriate $f$.
	\end{prop}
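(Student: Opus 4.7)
The plan is to prove the refined interpolation inequality via the $K$-method of real interpolation, exploiting the Bessel-type projectors $P_t^r$ from Lemma~\ref{lemma on sobolev space K functional} and the norm representation from Lemma~\ref{lemma on a norm computation}. The core idea is that the perturbative term $A\|f\|_{r_0}$ in the second operator bound is not controlled by the $K$-functional at the pair $(H^{r_0}, H^{r_1})$ but rather at the shifted pair $(H^{r_{-1}}, H^{r_0})$, and that its weight should vanish as $\sigma \to 0$ because at $\sigma = 0$ we have the direct bound $\|Tf\|_{s_0} \le C_0\|f\|_{r_0}$ with no $A$ contribution at all.

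I would first, for each $t > 0$, decompose $f = (I - P_t^r)f + P_t^r f$, apply $T$, and use the two operator hypotheses to estimate
\begin{equation*}
K(C_0 t/C_1, Tf; H^{s_0}, H^{s_1}) \le C_0\|(I-P_t^r)f\|_{r_0} + C_0 t\|P_t^r f\|_{r_1} + (C_0 t A/C_1)\|P_t^r f\|_{r_0}.
\end{equation*}
Applying Lemma~\ref{lemma on sobolev space K functional} twice---once with $(s_0,s_1) \leftarrow (r_0,r_1)$ and $s \leftarrow r$ to collect the first two terms into $\sqrt{2}\,C_0 K(t,f;H^{r_0},H^{r_1})$, and once with $(s_0,s_1)\leftarrow (r_{-1},r_0)$ and the same $s \leftarrow r$ (legitimate since $r_0 = r_{-1} + r$) to dominate $t\|P_t^r f\|_{r_0}$ by $K(t,f;H^{r_{-1}},H^{r_0})$---produces the key commutation-style inequality
\begin{equation*}
K(C_0 t/C_1, Tf; H^{s_0}, H^{s_1}) \le \sqrt{2}\,C_0 K(t, f; H^{r_0}, H^{r_1}) + (C_0 A/C_1) K(t, f; H^{r_{-1}}, H^{r_0}).
\end{equation*}
Minkowski's inequality in $L^2(\upxi^{-2\sigma-1}d\upxi)$ combined with a change of variables $\upxi = C_0 t/C_1$ and an application of Lemma~\ref{lemma on a norm computation} to each summand then yields a bound of the form $\|Tf\|_{H^{(1-\sigma)s_0+\sigma s_1}} \lesssim C_0^{1-\sigma}C_1^\sigma\bigl[\|f\|_{H^{(1-\sigma)r_0+\sigma r_1}} + (A/C_1)\|f\|_{H^{(1-\sigma)r_{-1}+\sigma r_0}}\bigr]$, which already establishes the inequality up to the sharp constant.

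The main obstacle is producing the claimed coefficient $\sigma^{1/2}$ on the lower-regularity correction rather than a uniform constant. Achieving this requires exploiting the asymptotic $\mathfrak{c}(\sigma)^{-1} = 2\sin(\pi\sigma)/\pi$, which is bounded above by $2\sigma$ on $[0,1]$ and vanishes linearly as $\sigma \to 0$. The strategy is to combine the $K$-functional bound $t\|P_t^r f\|_{r_0} \le K(t,f;H^{r_{-1}},H^{r_0})$ with the uniform pointwise bound $t\|P_t^r f\|_{r_0} \le (1/2)\|f\|_{r_{-1}}$, which follows from the symbol estimate $u/(1+u)^2 \le 1/4$ applied to the Fourier multiplier of $tP_t^r$; this gives an integrable bound on the $A$ contribution to the integral $\int \upxi^{-2\sigma-1} K(\upxi,Tf)^2\,d\upxi$ that, once the $\mathfrak{c}(\sigma)^{-1}$ prefactor is applied, produces a $\sigma$ factor on the square of the correction term. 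Taking the square root delivers the $\sigma^{1/2}$ factor, and tracking the constants carefully produces the displayed factor of $4$.

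Once the refined bound is established, it remains only to collect constants. The final step consolidates the estimate into the stated form $\|Tf\|/(4C_0^{1-\sigma}C_1^\sigma) \le \|f\|_{(1-\sigma)r_0+\sigma r_1} + \sigma^{1/2}(A/C_1)\|f\|_{(1-\sigma)r_{-1}+\sigma r_0}$, with the constant $4$ providing enough slack to absorb the various factors of $\sqrt{2}$ and the equivalence constants inherent in Lemma~\ref{lemma on a norm computation}.
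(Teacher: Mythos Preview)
Your overall architecture matches the paper's proof: decompose $f$ via the projector $P_t^r$, apply the two operator bounds, pass to the $K$-functional for the main term via Lemma~\ref{lemma on sobolev space K functional}, and conclude with Lemma~\ref{lemma on a norm computation}. The cosmetic difference that you place the rescaling $C_0/C_1$ in the $K$-functional argument of $Tf$ rather than in the projector is harmless.

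The gap is in your treatment of the $\sigma^{1/2}$ factor. After the decomposition, the $A$ contribution is (up to the rescaling) $\int_0^\infty t^{1-2\sigma}\|P_t^r f\|_{H^{r_0}}^2\,\mathrm{d}t$, and you need this quantity to be bounded by a constant \emph{independent of $\sigma$} times $\|f\|_{H^{(1-\sigma)r_{-1}+\sigma r_0}}^2$, so that dividing by $\mathfrak{c}(\sigma)\asymp 1/\sigma$ produces the gain. Neither of your two proposed bounds accomplishes this. Your $K$-functional bound $t\|P_t^r f\|_{H^{r_0}}\le K(t,f;H^{r_{-1}},H^{r_0})$ feeds into Lemma~\ref{lemma on a norm computation} and returns exactly $\mathfrak{c}(\sigma)\|f\|^2_{H^{(1-\sigma)r_{-1}+\sigma r_0}}$, so the $\mathfrak{c}(\sigma)$ cancels and you recover only the unrefined estimate. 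Your uniform bound $t\|P_t^r f\|_{H^{r_0}}\le \tfrac12\|f\|_{H^{r_{-1}}}$ is constant in $t$ and is not integrable against $t^{-2\sigma-1}$ on $(0,\infty)$; splitting at $t=1$ and using $\|P_t^r f\|_{H^{r_0}}\le\|f\|_{H^{r_0}}$ on $(0,1)$ still leaves a $1/\sigma$ divergence from the tail, and in any case lands on $\|f\|_{H^{r_{-1}}}$ and $\|f\|_{H^{r_0}}$ rather than the interpolated norm. The point is that $t\|P_t^r f\|_{H^{r_0}}$ actually decays like $t^{-1}$ as $t\to\infty$, and both of your bounds discard this.

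The paper does not estimate this term at all: it computes the integral exactly by Tonelli, obtaining
\[
\int_0^\infty t^{1-2\sigma}\|P_t^r f\|_{H^{r_0}}^2\,\mathrm{d}t=\mathfrak{d}(\sigma)\,\|f\|_{H^{(1-\sigma)r_{-1}+\sigma r_0}}^2,\qquad \mathfrak{d}(\sigma)=\int_0^\infty\frac{\tau^{1-2\sigma}}{(1+\tau^2)^2}\,\mathrm{d}\tau,
\]
and then checks by elementary calculus that $\mathfrak{d}(\sigma)/\mathfrak{c}(\sigma)\le 4\sigma$. This direct computation, rather than any pointwise domination, is the missing step.
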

	\begin{proof}
		Given $t\in\R^+$ and $f \in H^{r_0}$, we decompose 	$f = (I-P^r_{C_0^{-1}C_1t})f + P^r_{C_0^{-1}C_1t} f$, with   $(I-P^r_{C_0^{-1}C_1t})f \in H^{r_0}$  and $P^r_{C_0^{-1}C_1t}f \in H^{r_1}$. By the definition of $K$ and the boundedness hypotheses, we see may then estimate
		\begin{multline}
			K(t,Tf;H^{s_0},H^{s_1})\le\tnorm{T(I-P^r_{C_0^{-1}C_1t})f}_{H^{s_0}}+t\tnorm{TP^r_{C_0^{-1}C_1t}f}_{H^{s_1}}\\
			\le C_0\sp{\tnorm{(I-P^r_{C_0^{-1}C_1t})f}_{H^{r_0}}+C_0^{-1}C_1t\tnorm{P^r_{C_0^{-1}C_1t}f}_{H^{r_1}}}+At\tnorm{P_{C_0^{-1}C_1t}^rf}_{H^{r_0}}.
		\end{multline}
		We apply Lemma~\ref{lemma on sobolev space K functional} to see that
		\begin{equation}
			K(t,Tf;H^{s_0},H^{s_1})\le \sqrt{2}C_0 K(C_0^{-1}C_1t,f;H^{r_0},H^{r_1}) + At\tnorm{P^r_{C_0^{-1}C_1t}f}_{H^{r_0}}.
		\end{equation}
		Upon squaring, multiplying by $t^{-2\sig-1}$, integrating over $\R^+$, and employing Lemma~\ref{lemma on a norm computation}, we acquire the bound
		\begin{multline}\label{ref first}
			4^{-1}\mathfrak{c}(\sig)\tnorm{Tf}_{H^{(1-\sig)s_0+\sig s_1}}^2 \le \mathfrak{c}(\sig)C_0^{2(1-\sig)}C_1^{2\sig}\tnorm{f}_{H^{(1-\sig)r_0+\sig r_1}}^2\\+A^2(C_0^{-1}C_1)^{2(\sig-1)}\int_0^\infty \tau^{-2\sig+1}\tnorm{P^r_\tau f}^2_{H^{r_0}}\;\m{d}\tau.
		\end{multline}
		The final integral above can then be computed explicitly by using Tonelli's theorem:
		\begin{multline}\label{ref second}
			\int_0^\infty \tau^{-2\sig+1}\tnorm{P^r_\tau f}^2_{H^{r_0}}\;\m{d}\tau=\int_{\R^n}\tbr{2\pi\xi}^{2r_0}\int_0^\infty\f{\tau^{-2\sig+1}}{(1+\tau^2\tbr{2\pi\xi}^{2r})^2}\;\m{d}\tau\;|\mathscr{F}[f](\xi)|^2\;\m{d}\xi\\
			=\mathfrak{d}(\sig)\int_{\R^n}\tbr{2\pi\xi}^{2(r_0+(\sig-1)r)}|\mathscr{F}[f](\xi)|^2\;\m{d}\xi=\mathfrak{d}(\sig)\tnorm{x}_{H^{(1-\sig)r_{-1}+\sig r_0}}^2,
		\end{multline}
		where we define $\mathfrak{d}(\sig)=\int_0^\infty\f{\tau^{-2\sig+1}}{(1+\tau^2)^2}\;\m{d}\tau$. Together, \eqref{ref first} and~\eqref{ref second} imply that
		\begin{equation}
			\f{\tnorm{Tf}_{H^{(1-\sig)s_0+\sig s_1}}}{2C_0^{1-\sig}C_1^\sig}\le \tnorm{f}_{H^{(1-\sig)r_0+\sig r_1}}+\f{A}{C_1}\sqrt{\f{\mathfrak{d}(\sig)}{\mathfrak{c}(\sig)}}\tnorm{f}_{H^{(1-\sig)r_{-1}+\sig r_0}}.
		\end{equation}
		The proof is complete upon noting the bounds $\mathfrak{c}(\sig)\ge \int_0^1 \frac{\tau^{1-2\sigma}}{2}\;\m{d}\tau + \int_1^\infty  \frac{\tau^{1-2\sigma}}{2\tau^2}\;\m{d}\tau  =   \f{1}{4}\tp{\f{1}{1-\sig}+\f{1}{\sig}}$ and $\mathfrak{d}(\sig) \le \int_0^1 \tau^{-2\sig +1}\;\m{d}\tau + \int_1^\infty \tau^{-2\sigma -3}\;\m{d}\tau  =\f{1}{2}\tp{\f{1}{1-\sig}+\f{1}{1+\sig}}$,
		which imply that $\f{\mathfrak{d}(\sig)}{\mathfrak{c}(\sig)}\le\f{4\sig}{1+\sig}\le4\sig$.
	\end{proof}

 \section{Some nonstandard function spaces} \label{appendix_nonstandard_fn_spaces}
	
	\subsection{The anisotropic Sobolev spaces}\label{appendix on anisotropic Sobolev spaces}
	
	For $\R\ni s\ge 0$ and $d\in\N^+$ we define the anisotropic Sobolev space
	\begin{equation}\label{how has this not yet been labeled}
		\mathcal{H}^s(\R^d)=\tcb{f\in\mathscr{S}^\ast(\R^d;\R)\;:\;\mathscr{F}[f]\in L^1_{\m{loc}}(\R^d;\C),\;\tnorm{f}_{\mathcal{H}^s} <\infty}\index{\textbf{Function spaces}!170@$\mathcal{H}^s$},
	\end{equation}
	equipped with the norm
	\begin{equation}\label{there's one for you nineteen for me}
 \tnorm{f}_{\mathcal{H}^s}
 =\bp{\int_{\R^d}\sp{|\xi|^{-2}(\xi_1^2+|\xi|^4)\mathds{1}_{B(0,1)}(\xi) + \tbr{\xi}^{2s}\mathds{1}_{\R^d\setminus B(0,1)}(\xi)} |\mathscr{F}[f](\xi)|^2\;\m{d}\xi}^{1/2}.
	\end{equation}
 These spaces were introduced in Leoni and Tice~\cite{leoni2019traveling}, where it was shown, in Proposition 5.2 and Theorem 5.6,  that $\mathcal{H}^s(\R^d)$ is a Hilbert space and $H^s(\R^d) \emb \mathcal{H}^s(\R^d) \emb H^s(\R^d) + C^\infty_0(\R^d)$, with equality in the first embedding if and only if $d=1$.  The `anisotropic' descriptor is justified by the low frequency multiplier in \eqref{how has this not yet been labeled} as well as Theorem 5.2 in \cite{leoni2019traveling}, which shows that $\mathcal{H}^s(\R^d)$ is not closed under composition with rotations when $d \ge 2$.
 
 	We make the following notation for band-limited subspaces. Given $\kappa\in\R^+$, we define the space
	\begin{equation}\label{band limited chilli snow flake}
		\mathcal{H}^0_{(\kappa)}(\R^d) \subseteq \bigcap_{s \ge 0} \mathcal{H}^s(\R^d) 
		\text{ via }
		\mathcal{H}^0_{(\kappa)}(\R^d)=\tcb{f\in\mathcal{H}^0(\R^d)\;:\;\m{supp}\mathscr{F}[f]\subseteq\Bar{B(0,\kappa)}}\index{\textbf{Function spaces}!175@$\mathcal{H}^0_{(\kappa)}$}.
	\end{equation}
	We will also sometimes write $\mathcal{H}^s_{(\kappa)}(\R^d)= \mathcal{H}^0_{(\kappa)}(\R^d)$ for any $0\le s\in\R$.
	
	Now let us enumerate the properties of theses spaces pertinent to this work. First, we discuss a high-low decomposition for which the following notational convention is set.  For $\kappa\in\R^+$, we define the linear operators $\Uppi^{\kappa}_{\m{L}}$ and $\Uppi^{\kappa}_{\m{H}}$ on the subspace of $f \in \mathscr{S}^\ast(\R^d;\C)$ such that $\mathscr{F}[f]$ is locally integrable via 
	\begin{equation}\label{notation for the Fourier projection operators}
		\Uppi^{\kappa}_{\m{L}}f=\mathscr{F}^{-1}[\mathds{1}_{B(0,\kappa)}\mathscr{F}[f]]
		\index{\textbf{Linear maps}!10@$\Uppi^\kappa_{\m{L}}$}
		\text{ and }
		\Uppi^\kappa_{\m{H}}f=(I-\Uppi^\kappa_{\m{L}})f
		\index{\textbf{Linear maps}!11@$\Uppi^\kappa_{\m{H}}$}.
	\end{equation}

	\begin{prop}[Frequency splitting for anisotropic Sobolev spaces]\label{proposition on frequency splitting}
		The following hold for $0\le s\in\R$, $f\in\mathcal{H}^s(\R^d)$, and $\kappa\in\R^+$.
		\begin{enumerate}
			\item We have the equivalence $\tnorm{f}_{\mathcal{H}^s}\asymp_{s,\kappa}\sqrt{\tnorm{\Uppi _{\m{L}}^\kappa f}^2_{\mathcal{H}^0}+\tnorm{\Uppi_{\m{H}}^\kappa f}^2_{H^s}}$.
			\item We have that $\Uppi_{\m{L}}^\kappa f\in C^\infty_0(\R^d)$ with the estimates $\tnorm{\Uppi_{\m{L}}^\kappa f}_{W^{k,\infty}}\lesssim_{k,\kappa}\tnorm{\Uppi_{\m{L}}^\kappa f}_{\mathcal{H}^0}$ for every $k \in \N$.
		\end{enumerate}
	\end{prop}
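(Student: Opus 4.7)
The strategy is to attack the two items separately using direct Fourier-side computations, exploiting that the $\mathcal{H}^s$ norm is defined through a frequency-localized weight. The first item is a pointwise weight-comparison argument, while the second item rests on Paley--Wiener considerations combined with a Cauchy--Schwarz bound against the anisotropic weight.

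\textbf{Item 1.} My plan is to decompose the integration domain into three natural frequency regions determined by the two cutoffs $1$ and $\kappa$. Set $\kappa_\star=\min\{1,\kappa\}$ and $\kappa^\star=\max\{1,\kappa\}$, and denote by $w_s(\xi)=|\xi|^{-2}(\xi_1^2+|\xi|^4)\mathds{1}_{B(0,1)}+\tbr{\xi}^{2s}\mathds{1}_{\R^d\setminus B(0,1)}$ the weight appearing in $\tnorm{\cdot}_{\mathcal{H}^s}^2$ and by $w_0^{\m{low}},\,w_s^{\m{hi}}$ the weights in $\tnorm{\Uppi^\kappa_{\m{L}}f}_{\mathcal{H}^0}^2$ and $\tnorm{\Uppi^\kappa_{\m{H}}f}_{H^s}^2$, respectively. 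On $B(0,\kappa_\star)$ the weights $w_s$ and $w_0^{\m{low}}$ coincide (both equal $|\xi|^{-2}(\xi_1^2+|\xi|^4)$), and on $\R^d\setminus B(0,\kappa^\star)$ the weights $w_s$ and $w_s^{\m{hi}}$ coincide (both equal $\tbr{\xi}^{2s}$). The entire content of the equivalence is therefore concentrated on the bounded annulus $A=\{\kappa_\star\le|\xi|\le\kappa^\star\}$, where I would verify by inspection that the three candidate weights $|\xi|^{-2}(\xi_1^2+|\xi|^4)$, $\tbr{\xi}^{2s}$, and $1$ are uniformly comparable with constants depending only on $s$ and $\kappa$. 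Then, depending on whether $\kappa\ge 1$ or $\kappa\le 1$, the annular piece of $\tnorm{f}_{\mathcal{H}^s}^2$ either falls into $\tnorm{\Uppi^\kappa_{\m{L}}f}_{\mathcal{H}^0}^2$ or into $\tnorm{\Uppi^\kappa_{\m{H}}f}_{H^s}^2$, and the equivalence follows from summing the three contributions.

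\textbf{Item 2.} Here the essential ingredient is a bound of the form $\tnorm{\mathds{1}_{B(0,\kappa)}\mathscr{F}[f]}_{L^1}\lesssim_\kappa\tnorm{\Uppi^\kappa_{\m{L}}f}_{\mathcal{H}^0}$, obtained by Cauchy--Schwarz against the weight $w_0^{\m{low}}$:
\begin{equation}
\int_{B(0,\kappa)}|\mathscr{F}[f](\xi)|\,\m{d}\xi\le\Bigl(\int_{B(0,\kappa)}w_0^{\m{low}}(\xi)^{-1}\,\m{d}\xi\Bigr)^{1/2}\tnorm{\Uppi^\kappa_{\m{L}}f}_{\mathcal{H}^0}.
\end{equation}
Once the integral factor is shown to be finite, Fourier inversion and Riemann--Lebesgue yield $\Uppi^\kappa_{\m{L}}f\in C_0(\R^d)$ with the $L^\infty$-bound. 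For higher derivatives, the Fourier support is contained in $\Bar{B(0,\kappa)}$, so $|(2\pi\ii\xi)^\alpha|\le(2\pi\kappa)^{|\alpha|}$ on that support, and the same Cauchy--Schwarz estimate with $(2\pi\ii\xi)^\alpha\mathscr{F}[f](\xi)$ in place of $\mathscr{F}[f](\xi)$ delivers $\pd^\alpha\Uppi^\kappa_{\m{L}}f\in C_0(\R^d)$ with the claimed $W^{k,\infty}$ bound.

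\textbf{Main obstacle.} The one nontrivial point is verifying that $\int_{B(0,\kappa)}w_0^{\m{low}}(\xi)^{-1}\,\m{d}\xi<\infty$. The inverse weight is $|\xi|^2/(\xi_1^2+|\xi|^4)$ on $B(0,1)$ and equals $1$ on the remainder, so the subtlety is purely near the origin, where $w_0^{\m{low}}(\xi)=(\xi_1/|\xi|)^2+|\xi|^2$ degenerates along the hyperplane $\{\xi_1=0\}$. I would dispatch this by a Fubini/slicing argument: writing $\xi=(\xi_1,\xi')\in\R\times\R^{d-1}$ and integrating first in $\xi_1$ at fixed $\xi'$, I estimate
\begin{equation}
\int_{|\xi_1|\le\kappa}\frac{\xi_1^2+|\xi'|^2}{\xi_1^2+(\xi_1^2+|\xi'|^2)^2}\,\m{d}\xi_1\lesssim 1
\end{equation}
uniformly in $\xi'\in B_{\R^{d-1}}(0,\kappa)$ (splitting into $|\xi_1|\le|\xi'|$, where the denominator is bounded below by $\xi_1^2+|\xi'|^4$ and the bound $\int(\xi_1^2+|\xi'|^4)^{-1}\m{d}\xi_1\cdot|\xi'|^2\lesssim 1$ applies, and $|\xi_1|\ge|\xi'|$, where the integrand is bounded by $2\xi_1^{-2}$ times a constant). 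Integrating the resulting uniform bound over $\xi'\in B_{\R^{d-1}}(0,\kappa)$ yields a finite total, completing the proof.
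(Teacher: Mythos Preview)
The paper does not prove this proposition directly; it cites Theorem~5.5 of Leoni--Tice~\cite{leoni2019traveling}. Your self-contained Fourier-side argument is therefore a genuine addition, and Item~1 is correct as outlined: the three candidate weights are indeed uniformly comparable on the annulus $\{\kappa_\star\le|\xi|\le\kappa^\star\}$ since $|\xi|\asymp_\kappa 1$ there, and outside the annulus the weights coincide exactly.

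For Item~2, your Cauchy--Schwarz strategy is the right one, and the claim that the inner $\xi_1$-integral is bounded uniformly in $\xi'$ is true. However, your parenthetical justification contains a slip in the region $|\xi_1|\ge|\xi'|$: the bound you record, integrand $\lesssim\xi_1^{-2}$, evidently comes from discarding the $\xi_1^2$ summand in the denominator and keeping only $(\xi_1^2+|\xi'|^2)^2\ge\xi_1^4$. Integrating $\xi_1^{-2}$ over $\{|\xi'|\le|\xi_1|\le\kappa\}$ yields $\sim|\xi'|^{-1}$ rather than a uniform constant, and $|\xi'|^{-1}$ fails to be integrable over $\xi'\in B_{\R^{d-1}}(0,\kappa)$ precisely when $d=2$ (which is the physically relevant case $n=3$). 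The remedy is immediate: retain the $\xi_1^2$ summand instead, so that in the region $|\xi_1|\ge|\xi'|$ one has
\[
\frac{\xi_1^2+|\xi'|^2}{\xi_1^2+(\xi_1^2+|\xi'|^2)^2}\le\frac{2\xi_1^2}{\xi_1^2}=2,
\]
a genuinely uniform bound, after which your concluding integration over $\xi'$ works in every dimension $d\ge2$. The case $d=1$ is trivial since then the weight reduces to $1+\xi_1^2$.
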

	\begin{proof}
		This is Theorem 5.5 in Leoni and Tice~\cite{leoni2019traveling}.
	\end{proof}
	
	The following algebra properties of the anisotropic Sobolev spaces are extremely important in our nonlinear analysis.
 
	\begin{prop}[Algebra properties of anisotropic Sobolev spaces]\label{proposition on algebra properties}
		Suppose that $f_1,f_2\in\mathcal{H}^0_{(\kappa)}(\R^d)$ for some $\kappa\in\R^+$.  		The following hold.
		\begin{enumerate}
			\item The pointwise product $f_1f_2$ belongs to $\mathcal{H}_{(2\kappa)}^0(\R^d)$ and satisfies the estimate $\tnorm{f_1f_2}_{\mathcal{H}^0}\lesssim_{\kappa}\tnorm{f_1}_{\mathcal{H}^0}\tnorm{f_2}_{\mathcal{H}^0}$.
			\item Set
			\begin{equation}\label{the definition of rd}
				r_d=\begin{cases}1+\sfloor{\f{d+1}{d-1}},&\text{if }d>1,\\
					1&\text{if }d=1.
				\end{cases}
			\end{equation}
			Assume additionally that $f_3,\dots,f_{r_d}\in\mathcal{H}_{(\kappa)}^0(\R^d)$. Then the pointwise product $\prod_{j=1}^{r_d}f_j$ belongs to $(L^2\cap\mathcal{H}^0_{(r_d \cdot \kappa)})(\R^d)$ and satisfies
			\begin{equation}
				\bnorm{\prod_{j=1}^{r_d}f_j}_{L^2} \lesssim_{\kappa,d} \prod_{j=1}^{r_d}\tnorm{f_j}_{\mathcal{H}^0}.
			\end{equation}
			\item If   $g\in\mathcal{H}^s(\R^d)$, then the pointwise product $f_1g$ belongs to $\mathcal{H}^s(\R^d)$ and satisfies
			\begin{equation}
				\tnorm{f_1g}_{\mathcal{H}^s}\lesssim_{\kappa,s}\tnorm{f_1}_{\mathcal{H}^0}\tnorm{g}_{\mathcal{H}^s}.
			\end{equation}
		\end{enumerate}
	\end{prop}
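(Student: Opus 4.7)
The plan is to prove the three items in order, in each case exploiting the band-limited Fourier support of the factors together with the anisotropic weight $w(\xi) = |\xi|^{-2}(\xi_1^2 + |\xi|^4) = \xi_1^2/|\xi|^2 + |\xi|^2$ on $B(0,1)$ that defines the $\mathcal{H}^0$ norm. As a preliminary observation, a cylindrical coordinate computation around the $\xi_1 = 0$ hyperplane shows that $\int_{B(0,\kappa)} 1/w \, d\xi < \infty$ in every dimension $d \ge 2$. Combined with Cauchy-Schwarz this gives $\tnorm{\hat f_i}_{L^1} \lesssim_\kappa \tnorm{f_i}_{\mathcal{H}^0}$ for every $f_i \in \mathcal{H}^0_{(\kappa)}$, recovering (and quantifying) that $f_i \in C^\infty_0 \cap L^\infty$ with $\tnorm{f_i}_{L^\infty} \lesssim_\kappa \tnorm{f_i}_{\mathcal{H}^0}$, consistent with the second item of Proposition~\ref{proposition on frequency splitting}.

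For item 1, I would observe that $\widehat{f_1 f_2} = \hat f_1 \ast \hat f_2 \in L^1$ (by Young applied to the preliminary bound) is supported in $\bar{B(0, 2\kappa)}$, giving the band-limit assertion. The quantitative bound requires estimating $\int w(\xi) |\hat f_1 \ast \hat f_2|^2 \, d\xi$, which I would split into the regions $\bar{B(0, 2\kappa)} \setminus B(0,1)$ and $B(0,1)$. On the outer annulus the weight is $1$ and I would bound the $L^2$ norm of $\hat f_1 \ast \hat f_2$ via a mixed Young estimate, after first decomposing $\hat f_i$ into the pieces on which $w \asymp 1$ (away from the origin) and on which $w$ is smaller (near the origin). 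On the inner ball $B(0,1)$ the bound $w \le 2$ is available but not sharp enough by itself; here I would perform a direction-sensitive convolution estimate that splits $\zeta = (\zeta_1, \zeta')$ and tracks separately how many of the two factors $\hat f_1(\xi - \zeta)$, $\hat f_2(\zeta)$ are concentrated near $\{\xi_1 = 0\}$, exploiting that the weight controls $\partial_1 f_i$ in $\dot H^{-1}$ to produce cancellation.

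For item 2, I would iterate item 1 to reduce $\prod_{j=1}^{r_d} f_j$ to an $\mathcal{H}^0_{(r_d \kappa)}$ function whose $L^2$ norm must be bounded by the product $\prod_j \tnorm{f_j}_{\mathcal{H}^0}$. The Fourier transform is an $r_d$-fold convolution of the $\hat f_j$, and iterated Young-style estimates against the weighted $L^2$ structure combined with local integrability of $1/w$ (which for suitable powers improves as $r_d$ grows) produces $L^2$ control; the specific value $r_d = 1 + \lfloor (d+1)/(d-1) \rfloor$ is the smallest number of factors for which the iterated convolution closes in $L^2$, tracking the dimensional dependence of how $1/w^{\alpha}$ transitions from non-integrable to integrable near the $\xi_1 = 0$ singularity.

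For item 3, I would decompose $g = \Uppi^1_{\m{L}} g + \Uppi^1_{\m{H}} g$ using Proposition~\ref{proposition on frequency splitting}, with $\Uppi^1_{\m{L}} g \in \mathcal{H}^0_{(1)}$ and $\Uppi^1_{\m{H}} g \in H^s$, each controlled by $\tnorm{g}_{\mathcal{H}^s}$. The product $f_1 \cdot \Uppi^1_{\m{L}} g$ is a product of two band-limited $\mathcal{H}^0$ functions, so item 1 with band-limit $\max\{\kappa, 1\}$ gives $\mathcal{H}^0$ control, and since the product is band-limited in $B(0, \kappa + 1)$ the $\mathcal{H}^s$ norm is equivalent to the $\mathcal{H}^0$ norm up to $\kappa,s$-dependent constants. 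For the product $f_1 \cdot \Uppi^1_{\m{H}} g$, I would use that $f_1 \in C^\infty_b$ with derivatives bounded in terms of $\tnorm{f_1}_{\mathcal{H}^0}$ (from Proposition~\ref{proposition on frequency splitting}), so standard Sobolev multiplier theory places the product in $H^s$, which embeds in $\mathcal{H}^s$. The main obstacle is the low-frequency convolution estimate in item 1, where the degeneracy of $w$ along $\xi_1 = 0$ precludes brute-force $L^1$-convolution arguments and requires the direction-sensitive decomposition described above; this is also what drives the explicit formula for $r_d$ in item 2.
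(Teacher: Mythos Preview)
Your plan for item~3 matches the paper's proof essentially verbatim: decompose $g$ via $\Uppi^\kappa_{\mathrm{L}}$ and $\Uppi^\kappa_{\mathrm{H}}$, handle the low piece with item~1, and handle the high piece as a product of a $W^{s,\infty}$ function with an $H^s$ function.

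For item~1 the paper does not give an argument at all; it simply cites Koganemaru--Tice. Your convolution-and-direction-splitting sketch is therefore going beyond what the paper proves. The outline is plausible, but be aware that the difficulty you identify --- that $w\le 2$ on $B(0,1)$ only reduces matters to an $L^2$ bound on $\hat f_1\ast\hat f_2$, which does not follow from $\hat f_i\in L^1$ alone --- is real, and your ``direction-sensitive'' remedy is not fully specified.

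For item~2 your route diverges from the paper and is somewhat muddled. Iterating item~1 gives $\prod_j f_j\in\mathcal{H}^0_{(r_d\kappa)}$ with the right bound, but $\mathcal{H}^0_{(r_d\kappa)}\not\subset L^2$ for $d\ge 2$, so that alone does not yield the $L^2$ estimate; you then gesture at an iterated-Young argument on the Fourier side without saying which $L^p$ spaces you use or why the exponents close. The paper instead works on the \emph{physical} side: it computes exactly when $1/\mu\in L^q(B(0,1))$ (namely $q<(d+1)/2$), uses H\"older to deduce $\hat f\in L^p$ for $p<\tfrac{2(d+1)}{d+3}$, applies Hausdorff--Young to get the embedding $\mathcal{H}^0_{(\kappa)}\hookrightarrow L^q$ for every $q>\tfrac{2(d+1)}{d-1}$, and then bounds $\bigl\|\prod_j f_j\bigr\|_{L^2}$ by a single application of H\"older in $\R^d$. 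The value $r_d=1+\lfloor(d+1)/(d-1)\rfloor$ drops out as the least integer $k$ with $2k>\tfrac{2(d+1)}{d-1}$. This is both shorter and makes the origin of $r_d$ transparent, whereas in your approach $r_d$ is asserted rather than derived.
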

	\begin{proof}
		The first item is proved in Section 2.2 in Koganemaru and Tice~\cite{koganemaru2022traveling}.
		
		We now prove the second item in the case that $\kappa=1$ and $d\ge 2$.  The general case for $d \ge 2$ can be handled similarly, and the case $d=1$ is trivial.  We define the function $\mu:B(0,1)\to\R$ via $\mu(\xi)=|\xi|^{-2}\tp{\xi_1^2+|\xi|^4}$ and note that $\mu$ is the multiplier that encodes the low frequency control in $\mathcal{H}^0(\R^d)$.  We will calculate for which $q\in[1,\infty)$ we have $1/\mu\in L^q(B(0,1))$. Consider the decomposition  $B(0,1)=E_0\sqcup E_1$
		\begin{equation}
			E_0=\tcb{\xi = (\xi_1,\xi^\ast)\in B(0,1)\;:\;|\xi_1\pm1/2|^2+|\xi^\ast|^2<1/4},\quad E_1=B(0,1)\setminus E_0.
		\end{equation}
		An elementary calculation shows that for $\xi\in B(0,1)$ we have that
		\begin{equation}
			\xi\in E_0\lra|\xi|^4<|\xi_1|^2\quad\text{and}\quad\xi\in E_1\lra|\xi_1|^2\le|\xi|^4.
		\end{equation}
		In turn, these show that 
		\begin{equation}
			\begin{cases}
				\xi_1^2/\abs{\xi}^2 \le \mu(\xi) \le 2 \xi_1^2/\abs{\xi}^2 &\text{for }\xi \in E_0 \\
				\abs{\xi}^2 \le \mu(\xi) \le 2 \abs{\xi}^2 &\text{for } \xi \in E_1.
			\end{cases}
		\end{equation}
		For $1 \le q < \infty$, we then have the equivalence
		\begin{equation}\label{Maine}
			\int_{B(0,1)} \mu^{-q} \asymp \int_{E_0}\xi_1^{-2q}\abs{\xi}^{2q}\;\m{d}\xi  + \int_{E_1} \abs{\xi}^{-2q}\;\m{d}\xi.
		\end{equation}
		We may use spherical coordinates in the $\xi^\ast \in \R^{d-1}$ variable to see that 
		\begin{equation}\label{Louisiana}
			\int_{E_1} \abs{\xi}^{-2q}\;\m{d}\xi < \infty
		\end{equation}
		if and only if 
		\begin{equation}
			\int_0^{1/2} r^{d-2} \int_0^{1/2 - \sqrt{1/4 - r^2}} \frac{\m{d}x_1}{(x_1^2 + r^2)^q}\;\m{d}r < \infty,
		\end{equation}
		but for the latter we can bound 
		\begin{equation}
			\int_0^{1/2} r^{d-2} \int_0^{1/2 - \sqrt{1/4 - r^2}} \frac{dx_1}{(x_1^2 + r^2)^q}\;\m{d}r \le \int_0^{1/2} r^{d-2-2q}\bp{\frac{1}{2} - \sqrt{\frac{1}{4} - r^2}}\;\m{d}r,
		\end{equation}
		and since the term in parentheses behaves like $r^2$ for $r \sim 0$ this will be finite if and only if $d-2-2q+2 > -1 \lra q < (d+1)/2$. By putting this together, we see that if $q < (d+1)/2$ then the integral~\eqref{Louisiana} is indeed finite. Next we consider the $E_0$ integral, again using spherical coordinates in $\xi^\ast$.  We have that 
		\begin{equation}\label{Kentucky}
			\int_{E_0} \xi_1^{-2q}\abs{\xi}^{2q} \;\m{d}\xi < \infty
		\end{equation}
		if and only if 
		\begin{equation}
			\int_0^{1/2} r^{d-2} \int_{1/2 - \sqrt{1/4 - r^2}}^{1/2} \bp{1 + \frac{r^2}{x_1^2} }^q \;\m{d}x_1\;\m{d}r < \infty,
		\end{equation}
		and we know that this integral is finite if we know that
		\begin{equation}
			\int_0^{1/2} r^{d-2} \int_{1/2 - \sqrt{1/4 - r^2}}^{1/2} \bp{\frac{r^2}{x_1^2} }^q \;\m{d}x_1\;\m{d}r < \infty.
		\end{equation}
		For this latter integral we compute 
		\begin{equation}
			\int_0^{1/2} r^{d-2} \int_{1/2 - \sqrt{1/4 - r^2}}^{1/2} \bp{\frac{r^2}{x_1^2} }^q\;\m{d}x_1\;\m{d}r   \le \frac{1}{2q-1} \int_0^{1/2} r^{d-2+2q} \bp{\frac{1}{2} - \sqrt{\frac{1}{4} - r^2} }^{1-2q}\;\m{d}r,
		\end{equation}
		and this is finite if and only if $d-2+2q + 2(1-2q) > -1 \Leftrightarrow q < \frac{d+1}{2}$. So once more we guarantee the integral~\eqref{Kentucky} is finite if $q < (d+1)/2$. Hence~\eqref{Maine} is finite for the same range of $q$.
		
		Now suppose that $f\in\mathcal{H}_{(1)}^0(\R^d)$. Since $\tnorm{f}_{\mathcal{H}^0}\asymp\tnorm{\sqrt{\mu}(\grad/2\pi\ii)f}_{L^2}$, it follows from H\"older's inequality that for $1\le p<2$ we have
		\begin{equation}
			\tnorm{\mathscr{F}[f]}_{L^p}\lesssim\tnorm{f}_{\mathcal{H}^0}\bp{\int_{B(0,1)}(1/\mu)^{p/(2-p)}}^{1/p-1/2}.
		\end{equation}
		The above analysis shows that the coefficient given by the integral on the right hand side is finite if and only if $p/(2-p)<(d+1)/2$. This is equivalent to $p<\f{2(d+1)}{d+3}$, and thus $\mathscr{F}[f]\in L^p(\R^d)$ for $1\le p<\f{2(d+1)}{d+3}$. In turn, thanks to the Hausdorff-Young inequality, we have that
		\begin{equation}\label{taco_truck}
			\tnorm{f}_{L^q}\lesssim_q\tnorm{f}_{\mathcal{H}^0} \text{ for } 
			\f{2(d+1)}{d-1}<q\le\infty.
		\end{equation}
		Hence, by H\"older's inequality, for any $k\in\N$ with $k\le\f{2(d+1)}{d-1}$ we have that
		\begin{equation}\label{in the middle of the sea}
			\tnorm{f^k}_{L^q}\lesssim_{q,k}\tnorm{f}^k_{\mathcal{H}^0} \text{ for } \f{2(d+1)}{k(d-1)}<q\le\infty.
		\end{equation}
		Define $r_d=\min\scb{k\in\N\;:\;\f{2(d+1)}{k(d-1)}<2}$. Note that the minimum exists since $2<\f{2(d+1)}{d-1}\le 6$. In fact, it is easy to see that
		\begin{equation}
			r_d=1+\bfloor{\f{d+1}{d-1}}=\begin{cases}
				4&\text{if }d=2,\\
				3&\text{if }d=3,\\
				2&\text{if }d>3,
			\end{cases}
		\end{equation}
		and that $\tnorm{f^{r_d}}_{L^2}\lesssim\tnorm{f}_{\mathcal{H}^0}^{r_d}$. The second item then follows by one more application of H\"older's inequality.
		
		Finally, we prove the third item. Proposition~\ref{proposition on frequency splitting} and the first item  allow us to estimate
		\begin{multline}
			\tnorm{f_1g}_{\mathcal{H}^s}\le\tnorm{f_1\Uppi^\kappa_{\m{L}}g}_{\mathcal{H}^s}+\tnorm{f_1\Uppi^\kappa_{\m{H}}g}_{\mathcal{H}^s}\lesssim\tnorm{f_1}_{\mathcal{H}^0}\tnorm{\Uppi^\kappa_{\m{L}}g}_{\mathcal{H}^0}+\tnorm{f_1\Uppi^\kappa_{\m{H}}g}_{H^s}\\
			\lesssim\tnorm{f_1}_{\mathcal{H}^0}\tnorm{g}_{\mathcal{H}^s}+\tnorm{f_1}_{W^{s,\infty}}\tnorm{\Uppi^\kappa_{\m{H}}g}_{H^s}\lesssim\tnorm{f_1}_{\mathcal{H}^0}\tnorm{g}_{\mathcal{H}^s},
		\end{multline}
		which completes the proof of the third item.
	\end{proof}
	
 \begin{rmk}\label{remark_about_Lp_inclusion_for_anisos}
     From the proof of Proposition~\ref{proposition on algebra properties}, specifically~\eqref{taco_truck}, and Proposition~\ref{proposition on frequency splitting} we deduce that for $\R\ni s\ge0$, $2(d+1)/(d-1)<q\le\infty$ and $f\in\mathcal{H}^s(\R^d)$, the low mode part $\Uppi^1_{\m{L}}f$ belongs to $L^q(\R^d)$ and the high mode part $\Uppi^1_{\m{H}}f$ belongs to $L^2(\R^d)$.  In fact, the induced embedding $\mathcal{H}^s(\R^d)\emb(L^q+L^2)(\R^d)$ is continuous.
 \end{rmk}

 We now develop a spatial characterization of the specialized Sobolev spaces that will be useful when employing these spaces in a priori estimates. Recall that the homogeneous spaces $\dot{H}^{-1}$ are defined in~\eqref{dotHminus1_def}.

 \begin{prop}[Characterizations of the anisotropic Sobolev spaces]\label{proposition on spatial characterization of anisobros}
     The following hold for $\R\ni s\ge 0$.
     \begin{enumerate}
         \item If $f\in\mathscr{S}^\ast(\R^d;\R)$ satisfies $\grad f\in H^{s-1}(\R^d;\R^d)$ and $\pd_1 f\in\dot{H}^{-1}(\R^d)$, then there exists a constant $c\in\R$ such that $f-c\in\mathcal{H}^s(\R^d)$.
         
        \item  Assume $d \ge 2$ and write $p_d = \frac{2(d+1)}{d-1}>2$. Then we have the equality 
        \begin{equation}\label{ziggy_stardust_1}
            \mathcal{H}^s(\R^d)  
             = \bcb{ f \in L^2(\R^d) + \bigcup_{p_d < p < \infty} L^p(\R^d) \;:\; \nabla f \in H^{s-1}(\R^d;\R^d) \text{ and } \partial_1 f \in \dot{H}^{-1}(\R^d)},
        \end{equation}
        with the equivalence of norms 
         \begin{equation}\label{the norm on the anisotropic Sobolev spaces}
             \tnorm{f}_{\mathcal{H}^s} \asymp \sqrt{\tnorm{\grad f}_{H^{s-1}}^2+\tsb{\pd_1f}_{\dot{H}^{-1}}^2},
         \end{equation}
         where the implied constants depend only on $d$ and $s$.
     \end{enumerate}
 \end{prop}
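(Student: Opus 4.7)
The plan is to prove item~1 first and then derive item~2 by combining it with the embedding from Remark~\ref{remark_about_Lp_inclusion_for_anisos}. For item~1, the two hypotheses translate at the Fourier level to the distributional identities $\xi\widehat{f} \in \langle\xi\rangle^{1-s}L^2(\R^d;\C^d)$ and $(\xi_1/|\xi|)\widehat{f} \in L^2(\R^d\setminus\{0\};\C)$. Dividing by $|\xi|^2$ on any compact set of $\R^d\setminus\{0\}$ (where $|\xi|$ is bounded below) shows that $\widehat{f}$ is represented on $\R^d\setminus\{0\}$ by some $g \in L^2_{\m{loc}}(\R^d\setminus\{0\};\C)$. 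To upgrade $g$ to an $L^1_{\m{loc}}(\R^d)$ function (extended by zero at the origin), I would use the low-frequency multiplier $\mu(\xi)=|\xi|^{-2}(\xi_1^2+|\xi|^4)$: a direct computation in spherical coordinates shows $\int_{B(0,1)} \mu^{-1}\,\m{d}\xi < \infty$ for all $d \ge 2$, and pairing this with Cauchy--Schwarz against the two hypotheses gives the required local integrability near the origin.

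With $g \in L^1_{\m{loc}}(\R^d)$ in hand, the difference $\widehat{f} - g$ is a tempered distribution supported at $\{0\}$, hence of the form $\sum_{|\alpha|\le N} c_\alpha \partial^\alpha \delta_0$. Multiplying by $\xi_j$ for each $j \in \{1,\dotsc,d\}$ and using that $\xi_j g$ has no singular contribution at the origin, I would invoke the elementary identity $\xi_j \partial^\alpha \delta_0 = -\alpha_j \partial^{\alpha - e_j}\delta_0$ together with the linear independence of $\{\partial^\alpha \delta_0\}_\alpha$ to conclude that all $c_\alpha$ with $|\alpha| \ge 1$ vanish. This forces $\widehat{f} = g + c\delta_0$ for some $c \in \R$ (real by the reality of $f$), so $\mathscr{F}[f - c] = g \in L^1_{\m{loc}}(\R^d)$. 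A direct computation of $\tnorm{f - c}_{\mathcal{H}^s}$ then establishes both finiteness and the norm equivalence \eqref{the norm on the anisotropic Sobolev spaces}, by comparing the multipliers $\mu \mathds{1}_{B(0,1)} + \langle\xi\rangle^{2s}\mathds{1}_{B(0,1)^c}$ against $\langle\xi\rangle^{2(s-1)}|\xi|^2 + (\xi_1/|\xi|)^2$ in the low- and high-frequency regions.

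For item~2, the inclusion $\mathcal{H}^s(\R^d) \subseteq \text{RHS}$ is immediate from the definition of $\tnorm{\cdot}_{\mathcal{H}^s}$ (which directly encodes $\grad f \in H^{s-1}$ and $\pd_1 f \in \dot{H}^{-1}$) and from Remark~\ref{remark_about_Lp_inclusion_for_anisos} (which yields $\mathcal{H}^s(\R^d) \emb L^2(\R^d) + L^p(\R^d)$ for every $p_d < p < \infty$). For the reverse inclusion, applying item~1 to an $f$ in the right-hand side of \eqref{ziggy_stardust_1} produces $c \in \R$ with $f - c \in \mathcal{H}^s \emb L^2 + L^p$; combined with the hypothesis $f \in L^2 + L^p$, this forces the constant $c \in L^2(\R^d) + L^p(\R^d)$. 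Since $\R^d$ has infinite measure and $p < \infty$, an elementary argument (any nonzero constant cannot be decomposed as a sum of an $L^2$ and an $L^p$ function outside arbitrarily large sets) yields $c = 0$, so $f \in \mathcal{H}^s(\R^d)$. The norm equivalence then follows from the one established in item~1.

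The main obstacle will be the local integrability step in item~1. The hypothesis $\grad f \in H^{s-1}$ alone does not suffice when $d = 2$, since the natural Cauchy--Schwarz estimate would require $\int_{B(0,1)} |\xi|^{-2}\,\m{d}\xi < \infty$, which fails in two dimensions. The role of the second hypothesis $\pd_1 f \in \dot{H}^{-1}$ is precisely to compensate for this deficiency through the angular factor $\xi_1/|\xi|$: the combined multiplier $\mu$ satisfies $\int_{B(0,1)} \mu^{-1}\,\m{d}\xi < \infty$ in \emph{every} dimension $d \ge 2$, as revealed in $d=2$ by the identity $\int_0^{2\pi}(a\cos^2\theta + b\sin^2\theta)^{-1}\,\m{d}\theta = 2\pi/\sqrt{ab}$ applied to the angular integral, and in $d \ge 3$ by the analogous spherical computation giving an inner angular integral of size $O(1/r)$ that balances the radial volume factor $r^{d-1}$. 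This interplay between radial and angular decay is also what motivates the anisotropic definition of the $\mathcal{H}^s$ scale in the first place, and clarifies why the two derivative hypotheses of item~1 are the correct pair to isolate.
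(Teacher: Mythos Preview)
Your proposal is correct, and for item~2 it follows the same route as the paper. For item~1 the strategies differ. The paper constructs approximants $f_\delta$ by smoothly cutting off $\widehat{f}$ near the origin, shows $\{f_\delta\}$ is Cauchy in $\mathcal{H}^s$ (using completeness of $\mathcal{H}^s$), passes to a limit $g\in\mathcal{H}^s$, and then argues $f-g$ is a polynomial whose gradient lies in $H^{s-1}$, hence a constant. Your argument is more direct: you identify $\widehat{f}$ with an $L^2_{\m{loc}}$ function $g$ on $\R^d\setminus\{0\}$, establish $g\in L^1_{\m{loc}}(\R^d)$ via Cauchy--Schwarz against the explicit bound $\int_{B(0,1)}\mu^{-1}<\infty$, and then analyze the point-supported distribution $\widehat{f}-g$ by multiplying by $\xi_j$. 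Your approach has the virtue of making the role of the multiplier $\mu$ completely transparent (and your angular computations for $d=2$ and $d\ge 3$ are correct; note that $\int_{B(0,1)}\mu^{-q}<\infty$ iff $q<(d+1)/2$, so $q=1$ always works). The paper's approach trades this explicitness for not having to separately verify that $g$ is tempered or to manipulate derivatives of $\delta_0$. Both arrive at the same norm equivalence by the same multiplier comparison.
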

 \begin{proof}
 
 We begin with the proof of the first item. Let $f\in\mathscr{S}^\ast(\R^d;\R)$ satisfy $\grad f\in H^{s-1}(\R^d;\R^d)$ and $\pd_1 f\in\dot{H}^{-1}(\R^d)$, and let $\varphi\in C^\infty_c(\R^d)$ be such that $\varphi=1$ on $B(0,1/2)$, $0 \le \varphi \le 1$, and $\supp(\varphi) \subseteq B(0,1)$. Given $\del\in(0,1)$, write $\varphi_\del(\xi)=\varphi(\xi/\del)$ for $\xi\in\R^d$.  For $\del \in (0,1)$ define $f_\del \in \mathscr{S}^\ast(\R^d;\R)$ via $\mathscr{F}[f_\delta] = (1-\varphi_\delta) \mathscr{F} [f]$ and note that $\supp(\mathscr{F} [f_\del]) \subseteq \R^d \backslash B(0,\del/2)$ and that $\mathscr{F} [f] = \mathscr{F} [f_\del]$ in $\R^d \backslash B(0,\del)$.  Since $\nabla f \in H^{s-1}(\R^d;\R^d)$ we have that $\mathscr{F} [f_\del]$ is a locally integrable function given by 
 \begin{equation}
     \mathscr{F} [f_\delta](\xi) = -(1-\varphi_\delta(\xi)) \ii\xi(2\pi|\xi|^2)^{-1} \cdot \mathscr{F} [\nabla f](\xi). 
 \end{equation}
 This fact and the equivalence
 \begin{equation}\label{equivalencegenciabelangaenciaviadiamamamia}
	|\xi|^{-2}\xi_1^2+|\xi|^2\tbr{\xi}^{2(s-1)}
	\asymp 
	|\xi|^{-2}(\xi_1^2+|\xi|^4)\mathds{1}_{B(0,1)}(\xi)+\tbr{\xi}^{2s}\mathds{1}_{\R^d\setminus B(0,1)}(\xi) \text{ for } \xi \in \R^d \backslash \{0\},
\end{equation}
where the implicit constants depend only on $d$ and $s$,  justify the estimate
 \begin{multline}\label{getting better all the time}
     \norm{f_\del}_{\mathcal{H}^s}^2  =\int_{\R^d}\sp{|\xi|^{-2}(\xi_1^2+|\xi|^4)\mathds{1}_{B(0,1)}+\tbr{\xi}^{2s}\mathds{1}_{\R^d\setminus B(0,1)}}|\mathscr{F}[f_\del](\xi)|^2\;\m{d}\xi \\
     \asymp
      \int_{\R^d \backslash B(0,\delta/2)}(|\xi|^{-2}\xi_1^2+|\xi|^2\tbr{\xi}^{2(s-1)}) |1-\varphi_\delta(\xi)|^2 |\mathscr{F}[f](\xi)|^2\;\m{d}\xi 
     \lesssim
     \tsb{\pd_1 f}^2_{\dot{H}^{-1}} + \tnorm{\grad f}_{H^{s-1}}^2,
 \end{multline}
 which shows that $\tcb{f_\del}_{\del\in(0,1)}\subset\mathcal{H}^s(\R^d)$.  A similar argument shows that $\tcb{f_\del}_{\del\in(0,1)}$ is Cauchy in $\mathcal{H}^s(\R^d)$, and since $\mathcal{H}^s(\R^d)$ is complete, $f_\delta \to g \in \mathcal{H}^s(\R^d)$ as $\del \to 0$.  On the other hand, since  $\mathscr{F} [f] = \mathscr{F} [f_\del]$ in $\R^d \backslash B(0,\del)$, we readily deduce that $\mathscr{F}[f] = \mathscr{F}[g]$ on $\R^d \backslash \{0\}$ and hence that  $\m{supp}(\mathscr{F}[f-g])\subseteq\tcb{0}$. The tempered distributions supported at a point consist only of linear combinations of Dirac masses and their derivatives; hence, there is a polynomial $P$ such that $f-g=P$.  Theorem 5.6  Leoni and Tice~\cite{leoni2019traveling} shows that the inclusion $g \in \mathcal{H}^s(\R^d)$ implies that $\grad g \in H^{s-1}(\R^d;\R^d)$ (the proof there is stated for $s\ge 1$, but the argument works just as well for $0 \le s < 1$). Consequently, $\grad P = \grad(f-g) \in H^{s-1}(\R^d;\R^d)$, which requires that $P$ is a constant. This completes the proof of the first item.
 
 We now turn to the proof of the second item. Call the second space in \eqref{ziggy_stardust_1} $\mathcal{H}^s_{\m{II}}(\R^d)$. Theorems 5.5 and 5.6 of~\cite{leoni2019traveling} show that if $f \in \mathcal{H}^s(\R^d)$, then $\grad f \in H^{s-1}(\R^d;\R^d)$ and $\partial_1 f \in \dot{H}^{-1}(\R^d)$; these inclusions, together with the observations of Remark \ref{remark_about_Lp_inclusion_for_anisos}, show that $\mathcal{H}^s(\R^d) \subseteq \mathcal{H}^s_{\m{II}}(\R^d)$.
 
 To prove the reverse inclusion, we first note that $L^2(\R^d) + \bigcup_{p_d < p < \infty} L^p(\R^d) \subseteq \mathscr{S}^\ast(\R^d;\R)$.  Thus, the first item shows that if $f \in \mathcal{H}^s_{\m{II}}(\R^d)$, then $f- c \in \mathcal{H}^s(\R^d)$ for some $c \in \R$.  On the other hand, by definition we can choose $p_d < q < \infty$ such that $f \in L^2(\R^d) + L^q(\R^d)$, and so upon appealing again to Remark~\ref{remark_about_Lp_inclusion_for_anisos} we find that $c = f - (f-c) \in L^2(\R^d) + L^q(\R^d)$, which implies that $c=0$.  Thus, $f \in \mathcal{H}^s(\R^d)$, and we deduce from this that $\mathcal{H}^s_{\m{II}}(\R^d) \subseteq \mathcal{H}^s(\R^d)$.
 
 We now know that $\mathcal{H}^s(\R^d) = \mathcal{H}^s_{\m{II}}(\R^d)$, so it remains only to verify~\eqref{the norm on the anisotropic Sobolev spaces}.  This follows from    \eqref{equivalencegenciabelangaenciaviadiamamamia},  together with the additional equivalences  
\begin{equation}
    \tnorm{\grad f}_{H^{s-1}}\asymp\tnorm{|\grad|\tbr{\grad}^{s-1}f}_{L^2} 
    \text{ and } 
    \tsb{\pd_1f}_{\dot{H}^{-1}}\asymp\tnorm{|\grad|^{-1}\pd_1f}_{L^2} \text{ for } f\in\mathcal{H}^s(\R^d),
\end{equation} 
 where again the implicit constants depend only on $s$ and $d$.
 \end{proof}
	
	The next property of the specialized Sobolev spaces that we need is a simple interpolation property.
	
	\begin{lem}[Log-convexity of the norm in anisotropic Sobolev spaces]\label{lem on log-convexity in ansiotropic Sobolev spaces}
		Let $s_0,s_1\in\R$ be such that $s_0,s_1 \ge 0$, and let $\sig\in[0,1]$. For all $f\in(\mathcal{H}^{s_0}\cap\mathcal{H}^{s_1})(\R^d)$, we have that $f\in\mathcal{H}^{(1-\sig)s_0+\sig s_1}(\R^d)$ and satisfies the bound
		\begin{equation}
			\tnorm{f}_{\mathcal{H}^{(1-\sig)s_0+\sig s_1}}\lesssim\tnorm{f}^{1-\sig}_{\mathcal{H}^{s_0}}\tnorm{f}^{\sig}_{\mathcal{H}^{s_1}}.
		\end{equation}
	\end{lem}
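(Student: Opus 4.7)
The plan is to reduce the claim to a pointwise log-convexity of the multiplier appearing in the definition of the norm, splitting frequency space according to the dichotomy already built into the anisotropic norm. Write $s = (1-\sigma)s_0 + \sigma s_1$, and observe from \eqref{there's one for you nineteen for me} that the low-frequency piece of the multiplier, $|\xi|^{-2}(\xi_1^2 + |\xi|^4)\mathds{1}_{B(0,1)}(\xi)$, is independent of the regularity index $s$, while the high-frequency piece, $\tbr{\xi}^{2s}\mathds{1}_{\R^d\setminus B(0,1)}(\xi)$, is genuinely $s$-dependent. This suggests defining
\begin{equation}
L(f) = \int_{B(0,1)} |\xi|^{-2}(\xi_1^2+|\xi|^4)|\mathscr{F}[f](\xi)|^2\;\m{d}\xi, \quad H_s(f) = \int_{\R^d\setminus B(0,1)} \tbr{\xi}^{2s}|\mathscr{F}[f](\xi)|^2\;\m{d}\xi,
\end{equation}
so that $\tnorm{f}_{\mathcal{H}^s}^2 = L(f) + H_s(f)$.

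The first key step is to apply H\"older's inequality in the high-frequency integral using the pointwise identity $\tbr{\xi}^{2s} = \tbr{\xi}^{2(1-\sigma)s_0}\tbr{\xi}^{2\sigma s_1}$ with conjugate exponents $1/(1-\sigma)$ and $1/\sigma$; this yields
\begin{equation}
H_s(f) \le H_{s_0}(f)^{1-\sigma} H_{s_1}(f)^\sigma.
\end{equation}
The second key step is to combine this with the trivial log-convexity bound $L(f) = L(f)^{1-\sigma}L(f)^\sigma$ and the inequality $a^{1-\sigma}b^\sigma + c^{1-\sigma}d^\sigma \le 2(a+c)^{1-\sigma}(b+d)^\sigma$ (which follows from monotonicity: each term on the left is bounded by $(a+c)^{1-\sigma}(b+d)^\sigma$) to obtain
\begin{equation}
\tnorm{f}_{\mathcal{H}^s}^2 = L(f) + H_s(f) \le 2\tp{L(f)+H_{s_0}(f)}^{1-\sigma}\tp{L(f)+H_{s_1}(f)}^\sigma = 2\tnorm{f}_{\mathcal{H}^{s_0}}^{2(1-\sigma)}\tnorm{f}_{\mathcal{H}^{s_1}}^{2\sigma}.
\end{equation}
Taking square roots yields the stated inequality with implicit constant $\sqrt{2}$.

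The argument is short and direct; there is no real obstacle, as both the inclusion $f \in \mathcal{H}^{(1-\sigma)s_0+\sigma s_1}(\R^d)$ and the norm bound are immediate consequences of the splitting and H\"older estimate above. The only minor subtlety is ensuring that $\mathscr{F}[f]$ is locally integrable, which is already encoded in the assumption $f \in \mathcal{H}^{s_0}(\R^d) \cap \mathcal{H}^{s_1}(\R^d)$ by the definition \eqref{how has this not yet been labeled}.
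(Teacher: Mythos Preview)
Your proof is correct but takes a genuinely different route from the paper. The paper introduces an auxiliary Fourier multiplier $\chi(\xi) = (|\xi_1|/|\xi| + |\xi|)\mathds{1}_{B(0,1)}(\xi) + \mathds{1}_{\R^d\setminus B(0,1)}(\xi)$ and verifies the norm equivalence $\tnorm{f}_{\mathcal{H}^s} \asymp \tnorm{\chi(\nabla/2\pi\ii)f}_{H^s}$, thereby reducing the question to the standard log-convexity of $L^2$-based Sobolev spaces applied to $\chi(\nabla/2\pi\ii)f$. You instead split the squared norm directly into an $s$-independent low-frequency piece $L(f)$ and an $s$-dependent high-frequency piece $H_s(f)$, apply H\"older pointwise in the latter, and recombine using the elementary monotonicity bound $a^{1-\sigma}b^\sigma + c^{1-\sigma}d^\sigma \le 2(a+c)^{1-\sigma}(b+d)^\sigma$. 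Your argument is more self-contained and yields the explicit constant $\sqrt{2}$; the paper's approach, while requiring the extra step of checking the $\chi$-equivalence, has the advantage that the same multiplier $\chi$ is reused verbatim in the proof of LP-smoothability (Lemma~\ref{lem on lp smoothability of anisotropic Sobolev spaces}), so the investment pays off twice.
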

	\begin{proof}
		We define an auxiliary function
		\begin{equation}\label{Minnesota}
			\chi:\R^d\to\R
			\text{ via }
			\chi(\xi)=\tp{|\xi\cdot e_1||\xi|^{-1}+|\xi|}\mathds{1}_{B(0,1)}(\xi)+\mathds{1}_{\R^d\setminus B(0,1)}(\xi).
		\end{equation}
		From the definition of the norm on $\mathcal{H}^s(\R^d)$ it is clear that if $\eta$ belongs to this space then $\chi(\grad/2\pi\ii)\eta\in H^s(\R^d)$ and we have the equivalence
		\begin{equation}\label{Idaho}
			\tnorm{\eta}_{\mathcal{H}^s}\asymp\tnorm{\chi(\grad/2\pi\ii)\eta}_{H^s}.
		\end{equation}
		From~\eqref{Idaho}, we see immediately that the log-convexity of the $\mathcal{H}^s$ spaces follows from the usual log-convexity of $L^2$-based Sobolev spaces.
	\end{proof}
	
	We now study smoothing operators on the scales of anisotropic Sobolev spaces.
	\begin{lem}[LP-smoothability]\label{lem on lp smoothability of anisotropic Sobolev spaces}
		The Banach scale $\tcb{\mathcal{H}^s(\R^d)}_{s\in\N}$ is LP-smoothable in the sense of Definition~\ref{defn of smoothable and LP-smoothable Banach scales}.
	\end{lem}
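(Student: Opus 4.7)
The plan is to mimic Example~\ref{example of Euclidean Soblev spaces} and take the Fourier-truncation operators $S_j = \mathds{1}_{B(0,2^j)}(\grad/2\pi\ii)$ for $j\in\N^+$, with the convention $S_0 = 0$ used in the Littlewood--Paley decomposition~\eqref{smoothing_LP_2}. Given $f \in \mathcal{H}^0(\R^d)$, so that $\mathscr{F}[f] \in L^1_{\m{loc}}$, the function $S_j f$ has compactly supported Fourier transform in $L^1(\R^d)$, so it is smooth and, as a direct computation using the norm definition \eqref{there's one for you nineteen for me} shows, belongs to $\mathcal{H}^s(\R^d)$ for every $s\ge 0$. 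Thus $S_j : \mathcal{H}^0(\R^d) \to \mathcal{H}^N(\R^d)$, which is the baseline requirement.

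The verification of the four smoothing bounds \eqref{smoothing_1}--\eqref{smoothing_4} rests on the following structural observation about the Fourier multiplier
\begin{equation*}
    \mu_s(\xi) = |\xi|^{-2}(\xi_1^2+|\xi|^4)\mathds{1}_{B(0,1)}(\xi) + \tbr{\xi}^{2s}\mathds{1}_{\R^d\setminus B(0,1)}(\xi)
\end{equation*}
defining the $\mathcal{H}^s$-norm: on $B(0,1)$ the multiplier $\mu_s$ is \emph{independent} of $s$, while on $\{|\xi|\ge 1\}$ it equals $\tbr{\xi}^{2s}$ and behaves in $s$ and $\xi$ exactly as in the standard Sobolev case. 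For condition \eqref{smoothing_1} the bound is immediate from $\mathds{1}_{B(0,2^j)}\le 1$. For \eqref{smoothing_2} with $s<t$ I would split $\{|\xi|\le 2^j\}$ into its intersection with $B(0,1)$ (where $\mu_t=\mu_s$, so the bound is trivial since $j\ge 1$) and with $B(0,2^j)\setminus B(0,1)$ (where $\mu_t/\mu_s=\tbr{\xi}^{2(t-s)}\lesssim 2^{2j(t-s)}$). Condition \eqref{smoothing_3} is even cleaner since $(I-S_j)u$ has Fourier support in $\{|\xi|\ge 2^j\}\subseteq \{|\xi|\ge 1\}$ for $j\ge 1$, where the multiplier is purely a power of $\tbr{\xi}$. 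For condition \eqref{smoothing_4} with $j\ge 1$, the dyadic annulus $\{2^j\le|\xi|\le 2^{j+1}\}$ lies in $\{|\xi|\ge 1\}$ and $\tbr{\xi}\asymp 2^j$ throughout, so $\mu_t/\mu_s\asymp 2^{2j(t-s)}$ in both directions. The only mildly delicate case is $j=0$, where $\Updelta_0 u = S_1 u$ is supported in $B(0,2)$ rather than an annulus; here the multipliers satisfy $\mu_s \asymp_s \mu_0$ on $B(0,2)$ (the crucial point is that $\mu_s = \mu_0$ on $B(0,1)$ while $\mu_s\asymp 1$ on $B(0,2)\setminus B(0,1)$), so all $\mathcal{H}^s$-norms of $S_1 u$ are equivalent, which is compatible with $2^{0\cdot(t-s)}=1$.

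Finally, for the Littlewood--Paley axioms: \eqref{smoothing_LP_1} is dominated convergence applied to the integral defining $\|(I-S_j)u\|_{\mathcal{H}^s}^2 = \int_{|\xi|>2^j}\mu_s(\xi)|\mathscr{F}[u](\xi)|^2\,\m{d}\xi$, whose integrand is in $L^1(\R^d)$. The equivalence \eqref{smoothing_LP_2} is in fact an exact equality in our setting: the Fourier supports of $\tcb{\Updelta_j u}_{j=0}^\infty$ are pairwise disjoint (up to sets of measure zero) and cover $\R^d$, so summing the $\mathcal{H}^s$-norm integrals reconstitutes $\|u\|_{\mathcal{H}^s}^2$. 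No step should present a genuine obstacle; the only mild care needed is the low-frequency boundary at $|\xi|=1$ in conditions \eqref{smoothing_2} and \eqref{smoothing_4}, which is handled by the two-region split described above.
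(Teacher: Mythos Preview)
Your proposal is correct and uses the same smoothing operators $S_j = \mathds{1}_{B(0,2^j)}(\grad/2\pi\ii)$ as the paper. The verification strategy differs slightly in execution: you check each of \eqref{smoothing_1}--\eqref{smoothing_4} by hand via the low/high frequency split of the multiplier $\mu_s$, whereas the paper packages the same observation into a single step by introducing the auxiliary multiplier $\chi(\xi) = (|\xi_1||\xi|^{-1}+|\xi|)\mathds{1}_{B(0,1)}(\xi) + \mathds{1}_{\R^d\setminus B(0,1)}(\xi)$ (already used in the log-convexity Lemma~\ref{lem on log-convexity in ansiotropic Sobolev spaces}), noting the norm equivalence $\tnorm{\eta}_{\mathcal{H}^s}\asymp\tnorm{\chi(\grad/2\pi\ii)\eta}_{H^s}$, and then invoking the commutation of $S_j$ with $\chi(\grad/2\pi\ii)$ to inherit all four smoothing inequalities directly from the standard Sobolev case (Example~\ref{example of Euclidean Soblev spaces}). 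Your approach is more self-contained; the paper's is more economical once $\chi$ is in hand. One small remark: your discussion of the $j=0$ case for \eqref{smoothing_4} is unnecessary, since Definition~\ref{defn of smoothable and LP-smoothable Banach scales} only requires \eqref{smoothing_1}--\eqref{smoothing_4} for $j\in\N^+$; the $\Updelta_0$ operator enters only in the Littlewood--Paley condition \eqref{smoothing_LP_2}, which you correctly identify as an exact equality.
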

	\begin{proof}
		For the smoothing operators we make the obvious choice $S_j\eta=\mathds{1}_{B(0,2^j)}(\grad/2\pi\ii)\eta$ for $j\in\N^+$. The continuity of $S_j:\mathcal{H}^0(\R^d)\to\mathcal{H}^\infty(\R^d)$  is clear. Moreover, the Littlewood-Paley condition~\eqref{smoothing_LP_2} is trivially satisfied with $A=1$. Thus it remains only to verify the smoothing inequalities from the first item of Definition~\eqref{defn of smoothable and LP-smoothable Banach scales}. We again consider the auxiliary function $\chi$ from~\eqref{Minnesota}.  Since the smoothing operators $\tcb{S_j}_{j=0}^\infty$ commute with the Fourier multiplication operator $\chi(\grad/2\pi\ii)$, we see that the smoothing inequalities of~\eqref{smoothing_1}--\eqref{smoothing_4} follow directly from LP-smoothability of $\tcb{H^s(\R^d)}_{s \in \N}$ (see Example~\ref{example of Euclidean Soblev spaces}) and~\eqref{Idaho}.
	\end{proof}
	
	\subsection{Adapted Sobolev spaces}\label{appendix on adapted Sobolev spaces}
	
	Given $X\in   L^{\infty}(\Omega;\R^n)$,  we define the adapted Sobolev space
	\begin{equation}\label{adapted X space def}
		H^0_X(\Omega)=\tcb{\varphi\in L^2(\Omega)\;:\;\grad\cdot(X\varphi)\in L^2(\Omega)},
	\end{equation}
	where the divergence is understood in the sense of distributions, and equip it with the obvious norm $\tnorm{\varphi}_{H^0_X}=\sqrt{\tnorm{\varphi}_{L^2}^2+\tnorm{\grad\cdot(X\varphi)}_{L^2}^2}$ and associated inner-product.

	\begin{lem}[Basic properties of $H^0_X(\Omega)$]\label{lem on density of bounded support functions in adapted Sobolev spaces}
		Let $X\in L^{\infty}(\Omega;\R^n)$.  Then the following hold.
		\begin{enumerate}
			\item $H^0_X(\Omega)$ is a Hilbert space.
			\item If $\psi \in C^\infty(\Omega) \cap W^{1,\infty}(\Omega)$ and $\varphi \in H^0_X(\Omega)$, then we have the inclusion $\varphi \psi \in H^0_X(\Omega)$ and the estimate $\norm{\varphi \psi}_{H^0_X} \lesssim \br{ \norm{X}_{L^\infty}} \norm{\psi}_{W^{1,\infty}} \norm{\varphi}_{H^0_X}$.
			\item The space $\{\varphi \in H^0_X(\Omega) \;:\; \supp(\varphi) \text{ is bounded} \}$ is dense in $H^0_X(\Omega)$.
		\end{enumerate}

	\end{lem}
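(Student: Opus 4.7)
The plan is to prove the three items in order, since the second is used in the proof of the third.

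For the first item, I would show completeness by a standard argument for graph-type spaces. Let $\{\varphi_j\}_{j=0}^\infty\subseteq H^0_X(\Omega)$ be Cauchy; then $\{\varphi_j\}_{j=0}^\infty$ and $\{\grad\cdot(X\varphi_j)\}_{j=0}^\infty$ are both Cauchy in $L^2(\Omega)$, so they converge in $L^2$ to limits $\varphi$ and $\psi$, respectively. I would then show that $\psi=\grad\cdot(X\varphi)$ in the sense of distributions by testing against $\phi\in C^\infty_c(\Omega)$: the identity $\int_{\Omega}\grad\cdot(X\varphi_j)\phi=-\int_{\Omega}X\varphi_j\cdot\grad\phi$ passes to the limit since $X\in L^\infty$ and $\varphi_j,\grad\cdot(X\varphi_j)\to\varphi,\psi$ in $L^2$. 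Thus $\varphi\in H^0_X(\Omega)$ and $\varphi_j\to\varphi$ in $H^0_X(\Omega)$. Finally, the norm $\tnorm{\cdot}_{H^0_X}$ visibly comes from the inner product $\tbr{\varphi_0,\varphi_1}_{H^0_X}=\tbr{\varphi_0,\varphi_1}_{L^2}+\tbr{\grad\cdot(X\varphi_0),\grad\cdot(X\varphi_1)}_{L^2}$, which makes $H^0_X(\Omega)$ a Hilbert space.

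For the second item, I would first establish the distributional Leibniz identity $\grad\cdot(X\varphi\psi)=\psi\,\grad\cdot(X\varphi)+X\varphi\cdot\grad\psi$ in $\mathcal{D}'(\Omega)$, which is straightforward since $\psi\in C^\infty(\Omega)\cap W^{1,\infty}(\Omega)$ and both sides are a priori well-defined distributions. Since $\psi\grad\cdot(X\varphi)\in L^2(\Omega)$ and $X\varphi\cdot\grad\psi\in L^2(\Omega)$ (using $X\in L^\infty$, $\grad\psi\in L^\infty$, and $\varphi\in L^2$), we deduce $\varphi\psi\in H^0_X(\Omega)$. The estimate then follows from the triangle inequality:
\begin{equation}
\tnorm{\varphi\psi}_{H^0_X}\le\tnorm{\psi}_{L^\infty}\tnorm{\varphi}_{L^2}+\tnorm{\psi}_{L^\infty}\tnorm{\grad\cdot(X\varphi)}_{L^2}+\tnorm{X}_{L^\infty}\tnorm{\grad\psi}_{L^\infty}\tnorm{\varphi}_{L^2},
\end{equation}
which is controlled by $\tbr{\tnorm{X}_{L^\infty}}\tnorm{\psi}_{W^{1,\infty}}\tnorm{\varphi}_{H^0_X}$ up to a universal constant.

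For the third item, I would use a standard cutoff argument exploiting that $\Omega=\R^{n-1}\times(0,b)$ is unbounded only in tangential directions. Fix $\chi\in C^\infty_c(\R^{n-1})$ with $\chi=1$ on $B(0,1)$, $0\le\chi\le 1$, and $\supp\chi\subseteq B(0,2)$, and for $R\in\R^+$ set $\psi_R(x,y)=\chi(x/R)$, which lies in $C^\infty(\Omega)\cap W^{1,\infty}(\Omega)$ with $\tnorm{\psi_R}_{L^\infty}\le1$ and $\tnorm{\grad\psi_R}_{L^\infty}\lesssim R^{-1}$. Given $\varphi\in H^0_X(\Omega)$, the second item ensures $\varphi\psi_R\in H^0_X(\Omega)$, and clearly $\supp(\varphi\psi_R)$ is bounded. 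It remains to show $\varphi\psi_R\to\varphi$ in $H^0_X(\Omega)$ as $R\to\infty$, which I would do via dominated convergence: $\tnorm{\varphi(1-\psi_R)}_{L^2}\to 0$ and $\tnorm{(1-\psi_R)\grad\cdot(X\varphi)}_{L^2}\to 0$, while the commutator contribution satisfies
\begin{equation}
\tnorm{X\varphi\cdot\grad\psi_R}_{L^2}\le\tnorm{X}_{L^\infty}\tnorm{\grad\psi_R}_{L^\infty}\tnorm{\varphi}_{L^2}\lesssim R^{-1}\tnorm{X}_{L^\infty}\tnorm{\varphi}_{L^2}\to0.
\end{equation}
Combining these with the Leibniz identity from the second item yields $\grad\cdot(X\varphi\psi_R)\to\grad\cdot(X\varphi)$ in $L^2(\Omega)$, and density follows.

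None of the steps here present a serious obstacle; the most delicate point is merely bookkeeping the distributional Leibniz rule in the second item so that the product is rigorously shown to lie in $H^0_X(\Omega)$, which is needed before invoking it in the cutoff approximation of the third item.
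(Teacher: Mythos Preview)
Your proposal is correct and follows essentially the same approach as the paper: completeness via the graph-norm argument, the distributional Leibniz identity for item two, and a cutoff-plus-dominated-convergence argument for item three. The only minor differences are that the paper justifies the Leibniz identity by approximating $\varphi$ in $L^2$ by smooth functions (rather than arguing directly on the distributional pairing as you do), and the paper uses a radial cutoff in $\R^n$ rather than one depending only on the tangential variable; neither difference is substantive.
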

	\begin{proof}
		Suppose $\{\varphi_k\}_{k=0}^\infty \subseteq H^0_X(\Omega)$ is Cauchy. Then the sequences $\{\varphi_k\}_{k=0}^\infty$ and $\{\zeta_k\}_{k=0}^\infty$ are Cauchy in $L^2$, where $\zeta_k = \nabla \cdot (\varphi_k X)$.  Thus, there exist $\varphi,\zeta \in L^2(\Omega)$ such that $\varphi_k \to \varphi$ and $\zeta_k \to \zeta$ in $L^2$.  However, for $\chi \in C_c^\infty(\Omega)$,
		\begin{equation}
			\int_\Omega \zeta \chi = \lim_{k \to \infty} \int_{\Omega}\zeta_k \chi = \lim_{k \to \infty} \int_\Omega -\varphi_k X \cdot \nabla \chi = \int_\Omega -\varphi X \cdot \nabla \chi,
		\end{equation}
		so $\zeta = \nabla\cdot(\varphi X)$ in the sense of distributions, and we deduce that $H^0_X(\Omega)$ is complete and hence a Hilbert space.  The first item is proved.
		
		Now we prove the second item.  Let $\varphi\in H^0_X(\Omega)$ and $\psi\in W^{1,\infty}(\Omega) \cap C^\infty(\Omega)$. We begin by claiming that
		\begin{equation}\label{the product rule in adapted for cutoff}
			\grad\cdot(X\psi\varphi)=\psi\grad\cdot(X\varphi)+X\varphi\cdot\grad\psi.
		\end{equation}
		Once the claim is proved, the inclusion $\varphi \psi \in H^0_X(\Omega)$ and the stated estimate follow immediately.  To prove the claim, let $\tcb{\varphi_\ell}_{\ell\in\N}\subseteq H^1(\Omega)\cap C^\infty(\Omega)$ be a sequence such that $\varphi_\ell\to\varphi$ in $L^2(\Omega)$ as $\ell\to\infty$. By the product rule, we have the identity
		\begin{equation}
			\grad\cdot(X\psi\varphi_\ell)=\psi\grad\cdot(X\varphi_\ell)+X\varphi_\ell\cdot\grad\psi \text{ for every } \ell \in \N,
		\end{equation}
		or more precisely, for any $\chi\in C^\infty_c(\Omega)$ we have that
		\begin{equation}
			-\int_{\Omega}X\psi\varphi_\ell\cdot\grad\chi 
			= -\int_{\Omega}\grad(\psi\chi)\cdot X\varphi_\ell
			+\int_{\Omega}\chi X\varphi_\ell\cdot\grad\psi.
		\end{equation}
		Now we send $\ell\to\infty$ and use the convergence in $L^2(\Omega)$ and the inclusion $\grad\cdot(X\varphi)\in L^2(\Omega)$ to deduce that
		\begin{equation}
			-\int_{\Omega}X\psi\varphi\cdot\grad\chi=\int_{\Omega}(\psi\grad\cdot(X\varphi)+X\varphi\cdot\grad\psi)\chi,
		\end{equation}
		which is \eqref{the product rule in adapted for cutoff}.  This completes the proof of the claim and the second item.
		
		Finally, we prove the density assertion.  Fix $\psi\in C^\infty_c(\R^n)$ such that $\psi=1$ in $B(0,1)$. Define $\tcb{\psi_N}_{N\in\N^+}\subset H^1(\Omega)$ via $\psi_N=\psi(N^{-1}\cdot)$.   Let  $\varphi\in H^0_X(\Omega)$, and write $\varphi_N=\psi_N\varphi$. The proof is complete as soon as we show that $\varphi_N\to\varphi$ as $N\to\infty$ in the $H^0_X(\Omega)$-norm.  The dominated convergence theorem shows that $\varphi_N\to\varphi$ in $L^2$ as $N\to\infty$. On the other hand, formula~\eqref{the product rule in adapted for cutoff} implies that $\grad\cdot(X\psi_N\varphi)=\psi_N\grad\cdot(X\varphi)+N^{-1}X\varphi(\grad\psi)(N^{-1}\cdot)$. Since $\grad\cdot(X\varphi)\in L^2(\Omega)$, we have that $\psi_N\grad\cdot(X\varphi)\to\grad\cdot(X\varphi)$ as $N\to\infty$ in $L^2(\Omega)$. Since $\grad\psi$ is bounded, we have that $N^{-1}X\varphi(\grad\psi)(N^{-1}\cdot)\to 0$ as $N\to\infty$ in $L^2(\Omega)$. Together, these convergences imply that $\grad\cdot(X\varphi_N)\to\grad\cdot(X\varphi)$ as $N\to\infty$ in $L^2(\Omega)$. Hence, we have shown that $\varphi_N\to\varphi$ as $N\to\infty$ in the $H^0_X(\Omega)$-norm, and the third item is proved.
	\end{proof}
	
	Next we prove a density result.
	
	\begin{lem}[Density of smooth functions in adapted Sobolev spaces]\label{lem on density of smooth functions in adapted sobolev spaces}
		For $X\in W^{1,\infty}(\Omega;\R^n)$, the space $C^\infty_c(\bar{\Omega})$ is dense in  $H^0_X(\Omega)$.
	\end{lem}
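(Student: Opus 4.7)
By the third item of Lemma~\ref{lem on density of bounded support functions in adapted Sobolev spaces}, it suffices to approximate an arbitrary $\varphi\in H^0_X(\Omega)$ of bounded support by elements of $C^\infty_c(\bar\Omega)$ in the $H^0_X(\Omega)$ norm.  Fix such a $\varphi$, and note that since $X\in W^{1,\infty}(\Omega;\R^n)$ we have $X\cdot\nabla\varphi=\nabla\cdot(X\varphi)-(\nabla\cdot X)\varphi\in L^2(\Omega)$.

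The strategy is to combine extension-by-reflection with Friedrichs mollification. Let $\rho:\R\to[0,b]$ be the continuous zigzag function defined by $\rho(y)=y$ for $y\in[0,b]$, $\rho(y)=-y$ for $y\in[-b,0]$, and extended $2b$-periodically. Define $\tilde\varphi(x,y):=\varphi(x,\rho(y))\in L^2_{\loc}(\R^n)$ and extend $X$ componentwise by $\tilde X_j(x,y):=X_j(x,\rho(y))$ for $j<n$ and $\tilde X_n(x,y):=\rho'(y)X_n(x,\rho(y))$.  Then $\tilde X\in L^\infty_{\loc}(\R^n;\R^n)$ and is $W^{1,\infty}$ on the interior of each open strip $\R^{n-1}\times(kb,(k+1)b)$; moreover, a pointwise calculation on each such strip, using that reflection flips the sign of $\partial_y\tilde\varphi$ simultaneously with the sign of $\tilde X_n$, yields the key identity $\tilde X\cdot\nabla\tilde\varphi=(X\cdot\nabla\varphi)(x,\rho(y))\in L^2_{\loc}(\R^n)$.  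Finally, for $\varepsilon\in(0,b/4)$ set $\varphi_\varepsilon:=(\tilde\varphi\ast\eta_\varepsilon)\res\bar\Omega$, where $\eta_\varepsilon$ is a standard smooth mollifier; because $\tilde\varphi$ has bounded support in the tangential directions, $\varphi_\varepsilon\in C^\infty_c(\bar\Omega)$.

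Convergence $\varphi_\varepsilon\to\varphi$ in $L^2(\Omega)$ is classical, so the main point is $\nabla\cdot(X\varphi_\varepsilon)\to\nabla\cdot(X\varphi)$ in $L^2(\Omega)$.  Splitting $\nabla\cdot(X\varphi_\varepsilon)=(\nabla\cdot X)\varphi_\varepsilon+X\cdot\nabla\varphi_\varepsilon$, the first summand converges to $(\nabla\cdot X)\varphi$ by boundedness of $\nabla\cdot X$ and $L^2$-convergence of $\varphi_\varepsilon$.  The second is handled by a Friedrichs commutator argument applied to $X\cdot\nabla\tilde\varphi$: on any compact subset of $\Omega$ at distance exceeding $\varepsilon$ from $\partial\Omega$, the ball $B(x,\varepsilon)$ lies inside a single strip where $\tilde X\in W^{1,\infty}$, and the standard Friedrichs commutator identity $X\cdot\nabla(\tilde\varphi\ast\eta_\varepsilon)-(X\cdot\nabla\tilde\varphi)\ast\eta_\varepsilon=\int\eta_\varepsilon(\cdot-y)(X(\cdot)-\tilde X(y))\cdot\nabla\tilde\varphi(y)\,dy$ vanishes in $L^2$ as $\varepsilon\to 0$ by Lipschitz regularity of $X$ on the strip.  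For points near $\partial\Omega$, the reflection symmetry of $\tilde\varphi$ ensures the convolution faithfully encodes the boundary behavior of $\varphi$, and after a uniform-in-$\varepsilon$ truncation in the tangential directions the same argument gives convergence on any tangentially bounded neighborhood of $\partial\Omega$ in $\bar\Omega$.

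The main obstacle is the global regularity of the extension $\tilde X$: the odd-reflection prescription for $\tilde X_n$ generically introduces jump discontinuities at the reflection lines $y=kb$, so $\tilde X$ is not globally $W^{1,\infty}(\R^n)$ and Friedrichs' lemma does not apply directly across these interfaces.  The resolution is that any distributional Dirac contribution produced by these jumps is supported on the reflection lines (a set of zero Lebesgue measure), so it does not enter the $L^2(\Omega)$ norms being estimated; a strip-wise application of the commutator argument, combined with the reflection symmetry of $\tilde\varphi$, then yields the required $L^2(\Omega)$-convergence of $X\cdot\nabla\varphi_\varepsilon$ to $X\cdot\nabla\varphi$ and completes the proof.
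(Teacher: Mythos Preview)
Your reduction to boundedly supported $\varphi$ via the third item of Lemma~\ref{lem on density of bounded support functions in adapted Sobolev spaces} matches the paper's first step exactly. After that, the paper proceeds very differently: it works on a bounded Lipschitz domain $U_{2R}\supset\supp\varphi$ and invokes the mollification operators of Boyer--Fabrie (Proposition~III.2.8 and Theorem~III.2.10 in \cite{MR2986590}) as a black box. Those operators are built so that the mollification at each point samples only from inside the domain, and the Friedrichs commutator estimate is proved directly for $\alpha\in W^{1,\infty}$ without any extension across $\partial\Omega$.

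Your reflection-plus-mollification route runs into a genuine gap at the boundary. As you correctly identify, the odd reflection of $X_n$ produces a jump of size $2X_n(\cdot,0)$ across $\{y=0\}$ (and similarly at $\{y=b\}$), so $\tilde X\notin W^{1,\infty}_{\loc}(\R^n)$ unless $X\cdot e_n$ vanishes on $\partial\Omega$. Your proposed resolution --- that the resulting Dirac-type contribution lives on a null set and therefore does not enter the $L^2(\Omega)$ norms --- is not correct reasoning once mollification is applied: convolving a surface distribution $\mu\otimes\delta_{y=0}$ with $\eta_\varepsilon$ produces a smooth function of size $O(\varepsilon^{-1})$ supported in a strip of width $\varepsilon$, and there is no a~priori reason its $L^2(\Omega)$ norm tends to zero. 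Concretely, the distributional identity $\nabla\cdot(\tilde X\tilde\varphi)=\widetilde{\nabla\cdot(X\varphi)}+2\bigl((X\varphi)\cdot e_n\bigr)\big|_{\Sigma_0}\otimes\delta_{y=0}+\cdots$ shows that the ``pointwise'' and ``distributional'' versions of $\tilde X\cdot\nabla\tilde\varphi$ differ by a surface term, and this surface term feeds into your commutator near $\partial\Omega$. The phrase ``strip-wise application of the commutator argument, combined with the reflection symmetry of $\tilde\varphi$'' does not address this, because for $x_n\in(0,\varepsilon)$ the mollification ball straddles the interface and the commutator cannot be localized to a single strip. Relatedly, your written commutator integrand involves $\nabla\tilde\varphi(y)$ as if it were a function, but $\tilde\varphi$ is only in $L^2$, so $\nabla\tilde\varphi$ is a genuine distribution and that integral requires interpretation.

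Your construction would go through cleanly under the extra hypothesis $\m{Tr}_{\partial\Omega}(X\cdot e_n)=0$ (then $\tilde X\in W^{1,\infty}_{\loc}$ and Friedrichs applies globally), and indeed the paper only ever invokes this lemma in settings where that tangency holds. But as stated, for general $X\in W^{1,\infty}(\Omega;\R^n)$, you need a mollifier that does not cross $\partial\Omega$ --- which is exactly what the Boyer--Fabrie construction supplies.
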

	\begin{proof}
		First note that the inclusion  $C^\infty_c(\bar{\Omega}) \subseteq H^0_X(\Omega)$ follows from the extra assumption that $X \in W^{1,\infty}(\Omega;\R^n)$ and Rademacher's theorem.  To prove density, we endeavor to apply Proposition III.2.8 and Theorem III.2.10 in Boyer and Fabrie~\cite{MR2986590}. These results show that, given a bounded Lipschitz domain $U\subseteq\R^n$ such that $U \subseteq B(0,R)$, there exist mollification operators $\tcb{\mathcal{S}_\ep}_{\ep>0}$ such that if $f\in L^2(U)$ and $\al\in W^{1,\infty}(U;\R^n)$ satisfy $\grad\cdot(\al f)\in L^2(U)$, then we have that $\tcb{\mathcal{S}_\ep f}_{\ep>0}\subseteq C^\infty_c(\Bar{U})$ with $\supp(\mathcal{S}_\ep f) \subseteq R+1$, $\mathcal{S}_\ep f\overset{\ep\to0}{\to} f$ in $L^2(U)$, and $\grad\cdot(\al\mathcal{S}_\ep f)\overset{\ep\to0}{\to}\grad\cdot(\al f)$ in $L^2(U)$.
		
		Note that $\Omega\subset\R^n$ does not have a compact boundary, so we cannot directly apply this result. However, thanks to Lemma~\ref{lem on density of bounded support functions in adapted Sobolev spaces} we can reduce to this case. Let $\varphi\in H^0_X(\Omega)$ and let $\kappa\in\R^+$. Thanks to the aforementioned lemma there exists a $\tilde{\varphi}\in H^0_X(\Omega)$ with bounded support such that $\tnorm{\varphi-\tilde{\varphi}}_{H^0_X}\le\kappa/2$. For some $3 \le R\in\R^+$, we then have that $\m{supp}(\tilde{\varphi})\subseteq U_{R}=B_{\R^n}(0,R)\cap\Omega$. The set $U_{2R}$ is a Lipschitz domain with compact boundary, and hence it admits smoothing operators $\tcb{\mathcal{S}_\ep}_{\ep>0}$ as described above. By the support conditions on $\tilde{\varphi}$ and the properties of these smoothing operators, there exits an $\ep_0\in\R^+$ such that for all $0<\ep\le\ep_0$ we have that $\m{supp}(\mathcal{S}_\ep\tilde{\varphi})\subseteq U_{3R/2}$ and
		\begin{equation}
			\sqrt{\tnorm{\mathcal{S}_{\ep}\tilde{\varphi}-\tilde{\varphi}}^2_{L^2(U_{2R})}+\tnorm{\grad\cdot(X\mathcal{S_\ep}\tilde{\varphi})-\grad\cdot(X\tilde{\varphi})}_{L^2(U_{2R})}^2}\le\kappa/2.
		\end{equation}
		Let $\Bar{\varphi}=\mathcal{S}_{\ep_0}\tilde{\varphi}$. By support considerations on $\tilde{\varphi}$ and $\Bar{\varphi}$, we have that actually $\Bar{\varphi}\in C^\infty_c(\Bar{\Omega})$ and
		\begin{equation}
			\begin{cases}
				\tnorm{\Bar{\varphi}-\tilde{\varphi}}_{L^2(U_{2R})}=\tnorm{\Bar{\varphi}-\tilde{\varphi}}_{L^2(\Omega)},\\
				\tnorm{\grad\cdot(X\Bar{\varphi})-\grad\cdot(X\tilde{\varphi})}_{L^2(U_{2R})}=\tnorm{\grad\cdot(X(\Bar{\varphi}-\tilde{\varphi}))}_{L^2(\Omega)}.
			\end{cases}
		\end{equation}
		Hence, by the previous two equations we get $\tnorm{\Bar{\varphi}-\tilde{\varphi}}_{H^0_X}\le\kappa/2$. The estimate $\tnorm{\varphi-\Bar{\varphi}}_{H^0_X}\le\kappa$ now follows from the triangle inequality, and density is proved.
	\end{proof}
	
	The next result records some integration by parts formulas.
	
	\begin{prop}[Some integration by parts]\label{prop on divergence trick}
		Let $X\in W^{1,\infty}(\Omega;\R^n)$ be such that  $\m{Tr}_{\pd\Omega}(X\cdot e_n)=0$.  The following hold.
		
		\begin{enumerate}
			\item If  $\varphi \in H^0_X(\Omega)$, then 
			\begin{equation}\label{the identity}
				\int_{\Omega} \frac{\nabla \cdot X}{2} \varphi^2 = \int_{\Omega} \varphi \nabla \cdot(\varphi X).
			\end{equation}
			\item If $\varphi\in H^0_X(\Omega)$ and $\psi\in H^1(\Omega)$, then $\int_{\Omega}\grad\cdot(\varphi X)\psi=-\int_{\Omega}\varphi X\cdot\grad\psi$.
		\end{enumerate}
	\end{prop}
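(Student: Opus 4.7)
The strategy is to verify both identities first for smooth, compactly supported test functions where everything is classical, and then extend by density using Lemma~\ref{lem on density of smooth functions in adapted sobolev spaces}, which applies here since $X\in W^{1,\infty}(\Omega;\R^n)$.

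For the smooth case of the first identity, suppose $\varphi\in C^\infty_c(\bar\Omega)$. The product rule gives $\nabla\cdot(\varphi X)=\varphi\,\nabla\cdot X+X\cdot\nabla\varphi$, and also $\nabla\cdot(\varphi^2 X)=\varphi^2\nabla\cdot X+2\varphi X\cdot\nabla\varphi$. Since $\varphi^2 X\in W^{1,\infty}(\Omega;\R^n)$ has compact support in $\bar\Omega$, the divergence theorem on the slab $\Omega=\R^{n-1}\times(0,b)$ yields $\int_\Omega\nabla\cdot(\varphi^2 X)=\int_\Sigma\varphi^2 X\cdot e_n-\int_{\Sigma_0}\varphi^2 X\cdot e_n=0$ by the hypothesis $\m{Tr}_{\pd\Omega}(X\cdot e_n)=0$. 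Therefore $\int_\Omega\varphi X\cdot\nabla\varphi=-\tfrac{1}{2}\int_\Omega\varphi^2\nabla\cdot X$, and combining this with the first product-rule identity gives $\int_\Omega\varphi\nabla\cdot(\varphi X)=\int_\Omega\varphi^2\nabla\cdot X+\int_\Omega\varphi X\cdot\nabla\varphi=\tfrac{1}{2}\int_\Omega\varphi^2\nabla\cdot X$, which is \eqref{the identity}. The same divergence-theorem argument for the smooth case of the second identity, applied to $\varphi\psi X\in H^1(\Omega;\R^n)$ with compact support (here $\varphi\in C^\infty_c(\bar\Omega)$, $\psi\in H^1(\Omega)$), gives $0=\int_\Omega\nabla\cdot(\varphi\psi X)=\int_\Omega\psi\nabla\cdot(\varphi X)+\int_\Omega\varphi X\cdot\nabla\psi$, which is the claim.

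To conclude, given $\varphi\in H^0_X(\Omega)$ we invoke Lemma~\ref{lem on density of smooth functions in adapted sobolev spaces} to obtain $\{\varphi_k\}_{k\in\N}\subseteq C^\infty_c(\bar\Omega)$ with $\varphi_k\to\varphi$ in $H^0_X(\Omega)$, i.e.\ $\varphi_k\to\varphi$ in $L^2(\Omega)$ and $\nabla\cdot(\varphi_k X)\to\nabla\cdot(\varphi X)$ in $L^2(\Omega)$. For the first identity, bilinearity and Cauchy--Schwarz show that the map $H^0_X(\Omega)\ni\phi\mapsto\int_\Omega\phi\,\nabla\cdot(\phi X)$ is continuous, and $\int_\Omega\tfrac{\nabla\cdot X}{2}\varphi_k^2\to\int_\Omega\tfrac{\nabla\cdot X}{2}\varphi^2$ follows from $\nabla\cdot X\in L^\infty(\Omega)$ together with the $L^1$ convergence $\varphi_k^2\to\varphi^2$ (which is bounded by $\|\varphi_k-\varphi\|_{L^2}(\|\varphi_k\|_{L^2}+\|\varphi\|_{L^2})$). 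For the second identity, continuity of both $\phi\mapsto\int_\Omega\nabla\cdot(\phi X)\psi$ and $\phi\mapsto-\int_\Omega\phi X\cdot\nabla\psi$ on $H^0_X(\Omega)$ is immediate from Cauchy--Schwarz, since $\psi,\nabla\psi\in L^2$ and $X\in L^\infty$.

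The only point that requires any thought is the verification of the smooth case: on an unbounded slab one must be careful about the divergence theorem, but compact support of $\varphi$ (and hence of $\varphi^2 X$ and $\varphi\psi X$) reduces the integration to a cylinder on which $\partial\Omega$ has the two flat pieces $\Sigma$ and $\Sigma_0$ with outward normals $\pm e_n$, so the boundary integrals vanish by the hypothesis on $X$. Everything else is routine functional-analytic passage to the limit, and no obstacle is expected.
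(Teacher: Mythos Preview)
Your proof is correct and follows essentially the same approach as the paper: establish the identities first for sufficiently regular $\varphi$ via the product rule and divergence theorem (using $\m{Tr}_{\pd\Omega}(X\cdot e_n)=0$), then extend by the density result of Lemma~\ref{lem on density of smooth functions in adapted sobolev spaces}. The only cosmetic difference is that the paper states the smooth case for $\varphi\in H^1(\Omega)$ rather than $C^\infty_c(\bar\Omega)$, and it leaves the density argument for the second item implicit, whereas you spell out the continuity of both sides explicitly.
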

	\begin{proof}
		We begin by proving the first item. Assume initially that $\varphi\in H^1(\Omega)$. By using the product rule in two ways, we compute that
  \begin{equation}
  2^{-1}\grad\cdot(\varphi^2X)=\grad\cdot(\varphi^2X)-2^{-1}\grad\cdot(\varphi^2X)=\varphi\grad\cdot(\varphi X)-2^{-1}(\grad\cdot X)\varphi^2.
  \end{equation}
		We then integrate this over $\Omega$, apply the divergence theorem, and use the condition $\m{Tr}_{\pd\Omega}(X\cdot e_n)=0$ to get~\eqref{the identity}. The case of general $\varphi\in H^0_X(\Omega)$ follows from a density argument via Lemma~\ref{lem on density of smooth functions in adapted sobolev spaces}. The second item is proved via a similar density argument.
	\end{proof}
	
	Next we prove a useful estimate.
	
	\begin{prop}[Refined divergence compatibility estimate]\label{prop on refined divergence compatibility estimate}
		Suppose that $X\in W^{1,\infty}(\Omega)$ and $\varphi\in H^0_X(\Omega)$. Then the function $\int_0^b\grad\cdot(X\varphi)(\cdot,y)\;\m{d}y : \R^{n-1} \to \R$ belongs to $\dot{H}^{-1}(\R^{n-1})$, satisfies the equality
		\begin{equation}\label{song for a favorite flour}
			\int_0^b\grad\cdot(X\varphi)(\cdot,y)\;\m{d}y=(\grad_{\|},0)\cdot\int_0^b(X\varphi)(\cdot,y)\;\m{d}y,
		\end{equation}
		and obeys the estimate
		\begin{equation}\label{driving down the road yesterday}
			\bsb{\int_0^b\grad\cdot(X\varphi)(\cdot,y)\;\m{d}y }_{\dot{H}^{-1}}\lesssim\tnorm{X\varphi}_{L^2}
		\end{equation}
		for an implicit constant depending only on $b$ and the dimension.
	\end{prop}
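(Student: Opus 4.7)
The plan is first to establish the pointwise-in-$x$ integral identity~\eqref{song for a favorite flour}, from which both the $\dot{H}^{-1}$ inclusion and the estimate~\eqref{driving down the road yesterday} follow with little extra work. I interpret the statement as implicitly carrying the standard trace hypothesis $\mathrm{Tr}_{\pd\Omega}(X\cdot e_n)=0$, in keeping with Proposition~\ref{prop on divergence trick} and every application of this result in the main text; without such a condition the boundary contribution from integrating $\pd_n(X\cdot e_n\,\varphi)$ would obstruct the identity.

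First I would reduce to smooth data. By Lemma~\ref{lem on density of smooth functions in adapted sobolev spaces}, there exists a sequence $\{\varphi_k\}_{k\in\N}\subseteq C^\infty_c(\bar\Omega)$ with $\varphi_k\to\varphi$ in $H^0_X(\Omega)$, which in particular gives both $\varphi_k\to\varphi$ and $\grad\cdot(X\varphi_k)\to\grad\cdot(X\varphi)$ in $L^2(\Omega)$. For each $k$, Fubini's theorem in the tangential variables together with the fundamental theorem of calculus in the normal variable yields the pointwise identity
\begin{equation*}
\int_0^b\grad\cdot(X\varphi_k)(x,y)\;\m{d}y=(\grad_{\|},0)\cdot\int_0^b(X\varphi_k)(x,y)\;\m{d}y+\m{Tr}_{\Sigma}(X\cdot e_n\,\varphi_k)(x)-\m{Tr}_{\Sigma_0}(X\cdot e_n\,\varphi_k)(x),
\end{equation*}
and the boundary terms vanish by the trace hypothesis on $X$. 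To pass to the limit $k\to\infty$, I would observe that the left side converges in $L^2(\R^{n-1})$ by Minkowski's integral inequality applied to $\grad\cdot(X\varphi_k)-\grad\cdot(X\varphi)\to 0$ in $L^2(\Omega)$, while on the right side $X\varphi_k\to X\varphi$ in $L^2(\Omega;\R^n)$ gives convergence of $\int_0^b(X\varphi_k)(\cdot,y)\;\m{d}y$ in $L^2(\R^{n-1};\R^n)$, hence convergence of its tangential divergence in $H^{-1}(\R^{n-1})$. Both limits agree as tempered distributions, establishing~\eqref{song for a favorite flour}.

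For the $\dot{H}^{-1}$ assertion and~\eqref{driving down the road yesterday}, Minkowski's integral inequality produces the bound $\bigl\lVert\int_0^b(X\varphi)(\cdot,y)\;\m{d}y\bigr\rVert_{L^2(\R^{n-1})}\le\sqrt{b}\,\tnorm{X\varphi}_{L^2(\Omega)}$. The conclusion then follows from the elementary fact that for any $f\in L^2(\R^{n-1};\R^n)$ one has $(\grad_{\|},0)\cdot f\in\dot{H}^{-1}(\R^{n-1})$ with $\bsb{(\grad_{\|},0)\cdot f}_{\dot{H}^{-1}}\le\tnorm{f}_{L^2}$, which is immediate via Plancherel since each Fourier multiplier $\ii\xi_j/|\xi|$ is bounded on $L^2(\R^{n-1})$. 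Combining these two inequalities produces~\eqref{driving down the road yesterday} with the stated constant. No step is particularly delicate; the only point requiring care is the density reduction and the correct topology in which to close the limits, which is handled cleanly through Lemma~\ref{lem on density of smooth functions in adapted sobolev spaces}.
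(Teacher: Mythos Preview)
Your proof is correct, and you rightly flag that the trace hypothesis $\m{Tr}_{\pd\Omega}(X\cdot e_n)=0$ is implicitly needed (indeed, the paper's own proof invokes the second item of Proposition~\ref{prop on divergence trick}, which carries exactly this hypothesis).

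The route differs slightly from the paper's. Rather than approximating $\varphi$ by smooth functions and passing to the limit, the paper works distributionally: it tests $\int_0^b\grad\cdot(X\varphi)(\cdot,y)\,\m{d}y$ against an arbitrary $\psi\in C^\infty_c(\R^{n-1})$, extends $\psi$ trivially to $\Omega$, and applies the integration-by-parts identity $\int_\Omega\grad\cdot(X\varphi)\psi=-\int_\Omega\varphi X\cdot\grad\psi$ from Proposition~\ref{prop on divergence trick} directly for $\varphi\in H^0_X(\Omega)$. Since $\grad\psi=(\grad_{\|}\psi,0)$, this immediately yields~\eqref{song for a favorite flour} without any limiting argument. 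Your approach unpacks the same density mechanism explicitly (that proposition is itself proved by density), so the logical content is the same; the paper's version is marginally cleaner because it avoids tracking convergence in two separate topologies, but your version has the virtue of being entirely self-contained.
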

	\begin{proof}
		Let $\psi\in C^\infty_c(\R^{n-1})$ and view $\psi\in C^\infty_c(\Bar{\Omega})$ via the trivial extension $\psi(x,y)=\psi(x)$ for $(x,y)\in\R^{n-1}\times(0,b)$. Thanks to the second item of Proposition~\ref{prop on divergence trick} and Fubini's theorem, we have that
		\begin{multline}
			\int_{\R^{n-1}}\bp{\int_0^b\grad\cdot(X\varphi)(\cdot,y)\;\m{d}y}\psi=\int_{\Omega}\grad\cdot(X\varphi)\psi=-\int_{\Omega}\varphi X\cdot\grad\psi\\
			=-\int_{\R^{n-1}}\bp{\int_0^b(\varphi X)(\cdot,y);\m{d}y}\cdot(\grad_{\|}\psi,0).
		\end{multline}
		Since $\psi$ was arbitrary, the identity~\eqref{song for a favorite flour} is established. The estimate~\eqref{driving down the road yesterday}  readily follows from \eqref{song for a favorite flour} and the definition of the norm on $\dot{H}^{-1}(\R^{n-1})$.
	\end{proof}

 \section{A selection of PDE tools}	\label{appendix_pde_tools}
	
	\subsection{The divergence boundary value problem}
	
	Our focus in this subsection is the construction of right-inverses of the divergence operator in various spaces.  	We first make the following notation: for $s\in[0,\infty)$, we define the function space
	\begin{equation}\label{Rhode Island}
		\hat{H}^s(\Omega)=\bcb{\phi\in H^s(\Omega)\;:\;\int_0^b\phi(\cdot,y)\;\m{d}y\in\dot{H}^{-1}(\Sigma)}\index{\textbf{Function spaces}!160@$\hat{H}^s$},
	\end{equation}
	where the homogeneous spaces $\dot{H}^{-1}$ are defined in~\eqref{dotHminus1_def}.  This space is Hilbert for the norm $\tnorm{\phi}_{\hat{H}^s}=\sp{\tnorm{\phi}^2_{H^s}+\ssb{\int_0^b\phi(\cdot,y)\;\m{d}y}_{\dot{H}^{-1}}^2}^{1/2}$. 

 We now provide a useful compatibility estimate related to the divergence and the normal trace.

 \begin{prop}[Divergence-normal trace compatibility condition]\label{prop on divergence-normal trace}
     If $u\in H^1(\Omega;\R^n)$, then
     \begin{equation}\label{eigen is inferior to propre}
     \bsb{ \m{Tr}_{\Sigma}u\cdot e_n -\m{Tr}_{\Sigma_0}u\cdot e_n -\int_0^b(\grad\cdot u)(\cdot,y)\;\m{d}y}_{\dot{H}^{-1}(\R^{n-1})}\lesssim\tnorm{u}_{L^2}.
     \end{equation}
 \end{prop}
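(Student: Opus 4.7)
The plan is to derive the stated estimate by converting the difference of traces and the bulk divergence integral into a perfect tangential divergence of an $L^2$ vector field, at which point the negative order norm recovers the necessary regularity. The only potentially subtle point is justifying the manipulations at the $H^1(\Omega;\R^n)$ level of regularity, which I would handle with a routine density argument since $C^\infty_c(\bar{\Omega};\R^n)$ is dense in $H^1(\Omega;\R^n)$ and both sides of \eqref{eigen is inferior to propre} depend continuously on $u$ in the $H^1$ norm.

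First, I would assume $u \in C^\infty_c(\bar{\Omega};\R^n)$ for the computational step. Writing $u = (u_{\|}, u \cdot e_n)$ with $u_{\|} : \Omega \to \R^{n-1}$, the fundamental theorem of calculus in the vertical direction gives
\begin{equation}
\m{Tr}_{\Sigma}(u \cdot e_n)(x) - \m{Tr}_{\Sigma_0}(u \cdot e_n)(x) = \int_0^b \pd_n (u \cdot e_n)(x,y)\;\m{d}y
\end{equation}
pointwise in $x \in \R^{n-1}$. Subtracting $\int_0^b (\grad \cdot u)(x,y)\;\m{d}y = \int_0^b (\grad_{\|} \cdot u_{\|} + \pd_n(u\cdot e_n))(x,y)\;\m{d}y$ collapses the $\pd_n$ terms and leaves
\begin{equation}
\m{Tr}_{\Sigma}(u\cdot e_n) - \m{Tr}_{\Sigma_0}(u \cdot e_n) - \int_0^b (\grad \cdot u)(\cdot,y)\;\m{d}y = -\int_0^b (\grad_{\|} \cdot u_{\|})(\cdot,y)\;\m{d}y.
\end{equation}
By Fubini and the commutation of $\grad_{\|}$ with integration over the transverse variable (valid in the sense of distributions on $\R^{n-1}$), this last expression equals $-(\grad_{\|}, 0) \cdot V$, where $V(x) = \int_0^b u(x,y)\;\m{d}y \in \R^n$.

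Next, I would estimate the $\dot{H}^{-1}(\R^{n-1})$ seminorm of this tangential divergence. Minkowski's inequality and Cauchy--Schwarz in the $y$ variable give
\begin{equation}
\tnorm{V}_{L^2(\R^{n-1})}^2 \le \int_{\R^{n-1}} b \int_0^b |u(x,y)|^2\;\m{d}y\;\m{d}x = b \tnorm{u}_{L^2(\Omega)}^2.
\end{equation}
Since $\mathscr{F}[(\grad_{\|},0)\cdot V](\xi) = 2\pi \ii \xi \cdot \mathscr{F}[V](\xi)$ for $\xi \in \R^{n-1}$, the defining seminorm from \eqref{dotHminus1_def} immediately yields
\begin{equation}
\bsb{(\grad_{\|},0)\cdot V}_{\dot{H}^{-1}} = \tnorm{\mathds{1}_{\R^{n-1}\setminus\{0\}} |\xi|^{-1} 2\pi \ii \xi \cdot \mathscr{F}[V]}_{L^2} \le 2\pi \tnorm{V}_{L^2} \lesssim \tnorm{u}_{L^2(\Omega)},
\end{equation}
which is exactly \eqref{eigen is inferior to propre} for smooth $u$.

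Finally, I would extend to general $u \in H^1(\Omega;\R^n)$ by density. For a sequence $u_k \in C^\infty_c(\bar{\Omega};\R^n)$ converging to $u$ in $H^1$, both $\m{Tr}_{\Sigma}(u_k \cdot e_n)$ and $\m{Tr}_{\Sigma_0}(u_k \cdot e_n)$ converge in $H^{1/2} \hookrightarrow L^2 \hookrightarrow \dot{H}^{-1}(\R^{n-1})$ (the latter embedding being continuous on compactly Fourier supported test functions and extended by the $L^2$ bound above), and $\int_0^b \grad \cdot u_k(\cdot,y)\;\m{d}y \to \int_0^b \grad \cdot u(\cdot,y)\;\m{d}y$ in the same sense by the same Minkowski estimate applied to $\grad u_k - \grad u$. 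Passing to the limit in the inequality established for smooth functions yields the claim. The only minor subtlety is ensuring that each of the three terms on the left of \eqref{eigen is inferior to propre} individually makes sense as a tempered distribution with locally integrable Fourier transform on $\R^{n-1}\setminus\{0\}$; this is automatic since their sum equals the tangential divergence of $V \in L^2(\R^{n-1};\R^n)$, and individually they lie in $L^2$ (after the above embedding) or are such divergences themselves.
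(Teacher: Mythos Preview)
Your proof is correct and follows the same approach as the paper: rewrite the left side as $-(\grad_{\|},0)\cdot\int_0^b u(\cdot,y)\,\m{d}y$ via the fundamental theorem of calculus, then bound its $\dot{H}^{-1}$ seminorm by the $L^2$ norm of the integrand. The paper's proof is two lines and skips the density step entirely, since the identity already holds for $u\in H^1(\Omega;\R^n)$ as an equality in $L^2(\R^{n-1})$ (all three left-hand terms are in $L^2$), after which the $\dot{H}^{-1}$ bound is immediate from the divergence structure. Note that your density passage has a small wobble: the embedding $L^2(\R^{n-1})\hookrightarrow\dot{H}^{-1}(\R^{n-1})$ fails when $n-1\ge 2$, so you cannot pass the individual terms to the limit in $\dot{H}^{-1}$ separately; it is cleaner to argue directly at $H^1$ regularity as the paper does, or to pass the \emph{sum} (equivalently, $V_k\to V$ in $L^2$) to the limit.
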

 \begin{proof}
 Thanks to the fundamental theorem of calculus, we have
 \begin{equation}
     \m{Tr}_{\Sigma}u\cdot e_n -\m{Tr}_{\Sigma_0}u\cdot e_n-\int_0^b(\grad\cdot u)(\cdot,y)\;\m{d}y=-(\grad_{\|},0)\cdot\int_0^bu(\cdot,y)\;\m{d}y.
 \end{equation}
Estimate~\eqref{eigen is inferior to propre} now follows by definition of the norm on $\dot{H}^{-1}(\R^{n-1})$ (see~\eqref{dotHminus1_def}).
 \end{proof}

The rest of this subsection is devoted building solution operators to divergence boundary value problems.
	
	\begin{prop}[Solution operators to divergence boundary value problems]\label{Bogovskii main}
		There exist bounded linear operators $\mathcal{B}_0$ and $\mathcal{B}_1$ such that for $k\in\N$,
		\begin{equation}
			\mathcal{B}_0: \hat{H}^k(\Omega)
			\to H^{1+k}(\Omega;\R^n)\cap H^1_0(\Omega;\R^n)
		\end{equation}
		and
		\begin{equation}
			\mathcal{B}_1:\tcb{\varphi\in H^{1/2+k}(\Sigma;\R^n)\;:\;\varphi\cdot e_n\in\dot{H}^{-1}(\Sigma)}\to H^{1+k}(\Omega;\R^n)\cap{_0}H^1(\Omega;\R^n),
		\end{equation}
		with the properties
		\begin{equation}\label{Bogovskii main 0}
			\begin{cases}
				\grad\cdot\mathcal{B}_0\psi=\psi&\text{in }\Omega,\\
				\mathcal{B}_0\psi=0&\text{on }\pd\Omega,
			\end{cases}
			\quad\text{and}\quad
			\begin{cases}
				\grad\cdot\mathcal{B}_1\varphi=0&\text{in }\Omega,\\
				\mathcal{B}_1\varphi=\varphi&\text{on }\Sigma,\\
				\mathcal{B}_1\varphi=0&\text{on }\Sigma_0.
			\end{cases}
		\end{equation}
	\end{prop}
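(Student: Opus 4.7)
The plan is to construct $\mathcal{B}_0$ first and then derive $\mathcal{B}_1$ from it by combining a trace extension with the divergence correction furnished by $\mathcal{B}_0$. The main obstacle is ensuring that $\mathcal{B}_0$ gains one full Sobolev derivative on the unbounded slab $\Omega$, particularly at the low tangential frequencies, where the $\dot H^{-1}$ compatibility built into $\hat H^k(\Omega)$ is essential.

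To build $\mathcal{B}_0$, I would decompose $\psi \in \hat H^k(\Omega)$ as $\psi = \psi_{\m{mean}} + \psi_{\m{osc}}$, where $\psi_{\m{mean}}(x,y) = b^{-1} g'(y) M(x)$ with $M(x) = \int_0^b \psi(x,s)\,\m{d}s$ and $g \in C^\infty([0,b])$ chosen so that $g(0)=0$, $g(b)=1$, $g'(0)=g'(b)=0$, and $\int_0^b g'(y)\,\m{d}y = b$. The remainder $\psi_{\m{osc}}$ then satisfies $\int_0^b \psi_{\m{osc}}(x,y)\,\m{d}y = 0$ for a.e.\ $x$. For the mean part, I would explicitly set $u^{\m{mean}}_n = 0$ and $\widehat{u^{\m{mean}}_\parallel}(\xi,y) = -\ii \xi g'(y) \hat M(\xi)/(b|\xi|^2)$ in Fourier variables along $\Sigma$; one checks directly that $\nabla \cdot u^{\m{mean}} = \psi_{\m{mean}}$, that $u^{\m{mean}}$ vanishes on $\partial \Omega$ (via $g'(0) = g'(b) = 0$), and that $\|u^{\m{mean}}\|_{H^{1+k}} \lesssim \|M\|_{\dot H^{-1}(\Sigma)} + \|M\|_{H^k(\Sigma)} \lesssim \|\psi\|_{\hat H^k}$. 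For the oscillatory part, I would solve the Stokes boundary value problem $-\Delta u^{\m{osc}} + \nabla p^{\m{osc}} = 0$, $\nabla \cdot u^{\m{osc}} = \psi_{\m{osc}}$ in $\Omega$ with $u^{\m{osc}} = 0$ on $\partial\Omega$. Existence of a weak solution in $H^1_0(\Omega;\R^n)$ follows from a Lax-Milgram and De Rham argument on divergence-free test fields together with the pointwise-in-$x$ compatibility of $\psi_{\m{osc}}$; regularity promotion to $u^{\m{osc}} \in H^{1+k}(\Omega;\R^n)$ follows from Fourier analysis in the tangential variable (for each nonzero $\xi$ one has a well-posed ODE system on $(0,b)$) combined with standard elliptic estimates tangential to the boundary. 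Setting $\mathcal{B}_0 \psi = u^{\m{mean}} + u^{\m{osc}}$ yields the claimed operator.

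For $\mathcal{B}_1$, I would fix a bounded extension operator $E: H^{1/2+k}(\Sigma;\R^n) \to H^{1+k}(\Omega;\R^n) \cap {_0}H^1(\Omega;\R^n)$ with $E\varphi|_\Sigma = \varphi$, built componentwise from variants of the operator $\mathcal{E}$ from~\eqref{North Dakota}. I then define $\mathcal{B}_1 \varphi = E\varphi - \mathcal{B}_0(\nabla \cdot E\varphi)$. The remaining verification is that $\nabla \cdot E\varphi \in \hat H^k(\Omega)$: the inclusion $\nabla \cdot E\varphi \in H^k(\Omega)$ is clear from the mapping properties of $E$, while Fubini and the trace identities on $\Sigma$ and $\Sigma_0$ yield
\begin{equation}
\int_0^b (\nabla \cdot E\varphi)(\cdot, y)\,\m{d}y = (\nabla_\parallel, 0) \cdot \int_0^b E\varphi(\cdot, y)\,\m{d}y + \varphi \cdot e_n,
\end{equation}
which lies in $\dot H^{-1}(\Sigma)$ by the hypothesis $\varphi \cdot e_n \in \dot H^{-1}(\Sigma)$ together with a divergence-compatibility estimate in the spirit of Proposition~\ref{prop on refined divergence compatibility estimate}. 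The properties~\eqref{Bogovskii main 0} for $\mathcal{B}_1$ then follow immediately from the definition and the corresponding properties of $\mathcal{B}_0$ and $E$.
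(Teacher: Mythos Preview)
Your construction of $\mathcal{B}_1$ matches the paper's exactly: extend $\varphi$ to a field in ${_0}H^1\cap H^{1+k}$, verify that its divergence lies in $\hat H^k$ via the identity you wrote, and subtract $\mathcal{B}_0$ of that divergence.

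For $\mathcal{B}_0$ the paper takes a different, more self-contained route. Rather than splitting off the vertical mean and appealing to Stokes, it solves for each tangential Fourier mode $\xi\neq 0$ the scalar Neumann problem $-\partial_n^2\zeta + 4\pi^2|\xi|^2\zeta = \mathscr{F}[\psi](\xi,\cdot)$ on $(0,b)$ explicitly, sets $X=\mathscr{F}^{-1}[(-2\pi i\xi\zeta,-\partial_n\zeta)]$ so that $\nabla\cdot X=\psi$ with $X\cdot e_n=0$ on $\partial\Omega$, and then adds a divergence-free corrector built from a trace right inverse to kill the nonzero tangential trace of $X$. The $\dot H^{-1}$ hypothesis enters through a Poincar\'e argument in the ODE energy identity for $\zeta$ at small $|\xi|$. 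Your mean/oscillatory split is a legitimate alternative and has the virtue of isolating the $\dot H^{-1}$ condition entirely in the explicit mean part, but your justification for the oscillatory part has a circularity: the Lax--Milgram/De~Rham route to inhomogeneous-divergence Stokes on $\Omega$ presupposes a bounded right inverse to the divergence on $H^1_0(\Omega)$, which is precisely the operator under construction (and the naive one-dimensional lifting $w_n=\int_0^y\psi_{\m{osc}}$ fails to land in $H^1$ since its tangential derivatives are uncontrolled). To close this you must either cite an external Stokes-in-a-layer result or carry out the tangential-Fourier ODE analysis for the full Stokes system directly; both work, but the latter is heavier than the paper's scalar second-order problem. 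Minor slip: the conditions $g(0)=0$, $g(b)=1$, $\int_0^b g'=b$ are inconsistent; presumably you mean $g(b)=b$.
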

	\begin{proof}
		We begin with the construction of $\mathcal{B}_0$.  Fix $k \in \N$ and $\psi \in \hat{H}^k(\Omega)$.  According to the Tonelli and Parseval theorems, we have that 
		\begin{equation}\label{Bogovskii main 1}
			\int_{\R^{n-1}} \sum_{j=0}^k \int_0^b \br{\xi}^{2(k-j)} \tabs{\partial_n^j \mathscr{F}[\psi](\xi,y)}^2\;\m{d}y\;\m{d}\xi \asymp \norm{\psi}_{H^k}^2,
		\end{equation}
		and in particular this implies that for almost every $\xi \in \R^{n-1}$ we have that 
		\begin{equation}\label{Bogovskii main 2}
			\sum_{j=0}^k \int_0^b \br{\xi}^{2(k-j)} \tabs{\partial_n^j \mathscr{F}[\psi](\xi,y)}^2\;\m{d}y  < \infty.
		\end{equation}
		Without loss of generality, we may assume that, in fact, this quantity is finite for every $\xi \in \R^{n-1}$.  We then define the measurable function $\zeta : \Omega \to \C$ via 
		\begin{multline}
			\zeta(\xi,y) = \frac{\cosh(2\pi \abs{\xi} y)}{2\pi \abs{\xi} \sinh(2\pi \abs{\xi} b)} \int_0^b \mathscr{F}[\psi](\xi,t) \cosh(2\pi \abs{\xi}(b-t))\;\m{d}t \\
			- \int_0^y \mathscr{F}[\psi](\xi,t) \frac{\sinh(2\pi \abs{\xi}(y-t))}{2\pi \abs{\xi}}\;\m{d}t.
		\end{multline}
		It is a simple matter to verify that for each $\xi \in \R^{n-1}\backslash \{0\}$ we have the inclusion $\zeta(\xi,\cdot) \in H^2((0,b);\C)$ and that 
		\begin{equation}\label{Bogovskii main 3}
			\begin{cases}
				-\partial_n^2 \zeta(\xi,\cdot) + 4 \pi^2 \abs{\xi}^2 \zeta(\xi,\cdot) = \mathscr{F}[\psi](\xi,\cdot) & \text{in }(0,b) \\
				\partial_n \zeta(\xi,t) =0 &\text{for }t \in \{0,b\}.
			\end{cases}
		\end{equation}
		
		Multiplying the ODE satisfied by $\zeta$ by $\bar{\zeta}$ and integrating by parts over $(0,b)$, we see that 
		\begin{multline}
			\int_0^b \tp{ \abs{\partial_n \zeta(\xi,y)}^2 + 4 \pi^2 \abs{\xi}^2 \abs{\zeta(\xi,y)}^2}\;\m{d}y  = \int_0^b \mathscr{F}[\psi](\xi,y) \Bar{\zeta(\xi,y)}\;\m{d}y\\
			=\int_0^b \mathscr{F}[\psi](\xi,y) \tp{\Bar{\zeta(\xi,y)} - \tbr{\Bar{\zeta(\xi,\cdot)}}_{(0,b)}}\;\m{d}y + \tbr{\Bar{\zeta(\xi,\cdot)}}_{(0,b)} \int_0^b \mathscr{F}[\psi](\xi)\;\m{d}y.
		\end{multline}
		In the above we have written $\br{\cdot}_{(0,b)}$ to be the integral average over $(0,b)$. By the Poincar\'{e}, Cauchy-Schwarz, and Cauchy inequalities, we may then bound 
		\begin{multline}
			\babs{\int_0^b \mathscr{F}[\psi](\xi,y) \Bar{\zeta(\xi,y)}\;\m{d}y}
			\le \frac{1}{2} \int_0^b \tp{\abs{\partial_n \zeta(\xi,y)}^2 + 4\pi^2 \abs{\xi}^2 \abs{\zeta(\xi,y)}^2}\;\m{d}y \\
			+ C \int_0^b \tabs{\mathscr{F}[\psi](\xi,y)}^2\;\m{d}y + C \babs{\int_0^b \frac{\mathscr{F}[\psi](\xi,y)}{\abs{\xi}}\;\m{d}y}^2
		\end{multline}
		for a constant $C >0$ depending only on $b$.  On the other hand, a similar argument shows that 
		\begin{multline}
			\int_0^b \tp{ \abs{\xi}^{2(k+1)} \abs{\partial_n \zeta(\xi,y)}^2 + 4 \pi^2 \abs{\xi}^{2(k+2)} \abs{\zeta(\xi,y)}^2 }\;\m{d}y 
			= \int_0^b \abs{\xi}^k \mathscr{F}[\psi](\xi,y) \abs{\xi}^{k+2}\Bar{\zeta(\xi,y)}\;\m{d}y \\
			\le \frac{1}{2} \int_0^b  4 \pi^2 \abs{\xi}^{2(k+2)} \abs{\zeta(\xi,y)}^2\;\m{d}y + C \int_0^b   \abs{\xi}^{2k} \tabs{\mathscr{F}[\psi](\xi,y)}^2\;\m{d}y
		\end{multline}
		for a constant $C >0$ depending only on $b$.  Upon combining these bounds, we deduce that  $\zeta(\xi,\cdot)$ obeys the estimate 
		\begin{equation}\label{Bogovskii main 4}
			\br{\xi}^{2(k+1)}  \int_0^b \tp{\abs{\partial_n \zeta(\xi,y)}^2 + 4 \pi^2 \abs{\xi}^2   \tabs{\zeta(\xi,y)}^2}\;\m{d}y  \lesssim  \br{\xi}^{2k} \int_0^b \tabs{\mathscr{F}[\psi](\xi,y)}^2\;\m{d}y +  \babs{\int_0^b \frac{\mathscr{F}[\psi](\xi,y)}{\abs{\xi}}\;\m{d}y}^2.  
		\end{equation}
		
		Returning to \eqref{Bogovskii main 3} and exploiting \eqref{Bogovskii main 2} and a simple finite induction argument, we deduce that $\zeta(\xi,\cdot) \in H^{2+k}((0,b);\C)$ and that for $0 \le j \le k$ we have the bound
		\begin{multline}
			\br{\xi}^{2(k-j)}  \int_0^b  \tabs{\partial_n^{2+j} \zeta(\xi,y)}^2\;\m{d}y \lesssim 
			\br{\xi}^{2(k-j)}  \int_0^b  \tabs{\partial_n^j \mathscr{F}[\psi](\xi,y)}^2\;\m{d}y \\+ \br{\xi}^{2(k-j+1)} 4 \pi^2 \tabs{\xi}^2  \int_0^b  \tabs{\partial_n^j \zeta(\xi,y)}^2\;\m{d}y.
		\end{multline}
		We may then take appropriate linear combinations of these estimates, for $0 \le j \le k$, and \eqref{Bogovskii main 4}, in order to deduce the bound 
		\begin{multline}\label{Bogovskii main 5}
			\br{\xi}^{2(k+1)}  \int_0^b  4 \pi^2 \abs{\xi}^2   \tabs{\zeta(\xi,y)}^2  \;\m{d}y +   \sum_{j=1}^{2+k} \br{\xi}^{2(k+2-j)} \int_0^b \tabs{\partial_n^{j} \zeta(\xi,y)}^2  \;\m{d}y \\
			\lesssim
			\sum_{j=0}^k \int_0^b \br{\xi}^{2(k-j)} \tabs{\partial_n^j \mathscr{F}[\psi](\xi,y)}^2\;\m{d}y + \babs{\int_0^b \frac{\mathscr{F}[\psi](\xi,y)}{\abs{\xi}}\;\m{d}y}^2.
		\end{multline}
		
		Next, we define the vector field $\Xi : \Omega \to \C^n$ via $\Xi(\xi,y) = (-2\pi i \xi \zeta(\xi,y) , -\partial_n \zeta(\xi,y))$. Since $\psi$ is real-valued we know that $\Bar{\mathscr{F}[\psi](\xi,y)} = \mathscr{F}[\psi](-\xi,y)$ for $\xi \in \R^{n-1}$ and $y \in (0,b)$, and this readily implies that $\Bar{\Xi(\xi,y)} = \Xi(-\xi,y)$ as well.  The bound \eqref{Bogovskii main 5}, integrated over $\xi \in \R^{n-1}$, implies that $\Xi \in L^2(\Omega; \C^n)$, and so we may define $X \in L^2(\Omega ;\R^n)$ via $X = \mathscr{F}^{-1}[\Xi]$.  In turn,  \eqref{Bogovskii main 5} and \eqref{Bogovskii main 1} further imply that $X \in H^{1+k}(\Omega;\R^n)$ and 
		\begin{equation}\label{Bogovskii main 6}
			\norm{X}_{H^{1+k}} \lesssim\bp{ \norm{\psi}_{H^k}^2 + \bsb{\int_0^b\psi(\cdot,y)\;\m{d}y}_{\dot{H}^{-1}}^2 }^{1/2} = \norm{\psi}_{\hat{H}^k}.
		\end{equation}
		We use \eqref{Bogovskii main 3} to compute $\mathscr{F}[\nabla \cdot X](\xi,y) = (2\pi i \xi ,0)\cdot \Xi(\xi,y) + \partial_n \Xi\cdot e_n(\xi,y) = -\partial_n^2 \zeta(\xi,y) + 4 \pi^2 \abs{\xi}^2 \zeta(\xi,y) = \mathscr{F}[\psi](\xi,y)$ and $\mathscr{F}[X](\xi,t) = \Xi\cdot e_n(\xi,t) = 0 \text{ for } t \in \{0,b\}$. Thus, $X$ satisfies 
		\begin{equation}\label{Bogovskii main 7}
			\begin{cases}
				\nabla \cdot X = \psi & \text{in } \Omega, \\
				X \cdot e_n = 0 & \text{on } \partial \Omega.
			\end{cases}
		\end{equation}

		We next recall that standard Sobolev trace theory (see for instance Chapter 7 in Adams and Fournier~\cite{MR2424078} or Chapter 18 in Leoni~\cite{MR3726909}) shows that the trace map 
		\begin{equation}
			H^{2+k}(\Omega) \ni f \mapsto (\m{Tr}_{\pd\Omega}[f], \m{Tr}_{\pd\Omega}[\partial_n f]) \in H^{3/2+k}(\partial \Omega) \times H^{1/2+k}(\partial \Omega)
		\end{equation}
		is bounded and linear and admits a bounded right inverse, $\mathcal{R}$.  Consequently, we may define $\omega \in H^{k+2}(\Omega;\R^{n-1})$ via $\omega=(\mathcal{R}(0,-X\cdot e_1),\dots,\mathcal{R}(0,-X\cdot e_{n-1}))$. Finally, this allows us to define $\mathcal{B}_0\psi \in H^{1+k}(\Omega;\R^n)$ via $\mathcal{B}_0\psi = X + (\partial_n \omega, \grad_{\|} \cdot \omega)$. Then from \eqref{Bogovskii main 7} and the construction of $\omega$ we have that $\nabla \cdot \mathcal{B}_0 \psi = \nabla \cdot X + \nabla_{\|} \cdot \partial_n \omega - \partial_n \nabla_{\|} \cdot \omega = \psi$ in $\Omega,$ while on $\partial \Omega$ we have for $j\in\tcb{1,\dots,n-1}$ $(\mathcal{B}_0 \psi)\cdot e_j = X\cdot e_j + \partial_n \omega\cdot e_j = X\cdot e_j - X\cdot e_j =0$.
		Additionally we have $(\mathcal{B}_0 \psi) \cdot e_n = X\cdot e_n- \nabla_{\|} \cdot \omega = X\cdot e_n =0$. Thus, $\mathcal{B}_0 \psi$ satisfies the properties stated in \eqref{Bogovskii main 0}.  Moreover, the construction of $\mathcal{B}_0 \psi$ shows that the resulting map $\psi \mapsto \mathcal{B}_0 \psi$ is linear and satisfies $\norm{\mathcal{B}_0 \psi}_{H^{k+1}} \lesssim \norm{\psi}_{\hat{H}^k}$. This completes the construction of the bounded linear map $\mathcal{B}_0$.
		
		We now  build $\mathcal{B}_1$.  To this end, we write $L$ for a bounded right inverse of the trace map
		\begin{equation}
			H^{1+k}(\Omega;\R^n) \cap{_0}H^1(\Omega;\R^n)   \ni u \mapsto \m{Tr}_{\Sigma}(u) \in H^{1/2+k}(\Sigma;\R^n), 
		\end{equation}
		which again exists by standard trace theory.  Note that if $\varphi \in H^{1/2+k}(\Sigma;\R^n)$ and $\varphi\cdot e_n \in \dot{H}^{-1}(\Sigma;\R^n)$, then 
		\begin{equation}
			\int_0^b \nabla \cdot (L\varphi)(\cdot,y)\;\m{d}y =  (\grad_{\|},0)\cdot\int_0^b (L\varphi)(\cdot,y)\;\m{d}y  + \varphi\cdot e_n(\cdot),
		\end{equation}
		and we readily deduce from this that 
		\begin{equation}
			\bsb{\int_0^b\nabla \cdot (L\varphi)(\cdot,y)\;\m{d}y}_{\dot{H}^{-1}} \lesssim \tnorm{L\varphi}_{L^2} +     \tsb{\varphi\cdot e_n}_{\dot{H}^{-1}} \lesssim \norm{\varphi}_{H^{1/2+k}} +     \tsb{\varphi\cdot e_n}_{\dot{H}^{-1}}.
		\end{equation}
		We may thus define the linear map $\mathcal{B}_1$ via $\mathcal{B}_1 \varphi = L\varphi - \mathcal{B}_0 \nabla \cdot L \varphi$. It is then a simple matter to verify that $\mathcal{B}_1$ is bounded and  $\mathcal{B}_1 \varphi$ satisfies the conditions stated in \eqref{Bogovskii main 0}.  This completes the construction of $\mathcal{B}_1$.
	\end{proof}
	
	We next record a couple corollaries.  The first constructs an operator on $H^k(\Omega)$.
	
	\begin{coro}[Right inverse to the divergence]\label{Bogovskii 1}
		There exists a bounded linear operator such that for $k\in\N$, $\mathcal{B}:H^k(\Omega)\to H^{1+k}(\Omega;\R^n)\cap{_0}H^1(\Omega;\R^n)$
		with $\grad\cdot\mathcal{B}\psi =\psi$ for all $\psi\in H^k(\Omega)$.
	\end{coro}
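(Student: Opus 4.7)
The idea is to reduce this to Proposition~\ref{Bogovskii main} by splitting off a piece of $\psi$ that captures its vertical mean, then solving the remaining piece (which lies in $\hat{H}^k(\Omega)$) with the operator $\mathcal{B}_0$.  Concretely, given $\psi \in H^k(\Omega)$, I would set $f(x) = \int_0^b \psi(x,y)\,\m{d}y \in H^k(\Sigma)$ by Tonelli, fix a smooth cutoff $\chi \in C^\infty([0,b])$ with $\chi(0)=0$ and $\int_0^b \chi = 1$, let $\tilde{\chi}(y) = \int_0^y \chi(t)\,\m{d}t$, and then split
\begin{equation}
    \psi = \psi_0 + \chi(y) f(x), \qquad \psi_0 := \psi - \chi(\cdot) f(\m{id}_\Sigma).
\end{equation}
By construction $\int_0^b \psi_0(\cdot,y)\,\m{d}y = 0$, so $\psi_0 \in \hat{H}^k(\Omega)$ and  Proposition~\ref{Bogovskii main} produces $X_0 = \mathcal{B}_0 \psi_0 \in H^{1+k}(\Omega;\R^n) \cap H^1_0(\Omega;\R^n)$ with $\grad \cdot X_0 = \psi_0$ and $\tnorm{X_0}_{H^{1+k}} \lesssim \tnorm{\psi_0}_{\hat{H}^k} \lesssim \tnorm{\psi}_{H^k}$.

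The main difficulty, and the only nontrivial point, is the construction of the second piece: I need $X_1 \in H^{1+k}(\Omega;\R^n) \cap {_0}H^1(\Omega;\R^n)$ with $\grad\cdot X_1 = \chi(y) f(x)$, but $f$ has only $k$ tangential derivatives in $L^2$, so a separable ansatz of the form $e_n F(y) f(x)$ would require $f \in H^{1+k}(\Sigma)$ — a half-integer's worth of derivatives that I do not have.  The fix is to use an elliptic regularization to absorb one derivative on the tangential variable: solve $\phi - \Delta_{\|} \phi = f$ on $\Sigma$, which by standard elliptic theory gives $\phi \in H^{2+k}(\Sigma)$ with the estimate $\tnorm{\phi}_{H^{2+k}} \lesssim \tnorm{f}_{H^k}$, and then define
\begin{equation}
    X_1(x,y) = -\chi(y)\, \grad_{\|}\phi(x) + e_n \tilde{\chi}(y) \phi(x).
\end{equation}
A direct calculation yields $\grad\cdot X_1 = -\chi(y)\Delta_{\|}\phi + \tilde{\chi}'(y)\phi = \chi(y)(\phi - \Delta_{\|}\phi) = \chi(y) f(x)$, while $X_1(x,0) = 0$ because $\chi(0)=\tilde{\chi}(0)=0$.

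Since $\grad_{\|}\phi \in H^{1+k}(\Sigma;\R^{n-1})$ and $\phi \in H^{2+k}(\Sigma)$, and the $y$-dependent factors are smooth on $[0,b]$, a straightforward Fubini argument gives $\tnorm{X_1}_{H^{1+k}(\Omega)} \lesssim \tnorm{\phi}_{H^{2+k}(\Sigma)} \lesssim \tnorm{\psi}_{H^k(\Omega)}$.  Setting $\mathcal{B}\psi = X_0 + X_1$ then yields a bounded linear map $\mathcal{B}: H^k(\Omega) \to H^{1+k}(\Omega;\R^n) \cap {_0}H^1(\Omega;\R^n)$ satisfying $\grad \cdot \mathcal{B}\psi = \psi$, as required.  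The only issue with viewing $\mathcal{B}$ as a single operator (rather than one per $k$) is linearity: since $\mathcal{B}_0$ is linear and the operators $\psi \mapsto f$, $f \mapsto \phi$, and $(\phi,\chi) \mapsto X_1$ are all linear, the resulting $\mathcal{B}$ is a single linear operator whose restriction to each $H^k(\Omega)$ is bounded into the stated target.
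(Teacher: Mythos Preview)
Your proof is correct. Both you and the paper follow the same overall plan: subtract from $\psi$ a separable piece carrying the vertical average $f(x)=\int_0^b\psi(\cdot,y)\,\m{d}y$, apply $\mathcal{B}_0$ to the zero-mean remainder, and then handle the mean piece by an explicit vector field. The paper does the last step via the simplest possible ansatz, setting $\mathcal{B}\psi=\mathcal{B}_0(\psi-A_0\psi)+(A_1\psi)e_n$ with $A_1\psi(x,y)=\tfrac{y}{b}f(x)$. But you correctly identified the regularity issue with this kind of ansatz: since $f$ lies only in $H^k(\Sigma)$, the purely normal field $\tfrac{y}{b}f(x)\,e_n$ has tangential derivatives of order $k+1$ that need not be square-integrable, so it fails to land in $H^{1+k}(\Omega;\R^n)$ in general. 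Your elliptic regularization --- solving $(I-\Delta_\|)\phi=f$ to obtain $\phi\in H^{k+2}(\Sigma)$ and then writing $X_1=(-\chi(y)\grad_\|\phi,\,\tilde\chi(y)\phi)$ --- is precisely what is needed, since it trades the missing tangential regularity on $f$ for a tangential divergence contribution that the extra two derivatives on $\phi$ can absorb. So your route is slightly longer than the paper's but actually closes a gap in its argument. (One minor quibble: the shortfall is a full derivative, not a half-integer.)
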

	\begin{proof}
		If for $p \in \N$  we define $A_p \psi : \Omega \to \R$ via $A_p \psi(x,y) = \frac{y^p}{b} \int_0^b \psi(x,t)\;\m{d}t$, then $A_p \psi \in H^k(\Omega)$ and $\norm{A_p \psi}_{H^k} \lesssim_p \norm{\psi}_{H^k}$.  In  other words, $A_p \in \mathcal{L}(H^k(\Omega))$.  
		
		Note that for $\psi \in H^k(\Omega)$ we have that $\int_0^b \tp{ \psi(\cdot,y) - A_0 \psi(\cdot,y)}\;\m{d}y = \int_0^b \psi(\cdot,y)\;\m{d}y - \int_0^b \psi(\cdot,y)\;\m{d}y =0$. Consequently, we may construct $\mathcal{B}_0 (\psi - A_0 \psi) \in H^{1+k}(\Omega;\R^n) \cap H^1_0(\Omega;\R^n)$ by using the operator $\mathcal{B}_0$ from Proposition \ref{Bogovskii main}.  We then define  $\mathcal{B}:H^k(\Omega)\to H^{1+k}(\Omega;\R^n)\cap{_0}H^1(\Omega;\R^n)$ via $\mathcal{B} \psi = \mathcal{B}_0 (\psi - A_0 \psi) + (A_1 \psi) e_n$. It is then a simple matter to check that this is bounded and linear and satisfies $\nabla \cdot \mathcal{B}\psi = \psi$ for all $\psi \in H^k(\Omega)$.
	\end{proof}
	
	The second corollary to Proposition~\ref{Bogovskii main} constructs an operator on $H^{1/2+k}(\Sigma)\cap\dot{H}^{-1}(\Sigma)$.
	
	\begin{coro}[Solenoidal extension operator]\label{Bogovskii 2}
		There exists a bounded linear operator such that for $k\in\N$, $\mathcal{B}_2:H^{1/2+k}(\Sigma)\cap\dot{H}^{-1}(\Sigma)\to{_0}H^1(\Omega;\R^n)$ with $\grad\cdot\mathcal{B}_2\chi=0$, $\m{Tr}_\Sigma(\mathcal{B}_2\chi\cdot e_n)=\chi$, and $(I-e_n\otimes e_n)\m{Tr}_\Sigma(\mathcal{B}_2\chi)=0$.
	\end{coro}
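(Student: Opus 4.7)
The plan is to realize $\mathcal{B}_2$ as a direct specialization of the operator $\mathcal{B}_1$ constructed in Proposition~\ref{Bogovskii main}, by lifting the scalar boundary datum $\chi$ to a vector-valued boundary datum supported in the normal direction. Concretely, I will define
\begin{equation}
    \mathcal{B}_2 \chi = \mathcal{B}_1( \chi e_n)
\end{equation}
for $\chi \in H^{1/2+k}(\Sigma) \cap \dot{H}^{-1}(\Sigma)$.

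First I will check that this is well-defined. The map $\chi \mapsto \chi e_n$ is clearly bounded and linear from $H^{1/2+k}(\Sigma) \cap \dot{H}^{-1}(\Sigma)$ into the space $\{\varphi\in H^{1/2+k}(\Sigma;\R^n) : \varphi\cdot e_n\in\dot{H}^{-1}(\Sigma)\}$ that serves as the domain of $\mathcal{B}_1$, because $(\chi e_n)\cdot e_n = \chi \in \dot{H}^{-1}(\Sigma)$ and all tangential components vanish identically. Therefore $\mathcal{B}_2$ is well-defined as a bounded linear map into $H^{1+k}(\Omega;\R^n)\cap {_0}H^1(\Omega;\R^n)$.

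Next I will verify the three required properties by reading them off from \eqref{Bogovskii main 0}. The divergence-free condition $\nabla\cdot \mathcal{B}_2\chi = 0$ in $\Omega$ is immediate from the first property of $\mathcal{B}_1$. The boundary trace condition $\mathcal{B}_1(\chi e_n) = \chi e_n$ on $\Sigma$ directly yields both $\m{Tr}_\Sigma(\mathcal{B}_2\chi\cdot e_n) = \chi$ and $(I-e_n\otimes e_n)\m{Tr}_\Sigma(\mathcal{B}_2\chi) = 0$. Finally, vanishing on $\Sigma_0$ is built into the codomain ${_0}H^1(\Omega;\R^n)$.

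There is essentially no obstacle here; the only subtle point is confirming that the compatibility condition $\varphi\cdot e_n\in\dot{H}^{-1}(\Sigma)$ required by $\mathcal{B}_1$ is precisely the hypothesis $\chi\in\dot{H}^{-1}(\Sigma)$ placed on $\chi$, which is exactly the obstruction measured by the normal flux. The heavy lifting — constructing a divergence-free extension with prescribed normal flux and optimal regularity — has already been done in Proposition~\ref{Bogovskii main} via the Fourier-space ODE analysis.
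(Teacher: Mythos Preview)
Your proposal is correct and takes exactly the same approach as the paper: define $\mathcal{B}_2\chi = \mathcal{B}_1(\chi e_n)$ and read off all the required properties from those of $\mathcal{B}_1$ in Proposition~\ref{Bogovskii main}. The paper's proof is in fact the one-liner version of what you wrote.
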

	\begin{proof}
		Let $\mathcal{B}_1$ be as in Proposition \ref{Bogovskii main}, and define $\mathcal{B}_2$ via $\mathcal{B}_2 \chi = \mathcal{B}_1 (\chi e_n)$.
	\end{proof}
	
	\subsection{Elliptic theory tools}\label{appendix_elliptic_tools}

	We begin this subsection by recording some results about the bilinear form $B_m$ from \eqref{copy that pasta is it linguini or roasted tolueney}.  While $B_m$ is not coercive, the next result shows that it satisfies G\aa rding's inequality.
	
	\begin{lem}[$B_m$ G\aa rding inequalities]\label{lemma on garding inequality for Bm}
		Let $m\in\N$ and $\Lambda\in W^{\infty,\infty}(\Omega)$ with $\Lambda>0$ and $1/\Lambda\in L^\infty(\Omega)$. There exist constants $c,C\in\R^+$, depending only on $\Lambda$, the dimension, $\Omega$, and $m$, such that $c\tnorm{\varphi}_{H^m}^2\le B_m(\Lambda\varphi,\varphi)+C\tnorm{\varphi}_{L^2}^2$ for all $\varphi\in H^m(\Omega)$.  
	\end{lem}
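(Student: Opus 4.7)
The overall plan is to expand $B_m(\Lambda\varphi,\varphi)$ via the Leibniz rule, extract a positive quadratic form in the top-order derivatives $\partial_j^m\varphi$, absorb the resulting lower-order commutator terms, and finally invoke a norm equivalence that promotes control of $\sum_{j=1}^n\tnorm{\partial_j^m\varphi}_{L^2}^2$ to control of $\tnorm{\varphi}_{H^m}^2$ modulo $\tnorm{\varphi}_{L^2}^2$. Throughout, the hypotheses $\Lambda>0$ and $1/\Lambda\in L^\infty(\Omega)$ supply a positive lower bound $c_0=\mathrm{ess\,inf}_\Omega\Lambda>0$, and the assumption $\Lambda\in W^{\infty,\infty}(\Omega)$ supplies uniform bounds on all derivatives of $\Lambda$ appearing in the commutator.

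First, a direct application of the Leibniz formula gives
\begin{equation}
B_m(\Lambda\varphi,\varphi)=\sum_{j=1}^n\int_\Omega \Lambda(\partial_j^m\varphi)^2+\sum_{j=1}^n\sum_{k=1}^m\binom{m}{k}\int_\Omega (\partial_j^k\Lambda)(\partial_j^{m-k}\varphi)(\partial_j^m\varphi).
\end{equation}
The main term satisfies $\sum_j\int\Lambda(\partial_j^m\varphi)^2\ge c_0\sum_j\tnorm{\partial_j^m\varphi}_{L^2}^2$. For each commutator term, Cauchy's inequality with parameter $\ep>0$ yields
\begin{equation}
\babs{\int_\Omega(\partial_j^k\Lambda)(\partial_j^{m-k}\varphi)(\partial_j^m\varphi)}\le\ep\tnorm{\partial_j^m\varphi}_{L^2}^2+C_\ep\tnorm{\Lambda}_{W^{k,\infty}}^2\tnorm{\partial_j^{m-k}\varphi}_{L^2}^2.
\end{equation}
Choosing $\ep>0$ small relative to $c_0/(nm)$ and absorbing, I arrive at the intermediate bound
\begin{equation}
B_m(\Lambda\varphi,\varphi)\ge \f{c_0}{2}\sum_{j=1}^n\tnorm{\partial_j^m\varphi}_{L^2}^2 - C\tnorm{\varphi}_{H^{m-1}}^2
\end{equation}
for some $C>0$ depending on $\Lambda$ and $m$. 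A standard Gagliardo--Nirenberg interpolation argument then gives $\tnorm{\varphi}_{H^{m-1}}^2\le\ep'\tnorm{\varphi}_{H^m}^2+C_{\ep'}\tnorm{\varphi}_{L^2}^2$, which I will absorb once the $H^m$ equivalence is in place.

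The step I expect to require the most care is the comparison between $\sum_{j=1}^n\tnorm{\partial_j^m\varphi}_{L^2(\Omega)}^2$ and the full $H^m(\Omega)$ seminorm on our slab $\Omega=\R^{n-1}\times(0,b)$. I will establish the equivalence
\begin{equation}
\tnorm{\varphi}_{H^m(\Omega)}^2\asymp \tnorm{\varphi}_{L^2(\Omega)}^2+\sum_{j=1}^n\tnorm{\partial_j^m\varphi}_{L^2(\Omega)}^2
\end{equation}
by passing to the partial Fourier transform $\hat\varphi(\xi,y)=\mathscr{F}_x[\varphi](\xi,y)$. On the Fourier side, the right-hand side is comparable to $\int_{\R^{n-1}}\int_0^b\tp{\tbr{\xi}^{2m}\snorm{\hat\varphi(\xi,y)}^2+\snorm{\pd_y^m\hat\varphi(\xi,y)}^2}\,\m{d}y\,\m{d}\xi$, using the elementary inequality $\sum_{j=1}^{n-1}|\xi_j|^{2m}+1\asymp\tbr\xi^{2m}$. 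For each fixed frequency $\xi$, a one-dimensional interpolation inequality on $(0,b)$ controls the intermediate quantities $\tbr{\xi}^{2(m-k)}\snorm{\pd_y^k\hat\varphi(\xi,y)}^2$ for $1\le k\le m-1$ in terms of $\tbr{\xi}^{2m}|\hat\varphi|^2$ and $|\pd_y^m\hat\varphi|^2$, via Gagliardo--Nirenberg together with Young's inequality applied to the weighted factors. Integrating in $\xi$ and $y$ then yields the desired norm equivalence.

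Combining the three steps: the second step gives $B_m(\Lambda\varphi,\varphi)\ge \f{c_0}{2}\sum_j\tnorm{\partial_j^m\varphi}_{L^2}^2-C\tnorm{\varphi}_{H^{m-1}}^2$; the norm equivalence converts this into $B_m(\Lambda\varphi,\varphi)\gtrsim\tnorm{\varphi}_{H^m}^2-C'\tnorm{\varphi}_{L^2}^2-C\tnorm{\varphi}_{H^{m-1}}^2$; and interpolation plus one final absorption eliminates the $H^{m-1}$ contribution to deliver the desired bound $c\tnorm{\varphi}_{H^m}^2\le B_m(\Lambda\varphi,\varphi)+C\tnorm{\varphi}_{L^2}^2$. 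The dependence of $c,C$ is traced through all constants and depends only on $\Lambda$ (through $c_0$ and finitely many norms $\tnorm{\Lambda}_{W^{k,\infty}}$), the dimension $n$, the domain parameter $b$, and $m$.
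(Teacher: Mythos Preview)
Your proposal is correct and follows essentially the same strategy as the paper: extract the coercive top-order term $\sum_j\int\Lambda(\partial_j^m\varphi)^2$, control the commutator $[\partial_j^m,\Lambda]\varphi$ by lower-order norms, establish the equivalence $\tnorm{\varphi}_{H^m}^2\asymp\tnorm{\varphi}_{L^2}^2+\sum_j\tnorm{\partial_j^m\varphi}_{L^2}^2$ via partial Fourier transform in the tangential variables and one-dimensional interpolation in $y$, and finish with interpolation and absorption of the $H^{m-1}$ remainder. The only organizational difference is that the paper first proves the case $\Lambda=1$ (phrasing the norm equivalence as the slicing identity $H^m(\Omega)=H^m(\R^{n-1};L^2(0,b))\cap L^2(\R^{n-1};H^m(0,b))$) and then reduces general $\Lambda$ to that case, whereas you handle general $\Lambda$ in one pass; the content is the same.
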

	\begin{proof}
		Consider first the case that $\Lambda=1$. We decompose $B_m=B_{m,\|}+B_{m,n}$ where
		\begin{equation}
			B_{m,\|}(\varphi_0,\varphi_1)=\sum_{j=1}^{n-1}\int_{\Omega}\pd_j^m\varphi_0\pd_j^m\varphi_1
			\text{ and }
			B_{m,n}(\varphi_0,\varphi_1)=\int_{\Omega}\pd_n^m\varphi_0\pd_n^m\varphi_1.
		\end{equation}
		For $B_{m,\|}$, we use Fubini's theorem followed by Plancherel's theorem to estimate
		\begin{multline}
			B_{m,\|}(\varphi,\varphi)=\sum_{j=1}^{ n-1}\int_0^b\int_{\R^{n-1}}|(2\pi\ii\xi_j)^m\mathscr{F}[\varphi(\cdot,y)](\xi)|^2\;\m{d}\xi\;\m{d}y\\
			=\int_0^b\int_{\R^{n-1}}\f{\sum_{j=1}^{n-1}|(2\pi\ii\xi_j)^m|^2}{\sum_{|\al|=m}|(2\pi\ii\xi)^\al|^2}\sum_{|\al|=m}|(2\pi\ii\xi)^\al\mathscr{F}[\varphi(\cdot,y)](\xi)|^2\;\m{d}\xi\;\m{d}y\\
			\gtrsim_m\sum_{\substack{\be\in\N^{n-1}\\|\be|=m}}\tnorm{\pd^{(\be,0)}\varphi}_{L^2}^2=\tnorm{\varphi}_{\dot{H}^m(\R^{n-1};L^2(0,b))}^2.
		\end{multline}
		Hence, by the fact that $\dot{H}^m\cap L^2=H^m$, we get that
		\begin{equation}\label{first equation to combine}
			B_{m,\|}(\varphi,\varphi)+\tnorm{\varphi}_{L^2}^2\gtrsim_m\tnorm{\varphi}_{H^m(\R^{n-1};L^2(0,b))}^2.
		\end{equation}
		On the other hand, for the $B_{m,n}$ piece, we see that
		\begin{equation}
			B_{m,n}(\varphi,\varphi)=\tnorm{\varphi}_{L^2(\R^{n-1};\dot{H}^m(0,b))}^2.
		\end{equation}
		Thus, by again using that $\dot{H}^m\cap L^2=H^m$, we get the bound
		\begin{equation}\label{second equation to combine}
			B_{m,n}(\varphi,\varphi)+\tnorm{\varphi}^2_{L^2}\gtrsim_{\Omega,m}\tnorm{\varphi}_{L^2(\R^{n-1};H^m(0,b))}^2.
		\end{equation}
		Upon combining \eqref{first equation to combine} and~\eqref{second equation to combine}, we arrive at the estimate
		\begin{equation}
			B_m(\varphi,\varphi)+\tnorm{\varphi}_{L^2}^2\gtrsim_{m,\Omega}\tnorm{\varphi}_{H^mL^2\cap L^2H^m}^2.
		\end{equation}
		The proof in the case that $\Lambda=1$ is then complete as soon as we note the slicing characterization
		\begin{equation}\label{it reeks in here}
			H^m(\Omega)=H^m(\R^{n-1};L^2(0,b))\cap L^2(\R^{n-1};H^m(0,b)),
		\end{equation}
		with norm equivalence, which can be proved as follows.  The slicing characterization for $\R^n$ in place of $\Omega$ is proved in Lemma A.6 from Leoni and Tice~\cite{leoni2019traveling}.  We then reduce~\eqref{it reeks in here} to this case with the help of a Stein extension operator $\mathfrak{E}_\Omega$ (see Definition~\ref{defn Stein-extension operator}), which continuously maps both  $H^{m}(\Omega)\to H^m(\R^n)$ and $H^m(\R^{n-1};L^2(0,b))\cap L^2(\R^n;H^{m}(0,b))\to H^m(\R^{n-1};L^2(\R))\cap L^2(\R^n;H^{m}(\R))$.
		
		We next consider the case of general $\Lambda$ satisfying the stated hypotheses. We introduce commutators as follows:
		\begin{equation}
			B_m(\Lambda\varphi,\varphi)=\sum_{j=1}^n\int_{\Omega}\Lambda(\pd_j^m\varphi)^2+[\Lambda,\pd_j^m]\varphi\pd_j^m\varphi.
		\end{equation}
		Hence, we have the estimate
		\begin{equation}
			c B_m(\varphi,\varphi)\le B_m(\Lambda\varphi,\varphi)+\sqrt{B_m(\varphi,\varphi)}\sqrt{\sum_{j=1}^n\tnorm{[\Lambda,\pd_j^m]\varphi}_{L^2}^2}.
		\end{equation}
		By Cauchy's inequality and the boundedness of the commutator operator (see Corollary~\ref{coro on tames estimates on commutators}), 
		\begin{equation}
			cB_m(\varphi,\varphi)\le B_m(\Lambda\varphi,\varphi)+C\tnorm{\varphi}_{H^{m-1}}^2.
		\end{equation}
		Invoking the special case $\Lambda=1$ then shows that
		\begin{equation}
			c\tnorm{\varphi}^2_{H^{m}}\le C\tnorm{\varphi}_{L^2}^2+B_m(\Lambda\varphi,\varphi)+C\tnorm{\varphi}^2_{H^{m-1}}.
		\end{equation}
		For the final term above, we use the log-convexity of the norm and Young's inequality:  for $\kappa\in(0,1)$ we estimate $\tnorm{\varphi}_{H^{m-1}}\lesssim \kappa^{-m}\tnorm{\varphi}_{L^2}+\kappa\tnorm{\varphi}_{H^m}$ and then choose $\kappa$ sufficiently small to absorb the right hand side's $\tnorm{\varphi}_{H^m}$-contribution by the left.
	\end{proof}
	
	Our next lemma involves integration by parts.
	\begin{lem}[$B_m$ integration by parts]\label{lemma on integration by parts}
		Suppose that $\psi\in H^{2m}(\Omega)$ and $\phi\in H^m(\Omega)$.  Then
		\begin{equation}
			B_m(\phi,\psi)=\int_{\Omega}\phi L_m\psi+\sum_{j=0}^{m-1}\bp{\int_{\Sigma}-\int_{\Sigma_0}}(-1)^{m-1-j}\pd_n^j\phi\pd_n^{2m-j-1}\psi,
		\end{equation}
		where $L_m$ is the linear elliptic operator defined in~\eqref{the operator Lm}.
	\end{lem}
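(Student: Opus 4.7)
The strategy is to first establish the identity for smooth test functions by a direct componentwise integration by parts, and then promote to the stated regularity via density and continuity of the relevant trace maps.

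First, I would prove the identity under the assumption that $\phi, \psi \in C^\infty_c(\bar{\Omega})$. Writing $B_m(\phi,\psi) = \sum_{j=1}^n \int_\Omega \partial_j^m \phi \, \partial_j^m \psi$, I would treat the tangential terms ($1 \le j \le n-1$) and the normal term ($j=n$) separately. For each $1 \le j \le n-1$, Fubini's theorem lets me write the integral as an iterated integral in which the inner integration is over $\R^{n-1}$ in the tangential coordinates; since $\phi$ and $\psi$ have compact support, I can integrate by parts $m$ times in the variable $x_j$ with no boundary contributions, yielding $\int_\Omega \partial_j^m \phi \, \partial_j^m \psi = (-1)^m \int_\Omega \phi \, \partial_j^{2m}\psi$. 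For the normal term $j=n$, I apply Fubini to reduce to integration over $(0,b)$ in the $y$-variable and integrate by parts $m$ times. Each step produces a boundary contribution on $\{y=0\} \cup \{y=b\}$; after a reindexing (setting the summation variable $j = m-1-k$ where $k$ counts the integration-by-parts steps), the accumulated boundary terms become
\begin{equation}
    \sum_{j=0}^{m-1} (-1)^{m-1-j} \bp{\int_\Sigma - \int_{\Sigma_0}} \partial_n^j \phi \, \partial_n^{2m-1-j}\psi,
\end{equation}
and the bulk term contributes $(-1)^m \int_\Omega \phi \, \partial_n^{2m}\psi$. Summing over $j \in \{1,\dots,n\}$ reassembles $\int_\Omega \phi \, L_m \psi$ and yields the claimed formula for smooth test functions.

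Second, I would extend the identity to $\phi \in H^m(\Omega)$ and $\psi \in H^{2m}(\Omega)$ by density. Since $\Omega = \R^{n-1} \times (0,b)$ is a Stein extension domain (see Example~\ref{example on Sobolev spaces on domains}), the space $C^\infty_c(\bar{\Omega})$ is dense in both $H^m(\Omega)$ and $H^{2m}(\Omega)$ in their respective norms. Select sequences $\{\phi_k\} \subseteq C^\infty_c(\bar\Omega)$ with $\phi_k \to \phi$ in $H^m(\Omega)$ and $\{\psi_k\} \subseteq C^\infty_c(\bar\Omega)$ with $\psi_k \to \psi$ in $H^{2m}(\Omega)$. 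The left-hand side $B_m(\phi_k,\psi_k) \to B_m(\phi,\psi)$ by the continuity of $B_m$ on $H^m \times H^m$. The bulk term on the right satisfies $\int_\Omega \phi_k L_m \psi_k \to \int_\Omega \phi L_m \psi$ via Cauchy--Schwarz, since $L_m \psi_k \to L_m \psi$ in $L^2(\Omega)$.

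The only nontrivial part of the passage to the limit is the boundary sum, for which I would invoke standard trace theory. For $0 \le j \le m-1$, the trace operator continuously sends $H^m(\Omega) \to H^{m-j-1/2}(\partial\Omega)$ via $\phi \mapsto \m{Tr}_{\partial\Omega}(\partial_n^j \phi)$, while $H^{2m}(\Omega) \to H^{j+1/2}(\partial\Omega)$ via $\psi \mapsto \m{Tr}_{\partial\Omega}(\partial_n^{2m-1-j}\psi)$. These Sobolev indices are nonnegative and their sum is $m \ge 0$, so the duality pairing $H^{m-j-1/2}(\partial\Omega) \times H^{j+1/2}(\partial\Omega) \to \R$ (which reduces to $H^{-1/2} \times H^{1/2}$ via the Sobolev embedding $H^{j+1/2} \hookrightarrow H^{1/2}$ when $j \le m-1$, or symmetrically) is well-defined and continuous, so each boundary integral passes to the limit. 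This completes the identification of both sides of the claimed formula. The main obstacle, if any, is purely notational bookkeeping of the reindexing in the integration-by-parts step; no serious analytical difficulty arises, since the regularity $\psi \in H^{2m}(\Omega)$ is exactly what is needed for all of the traces on the right-hand side to make sense.
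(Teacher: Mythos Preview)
Your proof is correct and follows the same approach the paper has in mind: the paper's own proof is simply ``This is a simple exercise in integration by parts,'' and you have spelled out exactly that exercise, together with the density argument needed to justify it at the stated regularity.
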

	\begin{proof}
		This is a simple exercise in integration by parts.
	\end{proof}
	
	Now we prove a priori estimates for the natural Neumann problem associated with the operator $L_m$.
	
	\begin{lem}[A priori estimates for $L_m$]\label{lem on a priori estimates for Lm}
		Suppose that $m\in\N^+$, $k\in\N$, $\psi\in H^k(\Omega)$, and $\varphi\in H^{k+2m}(\Omega)$ are related via the equations
		\begin{equation}\label{PDE for Lm with NNC}
			\begin{cases}
				L_m\varphi=\psi&\text{in }\Omega,\\
				\pd_n^m\varphi=\cdots=\pd_n^{2m-1}\varphi=0&\text{on }\pd\Omega.
			\end{cases}
		\end{equation}
		Then we have the a priori estimate
		\begin{equation}\label{the a priori estimate for Lm}
			\tnorm{\varphi}_{H^{k+2m}}\lesssim\tnorm{\varphi}_{L^2}+\tnorm{\psi}_{H^k},
		\end{equation}
		where the implicit constant depends only on $k$, $m$, and $\Omega$.
	\end{lem}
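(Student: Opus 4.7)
The strategy is to exploit the slab geometry $\Omega = \R^{n-1}\times(0,b)$ and the special structure of the boundary conditions, which only involve normal derivatives, by means of a tangential Fourier transform combined with a bootstrap argument. Since $L_m$ is uniformly elliptic of order $2m$ and the Neumann-type conditions $\pd_n^m\varphi = \cdots = \pd_n^{2m-1}\varphi = 0$ constitute a complementing set of $m$ boundary conditions, a classical Agmon--Douglis--Nirenberg theory yields an estimate of the desired form on bounded subdomains; the point of our argument is to globalize this on the unbounded slab.

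The plan is to first establish the base case $k=0$, namely $\tnorm{\varphi}_{H^{2m}} \lesssim \tnorm{\varphi}_{L^2} + \tnorm{\psi}_{L^2}$, by tangential Fourier analysis. Taking the Fourier transform in $x \in \R^{n-1}$ turns~\eqref{PDE for Lm with NNC} into the parametrized family of ODEs
\begin{equation*}
    p(\xi)\hat\varphi(\xi,\cdot) + (-1)^m\pd_y^{2m}\hat\varphi(\xi,\cdot) = \hat\psi(\xi,\cdot) \text{ in }(0,b),\quad \pd_y^j\hat\varphi(\xi,\cdot)|_{y\in\{0,b\}} = 0 \text{ for } j=m,\dots,2m-1,
\end{equation*}
with $p(\xi) = (2\pi)^{2m}\sum_{j=1}^{n-1}\xi_j^{2m} \asymp |\xi|^{2m}$ for $|\xi|\ge 1$. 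Testing against $\overline{\hat\varphi(\xi,\cdot)}$ and $\overline{\pd_y^{2m}\hat\varphi(\xi,\cdot)}$ and integrating by parts $m$ times in $y$, all boundary contributions vanish thanks to the prescribed Neumann conditions (this is the same cancellation mechanism as in Lemma~\ref{lemma on integration by parts}), and I expect to extract the pointwise-in-$\xi$ bound
\begin{equation*}
    p(\xi)^2\tnorm{\hat\varphi(\xi,\cdot)}_{L^2(0,b)}^2 + \tnorm{\pd_y^{2m}\hat\varphi(\xi,\cdot)}_{L^2(0,b)}^2 \lesssim \tnorm{\hat\psi(\xi,\cdot)}_{L^2(0,b)}^2 + \tnorm{\hat\varphi(\xi,\cdot)}_{L^2(0,b)}^2,
\end{equation*}
where the $L^2$ term on the right compensates for the degeneracy of $p(\xi)$ as $\xi\to 0$. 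Integrating over $\xi$, invoking Plancherel, and employing the slicing characterization $H^{2m}(\Omega) \asymp H^{2m}(\R^{n-1};L^2(0,b)) \cap L^2(\R^{n-1};H^{2m}(0,b))$ from~\eqref{it reeks in here}, with standard interpolation to handle the mixed derivatives of order $2m$, yields the base case.

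For the inductive step I would bootstrap by alternating between tangential differentiation and the equation itself. For any multi-index $\al$ with $\al\cdot e_n = 0$ and $|\al|\le k$, the function $\pd^\al\varphi$ solves the same boundary value problem with datum $\pd^\al\psi$, since the Neumann conditions are unaffected by tangential differentiation; the base case therefore gives $\tnorm{\pd^\al\varphi}_{H^{2m}} \lesssim \tnorm{\pd^\al\varphi}_{L^2} + \tnorm{\pd^\al\psi}_{L^2}$, which upon summation controls all derivatives of $\varphi$ of total order $\le k+2m$ whose normal order is at most $2m$. To recover the remaining derivatives $\pd^\be\varphi$ with $\be_n > 2m$, I rearrange the PDE as
\begin{equation*}
    \pd_n^{2m}\varphi = (-1)^m\psi - \sum_{j=1}^{n-1}\pd_j^{2m}\varphi,
\end{equation*}
differentiate by $\pd^{\be'}$ for $\be' = \be - 2m e_n$, and iterate on $\be_n - 2m$; each iteration trades $2m$ normal derivatives for $2m$ tangential derivatives of $\varphi$ plus one derivative of $\psi$, both of which have already been controlled. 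Interpolation in the $L^2$ term, $\tnorm{\varphi}_{L^2}\le\tnorm{\varphi}_{L^2}$ trivially, completes the bound.

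The main technical obstacle is the careful derivation of the pointwise $\xi$-estimate in the base case: on the one hand the coercive weight $p(\xi)$ vanishes on a neighborhood of $\xi=0$, which is why the $L^2$ term on the right-hand side of~\eqref{the a priori estimate for Lm} is unavoidable; on the other hand the intermediate Sobolev norms $\tnorm{\xi^\al\pd_y^j\hat\varphi}_{L^2}$ with $|\al|+j=2m$ must be extracted from the pure tangential and pure normal endpoints via an interpolation that respects the slicing decomposition. Once this is in place, the remainder of the argument is essentially bookkeeping.
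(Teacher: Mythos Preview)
Your proposal is correct and shares the same inductive architecture as the paper: establish the $k=0$ case, bootstrap by applying it to tangential derivatives $\pd^\al\varphi$ (which satisfy the same boundary value problem with datum $\pd^\al\psi$), recover the high normal derivatives from the rearranged equation $\pd_n^{2m}\varphi=(-1)^m\psi-\sum_{j<n}\pd_j^{2m}\varphi$, and finally replace $\tnorm{\varphi}_{H^k}$ by $\tnorm{\varphi}_{L^2}$ on the right via interpolation and absorption.

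The only genuine difference is how the base case is obtained. The paper packages it through the bilinear form $B_m$ of~\eqref{copy that pasta is it linguini or roasted tolueney}: one tests $L_m\varphi=\psi$ against $\varphi$, invokes Lemma~\ref{lemma on integration by parts} to kill the boundary terms and arrive at $B_m(\varphi,\varphi)=\int_\Omega\psi\varphi$, and then applies the G\aa rding inequality of Lemma~\ref{lemma on garding inequality for Bm}. You instead work directly on the Fourier side, testing the parametrized ODE against both $\hat\varphi$ and $\pd_y^{2m}\hat\varphi$ to obtain a pointwise-in-$\xi$ estimate, and then reassemble via the slicing characterization~\eqref{it reeks in here}. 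The underlying cancellation mechanism (the Neumann conditions $\pd_n^m\varphi=\cdots=\pd_n^{2m-1}\varphi=0$ annihilate all boundary terms in the $m$-fold integration by parts) is the same in both arguments; your route is more self-contained, while the paper's is shorter because the G\aa rding lemma is already in hand for other purposes in Appendix~\ref{appendix_elliptic_tools}. Two cosmetic remarks: $p(\xi)$ vanishes only at $\xi=0$, not on a neighborhood, though this does not affect your argument; and your closing sentence about interpolation is garbled --- what you need (and what the paper spells out) is $\tnorm{\varphi}_{H^k}\lesssim\kappa\tnorm{\varphi}_{H^{k+2m}}+\kappa^{-k/2m}\tnorm{\varphi}_{L^2}$ for small $\kappa$, followed by absorption.
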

	\begin{proof}
		We begin by proving the case $k=0$. By taking the $L^2$-inner product of the equation with $\varphi$ and utilizing Lemma~\ref{lemma on integration by parts},  we are left with
		\begin{equation}
			B_m(\varphi,\varphi)=\int_{\Omega}\psi\varphi.
		\end{equation}
		For the right hand side we use the Cauchy-Schwarz inequality, while for the left hand side we use the G\aa rding inequality from Lemma~\ref{lemma on garding inequality for Bm}.  This yields the inequality
		\begin{equation}
			\tnorm{\varphi}_{H^{2m}}\lesssim\tnorm{\varphi,\psi}_{L^2\times L^2}.
		\end{equation}
		
		We now induct on $k \in \N$.  We have already established the base case, $k=0$. Suppose that $k\in\N$ and that whenever $\varphi\in H^{k+2m}(\Omega)$ and $\psi\in H^k(\Omega)$ are related via the equations~\eqref{PDE for Lm with NNC}, we have the a priori estimate
		\begin{equation}
			\tnorm{\varphi}_{H^{k+2m}}\lesssim\tnorm{\varphi,\psi}_{H^k\times H^k}.
		\end{equation}
		We now prove the above necessarily holds for $k+1$ as well. Assume $\varphi\in H^{1+k+2m}(\Omega)$ and $\psi\in H^{1+k}(\Omega)$ satisfy~\eqref{PDE for Lm with NNC}. Let $j\in\tcb{1,\dots,n-1}$ and apply $\pd_j$ to these equations. We then see that
		\begin{equation}
			\begin{cases}
				L_m\pd_j\varphi=\pd_j\psi&\text{in }\Omega,\\
				\pd_n^m\pd_j\varphi=\cdots=\pd_n^{2m-1}\pd_j\varphi=0&\text{on }\pd\Omega.
			\end{cases}
		\end{equation}
		Hence, we can apply the inductive hypothesis and sum over $j$ to arrive at the bounds
		\begin{equation}\label{this guys got is 1}
			\sum_{j=1}^{n-1}\tnorm{\pd_j\varphi}_{H^{k+2m}}\lesssim\sum_{j=1}^{n-1}\tnorm{\pd_j\varphi,\pd_j\psi}_{H^k\times H^k}\le\tnorm{\varphi,\psi}_{H^{1+k}\times H^{1+k}}.
		\end{equation}
		On the other hand, we can rearrange the PDE satisfied by $\varphi$ and $\psi$ to see that $(-1)^m\pd_n^{2m}\varphi=\psi-(-1)^m\sum_{j=1}^{n-1}\pd_j^{2m}\varphi$. Thus,
		\begin{equation}\label{this guys got is 2}
			\tnorm{\pd_n^{2m}\varphi}_{H^{1+k}}\le\tnorm{\psi}_{H^{1+k}}+\sum_{j=1}^{n-1}\tnorm{\pd_j^{2m}\varphi}_{H^{1+k}}\le\tnorm{\varphi}_{H^{1+k}}+\sum_{j=1}^{n-1}\tnorm{\pd_j\varphi}_{H^{k+2m}}.
		\end{equation}
		Combining~\eqref{this guys got is 1} and~\eqref{this guys got is 2} then shows that
		\begin{equation}
			\sqrt{B_{1+k+2m}(\varphi,\varphi)}\le\sum_{j=1}^{n-1}\tnorm{\pd_j\varphi}_{H^{k+2m}}+\tnorm{\pd_n^{2m}\varphi}_{H^{1+k}}\lesssim\tnorm{\varphi,\psi}_{H^{1+k}\times H^{1+k}}.
		\end{equation}
		Finally, we use G\aa rding's inequality, Lemma~\ref{lemma on garding inequality for Bm}, to see that
		\begin{equation}
			\tnorm{\varphi}_{H^{1+k+2m}}\lesssim\tnorm{\varphi,\psi}_{H^{1+k}\times H^{1+k}}.
		\end{equation}
		This completes the induction argument. 
		
		From the induction, we know that a weaker version of \eqref{the a priori estimate for Lm} holds, namely the same estimate but with $\tnorm{\varphi}_{L^2}$ replaced by $\tnorm{\varphi}_{H^k}$.  However, we can readily derive the desired bound from this weaker one by using interpolation, Young's inequality, and absorption: for $\kappa\in\R^+$ we have that
		\begin{equation}
			\tnorm{\varphi}_{H^k}\lesssim\tnorm{\varphi}_{L^2}^{\f{2m}{k+2m}}\tnorm{\varphi}_{H^{k+2m}}^{\f{k}{k+2m}}\lesssim\kappa\tnorm{\varphi}_{H^{k+2m}}+\kappa^{-\f{k}{2m}}\tnorm{\varphi}_{L^2}.
		\end{equation}
	\end{proof}
	
	If we add a $0^{\m{th}}$ term to $L_m$, we obtain an existence theory.
	
	\begin{lem}[Existence theory for $L_m$]\label{elliptic theory lemma for existence}
		Let $m\in\N^+$ and $\kappa \in \R^+$. The operator
		\begin{equation}
			\kappa +L_m:\tcb{\varphi\in H^{2m}(\Omega)\;:\;\m{Tr}_{\pd\Omega}(\pd_n^m\varphi)=\cdots=\m{Tr}_{\pd\Omega}(\pd_n^{2m-1}\varphi)=0}\to L^2(\Omega)
		\end{equation}
		is a Banach isomorphism.
	\end{lem}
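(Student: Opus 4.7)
The plan is to prove this via a standard variational argument: set up the natural bilinear form, apply Lax-Milgram to obtain weak solutions, then use elliptic regularity to promote to strong solutions and recover the Neumann boundary conditions. I would define the bilinear form $a : H^m(\Omega)\times H^m(\Omega)\to\R$ via $a(\varphi,\psi)=\kappa\int_\Omega\varphi\psi+B_m(\varphi,\psi)$, where $B_m$ is the form from~\eqref{copy that pasta is it linguini or roasted tolueney}. This is manifestly bounded. The integration by parts identity of Lemma~\ref{lemma on integration by parts} shows that a solution $\varphi\in H^{2m}(\Omega)$ to $a(\varphi,\psi)=\int_\Omega f\psi$ for all $\psi\in H^m(\Omega)$ automatically produces, via appropriate choices of test functions $\psi$, both the equation $(\kappa+L_m)\varphi=f$ and the natural Neumann boundary conditions $\partial_n^m\varphi=\cdots=\partial_n^{2m-1}\varphi=0$ on $\partial\Omega$.

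The key observation for Lax-Milgram is coercivity of $a$ on $H^m(\Omega)$ for every $\kappa>0$. G\aa rding's inequality from Lemma~\ref{lemma on garding inequality for Bm} (with $\Lambda=1$) yields constants $c,C\in\R^+$ with $c\tnorm{\varphi}_{H^m}^2\le B_m(\varphi,\varphi)+C\tnorm{\varphi}_{L^2}^2$ for all $\varphi\in H^m(\Omega)$. Consequently, for any $t\in[0,1]$,
\begin{equation}
a(\varphi,\varphi)=\kappa\tnorm{\varphi}_{L^2}^2+B_m(\varphi,\varphi)\ge tc\tnorm{\varphi}_{H^m}^2+(\kappa-tC)\tnorm{\varphi}_{L^2}^2+(1-t)B_m(\varphi,\varphi).
\end{equation}
Choosing $t=\min\tcb{1,\kappa/(2C)}$ yields $a(\varphi,\varphi)\gtrsim\tnorm{\varphi}_{H^m}^2$ with a constant depending on $\kappa$, $c$, $C$, and $m$, so Lax-Milgram provides, for each $f\in L^2(\Omega)\subset(H^m(\Omega))^\ast$, a unique weak solution $\varphi\in H^m(\Omega)$ with $a(\varphi,\psi)=\int_\Omega f\psi$ for all test functions. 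Coercivity also delivers injectivity of $\kappa+L_m$ on the stated domain: any element of the kernel is a weak solution with $f=0$, whence $\varphi=0$.

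The main work is then the regularity promotion step: starting from $\varphi\in H^m(\Omega)$ weakly solving $a(\varphi,\cdot)=\int_\Omega f\cdot$ with $f\in L^2(\Omega)$, I would upgrade to $\varphi\in H^{2m}(\Omega)$ satisfying~\eqref{PDE for Lm with NNC}. Since both boundary components $\Sigma$ and $\Sigma_0$ are flat (copies of $\R^{n-1}$) and translation is available in the full $(n-1)$-dimensional tangential directions, the cleanest approach is the partial Fourier transform in the tangential variable $x\in\R^{n-1}$, entirely analogous to the ODE construction used in the proof of Proposition~\ref{Bogovskii main}. Tangential Fourier transforming $(\kappa+L_m)\varphi=f$ reduces the problem, for a.e.\ $\xi\in\R^{n-1}$, to the $2m$-th order constant-coefficient ODE
\begin{equation}
(-1)^m\partial_n^{2m}\widehat\varphi(\xi,\cdot)+\bp{\kappa+(-1)^m\ssum{j=1}{n-1}(2\pi\ii\xi_j)^{2m}}\widehat\varphi(\xi,\cdot)=\widehat f(\xi,\cdot)\text{ on }(0,b)
\end{equation}
with the homogeneous Neumann conditions $\partial_n^m\widehat\varphi=\cdots=\partial_n^{2m-1}\widehat\varphi=0$ at $y\in\tcb{0,b}$. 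Energy identities for this ODE (multiplying by $\overline{\widehat\varphi}$ and integrating by parts as in Lemma~\ref{lemma on integration by parts}) produce the pointwise-in-$\xi$ estimate $\sum_{j=0}^{2m}\tbr{\xi}^{2(2m-j)}\int_0^b|\partial_n^j\widehat\varphi(\xi,y)|^2\;\m{d}y\lesssim\int_0^b|\widehat f(\xi,y)|^2\;\m{d}y$, which after integration over $\xi\in\R^{n-1}$ and application of Parseval gives $\varphi\in H^{2m}(\Omega)$ with the required bound. This simultaneously establishes surjectivity and the a priori estimate $\tnorm{\varphi}_{H^{2m}}\lesssim\tnorm{f}_{L^2}$ (which alternatively could be read off from Lemma~\ref{lem on a priori estimates for Lm} applied to $L_m\varphi=f-\kappa\varphi$ once regularity is known). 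Combined with the earlier injectivity, the bounded inverse theorem completes the proof.

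The main obstacle is the regularity step, where one must be careful about two points: first, the nontrivial Neumann boundary conditions involve derivatives of order $m$ through $2m-1$, so tangential regularity alone does not immediately produce them --- one has to read them off from the variational identity via Lemma~\ref{lemma on integration by parts} after knowing $\varphi\in H^{2m}$; second, the ODE construction requires verifying that the $\xi$-uniform estimates have the right $\tbr{\xi}$-weights so that integration over $\xi$ actually yields an $H^{2m}(\Omega)$ bound rather than just an $L^2\cdot H^{2m}$ bound. Both issues are handled by the same ODE energy argument used in the proof of Proposition~\ref{Bogovskii main}, with the coercive term $\kappa$ replacing the role of $|\xi|^2$ for low tangential frequencies, which actually simplifies matters compared to the divergence-problem setting.
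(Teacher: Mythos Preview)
Your proposal is correct and follows essentially the same strategy as the paper: establish coercivity of the bilinear form $a(\varphi,\psi)=\kappa\int_\Omega\varphi\psi+B_m(\varphi,\psi)$ on $H^m(\Omega)$ via G\aa rding's inequality together with the nonnegativity of $B_m$, obtain weak solutions by Lax--Milgram (the paper phrases this as Riesz representation, which amounts to the same thing for this symmetric coercive form), promote to $H^{2m}(\Omega)$ by elliptic regularity, and read off the Neumann conditions from the variational identity via Lemma~\ref{lemma on integration by parts}.

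The one place you diverge is the regularity promotion: the paper invokes horizontal difference quotients (which need no cutoffs since $\Omega$ is translation invariant in the tangential directions), while you take the tangential Fourier transform and reduce to a $\xi$-parametrized family of $2m$-th order ODEs on $(0,b)$. These are essentially dual formulations of the same idea for a flat strip, and both yield the required $\xi$-weighted estimates. Your ODE route has the minor advantage that the Neumann conditions are already encoded in the $1$D weak formulation, so they come out automatically once $\hat\varphi(\xi,\cdot)\in H^{2m}(0,b)$ is established; the difference-quotient route requires the separate integration-by-parts step you also describe. Either way the argument is complete.
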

	\begin{proof}
		The proof of Lemma~\ref{lemma on garding inequality for Bm}, paired with the semi-definiteness of $B_m$, shows that the symmetric bilinear map
		\begin{equation}
			H^m(\Omega)\times H^{m}(\Omega)\ni(\varphi_0,\varphi_1)\mapsto 
			\int_{\Omega} \kappa  \varphi_0 \varphi_1
			+ \sum_{j=1}^n\int_{\Omega}\pd_j^m\varphi_0\pd_j^m\varphi_1 \in\R
		\end{equation}
		is bounded and coercive, and thus defines an inner-product on $H^m(\Omega)$.  The Riesz representation theorem then provides for the existence of unique weak solutions, but then standard elliptic regularity arguments (which are elementary in $\Omega$ since we can use horizontal difference quotients without cutoffs) allow us to promote the regularity of weak solutions to $H^{2m}(\Omega)$, provided the data lie in $L^2(\Omega)$.  In turn, by integrating by parts, we verify that such weak solutions satisfy the boundary conditions.  From this we readily deduce that the operator is an isomorphism between the stated spaces.
	\end{proof}
	
	We next record a result about inverting a particular pseudodifferential operator.
	
	\begin{lem}[Simple symbol inversion]\label{lemma on simple symbol inversion}
		Fix $s,\sig\in\R$ with $\sig\ge0$ and $M,N\in\N^+$. Suppose that $\psi\in H^s(\R^d)$. There exists a unique $\varphi\in H^{s+\sig}(\R^d)$ such that 
		\begin{equation}\label{Maryland}
			\tp{N^{-1}(M^{-1}+(-\Delta)^{\sig/2})-\pd_1}\varphi=\psi.
		\end{equation}
		Moreover we have the estimate
		\begin{equation}
			\tnorm{(NM)^{-1}\varphi,N^{-1}|\grad|^\sig\varphi}_{H^s\times H^{s}}\lesssim\tnorm{\psi}_{H^s}
		\end{equation}
		for implicit constants depending only on $\sig$.
	\end{lem}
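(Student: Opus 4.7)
The plan is to treat this as a straightforward Fourier multiplier problem and to extract the two estimates by noting that the symbol has a real part controlling $M^{-1}+|\xi|^\sigma$ and an imaginary part controlling $\xi_1$, so its modulus dominates each of the two quantities on the left side of the desired estimate individually.

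First, I would take the Fourier transform of \eqref{Maryland} to see that any solution must satisfy, pointwise in $\xi \in \R^d$,
\begin{equation}
\mathfrak{m}(\xi) \mathscr{F}[\varphi](\xi) = \mathscr{F}[\psi](\xi), \quad \mathfrak{m}(\xi) = N^{-1}\sp{M^{-1} + |2\pi\xi|^{\sigma}} - 2\pi\ii \xi_1.
\end{equation}
Since the real part of $\mathfrak{m}(\xi)$ is at least $(NM)^{-1} > 0$ for every $\xi$, the symbol is nowhere vanishing and uniqueness follows immediately from this. For existence, I would define $\varphi$ by setting $\mathscr{F}[\varphi](\xi) = \mathscr{F}[\psi](\xi) / \mathfrak{m}(\xi)$ and verify that $\varphi \in H^{s+\sigma}(\R^d)$. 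For this one notes that $\tbr{\xi}^\sigma / |\mathfrak{m}(\xi)|$ is uniformly bounded (by $N$ for $|\xi| \ge 1$, using $|\mathfrak{m}(\xi)| \ge N^{-1}|2\pi\xi|^\sigma$, and by a constant times $NM$ for $|\xi| \le 1$, using $|\mathfrak{m}(\xi)| \ge (NM)^{-1}$), so that Plancherel gives $\varphi \in H^{s+\sigma}(\R^d)$ with a (very non-uniform) bound; this is enough to make sense of \eqref{Maryland} and justify the integration by parts in the subsequent estimates.

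The two uniform estimates then follow from the two lower bounds
\begin{equation}
|\mathfrak{m}(\xi)| \ge \m{Re}\,\mathfrak{m}(\xi) = N^{-1}\sp{M^{-1} + |2\pi\xi|^{\sigma}} \ge \max\tcb{(NM)^{-1}, N^{-1}|2\pi\xi|^{\sigma}}
\end{equation}
applied separately. Dividing through and multiplying by $\tbr{\xi}^s$, I would conclude
\begin{equation}
\tbr{\xi}^s(NM)^{-1} |\mathscr{F}[\varphi](\xi)| + \tbr{\xi}^s N^{-1}|2\pi\xi|^\sigma |\mathscr{F}[\varphi](\xi)| \le 2 \tbr{\xi}^s |\mathscr{F}[\psi](\xi)|,
\end{equation}
then square and integrate over $\R^d$ and invoke Plancherel on both sides. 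Using the standard equivalence $\tnorm{|\grad|^\sigma f}_{H^s} \asymp \tnorm{|2\pi\cdot|^\sigma \mathscr{F}[f]}_{L^2(\R^d, \tbr{\xi}^{2s} \m{d}\xi)}$ (whose constants depend only on $\sigma$) yields the stated estimate. No step here is really an obstacle; the only thing to watch is that the constants come out independent of $M$ and $N$, which is exactly what the symbol inequality above delivers.
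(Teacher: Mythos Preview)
Your proof is correct and essentially identical to the paper's: both compute the Fourier symbol, observe that its real part is bounded below by $(NM)^{-1}$ and by $N^{-1}|2\pi\xi|^\sigma$, and read off the two estimates from the resulting pointwise bound on $1/|\mathfrak{m}(\xi)|$. The paper packages the symbol slightly differently (writing the real part as $(MN)^{-1}\tbr{2\pi\sqrt{M|\xi|^\sigma}}^2$) but the content is the same.
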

	\begin{proof}
		The symbol of the pseudodifferential operator in~\eqref{Maryland} is
		\begin{equation}
			\R^d\ni\xi\mapsto a(\xi)=(MN)^{-1}\tbr{2\pi\sqrt{M|\xi|^\sig}}^2-2\pi\ii\xi\cdot e_1\in\C.
		\end{equation}
		The stated estimate then follows readily from the elementary estimate  
		\begin{equation}
			\f{1}{|a(\xi)|}\le\f{MN}{\tbr{2\pi\sqrt{M|\xi|^\sig}}^2}\le\f{N}{2\pi}\min\scb{M,|\xi|^{-\sig}} \text{ for } \xi\in\R^d.
		\end{equation}
	\end{proof}

\subsection{Dissipation calculation for traveling compressible Navier-Stokes}\label{lift my head, im still yawning}

This subsection explores the role of forcing in the traveling wave problem~\eqref{The nonlinear equations in the right form}. We will require the following density result.

\begin{lem}[Density of smooth functions with bounded support]\label{lem on density of do}
    For any  $s \in \{-1,0\} \cup \R^+$,  the subspace
    \begin{equation}\label{Xs_dense_subspace}
        \{(q,u,\eta)\in C_c^\infty(\Bar{\Omega})\times C^\infty_c(\Bar{\Omega};\R^n)\times C^\infty_c(\R^{n-1})\;:\; 
        \m{Tr}_{\Sigma_0}u=0, 
        \text{ and } \m{Tr}_\Sigma u\cdot e_n+\pd_1\eta=0 \}
    \end{equation}
    is dense in $\X^s$, as defined by \eqref{domain banach scales}.
\end{lem}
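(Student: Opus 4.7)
The argument proceeds in three stages centered on the kinematic constraint $\m{Tr}_\Sigma u\cdot e_n+\partial_1\eta=0$: a density step in the anisotropic space for $\eta$, a boundary-compatible lift that transfers the constraint onto the velocity, and classical Sobolev density for the remaining homogeneous-trace components. The decisive difficulty is Stage~1, because anisotropic elements may be bounded but non-decaying at low frequency.

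\textbf{Stage 1 (anisotropic density of $C^\infty_c(\Sigma)$ in $\mathcal{H}^{5/2+s}(\Sigma)$).} For $n=2$ this is immediate since $\mathcal{H}^s(\R)=H^s(\R)$. For $n\ge 3$, split $\eta=\Uppi^1_{\m{L}}\eta+\Uppi^1_{\m{H}}\eta$ via Proposition~\ref{proposition on frequency splitting}: the high-mode piece lies in $H^{5/2+s}(\Sigma)$ and is approximated by $C^\infty_c(\Sigma)$ through standard mollification and horizontal truncation. The low-mode piece $\Uppi^1_{\m{L}}\eta$ is smooth and bounded (Proposition~\ref{proposition on frequency splitting}) but need not decay, so the naive cutoff must be justified indirectly. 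Using the spatial characterization of Proposition~\ref{proposition on spatial characterization of anisobros}, convergence of the truncated sequence $\chi_R\Uppi^1_{\m{L}}\eta$ to $\Uppi^1_{\m{L}}\eta$ in $\mathcal{H}^0$ reduces, after the product rule, to showing that the commutator $(\nabla\chi_R)\Uppi^1_{\m{L}}\eta$ vanishes in $L^2(\Sigma)$ and a parallel expression vanishes in $\dot H^{-1}(\Sigma)$. These are controlled by combining the cutoff scaling $\|\nabla\chi_R\|_{L^p(\Sigma)}\lesssim R^{-1+(n-1)/p}$ with the $L^q$-inclusion of Remark~\ref{remark_about_Lp_inclusion_for_anisos}, which gives $\Uppi^1_{\m{L}}\eta\in L^q(\Sigma)$ for every $q>2n/(n-2)$; choosing H\"older conjugates $1/p+1/q=1/2$ with $p>n-1$ drives the commutator to zero as $R\to\infty$.

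\textbf{Stage 2 (lifting the kinematic constraint).} Given $\eta_k\in C^\infty_c(\Sigma)$ from Stage~1 and a fixed $\psi\in C^\infty([0,b])$ with $\psi(0)=0$, $\psi(b)=1$, define the explicit smooth compactly supported lift $w_k(x,y)=-\partial_1\eta_k(x)\,\psi(y)\,e_n\in C^\infty_c(\bar\Omega;\R^n)$, which satisfies $\m{Tr}_{\Sigma_0}w_k=0$ and $\m{Tr}_\Sigma w_k\cdot e_n=-\partial_1\eta_k$. Employ the splitting $u=u_0+\mathcal{E}_1(0,-\partial_1\eta)$ from the proof of Lemma~\ref{lem on tameness of domain and codomain}, where $u_0\in H^{2+s}(\Omega;\R^n)$ satisfies the homogeneous traces $\m{Tr}_{\Sigma_0}u_0=0$ and $\m{Tr}_\Sigma u_0\cdot e_n=0$, and set $\bar u_{0,k}:=u_0+[\mathcal{E}_1(0,-\partial_1\eta_k)-w_k]$, which likewise obeys these homogeneous traces. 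Continuity of $\mathcal{E}_1:H^{3/2+s}(\Sigma)\to H^{2+s}(\Omega;\R^n)$ combined with $\partial_1\eta_k\to\partial_1\eta$ in $H^{3/2+s}(\Sigma)$ then yields
\begin{equation*}
u-(\bar u_{0,k}+w_k)=\mathcal{E}_1(0,-\partial_1\eta)-\mathcal{E}_1(0,-\partial_1\eta_k)\to 0\text{ in }H^{2+s}(\Omega;\R^n).
\end{equation*}

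\textbf{Stage 3 (homogeneous-trace density and assembly).} The smooth compactly supported subspace of $H^{2+s}(\Omega;\R^n)$ vector fields obeying $\m{Tr}_{\Sigma_0}v=0$ and $\m{Tr}_\Sigma v\cdot e_n=0$ is dense in its $H^{2+s}$ analogue via a standard reflection-mollification-truncation argument: a vertical partition of unity separates neighborhoods of $\Sigma_0$ and $\Sigma$, each component is extended by odd reflection across whichever boundary carries a Dirichlet condition for it, mollified inside the extended slab, and horizontally truncated. Choose $u_{0,k}$ in this smooth subspace with $\|\bar u_{0,k}-u_{0,k}\|_{H^{2+s}}<1/k$, and independently $q_k\in C^\infty_c(\bar\Omega)$ with $\|q_k-q\|_{H^{1+s}}<1/k$ via classical Sobolev density. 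The triple $(q_k,u_{0,k}+w_k,\eta_k)$ then belongs to the subspace \eqref{Xs_dense_subspace} and converges to $(q,u,\eta)$ in $\X^s$. The bulk of the work is in Stage~1, where the low-frequency non-decay is rescued only by the $L^q$-embedding hidden in the structure of the anisotropic norm; the remaining stages are essentially careful bookkeeping of boundary traces under the explicit lift $w_k$ and the harmonic-type lift $\mathcal{E}_1$.
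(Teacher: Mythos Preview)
Your overall architecture is sound, and Stages 2--3 are correct, but Stage 1 contains a gap and the whole route is considerably more involved than the paper's.

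\textbf{The gap in Stage 1.} The claim that the $\dot H^{-1}$ piece is handled by ``a parallel expression'' controlled through the same cutoff scaling $\|\nabla\chi_R\|_{L^p}\lesssim R^{-1+(n-1)/p}$ and the $L^q$-inclusion does not go through. Your H\"older argument with $1/p+1/q=1/2$ produces an $L^2$ bound on $(\partial_1\chi_R)\Uppi^1_{\m L}\eta$, but $L^2(\Sigma)$ does not embed in $\dot H^{-1}(\Sigma)$; the relevant embedding is $L^{2(n-1)/(n+1)}\hookrightarrow\dot H^{-1}$ (for $n\ge 4$), and repeating H\"older at that target forces $p<(n-1)n/(2n-1)<n-1$, so the cutoff norm $R^{-1+(n-1)/p}$ grows rather than decays. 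The companion term $(\chi_R-1)\partial_1\Uppi^1_{\m L}\eta$ is no easier, since multiplication by a cutoff does not obviously act boundedly on $\dot H^{-1}$. A clean repair is to first remove a neighborhood of the frequency origin: for $\delta>0$ set $f_\delta=\Uppi^\delta_{\m H}\Uppi^1_{\m L}\eta$, which has Fourier support in the annulus $\{\delta\le|\xi|\le 1\}$ and hence lies in $H^\infty\subset L^2$; one checks directly from the Fourier-side norm that $f_\delta\to\Uppi^1_{\m L}\eta$ in $\mathcal H^0$ by dominated convergence. Spatial truncation of $f_\delta$ is then routine, and a diagonal argument finishes.

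\textbf{Comparison with the paper.} The paper avoids the entire Stage 1 computation by simply citing Leoni--Tice for the density of $H^r$ in $\mathcal H^r$, whence $C^\infty_c$ density follows immediately. For the boundary conditions, the paper does not split $u$ into a homogeneous-trace part plus a lift and then approximate each separately; instead it first approximates $(q,u,\eta)$ by unconstrained smooth compactly supported triples $(\tilde q_N,\tilde u_N,\tilde\eta_N)$ in $\X_s$, and then applies a single support-preserving correction
\[
\Pi_\nu(q,u,\eta)=(q,\,u-\psi_\nu\mathcal E_1(\m{Tr}_{\Sigma_0}u,\,\m{Tr}_\Sigma u\cdot e_n+\partial_1\eta),\,\eta),
\]
with $\psi_\nu$ a horizontal cutoff. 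Convergence of $\Pi_{R_N}(\tilde q_N,\tilde u_N,\tilde\eta_N)$ to $(q,u,\eta)$ follows in one line because the target already satisfies the constraints, so the $\mathcal E_1$ argument is applied to differences that vanish in the limit. Your route (explicit lift $w_k$, then separate homogeneous-trace density via reflection/mollification) works but requires a nontrivial density lemma in Stage 3 that the paper's projection map sidesteps entirely.
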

\begin{proof}
    First, we note that Theorem 5.2 and the second item of Theorem 5.6 in Leoni and Tice~\cite{leoni2019traveling} imply that  $H^r(\R^{n-1})\subset \mathcal{H}^r(\R^{n-1})$ is a continuous and  dense inclusion for $r \ge 0$.  On the other hand, $C^\infty_c(\R^{n-1})$ is dense in $H^r(\R^{n-1})$. These facts combine to give the density of $C^\infty_c(\R^{n-1})$ in $\mathcal{H}^r(\R^{n-1})$.  In turn, we deduce that
    \begin{equation}\label{tomorrow}
     C^\infty_c(\Bar{\Omega})\times C^\infty_c(\Bar{\Omega};\R^n)\times C^\infty_c(\R^{n-1}) \subset \X_s
    \end{equation}
    is a dense inclusion, where the latter space is defined by~\eqref{Illinois}.

    It remains to handle the boundary conditions that define the subspace $\X^s \subset \X_s$. For this we will modify the map $\rho_{\X}$ from Lemma~\ref{lem on tameness of domain and codomain}. Let $\psi\in C^\infty_c(\R)$ be such that $0\le\psi\le 1$, $\supp(\psi) \subseteq (-2,2)$, and $\psi=1$ on $(-1,1)$.  For $1\le\nu\in\R$ we define  $\psi_\nu\in C^\infty_c(\Bar{\Omega})$ via $\psi_\nu(x,y)=\psi(|x/\nu|^2)$ for $(x,y)\in\R^{n-1}\times(0,b)$.  Next, we define $\Pi_\nu:\X_s\to\X_s$ via
    \begin{equation}
        \Pi_\nu(q,u,\eta)=(q,u-\psi_\nu\mathcal{E}_1(\m{Tr}_{\Sigma_0}u,\m{Tr}_{\Sigma}(u\cdot e_n)+\pd_1\eta),\eta),
    \end{equation}
    where $\mathcal{E}_1$ is the map defined in the proof of Lemma~\ref{lem on tameness of domain and codomain}. In addition to being linear and bounded (with  $\sup_{\nu \ge 1} \norm{\Pi_\nu}_{\mathcal{L}(\X_s)} < \infty$), $\Pi_\nu$ has the property that if $(q,u,\eta)\in \X_s$ is smooth and supported in a ball of radius $\nu$ (appropriately interpreted for each element of the tuple), then $\Pi_\nu(q,u,\eta)$ belongs to $\X^s$ and remains smooth and supported in a ball of radius $2\nu$.

    Fix $(q,u,\eta)\in\X^s$. From the dense inclusion~\eqref{tomorrow}, we are assured of the existence of a sequence $\tcb{(\tilde{q}_N,\tilde{u}_N,\tilde{\eta}_N)}_{N=1}^\infty\subset C^\infty_c(\Bar{\Omega})\times C^\infty_c(\Bar{\Omega};\R^n)\times C^\infty_c(\R^{n-1})$  with the $N^{\m{th}}$ element of the sequence consisting of functions supported in a ball of radius $1\le R_N\in\R^+$ and with the additional property that $(\tilde{q}_N,\tilde{u}_N,\tilde{\eta}_N)\to(q,u,\eta)$ in $\X_s$ as $N\to\infty$. Set $(q_N,u_N,\eta_N)=\Pi_{R_N}(\tilde{q}_N,\tilde{u}_N,\tilde{\eta}_N)$. Thanks to the aforementioned properties of $\Pi_{R_N}$, we know that $\tcb{(q_N,u_N,\eta_N)}_{N=1}^\infty\subset\X^s$ and that this sequence consists of smooth functions with compact support.

    By using the boundary conditions implied by the inclusion $(q,u,\eta)\in\X^s$, we now check that
    \begin{multline}\label{it can hurt you}
        (q,u,\eta)-(q_N,u_N,\eta_N)=(q-\tilde{q}_N,u-\tilde{u}_N,\eta-\tilde{\eta}_N)\\-(0,\psi_{R_N}\mathcal{E}_1(\m{Tr}_{\Sigma_0}(\tilde{u}_N-u),\m{Tr}_{\Sigma}(\tilde{u}_N-u)\cdot e_n+\pd_1(\tilde{\eta}_N-\eta)),0).
    \end{multline}
    Then, in light of the convergence of $\tcb{(\tilde{q}_N,\tilde{u}_N,\tilde{\eta}_N)}_{N=1}^\infty$ to $(q,u,\eta)$, we conclude from \eqref{it can hurt you} that $(q_N,u_N,\eta_N) \to (q,u,\eta)$ in $\X^s$ as $N \to \infty$.  
\end{proof}

We now record an important calculation.

\begin{thm}[Dissipation-power balance for traveling compressible Navier-Stokes]\label{thm on power-dissipation}
Suppose that $\gam\in\R^+$, $\N\ni s\ge1+\tfloor{n/2}$, $(g,f,k)\in\Y^s$, and $(q,u,\eta)\in B_{\X^{1+\tfloor{n/2}}}(0,\rho_{\m{prin}})\cap\X^{1+s}$, where $\rho_{\m{prin}}$ is defined in Theorem~\ref{thm on smooth tameness of the principal part nonlinear operator}.  Further suppose that 
\begin{equation}\label{whole foods}
    \begin{cases}
        \grad\cdot(\sig_{q,\eta}(u-M_{\eta}e_1))=g&\text{in }\Omega,\\
        \gam^2\sig_{q,\eta}M_{\eta}^{-\m{t}}(((u-M_{\eta}e_1)\cdot\grad)(M_\eta^{-1}u))+\sig_{q,\eta}\grad(q+\mathfrak{g}\eta)&\\\quad-\gam M_{\eta}^{-\m{t}}\grad\cdot(\mathbb{S}^{\sig_{q,\eta}}_{\mathcal{A}_\eta}(M_{\eta}^{-1}u)M_{\eta}^{\m{t}})=f&\text{in }\Omega,\\
        -\tp{(P-P_{\m{ext}})\circ\sig_{q,\eta}-\gam\S^{\sig_{q,\eta}}_{\mathcal{A}_\eta}(M_\eta^{-1}u)}M_\eta^{\m{t}}e_n-\varsigma\mathscr{H}(\eta)M_\eta^{\m{t}}e_n=k&\text{in }\Sigma,\\
        u\cdot e_n+\pd_1\eta=0&\text{on }\Sigma,\\
        u=0&\text{on }\Sigma_0.
    \end{cases}
\end{equation}
In other words, $\Psi(q,u,\eta,\gam)=(g,f,k)$, where $\Psi$ is the operator given in~\eqref{Nebraska}. Then
    \begin{multline}\label{DP}
        \int_{\Omega}\f{\gam}{J_\eta}\bp{\f{\upmu(\sig_{q,\eta})}{2}|\mathbb{D}^0_{M_\eta^{\m{t}}}(M_{\eta}^{-1}u)|^2+\uplambda(\sig_{q,\eta})|\grad\cdot u|^2}=\int_{\Omega}f\cdot u+g(\gam^2|M_{\eta}^{-1}u|^2/2+q)\\
        +\mathfrak{g} \int_{\R^{n-1}}\bp{|\grad_{\|}|^{-1}\int_0^bg(\cdot,y)\;\m{d}y}|\grad_{\|}|\eta
        +\int_{\Sigma}k\cdot M_{\eta}^{-1}u,
    \end{multline}
    where for  $w : \Omega \to \R^n$ differentiable and $M : \Omega \to \R^{n \times n}$ we write 
    \begin{equation}
        \mathbb{D}^0_{M}w=\grad w M^{\m{t}}+M\grad w^{\m{t}}-\f{2}{n}(M\grad)\cdot w I.
    \end{equation}
\end{thm}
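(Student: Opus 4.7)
The strategy is to pair the momentum equation in~\eqref{whole foods} with the vector field $u$ itself in $L^2(\Omega;\R^n)$, use the elementary algebraic identity $(M_\eta^{-t}V)\cdot u=V\cdot M_\eta^{-1}u$ to shift all transposed matrices onto the test slot, integrate by parts, and invoke the continuity and kinematic boundary conditions to match each resulting term with its counterpart in~\eqref{DP}. First I would reduce by density: Lemma~\ref{lem on density of do} shows that the smooth compactly supported subspace~\eqref{Xs_dense_subspace} is dense in $\X^{1+s}$; the data $(g,f,k)$ depends continuously on $(q,u,\eta)$ via the tame smoothness of $\Psi$ (Theorem~\ref{thm on smooth tameness of the principal part nonlinear operator}); and every summand of~\eqref{DP} is continuous on $\X^{1+s}$ provided the anomalous gravity summand is interpreted through the $\dot{H}^{-1}(\Sigma)\times\dot{H}^{1}(\Sigma)$ pairing of Proposition~\ref{proposition on spatial characterization of anisobros}. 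On the smooth dense class all integrations by parts are classical and the gravity pairing coincides with the ordinary integral $\mathfrak{g}\int_{\R^{n-1}}\eta\int_0^b g(\cdot,y)\,\m{d}y$.

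With the density reduction in hand, the viscous contribution, after the algebraic shift and integration by parts, yields the LHS dissipation in~\eqref{DP} via $M_\eta^t=J_\eta\mathcal{A}_\eta$, the identity $\mathbb{D}^0_{M_\eta^t}w=J_\eta\mathbb{D}^0_{\mathcal{A}_\eta}w$ for $w=M_\eta^{-1}u$, trace-freeness of the deviatoric part, and the Piola-type relation $(\mathcal{A}_\eta\grad)\cdot w=J_\eta^{-1}\grad\cdot u$. Its $\Sigma_0$-boundary piece vanishes since $u=0$, and its $\Sigma$-boundary piece, processed with the dynamic boundary condition and the identity $M_\eta^t e_n\cdot M_\eta^{-1}u=-\pd_1\eta$ (kinematic), produces the stress term $\int_\Sigma k\cdot M_\eta^{-1}u$, a residual pressure contribution $\int_\Sigma(P-P_{\m{ext}})(\sigma_{q,\eta})\pd_1\eta$ to be cancelled below, and a surface tension piece $\varsigma\int_\Sigma\mathscr{H}(\eta)\pd_1\eta$ that vanishes by translation invariance (the integrand equals $-\pd_1\sqrt{1+|\grad_{\|}\eta|^2}$ after tangential integration by parts). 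For the inertia term, the same algebraic shift produces $\gamma^2\sigma_{q,\eta}(u-M_\eta e_1)\cdot\grad(|M_\eta^{-1}u|^2/2)$, and one integration by parts combined with the continuity equation gives the bulk $-\int\gamma^2 g|M_\eta^{-1}u|^2/2$, with all boundary contributions vanishing from $u|_{\Sigma_0}=0$, $M_\eta e_1\cdot e_n|_{\Sigma_0}=\mathcal{E}(\pd_1\eta)|_{\Sigma_0}=0$, and $(u-M_\eta e_1)\cdot e_n|_\Sigma=0$.

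For the bulk pressure-gravity term I decompose $q+\mathfrak{g}\eta=H(\sigma_{q,\eta})+\mathfrak{g}(\mathfrak{F}_\eta\cdot e_n)$, which comes from~\eqref{sigma_q_eta_def} and $H\circ\varrho(y)=-\mathfrak{g}y$; combined with the thermodynamic relation $\sigma\grad H(\sigma)=\grad P(\sigma)$ this gives $\sigma_{q,\eta}\grad(q+\mathfrak{g}\eta)=\grad(P-P_{\m{ext}})(\sigma_{q,\eta})+\mathfrak{g}\sigma_{q,\eta}e_n+\mathfrak{g}\sigma_{q,\eta}\grad\mathcal{E}\eta$ (with $\eta$ trivially extended in $y$). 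Integration by parts of the pressure piece cancels the residual $\Sigma$-surface pressure from the viscous step and leaves a bulk combination whose identification with $\int gq+\mathfrak{g}\int_{\R^{n-1}}\eta\int_0^b g$ reduces to the residual identity $\int\mathcal{E}\eta\, g+\int y\, g+\int\sigma_{q,\eta}u\cdot e_n+\int\sigma_{q,\eta}\grad\mathcal{E}\eta\cdot u=0$. This in turn follows by multiplying the continuity equation $\grad\cdot(\sigma_{q,\eta}(u-M_\eta e_1))=g$ by $H(\sigma_{q,\eta})=q+\mathfrak{g}(I-\mathcal{E})\eta-\mathfrak{g}y$ and integrating by parts.

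\emph{Main obstacle.} The hard part is verifying the residual identity above. I plan to combine three key structural facts: the direct computation $\grad\cdot(M_\eta e_1)\equiv 0$ from~\eqref{Mississippi}; the commutation $\mathcal{E}\pd_1\eta=\pd_1\mathcal{E}\eta$ together with $\grad\mathcal{E}\eta\cdot M_\eta e_1=\pd_1\mathcal{E}\eta$; and the trace relations $\mathcal{E}\eta|_\Sigma=\eta$, $\mathcal{E}\eta|_{\Sigma_0}=0$ from~\eqref{North Dakota}. Integrating $\int y\, g$ by parts in $y$ via continuity and the vanishing of $(u-M_\eta e_1)\cdot e_n|_{\pd\Omega}$ yields $-\int\sigma_{q,\eta}u\cdot e_n-\int\sigma_{q,\eta}\pd_1\mathcal{E}\eta$, while integrating $\int\sigma_{q,\eta}\grad\mathcal{E}\eta\cdot u$ by parts (using $\grad\cdot(\sigma_{q,\eta}u)=g+M_\eta e_1\cdot\grad\sigma_{q,\eta}$) and then one further integration by parts against $\sigma_{q,\eta}$ yields $-\int\mathcal{E}\eta\, g+\int\sigma_{q,\eta}\pd_1\mathcal{E}\eta+\int_\Sigma\sigma_{q,\eta}\eta\,\pd_1\eta+\int_\Sigma\sigma_{q,\eta}\eta\, u\cdot e_n$. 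The bulk terms cancel identically and the residual collapses to $\int_\Sigma\sigma_{q,\eta}\eta(u\cdot e_n+\pd_1\eta)$, which vanishes by the kinematic boundary condition; the density argument then extends~\eqref{DP} to all admissible $(q,u,\eta)$.
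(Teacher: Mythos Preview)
Your proof is correct and complete, but it takes a genuinely different route through the pressure--gravity term than the paper does. The paper splits the test vector as $\sigma_{q,\eta}u=\sigma_{q,\eta}(u-M_\eta e_1)+\sigma_{q,\eta}M_\eta e_1$: the first piece has divergence $g$ and vanishing normal trace, so one integration by parts against $\grad(q+\mathfrak{g}\eta)$ yields $-\int_\Omega g(q+\mathfrak{g}\eta)$ immediately; for the second piece the paper uses the identity $q+\mathfrak{g}\eta=H(\sigma_{q,\eta})-H\!\circ\!\varrho(\mathfrak{F}_\eta\cdot e_n)$ together with the orthogonality $M_\eta e_1\cdot\grad(\mathfrak{F}_\eta\cdot e_n)=J_\eta\,e_1\cdot e_n=0$ to reduce everything to $\int_\Omega M_\eta e_1\cdot\grad\bigl(P(\sigma_{q,\eta})-P\!\circ\!\varrho(\mathfrak{F}_\eta\cdot e_n)\bigr)$, where the bracketed function lies in $C^\infty_c(\bar\Omega)$ on the dense class because it vanishes wherever $q+\mathfrak{g}\eta=0$. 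One more integration by parts (using $\grad\cdot(M_\eta e_1)=0$) then leaves only a $\Sigma$-boundary term, and its non-cancelling part is the $\pd_1$-derivative of a primitive and hence integrates to zero.

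Your decomposition $\sigma_{q,\eta}\grad(q+\mathfrak{g}\eta)=\grad(P-P_{\mathrm{ext}})(\sigma_{q,\eta})+\mathfrak{g}\sigma_{q,\eta}e_n+\mathfrak{g}\sigma_{q,\eta}\grad\mathcal{E}\eta$ is also valid, and the residual identity you isolate does reduce to zero exactly as you outline; I checked the chain of integrations by parts in your ``main obstacle'' paragraph and every step holds (in particular $M_\eta e_1\cdot\grad\mathcal{E}\eta=\pd_1\mathcal{E}\eta$ and the boundary reduction to $\int_\Sigma\sigma_{q,\eta}\eta(u\cdot e_n+\pd_1\eta)=0$ are correct). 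The cost of your route is two extra integrations by parts and the need to verify that $g$ itself is compactly supported on the dense class (it is, since $M_\eta e_1\cdot\grad\sigma_{q,\eta}$ factors through $\pd_1 q$, $\pd_n q$, and $\pd_1\eta$), whereas the paper's compact-support claim for $P(\sigma_{q,\eta})-P\!\circ\!\varrho(\mathfrak{F}_\eta\cdot e_n)$ is more transparent. What your approach buys is that it never invokes the orthogonality $M_\eta e_1\perp M_\eta^{-\mathrm{t}}e_n$, trading that algebraic observation for a slightly longer but entirely elementary chain of cancellations.
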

\begin{proof}
    First, we claim that it suffices to prove~\eqref{DP} under the additional assumption that $(q,u,\eta)$ belongs to the space in \eqref{Xs_dense_subspace}.  Indeed, assume that the identity holds in this special case, and let $(q,u,\eta)\in B_{\X^{1+\tfloor{n/2}}}(0,\rho_{\m{prin}})\cap\X^{1+s}$ be generic. Thanks to Lemma~\ref{lem on density of do}, there exists a sequence $\tcb{(q_N,u_N,\eta_N)}_{N=1}^\infty$, belonging to the space in \eqref{Xs_dense_subspace}, such that  $(q_N,u_N,\eta_N) \to (q,u,\eta)$ in $\X^{1+s}$ as $N\to\infty$.  Due to this convergence, we may assume without loss of generality that the sequence is contained in $B_{\X^{1+\tfloor{n/2}}}(0,\rho_{\m{prin}})$.  We then rewrite the identity $\Psi(q,u,\eta,\gam)=(g,f,k)$ as $\Psi(q_N,u_N,\eta_N,\gam)=(g_N,f_N,k_N)$, where $g_N = g+(\Psi_1(q_N,u_N,\eta_N)-\Psi_1(q,u,\eta))$, $f_N = f + (\Psi_2(q_N,u_N,\eta_N,\gam)-\Psi_2(q,u,\eta,\gam))$, and $k_N = k + (\Psi_3(q_N,u_N,\eta_N,\gam)-\Psi_3(q,u,\eta,\gam))$. Thanks to the continuity of the map $\Psi$ established in Theorem~\ref{thm on smooth tameness of the principal part nonlinear operator}, we have that $(g_N,f_N,k_N)\to(g,f,k)$ in the space $\Y^s$ as $N\to\infty$.  Using the special case, we  have the identity
        \begin{multline}\label{DPN}
        \int_{\Omega}\f{\gam}{J_{\eta_N}}\bp{\f{\upmu(\sig_{q_N,\eta_N})}{2}|\mathbb{D}^0_{M_{\eta_N}^{\m{t}}}(M_{\eta_N}^{-1}u_N)|^2+\uplambda(\sig_{q_N,\eta_N})|\grad\cdot u_N|^2}
        = \int_{\Omega}f_N\cdot u_N+g_N(\gam^2|M_{\eta_N}^{-1}u_N|^2/2+q_N) \\
        + \int_{\Sigma}k_N\cdot M_{\eta_N}^{-1}u_N
        + \mathfrak{g} \int_{\R^{n-1}}\bp{|\grad_{\|}|^{-1}\int_0^b g_N(\cdot,y)\;\m{d}y}|\grad_{\|}|\eta_N
    \end{multline}
    for every $N$.  Since  $(q_N,u_N,\eta_N)\to(q,u,\eta)$ in $\X^{1+s}$ and $(g_N,f_N,k_N)\to(g,f,k)$ in $\Y^s$ as $N \to \infty$ and $s\ge1+\tfloor{n/2}$, it is a simple matter to send $N \to \infty$ in~\eqref{DPN} to obtain the desired equality~\eqref{DP}.  This completes the proof of the claim.

    We now establish~\eqref{DP} in the special case.  We begin by taking the inner product of the second equation in~\eqref{whole foods} with $u$ in $L^2(\Omega;\R^n)$. Since the vector field $\sig_{q,\eta}(u-M_{\eta}e_1)$ has divergence $g$ and vanishing normal trace, the contribution of the advective derivative is
    \begin{multline}
        \int_{\Omega}\sig_{q,\eta}M_{\eta}^{-\m{t}}(((u-M_{\eta}e_1)\cdot\grad)(M_\eta^{-1}u))\cdot u=-\int_{\Omega}\f{1}{2}\grad\cdot(\sig_{q,\eta}(u-M_\eta e_1))|M_{\eta}^{-1}u|^2\\+\int_{\Sigma}\f{1}{2}\sig_{q,\eta}(u-M_{\eta}e_1)\cdot e_n|M_{\eta}^{-1}u|^2=-\f{1}{2}\int_{\Omega}g|M_{\eta}^{-1}u|^2.
    \end{multline}
    The contribution of the viscous stress term is
    \begin{multline}
        \int_{\Omega}M_{\eta}^{-\m{t}}\grad\cdot(\mathbb{S}^{\sig_{q,\eta}}_{\mathcal{A}_{\eta}}(M_{\eta}^{-1}u)M_{\eta}^{\m{t}})\cdot u=\int_{\Sigma}\mathbb{S}^{\sig_{q,\eta}}_{\mathcal{A}_\eta}(M_{\eta}^{-1}u)M_{\eta}^{\m{t}}e_n\cdot M_{\eta}^{-1}u\\-\int_{\Omega}\mathbb{S}_{\mathcal{A}_\eta}^{\sig_{q,\eta}}(M_{\eta}^{-1}u)M_{\eta}^{\m{t}}:\grad(M_{\eta}^{-1}u)=\int_{\Sigma}\mathbb{S}^{\sig_{q,\eta}}_{\mathcal{A}_\eta}(M_{\eta}^{-1}u)M_{\eta}^{\m{t}}e_n\cdot M_{\eta}^{-1}u\\-\int_{\Omega}\f{1}{J_\eta}\bp{\f{\upmu(\sig_{q,\eta})}{2}|\mathbb{D}^0_{M_\eta^{\m{t}}}(M_{\eta}^{-1}u)|^2+\uplambda(\sig_{q,\eta})|\grad\cdot u|^2},
    \end{multline}
    where in the final identity we have used the fact that $\grad\cdot(J_\eta \mathcal{A}_{\eta}) =0$.   For the pressure contribution, we first write $\sig_{q,\eta}u=\sig_{q,\eta}(u-M_{\eta}e_1)+\sig_{q,\eta}M_{\eta}e_1$, and then integrate by parts to see that
    \begin{equation}
        \int_{\Omega}\sig_{q,\eta}\grad(q+\mathfrak{g}\eta)\cdot u=-\int_{\Omega}g(q+\mathfrak{g}\eta)+\int_{\Omega}\sig_{q,\eta}M_{\eta}e_1\cdot\grad(q+\mathfrak{g}\eta).
    \end{equation}
    To handle the final term above, we use the definition of $\sig_{q,\eta}$, which appears in~\eqref{sigma_q_eta_def}, to express
    \begin{equation}
        q+\mathfrak{g}\eta=H\circ\sig_{q,\eta}-H\circ\varrho(\mathfrak{F}_\eta\cdot e_n).
    \end{equation}
    Since $\grad(\mathfrak{F}_\eta\cdot e_n)=J_\eta M_{\eta}^{-\m{t}}e_n$, it follows that $M_\eta e_1\cdot\grad(H\circ\varrho(\mathfrak{F}_\eta\cdot e_n))=M_\eta e_1\cdot\grad(P\circ\varrho(\mathfrak{F}_\eta\cdot e_n))=0$. We may then rewrite
    \begin{equation}
    \int_{\Omega}\sig_{q,\eta}M_{\eta}e_1\cdot\grad(q+\mathfrak{g}\eta)=\int_{\Omega}M_\eta e_1\cdot\grad(P(\sig_{q,\eta})-P\circ\varrho(\mathfrak{F}_\eta\cdot e_n)).
    \end{equation}
    Since  $\grad\cdot (M_\eta e_1)=0$ and  $P(\sig_{q,\eta})-P\circ\varrho(\mathfrak{F}_\eta\cdot e_n) \in C_c^\infty(\Bar{\Omega})$, we may integrate by parts once again to see that
    \begin{equation}
        \int_{\Omega}M_\eta e_1\cdot\grad(P(\sig_{q,\eta})-P\circ\varrho(\mathfrak{F}_\eta\cdot e_n))=-\int_{\Sigma}(P-P_{\m{ext}})(\sig_{q,\eta})\pd_1\eta+\int_{\Sigma}(P\circ\varrho(\mathfrak{F}_\eta\cdot e_n)-P_{\m{ext}})\pd_1\eta.
    \end{equation}
    The final term above vanishes since
    \begin{equation}
        \int_{\Sigma}(P\circ\varrho(\mathfrak{F}_\eta\cdot e_n)-P_{\m{ext}})\pd_1\eta=\int_{\Sigma}\pd_1\bp{\int_0^\eta(P\circ\varrho(b+s))-P_{\m{ext}})\;\m{d}s}=0.
    \end{equation}
    Combining these, we deduce that the contribution of the pressure term in the momentum equation is 
    \begin{equation}
        \int_{\Omega}\sig_{q,\eta}\grad(q+\mathfrak{g}\eta)\cdot u=\int_{\Sigma}(P-P_{\m{ext}})(\sig_{q,\eta})u\cdot e_n
        -\int_{\Omega}g(q+\mathfrak{g}\eta).
    \end{equation}
    Upon synthesizing these calculations, we conclude that
    \begin{multline}\label{lebron}
        \int_{\Omega}f\cdot u+g(\gam^2|M_{\eta}^{-1}u|^2/2+q+\mathfrak{g}\eta)\\
        -\int_{\Sigma}((P-P_{\m{ext}})(\sig_{q,\eta})-\gam\mathbb{S}^{\sig_{q,\eta}}_{\mathcal{A}_\eta}(M_{\eta}^{-1}u))M_{\eta}^{\m{t}}e_n\cdot M_{\eta}^{-1}u\\=\int_{\Omega}\f{\gam}{J_\eta}\bp{\f{\upmu(\sig_{q,\eta})}{2}|\mathbb{D}^0_{M_\eta^{\m{t}}}(M_\eta^{-1}u)|^2+\uplambda(\sig_{q,\eta})|\grad\cdot u|^2}.
    \end{multline}
    
    We appeal to the dynamic boundary condition to rewrite
    \begin{multline}\label{james}
        -\int_{\Sigma}((P-P_{\m{ext}})(\sig_{q,\eta})-\gam\mathbb{S}^{\sig_{q,\eta}}_{\mathcal{A}_\eta}(M_{\eta}^{-1}u))M_{\eta}^{\m{t}}e_n\cdot M_{\eta}^{-1}u\\
        =\int_{\Sigma}(k+\varsigma\mathscr{H}(\eta)M_\eta^{\m{t}}e_n)\cdot M_\eta^{-1}u=\int_{\Sigma}k\cdot M_{\eta}^{-1}u+\varsigma\int_{\Sigma}\pd_1(\tbr{\grad_{\|}\eta}-1)=\int_{\Sigma}k\cdot M_{\eta}^{-1}u.
    \end{multline}
    Finally, from Parseval's theorem we have the identity 
    \begin{equation}\label{basketball}
        \int_{\Omega}g\eta=   \int_{\R^{n-1}}\bp{|\grad_{\|}|^{-1}\int_0^bg(\cdot,y)\;\m{d}y}|\grad_{\|}|\eta.
    \end{equation}
    Then~\eqref{DP} follows by combining~\eqref{lebron}, \eqref{james}, and~\eqref{basketball}.
\end{proof}

Theorem~\ref{thm on power-dissipation} immediately leads to the next result.

\begin{coro}\label{flat_diss_power_id}
    Suppose that  $\N\ni s\ge 2+\tfloor{n/2}$, $(q,u,\eta)\in \X^{1+s}\cap B_{\X^{2+\tfloor{n/2}}}(0,\rho_{\m{WD}})$, and $(\mathcal{T},\mathcal{G},\mathcal{F})\in \W_{s}$ satisfy  \eqref{The nonlinear equations in the right form}.  Then 
        \begin{multline}\label{DP2}
        \int_{\Omega}\f{\gam}{J_\eta}\bp{\f{\upmu(\sig_{q,\eta})}{2}|\mathbb{D}^0_{M_\eta^{\m{t}}}(M_\eta^{-1}u)|^2+\uplambda(\sig_{q,\eta})|\grad\cdot u|^2}=\int_{\Sigma}\mathcal{T}\circ\mathfrak{F}_\eta M_\eta^{\m{t}}e_n\cdot M_{\eta}^{-1}u\\+\int_{\Omega}J_\eta(\sig_{q,\eta}\mathcal{G}\circ\mathfrak{F}_\eta+\mathcal{F}\circ\mathfrak{F}_\eta)\cdot M_{\eta}^{-1}u.
    \end{multline}
\end{coro}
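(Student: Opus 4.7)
The plan is to obtain Corollary \ref{flat_diss_power_id} as a direct consequence of Theorem \ref{thm on power-dissipation} by specializing the data $(g,f,k)$ to those arising from the fully nonlinear problem \eqref{The nonlinear equations in the right form}. I will begin by rewriting \eqref{The nonlinear equations in the right form} in the form $\Psi(q,u,\eta,\gam) = -\Phi(q,u,\eta,\mathcal{T},\mathcal{G},\mathcal{F})$ using the definitions \eqref{Nebraska} and \eqref{hey i think this may be the last thing I label}. Since $\Phi_1 = 0$ by definition, this identifies the associated principal-part data as
\begin{equation}
g = 0, \quad f = J_\eta M_\eta^{-\m{t}}\tp{\sig_{q,\eta}\mathcal{G}\circ\mathfrak{F}_\eta + \mathcal{F}\circ\mathfrak{F}_\eta}, \quad k = \mathcal{T}\circ\mathfrak{F}_\eta M_\eta^{\m{t}} e_n.
\end{equation}

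Next I would verify that all hypotheses of Theorem \ref{thm on power-dissipation} are met. Since $\rho_{\m{WD}} = \min\{\rho_{\m{prin}},\rho_{\m{bulk}},\rho_{\m{surf}}\} \le \rho_{\m{prin}}$ by Theorem~\ref{thm on smooth tameness of the nonlinear operator}, and $s \ge 2 + \tfloor{n/2} \ge 1+\tfloor{n/2}$, the inclusion $(q,u,\eta)\in\X^{1+s}\cap B_{\X^{1+\tfloor{n/2}}}(0,\rho_{\m{prin}})$ holds. The inclusion $(g,f,k)\in\Y^s$ follows from the tame smoothness results of Propositions \ref{prop on smooth tameness of the momentum equation 2} and \ref{smooth tameness of dynamic boundary condition 2} (with $m=0$), which ensure that composition with $\mathfrak{F}_\eta$, multiplication by $J_\eta$, $M_\eta^{\pm \m{t}}$, and $\sig_{q,\eta}$ preserve the requisite regularity given the smoothness of $(q,u,\eta)$ and $(\mathcal{T},\mathcal{G},\mathcal{F})$.

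With the hypotheses verified, I would apply \eqref{DP} directly. Because $g=0$, the two terms $\int_\Omega g(\gamma^2|M_\eta^{-1}u|^2/2 + q)$ and $\mathfrak{g}\int_{\R^{n-1}} (|\grad_\|^{-1}\int_0^b g)|\grad_\||\eta$ both vanish, leaving only the contributions from $f$ and $k$. It then remains to rewrite
\begin{equation}
\int_\Omega f \cdot u = \int_\Omega J_\eta M_\eta^{-\m{t}}(\sig_{q,\eta}\mathcal{G}\circ\mathfrak{F}_\eta + \mathcal{F}\circ\mathfrak{F}_\eta)\cdot u = \int_\Omega J_\eta(\sig_{q,\eta}\mathcal{G}\circ\mathfrak{F}_\eta + \mathcal{F}\circ\mathfrak{F}_\eta)\cdot M_\eta^{-1}u
\end{equation}
using the pointwise adjoint identity $M_\eta^{-\m{t}} v \cdot u = v\cdot M_\eta^{-1}u$, and analogously to recognize $\int_\Sigma k\cdot M_\eta^{-1}u$ as the surface stress term in \eqref{DP2}. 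Combining these pieces yields exactly \eqref{DP2}.

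There is no genuine obstacle here: the only work beyond unwinding definitions is the verification that $(g,f,k)\in\Y^s$, which is purely bookkeeping supported by the tame mapping properties already established. In particular, the core dissipation–power calculation has been absorbed into Theorem \ref{thm on power-dissipation}, so this corollary is essentially an evaluation of that identity at the nonlinear forcing data.
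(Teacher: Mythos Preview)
Your proposal is correct and takes essentially the same approach as the paper, which states that Theorem~\ref{thm on power-dissipation} immediately leads to this corollary without further proof. Your identification of $(g,f,k)=(0,-\Phi_2,-\Phi_3)$ and the subsequent use of the adjoint identity $M_\eta^{-\m{t}}v\cdot u=v\cdot M_\eta^{-1}u$ to rewrite the right-hand side are exactly the unwinding-of-definitions the paper intends.
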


We now combine this result with Korn-type bounds to deduce a useful uniqueness result. Recall that $\rho_{\m{WD}}$ is from Theorem~\ref{thm on smooth tameness of the nonlinear operator}.

\begin{coro}\label{trivial_solns_unique}
There exists a $\rho \in (0,\rho_{\m{WD}}]$, depending on $b$, $\mathfrak{g}$, $P$, $\upmu$, $\uplambda$, and $n$, such that if $(q,u,\eta)\in \X^{3+\tfloor{n/2}}\cap B_{\X^{2+\tfloor{n/2}}}(0,\rho)$  satisfies \eqref{The nonlinear equations in the right form} with $\mathcal{T}=0$, $\mathcal{G}=0$, and $\mathcal{F}=0$, then $q=0$, $u=0$, and $\eta =0$.
\end{coro}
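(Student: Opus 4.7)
The plan is to rigorously implement the formal dissipation argument sketched in Section~\ref{sec_tw_and_force_role} around~\eqref{she said I know what its like to be dead}--\eqref{songs that linger on my}, but executed in the flattened coordinates via the power-balance identity of Corollary~\ref{flat_diss_power_id}. With $(\mathcal{T},\mathcal{G},\mathcal{F}) = 0$ the right-hand side of~\eqref{DP2} vanishes, and Propositions~\ref{prop on smooth tameness of the continuity equation} and~\ref{prop on smooth tameness of the momentum equation 1} guarantee that for $\rho \le \rho_{\m{WD}}$ we have $\sig_{q,\eta}$ bounded between positive constants and $J_\eta \ge 1/2$ pointwise. Combined with $\gamma > 0$ and the parameter hypothesis~\eqref{parameter_assumptions}, this forces $\mathbb{D}^0_{M_\eta^{\m{t}}}(M_\eta^{-1}u) \equiv 0$ on $\Omega$, as well as $\nabla \cdot u \equiv 0$ in the case $n = 2$ (where $\uplambda > 0$).

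The first main step is to deduce $u \equiv 0$ from these vanishing conditions via a perturbed Korn inequality. Supercritical Sobolev embedding on $\X^{2+\tfloor{n/2}}$ together with Lemma~\ref{lem on mapping properties of the Poisson extension operator variants} yields $\|M_\eta - I\|_{L^\infty} \lesssim \|\eta\|_{\mathcal{H}^{5/2+\tfloor{n/2}}} \lesssim \rho$, so expanding $\mathbb{D}^0_{M_\eta^{\m{t}}}(M_\eta^{-1}u) = \mathbb{D}^0 u + R_\eta u$ produces a remainder satisfying $\|R_\eta u\|_{L^2} \le C\rho\,\|u\|_{H^1}$. For $n \ge 3$ the deviatoric Korn inequality (Proposition~\ref{prop on deviatoric Korn's inequality}) gives $\|u\|_{H^1} \lesssim \|\mathbb{D}^0 u\|_{L^2}$, and absorption for $\rho$ sufficiently small forces $u \equiv 0$. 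For $n = 2$ the deviatoric inequality fails (Remark~\ref{rmk that even a watched pot boils twice per day}), but combining the vanishing of both $\mathbb{D}^0_{M_\eta^{\m{t}}}(M_\eta^{-1}u)$ and $\nabla \cdot u$ shows that $\mathbb{D} u$ is controlled by an $O(\rho)\|u\|_{H^1}$ remainder, and the standard Korn inequality (Proposition~\ref{prop on Korn's inequality}) closes the argument analogously. This absorption step is the main technical obstacle, as it fixes the required smallness of $\rho$.

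With $u \equiv 0$, the second equation of~\eqref{The nonlinear equations in the right form} reduces to $\sig_{q,\eta}\grad(q + \mathfrak{g}\eta) = 0$, and positivity of $\sig_{q,\eta}$ yields $\grad(q + \mathfrak{g}\eta) = 0$. Hence $q$ is independent of $y$, and $q + \mathfrak{g}\eta$ equals a constant $c \in \R$ on $\R^{n-1}$. The regularity $q \in H^{4+\tfloor{n/2}}(\R^{n-1})$ forces $q \to 0$ at infinity by Sobolev embedding, while Proposition~\ref{proposition on frequency splitting} and the inclusion $\mathcal{H}^s(\R^{n-1}) \hookrightarrow H^s(\R^{n-1}) + C^\infty_0(\R^{n-1})$ force $\eta \to 0$ at infinity; therefore $c = 0$ and $q = -\mathfrak{g}\eta$. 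In particular, on $\Sigma$ we have $\mathcal{E}\eta|_\Sigma = \eta$ and therefore $\sig_{q,\eta}|_\Sigma = H^{-1}(-\mathfrak{g}(b+\eta))$.

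Finally, the dynamic boundary condition with $u = 0$ and zero data reduces to $P \circ H^{-1}(-\mathfrak{g}(b+\eta)) - P_{\m{ext}} + \varsigma \mathscr{H}(\eta) = 0$ on $\Sigma$, using that $P(\varrho(b)) = P_{\m{ext}}$ by~\eqref{equilibrium cauchy problem}. Testing against $\eta$ and integrating the mean curvature term by parts yields
\begin{equation*}
    \int_{\R^{n-1}}\bp{P\circ H^{-1}(-\mathfrak{g}(b+\eta)) - P_{\m{ext}}}\eta = \varsigma \int_{\R^{n-1}} \f{|\grad_\| \eta|^2}{\sqrt{1 + |\grad_\| \eta|^2}} \ge 0.
\end{equation*}
Since $P$ and $H^{-1}$ are both strictly increasing, $\eta \mapsto P \circ H^{-1}(-\mathfrak{g}(b+\eta)) - P_{\m{ext}}$ is strictly decreasing and vanishes at $\eta = 0$; consequently, the integrand on the left is nonpositive pointwise with equality iff $\eta(x) = 0$. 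The integral therefore vanishes identically, forcing $\eta \equiv 0$, and then $q = -\mathfrak{g}\eta = 0$. The choice of $\rho$ is dictated solely by the perturbed Korn absorption in the first step.
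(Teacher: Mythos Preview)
Your argument is correct, and the first step (deducing $u=0$ via a perturbed Korn inequality and absorption) is essentially identical to the paper's, which phrases the same idea as $\mathfrak{D}_0\lesssim\mathfrak{D}_\eta+g(\tnorm{\eta}_{H^{9/2+\tfloor{n/2}}})\tnorm{u}_{H^1}^2$ for a modulus $g$.

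Where you diverge from the paper is in the second step. After obtaining $u=0$, the paper simply reads off the kinematic boundary condition (the fourth equation of~\eqref{The nonlinear equations in the right form}), which gives $\pd_1\eta=0$ on $\Sigma$; since $\mathscr{F}[\eta]\in L^1_{\m{loc}}$ by the definition of $\mathcal{H}^s$, this forces $\hat{\eta}=0$ a.e.\ and hence $\eta=0$ in one line. Then $q=0$ follows immediately from the momentum equation. You instead ignore the kinematic condition entirely and follow the longer route sketched in the introduction: momentum equation $\Rightarrow q+\mathfrak{g}\eta=c$, decay $\Rightarrow c=0$, dynamic boundary condition $\Rightarrow$ sign/monotonicity argument after testing against $\eta$. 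This is valid (and you correctly observe that $q=-\mathfrak{g}\eta$ puts $\eta$ back into $L^2(\R^{n-1})$, so the pairing is well-defined), but it requires several more ingredients---decay at infinity for both $q$ and $\eta$, the structure of $\sig_{q,\eta}|_\Sigma$, integration by parts in $\mathscr{H}$, and strict monotonicity of $P\circ H^{-1}$---none of which the paper needs. The paper's route is considerably shorter; yours has the minor virtue of making explicit the formal computation~\eqref{songs that linger on my} and of not relying on the particular Fourier-side structure of $\mathcal{H}^s$.
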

\begin{proof}
We begin with some general considerations.  Propositions \ref{prop on smooth tameness of the continuity equation}  and \ref{prop on smooth tameness of the momentum equation 1}  show that if \begin{equation}\label{gladines}
(q,u,\eta) \in B_{\X^{2+\tfloor{n/2}}}(0,\rho_{\m{WD}}),
\end{equation}
then $c \le \sigma_{q,\eta} \le C$ for some constants $c,C \in\R^+$, and $1/2 \le  J_\eta \le 3/2$, which in particular implies that $M_\eta$ is invertible.  Assume~\eqref{gladines} holds.   We may then define
\begin{equation}
    \mathfrak{D}_{\eta} =  \int_{\Omega} \bp{ \f{\upmu(\varrho)}{2} |\mathbb{D}^0_{M_\eta^{\m{t}}}(M_\eta^{-1}u)|^2 + \uplambda(\varrho) |\grad\cdot u|^2}.
\end{equation}
It is a simple matter to check that 
\begin{equation}\label{gladines_2.5}
    \mathfrak{D}_0 \lesssim \mathfrak{D}_\eta + g(\norm{\eta}_{H^{9/2+\tfloor{n/2}}} ) \norm{u}_{H^1}^2 
\end{equation}
for an continuous and increasing function $g: [0,\infty) \to [0,\infty)$ such that $g(0) =0$.  In light of the properties of $g$ and the Korn inequalities from Propositions \ref{prop on Korn's inequality} and \ref{prop on deviatoric Korn's inequality}, we may choose $0 < \rho \le \rho_{\m{WD}}$ such that if $\norm{\eta}_{H^{9/2+\tfloor{n/2}}} < \rho$, then the term $g(\norm{\eta}_{H^{9/2+\tfloor{n/2}}} ) \norm{u}_{H^1}^2$ may be absorbed onto the left side of \eqref{gladines_2.5}, resulting in the bound
\begin{equation}\label{gladines_3}
    \mathfrak{D}_0 \lesssim \mathfrak{D}_\eta. 
\end{equation}

Now  assume that $(q,u,\eta)\in \X^{3+\tfloor{n/2}}\cap B_{\X^{2+\tfloor{n/2}}}(0,\rho)$  satisfies \eqref{The nonlinear equations in the right form} with $\mathcal{T}=0$ and $\mathcal{G}=\mathcal{F}=0$.  Corollary \ref{flat_diss_power_id} and the above bounds on $\sigma_{q,\eta}$ and $J_\eta$ then imply that $\mathfrak{D}_\eta =0$.  Then \eqref{gladines_3} implies that $\mathfrak{D}_0 =0$, and so we may again appeal to the Korn inequalities to see that $u=0$.  The fourth equation in \eqref{The nonlinear equations in the right form} then implies that $\eta =0$, but then the second and third require that $q=0$.
\end{proof}

 \section{Fine tools for nonlinear analysis} \label{appendix_nlin_analysis}
	\subsection{Smoothness of superposition nonlinearities}\label{subsection on analysis of superposition nonlinearities}
	
	This subsection is concerned with operators between Sobolev spaces involving composition nonlinearities. 
	\begin{lem}\label{comp_bilip_lem}
		Let $k = 2+ \tfloor{n/2}$.  Suppose that $\Phi : \R^n \to \R^n$ is a bi-Lipschitz homeomorphism and a $C^1$ diffeomorphism.  Then there exists a $\delta >0$, depending on $n$ and the Lipschitz seminorm $[\Phi]_{C^{0,1}}$, such that if $g \in W^{k,\infty}(\R^n;\R^n)$ and $h \in H^k(\R^n;\R^n)$ satisfy 
		\begin{equation}\label{comp_bilip_lem_0}
			\max\{\norm{g}_{W^{k,\infty}}, \norm{h}_{H^k}\} < \delta,
		\end{equation}
		then $\Phi + g +h$ is also a bi-Lipschitz homeomorphism and a $C^1$ diffeomorphism, satisfying the bounds 
		\begin{equation}\label{comp_bilip_lem_00}
			\tnorm{D(\Phi + g + h)}_{C^0_b} < 3\cdot 2^{-1} \tnorm{D \Phi}_{C^0_b} \text{ and }   \tnorm{D(\Phi + g + h)^{-1}}_{C^0_b} < 2 \tnorm{D \Phi^{-1}}_{C^0_b}.
		\end{equation}
	\end{lem}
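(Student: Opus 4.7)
\textbf{Proof plan for Lemma \ref{comp_bilip_lem}.} The plan is to reduce the problem to a standard Banach perturbation of the identity by right-composing with $\Phi^{-1}$. First I would record the Sobolev embedding $H^k(\R^n)\hookrightarrow W^{1,\infty}(\R^n)$, which holds because $k = 2+\lfloor n/2\rfloor \ge 2 + (n-1)/2 > 1 + n/2$. Together with the trivial inclusion $W^{k,\infty}\hookrightarrow W^{1,\infty}$, this gives
\begin{equation}
    \tnorm{D(g+h)}_{L^\infty} \lesssim_n \tnorm{g}_{W^{k,\infty}} + \tnorm{h}_{H^k} < 2\delta,
\end{equation}
with the same kind of bound for $\tnorm{g+h}_{L^\infty}$. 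In particular $g+h \in C^1_b(\R^n;\R^n)$, so $F := \Phi + g + h$ is $C^1$ with continuous derivative.

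Next I would introduce the map $\psi := (g+h)\circ \Phi^{-1} : \R^n \to \R^n$. Since $\Phi^{-1}$ is globally Lipschitz (as $\Phi$ is bi-Lipschitz), $\psi$ is also Lipschitz, and by the chain rule
\begin{equation}
    D\psi(y) \;=\; D(g+h)\tp{\Phi^{-1}(y)} \cdot D\Phi^{-1}(y),
    \qquad
    \tnorm{D\psi}_{L^\infty} \;\le\; \tnorm{D(g+h)}_{L^\infty}\,\tnorm{D\Phi^{-1}}_{L^\infty} \;<\; 2\delta\,[\Phi]_{C^{0,1}}^{-1}\cdot C_n,
\end{equation}
where in the last inequality I use $\tnorm{D\Phi^{-1}}_{L^\infty} = [\Phi^{-1}]_{C^{0,1}}$ and standard control of this in terms of $[\Phi]_{C^{0,1}}$ via bi-Lipschitz duality. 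Choosing $\delta$ small enough (depending on $n$ and $[\Phi]_{C^{0,1}}$) ensures $\tnorm{D\psi}_{L^\infty} \le 1/2$.

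With this in hand, the map $\m{id}_{\R^n} + \psi$ is a bi-Lipschitz homeomorphism of $\R^n$: the Banach fixed-point theorem applied to $y \mapsto z - \psi(y)$ (a contraction with constant $\le 1/2$) gives solvability of $y+\psi(y)=z$ for every $z$, and uniqueness plus Lipschitz continuity of the inverse follow from the two-sided bound
\begin{equation}
    \tfrac{1}{2}|y_1 - y_2| \;\le\; \snorm{(\m{id}+\psi)(y_1) - (\m{id}+\psi)(y_2)} \;\le\; \tfrac{3}{2}|y_1 - y_2|.
\end{equation}
Moreover $I + D\psi$ is invertible pointwise with $\tnorm{(I+D\psi)^{-1}}_{L^\infty} \le 2$, so the $C^1$ inverse function theorem applied globally (using the homeomorphism established above) shows $\m{id}+\psi$ is in fact a $C^1$ diffeomorphism. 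Since $F = (\m{id}+\psi)\circ \Phi$, $F$ inherits the bi-Lipschitz and $C^1$-diffeomorphism properties as a composition.

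Finally, I would read off the quantitative bounds \eqref{comp_bilip_lem_00}. The first follows from $DF = D\Phi + D(g+h)$ and $\tnorm{D(g+h)}_{L^\infty} < 2\delta$, choosing $\delta \le \tfrac{1}{4}\tnorm{D\Phi}_{C^0_b}$. For the second, the identity $F^{-1} = \Phi^{-1}\circ(\m{id}+\psi)^{-1}$ and the chain rule give
\begin{equation}
    DF^{-1}(z) \;=\; D\Phi^{-1}\bp{(\m{id}+\psi)^{-1}(z)} \cdot D(\m{id}+\psi)^{-1}(z),
\end{equation}
and the $L^\infty$ bound $\tnorm{D(\m{id}+\psi)^{-1}}_{L^\infty} \le 2$ established above yields $\tnorm{DF^{-1}}_{C^0_b} \le 2\tnorm{D\Phi^{-1}}_{C^0_b}$. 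The only mildly delicate step is the global (as opposed to local) promotion of the inverse function theorem, which is handled cleanly by first establishing the homeomorphism property via the contraction principle; once that is done, everything else is a routine application of Sobolev embedding and the chain rule.
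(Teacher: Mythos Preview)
Your overall approach is essentially the paper's: both reduce to a Banach-type perturbation of the identity, you via the explicit right-composition $\psi = (g+h)\circ\Phi^{-1}$ so that $F = (\mathrm{id}+\psi)\circ\Phi$, the paper more tersely by invoking ``Banach fixed point shows $\Phi+\Xi$ is bi-Lipschitz when $[\Xi]_{C^{0,1}} < [\Phi^{-1}]_{C^{0,1}}^{-1}$'' and then using a Neumann series for $(D\Phi)^{-1}(I + D(g+h)(D\Phi)^{-1})^{-1}$ in place of your chain-rule computation on $F^{-1} = \Phi^{-1}\circ(\mathrm{id}+\psi)^{-1}$. These are the same argument in different coordinates.

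There is one genuine error, however. Your claim of ``standard control of $[\Phi^{-1}]_{C^{0,1}}$ in terms of $[\Phi]_{C^{0,1}}$ via bi-Lipschitz duality'' is false: for a general bi-Lipschitz $C^1$ diffeomorphism there is no bound on $\tnorm{D\Phi^{-1}}_{L^\infty}$ in terms of $\tnorm{D\Phi}_{L^\infty}$ alone (take $\Phi = \ep\,\mathrm{id}$, for which $[\Phi]_{C^{0,1}} = \ep$ but $[\Phi^{-1}]_{C^{0,1}} = \ep^{-1}$). The inequality actually goes the other way, $[\Phi^{-1}]_{C^{0,1}}\cdot[\Phi]_{C^{0,1}}\ge 1$. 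This is a cosmetic issue stemming from a slight imprecision in the lemma's stated dependency: the paper's own proof sets $\delta = (4C[\Phi^{-1}]_{C^{0,1}})^{-1}$, so $\delta$ really depends on $[\Phi^{-1}]_{C^{0,1}}$ (equivalently, on the bi-Lipschitz data of $\Phi$, not just the forward Lipschitz constant). Drop the duality claim, let $\delta$ depend on $[\Phi^{-1}]_{C^{0,1}}$ as well, and your argument goes through cleanly.
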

	\begin{proof}
		First note that the Banach fixed point theorem implies that if $\Xi: \R^n \to \R^n$ is a Lipschitz map satisfying the bound $[\Xi]_{C^{0,1}} < [\Phi^{-1}]_{C^{0,1}}^{-1}$, then $\Phi + \Xi$ is also a bi-Lipschitz homeomorphism.  The standard Sobolev embeddings provide a constant $C>0$, depending on $n$, such that $[g+h]_{C^{0,1}} \le \norm{g + h}_{C^1_b} \le C\tp{ \norm{g}_{W^{k,\infty}} + \norm{h}_{H^k}}$.
		Thus, if we set $\delta = \tp{ 4 C [\Phi^{-1}]_{C^{0,1}}}^{-1}$,
		then the bound \eqref{comp_bilip_lem_0} implies that $[g+h]_{C^{0,1}} < \frac{1}{2} [\Phi^{-1}]_{C^{0,1}}^{-1}$,
		and so $\Phi + g +h$ is a bi-Lipschitz homeomorphism. 	On the other hand, $[\Phi^{-1}]_{C^{0,1}} = \tnorm{D \Phi^{-1}}_{C^0_b} \text{ and } [g+h]_{C^{0,1}} = \tnorm{Dg + Dh}_{C^0_b}$,
		so the continuous map $D(\Phi + g+h)$ is everywhere invertible, and so $\Phi + g + h$ is then a $C^1$ diffeomorphism by the inverse function theorem.
		
		It remains to prove the bound \eqref{comp_bilip_lem_00}.  To this end first note that $\tnorm{D(g+h)(D\Phi)^{-1}}_{C^0_b}\le\tsb{\Phi^{-1}}_{C^{0,1}}\tsb{g+h}_{C^{0,1}}<2^{-1}$
		From this we deduce that 
		\begin{equation}
			\tnorm{D(\Phi + g +h)}_{C^0_b} \le \norm{D \Phi}_{C^0_b} \tnorm{1+D(g+h)(D\Phi)^{-1}}_{C^0_b} 
			\le 
			 3\cdot 2^{-1} \norm{D \Phi}_{C^0_b}
		\end{equation}
		and 
		\begin{multline}
			\tnorm{D(\Phi + g +h)^{-1}}_{C^0_b} =   \tnorm{(D\Phi)^{-1} (1 + D(g+h) (D\Phi)^{-1})^{-1}  }_{C^0_b}  \\
			\le \tnorm{D\Phi^{-1}}_{C^0_b} \sum_{m=0}^\infty \tnorm{((D(g+h) (D\Phi)^{-1})^{-1})^m }_{C^0_b} 
			<  \tnorm{D\Phi^{-1}}_{C^0_b} \sum_{m=0}^\infty \frac{1}{2^m}
			= 2\tnorm{D\Phi^{-1}}_{C^0_b}.
		\end{multline}
		These prove \eqref{comp_bilip_lem_00}.
	\end{proof}

	The following is a modification of the main argument presented in Inci, Kappeler, and Topalov~\cite{MR3135704}.
	
	\begin{thm}\label{comp_C2}
		Let $\N \ni k \ge2+ \tfloor{n/2}$ and $m \in \N$.  Suppose that $\Phi : \R^n \to \R^n$ is a bi-Lipschitz homeomorphism and a $C^1$ diffeomorphism such that $D\Phi \in W^{k-1,\infty}(\R^n; \R^{n \times n})$.  Let $\delta >0$ be determined by $n$ and $\Phi$ as in Lemma \ref{comp_bilip_lem}.  Define the map 
		\begin{multline}
			\Lambda : H^{m+k}(\R^n;\R^d) \times (B_{W^{2+\tfloor{n/2},\infty}}(0,\delta)\cap W^{k,\infty}(\R^n;\R^n)) \times (B_{H^{2+\tfloor{n/2}}}(0,\delta)\cap H^k(\R^n;\R^n))\\
			\to H^k(\R^n; \R^d)
		\end{multline}
		via $\Lambda(f,g,h) = f(\Phi + g +h)$. Then the following hold.
		\begin{enumerate}
			\item $\Lambda$ is well-defined and continuous.
			\item If $m=1$, then $\Lambda$ is $C^1$ and satisfies
			\begin{equation}\label{comp_C2_01}
				D \Lambda(f,g,h) (f_1,g_1,h_1) = D f(\Phi + g + h) (g_1 + h_1) + f_1(\Phi + g +h).   
			\end{equation}
			\item If $m\ge 2$, then $\Lambda$ is $C^m$ and satisfies
			\begin{multline}\label{comp_C2_02}
				D^m \Lambda(f,g,h) [(f_1,g_1,h_1),\dots,(f_m,g_m,h_m)] =  D^m f(\Phi + g +h)\tp{g_i+h_i}_{i=1}^m \\
				+\sum_{\ell=1}^mD^{m-1}f_\ell(\Phi+g+h)\tp{g_i+h_i}_{i\neq\ell}.
			\end{multline}
		\end{enumerate}

	\end{thm}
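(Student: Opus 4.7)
The plan is to address the three items in sequence, with the first (well-definedness and continuity) bearing the brunt of the technical work and the remaining items following by Taylor expansion and induction. Throughout, I would set $\Psi = \Phi + g + h$ and exploit Lemma~\ref{comp_bilip_lem}, which, under the smallness hypothesis on $(g,h)$, guarantees that $\Psi$ is a bi-Lipschitz $C^1$ diffeomorphism with $D\Psi$ and $D\Psi^{-1}$ uniformly bounded in $L^\infty$ (independently of $(g,h)$ in the stated ball). This uniformity is the key enabler of all subsequent estimates.

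For item 1 (well-definedness and continuity), I would first establish the $L^2$ estimate via change of variables: $\tnorm{f\circ\Psi}_{L^2}^2 \le \tnorm{\det D\Psi^{-1}}_{L^\infty}\tnorm{f}_{L^2}^2$, with the determinant uniformly bounded by Lemma~\ref{comp_bilip_lem}. Next, I would apply the Faà di Bruno formula to expand $D^k(f\circ\Psi)$ as a sum of terms $(D^\ell f)(\Psi)\cdot\prod_{i=1}^\ell D^{j_i}\Psi$ with $j_1+\cdots+j_\ell = k$ and $1\le\ell\le k$. Each factor $D^{j_i}\Psi = D^{j_i}\Phi + D^{j_i}g + D^{j_i}h$ splits into a $W^{k-1,\infty}\cup W^{k,\infty}$ piece (from $\Phi$ and $g$) and an $H^k$ piece (from $h$). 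Since $k\ge 2+\tfloor{n/2}$, Sobolev embedding gives $H^{k-1}\hookrightarrow L^\infty$, so $(D^\ell f)(\Psi)$ is $L^\infty$ when $\ell\le k-1$. The only subcritical terms occur at $\ell=1,k$ (top derivative on $f$, all others distributed), where we need $D^\ell f\in H^{m+k-\ell}$ paired with products of $D^{j_i}\Psi$; the algebra property of $H^k$ for supercritical $k$ and standard Moser-type tame estimates bound each such product in $L^2$. Assembling these, one obtains $\tnorm{f\circ\Psi}_{H^k}\lesssim \tnorm{f}_{H^{m+k}}\,p(\tnorm{g}_{W^{k,\infty}}, \tnorm{h}_{H^k})$ for some continuous function $p$. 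Continuity in the joint variables then follows by approximating $f$ by $C^\infty_c$ functions, using the uniform estimates plus dominated convergence for the smooth approximants, and handling the tail via the just-derived bound.

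For item 2, I would differentiate $\Lambda$ in the direction $(f_1,g_1,h_1)$ by writing, via the fundamental theorem of calculus applied pointwise,
\begin{multline}
\Lambda(f+tf_1,g+tg_1,h+th_1)-\Lambda(f,g,h)\\
=tf_1(\Psi+t(g_1+h_1))+\int_0^1 Df(\Psi+ts(g_1+h_1))\cdot t(g_1+h_1)\,\m{d}s,
\end{multline}
dividing by $t$, and sending $t\to 0$. The candidate derivative \eqref{comp_C2_01} is well-defined as an element of $H^k$ when $m=1$: the term $f_1(\Psi)$ lies in $H^k$ by item~1 (applied with $m=0$), and $Df(\Psi)\cdot(g_1+h_1)$ lies in $H^k$ because $Df\in H^k$ (so item~1 applies with $m=0$ to $Df$) and $g_1+h_1\in W^{k,\infty}+H^k$, with the product estimated via Moser-type bounds. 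To show Fréchet differentiability, I would bound the remainder by reapplying item~1 at regularity one lower ($H^{k-1}$), using Taylor's theorem on $f$ to second order and the uniform bi-Lipschitz property; the remainder is of order $t^2$ in $H^{k-1}$ but we need convergence to zero in $H^k$, which is recovered from the $C^0$ continuity of the partial derivatives in $(f,g,h)$ (itself an instance of item~1 applied to $Df$).

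For item 3, I would proceed by induction on $m$: the formula \eqref{comp_C2_02} expresses $D^m\Lambda$ as a sum whose terms are each of the form $D^\ell f(\Psi)\cdot(\text{linear in }g_i+h_i)$ with $\ell\le m$, and applying item~2 to each such term (with $f$ replaced by $D^\ell f$, which lies in $H^{(m-\ell)+k}$) yields the next derivative. The hard part throughout will be the top-order term in the Faà di Bruno expansion, $Df(\Psi)\cdot D^k\Psi$, since $D^k\Psi$ contains $D^kh$, which is only $L^2$, forcing us to pair it with an $L^\infty$ factor $Df(\Psi)$ and requiring careful use of both the Sobolev embedding at the supercritical threshold $k-1\ge 1+\tfloor{n/2}$ and the bi-Lipschitz change of variables to transfer pointwise bounds from $Df$ to $Df\circ\Psi$.
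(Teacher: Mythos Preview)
Your overall structure is sound and items~2--3 largely parallel the paper, but your plan for item~1 differs in a way that introduces a concrete error. You propose to expand $D^k(f\circ\Psi)$ via Fa\`{a} di Bruno and bound each term; the paper instead runs a finite induction on the regularity index $0\le j\le k$, proving that $f\in H^j$ implies $f\circ\Psi\in H^j$ by differentiating once, $\partial_a(f\circ\Psi)=\sum_b((\partial_b f)\circ\Psi)\,\partial_a\Psi_b$, and invoking the induction hypothesis together with the product estimates of Corollary~\ref{corollary on tame estimates on simple multipliers}. Your claim that $(D^\ell f)(\Psi)\in L^\infty$ for all $\ell\le k-1$ is false: when $m=0$ (which you correctly note is required in item~2 to place $f_1\circ\Psi$ and $(Df)\circ\Psi$ in $H^k$) and $\ell=k-1$, one only has $D^{k-1}f\in H^1$, which does not embed into $L^\infty$ for $n\ge 2$. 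The Fa\`{a} di Bruno route can be salvaged, but it requires the full Gagliardo--Nirenberg machinery (as in Theorem~\ref{theorem on tame estimates on products} or Corollary~\ref{coro on tame estimates on superposition}) to balance the intermediate-$\ell$ terms across $L^p$ spaces; the simple $L^\infty$ pairing you describe is not available. The paper's induction sidesteps this entirely because at each step one only multiplies an $H^j$ function by a factor in $W^{j,\infty}+H^j$ with $j\ge 1+\tfloor{n/2}$.

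For item~2, your plan to bound the remainder in $H^{k-1}$ via second-order Taylor on $f$ and then ``recover'' $H^k$ is more circuitous than needed. The paper writes the remainder directly in terms of $\Lambda$: with $\mathcal{R}_1=\sum_b(g_1+h_1)_b\int_0^1[\Lambda(\partial_b f,g+tg_1,h+th_1)-\Lambda(\partial_b f,g,h)]\,\m{d}t$ and $\mathcal{R}_2=\sum_b(g_1+h_1)_b\int_0^1\Lambda(\partial_b f_1,g+tg_1,h+th_1)\,\m{d}t$, the $H^k$-smallness of $\mathcal{R}_1$ follows immediately from the continuity of $\Lambda$ (item~1 at $m=0$ applied to $\partial_b f\in H^k$), and $\mathcal{R}_2$ is quadratically small as a product of $\tnorm{g_1+h_1}$ and $\tnorm{f_1}_{H^{k+1}}$. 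No drop to $H^{k-1}$ is needed.
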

	\begin{proof}
		We divide the proof into steps.
		
		\textbf{Step 1}: Preliminary observations. For any $f$, $g$, and $h$ in the domain of $\Lambda$ we have that $\max\{\norm{g}_{W^{2+\tfloor{n/2},\infty}}, \norm{h}_{H^{2+\tfloor{n/2}}}\} < \delta$. Consequently, Lemma \ref{comp_bilip_lem} implies that $\Phi + g +h$ is a $C^1$ diffeomorphism and that 
		\begin{equation}\label{comp_C2_1}
			\tnorm{D (\Phi + g + h)}_{C^0_b} +     \tnorm{D (\Phi + g + h)^{-1}}_{C^0_b} \le A_0,
		\end{equation}
		where $A_0>0$ is a constant depending on $\Phi$.  Throughout the rest of the proof, when we write $\lesssim$ we allow for the implicit constant to depend on $A_0$, and thus on $\Phi$.

		\textbf{Step 2}: Well-definedness and continuity.  We now aim to prove that $\Lambda$ is actually well-defined, i.e. takes values in $H^k(\R^n;\R^d)$, and is a continuous map.  It suffices to prove this when $m=0$, as the cases $m\in \{1,2\}$ follow from this case.  To prove this, we proceed by finite induction on $0 \le j \le k$.  For such $j$ let $\mathbb{P}_j$ denote the proposition that 
		\begin{multline}
			\Lambda : H^{m+j}(\R^n;\R^d) \times (B_{W^{2+\tfloor{n/2},\infty}}(0,\delta)\cap W^{k,\infty}(\R^n;\R^n)) \times (B_{H^{2+\tfloor{n/2}}}(0,\delta)\cap H^k(\R^n;\R^n))\\
			\to H^j(\R^n; \R^d)
		\end{multline}
		is well-defined and continuous and obeys the estimate
		\begin{equation}\label{comp_C2_1.5}
			\norm{\Lambda(f,g,h)}_{H^j} \lesssim \tbr{\norm{g}_{W^{k,\infty}}, \norm{h}_{H^k}}^j \norm{f}_{H^j}.
		\end{equation}
		
		Consider the base case, $j =0$.  A change of variables and the bound \eqref{comp_C2_1} allow us to estimate 
		\begin{equation}\label{comp_C2_2}
			\norm{\Lambda(f,g,h)}_{H^0} = \bp{\int_{\R^n} \abs{f \circ(\Phi+g+h)}^2 }^{1/2} \le \tnorm{\det \nabla  (\Phi +g +h)^{-1}}_{L^\infty}^{1/2} \tnorm{f}_{H^0} \lesssim \tnorm{f}_{H^0}.
		\end{equation}
		This shows that $\Lambda$ is well-defined and establishes \eqref{comp_C2_1.5} when $j=0$. Next we bound
		\begin{equation}
			\tnorm{\Lambda(f,g,h) - \Lambda(\tilde{f},\tilde{g},\tilde{h})}_{H^0} \le \tnorm{\Lambda(f,g,h) - \Lambda(f,\tilde{g},\tilde{h})}_{H^0} + \tnorm{\Lambda(f-\tilde{f},\tilde{g},\tilde{h})}_{H^0}.    
		\end{equation}
		For the latter term we use \eqref{comp_C2_2} to see that $\lim_{(\tilde{f},\tilde{g},\tilde{h}) \to (f,g,h)} \tnorm{\Lambda(f-\tilde{f},\tilde{g},\tilde{h})}_{H^0}=0$,
		while for the former we use the density of $C_c^\infty(\R^n;\R^d)$ in $H^0(\R^n;\R^d)$ together with the fact that if $(\tilde{g},\tilde{h}) \to (g,h)$ then $\Phi + \tilde{g}+\tilde{h} \to \Phi + g + h$ uniformly to deduce that $\lim_{(\tilde{f},\tilde{g},\tilde{h}) \to (f,g,h)} \tnorm{\Lambda(f,g,h) - \Lambda(f,\tilde{g},\tilde{h})}_{H^0}=0$.
		Thus, $\Lambda$ is continuous when $j=0$, and we find that $\mathbb{P}_0$ holds.
		
		Proceeding inductively, we now suppose that $\mathbb{P}_\ell$ holds for all $0\le \ell \le j \le k-1$ and consider the case $j+1 \le k$.  Let $(f,g,h)$ be in the domain of $\Lambda$ in this case.  For any $1\le a \le n$ we compute 
		\begin{equation}
			\partial_a \Lambda(f,g,h) = \sum_{b=1}^n \partial_b f \circ(\Phi+g+h) \partial_a(\Phi+g+h)_b = \sum_{b=1}^n \Lambda(\partial_b f,g,h) \partial_a (\Phi+g+h)_b    
		\end{equation}
		in order to exploit the induction hypothesis and a product estimate (see Corollary~\ref{corollary on tame estimates on simple multipliers}) to bound 
		\begin{multline}
			\norm{\Lambda(f,g,h)}_{H^{j+1}} \lesssim  \tnorm{\Lambda(f,g,h)}_{H^{0}} + \sum_{a=1}^n \tnorm{\partial_a \Lambda(f,g,h)}_{H^{j}} \\
			\lesssim \tnorm{f}_{H^0} + \sum_{a,b=1}^n \tnorm{\Lambda(\partial_b f,g,h) \partial_a (\Phi+g+h)_b}_{H^j}
			\lesssim \tnorm{f}_{H^0} + \tbr{\norm{g}_{W^{k,\infty}}, \tnorm{h}_{H^k}}\sum_{b=1}^n \tnorm{\Lambda(\partial_b f,g,h)}_{H^j} \\
			\lesssim \tnorm{f}_{H^0} + \tbr{\tnorm{g}_{W^{k,\infty}},\tnorm{h}_{H^k}} \tbr{ \norm{g}_{W^{k,\infty}}, \norm{h}_{H^k} }^j \tnorm{f}_{H^{j+1} }
			\lesssim  \tbr{ \norm{g}_{W^{k,\infty}}, \norm{h}_{H^k} }^{j+1} \tnorm{f}_{H^{j+1}},
		\end{multline}
		which shows well-definedness and the bound \eqref{comp_C2_1.5} for  $j+1$.
		
		Next, we establish the continuity assertion of $\mathbb{P}_{j+1}$. We initially compute 
		\begin{multline}
			\partial_a(\Lambda(f,g,h) - \Lambda(\tilde{f},\tilde{g},\tilde{h})  ) 
			= \sum_{b=1}^n \tp{ \Lambda(\partial_b f,g,h) - \Lambda(\partial_b \tilde{f},\tilde{g},\tilde{h}) } \partial_a(\Phi + g+h)_b \\
			+ \sum_{b=1}^n \Lambda(\partial_b \tilde{f},\tilde{g},\tilde{h}) \partial_a( g - \tilde{g} + h - \tilde{h})_b.
		\end{multline}
		Again using basic product estimates (see Corollary~\ref{corollary on tame estimates on simple multipliers}), we may deduce from this that
		\begin{multline}
			\tnorm{\Lambda(f,g,h) - \Lambda(\tilde{f},\tilde{g},\tilde{h}) }_{H^{j+1}}  \lesssim \tnorm{\Lambda(f,g,h) - \Lambda(\tilde{f},\tilde{g},\tilde{h}) }_{H^0} + \sum_{a=1}^n \tnorm{\partial_a \Lambda(f,g,h) - \partial_a \Lambda(\tilde{f},\tilde{g},\tilde{h}) }_{H^j}  
			\lesssim\\
			\tbr{\norm{g,h}_{W^{k,\infty}\times H^k} } \sum_{\abs{\alpha} \le 1} \tnorm{\Lambda(\partial^\alpha f,g,h) - \Lambda(\partial^\alpha \tilde{f},\tilde{g},\tilde{h})}_{H^j}
			+ \tnorm{g-\tilde{g},h-\tilde{h}}_{W^{k,\infty}\times H^k}\sum_{b=1}^n \tnorm{\Lambda(\partial_b \tilde{f},\tilde{g},\tilde{h})}_{H^j},
		\end{multline}
		and this bound and the induction hypothesis readily imply the continuity assertion of $\mathbb{P}_{j+1}.$  Thus $\mathbb{P}_{j+1}$ holds, and the induction argument is complete.  
		
		\textbf{Step 3}: Continuous differentiability when $m=1$. Next we aim to prove that $\Lambda$ is $C^1$ when $m=1$.  To this end fix $(f,g,h)$ in the domain of $\Lambda$ and pick $r >0$ such that $r + \max\{ \norm{g}_{W^{2+\tfloor{n/2},\infty}}, \norm{h}_{H^{2+\tfloor{n/2}}} \} < \delta$.
		Then $(f,g,h) + t (f_1,g_1,h_1)$ belongs to the domain of $\Lambda$ for every $t \in [-1,1]$ and $f_1 \in H^{2+k}(\R^n;\R^d)$, $g_1 \in W^{k,\infty}(\R^n;\R^n)$, and $h_1 \in H^k(\R^n;\R^n)$ such that $\max\{ \norm{g_1}_{W^{2+\tfloor{n/2},\infty}}, \norm{h_1}_{H^{2+\tfloor{n/2}}} \} < r$. By using the fundamental theorem of calculus, we may compute 
		\begin{equation}
			\Lambda(f+f_1,g+g_1, h+h_1) - \Lambda(f,g,h) - \Lambda(f_1,g,h) - \sum_{b=1}^n \Lambda(\partial_b f,g,h)(g_1+h_1)_b =
			\mathcal{R}_1 + \mathcal{R}_2
		\end{equation}
		for remainder terms
		\begin{equation}
			\mathcal{R}_1 = \sum_{b=1}^n (g_1+h_1)_b \int_0^1 \left(\Lambda(\partial_b f,g+tg_1,h+th_1) - \Lambda(\partial_b f, g,h) \right)dt
		\end{equation}
		and 
		\begin{equation}
			\mathcal{R}_2 = \sum_{b=1}^n (g_1+h_1)_b \int_0^1 \Lambda(\partial_b f_1,g+tg_1,h+th_1) dt.
		\end{equation}
		Using the results of the previous step, we readily deduce from these expressions that 
		\begin{equation}
			\frac{\norm{\mathcal{R}_1 + \mathcal{R}_2}_{H^k}}{ \norm{f_1}_{H^{1+k}} + \norm{g_1}_{W^{k,\infty}} + \norm{h_1}_{H^k} } \to 0 \text{ as } (f_1,g_1,h_1) \to 0.
		\end{equation}
		Hence, $\Lambda$ is differentiable on its domain, and 
		\begin{equation}\label{comp_C2_3}
			D \Lambda(f,g,h)(f_1,g_1,h_1) = \Lambda(f_1,g,h) + \sum_{b=1}^n \Lambda(\partial_b f,g,h)(g_1 +h_1)_b,  
		\end{equation}
		which may be rewritten as \eqref{comp_C2_01}.  On the other hand, the expression for $D \Lambda(f,g,h)$ in terms of $\Lambda$ shows that $D \Lambda$ is continuous on its domain when $m=1$, and so $\Lambda$ is actually $C^1$ in this case.
		
		\textbf{Step 4}: $m^{\m{th}}$-order continuous differentiability when $m\ge2$.  Now assume that $m\ge2$.  In this case, the result of the third step still shows that $\Lambda$ is $C^1$ on its domain with the same expression for $D \Lambda$ given in \eqref{comp_C2_3}. However, since $D\Lambda$ can be expressed in terms of standard products and $\Lambda$, we iteratively deduce that $\Lambda$ is actually $C^2$ when $m=2$, $C^3$ when $m=3$, etc.,  and that the formula~\eqref{comp_C2_02} holds for any such $m$.  
	\end{proof}

We also need a variant of Theorem \ref{comp_C2} in which the outer composition map is a fixed element of a $C^{k}_b-$type space, and we are considering smoothness as a mapping into the space of Sobolev multipliers. This latter space is defined as follows. For $W\subseteq\R^n$ an open set and $s\in\N$, we define the space of Sobolev multipliers $\mathcal{M}(H^s(W))$ to be the set of bounded linear maps $L\in\mathcal{L}(H^s(W))$ for which there exists $m\in L^1_{\m{loc}}(W)$ such that $Lf=mf$ for all $f\in H^s(W)$. We endow this space with the natural operator norm: $\tnorm{m}_{\mathcal{M}(H^s)}=\sup_{\tnorm{f}_{H^s}\le1}\tnorm{mf}_{H^s}$. It turns out that $\mathcal{M}(H^s(W))$ is a closed subspace of $\mathcal{L}(H^s(W))$, and is thus complete, and that $\mathcal{M}(H^s(W))\subset H^s_{\m{loc}}(W)$. For the proofs of these facts and more information on the spaces of Sobolev multipliers, we refer the reader to the book by Maz'ya and Shaposhnikova~\cite{MR2457601}.

	\begin{thm}\label{comp_C2_fixed_variant}
		Let $\N \ni k \ge2+\tfloor{n/2}$,  $\varnothing \neq V \subseteq \R^d$ be open, and  $\es\neq O\subseteq\R^n$ be a Stein-extension domain (see Definition~\ref{defn Stein-extension operator}). Suppose that $\varnothing \neq U \subseteq  W^{k,\infty}(O;\R^d) \times  H^k(O;\R^d)$ is an open set with the property that if $(g,h) \in U$, then $(g+h)(O) \subseteq V$.   For any $f \in C^{k}_b(V)$ define the function $\Theta(f;g,h) : O \to \R$ via $\Theta(f;g,h) = f(g +h)$. Then the following hold for each $m \in \N$ and $f \in C^{m+k}_b(V)$.
		\begin{enumerate}
			\item For each $(g,h) \in U$, the function $\Theta(f;g,h)$ defines an element of $\mathcal{M}(H^k(O))$.  Moreover, the induced map $\Theta(f;\cdot) : U  \to \mathcal{M}(H^k(O))$ is $C^m$.
			\item If $m\ge 1$, then the induced map $\Theta(f;\cdot):U \to  \mathcal{M}(H^k(O))$ satisfies
			\begin{equation}\label{comp_C2_fv_01}
				D^m \Theta(f;g,h) [(g_1,h_1),\dotsc,(g_m,h_m)] =  D^mf(g + h) [(g_1,h_1),\dotsc,(g_m,h_m)].   
			\end{equation}
		\end{enumerate}
	\end{thm}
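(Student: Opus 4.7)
The plan is to reduce the analysis to a decomposition that accounts for the asymmetry between the $W^{k,\infty}$ component $g$ and the $H^k$ component $h$. The fundamental issue is that when $O$ has infinite measure the composition $f(g+h)$ generally fails to lie in $L^2(O)$ (consider constant $g+h$), so it cannot belong to $H^k(O)$ directly; this is precisely why the codomain must be the multiplier space $\mathcal{M}(H^k(O))$, which contains both $W^{k,\infty}(O)$ and $H^k(O)$ continuously. The latter inclusion holds because $k\ge 2+\tfloor{n/2}>n/2$ makes $H^k(O)$ a Banach algebra that continuously embeds into $L^\infty(O)$.

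First I would handle the $m=0$ case of well-definedness and continuity. For $(g,h)\in U$ I would use the decomposition
\begin{equation}
\Theta(f;g,h) = f(g) + \int_0^1 Df(g+th)\cdot h\,\m{d}t.
\end{equation}
The Fa\`a di Bruno formula applied to the first term, together with the hypothesis $f\in C^{k}_b(V)$ and $g\in W^{k,\infty}(O)$, shows that $f(g)\in W^{k,\infty}(O)$ with norm controlled by $\tnorm{f}_{C^k_b}$ and a polynomial in $\tnorm{g}_{W^{k,\infty}}$, which yields continuous $W^{k,\infty}(O)\emb\mathcal{M}(H^k(O))$ contribution. The second term is controlled in $H^k(O)$ by combining Fa\`a di Bruno on the integrand (where $f'$ is bounded since $f\in C^{1+k}_b$), Minkowski's integral inequality, and the tame product estimates from Corollary~\ref{corollary on tame estimates on simple multipliers}; the presence of the factor $h$ guarantees an $L^2(O)$ bound despite the constant-like behavior of $f(g)$. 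Since $k>n/2$, the algebra property of $H^k(O)$ (via the Stein extension $\mathfrak{E}_O$ and Example~\ref{example on Sobolev spaces on domains}) provides a continuous embedding $H^k(O)\emb\mathcal{M}(H^k(O))$, completing well-definedness. Continuity with respect to $(g,h)\in U$ then follows from the uniform multiplier bounds together with a density argument, using that compactly supported $H^k$-functions are dense and pointwise continuity of $f$ passes through to multiplier convergence.

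For the differentiability assertions when $m\ge 1$, I would apply the fundamental theorem of calculus to write
\begin{equation}
\Theta(f;g+g_1,h+h_1) - \Theta(f;g,h) - Df(g+h)(g_1+h_1) = \int_0^1 \bsb{Df(g+h+t(g_1+h_1)) - Df(g+h)}(g_1+h_1)\,\m{d}t,
\end{equation}
interpreted as an equation in the multiplier space. Applying the already-established $m=0$ continuity result to $Df\in C^{m-1+k}_b(V;\mathcal{L}(\R^d;\R))$ shows that the integrand vanishes in the $\mathcal{M}(H^k(O))$ topology as $(g_1,h_1)\to 0$, which establishes both the derivative formula~\eqref{comp_C2_fv_01} at order one and the $C^1$ regularity of $\Theta(f;\cdot)$. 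An induction on $m$, where the derivative formula expresses $D\Theta(f;\cdot)$ in terms of $\Theta(Df;\cdot)$ paired via the bilinear multiplier product with $(g_1+h_1)$, then produces the higher-order formulas~\eqref{comp_C2_fv_01} and the $C^m$ conclusion, with each induction step consuming exactly one order of regularity from $f$.

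The main obstacle will be controlling the Fa\`a di Bruno expansions tamely in $H^k(O)$ for the remainder $\int_0^1 Df(g+th)\cdot h\,\m{d}t$: its derivatives involve products of derivatives of $g+h$, some of which (from the $h$ factor) only lie in $H^k$ rather than in $L^\infty$, and ensuring these products admit a clean $H^k$ bound requires a careful high-low splitting via Corollary~\ref{corollary on tame estimates on simple multipliers}. The Sobolev embedding $H^k(O)\emb L^\infty(O)$ provided by $k>n/2$, combined with the structural fact that the integrand always carries at least one factor of $h$ (or $g_1+h_1$ in the differentiability argument), guarantees that the counting of derivatives closes and yields the required estimates.
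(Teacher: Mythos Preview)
Your approach to the $m\ge 1$ steps via the fundamental theorem of calculus and induction is essentially the paper's argument and is fine. The problem lies in your treatment of the $m=0$ case.

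Your decomposition $\Theta(f;g,h)=f(g)+\int_0^1 Df(g+th)\,h\,\m{d}t$ and the plan to place the integral in $H^k(O)$ costs one more derivative of $f$ than the theorem gives you. To control $\int_0^1 Df(g+th)\,h\,\m{d}t$ in $H^k(O)$ you must differentiate $k$ times; by Leibniz and Fa\`a di Bruno, the term where all $k$ derivatives fall on $Df(g+th)$ involves $D^{k+1}f$. But for $m=0$ the hypothesis is only $f\in C^k_b(V)$, so $D^{k+1}f$ is not available. (You even write ``$f'$ is bounded since $f\in C^{1+k}_b$'', which is not what you are assuming at this stage.) The same loss persists if you try instead to view $Df(g+th)$ as a multiplier: with $Df\in C^{k-1}_b$ you only get $\Theta(Df;\cdot)\in\mathcal{M}(H^{k-1})$, so the product with $h$ lands in $H^{k-1}$, not $H^k$.

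The paper avoids this loss by never splitting $f(g+h)$ into a $W^{k,\infty}$ piece plus an $H^k$ piece. Instead it works directly in the multiplier norm via a finite induction on $j\in\{0,\dots,k\}$ establishing that $f\in C^j_b$ implies $\Theta(f;\cdot)\in\mathcal{M}(H^j)$. The inductive step uses
\[
\pd_a\bigl[\Theta(f;g,h)\varphi\bigr]=\Theta(f;g,h)\,\pd_a\varphi+\sum_{b}\Theta(\pd_b f;g,h)\,\pd_a(g+h)_b\,\varphi,
\]
so one derivative always falls on the test function $\varphi$ or on $g+h$, and only one additional derivative of $f$ is consumed per step. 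This is what makes the derivative count close at exactly $f\in C^k_b$. Your decomposition sacrifices this economy by committing to a target space ($H^k$) for the remainder before pairing with a test function. If you want to keep your splitting idea, you would have to weaken the conclusion to require $f\in C^{k+1}_b$ for continuity, which is not the stated theorem (though it would suffice for the smooth $f$ used in the paper's applications).
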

	\begin{proof}

		We divide the proof into three steps.
		
		\textbf{Step 1}: Well-definedness and continuity when $m=0$. Suppose that $m=0$.  For $0 \le j \le k$ let $\mathbb{P}_j$ denote the proposition that if $f \in C^{j}_b(V)$, then $\Theta(f;\cdot) : U \to \mathcal{M}(H^j(O))$ is well-defined, continuous, and obeys the estimate 
		\begin{equation}\label{comp_C2_fv_1}
			\norm{\Theta(f;g,h)}_{\mathcal{M}(H^j)} \lesssim \norm{f}_{C^j_b} \tbr{\norm{g}_{W^{k,\infty}}, \norm{h}_{H^k} }^{j-1}.
		\end{equation}
		We will employ a finite induction to show that $\mathbb{P}_j$ holds for all $0 \le j \le k$, which then proves the first item when $m=0$.
		
		Consider the base case, $j=0$.  Then for $\varphi \in H^0(O)$ we can trivially bound $\norm{\Theta(f;g,h) \varphi}_{H^0} \le \norm{f}_{C^0_b} \norm{\varphi}_{H^0}$ in order to see that $\Theta(f;g,h) \in \mathcal{M}(H^0(O))$ with $\norm{\Theta(f;g,h)}_{\mathcal{M}(H^0)} \le \norm{f}_{C^0_b}$. Thus, $\Theta(f;\cdot)$ is well-defined and obeys the bound \eqref{comp_C2_fv_1} when $j=0$.  To prove continuity, we note that for $(g,h), (\bar{g},\bar{h}) \in U$ we may argue as above to bound $\tnorm{\Theta(f;g,h) - \Theta(f;\bar{g},\bar{h})}_{\mathcal{M}(H^0)} \le \tnorm{f(g+h)-f(\bar{g}+\bar{h})}_{C^0_b}$, but since $k > n/2$, convergence in $W^{k,\infty} \times H^k$ implies uniform convergence, and we deduce that $\lim_{(\bar{g},\bar{h}) \to (g,h)}     \tnorm{\Theta(f;g,h) - \Theta(f;\bar{g},\bar{h})}_{\mathcal{M}(H^0)} =0$. This establishes the continuity of $\Theta(f;\cdot)$ in $U$, and so $\mathbb{P}_0$ is proved.
		
		Now suppose that $\mathbb{P}_\ell$ holds for all $0 \le \ell \le j \le k-1$ and suppose that $f \in C^{j+1}_b(V)$.  Then for $\varphi \in H^{j+1}(O)$ and $(g,h) \in U$ we have that
		\begin{multline}
			\norm{\Theta(f;g,h) \varphi}_{H^{j+1}} \lesssim     \norm{\Theta(f;g,h) \varphi}_{H^{0}} + \sum_{a=1}^n      \norm{\partial_a [\Theta(f;g,h) \varphi]}_{H^{j+1}} \\
			\lesssim     \norm{\Theta(f;g,h) \varphi}_{H^{0}} + \sum_{a=1}^n      \norm{\partial_a[ \Theta(f;g,h) ]\varphi}_{H^{j}} + \sum_{a=1}^n\norm{\Theta(f;g,h) \partial_a \varphi}_{H^{j}}.
		\end{multline}
		By the induction hypothesis, we have the bounds $\norm{\Theta(f;g,h) \varphi}_{H^{0}} \le \norm{\Theta(f;g,h) }_{\mathcal{M}(H^{0})} \norm{\varphi}_{H^0}$ and $    \sum_{a=1}^n\norm{\Theta(f;g,h) \partial_a \varphi}_{H^{j}} \lesssim  \norm{\Theta(f;g,h) }_{\mathcal{M}(H^{j})} \norm{\varphi}_{H^{j+1}}$. On the other hand, 
		\begin{equation}
		\partial_a [\Theta(f;g,h)] = \sum_{b=1}^n \partial_b f(g + h) \partial_a(g+h)_b =  \sum_{b=1}^n \Theta(\partial_b f,g,h) \partial_a(g+h)_b,    
		\end{equation}
		so we may again use the induction hypothesis to bound 
		\begin{multline}
			\sum_{a=1}^n      \norm{\partial_a[ \Theta(f;g,h) ]\varphi}_{H^{j}} \lesssim \sum_{a,b=1}^n \norm{\Theta(\partial_b f,g,h)}_{\mathcal{M}(H^j)} \norm{\partial_a(g+h)_b \varphi}_{H^j} \\
			\lesssim 
			\sum_{b=1}^n \norm{\Theta(\partial_b f,g,h)}_{\mathcal{M}(H^j)}\left(\norm{g}_{W^{k,\infty}} + \norm{h}_{H^k} \right) \norm{\varphi}_{H^j}.
		\end{multline}
		Upon combining these bounds and again using the induction hypothesis, we find that $\Theta(f;g,h) \in \mathcal{M}(H^{j+1}(O))$ with 
		\begin{multline}
			\norm{\Theta(f;g,h)}_{\mathcal{M}(H^j)} \lesssim \norm{\Theta(f;g,h) }_{\mathcal{M}(H^{0})} + \norm{\Theta(f;g,h) }_{\mathcal{M}(H^{j})} \\
			+ \sum_{b=1}^n \norm{\Theta(\partial_b f,g,h)}_{\mathcal{M}(H^j)}\left(\norm{g}_{W^{k,\infty}} + \norm{h}_{H^k} \right)
			\lesssim
			\norm{f}_{C^j_b} \br{\norm{g}_{W^{k,\infty}},\norm{h}_{H^k} }^{j-1} \\
			+ \sum_{b=1}^n     \norm{\partial_b f}_{C^j_b} \br{\norm{g}_{W^{k,\infty}} + \norm{h}_{H^k} }^{j} \lesssim     \norm{f}_{C^{j+1}_b} \br{\norm{g}_{W^{k,\infty}}, \norm{h}_{H^k} }^{(j+1)-1}.
		\end{multline}
		This shows that $\Theta(f;\cdot)$ is well-defined on $U$ and obeys \eqref{comp_C2_fv_1} in the case $j+1$.  
		
		To prove $\mathbb{P}_{j+1},$ it remains only to establish continuity.  For $(g,h),(\bar{g},\bar{h}) \in U$ and $\varphi \in H^{j+1}(\R^n)$ we argue as above to bound 
		\begin{multline}
			\tnorm{[\Theta(f;g,h) - \Theta(f;\bar{g},\bar{h}) ] \varphi}_{H^{j+1}} \lesssim     \tnorm{\Theta(f;g,h) - \Theta(f;\bar{g},\bar{h})}_{\mathcal{M}(H^{0})} \tnorm{\varphi}_{H^0} \\
			+ \sum_{a=1}^n \tnorm{\Theta(f;g,h) -\Theta(f;\bar{g},\bar{h})}_{\mathcal{M}(H^{j})} 
			\tnorm{\partial_a \varphi}_{H^j} 
			+ \sum_{a=1}^n \tnorm{\partial_a[\Theta(f;g,h) -\Theta(f;\bar{g},\bar{h})] \varphi}_{H^j}.
		\end{multline}
		The first two terms here are easy to deal with, but we must rewrite the third: $\partial_a[\Theta(f;g,h) -\Theta(f;\bar{g},\bar{h})] = \sum_{b=1}^n [\Theta(\partial_b f;g,h) -\Theta(\partial_b f;\bar{g},\bar{h})] \partial_a(g+h)_b
		+ \sum_{b=1}^n \Theta(\partial_b f, \bar{g},\bar{h}) \partial_a(g+h-\bar{g}-\bar{h})_b$,   
		which then allows us to bound 
		\begin{multline}
			\sum_{a=1}^n \tnorm{\partial_a[\Theta(f;g,h) -\Theta(f;\bar{g},\bar{h})] \varphi}_{H^j} \lesssim
			\sum_{a,b=1}^n \tnorm{\Theta(\partial_b f;g,h) -\Theta(\partial_b f;\bar{g},\bar{h})}_{\mathcal{M}(H^{j})} \tnorm{\partial_a(g+h)_b \varphi  }_{H^j} \\
			+ \sum_{a,b=1}^n \tnorm{\Theta(\partial_b f;\bar{g},\bar{h})}_{\mathcal{M}(H^{j})} \tnorm{\partial_a(g+h -\bar{g}-\bar{h})_b \varphi  }_{H^j} \\
			\lesssim \sum_{b=1}^n \tnorm{\Theta(\partial_b f;g,h) -\Theta(\partial_b f;\bar{g},\bar{h})}_{\mathcal{M}(H^{j})}\left(\norm{g}_{W^{k,\infty}} + \tnorm{h}_{H^k} \right) \norm{\varphi  }_{H^j}  \\
			+ \sum_{b=1}^n  \norm{\Theta(\partial_b f;\bar{g},\bar{h})}_{\mathcal{M}(H^{j})} \left(\norm{g-\bar{g}}_{W^{k,\infty}} + \norm{h-\bar{h}}_{H^k} \right) \norm{\varphi  }_{H^j}.
		\end{multline}
		Hence, 
		\begin{multline}
			\norm{[\Theta(f;g,h) - \Theta(f;\bar{g},\bar{h}) ] }_{\mathcal{M}(H^{j+1})} \lesssim
			\norm{\Theta(f;g,h) -\Theta(f;\bar{g},\bar{h})}_{\mathcal{M}(H^{0})} \\
			+  \norm{\Theta(f;g,h) -\Theta(f;\bar{g},\bar{h})}_{\mathcal{M}(H^{j})} 
			+\left(\norm{g}_{W^{k,\infty}} + \norm{h}_{H^k} \right) \sum_{b=1}^n \norm{\Theta(\partial_b f;g,h) -\Theta(\partial_b f;\bar{g},\bar{h})}_{\mathcal{M}(H^{j})} \\
			+ \left(\norm{g-\bar{g}}_{W^{k,\infty}} + \norm{h-\bar{h}}_{H^k} \right) \sum_{b=1}^n  \norm{\Theta(\partial_b f;\bar{g},\bar{h})}_{\mathcal{M}(H^{j})}, 
		\end{multline}
		and we readily deduce from this bound and the induction hypothesis that $\Theta(f;\cdot)$ is continuous on $U$ when $f \in C^{j+1}_b(V)$.  Thus, $\mathbb{P}_{j+1}$ holds, and the finite induction argument is complete.

		\textbf{Step 2}: $C^1$ when $f \in C^{1+k}_b(V)$. Now suppose that $m=1$ and $f \in C^{1+k}_b(V)$.  For $(g,h) \in U$ and $(g_1,h_1)\in W^{k,\infty}(O;\R^d) \times H^k(O;\R^d)$ small enough that  $(g+tg_1,h+th_1) \in U$ for all $t\in[0,1]$, we have the identity
		\begin{multline}
			\Theta(f;g+g_1,h+h_1) - \Theta(f;g,h) - \sum_{b=1}^n \Theta(\partial_b f;g,h)(g_1+h_1)_b \\
			= 
			\sum_{b=1}^n (g_1+h_1)_b \int_0^1 \left[\Theta(\partial_b f; g+tg_1,h+th_1) - \Theta(\partial_b f, g,h) \right] dt.
		\end{multline}
		Using this and the results from Step 1, we may argue as in Step 3 of the proof of Theorem \ref{comp_C2} to deduce from this that $\Theta(f;\cdot)$ is differentiable on $U$ and satisfies $D \Theta(f;g,h)(g_1,h_1) = \sum_{b=1}^n \Theta(\partial_b f;g,h)(g_1+h_1)_b$, which may be rewritten as \eqref{comp_C2_fv_01} when $m=1$.  In turn, this formula and the results from the previous step show that $\Theta(f;\cdot)$ is $C^1$.

		\textbf{Step 3}: $C^m$ when $f \in C^{m+k}_b(V)$.  With the $m=1$ case established and formula \eqref{comp_C2_fv_01} in hand, we may then employ a simple induction argument, which we omit for the sake of brevity, to conclude that if $f \in C^{m+k}_b(V)$ for $m \ge 2$, then $\Theta(f;\cdot)$ is $C^m$ on $U$ with the formula \eqref{comp_C2_fv_01} holding for all $m \ge 1$.
	\end{proof}
	
	\subsection{Tools for tame estimates}\label{appendix on tools for tame estimates}

	In this subsection we record some useful tame estimates that form the basis for Section~\ref{only place he's ever been} and numerous other areas of our analysis.  We begin by recalling two versions of the log-convexity result known as Gagliardo-Nirenberg interpolation.  The first is for functions on $\R^n$.
    
	\begin{thm}[Gagliardo-Nirenberg interpolation in full space]\label{thm gagliardo nirenberg}
		Let $V$ be a finite dimensional real vector space, $1\le t,p,q\le\infty$ and $s,r\in\N^+$ be such that $1\le r<s$ and
		\begin{equation}\label{that interpolation equality convex good stuff}
			\f{r}{s}\f{1}{p}+\bp{1-\f{r}{s}}\f{1}{q}=\f{1}{t}.
		\end{equation}
		For all $\varphi\in  L^q(\R^n;V) \cap  \dot{W}^{s,p}(\R^n;V)$ we have the estimate $\tnorm{D^r\varphi}_{L^t}\lesssim\tnorm{\varphi}_{L^q}^{1-r/s}\tnorm{D^s\varphi}^{s/r}_{L^p}$, where the implicit constant depends only the dimension, $r$, $s$, $p$, and $q$.
	\end{thm}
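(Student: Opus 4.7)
The plan is to prove this as the classical Gagliardo--Nirenberg interpolation inequality of Nirenberg. I would first verify that the identity~\eqref{that interpolation equality convex good stuff} is exactly the scaling condition ensuring that both sides of the target estimate transform identically under the rescaling $\varphi(\cdot) \mapsto \varphi(\lambda\,\cdot)$; this homogeneity is the structural backbone of the result and, together with the hypothesis $r < s$, dictates the balance between the two data norms on the right.

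My main approach would be via a Littlewood--Paley decomposition $\varphi = \sum_{j \in \Z} P_j \varphi$, with $P_j$ a smooth projector onto frequencies $|\xi| \sim 2^j$, which handles all admissible interior exponent values uniformly. For each dyadic block, the Mihlin multiplier theorem yields $\tnorm{D^r P_j \varphi}_{L^t} \lesssim 2^{jr}\tnorm{P_j \varphi}_{L^t}$, and Bernstein's inequality provides two complementary controls,
\[
\tnorm{P_j \varphi}_{L^t} \lesssim 2^{jn(1/q - 1/t)}\tnorm{P_j \varphi}_{L^q}
\quad\text{and}\quad
\tnorm{P_j \varphi}_{L^t} \lesssim 2^{jn(1/p - 1/t) - js}\tnorm{P_j D^s \varphi}_{L^p},
\]
the first being useful at low frequencies and the second at high frequencies. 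Splitting $D^r\varphi = \sum_{j<J} D^r P_j\varphi + \sum_{j\ge J} D^r P_j\varphi$ and using the low-frequency bound on the first sum and the high-frequency bound on the second produces two geometric series whose exponents have opposite signs, thanks to~\eqref{that interpolation equality convex good stuff}. Choosing the threshold $J$ to balance these two series in terms of the ratio $\tnorm{\varphi}_{L^q}/\tnorm{D^s\varphi}_{L^p}$ yields exactly the claimed homogeneous bound.

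The principal obstacle is the treatment of the endpoint exponents $p, q, t \in \{1,\infty\}$, where Mihlin's theorem and the Littlewood--Paley square-function characterization of $L^t$ both break down. For these cases I would fall back on Nirenberg's original inductive argument: establish the base case $r=1, s=2$ via the integration-by-parts identity
\[
\int |\nabla\varphi|^t = -\int \nabla\cdot\bigl(|\nabla\varphi|^{t-2}\nabla\varphi\bigr)\,\varphi
\]
combined with H\"older's inequality applied with the exponent triple dictated by~\eqref{that interpolation equality convex good stuff}, and then bootstrap to general $1\le r<s$ by iteratively applying the anchor estimate to $D^{r-1}\varphi$ and interpolating along the $L^{\ell}$-scale, verifying that the exponents telescope correctly. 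A routine density argument from the Schwartz class lifts the inequality to all $\varphi \in L^q \cap \dot{W}^{s,p}$, completing the proof.
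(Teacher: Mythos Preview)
The paper does not actually prove this theorem: its entire proof is a citation to Theorem~12.85 in Leoni's book for the scalar case, followed by the remark that the vector-valued case follows by choosing a basis and working component-wise. Your proposal, by contrast, sketches a genuine self-contained argument, so the two are not really comparable as proofs---you are supplying content where the paper defers to the literature.

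Your outline is one of the standard routes to Gagliardo--Nirenberg and is sound in spirit. A couple of technical cautions are worth flagging. First, the Bernstein inequality $\tnorm{P_j\varphi}_{L^t}\lesssim 2^{jn(1/q-1/t)}\tnorm{P_j\varphi}_{L^q}$ only goes in the direction $q\le t$; since $1/t$ is a convex combination of $1/p$ and $1/q$, the ordering of $p,q,t$ is not fixed a priori, so the low/high-frequency splitting as written may not always apply. The cleaner Littlewood--Paley route is to use H\"older's inequality on each block, $\tnorm{P_j\varphi}_{L^t}\le\tnorm{P_j\varphi}_{L^q}^{1-r/s}\tnorm{P_j\varphi}_{L^p}^{r/s}$, together with $\tnorm{P_j\varphi}_{L^p}\lesssim 2^{-js}\tnorm{D^sP_j\varphi}_{L^p}$ on the annular pieces, and then optimize. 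Second, as you correctly note, the Mihlin/square-function machinery breaks at $p,q,t\in\{1,\infty\}$; your fallback to Nirenberg's original one-dimensional integration-by-parts base case plus induction on $r$ and $s$ is exactly how Leoni's book handles the full range, so at that point you are essentially reproducing the cited reference. Either way the result is classical and your plan would succeed with these adjustments.
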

	\begin{proof}
	See, for instance, Theorem 12.85 in Leoni~\cite{MR3726909} for the proof when $V=\R$. The case for general $V$ follows by choosing a basis and working component-wise.
	\end{proof}
	
	The second version of Gagliardo-Nirenberg is for functions in certain domains.
	
	\begin{coro}[Gagliardo-Nirenberg interpolation in domains]\label{gagliardo nirenberg interpolation in domains}
		Suppose that $V$ is a finite dimensional real vector space and $U\subseteq\R^n$ is a Stein extension domain (see Definition~\ref{defn Stein-extension operator}). Let $1\le p,q,t\le\infty$ and $s,r\in\N^+$ be such that $1\le r<s$ and~\eqref{that interpolation equality convex good stuff} holds. For all $\varphi\in(L^q\cap W^{s,p})(U;V)$ we have the estimate $\tnorm{\varphi}_{W^{r,t}}\lesssim\tnorm{\varphi}^{1-r/s}_{L^q}\tnorm{\varphi}_{W^{s,p}}^{s/r}$, where the implicit constant depends on the dimension, $r$, $s$, $p$, $q$, and $U$.
	\end{coro}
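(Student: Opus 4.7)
The plan is to reduce the bound to its full-space analog, Theorem \ref{thm gagliardo nirenberg}, by means of the Stein extension operator $\mathfrak{E}_U$ furnished by the hypothesis that $U$ is a Stein extension domain (Definition \ref{defn Stein-extension operator}). As a first step, I lift $\varphi$ to $\Phi := \mathfrak{E}_U \varphi$ on $\R^n$; since $\mathfrak{E}_U$ is by definition a bounded linear operator from $W^{k,\ell}(U;V)$ to $W^{k,\ell}(\R^n;V)$ simultaneously for every integer $k \ge 0$ and every $1 \le \ell \le \infty$, with operator norms depending only on $n$, $k$, $\ell$, and $U$, the extension $\Phi$ lies in $L^q(\R^n;V) \cap W^{s,p}(\R^n;V)$ and obeys the continuity bounds $\tnorm{\Phi}_{L^q(\R^n)} \lesssim \tnorm{\varphi}_{L^q(U)}$ and $\tnorm{\Phi}_{W^{s,p}(\R^n)} \lesssim \tnorm{\varphi}_{W^{s,p}(U)}$.

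Next, I apply Theorem \ref{thm gagliardo nirenberg} on $\R^n$ to $\Phi$ and promote its conclusion to a bound on the full $W^{r,t}(\R^n)$ norm of $\Phi$. For the top-order derivatives the theorem applied directly yields $\tnorm{D^r \Phi}_{L^t(\R^n)} \lesssim \tnorm{\Phi}_{L^q(\R^n)}^{1-r/s}\tnorm{D^s\Phi}_{L^p(\R^n)}^{s/r}$. For each intermediate order $0 \le j < r$, invoking Theorem \ref{thm gagliardo nirenberg} with the pair $(j,s)$ in place of $(r,s)$ produces a bound for $\tnorm{D^j \Phi}$ in an auxiliary Lebesgue space $L^{t_j}(\R^n)$ whose exponent $t_j$ is determined by the scaling identity \eqref{that interpolation equality convex good stuff}. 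Classical Lyapunov interpolation between these auxiliary $L^{t_j}$ spaces, combined with Young's inequality to balance exponents, then permits the conversion of the auxiliary bounds into genuine $L^t$ controls on every $D^j\Phi$ with $0 \le j \le r$. After inserting the continuity estimates from the first step, this yields the unified bound $\tnorm{\Phi}_{W^{r,t}(\R^n)} \lesssim \tnorm{\varphi}_{L^q(U)}^{1-r/s}\tnorm{\varphi}_{W^{s,p}(U)}^{s/r}$.

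The final step is to restrict back to $U$: the defining property of the Stein extension yields $\Phi|_U = \varphi$, and so the trivial inequality $\tnorm{D^j \varphi}_{L^t(U)} \le \tnorm{D^j \Phi}_{L^t(\R^n)}$ for each $j \in \{0,1,\ldots,r\}$ concludes the proof. The only delicate piece is the intermediate-derivative bookkeeping in the second step, namely the conversion of the $L^{t_j}$ controls into a uniform $L^t$ control; this is the main obstacle, but it requires nothing beyond careful manipulation of the exponent identity \eqref{that interpolation equality convex good stuff} together with standard Lyapunov and Young interpolation, and introduces no new analytic ingredient beyond the Stein extension and Theorem \ref{thm gagliardo nirenberg} themselves.
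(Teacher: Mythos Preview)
Your overall strategy---extend via $\mathfrak{E}_U$, apply Theorem~\ref{thm gagliardo nirenberg} on $\R^n$, restrict back---is exactly the paper's, but your treatment of the intermediate derivatives is both overcomplicated and not clearly correct. Applying Theorem~\ref{thm gagliardo nirenberg} with $(j,s)$ gives control of $D^j\Phi$ only in $L^{t_j}$ with $1/t_j = (j/s)/p + (1-j/s)/q$, and for $j<r$ this $t_j$ is different from $t$. Your appeal to ``Lyapunov interpolation between these auxiliary $L^{t_j}$ spaces'' does not make sense as stated: Lyapunov interpolation requires control of the \emph{same} function in two Lebesgue spaces that bracket the target, but you only have $D^j\Phi$ in a single space $L^{t_j}$, and different $j$ correspond to different functions.

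The paper sidesteps this entirely. It uses the equivalence $\tnorm{\Phi}_{W^{r,t}(\R^n)} \asymp \tnorm{\Phi}_{L^t(\R^n)} + \tnorm{D^r\Phi}_{L^t(\R^n)}$ (itself a consequence of Theorem~\ref{thm gagliardo nirenberg} with all three exponents equal to $t$), so only two terms need bounding. The top-order term is handled by Theorem~\ref{thm gagliardo nirenberg} exactly as you do. For the zeroth-order term, the paper uses that $\mathfrak{E}_U$ is bounded $L^t(U)\to L^t(\R^n)$, and then exploits the exponent relation~\eqref{that interpolation equality convex good stuff} directly: since $1/t$ is the convex combination $(r/s)/p + (1-r/s)/q$, log-convexity of the Lebesgue norms gives $\tnorm{\varphi}_{L^t(U)} \le \tnorm{\varphi}_{L^q(U)}^{1-r/s}\tnorm{\varphi}_{L^p(U)}^{r/s} \le \tnorm{\varphi}_{L^q(U)}^{1-r/s}\tnorm{\varphi}_{W^{s,p}(U)}^{r/s}$. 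This is the clean replacement for your second step.
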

	\begin{proof}
		Let $\mathfrak{E}_U$ denote a Stein-extension operator for the domain $U$. Given $\varphi$ as in the hypotheses, we use the Theorem~\ref{thm gagliardo nirenberg} followed by the log-convexity of the norm on the Lebesgue spaces to obtain the desired bound:
		\begin{multline}
			\tnorm{\varphi}_{W^{r,t}(U;V)}\le\tnorm{\mathfrak{E}_U\varphi}_{W^{r,t}(\R^n;V)}\lesssim\tnorm{\mathfrak{E}_U\varphi}_{L^t(\R^n;V)}+\tnorm{D^r\mathfrak{E}_U\varphi}_{L^t(\R^n;V)}\lesssim\tnorm{\varphi}_{L^t(U;V)}\\+\tnorm{\varphi}^{1-r/s}_{L^q(U;V)}\tnorm{\varphi}^{s/r}_{W^{s,p}(U;V)}\le\tnorm{\varphi}_{L^q(U;V)}^{1-r/s}\tnorm{\varphi}^{r/s}_{L^p(U;V)}+\tnorm{\varphi}^{1-r/s}_{L^q(U;V)}\tnorm{\varphi}^{s/r}_{W^{s,p}(U;V)}\\\lesssim\tnorm{\varphi}^{1-r/s}_{L^q(U;V)}\tnorm{\varphi}^{s/r}_{W^{s,p}(U;V)}.
		\end{multline}
	\end{proof}
	
	Armed with the interpolation theorems, we can now prove a series of useful tame estimates. We begin by studying products. The following principal result, which has a rather lengthy statement, is the source of numerous useful and simpler corollaries.
	
	\begin{thm}[Tame estimates on products]\label{theorem on tame estimates on products}
		Let $U\subseteq\R^d$ be a Stein extension domain (see Definition~\ref{defn Stein-extension operator}), and let $\ell,m,k,\al,\be \in\N$ and $\al_1,\dots,\al_\ell,\be_1,\dots,\be_m\in\N^d$ satisfy $\sum_{i=1}^\ell|\al_i|=\al$, $\sum_{j=1}^m|\be_j|=\be$, and $\al+\be=k$.  Suppose that $r,s,t,p_1,\dots,p_\ell,q_1,\dots,q_\ell,a_1,\dots,a_m,b_1,\dots,b_m\in[1,\infty]$ satisfy $1/r=1/s+1/t$ as well as
		\begin{equation}\label{hypothesis madness}
			\f{1}{p_i}-\f{1}{q_i}=\f{k}{\al}\bp{\f{1}{s}-\sum_{\lambda=1}^\ell\f{1}{q_\lambda}}
			\text{ for }i\in\tcb{1,\dots,\ell}
			\text{ and }
			\f{1}{a_j}-\f{1}{b_j}=\f{k}{\be}\bp{\f{1}{t}-\sum_{\mu=1}^m\f{1}{b_\mu}}
			\text{ for }j\in\tcb{1,\dots,m}.
		\end{equation}
		Suppose additionally that
		\begin{equation}
			u_i\in(W^{k,p_i}\cap L^{q_i})(U) 
			\text{ for }i\in\tcb{1,\dots,\ell}
			\text{ and }
			 w_j\in(W^{k,a_j}\cap L^{b_j})(U) 
    		 \text{ for }j\in\tcb{1,\dots,m}.
		\end{equation}
		Then we have  the product estimate
		\begin{equation}\label{theorem on tame estimates on products: est}
			\bnorm{\bp{\prod_{i=1}^\ell\pd^{\al_i}u_i}\bp{\prod_{j=1}^m\pd^{\be_j}w_j}}_{L^r}\lesssim\sum_{n=1}^{m+\ell}\tnorm{v_n}_{W^{k,c_n}}\prod_{\nu \neq n}\tnorm{v_\nu}_{L^{d_n}},
		\end{equation}
		where
		\begin{equation}\label{newnotation}
			v_n=\begin{cases}
				u_n&\text{if }n\le \ell,\\
				w_{n-\ell}&\text{if }\ell<n,
			\end{cases}
			\quad c_n=\begin{cases}
				p_n&\text{if }n\le\ell,\\
				a_{n-\ell}&\text{if }\ell<n,
			\end{cases}
			\text{ and }
			 d_n=\begin{cases}
				q_n&\text{if }n\le\ell,\\
				b_{n-\ell}&\text{if }\ell<n.
			\end{cases}
		\end{equation}
	\end{thm}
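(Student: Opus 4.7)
The plan is to combine three ingredients: a carefully chosen H\"older inequality distributing the $L^r$ bound across the $\ell+m$ factors, Corollary~\ref{gagliardo nirenberg interpolation in domains} applied factorwise, and a weighted AM-GM step to convert a product of factorwise bounds into the stated sum. The whole argument is essentially exponent bookkeeping driven by~\eqref{hypothesis madness}.

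First I would assign to each factor $\pd^{\al_i}u_i$ a H\"older exponent $s_i\in[1,\infty]$ and to each $\pd^{\be_j}w_j$ an exponent $t_j\in[1,\infty]$ via the convex combinations
\begin{equation*}
    \f{1}{s_i} = \f{|\al_i|}{k}\cdot\f{1}{p_i} + \bp{1-\f{|\al_i|}{k}}\f{1}{q_i}, \qquad \f{1}{t_j} = \f{|\be_j|}{k}\cdot\f{1}{a_j}+\bp{1-\f{|\be_j|}{k}}\f{1}{b_j}.
\end{equation*}
Using~\eqref{hypothesis madness} together with $\sum_i|\al_i|=\al$ and $\sum_j|\be_j|=\be$, a direct calculation yields $\sum_i 1/s_i = 1/s$ and $\sum_j 1/t_j = 1/t$, so $\sum_i 1/s_i + \sum_j 1/t_j = 1/r$. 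H\"older's inequality then controls the $L^r$ norm by the product $\prod_i\tnorm{\pd^{\al_i}u_i}_{L^{s_i}}\prod_j\tnorm{\pd^{\be_j}w_j}_{L^{t_j}}$.

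Next I would interpolate each factor. When $1\le|\al_i|\le k-1$, the exponent $s_i$ is by construction the one dictated by~\eqref{that interpolation equality convex good stuff} with $r=|\al_i|$ and $s=k$, so Corollary~\ref{gagliardo nirenberg interpolation in domains} gives $\tnorm{\pd^{\al_i}u_i}_{L^{s_i}}\lesssim\tnorm{u_i}_{L^{q_i}}^{1-|\al_i|/k}\tnorm{u_i}_{W^{k,p_i}}^{|\al_i|/k}$; the analogous bound holds for $w_j$, while the endpoint cases $|\al_i|\in\tcb{0,k}$ reduce to trivial embeddings obeying the same interpolation identity. Setting $\gam_n=|\al_n|/k$ for $n\le\ell$ and $\gam_n=|\be_{n-\ell}|/k$ for $n>\ell$ and using the relabeling~\eqref{newnotation}, this yields $\tnorm{\cdots}_{L^r}\lesssim\prod_{n}\tnorm{v_n}_{W^{k,c_n}}^{\gam_n}\tnorm{v_n}_{L^{d_n}}^{1-\gam_n}$.

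To conclude, the hypothesis $\al+\be=k$ forces $\sum_n\gam_n=1$, so writing the right side as $\prod_\nu\tnorm{v_\nu}_{L^{d_\nu}}\cdot\prod_n(\tnorm{v_n}_{W^{k,c_n}}/\tnorm{v_n}_{L^{d_n}})^{\gam_n}$ and applying weighted AM-GM to the second factor bounds it by $\sum_n\tnorm{v_n}_{W^{k,c_n}}/\tnorm{v_n}_{L^{d_n}}$, producing~\eqref{theorem on tame estimates on products: est}. Degenerate situations where some low norm vanishes can be handled by an $\ep$-regularization and passage to the limit; the edge cases $\al=0$ or $\be=0$ (for which~\eqref{hypothesis madness} should be read as $1/s=\sum_\lambda 1/q_\lambda$ or $1/t=\sum_\mu 1/b_\mu$) are dispatched by treating the corresponding group of factors via a pure H\"older bound without interpolation. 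The main obstacle will be the exponent bookkeeping in the H\"older step, since the choice of $s_i$, $t_j$ and the verification that $\sum 1/s_i + \sum 1/t_j = 1/r$ require careful use of~\eqref{hypothesis madness}; once the exponents are set, the rest is automatic.
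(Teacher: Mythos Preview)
Your proof is correct and follows essentially the same approach as the paper: the same choice of intermediate H\"older exponents $s_i,t_j$, the same verification via~\eqref{hypothesis madness} that they sum to $1/r$, the same factorwise application of Corollary~\ref{gagliardo nirenberg interpolation in domains}, and the same final weighted AM-GM/Young step. The only cosmetic difference is that the paper rewrites $\prod_n\tnorm{v_n}_{W^{k,c_n}}^{\gam_n}\tnorm{v_n}_{L^{d_n}}^{1-\gam_n}$ via an algebraic identity as $\prod_n\sp{\tnorm{v_n}_{W^{k,c_n}}\prod_{\nu\neq n}\tnorm{v_\nu}_{L^{d_\nu}}}^{\gam_n}$ before applying Young's inequality, thereby avoiding your division-and-regularize maneuver, but the two arguments are equivalent.
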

	\begin{proof}
		For $i\in\tcb{1,\dots,\ell}$ and $j\in\tcb{1,\dots,m}$, we define $s_i,t_j\in[1,\infty]$ via
		\begin{equation}\label{some intermediate exponents}
			\f{1}{s_i}=\f{|\al_i|/k}{p_i}+\f{1-|\al_i|/k}{q_i}
			\text{ and }
			\f{1}{t_j}=\f{|\be_j|/k}{a_j}+\f{1-|\be_j|/k}{b_j}.
		\end{equation}
		We first claim that $\sum_{i=1}^\ell1/s_i=1/s$ and $\sum_{j=1}^m1/t_j=1/t$.  Indeed, by invoking  \eqref{hypothesis madness}, we see that equation~\eqref{some intermediate exponents} is equivalent to
		\begin{equation}
			\f{1}{s_i}-\f{1}{q_i}=\f{|\al_i|}{k}\cdot\f{k}{\al}\bp{\f{1}{s}-\sum_{\lambda=1}^\ell\f{1}{q_\lambda}}
			\text{ and }
			\f{1}{t_j}-\f{1}{b_j}=\f{|\be_j|}{k}\cdot\f{k}{\be}\bp{\f{1}{t}-\sum_{\mu=1}^m\f{1}{b_\mu}}
		\end{equation}
		for $i\in\tcb{1,\dots,\ell}$ and $j\in\tcb{1,\dots,m}$. Now we sum over $i$ and $j$ in these ranges and use the definition of $\al$ and $\be$ to see that
		\begin{equation}
			\sum_{i=1}^\ell\bp{\f{1}{s_i}-\f{1}{q_i}}=\f{1}{s}-\sum_{i=1}^\ell\f{1}{q_i}
			\text{ and }
			\sum_{j=1}^m\bp{\f{1}{t_j}-\f{1}{b_j}}=\f{1}{t}-\sum_{j=1}^m\f{1}{b_j},
		\end{equation}
		from which the claim immediately follows.
        
        We next note that by hypothesis  $1/r=1/s+1/t$, and so the claim allows us to begin the product estimate by applying H\"older's inequality:
		\begin{equation}\label{application of Holders}
			\bnorm{\bp{\prod_{i=1}^\ell\pd^{\al_i}u_i}\bp{\prod_{j=1}^m\pd^{\be_j}w_j}}_{L^r}\le\bp{\prod_{i=1}^{\ell}\tnorm{u_i}_{W^{|\al_i|,s_i}}}\bp{\prod_{j=1}^m\tnorm{w_j}_{W^{|\be_j|,t_j}}}.
		\end{equation}
		We then use~\eqref{some intermediate exponents} and apply Gagliardo-Nirenberg interpolation, Corollary~\ref{gagliardo nirenberg interpolation in domains}, to bound
		\begin{equation}\label{application of GNI}
			\tnorm{u_i}_{W^{|\al_i|,s_i}}\lesssim\tnorm{u_i}_{W^{k,p_i}}^{|\al_i|/k}\tnorm{u_i}_{L^{q_i}}^{1-|\al_i|,k}
			\text{ and }
			\tnorm{w_j}_{W^{|\al_j|,s_j}}\lesssim\tnorm{w_j}_{W^{k,a_j}}^{|\be_j|/k}\tnorm{w_j}_{L^{b_j}}^{1-|\be_i|/k}.
		\end{equation}
		We set $\gam_n=\al_n$ if $n\le\ell$ and $\gam_n=\be_{n-\ell}$ if $n>\ell$ and note that $\sum_{n=1}^{m+\ell} \abs{\gam_n}/k = (\alpha + \beta)/k =1$.  The latter identity implies that if $\{B_n\}_{n=1}^{m+\ell} \subseteq [0,\infty)$, then 
		\begin{equation}\label{square_product_ident}
		    \prod_{n=1}^{m+\ell} B_n^{1-\abs{\gam_n}/k} =  \prod_{n=1}^{m+\ell} B_n^{\sum_{\nu \neq n} \abs{\gam_\nu}/k} = \prod_{n=1}^{m+\ell} \prod_{\nu \neq n} B_n^{\abs{\gam_\nu}/k} =  \prod_{n=1}^{m+\ell} \prod_{\nu \neq n} B_\nu^{\abs{\gam_n}/k}.
		\end{equation}
		We then combine estimates~\eqref{application of Holders} and~\eqref{application of GNI}, while using the notation of~\eqref{newnotation} and the identity \eqref{square_product_ident}, to arrive at the estimate
		\begin{equation}
			\bnorm{\bp{\prod_{i=1}^\ell\pd^{\al_i}u_i}\prod_{j=1}^m\pd^{\be_j}w_j}_{L^r}
			\lesssim \prod_{n=1}^{m+\ell}\tnorm{v_n}^{|\gam_n|/k}_{W^{k,c_n}}\tnorm{v_n}^{1-|\gam_n|/k}_{L^{d_n}}=\prod_{n=1}^{m+\ell}\bp{\tnorm{v_n}_{W^{k,c_n}}\prod_{\nu \neq n}\tnorm{v_\nu}_{L^{d_n}}}^{|\gam_n|/k}.
		\end{equation}
		Then \eqref{theorem on tame estimates on products: est} follows from this via an application of Young's inequality.
	\end{proof}
	
	We now can derive more familiar-looking high-low type product estimates.

	\begin{coro}[Tame estimates on simple multipliers]\label{corollary on tame estimates on simple multipliers}
		Let $U\subseteq\R^d$ be a Stein extension domain (see Definition~\ref{defn Stein-extension operator}), $k\in\N$, and $\varphi\in H^k(U)$. The following hold.
		\begin{enumerate}
			\item Suppose that $\al,\be\in\N^d$ are such that $|\al|+|\be|=k$ and $\psi\in H^{\max\tcb{1+\tfloor{d/2},k}}(U)$. Then the product $\pd^\al\psi\pd^\be\varphi$ belongs to $L^2(U)$ and obeys the bound
			\begin{equation}\label{a bound for me and you}
				\tnorm{\pd^\al\psi\pd^\be\varphi}_{L^2}\lesssim\tnorm{\psi}_{H^{1+\tfloor{d/2}}}\tnorm{\psi}_{H^k}+\begin{cases}
					0&\text{if }k\le\tfloor{d/2},\\
					\tnorm{\psi}_{H^k}\tnorm{\varphi}_{H^{1+\tfloor{d/2}}}&\text{if }1+\tfloor{d/2}<k.
				\end{cases}
			\end{equation}
			\item Suppose that $\psi\in H^{\max\tcb{1+\tfloor{d/2},k}}(U)$. Then the product $\psi\varphi$ belongs to $H^k(U)$ and obeys the bound
			\begin{equation}
				\tnorm{\psi\varphi}_{H^k}\lesssim\tnorm{\psi}_{H^{1+\tfloor{d/2}}}\tnorm{\psi}_{H^k}+\begin{cases}
					0&\text{if }k\le1+\tfloor{d/2},\\
					\tnorm{\psi}_{H^k}\tnorm{\varphi}_{H^{1+\tfloor{d/2}}}&\text{if }1+\tfloor{d/2}<k.
				\end{cases}
			\end{equation}
			\item Suppose that $\al,\be\in\N^d$ are such that $|\al|+|\be|=k$ and $\psi\in W^{k,\infty}(U)$. Then the product $\pd^\al\psi\pd^\be\varphi$ belongs to $L^2(U)$ and obeys the bound
			\begin{equation}
				\tnorm{\pd^\al\psi\pd^\be\varphi}_{L^2}\lesssim\tnorm{\psi}_{L^\infty}\tnorm{\varphi}_{H^k}+\tnorm{\psi}_{W^{k,\infty}}\tnorm{\varphi}_{L^2}.
			\end{equation}
			\item Suppose that $\psi\in W^{k,\infty}(U)$. Then the product $\psi\varphi$ belongs to $H^k(U)$ and obeys the bound
			\begin{equation}
				\tnorm{\psi\varphi}_{H^k}\lesssim\tnorm{\psi}_{L^\infty}\tnorm{\varphi}_{H^k}+\tnorm{\psi}_{W^{k,\infty}}\tnorm{\varphi}_{L^2}.
			\end{equation}
		\end{enumerate}
	\end{coro}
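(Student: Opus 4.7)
The four items split into two natural pairs: items (1) and (3) bound single products of partial derivatives, while items (2) and (4) are their consequences after summing via the Leibniz rule. I would first observe that items (2) and (4) follow from items (1) and (3), respectively: for any multi-index $\gamma$ with $|\gamma|\le k$ the Leibniz formula $\partial^\gamma(\psi\varphi)=\sum_{\alpha+\beta=\gamma}\binom{\gamma}{\alpha}\partial^\alpha\psi\,\partial^\beta\varphi$ expresses each derivative of the product as a finite linear combination of terms to which the corresponding single-product bound applies; summing in $\gamma$ and controlling the $L^2$-term $\tnorm{\psi\varphi}_{L^2}$ trivially yields the $H^k$-estimates.

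For item (3), I would apply Theorem~\ref{theorem on tame estimates on products} with $\ell=m=1$, $u_1=\psi$, $w_1=\varphi$, $r=2$, and the exponents $p_1=q_1=\infty$, $a_1=b_1=2$. The hypothesis~\eqref{hypothesis madness} then gives $1/s=0$ and $1/t=1/2$, hence $1/r=1/s+1/t=1/2$ as required, and the conclusion~\eqref{theorem on tame estimates on products: est} reads exactly as the stated bound of item~(3).

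For item (1), I would invoke the same theorem but with $p_1=a_1=2$ and $q_1=b_1=\infty$. The hypothesis becomes $1/s=|\alpha|/(2k)$ and $1/t=|\beta|/(2k)$, which sum to $1/r=1/2$, and the theorem produces
\[
\tnorm{\partial^\alpha\psi\,\partial^\beta\varphi}_{L^2}\lesssim \tnorm{\psi}_{H^k}\tnorm{\varphi}_{L^\infty}+\tnorm{\varphi}_{H^k}\tnorm{\psi}_{L^\infty}.
\]
Refining this using the Sobolev embedding $H^{1+\tfloor{d/2}}(U)\hookrightarrow L^\infty(U)$ (valid because $U$ is a Stein extension domain, so the embedding on $\R^d$ transfers to $U$) replaces $\tnorm{\psi}_{L^\infty}$ by $\tnorm{\psi}_{H^{1+\tfloor{d/2}}}$; when additionally $k>1+\tfloor{d/2}$, the same embedding applies to $\varphi$, and the two stated terms fall out. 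When $k\le 1+\tfloor{d/2}$, one instead uses $\tnorm{\psi}_{H^k}\le\tnorm{\psi}_{H^{1+\tfloor{d/2}}}$ and eliminates the $\tnorm{\varphi}_{L^\infty}$ factor by re-running the theorem with a scheme that places $\varphi$'s derivatives uniformly in $L^2$-based slots and Gagliardo-Nirenberg-interpolates $\psi$'s derivatives between $H^{1+\tfloor{d/2}}$ and its Sobolev-embedded $L^\infty$ endpoint, absorbing $\tnorm{\psi}_{L^\infty}\lesssim\tnorm{\psi}_{H^{1+\tfloor{d/2}}}$ into the interpolation.

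\textbf{Main obstacle.} The delicate point is Case A of item (1), i.e.~$k\le 1+\tfloor{d/2}$: the naive exponent choice yields a $\tnorm{\varphi}_{L^\infty}$ factor which is \emph{not} controlled by $\tnorm{\varphi}_{H^k}$ when $k\le d/2$. The remedy requires either a tailored second application of Theorem~\ref{theorem on tame estimates on products} with exponents adapted to the subcritical regularity of $\varphi$, or a direct Gagliardo-Nirenberg argument (via Corollary~\ref{gagliardo nirenberg interpolation in domains}) applied to both factors simultaneously so that the $L^\infty$-norm of $\varphi$ never appears. Item (3), by contrast, is symmetric and direct: the hypothesis $\psi\in W^{k,\infty}$ gives the $L^\infty$-based control uniformly, so no such maneuver is needed.
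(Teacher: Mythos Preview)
Your reductions of items (2) and (4) to (1) and (3) via Leibniz, your treatment of item (3), and your handling of item (1) in the supercritical regime $k>1+\tfloor{d/2}$ all match the paper's argument (modulo the cosmetic choice $\ell=m=1$ versus the paper's $\ell=2,m=0$).

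The gap is in the subcritical case $k\le\tfloor{d/2}$ of item (1). Your first proposed remedy---keeping $\varphi$ in ``$L^2$-based slots'' and Gagliardo--Nirenberg interpolating $\psi$ between $H^{1+\tfloor{d/2}}$ and $L^\infty$---does not close: if $\varphi$ sits in $L^2\cap L^2$ then the H\"older budget forces $\psi$'s top derivatives into $L^\infty$, i.e.\ you would need $\psi\in W^{k,\infty}$, and $H^{1+\tfloor{d/2}}$ does not embed there for $k\ge 1$. The paper's resolution uses an ingredient you omit: the \emph{subcritical} Sobolev embedding applied to $\varphi$ itself. When $k<d/2$ one has $H^k(U)\hookrightarrow L^{2d/(d-2k)}(U)$, so $\varphi$ can occupy the slot $q_1=\tfrac{2d}{d-2k}>2$; this frees enough H\"older room that $\psi$'s $k$th derivatives need only lie in $L^{d/k}$, and one checks via Gagliardo--Nirenberg (interpolating $H^{1+\tfloor{d/2}}\cap L^\infty$) that indeed $H^{1+\tfloor{d/2}}(U)\hookrightarrow W^{k,d/k}(U)$. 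Theorem~\ref{theorem on tame estimates on products} with $p_1=2$, $q_1=\tfrac{2d}{d-2k}$, $p_2=d/k$, $q_2=\infty$ then yields the single term $\tnorm{\psi}_{H^{1+\tfloor{d/2}}}\tnorm{\varphi}_{H^k}$. The endpoint $k=d/2$ is handled by the critical embedding $H^{d/2}\hookrightarrow L^{2+d}$ with $p_2=(4+2d)/d$. Your second suggestion (``GNI applied to both factors simultaneously'') is closer in spirit but still misses that the crucial step is a Sobolev embedding on $\varphi$, not an interpolation.
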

	\begin{proof}
		The second and fourth items follow directly from the Leibniz rule and the first and third items, respectively.   The third item follows as a direct application of Theorem~\ref{theorem on tame estimates on products} in the case that $\ell=2$, $m=0$, $r=2$, $p_1=2$, $p_2=\infty$, $q_1=2$, $q_2=\infty$. It remains to prove the first item.  In the case $1+\tfloor{d/2}< k$, the estimate of the first item again follows from Theorem~\ref{theorem on tame estimates on products}, applied with $\ell=2$, $m=0$, $r=2$, $p_1=p_2=2$, $q_1=q_2=\infty$, combined with the supercritical Sobolev embedding $H^{1+\tfloor{d/2}}(U)\emb L^\infty(U)$. 
		
		Now consider the remaining case,  $k\le\tfloor{d/2}$. We consider first what happens when $k<d/2$.  In this case we can invoke the subcritical Sobolev embedding $H^k(U)\emb L^{\f{2d}{d-2k}}(U)$ to see that $\varphi\in(W^{k,p_1}\cap L^{q_1})(U)$ for $p_1=2$ and $q_1=\f{2d}{d-2k}$. On the other hand, the supercritical Sobolev embedding, 	Corollary~\ref{gagliardo nirenberg interpolation in domains}, and the estimate $1+\tfloor{d/2}>d/2>k$ show that we have the embeddings 
		\begin{equation}
		H^{1+\tfloor{d/2}}(U)\emb H^{1+\tfloor{d/2}}(U) \cap L^\infty(U) \emb  W^{k,2(1+\tfloor{d/2})/k}(U)\emb W^{k,d/k}(U).    
		\end{equation}
		This implies that $\psi\in(L^\infty\cap H^{1+\tfloor{d/2}})(U)$.  We set $p_2=d/k$ and $q_2=\infty$ and note that the hypotheses of Theorem~\ref{theorem on tame estimates on products} are satisfied with $r=2$, $\ell=2$, and $m=0$ again; the bound \eqref{a bound for me and you} when $k < d/2$ then follows from the product estimate provided by the theorem.
		
		Finally, we consider the case $k=d/2$. Thanks to Corollary~\ref{gagliardo nirenberg interpolation in domains} again, we obtain that $\psi\in H^{1+d/2}(U)\emb W^{k,(4+2d)/d}(U)$, which motivates setting $p_2=\f{4+2d}{d}$ and $q_2=\infty$.  The critical Sobolev embedding implies that $\varphi\in H^{k}(U)\emb L^{2+d}(U)$, which dictates that we set $p_1=2$, $q_1=2+d$, and $r=2$.  Using these parameters, we again apply Theorem~\ref{theorem on tame estimates on products} to get \eqref{a bound for me and you} when $k=d/2$, which completes the proof of the first item.
	\end{proof}
	
	Next we consider tame bounds on commutator expressions.

	\begin{coro}[Tame estimates on commutators]\label{coro on tames estimates on commutators}
		Let $U\subseteq\R^d$ be a Stein extension domain (see Definition~\ref{defn Stein-extension operator}), $k,m\in\N$ with $m\ge 1$, $j\in\tcb{1,\dots,d}$, and $\varphi\in H^{k+m-1}(U)$. The following hold.
		\begin{enumerate}
			\item Suppose that $\psi\in H^{\max\tcb{2+\tfloor{d/2},k+m}}_{\loc}(U)$ satisfies $\pd_j\psi\in H^{\max\tcb{1+\tfloor{d/2},k+m-1}}(U)$. Then the commutator $[\pd_j^m,\psi]\varphi$ belongs to $H^{k}(U)$ and obeys the bound
			\begin{equation}
				\tnorm{[\pd_j^m,\psi]\varphi}_{H^k}\lesssim\tnorm{\pd_j\psi}_{H^{1+\tfloor{d/2}}}\tnorm{\varphi}_{H^{k+m-1}}+\begin{cases}
					0&\text{if }k+m\le2+\tfloor{d/2},\\
					\tnorm{\pd_j\psi}_{H^{k+m-1}}\tnorm{\varphi}_{H^{1+\tfloor{d/2}}}&\text{if }2+\tfloor{d/2}<k+m.
				\end{cases}
			\end{equation}
			\item Suppose that $\psi\in W^{k+m,\infty}_{\loc}(U)$ satisfies $\pd_j\psi\in W^{k+m-1,\infty}(U)$. Then the commutator $[\pd_j^m,\psi]\varphi$ belongs to $H^k(U)$ and obeys the bound
			\begin{equation}
				\tnorm{[\pd_j^m,\psi]\varphi}_{H^k}\lesssim\tnorm{\pd_j\psi}_{L^\infty}\tnorm{\varphi}_{H^{k+m-1}}+\tnorm{\pd_j\psi}_{W^{k+m-1,\infty}}\tnorm{\varphi}_{L^2}.
			\end{equation}
		\end{enumerate}
	\end{coro}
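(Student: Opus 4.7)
The plan is to reduce both estimates to applications of Corollary~\ref{corollary on tame estimates on simple multipliers} after a Leibniz expansion of the commutator. First, I would write
\begin{equation}
    [\pd_j^m,\psi]\varphi = \pd_j^m(\psi\varphi) - \psi\pd_j^m\varphi = \sum_{\ell=1}^m \binom{m}{\ell}\pd_j^{\ell-1}(\pd_j\psi)\,\pd_j^{m-\ell}\varphi,
\end{equation}
where the key move is pulling a single $\pd_j$ out of $\pd_j^\ell\psi$.  This ensures every term on the right involves only derivatives of $\pd_j\psi$, matching exactly the regularity hypotheses placed on $\pd_j\psi$ in the two items of the statement.

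To control $\tnorm{[\pd_j^m,\psi]\varphi}_{H^k}$, I would differentiate by $\pd^\gamma$ for $|\gamma|\le k$ and expand again with Leibniz, so that each resulting term takes the form
\begin{equation}
    \pd^{\tilde\alpha}(\pd_j\psi)\,\pd^{\tilde\beta}\varphi, \qquad |\tilde\alpha|+|\tilde\beta| = |\gamma|+m-1 \le k+m-1.
\end{equation}
For a fixed $K=|\tilde\alpha|+|\tilde\beta|$, this is precisely the type of product controlled by Corollary~\ref{corollary on tame estimates on simple multipliers}, applied with its parameter $k$ replaced by $K$ and with $\pd_j\psi$ playing the role of $\psi$ in that corollary.

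For the first item of the present corollary, the first item of the simple multiplier corollary yields
\begin{equation}
    \tnorm{\pd^{\tilde\alpha}(\pd_j\psi)\pd^{\tilde\beta}\varphi}_{L^2} \lesssim \tnorm{\pd_j\psi}_{H^{1+\tfloor{d/2}}}\tnorm{\varphi}_{H^K} + \tnorm{\pd_j\psi}_{H^K}\tnorm{\varphi}_{H^{1+\tfloor{d/2}}},
\end{equation}
where the second term is only needed when $1+\tfloor{d/2}<K$.  Monotonicity of Sobolev norms in $K$, together with $K\le k+m-1$, lets me replace $K$ by $k+m-1$ on the right.  A multi-index $\gamma$ producing a term with $K>1+\tfloor{d/2}$ exists if and only if $k+m-1>1+\tfloor{d/2}$, which matches the case split $2+\tfloor{d/2}<k+m$ in the statement.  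Summing the finitely many terms in the Leibniz expansion produces the claimed bound. For the second item, exactly the same argument using the third item of Corollary~\ref{corollary on tame estimates on simple multipliers} gives
\begin{equation}
    \tnorm{\pd^{\tilde\alpha}(\pd_j\psi)\pd^{\tilde\beta}\varphi}_{L^2}\lesssim\tnorm{\pd_j\psi}_{L^\infty}\tnorm{\varphi}_{H^K}+\tnorm{\pd_j\psi}_{W^{K,\infty}}\tnorm{\varphi}_{L^2},
\end{equation}
and monotonicity again promotes $K$ to $k+m-1$.

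There is no substantive obstacle: the work is essentially bookkeeping once the Leibniz identity is in place.  The only minor points to watch are that the local regularity assumption on $\psi$ makes all the distributional derivatives $\pd^{\tilde\alpha}(\pd_j\psi)$ meaningful as $L^1_{\loc}$ functions, and that the case split in the statement aligns precisely with the case split produced by the largest admissible $K=k+m-1$ from the simple multiplier corollary.
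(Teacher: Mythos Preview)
Your proposal is correct and follows essentially the same approach as the paper: expand the commutator via the Leibniz rule so that every term carries a factor of $\pd_j\psi$, then apply the first (respectively third) item of Corollary~\ref{corollary on tame estimates on simple multipliers} to each summand and take the maximum over $K\le k+m-1$. The paper collapses your two Leibniz steps into a single expansion of $\pd^\alpha([\pd_j^m,\psi]\varphi)$, but the argument is otherwise identical.
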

	\begin{proof}
		Fix $\al\in\N^d$ with $|\al|\le k$. By the Leibniz rule we have
		\begin{equation}
			\pd^\al([\pd_j^m,\psi]\varphi)=\sum_{\ell=0}^{m-1}\sum_{\be\le\al}c_{\ell,m,\be,\al}\pd^\be\pd_j^{\ell}\pd_j\psi\pd^{\al-\be}\pd_j^{m-1-\ell}\varphi,
		\end{equation}
		and hence
		\begin{equation}
			\tnorm{[\pd_j^m,\psi]\varphi}_{H^k}\lesssim\sum_{|\al|\le k}\sum_{\ell=0}^{m-1}\sum_{\be\le\al}\tnorm{\pd^\be\pd_j^{\ell}\pd_j\psi\pd^{\al-\be}\pd_j^{m-1-\ell}\varphi}_{L^2}.
		\end{equation}
		To prove the first item, we now apply the first conclusion of Corollary~\ref{corollary on tame estimates on simple multipliers} and use that $|\al|\le k$. To prove the second item, we instead use the third conclusion of the aforementioned corollary and that $\ell\le m-1$ in these sums.
	\end{proof}

	Now we derive tame bounds on superposition multipliers.

	\begin{coro}[Tame estimates on superposition multipliers]\label{coro on tame estimates on superposition multipliers}
		Let $U\subseteq\R^d$ be a Stein extension domain (see Definition~\ref{defn Stein-extension operator}), $k\in\N^+$, and $V$ be a finite dimensional real vector space.  Suppose that $f\in W^{k,\infty}(V)$, $g\in W^{k,\infty}_{\loc}(U;V)$ is such that $Dg\in W^{k-1,\infty}(U;\mathcal{L}(\R^d;V))$, and $\psi\in H^k_{\loc}(U;V)$ is such that $D\psi\in (L^\infty\cap H^{k-1})(U;\mathcal{L}(\R^d;V))$. Let $R\in \R^+$ and $S\in \R^+ \cup \{\infty\}$  satisfy
		\begin{equation}
			\tbr{\tnorm{f}_{W^{1,\infty}},\tnorm{Dg}_{L^\infty},\tnorm{D\psi}_{L^2\cap L^\infty}}\le R \text{ and }
			\tbr{\tnorm{f}_{W^{2+\tfloor{d/2},\infty}},\tnorm{Dg}_{W^{1+\tfloor{d/2},\infty}},\tnorm{D\psi}_{H^{1+\tfloor{d/2}}}}\le S.
		\end{equation}
		Then the following hold.
		\begin{enumerate}
			\item If $\varphi\in(L^\infty\cap H^k)(U)$, then the product $f(g+\psi)\varphi$ belongs to $H^k(U)$ and obeys the estimate
			\begin{equation}\label{the first items estimate}
				\tnorm{f(g+\psi)\varphi}_{H^k}\lesssim R^{k+1}\tnorm{\varphi}_{H^k}+R^k\tnorm{f,Dg,D\psi}_{W^{k,\infty}\times W^{k-1,\infty}\times H^{k-1}}\tnorm{\varphi}_{L^2\cap L^\infty}.
			\end{equation}
			\item If $\varphi\in H^k(U)$ and we assume additionally that $S<\infty$, then the product $f(g+\psi)\varphi$ belongs to $H^k(U)$ and obeys the estimate
			\begin{equation}\label{the introductino is net}
				\tnorm{f(g+\psi)\varphi}_{H^k}\lesssim S^{k+1}\tnorm{\varphi}_{H^k}+S^k\begin{cases}
					0&\text{if }k\le 1+\tfloor{d/2},\\
					\tnorm{f,Dg,D\psi}_{W^{k,\infty}\times W^{k-1,\infty}\times H^{k-1}}\tnorm{\varphi}_{H^{1+\tfloor{d/2}}}&\text{if }1+\tfloor{d/2}<k.
				\end{cases}
			\end{equation}
			\item Suppose that $\varphi\in H^k(U)$, $S<\infty$, and there is a Stein-extension domain $O\subseteq V$ such that $f\in W^{k,\infty}(O)$. If the image of $g+\psi$ is a subset of $O$, then the inclusion and estimate from the second item hold.
		\end{enumerate}
	\end{coro}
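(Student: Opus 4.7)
The plan is to estimate $\pd^\alpha(f(g+\psi)\varphi)$ pointwise via Leibniz and Fa\`{a} di Bruno, then bound the resulting product expressions using the high--low product estimates of Corollary~\ref{corollary on tame estimates on simple multipliers}. For a multi-index $\alpha \in \N^d$ with $|\alpha| \le k$, Leibniz gives
\begin{equation}
\pd^\alpha(f(g+\psi)\varphi) = \sum_{\beta \le \alpha}\binom{\alpha}{\beta}\pd^{\alpha-\beta}[f(g+\psi)]\pd^\beta\varphi,
\end{equation}
and Fa\`{a} di Bruno expresses each $\pd^{\alpha-\beta}[f(g+\psi)]$ as a finite sum of terms of the form $D^j f(g+\psi)\{\pd^{\gamma_1}(g+\psi),\dots,\pd^{\gamma_j}(g+\psi)\}$ with $1 \le j \le |\alpha-\beta|$ and $\gamma_1 + \cdots + \gamma_j = \alpha-\beta$, $|\gamma_i|\ge 1$. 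The term with $j=0$ (only appearing when $\alpha=\beta$) is just $f(g+\psi)\pd^\alpha\varphi$.

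For the first item, I would bound each such product in $L^2(U)$ by placing $\varphi$ in either $L^\infty$ or $L^2$ and all of the $D^jf(g+\psi)$ and $\pd^{\gamma_i}(g+\psi)$ factors in $L^\infty$, using the third item of Corollary~\ref{corollary on tame estimates on simple multipliers}. The inclusion $D^jf(g+\psi) \in L^\infty(U;\mathcal{L}^j(V;\R))$ follows from $f \in W^{k,\infty}(V)$ and uses only $\|f\|_{W^{j,\infty}}\le \|f\|_{W^{k,\infty}}$. For $1 \le |\gamma_i|$, each $\pd^{\gamma_i}(g+\psi)$ is one derivative of $Dg + D\psi$, and the third item of Corollary~\ref{corollary on tame estimates on simple multipliers} (applied with $k$ replaced by $|\gamma_i|-1$) provides the $L^\infty$ bound at the cost of factors of $\|Dg,D\psi\|_{L^\infty}$ (low norm) and $\|Dg,D\psi\|_{W^{|\gamma_i|-1,\infty}\times H^{|\gamma_i|-1}}\|\varphi\|_{L^2\cap L^\infty}$ (high norm); summing and using $|\alpha|\le k$ yields the factor $R^{k+1}\|\varphi\|_{H^k} + R^k\|f,Dg,D\psi\|_{W^{k,\infty}\times W^{k-1,\infty}\times H^{k-1}}\|\varphi\|_{L^2\cap L^\infty}$, which is \eqref{the first items estimate}. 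The key bookkeeping point is that exactly one factor per summand carries a \emph{high} norm, while the rest carry \emph{low} norms, producing the $R^k$ (or $R^{k+1}$) power from the low-norm factors.

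For the second item, the same decomposition is used, but now $\varphi$ is only in $H^k$, so we cannot afford to place it in $L^\infty$. Instead, in each summand we place \emph{one} factor at its supercritical Sobolev scale $H^{1+\tfloor{d/2}}$ and measure the remaining factors in $L^\infty$-type norms using the first item of Corollary~\ref{corollary on tame estimates on simple multipliers}. When $k \le 1+\tfloor{d/2}$, all derivatives are controlled by $\|\varphi\|_{H^k}$ with low-norm multipliers, yielding the first term $S^{k+1}\|\varphi\|_{H^k}$ of \eqref{the introductino is net}. When $k > 1+\tfloor{d/2}$, the high-low trade in the first conclusion of Corollary~\ref{corollary on tame estimates on simple multipliers} allocates the high derivative either to $\varphi$ (absorbed into $\|\varphi\|_{H^k}$ with low-norm multipliers controlled by $S^{k+1}$) or to one of the Sobolev multipliers $D^jf(g+\psi), \pd^{\gamma_i}(g+\psi)$ (producing the $S^k\|f,Dg,D\psi\|_{W^{k,\infty}\times W^{k-1,\infty}\times H^{k-1}}\|\varphi\|_{H^{1+\tfloor{d/2}}}$ term). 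Fa\`{a} di Bruno's formula produces at most $k$ factors per term, so low-norm factors contribute at most $S^{k+1}$.

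For the third item, I would pass to an extension of $f$ to all of $V$. Using a Stein extension operator $\mathfrak{E}_O$ for the Stein-extension domain $O \subseteq V$ (see Definition~\ref{defn Stein-extension operator} and Example~\ref{example on Sobolev spaces on domains}), set $\tilde{f} = \mathfrak{E}_O f \in W^{k,\infty}(V)$ with $\|\tilde{f}\|_{W^{k,\infty}(V)}\lesssim \|f\|_{W^{k,\infty}(O)}$. Since $(g+\psi)(U)\subseteq O$, we have $\tilde{f}(g+\psi) = f(g+\psi)$ pointwise on $U$. Applying the second item to $\tilde{f}$ in place of $f$ then yields the desired estimate. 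The main technical obstacle is purely bookkeeping: tracking which factor in each Fa\`{a} di Bruno term absorbs the high derivative so that exactly one high norm appears per summand with the correct power of the low-norm constant. Since the derivatives only ever hit $Dg+D\psi$ (never $g+\psi$ itself), the hypothesis that $D\psi \in L^\infty \cap H^{k-1}$ (rather than the stronger $W^{k-1,\infty}$) is exactly what the combined first/third items of Corollary~\ref{corollary on tame estimates on simple multipliers} require.
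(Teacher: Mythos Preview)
Your overall structure (Leibniz plus Fa\`{a} di Bruno for item~1, then reduce items~2 and~3 to item~1) matches the paper's, and your treatment of item~3 via the Stein extension $\mathfrak{E}_O$ is exactly right. The gap is in how you estimate the multi-factor terms arising from Fa\`{a} di Bruno.

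For item~1, consider a term of the form $D^\ell f(g+\psi)\{\pd^{\gamma_1}\psi,\dotsc,\pd^{\gamma_\upnu}\psi,\pd^{\gamma_{\upnu+1}}g,\dotsc,\pd^{\gamma_\ell}g\}\pd^\beta\varphi$. You propose to place all the $\pd^{\gamma_i}(g+\psi)$ factors in $L^\infty$ via Corollary~\ref{corollary on tame estimates on simple multipliers}, but that corollary yields $L^2$ bounds on two-factor products, not $L^\infty$ bounds on individual factors; and more fundamentally, the hypothesis on $\psi$ only gives $D\psi\in L^\infty$, so $\pd^{\gamma_i}\psi$ need not lie in $L^\infty$ once $|\gamma_i|\ge 2$. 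When several $\psi$-factors of order $\ge 2$ appear simultaneously, one must place them in intermediate Lebesgue spaces via Gagliardo--Nirenberg, balance the H\"older exponents, and then use Young's inequality to obtain exactly one high-norm factor. This is precisely the content of the argument underlying Theorem~\ref{theorem on tame estimates on products}, and the paper invokes (a mild modification of) that argument here with the specific choices $u_1=\cdots=u_\upnu=D\psi$, $u_{\upnu+1}=\varphi$ on the $L^2$-side and $w_1=Df$, $w_2=\cdots=Dg$ on the $L^\infty$-side, using only the trivial bound $\|D^\ell f(g+\psi)\|_{L^\infty}\le\|D^\ell f\|_{L^\infty}$ for the outer composition factor. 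Pairwise high--low estimates alone do not reach the tame form~\eqref{the first items estimate}.

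For item~2 your sketch also has a gap in the low-regularity regime $k\le 1+\tfloor{d/2}$. The paper does not repeat the decomposition there: instead it observes that item~1 plus Sobolev embedding already gives the case $k\ge 1+\tfloor{d/2}$, establishes the endpoint $\mathcal{L}(H^{1+\tfloor{d/2}})$ bound $S^{2+\tfloor{d/2}}$ and the trivial $\mathcal{L}(L^2)$ bound $S$ for the multiplier $\varphi\mapsto f(g+\psi)\varphi$, and then interpolates the operator norms to cover $1\le k\le 1+\tfloor{d/2}$. Your direct approach would need to carefully route the $\pd^{\gamma_i}\psi$ factors through subcritical Sobolev embeddings and H\"older in a dimension-dependent way, which is doable but is not what Corollary~\ref{corollary on tame estimates on simple multipliers} provides.
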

	\begin{proof}
		We begin by proving the first item. We start by estimating
		\begin{equation}\label{norm estimate low plus high no inbetween}
			\tnorm{f(g+\psi)\varphi}_{H^k}\lesssim\tnorm{f(g+\psi)\varphi}_{L^2}+\tnorm{D^k[f(g+\psi)\varphi]}_{L^2}
		\end{equation}
		and $\tnorm{f(g+\psi)\varphi}_{L^2}\le\tnorm{f}_{L^\infty}\tnorm{\varphi}_{L^2}$. For the second term in \eqref{norm estimate low plus high no inbetween} we have to work harder. By the differentiation rules for products and compositions, we have that
		\begin{multline}
			D^k[f(g+\psi)\varphi]=f(g+\psi)D^k\varphi+\m{sym}\sum_{j=0}^{k-1}\sum_{\ell=1}^{j+1}\sum_{j_1+\cdots+j_\ell=j+1}c_{j+1,k,\ell,j_1,\dots,j_\ell}
			\\\cdot D^{\ell}f(g+\psi)\tcb{D^{j_1}(g+\psi),\dots,D^{j_\ell}(g+\psi)}\otimes D^{k-1-j}\varphi
			=\bf{I}+\bf{II},
		\end{multline}
		where $\m{sym}$ denotes symmetrization of the multilinear map. We will estimate the $L^2$-norm of $\bf{I}$ and $\bf{II}$ separately. We handle $\bf{I}$ trivially:
		\begin{equation}\label{estimate for I_}
			\tnorm{\bf{I}}_{L^2}\le\tnorm{f}_{L^\infty}\tnorm{\varphi}_{H^k}.
		\end{equation}
		
		For $\bf{II}$, we use the triangle inequality and study each term in the series:
		\begin{equation}
			\tnorm{\bf{II}}_{L^2}\lesssim\sum_{j=0}^{k-1}\sum_{\ell=1}^{j+1}\sum_{j_1+\cdots+j_\ell=j+1}\tnorm{D^{\ell}f(g+\psi)\tcb{D^{j_1}(g+\psi),\dots,D^{j_\ell}(g+\psi)}\otimes D^{k-1-j}\varphi}_{L^2}.
		\end{equation}
		By the $\ell$-multilinearity of $D^\ell f$ and symmetry considerations, we have the upper bound
		\begin{equation}\label{monkberry moon delight try some of this honey}
			\tnorm{\bf{II}}_{L^2}\lesssim\sum_{j=0}^{k-1}\sum_{\ell=1}^{j+1}\sum_{j_1+\cdots+j_\ell=j+1}\sum_{\upnu=0}^\ell\tnorm{D^\ell f(g+\psi)\tcb{D^{j_1}\psi,\dots,D^{j_\upnu}\psi,D^{j_{\upnu+1}}g,\dots,D^{j_\ell}g}\otimes D^{k-1-j}\varphi}_{L^2}.
		\end{equation}
		We would like to apply Theorem~\ref{theorem on tame estimates on products} to the summands in this expression, but due to the appearance of $D^\ell f(g+\psi)$ terms we cannot verify the theorem's hypotheses.  Instead, we modify the argument used to prove the theorem. Given $j\in\tcb{0,\dots,k-1}$, $\ell\in\tcb{1,\dots,j+1}$, $j_1+\cdots+j_\ell=j+1$, and $\upnu\in\tcb{0,\dots,\ell}$, we set $\al=k-1-j+\sum_{\nu=1}^\upnu(j_\nu-1)$, $\be=\ell-1+\sum_{\nu=\upnu+1}^\ell(j_\nu-1)$, $\al+\be=k-1$, $r=s=2$, $t=a_1=b_1=\cdots=a_{\ell-\upnu+1}=b_{\ell-\upnu+1}=\infty$, $p_1=\cdots= p_{\upnu+1}=2$, $q_1=\cdots=q_{\upnu+1}=2\f{(k-1)(\upnu+1)-\al}{k-1-\al}$, $u_1 = \cdots = u_{\upnu}=D\psi$, $u_{\upnu+1}=\varphi$, $w_1 = Df$, and $w_2 = \cdots = w_{\ell-\upnu+1}=Dg$.  The  argument used to prove Theorem \ref{theorem on tame estimates on products} then pushes through for the summands in \eqref{monkberry moon delight try some of this honey} with these parameters, thanks to the trivial bound $\tnorm{D^\ell f(g+\psi)}_{L^\infty}\le\tnorm{D^\ell f}_{L^\infty}$;  this results in the estimate
		\begin{multline}\label{estimate for II_}
			\tnorm{\bf{II}}_{L^2}\lesssim\tbr{\tnorm{Dg,D\psi}_{L^\infty\times(L^2\cap L^\infty)}}^{k-1}\big(\tnorm{Df}_{L^\infty}\tnorm{Dg,D\psi}_{L^\infty\times(L^2\cap L^\infty)}\tnorm{\varphi}_{H^{k-1}}\\+\tnorm{Df}_{L^\infty}\tnorm{Dg,D\psi}_{W^{k-1,\infty}\times H^{k-1}}\tnorm{\varphi}_{L^2\cap L^\infty}+\tnorm{Df}_{W^{k-1,\infty}}\tnorm{Dg,D\psi}_{L^\infty\times(L^2\cap L^\infty)}\tnorm{\varphi}_{L^2\cap L^\infty}\big).
		\end{multline}
		Upon combining estimates~\eqref{norm estimate low plus high no inbetween}, \eqref{estimate for I_}, and~\eqref{estimate for II_}, we arrive at the desired conclusion, estimate~\eqref{the first items estimate}, of the first item. 
		
		The second item in the case that $1+\tfloor{d/2}\le k$ follows from estimate~\eqref{the first items estimate} of the first item, the fact that $1\le R\lesssim S$, and the supercritical Sobolev embedding $H^{1+\tfloor{d/2}}\emb L^2\cap L^\infty$. On the other hand, we have the trivial estimate
  \begin{equation}
      \tnorm{f(g+\psi)\varphi}_{L^2}\le\tnorm{f}_{L^\infty}\tnorm{\varphi}_{L^2}\le S\tnorm{\varphi}_{L^2}.    
  \end{equation}
  This shows that the linear map $\varphi\mapsto f(g+\psi)\varphi$ has $S^{2+\tfloor{d/2}}$ as an upper-bound on the $\mathcal{L}(H^{1+\tfloor{d/2}}(U))$ operator norm and has $S$ as an upper-bound on the $\mathcal{L}(L^2(U))$ operator norm.  Employing operator interpolation (see, for instance, Bergh and L\"ofstr\"om~\cite{MR0482275}) with these bounds, we achieve~\eqref{the introductino is net} in the cases $k\le 1+\tfloor{d/2}$, which completes the proof of the second item.
  
  The third item follows from the second item applied when $f$ is replaced by $\mathfrak{E}_Of$, where $\mathfrak{E}_O$ is a Stein extension operator for $O$, and the observation that the image hypothesis on $g+\psi$ ensures that $(\mathfrak{E}_Of)(g+\psi)=f(g+\psi)$.
	\end{proof}
	
	Now we consider superposition on its own, not as a multiplier.

	\begin{coro}[Tame estimates on superposition]\label{coro on tame estimates on superposition}
		Let $\N\ni k\ge2+\lfloor n/2\rfloor$. The following hold.
		\begin{enumerate}
			\item Let $V$, $W$ be finite dimensional real vector spaces, $O\subseteq V$ be a Stein extension domain (see Definition~\ref{defn Stein-extension operator}) containing $0$ that is star shaped with respect to $0$, and $U\subseteq\R^n$ be a Stein extension domain. If $f\in W_{\loc}^{k,\infty}(O;W)$ is such that $f(0)=0$ and $Df\in W^{k-1,\infty}(O;\mathcal{L}(V;W))$, and $\varphi\in H^k(U;V)$ is such that $\varphi(U)\subseteq O$, then the superposition $f(\varphi)$ belongs to $H^k(U;W)$ and obeys the estimate 
			\begin{equation}
				\tnorm{f(\varphi)}_{H^k}\lesssim S^k\tnorm{\varphi}_{H^k},
			\end{equation}
			for any $S\in\R^+$ satisfying $\tbr{\tnorm{Df}_{W^{k-1,\infty}},\tnorm{\varphi}_{H^{2+\lfloor n/2\rfloor}}}\le S$.
			\item Suppose that $f\in H^k(\R^n;\R^d)$ and $g,\psi\in C^1(\R^n;\R^n)$ are such that $Dg\in W^{k-1,\infty}(\R^n;\R^{n\times n})$, $D\psi\in H^{k-1}(\R^n;\R^{n\times n})$, and $g+\psi$ is a bi-Lipschitz and $C^1$-diffeomorphism of $\R^n$. The superposition $f(g+\psi)$ belongs to $H^k(\R^n;\R^d)$  and satisfies the estimate
			\begin{equation}\label{tame estimates on superposition estimate numero dos}
				\tnorm{f(g+\psi)}_{H^k}\lesssim S^{k+1}\tnorm{f}_{H^k}+S^k\tnorm{Dg,D\varphi}_{W^{k-1,\infty}\times H^{k-1}}\tnorm{f}_{H^{2+\lfloor n/2\rfloor}},
			\end{equation}
			for any $S\in\R^+$ satisfying
			\begin{equation}
				\tbr{\tnorm{\det D(g+\psi)^{-1}}_{L^\infty},\tnorm{Dg,D\psi}_{W^{1+\tfloor{n/2},\infty}\times H^{1+\tfloor{n/2}}}}\le S.
			\end{equation}
		\end{enumerate}
	\end{coro}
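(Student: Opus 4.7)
The plan is to establish both items via the Faà di Bruno chain rule formula for derivatives of a composition, which expresses $\partial^\alpha[f(\Phi)]$ as a linear combination of terms of the form $D^\ell f(\Phi)[D^{j_1}\Phi,\ldots,D^{j_\ell}\Phi]$ with $1\le\ell\le|\alpha|$ and $j_1+\cdots+j_\ell=|\alpha|$. The two items will then differ essentially in how the outer factor $D^\ell f(\Phi)$ is controlled.

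For the first item, since $f(0)=0$ and $O$ is star-shaped about the origin, the fundamental theorem of calculus yields $f(\varphi)=\int_0^1 Df(t\varphi)\cdot\varphi\,dt$ (using that $\varphi(U)\subseteq O$ implies $t\varphi(U)\subseteq O$), which immediately gives the $L^2$ bound $\|f(\varphi)\|_{L^2}\le\|Df\|_{L^\infty(O)}\|\varphi\|_{L^2}$. For $1\le|\alpha|\le k$, the Faà di Bruno formula reduces the problem to estimating each summand; the hypothesis $Df\in W^{k-1,\infty}(O)$ guarantees $\|D^\ell f(\varphi)\|_{L^\infty}\le\|D^\ell f\|_{L^\infty(O)}$ for $1\le\ell\le k$, so the entire bound reduces to controlling the product $D^{j_1}\varphi\cdots D^{j_\ell}\varphi$ in $L^2$. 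Applying Theorem~\ref{theorem on tame estimates on products} with $r=2$, $p_i=2$, $q_i=\infty$ gives $\|D^{j_1}\varphi\cdots D^{j_\ell}\varphi\|_{L^2}\lesssim\|\varphi\|_{H^{|\alpha|}}\|\varphi\|_{L^\infty}^{\ell-1}$, and the supercritical embedding $H^{1+\tfloor{n/2}}\hookrightarrow L^\infty$ (valid since $k\ge 2+\tfloor{n/2}$) converts $\|\varphi\|_{L^\infty}^{\ell-1}\lesssim S^{\ell-1}$. Summing over $\ell$ and $\alpha$ produces the target bound $\|f(\varphi)\|_{H^k}\lesssim S^k\|\varphi\|_{H^k}$.

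For the second item, the crucial difficulty is that $D^\ell f$ is only known to lie in $H^{k-\ell}$ rather than $L^\infty$, so $D^\ell f(\Phi)$ admits no pointwise bound. The remedy is to exploit that $\Phi=g+\psi$ is a bi-Lipschitz diffeomorphism of all of $\R^n$: a standard change of variables delivers
\begin{equation*}
\|D^\ell f(\Phi)\|_{L^p(\R^n)}\le\|\det D\Phi^{-1}\|_{L^\infty}^{1/p}\|D^\ell f\|_{L^p(\R^n)}\le S^{1/p}\|D^\ell f\|_{L^p(\R^n)}
\end{equation*}
for every $p\in[1,\infty]$. With this in hand, I proceed by finite induction on $j\in\{0,1,\ldots,k\}$ to establish the tame estimate
\begin{equation*}
\|F(\Phi)\|_{H^j}\lesssim S^{j+1}\|F\|_{H^j}+S^j\|Dg,D\psi\|_{W^{k-1,\infty}\times H^{k-1}}\|F\|_{H^{2+\tfloor{n/2}}}
\end{equation*}
for every $F\in H^j(\R^n)$. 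The base case $j=0$ is direct change of variables. In the inductive step I differentiate $\partial_a[F(\Phi)]=\sum_b(\partial_b F)(\Phi)\partial_a(g+\psi)_b$ and split $\partial_a(g+\psi)_b=\partial_a g_b+\partial_a\psi_b$; the first summand is handled by the $L^\infty$-multiplier estimate of Corollary~\ref{corollary on tame estimates on simple multipliers}.4, while the second is handled by the high-low Sobolev multiplier estimate of Corollary~\ref{corollary on tame estimates on simple multipliers}.2, with the inductive hypothesis at level $1+\tfloor{n/2}$ (where no correction term appears) supplying the needed low-regularity bound on $(\partial_b F)(\Phi)$. The third item follows at once from the second by replacing $f$ with its Stein extension $\mathfrak{E}_O f\in H^k(\R^n;\R^d)$; the image condition $(g+\psi)(\R^n)\subseteq O$ ensures that $\mathfrak{E}_O f\circ(g+\psi)=f\circ(g+\psi)$.

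The main obstacle is securing the precise tame form in the second item, in particular the appearance of $\|f\|_{H^{2+\tfloor{n/2}}}$ rather than a higher norm of $f$ in the correction. Applying the inductive hypothesis to $\partial_b f$ naively generates a $\|\partial_b f\|_{H^{2+\tfloor{n/2}}}\le\|f\|_{H^{3+\tfloor{n/2}}}$ contribution, which must be reabsorbed by interpolating between $\|f\|_{H^{2+\tfloor{n/2}}}$ and $\|f\|_{H^k}$ via log-convexity of Sobolev norms (Theorem~\ref{thm gagliardo nirenberg}) together with Young's inequality, and then absorbing the resulting $\|f\|_{H^k}$ contribution into the leading $S^{k+1}\|f\|_{H^k}$ term. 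Keeping careful track of which factors carry high norms and which carry low norms throughout the induction, and of the power of $S$ that accumulates, is the main technical bookkeeping; once done correctly, all the analytic tools required (change of variables, Faà di Bruno, high-low Sobolev multiplier estimates, and Gagliardo-Nirenberg interpolation) are available from the preceding results in this appendix.
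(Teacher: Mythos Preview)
For the first item your approach is correct and close to the paper's: the $L^2$ bound via the fundamental theorem of calculus is identical, and for the higher derivatives the paper takes a slight shortcut by writing $D(f(\varphi))=Df(\varphi)D\varphi$ and invoking the third item of Corollary~\ref{coro on tame estimates on superposition multipliers} (the superposition-multiplier estimate) rather than re-expanding via Fa\`a di Bruno. Your direct argument via Theorem~\ref{theorem on tame estimates on products} works equally well.

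For the second item your approach diverges from the paper's, and the interpolation step you flag as the main obstacle contains a genuine gap. When you apply the inductive hypothesis at level $j-1$ to $\partial_b F$, the correction term is $S^{j-1}\tnorm{Dg,D\psi}_{W^{k-1,\infty}\times H^{k-1}}\tnorm{\partial_b F}_{H^{2+\tfloor{n/2}}}$; after multiplication by $\tnorm{\partial_a g_b}_{L^\infty}\le S$ (or the analogous bound for $\psi$) this becomes $S^{j}\tnorm{Dg,D\psi}_{\mathrm{high}}\tnorm{F}_{H^{3+\tfloor{n/2}}}$. Interpolating $\tnorm{F}_{H^{3+\tfloor{n/2}}}$ between $\tnorm{F}_{H^{2+\tfloor{n/2}}}$ and $\tnorm{F}_{H^k}$ and applying Young's inequality then produces a term $S^{j}\tnorm{Dg,D\psi}_{\mathrm{high}}\tnorm{F}_{H^k}$, a \emph{high-norm times high-norm} product that is not bounded by $S^{k+1}\tnorm{F}_{H^k}$, since $\tnorm{Dg,D\psi}_{W^{k-1,\infty}\times H^{k-1}}$ is not controlled by $S$. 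To salvage the induction one would have to carry a $j$-dependent norm $\tnorm{Dg,D\psi}_{W^{j-1,\infty}\times H^{j-1}}$ in the correction and interpolate it \emph{simultaneously} with the $F$-norm (as is done elsewhere in the paper, e.g.\ in the proof of Theorem~\ref{thm on tangential derivative analysis}); this is considerably more delicate than your sketch suggests.

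The paper avoids induction entirely for item 2. It expands $D^k[f(g+\psi)]$ once via Fa\`a di Bruno, uses multilinearity to separate the $g$- and $\psi$-derivative factors, and then reruns the \emph{argument} (not merely the statement) of Theorem~\ref{theorem on tame estimates on products} with a carefully chosen set of Lebesgue exponents, carrying the extra factor of $S$ from the change-of-variables bound $\tnorm{D^\ell f(g+\psi)}_{L^\chi}\le S^{1/\chi}\tnorm{D^\ell f}_{L^\chi}$. This one-shot product estimate delivers the precise tame form directly, with no interpolation bookkeeping. Finally, there is no third item in this statement; your closing remark about Stein extension appears to conflate this result with Corollary~\ref{coro on tame estimates on superposition multipliers}.
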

	\begin{proof}
		We begin by proving the first item. Since $f(0)=0$ and $O$ is star-shaped with respect to the origin of $V$, we can use the fundamental theorem of calculus to obtain the equality $f(\varphi)=\int_0^1Df(t\varphi)[\varphi]\;\m{d}t$, from which we deduce that $\tnorm{f(\varphi)}_{L^2}\le\tnorm{Df}_{L^\infty}\tnorm{\varphi}_{L^2}$. We next establish an $H^{k-1}$-bound on the derivative of the superposition, $D(f(\varphi))=Df(\varphi)D\varphi$. For this we utilize the third item of Corollary~\ref{coro on tame estimates on superposition multipliers} and obtain the bound $\tnorm{D(f(\varphi))}_{H^{k-1}}\lesssim S^k\tnorm{D\varphi}_{H^{k-1}}$. Together, these estimate give the conclusion of the first item.
		
		We next prove the second item, noting initially that
		\begin{equation}\label{the equation who shall not pass}
			\tnorm{f(g+\psi)}_{H^k}\lesssim\tnorm{f(g+\psi)}_{L^2}+\tnorm{D^k[f(g+\psi)]}_{L^2}.
		\end{equation}
		For the first $L^2$-norm, we use that $(g+\psi)$ is a bi-Lipschitz and $C^1$ diffeomorphism to estimate
		\begin{equation}\label{the equation who will pass}
			\tnorm{f(g+\psi)}_{L^2}=\bp{\int_{\R^n}|f|^2|\det D(g+\psi)^{-1}|}^{1/2}\le S^{1/2}\tnorm{f}_{L^2}.
		\end{equation}
		For the latter term of~\eqref{the equation who shall not pass}, we would like to use Theorem \ref{theorem on tame estimates on products}, but as in the proof of Corollary \ref{coro on tame estimates on superposition multipliers}, we cannot quite do so.  Instead, we use the differentiation rules for composition and argue in a manner similar to the proof of the first item in Corollary~\ref{coro on tame estimates on superposition multipliers}.  Indeed, we have the formula 
		\begin{equation}
			D^k[f(g+\psi)]=\m{sym}\sum_{\ell=1}^k\sum_{j_1+\dots+j_\ell=k}c_{\ell,k,j_1,\dots,j_\ell}D^\ell f(g+\psi)\tcb{D^{j_1}(g+\psi),\dots,D^{j_\ell}(g+\psi)},
		\end{equation}
		where $\m{sym}$ denotes the symmetrization operator for multilinear maps. By multilinearity and symmetry considerations, we then deduce the upper bound
		\begin{equation}\label{i pushed her away, i walked to the door}
			\tnorm{D^k[f(g+\psi)]}_{L^2}\lesssim
			\sum_{\ell=1}^k\sum_{j_1+\cdots+j_\ell=k}\sum_{\upnu=0}^\ell\tnorm{D^\ell f(g+\psi)\tcb{D^{j_1}\psi,\dots D^{j_\upnu}\psi,D^{j_{\upnu+1}}g,\dots,D^{j_\ell}g}}_{L^2}.
		\end{equation}
		To each summand on the right hand side above we apply the argument from the proof of Theorem~\ref{theorem on tame estimates on products} as follows. Given $\ell\in\tcb{1,\dots,k}$, $j_1+\cdots+j_\ell=k$, and $\upnu\in\tcb{0,1,\dots,\ell}$, we define $\al=\ell-1+\sum_{\nu=1}^\upnu(j_\nu-1)$, $\be=\sum_{\nu=\upnu+1}^\ell(j_\nu-1)$, $\al+\be=k-1$, $r=s=2$, $t=a_1=b_1=\cdots=a_{\ell-\upnu}=b_{\ell-\upnu}=\infty$, $p_1=\cdots=p_{\upnu+1}=2$, $q_1=\cdots=q_{\upnu+1}=2\f{(k-1)(\upnu+1)-\al}{k-1-\al}$, $u_1 = Df$, $u_2 = \cdots = u_{\upnu+1}=D\psi$, and $w_1 = \cdots = w_{\ell-\upnu}=Dg$.        The argument used in the theorem pushes through so long as we carry an extra factor of $S$, thanks to the bounds
		\begin{equation}
			\tnorm{D^\ell f(g+\psi)}_{L^\chi}\le S^{1/\chi}\tnorm{D^\ell f}_{L^\chi}\le S\tnorm{D^\ell f}_{L^\chi} \text{ for all }  \chi\in[1,\infty].
		\end{equation}
		Therefore, we deduce the estimate
		\begin{multline}\label{and thats the way that i wanted to stay}
			\tnorm{D^\ell f(g+\psi)\tcb{D^{j_1}\psi,\dots D^{j_\upnu}\psi,D^{j_{\upnu+1}}g,\dots,D^{j_\ell}g}}_{L^2}\\\lesssim S\tbr{\tnorm{Dg,D\psi}_{L^\infty\times(L^2\cap L^\infty)}}^{\ell-1}\big(\tnorm{Df}_{H^{k-1}}\tnorm{Dg,D\psi}_{L^\infty\times (L^2\cap L^\infty)}\\+\tnorm{Df}_{L^2\cap L^\infty}\tnorm{Dg,D\psi}_{W^{k-1,\infty}\times H^{k-1}}\big)
			\lesssim S^{\ell+1}\tnorm{f}_{H^k}+S^\ell\tnorm{Dg,D\psi}_{W^{k-1,\infty}\times H^{k-1}}\tnorm{f}_{H^{2+\lfloor n/2\rfloor}}.
		\end{multline}
		Upon combining~\eqref{the equation who shall not pass}, \eqref{the equation who will pass}, \eqref{i pushed her away, i walked to the door}, and~\eqref{and thats the way that i wanted to stay}, we acquire the desired bound, \eqref{tame estimates on superposition estimate numero dos}. 
	\end{proof}
	
        {\small\printindex}
	\bibliographystyle{abbrv}
	\bibliography{cnstw.bib}

\begin{thebibliography}{100}

\bibitem{AbMaRa_1988}
R.~Abraham, J.~E. Marsden, and T.~Ratiu.
\newblock {\em Manifolds, tensor analysis, and applications}, volume~75 of {\em
  Applied Mathematical Sciences}.
\newblock Springer-Verlag, New York, second edition, 1988.

\bibitem{MR2424078}
R.~A. Adams and J.~J.~F. Fournier.
\newblock {\em Sobolev spaces}, volume 140 of {\em Pure and Applied Mathematics
  (Amsterdam)}.
\newblock Elsevier/Academic Press, Amsterdam, second edition, 2003.

\bibitem{MR162050}
S.~Agmon, A.~Douglis, and L.~Nirenberg.
\newblock Estimates near the boundary for solutions of elliptic partial
  differential equations satisfying general boundary conditions. {II}.
\newblock {\em Comm. Pure Appl. Math.}, 17:35--92, 1964.

\bibitem{Auchmuty_Beals_1971}
J.~F.~G. Auchmuty and R.~Beals.
\newblock Variational solutions of some nonlinear free boundary problems.
\newblock {\em Arch. Rational Mech. Anal.}, 43:255--271, 1971.

\bibitem{MR3711883}
P.~Baldi and E.~Haus.
\newblock A {N}ash-{M}oser-{H}\"{o}rmander implicit function theorem with
  applications to control and {C}auchy problems for {PDE}s.
\newblock {\em J. Funct. Anal.}, 273(12):3875--3900, 2017.

\bibitem{refId0}
{Bauer, Sebastian}, {Neff, Patrizio}, {Pauly, Dirk}, and {Starke, Gerhard}.
\newblock Dev-div- and devsym-devcurl-inequalities for incompatible square
  tensor fields with mixed boundary conditions.
\newblock {\em ESAIM: COCV}, 22(1):112--133, 2016.

\bibitem{MR445136}
J.~T. Beale.
\newblock The existence of solitary water waves.
\newblock {\em Comm. Pure Appl. Math.}, 30(4):373--389, 1977.

\bibitem{MR611750}
J.~T. Beale.
\newblock The initial value problem for the {N}avier-{S}tokes equations with a
  free surface.
\newblock {\em Comm. Pure Appl. Math.}, 34(3):359--392, 1981.

\bibitem{MR0482275}
J.~Bergh and J.~L\"{o}fstr\"{o}m.
\newblock {\em Interpolation spaces. {A}n introduction}.
\newblock Grundlehren der Mathematischen Wissenschaften, No. 223.
  Springer-Verlag, Berlin-New York, 1976.

\bibitem{MR2580515}
M.~Berti, P.~Bolle, and M.~Procesi.
\newblock An abstract {N}ash-{M}oser theorem with parameters and applications
  to {PDE}s.
\newblock {\em Ann. Inst. H. Poincar\'{e} C Anal. Non Lin\'{e}aire},
  27(1):377--399, 2010.

\bibitem{MR2986590}
F.~Boyer and P.~Fabrie.
\newblock {\em Mathematical tools for the study of the incompressible
  {N}avier-{S}tokes equations and related models}, volume 183 of {\em Applied
  Mathematical Sciences}.
\newblock Springer, New York, 2013.

\bibitem{MR3892402}
B.~Buffoni and E.~Wahl\'{e}n.
\newblock Steady three-dimensional rotational flows: an approach via two stream
  functions and {N}ash-{M}oser iteration.
\newblock {\em Anal. PDE}, 12(5):1225--1258, 2019.

\bibitem{BSZ_2018}
J.~Burczak, Y.~Shibata, and W.~M. Zaj\c{a}czkowski.
\newblock Local and global solutions for the compressible {N}avier-{S}tokes
  equations near equilibria via the energy method.
\newblock In {\em Handbook of mathematical analysis in mechanics of viscous
  fluids}, pages 1751--1841. Springer, Cham, 2018.

\bibitem{Chandrasekhar_1957}
S.~Chandrasekhar.
\newblock {\em An introduction to the study of stellar structure}.
\newblock Dover Publications, Inc., New York, N.Y., 1957.

\bibitem{MR2372810}
G.-Q. Chen and Y.-G. Wang.
\newblock Existence and stability of compressible current-vortex sheets in
  three-dimensional magnetohydrodynamics.
\newblock {\em Arch. Ration. Mech. Anal.}, 187(3):369--408, 2008.

\bibitem{MR3925528}
G.-Q.~G. Chen, P.~Secchi, and T.~Wang.
\newblock Nonlinear stability of relativistic vortex sheets in
  three-dimensional {M}inkowski spacetime.
\newblock {\em Arch. Ration. Mech. Anal.}, 232(2):591--695, 2019.

\bibitem{CHWWY_2020}
R.~M. Chen, J.~Hu, D.~Wang, T.~Wang, and D.~Yuan.
\newblock Nonlinear stability and existence of compressible vortex sheets in
  2{D} elastodynamics.
\newblock {\em J. Differential Equations}, 269(9):6899--6940, 2020.

\bibitem{CDAD_2011}
Y.~Cho, J.~D. Diorio, T.~R. Akylas, and J.~H. Duncan.
\newblock {Resonantly forced gravity--capillary lumps on deep water. Part 2.
  Theoretical model}.
\newblock {\em J. Math. Fluid Mech.}, 672:288--306, 2011.

\bibitem{MR3139610}
D.~Coutand, J.~Hole, and S.~Shkoller.
\newblock Well-posedness of the free-boundary compressible 3-{D} {E}uler
  equations with surface tension and the zero surface tension limit.
\newblock {\em SIAM J. Math. Anal.}, 45(6):3690--3767, 2013.

\bibitem{Coutand_Shkoller_2011}
D.~Coutand and S.~Shkoller.
\newblock Well-posedness in smooth function spaces for moving-boundary 1-{D}
  compressible {E}uler equations in physical vacuum.
\newblock {\em Comm. Pure Appl. Math.}, 64(3):328--366, 2011.

\bibitem{MR2980528}
D.~Coutand and S.~Shkoller.
\newblock Well-posedness in smooth function spaces for the moving-boundary
  three-dimensional compressible {E}uler equations in physical vacuum.
\newblock {\em Arch. Ration. Mech. Anal.}, 206(2):515--616, 2012.

\bibitem{MR2214623}
S.~Dain.
\newblock Generalized {K}orn's inequality and conformal {K}illing vectors.
\newblock {\em Calc. Var. Partial Differential Equations}, 25(4):535--540,
  2006.

\bibitem{Denisova_1997}
I.~V. Denisova.
\newblock The problem of the motion of two compressible fluids separated by a
  closed free surface.
\newblock {\em Zap. Nauchn. Sem. S.-Peterburg. Otdel. Mat. Inst. Steklov.
  (POMI)}, 243(Kraev. Zadachi Mat. Fiz. i Smezh. Vopr. Teor. Funktsi\u{\i}.
  28):61--86, 338--339, 1997.

\bibitem{Denisova_2000}
I.~V. Denisova.
\newblock Evolution of compressible and incompressible fluids separated by a
  closed interface.
\newblock {\em Interfaces Free Bound.}, 2(3):283--312, 2000.

\bibitem{Denisova_2003}
I.~V. Denisova.
\newblock Solvability in weighted {H}\"{o}lder spaces for a problem governing
  the evolution of two compressible fluids.
\newblock {\em Zap. Nauchn. Sem. S.-Peterburg. Otdel. Mat. Inst. Steklov.
  (POMI)}, 295(Kraev. Zadachi Mat. Fiz. i Smezh. Vopr. Teor. Funkts.
  33):57--89, 244--245, 2003.

\bibitem{Denisova_Solonnikov_2018}
I.~V. Denisova and V.~A. Solonnikov.
\newblock Local and global solvability of free boundary problems for the
  compressible {N}avier-{S}tokes equations near equilibria.
\newblock In {\em Handbook of mathematical analysis in mechanics of viscous
  fluids}, pages 1947--2035. Springer, Cham, 2018.

\bibitem{DCDA_2011}
J.~D. Diorio, Y.~Cho, J.~H. Duncan, and T.~R. Akylas.
\newblock {Resonantly forced gravity--capillary lumps on deep water. Part 1.
  Experiments}.
\newblock {\em J. Math. Fluid Mech.}, 672:268--287, 2011.

\bibitem{Disconzi_Kukavica_2019}
M.~M. Disconzi and I.~Kukavica.
\newblock A priori estimates for the 3{D} compressible free-boundary {E}uler
  equations with surface tension in the case of a liquid.
\newblock {\em Evol. Equ. Control Theory}, 8(3):503--542, 2019.

\bibitem{Disconzi_Luo_2020}
M.~M. Disconzi and C.~Luo.
\newblock On the incompressible limit for the compressible free-boundary
  {E}uler equations with surface tension in the case of a liquid.
\newblock {\em Arch. Ration. Mech. Anal.}, 237(2):829--897, 2020.

\bibitem{MR2765512}
I.~Ekeland.
\newblock An inverse function theorem in {F}r\'{e}chet spaces.
\newblock {\em Ann. Inst. H. Poincar\'{e} C Anal. Non Lin\'{e}aire},
  28(1):91--105, 2011.

\bibitem{MR4275475}
I.~Ekeland and E.~S\'{e}r\'{e}.
\newblock A surjection theorem for maps with singular perturbation and loss of
  derivatives.
\newblock {\em J. Eur. Math. Soc. (JEMS)}, 23(10):3323--3349, 2021.

\bibitem{EvBS_2014}
Y.~Enomoto, L.~von Below, and Y.~Shibata.
\newblock On some free boundary problem for a compressible barotropic viscous
  fluid flow.
\newblock {\em Ann. Univ. Ferrara Sez. VII Sci. Mat.}, 60(1):55--89, 2014.

\bibitem{MR2040667}
E.~Feireisl.
\newblock {\em Dynamics of viscous compressible fluids}, volume~26 of {\em
  Oxford Lecture Series in Mathematics and its Applications}.
\newblock Oxford University Press, Oxford, 2004.

\bibitem{Fichera1973}
G.~Fichera.
\newblock {\em Existence Theorems in Elasticity}, pages 347--389.
\newblock Springer Berlin Heidelberg, Berlin, Heidelberg, 1973.

\bibitem{MR1814364}
D.~Gilbarg and N.~S. Trudinger.
\newblock {\em Elliptic partial differential equations of second order}.
\newblock Classics in Mathematics. Springer-Verlag, Berlin, 2001.
\newblock Reprint of the 1998 edition.

\bibitem{MR4072680}
D.~Ginsberg, H.~Lindblad, and C.~Luo.
\newblock Local well-posedness for the motion of a compressible,
  self-gravitating liquid with free surface boundary.
\newblock {\em Arch. Ration. Mech. Anal.}, 236(2):603--733, 2020.

\bibitem{Groves_2004}
M.~D. Groves.
\newblock {Steady water waves}.
\newblock {\em J. Nonlinear Math. Phys.}, 11(4):435--460, 2004.

\bibitem{GFA_2010}
M.~E. Gurtin, E.~Fried, and L.~Anand.
\newblock {\em The mechanics and thermodynamics of continua}.
\newblock Cambridge University Press, Cambridge, 2010.

\bibitem{Hadzic_Jang_2019}
M.~Had\v{z}i\'{c} and J.~J. Jang.
\newblock A class of global solutions to the {E}uler-{P}oisson system.
\newblock {\em Comm. Math. Phys.}, 370(2):475--505, 2019.

\bibitem{MR656198}
R.~S. Hamilton.
\newblock The inverse function theorem of {N}ash and {M}oser.
\newblock {\em Bull. Amer. Math. Soc. (N.S.)}, 7(1):65--222, 1982.

\bibitem{MR4406719}
S.~V. Haziot, V.~M. Hur, W.~A. Strauss, J.~F. Toland, E.~Wahl\'{e}n, S.~Walsh,
  and M.~H. Wheeler.
\newblock Traveling water waves---the ebb and flow of two centuries.
\newblock {\em Quart. Appl. Math.}, 80(2):317--401, 2022.

\bibitem{MR602181}
L.~H\"{o}rmander.
\newblock The boundary problems of physical geodesy.
\newblock {\em Arch. Rational Mech. Anal.}, 62(1):1--52, 1976.

\bibitem{MR802486}
L.~H\"{o}rmander.
\newblock On the {N}ash-{M}oser implicit function theorem.
\newblock {\em Ann. Acad. Sci. Fenn. Ser. A I Math.}, 10:255--259, 1985.

\bibitem{MR1039355}
L.~H\"{o}rmander.
\newblock The {N}ash-{M}oser theorem and paradifferential operators.
\newblock In {\em Analysis, et cetera}, pages 429--449. Academic Press, Boston,
  MA, 1990.

\bibitem{MR4266110}
Y.~Huang and T.~Luo.
\newblock Compressible viscous heat-conducting surface wave without surface
  tension.
\newblock {\em J. Math. Phys.}, 62(6):Paper No. 061501, 30, 2021.

\bibitem{MR3135704}
H.~Inci, T.~Kappeler, and P.~Topalov.
\newblock On the regularity of the composition of diffeomorphisms.
\newblock {\em Mem. Amer. Math. Soc.}, 226(1062):vi+60, 2013.

\bibitem{Jang_2010}
J.~Jang.
\newblock Local well-posedness of dynamics of viscous gaseous stars.
\newblock {\em Arch. Ration. Mech. Anal.}, 195(3):797--863, 2010.

\bibitem{Jang_Makino_2017}
J.~Jang and T.~Makino.
\newblock On slowly rotating axisymmetric solutions of the {E}uler-{P}oisson
  equations.
\newblock {\em Arch. Ration. Mech. Anal.}, 225(2):873--900, 2017.

\bibitem{Jang_Masmoudi_2009}
J.~Jang and N.~Masmoudi.
\newblock Well-posedness for compressible {E}uler equations with physical
  vacuum singularity.
\newblock {\em Comm. Pure Appl. Math.}, 62(10):1327--1385, 2009.

\bibitem{Jang_Masmoudi_2015}
J.~Jang and N.~Masmoudi.
\newblock Well-posedness of compressible {E}uler equations in a physical
  vacuum.
\newblock {\em Comm. Pure Appl. Math.}, 68(1):61--111, 2015.

\bibitem{MR3537008}
J.~Jang, I.~Tice, and Y.~Wang.
\newblock The compressible viscous surface-internal wave problem: local
  well-posedness.
\newblock {\em SIAM J. Math. Anal.}, 48(4):2602--2673, 2016.

\bibitem{MR3488552}
J.~Jang, I.~Tice, and Y.~Wang.
\newblock The compressible viscous surface-internal wave problem: stability and
  vanishing surface tension limit.
\newblock {\em Comm. Math. Phys.}, 343(3):1039--1113, 2016.

\bibitem{MR2164990}
B.~J. Jin.
\newblock Existence of viscous compressible barotropic flow in a moving domain
  with free upper surface, via {G}alerkin method.
\newblock {\em Ann. Univ. Ferrara Sez. VII (N.S.)}, 49:43--71, 2003.

\bibitem{MR1903004}
B.~J. Jin and M.~Padula.
\newblock In a horizontal layer with free upper surface.
\newblock {\em Commun. Pure Appl. Anal.}, 1(3):379--415, 2002.

\bibitem{MR2076684}
B.~J. Jin and M.~Padula.
\newblock Steady flows of compressible fluids in a rigid container with upper
  free boundary.
\newblock {\em Math. Ann.}, 329(4):723--770, 2004.

\bibitem{MR3281921}
U.~Kadri.
\newblock Wave motion in a heavy compressible fluid: revisited.
\newblock {\em Eur. J. Mech. B Fluids}, 49(part A):50--57, 2015.

\bibitem{koganemaru2022traveling}
J.~Koganemaru and I.~Tice.
\newblock Traveling wave solutions to the inclined or periodic free boundary
  incompressible navier-stokes equations.
\newblock {\em Preprint, arXiv:2207.07702}, 2022.

\bibitem{MR1892228}
P.~D. Lax.
\newblock {\em Functional analysis}.
\newblock Pure and Applied Mathematics (New York). Wiley-Interscience [John
  Wiley \& Sons], New York, 2002.

\bibitem{MR3726909}
G.~Leoni.
\newblock {\em A first course in {S}obolev spaces}, volume 181 of {\em Graduate
  Studies in Mathematics}.
\newblock American Mathematical Society, Providence, RI, second edition, 2017.

\bibitem{leoni2019traveling}
G.~Leoni and I.~Tice.
\newblock Traveling wave solutions to the free boundary incompressible
  navier-stokes equations.
\newblock {\em Comm. Pure Appl. Math.}, 2022.

\bibitem{Li_1991}
Y.~Y. Li.
\newblock On uniformly rotating stars.
\newblock {\em Arch. Rational Mech. Anal.}, 115(4):367--393, 1991.

\bibitem{Lindblad_2003}
H.~Lindblad.
\newblock Well-posedness for the linearized motion of a compressible liquid
  with free surface boundary.
\newblock {\em Comm. Math. Phys.}, 236(2):281--310, 2003.

\bibitem{MR2177323}
H.~Lindblad.
\newblock Well posedness for the motion of a compressible liquid with free
  surface boundary.
\newblock {\em Comm. Math. Phys.}, 260(2):319--392, 2005.

\bibitem{MR2178961}
H.~Lindblad.
\newblock Well-posedness for the motion of an incompressible liquid with free
  surface boundary.
\newblock {\em Ann. of Math. (2)}, 162(1):109--194, 2005.

\bibitem{Lindblad_Luo_2018}
H.~Lindblad and C.~Luo.
\newblock A priori estimates for the compressible {E}uler equations for a
  liquid with free surface boundary and the incompressible limit.
\newblock {\em Comm. Pure Appl. Math.}, 71(7):1273--1333, 2018.

\bibitem{MR1422251}
P.-L. Lions.
\newblock {\em Mathematical topics in fluid mechanics. {V}ol. 1}, volume~3 of
  {\em Oxford Lecture Series in Mathematics and its Applications}.
\newblock The Clarendon Press, Oxford University Press, New York, 1996.
\newblock Incompressible models, Oxford Science Publications.

\bibitem{Long_Morton_1966}
R.~Long and J.~Morton.
\newblock Solitary waves in compressible stratified fluids.
\newblock {\em Tellus}, 18(1):79--85, 1966.

\bibitem{MR40887}
M.~S. Longuet-Higgins.
\newblock A theory of the origin of microseisms.
\newblock {\em Philos. Trans. Roy. Soc. London Ser. A}, 243:1--35, 1950.

\bibitem{MR4439376}
C.~Luo and J.~Zhang.
\newblock Local well-posedness for the motion of a compressible gravity water
  wave with vorticity.
\newblock {\em J. Differential Equations}, 332:333--403, 2022.

\bibitem{MR3218831}
T.~Luo, Z.~Xin, and H.~Zeng.
\newblock Well-posedness for the motion of physical vacuum of the
  three-dimensional compressible {E}uler equations with or without
  self-gravitation.
\newblock {\em Arch. Ration. Mech. Anal.}, 213(3):763--831, 2014.

\bibitem{MR882389}
T.~Makino.
\newblock On a local existence theorem for the evolution equation of gaseous
  stars.
\newblock In {\em Patterns and waves}, volume~18 of {\em Stud. Math. Appl.},
  pages 459--479. North-Holland, Amsterdam, 1986.

\bibitem{MR3379135}
T.~Makino.
\newblock On spherically symmetric motions of the atmosphere surrounding a
  planet governed by the compressible {E}uler equations.
\newblock {\em Funkcial. Ekvac.}, 58(1):43--85, 2015.

\bibitem{MR3569407}
T.~Makino.
\newblock An application of the {N}ash-{M}oser theorem to the vacuum boundary
  problem of gaseous stars.
\newblock {\em J. Differential Equations}, 262(2):803--843, 2017.

\bibitem{MD_2017}
N.~Masnadi and J.~H. Duncan.
\newblock The generation of gravity–capillary solitary waves by a pressure
  source moving at a trans-critical speed.
\newblock {\em J. Fluid Mech.}, 810:448–474, 2017.

\bibitem{MR713680}
A.~Matsumura and T.~Nishida.
\newblock Initial-boundary value problems for the equations of motion of
  compressible viscous and heat-conductive fluids.
\newblock {\em Comm. Math. Phys.}, 89(4):445--464, 1983.

\bibitem{MOM_1995}
v.~Matu\v{s}u-Ne\v{c}asov\'{a}, M.~Okada, and T.~Makino.
\newblock Free boundary problem for the equation of spherically symmetric
  motion of viscous gas. {II}.
\newblock {\em Japan J. Indust. Appl. Math.}, 12(2):195--203, 1995.

\bibitem{MR2457601}
V.~G. Maz'ya and T.~O. Shaposhnikova.
\newblock {\em Theory of {S}obolev multipliers}, volume 337 of {\em Grundlehren
  der mathematischen Wissenschaften [Fundamental Principles of Mathematical
  Sciences]}.
\newblock Springer-Verlag, Berlin, 2009.
\newblock With applications to differential and integral operators.

\bibitem{MKS_1990}
R.~Miesen, L.~Kamp, and F.~Sluijter.
\newblock Long solitary waves in compressible shallow fluids.
\newblock {\em Physics of Fluids A - Fluid Dynamics}, 2(3):359--370, MAR 1990.

\bibitem{MKS_1990_2}
R.~Miesen, L.~Kamp, and F.~Sluijter.
\newblock Solitary waves in compressible deep fluids.
\newblock {\em Physics of fluids A - Fluid Dynamics}, 2(8):1404--1411, AUG
  1990.

\bibitem{Moser_1966}
J.~Moser.
\newblock A rapidly convergent iteration method and non-linear partial
  differential equations. {I}.
\newblock {\em Ann. Scuola Norm. Sup. Pisa Cl. Sci. (3)}, 20:265--315, 1966.

\bibitem{nash_1956}
J.~Nash.
\newblock The imbedding problem for {R}iemannian manifolds.
\newblock {\em Ann. of Math. (2)}, 63:20--63, 1956.

\bibitem{nguyen_tice_2022}
H.~Nguyen and I.~Tice.
\newblock Traveling wave solutions to the one-phase muskat problem: existence
  and stability.
\newblock {\em Preprint, arXiv:2211.06286}, 2022.

\bibitem{MR2084891}
A.~Novotn\'{y} and I.~Stra\v{s}kraba.
\newblock {\em Introduction to the mathematical theory of compressible flow},
  volume~27 of {\em Oxford Lecture Series in Mathematics and its Applications}.
\newblock Oxford University Press, Oxford, 2004.

\bibitem{Makino_1993}
M.~Okada and T.~Makino.
\newblock Free boundary problem for the equation of spherically symmetric
  motion of viscous gas.
\newblock {\em Japan J. Indust. Appl. Math.}, 10(2):219--235, 1993.

\bibitem{PC_2016}
B.~Park and Y.~Cho.
\newblock {Experimental observation of gravity--capillary solitary waves
  generated by a moving air suction}.
\newblock {\em J. Math. Fluid Mech.}, 808:168--188, 2016.

\bibitem{PC_2018}
B.~Park and Y.~Cho.
\newblock {Two-dimensional gravity--capillary solitary waves on deep water:
  generation and transverse instability}.
\newblock {\em J. Math. Fluid Mech.}, 834:92--124, 2018.

\bibitem{MR1046283}
K.~Pileckas and W.~M. Zaj\c{a}czkowski.
\newblock On the free boundary problem for stationary compressible
  {N}avier-{S}tokes equations.
\newblock {\em Comm. Math. Phys.}, 129(1):169--204, 1990.

\bibitem{MR2963679}
P.~Plotnikov and J.~Soko{\l}owski.
\newblock {\em Compressible {N}avier-{S}tokes equations}, volume~73 of {\em
  Instytut Matematyczny Polskiej Akademii Nauk. Monografie Matematyczne (New
  Series) [Mathematics Institute of the Polish Academy of Sciences.
  Mathematical Monographs (New Series)]}.
\newblock Birkh\"{a}user/Springer Basel AG, Basel, 2012.
\newblock Theory and shape optimization.

\bibitem{MR1854060}
P.~I. Plotnikov and J.~F. Toland.
\newblock Nash-{M}oser theory for standing water waves.
\newblock {\em Arch. Ration. Mech. Anal.}, 159(1):1--83, 2001.

\bibitem{Schwartz_1960}
J.~Schwartz.
\newblock On {N}ash's implicit functional theorem.
\newblock {\em Comm. Pure Appl. Math.}, 13:509--530, 1960.

\bibitem{Schwartz_1969}
J.~T. Schwartz.
\newblock {\em Nonlinear functional analysis}.
\newblock Notes on Mathematics and its Applications. Gordon and Breach Science
  Publishers, New York-London-Paris, 1969.
\newblock Notes by H. Fattorini, R. Nirenberg and H. Porta, with an additional
  chapter by Hermann Karcher.

\bibitem{MR697305}
P.~Secchi and A.~Valli.
\newblock A free boundary problem for compressible viscous fluids.
\newblock {\em J. Reine Angew. Math.}, 341:1--31, 1983.

\bibitem{Sergeraert_1972}
F.~Sergeraert.
\newblock Un th\'{e}or\`eme de fonctions implicites sur certains espaces de
  {F}r\'{e}chet et quelques applications.
\newblock {\em Ann. Sci. \'{E}cole Norm. Sup. (4)}, 5:599--660, 1972.

\bibitem{Shibata_2016}
Y.~Shibata.
\newblock On the global well-posedness of some free boundary problem for a
  compressible barotropic viscous fluid flow.
\newblock In {\em Recent advances in partial differential equations and
  applications}, volume 666 of {\em Contemp. Math.}, pages 341--356. Amer.
  Math. Soc., Providence, RI, 2016.

\bibitem{MR1226506}
V.~A. Solonnikov and A.~Tani.
\newblock Evolution free boundary problem for equations of motion of viscous
  compressible barotropic liquid.
\newblock In {\em The {N}avier-{S}tokes equations {II}---theory and numerical
  methods ({O}berwolfach, 1991)}, volume 1530 of {\em Lecture Notes in Math.},
  pages 30--55. Springer, Berlin, 1992.

\bibitem{MR0290095}
E.~M. Stein.
\newblock {\em Singular integrals and differentiability properties of
  functions}.
\newblock Princeton Mathematical Series, No. 30. Princeton University Press,
  Princeton, N.J., 1970.

\bibitem{MR4337506}
N.~Stevenson and I.~Tice.
\newblock Traveling wave solutions to the multilayer free boundary
  incompressible {N}avier-{S}tokes equations.
\newblock {\em SIAM J. Math. Anal.}, 53(6):6370--6423, 2021.

\bibitem{Strauss_2010}
W.~A. Strauss.
\newblock {Steady water waves}.
\newblock {\em Bull. Amer. Math. Soc. (N.S.)}, 47(4):671--694, 2010.

\bibitem{Strauss_Wu_2019}
W.~A. Strauss and Y.~Wu.
\newblock Rapidly rotating stars.
\newblock {\em Comm. Math. Phys.}, 368(2):701--721, 2019.

\bibitem{MR2004291}
N.~Tanaka and A.~Tani.
\newblock Surface waves for a compressible viscous fluid.
\newblock {\em J. Math. Fluid Mech.}, 5(4):303--363, 2003.

\bibitem{Toland_1996}
J.~F. Toland.
\newblock {Stokes waves}.
\newblock {\em Topol. Methods Nonlinear Anal.}, 7(1):1--48, 1996.

\bibitem{MR2560044}
Y.~Trakhinin.
\newblock Local existence for the free boundary problem for nonrelativistic and
  relativistic compressible {E}uler equations with a vacuum boundary condition.
\newblock {\em Comm. Pure Appl. Math.}, 62(11):1551--1594, 2009.

\bibitem{MR4444136}
Y.~Trakhinin and T.~Wang.
\newblock Well-posedness for the free-boundary ideal compressible
  magnetohydrodynamic equations with surface tension.
\newblock {\em Math. Ann.}, 383(1-2):761--808, 2022.

\bibitem{MR0119656}
J.~V. Wehausen and E.~V. Laitone.
\newblock {\em Surface waves}.
\newblock Handbuch der Physik, Vol. 9, Part 3. Springer-Verlag, Berlin, 1960.

\end{thebibliography}
 	
\end{document}